\newif\ifnotanonymous
\NewDocumentCommand{\R}{}{\mathbb{R}}
\NewDocumentCommand{\C}{}{\mathbb{C}}
\NewDocumentCommand{\Z}{}{\mathbb{Z}}
\NewDocumentCommand{\transpose}{}{\top}
\NewDocumentCommand{\sE}{}{\mathcal{E}}
\NewDocumentCommand{\sF}{}{\mathcal{F}}
\NewDocumentCommand{\sG}{}{\mathcal{G}}
\NewDocumentCommand{\sD}{}{\mathcal{D}}
\NewDocumentCommand{\sDt}{}{\widetilde{\sD}}
\NewDocumentCommand{\sDh}{}{\widehat{\sD}}
\NewDocumentCommand{\sEh}{}{\widehat{\sE}}
\NewDocumentCommand{\sEt}{}{\widetilde{\sE}}
\NewDocumentCommand{\sN}{}{\mathcal{N}}
\NewDocumentCommand{\sM}{}{\mathcal{M}}
\NewDocumentCommand{\Compactt}{}{\widetilde{\Compact}}
\NewDocumentCommand{\psih}{}{\hat{\psi}}
\NewDocumentCommand{\phih}{}{\hat{\phi}}
\NewDocumentCommand{\psit}{}{\tilde{\psi}}
\NewDocumentCommand{\phit}{}{\tilde{\phi}}
\NewDocumentCommand{\Dt}{}{\widetilde{D}}
\NewDocumentCommand{\Dh}{}{\widehat{D}}
\NewDocumentCommand{\Fh}{}{\widehat{F}}
\NewDocumentCommand{\Ph}{}{\widehat{P}}
\NewDocumentCommand{\Et}{}{\widetilde{E}}
\NewDocumentCommand{\Eh}{}{\widehat{E}}
\NewDocumentCommand{\Ft}{}{\widetilde{F}}
\NewDocumentCommand{\Psih}{}{\widehat{\Psi}}
\NewDocumentCommand{\xt}{}{\tilde{x}}
\NewDocumentCommand{\yt}{}{\tilde{y}}
\NewDocumentCommand{\fh}{}{\hat{f}}
\NewDocumentCommand{\ah}{}{\hat{a}}
\NewDocumentCommand{\at}{}{\tilde{a}}
\NewDocumentCommand{\bt}{}{\tilde{b}}
\NewDocumentCommand{\ct}{}{\tilde{c}}
\NewDocumentCommand{\ch}{}{\hat{c}}
\NewDocumentCommand{\TestFunctionsZero}{o}{\TestFunctionsSymbol_0\IfValueT{#1}{(#1)}}
\NewDocumentCommand{\TestFunctionsZeroN}{}{\TestFunctionsZero[\ManifoldN]}
\NewDocumentCommand{\DistributionsZero}{o}{\TestFunctionsSymbol'_0\IfValueT{#1}{(#1)}}
\NewDocumentCommand{\DistributionsZeroN}{}{\DistributionsZero[\ManifoldN]}
\NewDocumentCommand{\sS}{}{\mathcal{S}}
\NewDocumentCommand{\sT}{}{\mathcal{T}}
\NewDocumentCommand{\sB}{}{\mathcal{B}}
\NewDocumentCommand{\supp}{}{\mathrm{supp}}
\NewDocumentCommand{\DefnIt}{m}{\textit{#1}}
\NewDocumentCommand{\TangentSpace}{m m}{T_{#1}#2}
\NewDocumentCommand{\Vol}{o}{\mathrm{Vol}\IfValueT{#1}{(#1)}}
\NewDocumentCommand{\Volh}{o}{\widehat{\mathrm{Vol}}\IfValueT{#1}{(#1)}}
\NewDocumentCommand{\Omegah}{}{\widehat{\Omega}}
\NewDocumentCommand{\OmegaClosure}{}{\overline{\Omega}}
\NewDocumentCommand{\Rngeq}{}{\R^n_{\geq 0}}
\NewDocumentCommand{\Rng}{}{\R^n_{> 0}}
\NewDocumentCommand{\Rn}{}{\R^n}
\NewDocumentCommand{\Rnmo}{}{\R^{n-1}}
\NewDocumentCommand{\RN}{}{\R^N}
\NewDocumentCommand{\Zgeq}{}{\Z_{\geq 0}}
\NewDocumentCommand{\Zg}{}{\Z_{> 0}}
\NewDocumentCommand{\TraceMap}{o o}{\mathscr{R}\IfValueT{#1}{_{#1}}\IfValueT{#2}{^{#2}}}
\NewDocumentCommand{\TraceInverseMap}{o o}{\mathscr{C}\IfValueT{#1}{_{#1}}\IfValueT{#2}{^{#2}}}
\NewDocumentCommand{\ManifoldM}{}{\mathfrak{M}}
\NewDocumentCommand{\ManifoldN}{}{\mathfrak{N}}
\NewDocumentCommand{\InteriorN}{}{\mathrm{Int}(\ManifoldN)}
\NewDocumentCommand{\BoundaryN}{}{\partial \ManifoldN}
\NewDocumentCommand{\ManifoldNt}{}{\widetilde{\ManifoldN}}
\NewDocumentCommand{\grad}{}{\nabla}
\NewDocumentCommand{\TripleNorm}{m o}{\left\vert\kern-0.25ex\left\vert\kern-0.25ex\left\vert #1 \right\vert\kern-0.25ex\right\vert\kern-0.25ex\right\vert\IfNoValueF{#2}{_{#2}}}
\NewDocumentCommand{\Bngeq}{m}{B^{n}_{\geq}(#1)}
\NewDocumentCommand{\Bng}{m}{B^{n}_{>}(#1)}
\NewDocumentCommand{\Bnleq}{m}{B^{n}_{\leq}(#1)}
\NewDocumentCommand{\Bnl}{m}{B^{n}_{<}(#1)}
\NewDocumentCommand{\Bn}{m}{B^{n}(#1)}
\NewDocumentCommand{\Bnmo}{m}{B^{n-1}(#1)}
\NewDocumentCommand{\BnClosure}{m}{\overline{\Bn{#1}}}
\NewDocumentCommand{\BnmoClosure}{m}{\overline{\Bnmo{#1}}}
\NewDocumentCommand{\BngeqClosure}{m}{\overline{\Bngeq{#1}}}
\NewDocumentCommand{\Qngeq}{m}{Q^{n}_{\geq}(#1)}
\NewDocumentCommand{\Qn}{m}{Q^{n}(#1)}
\NewDocumentCommand{\Qngeqc}{m m}{Q^{n}_{\geq #2}(#1)}
\NewDocumentCommand{\Qneqc}{m m}{Q^{n}_{= #2}(#1)}
\NewDocumentCommand{\QnClosure}{m}{\overline{\Qn{#1}}}
\NewDocumentCommand{\QngeqClosure}{m}{\overline{\Qngeq{#1}}}
\NewDocumentCommand{\TestFunctionsSymbol}{}{\mathscr{D}}
\NewDocumentCommand{\TestFunctions}{o}{\TestFunctionsSymbol\IfNoValueTF{#1}{}{(#1)}}
\NewDocumentCommand{\Distributions}{o}{\TestFunctionsSymbol'\IfNoValueTF{#1}{}{(#1)}}
\NewDocumentCommand{\TemperedDistributions}{o}{\SchwartzSpace'\IfNoValueTF{#1}{}{(#1)}}
\NewDocumentCommand{\GlobalTestFunctionsSymbol}{}{\mathscr{E}}
\NewDocumentCommand{\GlobalTestFunctions}{o}{\GlobalTestFunctionsSymbol\IfNoValueTF{#1}{}{(#1)}}
\NewDocumentCommand{\DistributionsCpt}{o}{\GlobalTestFunctionsSymbol'\IfNoValueTF{#1}{}{(#1)}}
\NewDocumentCommand{\opP}{}{\mathscr{P}}
\NewDocumentCommand{\opL}{}{\mathscr{L}}
\NewDocumentCommand{\Compact}{}{\mathcal{K}}
\NewDocumentCommand{\Compacth}{}{\widehat{\Compact}}
\NewDocumentCommand{\BesovSymbol}{}{\mathscr{B}}
\NewDocumentCommand{\TLSymbol}{}{\mathscr{F}}
\NewDocumentCommand{\XSymbol}{}{\mathscr{A}}
\NewDocumentCommand{\BesovSpace}{m m m o o}{\BesovSymbol^{#1}_{#2,#3}\IfValueT{#4}{(#4\IfValueT{#5}{,#5})}}
\NewDocumentCommand{\TLSpace}{m m m o o}{\TLSymbol^{#1}_{#2,#3}\IfValueT{#4}{(#4\IfValueT{#5}{,#5})}}
\NewDocumentCommand{\XSpace}{m m m o o}{\XSymbol^{#1}_{#2,#3}\IfValueT{#4}{(#4\IfValueT{#5}{,#5})}}
\NewDocumentCommand{\XCircSpace}{m m m o o}{\mathring{\XSymbol}^{#1}_{#2,#3}\IfValueT{#4}{(#4\IfValueT{#5}{,#5})}}
\NewDocumentCommand{\XSpaceClassical}{m m m o o}{\XSymbol^{#1}_{#2,#3,\mathrm{std}}\IfValueT{#4}{(#4\IfValueT{#5}{,#5})}}
\NewDocumentCommand{\BigBesovSpace}{m m m o o}{\BesovSymbol^{#1}_{#2,#3}\IfValueT{#4}{\left(#4\IfValueT{#5}{,#5}\right)}}
\NewDocumentCommand{\BigTLSpace}{m m m o o}{\TLSymbol^{#1}_{#2,#3}\IfValueT{#4}{\left(#4\IfValueT{#5}{,#5}\right)}}
\NewDocumentCommand{\BesovSpaceCpt}{m m m o o}{\BesovSymbol^{#1}_{#2,#3,\mathrm{cpt}}\IfValueT{#4}{(#4\IfValueT{#5}{,#5})}}
\NewDocumentCommand{\TLSpaceCpt}{m m m o o}{\TLSymbol^{#1}_{#2,#3,\mathrm{cpt}}\IfValueT{#4}{(#4\IfValueT{#5}{,#5})}}
\NewDocumentCommand{\ASpace}{m m m o o}{\XSymbol^{#1}_{#2,#3}\IfValueT{#4}{(#4\IfValueT{#5}{,#5})}}
\NewDocumentCommand{\BigASpace}{m m m o o}{\XSymbol^{#1}_{#2,#3}\IfValueT{#4}{\left(#4\IfValueT{#5}{,#5}\right)}}
\NewDocumentCommand{\ACircSpace}{m m m o o}{\mathring{\XSymbol}^{#1}_{#2,#3}\IfValueT{#4}{(#4\IfValueT{#5}{,#5})}}
\NewDocumentCommand{\ASpaceClassical}{m m m o o}{\XSymbol^{#1}_{#2,#3,\mathrm{std}}\IfValueT{#4}{(#4\IfValueT{#5}{,#5})}}
\NewDocumentCommand{\ASpaceCpt}{m m m o o}{\XSymbol^{#1}_{#2,#3,\mathrm{cpt}}\IfValueT{#4}{(#4\IfValueT{#5}{,#5})}}
\NewDocumentCommand{\ANorm}{m m m m o o}{\|#1\|_{\ASpace{#2}{#3}{#4}[#5][#6]}}
\NewDocumentCommand{\BANorm}{m m m m o o}{\left\|#1\right\|_{\ASpace{#2}{#3}{#4}[#5][#6]}}
\NewDocumentCommand{\NewXSpace}{m  o o}{\mathscr{X}^{#1}\IfValueT{#2}{(#2\IfValueT{#3}{,#3})}}
\NewDocumentCommand{\NewXNorm}{m m  o o}{\|#1\|_{\NewXSpace{#2}[#3][#4]}}
\NewDocumentCommand{\BNewXNorm}{m m  o o}{\left\|#1\right\|_{\NewXSpace{#2}[#3][#4]}}
\NewDocumentCommand{\TLNorm}{m m m m o o}{\left\| #1 \right\|_{\TLSpace{#2}{#3}{#4}[#5][#6]}}
\NewDocumentCommand{\BesovNorm}{m m m m o o}{\left\| #1 \right\|_{\BesovSpace{#2}{#3}{#4}[#5][#6]}}
\NewDocumentCommand{\BigBesovNorm}{m m m m o o}{\left\| #1 \right\|_{\BigBesovSpace{#2}{#3}{#4}[#5][#6]}}
\NewDocumentCommand{\LpNorm}{m m o}{\|#1\|_{\LpSpace{#2}[#3]}}
\NewDocumentCommand{\BLpNorm}{m m o}{\left\|#1\right\|_{\LpSpace{#2}[#3]}}
\NewDocumentCommand{\LpOpNorm}{m m o}{\|#1\|_{\LpSpace{#2}[#3]\rightarrow \LpSpace{#2}[#3]}}
\NewDocumentCommand{\BLpOpNorm}{m m o}{\left\|#1\right\|_{\LpSpace{#2}[#3]\rightarrow \LpSpace{#2}[#3]}}
\NewDocumentCommand{\VectorFields}{m}{\mathscr{X}(#1)}
\NewDocumentCommand{\VectorFieldsN}{}{\VectorFields{\ManifoldN}}
\NewDocumentCommand{\LpSpace}{m o}{L^{#1}\IfValueT{#2}{(#2)}}
\NewDocumentCommand{\lqLpSpace}{m m o}{\ell^{#2}(\Zgeq; L^{#1}\IfValueT{#3}{(#3)} )}
\NewDocumentCommand{\LplqSpace}{m m o}{L^{#1}(\IfValueT{#3}{#3;} \ell^{#2}(\Zgeq) )}
\NewDocumentCommand{\LplqSpaceNoSet}{m m}{L^{#1}( \ell^{#2} )}
\NewDocumentCommand{\lqLpSpaceNoSet}{m m}{\ell^{#2}( L^{#1} )}
\NewDocumentCommand{\lqLpNorm}{m m m o}{\|#1\|_{\lqLpSpace{#2}{#3}[#4]}}
\NewDocumentCommand{\LplqNorm}{m m m o}{\|#1\|_{\LplqSpace{#2}{#3}[#4]}}
\NewDocumentCommand{\lqLpNormNoSet}{m m m}{\|#1\|_{\lqLpSpaceNoSet{#2}{#3}}}
\NewDocumentCommand{\LplqNormNoSet}{m m m}{\|#1\|_{\LplqSpaceNoSet{#2}{#3}}}
\NewDocumentCommand{\BlqLpNorm}{m m m o}{\left\|#1\right\|_{\lqLpSpace{#2}{#3}[#4]}}
\NewDocumentCommand{\BLplqNorm}{m m m o}{\left\|#1\right\|_{\LplqSpace{#2}{#3}[#4]}}
\NewDocumentCommand{\FilteredSheaf}{m o o}{%
  #1_{%
    \IfValueTF{#3}{#3}{\bullet}%
  }%
  \IfValueT{#2}{(#2)}%
}
\NewDocumentCommand{\FilteredSheafF}{o o}{\FilteredSheaf{\mathcal{F}}[#1][#2]}
\NewDocumentCommand{\FilteredSheafG}{o o}{\FilteredSheaf{\mathcal{G}}[#1][#2]}
\NewDocumentCommand{\FilteredSheafFh}{o o}{\FilteredSheaf{\widehat{\mathcal{F}}}[#1][#2]}
\NewDocumentCommand{\FilteredSheafNoSet}{m o}{%
  #1_{%
    \IfValueTF{#2}{#2}{\bullet}%
  }%
}
\NewDocumentCommand{\FilteredSheafNoSetF}{o}{\FilteredSheafNoSet{\mathcal{F}}[#1]}
\NewDocumentCommand{\FilteredSheafGenBy}{m o o}{\left\langle #1\right\rangle_{%
\IfValueTF{#3}{#3}{\bullet}%
}%
\IfValueT{#2}{(#2)}}
\NewDocumentCommand{\LieFilteredSheaf}{m o o}{%
  \mathrm{Lie}%
    (#1)_{%
    \IfValueTF{#3}{#3}{\bullet}%
  }%
  \IfValueT{#2}{(#2)}
}
\NewDocumentCommand{\LieFilteredSheafNoSet}{m o}{%
  \mathrm{Lie}%
    (#1)_{%
    \IfValueTF{#2}{#2}{\bullet}%
  }%
}
\NewDocumentCommand{\LieFilteredSheafFNoSet}{o}{\LieFilteredSheafNoSet{\FilteredSheafF}[#1]}
\NewDocumentCommand{\LieFilteredSheafF}{o o}{\LieFilteredSheaf{\FilteredSheafF}[#1][#2]}
\NewDocumentCommand{\LieFilteredSheafG}{o o}{\LieFilteredSheaf{\FilteredSheafG}[#1][#2]}
\NewDocumentCommand{\LieFilteredSheafFh}{o o}{\LieFilteredSheaf{\FilteredSheafFh}[#1][#2]}
\NewDocumentCommand{\RestrictFilteredSheaf}{m m o}{%
  #1\big|_{#2}^{\#}\IfValueT{#3}{(#3)}
}
\NewDocumentCommand{\CinftyCptSpace}{o}{C^\infty_{\mathrm{cpt}}\IfValueT{#1}{(#1)}}
\NewDocumentCommand{\CinftySpace}{o o}{C^\infty\IfValueT{#1}{(#1\IfValueT{#2}{;#2})}}
\NewDocumentCommand{\Wh}{}{\widehat{W}}
\NewDocumentCommand{\Wds}{}{d^W}
\NewDocumentCommand{\Zds}{}{d^Z}
\NewDocumentCommand{\Zde}{}{e}
\NewDocumentCommand{\Zdv}{}{d}
\NewDocumentCommand{\Xde}{}{e}
\NewDocumentCommand{\Wde}{}{e}
\NewDocumentCommand{\WWds}{}{(W,\Wds)}
\NewDocumentCommand{\ZZds}{}{(Z,\Zds)}
\NewDocumentCommand{\ZZde}{}{(Z,\Zde)}
\NewDocumentCommand{\ZZdv}{}{(Z,\Zdv)}
\NewDocumentCommand{\Wdv}{}{d}
\NewDocumentCommand{\Vdv}{}{d}
\NewDocumentCommand{\Vde}{}{e}
\NewDocumentCommand{\Xdv}{}{d}
\NewDocumentCommand{\WWdv}{}{(W,\Wdv)}
\NewDocumentCommand{\Xh}{}{\widehat{X}}
\NewDocumentCommand{\Zh}{}{\widehat{Z}}
\NewDocumentCommand{\XhXde}{}{(\Xh,\Xde)}
\NewDocumentCommand{\XhXdv}{}{(\Xh,\Xdv)}
\NewDocumentCommand{\ZhZdv}{}{(\Zh,\Zdv)}
\NewDocumentCommand{\WhWde}{}{(\Wh,\Wde)}
\NewDocumentCommand{\WWde}{}{(W,\Wde)}
\NewDocumentCommand{\XXdv}{}{(X,\Xdv)}
\NewDocumentCommand{\XXde}{}{(X,\Xde)}
\NewDocumentCommand{\WhWdv}{}{(\Wh,\Wdv)}
\NewDocumentCommand{\Who}{}{(\Wh,1)}
\NewDocumentCommand{\VVdv}{}{(V,\Vdv)}
\NewDocumentCommand{\VVde}{}{(V,\Vde)}
\NewDocumentCommand{\WWo}{}{(W,1)}
\NewDocumentCommand{\partialo}{}{(\partial,1)}
\NewDocumentCommand{\WdWithBar}{}{{d\hspace*{-0.08em}\bar{}\hspace*{0.1em}}}
\NewDocumentCommand{\Xt}{}{\widetilde{X}}
\NewDocumentCommand{\XtXdv}{}{(\Xt,\Xdv)}
\NewDocumentCommand{\Vt}{}{\widetilde{V}}
\NewDocumentCommand{\VtVdv}{}{(\Vt,\Vdv)}
\NewDocumentCommand{\DegWdv}{m}{\mathrm{deg}_{\Wdv}(#1)}
\NewDocumentCommand{\DegXdv}{m}{\mathrm{deg}_{\Xdv}(#1)}
\NewDocumentCommand{\DegVdv}{m}{\mathrm{deg}_{\Vdv}(#1)}
\NewDocumentCommand{\BWWdv}{m m}{B_{\WWdv}(#1,#2)}
\NewDocumentCommand{\BWhWdv}{m m}{B_{\WhWdv}(#1,#2)}
\NewDocumentCommand{\BXXdv}{m m}{B_{\XXdv}(#1,#2)}
\NewDocumentCommand{\BZZde}{m m}{B_{\ZZde}(#1,#2)}
\NewDocumentCommand{\MetricWWdv}{o o}{\rho_{\WWdv}\IfValueT{#1}{(#1\IfValueT{#2}{,#2})}}
\NewDocumentCommand{\MetricWWo}{o o}{\rho_{\WWo}\IfValueT{#1}{(#1\IfValueT{#2}{,#2})}}
\NewDocumentCommand{\DistWWdv}{o o}{\mathrm{dist}_{\WWdv}\IfValueT{#1}{(#1\IfValueT{#2}{,#2})}}
\NewDocumentCommand{\DistXXdv}{o o}{\mathrm{dist}_{\XXdv}\IfValueT{#1}{(#1\IfValueT{#2}{,#2})}}
\NewDocumentCommand{\DistXXdvSet}{o o}{\mathrm{dist}_{\XXdv}\IfValueT{#1}{(#1\IfValueT{#2}{,#2})}}
\NewDocumentCommand{\MetricXXdv}{o o}{\rho_{\XXdv}\IfValueT{#1}{(#1\IfValueT{#2}{,#2})}}
\NewDocumentCommand{\MetricXXdvSet}{o o}{\rho_{\XXdv}\IfValueT{#1}{(#1\IfValueT{#2}{,#2})}}
\NewDocumentCommand{\MetricWhWdv}{o o}{\rho_{\WhWdv}\IfValueT{#1}{(#1\IfValueT{#2}{,#2})}}
\NewDocumentCommand{\MetricWWds}{o o}{\rho_{\WWds}\IfValueT{#1}{(#1\IfValueT{#2}{,#2})}}
\NewDocumentCommand{\MetricZZds}{o o}{\rho_{\ZZds}\IfValueT{#1}{(#1\IfValueT{#2}{,#2})}}
\NewDocumentCommand{\MetricZZde}{o o}{\rho_{\ZZde}\IfValueT{#1}{(#1\IfValueT{#2}{,#2})}}
\NewDocumentCommand{\MetricVVde}{o o}{\rho_{\VVde}\IfValueT{#1}{(#1\IfValueT{#2}{,#2})}}
\NewDocumentCommand{\Span}{}{\mathrm{span}}
\NewDocumentCommand{\degBoundaryBngeq}{m o}{\mathrm{deg^{\Bngeq{1}}_{#1}}\IfValueT{#2}{(#2)}}
\NewDocumentCommand{\degBoundaryN}{m o}{\mathrm{deg^{\BoundaryN}_{#1}}\IfValueT{#2}{(#2)}}
\NewDocumentCommand{\degBoundaryNF}{o}{\degBoundaryN{\FilteredSheafF}[#1]}
\NewDocumentCommand{\ManifoldNncF}{o}{\ManifoldN^{\mathrm{nc}}_{\FilteredSheafF}\IfValueT{#1}{(#1)}}
\NewDocumentCommand{\BoundaryNncF}{o}{\BoundaryN^{\mathrm{nc}}_{\FilteredSheafF}\IfValueT{#1}{(#1)}}
\NewDocumentCommand{\ManifoldNtncG}{o}{\ManifoldNt^{\mathrm{nc}}_{\FilteredSheafG}\IfValueT{#1}{(#1)}}
\NewDocumentCommand{\Extension}{o}{\mathscr{E}\IfValueT{#1}{_{#1}}}
\NewDocumentCommand{\VSpace}{m m}{\mathcal{V}_{#1,#2}}
\NewDocumentCommand{\VNorm}{m m m}{\|#1\|_{\VSpace{#2}{#3}}}
\NewDocumentCommand{\BVNorm}{m m m}{\left\|#1\right\|_{\VSpace{#2}{#3}}}
\NewDocumentCommand{\BVNormOmega}{m m m}{\left\|#1\right\|_{\VSpace{#2}{#3}(\Omega)}}
\NewDocumentCommand{\BVNormOmegaN}{m m m}{\left\|#1\right\|_{\VSpace{#2}{#3}(\Omega\cap \ManifoldN)}}
\NewDocumentCommand{\VOpNorm}{m m m}{\|#1\|_{\VSpace{#2}{#3}\rightarrow \VSpace{#2}{#3}}}
\NewDocumentCommand{\SchwartzSpace}{o}{\mathscr{S}\IfValueT{#1}{(#1)}}
\NewDocumentCommand{\SchwartzSpaceR}{}{\SchwartzSpace[\R]}
\NewDocumentCommand{\SchwartzSpaceRn}{}{\SchwartzSpace[\Rn]}
\NewDocumentCommand{\SchwartzSpaceRopq}{}{\SchwartzSpace[\R^{1+q}]}
\NewDocumentCommand{\SchwartzSpacezRopq}{}{\SchwartzSpacez[\R^{1+q}]}
\NewDocumentCommand{\SchwartzSpacez}{o}{\mathscr{S}_0\IfValueT{#1}{(#1)}}
\NewDocumentCommand{\SchwartzSpacezR}{}{\SchwartzSpacez[\R]}
\NewDocumentCommand{\SchwartzSpacezRn}{}{\SchwartzSpacez[\Rn]}
\NewDocumentCommand{\SchwartzSpaceRng}{}{\SchwartzSpace[\Rng]}
\NewDocumentCommand{\Dil}{m m}{\mathrm{Dil}_{#1}[#2]}
\NewDocumentCommand{\Dild}{m m}{\mathrm{Dil}_{#1}^d[#2]}
\NewDocumentCommand{\PElemSymbol}{}{\mathrm{PElem}}
\NewDocumentCommand{\PElemzSymbol}{}{\mathrm{PElem}^{0}}
\NewDocumentCommand{\PElemOnlySubscript}{m}{\mathrm{PElem}_{#1}}
\NewDocumentCommand{\PElemzOnlySubscript}{m}{\mathrm{PElem}_{#1}^{0}}
\NewDocumentCommand{\ElemzOnlySubscript}{m}{\mathrm{Elem}_{#1}^{0}}
\NewDocumentCommand{\Elemz}{m m}{\mathrm{Elem}_{#1}^{0}(#2)}
\NewDocumentCommand{\PElem}{m m}{\mathrm{PElem}_{#1}(#2)}
\NewDocumentCommand{\PElemz}{m m}{\mathrm{PElem}_{#1}^{0}(#2)}
\NewDocumentCommand{\Elemzh}{m m}{\widehat{\mathrm{Elem}^{0}_{#1}}(#2)}
\NewDocumentCommand{\PElemWWdv}{m}{\PElem{\WWdv}{#1}}
\NewDocumentCommand{\PElemF}{m}{\PElem{\FilteredSheafF}{#1}}
\NewDocumentCommand{\PElemzF}{m}{\PElemz{\FilteredSheafF}{#1}}
\NewDocumentCommand{\PElemzFh}{m}{\PElemz{\FilteredSheafFh}{#1}}
\NewDocumentCommand{\ElemzF}{m}{\Elemz{\FilteredSheafF}{#1}}
\NewDocumentCommand{\ElemzFh}{m}{\Elemz{\FilteredSheafFh}{#1}}
\NewDocumentCommand{\ElemzhF}{m}{\Elemzh{\FilteredSheafF}{#1}}
\NewDocumentCommand{\PElemzFhN}{m}{\PElemz{\FilteredSheafFh}{#1,\ManifoldN}}
\NewDocumentCommand{\ElemzFhN}{m}{\Elemz{\FilteredSheafFh}{#1,\ManifoldN}}
\NewDocumentCommand{\ElemzG}{m}{\Elemz{\FilteredSheafG}{#1}}
\NewDocumentCommand{\ElemzXXdv}{m}{\Elemz{\FilteredSheafGenByXXdv}{#1}}
\NewDocumentCommand{\ElemzVVdv}{m}{\Elemz{\FilteredSheafGenByVVdv}{#1}}
\NewDocumentCommand{\PElemXXdv}{m}{\PElem{\FilteredSheafGenByXXdv}{#1}}
\NewDocumentCommand{\ElemzLieFh}{m}{\Elemz{\LieFilteredSheafFh}{#1}}
\NewDocumentCommand{\ElemzLieFhN}{m}{\Elemz{\LieFilteredSheafFh}{#1,\ManifoldN}}
\NewDocumentCommand{\PElemzLieFh}{m}{\PElemz{\LieFilteredSheafFh}{#1}}
\NewDocumentCommand{\PElemzLieFhN}{m}{\PElemz{\LieFilteredSheafFh}{#1,\ManifoldN}}
\NewDocumentCommand{\PElemFh}{m}{\PElem{\FilteredSheafFh}{#1}}
\NewDocumentCommand{\PElemWWdvOmegaVol}{m}{\PElem{\WWdv,\Omega,\Vol}{#1}}
\NewDocumentCommand{\PElemzWWdvOmegaVol}{m}{\PElemz{\WWdv,\Omega,\Vol}{#1}}
\NewDocumentCommand{\ElemzWWdvOmegaVol}{m}{\Elemz{\WWdv,\Omega,\Vol}{#1}}
\NewDocumentCommand{\PairDistributionAndTestFunctions}{m m}{\left\langle#1,#2\right\rangle}
\NewDocumentCommand{\Gen}{m}{\mathrm{Gen}#1}
\NewDocumentCommand{\GenWWdv}{}{\Gen{\WWdv}}
\NewDocumentCommand{\GenZZde}{}{\Gen{\ZZde}}
\NewDocumentCommand{\Mult}{m}{\mathrm{Mult}(#1)}
\NewDocumentCommand{\Multpsi}{}{\Mult{\psi}}
\NewDocumentCommand{\VpqsENorm}{m o o o o}{\left\|#1\right\|_{\IfValueTF{#2}{\VSpace{#2}{#3}}{\VSpace{p}{q}}, \IfValueTF{#4}{#4}{s}, \IfValueTF{#5}{#5}{\sE}} }
\NewDocumentCommand{\VpqOmegaCapNsENorm}{m o o o o}{\left\|#1\right\|_{\IfValueTF{#2}{\VSpace{#2}{#3}(\Omega\cap\ManifoldN)}{\VSpace{p}{q}(\Omega\cap\ManifoldN)}, \IfValueTF{#4}{#4}{s}, \IfValueTF{#5}{#5}{\sE}} }
\NewDocumentCommand{\VpqOmegasENorm}{m o o o o}{\left\|#1\right\|_{\IfValueTF{#2}{\VSpace{#2}{#3}(\Omega)}{\VSpace{p}{q}(\Omega)}, \IfValueTF{#4}{#4}{s}, \IfValueTF{#5}{#5}{\sE}} }
\NewDocumentCommand{\CmSpace}{m o o}{C^{#1}\IfValueT{#2}{(#2\IfValueT{#3}{;#3})} }
\NewDocumentCommand{\CmNorm}{m m o o}{\|#1\|_{\CmSpace{#2}[#3][#4]}}
\NewDocumentCommand{\BCmNorm}{m m o o}{\left\|#1\right\|_{\CmSpace{#2}[#3][#4]}}
\NewDocumentCommand{\Extend}{}{\mathfrak{E}}
\NewDocumentCommand{\ceil}{m}{\lceil #1 \rceil}
\NewDocumentCommand{\floor}{m}{\lfloor #1 \rfloor}
\NewDocumentCommand{\vsig}{}{\varsigma}
\NewDocumentCommand{\vsigt}{}{\tilde{\vsig}}
\NewDocumentCommand{\conv}{}{\mathrm{conv}}
\NewDocumentCommand{\convClosure}{}{\overline{\mathrm{conv}}}
\NewDocumentCommand{\DXSpace}{o}{X^{\IfValueTF{#1}{#1}{p}}}
\NewDocumentCommand{\DXNorm}{m o}{\|#1\|_{\DXSpace[#2]}}
\NewDocumentCommand{\XTraceSpaceShort}{}{\mathscr{X}^s}
\NewDocumentCommand{\XtTraceSpaceShort}{}{\widetilde{\mathscr{X}}^{s'}}
\NewDocumentCommand{\YTraceSpaceShort}{}{\mathscr{Y}^{s-\lambda/p}}
\NewDocumentCommand{\XtTraceSpace}{o}{\XtTraceSpaceShort\left( \IfValueTF{#1}{#1}{\Compact}; \FilteredSheafF \right)}
\NewDocumentCommand{\XTraceSpace}{o}{\mathscr{X}^s\left( \IfValueTF{#1}{#1}{\Compact}; \FilteredSheafF \right)}
\NewDocumentCommand{\XTraceSpacez}{o}{\mathscr{X}^s_0\left( \IfValueTF{#1}{#1}{\Compact}; \FilteredSheafF \right)}
\NewDocumentCommand{\YTraceSpace}{o}{\mathscr{Y}^{s-\lambda/p}\left( \IfValueTF{#1}{#1}{\Compact}\cap \BoundaryN; \RestrictFilteredSheaf{\LieFilteredSheafF}{\BoundaryNncF} \right)}
\NewDocumentCommand{\XTraceSpaceNoSet}{}{\mathscr{X}^s\left(  \FilteredSheafF \right)}
\NewDocumentCommand{\YTraceSpaceNoSet}{}{\mathscr{Y}^{s-\lambda/p}\left(  \RestrictFilteredSheaf{\LieFilteredSheafF}{\BoundaryNncF} \right)}
\NewDocumentCommand{\XTraceNorm}{m}{\left\| #1 \right\|_{\XTraceSpaceNoSet}}
\NewDocumentCommand{\YTraceNorm}{m}{\left\| #1 \right\|_{\YTraceSpaceNoSet}}
\NewDocumentCommand{\XTraceMap}{}{\mathscr{R}[\XTraceSpaceShort]}
\NewDocumentCommand{\XtTraceMap}{}{\mathscr{R}[\XtTraceSpaceShort]}
\NewDocumentCommand{\FilteredSheafGenByXXdv}{o o}{\FilteredSheafGenBy{\XXdv}[#1][#2]}
\NewDocumentCommand{\FilteredSheafGenByVVdv}{o o}{\FilteredSheafGenBy{\VVdv}[#1][#2]}
\NewDocumentCommand{\Maximal}{}{\mathcal{M}}
\NewDocumentCommand{\etaOne}{}{\eta_1}
\NewDocumentCommand{\etaThree}{}{\eta_2}
\NewDocumentCommand{\etaFour}{}{\eta_3}
\NewDocumentCommand{\etaFive}{}{\eta_4}
\NewDocumentCommand{\opQtEPhigamma}{o o o o}{\mathscr{Q}_{\IfValueTF{#1}{#1}{t}}(\IfValueTF{#2}{#2}{\sE'},\IfValueTF{#3}{#3}{\Phi},\IfValueTF{#4}{#4}{\gamma})}
\NewDocumentCommand{\ad}{o}{\mathrm{ad}\IfValueT{#1}{(#1)}}
\NewDocumentCommand{\ZygSymbol}{}{\mathscr{C}}
\NewDocumentCommand{\ZygSpace}{m o o o}{\ZygSymbol^{#1}\IfValueT{#2}{\left( #2\IfValueT{#3}{,#3}\IfValueT{#4}{;#4} \right)}}
\NewDocumentCommand{\ZygNorm}{m m o}{\left\| #1\right\|_{\ZygSpace{#2}[#3]}}
\NewDocumentCommand{\HolderSpace}{m m o o o}{C^{#1,#2}\IfValueT{#3}{\left( #3\IfValueT{#4}{,#4}\IfValueT{#5}{;#5} \right)}}
\NewDocumentCommand{\HolderNorm}{m m m o o o}{\| #1 \|_{\HolderSpace{#2}{#3}[#4][#5][#6]}}
\NewDocumentCommand{\BHolderNorm}{m m m o o o}{\left\| #1 \right\|_{\HolderSpace{#2}{#3}[#4][#5][#6]}}
\NewDocumentCommand{\HolderSpaceCompactF}{m m o}{\HolderSpace{#1}{#2}[\Compact][\FilteredSheafF][#3]}
\NewDocumentCommand{\HolderNormF}{m m m o}{\|#1\|_{\HolderSpace{#2}{#3}[\FilteredSheafF]}}
\NewDocumentCommand{\HolderNormFh}{m m m o}{\|#1\|_{\HolderSpace{#2}{#3}[\FilteredSheafFh]}}
\NewDocumentCommand{\HolderSpaceCompactWWo}{m m o}{\HolderSpace{#1}{#2}[\Compact][\WWo][#3]}
\NewDocumentCommand{\HolderNormWWo}{m m m o}{\|#1\|_{\HolderSpace{#2}{#3}[W,1]}}
\NewDocumentCommand{\ZygSpaceCompactF}{m o}{\ZygSpace{#1}[\Compact][\FilteredSheafF][#2]}
\NewDocumentCommand{\ZygNormF}{m m}{\ZygNorm{#1}{#2}[\FilteredSheafF]}
\NewDocumentCommand{\ZygNormFh}{m m}{\ZygNorm{#1}{#2}[\FilteredSheafFh]}
\NewDocumentCommand{\CSpace}{m}{C(#1)}
\NewDocumentCommand{\ProdZygSpace}{m m o o o o}{\ZygSymbol^{#1, #2}\IfValueT{#3}{\left( #3 \IfValueT{#4}{\times #4}  \IfValueT{#5}{,#5}\IfValueT{#6}{;#6} \right)}}
\NewDocumentCommand{\ProdZygSpaceCompactRNF}{m m}{\ProdZygSpace{#1}{#2}[\Compact][\RN][\FilteredSheafF]}
\NewDocumentCommand{\ProdZygNorm}{m m m o}{\left\| #1\right\|_{\ProdZygSpace{#2}{#3}[#4]}}
\NewDocumentCommand{\ProdZygNormF}{m m m}{\ProdZygNorm{#1}{#2}{#3}[\FilteredSheafF]}
\begin{document}

\newtheorem{theorem}{Theorem}[section]
\newtheorem{corollary}[theorem]{Corollary}
\newtheorem{proposition}[theorem]{Proposition}
\newtheorem{lemma}[theorem]{Lemma}
\newtheorem{conjecture}[theorem]{Conjecture}
\newtheorem{problem}[theorem]{Problem}

\theoremstyle{remark}
\newtheorem{remark}[theorem]{Remark}

\theoremstyle{definition}
\newtheorem{definition}[theorem]{Definition}

\theoremstyle{definition}
\newtheorem{notation}[theorem]{Notation}

\theoremstyle{definition}
\newtheorem{assumption}[theorem]{Assumption}

\theoremstyle{remark}
\newtheorem{example}[theorem]{Example}

\numberwithin{equation}{section}

\title{Function spaces and trace theorems for maximally subelliptic boundary value problems}

\ifnotanonymous
\author{Brian Street\footnote{The author was partially supported by National Science Foundation Grant 2153069.}}
\else
\author{}
\fi

\date{}

\maketitle

\begin{abstract}
    We introduce Besov and Triebel--Lizorkin spaces on a manifold with boundary adapted to H\"ormander vector fields, near a so-called non-characteristic point of the boundary. We prove sharp results in these spaces for the corresponding restriction and trace operators, show these operators are retractions, and other related results. This is the second paper in a forthcoming series devoted to a general theory of maximally subelliptic boundary value problems, and lays the function space foundation for this general theory.
\end{abstract}

\tableofcontents

\section{Introduction}
Let \(\ManifoldN\) be a smooth manifold with boundary; we denote by \(\InteriorN\) its interior and \(\BoundaryN\) its boundary.
A boundary value problem for a partial differential equation (PDE) is an equation of the form:
\begin{equation*}
    \opP u = f \text{ on }\InteriorN, \quad B_j u\big|_{\BoundaryN}=g_j\text{ on }\BoundaryN,
\end{equation*}
where \(\opP\) and \(B_1,\ldots, B_l\) are partial differential operators, \(u\) is the unknown function,
and \(f\) and \(g_j\) are given functions on \(\InteriorN\) and \(\BoundaryN\), respectively.
There is a deep and robust theory of elliptic boundary value problems; see, for example,
\cite{AgranovichEgorovShubinPartialDifferentialEquationsIX} for some general theory 
and \cite[Chapter 7]{FollandIntroductionToPDEs} for a friendly introduction.
This theory includes: sharp results in a variety of function spaces, operators with rough coefficients,
fully nonlinear equations and much more, all done for general elliptic boundary value problems.
Once one leaves the setting of elliptic boundary value problems, much less is known.
There are few general theorems and even fewer sharp results.
This is the second in a series of papers whose goal is to develop a general theory of maximally subelliptic (also known 
as maximally hypoelliptic) boundary value problems (the first paper studied the associated geometries \cite{StreetCarnotCaratheodoryBallsOnManifoldsWithBoundary}).
In this paper, we introduce Besov and Triebel--Lizorkin spaces on a manifold with boundary adapted to a maximally subelliptic PDE;
these generalize the classical Besov and Triebel--Lizorkin spaces. 
For the treatment of boundary value problems, trace maps are particularly important; we introduce these and show they
are retractions on the appropriate spaces (continuous linear maps with continuous, linear right inverses).
The (well-versed) reader
wishing to quickly get an idea of our main results can jump straight to Section \ref{Section::GlobalCor}.
In this introduction, we explain the motivation behind our results,
while in Section \ref{Section::Classical} we remind the reader of some classical results on \(\Rn\) which are
generalized in this paper.

Our story begins with the foundational paper of H\"ormander \cite{HormanderHypoellipticSecondOrderDifferentialEquations}.
Let \(X_0,X_1,\ldots, X_p\) be smooth vector fields on a manifold without boundary, \(\ManifoldM\), satisfying
\textit{H\"ormander's condition}: the Lie algebra generated by \(X_0,X_1,\ldots, X_p\) spans the tangent space at every point
(see Definition \ref{Defn::GlobalCor::HormandersCondition}).  Let
\begin{equation}\label{Eqn::Intro::HorSubLapalce}
    \opL:=X_0+X_1^2+X_2^2+\cdots+X_p^2.
\end{equation}
\(\opL\) (and more general maximally subelliptic operators; see Definition \ref{Defn::Intro::MaximalSub}) arises in a number of settings, including
stochastic calculus and several complex variables--see the introduction of \cite{BramantiBrandoliniHormanderOperators}
for a friendly account.
H\"ormander showed that \(\opL\) is \textit{subelliptic}: roughly speaking if \(u\in \Distributions[\ManifoldM]\) and 
\(\opL u\in L^2_s\) near a point \(x\in \ManifoldM\), then \(u\in L^2_{s+\epsilon}\) near \(x\), where \(L^2_s\) is the
\(L^2\)-Sobolev space of order \(s\in \R\), and \(\epsilon=\epsilon(x)>0\).
Informally: \(u\) is smoother than \(\opL u\).
It seems hopeless at this stage to develop a general theory of ``subelliptic boundary value problems'' which parallels the elliptic theory:
general subelliptic PDEs are just too wild.\footnote{For example, the \(\overline{\partial}\)-Neumann problem is a boundary value
problem which is often subelliptic, has given rise to a huge theory, but for which there are still many difficult open questions.
See \cite{ChenShawPartialDifferentialEquationsInSeveralComplexVariables,StraubeLecturesOnTheL2SobolevTheoryOfTheOverlinePartialNeumannProblem}.}  
Nevertheless, boundary value problems for special cases
of \(\opL\) have been studied by many authors: this began with the work of Jerison \cite{JerisonDirichletProblemForTheKohnLaplacianI},
which was followed by many others.  However, there still currently no general sharp theory, even for the Dirichlet
problem for general operators of the form \eqref{Eqn::Intro::HorSubLapalce}.

Rothschild and Stein \cite{RothschildSteinHypoellipticDifferentialOperatorsAndNilpotentGroups} showed that not only is \(\opL\)
subelliptic, but a stronger result is true: \(\opL\) is \textit{maximally subelliptic}. Let \(\WWdv=\left\{ \left( W_1,\Wdv_1 \right),\ldots, \left( W_r,\Wdv_r \right) \right\}\)
be H\"ormander vector fields on \(\ManifoldM\), each paired with a formal degree \(\Wdv_j\in \Zg=\left\{ 1,2,3,\ldots \right\}\).
In the case of \(\opL\) in \eqref{Eqn::Intro::HorSubLapalce}, we take
\(\WWdv=\left\{ \left( X_0,2 \right),\left( X_1,1 \right),\ldots, \left( X_p,1 \right) \right\}\).
For a list \(\alpha=\left( \alpha_1,\ldots,\alpha_L \right)\), let \(W^{\alpha}=W_{\alpha_1}W_{\alpha_2}\cdots W_{\alpha_L}\)
and \(\DegWdv{\alpha}=\Wdv_{\alpha_1}+\Wdv_{\alpha_2}+\cdots+\Wdv_{\alpha_L}\).

\begin{definition}\label{Defn::Intro::MaximalSub}
    Let \(\kappa\in \Zg\) be such that \(\Wdv_j\) divides \(\kappa\) for every \(1\leq j\leq r\). Let \(\opP\) be a partial differential
    operator of the form
    \begin{equation}\label{Eqn::Intro::MaximalSub}
        \opP = \sum_{\DegWdv{\alpha}\leq \kappa} a_\alpha W^{\alpha},\quad a_\alpha\in \CinftySpace[\ManifoldM].
    \end{equation}
    We say \(\opP\) is \textit{maximally subelliptic}\footnote{Estimates like \eqref{Eqn::Intro::MaximalSub} first appeared
    in the work of Folland and Stein \cite{FollandSteinEstimatesForTheBarPartialBComplex}, and later work of Folland \cite{FollandSubellipticEstimatesAndFunctionSpacesOnNilpotentLieGroups}  
    and Rothschild and Stein \cite{RothschildSteinHypoellipticDifferentialOperatorsAndNilpotentGroups}. An equivalent general definition,
    under the French name \textit{hypoellipticit\'e maximale} was first introduced by Helffer and Nourrigat \cite{HelfferNourrigatHypoellipticiteMaximalePourDesOperateursPolynomesDeChampsDeVecteurs}.} with respect to \(\WWdv\) if for all \(\Omega\Subset \ManifoldM\)
    open and relatively compact,\footnote{We write \(A\Subset B\) if \(A\) is relatively compact in \(B\); i.e., if the closure of \(A\)
    as a subspace of \(B\) is compact.} we have
    \begin{equation*}
        \sum_{j=1}^r \BLpNorm{ W_j^{\kappa/\Wdv_j} f}{2}
        \lesssim\BLpNorm{\opP f}{2} + \BLpNorm{f}{2},\quad \forall f\in \CinftyCptSpace[\Omega].
    \end{equation*}
\end{definition}

Maximally subelliptic operators are subelliptic, but maximal subellipticity is a much stronger condition than subellipticity.
To help explain why, we return to the well-studied elliptic setting.
There is by now a vast theory for elliptic PDEs, spanning countless papers and many textbooks
(see, for example, the three volume series of Taylor 
\cite{TaylorPartialDifferentialEquationsI,TaylorPartialDifferentialEquationsII,TaylorPartialDifferentialEquationsIII});
covering
linear, rough coefficients, fully nonlinear, boundary value problems,
and much more, where sharp results are known in a variety of the classical function spaces.
Many classical proofs in the elliptic theory seem short at first glance, but rely on a huge foundation
of function spaces (like Besov and Triebel--Lizorkin spaces), operators (like pseudodifferential operators),
geometry (usually Euclidean or Riemannian), and classical transforms like the Fourier transform.

If one takes the H\"ormander vector fields with formal degrees on \(\Rn\) given by
\begin{equation*}
    \partialo=\left\{ \left( \partial_{x_1},1   \right),\ldots, \left( \partial_{x_n},1 \right) \right\},
\end{equation*}
then an operator \(\opP\) is maximally subelliptic with respect to \(\partialo\) if and only if it is (locally) elliptic.
While subellipticity is a \textit{weaker} condition than ellipticity,
maximal subellipticity is instead a \textit{generalization} of ellipticity: ellipticity is the special case of 
maximally subellipticity with \(\WWdv=\partialo\). By changing \(\WWdv\), maximal subellipticity becomes a different condition,
but not a weaker condition.  When viewed from the perspective of the elliptic theory, maximally subelliptic PDEs
can be very degenerate (they are often nowhere elliptic--see \cite[Remark 8.2.7]{StreetMaximalSubellipticity}); however, they satisfy a different condition which in some
ways is just as strong as ellipticity.

In light of this, given any classical result from the elliptic theory, one can ask if it is possible to prove a more general
result for maximally subelliptic operators which specializes to the classical result without weakening the conclusions.
For manifolds without boundary, this idea has been taken up by many authors, and has been furthered in thousands of papers
and several books (see \cite{BramantiAnInvitationToHypoellipticOperators,BramantiBrandoliniHormanderOperators} for a friendly
introduction and \cite{StreetMaximalSubellipticity} for some general theory). A major difficulty 
is that the foundation used for elliptic operators (classical function spaces, operators, geometry, and transforms)
is not as useful when studying maximally subelliptic PDEs: one needs a more general foundation on which to work.
There are many complications in doing so (some are discussed below). Nevertheless,  a general interior theory
for maximally subelliptic PDEs has been achieved, which largely parallels and generalizes the classical elliptic theory
(see \cite{StreetMaximalSubellipticity}). This includes not only the sharp interior regularity theory for linear operators with
smooth coefficients, but also operators with rough coefficients, and fully nonlinear equations.

Returning to boundary value problems, there is a deep theory of elliptic boundary value problems;
see, for example, \cite{AgranovichEgorovShubinPartialDifferentialEquationsIX}.
While there have been many examples of what might be called maximally subelliptic boundary value problems studied 
(starting with the work of Jerison \cite{JerisonDirichletProblemForTheKohnLaplacianI}),
there is not yet even a definition of \textit{maximally subelliptic boundary value problems} which generalizes the elliptic case;
let alone any general results which generalize the elliptic theory.
The main problem in introducing such a theory is that the foundational theory of function spaces on which elliptic
boundary value problems rest has not been generalized to the maximally subelliptic setting.
This goal of this paper is to fill this gap, setting the stage for the study of general maximally subelliptic boundary value problems.

An important concept, first pointed out by Kohn and Nirenberg \cite{KohnNirenbergNonCoerciveBoundaryValueProblems},
Derridj \cite{DerridjSurUnTheoremeDeTraces}, and Jerison \cite{JerisonDirichletProblemForTheKohnLaplacianI,JerisonDirichletProblemForTheKohnLaplacianII},
is that even in the simplest non-elliptic cases, the interaction between \(\WWdv\) and \(\BoundaryN\) is important.
They showed that whether or not the boundary was ``characteristic'' with respect to \(\WWdv\) had significant impacts on boundary value problems.
When \(\Wdv_j=1\) for \(1\leq j\leq r\), we say \(x_0\in \BoundaryN\) is \(\WWo\)-non-characteristic if \(\exists j\)
with \(W_j(x_0)\not\in \TangentSpace{x_0}{\BoundaryN}\). The definition when some of the \(\Wdv_j\) are not \(1\)
is a bit more complicated; see Definition \ref{Defn::Filtrations::RestrictingFiltrations::NonCharPoints}.
In this paper, we define function spaces and various operations near non-characteristic points of the boundary.
Further highlighting the importance of this concept, some of our definitions are only well-defined
near these non-characteristic points; for more details, see Remark \ref{Rmk::Spaces::LP::NonCharMatters}.
Moreover, the natural generalization of our results to the characteristic setting is false;
see Section \ref{Section::Trace::CharacteristicFailure}--and it is unclear what the correct results
are.

The main goals of this paper are to do the following on a manifold with boundary
endowed with H\"ormander vector fields with formal degrees \(\WWdv\), both on the interior and near the non-characteristic
part of the boundary:
\begin{enumerate}[(i)]
    \item Define Besov, \(\BesovSpace{s}{p}{q}\), and Triebel--Lizorkin, \(\TLSpace{s}{p}{q}\), spaces adapted to \(\WWdv\).
        These specialize to important cases, especially when \(\Wdv_j=1\), \(\forall j\): \(\TLSpace{s}{p}{2}\) (\(1<p<\infty\)) can be viewed as non-isotropic \(\LpSpace{p}\)-Sobolev spaces
        adapted to \(\WWdv\) (see Corollary \ref{Cor::GlobalCor::TLmp2AreSobolevSpaces}), while \(\BesovSpace{s}{\infty}{\infty}\)
        for \(s\in (0,\infty)\setminus \Zg\) can be viewed as H\"older spaces adapted to \(\WWdv\) (see Section \ref{Section::Zyg::Holder}).
        See Chapter \ref{Chapter::Spaces}.
    \item Define trace maps to the boundary, characterize their image, and show that they are retractions (right invertible, with a continuous, linear, right inverse),
        on the appropriate Besov and Triebel--Lizorkin spaces. See Chapter \ref{Chapter::Trace}.
    \item When \(\ManifoldN\) is a closed, embedded, co-dimension \(0\) submanifold of some ambient manifold without boundary \(\ManifoldM\), and \(W_1,\ldots, W_r\)
        are given by restrictions of H\"ormander vector fields on \(\ManifoldM\), we show that the Besov and Triebel--Lizorkin spaces on \(\ManifoldN\) (adapted to \(\WWdv\)) correspond
        with restrictions of the corresponding Besov and Triebel--Lizorkin spaces on \(\ManifoldM\), and that the restriction map is a retraction. See Theorem \ref{Thm::Spaces::Extension}.
    \item For applications to the Dirichlet problem, we characterize the closure of \(\CinftyCptSpace[\InteriorN]\) in certain Besov and Triebel--Lizorkin spaces
        in terms of vanishing traces on the boundary. See Section \ref{Section::Trace::Vanish}.
    \item For applications to nonlinear boundary value problems, we give sharp regularity for some compositions.
        See Section \ref{Section::Zyg::Compositions}.
\end{enumerate}

In introducing this theory, we face several hurdles:
\begin{itemize}
    \item The Fourier transform, which is the central tool when studying classical function spaces, is not adapted to the Carnot--Carath\'eodory
        geometry defined by \(\WWdv\). This means that the Fourier transform is  rarely used in this paper. Instead, we work integral operators
        instead of multipliers to define the Littlewood--Paley theory (see Section \ref{Section::Spaces::LittlewoodPaleyTheory}).
        While these integral operators are sufficiently powerful to develop our theory, they do make the proofs more involved.
        Even the classical analogs of our results (as in \cite{TriebelTheoryOfFunctionSpaces,RunstSickelSobolevSpacesOfFractionalOrder})
        require many detailed estimates. The lack of a Fourier transform, and our more general and abstract setting,
        lead to even more involved proofs.
    \item The most common way to define the classical Besov and Triebel--Lizorkin on a manifold with boundary is as restrictions of Besov and Triebel--Lizorkin
        spaces on an ambient manifold without boundary (see the discussion in Section \ref{Section::Spaces::Classical}). This presents a complication
        in our setting because the 
        H\"ormander vector fields with formal degrees on the ambient manifold are not uniquely determined by those
        on the submanifold with boundary.
        Thus one has many different possible Besov and Triebel--Lizorkin
        spaces on the ambient manifold (for each choice of ambient H\"ormander vector fields with formal degrees)
        and it is not a priori clear their restrictions give the same space on the given manifold with boundary. We proceed in a different
        way and introduce the Besov and Triebel--Lizorkin spaces intrinsically on the manifold with boundary, and then show that
        all the different possible restrictions
        (from all possible ambient manifolds endowed with appropriate H\"ormander vector fields with formal degrees)
        give rise to the same space. See Section \ref{Section::Spaces::Classical}
        for a discussion of this intrinsic definition in the classical setting.
    \item Related to the previous point, one common way to extend functions on a manifold with boundary to an ambient manifold involves
        some kind of reflection over the boundary; see, e.g., \cite{SeeleyExtensionOfCInfinityFunctionsDefinedInAHalfSpace}.
        Because the H\"ormander vector fields on the ambient manifold are not determined by those on the manifold with boundary,
        this kind of reflection does not respect our Besov and Triebel--Lizorkin spaces. Instead, a more complicated
        method of extension is used: see Section \ref{Section::Spaces::RestrictionAndExtension}.
    \item Theories of Besov and Triebel--Lizorkin spaces 
        for geometries other than Euclidean or Riemannain (for example on spaces of homogeneous type)
    usually either restrict to the regularity parameter \(|s|\) small
        (for example, \cite{HanSawyerLittlewoodPaleyTheoryOnSpacesOfHomogeneousTypeAndTheClassicalFunctionSpaces}) or use
        some sort of underlying group structure (for example, \cite{FollandSubellipticEstimatesAndFunctionSpacesOnNilpotentLieGroups})--see
        \cite[Section 6.14]{StreetMaximalSubellipticity} for a more detailed history. In this paper we use the special nature
        of Carnot--Carathe\'odory geometry to introduce these spaces for all regularity parameters \(s\). On manifolds
        without boundary, this follows many authors (see \cite[Section 6.14]{StreetMaximalSubellipticity} for a history).
        For manifolds with boundary, we do not know of an analogous theory.
    \item The standard dilation maps on \(\Rn\) play an important role in the study of the classical Besov and Triebel--Lizorkin spaces.
        In the maximally subelliptic setting, these are replaced by much more complicated scaling maps originially
        introduced by Nagel, Stein, and Wainger \cite{NagelSteinWaingerBallsAndMetricsDefinedByVectorFieldsI} on manifolds without boundary;
        see \cite[Section 3.3.1]{StreetMaximalSubellipticity} for a description of how these scaling maps can be used.
        In this paper, we use similar scaling maps on manifolds with boundary introduced in \cite{StreetCarnotCaratheodoryBallsOnManifoldsWithBoundary}.
    \item In this introduction, we worked with a given list of H\"ormander vector fields with formal degrees \(\WWdv\).
        However, many such lists are equivalent for our purposes (see \textit{local weakly equivalent} in  \cite[Definition \ref*{CC::Defn::BasicDefns::StrongWeakEquivalnce}]{StreetCarnotCaratheodoryBallsOnManifoldsWithBoundary}).
        For the interior theory, working with this equivalence is not too unwieldy, and this was the approach taken in
        \cite{StreetMaximalSubellipticity}. However, when studying boundary value problems, working with this equivalence becomes
        much more difficult. For example, the induced H\"ormander vector fields with formal degrees on the non-characteristic boundary are only well-defined
        up to this equivalence. Instead, following \cite[Section \ref*{CC::Section::Sheaves}]{StreetCarnotCaratheodoryBallsOnManifoldsWithBoundary}, we work more directly
        with certain \(\Zg\)-filtrations of sheaves of vector fields on \(\ManifoldN\);
        see Chapter \ref{Chapter::VectorFieldsAndSheaves}. While more technical, these definitions encapsulate the equivalence
        (see \cite[Section \ref*{CC::Section::Sheaves::Control}]{StreetCarnotCaratheodoryBallsOnManifoldsWithBoundary}),
        which streamlines many of our definitions and proofs.
    \item We do not assume an equiregularity hypothesis on the H\"ormader vector fields, and do not assume
        a filtered manifold structure; such hypotheses simplfy the analysis but do not seem to yeild stronger results in our setting.
\end{itemize}

As described above, one major theme of this paper that is trace maps and restriction maps are retractions (have continuous, linear, right inverses).
This right invertability is a central tool in the study of elliptic boundary value problems; and will be a central tool
in our future study of maximally subelliptic boundary value problems. It appears 
implicitly and explicitly throughout the elliptic theory; see \cite[Proposition 7.7]{FollandIntroductionToPDEs} for a simple
version of it, and applications of that proposition throughout \cite[Chapter 7]{FollandIntroductionToPDEs}.
One particularly simple to understand application comes from nonlinear PDEs: the Banach space inverse function theorem is a central
tool in studying nonlinear elliptic equations, and applies because all operations involved are appropriately invertible.
See \cite[Section 1.7.1]{StreetMaximalSubellipticity} for how it can be applied to the interior nonlinear maximally subelliptic theory.
Using ideas like these, the right invertiblity described in this paper will be important in our future study of both linear
and nonlinear maximally subelliptic boundary value problems.

Even in the setting of nilpotent Lie groups, some of our definitions and results are new.
Indeed, when considering a smooth bounded domain in a nilpotent Lie group, the boundary
may have no good local group structure. Since we develop our definitions and results without using a group
structure, they cover this case.

Some of the first results on trace theorems in settings like the ones described in this paper are due to Derridj \cite{DerridjSurUnTheoremeDeTraces}.
For other previous related works, see 
\cite{BahouriCheminXuTraceTheoremOnTheHeisenbergGroup,
BahouriCheminXuTraceTheoremOnTheHeisenbergGroupOnHomogeneousHypersurfaces,
BahouriCheminXuTraceTheoremInWeightedSobolevSpaces,
BerhanuPesensonTheTraceProblemForVectorFieldsSatisfyingHormandersCondition,
DanielliGarofaloNhieuTraceInequalitiesForCarnotCaratheodorySpacesAndApplications,PesensonTheTraceProblemAndHardyOperatorForNonisotropicFunctionSpacesOnTheHeisenbergGroup,DanielliGarofaloNhieuSubellipticBesovSpacesAndTheCharacterizationOfTracesOnLowerDimensionalManifolds,MontiMorbidelliTraceTheoremsForVectorFields}.
In particular, \cite{DanielliGarofaloNhieuSubellipticBesovSpacesAndTheCharacterizationOfTracesOnLowerDimensionalManifolds} 
has some similarities to our results;
it would be interesting to unify the apporaches.

\section{Background: Classical trace theorems for elliptic function spaces}\label{Section::Classical}
We describe some classical results on trace theorems that we generalize in this paper.
See \cite[Sections 2.7.2 and 2.9 and Chapter 3]{TriebelTheoryOfFunctionSpaces} and \cite[Section 2.4]{RunstSickelSobolevSpacesOfFractionalOrder}
for complete presentations, along with more results.

Consider the manifold with boundary\footnote{In this section, we restrict attention to the particular manifold with boundary \(\Rngeq\), even though
later in the paper we work with general smooth manifolds with boundary. The difference is not an essential point,
since the main results of this paper are local.}
\begin{equation*}
    \Rngeq :=\left\{ (x',x_n) : x'\in \Rnmo, x_n\geq 0 \right\},
\end{equation*}
with interior \(\Rng:=\left\{ (x',x_n) : x_n>0 \right\}\)
and boundary \(\Rnmo\cong \left\{ (x',0) :x'\in \Rnmo \right\}\).

Let \(H^s(\Rn)\) and \(H^s(\Rngeq)\) denote the standard \(\LpSpace{2}\)-Sobolev spaces
on \(\Rn\) and \(\Rngeq\), respectively.
The two main maps we consider are \(f\mapsto f\big|_{\Rng}\) (taking \(\Distributions[\Rn]\rightarrow \Distributions[\Rng]\))
and for \(L\in \Zgeq:=\left\{ 0,1,2,\ldots \right\}\), the trace map,
initially defined for smooth functions \(f:\Rngeq\rightarrow \C\),
\begin{equation}\label{Eqn::Intro::Classical::DefineTraceMap}
    \TraceMap[L] f(x')=\left( f(x',0), \partial_{x_n} f(x',0),\ldots, \partial_{x_n}^L f(x',0) \right).
\end{equation}
Two of the most familiar results concerning these maps are:

\begin{itemize}
    \item The map \(f\mapsto f\big|_{\Rng}\) is a  retraction \(H^s(\Rn)\rightarrow H^s(\Rngeq)\);
    i.e., it is a continuous linear map with a continuous, linear, right inverse.\footnote{In fact, one common definition
    for \(H^s(\Rngeq)\) is \(H^s(\Rn)/\sim\) where \(\sim\) is the equivalence relation 
    \(f\sim g\) if \(f\big|_{\Rng}=g\big|_{\Rng}\) (see \cite[Section 2.9.1]{TriebelTheoryOfFunctionSpaces}). There are other 
    equivalent
    definitions which do not use the ambient
    space \(\Rn\)--see Section \ref{Section::Spaces::Classical}.}

    \item For \(s>L+1/2\), \(\TraceMap[L]\) extends to a continuous map
    \begin{equation}\label{Eqn::Intro::Classical::TraceOnL2Sobolev}
        \TraceMap[L]:H^s(\Rngeq)\rightarrow \prod_{j=0}^L H^{s-j-1/2}(\Rnmo),
    \end{equation}
    and this map is a retraction.
\end{itemize}

Unfortunately, to generalize \eqref{Eqn::Intro::Classical::TraceOnL2Sobolev}
to \(\LpSpace{p}\)-Sobolev spaces, \(1<p<\infty\), one must leave the realm of Sobolev
spaces altogether: the range is in terms of Besov spaces (see Remark \ref{Rmk::Intro::Classical::TraceGoesToBesov}).
Thus, it makes sense to proceed more generally and use the standard
Besov, \(\BesovSpace{s}{p}{q}[\Rngeq]\), and Triebel--Lizorkin, \(\TLSpace{s}{p}{q}[\Rngeq]\),
spaces. Of particular interest are the standard \(\LpSpace{p}\)-Sobolev spaces
which equal \(\TLSpace{s}{p}{2}\), \(1<p<\infty\), and the Zygmund--H\"older spaces
\(\BesovSpace{s}{\infty}{\infty}\), \(s>0\).
See \cite[Section 2.9.1]{TriebelTheoryOfFunctionSpaces} and Section \ref{Section::Spaces::Classical}
for the definitions of \(\BesovSpace{s}{p}{q}[\Rngeq]\) and \(\TLSpace{s}{p}{q}[\Rngeq]\).

\begin{remark}\label{Rmk::Intro::Classical::Restrcitpq}
    In what follows, we restrict attention to \(1\leq p,q\leq \infty\)
    when considering \(\BesovSpace{s}{p}{q}\), and \(1<p<\infty\), \(1<q\leq \infty\)
    when considering \(\TLSpace{s}{p}{q}\). In the classical setting, there are results known
    for more 
    \(p,q\in (0,\infty]\) (see, e.g., 
    \cite[Sections 2.7.2 and 2.9.4]{TriebelTheoryOfFunctionSpaces} and
    \cite[Section 2.4]{RunstSickelSobolevSpacesOfFractionalOrder}),
    though they can have a somewhat more complicated form.
    We only generalize these results under the above additional restrictions on \(p,q\).
    It is likely possible to obtain results for more \(p,q\in (0,\infty]\), but this would greatly complicate matters,
    and would not be as useful for the applications we have in mind to subelliptic PDEs.
\end{remark}

\begin{theorem}[{\cite[Section 2.9.4]{TriebelTheoryOfFunctionSpaces}}]\label{Thm::Intro::Classical::Extension}
    The map \(f\mapsto f\big|_{\Rng}\), \(\Distributions[\Rn]\rightarrow \Distributions[\Rng]\),
    is a retraction \(\BesovSpace{s}{p}{q}[\Rn]\rightarrow \BesovSpace{s}{p}{q}[\Rngeq]\)
    and \(\TLSpace{s}{p}{q}[\Rn]\rightarrow \TLSpace{s}{p}{q}[\Rngeq]\), \(\forall s\in \R\)
    and under the restrictions on \(p,q\) in Remark \ref{Rmk::Intro::Classical::Restrcitpq}.
\end{theorem}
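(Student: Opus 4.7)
The plan is to establish the retraction property by constructing a continuous linear extension operator $\Extend$ that serves as a right inverse to the restriction map $R : f \mapsto f|_{\Rng}$. The continuity of $R$ from $\BesovSpace{s}{p}{q}[\Rn]$ to $\BesovSpace{s}{p}{q}[\Rngeq]$ (and similarly for $\TLSpace{s}{p}{q}$) is immediate once one adopts the standard definition of the half-space spaces as the quotient of the full-space spaces by the subspace of distributions vanishing on $\Rng$, equipped with the infimum norm; the retraction claim is then equivalent to exhibiting such an $\Extend$.

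For the construction, I would use a Seeley-type reflection. For a distribution $f$ on $\Rngeq$, define
\[
\Extend f(x', x_n) = \begin{cases} f(x', x_n), & x_n \geq 0, \\ \sum_{k=0}^{\infty} a_k \, \phi(2^k x_n)\, f(x', -2^k x_n), & x_n < 0, \end{cases}
\]
where $\phi \in \CinftyCptSpace[\R]$ equals $1$ near $0$ and is supported near $0$, and the coefficients $\{a_k\}$ decay rapidly while satisfying $\sum_k a_k (-2^k)^m = 1$ for every $m \in \Zgeq$. This matching condition forces derivatives of all orders to agree at $x_n = 0$, so $\Extend f$ has the required global regularity when $f$ is smooth. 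The main estimate is to bound $\Extend f$ in $\BesovSpace{s}{p}{q}[\Rn]$ and $\TLSpace{s}{p}{q}[\Rn]$ via their Littlewood--Paley characterizations: $\psi_j * (\Extend f)$ decomposes piecewise, each reflected summand is a rescaling of $f$ on $\Rng$, and summing the resulting rescaled Littlewood--Paley pieces geometrically in $k$ yields a bound by the $\BesovSpace{s}{p}{q}[\Rngeq]$ norm of $f$ (the $\TLSpace{s}{p}{q}$ case additionally uses the Fefferman--Stein vector-valued maximal inequality).

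The hard part is handling the full range $s \in \R$. For $s$ large and positive the smoothness-matching holds pointwise and the construction above can be analyzed directly, but for negative $s$ (and at $s = 0$) the series must be interpreted distributionally and the Littlewood--Paley estimates rely on cancellation among the rescaled pieces. The cleanest route is to establish boundedness for $s$ large and positive directly, and then obtain $s < 0$ by duality using $\BesovSpace{s}{p}{q}' = \BesovSpace{-s}{p'}{q'}$ (and similarly for $\TLSpace{s}{p}{q}$) under the restrictions on $p, q$ from Remark \ref{Rmk::Intro::Classical::Restrcitpq}, with real interpolation filling intermediate $s$; the remaining technical point is to amalgamate the two regimes into a single $\Extend$ independent of $s$, which is where the analysis is most delicate.
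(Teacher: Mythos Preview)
Your Seeley-reflection strategy is the standard classical approach and is essentially what the cited reference (Triebel) does, so the overall plan is sound. However, the duality step you sketch has a real gap: if $\Extend$ is bounded $\BesovSpace{s}{p}{q}[\Rngeq]\to\BesovSpace{s}{p}{q}[\Rn]$ for large $s$, the adjoint $\Extend^{*}$ maps in the wrong direction and is a different operator, so duality does not show that the \emph{same} $\Extend$ is bounded for negative $s$. Moreover, under the paper's hypotheses on $p,q$ (which include $p,q\in\{1,\infty\}$ in the Besov case), the needed duality identifications fail at the endpoints. The standard remedy is to bypass duality entirely and analyze the Seeley extension directly via the Littlewood--Paley characterization for all $s$: the infinite moment-matching conditions $\sum_k a_k(-2^k)^m=1$ are precisely what make $\psi_j*(\Extend f)$ controllable uniformly in $s$, so no splitting into regimes is needed.

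The paper itself does not prove this classical theorem directly (it is cited as background), but its proof of the general subelliptic analogue (Theorem~\ref{Thm::Spaces::Extension}) takes a genuinely different route that specializes to give an alternative proof here. Rather than reflecting the function, the paper writes the identity as $\sum_j D_j$ with $\{(D_j,2^{-j})\}$ elementary, factors each $D_j=\sum_{|\alpha|,|\beta|\le N}2^{-j(2N-|\alpha|-|\beta|)}(2^{-j\Wdv}W)^{\alpha}D_{j,\alpha,\beta}(2^{-j\Wdv}W)^{\beta}$, and then extends only the \emph{kernels} $D_{j,\alpha,\beta}(x,y)$ to the ambient manifold (Proposition~\ref{Prop::Spaces::Elem::Extend::Extend::NonVerbose}). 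The resulting extension $\Extension[N_0]$ depends on a parameter $N_0$ and is only claimed bounded for $|s|\le N_0$, so the paper trades universality in $s$ for a construction that does not rely on any reflection symmetry. The payoff is that this method generalizes to Carnot--Carath\'eodory geometries, where (as the paper notes in the introduction) Seeley-type reflection fails because the ambient H\"ormander vector fields are not determined by their restrictions to $\ManifoldN$. Your approach is simpler and more explicit in the Euclidean case; the paper's is the one that survives in the subelliptic setting.
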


\begin{theorem}[{\cite[Sections 2.7.2 and 3.3.3]{TriebelTheoryOfFunctionSpaces} and \cite[Section 2.4.2]{RunstSickelSobolevSpacesOfFractionalOrder}}]
    \label{Thm::Intro::Classical::TraceThm}
    Fix \(L\in \Zgeq\). There exists a unique map
    \begin{equation*}
        \TraceMap[L]:\left( \bigcup_{\substack{1<p\leq \infty \\ 1\leq q\leq \infty\\ s>L+1/p}} \BesovSpace{s}{p}{q}[\Rngeq] \right)
        \bigcup
        \left( \bigcup_{\substack{1<p<\infty \\ 1< q\leq \infty\\ s>L+1/p}} \TLSpace{s}{p}{q}[\Rngeq] \right)
        \rightarrow \Distributions[\Rnmo]^{L+1}
    \end{equation*}
    such that
    \begin{enumerate}[(I)]
        \item For \(f\in \CinftyCptSpace[\Rngeq]\), \(\TraceMap[L]f\) is given by \eqref{Eqn::Intro::Classical::DefineTraceMap}.
        \item\label{Item::Intro::Classical::TraceThm::Besov} \(\TraceMap[L]:\BesovSpace{s}{p}{q}[\Rngeq]\rightarrow \prod_{l=0}^L \BesovSpace{s-l-1/p}{p}{q}[\Rnmo]\)
            is continuous for \(1<p\leq \infty\), \(1\leq q\leq \infty\), \(s>L+1/p\).
        \item\label{Item::Intro::Classical::TraceThm::TL} \(\TraceMap[L]:\TLSpace{s}{p}{q}[\Rngeq]\rightarrow \prod_{l=0}^L \BesovSpace{s-l-1/p}{p}{p}[\Rnmo]\) is continuous
            for \(1<p<\infty\), \(1<q\leq \infty\), \(s>L+1/p\).
    \end{enumerate}
    Furthermore, the maps in \ref{Item::Intro::Classical::TraceThm::Besov} and \ref{Item::Intro::Classical::TraceThm::TL}
    are retractions. Moreover there is a continuous, linear map
    \(\TraceInverseMap[L]:\TemperedDistributions[\Rnmo]^{L+1}\rightarrow \Distributions[\Rng]\)
    such that
    \begin{enumerate}[(i)]
        \item\label{Item::Intro::Classical::TraceThm::InverseBesov} \(\TraceInverseMap[L]:\prod_{l=0}^L \BesovSpace{s-l-1/p}{p}{q}[\Rnmo]\rightarrow \BesovSpace{s}{p}{q}[\Rngeq]\)
            is continuous for \(s\in \R\), \(p,q\in [1,\infty]\).
        \item\label{Item::Intro::Classical::TraceThm::InverseTL} \(\TraceInverseMap[L]:\prod_{l=0}^L \BesovSpace{s-l-1/p}{p}{p}[\Rnmo]\rightarrow \TLSpace{s}{p}{q}[\Rngeq]\)
            is continuous for \(s\in \R\), \(p\in (1,\infty)\), \(q\in (1,\infty]\).
        \item \(\TraceMap[L]\TraceInverseMap[L]=I\) whenever it makes sense; i.e., whenever the range space of
            \(\TraceInverseMap[L]\) in \ref{Item::Intro::Classical::TraceThm::InverseBesov} or \ref{Item::Intro::Classical::TraceThm::InverseTL}
            coincides with the domain space of \(\TraceMap[L]\) in \ref{Item::Intro::Classical::TraceThm::Besov} or \ref{Item::Intro::Classical::TraceThm::TL}.
    \end{enumerate}
\end{theorem}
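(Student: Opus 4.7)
The plan is to reduce the whole theorem to Littlewood--Paley analysis on \(\Rn\) using dyadic building blocks, following the classical template. First, uniqueness of \(\TraceMap[L]\) on the union of the various \(\BesovSpace{s}{p}{q}[\Rngeq]\) and \(\TLSpace{s}{p}{q}[\Rngeq]\) follows because \(\CinftyCptSpace[\Rn]\) restricted to \(\Rngeq\) is dense in each individual space (with the stated restrictions on \(p,q\)), and on any two overlapping spaces the boundary trace must coincide by continuity with \eqref{Eqn::Intro::Classical::DefineTraceMap}. Throughout, I would use Theorem \ref{Thm::Intro::Classical::Extension} to move freely between \(\Rngeq\) and \(\Rn\), so that the Littlewood--Paley partition \(\{\Delta_k\}\) on all of \(\Rn\) is available: \(\mathrm{supp}\,\widehat{\Delta_k f}\subset\{|\xi|\sim 2^k\}\).

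For the forward direction, the main tool is that \(\Delta_k f(x',x_n)\) is a band-limited function of \(x_n\), so the Bernstein-type slice inequality
\[
\bigl\|\partial_{x_n}^l(\Delta_k f)(\cdot,0)\bigr\|_{\LpSpace{p}[\Rnmo]}\lesssim 2^{k(l+1/p)}\,\BLpNorm{\Delta_k f}{p}[\Rn]
\]
holds for \(1\le p\le\infty\). Summing dyadically in \(k\) with weights \(2^{k(s-l-1/p)}\) and taking the \(\ell^q\)-norm yields part \ref{Item::Intro::Classical::TraceThm::Besov}: the loss of \(1/p\) derivatives is exactly accounted for by the \(\LpSpace{p}\)-slice estimate. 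Part \ref{Item::Intro::Classical::TraceThm::TL} is subtler: the TL norm uses a mixed \(L^p(\ell^q)\) gauge in the ambient space, but after restricting to \(\{x_n=0\}\) the vertical integration collapses, and what survives is an \(\ell^p\) sum of \(L^p(\Rnmo)\) norms, producing a \(\BesovSpace{s-l-1/p}{p}{p}\) bound regardless of the original \(q\).

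For the right inverse \(\TraceInverseMap[L]\), I would use the explicit dyadic formula
\[
\TraceInverseMap[L](g_0,\ldots,g_L)(x',x_n)\;=\;\sum_{l=0}^L\sum_{k\in\Z}\frac{x_n^l}{l!}\,\psi(2^k x_n)\,\Delta_k^{(n-1)}g_l(x'),
\]
where \(\Delta_k^{(n-1)}\) is the Littlewood--Paley block on \(\Rnmo\) and \(\psi\in\CinftyCptSpace[\R]\) is a fixed cutoff with \(\psi(0)=1\) and enough derivatives vanishing at \(0\) to decouple the different \(l\)'s. Evaluating \(\partial_{x_n}^{l'}\) at \(x_n=0\) telescopes to \(\sum_k\Delta_k^{(n-1)}g_{l'}=g_{l'}\), so \(\TraceMap[L]\TraceInverseMap[L]=I\). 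Continuity in \ref{Item::Intro::Classical::TraceThm::InverseBesov} and \ref{Item::Intro::Classical::TraceThm::InverseTL} follows from the reverse principle: the \(k\)-th summand is supported in the strip \(\{x_n\lesssim 2^{-k}\}\) with horizontal Fourier support \(|\xi'|\sim 2^k\), so its \(\LpSpace{p}[\Rn]\)-norm is \(\sim 2^{-k/p}\|\Delta_k^{(n-1)}g_l\|_{\LpSpace{p}[\Rnmo]}\), which exactly absorbs the gain of \(1/p\) derivatives in the codomain and reconstitutes the ambient Besov or TL norm after summing in \(k\).

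The main obstacle will be the asymmetry between \(q\) in the domain and \(p\) in the codomain in parts \ref{Item::Intro::Classical::TraceThm::TL} and \ref{Item::Intro::Classical::TraceThm::InverseTL}: the TL spaces encode their fine scale via \(L^p(\Rn;\ell^q)\), while a codimension-one trace sees only \(\ell^p\), so one must argue that the LP blocks on the boundary cannot be coupled more finely than \(p\) allows. Making this rigorous requires either a Jawerth--Franke-style embedding with the correct regularity shift, or a direct Fubini argument comparing \(L^p(\R;\ell^q)\) and \(\ell^p(L^p)\) on vertical dyadic strips. The rest of the argument, including the restriction to the ranges of \(p,q\) in Remark \ref{Rmk::Intro::Classical::Restrcitpq} (where Fefferman--Stein and Minkowski's inequality are available), is essentially bookkeeping on top of Bernstein's inequality.
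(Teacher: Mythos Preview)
The paper does not give its own proof of this theorem: it is stated as classical background with citations to Triebel and Runst--Sickel. What the paper does prove are the generalizations Theorem~\ref{Thm::Trace::ForwardMap} and Theorem~\ref{Thm::Trace::Dirichlet::MainInverseThm} (reduced to Propositions~\ref{Prop::Traces::CoordReduction::ForwardMap} and~\ref{Prop::Traces::CoordReduction::InverseMap}), and it is those proofs you should compare against.

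Your inverse-map construction is essentially identical to the paper's: in the proof of Proposition~\ref{Prop::Traces::CoordReduction::InverseMap} the right inverse is built as \(\sum_l\sum_k \gamma(x_n)2^{-l\lambda k}\phi_l(2^{k\lambda}x_n)D_k'f_l\) with \(\partial^j\phi_l(0)=\delta_{jl}\), which is your \(x_n^l\psi(2^kx_n)/l!\) in disguise. The continuity argument is also the same in spirit (Lemma~\ref{Lemma::Trace::InverseProof::VpqNormInequality}).

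For the forward Besov estimate your Bernstein approach is the classical one and works. The paper's route (Section~\ref{Section::Trace::Forward}) is different: it uses a Peetre-type maximal function \(E_j^{*,L}\) (Proposition~\ref{Prop::Trace::Forward::BoundEjStarByNorm}) rather than Fourier-side Bernstein, because in the H\"ormander setting no Fourier transform is available.

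Where your sketch has a genuine gap is the Triebel--Lizorkin forward bound. The slice inequality \(\|\Delta_kf(\cdot,0)\|_{\LpSpace{p}[\Rnmo]}\lesssim 2^{k/p}\|\Delta_kf\|_{\LpSpace{p}[\Rn]}\) only yields \(\|\TraceMap f\|_{\BesovSpace{s-1/p}{p}{p}}\lesssim\|f\|_{\BesovSpace{s}{p}{p}}\), and \(\BesovSpace{s}{p}{p}\) is \emph{not} controlled by \(\TLSpace{s}{p}{q}\) when \(q>p\) (the embedding \(\TLSpace{s}{p}{q}\hookrightarrow\BesovSpace{s}{p}{\max(p,q)}\) goes the wrong way). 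Your phrase ``the vertical integration collapses'' and your proposed ``Fubini on vertical dyadic strips'' point at the right mechanism, but the missing ingredient is a \emph{pointwise} bound: one needs \(|\Delta_kf(x',0)|\lesssim E_k^{*}f(x',x_n)\) for \(x_n\sim 2^{-k}\) (Lemma~\ref{Lemma::Trace::Forward::BoundRestrictElemByElemStar} in the paper, or the band-limited sub-mean-value inequality in the Euclidean case). Only then can one integrate the \(p\)-th power over the disjoint strips \(x_n\in[2^{-(k+1)},2^{-k}]\), pass to \(\sup_k\) inside the spatial integral, and land in \(\LplqSpaceNoSet{p}{\infty}\supseteq\LplqSpaceNoSet{p}{q}\). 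The \(L^p\)-level Bernstein inequality alone cannot see the \(\ell^q\) structure, and Jawerth--Franke does not give this particular shift directly.
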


\begin{remark}
    The repeated \(p\) in the subscript of \(\BesovSpace{s-l-1/p}{p}{p}[\Rnmo]\) in \ref{Item::Intro::Classical::TraceThm::TL}
    and \ref{Item::Intro::Classical::TraceThm::InverseTL} is not a typo; this is the correct space to use
    as shown by Theorem \ref{Thm::Intro::Classical::TraceThm}.
\end{remark}

\begin{remark}\label{Rmk::Intro::Classical::TraceGoesToBesov}
    Setting \(q=2\) in \ref{Item::Intro::Classical::TraceThm::TL} and \ref{Item::Intro::Classical::TraceThm::InverseTL}  shows that the range of \(\TraceMap[L]\)
    when acting on \(\LpSpace{p}\)-Sobolev spaces is in terms of Besov spaces.
    When \(p=2\), \(\BesovSpace{s}{2}{2}=\TLSpace{s}{2}{2}=H^s\) and in this case, the Besov spaces do coincide
    with the \(\LpSpace{2}\)-Sobolev spaces, but for \(p\ne 2\) this is not the case.
\end{remark}

The next result is contained in \cite[Theorem 3 of Section 2.4.4]{RunstSickelSobolevSpacesOfFractionalOrder}
with \(\Rngeq\) replaced by a bounded domain; though the same holds for \(\Rngeq\).

\begin{theorem}[{\cite[Theorem 3 of Section 2.4.4]{RunstSickelSobolevSpacesOfFractionalOrder}}]
    \label{Thm::Intro::Classical::DensityOfSmoothWithCptSupp}
    Fix \(p,q\in (1,\infty)\) and for \(s\in \R\) let \(\XSpace{s}{p}{q}[\Rngeq]\) denote either
    \(\BesovSpace{s}{p}{q}[\Rngeq]\) or \(\TLSpace{s}{p}{q}[\Rngeq]\), and let \(\XCircSpace{s}{p}{q}[\Rngeq]\)
    denote the closure of \(\CinftyCptSpace[\Rng]\) in \(\XSpace{s}{p}{q}[\Rngeq]\).
    \begin{enumerate}[(i)]
        \item\label{Item::Intro::Classical::DensityOfSmoothWithCptSupp::sleq1/p} If \(s\leq 1/p\), \(\XSpace{s}{p}{q}[\Rngeq]=\XCircSpace{s}{p}{q}[\Rngeq]\).
        \item\label{Item::Intro::Classical::DensityOfSmoothWithCptSupp::sg1/p} For \(L\in \Zgeq\), if \(L+1/p<s\leq L+1+1/p\), then
            \begin{equation*}
                \XCircSpace{s}{p}{q}[\Rngeq]=\left\{ f\in \XSpace{s}{p}{q}[\Rngeq] : \TraceMap[L] f=0 \right\}.
            \end{equation*}
    \end{enumerate}
\end{theorem}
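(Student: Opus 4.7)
Both parts rest on the same general scheme: cut off near the boundary, translate into the interior, and then mollify. The difference is that in part (i) there is no obstruction to cutting off, while in part (ii) the vanishing trace of order $L$ is exactly what allows the cutoff to converge.

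\textbf{Part (i), $s \leq 1/p$.} The plan is to show that $\CinftyCptSpace[\Rng]$ is dense directly. Let $\chi \in C^\infty(\R)$ with $\chi \equiv 0$ on $(-\infty,1]$ and $\chi \equiv 1$ on $[2,\infty)$, and set $\chi_\epsilon(x_n) := \chi(x_n/\epsilon)$. For $f \in \XSpace{s}{p}{q}[\Rngeq]$ I would put $f_\epsilon := \chi_\epsilon f$; this is supported in $\{x_n \geq \epsilon\}$, so after mollifying in $\Rn$ with a mollifier of radius $\ll \epsilon$ and multiplying by a cutoff in $x'$, one obtains elements of $\CinftyCptSpace[\Rng]$. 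The whole matter reduces to showing $f_\epsilon \to f$ in $\XSpace{s}{p}{q}[\Rngeq]$. This is a Hardy-type fact: for $s < 1/p$ one has the embedding $\XSpace{s}{p}{q}[\Rngeq] \hookrightarrow L^p(x_n^{-sp}\,dx)$, whence $\|(1-\chi_\epsilon) f\|_{\XSpace{s}{p}{q}} \to 0$ by dominated convergence together with multiplier estimates for the smooth bump $1-\chi_\epsilon$. The endpoint $s = 1/p$ is handled separately by first approximating $f$ by functions smooth up to the boundary (a standard reflection/mollification) and then applying a logarithmic refinement of Hardy's inequality.

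\textbf{Part (ii), $L+1/p < s \leq L+1+1/p$.} The inclusion $\XCircSpace{s}{p}{q}[\Rngeq] \subseteq \{f : \TraceMap[L] f = 0\}$ is immediate: $\TraceMap[L]$ is continuous by Theorem \ref{Thm::Intro::Classical::TraceThm}, annihilates $\CinftyCptSpace[\Rng]$, and its kernel is therefore closed. For the reverse inclusion, given $f \in \XSpace{s}{p}{q}[\Rngeq]$ with $\TraceMap[L] f = 0$, I would extend by zero, defining $Ef := f$ on $\Rng$ and $Ef := 0$ on $\Rn \setminus \Rngeq$. The crucial claim is that $Ef \in \XSpace{s}{p}{q}[\Rn]$ with $\|Ef\|_{\XSpace{s}{p}{q}[\Rn]} \lesssim \|f\|_{\XSpace{s}{p}{q}[\Rngeq]}$ whenever the trace vanishes and $s \leq L+1+1/p$. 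Granted this, set $(\tau_\delta g)(x',x_n) := g(x',x_n-\delta)$; then $\tau_\delta Ef$ is supported in $\{x_n \geq \delta\}$, and $\tau_\delta Ef \to Ef$ in $\XSpace{s}{p}{q}[\Rn]$ as $\delta \to 0^+$ by continuity of translations (valid because $p,q < \infty$). Mollifying $\tau_\delta Ef$ with a mollifier of radius $\ll \delta$ and truncating in $x'$ yields elements of $\CinftyCptSpace[\Rng]$ whose restrictions to $\Rngeq$ approximate $f$.

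\textbf{Main obstacle.} The real work is the bounded zero-extension claim in part (ii). The condition $s \leq L+1+1/p$ is sharp: for $s$ beyond this range the extension by zero fails even for smooth functions vanishing to order $L$ at the boundary, because higher-order normal derivatives of $Ef$ acquire a jump at $\{x_n = 0\}$ that cannot be absorbed. I would prove it via an atomic/Littlewood--Paley characterization of $\XSpace{s}{p}{q}$: decompose $f$ dyadically in $x_n$, use Taylor expansion of order $L$ in the normal variable to exploit $\TraceMap[L] f = 0$ and get pointwise bounds of order $x_n^{L+1-\epsilon}$ (in a suitable averaged sense), then check that the corresponding atoms for $Ef$ still satisfy the required moment/size conditions under the constraint $s \leq L+1+1/p$. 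Everything else in the proof — Hardy's inequality for (i), translation continuity, and mollification — is routine.
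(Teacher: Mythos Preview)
The paper does not prove this theorem; it is quoted as a classical background result with a citation to Runst--Sickel, and the remark immediately following it explicitly says the endpoint cases are not addressed in the paper. So there is no ``paper's own proof'' to compare against for this particular statement.

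That said, it is worth comparing your sketch to how the paper proves its own generalization, Theorem~\ref{Thm::Trace::Vanish::MainVanishThm} (which drops the endpoints $s = L\lambda + \lambda/p$). The paper's route is quite different from yours. Instead of extending by zero and translating, it (a)~first approximates $u$ by smooth functions $g_j$ via Corollary~\ref{Cor::Spaces::Approximation::SmoothFunctionsAreDense}, then subtracts $\TraceInverseMap[\sN_L]\TraceMap[\sN_L] g_j$ to force the normal derivatives to vanish exactly (Lemma~\ref{Lemma::Trace::CharVanish::ApproxBySmoothWithVanish}), and (b)~multiplies the resulting smooth function by a normal cutoff $\phi_0(2^{\lambda K} x_n)$. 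The cutoff sequence is shown only to be \emph{bounded} in the target space (Proposition~\ref{Prop::Trace::CharVanish::MainEstimate}) and to converge in distributions; strong convergence is then extracted via reflexivity and Mazur's lemma (Proposition~\ref{Prop::Spaces::Approximation::DistributionConvgToStronger}~\ref{Item::Spaces::Approximation::DistributionConvgToStronger::StrongConvg}). This is where the restriction $p,q\in(1,\infty)$ is used, and it is also why the endpoints are lost. The advantage of the paper's approach is that it requires nothing beyond the abstract elementary-operator calculus and so carries over to the Carnot--Carath\'eodory setting; your zero-extension/translation argument relies on Euclidean structure (translation in $x_n$) that has no analogue there.

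Your outline for the classical case is standard and essentially correct, but the step you flag as the ``main obstacle''---bounded zero extension under vanishing trace up to the critical exponent---is itself a theorem of comparable depth to the statement you are proving (see Triebel, \emph{Theory of Function Spaces}, \S 2.9.3--2.9.4), and your atomic sketch is too thin to stand as a proof of it.
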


\begin{remark}
    In Theorem \ref{Thm::Intro::Classical::DensityOfSmoothWithCptSupp}, the ``endpoint'' estimates
    (\(s=1/p\) in \ref{Item::Intro::Classical::DensityOfSmoothWithCptSupp::sleq1/p}
    and \(s=L+1+1/p\) in \ref{Item::Intro::Classical::DensityOfSmoothWithCptSupp::sg1/p})
    are more difficult and we do not address such endpoints in this paper--see Remark \ref{Rmk::GlobalCor::MissingEndpointsForDensity}.
\end{remark}

\section{Basic definitions and global corollaries}\label{Section::GlobalCor}
The main results of this paper are local and take place on non-compact manifolds. In this chapter, we present some global corollaries of our main
results on compact manifolds, where we restrict to simple special cases to help give the reader the main ideas.
Let \(\ManifoldN\) be a manifold with boundary.

\begin{notation}
    \begin{itemize}
        \item \(\VectorFieldsN\) denotes the 
\(\CinftySpace[\ManifoldN][\R]\)-module of
smooth vector fields on \(\ManifoldN\).
        \item \(\CinftyCptSpace[\ManifoldN]\) denotes the space of smooth functions with compact support on \(\ManifoldN\),
        with the usual topology. These functions
            may be nonzero on \(\BoundaryN\).
        \item \(\Distributions[\ManifoldN]\) denotes dual space of \(\CinftyCptSpace[\ManifoldN]\), with the usual weak topology.
        \item \(\TestFunctionsZeroN\) denotes the space of those \(f(x)\in \CinftyCptSpace[\ManifoldN]\) which vanish to infinite
            order as \(x\rightarrow \BoundaryN\). This is a closed subspace of \(\CinftyCptSpace[\ManifoldN]\) and inherits the topology.
        \item \(\DistributionsZeroN\) denotes the dual space of \(\TestFunctionsZeroN\), with the usual weak topology.
    \end{itemize}
\end{notation}


\begin{definition}\label{Defn::GlobalCor::HormandersCondition}
    Let \(W=\left\{ W_1,\ldots, W_r \right\}\subset \VectorFieldsN\) be a finite set of smooth vector fields on \(\ManifoldN\).
    We say \(W\) \DefnIt{satisfies H\"ormander's condition of order \(m\in \Zg\) at \(x\in \ManifoldN\)}
    if
    \begin{equation*}
        \underbrace{W_1(x),\ldots, W_r(x)}_{\text{commutators of order }1},\underbrace{\ldots, [W_i,W_j](x),\ldots}_{\text{commutators of order} 2},\underbrace{\ldots,[W_i,[W_j,W_k]](x),\ldots}_{\text{commutators of order }3},\ldots,\text{commutators of order }m
    \end{equation*}
    span \(\TangentSpace{x}{\ManifoldN}\), \(\forall x\in \ManifoldN\).
    We say \(W\) \DefnIt{satisfies H\"ormander's condition of order \(m\) on \(\ManifoldN\)} if \(W\)
    satisfies H\"ormander's condition of order \(m\) at \(x\), \(\forall x\in \ManifoldN\).
    We say \(W\) \DefnIt{staisfies H\"ormander's condition on \(\ManifoldN\)} if \(\forall x\in \ManifoldN\),
    \(\exists m\), such that \(W\) satisfies H\"ormander's condition of order \(m\) at \(x\).
\end{definition}

Given a collection of H\"ormander vector fields \(W=\left\{ W_1,\ldots, W_r \right\}\), we will often assign to them \DefnIt{formal degrees}
\(\Wdv_1,\ldots, \Wdv_r\in \Zg\), and write
\begin{equation*}
    \WWdv:=\left\{ \left( W_1,\Wdv_1 \right),\ldots, \left( W_r, \Wdv_r \right) \right\}.
\end{equation*}
We call \(\WWdv\) a set of \DefnIt{H\"ormander vector fields with formal degrees}.

\begin{example}
    When we assign \(W_j\) the degree \(\Wdv_j\in \Zg\), it means we treat \(W_j\) as a differential operator
    of ``degree'' \(\Wdv_j\) (even though it is a differential operator of order \(1\) in the classical sense).
    For example, when considering the subelliptic heat operator \(\partial_t+\sum_{j=1}^r W_j^{*}W_j\), then we use
    the vector fields with formal degrees \(\left\{ \left( \partial_t,2 \right), \left( W_1,1 \right),\ldots, \left( W_r,1 \right) \right\}\).
    See \cite[Section 1.1]{StreetMaximalSubellipticity} for a further discussion and more examples.
\end{example}

\begin{example}
    The simplest example of H\"ormander vector fields with formal degrees on \(\Rn\) is given by
    \((\partial,1):=\left\{ \left( \partial_{x_1},1 \right),\ldots,\left( \partial_{x_n},1 \right) \right\}\).
    When we choose these vector fields with formal degrees, the definitions and results in this paper
    coincide with the classical elliptic definitions and results. See, for example,
    \cite[Section 6.6.1]{StreetMaximalSubellipticity}. Thus, the definitions and results
    in this paper are a true generalization of the classical results.
\end{example}

To simplify our main results and make the easier to understand,
throughout the rest of this chapter (and only this chapter), 
we assume \(\ManifoldN\) is a \textbf{compact} manifold with boundary and
let \(W=\left\{ W_1,\ldots, W_r \right\}\) be H\"ormander vector fields on 
 \(\ManifoldN\).
We write \(\WWdv=\left\{ (W_1,\Wdv_1),\ldots, (W_r,\Wdv_r) \right\}\) for H\"ormander vector fields with formal degrees
and \(\WWo=\left\{ (W_1,1),\ldots, (W_r,1) \right\}\) for the special case when we have chosen every formal degree
equal to \(1\).
\textit{In this chapter, we restrict to the special case \(\WWo\) as that is easiest to understand. However,
in the rest of the paper we work in the more general case of \(\WWdv\) and present local results on non-compact manifolds.}

\begin{definition}\label{Defn::GlobalCor::SimpleNonChar}
    We say \(x\in \BoundaryN\) is \(\WWo\)-non-characteristic if \(\exists j\) with \(W_j(x)\not\in \TangentSpace{x}{\BoundaryN}\).
    See Definition \ref{Defn::Filtrations::RestrictingFiltrations::NonCharPoints} for the general case for \(\WWdv\).
\end{definition}

\begin{remark}
    Derridj showed that almost every boundary point is \(\WWo\)-non-characteristic \cite[Theorem 1]{DerridjSurUnTheoremeDeTraces}.
\end{remark}

Throughout this chapter, we make the following assumption global assumption (which is replaced by a local
assumption in the main results in this paper):

\medskip
\noindent\textbf{Temporary Global Assumption:}
    In this chapter (and only this chapter), we assume every \(x\in \BoundaryN\) is \(\WWo\)-non-characteristic.
\medskip

In Section \ref{Section::Spaces::MainDefns}, we define Besov, \(\BesovSpace{s}{p}{q}[\ManifoldN][\WWo]\), and Triebel-Lizorkin, \(\TLSpace{s}{p}{q}[\ManifoldN][\WWo]\),
spaces 
adapted to \(\WWo\)
(with \(s\in \R\) and the restrictions on \(p,q\) described in Remark \ref{Rmk::Intro::Classical::Restrcitpq}).\footnote{
    We are taking \(\FilteredSheafF=\FilteredSheafGenBy{\WWo}\) in the notation of that section.
    See, also, Definition \ref{Defn::Filtrations_Sheaves::SheafGeneratedBy}.
}
These are Banach spaces and \(s\in \R\) should be thought of as the regularity parameter.
Each \(W_j\) acts as a differential operator of degree \(1\) in the sense:
\begin{equation*}
    W_j:\BesovSpace{s}{p}{q}[\ManifoldN][\WWo]\rightarrow \BesovSpace{s-1}{p}{q}[\ManifoldN][\WWo],
    \quad
    W_j:\TLSpace{s}{p}{q}[\ManifoldN][\WWo]\rightarrow \TLSpace{s-1}{p}{q}[\ManifoldN][\WWo];
\end{equation*}
see Proposition \ref{Prop::Spaces::MappingOfFuncsAndVFs}.
In fact, under appropriate hypotheses, we more generally define the spaces
\(\BesovSpace{s}{p}{q}[\ManifoldN][\WWdv]\) and \(\TLSpace{s}{p}{q}[\ManifoldN][\WWdv]\)
where each \(W_j\) acts as a differential operator of degree \(\Wdv_j\).

\begin{notation}
    For a list \(\alpha=(\alpha_1,\alpha_2,\ldots, \alpha_L)\in \left\{ 1,\ldots, r \right\}^L\), we write
    \(W^\alpha = W_{\alpha_1}W_{\alpha_2}\cdots W_{\alpha_L}\) and \(|\alpha|=L\),
    so that \(W^{\alpha}\) is an \(|\alpha|\)-order partial differential operator.
\end{notation}

The spaces \(\TLSpace{s}{p}{2}[\ManifoldN][\WWo]\) can be thought of as the appropriate
(non-isotropic) \(\LpSpace{p}\)-Sobolev spaces
as the next result shows (here, we endow \(\ManifoldN\) with any smooth, strictly positive density--the choice of density
does not matter).

\begin{corollary}\label{Cor::GlobalCor::TLmp2AreSobolevSpaces}
    For \(1<p<\infty\) and \(m\in \Zgeq\),
    \begin{equation*}
        \TLSpace{m}{p}{2}[\ManifoldN][\WWo]=\left\{ f\in \DistributionsZeroN : W^{\alpha}f\in \LpSpace{p}[\ManifoldN],\forall |\alpha|\leq L \right\},
    \end{equation*}
    \begin{equation*}
        \TLNorm{f}{m}{p}{2}[\WWo] \approx \sum_{|\alpha|\leq m} \LpNorm{W^{\alpha}f}{p}[\ManifoldN].
    \end{equation*}
\end{corollary}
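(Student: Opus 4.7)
The plan is to prove both inequalities underlying the stated norm equivalence $\TLNorm{f}{m}{p}{2}[\WWo]\approx \sum_{|\alpha|\leq m}\LpNorm{W^{\alpha}f}{p}[\ManifoldN]$, using the mapping properties of the $W_j$'s on $\TLSpace{s}{p}{q}[\ManifoldN][\WWo]$ together with the intrinsic Littlewood--Paley characterization of these spaces developed earlier in the paper. The equality of sets then follows from norm equivalence on a common dense class, once one observes that both sides are complete subspaces of $\DistributionsZeroN$.

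For $\sum_{|\alpha|\leq m}\LpNorm{W^{\alpha}f}{p}[\ManifoldN]\lesssim \TLNorm{f}{m}{p}{2}[\WWo]$, I would iterate the degree-$1$ mapping property $W_j:\TLSpace{s}{p}{2}[\ManifoldN][\WWo]\to\TLSpace{s-1}{p}{2}[\ManifoldN][\WWo]$ (Proposition \ref{Prop::Spaces::MappingOfFuncsAndVFs}) to get the continuous mapping $W^{\alpha}:\TLSpace{m}{p}{2}[\ManifoldN][\WWo]\to \TLSpace{m-|\alpha|}{p}{2}[\ManifoldN][\WWo]$ whenever $|\alpha|\leq m$. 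Combining this with the trivial embedding $\TLSpace{s}{p}{2}\hookrightarrow\TLSpace{0}{p}{2}$ for $s\geq 0$ and the identification $\TLSpace{0}{p}{2}[\ManifoldN][\WWo]=\LpSpace{p}[\ManifoldN]$ for $1<p<\infty$ (a vector-valued Littlewood--Paley square-function theorem adapted to $\WWo$, which should appear as a basic lemma in the function-spaces chapter) immediately yields the bound.

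The reverse bound is the deeper direction, and is the main obstacle. Working from the LP characterization, express the norm as $\bigl\|\bigl(\sum_k 4^{km}|\Delta_k f|^2\bigr)^{1/2}\bigr\|_{\LpSpace{p}}$, where $\Delta_k$ are the $\WWo$-adapted projectors localized at scale $2^{-k}$ in $\MetricWWo$. The key quantitative step is to produce, for each $k$, a decomposition $2^{km}\Delta_k f = \sum_{|\alpha|\leq m} T_{k,\alpha}[W^{\alpha}f]$ in which the $T_{k,\alpha}$ are integral operators whose kernels satisfy uniform-in-$k$ Carnot--Carath\'eodory bounds at scale $2^{-k}$; such kernels dominate $|T_{k,\alpha}[g]|$ pointwise by an $\WWo$-adapted Hardy--Littlewood maximal function applied to $g$. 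The vector-valued Fefferman--Stein maximal inequality then converts this into the required $\LpSpace{p}$-bound. In the classical setting this decomposition is a Mihlin/Bernstein multiplier identity; here, in the absence of a Fourier transform, it must be carried out directly at the kernel level, via the scaling maps of \cite[Theorem \ref*{CC::Thm::Scaling::MainResult}]{StreetCarnotCaratheodoryBallsOnManifoldsWithBoundary} and the fact that the LP projectors built from them commute with each $W_j$ at scale $2^{-k}$ modulo errors acceptable for the Fefferman--Stein step.
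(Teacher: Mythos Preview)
Your approach is essentially correct and aligns with the paper's strategy: the paper reduces to Proposition~\ref{Prop::Spaces::EqualsSobolev}, whose proof combines the base case \(\TLSpace{0}{p}{2}=\LpSpace{p}\) (Proposition~\ref{Prop::Spaces::EqualsLp}) with an inductive step (Proposition~\ref{Prop::Spaces::BasicProps::OneDerivsGivesNorms}) that, for each \((E_j,2^{-j})\), pulls one derivative out on the right via the built-in decomposition \(E_j=\sum_{|\alpha|\leq 1}2^{(|\alpha|-1)j}\Et_{j,\alpha}(2^{-j}W)^{\alpha}\) from Proposition~\ref{Prop::Spaces::Elem::Elem::MainProps}~\ref{Item::Spaces::Elem::Elem::PullOutNDerivs}. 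This is exactly your decomposition \(2^{km}\Delta_k f=\sum_{|\alpha|\leq m}T_{k,\alpha}[W^{\alpha}f]\), realized abstractly through the elementary-operator machinery rather than at the kernel level; the Fefferman--Stein step you anticipate is already packaged into Corollary~\ref{Cor::Spaces::MainEst::VpqsESeminormIsContinuous} (via Proposition~\ref{Prop::Spaces::Elem::PElem::PElemOpsBoundedOnVV}).

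Two minor corrections: the scaling maps are not invoked directly here---the derivative extraction is purely algebraic, coming from the recursive definition of \(\ElemzF{\cdot}\) (Definition~\ref{Defn::Spaces::LP::ElemWWdv}~\ref{Item::Spaces::LP::ElemWWdv::ElemArePreDerivs}), not from a commutation-modulo-errors argument. And the paper does not pass through a dense class for the set equality; Proposition~\ref{Prop::Spaces::BasicProps::OneDerivsGivesNorms} proves directly, for arbitrary \(f\in\DistributionsZeroN\) supported in \(\Compact\), that \(f\in\TLSpace{s+N}{p}{2}\) if and only if \(W^{\alpha}f\in\TLSpace{s}{p}{2}\) for all \(|\alpha|\leq N\), using Corollary~\ref{Cor::Spaces::MainEst::SufficesToCheckInSpaceLocally} to conclude membership from finiteness of all seminorms.
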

\begin{proof}
Since \(\ManifoldN\) is compact, this is the special case of
Proposition \ref{Prop::Spaces::EqualsSobolev} with
\(\FilteredSheafF=\FilteredSheafGenBy{\WWo}\) (see Definition \ref{Defn::Filtrations_Sheaves::SheafGeneratedBy}).
\end{proof}

\begin{remark}
    When \(s\in (0,\infty)\setminus \Zg\), \(\BesovSpace{s}{\infty}{\infty}[\ManifoldN][\WWo]\)
    agrees with the naturally defined H\"older space with respect to the adapted Carnot--Carath\'eodory metric.
    See Corollary \ref{Cor::Zyg::Holder::ZygSpaceEqualsHolderForWWo}.
\end{remark}

Let \(X\in \VectorFieldsN\) and be in the \(\CinftySpace[\ManifoldN][\R]\) module generated by \(W_1,\ldots, W_r\)
and be such that \(X(x)\not \in \TangentSpace{x}{\BoundaryN}\), \(\forall x\in \BoundaryN\).
For \(L\in \Zgeq\) and \(f\in \CinftySpace[\ManifoldN]\) define
\begin{equation}\label{Eqn::GlobalCors::DefineTraceMap}
    \TraceMap[L][X]f:=\left(f\big|_{\BoundaryN}, Xf\big|_{\BoundaryN},\ldots, X^L f\big|_{\BoundaryN} \right).
\end{equation}

\begin{corollary}\label{Cor::GlobalCors::TraceThm}
     There exists H\"ormander vector fields with formal degrees \(\VVdv=\left\{ \left( V_1,\Vdv_1 \right),\ldots, \left( V_q, \Vdv_q \right) \right\}\)
    on \(\BoundaryN\)
    such that
    for \(L\in \Zgeq\)
    there exists a
    unique map
    \begin{equation*}
        \TraceMap[L][X]:\left( \bigcup_{\substack{1<p\leq \infty \\ 1\leq q\leq \infty\\ s>L+1/p}} \BesovSpace{s}{p}{q}[\ManifoldN][\WWo] \right)
        \bigcup
        \left( \bigcup_{\substack{1<p<\infty \\ 1< q\leq \infty\\ s>L+1/p}} \TLSpace{s}{p}{q}[\ManifoldN][\WWo] \right)
        \rightarrow \Distributions[\BoundaryN]^{L+1}
    \end{equation*}
    such that
    \begin{enumerate}[(I)]
        \item For \(f\in \CinftyCptSpace[\Rngeq]\), \(\TraceMap[L][X]f\) is given by \eqref{Eqn::GlobalCors::DefineTraceMap}.
        \item\label{Item::GlobalCors::TraceThm::Besov} \(\TraceMap[L][X]:\BesovSpace{s}{p}{q}[\ManifoldN][\WWo]\rightarrow \prod_{l=0}^L \BesovSpace{s-l-1/p}{p}{q}[\BoundaryN][\VVdv]\)
            is continuous for \(1<p\leq \infty\), \(1\leq q\leq \infty\), \(s>L+1/p\).
        \item\label{Item::GlobalCors::TraceThm::TL} \(\TraceMap[L][X]:\TLSpace{s}{p}{q}[\ManifoldN][\WWo]\rightarrow \prod_{l=0}^L \BesovSpace{s-l-1/p}{p}{p}[\BoundaryN][\VVdv]\) is continuous
            for \(1<p<\infty\), \(1<q\leq \infty\), \(s>L+1/p\).
    \end{enumerate}
    Furthermore, the maps in \ref{Item::GlobalCors::TraceThm::Besov} and \ref{Item::GlobalCors::TraceThm::TL}
    are retractions. Moreover there is a continuous, linear map
    \(\TraceInverseMap[L][X]:\Distributions[\BoundaryN]^{L+1}\rightarrow \DistributionsZeroN\)
    such that
    \begin{enumerate}[(i)]
        \item\label{Item::GlobalCors::TraceThm::InverseBesov} \(\TraceInverseMap[L][X]:\prod_{l=0}^L \BesovSpace{s-l-1/p}{p}{q}[\BoundaryN][\VVdv]\rightarrow \BesovSpace{s}{p}{q}[\ManifoldN][\WWo]\)
            is continuous for \(s\in \R\), \(p,q\in [1,\infty]\).
        \item\label{Item::GlobalCors::TraceThm::InverseTL} \(\TraceInverseMap[L][X]:\prod_{l=0}^L \BesovSpace{s-l-1/p}{p}{p}[\BoundaryN][\VVdv]\rightarrow \TLSpace{s}{p}{q}[\ManifoldN][\WWo]\)
            is continuous for \(s\in \R\), \(p\in (1,\infty)\), \(q\in (1,\infty]\).
        \item \(\TraceMap[L][X]\TraceInverseMap[L][X]=I\) whenever it makes sense; i.e., whenever the range space of
            \(\TraceInverseMap[L][X]\) in \ref{Item::GlobalCors::TraceThm::InverseBesov} or \ref{Item::GlobalCors::TraceThm::InverseTL}
            coincides with the domain space of \(\TraceMap[L][X]\) in \ref{Item::GlobalCors::TraceThm::Besov} or \ref{Item::GlobalCors::TraceThm::TL}.    \end{enumerate}
\end{corollary}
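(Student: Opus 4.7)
The plan is to deduce this global statement from the local version of the trace theorem stated in Chapter \ref{Chapter::Trace}, together with a partition of unity argument made possible by the compactness of $\ManifoldN$.

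First, I would construct $\VVdv$. Under the Temporary Global Assumption, $\BoundaryNncF = \BoundaryN$ for $\FilteredSheafF = \FilteredSheafGenBy{\WWo}$. By \cite[Proposition \ref*{CC::Prop::Sheaves::Restrict::Bndry::RestritionIsFiniteTypeAndSpandTangent}]{StreetCarnotCaratheodoryBallsOnManifoldsWithBoundary}, the restricted Lie filtration $\RestrictFilteredSheaf{\LieFilteredSheafF}{\BoundaryNncF}$ is of finite type and spans $T\BoundaryN$; compactness of $\BoundaryN$ then lets me pick finitely many generating sections, yielding a list $\VVdv = \{(V_1,\Vdv_1),\ldots, (V_q,\Vdv_q)\}$ of H\"ormander vector fields with formal degrees on $\BoundaryN$ whose induced filtration is equivalent to the restricted one. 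This determines the boundary spaces $\BesovSpace{s}{p}{q}[\BoundaryN][\VVdv]$ up to equivalent norm.

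Next, I would cover $\ManifoldN$ by finitely many coordinate charts and a compatible partition of unity $\{\chi_i\}$, separated into interior charts (where there is nothing to prove) and boundary charts near non-characteristic points. On each boundary chart the abstract local trace theorem of Chapter \ref{Chapter::Trace} supplies a continuous trace map with target a product of local boundary Besov spaces and a continuous linear right inverse. The global map $\TraceMap[L][X]$ is then defined by $\TraceMap[L][X] f = \sum_i \TraceMap[L][X](\chi_i f)$, and continuity follows from boundedness of multiplication by $\chi_i$ on the $\XSpace{s}{p}{q}$ spaces (Proposition \ref{Prop::Spaces::MappingOfFuncsAndVFs}) combined with the local trace estimates. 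The global right inverse $\TraceInverseMap[L][X]$ is built symmetrically: take a partition of unity on $\BoundaryN$ subordinate to the boundary charts, split the boundary data $(g_0,\ldots, g_L)$ accordingly, apply the local right inverses, and sum the (compactly supported, hence extended by zero) local extensions. The relation $\TraceMap[L][X] \TraceInverseMap[L][X] = I$ is then checked chart by chart.

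A subtle step is verifying that the trace map based on the specific vector field $X$ chosen in \eqref{Eqn::GlobalCors::DefineTraceMap} is compatible with the abstract boundary operators produced in the local theorem. Since $X$ lies in the $\CinftySpace[\ManifoldN][\R]$-module generated by $W_1,\ldots, W_r$ and is transverse, any other transverse choice $X'$ with the same property satisfies $X = a X' + Y$ with $a\in \CinftySpace[\ManifoldN]$ nowhere vanishing on $\BoundaryN$ and $Y$ tangent to $\BoundaryN$; iterating, $X^\ell f \big|_{\BoundaryN}$ is a polynomial expression in $(X')^k f\big|_{\BoundaryN}$ (for $k \leq \ell$) and tangential derivatives along $\BoundaryN$ expressible in the $V_j$'s, with smooth coefficients. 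Since these tangential operations are bounded between the boundary Besov spaces (each tangential differentiation drops the regularity by at most the allowed amount), the continuity statements in \ref{Item::GlobalCors::TraceThm::Besov}-\ref{Item::GlobalCors::TraceThm::TL} for $X$ follow from those for a preferred local transverse direction. Uniqueness of $\TraceMap[L][X]$ comes from density of $\CinftyCptSpace[\ManifoldN]$ in the relevant spaces (Chapter \ref{Chapter::Spaces}) for finite $p,q$, and by extrapolation/continuity on the subspace where it matters for the endpoint $p$ or $q$ equal to $\infty$.

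The main obstacle will be the compatibility discussion in the last paragraph: getting a clean bookkeeping of how tangential commutator terms, produced when expressing $X^\ell$ in terms of a chart-preferred transverse vector field plus tangential corrections, are absorbed by the target $\BesovSpace{s-l-1/p}{p}{q}[\BoundaryN][\VVdv]$ norms, and ensuring this does not force any unwanted loss of regularity beyond the $l + 1/p$ already accounted for. The other ingredients (partition of unity, density, and the local trace result) are then largely bookkeeping.
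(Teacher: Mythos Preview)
Your approach is correct and essentially mirrors the paper's strategy. The paper packages the partition-of-unity gluing and the transverse-compatibility bookkeeping you describe into Theorem~\ref{Thm::Trace::ForwardMap} and Corollary~\ref{Cor::Trace::InverseMap} (via the Dirichlet-system machinery of Section~\ref{Section::Trace::Dirichlet}, especially Proposition~\ref{Prop::Trace::Reduction::TurnBIntoN}, and the localization in Sections~\ref{Section::Trace::ReductionToLocal}--\ref{Section::Trace::ReductionToCoord}), so that Corollary~\ref{Cor::GlobalCors::TraceThm} is just a direct specialization: under the compactness and global non-characteristic assumptions one takes $\Omega=\ManifoldN$, $\lambda=1$, and $V=X$ in those results, and the paper's proof is literally a one-line citation.
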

\begin{proof}
    See Theorem \ref{Thm::Trace::ForwardMap} and Corollary \ref{Cor::Trace::InverseMap}.
\end{proof}

\begin{corollary}\label{Cor::GlobalCor::DensityOfSmoothWithCptSupp}
    Fix \(p,q\in (1,\infty)\) and for \(s\in \R\) let \(\XSpace{s}{p}{q}[\ManifoldN][\WWo]\) denote either
    \(\BesovSpace{s}{p}{q}[\ManifoldN][\WWo]\) or \(\TLSpace{s}{p}{q}[\ManifoldN][\WWo]\), and let \(\XCircSpace{s}{p}{q}[\ManifoldN][\WWo]\)
    denote the closure of \(\CinftyCptSpace[\InteriorN]\) in \(\XSpace{s}{p}{q}[\ManifoldN][\WWo]\).
    \begin{enumerate}
        \item\label{Item::GlobalCor::DensityOfSmoothWithCptSupp::sleq1/p} If \(s< 1/p\), \(\XSpace{s}{p}{q}[\ManifoldN][\WWo]=\XCircSpace{s}{p}{q}[\ManifoldN][\WWo]\).
        \item\label{Item::GlobalCor::DensityOfSmoothWithCptSupp::sg1/p} For \(L\in \Zgeq\), if \(L+1/p<s< L+1+1/p\), then
            \begin{equation*}
                \XCircSpace{s}{p}{q}[\ManifoldN][\WWo]=\left\{ f\in \XSpace{s}{p}{q}[\ManifoldN][W] : \TraceMap[L][X] f=0 \right\}.
            \end{equation*}
    \end{enumerate}
\end{corollary}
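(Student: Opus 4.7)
The plan is to reduce the global statement on the compact manifold $\ManifoldN$ to local estimates near the boundary, since on compact subsets of $\InteriorN$ density of $\CinftyCptSpace[\InteriorN]$ is a standard consequence of the interior theory. The easy inclusion is that $\XCircSpace{s}{p}{q}[\ManifoldN][\WWo] \subseteq \{f : \TraceMap[L][X] f = 0\}$ whenever $\TraceMap[L][X]$ is defined on $\XSpace{s}{p}{q}[\ManifoldN][\WWo]$ (i.e., when $s > L+1/p$). This follows immediately from Corollary \ref{Cor::GlobalCors::TraceThm}: any $\varphi \in \CinftyCptSpace[\InteriorN]$ has $\TraceMap[L][X]\varphi = 0$ pointwise, and $\TraceMap[L][X]$ is continuous on $\XSpace{s}{p}{q}[\ManifoldN][\WWo]$.

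For the reverse inclusions, the strategy is a cutoff argument. Choose, in a collar neighborhood of $\BoundaryN$, a smooth family $\chi_{\epsilon}$ with $\chi_{\epsilon} \equiv 0$ on a neighborhood of $\BoundaryN$ of size $\epsilon$, $\chi_{\epsilon} \equiv 1$ outside a neighborhood of size $2\epsilon$, and with $X$ applied to $\chi_{\epsilon}$ scaling like $\epsilon^{-1}$. For part (1), given $f \in \XSpace{s}{p}{q}[\ManifoldN][\WWo]$ with $s < 1/p$, first approximate $f$ by a smooth function (smooth functions up to the boundary are dense in $\XSpace{s}{p}{q}$ by a standard mollification argument using the interior theory plus an extension, the latter available from Theorem \ref{Thm::Spaces::Extension}); then show $\chi_\epsilon f \to f$ as $\epsilon \to 0$. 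The condition $s < 1/p$ is exactly what lets a Hardy-type inequality control the norm of $(1-\chi_\epsilon)f$ by a quantity that vanishes as $\epsilon \to 0$. For part (2), given $f$ with $\TraceMap[L][X] f = 0$, first subtract $\TraceInverseMap[L][X]\TraceMap[L][X] g$ for a smooth approximation $g$ of $f$, reducing to $g$ with $g, Xg, \ldots, X^L g$ vanishing on $\BoundaryN$. A Taylor expansion in the transverse direction $X$ then shows that $g$ vanishes to order $L+1$ at the boundary, and $\chi_\epsilon g \to g$ follows from the same Hardy-type estimate, now with $L+1$ extra powers of the boundary-defining function compensating for the $L+1$ extra derivatives before the critical threshold.

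The main obstacle is establishing the Hardy-type estimate for $(1-\chi_\epsilon) f$ (respectively $(1-\chi_\epsilon)g$) in the non-isotropic, adapted setting. One must bound $\BANorm{(1-\chi_\epsilon)f}{s}{p}{q}[\ManifoldN][\WWo] \to 0$, which requires controlling both the multiplication by a rough cutoff at scale $\epsilon$ and the commutators $[W_j, \chi_\epsilon]$ in the Carnot--Carath\'eodory geometry. This is precisely where the Littlewood--Paley decomposition from Section \ref{Section::Spaces::LittlewoodPaleyTheory} enters: one splits $f$ dyadically, uses that $(1-\chi_\epsilon)$ is supported on a set of Carnot--Carath\'eodory thickness comparable to $\epsilon$ from the boundary, and sums the scales using that the trace vanishes (or that $s < 1/p$ in part (1)).

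Finally, once the local cutoff estimates are in hand, one patches using a finite partition of unity (since $\ManifoldN$ is compact and every boundary point is non-characteristic by the Temporary Global Assumption), combining the local density result near each boundary chart with standard density on compact interior subsets. The endpoints $s = 1/p$ and $s = L + 1 + 1/p$ are excluded exactly because the Hardy-type estimate degenerates there.
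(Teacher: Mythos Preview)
Your outline is broadly correct and aligns with the paper's strategy through the reduction steps: localize, approximate by smooth functions, subtract off $\TraceInverseMap[L][X]\TraceMap[L][X]$ of the approximation to force vanishing boundary data (this is exactly Lemma~\ref{Lemma::Trace::CharVanish::ApproxBySmoothWithVanish}), then Taylor-expand to write the function as $x_n^{L+1}$ times something smooth, then cut off near the boundary. The easy inclusion via continuity of $\TraceMap[L][X]$ is also handled the same way.

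The main divergence is in the final step. You propose to show directly that $\|(1-\chi_\epsilon)f\|_{\XSpace{s}{p}{q}} \to 0$ via a Hardy-type inequality. The paper does \emph{not} prove this vanishing; instead it establishes only the uniform bound
\(
\sup_{K}\ANorm{\phi_0(2^{\lambda K}x_n)f}{s}{p}{q}<\infty
\)
(Proposition~\ref{Prop::Trace::CharVanish::MainEstimate}), combines it with the obvious convergence $\phi_0(2^{\lambda K}x_n)f\to f$ in $\DistributionsZeroN$, and then invokes reflexivity of $\VSpace{p}{q}$ plus Mazur's lemma (Proposition~\ref{Prop::Spaces::Approximation::DistributionConvgToStronger}~\ref{Item::Spaces::Approximation::DistributionConvgToStronger::StrongConvg}) to conclude that $f$ lies in the norm closure of convex combinations of the cutoffs. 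This soft argument is why the hypothesis $p,q\in(1,\infty)$ appears: it is used for reflexivity, not for a Hardy estimate. Your direct approach would work in principle and is the classical route (cf.\ Triebel), but it requires proving the sharper vanishing statement in the adapted geometry, which you have left vague; the paper's route trades that for the easier uniform-boundedness estimate at the cost of the weak-to-strong machinery.
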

\begin{proof}
    This is a special case of Corollary \ref{Cor::Trace::Vanish::EveryPointNonChar}, below,
    with \(\lambda=1\) and 
\(\FilteredSheafF=\FilteredSheafGenBy{\WWo}\) (see Definition \ref{Defn::Filtrations_Sheaves::SheafGeneratedBy}).
\end{proof}

\begin{remark}\label{Rmk::GlobalCor::MissingEndpointsForDensity}
    Comparing Corollary \ref{Cor::GlobalCor::DensityOfSmoothWithCptSupp} and Theorem \ref{Thm::Intro::Classical::DensityOfSmoothWithCptSupp},
    we see that Corollary \ref{Cor::GlobalCor::DensityOfSmoothWithCptSupp} is missing the ``endpoints''
    \(s=1/p+L\). While we expect these endpoints estimates to hold, the proofs in the classical setting are delicate
    and we have not yet generalized them to this setting.
\end{remark}

\begin{corollary}\label{Cor::GlobalCor::ExtendToLargerMfld}
    Let \(\ManifoldM\) be a smooth, compact manifold without boundary such that
    \(\ManifoldN\subseteq\ManifoldM\) is a closed, embedded, codimension \(0\) submanifold (with boundary).
    Suppose \(\Wh_1,\ldots, \Wh_r\) are H\"ormander vector fields on \(\ManifoldM\)
    with \(\Wh_j\big|_{\ManifoldN}=W_j\).
    The map \(f\mapsto f\big|_{\TestFunctionsZeroN}\), \(\Distributions[\ManifoldM]\rightarrow \DistributionsZeroN\)
    is a retraction \(\BesovSpace{s}{p}{q}[\ManifoldM][\Who]\rightarrow \BesovSpace{s}{p}{q}[\ManifoldN][\WWo]\)
    and \(\TLSpace{s}{p}{q}[\ManifoldM][\Who]\rightarrow \TLSpace{s}{p}{q}[\ManifoldN][\WWo]\),
    \(\forall s\in \R\) and under the restrictions on \(p,q\) in Remark \ref{Rmk::Intro::Classical::Restrcitpq}.
\end{corollary}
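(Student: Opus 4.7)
The plan is to deduce this as a direct compact-globalization of Theorem \ref{Thm::Spaces::Extension}, which is the paper's main restriction/extension result (stated locally, for general $\WWdv$, near non-characteristic boundary points). The hypotheses match: $\ManifoldN$ is a closed embedded codim-$0$ submanifold of $\ManifoldM$, we are given $\Wh_j\big|_{\ManifoldN}=W_j$, and by the Temporary Global Assumption of this chapter every $x\in\BoundaryN$ is $\WWo$-non-characteristic. The case $\Wdv_j=1$ for all $j$ is a special case of the general framework.

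First I would address why the map $f\mapsto f\big|_{\TestFunctionsZeroN}$ is well-defined and continuous $\Distributions[\ManifoldM]\to \DistributionsZeroN$: any $\varphi\in \TestFunctionsZeroN$ extends by zero to a function in $\CinftyCptSpace[\ManifoldM]$ (smoothness across $\BoundaryN$ is exactly the flat-vanishing condition built into $\TestFunctionsZeroN$), so the pairing $\langle f,\varphi\rangle_{\ManifoldM}$ descends. Continuity at the level of the Besov / Triebel--Lizorkin norms is the content of Theorem \ref{Thm::Spaces::Extension} near boundary points, and is essentially immediate (using only that $\Wh_j$ restrict to $W_j$) at interior points. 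Since $\ManifoldN$ is compact, a finite cover by coordinate patches—one ``interior'' patch plus finitely many patches around non-characteristic boundary points—together with a subordinate partition of unity globalizes the local mapping estimate to give continuity
\[
\BesovSpace{s}{p}{q}[\ManifoldM][\Who]\longrightarrow \BesovSpace{s}{p}{q}[\ManifoldN][\WWo]
\]
and the analogous statement for $\TLSpace{s}{p}{q}$.

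For the retraction property, I would build the right inverse by patching. Near each boundary point of $\ManifoldN$, Theorem \ref{Thm::Spaces::Extension} supplies a continuous linear local extension operator into the ambient space. Away from $\BoundaryN$ one just multiplies by a cutoff supported in the interior. Using the same finite cover as above and a subordinate smooth partition of unity $\{\chi_k\}$ on $\ManifoldN$, one defines a global extension $\mathfrak{E}: \BesovSpace{s}{p}{q}[\ManifoldN][\WWo]\to \BesovSpace{s}{p}{q}[\ManifoldM][\Who]$ by $\mathfrak{E} f = \sum_k \mathfrak{E}_k(\chi_k f)$, where each $\mathfrak{E}_k$ is the appropriate local extension. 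Continuity of $\mathfrak{E}$ follows from continuity of multiplication by smooth cutoffs on these spaces (a corollary of the sheaf-theoretic framework developed in the paper) together with the local boundedness from Theorem \ref{Thm::Spaces::Extension}. The identity $\mathrm{res}\circ \mathfrak{E}=I$ holds because each $\mathfrak{E}_k$ is a right inverse to restriction in its own patch and the $\chi_k$ sum to $1$ on $\ManifoldN$.

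The only nontrivial step is the local extension across $\BoundaryN$, which is exactly why Theorem \ref{Thm::Spaces::Extension} is needed: simple reflection fails here (as emphasized in the introduction) because the ambient Hörmander vector fields with formal degrees are not intrinsic to $\ManifoldN$, so one cannot symmetrize across $\BoundaryN$ the way one can in the classical setting of \cite{SeeleyExtensionOfCInfinityFunctionsDefinedInAHalfSpace}. Once that local extension is granted, the compact globalization via partition of unity is routine, and the statement for the range $s\in\R$ and the $(p,q)$ restrictions of Remark \ref{Rmk::Intro::Classical::Restrcitpq} follows because Theorem \ref{Thm::Spaces::Extension} is valid in the same range.
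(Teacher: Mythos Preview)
Your reduction to Theorem \ref{Thm::Spaces::Extension} is correct, but you have misread the scope of that theorem and are doing unnecessary work. Theorem \ref{Thm::Spaces::Extension} is not stated ``locally, near non-characteristic boundary points'': it applies directly to any compact set \(\Compact\Subset\ManifoldM\) with \(\Compact\cap\ManifoldN\subseteq\ManifoldNncF\) (part \ref{Item::Spaces::Extension::Restriction}) and to any compact \(\Compact\Subset\ManifoldNncF\) together with any relatively compact open \(\Omega\Subset\ManifoldM\) containing \(\Compact\) (part \ref{Item::Spaces::Extension::Extension}). Since here \(\ManifoldN\) and \(\ManifoldM\) are themselves compact and the Temporary Global Assumption gives \(\ManifoldNncF=\ManifoldN\), the paper's proof is a single sentence: take \(\Compact=\ManifoldM\) in part \ref{Item::Spaces::Extension::Restriction} and \(\Compact=\ManifoldN\), \(\Omega=\ManifoldM\) in part \ref{Item::Spaces::Extension::Extension}, with \(\FilteredSheafF=\FilteredSheafGenBy{\WWo}\) and \(\FilteredSheafFh=\FilteredSheafGenBy{\Who}\). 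The extension operator \(\Extension[N_0]\) (for any \(N_0\geq|s|\)) is already global.

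Your finite-cover-plus-partition-of-unity patching is therefore redundant: it reassembles a global object that Theorem \ref{Thm::Spaces::Extension} already hands you whole. The patching argument you sketch would work (and relies on Proposition \ref{Prop::Spaces::MappingOfFuncsAndVFs} \ref{Item::Spaces::MappingOfFuncsAndVFs::Funcs} for multiplication by cutoffs), but it adds a layer that the paper's formulation was designed to avoid.
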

\begin{proof}
    Since \(\ManifoldN\) and \(\ManifoldM\) are compact, this is the special case of
    Theorem \ref{Thm::Spaces::Extension} with
    \(\FilteredSheafF=\FilteredSheafGenBy{\WWo}\)
    and \(\FilteredSheafFh=\FilteredSheafGenBy{\Who}\)
    (see Definition \ref{Defn::Filtrations_Sheaves::SheafGeneratedBy}).
\end{proof}

\begin{remark}\label{Rmk::GlobalCor::IntrinsicWithExtension}
    Corollary \ref{Cor::GlobalCor::ExtendToLargerMfld} shows that:
    \begin{enumerate}[(i)]
        \item We define \(\BesovSpace{s}{p}{q}[\ManifoldN][\WWo]\) and \(\TLSpace{s}{p}{q}[\ManifoldN][\WWo]\)
            intrinsically on \(\ManifoldN\). Similar to the
            classical setting (see \eqref{Eqn::Spaces::Classical::ClassicalDefnOfASpace}) the spaces may be equivalently
            defined as restrictions of spaces on manifolds without boundary (see Corollary \ref{Cor::Spaces::Extend::Consequences::RestrictionSpaceDefn}
            for this equivalence). Such Besov and Triebel--Lizorkin
            spaces on manifolds without boundary have been described (in various levels of generality)
            by many authors--see \cite[Chapter 6]{StreetMaximalSubellipticity} for such a theory and \cite[Section 6.14]{StreetMaximalSubellipticity}
            for many references.
        \item\label{Item::GlobalCor::IntrinsicWithExtension::DoesNotDependOnAmbientVfs} If one were to instead 
        (equivalently)
        define \(\BesovSpace{s}{p}{q}[\ManifoldN][\WWo]\) and \(\TLSpace{s}{p}{q}[\ManifoldN][\WWo]\) in terms
            of restrictions of corresponding spaces on a ambient manifold without boundary (as in Corollary \ref{Cor::Spaces::Extend::Consequences::RestrictionSpaceDefn},
             thereby generalizing the classical definition \eqref{Eqn::Spaces::Classical::ClassicalDefnOfASpace}),
            Corollary \ref{Cor::GlobalCor::ExtendToLargerMfld} shows 
            that
            the definition does not depend on the ambient space
            \(\ManifoldM\) or the values of the ambient H\"ormander vector fields \(\Wh_1,\ldots, \Wh_r\) on \(\ManifoldM\setminus \ManifoldN\)
            (see, also, Corollary \ref{Cor::Spaces::Extend::Consequences::RestrictionSpaceDefn}).
    \end{enumerate}
\end{remark}

    \subsection{Informal description of the norms}\label{Section::GlobalCor::InfomralNorms}
    In this section, we informally describe the norms
\(\BesovNorm{\cdot}{s}{p}{q}[\ManifoldN][\WWo]\) and \(\TLNorm{\cdot}{s}{p}{q}[\ManifoldN][\WWo]\),
under the assumptions of this chapter (which are more restrictive than the assumptions we use in the rest of the paper). 
The rigorous definition, in the more general setting, is contained in Section \ref{Section::Spaces::MainDefns};
in that section it is also explained that none of the choices made here affect the equivalence class of the norm.
The reader might also find useful \cite[Section 6.1]{StreetMaximalSubellipticity}, which describes the norms
on a manifold without boundary.

Fix any smooth, strictly positive density \(\Vol\) on \(\ManifoldN\) (the choice of density is not relevant).
Abstractly, we define operators \(D_j:\DistributionsZeroN\rightarrow \CinftySpace[\ManifoldN]\), \(j\in \Zgeq\), which
play the role of the Littlewood--Paley projectors. The norms are then defined in the usual way:
\begin{equation*}
    \BesovNorm{f}{s}{p}{q}[\ManifoldN][\WWo]:=\lqLpNorm{\left\{2^{js} D_j f \right\}_{j\in \Zgeq}}{p}{q}[\ManifoldN,\Vol],
\end{equation*}
\begin{equation*}
    \TLNorm{f}{s}{p}{q}[\ManifoldN][\WWo]:=\LplqNorm{\left\{2^{js} D_j f \right\}_{j\in \Zgeq}}{p}{q}[\ManifoldN,\Vol].
\end{equation*}
In this section, we describe one of the equivalent ways to define the \(D_j\).
Because multiplication by smooth functions is a continuous map on the spaces (see Proposition \ref{Prop::Spaces::MappingOfFuncsAndVFs} \ref{Item::Spaces::MappingOfFuncsAndVFs::Funcs}),
the spaces are localizable, and it suffices to define operators \(D_j\) near each point \(x_0\in \ManifoldN\).
The more difficult case is \(x_0\in \BoundaryN\), which we focus on in this section (for a similar informal discussion in the easier the case \(x\in \InteriorN\),
see \cite[Section 6.1]{StreetMaximalSubellipticity}).

Fix \(x_0\in \BoundaryN\); by hypothesis, \(x_0\) is \(\WWo\)-non-characteristic. Therefore, there exists \(j_0\) with \(W_{j_0}(x_0)\not\in \TangentSpace{x_0}{\BoundaryN}\).
Without loss of generality, assume \(j_0=1\).  Moreover, by possibly replacing \(W_1\) with \(-W_1\), we may assume \(W_1(x_0)\) points into \(\InteriorN\), in
the sense that \(e^{tW_1}x_0\) exists for \(t\geq 0\) small and for \(t>0\) small \(e^{tW_1} x_0\in \InteriorN\).

Let \(U\) be a small \(\ManifoldN\)-neighborhood of \(x_0\); we allow \(U\) to shrink from line to line, as needed.
By continuity, \(W_1(x)\not \in \TangentSpace{x}{\BoundaryN}\), \(\forall x\in U\cap \BoundaryN\).
Recursively associate formal degrees to vector fields as follow:
\begin{itemize}
    \item \(W_1,\ldots, W_r\) have degree \(1\).
    \item If \(Y_1\) has degree \(d_1\) and \(Y_2\) has degree \(d_2\), then assign to \([Y_1,Y_2]\) degree \(d_1+d_2\) (the same vector field might be assigned more than one formal degree).
\end{itemize}

Let \(\ZZdv=\left\{ \left( Z_0, 1 \right), \left( Z_1, \Zdv_1 \right),\ldots, \left( Z_q,\Zdv_q \right) \right\}\)
be an enumeration of all such vector fields with formal degrees \(\leq m\); where \(W_1,\ldots, W_r\)
satisfying H\"ormander's condition of order \(m\) at \(x_0\); here we have taken \((Z_0,1)=(W_1,1)\).
See \cite[Section \ref*{CC::Section::BndryVfsWithFormaLDegrees}]{StreetCarnotCaratheodoryBallsOnManifoldsWithBoundary} for how the choice of taking vector fields with degree \(\leq m\) is used.

For \(1\leq k\leq q\) take \(a_k\in \CinftySpace[U][\R]\) such that if \(X_k=Z_k-a_kZ_0=Z_k-a_kW_1\), then
\(X_k(x)\in \TangentSpace{x}{\BoundaryN}\), \(\forall x\in U\cap \BoundaryN\), \(\forall 1\leq k\leq q\).  Set \(\left( X_0,1 \right):=(Z_0,1)=(W_1,1)\).
Let \(\XXdv=\left\{ (X_0,1),(X_1,d_1),\ldots, (X_q,d_1) \right\}\).


Let \(\psi\in \CinftyCptSpace[U]\) equal \(1\) on a neighborhood of \(x_0\).  Fix \(\vsig_0(t_0,t_1,\ldots, t_q)\in \SchwartzSpace[\R^{1+q}]\) satisfying the following:
\begin{itemize}
    \item \(\int \vsig_0=1\) and \(\int t^{\alpha}\vsig_0=0\), \(\forall |\alpha|\geq 1\).
    \item \(\supp(\vsig_0)\subseteq [0,\infty)\times \R^q\).
\end{itemize}
Such a \(\vsig_0\) exists: see Lemma \ref{Lemma::Spaces::Classical::ExistsGoodPsi}.
Set 
\begin{equation*}
    \Dild{2^{j}}{\vsig}(t_0,t_1,\ldots, t_q):= 2^{j(1+d_1+d_2+\dots+d_q)}\vsig(2^{j}t_0, 2^{jd_1}t_1,\ldots, 2^{jd_q}t_q),
\end{equation*}
\begin{equation*}
    \vsig_j:=\vsig_0 - \Dild{2^{-1}}{\vsig_0}, \quad \forall j\geq 1.
\end{equation*}
Note that \(\vsig_j=\vsig_1\), \(\forall j\geq 1\),
and 
\begin{equation}\label{Eqn::GlobalCor::Norms::SumToDelta}
    \sum_{j=0}^L \Dild{2^j}{\vsig_j}=\Dild{2^L}{\vsig_0}\xrightarrow{L\rightarrow \infty} \delta_0(t),
\end{equation}
where \(\delta_0\) is the Dirac \(\delta\) function at \(0\).

Fix \(\eta\in \CinftyCptSpace[\R^{1+q}]\) with \(\eta\) supported on a small neighborhood of \(0\) and \(\eta=1\) near \(0\).
Set,
\begin{equation*}
    D_j f(x) = \psi(x) \int f\left( e^{t_0X_0+t_1X_1+\cdots +t_q X_q} x \right) \eta(t) \Dild{2^j}{\vsig_j}(t)\: dt.
\end{equation*}
By the choice of \(\vsig_0\) and the vector fields \(X_0,X_1,\ldots, X_q\), we have \(e^{t_0X_0+t_1X_1+\cdots +t_q X_q} x\in \InteriorN\),
for \(x\in U\), \(t_0>0\); here we have again possibly shrunk \(U\). Using this, it can be shown that \(D_j f\) makes sense,
\(\forall f\in \DistributionsZeroN\) and yields a smooth function.
Finally, by \eqref{Eqn::GlobalCor::Norms::SumToDelta}, we have
\begin{equation*}
    \sum_{j\in \Zgeq} D_j f(x) = \psi(x) f(x).
\end{equation*}
These operators, \(D_j\), are described in greater detail and generality in Section \ref{Section::Spaces::Multiplication}.
Section \ref{Section::Spaces::Classical} explains how the definitions here generalize the classical setting on Euclidean space.

\section{Vector fields and sheaves}\label{Chapter::VectorFieldsAndSheaves}
Let \(\ManifoldN\) be a smooth manifold with boundary, and let
\(\WWdv=\left\{ \left( W_1,\Wdv_1 \right),\ldots, \left( W_r,\Wdv_r \right) \right\}\subset \VectorFieldsN\times \Zg\)
be H\"ormander vector fields with formal degrees on \(\ManifoldN\).
A main goal of this paper is given \(\WWdv\), define function spaces adapted
to \(\WWdv\), which treat \(W_j\) as an operator of degree \(\Wdv_j\).
Closely tied to these function spaces is the Carnot--Carathe\'odory metric geometry on \(\ManifoldN\)
induced by \(\WWdv\); the balls in this geometry are defined for \(x\in \ManifoldN\) and \(\delta>0\) as:
\begin{equation*}
\begin{split}
        \BWWdv{x}{\delta}:=
        \bigg\{ &\exists \gamma:[0,1]\rightarrow \ManifoldN, \gamma(0)=x,\gamma(1)=y,
        \\&\gamma\text{ is absolutely continuous},
        \\&\gamma'(t)=\sum_{j=1}^r a_j(t) \delta^{\Wdv_j} W_j(\gamma(t))\text{ for almost every }t,
        \\&a_j\in \LpSpace{\infty}[[0,1]], \BLpNorm{\sum |a_j|^2 }{\infty}[[0,1]]<1
        \bigg\},        
\end{split}
\end{equation*}
with corresponding extended metric\footnote{An extended metric satisfies all the usual axioms of a metric, 
but may take the value \(+\infty\). If \(\ManifoldN\)
is not connected, then \(\DistWWdv[x][y]=\infty\) for \(x\) and \(y\) in different connected components.}
\begin{equation}\label{Eqn::VectorFields::DefineDistance}
    \DistWWdv[x][y]=\inf\left\{ \delta>0 : y\in \BWWdv{x}{\delta} \right\}.
\end{equation}
Since the results in this paper are local, it is often more convenient to work with the metric
\begin{equation}\label{Eqn::VectorFields::DefineMetric}
    \MetricWWdv[x][y]:=\min\left\{ \DistWWdv[x][y],1 \right\}.
\end{equation}

    \subsection{Filtrations of sheaves}
    There are many different lists of H\"ormander vector fields with formal degrees
which are equivalent for our purposes. This will be exploited when defining
H\"ormander vector fields with formal degrees on the non-characteristic part of \(\BoundaryN\).
Because of this, it is useful to state our definitions and theorems with this equivalence in mind.
The most natural way to do so is using the language of sheaves--see
\cite[Section \ref*{CC::Section::Sheaves}]{StreetCarnotCaratheodoryBallsOnManifoldsWithBoundary} for a more detailed description.

\begin{definition}\label{Defn::Filtrations_Sheaves::Filtration_Sheaves}
    We say \(\FilteredSheafF\) is a \DefnIt{filtration of sheaves of vector fields on \(\ManifoldN\)}
    if for each \(d\in \Zg\) and \(\Omega\subseteq \ManifoldN\) open,
    \(\FilteredSheafF[\Omega][d]\) is a \(\CinftySpace[\Omega][\R]\)-submodule of \(\VectorFields{\Omega}\),
    for each \(d\in \Zg\), \(\Omega\mapsto \FilteredSheafF[\Omega][d]\) satisfies the sheaf axioms,
    and \(d_1\leq d_2\implies \FilteredSheafF[\Omega][d_1]\subseteq \FilteredSheafF[\Omega][d_2]\).
\end{definition}

\begin{definition}\label{Defn::Filtrations_Sheaves::SheafGeneratedBy}
    Fix \(\Omega\subseteq \ManifoldN\) open. For \(\sS\subset \VectorFields{\Omega}\times \Zg\)
    finite and \(\Omega_1\subseteq \Omega\) open and \(d\in \Zg\) set
    \begin{equation*}
        \FilteredSheafGenBy{\sS}[\Omega_1][d]:=\CinftySpace[\Omega_1][\R]\text{-module generated by } \left\{ X : \exists (X,k)\in \sS, k\leq d \right\}.
    \end{equation*}
    \(\FilteredSheafGenBy{\sS}\) is a filtration of sheaves of vector fields on \(\Omega\) (see \cite[Lemma \ref*{CC::Lemma::Sheaves::Sheaves::ModuleGenByFiniteSetIsSheaf}]{StreetCarnotCaratheodoryBallsOnManifoldsWithBoundary}).
\end{definition}

\begin{definition}\label{Defn::Filtrations_Sheaves::Hormander_Filtration_Sheaves}
    We say \(\FilteredSheafF\) is a \DefnIt{H\"ormander filtration\footnote{In \cite{StreetCarnotCaratheodoryBallsOnManifoldsWithBoundary},
    the longer terminology \textit{filtration of sheaves of vector fields of finite type satisfying H\"ormander's condition} was used,
    as separating the notions of finite type and H\"ormander's condition was relevant in that paper. In this paper, we only work
    with H\"ormander filtrations of sheaves of vector fields, so these distinctions do not matter, and we use this more compact
    terminology.} 
    of sheaves of vector fields on \(\ManifoldN\)} if
    \(\FilteredSheafF\) is a filtration of sheaves of vector fields on \(\ManifoldN\), and for each \(x\in \ManifoldN\),
    \(\exists\) an open neighborhood \(\Omega\subseteq \ManifoldN\) of \(x\), and
    \(\WWdv=\left\{ (W_1,\Wdv_1),\ldots, (W_r,\Wdv_r) \right\}\subset \VectorFields{\Omega}\times \Zg\), H\"ormander
    vector fields with formal degrees on \(\Omega\), such that
    \begin{equation*}
        \FilteredSheafF\big|_{\Omega}=\FilteredSheafGenBy{\WWdv},
    \end{equation*}
    where \(\FilteredSheafF\big|_{\Omega}\) denotes the filtration of sheaves of vector fields on
    \(\Omega\) given by restricting \(\FilteredSheafF\) to open subsets of \(\Omega\).
\end{definition}

\begin{lemma}[{\cite[Proposition \ref*{CC::Prop::Sheaves::LieAlg::HormandersCondiIsEquivToOtherNotions} \ref*{CC::Item::::Sheaves::LieAlg::HormandersCondiIsEquivToOtherNotions::LocallyHorCond}\(\Rightarrow\)\ref*{CC::Item::::Sheaves::LieAlg::HormandersCondiIsEquivToOtherNotions::HorCondOnCptSets}]{StreetCarnotCaratheodoryBallsOnManifoldsWithBoundary}}]
    \label{Lemma::Filtrations::GeneratorsOnRelCptSet}
    Let \(\FilteredSheafF\) be a H\"ormander filtration of sheaves of vector fields on \(\ManifoldN\).
    Then, for each \(\Omega\Subset \ManifoldN\) open and relatively compact,
    there exist 
    \(\WWdv=\left\{ (W_1,\Wdv_1),\ldots, (W_r,\Wdv_r) \right\}\subset \VectorFields{\Omega}\times \Zg\), H\"ormander
    vector fields with formal degrees on \(\Omega\), such that
    \begin{equation*}
        \FilteredSheafF\big|_{\Omega}=\FilteredSheafGenBy{\WWdv}.
    \end{equation*}
\end{lemma}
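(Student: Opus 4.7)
The plan is to construct the generating set $\WWdv$ by patching together the local generators of $\FilteredSheafF$ via a partition of unity. By the definition of a H\"ormander filtration, at each $x \in \overline{\Omega}$ there is an open neighborhood $U_x \subseteq \ManifoldN$ and H\"ormander vector fields with formal degrees $\WWdv^x = \{(W_1^x, d_1^x), \ldots, (W_{r_x}^x, d_{r_x}^x)\}$ on $U_x$ such that $\FilteredSheafF|_{U_x} = \FilteredSheafGenBy{\WWdv^x}$. Using compactness of $\overline{\Omega}$, I would extract a finite subcover $U_{x_1}, \ldots, U_{x_N}$ and choose a smooth partition of unity $\{\chi_j\}_{j=1}^N$ on a neighborhood of $\overline{\Omega}$ with $\supp \chi_j \Subset U_{x_j}$, then set
\begin{equation*}
    \WWdv := \left\{\left(\chi_j W_i^{x_j},\, d_i^{x_j}\right) : 1 \le j \le N,\ 1 \le i \le r_{x_j}\right\},
\end{equation*}
regarding each $\chi_j W_i^{x_j}$ as a smooth vector field on $\Omega$ via extension by zero outside $U_{x_j}$.

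Next I would verify $\FilteredSheafGenBy{\WWdv} = \FilteredSheafF|_\Omega$ as filtered sheaves. The inclusion $\subseteq$ is immediate because each $\chi_j W_i^{x_j}$ lies in $\FilteredSheafF[\Omega][d_i^{x_j}]$ (the $\CinftySpace$-module structure of $\FilteredSheafF[\cdot][d]$ together with the sheaf axioms handles the extension-by-zero from $U_{x_j}$ to $\Omega$). For $\supseteq$, it suffices by the sheaf axioms to check stalks: given $x \in \Omega$, pick $j_0$ with $\chi_{j_0}(x) > 0$ and a neighborhood $V$ of $x$ with $V \subseteq U_{x_{j_0}}$ and $\chi_{j_0}|_V > 0$; then any $X \in \FilteredSheafF[V][d]$ can be written as $X = \sum_i c_i W_i^{x_{j_0}}$ with $c_i \in \CinftySpace[V]$ (and only those $i$ with $d_i^{x_{j_0}} \le d$ appearing), so $X = \sum_i (c_i \chi_{j_0}^{-1})(\chi_{j_0} W_i^{x_{j_0}}) \in \FilteredSheafGenBy{\WWdv}[V][d]$.

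The main technical obstacle is verifying that $\WWdv$ satisfies H\"ormander's condition on $\Omega$; the formal degree assignments and the sheaf equality by themselves do not obviously transfer H\"ormander's condition. For $x \in \Omega$, choose $j_0$ with $\chi_{j_0}(x) > 0$. I would prove by induction on bracket depth that each iterated Lie bracket of the $W_i^{x_{j_0}}$'s, evaluated at $x$, lies in the span of iterated brackets of the $\chi_{j_0} W_i^{x_{j_0}}$'s at $x$ of equal or lower depth. The base case is immediate since $\chi_{j_0}(x) \ne 0$. The inductive step rests on the Leibniz identity
\begin{equation*}
    [\chi_{j_0} Y_a, \chi_{j_0} Y_b] = \chi_{j_0}^2 [Y_a, Y_b] + \chi_{j_0}(Y_a \chi_{j_0}) Y_b - \chi_{j_0}(Y_b \chi_{j_0}) Y_a,
\end{equation*}
which, after evaluating at $x$ and dividing by $\chi_{j_0}(x)^2 > 0$, solves for $[Y_a, Y_b](x)$ in terms of $[\chi_{j_0}Y_a, \chi_{j_0}Y_b](x)$ and strictly lower-depth brackets; the bookkeeping nuisance is that the lower-depth remainder terms involve bare $Y_a(x), Y_b(x)$ rather than $(\chi_{j_0}Y_a)(x),(\chi_{j_0}Y_b)(x)$, but these are handled by feeding them back through the base case. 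Since the $W_i^{x_{j_0}}$'s satisfy H\"ormander's condition at $x$, their iterated brackets span $T_x\ManifoldN$, hence so do the iterated brackets of the $\chi_{j_0} W_i^{x_{j_0}}$'s, and therefore so do those of $\WWdv$, completing the verification.
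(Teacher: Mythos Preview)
The paper does not prove this lemma; it simply cites \cite[Proposition 7.18]{StreetCarnotCaratheodoryBallsOnManifoldsWithBoundary}. Your partition-of-unity construction is the natural approach, and your sheaf-equality argument is correct.

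There is, however, a gap in the H\"ormander verification. In your inductive step you solve for $[Y_a,Y_b](x)$ in terms of $[\chi_{j_0}Y_a,\chi_{j_0}Y_b](x)$ plus lower-depth terms, but when $Y_a,Y_b$ are themselves iterated brackets of the $W_i^{x_{j_0}}$'s of depth greater than one, the vector field $[\chi_{j_0}Y_a,\chi_{j_0}Y_b]$ is \emph{not} an iterated bracket of the $\chi_{j_0}W_i^{x_{j_0}}$'s, so its value at $x$ need not lie in the span you are aiming for. Your inductive hypothesis only controls $Y_a(x),Y_b(x)$ pointwise, and pointwise vectors cannot be bracketed.

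The fix is to run the expansion in the opposite direction. By induction on depth $m$ (using exactly your Leibniz identity, but as an identity of vector fields), every iterated bracket $B$ of the $\chi_{j_0}W_i^{x_{j_0}}$'s of depth $m$ equals $\chi_{j_0}^{\,m}\widetilde B$ plus a $C^\infty$-linear combination of brackets of the $W_i^{x_{j_0}}$'s of depth strictly less than $m$, where $\widetilde B$ is the corresponding bracket with each $\chi_{j_0}W_i^{x_{j_0}}$ replaced by $W_i^{x_{j_0}}$. Evaluating at $x$ and using $\chi_{j_0}(x)\ne 0$, this triangular system inverts and gives $\widetilde B(x)$ in the span of values at $x$ of brackets of the $\chi_{j_0}W_i^{x_{j_0}}$'s of depth at most $m$. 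Equivalently and more symmetrically: near $x$ one has $W_i^{x_{j_0}}=\chi_{j_0}^{-1}\cdot(\chi_{j_0}W_i^{x_{j_0}})$ with $\chi_{j_0}^{-1}$ smooth, so the same forward expansion applied with $\chi_{j_0}^{-1}$ in place of $\chi_{j_0}$ yields the reverse containment directly.
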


\begin{remark}
    See \cite[Proposition \ref*{CC::Prop::Sheaves::LieAlg::HormandersCondiIsEquivToOtherNotions}]{StreetCarnotCaratheodoryBallsOnManifoldsWithBoundary}
    for another equivalent characterization of Definition \ref{Defn::Filtrations_Sheaves::Hormander_Filtration_Sheaves}.
\end{remark}

\begin{notation}
    Let \(\FilteredSheafF\) be a filtration of sheaves of vector fields on \(\ManifoldN\).
    We write \(\LieFilteredSheafF\) for the smallest filtration of sheaves of vector fields containing
    \(\FilteredSheafF\) and such that
    \begin{equation*}
        X\in \LieFilteredSheafF[\Omega][j], Y\in \LieFilteredSheafF[\Omega][k]\implies
        [X,Y]\in \LieFilteredSheafF[\Omega][j+k].
    \end{equation*}
\end{notation}

\begin{lemma}[{\cite[Lemma \ref*{CC::Lemma::Sheaves::LieAlg::LieAlgebraFilIsGenByXXd} and Proposition \ref*{CC::Prop::Sheaves::LieAlg::FFiniteTypeAndHorImpliesSameForLie}]{StreetCarnotCaratheodoryBallsOnManifoldsWithBoundary}}]
    \label{Lemma::Filtrations::GeneratorsForLieFiltration}
    Let \(\FilteredSheafF\) be a H\"ormander filtration of sheaves of vector fields on \(\ManifoldN\).
    Then, \(\LieFilteredSheafF\) is also a H\"ormander filtration of sheaves of vector fields on \(\ManifoldN\).
    In fact, for each \(\Omega\Subset \ManifoldN\) open and relatively compact,
    there exist \(\XXde=\left\{ (X_1,\Xde_1),\ldots, (X_q,\Xde_q) \right\}\subset \VectorFields{\Omega}\times \Zg\)
    such that \(\LieFilteredSheafF\big|_{\Omega}=\FilteredSheafGenBy{\XXde}\)
    and \(\forall x\in \Omega\), \(\Span\left\{ X_1(x),\ldots, X_q(x) \right\}=\TangentSpace{x}{\ManifoldN}\).
    
    Such an \(\XXde\) can be defined as follows. 
    Let \(\Omegah\Subset \ManifoldN\) have \(\Omega\Subset\Omegah\) and
    \(\FilteredSheafF\big|_{\Omegah}=\FilteredSheafGenBy{\WWdv}\)
    where \(\WWdv=\left\{ (W_1, \Wdv_1),\ldots, (W_r, \Wdv_r) \right\}\subset \VectorFields{\Omegah}\times \Zg\)
    are H\"ormander vector fields with formal degrees on \(\Omegah\).
    Recursively define a set of vector fields with formal degrees \(\GenWWdv\) to be the smallest set with:
    \begin{itemize}
        \item \(\WWdv\subseteq \GenWWdv\),
        \item If \((V_1,e_1),(V_2,e_2)\in \GenWWdv\), then \(([V_1,V_2],e_1+e_2)\in \GenWWdv\).
    \end{itemize}
    By compactness, \(W_1,\ldots, W_r\) satisfy H\"ormander's condition of order \(m\in \Zg\) on \(\overline{\Omega}\).
    Let \(\XXde=\left\{ (V,f)\in \GenWWdv : f\leq m\max\{\Wdv_1,\ldots, \Wdv_r\} \right\}\).
\end{lemma}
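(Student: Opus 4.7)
The plan is to verify the explicit construction with $\XXde$ first, and then extract the global claim that $\LieFilteredSheafF$ is a H\"ormander filtration. Throughout, I write $M := m\max\{\Wdv_1,\ldots,\Wdv_r\}$ for the cutoff used in the statement.

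\textbf{Step 1: Describe $\LieFilteredSheafF[\Omega][d]$ via iterated brackets.} I would first show that, for every open $\Omega\subseteq\Omegah$, $\LieFilteredSheafF[\Omega][d]$ is generated, as a $\CinftySpace[\Omega][\R]$-module, by the vector fields
$\{V : (V,e)\in\GenWWdv,\ e\leq d\}$.
The inclusion $\supseteq$ is forced by the axioms ($\WWdv\subset\FilteredSheafF$ and iterated bracketing respecting formal degrees). The reverse inclusion uses the Leibniz identity
\begin{equation*}
  [fV_1,gV_2] = fg\,[V_1,V_2] + f(V_1 g)V_2 - g(V_2 f)V_1,
\end{equation*}
which shows the module generated by $\GenWWdv$ (graded by formal degree) is itself a filtration of sheaves closed under brackets respecting degrees and containing $\FilteredSheafF|_{\Omegah}=\FilteredSheafGenBy{\WWdv}$. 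By minimality it contains $\LieFilteredSheafF|_{\Omegah}$.

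\textbf{Step 2: Reduce to brackets of degree at most $M$.} Now fix $\Omega\Subset\Omegah$ and let $\XXde$ be as in the statement. I consider two cases.
If $d\leq M$, then every $(V,e)\in\GenWWdv$ with $e\leq d$ already satisfies $e\leq M$ and hence lies in $\XXde$; Step 1 immediately gives $\LieFilteredSheafF[\Omega][d]=\FilteredSheafGenBy{\XXde}[\Omega][d]$.
If $d>M$, H\"ormander's condition of order $m$ on $\overline\Omega$ guarantees that iterated commutators of the $W_j$ of length $\leq m$ span $\TangentSpace{x}{\ManifoldN}$ at every $x\in\overline\Omega$; each such commutator, with its natural formal degree, has degree at most $M$ and is therefore an element of $\XXde$. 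This proves the spanning claim $\Span\{X_1(x),\ldots,X_q(x)\}=\TangentSpace{x}{\ManifoldN}$ for $x\in\Omega$. Consequently every smooth vector field on $\Omega$ is a $\CinftySpace[\Omega][\R]$-combination of $X_1,\ldots,X_q$, so $\FilteredSheafGenBy{\XXde}[\Omega][d]=\VectorFields{\Omega}\supseteq \LieFilteredSheafF[\Omega][d]$; the reverse inclusion is trivial. Combined with Case 1, this gives $\LieFilteredSheafF|_{\Omega}=\FilteredSheafGenBy{\XXde}$.

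\textbf{Step 3: Conclude that $\LieFilteredSheafF$ is a H\"ormander filtration globally.} Since $\FilteredSheafF$ is a H\"ormander filtration, every $x\in\ManifoldN$ admits an open neighborhood $\Omegah$ on which $\FilteredSheafF|_{\Omegah}=\FilteredSheafGenBy{\WWdv}$ for some H\"ormander vector fields with formal degrees. Choose any smaller open $\Omega$ with $x\in\Omega\Subset\Omegah$; Step 2 produces $\XXde\subset\VectorFields{\Omega}\times\Zg$ that is in particular a H\"ormander system on $\Omega$ (its vector fields span the tangent bundle, which is a stronger condition) and satisfies $\LieFilteredSheafF|_{\Omega}=\FilteredSheafGenBy{\XXde}$. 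Therefore $\LieFilteredSheafF$ itself meets Definition \ref{Defn::Filtrations_Sheaves::Hormander_Filtration_Sheaves}.

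The main obstacle is the degree bookkeeping in Step 1, namely verifying that the $C^\infty$-module generated by $\GenWWdv$ (organized by formal degree) is itself closed under brackets in the filtered sense, so that minimality of $\LieFilteredSheafF$ can be invoked. The Leibniz identity above does the job, but one must be careful that the extra first-order terms $f(V_1 g)V_2$ and $g(V_2 f)V_1$ sit in the correct filtration slot: they are $\CinftySpace[\Omega][\R]$-multiples of $V_2$ (resp.\ $V_1$), which have formal degrees $e_2\leq e_1+e_2$ (resp.\ $e_1\leq e_1+e_2$), so they do belong to the module at level $e_1+e_2$. Once this is in place, the truncation at $M$ is essentially free, because H\"ormander's condition makes higher-degree commutators redundant at the module level.
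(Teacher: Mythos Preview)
The paper does not supply its own proof of this lemma; it is quoted from the companion paper \cite{StreetCarnotCaratheodoryBallsOnManifoldsWithBoundary}. Your argument is correct and is the natural one (and almost certainly what the cited reference does): describe $\LieFilteredSheafF$ locally as the $\CinftySpace$-module generated by iterated brackets of the $W_j$, using the Leibniz identity plus minimality, and then observe that H\"ormander's condition of order $m$ makes brackets of formal degree greater than $M$ redundant because the shorter ones already span the tangent space pointwise.

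Two small points you pass over lightly. First, to invoke minimality in Step~1 you need the candidate filtration (the module generated by $\{V:(V,e)\in\GenWWdv,\ e\leq d\}$) to satisfy the sheaf axioms; this holds because at each level $d$ only finitely many elements of $\GenWWdv$ have formal degree $\leq d$ (each $\Wdv_j\geq 1$), and modules generated by finite sets of vector fields are sheaves (this is exactly the content cited beneath Definition~\ref{Defn::Filtrations_Sheaves::SheafGeneratedBy}). Second, you are implicitly using that $\LieFilteredSheafF\big|_{\Omegah}$ equals the Lie filtration of $\FilteredSheafF\big|_{\Omegah}$, i.e., that forming the Lie filtration commutes with restriction to open sets; this is true because the bracket is local, but it is worth a sentence since minimality of $\LieFilteredSheafF$ is a global statement on $\ManifoldN$.
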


\begin{remark}
    In the sequel, we often begin with a H\"ormander filtration of sheaves of vector fields on \(\ManifoldN\),
    and for some \(\Omega\Subset \ManifoldN\) open and relatively compact we take
    H\"ormander vector fields with formal degrees \(\WWdv\)
    such that \(\FilteredSheafF\big|_{\Omega}=\FilteredSheafGenBy{\WWdv}\), as in Lemma \ref{Lemma::Filtrations::GeneratorsOnRelCptSet}.
    Not only will our definitions not depend on the particular choice of generators \(\WWdv\), but in fact
    will only depend on \(\FilteredSheafF\) through \(\LieFilteredSheafF\). More precisely,
    if \(\FilteredSheafG\) is another H\"ormander filtration of sheaves of vector fields on \(\ManifoldN\)
    such that \(\LieFilteredSheafF=\LieFilteredSheafG\), then our definitions will be the same
    whether we use \(\FilteredSheafF\) or \(\FilteredSheafG\). See, for example Theorem \ref{Thm::Spaces::OnlyDependsOnLieFiltration}.
\end{remark}

\begin{remark}
    In the literature on maximal subellipticity, instead of using
    filtrations of sheaves of vector fields, one often
    takes as given the globally defined
    filtration of \(\CinftySpace[\ManifoldN][\R]\)-modules
    which in our case would be
    \(\FilteredSheafF[\ManifoldN][1]\subseteq \FilteredSheafF[\ManifoldN][2]\subseteq\cdots\).
See \cite[Remark \ref*{CC::Rmk::Sheaves::SheavesAndDistributionsEquiv}]{StreetCarnotCaratheodoryBallsOnManifoldsWithBoundary}
for some comments on why filtrations of sheaves are more appropriate for our situation.
\end{remark}

    \subsection{Filtrations of sheaves on submanifolds and non-characteristic points}\label{Section::VectorFieldsAndSheaves::RestrictingSheavesAndNonCharPoints}
    We wish to restrict a filtration of sheaves of vector fields
to a submanifolds, keeping only those vector fields which
are tangent to the submanifold.
Moreover, we are only interested in H\"ormander filtrations of sheaves
of vector fields and wish to understand settings where this property
is preserved under this restriction. 
In \cite[Section \ref*{CC::Section::Sheaves::Restriction}]{StreetCarnotCaratheodoryBallsOnManifoldsWithBoundary},
a general definition was given and
two settings were presented where one could preserve the relevant properties:
embedded, co-dimension \(0\) submanifold
with boundary (Proposition \ref{Prop::Filtrations::RestrictingFiltrations::CoDim0Restriction}) 
and restricting to the non-characteristic boundary (Proposition \ref{Prop::Filtrations::RestrictingFiltrations::LieFIsHormanderFiltration}).
We describe these results here.


Throughout this section, \(\ManifoldN\) will be a smooth manifold, possibly with boundary.
We begin with the general definition.
\begin{definition}\label{Defn::Filtrations::RestrictingFiltrations::RestrictedFiltration}
    Let \(\FilteredSheafF\) be a filtration of sheaves of vector fields on \(\ManifoldN\)
    and \(S\subseteq \ManifoldN\) an embedded sub-manifold. We let
    \(\RestrictFilteredSheaf{\FilteredSheafF}{S}\) be the smallest filtration of sheaves
    of vector fields on \(\ManifoldN\) such for \(U\subseteq S\) open,
    \begin{equation}\label{Eqn::Filtrations::RestrictingFiltrations::RestrictedFiltration::Defn}
        \left\{X\big|_U : \exists \Omega\subseteq \ManifoldN\text{ open }, \Omega\cap S=U, X\in \FilteredSheafF[\Omega][d], X(x)\in \TangentSpace{x}{S}, \forall x\in U  \right\}\subseteq \RestrictFilteredSheaf{\FilteredSheafNoSetF[d]}{S}[U].    
    \end{equation}
\end{definition}

\begin{remark}
    In fact, one has equality in \eqref{Eqn::Filtrations::RestrictingFiltrations::RestrictedFiltration::Defn}
    as can be shown with a partition of unity argument. However,
    in the special cases we are interested in, 
    this equality follows from more precise characterizations
    of the local generators \(\RestrictFilteredSheaf{\FilteredSheafF}{S}\) which are required for our proofs.
    See, e.g., Proposition \ref{Prop::Filtrations::RestrictingFiltrations::CoDim0Restriction}.
\end{remark}

\begin{proposition}[{\cite[Proposition \ref*{CC::Prop::Sheaves::Restrict::Codim0::MainRestrictionResult}]{StreetCarnotCaratheodoryBallsOnManifoldsWithBoundary}}]
    \label{Prop::Filtrations::RestrictingFiltrations::CoDim0Restriction}
    Suppose \(\ManifoldM\) is a smooth manifold (possibly with boundary) and \(\ManifoldN\subseteq \ManifoldM\)
    is a co-dimension \(0\) embedded submanifold (possibly with boundary). Let \(\FilteredSheafF\) be a H\"ormander filtration of sheaves of vector fields
    on \(\ManifoldM\).
    Then, \(\RestrictFilteredSheaf{\FilteredSheafF}{\ManifoldN}\) is a H\"ormander filtration of sheaves
    of vector fields on \(\ManifoldN\).
    Moreover, if \(\Omega\subseteq\ManifoldM\) is open and \(\FilteredSheafF\big|_{\Omega}=\FilteredSheafGenBy{\WWdv}\),
    where \(\WWdv=\left\{ (W_1,\Wdv_1),\ldots, (W_r,\Wdv_r) \right\}\) are H\"ormander vector fields with formal degrees
    on \(\Omega\), then
    \(\RestrictFilteredSheaf{\FilteredSheafF}{\ManifoldN}\big|_{\Omega\cap\ManifoldN}=\FilteredSheafGenBy{(W\big|_{\ManifoldN\cap \Omega}, \Wdv)}=\RestrictFilteredSheaf{\FilteredSheafF}{\ManifoldN\cap \Omega}\),
    where
    \((W\big|_{\ManifoldN\cap \Omega}, \Wdv)=\left\{ (W_1\big|_{\ManifoldN\cap \Omega}, \Wdv_1),\ldots, (W_r\big|_{\ManifoldN\cap \Omega}, \Wdv_r) \right\}\).
\end{proposition}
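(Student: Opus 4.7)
Since being a Hörmander filtration is a local property, my plan is to reduce immediately to a neighborhood $\Omega \subseteq \ManifoldM$ of any given point of $\ManifoldN$ on which $\FilteredSheafF\big|_\Omega = \FilteredSheafGenBy{\WWdv}$, with $\WWdv = \{(W_1,\Wdv_1),\ldots,(W_r,\Wdv_r)\}$ Hörmander vector fields with formal degrees; such an $\Omega$ exists by Definition \ref{Defn::Filtrations_Sheaves::Hormander_Filtration_Sheaves}. The observation driving everything is that since $\ManifoldN \subseteq \ManifoldM$ has codimension $0$, at every point $x \in \ManifoldN$ we have $\TangentSpace{x}{\ManifoldN} = \TangentSpace{x}{\ManifoldM}$ (with the usual convention that tangent spaces at boundary points are full-dimensional). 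Consequently, the tangency requirement $X(x) \in \TangentSpace{x}{\ManifoldN}$ built into Definition \ref{Defn::Filtrations::RestrictingFiltrations::RestrictedFiltration} is automatic for every $X \in \FilteredSheafF[\Omega'][d]$, and I can ignore it from this point on.

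Granted this, the inclusion $\FilteredSheafGenBy{(W\big|_{\ManifoldN\cap\Omega}, \Wdv)} \subseteq \RestrictFilteredSheaf{\FilteredSheafF}{\ManifoldN}\big|_{\ManifoldN\cap\Omega}$ is immediate: each $W_j\big|_{\ManifoldN\cap\Omega}$ is of the form allowed on the left-hand side of \eqref{Eqn::Filtrations::RestrictingFiltrations::RestrictedFiltration::Defn} with $\Omega' = \Omega$, and the right-hand side is a $\CinftySpace[\cdot][\R]$-submodule that contains all $\CinftySpace$-combinations. For the reverse inclusion, I would exploit the ``smallest filtration of sheaves'' characterization: working stalkwise near any $q \in U \subseteq \ManifoldN \cap \Omega$, after shrinking any section of $\RestrictFilteredSheaf{\FilteredSheafF}{\ManifoldN}[U][d]$ must take the form $Y\big|_U$ for some $Y \in \FilteredSheafF[\Omega'][d]$ with $\Omega' \cap \ManifoldN = U$, and I may further shrink so $\Omega' \subseteq \Omega$. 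Since $\FilteredSheafF\big|_\Omega = \FilteredSheafGenBy{\WWdv}$ is a sheaf by Lemma \ref{CC::Lemma::Sheaves::Sheaves::ModuleGenByFiniteSetIsSheaf}, $Y$ is a $\CinftySpace[\Omega'][\R]$-linear combination of those $W_j$ with $\Wdv_j \leq d$; restricting coefficients and generators to $U$ places $Y\big|_U$ in $\FilteredSheafGenBy{(W\big|_{\ManifoldN\cap\Omega}, \Wdv)}[U][d]$. The sheaf axioms then glue these local representatives.

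Next, I would check that $(W\big|_{\ManifoldN\cap\Omega}, \Wdv)$ is itself Hörmander: Lie brackets are defined by a local formula in coordinates, so iterated brackets of the $W_j\big|_{\ManifoldN\cap\Omega}$ computed intrinsically on $\ManifoldN\cap\Omega$ equal the pointwise restrictions of the corresponding iterated brackets of the $W_j$ on $\Omega$; these span $\TangentSpace{x}{\ManifoldM} = \TangentSpace{x}{\ManifoldN}$ at each $x \in \ManifoldN\cap\Omega$ by hypothesis, so Hörmander's condition transfers. This simultaneously gives the generator identity and shows $\RestrictFilteredSheaf{\FilteredSheafF}{\ManifoldN}\big|_{\ManifoldN\cap\Omega}$ is a Hörmander filtration locally, hence (since this is a local property) globally. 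The third claim $\RestrictFilteredSheaf{\FilteredSheafF}{\ManifoldN}\big|_{\ManifoldN\cap\Omega} = \RestrictFilteredSheaf{\FilteredSheafF}{\ManifoldN\cap\Omega}$ falls out by running the same local argument with $\ManifoldN\cap\Omega$ playing the role of $\ManifoldN$: both constructions yield $\FilteredSheafGenBy{(W\big|_{\ManifoldN\cap\Omega},\Wdv)}$.

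The main obstacle I anticipate is the sheaf-theoretic $\subseteq$ step—specifically, translating the abstract ``smallest filtration of sheaves'' description of $\RestrictFilteredSheaf{\FilteredSheafF}{\ManifoldN}$ into a concrete local representation that can be compared with $\FilteredSheafGenBy{(W\big|_{\ManifoldN\cap\Omega},\Wdv)}$. Once Lemma \ref{CC::Lemma::Sheaves::Sheaves::ModuleGenByFiniteSetIsSheaf} is deployed to give a usable local-combination representation for sections of $\FilteredSheafF\big|_\Omega$, everything else reduces to unwinding definitions and applying the codimension-zero identification of tangent spaces.
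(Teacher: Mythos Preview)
The paper does not prove this proposition: it is quoted verbatim from the companion paper \cite{StreetCarnotCaratheodoryBallsOnManifoldsWithBoundary} (see the citation in the proposition header), so there is no in-paper proof to compare against.

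That said, your sketch is sound and hits the essential point: in codimension~$0$ the tangency condition in Definition~\ref{Defn::Filtrations::RestrictingFiltrations::RestrictedFiltration} is vacuous, so $\RestrictFilteredSheaf{\FilteredSheafF}{\ManifoldN}$ is simply built from restrictions of sections of $\FilteredSheafF$. The one place to be careful is the step you flag yourself. Rather than arguing stalkwise about what sections of the ``smallest filtration'' must look like, it is cleaner to argue by minimality directly: show that $\FilteredSheafGenBy{(W\big|_{\ManifoldN\cap\Omega},\Wdv)}$ (or more precisely, a global sheaf on $\ManifoldN$ that locally agrees with such generated modules, patched via the sheaf property of $\FilteredSheafF$) is itself a filtration of sheaves containing the left-hand side of \eqref{Eqn::Filtrations::RestrictingFiltrations::RestrictedFiltration::Defn}; then the containment $\RestrictFilteredSheaf{\FilteredSheafF}{\ManifoldN}\subseteq\FilteredSheafGenBy{(W\big|_{\ManifoldN\cap\Omega},\Wdv)}$ on $\ManifoldN\cap\Omega$ follows from the definition of ``smallest.'' Your stalkwise description is morally equivalent but requires the equality in \eqref{Eqn::Filtrations::RestrictingFiltrations::RestrictedFiltration::Defn} noted in the remark following Definition~\ref{Defn::Filtrations::RestrictingFiltrations::RestrictedFiltration}, which is not proved in this paper either. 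The remaining points (brackets restrict pointwise, H\"ormander's condition transfers, the third equality) are exactly as you describe.
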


Proposition \ref{Prop::Filtrations::RestrictingFiltrations::CoDim0Restriction} shows that we can restrict
a H\"ormander filtration of sheaves of vector fields to a co-dimension \(0\) submanifold with boundary.
The next proposition shows that every such H\"ormander filtration of sheaves of vector fields
on a manifold with boundary arises in this way.

\begin{proposition}[{\cite[Proposition \ref*{CC::Prop::Sheaves::Restrict::Bndry::NonCharIsNonChar}]{StreetCarnotCaratheodoryBallsOnManifoldsWithBoundary}}]
    \label{Prop::Filtrations::RestrictingFiltrations::CoDim0CoRestriction}
    Let \(\ManifoldN\) be a smooth manifold with boundary and \(\FilteredSheafF\) a H\"ormander filtration
    of sheaves of vector fields on \(\ManifoldN\).
    Then, there exists a smooth manifold \(\ManifoldM\) (without boundary),
    such that \(\ManifoldN\subseteq \ManifoldM\) is a closed, co-dimension \(0\), embedded submanifold,
    and \(\FilteredSheafFh\) a H\"ormander filtration of sheaves of vector fields on \(\ManifoldM\)
    such that 
    \(\RestrictFilteredSheaf{\FilteredSheafFh}{\ManifoldN}=\FilteredSheafF\).
\end{proposition}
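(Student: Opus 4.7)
The plan is to construct $\ManifoldM$ as an exterior collar extension of $\ManifoldN$ and then build $\FilteredSheafFh$ by smoothly extending local H\"ormander generators of $\FilteredSheafF$ across the added collar. By the classical collar neighborhood theorem, there is an open embedding $\phi:\BoundaryN\times[0,1)\hookrightarrow\ManifoldN$ identifying $\BoundaryN\times\{0\}$ with $\BoundaryN$. I form $\ManifoldM$ by gluing $\BoundaryN\times(-1,0]$ to $\ManifoldN$ along $\BoundaryN$ via $\phi$; the resulting smooth manifold has no boundary, and $\ManifoldN\subseteq\ManifoldM$ is a closed, codimension-zero, embedded submanifold.

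To produce $\FilteredSheafFh$, cover $\BoundaryN$ by a locally finite family of boundary-adapted coordinate charts $\widehat{\Omega}_\alpha\subseteq\ManifoldM$, small enough that each $\Omega_\alpha := \widehat{\Omega}_\alpha\cap\ManifoldN$ is relatively compact in $\ManifoldN$ and that Lemma \ref{Lemma::Filtrations::GeneratorsOnRelCptSet} furnishes H\"ormander generators $(W_j^\alpha, d_j^\alpha)_{j=1}^{r_\alpha}$ of $\FilteredSheafF$ on $\Omega_\alpha$. In the local coordinates each $W_j^\alpha$ is a smooth vector field on the closed half-space portion of the chart, and its coefficient functions extend smoothly across the boundary (for instance via Seeley extension), producing $\widehat{W}_j^\alpha$ on $\widehat{\Omega}_\alpha$ with $\widehat{W}_j^\alpha\big|_{\Omega_\alpha}=W_j^\alpha$. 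I then define $\FilteredSheafFh$ on $\ManifoldM$ as the filtration of sheaves locally generated on $\InteriorN$ by $\FilteredSheafF|_{\InteriorN}$ and, near any point of $\BoundaryN$ or the exterior collar, by the union of all the $(\widehat{W}_j^\beta, d_j^\beta)$ for charts $\widehat{\Omega}_\beta$ containing the point.

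To check H\"ormander's condition on $\ManifoldM$: at $x\in\InteriorN$ it is inherited from $\FilteredSheafF$, while at $x_0\in\BoundaryN\cap\widehat{\Omega}_\alpha$ the iterated Lie brackets of the $\widehat{W}_j^\alpha$ evaluated at $x_0$ coincide with those of the $W_j^\alpha$ (their coefficients agree on the half-space side of the chart, so all derivatives at $x_0$ match the one-sided derivatives of the originals) and hence span $\TangentSpace{x_0}{\ManifoldM}=\TangentSpace{x_0}{\ManifoldN}$; since spanning of iterated brackets up to a fixed order is an open condition, H\"ormander's condition persists on a neighborhood of $\BoundaryN$ in $\ManifoldM$, and after shrinking the exterior collar it holds on all of $\ManifoldM$. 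To check $\RestrictFilteredSheaf{\FilteredSheafFh}{\ManifoldN}=\FilteredSheafF$, I apply Proposition \ref{Prop::Filtrations::RestrictingFiltrations::CoDim0Restriction} locally on $\widehat{\Omega}_\alpha$: the restriction is generated by the $\widehat{W}_j^\beta\big|_{\Omega_\alpha}$ over all $\beta$ with $\widehat{\Omega}_\beta\cap\widehat{\Omega}_\alpha\ne\emptyset$, each of which lies in $\FilteredSheafF|_{\Omega_\alpha}$; conversely, the subset $(W_j^\alpha)_j$ by itself already generates $\FilteredSheafF|_{\Omega_\alpha}$, yielding equality.

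The main technical obstacle is making the global $\FilteredSheafFh$ well-defined and simultaneously compatible with both properties: different charts provide genuinely different extensions, and their values on the exterior side of the collar need not agree on overlaps. My device for overcoming this is to take the sheaf generated by the union of all local extensions simultaneously, which works because both desired properties behave well under enlargement of generating sets. H\"ormander's condition is monotone --- extra vector fields only let iterated brackets span sooner --- and the restriction identity survives because each extra generator's restriction to $\ManifoldN$ already lies in $\FilteredSheafF$, while some local subfamily suffices to generate all of $\FilteredSheafF|_{\Omega_\alpha}$.
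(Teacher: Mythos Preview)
The paper does not give its own proof of this proposition; it is quoted verbatim from the companion paper \cite{StreetCarnotCaratheodoryBallsOnManifoldsWithBoundary}, Proposition~7.31. Your collar-plus-Seeley-extension construction is the natural approach and is essentially correct.

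Two points could be tightened. First, your definition of \(\FilteredSheafFh\) is given piecewise---by \(\FilteredSheafF|_{\InteriorN}\) on the interior and by the extended generators near the boundary and collar---and you should state explicitly that these prescriptions agree on their overlap \(\InteriorN\cap\widehat{\Omega}_\alpha\); they do, since there \(\widehat{W}_j^\beta|_{\ManifoldN}=W_j^\beta\in\FilteredSheafF\), so both give \(\FilteredSheafF\), but this is the step that makes \(\FilteredSheafFh\) a well-defined sheaf. Second, your invocation of Proposition~\ref{Prop::Filtrations::RestrictingFiltrations::CoDim0Restriction} speaks of ``\(\widehat{W}_j^\beta|_{\Omega_\alpha}\)'', but \(\widehat{W}_j^\beta\) is only defined on \(\widehat{\Omega}_\beta\), so this restriction lives on \(\Omega_\alpha\cap\Omega_\beta\); the argument should be phrased on a small neighborhood of each point where local finiteness gives a single finite generating set, which is enough since the desired equality is a sheaf-local statement.
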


Next, we turn to defining the non-characteristic boundary and restrictions of filtrations of sheaves to it.

\begin{definition}
    Let \(\FilteredSheafF\) be a H\"ormander filtration of sheaves of vector fields on \(\ManifoldN\).
    We define \(\degBoundaryNF:\BoundaryN\rightarrow \Zg\) by, for \(x_0\in \BoundaryN\),
    \begin{equation*}
        \degBoundaryNF(x_0):=\min \left\{ d\in \Zg : \exists \Omega\subseteq \ManifoldN\text{ open}, x_0\in \Omega, X\in \LieFilteredSheafF[\Omega][d], X(x_0)\not\in \TangentSpace{x_0}{\BoundaryN}\right\}.
    \end{equation*}
\end{definition}

\(\degBoundaryNF:\BoundaryN\rightarrow \Zg\) is clearly upper semi-continuous, and depends on \(\FilteredSheafF\)
only though \(\LieFilteredSheafF\).

\begin{definition}\label{Defn::Filtrations::RestrictingFiltrations::NonCharPoints}
    We say \(x_0\in \BoundaryN\) is \DefnIt{\(\FilteredSheafF\)-non-characteristic} if \(x\mapsto \degBoundaryNF[x]\),
    \(\BoundaryN\rightarrow \Zg\) is continuous at \(x=x_0\). I.e., if \(\degBoundaryNF[x]\) is constant on a neighborhood
    of \(x_0\).
\end{definition}

\begin{example}\label{Example::Filtrations::RestrictingFiltrations::NonCharExamples}
    Fix \(x_0\in \BoundaryN\), and \(\Omega\subseteq\ManifoldN\) an \(\ManifoldN\)-open
    neighborhood of \(x_0\),
    with \(\FilteredSheafF\big|_{\Omega}=\FilteredSheafGenBy{\WWdv}\)
    where \(\WWdv=\left\{ (W_1,\Wdv_1),\ldots, (W_r,\Wdv_r) \right\}\) are H\"ormander vector fields with
    formal degrees on \(\Omega\).
    \begin{enumerate}[(i)]
        \item If \(\exists j\) with \(\Wdv_{j}=1\) and \(W_{j}(x_0)\not \in \TangentSpace{x_0}{\BoundaryN}\),
            then \(x_0\) is \(\FilteredSheafF\)-non-characteristic.
        \item\label{Item::Filtrations::RestrictingFiltrations::NonCharExamples::CharacterizeWhenAllDegsOne} In the case \(\WWdv=\WWo=\left\{ (W_1,1),\ldots, (W_r,1) \right\}\), then \(x_0\)
            is \(\FilteredSheafF\)-non-characteristic if and only if \(\exists j\) with \(W_j(x_0)\not\in T_{x_0}\BoundaryN\)
            if and only if \(\BoundaryN\) is non-characteristic near \(x_0\) for the sub-Laplacian \(\sum W_j^{*}W_j\).
            This is the notion used by Jerison \cite{JerisonDirichletProblemForTheKohnLaplacianI,JerisonDirichletProblemForTheKohnLaplacianII}
            and Kohn and Nirenberg \cite{KohnNirenbergNonCoerciveBoundaryValueProblems}.
        \item\label{Item::Filtrations::RestrictingFiltrations::NonCharExamples::CharacterizeNonCharInTermsOfWWdv}
        In general, \(x_0\) is \(\FilteredSheafF\)-non-characteristic if and only if \(\exists j\) with \(W_j(x_0)\not\in \TangentSpace{x_0}{\BoundaryN}\)
            and \(\Wdv_k<\Wdv_j\implies W_k(x)\in \TangentSpace{x}{\BoundaryN}\) for all \(x\in \BoundaryN\) near \(x_0\).
        \item If \(\FilteredSheafF[\Omega][d]=\VectorFields{\Omega}\), \(\forall \Omega, d\), then every point of
            \(\BoundaryN\) is \(\FilteredSheafF\)-non-characteristic. When we choose this filtration of sheaves of vector fields,
            then our results correspond with the classical results in the elliptic setting (see Proposition \ref{Prop::Spaces::EqualsClassical}). Thus, in the elliptic
            setting, every boundary point is automatically non-characteristic.
        \item Let \(\ManifoldN=\left\{ (x,y)\in \R^2 : y\geq x^2 \right\}\), \(\WWdv=\left\{ (\partial_x,1), (x\partial_y,1) \right\}\),
            and \(\FilteredSheafF=\FilteredSheafGenBy{\WWdv}\). Then, \((x,x^2)\) is \(\FilteredSheafF\)-non-characteristic if and only if
            \(x\ne 0\). Not only do our trace results not apply at \(x=0\), but the natural generalizations
            of our results to this setting are false. See Section \ref{Section::Trace::CharacteristicFailure} for a discussion.
        \item Suppose \(\ManifoldN=[0,\infty)\times \ManifoldM\) where \(\ManifoldM\) is a manifold without boundary, and we consider the heat equation
            \(\partial_t+\sum_{j=1}^s Y_j^{*}Y_j\), where \(Y_1,\ldots, Y_s\) are H\"ormander vector fields on \(\ManifoldM\).
            We choose \(\WWdv=\left\{ (\partial_t,2), (Y_1,1),\ldots, (Y_s,1) \right\}\) and \(\FilteredSheafF=\FilteredSheafGenBy{\WWdv}\).
            Then every point in the boundary (i.e., points of the form \((0,x)\in \{0\}\times \ManifoldM\)) is \(\FilteredSheafF\)-non-characteristic.
    \end{enumerate}
\end{example}

Set, for \(\lambda\in \Zg\),
\begin{equation*}
    \BoundaryNncF:=\left\{ x\in \BoundaryN : x\text{ is }\FilteredSheafF\text{-non-characteristic} \right\},
    \quad
    \BoundaryNncF[\lambda]=\left\{ x\in \BoundaryNncF : \degBoundaryNF[x]=\lambda \right\}.
\end{equation*}
Note that \(\BoundaryNncF\) is an open subset of \(\BoundaryN\), and \(\BoundaryNncF[\lambda]\), \(\lambda\in \Zg\),
are disjoint clopen subsets of \(\BoundaryNncF\) with \(\bigcup_{\lambda}\BoundaryNncF[\lambda]=\BoundaryNncF\).
Set \(\ManifoldNncF:=\InteriorN\cup \BoundaryNncF\) and \(\ManifoldNncF[\lambda]:=\InteriorN\cup \BoundaryNncF[\lambda]\),
where we give them the manifold structure as open submanifolds of \(\ManifoldN\).
Even if \(\ManifoldN\) is compact, \(\ManifoldNncF\) and \(\ManifoldNncF[\lambda]\) may not be compact.

If we replace \(\ManifoldN\) by \(\ManifoldNncF\) and \(\FilteredSheafF\) by \(\FilteredSheafF\big|_{\ManifoldNncF}\),
then we may assume that every point of \(\BoundaryN\) is \(\FilteredSheafF\)-non-characteristic.
Similarly, if we replace \(\ManifoldN\) by \(\ManifoldNncF[\lambda]\) and \(\FilteredSheafF\) by \(\FilteredSheafF\big|_{\ManifoldNncF[\lambda]}\)
then we may assume that \(\degBoundaryNF[x]=\lambda\), \(\forall x\in \BoundaryN\).
Since the main results of this paper are local results on the non-characteristic boundary, there
is no loss of generality in using either of these reductions (though we will not do so).
Note that if \(\ManifoldN\) does not have boundary, then \(\ManifoldNncF=\ManifoldN\).

\begin{proposition}[{\cite[Proposition \ref*{CC::Prop::Sheaves::Restrict::Bndry::RestritionIsFiniteTypeAndSpandTangent}]{StreetCarnotCaratheodoryBallsOnManifoldsWithBoundary}}]
    \label{Prop::Filtrations::RestrictingFiltrations::LieFIsHormanderFiltration}
    Let \(\FilteredSheafF\) be a H\"ormander filtration of sheaves of vector fields on \(\ManifoldN\).
    Then, \(\RestrictFilteredSheaf{\LieFilteredSheafF}{\BoundaryNncF}\) is a H\"ormander filtration of sheaves
    of vector fields\footnote{In fact, more is true. \(\RestrictFilteredSheaf{\LieFilteredSheafF}{\BoundaryNncF}\)
    is locally generated by finite sets, whose vector fields span the tangent space.} 
    on \(\BoundaryNncF\).
\end{proposition}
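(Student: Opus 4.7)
The plan is to work locally near a chosen non-characteristic boundary point and build explicit generators for $\RestrictFilteredSheaf{\LieFilteredSheafF}{\BoundaryNncF}$ by a single-pivot row reduction that kills the transverse component. Fix $x_0 \in \BoundaryNncF$ with $\degBoundaryNF(x_0) = \lambda$. By continuity of $\degBoundaryNF$ at $x_0$, shrink to a relatively compact open neighborhood $\Omega$ of $x_0$ in $\ManifoldN$ on which $\degBoundaryNF[x] = \lambda$ for all $x \in \Omega \cap \BoundaryN$. Apply Lemma \ref{Lemma::Filtrations::GeneratorsForLieFiltration} to get $\XXde = \{(X_1,\Xde_1),\ldots,(X_q,\Xde_q)\}$ with $\LieFilteredSheafF\big|_{\Omega} = \FilteredSheafGenBy{\XXde}$ and $\Span\{X_1(x),\ldots,X_q(x)\} = \TangentSpace{x}{\ManifoldN}$ for every $x \in \Omega$.

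Next I would extract the pivot. From the definition of $\degBoundaryNF(x_0) = \lambda$ and the fact that $\LieFilteredSheafF[\Omega][\lambda]$ is the $\CinftySpace[\Omega][\R]$-span of those $X_j$ with $\Xde_j \leq \lambda$, some $X_j$ with $\Xde_j \leq \lambda$ must satisfy $X_j(x_0) \notin \TangentSpace{x_0}{\BoundaryN}$; minimality of $\lambda$ forces $\Xde_j = \lambda$. Relabel to $j = 1$ and shrink $\Omega$ so $X_1(x) \notin \TangentSpace{x}{\BoundaryN}$ for all $x \in \Omega \cap \BoundaryN$. The same minimality, now applied at every boundary point of $\Omega$, implies $X_j(x) \in \TangentSpace{x}{\BoundaryN}$ for all $x \in \Omega \cap \BoundaryN$ whenever $\Xde_j < \lambda$. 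Choose boundary coordinates $(x',t)$ on $\Omega$ with $\BoundaryN = \{t=0\}$ and write $X_j = \alpha_j \partial_t + \beta_j$ where $\beta_j$ is tangent to the level sets $\{t=\mathrm{const}\}$. Then $\alpha_1$ is nonvanishing on $\Omega$ (after shrinking), so $a_j := \alpha_j/\alpha_1 \in \CinftySpace[\Omega][\R]$ is defined for $j \geq 2$; set $Y_j := X_j - a_j X_1$. For $\Xde_j \geq \lambda$ we have $a_j X_1 \in \LieFilteredSheafF[\Omega][\lambda] \subseteq \LieFilteredSheafF[\Omega][\Xde_j]$, so $Y_j \in \LieFilteredSheafF[\Omega][\Xde_j]$; by construction $Y_j$ is tangent to $\BoundaryN$ (indeed to every level set of $t$).

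Define the candidate generators on $\Omega \cap \BoundaryN$:
\begin{equation*}
    \mathcal{Z} := \left\{ \left(X_j\big|_{\Omega\cap\BoundaryN},\Xde_j\right) : \Xde_j < \lambda \right\} \cup \left\{ \left(Y_j\big|_{\Omega\cap\BoundaryN},\Xde_j\right) : j \geq 2,\ \Xde_j \geq \lambda \right\}.
\end{equation*}
Every element of $\mathcal{Z}$ is the restriction of a tangent vector field in $\LieFilteredSheafF$ of the corresponding degree, so $\FilteredSheafGenBy{\mathcal{Z}} \subseteq \RestrictFilteredSheaf{\LieFilteredSheafF}{\BoundaryNncF}\big|_{\Omega \cap \BoundaryN}$. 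For the reverse inclusion I would take $Z \in \LieFilteredSheafF[\Omega][d]$ tangent on $\Omega \cap \BoundaryN$, expand $Z = \sum_{\Xde_j \leq d} c_j X_j$ with $c_j \in \CinftySpace[\Omega][\R]$, and substitute $X_j = Y_j + a_j X_1$ for $j \geq 2$ with $\Xde_j \geq \lambda$. The coefficient of $X_1$ becomes $c_1 + \sum_{j \neq 1,\ \lambda \leq \Xde_j \leq d} c_j a_j$; on $\BoundaryN$ this equals $\alpha_1^{-1}$ times the $\partial_t$-coefficient of $Z$ at $t=0$, which vanishes by tangency of $Z$. Hence $Z\big|_{\Omega\cap\BoundaryN}$ is a $\CinftySpace[\Omega \cap \BoundaryN][\R]$-combination of $\mathcal{Z}$, and the two filtrations agree.

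Finally I would verify H\"ormander's condition for $\mathcal{Z}$, which is in fact trivial: the substitution $X_j \mapsto Y_j$ is an invertible row operation, so $\Span(\{X_1(x)\} \cup \mathcal{Z}(x)) = \TangentSpace{x}{\ManifoldN}$ for every $x \in \Omega \cap \BoundaryN$, and removing $X_1(x) \notin \TangentSpace{x}{\BoundaryN}$ from a spanning set for a space of codimension one leaves a spanning set for the hyperplane $\TangentSpace{x}{\BoundaryN}$. This proves the footnote claim simultaneously. The main obstacle is the generation argument in the third paragraph: one must pick coordinates and the pivot $X_1$ carefully enough that the tangency of $Z$ translates algebraically into the vanishing of the $X_1$-coefficient after the substitution; once this is set up, the rest is linear algebra and the definitions.
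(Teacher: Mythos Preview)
Your proof is correct and is essentially the construction the paper itself carries out in Lemmas \ref{Lemma::Spaces::Multiplication::NiceGenerators} and \ref{Lemma::Trace::CoordReduction::Proof::NiceGenerators}: pick a transverse pivot of minimal degree $\lambda$, subtract multiples of it from the remaining generators to make them tangent, and check that the resulting family both spans $\TangentSpace{x}{\BoundaryN}$ and generates the restricted filtration. In this paper the proposition is not proved but cited from the companion paper, so your argument supplies the details the paper defers; compare especially part \ref{Item::Trace::CoordReduction::Proof::NiceGenerators::GenerateBoundarySheaf} of Lemma \ref{Lemma::Trace::CoordReduction::Proof::NiceGenerators}, which is exactly your generation claim and is deduced there from the same row-reduction together with Definition \ref{Defn::Filtrations::RestrictingFiltrations::RestrictedFiltration}.
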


    \subsection{Metrics and Volumes}\label{Section::VectorFieldsAndSheaves::MetricsAndVolumes}
    A key tool when studying maximally subelliptic PDEs are scaling maps
adapted to H\"ormander vector fields with formal degrees.
On manifolds without boundary,
such scaling maps were introduced by Nagel, Stein, and Wainger \cite{NagelSteinWaingerBallsAndMetricsDefinedByVectorFieldsI},
and were generalized and refined by many authors (see, for example,
\cite{TaoWrightLpImprovingBoundsForAveragesAlongCurves,
FeffermanPhongSubellipticEigenvalueProblems,
FeffermanSanchezCalleFundamentalSolutionsForSecondOrderSubellipticOperators,
MontanariMorbidelliNonsmoothHormanderVectorFieldsAndTheirControlBalls,
StovaStreetCoordinatesAdaptedToVectorFieldsCanonicalCoordinates,
StreetCoordinatesAdaptedToVectorFieldsII,
StreetCoordinatesAdaptedToVectorFieldsIII}); see \cite[Section 3.3.1]{StreetMaximalSubellipticity}
for a description of the importance of these maps.

In \cite{StreetCarnotCaratheodoryBallsOnManifoldsWithBoundary} similar scaling maps
were introduced not only on \(\InteriorN\), but also on \(\BoundaryNncF\).
We present the scaling maps in Section \ref{Section::Spaces::Scaling}, but describe some consequences here.

Let \(\ManifoldN\) be a smooth manifold with boundary, and let
\(\FilteredSheafF\) be a H\"ormander filtration of sheaves of vector fields
on \(\ManifoldN\).
Fix \(\Compact\Subset \ManifoldNncF\), and let \(\Omega\Subset \ManifoldNncF\) be open
with \(\Compact\subseteq \Omega\). Let
\(\FilteredSheafF\big|_{\Omega}=\FilteredSheafGenBy{\WWdv}\)
where
\(\WWdv=\left\{ (W_1, \Wdv_1),\ldots, (W_r,\Wdv_r) \right\}\subset \VectorFields{\Omega}\times \Zg\)
are H\"ormander vector fields with formal degrees on \(\Omega\)--see Lemma \ref{Lemma::Filtrations::GeneratorsOnRelCptSet}.
Let \(\Vol\) be a smooth, strictly positive density on \(\Omega\).

\begin{proposition}\label{Prop::VectorFields::Scaling::SameTopology}
    The metric topology on \(\Compact\) induced by \(\MetricWWdv\) agrees with the usual topology on \(\Compact\)
    as a subspace of \(\ManifoldN\).
\end{proposition}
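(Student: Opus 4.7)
The plan is to deduce this directly from the metric/topology equivalence results already established in the companion paper \cite{StreetCarnotCaratheodoryBallsOnManifoldsWithBoundary} (specifically, the theorems labeled there as GivesUsualTopology and ExistsEquivalenceClass item SameTop), and to verify that their hypotheses apply to the present setup. The substantive content is the ball--box comparison: one must show two containments between $\MetricWWdv$-balls and usual-topology neighborhoods, uniformly over $\Compact$.

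For the direction \emph{usual topology refines metric topology}, I would use the existence of the scaling maps to be described in Section \ref{Section::VectorFieldsAndSheaves::MetricsAndVolumes} / \ref{Section::Spaces::Scaling}. These maps produce, for each $x_0\in\Compact$ and each small $\delta>0$, a parametrization of $\BWWdv{x_0}{\delta}$ by a fixed Euclidean model set. At interior points this is the classical Nagel--Stein--Wainger theorem. At non-characteristic boundary points $x_0$, Example \ref{Example::Filtrations::RestrictingFiltrations::NonCharExamples} \ref{Item::Filtrations::RestrictingFiltrations::NonCharExamples::CharacterizeNonCharInTermsOfWWdv} supplies a vector field in $\LieFilteredSheafF$ which is transverse to $\BoundaryN$ at $x_0$ and whose formal degree is locally constant (by Definition \ref{Defn::Filtrations::RestrictingFiltrations::NonCharPoints}); combined with a spanning set of tangent directions from Lemma \ref{Lemma::Filtrations::GeneratorsForLieFiltration}, this allows the scaling construction of \cite{StreetCarnotCaratheodoryBallsOnManifoldsWithBoundary} to parametrize a full $\ManifoldN$-neighborhood of $x_0$ by $B_{\WWdv}(x_0,\delta)$. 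Consequently $\MetricWWdv[x_0][\cdot]\to 0$ on usual-topology neighborhoods of $x_0$.

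For the reverse direction \emph{metric topology refines usual topology}, I would argue directly from the definition of $\BWWdv{x_0}{\delta}$: an admissible curve $\gamma$ satisfies $\gamma'(t)=\sum_j a_j(t)\,\delta^{\Wdv_j}W_j(\gamma(t))$ with $\|\sum|a_j|^2\|_\infty<1$, so in a fixed local chart around $x_0$ its Euclidean speed is bounded by $C\delta$ for $\delta\le 1$ (using smoothness of the $W_j$ and boundedness on a neighborhood of $\Compact$). Integrating over $t\in[0,1]$ shows $\gamma(1)$ lies within Euclidean distance $C\delta$ of $x_0$, giving $\BWWdv{x_0}{\delta}\subseteq$ (Euclidean ball of radius $C\delta$). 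Hence every usual-topology open neighborhood of $x_0$ contains some $\MetricWWdv$-ball.

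The main obstacle is the uniformity at the boundary. One needs the scaling constants and transverse-vector-field data to be chosen compatibly across all of $\Compact$, including points arbitrarily close to $\BoundaryN$. This is handled by covering $\Compact$ by finitely many coordinate patches on which \(\FilteredSheafF\) has local generators \(\WWdv\) as in Lemma \ref{Lemma::Filtrations::GeneratorsOnRelCptSet}, together with the fact that on $\BoundaryNncF$ the function $\degBoundaryNF$ is locally constant, so the ``escape vector field'' can be selected with uniformly bounded formal degree on $\Compact\cap\BoundaryN$. With that uniformity secured, the bound from the two containments yields a homeomorphism; alternatively, continuity in one direction together with compactness of $\Compact$ in the usual topology and Hausdorffness of the metric topology forces equality.
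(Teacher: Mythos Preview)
Your proposal is correct and takes essentially the same approach as the paper: the paper's proof is simply a citation to \cite[Theorem \ref*{CC::Thm::Sheaves::Metrics::ExistsEquivalenceClass} \ref*{CC::Item::Sheaves::Metrics::ExistsEquivalenceClass::SameTop}]{StreetCarnotCaratheodoryBallsOnManifoldsWithBoundary} (with the alternative of citing \cite[Theorem \ref*{CC::Thm::Metrics::Results::GivesUsualTopology}]{StreetCarnotCaratheodoryBallsOnManifoldsWithBoundary} for $\DistWWdv$ and noting that $\MetricWWdv=\DistWWdv\wedge 1$ gives the same topology), exactly the two results you identify. Your additional paragraphs sketching the ball--box comparison, the scaling-map argument at non-characteristic boundary points, and the direct speed estimate are not needed for the present proof---they are effectively an outline of what the cited companion-paper theorems contain---but they are accurate and the hypothesis check (that $\Compact\Subset\ManifoldNncF$ so the non-characteristic scaling theory applies) is the right thing to verify.
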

\begin{proof}
    This is 
    \cite[Theorem \ref*{CC::Thm::Sheaves::Metrics::ExistsEquivalenceClass} \ref*{CC::Item::Sheaves::Metrics::ExistsEquivalenceClass::SameTop}]{StreetCarnotCaratheodoryBallsOnManifoldsWithBoundary}.
    Alternatively, the result for \(\MetricWWdv\) replaced by \(\DistWWdv\) is
    \cite[Theorem \ref*{CC::Thm::Metrics::Results::GivesUsualTopology}]{StreetCarnotCaratheodoryBallsOnManifoldsWithBoundary}.
    Since \(\MetricWWdv=\DistWWdv\wedge 1\), the topology induced by \(\DistWWdv\) and \(\MetricWWdv\) are the same,
    so the result follow for \(\MetricWWdv\) as well.
\end{proof}

\begin{proposition}\label{Prop::VectorFields::Scaling::VolEstimates}
    Let \(\LieFilteredSheafF\big|_{\Omega}=\FilteredSheafGenBy{\XXde}\), where
    \(\XXde=\left\{ (X_1,\Xde_1),\ldots, (X_q,\Xde_q) \right\}\subset \VectorFields{\Omega}\times \Zg\),
    and set, for \(x\in \Omega\), \(\delta>0\),
    \begin{equation}\label{Eqn::VectorFields::Scaling::DefineLambda}
        \Lambda(x,\delta):=\max\left\{ \Vol(x)(\delta^{\Xde_{j_1}}X_{j_1}(x), \ldots, \delta^{\Xde_{j_n}}X_{j_n}(x)) : j_1,\ldots, j_n\in \{1,\ldots, q\} \right\}.
    \end{equation}
    There exists \(\delta_1\in (0,1]\) such that the following hold.
    \begin{enumerate}[(a)]
        \item\label{Item::VectorFields::Scaling::VolEstimates::VolApproxLambda} \(\forall x\in \Compact\), \(\delta\in (0,\delta_1]\),
            \begin{equation*}
                \Vol[\BWWdv{x}{\delta}]\approx \Lambda(x,\delta).
            \end{equation*}
        \item\label{Item::VectorFields::Scaling::VolEstimates::VolWedge1ApproxLambdaWedge1} \(\forall x\in \Compact\), \(\delta>0\),
            \begin{equation*}
                \Vol[\BWWdv{x}{\delta}]\wedge 1 \approx \Lambda(x,\delta)\wedge 1\approx \Vol[\BWWdv{x}{\delta\wedge \delta_1}].
            \end{equation*}
        \item\label{Item::VectorFields::Scaling::VolEstimates::VolDoubling} \(\forall x\in \Compact\), \(\delta\in (0,\delta_1]\),
            \begin{equation*}
                \Vol[\BWWdv{x}{2\delta}]\lesssim \Vol[\BWWdv{x}{\delta}].
            \end{equation*}
        \item\label{Item::VectorFields::Scaling::VolEstimates::VolWedge1Doubling} \(\forall x\in \Compact\), \(\delta>0\),
            \begin{equation*}
                \Vol[\BWWdv{x}{2\delta}]\wedge 1\lesssim \Vol[\BWWdv{x}{\delta}]\wedge 1.
            \end{equation*} 
            
        \item\label{Item::VectorFields::Scaling::VolEstimates::VolPoly} There exists \(Q_1,Q_2\geq 1\) such that \(\forall x\in \Compact\), \(\delta\in (0,\delta_1]\),
            \begin{equation*}
                \delta^{Q_2}\lesssim \Vol[\BWWdv{x}{\delta}]\lesssim \delta^{Q_1}.
            \end{equation*}
    \end{enumerate}
\end{proposition}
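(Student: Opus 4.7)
The plan is to deduce all five assertions from the ball-volume comparison (a), which itself rests on the scaling-map machinery from \cite{StreetCarnotCaratheodoryBallsOnManifoldsWithBoundary} reprised in Section \ref{Section::Spaces::Scaling}. I will first apply Lemma \ref{Lemma::Filtrations::GeneratorsForLieFiltration} to produce \(\XXde=\{(X_1,\Xde_1),\ldots,(X_q,\Xde_q)\}\) with \(\LieFilteredSheafF\big|_{\Omega}=\FilteredSheafGenBy{\XXde}\) and with \(X_1(x),\ldots,X_q(x)\) spanning \(\TangentSpace{x}{\ManifoldN}\) at every \(x\in\Omega\); these are precisely the vector fields appearing in the definition \eqref{Eqn::VectorFields::Scaling::DefineLambda} of \(\Lambda\). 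Part (a) is then the Nagel--Stein--Wainger style ball-box comparison in the non-characteristic boundary setting, which the companion paper supplies.

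I will prove (e) next, using only the definition of \(\Lambda\). Each summand \(\delta^{\Xde_{j_1}+\cdots+\Xde_{j_n}}|\Vol(x)(X_{j_1},\ldots,X_{j_n})|\) in the max has exponent in \([n,n\max_j\Xde_j]\) since every \(\Xde_j\geq 1\); the coefficient is uniformly bounded above on \(\overline{\Compact}\) by smoothness of \(\Vol\) and the \(X_j\), while at each \(x\in\overline{\Compact}\) some \(n\)-tuple from \(X_1,\ldots,X_q\) spans \(\TangentSpace{x}{\ManifoldN}\) and hence yields a coefficient uniformly bounded below by some \(c_0>0\) on \(\Compact\) (by continuity and compactness). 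Setting \(Q_1=n\geq 1\) and \(Q_2=n\max_j\Xde_j\geq 1\) then yields (e).

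Fix \(\delta_1\in(0,1]\) small enough that the comparison in (a) holds on \(\Compact\). For (b) with \(\delta\leq\delta_1\) one has \(\delta\wedge\delta_1=\delta\) and (a) immediately gives the three-way equivalence (using that \(A\asymp B\) implies \(A\wedge 1\asymp B\wedge 1\)). For \(\delta>\delta_1\), monotonicity of the ball, (a), and (e) give \(\Vol[\BWWdv{x}{\delta}]\geq\Vol[\BWWdv{x}{\delta_1}]\asymp\Lambda(x,\delta_1)\gtrsim\delta_1^{Q_2}>0\); and \(\Lambda(x,\delta)\) is non-decreasing in \(\delta\), so \(\Lambda(x,\delta)\gtrsim 1\). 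Hence all three quantities are \(\asymp 1\), finishing (b). Part (c) follows from (a) together with the polynomial estimate \(\Lambda(x,2\delta)\leq 2^{Q_2}\Lambda(x,\delta)\) when \(2\delta\leq\delta_1\), and from the uniform upper bound \(\Vol[\BWWdv{x}{2\delta_1}]\lesssim 1\) when \(2\delta>\delta_1\geq\delta\); the latter is because \(\BWWdv{x}{2\delta_1}\) stays inside a fixed relatively compact subset of \(\Omega\) for \(x\in\Compact\), as follows from Proposition \ref{Prop::VectorFields::Scaling::SameTopology}. Part (d) then combines (c) for \(\delta\leq\delta_1/2\) with (b) for \(\delta>\delta_1/2\), where both sides are already \(\asymp 1\).

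The main obstacle is (a) itself. The upper bound \(\Vol[\BWWdv{x}{\delta}]\lesssim\Lambda(x,\delta)\) reduces to a Jacobian computation after parametrizing points of the ball via exponentials of linear combinations of the \(\delta^{\Xde_j}X_j\); the matching lower bound, however, must exhibit a subset of \(\BWWdv{x}{\delta}\) with measure comparable to \(\Lambda(x,\delta)\), and this is exactly where the boundary scaling maps of \cite{StreetCarnotCaratheodoryBallsOnManifoldsWithBoundary} are essential, since they supply non-degenerate parametrizations that extend properly up to the non-characteristic part of \(\BoundaryN\).
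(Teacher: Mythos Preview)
Your approach is essentially the same as the paper's: both defer the substantive content (part (a)) to the scaling-map theorem in the companion paper \cite{StreetCarnotCaratheodoryBallsOnManifoldsWithBoundary}, and both derive (e) from (a) via the obvious polynomial bounds on \(\Lambda\). The paper's proof is terser---it cites the companion result for (a)--(d) simultaneously and only singles out (e) as needing the extra sentence---whereas you spell out how (b), (c), (d) follow from (a) and (e). Your reductions are correct.

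One small point: in your argument for (c) in the boundary case \(\delta_1/2<\delta\leq\delta_1\), you claim \(\BWWdv{x}{2\delta_1}\) stays inside a fixed relatively compact subset of \(\Omega\) and cite Proposition~\ref{Prop::VectorFields::Scaling::SameTopology}. That proposition only says the metric and manifold topologies agree on \(\Compact\); it does not directly control where balls centered in \(\Compact\) land. The cleanest fix is either to cite the ball-containment conclusion of the scaling theorem (Theorem~\ref{Thm::Spaces::Scaling::MainScalingThm}~\ref{Item::Spaces::Scaling::ContainedInAmbient}, which is exactly this statement after localizing), or---as the paper in effect does---to note that (c) itself is already part of the companion paper's theorem. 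Alternatively, you can make your citation work by applying the topology proposition to a slightly larger compact set \(\overline{\Omega_1}\) with \(\Compact\Subset\Omega_1\Subset\Omega\), using that disjoint compact sets have positive metric separation, and then shrinking \(\delta_1\); but this is more circuitous than simply invoking the companion result directly.
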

\begin{proof}
    This largely follows from \cite[Theorem \ref*{CC::Thm::Scaling::MainResult}]{StreetCarnotCaratheodoryBallsOnManifoldsWithBoundary}.
    In that reference, an explicit choice of \(\XXde\) was made.
    However, using the compactness of \(\Compact\), it is easy to see that the particular
    choice of generators \(\XXde\) for \(\LieFilteredSheafF\big|_{\Omega}\) do not change the result.
    \ref{Item::VectorFields::Scaling::VolEstimates::VolPoly} was not stated in \cite{StreetCarnotCaratheodoryBallsOnManifoldsWithBoundary},
    but follows immediately from \ref{Item::VectorFields::Scaling::VolEstimates::VolApproxLambda}.
\end{proof}

\begin{proposition}\label{Prop::VectorFields::Scaling::VolAndMetricEquivalnce}
    Let \(\FilteredSheafG\) be another H\"ormander filtration of sheaves of vector fields on
    \(\ManifoldN\) such that \(\LieFilteredSheafG=\LieFilteredSheafF\),
    \(\Omega_1\Subset \ManifoldNncF\) open with 
    \(\Compact\subseteq \Omega_1\),
    and
    \(\FilteredSheafG\big|_{\Omega_1}=\FilteredSheafGenBy{\ZZde}\), where
    \(\ZZde\subset \VectorFields{\Omega_1}\times \Zg\) are H\"ormander vector fields with formal degrees on \(\Omega_1\).
    Let \(\Volh\) be a smooth, strictly positive density on \(\Omega_1\)
    Then, there exists \(\delta_1\in (0,1]\) such that the following hold.
    \begin{enumerate}[(a)]
        \item\label{Item::VectorFields::Scaling::VolAndMetricEquivalnce::VolWequalsVolZ} \(\forall x\in \Compact\), \(\delta\in (0,\delta_1]\),
            \begin{equation*}
                \Vol[\BWWdv{x}{\delta}]\approx \Volh[\BZZde{x}{\delta}].
            \end{equation*}
        \item\label{Item::VectorFields::Scaling::VolAndMetricEquivalnce::VolWWedge1equalsVolZWedg1} \(\forall x\in \Compact\), \(\delta>0\),
            \begin{equation*}
                \Vol[\BWWdv{x}{\delta}]\wedge 1 \approx \Volh[\BZZde{x}{\delta}]\wedge 1.
            \end{equation*}
        \item\label{Item::VectorFields::Scaling::VolAndMetricEquivalnce::MetricWequalsMetricZ} \(\forall x,y\in \Compact\), \(\MetricWWdv[x][y]\approx \MetricZZde[x][y]\).
        \item\label{Item::VectorFields::Scaling::VolAndMetricEquivalnce::VolOfMetricWequalsVolOfMetricZ} \(\forall x,y\in \Compact\), \(\delta>0\),
            \begin{equation*}
                \Vol[\BWWdv{x}{\delta + \MetricWWdv[x][y]}]\wedge 1
                \approx \Volh[\BZZde{x}{\delta+\MetricZZde[x][y]}]\wedge 1.
            \end{equation*}
    \end{enumerate}
\end{proposition}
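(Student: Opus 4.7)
The plan is to reduce everything to the scaling apparatus of Proposition~\ref{Prop::VectorFields::Scaling::VolEstimates} and to the invariance-of-metrics theorem from \cite{StreetCarnotCaratheodoryBallsOnManifoldsWithBoundary}, leveraging the single hypothesis $\LieFilteredSheafG = \LieFilteredSheafF$. First I would choose common data: by Lemma~\ref{Lemma::Filtrations::GeneratorsOnRelCptSet} applied to the shared Lie filtration on $\Omega \cap \Omega_1$, pick an open $\Omega'$ with $\Compact \subseteq \Omega' \Subset \Omega \cap \Omega_1$ and a single list $\XXde = \{(X_1,\Xde_1),\ldots,(X_q,\Xde_q)\}$ of H\"ormander vector fields with formal degrees such that $\LieFilteredSheafF\big|_{\Omega'} = \LieFilteredSheafG\big|_{\Omega'} = \FilteredSheafGenBy{\XXde}$. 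Since $\Vol$ and $\Volh$ are both smooth strictly positive densities on the compact set $\overline{\Omega'}$, their ratio is bounded above and below there, so $\Vol \approx \Volh$ uniformly on $\Compact$.

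For parts \ref{Item::VectorFields::Scaling::VolAndMetricEquivalnce::VolWequalsVolZ} and \ref{Item::VectorFields::Scaling::VolAndMetricEquivalnce::VolWWedge1equalsVolZWedg1} I would apply Proposition~\ref{Prop::VectorFields::Scaling::VolEstimates} twice with the common generators $\XXde$: once to $(\WWdv,\Vol)$ and once to $(\ZZde,\Volh)$. This yields, on a common range of $\delta$,
\[ \Vol[\BWWdv{x}{\delta}] \approx \Lambda_{\Vol}(x,\delta), \qquad \Volh[\BZZde{x}{\delta}] \approx \Lambda_{\Volh}(x,\delta), \]
where both $\Lambda$-functions are defined by \eqref{Eqn::VectorFields::Scaling::DefineLambda} using the same $\XXde$ but different densities. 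Because $\Vol \approx \Volh$ on $\Compact$ and $\Lambda$ is linear in the density, $\Lambda_{\Vol}(x,\delta) \approx \Lambda_{\Volh}(x,\delta)$, which gives \ref{Item::VectorFields::Scaling::VolAndMetricEquivalnce::VolWequalsVolZ}. Part \ref{Item::VectorFields::Scaling::VolAndMetricEquivalnce::VolWWedge1equalsVolZWedg1} is the same argument using the $\wedge 1$ version in Proposition~\ref{Prop::VectorFields::Scaling::VolEstimates}\ref{Item::VectorFields::Scaling::VolEstimates::VolWedge1ApproxLambdaWedge1}.

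Part \ref{Item::VectorFields::Scaling::VolAndMetricEquivalnce::MetricWequalsMetricZ} is the essential geometric content, and it is supplied by the equivalence-of-metrics assertion in \cite[Theorem~\ref*{CC::Thm::Sheaves::Metrics::ExistsEquivalenceClass}]{StreetCarnotCaratheodoryBallsOnManifoldsWithBoundary}: two H\"ormander filtrations with the same Lie filtration induce comparable truncated Carnot--Carath\'eodory metrics on any $\Compact \Subset \ManifoldNncF$. Finally, part \ref{Item::VectorFields::Scaling::VolAndMetricEquivalnce::VolOfMetricWequalsVolOfMetricZ} combines \ref{Item::VectorFields::Scaling::VolAndMetricEquivalnce::MetricWequalsMetricZ}, which gives $\delta + \MetricWWdv[x][y] \approx \delta + \MetricZZde[x][y]$, with \ref{Item::VectorFields::Scaling::VolAndMetricEquivalnce::VolWWedge1equalsVolZWedg1} applied at these comparable radii, absorbing the bounded multiplicative constant between the radii by iterating the doubling estimate Proposition~\ref{Prop::VectorFields::Scaling::VolEstimates}\ref{Item::VectorFields::Scaling::VolEstimates::VolWedge1Doubling} a bounded number of times. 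The only genuinely nontrivial input is part \ref{Item::VectorFields::Scaling::VolAndMetricEquivalnce::MetricWequalsMetricZ}, which is used here as a black box; everything else is bookkeeping around Proposition~\ref{Prop::VectorFields::Scaling::VolEstimates}, using that the $\Lambda$ function depends on $\FilteredSheafF$ only through generators of $\LieFilteredSheafF$ and linearly on the density.
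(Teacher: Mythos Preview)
Your proposal is correct and follows essentially the same approach as the paper: pick common generators $\XXde$ for $\LieFilteredSheafF=\LieFilteredSheafG$ near $\Compact$, compare both $\Lambda$-functions through Proposition~\ref{Prop::VectorFields::Scaling::VolEstimates}, cite \cite[Theorem~\ref*{CC::Thm::Sheaves::Metrics::ExistsEquivalenceClass}]{StreetCarnotCaratheodoryBallsOnManifoldsWithBoundary} for \ref{Item::VectorFields::Scaling::VolAndMetricEquivalnce::MetricWequalsMetricZ}, and combine \ref{Item::VectorFields::Scaling::VolAndMetricEquivalnce::MetricWequalsMetricZ} with doubling and \ref{Item::VectorFields::Scaling::VolAndMetricEquivalnce::VolWWedge1equalsVolZWedg1} for \ref{Item::VectorFields::Scaling::VolAndMetricEquivalnce::VolOfMetricWequalsVolOfMetricZ}. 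The only cosmetic differences are that the paper takes the common generators on $\Omega\cup\Omega_1$ rather than on a subset of $\Omega\cap\Omega_1$, and that you are more explicit about the $\Vol\approx\Volh$ step (which the paper leaves implicit in the phrase ``(a) and (b) follow from Proposition~\ref{Prop::VectorFields::Scaling::VolEstimates}'').
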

\begin{proof}
    Let \(\Omega_2:=\Omega\cup \Omega_1\), and let
    \(\LieFilteredSheafF\big|_{\Omega_2}=\FilteredSheafGenBy{\XXdv}\).
    Thus, \(\LieFilteredSheafF\big|_{\Omega}=\FilteredSheafGenBy{\XXdv}\big|_{\Omega}\)
    and \(\LieFilteredSheafG\big|_{\Omega_1}=\FilteredSheafGenBy{\XXdv}\big|_{\Omega_1}\).
    It follows that we may use this choice of \(\XXdv\) when defining
    \(\Lambda\) in Proposition \ref{Prop::VectorFields::Scaling::VolEstimates}
    both when applied to \(\WWdv\) and \(\Vol\) and when applied to \(\ZZde\) and \(\Volh\).
    From here, \ref{Item::VectorFields::Scaling::VolAndMetricEquivalnce::VolWequalsVolZ}
    and \ref{Item::VectorFields::Scaling::VolAndMetricEquivalnce::VolWWedge1equalsVolZWedg1}
    follow from Proposition \ref{Prop::VectorFields::Scaling::VolEstimates} \ref{Item::VectorFields::Scaling::VolEstimates::VolApproxLambda}
    and \ref{Item::VectorFields::Scaling::VolEstimates::VolWedge1ApproxLambdaWedge1}, respectively.

    \ref{Item::VectorFields::Scaling::VolAndMetricEquivalnce::MetricWequalsMetricZ} follows from
    \cite[Theorem \ref*{CC::Thm::Sheaves::Metrics::ExistsEquivalenceClass} \ref*{CC::Item::Sheaves::Metrics::ExistsEquivalenceClass::AgreesWithCCInducedByG}]{StreetCarnotCaratheodoryBallsOnManifoldsWithBoundary}.
    There is one slight discrepancy between the notation of this paper and the notation in \cite{StreetCarnotCaratheodoryBallsOnManifoldsWithBoundary}.
    The notation \(\rho_{\WWdv}\) in \cite{StreetCarnotCaratheodoryBallsOnManifoldsWithBoundary}
    is the same as
    \(\DistWWdv[x][y]\) as defined in \eqref{Eqn::VectorFields::DefineDistance},
    while \(\rho_{\WWdv}\wedge 1\) in \cite{StreetCarnotCaratheodoryBallsOnManifoldsWithBoundary} is
    \(\MetricWWdv\) as defined in \eqref{Eqn::VectorFields::DefineMetric}.
    With this translation, \ref{Item::VectorFields::Scaling::VolAndMetricEquivalnce::MetricWequalsMetricZ} follows directly from
    \cite[Theorem \ref*{CC::Thm::Sheaves::Metrics::ExistsEquivalenceClass} \ref*{CC::Item::Sheaves::Metrics::ExistsEquivalenceClass::AgreesWithCCInducedByG}]{StreetCarnotCaratheodoryBallsOnManifoldsWithBoundary}.

    \ref{Item::VectorFields::Scaling::VolAndMetricEquivalnce::VolOfMetricWequalsVolOfMetricZ}:
    Using \ref{Item::VectorFields::Scaling::VolAndMetricEquivalnce::MetricWequalsMetricZ}
    and Proposition \ref{Prop::VectorFields::Scaling::VolEstimates} \ref{Item::VectorFields::Scaling::VolEstimates::VolWedge1Doubling},
    we see
    \begin{equation*}
        \Vol[\BWWdv{x}{\delta + \MetricWWdv[x][y]}]\wedge 1
                \approx \Vol[\BWWdv{x}{\delta + \MetricZZde[x][y]}]\wedge 1, \quad \forall x,y\in \Compact, \quad \delta>0.
    \end{equation*}
    Using \ref{Item::VectorFields::Scaling::VolAndMetricEquivalnce::VolWWedge1equalsVolZWedg1}, we have
    \begin{equation*}
        \Vol[\BWWdv{x}{\delta + \MetricZZde[x][y]}]
        \approx
        \Volh[\BZZde{x}{\delta + \MetricZZde[x][y]}]\wedge 1, \quad \forall x,y\in \Compact, \quad \delta>0.
    \end{equation*}
    Combining these two equations yields \ref{Item::VectorFields::Scaling::VolAndMetricEquivalnce::VolOfMetricWequalsVolOfMetricZ}.
\end{proof}

\begin{proposition}\label{Prop::VectorFields::Scaling::AmbientVolAndMetricEquivalnce}
    Let \(\ManifoldM\) be a smooth manifold without boundary such that \(\ManifoldN\subseteq \ManifoldM\)
    is a closed, embedded, co-dimension \(0\) submanifold with boundary.
    Let \(\FilteredSheafFh\) be a H\"ormander filtration of sheaves of vector fields on \(\ManifoldM\)
    such that \(\RestrictFilteredSheaf{\FilteredSheafFh}{\ManifoldN}=\FilteredSheafF\).
    Fix \(\Omegah\Subset \ManifoldM\) open and relatively compact in \(\ManifoldM\), with \(\Compact\subseteq \Omegah\),
    and \(\FilteredSheafFh\big|_{\Omegah}=\FilteredSheafGenBy{\ZZde}\) where \(\ZZde\subset \VectorFields{\Omegah}\times \Zg\)
    are H\"ormander vector fields with formal degrees on \(\Omegah\).
    Let \(\Volh\) be a smooth, strictly positive density on \(\Omegah\).
    Then, there exists \(\delta_1\in (0,1]\) such that the following hold.
    \begin{enumerate}[(a)]
        \item\label{Item::VectorFields::Scaling::AmbientVolAndMetricEquivalnce::VolWequalsVolZ} \(\forall x\in \Compact\), \(\delta\in (0,\delta_1]\),
            \begin{equation*}
                \Vol[\BWWdv{x}{\delta}]\approx \Volh[\BZZde{x}{\delta}].
            \end{equation*}
        \item\label{Item::VectorFields::Scaling::AmbientVolAndMetricEquivalnce::VolWWedge1equalsVolZWedg1} \(\forall x\in \Compact\), \(\delta>0\),
            \begin{equation*}
                \Vol[\BWWdv{x}{\delta}]\wedge 1 \approx \Volh[\BZZde{x}{\delta}]\wedge 1.
            \end{equation*}
        \item\label{Item::VectorFields::Scaling::AmbientVolAndMetricEquivalnce::MetricWequalsMetricZ} \(\forall x,y\in \Compact\), \(\MetricWWdv[x][y]\approx \MetricZZde[x][y]\).
        \item\label{Item::VectorFields::Scaling::AmbientVolAndMetricEquivalnce::VolOfMetricWequalsVolOfMetricZ} \(\forall x,y\in \Compact\), \(\delta>0\),
            \begin{equation*}
                \Vol[\BWWdv{x}{\delta + \MetricWWdv[x][y]}]\wedge 1
                \approx \Volh[\BZZde{x}{\delta+\MetricZZde[x][y]}]\wedge 1.
            \end{equation*}
    \end{enumerate}
\end{proposition}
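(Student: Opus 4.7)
The plan is to reduce (a)--(d) to Proposition~\ref{Prop::VectorFields::Scaling::VolAndMetricEquivalnce} by producing local generators of $\FilteredSheafF$ inside $\ManifoldN$ obtained by restricting the ambient generators $\ZZde$, and then bridging from the resulting ``submanifold balls'' to the ambient balls $\BZZde{x}{\delta}$.

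First, since $\Compact \Subset \ManifoldNncF$, $\Compact \subseteq \Omega$, and $\Compact \subseteq \Omegah$, choose an open set $\Omega_1$ with $\Compact \subseteq \Omega_1 \Subset \Omega \cap \Omegah \cap \ManifoldNncF$ (note $\Omega \subseteq \ManifoldN$, so $\Omega_1 \subseteq \ManifoldN$ automatically). By Proposition~\ref{Prop::Filtrations::RestrictingFiltrations::CoDim0Restriction} applied to the codimension-$0$ embedding $\ManifoldN \subseteq \ManifoldM$ and to $\FilteredSheafFh\big|_{\Omegah}=\FilteredSheafGenBy{\ZZde}$,
\begin{equation*}
\FilteredSheafF\big|_{\Omega_1}
= \RestrictFilteredSheaf{\FilteredSheafFh}{\ManifoldN}\big|_{\Omega_1}
= \FilteredSheafGenBy{(Z\big|_{\Omega_1},\Zde)},
\end{equation*}
so $(Z\big|_{\Omega_1},\Zde)$ is a set of H\"ormander vector fields with formal degrees on $\Omega_1$ generating $\FilteredSheafF\big|_{\Omega_1}$. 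Applying Proposition~\ref{Prop::VectorFields::Scaling::VolAndMetricEquivalnce} with $\FilteredSheafG:=\FilteredSheafF$ (so the hypothesis $\LieFilteredSheafG=\LieFilteredSheafF$ is automatic), with this $\Omega_1$, with generators $(Z\big|_{\Omega_1},\Zde)$, and with density $\Volh\big|_{\Omega_1}$, one obtains the analogues of (a)--(d) with $B_{(Z|_{\Omega_1},\Zde)}(x,\delta)$ and $\rho_{(Z|_{\Omega_1},\Zde)}$ in place of $\BZZde{x}{\delta}$ and $\MetricZZde$.

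It therefore remains to compare these submanifold quantities with the ambient quantities. The metric equivalence $\rho_{(Z|_{\Omega_1},\Zde)}(x,y) \approx \MetricZZde[x][y]$ on $\Compact\times \Compact$ is precisely \cite[Theorem~\ref*{CC::Thm::Sheaves::Metrics::SameMetricFromAmbientSpace}]{StreetCarnotCaratheodoryBallsOnManifoldsWithBoundary}, which identifies the CC metric on $\ManifoldN$ induced by $\RestrictFilteredSheaf{\FilteredSheafFh}{\ManifoldN}$ with the restriction of the CC metric on $\ManifoldM$ induced by $\FilteredSheafFh$ on compact subsets of $\ManifoldNncF$; this yields (c). For the volumes, choose generators $\XXde$ of $\LieFilteredSheafFh\big|_{\Omegah}$ via Lemma~\ref{Lemma::Filtrations::GeneratorsForLieFiltration}. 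Since Lie brackets commute with restriction to open codimension-$0$ subsets, the restrictions $(X\big|_{\Omega_1},\Xde)$ generate $\LieFilteredSheafF\big|_{\Omega_1}$, so applying Proposition~\ref{Prop::VectorFields::Scaling::VolEstimates}\ref{Item::VectorFields::Scaling::VolEstimates::VolApproxLambda} on both sides gives
\begin{equation*}
\Volh\!\left[\BZZde{x}{\delta}\right] \approx \Lambda_{\XXde,\Volh}(x,\delta),
\qquad
\Vol\!\left[\BWWdv{x}{\delta}\right] \approx \Lambda_{(X|_{\Omega_1},\Xde),\Vol}(x,\delta),
\end{equation*}
for $x\in \Compact$ and $\delta \in (0,\delta_1]$. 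Because $X_j\big|_{\Omega_1}(x)=X_j(x)$ at every $x\in \Omega_1$, the two right-hand sides differ only by the smooth positive ratio $\Vol/\Volh$ at $x$, which is bounded above and below on the compact set $\overline{\Omega_1}\supseteq \Compact$; this yields (a). Part (b) follows by the identical argument using Proposition~\ref{Prop::VectorFields::Scaling::VolEstimates}\ref{Item::VectorFields::Scaling::VolEstimates::VolWedge1ApproxLambdaWedge1}, and (d) follows from (b), (c), and the doubling estimate~\ref{Item::VectorFields::Scaling::VolEstimates::VolWedge1Doubling}, exactly as in the proof of Proposition~\ref{Prop::VectorFields::Scaling::VolAndMetricEquivalnce}\ref{Item::VectorFields::Scaling::VolAndMetricEquivalnce::VolOfMetricWequalsVolOfMetricZ}.

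The main obstacle is the metric inequality $\rho_{(Z|_{\Omega_1},\Zde)}(x,y)\lesssim \MetricZZde[x][y]$: an admissible path in $\ManifoldM$ joining two points of $\ManifoldN$ may leave $\ManifoldN$, and one must show that such excursions cannot produce essential shortcuts. This is precisely where the non-characteristic hypothesis $\Compact\subseteq \ManifoldNncF$ is used in a nontrivial way, and it constitutes the technical heart of \cite[Theorem~\ref*{CC::Thm::Sheaves::Metrics::SameMetricFromAmbientSpace}]{StreetCarnotCaratheodoryBallsOnManifoldsWithBoundary}. Citing that theorem reduces the remainder of the proof to the bookkeeping sketched above.
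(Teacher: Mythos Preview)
Your proof is correct and follows essentially the same route as the paper: both arguments reduce (a) and (b) to the pointwise $\Lambda$-formula of Proposition~\ref{Prop::VectorFields::Scaling::VolEstimates} by choosing common generators of $\LieFilteredSheafFh$ and restricting them to $\ManifoldN$, cite \cite[Theorem~\ref*{CC::Thm::Sheaves::Metrics::SameMetricFromAmbientSpace}]{StreetCarnotCaratheodoryBallsOnManifoldsWithBoundary} for (c), and derive (d) from (b), (c), and doubling. Your preliminary appeal to Proposition~\ref{Prop::VectorFields::Scaling::VolAndMetricEquivalnce} is harmless but redundant, since once you invoke the $\Lambda$-formula you are comparing $\Vol[\BWWdv{x}{\delta}]$ and $\Volh[\BZZde{x}{\delta}]$ directly and never actually use the intermediate $(Z|_{\Omega_1},\Zde)$-balls; the paper simply omits that detour.
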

\begin{proof}
    Let \(\Omega_1\Subset\ManifoldM\) be an open, relatively compact set in \(\ManifoldM\)
    such that \(\Omega\cup \Omegah\subseteq \Omega_1\). Take \(\XhXde=\left\{ (\Xh_1,\Xde_1),\ldots, (\Xh_q,\Xde_q) \right\}\subset \VectorFields{\Omega_1}\times \Zg\)
    such that \(\LieFilteredSheafFh\big|_{\Omega_1}=\FilteredSheafGenBy{\XhXde}\).
    Set \(X_j:=\Xh_j\big|_{\Omega}\) and \(\XXde=\left\{ (X_1,\Xde_1),\ldots, (X_q,\Xde_q) \right\}\).
    It follows from Proposition \ref{Prop::Filtrations::RestrictingFiltrations::CoDim0Restriction}
    that \(\LieFilteredSheafF\big|_{\Omega}=\FilteredSheafGenBy{\XXde}\).
    For \(x\in \Compact\), the formula for \(\Lambda\) in \eqref{Prop::Filtrations::RestrictingFiltrations::CoDim0Restriction}
    is the same whether we use \(\XhXde\) or \(\XXde\).
    From here, \ref{Item::VectorFields::Scaling::AmbientVolAndMetricEquivalnce::VolWequalsVolZ}
    and \ref{Item::VectorFields::Scaling::AmbientVolAndMetricEquivalnce::VolWWedge1equalsVolZWedg1}
    follow from Proposition \ref{Prop::VectorFields::Scaling::VolEstimates} \ref{Item::VectorFields::Scaling::VolEstimates::VolApproxLambda}
    and \ref{Item::VectorFields::Scaling::VolEstimates::VolWedge1ApproxLambdaWedge1}, respectively.

    \ref{Item::VectorFields::Scaling::AmbientVolAndMetricEquivalnce::MetricWequalsMetricZ}
    follows from \cite[Theorem \ref*{CC::Thm::Sheaves::Metrics::SameMetricFromAmbientSpace}]{StreetCarnotCaratheodoryBallsOnManifoldsWithBoundary}.

    \ref{Item::VectorFields::Scaling::AmbientVolAndMetricEquivalnce::VolOfMetricWequalsVolOfMetricZ}
    follows from \ref{Item::VectorFields::Scaling::AmbientVolAndMetricEquivalnce::VolWWedge1equalsVolZWedg1}
    and \ref{Item::VectorFields::Scaling::AmbientVolAndMetricEquivalnce::MetricWequalsMetricZ}
    using Proposition \ref{Prop::VectorFields::Scaling::VolEstimates} \ref{Item::VectorFields::Scaling::VolEstimates::VolWedge1Doubling}
    just as in the proof of Proposition \ref{Prop::VectorFields::Scaling::VolAndMetricEquivalnce} \ref{Item::VectorFields::Scaling::VolAndMetricEquivalnce::VolOfMetricWequalsVolOfMetricZ}.
\end{proof}

    \subsection{Differential operators}
    Let \(\ManifoldN\) be a smooth manifold with boundary, and let \(\FilteredSheafF\)
be a H\"ormander filtration of sheaves of vector fields on \(\ManifoldN\).

\begin{definition}\label{Defn::Filtrations::DiffOps::Deg}
    Let \(\Omega\Subset \ManifoldN\) be open and relatively compact, and let \(\opL\)
    be a partial differential operator with smooth coefficients defined on \(\Omega\).
    We say \(\opL\) has \(\FilteredSheafF\)-degree \(\leq \kappa\in \Zgeq\) on \(\Omega\)
    if the following holds:
    \begin{itemize}
        \item Let \(\WWdv=\left\{ (W_1,\Wdv_1),\ldots, (W_r, \Wdv_r) \right\}\subset \VectorFields{\Omega}\times \Zg\)
            be H\"ormander vector fields with formal degrees on \(\Omega\) such that
            \(\FilteredSheafF\big|_{\Omega}=\FilteredSheafGenBy{\WWdv}\)--see Lemma \ref{Lemma::Filtrations::GeneratorsOnRelCptSet}.
        \item We assume we may write
            \begin{equation*}
                \opL=\sum_{\DegWdv{\alpha}\leq \kappa} a_\alpha(x) W^{\alpha},\quad a_\alpha\in \CinftySpace[\Omega],
            \end{equation*}
            Here if \(\alpha=(\alpha_1,\ldots, \alpha_L)\in \left\{ 1,\ldots, r \right\}^L\),
            then \(\DegWdv{\alpha}=\Wdv_{\alpha_1}+\Wdv_{\alpha_2}+\cdots+\Wdv_{\alpha_L}\).
    \end{itemize}
    It is not hard to see that this does not depend on the choice of \(\WWdv\).
    If \(\Compact\Subset \ManifoldN\) is compact, we say \(\opL\)
    has \(\FilteredSheafF\)-degree \(\leq \kappa\) on \(\Compact\) if there exists \(\Omega\Subset \ManifoldN\)
    open and relatively compact, with \(\Compact\Subset \Omega\) and \(\opL\)
    has \(\FilteredSheafF\)-degree \(\leq \kappa\) on \(\Omega\).
\end{definition}

\begin{remark}\label{Rmk::Filtrations::DiffOps::DoesntDependOnChoices}
    Definition \ref{Defn::Filtrations::DiffOps::Deg} does not depend on any of the choices made
    in the definition. Namely, any other choice of \(\Omega\), \(\Omega_1\), and \(\WWdv\) lead to the same
    definition. Moreover, \(\opL\) has \(\FilteredSheafF\)-degree \(\leq \kappa\in \Zgeq\) on \(\Compact\)
    if and only if \(\opL\) has \(\LieFilteredSheafF\)-degree \(\leq \kappa\in \Zgeq\) on \(\Compact\).
\end{remark}

\section{Besov and Triebel--Lizorkin spaces}\label{Chapter::Spaces}
Throughout this chapter, let \(\ManifoldN\) be a smooth manifold with boundary,
and let \(\FilteredSheafF\) be a H\"ormander filtration of sheaves of vector fields on \(\ManifoldN\)
as in Definitions \ref{Defn::Filtrations_Sheaves::Filtration_Sheaves} and \ref{Defn::Filtrations_Sheaves::Hormander_Filtration_Sheaves}.
In this chapter, for \(\Compact\Subset \ManifoldNncF\) compact,\footnote{See Section \ref{Section::VectorFieldsAndSheaves::RestrictingSheavesAndNonCharPoints}
for the definition of \(\ManifoldNncF\).} we define:
\begin{itemize}
    \item The Besov spaces, \(\BesovSpace{s}{p}{q}[\Compact][\FilteredSheafF]\), \(s\in \R\), \(1\leq p,q\leq \infty\).
    \item The Triebel--Lizorkin spaces, \(\TLSpace{s}{p}{q}[\Compact][\FilteredSheafF]\), \(s\in \R\), \(1<p<\infty\), \(1<q\leq \infty\).
\end{itemize}
See Section \ref{Section::Spaces::MainDefns} for the definitions; before stating the (somewhat technical) definitions we state our main results
concerning these spaces.

\begin{notation}\label{Notation::Spaces::XSpace}
    Throughout the paper,
    \(\XSpace{s}{p}{q}\) will denote either \(\BesovSpace{s}{p}{q}\) or \(\TLSpace{s}{p}{q}\).
    Here, \(s\in \R\), and when \(\XSpace{s}{p}{q}=\BesovSpace{s}{p}{q}\) we restrict to
    \(1\leq p,q\leq \infty\), and when \(\XSpace{s}{p}{q}=\TLSpace{s}{p}{q}\)
    we restrict to \(1<p<\infty\), \(1<q\leq \infty\) (as in Remark \ref{Rmk::Intro::Classical::Restrcitpq}).
\end{notation}

 \(\XSpace{s}{p}{q}[\Compact][\FilteredSheafF]\) is a subspace of those distributions in \(\DistributionsZeroN\)
 which are supported
 in \(\Compact\).

 \begin{remark}\label{Rmk::Spaces::DefnOfSupport}
    Here, support is defined in the usual way: for \(u\in \DistributionsZeroN\),
    \(x\not \in \supp(u)\) if and only if \(\exists \Omega\subseteq \ManifoldN\) open with \(x\in \Omega\) and
    such that \(\forall f\in \TestFunctionsZeroN\) with \(\supp(f)\subseteq \Omega\), we have \(u(f)=0\).
 \end{remark}

 The main results
 of this chapter are as follows.

 \begin{theorem}\label{Thm::Spaces::BanachSpace}
    \(\XSpace{s}{p}{q}[\Compact][\FilteredSheafF]\) is a Banach space.
 \end{theorem}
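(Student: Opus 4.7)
The plan is to verify the Banach space axioms in three stages: first that $\XSpace{s}{p}{q}[\Compact][\FilteredSheafF]$ is a normed vector space, second that the natural embedding into $\DistributionsZeroN$ is continuous, and third that the space is complete. The norm itself is built from the Littlewood--Paley operators $D_j$ (as sketched informally in Section \ref{Section::GlobalCor::InfomralNorms}) followed by either an $\ell^q(L^p)$ or $L^p(\ell^q)$ norm, so the triangle inequality and positive homogeneity are immediate from Minkowski's inequality applied in the appropriate mixed-norm space. Positive definiteness is the first real point that requires the structural identity $\sum_j D_j f = \psi f$ (modulo localization): if $\XNorm{f}{s}{p}{q} = 0$ then every $D_j f$ vanishes, and the reproducing-type formula from \eqref{Eqn::GlobalCor::Norms::SumToDelta} forces $f = 0$ as an element of $\DistributionsZeroN$ supported in $\Compact$. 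One must also verify that the definition does not depend on the auxiliary choices (cutoff $\psi$, density $\Vol$, generators of $\FilteredSheafF\big|_{\Omega}$, bump function $\vsig_0$), but that independence is established later and is not needed for this theorem: any single admissible choice gives a bona fide norm.

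Next I would prove the soft embedding $\XSpace{s}{p}{q}[\Compact][\FilteredSheafF] \hookrightarrow \DistributionsZeroN$ continuously. Because $\supp(f) \subseteq \Compact$ and $\Compact \Subset \ManifoldNncF$, one can cover $\Compact$ by finitely many of the local neighborhoods on which the operators $D_j$ from Section \ref{Section::GlobalCor::InfomralNorms} are defined, and then use a partition of unity. On each local piece, pairing $f$ against a test function $\phi \in \TestFunctionsZeroN$ decomposes as $\langle f,\phi\rangle = \sum_j \langle D_j f, \phi_j \rangle$ for appropriate adjoint-type test functions $\phi_j$; the doubling and polynomial volume bounds from Proposition \ref{Prop::VectorFields::Scaling::VolEstimates} \ref{Item::VectorFields::Scaling::VolEstimates::VolPoly} give $L^{p'}$ (or dual mixed-norm) estimates on the $\phi_j$ with sufficient $2^{-js}$ decay, so Hölder's inequality in the mixed-norm spaces yields $|\langle f,\phi\rangle| \lesssim \XNorm{f}{s}{p}{q} \cdot p_N(\phi)$ for some continuous seminorm $p_N$ on $\TestFunctionsZeroN$. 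This continuous embedding automatically upgrades the positive definiteness argument above.

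With the embedding in hand, completeness follows in the standard way. Given a Cauchy sequence $\{f_n\} \subset \XSpace{s}{p}{q}[\Compact][\FilteredSheafF]$, it is Cauchy in $\DistributionsZeroN$ by continuity of the embedding, so it converges to some $f \in \DistributionsZeroN$ whose support remains in $\Compact$ (closedness of the support condition under the distributional limit). The sequence $\{2^{js}D_j f_n\}_{j \in \Zgeq}$ is then Cauchy in $\lqLpSpaceNoSet{p}{q}$ or $\LplqSpaceNoSet{p}{q}$ by hypothesis, hence converges to some element $g = (g_j)$ of that mixed-norm space; because each $D_j$ is distributionally continuous and maps $\DistributionsZeroN \to \CinftySpace[\ManifoldN]$, one identifies $g_j = 2^{js}D_j f$ pointwise a.e., so $f \in \XSpace{s}{p}{q}[\Compact][\FilteredSheafF]$ and $f_n \to f$ in the norm.

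The main obstacle is the continuous embedding into $\DistributionsZeroN$: it requires quantitative, uniform-in-$j$ bounds for the adjoint action of $D_j$ on test functions vanishing to infinite order at $\BoundaryN$, and these bounds depend on the scaling maps near non-characteristic boundary points introduced in \cite{StreetCarnotCaratheodoryBallsOnManifoldsWithBoundary}. Once those estimates are in place, the remaining steps reduce to routine mixed-norm functional analysis.
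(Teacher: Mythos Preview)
Your overall strategy matches the paper's: establish that $\VpqsENorm{\cdot}[p][q][s][\sD_0]$ is a norm, prove the continuous embedding into $\DistributionsZeroN$ (the paper does this as Proposition~\ref{Prop::Spaces::MainEst::QuantitativeDistributions}), and then run the standard Cauchy-sequence completeness argument. The embedding step and the positive-definiteness argument are fine.

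There is, however, a genuine gap in your completeness step, stemming from the paper's Definition~\ref{Defn::Spaces::Defns::ASpace}. Membership in $\ASpace{s}{p}{q}[\Compact][\FilteredSheafF]$ requires $\VpqsENorm{f}[p][q][s][\sE]<\infty$ for \emph{every} $\sE\in\ElemzF{\ManifoldNncF}$, not merely for the single family $\sD_0$. Your argument shows that the distributional limit $f$ satisfies $\{2^{js}D_j f\}\in\VSpace{p}{q}$, i.e.\ $\VpqsENorm{f}[p][q][s][\sD_0]<\infty$, but as Remark~\ref{Rmk::Spaces::Defns::NormFiniteDoesntMeanInSpace} explicitly warns, this alone does not place $f$ in the space. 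The missing ingredient is the main estimate (Proposition~\ref{Prop::Spaces::MainEst::MainEst} and its Corollaries~\ref{Cor::Spaces::MainEst::VpqsESeminormIsContinuous} and~\ref{Cor::Spaces::MainEst::ChoiceOfNormWhichGivesFinite}), which shows that finiteness of $\VpqsENorm{\cdot}[p][q][s][\sD_0]+\VpqsENorm{\cdot}[p][q][s][\sD_{|s|+1}]$ is equivalent to membership in the space. With that in hand, your Cauchy argument can be run simultaneously for $\sD_0$ and $\sD_{|s|+1}$ (or one can use Corollary~\ref{Cor::Spaces::MainEst::VpqsESeminormIsContinuous} to control every $\VpqsENorm{f_n-f_m}[p][q][s][\sE]$ by the $\sD_0$-norm, then pass to the limit for each $\sE$). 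Either way, the almost-orthogonality machinery behind the main estimate is not optional here; it is what closes the loop between the single-norm description and the actual definition of the space.
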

\begin{proof}
    This follows from Proposition \ref{Prop::Spaces::Defns::NormAndBanach}, below.
\end{proof}

 \begin{proposition}\label{Prop::Spaces::Containment}
    Let \(\Compact_1\subseteq \Compact_2\Subset \ManifoldNncF\) be two compact sets.
    Then, \(\XSpace{s}{p}{q}[\Compact_1][\FilteredSheafF]\) is a closed subspace
    of \(\XSpace{s}{p}{q}[\Compact_2][\FilteredSheafF]\)
    and
    \begin{equation}\label{Eqn::Spaces::CompactSubsetImpliesClosedSubspace}
        \XSpace{s}{p}{q}[\Compact_1][\FilteredSheafF]
        =\left\{ f\in \XSpace{s}{p}{q}[\Compact_2][\FilteredSheafF] : \supp(f)\subseteq \Compact_1 \right\}.
    \end{equation}
 \end{proposition}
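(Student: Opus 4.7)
The plan is to first establish that elements of $\XSpace{s}{p}{q}[\Compact_1][\FilteredSheafF]$ are naturally elements of $\XSpace{s}{p}{q}[\Compact_2][\FilteredSheafF]$ with equivalent norm, then deduce the equality \eqref{Eqn::Spaces::CompactSubsetImpliesClosedSubspace} and closedness as consequences. Since both spaces consist of distributions in $\DistributionsZeroN$ supported in the respective compact set, the first inclusion at the level of distributions is immediate. The main content of the proposition is the norm equivalence, which requires revisiting the definition in Section \ref{Section::Spaces::MainDefns}: the norm is built out of Littlewood--Paley-type operators $D_j$ assembled via a partition of unity on a relatively compact neighborhood of the compact set, as sketched in Section \ref{Section::GlobalCor::InfomralNorms}.

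The key step is to show that if $f \in \DistributionsZeroN$ is supported in $\Compact_1$, then the norm $\XSpace{s}{p}{q}$-norm computed relative to any finite cover of $\Compact_1$ by coordinate patches (with attendant cutoffs $\psi_\alpha$) is comparable to the norm computed relative to a cover of $\Compact_2$. This is where I expect the main obstacle to lie: one needs to know that two different choices of cover and cutoffs yield equivalent norms on common elements. I would reduce this to the independence-of-choices statement that must already be part of Section \ref{Section::Spaces::MainDefns} (any two admissible covers/cutoffs for the same compact set give equivalent norms), combined with the observation that any admissible system for $\Compact_1$ can be extended to an admissible system for $\Compact_2$ by adding further cutoffs which, when applied to an $f$ supported in $\Compact_1$, vanish identically and thus contribute nothing to the norm. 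This yields the two-sided inequality between the norms and shows $\XSpace{s}{p}{q}[\Compact_1][\FilteredSheafF] \hookrightarrow \XSpace{s}{p}{q}[\Compact_2][\FilteredSheafF]$ continuously with equivalent norm on its image.

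Having this norm equivalence, equation \eqref{Eqn::Spaces::CompactSubsetImpliesClosedSubspace} is essentially immediate: if $f \in \XSpace{s}{p}{q}[\Compact_2][\FilteredSheafF]$ with $\supp(f) \subseteq \Compact_1$, then $f \in \DistributionsZeroN$ is supported in $\Compact_1$ and, by the argument above applied in reverse, its $\XSpace{s}{p}{q}[\Compact_1][\FilteredSheafF]$-norm is finite and comparable, so $f \in \XSpace{s}{p}{q}[\Compact_1][\FilteredSheafF]$. The reverse inclusion is trivial from the definition of support.

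For closedness, suppose $f_n \in \XSpace{s}{p}{q}[\Compact_1][\FilteredSheafF]$ converges to some $f \in \XSpace{s}{p}{q}[\Compact_2][\FilteredSheafF]$ in the norm of the latter. Since the norm of $\XSpace{s}{p}{q}[\Compact_2][\FilteredSheafF]$ dominates convergence in $\DistributionsZeroN$ (this should be part of the definition, and in any case follows from the pairing of the Littlewood--Paley decomposition with test functions in $\TestFunctionsZeroN$), we have $f_n \to f$ in $\DistributionsZeroN$. Since support is upper semicontinuous under distributional convergence, $\supp(f) \subseteq \Compact_1$, and by \eqref{Eqn::Spaces::CompactSubsetImpliesClosedSubspace} we conclude $f \in \XSpace{s}{p}{q}[\Compact_1][\FilteredSheafF]$. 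This gives both the closed subspace assertion and completes the proof.
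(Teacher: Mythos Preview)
Your argument is correct in spirit, but you are working from the informal description in Section \ref{Section::GlobalCor::InfomralNorms} rather than the actual definition, and this leads you to do far more work than needed. The paper's Definition \ref{Defn::Spaces::Defns::ASpace} says that $f \in \ASpace{s}{p}{q}[\Compact][\FilteredSheafF]$ if and only if $\supp(f)\subseteq\Compact$ and $\VpqsENorm{f}<\infty$ for \emph{every} $\sE\in\ElemzF{\ManifoldNncF}$. The second condition is a property of $f$ alone and makes no reference to $\Compact$; so the equality \eqref{Eqn::Spaces::CompactSubsetImpliesClosedSubspace} is literally immediate---there is no norm comparison to carry out, no covers to extend, nothing. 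This is exactly what the paper's proof says.

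For the closed-subspace assertion, your idea of using distributional convergence is right, but the paper takes a slightly cleaner route (Remark \ref{Rmk::Spaces::Defns::EquivClassOfNormWellDefinedAndDoesntDependOnCompact}): one may choose $\psi\in\CinftyCptSpace[\ManifoldNncF]$ with $\psi\equiv 1$ on a neighborhood of $\Compact_2\supseteq\Compact_1$, and use the \emph{same} $\sD_0$ to define the norm on both spaces. Then $\XSpace{s}{p}{q}[\Compact_1][\FilteredSheafF]$ is, by \eqref{Eqn::Spaces::CompactSubsetImpliesClosedSubspace}, the subset of the Banach space $\XSpace{s}{p}{q}[\Compact_2][\FilteredSheafF]$ cut out by the support condition, carrying the identical norm; closedness then follows from the continuous embedding into $\DistributionsZeroN$ (Proposition \ref{Prop::Spaces::MainEst::QuantitativeDistributions}) exactly as you describe. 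Your partition-of-unity extension argument would also work, but it is unnecessary machinery here.
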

 \begin{proof}
    The equality 
    \eqref{Eqn::Spaces::CompactSubsetImpliesClosedSubspace} is an immediate consequence of the definition
    (see Definition \ref{Defn::Spaces::Defns::ASpace}, below). That \(\XSpace{s}{p}{q}[\Compact_1][\FilteredSheafF]\) is a closed subspace
    of \(\XSpace{s}{p}{q}[\Compact_2][\FilteredSheafF]\)
    can be see as in Remark \ref{Rmk::Spaces::Defns::EquivClassOfNormWellDefinedAndDoesntDependOnCompact}, below.
 \end{proof}

 \begin{theorem}\label{Thm::Spaces::OnlyDependsOnLieFiltration}
    If \(\FilteredSheafG\) is another H\"ormander filtration of sheaves of vector fields on \(\ManifoldN\)
    such that \(\LieFilteredSheafF=\LieFilteredSheafG\), then
    \(\XSpace{s}{p}{q}[\Compact][\FilteredSheafF]=\XSpace{s}{p}{q}[\Compact][\FilteredSheafG]\),
    with equivalent norms.
 \end{theorem}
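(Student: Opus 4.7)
The plan is to reduce the claim to the fact that the norms defined in Section \ref{Section::Spaces::MainDefns} depend on the sheaf data only through its Lie filtration. As outlined informally in Section \ref{Section::GlobalCor::InfomralNorms}, the Littlewood--Paley operators $D_j$ that define the norms are built from (i) local generators $\XXde$ of $\LieFilteredSheafF\big|_\Omega$ produced by Lemma \ref{Lemma::Filtrations::GeneratorsForLieFiltration}, (ii) at each boundary point $x_0\in \Compact\cap \BoundaryN$ a vector field in $\LieFilteredSheafF$ which points into $\InteriorN$, and (iii) auxiliary data (cutoffs and the Schwartz function $\vsig_0$) independent of the filtration. Only the Lie filtration enters this construction.

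Before comparing the two filtrations I would record the immediate consequences of the hypothesis $\LieFilteredSheafF=\LieFilteredSheafG$. The function $\degBoundaryNF$ is defined purely in terms of $\LieFilteredSheafF$, so it coincides with its $\FilteredSheafG$-analogue; consequently $\ManifoldNncF$ equals its $\FilteredSheafG$-counterpart, and the hypothesis $\Compact\Subset \ManifoldNncF$ is symmetric in the two filtrations. Moreover, Proposition \ref{Prop::VectorFields::Scaling::VolAndMetricEquivalnce} guarantees that the Carnot--Carath\'eodory metrics, balls, and volume growth functions attached to any local generators of these Lie filtrations are uniformly comparable on $\Compact$, so all underlying geometric quantities used in defining the $D_j$ coincide up to constants.

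The main step is then as follows. Choose a relatively compact open neighborhood $\Omega$ of $\Compact$ with $\Omega\Subset \ManifoldNncF$, and apply Lemma \ref{Lemma::Filtrations::GeneratorsForLieFiltration} to $\FilteredSheafF$ to obtain a generating set $\XXde$ with $\LieFilteredSheafF\big|_\Omega=\FilteredSheafGenBy{\XXde}$; the hypothesis gives simultaneously $\LieFilteredSheafG\big|_\Omega=\FilteredSheafGenBy{\XXde}$. Using identical choices of inward-pointing vector fields at each boundary point of $\Compact$, identical cutoffs, and identical Schwartz data, define the Littlewood--Paley operators $D_j$ from this common data. Then the two expressions $\BesovNorm{f}{s}{p}{q}[\Compact][\FilteredSheafF]$ and $\BesovNorm{f}{s}{p}{q}[\Compact][\FilteredSheafG]$, and likewise the two $\TLSymbol$ norms, evaluated with these choices coincide verbatim.

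The main obstacle does not lie in this theorem proper, but in the prerequisite independence-of-choice result established in Section \ref{Section::Spaces::MainDefns}: namely, that two different admissible choices of generators of a single Lie filtration, or of auxiliary data, produce equivalent Besov and Triebel--Lizorkin norms. This is proved by a standard but technical almost-orthogonality argument comparing the integral kernels of the $D_j$ associated to different choices, in the spirit of the boundaryless treatment in \cite[Section 6.1]{StreetMaximalSubellipticity}; Proposition \ref{Prop::VectorFields::Scaling::VolEstimates} supplies the volume doubling and polynomial growth needed. Granted that independence, the present theorem follows immediately from the construction of the preceding paragraph.
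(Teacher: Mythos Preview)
Your outline has the right intuition—everything ultimately depends only on $\LieFilteredSheafF$—but there is a gap, and the paper's route is more direct.

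The gap concerns equality of the \emph{spaces}, not just equivalence of norms. By Definition~\ref{Defn::Spaces::Defns::ASpace}, membership in $\ASpace{s}{p}{q}[\Compact][\FilteredSheafF]$ requires $\VpqsENorm{f}<\infty$ for \emph{every} $\sE\in \ElemzF{\ManifoldNncF}$; Remark~\ref{Rmk::Spaces::Defns::NormFiniteDoesntMeanInSpace} stresses that finiteness of $\VpqsENorm{f}[p][q][s][\sD_0]$ for a single $\sD_0$ does not place $f$ in the space. So even if your common $D_j$'s give literally the same numerical value, and even granting Proposition~\ref{Prop::Spaces::Defns::EquivNorms} within each filtration separately, you have not shown $\ASpace{s}{p}{q}[\Compact][\FilteredSheafF]=\ASpace{s}{p}{q}[\Compact][\FilteredSheafG]$ as sets. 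To close this you would need Corollary~\ref{Cor::Spaces::MainEst::ChoiceOfNormWhichGivesFinite} for both filtrations, together with the verification that your common $\sD_0$ and the derived $\sD_N$ lie in both $\ElemzF{\ManifoldNncF}$ and $\ElemzG{\ManifoldNncF}$—which already presupposes knowing that these classes coincide.

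The paper's argument sidesteps all of this. The crucial ``independence'' result is not the norm-level Proposition~\ref{Prop::Spaces::Defns::EquivNorms} in Section~\ref{Section::Spaces::MainDefns} that you point to, but Lemma~\ref{Lemma::Spaces::LP::ElemDoesntDependOnChoices} in Section~\ref{Section::Spaces::LittlewoodPaleyTheory}, which asserts directly that $\ElemzF{\Compact}=\ElemzG{\Compact}$ whenever $\LieFilteredSheafF=\LieFilteredSheafG$. Since the space and its norm are defined purely in terms of $\ElemzF{\ManifoldNncF}$ (Definitions~\ref{Defn::Spaces::Defns::VspqENorm} and~\ref{Defn::Spaces::Defns::ASpace}, Notation~\ref{Notation::Spaces::Defns::Norm}), equality of these classes gives both conclusions at once, with no explicit $D_j$ construction needed. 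Your almost-orthogonality sketch is closer in spirit to Proposition~\ref{Prop::Spaces::MainEst::MainEst}; the proof of Lemma~\ref{Lemma::Spaces::LP::ElemDoesntDependOnChoices} is instead a more algebraic verification (Lemmas~\ref{Lemma::Spaces::Elem::PreElem::PreElemDoesntDependOnChoices} and~\ref{Lemma::Spaces::Elem::Elem::ElemDoesntDependOnChoices}) that the defining conditions in Definitions~\ref{Defn::Spaces::LP::PElemWWdv} and~\ref{Defn::Spaces::LP::ElemWWdv} are insensitive to which generators of the common Lie filtration one uses.
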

 \begin{proof}
    This follows by combining Lemma \ref{Lemma::Spaces::LP::ElemDoesntDependOnChoices}, below,
    with the definitions (see Definition \ref{Defn::Spaces::Defns::VspqENorm} and Notation \ref{Notation::Spaces::Defns::Norm}, below).
 \end{proof}

 \begin{proposition}\label{Prop::Spaces::MappingOfFuncsAndVFs}
    Let \(\Omega\subseteq \ManifoldN\) be open with \(\Compact\Subset\Omega\).
    \begin{enumerate}[(i)]
        \item\label{Item::Spaces::MappingOfFuncsAndVFs::VFsInF} For \(V\in \FilteredSheafF[\Omega][d]\), we have
        \(V:\XSpace{s}{p}{q}[\Compact][\FilteredSheafF]\rightarrow \XSpace{s-d}{p}{q}[\Compact][\FilteredSheafF]\),
        continuously.

        \item\label{Item::Spaces::MappingOfFuncsAndVFs::VFsInLieF} For \(V\in \LieFilteredSheafF[\Omega][d]\), we have
        \(V:\XSpace{s}{p}{q}[\Compact][\FilteredSheafF]\rightarrow \XSpace{s-d}{p}{q}[\Compact][\FilteredSheafF]\),
        continuously.

        \item\label{Item::Spaces::MappingOfFuncsAndVFs::Funcs} Let \(\psi\in \CinftySpace[\Omega]\) and \(\Mult{\psi}:f\mapsto \psi f\).
    Then,
    \(\Mult{\psi}:\XSpace{s}{p}{q}[\Compact][\FilteredSheafF]\rightarrow \XSpace{s}{p}{q}[\Compact][\FilteredSheafF]\),
    continuously.

    \item\label{Item::Spaces::MappingOfFuncsAndVFs::DiffOps} Let \(\opL\) be a partial differential
        operator with smooth coefficients, defined on a neighborhood of \(\Compact\), and suppose
        \(\opL\) has \(\FilteredSheafF\)-degree \(\leq \kappa\) on \(\Compact\).
        Then,
        \(\opL:\XSpace{s}{p}{q}[\Compact][\FilteredSheafF]\rightarrow \XSpace{s-\kappa}{p}{q}[\Compact][\FilteredSheafF]\),
        continuously.
    \end{enumerate}
 \end{proposition}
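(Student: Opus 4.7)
The plan is to prove \ref{Item::Spaces::MappingOfFuncsAndVFs::Funcs} first as the basic multiplication result, then establish \ref{Item::Spaces::MappingOfFuncsAndVFs::VFsInF} as the key vector-field estimate, and finally deduce \ref{Item::Spaces::MappingOfFuncsAndVFs::VFsInLieF} and \ref{Item::Spaces::MappingOfFuncsAndVFs::DiffOps} from these. All four claims will be read off from the Littlewood--Paley characterization of $\XSpace{s}{p}{q}[\Compact][\FilteredSheafF]$ in terms of the operators $D_j$ sketched in Section \ref{Section::GlobalCor::InfomralNorms} (and defined more generally in Section \ref{Section::Spaces::MainDefns}).

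For \ref{Item::Spaces::MappingOfFuncsAndVFs::Funcs}, I will show that for $\psi \in \CinftySpace[\Omega]$ the composition $D_j \circ \Mult{\psi}$ is, up to error terms that improve regularity by one degree, of the same ``type'' as $D_j$. Concretely, expanding $\psi(e^{t \cdot X}x) = \psi(x) + \sum_k t_k \int_0^1 (X_k\psi)(e^{\tau t \cdot X}x)\,d\tau$ and using the vanishing moments of $\vsig_j$ for $j\geq 1$, one writes $\psi D_j f = D_j(\psi f) + R_j f$ where $R_j$ has the same structure as $D_j$ but with an extra factor of $2^{-j}$. Summing over $j$ in either $\lqLpSpaceNoSet{p}{q}$ or $\LplqSpaceNoSet{p}{q}$ then yields continuity of $\Mult{\psi}$ on both scales.

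For \ref{Item::Spaces::MappingOfFuncsAndVFs::VFsInF}, after localizing by a cutoff and invoking \ref{Item::Spaces::MappingOfFuncsAndVFs::Funcs}, it suffices to treat each generator $W_k$ with $\Wdv_k \leq d$ and show $W_k : \XSpace{s}{p}{q}[\Compact][\FilteredSheafF] \to \XSpace{s-\Wdv_k}{p}{q}[\Compact][\FilteredSheafF]$; the general statement then follows because $\Wdv_k \leq d$ forces $\XSpace{s-\Wdv_k}{p}{q} \hookrightarrow \XSpace{s-d}{p}{q}$. The core computation is that because $W_k$ appears (after the reduction near the boundary in Section \ref{Section::GlobalCor::InfomralNorms}) among the $X_j$'s used to build $D_j$, applying $W_k$ under the integral sign is essentially equivalent to applying $\partial_{t_k}$ to $\eta(t)\Dild{2^j}{\vsig_j}(t)$ and integrating by parts; the scaling built into $\Dild{2^j}{\vsig_j}$ produces a gain of $2^{j\Wdv_k}$, which is exactly what is needed to push the regularity weight from $2^{js}$ to $2^{j(s-\Wdv_k)}$. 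Technical commutator terms (when $W_k$ is not literally one of the $X_j$'s, or when the cutoff $\psi$ interacts with the flow) are controlled by the same cancellation argument as in \ref{Item::Spaces::MappingOfFuncsAndVFs::Funcs}. This step, depending crucially on the precise form of $D_j$ and the scaling maps from \cite{StreetCarnotCaratheodoryBallsOnManifoldsWithBoundary}, will be the main obstacle.

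For \ref{Item::Spaces::MappingOfFuncsAndVFs::VFsInLieF}, given $V \in \LieFilteredSheafF[\Omega][d]$, use Lemma \ref{Lemma::Filtrations::GeneratorsForLieFiltration} to produce (on a slightly shrunken open set containing $\Compact$) a generating set $\XXde$ for $\LieFilteredSheafF$ with $(V,d) \in \XXde$, and set $\FilteredSheafG := \FilteredSheafGenBy{\XXde}$. Then $\LieFilteredSheafG = \LieFilteredSheafF$, so by Theorem \ref{Thm::Spaces::OnlyDependsOnLieFiltration} the spaces agree with equivalent norms, and \ref{Item::Spaces::MappingOfFuncsAndVFs::VFsInF} applied to $\FilteredSheafG$ gives the claim. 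Finally, \ref{Item::Spaces::MappingOfFuncsAndVFs::DiffOps} follows by expanding $\opL = \sum_{\DegWdv{\alpha}\leq \kappa} a_\alpha W^\alpha$ per Definition \ref{Defn::Filtrations::DiffOps::Deg}, iterating \ref{Item::Spaces::MappingOfFuncsAndVFs::VFsInF} to bound $W^\alpha : \XSpace{s}{p}{q} \to \XSpace{s-\DegWdv{\alpha}}{p}{q} \hookrightarrow \XSpace{s-\kappa}{p}{q}$, and using \ref{Item::Spaces::MappingOfFuncsAndVFs::Funcs} for the coefficients.
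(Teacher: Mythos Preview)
Your reductions for \ref{Item::Spaces::MappingOfFuncsAndVFs::VFsInLieF} and \ref{Item::Spaces::MappingOfFuncsAndVFs::DiffOps} match the paper exactly: \ref{Item::Spaces::MappingOfFuncsAndVFs::VFsInLieF} follows from \ref{Item::Spaces::MappingOfFuncsAndVFs::VFsInF} via Theorem~\ref{Thm::Spaces::OnlyDependsOnLieFiltration}, and \ref{Item::Spaces::MappingOfFuncsAndVFs::DiffOps} is iterated \ref{Item::Spaces::MappingOfFuncsAndVFs::VFsInF} plus \ref{Item::Spaces::MappingOfFuncsAndVFs::Funcs}.

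For \ref{Item::Spaces::MappingOfFuncsAndVFs::VFsInF} and \ref{Item::Spaces::MappingOfFuncsAndVFs::Funcs}, however, the paper takes a much shorter route that bypasses the ``main obstacle'' you anticipate. Rather than manipulating the explicit flow formula for $D_j$, the paper works at the level of Schwartz kernels and arbitrary $\sE\in\ElemzF{\ManifoldNncF}$. Fixing a cutoff $\phi\equiv 1$ near $\Compact$, one has $Yf=\phi Yf$ for $\supp(f)\subseteq\Compact$, and for $(E_j,2^{-j})\in\sE$ integration by parts in the $y$-variable gives
\[
[E_j\,\phi Y](x,y)=-\phi(y)\,Y_yE_j(x,y)+\phi(y)g(y)E_j(x,y),
\]
with no boundary terms precisely because $E_j(x,y)$ vanishes to infinite order as $y\to\BoundaryN$ (Remark~\ref{Rmk::Spaces::Elem::Elem::IntegrateByPartsWithoutBoundary}). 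Proposition~\ref{Prop::Spaces::Elem::Elem::MainProps}~\ref{Item::Spaces::Elem::Elem::MultBySmooth} and \ref{Item::Spaces::Elem::Elem::DerivFuncAribtrarySection} then give $\sEt:=\{(2^{-jd}E_j\phi Y,2^{-j})\}\in\ElemzF{\ManifoldNncF}$, whence $\VpqsENorm{Yf}[p][q][s-d][\sE]=\VpqsENorm{f}[p][q][s][\sEt]\lesssim\ANorm{f}{s}{p}{q}[\FilteredSheafF]$ by Corollary~\ref{Cor::Spaces::MainEst::VpqsESeminormIsContinuous}. Part \ref{Item::Spaces::MappingOfFuncsAndVFs::Funcs} is identical with $d=0$, using only \ref{Item::Spaces::Elem::Elem::MultBySmooth}. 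Your direct approach via Taylor expansion of $\psi(e^{t\cdot X}x)$ and commuting $W_k$ with the flow integral could be made to work, but it essentially re-derives pieces of Proposition~\ref{Prop::Spaces::Elem::Elem::MainProps} and Lemma~\ref{Lemma::Spaces::Multiplcation::EhAreElem} from scratch; the kernel/integration-by-parts route packages all of that into one line.

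One further point: because membership in $\ASpace{s}{p}{q}[\Compact][\FilteredSheafF]$ is defined (Definition~\ref{Defn::Spaces::Defns::ASpace}) by finiteness of $\VpqsENorm{\cdot}[p][q][s][\sE]$ for \emph{every} $\sE\in\ElemzF{\ManifoldNncF}$, working only with the specific $D_j$'s as you propose gives a norm bound but not automatically membership of $Yf$ in the target space. The paper's argument, being for arbitrary $\sE$, handles both at once; in your approach you would need to invoke Corollary~\ref{Cor::Spaces::MainEst::ChoiceOfNormWhichGivesFinite} (i.e., also control $\sD_{|s|+1}$) to close the loop.
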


 Proposition \ref{Prop::Spaces::MappingOfFuncsAndVFs} is established in Section \ref{Section::Spaces::BasicProps}.

 \begin{theorem}\label{Thm::Spaces::Extension}
    Let \(\ManifoldM\) be a smooth manifold such that \(\ManifoldN\subseteq \ManifoldM\)
    is a closed, embedded, codimension \(0\) submanifold with boundary.
    Let \(\FilteredSheafFh\) be a H\"ormander filtration of sheaves of vector fields on \(\ManifoldM\)
    such that \(\RestrictFilteredSheaf{\FilteredSheafFh}{\ManifoldN}=\FilteredSheafF\).
    \begin{enumerate}[(i)]
        \item\label{Item::Spaces::Extension::Restriction} Let \(\Compact\Subset \ManifoldM\) be compact with \(\Compact\cap \ManifoldN\subseteq \ManifoldNncF\).
            The map \(f\mapsto f\big|_{\TestFunctionsZeroN}\) is continuous
            \(\XSpace{s}{p}{q}[\Compact][\FilteredSheafFh]\rightarrow \XSpace{s}{p}{q}[\Compact\cap \ManifoldN][\FilteredSheafF]\).
            \item\label{Item::Spaces::Extension::Extension}
                The map in \ref{Item::Spaces::Extension::Restriction} is right invertible in the following sense.
                Let \(\Compact\Subset \ManifoldNncF\) be compact and let \(\Omega\Subset\ManifoldM\) be \(\ManifoldM\)-open and relatively compact in \(\ManifoldM\)
                with \(\Compact\subseteq \Omega\). Fix \(N_0\in [0,\infty)\). Then,
                there exists a linear extension map
                \begin{equation*}
                    \Extension[N_0]: \left( \bigcup_{\substack{1<p\leq \infty\\ 1\leq q\leq \infty \\ |s|\leq N_0}} \BesovSpace{s}{p}{q}[\Compact][\FilteredSheafF] \right)
                    \bigcup \left( \bigcup_{\substack{1<p<\infty \\ 1<q\leq \infty \\ |s|\leq N_0}} \TLSpace{s}{p}{q}[\Compact][\FilteredSheafF] \right)
                    \rightarrow
                    \Distributions[\ManifoldM]
                \end{equation*}
                such that
                \begin{enumerate}[(a)]
                    \item \(\Extension[N_0]:\XSpace{s}{p}{q}[\Compact][\FilteredSheafF]\rightarrow \XSpace{s}{p}{q}[\overline{\Omega}][\FilteredSheafFh]\), \(|s|\leq N_0\), is continuous.
                    \item \(\Extension[N_0]f\big|_{\TestFunctionsZeroN}=f\), for all \(f\) in the domain of \(\Extension[N_0]\).
                \end{enumerate}
    \end{enumerate}
 \end{theorem}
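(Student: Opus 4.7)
The overall plan is to work locally near each point of $\Compact \cap \ManifoldN$ and exploit the one-sided structure of the flow-based Littlewood--Paley operators $D_j$ sketched in Section \ref{Section::GlobalCor::InfomralNorms}: each $\vsig_j$ is supported in $[0,\infty)\times \R^q$, so the integral defining $D_j f(x)$ only samples $f$ along flow paths from $x$ into $\InteriorN$. This one-sidedness is the feature that will power both the restriction estimate and the construction of $\Extension[N_0]$.

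For part (i), when $x_0 \in \InteriorN$ the operators on $\ManifoldN$ and $\ManifoldM$ can be chosen to coincide in a neighborhood. For $x_0 \in \BoundaryNncF$, I would use Proposition \ref{Prop::Filtrations::RestrictingFiltrations::CoDim0Restriction} to pick local generators of $\LieFilteredSheafFh$ on an $\ManifoldM$-neighborhood $\Omega_0$ of $x_0$ whose restrictions generate $\LieFilteredSheafF|_{\Omega_0 \cap \ManifoldN}$. Using these common generators and identical cutoff data to define the Littlewood--Paley operators on both manifolds, the support of $\vsig_j$ forces the flow $e^{t_0 X_0 + \cdots + t_q X_q} x$ to stay in $\ManifoldN$ for every $x \in \Omega_0 \cap \ManifoldN$, so the defining integrand only sees values of $f$ on $\ManifoldN$, giving the pointwise identity $(D_j^{\ManifoldM} f)(x) = D_j^{\ManifoldN}(f|_{\TestFunctionsZeroN})(x)$ there. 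Combined with Proposition \ref{Prop::VectorFields::Scaling::AmbientVolAndMetricEquivalnce} (so volumes and metrics in the defining norms are comparable on $\Compact \cap \ManifoldN$) and a partition of unity, this yields continuity of $f \mapsto f|_{\TestFunctionsZeroN}$.

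For part (ii), the construction reverses this observation: the integral defining $D_j^{\ManifoldN} f(x)$ for $x \in \Omega_0 \cap \ManifoldN$ continues to make sense when $x \in \Omega_0$ lies slightly outside $\ManifoldN$, since the flow from $x$ for $t_0 > 0$ still enters $\Compact \cap \ManifoldN$ and the integrand stays in $\ManifoldN$ throughout the support of $\eta\,\Dild{2^j}{\vsig_j}$. Locally I would set
\begin{equation*}
    \Et_j f(x) := \psi(x) \int f\bigl( e^{t_0 X_0 + t_1 X_1 + \cdots + t_q X_q} x \bigr)\, \eta(t)\, \Dild{2^j}{\vsig_j}(t)\, dt, \qquad x \in \Omega_0,
\end{equation*}
with cutoffs $\psi,\eta$ chosen so that the flow stays in $\Compact$, and then patch via a partition of unity (together with trivial extension by zero away from $\BoundaryN$) into a smooth function on a neighborhood of $\Compact$ in $\ManifoldM$. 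Setting $\Extension[N_0] f := \sum_{j \geq 0} \Et_j f$, one has $\Et_j f|_{\ManifoldN} = D_j^{\ManifoldN} f$ by construction, hence $\Extension[N_0] f|_{\TestFunctionsZeroN} = f$.

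The main obstacle is the norm bound $\|\Extension[N_0] f\|_{\XSpace{s}{p}{q}[\overline{\Omega}][\FilteredSheafFh]} \lesssim \|f\|_{\XSpace{s}{p}{q}[\Compact][\FilteredSheafF]}$ for all $|s| \leq N_0$ and admissible $(p,q)$. The plan is to show that $\{\Et_j f\}_{j \geq 0}$ furnishes an ambient Littlewood--Paley synthesis for $\Extension[N_0] f$ adapted to $\FilteredSheafFh$: by Proposition \ref{Prop::VectorFields::Scaling::AmbientVolAndMetricEquivalnce} each $\Et_j f$ is localized at ambient scale $2^{-j}$, while the vanishing-moment conditions $\int t^{\alpha} \vsig_j\, dt = 0$ (for $j \geq 1$) supply the cancellation needed to treat negative regularities down to $-N_0$. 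Commuting ambient vector fields in $\LieFilteredSheafFh$ past the flow operator turns an ambient derivative of $\Et_j f$ into a controlled operator of comparable form applied to $f$, picking up only lower-order errors from the commutator structure of the Lie filtration, so that the $\XSpace{s}{p}{q}$-norm on $\ManifoldM$ is dominated by the intrinsic $\XSpace{s}{p}{q}$-norm on $\ManifoldN$. Making this synthesis precise, with constants depending only on $N_0$ (and the corresponding finite number of moment conditions and derivative bounds one invokes), is the main technical hurdle, and it is exactly what forces the parameter $N_0$ into the statement.
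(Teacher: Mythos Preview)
Your approach to part \ref{Item::Spaces::Extension::Restriction} is essentially the paper's: construct the Littlewood--Paley operators on \(\ManifoldM\) so that for \(x\in\ManifoldN\) the flow stays in \(\ManifoldN\), giving the pointwise identity \((\Dh_j f)(x)=D_j(f|_{\TestFunctionsZeroN})(x)\). This is exactly the content of Theorem \ref{Thm::Spaces::Multiplication::MainAmbientTheorem}, and the restriction estimate then follows as you outline.

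Your construction for part \ref{Item::Spaces::Extension::Extension}, however, has a genuine gap. You claim that for \(x\in\Omega_0\) slightly outside \(\ManifoldN\), the flow \(e^{t\cdot X}x\) with \(t_0>0\) ``still enters \(\Compact\cap\ManifoldN\) and the integrand stays in \(\ManifoldN\) throughout the support of \(\eta\,\Dild{2^j}{\vsig_j}\).'' This is false. The support of \(\eta\,\Dild{2^j}{\vsig_j}\) includes all \(t_0\in[0,a)\) (the Schwartz function \(\vsig_j\) is not compactly supported, so the localization comes entirely from \(\eta\)). If \(x\) lies outside \(\ManifoldN\), then for small \(t_0>0\) the point \(e^{t\cdot X}x\) is still outside \(\ManifoldN\); it only crosses \(\BoundaryN\) once \(t_0\) exceeds some positive threshold depending on the distance from \(x\) to \(\BoundaryN\). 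Hence the kernel \(\Et_j(x,\cdot)\) is not supported in \(\ManifoldN\), so it is not an element of \(\TestFunctionsZeroN\) and cannot be paired with \(f\in\DistributionsZeroN\). If instead you extend \(f\) by zero and interpret the formula classically, the result is not smooth in \(x\) across \(\BoundaryN\) unless \(f\) itself vanishes there.

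The paper circumvents this by abandoning the flow formula for the extension step and instead working at the level of kernels. Using the elementary structure, one writes \(D_j=\sum_{|\alpha|,|\beta|\le N}2^{-j(2N-|\alpha|-|\beta|)}(2^{-j\Wdv}W)^\alpha D_{j,\alpha,\beta}(2^{-j\Wdv}W)^\beta\), and then extends each smooth kernel \(D_{j,\alpha,\beta}(x,y)\) from \(\ManifoldN\times\ManifoldN\) to \(\ManifoldM\times\ManifoldN\) via the operators \(\Extend_\delta^0\) of Proposition \ref{Prop::Spaces::Elem::Extend::Extend::NonVerbose}. These extensions use the scaling maps \(\Psi_{x,\delta}\) of Theorem \ref{Thm::Spaces::Scaling::MainScalingThm} together with a classical Seeley extension on cubes, not flows; they produce pre-elementary (not elementary) operators \(E_{j,\alpha,\beta}\) on \(\ManifoldM\). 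The extension is then \(\Extension[N_0]=\sum_{\alpha,\beta}\sum_j 2^{-j(2N-|\alpha|-|\beta|)}(2^{-j\Wdv}\Wh)^\alpha E_{j,\alpha,\beta}(2^{-j\Wdv}W)^\beta\), and the boundedness is established by the almost-orthogonality argument of Proposition \ref{Prop::Spaces::Extend::TDefinesBoundedOperator}. The outer derivatives, which survive the extension intact, supply the cancellation you correctly identify as necessary for negative \(s\); this is why the decomposition to depth \(N\) (governed by \(N_0\)) precedes the kernel extension.
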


 See Section \ref{Section::Spaces::RestrictionAndExtension} for the proof of Theorem \ref{Thm::Spaces::Extension}.

 \begin{proposition}\label{Prop::Spaces::EqualsLp}
   For \(1<p<\infty\),
   \begin{equation*}
      \TLSpace{0}{p}{2}[\Compact][\FilteredSheafF]
      =\LpSpace{p}[\Compact],
  \end{equation*}
  and
  \begin{equation*}
      \TLNorm{f}{0}{p}{2}[\FilteredSheafF]\approx \LpNorm{f}{p},\quad \forall f\in \TLSpace{0}{p}{2}[\Compact][\FilteredSheafF]=\LpSpace{p}[\Compact].
  \end{equation*}
  Here, the norm on \(\LpSpace{p}[\Compact]\) is defined by any smooth, strictly positive density
  on an \(\ManifoldN\)-neighborhood of \(\Compact\). The choice of density does not matter--as the equivalence
  class of the \(\LpSpace{p}\)-norm does not depend on the choice.
 \end{proposition}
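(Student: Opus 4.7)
The plan is to establish the square-function characterization of $L^p$ adapted to $\FilteredSheafF$. By the definition in Section \ref{Section::Spaces::MainDefns}, one has $\TLNorm{f}{0}{p}{2}[\FilteredSheafF] = \LplqNormNoSet{\{D_j f\}_{j\in\Zgeq}}{p}{2}$, where $\{D_j\}_{j\in\Zgeq}$ is the Littlewood--Paley family described in Section \ref{Section::GlobalCor::InfomralNorms}. Since $(\Compact,\Vol,\MetricWWdv)$ is a space of homogeneous type (doubling by Proposition \ref{Prop::VectorFields::Scaling::VolEstimates} and giving the correct topology by Proposition \ref{Prop::VectorFields::Scaling::SameTopology}), the classical two-sided square-function estimate becomes available via vector-valued Calder\'on--Zygmund theory, and the proof reduces to verifying its hypotheses in the present setting.

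For the upper bound $\TLNorm{f}{0}{p}{2}[\FilteredSheafF] \lesssim \LpNorm{f}{p}$, I would treat $T f := \{D_j f\}_{j}$ as a vector-valued singular integral and show $T : L^p(\Compact) \to L^p(\Compact;\ell^2)$ is bounded. The strategy is standard: first prove the $L^2$ bound by a Cotlar--Stein almost-orthogonality argument, reducing to kernel-level estimates $\|D_j^* D_k\|_{L^2\to L^2} + \|D_j D_k^*\|_{L^2\to L^2} \lesssim 2^{-|j-k|\epsilon}$ for some $\epsilon>0$, which follow from the pointwise bounds on the kernels of $D_j$ obtained via the scaling maps; second, verify that the operator-valued kernel of $T$ satisfies the standard size and H\"older estimates relative to $\MetricWWdv$ and $\Vol$, and then invoke vector-valued CZ theory on a space of homogeneous type to conclude boundedness for all $1<p<\infty$.

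For the lower bound $\LpNorm{f}{p} \lesssim \TLNorm{f}{0}{p}{2}[\FilteredSheafF]$, I would use a Calder\'on-type reproducing formula $f = \sum_j \widetilde D_j D_j f$ (valid modulo a smoothing remainder), constructed from a dual Littlewood--Paley family $\widetilde D_j$ with the same type of kernel estimates as $D_j$. Pairing with $g\in L^{p'}$ and applying Cauchy--Schwarz in $j$ followed by H\"older in $L^p$ yields
\[
|\langle f, g\rangle| \;\leq\; \int_{\Compact} \Bigl(\sum_j |D_j f|^2\Bigr)^{1/2}\Bigl(\sum_j |\widetilde D_j^* g|^2\Bigr)^{1/2} \, d\Vol \;\lesssim\; \TLNorm{f}{0}{p}{2}[\FilteredSheafF] \, \LpNorm{g}{p'},
\]
where the last step applies the already established upper bound, in the dual range $p'$, to the family $\widetilde D_j^*$ acting on $g$. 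Taking the supremum over $g$ with $\LpNorm{g}{p'}\leq 1$ gives the claim. The main obstacle is the $L^2$ almost-orthogonality estimate for $\{D_j\}$: the kernels must be shown to decay off the diagonal at different scales $2^{-j}$ and $2^{-k}$ despite being defined by one-sided integrals near $\BoundaryNncF$ (to respect the support condition $e^{tX_0}x\in \InteriorN$ for $t>0$). Once those kernel bounds are in hand, everything else follows the classical template.
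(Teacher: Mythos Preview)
Your approach is a genuine direct attack via Littlewood--Paley theory on the manifold with boundary, whereas the paper takes a much shorter route: it invokes Corollary \ref{Cor::Spaces::Extend::Consequences::RestrictionSpaceDefn} (which is an immediate consequence of the extension Theorem \ref{Thm::Spaces::Extension}) to identify \(\TLSpace{0}{p}{2}[\Compact][\FilteredSheafF]\) with the restriction of the corresponding space on an ambient manifold without boundary, and then cites the already-known result \cite[Proposition 6.2.13]{StreetMaximalSubellipticity} in that setting. The paper's argument is essentially two lines; the heavy lifting was already done in proving the extension theorem.

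Your direct route is plausible in outline but more delicate than you indicate, mainly because the operators \(D_j\) here are genuinely asymmetric near the boundary: the kernels \(D_j(x,y)\) vanish to infinite order as \(y\to\BoundaryN\) but not as \(x\to\BoundaryN\) (this is the point of \(\PElemzF{\Compact}\) versus \(\PElemF{\Compact}\)). Consequently \(D_j^{*}\) need not lie in \(\ElemzF{\Compact}\), so the almost-orthogonality estimate for \(D_j^{*}D_k\) and the square-function bound for \(\{\widetilde D_j^{*}g\}\) cannot simply be read off from the machinery for elementary operators; you would need to develop a parallel calculus for the adjoint class. Similarly, the Calder\'on reproducing formula \(f=\sum_j \widetilde D_j D_j f\) is not furnished by the paper (which only gives \(\sum_j D_j=\Multpsi\)), and constructing a dual family \(\widetilde D_j\) with the same one-sided vanishing near \(\BoundaryN\) is an additional step. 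None of this is insurmountable---the pre-elementary bounds \eqref{Eqn::Spaces::LP::PreElemBound} are symmetric in \(x,y\) (Remark \ref{Rmk::Spaces::LP::BoundIsSymmetric}), so the CZ kernel estimates survive---but it amounts to redoing on \(\ManifoldN\) much of what the paper deliberately exports to \(\ManifoldM\) via extension.
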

 Proposition \ref{Prop::Spaces::EqualsLp} is proved in Section \ref{Section::Spaces::RestrictionAndExtension::SimpleConsequences}.

 \begin{proposition}\label{Prop::Spaces::EqualsSobolev}
   Let \(\Omega\subseteq \ManifoldN\) be open and let
   \(\WWo=\left\{ (W_1,1),\ldots, (W_r,1) \right\}\subset \VectorFields{\Omega}\times \Zg\)
   are H\"ormander vector fields with formal degrees on \(\Omega\), where each formal degree equals \(1\).
   Suppose \(\FilteredSheafF\big|_{\Omega}=\FilteredSheafGenBy{\WWo}\). Let \(\Compact\Subset \Omega\cap \ManifoldNncF\)
   be compact. Then, for \(1<p<\infty\), and \(k\in \Zgeq\),
   \begin{equation*}
       \TLSpace{k}{p}{2}[\Compact][\FilteredSheafF] = \left\{ f\in \LpSpace{p}[\Compact] : W^{\alpha}f\in \LpSpace{p}[\Compact], \forall |\alpha|\leq k \right\},
   \end{equation*}
   \begin{equation*}
       \TLNorm{f}{k}{p}{2}[\Compact][\FilteredSheafF]\approx \sum_{|\alpha|\leq k} \LpNorm{W^{\alpha}f}{p},
   \end{equation*}
   where the \(\LpSpace{p}\)-norm is taken with respect to any smooth, strictly positive density on \(\Omega\)
   (the choice of density does not matter).
\end{proposition}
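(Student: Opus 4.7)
The plan is to prove the two inclusions separately and then check that the norms are equivalent.

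The inclusion $\TLSpace{k}{p}{2}[\Compact][\FilteredSheafF] \subseteq \{ f : W^\alpha f \in \LpSpace{p}[\Compact]\ \forall |\alpha|\le k\}$ is essentially immediate from results already in hand. Since each $W_j$ lies in $\FilteredSheafF[\Omega][1] = \FilteredSheafGenBy{\WWo}[\Omega][1]$, Proposition \ref{Prop::Spaces::MappingOfFuncsAndVFs}~\ref{Item::Spaces::MappingOfFuncsAndVFs::VFsInF}, iterated $|\alpha|$ times, gives that $W^\alpha$ maps $\TLSpace{k}{p}{2}[\Compact][\FilteredSheafF]$ continuously into $\TLSpace{k-|\alpha|}{p}{2}[\Compact][\FilteredSheafF]$ whenever $|\alpha|\le k$. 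Combined with the embedding $\TLSpace{s}{p}{2} \hookrightarrow \TLSpace{0}{p}{2}$ for $s\ge 0$ (an immediate consequence of the monotonicity of the weights $2^{js}$ inside the $L^p(\ell^2)$ norm that will define $\TLNorm{\cdot}{s}{p}{2}$) and Proposition \ref{Prop::Spaces::EqualsLp}, this yields $\LpNorm{W^\alpha f}{p}\lesssim \TLNorm{f}{k}{p}{2}[\Compact][\FilteredSheafF]$ for every $|\alpha|\le k$.

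The reverse inclusion is the substantive content. The strategy is to recover the Littlewood--Paley bound on $f$ from the integrability of the $W^\alpha f$ by exploiting the cancellation of the mollifier $\vsig_j$ from Section \ref{Section::GlobalCor::InfomralNorms}: because $\int t^\alpha \vsig_j(t)\,dt = 0$ for every multi-index $\alpha$, a Taylor expansion of $f(e^{t_0 X_0 + \cdots + t_q X_q}x)$ to order $k$ along the flow generated by $X_0,\ldots,X_q$ (the tangent-space-spanning generators of $\LieFilteredSheafF$ furnished by Lemma \ref{Lemma::Filtrations::GeneratorsForLieFiltration}) annihilates all terms of order $<k$ and leaves only an integral remainder of order exactly $k$. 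Rewriting each $X_{i_1}\cdots X_{i_k}$ as a polynomial in the $W_j$ (plus commutators of strictly smaller $\FilteredSheafF$-degree), one obtains a schematic identity
\begin{equation*}
   D_j f(x) = 2^{-jk} \sum_{|\alpha|=k} \widetilde{D}_j^\alpha (W^\alpha f)(x) + (\text{lower-order-degree terms}),
\end{equation*}
where each $\widetilde{D}_j^\alpha$ is a Littlewood--Paley-type integral operator built from a rescaled and moment-cancelling Schwartz function. Multiplying by $2^{jk}$, taking $\ell^2$ in $j$ and $\LpSpace{p}$ in $x$, applying $L^p$-boundedness of the square function for $\widetilde{D}_j^\alpha$ (which is precisely Proposition \ref{Prop::Spaces::EqualsLp} applied to the filtration generated by these auxiliary operators), and inducting on $k$ to absorb the lower-order terms, one obtains $\TLNorm{f}{k}{p}{2}[\Compact][\FilteredSheafF]\lesssim \sum_{|\alpha|\le k} \LpNorm{W^\alpha f}{p}$.

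The main obstacle will be making precise the schematic identity above. In the classical Euclidean case, $D_j$ is a convolution and $W^\alpha$ has constant coefficients, so the two commute exactly and the argument reduces to a Fourier-multiplier computation. In the present maximally subelliptic setting the generators $X_i$ do not commute, and converting between iterated flows and iterated $W_j$'s produces remainder terms whose $\FilteredSheafF$-degree is strictly less than $k$; the induction on $k$ is what manages these, but the combinatorics of keeping the remainder terms organized so that each drop in $\FilteredSheafF$-degree is compensated by the appropriate factor of $2^j$ is delicate. A secondary obstacle is verifying that the ancillary operators $\widetilde{D}_j^\alpha$ genuinely form a Littlewood--Paley family of the type covered by Proposition \ref{Prop::Spaces::EqualsLp}; in practice this reduces to checking that the Schwartz symbols inherit the moment-cancellation hypotheses of Section \ref{Section::GlobalCor::InfomralNorms} after integration by parts against $W^\alpha$, which depends on the choice of generators being irrelevant (Theorem \ref{Thm::Spaces::OnlyDependsOnLieFiltration}).
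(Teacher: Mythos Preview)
Your forward inclusion is fine and matches the paper. For the reverse inclusion, however, the paper does not carry out the Taylor-expansion-and-remainder program you outline; it instead invokes Proposition~\ref{Prop::Spaces::BasicProps::OneDerivsGivesNorms}, which says directly that \(f\in\ASpace{s+N}{p}{q}[\Compact][\FilteredSheafF]\) if and only if \(W^{\alpha}f\in\ASpace{s}{p}{q}[\Compact][\FilteredSheafF]\) for all \(|\alpha|\leq N\), with equivalence of norms. Applying this with \(s=0\), \(N=k\), \(q=2\) and combining with Proposition~\ref{Prop::Spaces::EqualsLp} finishes the proof in two lines.

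The substantive content you are trying to reconstruct by hand is already packaged in Proposition~\ref{Prop::Spaces::Elem::Elem::MainProps}\ref{Item::Spaces::Elem::Elem::PullOutNDerivs}: every elementary operator admits a decomposition \(E_j=\sum_{|\alpha|\leq N}2^{(|\alpha|-N)j}\widetilde E_{j,\alpha}(2^{-j\Wdv}W)^{\alpha}\) with \(\widetilde E_{j,\alpha}\) again elementary. This is exactly your ``schematic identity,'' proved once and for all at the abstract level, and the proof of Proposition~\ref{Prop::Spaces::BasicProps::OneDerivsGivesNorms} uses it to obtain \(\VpqsENorm{f}[p][q][s+1][\sE]\leq\sum_{|\alpha|\leq 1}\VpqsENorm{W^{\alpha}f}[p][q][s][\widetilde\sE]\) for a single step, then inducts. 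Your proposed route via explicit Taylor expansion of \(f(e^{t\cdot X}x)\), moment cancellation of \(\vsig_j\), and manual commutator bookkeeping would eventually arrive at the same place, but it re-derives \ref{Item::Spaces::Elem::Elem::PullOutNDerivs} from scratch in a particular coordinate realization rather than using the abstract property. The paper's approach is both shorter and immune to the ``delicate combinatorics'' you anticipate, since the remainder management is absorbed into the recursive definition of \(\ElemzF{\Compact}\).
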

\begin{proof}
   For \(k=0\), this is Proposition \ref{Prop::Spaces::EqualsLp}.
   The result for general \(k\) follows from the result for \(k=0\) and Proposition \ref{Prop::Spaces::BasicProps::OneDerivsGivesNorms},
   below.
\end{proof}

\begin{remark}
   Proposition \ref{Prop::Spaces::EqualsSobolev} focuses on the special case when the formal
   degrees are all equal to \(1\). The general case, for general formal degrees, on a manifold without boundary 
   is covered in \cite[Corollary 6.2.14]{StreetMaximalSubellipticity}. We expect a similar result
   should be true on manifolds with boundary. However, if the proof in \cite[Corollary 6.2.14]{StreetMaximalSubellipticity}
   is an indication, this will require a more detailed study of some maximally subelliptic boundary value problems,
   which will be the focus of future work.
   However, see Proposition \ref{Prop::Spaces::Extend::Consequences::DerivForSobolevNormsOnFunctionsOfCptSupport}
   and Remark \ref{Rmk::Spaces::Extend::Consequences::DerivForSobolevNormsOnFunctionsOfCptSupport::NotAllFunctions} for the more
   general result, when restricted to smooth functions with compact support in \(\InteriorN\).
\end{remark}

\begin{remark}
    For the connection between \(\BesovSpace{s}{\infty}{\infty}[\Compact][\FilteredSheafF]\)
    and the naturally defined H\"older space with respect to the Carnot--Carath\'eodory metric;
    see Section \ref{Section::Zyg::Holder}.
\end{remark}

 Let \(\XSpaceClassical{s}{p}{q}[\Compact]\) denote the Banach space of those distributions
 in \(\DistributionsZeroN\) which lie in the classical Besov, \(\BesovSpace{s}{p}{q}\), or Triebel--Lizorkin,
 \(\TLSpace{s}{p}{q}\), space on \(\ManifoldN\), and are supported in \(\Compact\).
 Since \(\ManifoldN\) is a smooth manifold with boundary and \(\Compact\) is compact, this is a well-defined
 space; see, for example, \cite[Section 2.4]{RunstSickelSobolevSpacesOfFractionalOrder}.

 \begin{proposition}\label{Prop::Spaces::EqualsClassical}
    In the special case \(\FilteredSheafF[\Omega][d]=\VectorFields{\Omega}\), for all \(\Omega\) and \(d\),
    we have \(\XSpace{s}{p}{q}[\Compact][\FilteredSheafF]=\XSpaceClassical{s}{p}{q}[\Compact]\) with
    equivalence of norms.
 \end{proposition}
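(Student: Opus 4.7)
The plan is to reduce to the corresponding statement on manifolds without boundary via Theorem \ref{Thm::Spaces::Extension}. First I would observe that when $\FilteredSheafF[\Omega][d]=\VectorFields{\Omega}$, locally in any coordinate chart we have $\FilteredSheafF\big|_{\Omega}=\FilteredSheafGenBy{\WWo}$ where $\WWo=\{(\partial_{x_1},1),\ldots,(\partial_{x_n},1)\}$. Consequently, by Proposition \ref{Prop::VectorFields::Scaling::VolAndMetricEquivalnce}, the Carnot--Carath\'eodory metric $\MetricWWdv$ is locally equivalent to (a truncated) Euclidean metric, and the scaling maps degenerate to ordinary Euclidean dilations. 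In the informal description of Section \ref{Section::GlobalCor::InfomralNorms}, the operators $D_j$ built from these generators reduce (up to the cutoff $\psi$ and the harmless correction $a_k$) to standard convolution operators $f\mapsto\psi\cdot(f*\Dild{2^j}{\vsig_j})$ of Littlewood--Paley type. Every point of $\BoundaryN$ is automatically $\FilteredSheafF$-non-characteristic in this case (Example \ref{Example::Filtrations::RestrictingFiltrations::NonCharExamples}), so $\ManifoldNncF=\ManifoldN$.

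Next, I would embed $\ManifoldN$ in a smooth manifold without boundary $\ManifoldM$ as a closed, codimension~$0$ submanifold (Proposition \ref{Prop::Filtrations::RestrictingFiltrations::CoDim0CoRestriction}) and take $\FilteredSheafFh$ to be the maximal filtration $\FilteredSheafFh[\Omegah][d]=\VectorFields{\Omegah}$ on $\ManifoldM$, which satisfies $\RestrictFilteredSheaf{\FilteredSheafFh}{\ManifoldN}=\FilteredSheafF$. Theorem \ref{Thm::Spaces::Extension} then says that restriction is a retraction $\XSpace{s}{p}{q}[\overline{\Omega}][\FilteredSheafFh]\twoheadrightarrow\XSpace{s}{p}{q}[\Compact][\FilteredSheafF]$ for a suitable ambient open $\Omega\Subset\ManifoldM$ containing $\Compact$, so the norm on the latter is equivalent to the quotient norm induced by the former (equivalently, to $\inf\{\|g\|_{\XSpace{s}{p}{q}[\overline{\Omega}][\FilteredSheafFh]}:g\big|_{\TestFunctionsZeroN}=f\}$).

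On the manifold without boundary $\ManifoldM$ endowed with the maximal filtration, the local reduction in the first paragraph together with the theory developed for manifolds without boundary in \cite[Section 6.6.1]{StreetMaximalSubellipticity} shows that the intrinsic $\XSpace{s}{p}{q}[\overline{\Omega}][\FilteredSheafFh]$ coincides with the classical $\XSpaceClassical{s}{p}{q}[\overline{\Omega}]$ on $\ManifoldM$, with equivalent norms. On the other hand, the standard definition (see \cite[Section 2.4]{RunstSickelSobolevSpacesOfFractionalOrder}) of the classical $\XSpaceClassical{s}{p}{q}[\Compact]$ on $\ManifoldN$ identifies it precisely with the quotient by distributions supported off $\ManifoldN$; that is, it is itself a restriction-type space from $\ManifoldM$. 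Matching these two restriction descriptions gives the desired equality of spaces and equivalence of norms.

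The main obstacle is ensuring the passage through the ambient manifold is well posed and independent of the choices: one must verify that the extension map $\Extension[N_0]$ of Theorem \ref{Thm::Spaces::Extension} and the classical extension operators (e.g., Seeley-type extensions used implicitly in the classical restriction definition) produce equivalent quotient norms. This amounts to checking that the intrinsic norm on $\XSpace{s}{p}{q}[\Compact][\FilteredSheafF]$ agrees with any of the standard equivalent definitions of the classical $\XSpaceClassical{s}{p}{q}[\Compact]$; since both are characterized as quotients of the same ambient space $\XSpaceClassical{s}{p}{q}[\overline{\Omega}]=\XSpace{s}{p}{q}[\overline{\Omega}][\FilteredSheafFh]$ by distributions supported in $\ManifoldM\setminus\InteriorN$, the equivalence follows.
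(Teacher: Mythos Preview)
Your approach is correct and essentially the same as the paper's: embed $\ManifoldN$ in an ambient manifold without boundary $\ManifoldM$ carrying the maximal filtration, use the restriction/extension theorem to identify $\XSpace{s}{p}{q}[\Compact][\FilteredSheafF]$ with a quotient of the ambient space, and then invoke the known result on manifolds without boundary from \cite{StreetMaximalSubellipticity}. The paper streamlines this by taking $\ManifoldM$ to be the double of $\ManifoldN$, citing Corollary~\ref{Cor::Spaces::Extend::Consequences::RestrictionSpaceDefn} directly for the quotient description (which dispels your concern about matching extension operators), and quoting \cite[Theorem~6.6.7]{StreetMaximalSubellipticity} for the ambient identification; your first paragraph on the local Euclidean structure is not needed for the argument.
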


 Proposition \ref{Prop::Spaces::EqualsClassical} is proved in Section \ref{Section::Spaces::RestrictionAndExtension::SimpleConsequences}.

 \begin{remark}
   An important aspect of classical Besov and Triebel--Lizorkin spaces is that the elements can be well-approximated
   by smooth functions. The same is true in this setting: see Section \ref{Section::Spaces::SmoothApproximation}.
 \end{remark}

 The rest of this chapter is devoted to the relevant definitions and proofs of the above results.

    \subsection{The classical setting}\label{Section::Spaces::Classical}
    There are many possible equivalent characterizations
of the classical spaces \(\BesovSpace{s}{p}{q}[\Rngeq]\)
and \(\TLSpace{s}{p}{q}[\Rngeq]\).
In this section, we present one such characterization (Proposition \ref{Prop::Spaces::Classical::NewDefnAspq})
which is perhaps less common, and which we generalize to obtain
the definitions and results of this paper.

\begin{notation}\label{Notation::Spaces::Classical::VSpacepq}
    We define 
    \begin{equation*}
        \VSpace{p}{q}=
        \begin{cases}
            \lqLpSpace{p}{q}, &\text{if } \ASpace{s}{p}{q}=\BesovSpace{s}{p}{q},\\
            \LplqSpace{p}{q}, &\text{if } \ASpace{s}{p}{q}=\TLSpace{s}{p}{q},
        \end{cases}
    \end{equation*}
    where \(\LpSpace{p}\) is on the ambient space
    (with respect to whatever measure has been given).
    For example, when we are considering \(\Rngeq\) we use
    Lebesgue measure and have:
    \begin{equation*}
        \VSpace{p}{q}=
        \begin{cases}
            \lqLpSpace{p}{q}[\Rngeq], &\text{if } \ASpace{s}{p}{q}[\Rngeq]=\BesovSpace{s}{p}{q}[\Rngeq],\\
            \LplqSpace{p}{q}[\Rngeq], &\text{if } \ASpace{s}{p}{q}[\Rngeq]=\TLSpace{s}{p}{q}[\Rngeq],
        \end{cases}
    \end{equation*}
    and similarly for \(\Rngeq\) replaced with \(\Rn\).
    Whenever \( \VSpace{p}{q}=\lqLpSpace{p}{q}\) we restrict to \(1\leq p,q\leq \infty\),
    and whenever \(\VSpace{p}{q}= \LplqSpace{p}{q}\) we restrict to \(1<p<\infty\) and \(1<q\leq \infty\).
\end{notation}

We begin with one of the most standard definitions of \(\ASpace{s}{p}{q}[\Rn]\).
Let \(\SchwartzSpaceRn\) denote Schwartz space on \(\Rn\).
Fix \(\psi\in \SchwartzSpaceRn\) with \(\int \psi =1\) and \(\int t^{\alpha}\psi(t)\: dt=0\), \(\forall \alpha\ne 0\);
for example one may choose
\(\psi\) by letting
\(\psih\) be equal to one on a neighborhood of \(0\).
For any function \(\phi\), set \(\Dil{2^j}{\phi}(t)=2^{jn}\phi(2^j t)\).
For \(j\in \Zgeq\), set
\begin{equation}\label{Eqn::Spaces::Classical::Definephij}
    \phi_j=\begin{cases}
        \psi, & j=0,\\
        \Dil{2}{\psi}-\psi, & j\geq 1.
    \end{cases}
\end{equation}
So that 
\begin{equation}\label{Eqn::Spaces::Classical::SumDilatesToGetDelta0}
    \sum_{j=0}^L \Dil{2^j}{\phi_j} = \Dil{2^L}{\psi}\xrightarrow{L\rightarrow \infty}\delta_0.  
\end{equation}
Define \(D_j:\TemperedDistributions[\Rn]\rightarrow \CinftySpace[\Rn]\) by
\begin{equation}\label{Eqn::Spaces::Classical::DefineDjOnWholeSpace}
    D_j f :=f*\Dil{2^j}{\phi_j}.
\end{equation}
It follows from \eqref{Eqn::Spaces::Classical::SumDilatesToGetDelta0}
that \(\sum_{j=0}^\infty D_j =I\).

\begin{remark}\label{Rmk::Spaces::Classical::phiIsSchwartzAllOfWhoseMomentsVanish}
    For \(j\geq 1\),
    \(\phi_j\in \SchwartzSpaceRn\) and satisfies
    \(
        \int t^{\alpha}\phi_j(t)\: dt=0,
    \)
    \(\forall \alpha\):
    it is a Schwartz function all of whose moments vanish; see \eqref{Eqn::Spaces::Schwartz::DefnSchwartzSpacez}.
\end{remark}

Set
\begin{equation*}
    \ASpace{s}{p}{q}[\Rn]:=
    \left\{ f\in \TemperedDistributions[\Rn] : \left\{ D_j f \right\}_{j\in \Zgeq}\in \VSpace{p}{q}\right\},
    \quad
    \ANorm{f}{s}{p}{q}[\R^n]:=
    \BVNorm{\left\{ D_j f \right\}_{j\in \Zgeq}}{p}{q}.
\end{equation*}
It is a classical result that \(\ASpace{s}{p}{q}[\Rn]\) is a Banach space,
and the space and equivalence class of the norm
do not depend on the choice of \(\psi\); see, for example, \cite[Section 2.1]{RunstSickelSobolevSpacesOfFractionalOrder}.

A common way to define \(\ASpace{s}{p}{q}[\Rngeq]\) is as restrictions
of elements of \(\ASpace{s}{p}{q}[\Rn]\):
\begin{equation}\label{Eqn::Spaces::Classical::ClassicalDefnOfASpace}
    \ASpace{s}{p}{q}[\Rngeq]
    =\left\{ f\big|_{\Rng} : f\in \ASpace{s}{p}{q}[\Rngeq]  \right\},
    \quad
    \ANorm{g}{s}{p}{q}[\Rngeq] = \inf \left\{ \ANorm{f}{s}{p}{q}[\Rn] : f\in \ASpace{s}{p}{q}[\Rn], f\big|_{\Rng}=g \right\}.
\end{equation}
See, for example, \cite[Section 2.9.1]{TriebelTheoryOfFunctionSpaces}.
However, we provide an alternate characterization.

Let \(\SchwartzSpaceRng:=\left\{ f\in \SchwartzSpaceRn : \supp(f)\subseteq \Rngeq \right\}\);
which is a closed subspace of \(\SchwartzSpaceRn\), and therefore a Fr\'echet space.
Above, we picked any \(\psi\in \SchwartzSpaceRn\) with \(\int \psi =1\) and \(\int t^{\alpha}\psi(t)\: dt=0\), \(\forall \alpha\ne 0\).
The next lemma lets us choose one in \(\SchwartzSpaceRng\).
\begin{lemma}\label{Lemma::Spaces::Classical::ExistsGoodPsi}
    There exists \(\psi\in \SchwartzSpaceRng\) with \(\int \psi =1\) and \(\int t^{\alpha}\psi(t)\: dt=0\), \(\forall \alpha\ne 0\).
    In fact, \(\psi\) can be chosen with \(\supp(\psi)\subseteq (0,\infty)^n\).
\end{lemma}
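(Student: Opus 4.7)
The plan is to reduce the statement to the one-dimensional case by a tensor product, and then to construct the one-dimensional function as a sum of translates of a bump placed on a geometrically increasing sequence of locations.

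For the reduction to $n=1$: if one has $\psi_1 \in \SchwartzSpaceR$ with $\supp(\psi_1)\subseteq (0,\infty)$, $\int \psi_1 = 1$, and $\int t^k \psi_1\,dt = 0$ for all $k \geq 1$, then the tensor product $\psi(t_1,\ldots,t_n) := \prod_{j=1}^n \psi_1(t_j)$ lies in $\SchwartzSpaceRn$, is supported in $(0,\infty)^n$, and satisfies
\[
\int t^\alpha \psi(t)\,dt = \prod_{j=1}^n \int t_j^{\alpha_j}\psi_1(t_j)\,dt_j = \delta_{\alpha,0},
\]
so that $\psi$ fulfills both assertions of the lemma. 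Hence everything reduces to producing such a one-dimensional $\psi_1$.

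For the one-dimensional construction, I would fix a nonnegative $\phi \in C_c^\infty((1,2))$ with $\int \phi = 1$, set $m_k := \int t^k \phi\,dt$, and look for $\psi_1$ of the form $\psi_1(t) := \sum_{j \geq 0} a_j \phi(t - 2^j)$. The translates $\phi(\cdot - 2^j)$ have pairwise disjoint supports $[2^j+1, 2^j+2] \subseteq (0,\infty)$, so $\psi_1$ will automatically be smooth and supported in $(0,\infty)$ as soon as the series converges pointwise, and Schwartz decay at infinity will follow from super-polynomial decay of the coefficients $(a_j)$. Expanding $\int t^k \phi(t - 2^j)\,dt = \sum_{i=0}^k \binom{k}{i} 2^{j(k-i)} m_i$ and letting $F(z) := \sum_{j \geq 0} a_j z^j$, the moment conditions $\int t^k \psi_1 = \delta_{k,0}$ become the lower-triangular interpolation system
\[
\sum_{l=0}^k \binom{k}{l} m_{k-l}\, F(2^l) = \delta_{k,0}, \qquad k \geq 0,
\]
which uniquely determines the prescribed values $F(2^l)$ for $l \geq 0$. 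A quick induction using $|m_l| \leq 2^l$ (from $\supp(\phi) \subseteq [1,2]$) shows $|F(2^l)| \leq D^l$ for some constant $D$.

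The main obstacle is to realize these prescribed values at the nodes $\{2^l\}_{l\geq 0}$ by an entire function $F$ whose Taylor coefficients $a_j$ at the origin decay super-polynomially in $j$. Because the interpolation nodes have no finite accumulation point and the prescribed values grow only geometrically, such an $F$ exists (for instance of arbitrarily small positive order $\rho$) by a standard Weierstrass canonical product plus a Mittag--Leffler-type correction. For $F$ of finite order $\rho$, the Cauchy estimates give $|a_j| \leq \exp(j/\rho)(j/\rho)^{-j/\rho}$, which decays faster than $2^{-jN}$ for every $N$. This super-polynomial decay is precisely what is needed to verify that $\sum a_j \phi(t - 2^j)$ converges in $\SchwartzSpaceR$: on each interval $[2^j+1, 2^j+2]$ only the single term $a_j \phi(t - 2^j)$ contributes, so $|t|^N |\partial^m \psi_1(t)|$ is uniformly bounded in $t$ and in the index $j$ for every $N,m$. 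Alternatively, the entire interpolation argument can be replaced by the classical fact that the moment map $\SchwartzSpace([0,\infty)) \to \R^{\Zgeq}$ is surjective onto sequences of polynomial growth (a Borel-type consequence applied to the Laplace transform), which immediately produces $\psi_1$ with moment sequence $(1,0,0,\ldots)$.
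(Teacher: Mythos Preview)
Your tensor-product reduction to dimension one matches the paper. The one-dimensional construction is genuinely different: the paper (following Nazarov) works on the Fourier side, using an uncertainty-principle lemma of Havin--Joricke to build an even Schwartz function $G$ vanishing near $0$ with $\widehat G \equiv 1$ near $0$, and then sets $H(s) := G(\sqrt s)/(2\sqrt s)$; the substitution $s = t^2$ converts the even moments of $G$ into all moments of $H$. Your route is more elementary and avoids Fourier analysis entirely, at the cost of invoking a Weierstrass--Mittag-Leffler interpolation.

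One gap: the ``quick induction'' for $|F(2^l)| \leq D^l$ does not go through. Writing $B(x) := \sum_l F(2^l)\,x^l/l!$, the triangular system reads $B(x)\int \phi(t) e^{tx}\,dt = 1$, so $B$ has finite radius of convergence (the distance to the nearest complex zero of that Laplace transform) and $|F(2^l)|$ actually grows like $l!\,C^l$. This is harmless, because the bound is never used: you only need \emph{some} entire interpolant $F$ at the discrete set $\{2^l\}$, which Weierstrass--Mittag-Leffler always provides, and entirety of $F$ is precisely $\limsup|a_j|^{1/j} = 0$, i.e., $|a_j| \lesssim_N 2^{-jN}$ for every $N$, which is exactly the Schwartz condition for $\psi_1 = \sum_j a_j \phi(\cdot - 2^j)$. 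So drop the $D^l$ claim and the finite-order discussion and the argument is clean. (Your alternative via moment surjectivity is also valid---this is Dur\'an, \emph{Proc.\ Amer.\ Math.\ Soc.}\ 107 (1989), and needs no growth hypothesis on the target sequence.)
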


Lemma \ref{Lemma::Spaces::Classical::ExistsGoodPsi} follows immediately
from the next lemma, by taking \(\psi(t_1,\ldots, t_n)=H(t_1)H(t_2)\cdots H(t_n)\), where \(H\)
is as in the next lemma.

\begin{lemma}\label{Lemma::Spaces::Classical::ExistenceGoodFunc}
    \(\exists H\in \SchwartzSpace[\R]\) with \(\int H(t)\: dt=1\), \(\int t^{k}H(t)\: dt=0\), \(\forall k\geq 1\),
    and \(\supp(H)\subset (0,\infty)\).
\end{lemma}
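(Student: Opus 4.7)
The plan is to construct $H$ as a lacunary superposition of rescaled copies of a single bump supported in $(1,2)$, engineered so that the moment conditions become the vanishing of an entire function on the geometric sequence $2,4,8,\ldots$.

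First I fix $\psi \in \CinftyCptSpace[(1,2)]$ with $\int \psi = 1$, and for $j \geq 0$ set $\psi_j(t) := 2^{-j}\psi(t/2^j)$, so that $\supp \psi_j \subset [2^j, 2^{j+1}]$, $\int \psi_j = 1$, and $\int t^k \psi_j(t)\,dt = 2^{jk}\mu_k$ where $\mu_k := \int s^k \psi(s)\,ds$; in particular $\mu_0 = 1$. Next I consider the Weierstrass product
\begin{equation*}
G(z) := \prod_{k=1}^{\infty}\left(1 - \frac{z}{2^k}\right),
\end{equation*}
which converges uniformly on compact sets (since $\sum 2^{-k}<\infty$) to an entire function with simple zeros exactly at $z = 2, 4, 8, \ldots$ and $G(1) > 0$. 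Setting $F(z) := G(z)/G(1) = \sum_{j=0}^{\infty} c_j z^j$, I get $F(1) = 1$ and $F(2^k) = 0$ for every $k \geq 1$.

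The second step is a quantitative decay bound on the Taylor coefficients. From $|F(z)| \leq G(1)^{-1}\prod_{k}(1+|z|/2^k)$, splitting the product at $k = \lfloor \log_2 R\rfloor$ yields $\sup_{|z|=2^j}|F(z)| \leq C\, 2^{j(j+1)/2}$ (the first $j$ factors contribute $2^{j(j+1)/2}$, the tail is uniformly bounded). Cauchy's formula then gives
\begin{equation*}
|c_j| \leq 2^{-j\cdot j}\sup_{|z|=2^j}|F(z)| \leq C\, 2^{-j(j-1)/2},
\end{equation*}
i.e., super-exponential decay. I then define
\begin{equation*}
H(t) := \sum_{j=0}^{\infty} c_j \,\psi_j(t).
\end{equation*}
Since the supports $[2^j, 2^{j+1}]$ meet only at the dyadic points $2,4,8,\ldots$ where $\psi$ vanishes to infinite order, the sum is locally finite on $\R$ and defines a $C^\infty$ function supported in $[1,\infty) \subset (0,\infty)$. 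For the Schwartz property, on $[2^j, 2^{j+1}]$ one has $|H^{(m)}(t)| \leq |c_j|\,\|\psi^{(m)}\|_\infty\, 2^{-j(m+1)}$ while $t \leq 2^{j+1}$; combined with the bound on $c_j$, this gives $(1+t)^N|H^{(m)}(t)| \lesssim 2^{-j(j-1)/2 + j(N-m-1) + O(N)} \to 0$ as $j \to \infty$ for every $N,m$, so $H \in \SchwartzSpace[\R]$.

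The moment computation is then immediate from the choices:
\begin{equation*}
\int t^k H(t)\, dt = \sum_{j=0}^{\infty} c_j\, 2^{jk}\mu_k = \mu_k\, F(2^k),
\end{equation*}
which equals $\mu_0 F(1) = 1$ for $k=0$ and $\mu_k \cdot 0 = 0$ for $k \geq 1$. The main obstacle, and the point that forces the Weierstrass product rather than any finite sum, is precisely verifying $H \in \SchwartzSpace[\R]$: the super-exponential bound $|c_j| \lesssim 2^{-j(j-1)/2}$ must beat the polynomial growth $t^N \sim 2^{jN}$ on the support of the $j$-th term for \emph{every} $N$, and this is exactly what the order-zero entire function $F$ supplies.
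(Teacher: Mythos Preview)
Your proof is correct and takes a genuinely different route from the paper's. The paper relies on a nontrivial external result from uncertainty principles (a lemma of Havin--J\"oricke guaranteeing the existence of $r\in L^2$ with prescribed restrictions $r|_S$ and $\hat r|_\Sigma$ on sets of finite measure), then convolves and applies the change of variables $t\mapsto t^2$ to move an even Schwartz function supported away from the origin onto $(0,\infty)$. Your argument, by contrast, is entirely self-contained and elementary: the Weierstrass product $\prod_{k\ge1}(1-z/2^k)$ encodes all the moment conditions at once, and the Cauchy estimate $|c_j|\lesssim 2^{-j(j-1)/2}$ is exactly what is needed to force Schwartz decay against the geometric spreading of the supports. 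The paper's approach has the virtue of producing a function whose Fourier transform is identically $1$ near the origin (a slightly stronger property), but it imports machinery; your construction is more direct and arguably more transparent for this particular lemma. One very minor point: since $\psi\in\CinftyCptSpace[(1,2)]$, the supports of the $\psi_j$ are actually pairwise disjoint (contained in open intervals), so the local finiteness is even cleaner than you state.
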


The author learned of Lemma \ref{Lemma::Spaces::Classical::ExistenceGoodFunc} from a MathOverflow post of Nazarov
\cite{NazarovSpecialSchwartzFunctionOnThePositiveInterval}. We describe the proof here, as we do not know of a more
permanent reference.
First, we require another lemma taken from \cite{HavinJorickeUncertaintyPrincipleInHarmonicAnalysis}.

\begin{lemma}[\cite{HavinJorickeUncertaintyPrincipleInHarmonicAnalysis}]\label{Lemma::Spaces::Classical::RestrictFuncAndFT}
    Let \(S,\Sigma\subset \R\) be Lebesgue measurable sets with \(|S|,|\Sigma|<\infty\).
    Then, for any \(p,q\in \LpSpace{2}[\R]\), \(\exists r\in \LpSpace{2}[\R]\) with
    \begin{equation}\label{Eqn::Spaces::Classical::ExistR::Tmp}
        r\big|_{S}=p\big|_S, \quad \hat{r}\big|_{\Sigma}= \hat{q}\big|_{\Sigma}.
    \end{equation}
    Furthermore, if \(S=-S\), \(\Sigma=-\Sigma\), and \(p\big|_{S}\) and \(\hat{q}\big|_{\Sigma}\)
    are real valued and even, then \(r\) may be taken to be real valued and even.
\end{lemma}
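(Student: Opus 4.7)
The plan is to recast the claim as a surjectivity statement for a bounded linear operator between Hilbert spaces, and then prove that surjectivity via an operator-theoretic consequence of Benedicks's theorem on simultaneous support of a function and its Fourier transform.

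First I would introduce the bounded linear map
\[
T : \LpSpace{2}[\R] \to \LpSpace{2}[S] \oplus \LpSpace{2}[\Sigma], \qquad Tr := (r|_S,\, \hat r|_\Sigma),
\]
so that finding $r$ as in the lemma is exactly the statement that $(p|_S, \hat q|_\Sigma)$ lies in the range of $T$. Once $T$ is surjective, one can even take $r := T^*(TT^*)^{-1}(p|_S, \hat q|_\Sigma)$. By the closed range theorem in Hilbert space, surjectivity of $T$ is equivalent to $T^*$ being bounded below. A short duality computation yields $T^*(a,b) = a\chi_S + \mathcal{F}^{-1}(b\chi_\Sigma)$, and expanding the square and applying Plancherel gives
\[
\|T^*(a,b)\|_2^2 = \|a\|_2^2 + \|b\|_2^2 + 2\Re\langle \mathcal{F}(a\chi_S),\, b\chi_\Sigma\rangle.
\]
Setting $P_S f := \chi_S f$ and $Q_\Sigma f := \mathcal{F}^{-1}(\chi_\Sigma \hat f)$, an estimate of the form $\|Q_\Sigma P_S\|_{\mathrm{op}} =: c < 1$ combined with Cauchy--Schwarz will then yield $\|T^*(a,b)\|_2^2 \geq (1-c)(\|a\|_2^2 + \|b\|_2^2)$, which is the required lower bound.

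The hard part will be establishing the strict inequality $\|Q_\Sigma P_S\|_{\mathrm{op}} < 1$, which is an instance of the Amrein--Berthier inequality. I would deduce it from Benedicks's theorem (any $f \in \LpSpace{2}[\R]$ with both $\supp f$ and $\supp \hat f$ of finite Lebesgue measure must vanish) via a compactness argument: the self-adjoint operator $P_S Q_\Sigma P_S$ is an integral operator with kernel $\chi_S(x)\,\widehat{\chi_\Sigma}(y-x)\,\chi_S(y)$, and since $\widehat{\chi_\Sigma} \in \LpSpace{2}[\R]$ and $|S| < \infty$, this kernel lies in $\LpSpace{2}[S\times S]$, making the operator Hilbert--Schmidt and hence compact. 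Its norm is at most $1$; if it equaled $1$, a maximizer $f$ would satisfy $P_S Q_\Sigma P_S f = f$, forcing $f$ to be a nonzero function supported in $S$ whose Fourier transform is supported in $\Sigma$, contradicting Benedicks.

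For the symmetry assertion, I would note that the closed real subspace $V \subset \LpSpace{2}[\R]$ of real-valued even functions is invariant under $\mathcal{F}$, and since $S = -S$ and $\Sigma = -\Sigma$, the maps $r \mapsto r|_S$ and $r \mapsto \hat r|_\Sigma$ send $V$ into the real-even subspaces of $\LpSpace{2}[S]$ and $\LpSpace{2}[\Sigma]$. Because $P_S Q_\Sigma P_S$ also preserves $V$, the entire argument above goes through inside $V$, producing the desired real-valued, even $r$ matching $p$ on $S$ and with $\hat r$ matching $\hat q$ on $\Sigma$.
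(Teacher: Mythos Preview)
Your argument is correct. For the existence of $r$ the paper supplies no proof, only a citation to Havin--J\"oricke and to Nazarov's MathOverflow post; what you have written---surjectivity of $T$ via the Amrein--Berthier bound $\|Q_\Sigma P_S\|<1$, deduced from Benedicks's theorem through the Hilbert--Schmidt compactness of $P_S Q_\Sigma P_S$---is exactly the operator-theoretic argument underlying those references, so you have filled in what the paper leaves as a black box. For the symmetry clause the paper is more economical than your subspace restriction: once \emph{any} solution $r$ is found, it replaces $r(x)$ by $\tfrac14\bigl(r(x)+\overline{r(x)}+r(-x)+\overline{r(-x)}\bigr)$, which is real and even and, under the hypotheses $S=-S$, $\Sigma=-\Sigma$ together with the reality and evenness of $p|_S$ and $\hat q|_\Sigma$, still satisfies both constraints. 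Your approach of rerunning the surjectivity argument inside the real-even subspace is also valid, but the averaging trick is a one-line reduction that avoids repeating any operator theory.
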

\begin{proof}[Comments on the proof]
    The existence of \(r\) satisfying \eqref{Eqn::Spaces::Classical::ExistR::Tmp} can be found in a corollary
    on page 100 of \cite{HavinJorickeUncertaintyPrincipleInHarmonicAnalysis}; in fact, this works in any dimension.
    See \cite{NazarovSpecialSchwartzFunctionOnThePositiveInterval} for a simple proof in a special case which is sufficient
    for our purposes.

    To see that \(r\) can be taken to be real valued and even under the above hypotheses, 
    replace \(r(x)\) with \((r(x)+\overline{r(x)}+r(-x)+\overline{r(-x)})/4\), which satisfies \eqref{Eqn::Spaces::Classical::ExistR::Tmp}
    if \(r\) does and \(p\big|_{S}\) and \(\hat{q}\big|_{\Sigma}\)
    are real valued and even.
\end{proof}

\begin{proof}[Proof of Lemma \ref{Lemma::Spaces::Classical::ExistenceGoodFunc} taken from \cite{NazarovSpecialSchwartzFunctionOnThePositiveInterval}]
    Let \(\psi\in \CinftyCptSpace[(-1/100,1/100)]\) be real and even with \(\int \psi=1\) and \(\psih>1/2\) on \((-1/10,1/10)\).
    By Lemma \ref{Lemma::Spaces::Classical::RestrictFuncAndFT}, \(\exists f\in L^2(\R)\) real and even
    with \(f\big|_{(-1/10,1/10)}=0\), \(\hat{f}\big|_{(-1/10,1/10)}=1/\psih\).
    Set \(F=f*\psi\), so that \(F\big|_{(-1/20,1/20)}=0\), \(\hat{F}\big|_{(-1/10,1/10)}=1\),
    and \(\partial_x^{\alpha} F\in \LpSpace{\infty}\), \(\forall \alpha\). Note, \(F\) is real and even.

    Let \(G\) be defined by \(\hat{G}=\hat{F}*\psi\) so that \(G\big|_{(-1/20,1/20)}=0\), \(\hat{G}\big|_{(-1/20,1/20)}=1\),
    \(G\) is real and even, and \(G\in \SchwartzSpace[\R]\). We have \(\int_{\R} G=1\) and \(\int_{\R} t^{2k}G=0\), \(\forall k\geq 1\),
    and therefore, since \(G\) is even, \(\int_0^{\infty} 2G(t)\: dt=1\) and \(\int_0^{\infty} 2G(t) t^{2k}\: dt=0\), \(\forall k\geq 1\).

    Define \(H(t)\) by \(2tH(t^2)=G(t)\) for \(t\geq 0\) and \(H(t)=0\) for \(t<0\).
    Note \(H\in \SchwartzSpace[\R]\) since \(G\in \SchwartzSpace[\R]\) and \(G=0\) near \(0\).
    \(H\) satisfies the conclusions of the lemma.
\end{proof}

Let \(\TemperedDistributions[\Rng]\) denote the dual of \(\SchwartzSpaceRng\).
With the choice of \(\psi\in \SchwartzSpaceRng\) given in Lemma \ref{Lemma::Spaces::Classical::ExistsGoodPsi},
define \(\phi_j\in \SchwartzSpaceRng\) as in \eqref{Eqn::Spaces::Classical::Definephij}.
Define \(D_j:\TemperedDistributions[\Rng]\rightarrow \CinftySpace[\Rngeq]\) by
\begin{equation}\label{Eqn::Spaces::Classical::DefineDjOnHalfSpace}
    D_j f (x) = \int f(y) \Dil{2^j}{\phi_j}(y-x)\: dy,
\end{equation}
where we have abused notation and wrote integration in place of the pairing of distributions and test functions.
If we replace \(\psi(t)\) with \(\psi(-t)\), this corresponds \eqref{Eqn::Spaces::Classical::DefineDjOnWholeSpace}.
However, in this case, we only define \(D_j f(x)\) for \(x\in \Rngeq\).

\begin{proposition}\label{Prop::Spaces::Classical::NewDefnAspq}
    We have
    \begin{equation}\label{Eqn::Spaces::Classical::NewDefnAspq}
        \ASpace{s}{p}{q}[\Rngeq]\cong \left\{ f\in \TemperedDistributions[\Rng] :  \left\{ D_j f \right\}_{j\in \Zgeq}\in \VSpace{p}{q}  \right\},
        \quad
        \ANorm{f}{s}{p}{q}[\Rngeq] \approx \BVNorm{\left\{ D_j f \right\}_{j\in \Zgeq}}{p}{q},
    \end{equation}
    in the following sense.
    Given any \(g\in \ASpace{s}{p}{q}[\Rngeq]\), defined by \eqref{Eqn::Spaces::Classical::ClassicalDefnOfASpace},
    \(g\) is an equivalence class of elements of \(\TemperedDistributions[\Rn]\). The map
    \(g\mapsto g\big|_{\SchwartzSpaceRng}\) is well defined \(\ASpace{s}{p}{q}[\Rngeq]\rightarrow \TemperedDistributions[\Rng]\),
    and is a bijection from \(\ASpace{s}{p}{q}[\Rngeq]\) as defined by \eqref{Eqn::Spaces::Classical::ClassicalDefnOfASpace}
    to the set described in \eqref{Eqn::Spaces::Classical::NewDefnAspq}.
    Moreover, the norms in \eqref{Eqn::Spaces::Classical::ClassicalDefnOfASpace}
    and \eqref{Eqn::Spaces::Classical::NewDefnAspq} are equivalent under this bijection.
    In \eqref{Eqn::Spaces::Classical::NewDefnAspq}, \(\VSpace{p}{q}\) is defined on the underlying space \(\Rngeq\)
    as in Notation \ref{Notation::Spaces::Classical::VSpacepq}.
\end{proposition}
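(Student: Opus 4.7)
The plan is to verify the bijection and norm equivalence in three steps: establish that \(g\mapsto g|_{\SchwartzSpaceRng}\) is well-defined and injective; establish one direction of the norm equivalence by restricting an extension; and build an extension of \(f\) from the data \(\{D_j f\}\) to obtain the reverse inequality. For well-definedness, I would pick two representatives \(f_1,f_2\in \ASpace{s}{p}{q}[\Rn]\) of the same equivalence class, so that \(f_1-f_2\) is a tempered distribution supported in \(\{x_n\leq 0\}\). For any \(\phi\in \SchwartzSpaceRng\), the translates \(\phi_k(x',x_n):=\phi(x',x_n-1/k)\) lie in \(\SchwartzSpace[\Rn]\) with support in \(\{x_n\geq 1/k\}\subseteq \Rng\), so \(\left\langle f_1-f_2,\phi_k\right\rangle=0\); since \(\phi_k\to \phi\) in \(\SchwartzSpace[\Rn]\), continuity of tempered distributions gives \(\left\langle f_1-f_2,\phi\right\rangle=0\). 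Injectivity is immediate since \(\SchwartzSpaceRng\) already contains Schwartz functions supported in the open \(\Rng\).

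For the forward norm inequality, let \(f\in \ASpace{s}{p}{q}[\Rn]\) be any representative of \(g\). The classical Littlewood--Paley characterization of \(\ASpace{s}{p}{q}[\Rn]\) is valid for any admissible \(\psi\), in particular for the \(\psi\in\SchwartzSpaceRng\) from Lemma \ref{Lemma::Spaces::Classical::ExistsGoodPsi}, and gives \(\ANorm{f}{s}{p}{q}[\Rn]\approx \left\|\{f*\Dil{2^j}{\phi_j}\}_{j}\right\|_{\VSpace{p}{q}(\Rn)}\). Because \(\supp(\Dil{2^j}{\phi_j})\subseteq (0,\infty)^n\), for \(x\in \Rngeq\) the test function \(y\mapsto \Dil{2^j}{\phi_j}(y-x)\) is supported in \(x+(0,\infty)^n\subseteq \Rngeq\) and lies in \(\SchwartzSpaceRng\); hence \((f*\Dil{2^j}{\phi_j})(x)\) depends only on \(f|_{\SchwartzSpaceRng}=g|_{\SchwartzSpaceRng}\) and coincides with \(D_j(g|_{\SchwartzSpaceRng})(x)\). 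Restricting the \(\VSpace{p}{q}\)-norm to \(\Rngeq\) and taking the infimum over extensions yields \(\left\|\{D_j g\}\right\|_{\VSpace{p}{q}(\Rngeq)}\lesssim \ANorm{g}{s}{p}{q}[\Rngeq]\).

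For the reverse inequality, given \(f\in\TemperedDistributions[\Rng]\) with \(\{D_j f\}\in \VSpace{p}{q}(\Rngeq)\), the strategy is to produce an extension \(\tilde f\in \ASpace{s}{p}{q}[\Rn]\) with \(\tilde f|_{\SchwartzSpaceRng}=f\) and \(\ANorm{\tilde f}{s}{p}{q}[\Rn]\lesssim \left\|\{D_j f\}\right\|_{\VSpace{p}{q}(\Rngeq)}\). I would first extend each smooth function \(D_j f\) on \(\Rngeq\) to a function \((D_j f)^{\mathrm{ext}}\) on \(\Rn\) via a Seeley-type reflection, preserving the \(\VSpace{p}{q}\)-norm; then invoke a Calder\'on-type reproducing identity \(\sum_j \Dil{2^j}{\phi_j}*\Dil{2^j}{\eta_j}=\delta_0\) in \(\TemperedDistributions[\Rn]\), with \(\eta_j\in \SchwartzSpace[\Rn]\) obtained from the moment conditions on \(\phi_j\), and set \(\tilde f:=\sum_j (D_j f)^{\mathrm{ext}}*\Dil{2^j}{\eta_j}\). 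Standard Frazier--Jawerth almost-orthogonality estimates would then bound \(\ANorm{\tilde f}{s}{p}{q}[\Rn]\) by the sequence norm, while the support properties chosen for \(\eta_j\) force \(\tilde f|_{\SchwartzSpaceRng}=f\). The main obstacle is precisely this last step: simultaneously choosing \(\eta_j\) and the reflection extension so that the reconstructed \(\tilde f\) agrees with \(f\) on \(\Rng\) while satisfying the required sequence-space estimates. A cleaner alternative may be to bypass the explicit construction altogether by appealing to the intrinsic atomic or molecular characterization of the classical spaces on \(\Rngeq\) found in \cite{TriebelTheoryOfFunctionSpaces,RunstSickelSobolevSpacesOfFractionalOrder} and matching it directly against the \(\{D_j f\}\) norm.
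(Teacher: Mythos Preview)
Your well-definedness and injectivity arguments are correct, and the forward inequality (restricting an extension) is exactly right. The gap you have correctly identified is in the reverse direction, and it is real: if you Seeley-extend the functions \(D_j f\) and then convolve with \(\eta_j\), there is no reason for the result to restrict back to \(f\) on \(\Rng\), because the convolution near the boundary samples the artificial Seeley values of \((D_jf)^{\mathrm{ext}}\). Choosing \(\eta_j\) with support in \((-\infty,0)^n\) would repair the restriction identity (this is essentially Rychkov's one-sided Calder\'on reproducing formula), but you would still need to verify that such \(\eta_j\) exist with the required moment conditions and that the Frazier--Jawerth almost-orthogonality estimates survive the Seeley extension; you have not done either, and the atomic/molecular alternative you mention is not developed for \(\Rngeq\) in the cited references in the form you need.

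The paper does not prove this proposition directly; it instead points to the proof of Theorem~\ref{Thm::Spaces::Extension}, whose specialization to the classical setting yields the result. That proof takes a route genuinely different from yours: rather than extending the \emph{values} \(D_j f\) and invoking a reproducing formula, it extends the \emph{kernels}. Concretely, one writes each \(D_j\) in the form \(\sum_{|\alpha|,|\beta|\le N} 2^{-j(2N-|\alpha|-|\beta|)}\partial^\alpha D_{j,\alpha,\beta}\,\partial^\beta\) (this is the ``elementary operator'' structure, Proposition~\ref{Prop::Spaces::Elem::Elem::MainProps}), then extends the inner kernels \(D_{j,\alpha,\beta}(x,y)\) in the \(x\)-variable only while keeping their \(y\)-support in \(\Rngeq\) (Proposition~\ref{Prop::Spaces::Elem::Extend::Extend::NonVerbose}), and defines \(\Extension f=\sum_j\sum_{\alpha,\beta}(\cdots)\partial^\alpha E_{j,\alpha,\beta}\,\partial^\beta f\). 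Because the \(y\)-support never leaves \(\Rngeq\), this operator applies directly to \(f\in\TemperedDistributions[\Rng]\), and its restriction to \(\Rngeq\) is \(\sum_j D_j f=f\) by construction---no reproducing identity is needed. The norm bound then follows from the pre-elementary operator estimates (Proposition~\ref{Prop::Spaces::Extend::TDefinesBoundedOperator}). Your approach, if completed via Rychkov-type kernels, would stay closer to the classical Fourier-analytic literature; the paper's kernel-extension approach is what generalizes to the Carnot--Carath\'eodory setting, where there is no translation invariance to exploit.
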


In this paper, we generalize the spaces \(\ASpace{s}{p}{q}[\Rngeq]\)
by taking Proposition \ref{Prop::Spaces::Classical::NewDefnAspq}
as the starting point.
Though we do not prove Proposition \ref{Prop::Spaces::Classical::NewDefnAspq}
directly in this paper (and do not know of a reference for it),
we prove an essentially more general version.
In fact, our proof of Theorem \ref{Thm::Spaces::Extension}
can be easily modified to the simpler case described in this section
to establish Theorem \ref{Thm::Intro::Classical::Extension}
where \(\BesovSpace{s}{p}{q}[\Rngeq]\) and \(\TLSpace{s}{p}{q}[\Rngeq]\)
are defined by \eqref{Eqn::Spaces::Classical::NewDefnAspq}.
This then implies Proposition \ref{Prop::Spaces::Classical::NewDefnAspq}.
See, also, Proposition \ref{Prop::Spaces::EqualsClassical}
and Corollary \ref{Cor::Spaces::Extend::Consequences::RestrictionSpaceDefn}.

\begin{remark}
    Proposition \ref{Prop::Spaces::Classical::NewDefnAspq} gives an interior
    characterization of \(\ASpace{s}{p}{q}[\Rngeq]\): one can directly check if an
    element of \(\TemperedDistributions[\Rng]\) is in \(\ASpace{s}{p}{q}[\Rngeq]\)
    without extending it to \(\Rn\).
    This is particularly useful in our more general setting,
    see Remark \ref{Rmk::GlobalCor::IntrinsicWithExtension} \ref{Item::GlobalCor::IntrinsicWithExtension::DoesNotDependOnAmbientVfs}.
\end{remark}

    \subsection{Littlewood--Paley theory}\label{Section::Spaces::LittlewoodPaleyTheory}
    Central to the classical definitions of \(\ASpace{s}{p}{q}\) are the Littlewood--Paley
projectors \(D_j\) described in \eqref{Eqn::Spaces::Classical::DefineDjOnWholeSpace}
and \eqref{Eqn::Spaces::Classical::DefineDjOnHalfSpace},
which consist of convolution with Schwartz functions all of whose moments vanish
(see Remark \ref{Rmk::Spaces::Classical::phiIsSchwartzAllOfWhoseMomentsVanish}).
In \cite[Chapter 5]{StreetMaximalSubellipticity}
this concept was adapted to the more general setting of Carnot--Carath\'eodory geometries
on a manifold without boundary; where convolution with Schwartz functions was replaced
with ``pre-elementary operators,'' and convolution with Schwartz functions all of whose
moments vanished ws replaced with ``elementary operators.''
In this section, we generalize these concepts to manifold with boundary.

We continue to restrict attention to the non-characteristic boundary.
This allows us to use the results from Section \ref{Section::VectorFieldsAndSheaves::MetricsAndVolumes},
and allows us to show that the definitions do not depend on any of the choices in our
definitions. See, for example, Lemma \ref{Lemma::Spaces::LP::ElemDoesntDependOnChoices} and Remark \ref{Rmk::Spaces::LP::NonCharMatters}.

The difference between the definitions in this section and those in
\cite{StreetMaximalSubellipticity} is that we keep track of which functions
vanish to infinite order at \(\BoundaryN\). This is important, because integration by parts
is an essential technique for our proofs, and when integrating by parts, unless one
of the two functions vanishes on \(\BoundaryN\), then boundary terms arise.
We will see by carefully setting up our definitions, we can largely avoid such boundary terms.

Let \(\ManifoldN\) be a manifold with boundary and let \(\FilteredSheafF\)
be a H\"ormander filtration of sheaves of vector fields on \(\ManifoldN\).
Let \(\Compact\Subset \ManifoldNncF\) be compact.

Pick \(\Omega\Subset \ManifoldNncF\) open with \(\Compact\Subset \Omega\),
H\"ormander vector fields with formal degrees \(\WWdv=\left\{ (W_1, \Wdv_1),\ldots, (W_r,\Wdv_r) \right\}\subset \VectorFields{\Omega}\times \Zg\)
such that \(\FilteredSheafF\big|_{\Omega}=\FilteredSheafGenBy{\WWdv}\) (see Lemma \ref{Lemma::Filtrations::GeneratorsOnRelCptSet}), and a smooth, strictly positive density \(\Vol\)
on \(\Omega\). We will see (for example Lemma \ref{Lemma::Spaces::LP::ElemDoesntDependOnChoices}) that none of our definitions depend the choice of \(\Omega\), \(\WWdv\), or \(\Vol\).

\begin{notation}\label{Notation::Spaces::LP::MultiIndexAndDegree}
We write \(2^{-j\Wdv}W=(2^{-j\Wdv_1}W_1,\ldots, 2^{-j\Wdv_r}W_r)\), so that
with \(\alpha=(\alpha_1,\ldots, \alpha_L)\),
\begin{equation*}
    \left( 2^{-j\Wdv}W \right)^{\alpha}=2^{-j\Wdv_{\alpha_1}} W_{\alpha_1} 2^{-j\Wdv_{\alpha_2}}W_{\alpha_2}\cdots 2^{-j\Wdv_{\alpha_L}}W_{\alpha_L}.
\end{equation*}
We let \(\DegWdv{\alpha}=\Wdv_{\alpha_1}+\cdots+\Wdv_{\alpha_L}\), so that
\begin{equation*}
    \left( 2^{-j\Wdv}W \right)^{\alpha}=2^{-j\DegWdv{\alpha}} W^{\alpha}.
\end{equation*}
\end{notation}

\begin{remark}\label{Rmk::Spaces::LP::FunctionsOrOperators}
    Functions \(E(x,y)\in \CinftyCptSpace[\ManifoldN\times \ManifoldN]\) which vanish to infinite order as
    \(y\rightarrow \BoundaryN\) can be viewed as operators \(\DistributionsZeroN\rightarrow \CinftyCptSpace[\ManifoldN]\)
    by
    \begin{equation*}
        E u(x) = u\left( E(x,\cdot) \right), \quad u\in \DistributionsZeroN,
    \end{equation*}    
    where we have written \(u\left( E(x,\cdot) \right)\) for the distribution \(u\) applied to the function \(E(x,\cdot)\) in the \(\cdot\)
    variable.
    We also identify \(\CinftySpace[\ManifoldN]\hookrightarrow \DistributionsZeroN\) in the usual way:
    \begin{equation*}
        \PairDistributionAndTestFunctions{f}{g}=\int f(x) g(x) \: d\Vol(x),\quad f\in \CinftySpace[\ManifoldN], g\in \TestFunctionsZeroN.  
    \end{equation*}
    Combining the above we may view \(E(x,y)\in \CinftyCptSpace[\ManifoldN\times \ManifoldN]\) as an operator
    \(E:\DistributionsZeroN\rightarrow \DistributionsZeroN\). While the particular operator so obtained
    depends on the choice of \(\Vol\) (since the embedding \(\CinftySpace[\ManifoldN]\hookrightarrow \DistributionsZeroN\)
    depends on the choice of \(\Vol\)), the class of operators we consider at any point in time will not depend on 
    the choice of smooth, strictly positive density \(\Vol\) (see, for example, Lemma \ref{Lemma::Spaces::LP::ElemDoesntDependOnChoices}).
    Thus, whenever we treat \(E\) as an operator, we have picked a smooth, strictly positive density, and the choice of density is not relevant.   
\end{remark}

\begin{definition}\label{Defn::Spaces::LP::PElemWWdv}
    We let \(\PElemF{\Compact}\) be the set of subsets \(\sE\subset \CinftyCptSpace[\ManifoldN\times \ManifoldN]\times (0,1]\)
    satisfying:
    \begin{enumerate}[(i)]
        \item\label{Item::Spaces::LP::PElemWWdv::Support} \(\forall(E, 2^{-j})\in \sE\), \(\supp(E)\subseteq \Compact\times \Compact\).
        \item\label{Item::Spaces::LP::PElemWWdv::Bound} \(\forall \alpha,\beta\), \(\forall m\in \Zgeq\), \(\exists C=C(\sE,\alpha,\beta,m)\geq 1\), \(\forall (E,2^{-j})\in \sE\),
            \begin{equation}\label{Eqn::Spaces::LP::PreElemBound}
                \left| \left( 2^{-j\Wdv}W_x \right)^{\alpha}\left( 2^{-j\Wdv}W_y \right)^{\beta} E(x,y)\right| \leq C
                \frac{ \left( 1+2^j\MetricWWdv[x][y] \right)^{-m}}{\Vol[ \BWWdv{x}{2^{-j} + \MetricWWdv[x][y]} ]\wedge 1},
                \quad \forall x,y\in \Compact.
            \end{equation}
    \end{enumerate}
    We let \(\PElemzF{\Compact}\) consist of those \(\sE\in \PElemWWdv{\Compact}\) such that
    \begin{equation*}
        (E,2^{-j})\in \sE\implies E(x,y)\text{ vanishes to infinite order as }y\rightarrow \BoundaryN.
    \end{equation*}
\end{definition}

\begin{remark}\label{Rmk::Spaces::LP::BoundIsSymmetric}
    The expression \( \frac{ \left( 1+2^j\MetricWWdv[x][y] \right)^{-m}}{\Vol[ \BWWdv{x}{2^{-j} + \MetricWWdv[x][y]} ]\wedge 1}\)
    is actually symmetric in \(x,y\) in the sense that for \(x,y\in \Compact\),
    \begin{equation*}
        \frac{ \left( 1+2^j\MetricWWdv[x][y] \right)^{-m}}{\Vol[ \BWWdv{x}{2^{-j} + \MetricWWdv[x][y]} ]\wedge 1}
        \approx  \frac{ \left( 1+2^j\MetricWWdv[y][x] \right)^{-m}}{\Vol[ \BWWdv{y}{2^{-j} + \MetricWWdv[y][x]} ]\wedge 1}.
    \end{equation*}
    To see this, one need only show 
    \begin{equation}\label{Eqn::Spaces::LP::BoundIsSymmetric::SymmetricBound}
        \Vol[ \BWWdv{x}{2^{-j} + \MetricWWdv[x][y]} ]\wedge 1\approx \Vol[ \BWWdv{y}{2^{-j} + \MetricWWdv[x][y]} ]\wedge 1, \quad \forall x,y\in \Compact.  
    \end{equation}
    Indeed, if \(2^{-j} + \MetricWWdv[x][y]=2^{-j}+ \DistWWdv[x][y]\wedge 1\geq \DistWWdv[x][y]\), then \eqref{Eqn::Spaces::LP::BoundIsSymmetric::SymmetricBound}
    follows from Proposition \ref{Prop::VectorFields::Scaling::VolEstimates} \ref{Item::VectorFields::Scaling::VolEstimates::VolWedge1Doubling}.
    Otherwise, we have \(2^{-j} + \MetricWWdv[x][y]\geq 1\) and Proposition \ref{Prop::VectorFields::Scaling::VolEstimates} \ref{Item::VectorFields::Scaling::VolEstimates::VolWedge1ApproxLambdaWedge1}
    implies both sides of \eqref{Eqn::Spaces::LP::BoundIsSymmetric::SymmetricBound} are \(\approx 1\).
\end{remark}

\begin{definition}\label{Defn::Spaces::LP::ElemWWdv}
    We let \(\ElemzF{\Compact}\) be the largest set of subsets of \(\CinftyCptSpace[\ManifoldN\times \ManifoldN]\times (0,1]\)
    satisfying:
    \begin{enumerate}[(a)]
        \item\label{Item::Spaces::LP::ElemWWdv::ElemArePreElem} \(\ElemzF{\Compact}\subseteq \PElemzF{\Compact}\),
        \item\label{Item::Spaces::LP::ElemWWdv::ElemArePreDerivs} \(\forall \sE\in \ElemzF{\Compact}\), \(\forall (E,2^{-j})\in \sE\), we have
            \begin{equation*}
                E(x,y)=\sum_{|\alpha|,|\beta|\leq 1} 2^{-j(2-|\alpha|-|\beta|)} \left( 2^{-j\Wdv}W_x \right)^{\alpha} \left( 2^{-j\Wdv}W_y \right)^{\beta} E_{\alpha,\beta}(x,y),
            \end{equation*}
            where
            \begin{equation*}
                \left\{ \left( E_{\alpha,\beta},2^{-j} \right) : \left( E,2^{-j} \right)\in \sE, |\alpha|,|\beta|\leq 1 \right\}\in \ElemzF{\Compact}.
            \end{equation*}
    \end{enumerate}
    For \(\Omega\subseteq \ManifoldNncF\) open, we let
    \begin{equation}\label{Eqn::Spaces::LP::ElemzFOmegaIsUnion}
        \ElemzF{\Omega}:=\bigcup_{\substack{\Compact\Subset \Omega\\ \Compact\text{ compact}}} \ElemzF{\Compact}.
    \end{equation}
\end{definition}

\begin{lemma}\label{Lemma::Spaces::LP::ElemDoesntDependOnChoices}
    \(\PElemF{\Compact}\), \(\PElemzF{\Compact}\), 
    and \(\ElemzF{\Compact}\)
    do not depend on the choices of \(\Omega\), \(\WWdv\), and \(\Vol\)
    made above. More strongly, suppose \(\FilteredSheafG\) is another H\"ormander filtration
    of sheaves of vector fields on \(\ManifoldN\)
    with \(\LieFilteredSheafF=\LieFilteredSheafG\), \(\Omegah\Subset \ManifoldNncF\) is another 
    relatively compact open set with \(\Compact\Subset \Omegah\), \(\ZZde\) are H\"ormander vector fields
    with formal degrees with \(\FilteredSheafG\big|_{\Omegah}=\FilteredSheafGenBy{\ZZde}\),
    and \(\Volh\) is a smooth, strictly positive density on \(\Omegah\).
    Then if \(\PElem{\FilteredSheafG}{\Compact}\), \(\PElemz{\FilteredSheafG}{\Compact}\), 
    and \(\Elemz{\FilteredSheafG}{\Compact}\)
    are defined as in 
    Definitions \ref{Defn::Spaces::LP::PElemWWdv} and \ref{Defn::Spaces::LP::ElemWWdv} with these choices, we have
    \begin{equation*}
        \PElemF{\Compact}=\PElem{\FilteredSheafG}{\Compact}, \quad \PElemzF{\Compact}=\PElemz{\FilteredSheafG}{\Compact}, \quad \ElemzF{\Compact}=\Elemz{\FilteredSheafG}{\Compact}.
    \end{equation*}
\end{lemma}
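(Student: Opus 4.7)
The plan is to prove the three equalities separately. I would first handle $\PElemF{\Compact} = \PElem{\FilteredSheafG}{\Compact}$; then $\PElemzF{\Compact} = \PElemz{\FilteredSheafG}{\Compact}$ follows at once, since the boundary-vanishing condition on $E$ is intrinsic. Finally I would attack $\ElemzF{\Compact} = \Elemz{\FilteredSheafG}{\Compact}$, which requires exploiting the greatest-fixed-point nature of Definition \ref{Defn::Spaces::LP::ElemWWdv}.

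For the pre-elementary case, Proposition \ref{Prop::VectorFields::Scaling::VolAndMetricEquivalnce} parts \ref{Item::VectorFields::Scaling::VolAndMetricEquivalnce::VolWWedge1equalsVolZWedg1} and \ref{Item::VectorFields::Scaling::VolAndMetricEquivalnce::MetricWequalsMetricZ} immediately show that the right-hand side of \eqref{Eqn::Spaces::LP::PreElemBound}—after possibly shrinking $\delta_1$—is unchanged (up to constants) when $(\WWdv,\Vol,\MetricWWdv)$ is replaced by $(\ZZde,\Volh,\MetricZZde)$. The nontrivial step is the left-hand side. Since $\LieFilteredSheafF=\LieFilteredSheafG$, we have $Z_k\in\FilteredSheafG[\Omegah][\Zde_k]\subseteq\LieFilteredSheafF[\Omegah][\Zde_k]$, so Lemma \ref{Lemma::Filtrations::GeneratorsForLieFiltration} lets me write $Z_k$, on a neighborhood of $\Compact$, as a finite $\CinftySpace$-linear combination of iterated brackets $Y_\mu=[W_{l_1},[W_{l_2},\ldots,W_{l_{n_\mu}}]]$ of total $\Wdv$-degree $d_\mu\leq \Zde_k$. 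Because scalars commute with Lie brackets,
\begin{equation*}
    2^{-j\Zde_k}Z_k=\sum_\mu a_\mu(x)\,2^{-j(\Zde_k-d_\mu)}\bigl[2^{-j\Wdv_{l_1}}W_{l_1},\ldots,2^{-j\Wdv_{l_{n_\mu}}}W_{l_{n_\mu}}\bigr],
\end{equation*}
and every prefactor $2^{-j(\Zde_k-d_\mu)}\leq 1$. Applying this identity to \eqref{Eqn::Spaces::LP::PreElemBound} for $\WWdv$ and expanding each iterated commutator yields a finite sum of $(2^{-j\Wdv}W)^\alpha$ acting on $E$, each satisfying the $\WWdv$-pre-elementary bound by hypothesis, and so also the $\ZZde$-version of the bound; iterating for higher-order $(\alpha',\beta')$ gives the full derivative statement. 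The symmetric argument yields $\PElemF{\Compact}=\PElem{\FilteredSheafG}{\Compact}$, and the same argument restricts verbatim to $\PElemzF{\Compact}=\PElemz{\FilteredSheafG}{\Compact}$ since the vanishing condition on $E$ refers only to $\BoundaryN$.

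For $\ElemzF{\Compact}$ I would exploit maximality: to prove $\ElemzF{\Compact}\subseteq\Elemz{\FilteredSheafG}{\Compact}$ it is enough to verify that $\ElemzF{\Compact}$ itself satisfies conditions \ref{Item::Spaces::LP::ElemWWdv::ElemArePreElem} and \ref{Item::Spaces::LP::ElemWWdv::ElemArePreDerivs} of Definition \ref{Defn::Spaces::LP::ElemWWdv} with respect to $\ZZde$. Condition \ref{Item::Spaces::LP::ElemWWdv::ElemArePreElem} is the pre-elementary equality just established. For \ref{Item::Spaces::LP::ElemWWdv::ElemArePreDerivs}, given $(E,2^{-j})\in\sE\in\ElemzF{\Compact}$ and its $\WWdv$-decomposition, I would insert the Lie-bracket expansion above for each $W_l$ appearing and regroup the resulting iterated $\ZZde$-commutators into a $\ZZde$-decomposition with $|\alpha'|,|\beta'|\leq 1$, absorbing the harmless scaling factors $2^{-j(\Zde-d)}$ and all lower-order commutator remainders into the coefficient families $F_{\alpha',\beta'}$.

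The main obstacle is precisely this last step: the definition of $\ElemzF{\Compact}$ only allows \textbf{one} derivative in $x$ and one in $y$ at each recursion level, whereas the Lie-bracket expansion produces iterated commutators of arbitrary length. My resolution is to first establish, as a preparatory lemma, an equivalent characterization (in the spirit of \cite{StreetMaximalSubellipticity}) stating that $\sE\in\ElemzF{\Compact}$ if and only if for every $N\in\Zg$ each $(E,2^{-j})\in\sE$ admits an $N$-fold $\WWdv$-derivative expansion whose coefficients form a family in $\PElemzF{\Compact}$. Such a characterization is manifestly invariant under the Lie-bracket rewriting (which changes the number of derivatives at the cost of harmless $2^{-j(\Zde-d)}$ factors), so once it is shown to be equivalent to Definition \ref{Defn::Spaces::LP::ElemWWdv}, the symmetry between $\WWdv$ and $\ZZde$ gives the desired equality.
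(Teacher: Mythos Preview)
Your treatment of the pre-elementary case is correct and essentially matches the paper's argument (Lemma~\ref{Lemma::Spaces::Elem::PreElem::PreElemDoesntDependOnChoices}).

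For the elementary case, you correctly identify both the strategy (maximality) and the obstacle, but your proposed resolution has a gap. The ``preparatory lemma''---that \(\sE\in\ElemzF{\Compact}\) if and only if for every \(N\) each element admits an \(N\)-fold derivative expansion with coefficients in \(\PElemzF{\Compact}\)---is not obvious in the \((\Leftarrow)\) direction: the \(N\)-fold decompositions for different values of \(N\) need not be compatible, so one cannot directly extract a \emph{single} one-step decomposition whose coefficients themselves admit \(M\)-fold decompositions for all \(M\). Proving this characterization (if it holds at all) appears to require the very closure machinery you are trying to sidestep. The ``manifest invariance'' claim is also too quick: rewriting \((2^{-j\Wdv}W)^\alpha\) in terms of \(Z\) introduces smooth coefficient functions \(a(x)\) that must be commuted past the \(Z\)-derivatives, producing additional terms.

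The paper (Lemma~\ref{Lemma::Spaces::Elem::Elem::ElemDoesntDependOnChoices}) proceeds differently. It first establishes---using only the coinductive definition---that \(\ElemzWWdvOmegaVol{\Compact}\) is closed under multiplication by smooth functions and under application of \((2^{-j\Wdv}W_x)^\alpha\), \((2^{-j\Wdv}W_y)^\beta\) (Lemma~\ref{Lemma::Spaces::Elem::Elem::MainPropsLemma}). Expressing each \(Z_k\) as a \(\CinftySpace\)-combination of iterated \(W\)-brackets then yields the same closure for \((2^{-j\Zde}Z)^\gamma\). With this in hand one takes the one-step \(W\)-decomposition of \(E\), rewrites each \((2^{-j\Wdv}W)^\alpha\) as a sum \(\sum_\gamma a_\alpha^\gamma(\cdot)\,2^{-jc}(2^{-j\Zde}Z)^\gamma\), peels off only the \emph{outermost} \(Z\)-derivative, and absorbs the remainder \(a(x)(2^{-j\Zde}Z_x)^{\gamma'}E_{\alpha,\beta}\) into the new coefficient---which lies in \(\ElemzWWdvOmegaVol{\Compact}\) by the closure just established. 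Thus \(\ElemzWWdvOmegaVol{\Compact}\) itself satisfies the \(\ZZde\)-version of the axioms in Definition~\ref{Defn::Spaces::LP::ElemWWdv}, and maximality gives the inclusion. No alternative characterization is needed.
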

\begin{proof}
    This follows by combining Lemmas \ref{Lemma::Spaces::Elem::PreElem::PreElemDoesntDependOnChoices} and \ref{Lemma::Spaces::Elem::Elem::ElemDoesntDependOnChoices}, below.
\end{proof}

\begin{remark}\label{Rmk::Spaces::LP::NonCharMatters}
    Lemma \ref{Lemma::Spaces::LP::ElemDoesntDependOnChoices} uses that \(\Compact\Subset \ManifoldNncF\).
    Many of our definitions are only well-defined due to Lemma \ref{Lemma::Spaces::LP::ElemDoesntDependOnChoices}
    and therefore are only well-defined near the non-characteristic part of the boundary.
\end{remark}

\begin{remark}
    \(\PElemzF{\Compact}\) is generalization of convolutions with scaled Schwartz function from a bounded subset
    of \(\SchwartzSpaceRn\),
    while \(\ElemzF{\Compact}\) are the same but with \(\SchwartzSpaceRn\) replaced with
    \(\SchwartzSpacezRn\). See \cite[Section 1.2]{StreetMultiParameterSingularIntegrals} for this connection made explicit.
    In this paper, we deal with the added complication of working on a manifold with boundary--analogous to working with the
    function \(\psi\) from Lemma \ref{Lemma::Spaces::Classical::ExistsGoodPsi} in the classical setting.
\end{remark}

Our analog of the Littlewood--Paley projectors from \eqref{Eqn::Spaces::Classical::DefineDjOnHalfSpace} are described in the next
proposition.
\begin{proposition}\label{Prop::Spaces::LP::DjExist}
    Let \(\Omega\subseteq \ManifoldNncF\) be open and \(\psi\in \CinftyCptSpace[\Omega]\).
    There exists \(\left\{ (D_j, 2^{-j}) : j\in \Zgeq \right\}\in \ElemzF{\Omega}\)
    satisfying:
    \begin{enumerate}[(i)]
        \item\label{Item::Spaces::LP::DjExist::SumToMultiplication} \(\sum_{j\in \Zgeq} D_j = \Multpsi\), where \(\Multpsi\) is the multiplication operator \(f\mapsto \psi f\).
    See Proposition \ref{Prop::Spaces::Elem::Elem::ConvergenceOfElemOps} for the convergence of this sum.
        \item\label{Item::Spaces::LP::DjExist::SumToPreElem} Let \(P_j:=\sum_{k=0}^j D_j\). Then, \(\left\{ \left( P_j, 2^{-j} \right):j\in \Zgeq \right\}\in \PElemzF{\Omega}\).
    \end{enumerate}
    
\end{proposition}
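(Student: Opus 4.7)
The plan is to build the $D_j$ locally via the scaling construction outlined informally in Section 3, patch them with a partition of unity, and then verify the elementary operator estimates using the scaling theory of \cite{StreetCarnotCaratheodoryBallsOnManifoldsWithBoundary} combined with the results of Section~\ref{Section::VectorFieldsAndSheaves::MetricsAndVolumes}.

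First, I would reduce to a local construction. Since $\supp(\psi)\Subset\Omega\subseteq\ManifoldNncF$ is compact, cover $\supp(\psi)$ by finitely many open sets $U_1,\ldots,U_N\Subset\ManifoldNncF$ and choose $\psi_1,\ldots,\psi_N\in\CinftyCptSpace[\ManifoldN]$ with $\supp(\psi_k)\subseteq U_k$ and $\sum_k \psi_k=1$ on $\supp(\psi)$, so that $\psi=\sum_k (\psi\psi_k)$. It suffices to produce $\left\{(D_{j,k},2^{-j})\right\}_{j\in\Zgeq}\in\ElemzF{U_k}$ with $\sum_j D_{j,k}=\Mult{\psi\psi_k}$ and partial sums $P_{j,k}$ in $\PElemzF{U_k}$, and then sum over $k$ (closure of these classes under finite sums follows directly from Definitions \ref{Defn::Spaces::LP::PElemWWdv} and \ref{Defn::Spaces::LP::ElemWWdv}). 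If $U_k\Subset\InteriorN$, this is the interior case already handled in \cite{StreetMaximalSubellipticity}, so the only new case is when $U_k$ meets $\BoundaryN$ at a $\FilteredSheafF$-non-characteristic point $x_0$.

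Near such $x_0$, implement the construction from Section~\ref{Section::GlobalCor::InfomralNorms}. Using Lemma \ref{Lemma::Filtrations::GeneratorsOnRelCptSet} choose $\WWdv$ on a neighborhood, pick $W_{j_0}$ with $W_{j_0}(x_0)\notin\TangentSpace{x_0}{\BoundaryN}$ and $\Wdv_{j_0}=\degBoundaryNF(x_0)$ (guaranteed by Example~\ref{Example::Filtrations::RestrictingFiltrations::NonCharExamples}\ref{Item::Filtrations::RestrictingFiltrations::NonCharExamples::CharacterizeNonCharInTermsOfWWdv}), modify the remaining $W_k$ to be boundary-tangent, and let $\XXde=\{(X_0,\Xde_0),\ldots,(X_q,\Xde_q)\}$ generate $\LieFilteredSheafF\big|_\Omega$ with $X_0$ inward-pointing. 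Choose $\vsig_0\in\SchwartzSpace[\R^{1+q}]$ supported in $[0,\infty)\times\R^q$ with $\int\vsig_0=1$ and all higher moments vanishing (Lemma~\ref{Lemma::Spaces::Classical::ExistsGoodPsi} in a suitable form), set $\vsig_j=\vsig_0-\Dild{2^{-1}}{\vsig_0}$ for $j\geq 1$, fix a cutoff $\eta\in\CinftyCptSpace[\R^{1+q}]$ with $\eta\equiv 1$ near $0$, and define
\begin{equation*}
    D_j f(x):=(\psi\psi_k)(x)\int f\!\left(e^{t_0 X_0+\cdots+t_q X_q}x\right)\eta(t)\,\Dild{2^j}{\vsig_j}(t)\,dt.
\end{equation*}
Shrinking $U_k$ so that $e^{\sum t_i X_i}x\in\InteriorN$ whenever $t_0>0$, $x\in U_k$, and $t\in\supp(\eta)$, the kernel $D_j(x,y)$ (with respect to $\Vol$) is smooth, compactly supported in $U_k\times U_k$, and vanishes identically near $\{y\in\BoundaryN\}$ — hence vanishes to infinite order as $y\to\BoundaryN$, giving condition $\ElemzF{\cdot}\subseteq\PElemzF{\cdot}$. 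The telescoping identity $\sum_{j=0}^L\Dild{2^j}{\vsig_j}=\Dild{2^L}{\vsig_0}\to\delta_0$ in $\SchwartzSpace'$, together with $\eta\equiv 1$ near $0$, yields $\sum_{j=0}^L D_j\to \Mult{\psi\psi_k}$, giving \ref{Item::Spaces::LP::DjExist::SumToMultiplication}, and moreover identifies $P_j$ as the single operator with kernel $\psi(x)\psi_k(x)\eta(t)\Dild{2^j}{\vsig_0}(t)$ pushed forward under the flow.

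Verification of the pre-elementary bounds \eqref{Eqn::Spaces::LP::PreElemBound} for both $\{(D_j,2^{-j})\}$ and $\{(P_j,2^{-j})\}$ is where the real work lies, and I expect this to be the main obstacle. The argument will use the scaling maps on manifolds with boundary from \cite{StreetCarnotCaratheodoryBallsOnManifoldsWithBoundary}: after the change of variables that unit-scales the ball $\BWWdv{x_0}{2^{-j}}$, the vector fields $2^{-j\Wdv}W$ become uniformly $C^\infty$-bounded with uniform spanning, so $(2^{-j\Wdv}W_x)^\alpha(2^{-j\Wdv}W_y)^\beta$ applied to the rescaled kernel is controlled. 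Combined with Proposition~\ref{Prop::VectorFields::Scaling::VolEstimates}\ref{Item::VectorFields::Scaling::VolEstimates::VolApproxLambda}--\ref{Item::VectorFields::Scaling::VolEstimates::VolWedge1ApproxLambdaWedge1} to convert $\Lambda$-normalization into $\Vol[\BWWdv{x}{\cdot}]\wedge 1$, and the rapid decay in $2^j\MetricWWdv[x][y]$ coming from the Schwartz decay of $\vsig_j$ under the flow change of variables, one obtains \eqref{Eqn::Spaces::LP::PreElemBound} for $\{(P_j,2^{-j})\}$, proving \ref{Item::Spaces::LP::DjExist::SumToPreElem}. To obtain \ref{Item::Spaces::LP::DjExist::SumToMultiplication} in the stronger sense of membership in $\ElemzF{\Omega}$, apply the same procedure to $\vsig_j$ (for $j\geq 1$ all moments vanish, for $j=0$ one allows a separate treatment), and exploit that $\vsig_j=\vsig_0-\Dild{2^{-1}}{\vsig_0}$ has cancellation — allowing a Taylor-remainder identity to write the kernel as $\sum_{|\alpha|,|\beta|\leq 1}2^{-j(2-|\alpha|-|\beta|)}(2^{-j\Wdv}W_x)^\alpha(2^{-j\Wdv}W_y)^\beta E_{\alpha,\beta}$ with each $E_{\alpha,\beta}$ again of the same form — providing the recursive structure demanded by \ref{Item::Spaces::LP::ElemWWdv::ElemArePreDerivs} of Definition~\ref{Defn::Spaces::LP::ElemWWdv}. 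This recursive/cancellation step, carefully propagating derivatives through the flow exponentials while keeping the $y$-variable support away from $\BoundaryN$ uniformly in $j$, is the technical heart of the argument.
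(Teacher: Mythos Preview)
Your overall plan---partition of unity followed by a local construction using Schwartz functions pushed along the exponential map---is the same architecture the paper uses, and the recursive-derivative strategy you describe for the elementary operator property is implemented in the paper via the class $\sT$ of Definition~\ref{Defn::Spaces::Schwartz::sT} together with Lemma~\ref{Lemma::Spaces::Multiplication::ExpDerivs} (which converts $\partial_{t_l}$ derivatives into $X_k$-derivatives with controlled degrees).

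However, there is a genuine error in one step. Your claim that the kernel $D_j(x,y)$ ``vanishes identically near $\{y\in\BoundaryN\}$'' is false. Take $x\in\BoundaryN$: when $t_0=0$, the flow $e^{t\cdot X}x$ stays in $\BoundaryN$ (since $X_1,\ldots,X_q$ are boundary-tangent), so the support of $D_j(x,\cdot)$ meets every neighborhood of $\BoundaryN$. What is true is that $D_j(x,y)$ vanishes \emph{to infinite order} as $y\to\BoundaryN$, and this is exactly what $\PElemzF{\cdot}$ requires. But this does not follow from your stated reasoning; you only know $e^{t\cdot X}x\in\InteriorN$ for $t_0>0$, and the set $\{t_0=0\}$ has measure zero but is in the closure of the support of $\vsig_j$.

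The paper resolves this---and simultaneously obtains the pre-elementary bounds without redoing the scaling analysis---by constructing the operators first on an ambient manifold $\ManifoldM\supseteq\ManifoldN$ without boundary (Theorem~\ref{Thm::Spaces::Multiplication::MainAmbientTheorem}). On $\ManifoldM$ the kernel $\Dh_j\in\CinftyCptSpace[\ManifoldM\times\ManifoldM]$ is manifestly smooth, and one checks that for $x\in\ManifoldN$ the $y$-support lies in $\ManifoldN$ (this is the content of $\PElemzFhN{\cdot}$, Definition~\ref{Defn::Spaces::Multiplication::PElemzFhN}). Smoothness on $\ManifoldM\times\ManifoldM$ plus identical vanishing on $\ManifoldN\times(\ManifoldM\setminus\ManifoldN)$ then yields infinite-order vanishing at $\ManifoldN\times\BoundaryN$ for free (Lemma~\ref{Lemma::Spaces::Multiplication::RestrictElemGivesElem}). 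Moreover, the pre-elementary bounds on $\ManifoldM$ follow directly from \cite[Lemma~5.6.6]{StreetMaximalSubellipticity}, avoiding the boundary-scaling argument you propose. Your direct approach on $\ManifoldN$ could in principle be made to work, but the infinite-order vanishing and the smoothness-up-to-boundary of the kernel would require a separate argument that you have not supplied.
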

We defer the proof of Proposition \ref{Prop::Spaces::LP::DjExist} to Section \ref{Section::Spaces::Multiplication}.

\begin{remark}
    Definition \ref{Defn::Spaces::LP::ElemWWdv} differs slightly from \cite[Definition 5.2.8]{StreetMaximalSubellipticity},
    which addresses the case of manifolds without boundary. This difference is superficial, and the two definitions
    are equivalent in the case of manifolds without boundary as Lemma \ref{Lemma::Spaces::Elem::Elem::ElemzhF} shows.
\end{remark}
    
    \subsection{Main definitions}\label{Section::Spaces::MainDefns}
    Let \(\ASpace{s}{p}{q}\) be as in Notation \ref{Notation::Spaces::XSpace}, and let \(\VSpace{p}{q}\) be as in
Notation \ref{Notation::Spaces::Classical::VSpacepq}. To define \(\VSpace{p}{q}\) we use a smooth, strictly positive
density \(\Vol\) on \(\ManifoldN\)--the definitions which follow do not depend on this choice.

The definitions which follow are generalizations of the single-parameter definitions in \cite[Chapter 6]{StreetMaximalSubellipticity}--which take
place on a manifold without boundary. We use elements of \(\ElemzF{\ManifoldNncF}\) in the definitions here, which were called
``bounded sets of elementary operators'' in \cite{StreetMaximalSubellipticity}. The reader is referred to \cite{StreetMaximalSubellipticity}
for a more leisurely description of the concepts here, along with discussions of motivations.

See Section \ref{Section::Spaces::BasicProps} for a discussion of the proofs of the results stated in this section.

\begin{definition}\label{Defn::Spaces::Defns::VspqENorm}
    For \(\sE\in \ElemzF{\ManifoldNncF}\) and \(f\in \DistributionsZeroN\), we set
    \begin{equation*}
        \VpqsENorm{f}:=\sup_{\left\{ (E_j, 2^{j}) : j\in \Zgeq \right\}\subseteq \sE} \VNorm{\left\{ E_j f \right\}_{j\in \Zgeq}}{p}{q}.
    \end{equation*}
\end{definition}

Recall, we have fixed \(\Compact\Subset \ManifoldNncF\) compact.

\begin{definition}\label{Defn::Spaces::Defns::ASpace}
    For \(s\in \R\), we define
    \(\ASpace{s}{p}{q}[\Compact][\FilteredSheafF]\) to be the space of those
    \(f\in \DistributionsZeroN\) with \(\supp(f)\subseteq \Compact\)
    and \(\forall \sE \in \ElemzF{\ManifoldNncF}\) we have \(\VpqsENorm{f}<\infty\).
\end{definition}

Fix \(\psi\in \CinftyCptSpace[\ManifoldNncF]\) with \(\psi\equiv 1\) on a neighborhood of \(\Compact\).
By Proposition \ref{Prop::Spaces::LP::DjExist}, we may write
\(\Mult{\psi}=\sum_{j\in \Zgeq} D_j\), where
\(\sD_0:=\left\{ \left( D_j, 2^{-j} \right) : j\in \Zgeq \right\}\in \ElemzF{\ManifoldNncF}\).

\begin{proposition}\label{Prop::Spaces::Defns::NormAndBanach}
    \(\VpqsENorm{\cdot}[p][q][s][\sD_0]\) defines a norm on \(\ASpace{s}{p}{q}[\Compact][\FilteredSheafF]\).
    With this norm, \(\ASpace{s}{p}{q}[\Compact][\FilteredSheafF]\) is a Banach space.
\end{proposition}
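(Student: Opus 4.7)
The plan is to verify the vector-space-norm axioms directly and then establish completeness via a Fatou-type argument combined with distributional convergence. Homogeneity and subadditivity of $\VpqsENorm{\cdot}[p][q][s][\sD_0]$ are immediate from linearity of each $D_j$ and sublinearity of $\VNorm{\cdot}{p}{q}$. For definiteness, suppose $\VpqsENorm{f}[p][q][s][\sD_0] = 0$. Then $D_j f = 0$ as an $L^p$-function for every $j$, and summing in the sense of Proposition \ref{Prop::Spaces::LP::DjExist}\ref{Item::Spaces::LP::DjExist::SumToMultiplication} yields $\psi f = 0$; hence $f = 0$, since $\psi \equiv 1$ on a neighborhood of $\Compact \supseteq \supp(f)$.

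For completeness, let $\{f_n\}$ be Cauchy in the $\sD_0$-norm. For each fixed $j$ the sequence $\{D_j f_n\}_n$ is Cauchy in $L^p$, since its contribution to $\VNorm{\cdot}{p}{q}$ controls the $L^p$-difference; thus $D_j f_n \to g_j$ in $L^p$. I would first show that $\{f_n\}$ converges in $\DistributionsZeroN$ to a distribution $f$ supported in $\Compact$. To pair $f_n$ with $\phi \in \TestFunctionsZeroN$, I would decompose $\phi$ using a companion elementary family $\{\widetilde{D}_k\}$ obtained from Proposition \ref{Prop::Spaces::LP::DjExist} adapted to $\phi$, so that $\phi = \sum_k \widetilde{D}_k \phi$, and invoke an almost-orthogonality estimate of the form $\|D_j^{*}\widetilde{D}_k^{*}\| = O(2^{-|j-k|N})$ together with H\"older duality in $\VSpace{p}{q}$ to obtain $|\langle f_n - f_m, \phi\rangle| \lesssim_\phi \VpqsENorm{f_n - f_m}[p][q][s][\sD_0]$. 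This yields a distributional limit $f$, and its support lies in $\Compact$ because distributional convergence preserves support. Next, each $E$ with $(E,2^{-j}) \in \sE$ (for any $\sE \in \ElemzF{\ManifoldNncF}$) has a smooth, compactly supported kernel, so $E f_n \to E f$ in $L^p$; Fatou's lemma for $\VSpace{p}{q}$ applied to $f_n - f_m$ with $m \to \infty$ gives $\VpqsENorm{f - f_n}[p][q][s][\sE] \leq \liminf_m \VpqsENorm{f_m - f_n}[p][q][s][\sE]$. Specializing $\sE = \sD_0$ and letting $n \to \infty$ forces $\VpqsENorm{f - f_n}[p][q][s][\sD_0] \to 0$; for a general $\sE$, combining this bound with the comparison $\VpqsENorm{\cdot}[p][q][s][\sE] \lesssim_{\sE} \VpqsENorm{\cdot}[p][q][s][\sD_0]$ forces $\VpqsENorm{f}[p][q][s][\sE] < \infty$, so $f \in \ASpace{s}{p}{q}[\Compact][\FilteredSheafF]$.

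The main obstacle lies not in the Fatou/distributional packaging but in the two almost-orthogonality inputs, namely (a) the dual decomposition $\phi = \sum_k \widetilde{D}_k \phi$ of a test function with rapid decay for the transposes $\widetilde{D}_k^{*} \phi$, and (b) the norm-comparison estimate $\VpqsENorm{\cdot}[p][q][s][\sE] \lesssim_{\sE} \VpqsENorm{\cdot}[p][q][s][\sD_0]$ for arbitrary $\sE \in \ElemzF{\ManifoldNncF}$. In the classical setting these are standard Littlewood--Paley facts, but in the Carnot--Carath\'eodory setting with boundary they rest on composition and cancellation estimates for the elementary operators of Definition \ref{Defn::Spaces::LP::ElemWWdv}, which will be developed in the subsequent sections of this chapter. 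Once those inputs are available, the argument above closes, simultaneously proving that $\VpqsENorm{\cdot}[p][q][s][\sD_0]$ is a norm and that $\ASpace{s}{p}{q}[\Compact][\FilteredSheafF]$ is complete.
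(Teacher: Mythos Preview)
Your approach is essentially the same as the paper's: the paper uses Proposition~\ref{Prop::Spaces::MainEst::QuantitativeDistributions} to obtain the continuous embedding into $\DistributionsZeroN$ (playing the role of your dual-decomposition step), and then invokes Corollaries~\ref{Cor::Spaces::MainEst::VpqsESeminormIsContinuous} and~\ref{Cor::Spaces::MainEst::ChoiceOfNormWhichGivesFinite} for exactly the norm comparison and membership criterion you identify as the needed inputs. One minor imprecision: distributional convergence $f_n\to f$ yields only $Ef_n(x)\to Ef(x)$ pointwise (not in $L^p$), but this is all Fatou requires.
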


\begin{proposition}\label{Prop::Spaces::Defns::EquivNorms}
    The equivalence class of the norm \(\VpqsENorm{\cdot}[p][q][s][\sD_0]\) on \(\ASpace{s}{p}{q}[\Compact][\FilteredSheafF]\)
    does not depend on the choices made above. In particular, if \(\psit\in \CinftyCptSpace[\ManifoldNncF]\)
    is another function with \(\psit\equiv 1\) on a neighborhood of \(\Compact\), and if
    \(\Mult{\psit}=\sum_{j\in \Zgeq} \Dt_j\), where \(\sDt_0:=\left\{ \left( \Dt_j,2^{-j} \right) : j\in \Zgeq \right\}\in \ElemzF{\ManifoldNncF}\),
    then we have
    \begin{equation*}
        \VpqsENorm{f}[p][q][s][\sD_0]\approx \VpqsENorm{f}[p][q][s][\sDt_0], \quad \forall f\in \ASpace{s}{p}{q}[\Compact][\FilteredSheafF].
    \end{equation*}
\end{proposition}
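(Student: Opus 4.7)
The plan is to use symmetry and reduce to proving a single inequality $\VpqsENorm{f}[p][q][s][\sDt_0]\lesssim \VpqsENorm{f}[p][q][s][\sD_0]$. Since $\psi,\psit$ are both identically $1$ on a neighborhood of $\Compact$ and $\supp(f)\subseteq \Compact$, we have $\Mult{\psi}f=f=\Mult{\psit}f$ in $\DistributionsZeroN$, so by Proposition \ref{Prop::Spaces::LP::DjExist}\ref{Item::Spaces::LP::DjExist::SumToMultiplication} both $\sum_{j}D_jf=f$ and $\sum_{k}\Dt_k f=f$ converge. Writing $\Dt_k f=\sum_{j}\Dt_k D_j f$, the heart of the proof is to establish an almost-orthogonality estimate for the compositions $\Dt_k D_j$.

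The key lemma to prove is this: fix $N\in\Zgeq$. There exists an open $\Omega'\Subset \ManifoldNncF$ containing $\supp\psi\cup\supp\psit$, and an $\sE_N\in \ElemzF{\Omega'}$, such that for every $(\Dt_k,2^{-k})\in \sDt_0$ and $(D_j,2^{-j})\in \sD_0$, one can write $\Dt_k D_j = 2^{-N|j-k|}F_{j,k}$ with $(F_{j,k},2^{-(j\wedge k)})\in\sE_N$. This is proved using the defining property \ref{Item::Spaces::LP::ElemWWdv::ElemArePreDerivs} of $\ElemzF{\ManifoldNncF}$: iteratively decompose the smaller-scale operator (say $\Dt_k$ when $j\le k$) into normalized $W_y$-derivatives at scale $2^{-k}$, integrate by parts to transfer those derivatives onto the $z$-variable of $D_j(x,z)$, and then use the analogous decomposition of $D_j$ into $W_x$-derivatives at scale $2^{-j}$. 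Each transfer costs a factor $2^{-(k-j)}$, yielding the claimed gain $2^{-N|j-k|}$ after $N$ rounds. The integration by parts produces no boundary terms because elements of $\ElemzF{\Omega'}$ vanish to infinite order at $\BoundaryN$ in $y$ (Definition \ref{Defn::Spaces::LP::ElemWWdv}\ref{Item::Spaces::LP::ElemWWdv::ElemArePreElem}). The pre-elementary kernel bound \eqref{Eqn::Spaces::LP::PreElemBound} at the coarser scale $2^{-(j\wedge k)}$ is recovered from Proposition \ref{Prop::VectorFields::Scaling::VolEstimates}\ref{Item::VectorFields::Scaling::VolEstimates::VolWedge1Doubling} and the symmetrization observed in Remark \ref{Rmk::Spaces::LP::BoundIsSymmetric}, together with the standard integral estimate for convolutions against the kernels \eqref{Eqn::Spaces::LP::PreElemBound}.

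Given the lemma, the rest is a mixed-norm summation: fix $N>|s|+1$ and note
\begin{equation*}
    2^{ks}\Dt_k f = \sum_{j\in\Zgeq} c_{k,j}\bigl(2^{js} F_{j,k} f\bigr),\qquad c_{k,j}:=2^{-N|j-k|}2^{(k-j)s},
\end{equation*}
with $\sup_k\sum_j|c_{k,j}|+\sup_j\sum_k|c_{k,j}|<\infty$. Schur's test then controls $\lqLpSpace{p}{q}$-norms (the Besov case), and the same kernel, being a discrete convolution in a scalar index, is bounded on $\LplqSpace{p}{q}$ by Minkowski's inequality in $\ell^q$ applied pointwise in $x$ (the Triebel--Lizorkin case). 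Taking the supremum over countable subcollections of $\sDt_0$ on the left and recognizing the right-hand side as controlled by $\VpqsENorm{f}[p][q][s][\sE_N]$, it remains to observe $\VpqsENorm{f}[p][q][s][\sE_N]\lesssim \VpqsENorm{f}[p][q][s][\sD_0]$, which follows from applying the same argument with $\sDt_0$ replaced by $\sE_N$ (again invoking $\sum_j D_j f=f$) or, more directly, from the Banach-space version of Proposition \ref{Prop::Spaces::Defns::NormAndBanach} combined with the definition of membership in $\ASpace{s}{p}{q}[\Compact][\FilteredSheafF]$ (which bounds $\VpqsENorm{f}[p][q][s][\sE_N]$ for every $\sE_N\in\ElemzF{\ManifoldNncF}$).

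The main obstacle is the almost-orthogonality lemma itself: one must verify that the composition $\Dt_k D_j$, after gaining the factor $2^{-N|j-k|}$, lies in a \emph{fixed} bounded set of elementary operators whose constants do not depend on $(j,k)$. Tracking the derivatives, the doubling of volume balls across scales, the ``vanishing-in-$y$'' condition under iterated integration by parts, and the support of the resulting kernels is the only nontrivial technical step; everything else reduces to standard Schur/Young summation and the identities $\sum D_j f=\sum \Dt_k f=f$.
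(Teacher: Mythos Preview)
Your approach shares the essential ingredients with the paper's (symmetry, almost-orthogonality of elementary operators, and a Schur-type summation), but the final step has a genuine gap. After your Schur summation you arrive at a bound of the form $\VpqsENorm{f}[p][q][s][\sDt_0]\lesssim \VpqsENorm{f}[p][q][s][\sE_N]$ for some \emph{new} $\sE_N\in\ElemzF{\ManifoldNncF}$, and you must close the loop back to $\sD_0$. Neither of your two proposals does this: iterating the argument replaces $\sE_N$ by yet another $\sE_N'$ and never terminates, while the definition of $\ASpace{s}{p}{q}[\Compact][\FilteredSheafF]$ only gives \emph{finiteness} of $\VpqsENorm{f}[p][q][s][\sE_N]$, not a quantitative comparison with $\VpqsENorm{f}[p][q][s][\sD_0]$. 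Proposition~\ref{Prop::Spaces::Defns::NormAndBanach} does not help either, since it does not assert continuity of an arbitrary seminorm $\VpqsENorm{\cdot}[p][q][s][\sE]$.

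The paper closes this gap via Proposition~\ref{Prop::Spaces::MainEst::MainEst} (from which Proposition~\ref{Prop::Spaces::Defns::EquivNorms} follows in one line through Corollary~\ref{Cor::Spaces::MainEst::VpqsESeminormIsContinuous}). The organizational idea you are missing is to build, from the \emph{fixed} $D_j$'s alone, the auxiliary family $\sD_N=\{(2^{N|k|+|l|}D_{j+k}D_{j+k+l},2^{-(j+k)}):|l|>|k|\}\in\ElemzF{\ManifoldNncF}$ and to impose the a priori hypothesis $\VpqsENorm{f}[p][q][s][\sD_N]<\infty$ (automatic for $f\in\ASpace{s}{p}{q}[\Compact][\FilteredSheafF]$). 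One then uses a \emph{double} expansion $E_jf=\sum_{k,l}E_jD_{j+k}D_{j+k+l}f$: the almost-orthogonality (Lemma~\ref{Lemma::Spaces::ELem::Elem::ComposeElem}) produces the gain $2^{-N|k|}$ between $E_j$ and $D_{j+k}$, the resulting pre-elementary outer factor is removed by Proposition~\ref{Prop::Spaces::Elem::PElem::PElemOpsBoundedOnVV}, and what remains is expressed purely in terms of $D_{j+k}D_{j+k+l}$ --- i.e., in terms of $\sD_0$ and $\sD_N$ rather than a new family. Your single expansion $\Dt_k f=\sum_j\Dt_k D_j f$ inevitably leaves a modified inner operator (derivatives of $D_j$) that cannot be absorbed back into $\sD_0$; the second layer of $D$'s is exactly what allows the argument to close.
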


Propositions \ref{Prop::Spaces::Defns::NormAndBanach} and \ref{Prop::Spaces::Defns::EquivNorms} are proved in
Section \ref{Section::Spaces::BasicProps}.
In light of Proposition \ref{Prop::Spaces::Defns::EquivNorms}, the following notation makes sense.

\begin{notation}\label{Notation::Spaces::Defns::Norm}
    For \(f\in \ASpace{s}{p}{q}[\Compact][\FilteredSheafF]\), we write
    \begin{equation*}
        \ANorm{f}{s}{p}{q}[\FilteredSheafF]:=\VpqsENorm{f}[p][q][s][\sD_0], \quad f\in \ASpace{s}{p}{q}[\Compact][\FilteredSheafF].
    \end{equation*}
\end{notation}

\begin{remark}\label{Rmk::Spaces::Defns::EquivClassOfNormWellDefinedAndDoesntDependOnCompact}
    Only the equivalence class of the norm \(\ANorm{\cdot}{s}{p}{q}[\FilteredSheafF]\) is well defined on \(\ASpace{s}{p}{q}[\Compact][\FilteredSheafF]\).
    The notation \(\ANorm{f}{s}{p}{q}[\cdot]\) does not involve the compact set \(\Compact\), since given two compact sets
    \(\Compact,\Compact'\Subset \ManifoldNncF\), we may pick \(\psi\in \CinftyCptSpace[\ManifoldNncF]\) so that \(\psi\equiv 1\) on a neighborhood of \(\Compact\cup\Compact'\),
    and we may therefore use the same norm on \(\ASpace{s}{p}{q}[\Compact][\FilteredSheafF]\) and \(\ASpace{s}{p}{q}[\Compact'][\FilteredSheafF]\).
\end{remark}

\begin{remark}\label{Rmk::Spaces::Defns::NormFiniteDoesntMeanInSpace}
    Proposition \ref{Prop::Spaces::Defns::EquivNorms} only shows that the equivalence class of the norm
    \(\ANorm{f}{s}{p}{q}[\FilteredSheafF]\) is well-defined for \(f\in \ASpace{s}{p}{q}[\Compact][\FilteredSheafF]\),
    not for all \(f\in \DistributionsZeroN\) with \(\supp(f)\subseteq \Compact\). In particular,
    \textbf{it is not claimed} that if \(\ANorm{f}{s}{p}{q}[\FilteredSheafF]<\infty\) with \(\supp(f)\subseteq \Compact\),
    then \(f\in\ASpace{s}{p}{q}[\Compact][\FilteredSheafF]\). This issue can be fixed by modifying the norm as described
    in Remark \ref{Rmk::Spaces::MainEst::EquivNormWhichHasFinitenessProperty}.
    When \(s>0\), one does not need to modify the norm; see Section \ref{Section::Spaces::BasicProps::FiniteNorm}.
\end{remark}

In light of Proposition \ref{Prop::Spaces::Containment}, the following definition makes sense.
\begin{definition}\label{Defn::Spaces::Defns::ASpaceCpt}
    For \(\Omega\subseteq \ManifoldNncF\) open, set 
    \begin{equation*}
        \ASpaceCpt{s}{p}{q}[\Omega][\FilteredSheafF]:=\bigcup_{\substack{\Compact\Subset \Omega \\ \Compact\text{ compact}}} \ASpace{s}{p}{q}[\Compact][\FilteredSheafF].
    \end{equation*}
\end{definition}

    \subsection{Scaling}\label{Section::Spaces::Scaling}
    As described in Section \ref{Section::VectorFieldsAndSheaves::MetricsAndVolumes}, a key tool when studying maximally
subelliptic PDEs are scaling maps adapted to the Carnot-Carath\'eodory geometry; and such scaling maps were first introduced
by Nagel, Stein, and Wainger \cite{NagelSteinWaingerBallsAndMetricsDefinedByVectorFieldsI}.
In this section, we describe the scaling maps presented in \cite[Theorem \ref*{CC::Thm::Scaling::MainResult}]{StreetCarnotCaratheodoryBallsOnManifoldsWithBoundary},
which work both on \(\InteriorN\) and near \(\BoundaryNncF\).

For simplicity, we describe the result on the unit ball: \(\Bn{1}=\left\{ x\in \Rn : |x|\leq 1 \right\}\),
though \cite[Theorem \ref*{CC::Thm::Scaling::MainResult}]{StreetCarnotCaratheodoryBallsOnManifoldsWithBoundary}
works on more general manifolds and contains some additional results which are not used in this paper. We set \(\Bngeq{1}=\left\{ x=(x_1,\ldots,x_n)\in \Bn{1} : x_n\geq 0 \right\}\),
and \(\Bng{1}\left\{ x=(x_1,\ldots,x_n)\in \Bn{1} : x_n> 0 \right\}\).
\(\Bngeq{1}\) is our manifold with boundary, \(\Bng{1}\) is the interior, \(\Bnmo{1}\) the boundary,
and \(\Bn{1}\) the ambient manifold. We similarly define \(\Bnleq{1}\) and \(\Bnl{1}\) by replacing \(\geq 0\) and \(> 0\)
with \(\leq 0\) and \(<0\), respectively.

Let \(\WhWdv=\left\{ \left( \Wh,\Wdv_1 \right),\ldots, \left( \Wh, \Wdv_r \right) \right\}\subset \VectorFields{\Bn{1}}\times \Zg\)
be H\"ormander vector fields with formal degrees on \(\Bn{1}\).

\begin{assumption}
    We make the following non-characteristic assumption: there exists \(j_0\in \left\{ 1,\ldots, r \right\}\),
    with \(\Wh_{j_0}(x',0)\not \in \TangentSpace{x'}{\Rnmo}\), \(\forall x'\in \Bnmo{1}\)
    and \(\Wdv_k<\Wdv_{j_0}\) implies \(\Wh_k(x',0)\in \TangentSpace{(x',0)}{\Rnmo}\), \(\forall x'\in \Bnmo{1}\).
    In other words, \(\Wh_{j_0}(x',0)\) has a non-zero \(\partial_{x_n}\) component for \(x'\in \Bnmo{1}\),
    and if \(\Wdv_k<\Wdv_{j_0}\), then \(\Wh_k(x',0)\) does not have a \(\partial_{x_n}\) component for \(x'\in \Bnmo{1}\).
\end{assumption}

We let \(\Qn{1}=\left\{ x\in \Rn : |x|_{\infty}<1 \right\}\) and for \(c\in [-1,0]\),
\(\Qngeqc{1}{c}= \left\{ x=(x_1,\ldots, x_n)\in \Qn{1} : x_n\geq c \right\}\)
and \(\Qneqc{1}{c}= \left\{ x=(x_1,\ldots, x_n)\in \Qn{1} : x_n= c \right\}\).
Note that \(\Qngeqc{1}{-1}=\Qn{1}\).

For a vector field \(V\in \VectorFields{\Qn{1}}\), \(V=\sum_{j=1}^n a_j(x)\partial_{x_j}\), we identify
\(V\) with the vector valued function \(x\mapsto (a_1(x),\ldots, a_n(x))\). Thus, it makes sense to consider norms like
\(\CmNorm{V}{L}[\Qn{1}][\Rn]\), which denotes the usual \(\CmSpace{L}[\Qn{1}]\) norm, for functions taking values in \(\Rn\).

\begin{theorem}[{See \cite[Theorem \ref*{CC::Thm::Scaling::MainResult}]{StreetCarnotCaratheodoryBallsOnManifoldsWithBoundary}}]
    \label{Thm::Spaces::Scaling::MainScalingThm}
    Fix \(r_1,r_2\in (0,1)\) with \(r_1<r_2\). There exists \(\delta_1\in (0,1]\), \(\forall x \in \BnClosure{r_1}\),
    \(\forall \delta\in (0,\delta_1]\), \(\exists \Psi_{x,\delta}:\Qn{1}\rightarrow \BWhWdv{x}{\delta}\) such that
    \(\forall x\in \BnClosure{r_1}\) and \(\delta\in (0,\delta_1]\):
    \begin{enumerate}[(a)]
        \item\label{Item::Spaces::Scaling::ContainedInAmbient}  \(\BWWdv{x}{\delta}\subseteq \Bn{r_2}\).
        \item\label{Item::Spaces::Scaling::Doubling} \(\Vol[\BWhWdv{x}{2\delta}]\lesssim \Vol[\BWhWdv{x}{\delta}]\), where \(\Vol\) denotes Lebesgue measure.
        \item \(\Psi_{x,\delta}(0)=x\).
        \item \(\Psi_{x,\delta}(\Qn{1})\subseteq \Bn{r_2}\) is open and \(\Psi_{x,\delta}:\Qn{1}\xrightarrow{\sim} \Psi_{x,\delta}(\Qn{1})\)
            is a \(\CinftySpace\)-diffeomorphism.
        \item \(\forall x\in \BnClosure{r_1}\), \(\delta\in (0,\delta_1]\), \(\exists c_0=c_0(x,\delta)\in [-1,0]\), such that
            \begin{equation*}
                \Psi_{x,\delta}(\Qn{1})\cap \Bng{1}= \Psi_{x,\delta}(\Qngeqc{1}{c_0}), \quad \Psi_{x,\delta}(\Qn{1})\cap \Bnmo{1}=\Psi_{x,\delta}(\Qneqc{1}{c_0}).
            \end{equation*}
            Here, \(c_0=-1\) corresponds to the case \(\Psi_{x,\delta}(\Qn{1})\subseteq \Bng{1}\) and \(\Psi_{x,\delta}(\Qn{1})\cap \Bnmo{1}=\emptyset\),
            and \(c_0=0\) corresponds to the case \(x\in \Bnmo{1}\).
        \item\label{Item::Spaces::Scaling::ImageContainsBall}
         \(\exists \xi_1\in (0,1]\), \(\forall x\in \BnClosure{r_1}\), \(\delta\in (0,\delta_1]\),
            \begin{equation*}
                \BWhWdv{x}{\xi_1\delta}\subseteq \Psi_{x,\delta}(\Qn{1/4})\subseteq \Psi_{x,\delta}(\Qn{1})\subseteq \BWhWdv{x}{\delta}.
            \end{equation*}
    \end{enumerate}
    Let \(\Wh_j^{x,\delta}:=\Psi_{x,\delta}^{*} \delta^{\Wdv_j}\Wh_j\).
    \begin{enumerate}[(a),resume]
        \item\label{Item::Spaces::Scaling::PullBackSmooth} \(\Wh_1^{x,\delta},\ldots, \Wh_r^{x,\delta}\) are smooth, uniformly in \(x\) and \(\delta\) in the sense that
            \(\forall L\in \Zgeq\),
            \begin{equation*}
                \sup_{j\in \left\{ 1,\ldots, r \right\}}\sup_{\substack{x\in \BnClosure{r_1} \\ \delta\in (0,\delta_1]}} 
                \BCmNorm{\Wh_j^{x,\delta}}{L}[\Qn{1}][\Rn]<\infty.
            \end{equation*}
        \item\label{Item::Spaces::Scaling::PullBackSatisfyHormander} \(\Wh_1^{x,\delta},\ldots, \Wh_r^{x,\delta}\) satisfy H\"ormander's condition, uniformly in \(x\) and \(\delta\)
            in the sense that there exists \(m\in \Zgeq\), such that if
            \(\Xh^{x,\delta}_1,\ldots, \Xh^{x,\delta}_q\) denote the list of commutators of \(\Wh_1^{x,\delta},\ldots, \Wh_r^{x,\delta}\)
            up to order \(m\), then
            \begin{equation*}
                \inf_{\substack{x\in \BnClosure{r_1} \\ \delta\in (0,\delta_1] \\ u\in \Qn{1}}} \max_{\substack{j_1,\ldots, j_n\\\in \left\{ 1,\ldots, q \right\}}}
                \left| 
                    \det
                    \left( \Xh_{j_1}^{x,\delta}(u)| \cdots | \Xh_{j_n}^{x,\delta}(u) \right)
                \right|>0,
            \end{equation*}
            where \(\left( \Xh_{j_1}^{x,\delta}(u)| \cdots | \Xh_{j_n}^{x,\delta}(u) \right)\) denotes the \(n\times n\) matrix
            whose \(k\)-th column is \(\Xh_{j_k}^{x,\delta}(u)\), written as a column vector.
    \end{enumerate}
\end{theorem}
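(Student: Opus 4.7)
The plan is to reduce Theorem \ref{Thm::Spaces::Scaling::MainScalingThm} almost entirely to \cite[Theorem \ref*{CC::Thm::Scaling::MainResult}]{StreetCarnotCaratheodoryBallsOnManifoldsWithBoundary}, and then patch in the remaining properties from the volume estimates of Section \ref{Section::VectorFieldsAndSheaves::MetricsAndVolumes}. First I would verify that the hypotheses match: the manifold with boundary $\Bngeq{1}$, with ambient manifold $\Bn{1}$ and Hörmander vector fields with formal degrees $\WhWdv$, is a special case of the general setup there. Moreover, the non-characteristic assumption made in the statement is exactly the pointwise condition from Example \ref{Example::Filtrations::RestrictingFiltrations::NonCharExamples} \ref{Item::Filtrations::RestrictingFiltrations::NonCharExamples::CharacterizeNonCharInTermsOfWWdv} applied to every $x'\in \Bnmo{1}$; so every boundary point is $\FilteredSheafGenBy{\WhWdv}$-non-characteristic, placing us inside the scope of the scaling theorem of \cite{StreetCarnotCaratheodoryBallsOnManifoldsWithBoundary}.

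Next I would invoke that result with $\Compact=\BnClosure{r_1}$ to extract, for some $\delta_1\in (0,1]$ and every $x\in \BnClosure{r_1}$, $\delta\in (0,\delta_1]$, a map $\Psi_{x,\delta}:\Qn{1}\rightarrow \BWhWdv{x}{\delta}$. Properties (c), (d), (e), (g), (h), and the inner sandwich $\BWhWdv{x}{\xi_1\delta}\subseteq \Psi_{x,\delta}(\Qn{1/4})$ together with $\Psi_{x,\delta}(\Qn{1})\subseteq \BWhWdv{x}{\delta}$ in (f) are exactly statements extracted from the cited theorem; so they require only a reading-off. The boundary decomposition in (e), giving the constant $c_0=c_0(x,\delta)\in [-1,0]$, comes from the fact that $\Psi_{x,\delta}$ straightens the boundary through the canonical coordinates used in \cite{StreetCarnotCaratheodoryBallsOnManifoldsWithBoundary}, with $c_0=-1$ corresponding to the case that the image misses the boundary entirely and $c_0=0$ to the case $x\in \Bnmo{1}$.

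It remains to verify (a) and (b), which are not in the cited theorem's statement but follow quickly from the infrastructure in Section \ref{Section::VectorFieldsAndSheaves::MetricsAndVolumes}. For (a), I would choose $\delta_1$ small enough so that $\BWhWdv{x}{\delta_1}\subseteq \Bn{r_2}$ for all $x\in \BnClosure{r_1}$; such a uniform choice exists by compactness and by Proposition \ref{Prop::VectorFields::Scaling::SameTopology}, which guarantees that the metric topology of $\MetricWhWdv$ agrees with the Euclidean topology on $\BnClosure{r_2}$. For (b), the doubling property, I would apply Proposition \ref{Prop::VectorFields::Scaling::VolEstimates} \ref{Item::VectorFields::Scaling::VolEstimates::VolDoubling} to $\FilteredSheafF=\FilteredSheafGenBy{\WhWdv}$, $\Compact=\BnClosure{r_1}$, and $\Vol$ equal to Lebesgue measure, shrinking $\delta_1$ further if needed so that $2\delta\leq \delta_1'$ where $\delta_1'$ is the threshold produced by that proposition.

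The hard part is really just bookkeeping: aligning the concrete Euclidean framing of $\Bngeq{1}\subseteq \Bn{1}$ with the more abstract submanifold-with-boundary framing of \cite[Theorem \ref*{CC::Thm::Scaling::MainResult}]{StreetCarnotCaratheodoryBallsOnManifoldsWithBoundary}, and checking that the non-characteristic assumption here gives non-characteristicity of every boundary point in that paper's sense. There is no new analytic content; all the difficult work — construction of the canonical coordinates $\Psi_{x,\delta}$, uniform smoothness of the pullbacks, and uniform Hörmander's condition — has already been carried out in the companion paper.
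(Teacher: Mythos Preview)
Your proposal is correct and matches the paper's approach: the paper gives no independent proof of this theorem at all, simply citing it verbatim from \cite[Theorem \ref*{CC::Thm::Scaling::MainResult}]{StreetCarnotCaratheodoryBallsOnManifoldsWithBoundary} as indicated in the theorem header, with the surrounding text noting only that the companion result is stated on more general manifolds and contains additional conclusions not needed here. Your extra care in separately deriving (a) and (b) from Section~\ref{Section::VectorFieldsAndSheaves::MetricsAndVolumes} is harmless but likely unnecessary, since those facts are themselves attributed to the same cited theorem (see the proof of Proposition~\ref{Prop::VectorFields::Scaling::VolEstimates}); in any case the bookkeeping you describe is exactly what the paper has in mind.
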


    \subsection{Elementary and pre-elementary operators}
    In this section, we prove the relevant facts we require
about pre-elementary operators (\(\PElemF{\Compact}\) and \(\PElemzF{\Compact}\)) and 
elementary operators (\(\ElemzF{\Compact}\)),
as described in Definitions \ref{Defn::Spaces::LP::PElemWWdv} and \ref{Defn::Spaces::LP::ElemWWdv}.
In particular, we establish Lemma \ref{Lemma::Spaces::LP::ElemDoesntDependOnChoices}
and Proposition \ref{Prop::Spaces::LP::DjExist}; along with other needed results.

The notations \(\PElemF{\Compact}\), \(\PElemzF{\Compact}\), and \(\ElemzF{\Compact}\)
are well-defined\footnote{In the sense that it does not depend on any of the choices made in Definitions \ref{Defn::Spaces::LP::PElemWWdv} and \ref{Defn::Spaces::LP::ElemWWdv}.} in light of Lemma \ref{Lemma::Spaces::LP::ElemDoesntDependOnChoices}.
However, we have not yet established Lemma \ref{Lemma::Spaces::LP::ElemDoesntDependOnChoices}, so we temporarily
introduce notation which makes the choices involved in Definitions \ref{Defn::Spaces::LP::PElemWWdv} and \ref{Defn::Spaces::LP::ElemWWdv}
explicit.

Throughout this section, fix \(\Omega\Subset \ManifoldNncF\) open an relatively compact, with \(\Compact\Subset \Omega\),
and H\"ormander vector fields with formal degrees \(\WWdv=\left\{ \left( W_1,\Wdv_1 \right),\ldots, \left( W_r, \Wdv_r \right) \right\}\subset\VectorFields{\ManifoldNncF}\times \Zg\),
with \(\FilteredSheafF\big|_{\Omega}=\FilteredSheafGenBy{\WWdv}\).
Fix a smooth, strictly positive density \(\Vol\) on \(\Omega\).

\begin{notation}\label{Notation::Spaces::Elem::Intro::VerposeElemNotation}
    We write \(\PElemWWdvOmegaVol{\Compact}\), \(\PElemzWWdvOmegaVol{\Compact}\), and
    \(\ElemzWWdvOmegaVol{\Compact}\) for the corresponding
    sets \(\PElemF{\Compact}\),  \(\PElemzF{\Compact}\), and \(\ElemzF{\Compact}\)
    from Definitions \ref{Defn::Spaces::LP::PElemWWdv} and \ref{Defn::Spaces::LP::ElemWWdv}
    when defined with the above choices. As we will show in Lemma \ref{Lemma::Spaces::LP::ElemDoesntDependOnChoices},
    none of these choices matter and so once Lemma \ref{Lemma::Spaces::LP::ElemDoesntDependOnChoices} is established,
    we may write
    \(\PElemWWdvOmegaVol{\Compact}=\PElemF{\Compact}\),
    \(\PElemzWWdvOmegaVol{\Compact}=\PElemzF{\Compact}\),
    and \(\ElemzWWdvOmegaVol{\Compact}=\ElemzF{\Compact}\).
\end{notation}

The main difference between the setting here and that of the single-parameter special case of
\cite[Sections 5.4 and 5.5]{StreetMaximalSubellipticity} is that for
\(\sE\in \PElemzWWdvOmegaVol{\Compact}\) or \(\sE\in \ElemzWWdvOmegaVol{\Compact}\) and \((E,2^{-j})\in \sE\),
we have \(E(x,y)\) vanishes to infinite order as \(y\rightarrow \BoundaryN\),
while in \cite{StreetMaximalSubellipticity} the manifold had no boundary so this vanishing condition is vacuous.
Because of this, many of the proofs from \cite{StreetMaximalSubellipticity} generalize to this setting
with minimal changes. When this is the case, we describe only the changes needed, and refer the reader
to the corresponding result in \cite{StreetMaximalSubellipticity} for full details.

        \subsubsection{Extending and restricting pre-elementary operators}
        The results in this subsection have technical statements and proofs. It may be beneficial for the reader to skip this on a first reading,
returning when the results are needed.

Let \(\ManifoldM\) be a smooth manifold, without boundary, such that
\(\ManifoldN\subseteq \ManifoldM\) is a co-dimension \(0\), closed, embedded, submanifold with boundary,
and let \(\FilteredSheafFh\) be a H\"ormander filtration of sheaves of vector fields on \(\ManifoldM\)
with \(\RestrictFilteredSheaf{\FilteredSheafFh}{\ManifoldN}=\FilteredSheafF\).
Fix \(\Compacth\Subset \ManifoldM\) compact with \(\Compacth\cap \ManifoldN=\Compact\).
In this subsection, we show that we may restrict pre-elementary operators
from \(\PElemFh{\Compacth}\) to obtain pre-elementary operators in \(\PElemF{\Compact}\) (see Proposition \ref{Prop::Spaces::Elem::Extend::Restrict::NonVerbose}).
And, conversely, this restriction is right-invertible by a corresponding extension (see Proposition \ref{Prop::Spaces::Elem::Extend::Extend::NonVerbose}).
Throughout, fix a smooth, strictly positive density \(\Vol\) on \(\ManifoldM\).

Let \(\Omega\Subset\ManifoldM\) be an  \(\ManifoldM\)-open, relatively compact set, with \(\Omega\cap \ManifoldN\subseteq \ManifoldNncF\),
and \(\Compacth\subseteq \Omega\). 

The main results of this subsection are the next two propositions:

\begin{proposition}[Restriction]\label{Prop::Spaces::Elem::Extend::Restrict::NonVerbose}
    Let \(\sE\in \PElemFh{\Compacth}\). Then,
    \begin{equation*}
        \left\{ \left( E\big|_{\ManifoldN\times \ManifoldN}, 2^{-j} \right) : \left( E,2^{-j} \right)\in \sE \right\}
        \in
        \PElemF{\Compact}.
    \end{equation*}
\end{proposition}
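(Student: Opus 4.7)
The plan is to directly verify the two defining conditions of Definition \ref{Defn::Spaces::LP::PElemWWdv} for the restricted family, using compatible choices of generators, density, and ambient open sets on $\ManifoldN$ as are used on $\ManifoldM$. Fix generators $\ZhZdv=\{(\Zh_1,\Zde_1),\ldots,(\Zh_r,\Zde_r)\}$ of $\FilteredSheafFh|_\Omega$ and set $W_j:=\Zh_j|_{\Omega\cap\ManifoldN}$ with the same formal degrees. By Proposition \ref{Prop::Filtrations::RestrictingFiltrations::CoDim0Restriction}, $\WWdv:=\{(W_1,\Zde_1),\ldots,(W_r,\Zde_r)\}$ generates $\FilteredSheafF|_{\Omega\cap\ManifoldN}$, so these are the natural generators with which to test membership in $\PElemF{\Compact}$. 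As the density on $\ManifoldN$ I use the restriction of $\Vol$ from $\ManifoldM$.

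The support condition \ref{Item::Spaces::LP::PElemWWdv::Support} is immediate, since $\supp(E|_{\ManifoldN\times\ManifoldN})\subseteq(\Compacth\cap\ManifoldN)\times(\Compacth\cap\ManifoldN)=\Compact\times\Compact$. For the derivative bound \ref{Item::Spaces::LP::PElemWWdv::Bound}, the key observation is that $\ManifoldN$ is a codimension-$0$ embedded submanifold with boundary and $W_j$ is literally the restriction of $\Zh_j$, so differentiation in the $W_j$-direction on $\ManifoldN$ agrees with differentiation in the $\Zh_j$-direction on $\ManifoldM$ followed by restriction. Consequently, for $x,y\in\Compact$ and multi-indices $\alpha,\beta$,
\[
(2^{-j\Wdv}W_x)^\alpha(2^{-j\Wdv}W_y)^\beta\bigl(E|_{\ManifoldN\times\ManifoldN}\bigr)(x,y)
=\bigl((2^{-j\Zde}\Zh_x)^\alpha(2^{-j\Zde}\Zh_y)^\beta E\bigr)(x,y).
\]
Applying the ambient bound \eqref{Eqn::Spaces::LP::PreElemBound} from $\sE\in\PElemFh{\Compacth}$ dominates this by $C_{\alpha,\beta,m}(1+2^j\MetricZZde[x][y])^{-m}/(\Volh[\BZZde{x}{2^{-j}+\MetricZZde[x][y]}]\wedge 1)$.

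Finally I convert this ambient estimate into the intrinsic one on $\ManifoldN$ using Proposition \ref{Prop::VectorFields::Scaling::AmbientVolAndMetricEquivalnce} applied with compact set $\Compact$: part \ref{Item::VectorFields::Scaling::AmbientVolAndMetricEquivalnce::MetricWequalsMetricZ} gives $\MetricWWdv[x][y]\approx\MetricZZde[x][y]$ and part \ref{Item::VectorFields::Scaling::AmbientVolAndMetricEquivalnce::VolOfMetricWequalsVolOfMetricZ} gives the equivalence $\Vol[\BWWdv{x}{2^{-j}+\MetricWWdv[x][y]}]\wedge 1\approx\Volh[\BZZde{x}{2^{-j}+\MetricZZde[x][y]}]\wedge 1$ for all $x,y\in\Compact$ and all $j\in\Zgeq$. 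Substituting these produces exactly the estimate required for membership in $\PElemF{\Compact}$.

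The only subtle point is uniformity of constants: the comparison constants supplied by Proposition \ref{Prop::VectorFields::Scaling::AmbientVolAndMetricEquivalnce} depend only on the compact set $\Compact$, the chosen generators, and the densities, not on the scale $2^{-j}$ or the particular pair $(E,2^{-j})\in\sE$, so the family-wide bounds pass through. I also want to emphasize that it is essential to use the $\wedge 1$ versions of the volume comparisons in Proposition \ref{Prop::VectorFields::Scaling::AmbientVolAndMetricEquivalnce}, since $2^{-j}+\MetricWWdv[x][y]$ can exceed the $\delta_1$ beyond which the unadorned equivalences fail; this is where the $\wedge 1$ appearing in Definition \ref{Defn::Spaces::LP::PElemWWdv} is crucial rather than cosmetic. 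No other obstacle arises, as Lemma \ref{Lemma::Spaces::LP::ElemDoesntDependOnChoices} is not yet needed here: we are proving the conclusion for the particular compatible choices just described, and the independence-of-choices result will later extend it.
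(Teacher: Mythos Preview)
Your proof is correct and follows essentially the same approach as the paper: verify the support condition directly, identify $W$-derivatives on $\ManifoldN$ with restrictions of the ambient $\Zh$-derivatives, apply the ambient pre-elementary bound, and then invoke Proposition~\ref{Prop::VectorFields::Scaling::AmbientVolAndMetricEquivalnce} \ref{Item::VectorFields::Scaling::AmbientVolAndMetricEquivalnce::MetricWequalsMetricZ} and \ref{Item::VectorFields::Scaling::AmbientVolAndMetricEquivalnce::VolOfMetricWequalsVolOfMetricZ} to pass to the intrinsic metric and volume. The paper organizes this by first stating the choice-free version and then proving an explicit ``verbose'' version (Proposition~\ref{Prop::Spaces::Elem::Extend::Restrict::Verbose}) with fixed compatible generators, exactly as you do; your closing remark that Lemma~\ref{Lemma::Spaces::LP::ElemDoesntDependOnChoices} is not yet needed is precisely the point the paper makes in separating these two statements.
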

\begin{proof}
    Once Lemma \ref{Lemma::Spaces::LP::ElemDoesntDependOnChoices} is proven,
    this is equivalent to Proposition \ref{Prop::Spaces::Elem::Extend::Restrict::Verbose}, below.
\end{proof}

\begin{proposition}[Extension]\label{Prop::Spaces::Elem::Extend::Extend::NonVerbose}
    Let \(\Omega_1\Subset \Omega\) be \(\ManifoldM\)-open and relatively compact with \(\Compact \Subset\Omega_1\).
    For each \(\delta\in (0,1]\) three exist linear operators
    \begin{equation*}
        \Extend_\delta:\left\{ E(x,y)\in \CinftySpace[\ManifoldN\times \ManifoldN] : \supp(E)\subseteq \Compact\times \Compact \right\}
            \rightarrow \CinftyCptSpace[\Omega_1\times \Omega_1],
    \end{equation*}
    \begin{equation*}
        \begin{split}
        \Extend_\delta^0:
        &\left\{ E(x,y)\in \CinftySpace[\ManifoldN\times \ManifoldN] : \supp(E)\subseteq \Compact\times \Compact, E(x,y)\text{ vanishes to infinite order as }y\rightarrow\BoundaryN \right\}
            \\&\rightarrow 
            \left\{ E\in \CinftyCptSpace[\Omega_1\times \Omega_1] : \supp(E)\subseteq \Omega_1\times \left( \Omega_1\cap \ManifoldN \right) \right\}
        \end{split}
    \end{equation*}
    such that:
    \begin{enumerate}[(i)]
        \item\label{Item::Spaces::Elem::Extend::Extend::NonVerbose::ExtendRestrictIsI} 
        \(\Extend_\delta E\big|_{\ManifoldN\times \ManifoldN} = E\) and \(\Extend_\delta^0 E\big|_{\ManifoldN\times \ManifoldN}=E\),
            for all \(E\) in the respective domains.
        \item \label{Item::Spaces::Elem::Extend::Extend::NonVerbose::ExtendPreElemIsPreElem} 
            If \(\sE\in \PElemF{\Compact}\), then
            \begin{equation*}
                \left\{ \left( \Extend_\delta E, \delta \right) : (E,\delta)\in \sE \right\}
                \in \PElemFh{\overline{\Omega_1}}.
            \end{equation*}
            \item \label{Item::Spaces::Elem::Extend::Extend::NonVerbose::ExtendPreElemzIsPreElem}
            If \(\sE\in \PElemzF{\Compact}\), then
            \begin{equation*}
                \left\{ \left( \Extend_\delta^0 E, \delta \right) : (E,\delta)\in \sE \right\}
                \in \PElemFh{\overline{\Omega_1}}.
            \end{equation*}
    \end{enumerate}
\end{proposition}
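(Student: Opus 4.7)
The plan is to construct $\Extend_\delta$ and $\Extend_\delta^0$ from a Seeley-type reflection across $\BoundaryN$ combined with a smooth cutoff, and then to verify the pre-elementary bounds by pulling the resulting operators back under the anisotropic scaling maps of Theorem \ref{Thm::Spaces::Scaling::MainScalingThm}, where those bounds reduce to uniform $C^k$-estimates that the Seeley construction preserves.

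I would first fix a tubular collar of $\BoundaryN$ in $\ManifoldM$ (possible since $\ManifoldN \subseteq \ManifoldM$ is a closed, codimension-$0$ embedded submanifold with boundary) and construct, via the classical Seeley reflection, a linear operator $\mathscr{S}\colon \CinftySpace[\ManifoldN]\to \CinftySpace[\ManifoldM]$ satisfying $(\mathscr{S}f)\big|_{\ManifoldN}=f$ and sending bounded sets in every $C^k$-seminorm on $\ManifoldN$ to bounded sets in every $C^k$-seminorm on $\ManifoldM$. Fix $\chi\in \CinftyCptSpace[\Omega_1]$ with $\chi\equiv 1$ on an open neighborhood of $\Compact$. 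Then set
\begin{equation*}
\Extend_\delta E(x,y) := \chi(x)\chi(y)\,(\mathscr{S}_x\mathscr{S}_y E)(x,y),
\end{equation*}
where $\mathscr{S}_x,\mathscr{S}_y$ denote $\mathscr{S}$ acting on the first and second variable. For the second operator, note that if $E$ vanishes to infinite order in $y$ as $y\to \BoundaryN$, then extending $E$ by $0$ in $y$ gives a smooth $\widetilde E\in \CinftySpace[\ManifoldN\times \ManifoldM]$, and I set
\begin{equation*}
\Extend_\delta^0 E(x,y) := \chi(x)\chi(y)\,(\mathscr{S}_x\widetilde E)(x,y).
\end{equation*}
Linearity is clear, \ref{Item::Spaces::Elem::Extend::Extend::NonVerbose::ExtendRestrictIsI} follows from $(\mathscr{S}f)|_{\ManifoldN}=f$ and $\chi|_{\Compact}\equiv 1$, and the support conditions hold by construction; in particular, $\Extend_\delta^0 E$ is supported in $\Omega_1\times (\Omega_1\cap \ManifoldN)$ because the $y$-variable is never pushed past $\ManifoldN$.

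Next, to verify \ref{Item::Spaces::Elem::Extend::Extend::NonVerbose::ExtendPreElemIsPreElem}, I would pull the pre-elementary bound back to the scaling charts of Theorem \ref{Thm::Spaces::Scaling::MainScalingThm}. Applied to $\FilteredSheafFh$, that theorem yields, for every $x\in \overline{\Omega_1}$ and $\delta\in (0,\delta_1]$, a chart $\Psi_{x,\delta}\colon \Qn{1}\to \ManifoldM$ under which the rescaled vector fields pull back to uniformly smooth, uniformly Hörmander vector fields on $\Qn{1}$. Following the pattern developed in \cite{StreetMaximalSubellipticity}, the estimate \eqref{Eqn::Spaces::LP::PreElemBound} for $\sE\in \PElemF{\Compact}$ is equivalent to uniform $C^k$-boundedness (every $k$) of the two-variable pullbacks $(\Psi_{x,\delta}\times \Psi_{y,\delta})^\ast E$, with the weights $\Vol[\BWWdv{x}{2^{-j}+\MetricWWdv[x][y]}]\wedge 1$ and $(1+2^j\MetricWWdv[x][y])^{-m}$ absorbed into the normalization. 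By Proposition \ref{Prop::VectorFields::Scaling::AmbientVolAndMetricEquivalnce}, the ambient and intrinsic metrics and volumes on $\Compact$ agree up to constants, so on $\Compact\times \Compact$ the bound transfers directly. In the region where $\mathscr{S}_x$ or $\mathscr{S}_y$ acts nontrivially, the Seeley reflection in the boundary-adapted scaling chart becomes an $(x,\delta)$-uniform bounded extension from $\Qngeqc{1}{c_0}$ to $\Qn{1}$ in every $C^k$-seminorm, and the desired pointwise bound follows; multiplication by $\chi(x)\chi(y)$ preserves all estimates. The verification of \ref{Item::Spaces::Elem::Extend::Extend::NonVerbose::ExtendPreElemzIsPreElem} is analogous, using that $\widetilde E$ inherits the required bounds in $y$ from the infinite-order vanishing on $\BoundaryN$ and that only the $x$-variable is extended.

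The main obstacle is arranging the Seeley reflection so that it is compatible with the anisotropic scaling charts of Theorem \ref{Thm::Spaces::Scaling::MainScalingThm}: one must show that the pullback of $\mathscr{S}_x$ under $\Psi_{x,\delta}$ acts as an $(x,\delta)$-uniform, bounded extension across $\{x_n=c_0(x,\delta)\}$ in every $C^k$-seminorm on $\Qn{1}$. This is arranged by aligning the collar's normal direction with the transverse coordinate $x_n$ appearing in Theorem \ref{Thm::Spaces::Scaling::MainScalingThm}, after which the uniform bounds on the pullbacks of the vector fields combine with the classical Seeley estimates to yield the required $C^k$ uniformity. Once this compatibility is in place the remaining verification is bookkeeping, combining Proposition \ref{Prop::VectorFields::Scaling::AmbientVolAndMetricEquivalnce} with the pre-elementary estimates on $\ManifoldN$.
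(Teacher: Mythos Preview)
Your construction has a genuine gap at exactly the point you flag as the ``main obstacle,'' and the resolution you sketch does not work. You define $\Extend_\delta$ via a \emph{fixed}, $\delta$-independent Seeley reflection $\mathscr{S}$ in a fixed collar of $\BoundaryN$, and then claim that its pullback under $\Psi_{x,\delta}$ is an $(x,\delta)$-uniformly bounded $C^k$-extension on $\Qn{1}$. But the scaling maps $\Psi_{x,\delta}$ of Theorem \ref{Thm::Spaces::Scaling::MainScalingThm} are highly anisotropic and vary with both $x$ and $\delta$; there is no single collar direction that ``aligns'' with the transverse coordinate in all of these charts simultaneously. Concretely, your pulled-back reflection acts on $u\in\Qn{1}$ through the compositions $\Psi_{x,\delta}^{-1}\circ\rho_k\circ\Psi_{x,\delta}$, where $\rho_k$ is a fixed diffeomorphism in the collar. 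There is no reason these compositions have $C^k$-norms bounded uniformly in $(x,\delta)$, since conjugating a fixed smooth map by an anisotropic rescaling generically blows up derivatives. The statement itself signals this: the operators are allowed (and intended) to depend on $\delta$.

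The paper's proof supplies precisely the missing idea: rather than fixing one collar reflection, it first reduces to the model $\ManifoldM=\Bn{1}$, $\ManifoldN=\Bngeq{1}$ (Lemma \ref{Lemma::Spaces::Elem::Extend::ReduceToUnitBall}), and then for each $\delta$ builds a partition of unity $\{\psi_\delta,\phi_{1,\delta},\ldots,\phi_{L(\delta),\delta}\}$ adapted to a covering by Carnot--Carath\'eodory balls at scale $\delta$ (Propositions \ref{Prop::Spaces::Elem::Extend::NiceCovering} and \ref{Prop::Spaces::Elem::Extend::ParitionOfUnity}). The extension is then carried out \emph{inside each scaling chart} $\Psi_{(x_j',0),\delta}$ separately, using a classical Seeley extension on the unit cube $\Qn{1}$ (Lemma \ref{Lemma::Spaces::Elem::Extend::Classical}) and pushing back. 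Because the vector fields pull back to uniformly smooth, uniformly H\"ormander vector fields in these charts (Theorem \ref{Thm::Spaces::Scaling::MainScalingThm} \ref{Item::Spaces::Scaling::PullBackSmooth}, \ref{Item::Spaces::Scaling::PullBackSatisfyHormander}), the pre-elementary estimates reduce to uniform $C^k$-bounds on the cube, which the cube-level Seeley extension preserves. The bounded-overlap property of the $\delta$-dependent cover then sums the local estimates. This $\delta$-dependent localization is the essential mechanism you are missing.
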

\begin{proof}
    Once Lemma \ref{Lemma::Spaces::LP::ElemDoesntDependOnChoices} is proven,
    this is equivalent to 
    Proposition \ref{Prop::Spaces::Elem::Extend::Extend::Verbose}.
\end{proof}

Since we have not yet proved Lemma \ref{Lemma::Spaces::LP::ElemDoesntDependOnChoices},
we rewrite Propositions \ref{Prop::Spaces::Elem::Extend::Restrict::NonVerbose} and \ref{Prop::Spaces::Elem::Extend::Extend::NonVerbose} using Notation \ref{Notation::Spaces::Elem::Intro::VerposeElemNotation}.
Let \(\FilteredSheafFh\big|_{\Omega}=\FilteredSheafGenBy{\WhWdv}\),
where \(\WhWdv=\left\{ \left( \Wh_1, \Wdv_1 \right),\ldots, \left( \Wh_r, \Wdv_r \right) \right\}\subset \VectorFields{\Omega}\times \Zg\)
are H\"ormander vector fields with formal degrees on \(\Omega\).
Let \(W_j:=\Wh_j\big|_{\Omega\cap \ManifoldN}\)
and \(\WWdv=\left\{ \left( W_1,\Wdv_1 \right),\ldots, \left( W_r,\Wdv_r \right) \right\}\). It follows from Proposition \ref{Prop::Filtrations::RestrictingFiltrations::CoDim0Restriction}
that \(\FilteredSheafF\big|_{\Omega\cap \ManifoldN}=\FilteredSheafGenBy{\WWdv}\).

\begin{proposition}[Restriction]\label{Prop::Spaces::Elem::Extend::Restrict::Verbose}
    Proposition \ref{Prop::Spaces::Elem::Extend::Restrict::NonVerbose} holds with
    \(\PElemOnlySubscript{\FilteredSheafFh}\) and 
    \(\PElemOnlySubscript{\FilteredSheafF}\) replaced by 
    \(\PElemOnlySubscript{\WhWdv,\Omega,\Vol}\) and
    \(\PElemOnlySubscript{\WWdv,\Omega \cap \ManifoldN,\Vol}\), respectively.
\end{proposition}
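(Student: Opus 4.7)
The plan is to verify both defining conditions of $\PElemOnlySubscript{\WWdv,\Omega\cap\ManifoldN,\Vol}$ in Definition \ref{Defn::Spaces::LP::PElemWWdv} for the restricted collection $\sE_0 := \{(\Eh|_{\ManifoldN\times\ManifoldN}, 2^{-j}) : (\Eh,2^{-j}) \in \sE\}$, starting from a given $\sE \in \PElemOnlySubscript{\WhWdv,\Omega,\Vol}{\Compacth}$. The argument is essentially a direct transcription, once one translates the ambient pointwise bound using the metric/volume equivalences on $\Compact$ established in Proposition \ref{Prop::VectorFields::Scaling::AmbientVolAndMetricEquivalnce}.

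First I would check the basic structural properties. Since $\ManifoldN$ is a codimension $0$ embedded submanifold of $\ManifoldM$, restriction $\Eh \mapsto \Eh|_{\ManifoldN\times\ManifoldN}$ sends $\CinftyCptSpace[\ManifoldM\times\ManifoldM]$ into $\CinftyCptSpace[\ManifoldN\times\ManifoldN]$. For the support condition, $\supp(\Eh) \subseteq \Compacth\times\Compacth$ immediately implies
\[
\supp\bigl(\Eh|_{\ManifoldN\times\ManifoldN}\bigr) \subseteq (\Compacth\cap\ManifoldN)\times(\Compacth\cap\ManifoldN) = \Compact\times\Compact,
\]
verifying Definition \ref{Defn::Spaces::LP::PElemWWdv} \ref{Item::Spaces::LP::PElemWWdv::Support}.

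Next I would address the differential bound \eqref{Eqn::Spaces::LP::PreElemBound}. Because $W_k = \Wh_k|_{\Omega\cap\ManifoldN}$ and $\ManifoldN$ is codimension $0$, the restriction commutes with applying $W^\alpha$ in either variable on $\Omega\cap\ManifoldN$; that is, for any multi-indices $\alpha,\beta$ and $x,y \in \Compact$,
\[
\bigl(2^{-j\Wdv}W_x\bigr)^\alpha \bigl(2^{-j\Wdv}W_y\bigr)^\beta \bigl(\Eh|_{\ManifoldN\times\ManifoldN}\bigr)(x,y)
= \bigl[\bigl(2^{-j\Wdv}\Wh_x\bigr)^\alpha \bigl(2^{-j\Wdv}\Wh_y\bigr)^\beta \Eh\bigr](x,y).
\]
Applying the given bound in $\sE$ to the right-hand side, we obtain
\[
\bigl|\text{LHS}\bigr| \leq C\,\frac{(1+2^j\MetricWhWdv[x][y])^{-m}}{\Vol[\BWhWdv{x}{2^{-j}+\MetricWhWdv[x][y]}] \wedge 1}
\]
uniformly over $(\Eh,2^{-j}) \in \sE$ and $x,y \in \Compact$, with $C = C(\sE,\alpha,\beta,m)$.

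Finally I would invoke Proposition \ref{Prop::VectorFields::Scaling::AmbientVolAndMetricEquivalnce} (applied with this particular $\Omegah = \Omega$ and $\ZZde = \WhWdv$, using $\Vol$ on both sides, which is allowed since $\ManifoldN$ is codimension $0$) to obtain, for $x,y \in \Compact$ and $\delta > 0$,
\[
\MetricWhWdv[x][y] \approx \MetricWWdv[x][y],\qquad
\Vol[\BWhWdv{x}{\delta+\MetricWhWdv[x][y]}]\wedge 1 \approx \Vol[\BWWdv{x}{\delta+\MetricWWdv[x][y]}]\wedge 1.
\]
Substituting $\delta = 2^{-j}$ and using these equivalences, the previous display transforms into exactly the bound demanded by Definition \ref{Defn::Spaces::LP::PElemWWdv} \ref{Item::Spaces::LP::PElemWWdv::Bound}, with a constant depending only on $\sE,\alpha,\beta,m$ and the implicit constants of Proposition \ref{Prop::VectorFields::Scaling::AmbientVolAndMetricEquivalnce} (which depend only on $\Compact$). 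Since this holds uniformly across $\sE$, we conclude $\sE_0 \in \PElemOnlySubscript{\WWdv,\Omega\cap\ManifoldN,\Vol}{\Compact}$. No serious obstacle arises; the only nontrivial input is Proposition \ref{Prop::VectorFields::Scaling::AmbientVolAndMetricEquivalnce}, which has already been established.
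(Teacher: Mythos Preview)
Your proof is correct and follows essentially the same approach as the paper's own proof: check the support condition trivially, use that $W_k=\Wh_k|_{\Omega\cap\ManifoldN}$ to identify the derivatives of the restriction with the restriction of the ambient derivatives, apply the given pre-elementary bound on $\ManifoldM$, and then invoke Proposition \ref{Prop::VectorFields::Scaling::AmbientVolAndMetricEquivalnce} \ref{Item::VectorFields::Scaling::AmbientVolAndMetricEquivalnce::MetricWequalsMetricZ} and \ref{Item::VectorFields::Scaling::AmbientVolAndMetricEquivalnce::VolOfMetricWequalsVolOfMetricZ} to convert the ambient metric and volume into the intrinsic ones on $\Compact$.
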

\begin{proof}
    For \((E,2^{-j})\in \sE\), we clearly have
    \( \supp \left( E\big|_{\ManifoldN\times \ManifoldN} \right)\subseteq \Compact\times \Compact\),
    so we need only establish the estimates \eqref{Eqn::Spaces::LP::PreElemBound}.
    Indeed, for \(x,y\in\Compact\), we have \(\forall m,\alpha,\beta\),
    \begin{equation*}
    \begin{split}
         &\left| \left( 2^{-j\Wdv}W_x \right)^{\alpha} \left( 2^{-j\Wdv}W_y \right)^{\beta} E(x,y) \right|
         = \left| \left( 2^{-j\Wdv}\Wh_x \right)^{\alpha} \left( 2^{-j\Wdv}\Wh_y \right)^{\beta} E(x,y) \right|
         \\&\lesssim \frac{  \left( 1+2^{j}\MetricWhWdv[x][y] \right)^{-m} }{\Vol[ \BWhWdv{x}{2^{-j} + \MetricWhWdv[x][u]} ]\wedge 1}
         \approx 
         \frac{  \left( 1+2^{j}\MetricWWdv[x][y] \right)^{-m} }{\Vol[ \BWWdv{x}{2^{-j} + \MetricWWdv[x][u]} ]\wedge 1},
    \end{split}
    \end{equation*}
    where the final \(\approx\) follows from Proposition \ref{Prop::VectorFields::Scaling::AmbientVolAndMetricEquivalnce} 
    \ref{Item::VectorFields::Scaling::AmbientVolAndMetricEquivalnce::MetricWequalsMetricZ} and \ref{Item::VectorFields::Scaling::AmbientVolAndMetricEquivalnce::VolOfMetricWequalsVolOfMetricZ}.
\end{proof}

\begin{proposition}[Extension]\label{Prop::Spaces::Elem::Extend::Extend::Verbose}
    Proposition \ref{Prop::Spaces::Elem::Extend::Extend::NonVerbose} holds with
    \(\PElemOnlySubscript{\FilteredSheafFh}\), 
    \(\PElemOnlySubscript{\FilteredSheafF}\), and
    \(\PElemzOnlySubscript{\FilteredSheafF}\)
    replaced by 
    \(\PElemOnlySubscript{\WhWdv,\Omega,\Vol}\),
    \(\PElemOnlySubscript{\WWdv,\Omega \cap \ManifoldN,\Vol}\), 
    and
    \(\PElemzOnlySubscript{\WWdv,\Omega \cap \ManifoldN,\Vol}\)
    respectively.
\end{proposition}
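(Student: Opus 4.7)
The plan is to construct $\Extend_\delta$ and $\Extend_\delta^0$ by a Seeley-type boundary extension, combined with the scaling structure from Theorem \ref{Thm::Spaces::Scaling::MainScalingThm}. Since $\ManifoldN\subseteq\ManifoldM$ is a codimension-zero embedded submanifold with boundary, one can cover a neighborhood of $\BoundaryN\cap \overline{\Omega_1}$ by finitely many boundary-straightening charts in which $\ManifoldN$ corresponds to $\{x_n\geq 0\}$. In each such chart, let $\mathcal{S}$ denote the classical Seeley operator $\mathcal{S} f(x',x_n)=\sum_k c_k\varphi(b_k x_n)f(x',-b_k x_n)$ for $x_n<0$, with standard sequences $c_k,b_k$ and cutoff $\varphi$ chosen so that $\mathcal{S}$ is bounded $C^k\to C^k$ for every $k$. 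Combine the local Seeley operators via a subordinate partition of unity, and fix $\chi\in\CinftyCptSpace[\Omega_1]$ with $\chi\equiv 1$ on a neighborhood of $\Compact$. Define
\[
\Extend_\delta E(x,y) := \chi(x)\chi(y)\,\mathcal{S}_x\mathcal{S}_y\, E(x,y),\qquad
\Extend_\delta^0 E(x,y) := \chi(x)\chi(y)\,\mathcal{S}_x\, \widetilde{E}(x,y),
\]
where $\widetilde{E}$ is the zero-extension of $E$ in the $y$-variable off $\ManifoldN$ — a smooth extension because $E(x,y)$ vanishes to infinite order as $y\to\BoundaryN$. Property \ref{Item::Spaces::Elem::Extend::Extend::NonVerbose::ExtendRestrictIsI} and the support statements are immediate.

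The pre-elementary bounds \eqref{Eqn::Spaces::LP::PreElemBound} for the extensions are verified by reducing to uniform $C^k$ estimates via the scaling maps $\Psi_{x_0,\delta}$ of Theorem \ref{Thm::Spaces::Scaling::MainScalingThm}. For $(E,2^{-j})\in\sE\in\PElemOnlySubscript{\WWdv,\Omega\cap\ManifoldN,\Vol}(\Compact)$ and $x_0\in\Compacth$, pull back by $\Psi_{x_0,2^{-j}}$. The bound \eqref{Eqn::Spaces::LP::PreElemBound} together with the uniform smoothness and H\"ormander property of the pulled-back generators (Theorem \ref{Thm::Spaces::Scaling::MainScalingThm}(g)–(h)) shows that $(\Psi_{x_0,2^{-j}}\times\Psi_{x_0,2^{-j}})^{*}E$ is bounded in every $C^k\bigl(\Qngeqc{1}{c_0}\times\Qngeqc{1}{c_0}\bigr)$ uniformly in $x_0$ and $j$, where $c_0\in[-1,0]$ comes from Theorem \ref{Thm::Spaces::Scaling::MainScalingThm}(e). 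Applying $\mathcal{S}_x\mathcal{S}_y$ in the model cube $\Qn{1}$ then produces a function bounded in every $C^k\bigl(\Qn{1}\times\Qn{1}\bigr)$, with the same uniformity, by the classical $C^k$-boundedness of $\mathcal{S}$. Pushing forward by $\Psi_{x_0,2^{-j}}$ and using the volume and metric equivalences of Proposition \ref{Prop::VectorFields::Scaling::AmbientVolAndMetricEquivalnce} converts this uniform $C^k$ bound into the required pre-elementary bound for $\Extend_{2^{-j}}E$ with respect to $\WhWdv$.

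The main technical obstacle is ensuring compatibility between the Seeley reflection and the scaling chart near $\BoundaryN$: reflecting in $x_n$ inside $\Qn{1}$ after pulling back must agree, up to the cutoffs and partition of unity, with the pullback of the manifold-level extension. Because both the boundary-straightening chart and the scaling map flatten $\BoundaryN$ to $\{x_n=0\}$, they differ by a diffeomorphism preserving $\{x_n\geq 0\}$, and the Seeley construction is stable under such diffeomorphisms up to a reparameterization that only reshuffles the coefficients $c_k,b_k$. Tracking this compatibility uniformly across the partition of unity, and verifying that the cutoff $\chi$ does not destroy the pre-elementary bounds (it only introduces bounded multiplicative factors and ensures compact support in $\Omega_1$), is the main bookkeeping. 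Once these are in hand, properties \ref{Item::Spaces::Elem::Extend::Extend::NonVerbose::ExtendPreElemIsPreElem} and \ref{Item::Spaces::Elem::Extend::Extend::NonVerbose::ExtendPreElemzIsPreElem} follow.
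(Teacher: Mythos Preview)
Your definition of $\Extend_\delta$ is a fixed Seeley extension in boundary-straightening charts, independent of $\delta$; your verification then pulls back via $\Psi_{x_0,2^{-j}}$, applies Seeley in the model cube, and pushes forward. These are two different operations, and the bridge you propose---that the Seeley construction is stable under boundary-preserving diffeomorphisms up to a reparameterization of $c_k,b_k$---does not hold. The reflection $x_n\mapsto -b_k x_n$ is coordinate-specific; under a general diffeomorphism $\Phi$ preserving $\{x_n\geq 0\}$ the conjugate $\Phi^{*}\mathcal{S}\,\Phi_{*}$ is not again of Seeley form, and the transition $\Psi_{x_0,2^{-j}}^{-1}\circ(\text{boundary chart})$ has derivatives blowing up as $j\to\infty$. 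Since the pre-elementary bound is not a fixed-coordinate $C^k$ bound but involves the anisotropic derivatives $(2^{-j\Wdv}\Wh)^\alpha$ and Carnot--Carath\'eodory volumes, the standard $C^k$-boundedness of $\mathcal{S}$ does not transfer without controlling this conjugation uniformly in $j$, and that uniformity is exactly what is missing.

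What your verification steps actually describe---pull back by $\Psi_{x_0,2^{-j}}$, apply the classical extension in $\Qn{1}$, push forward---is not a check on your fixed operator but a genuinely $\delta$-dependent extension. This is the paper's construction: one first builds a scale-$\delta$ partition of unity $\{\phi_{j,\delta}\},\psi_\delta$ subordinate to the scaling-chart cover (Propositions \ref{Prop::Spaces::Elem::Extend::NiceCovering} and \ref{Prop::Spaces::Elem::Extend::ParitionOfUnity}), and then for each piece applies the classical extension $\Extend^1$ or $\Extend^2$ \emph{after} pulling back via $\Psi_{(x_j',0),\delta}$. Because the pulled-back vector fields $\Wh^{x,\delta}$ are uniformly smooth and uniformly span (Theorem \ref{Thm::Spaces::Scaling::MainScalingThm}\ref{Item::Spaces::Scaling::PullBackSmooth},\ref{Item::Spaces::Scaling::PullBackSatisfyHormander}), the $C^k$-boundedness of the classical extension now translates directly into the pre-elementary estimate (together with Lemma \ref{Lemma::Spaces::Elem::Extend::PreElemBoundUnderSmallPerterbations} to pass between nearby base points). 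The dependence of $\Extend_\delta$ on $\delta$ is not incidental bookkeeping; it is what makes the two halves of your own argument match.
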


We remind the reader of a standard result.

\begin{lemma}\label{Lemma::Spaces::Elem::Extend::Classical}
    Let \(\Compact_1,\Compact_2\Subset \ManifoldN\) be compact, and \(\Omega_1,\Omega_2\subseteq \ManifoldM\) be \(\ManifoldM\)-open with
    \(\Compact_1\subseteq \Omega_1\) and \(\Compact_2\subseteq\Omega_2\). Then, there exist linear extension maps:
    \begin{equation*}
        \Extend^1 :\left\{ f(x)\in \CinftySpace[\ManifoldN] : \supp(f)\subseteq \Compact_1\right\}\rightarrow \CinftyCptSpace[\Omega_1],
    \end{equation*}
    \begin{equation*}
        \Extend^2 :\left\{ F(x,y)\in \CinftySpace[\ManifoldN\times \ManifoldN] : \supp(F)\subseteq \Compact_1\times \Compact_2\right\}\rightarrow \CinftyCptSpace[\Omega_1\times \Omega_2],
    \end{equation*}
    such that
    \begin{itemize}
        \item \(\Extend^1 f\big|_{\ManifoldN}=f\) and \(\Extend^2F\big|_{\ManifoldN\times \ManifoldN}=F\) for \(f\) and \(F\) in the domains
            of \(\Extend^1\) and \(\Extend^2\), respectively.
        \item For every \(N\),
            \begin{equation*}
                \CmNorm{\Extend^1 f}{N}\lesssim \CmNorm{f}{N}, \quad \CmNorm{\Extend^2 F}{N}\lesssim \CmNorm{F}{N}.
            \end{equation*}
            Here, one may choose any norm from the equivalence class of norms on the respective \(\CmSpace{N}\) space.
    \end{itemize}
\end{lemma}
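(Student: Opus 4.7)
The plan is to build $\Extend^1$ using a standard collar-and-reflection construction, then obtain $\Extend^2$ by applying the one-variable construction twice (once in $x$, once in $y$). Since $\ManifoldN\subseteq\ManifoldM$ is a closed embedded codimension $0$ submanifold with boundary, the collar neighborhood theorem provides an $\ManifoldM$-open neighborhood $U$ of $\BoundaryN$ and a diffeomorphism $\Phi:U\xrightarrow{\sim}\BoundaryN\times(-2\epsilon,2\epsilon)$ with $\Phi(U\cap\ManifoldN)=\BoundaryN\times[0,2\epsilon)$. Pick cutoffs $\chi_0,\chi_1\in\CinftySpace[\ManifoldM]$ forming a partition of unity on a neighborhood of $\Compact_1$ with $\supp(\chi_0)\subseteq U$, $\chi_0\equiv 1$ on a smaller collar, and $\supp(\chi_1)\cap\BoundaryN=\emptyset$. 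Fix $\eta\in\CinftyCptSpace[\Omega_1]$ with $\eta\equiv 1$ on a neighborhood of $\Compact_1$.

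First I would define $\Extend^1$. For $f\in\CinftySpace[\ManifoldN]$ with $\supp(f)\subseteq\Compact_1$, the function $\chi_1 f$ already extends trivially by zero to $\ManifoldM$, while $\chi_0 f$, when read through $\Phi$, is a smooth function on $\BoundaryN\times[0,2\epsilon)$. Apply a Seeley-type extension operator $\sS$ in the normal variable: $\sS g(x',t):=\sum_{k=0}^{\infty}\lambda_k\,\rho(2^k t)\,g(x',-2^{-k}t)$ for $t<0$ (and $g$ for $t\geq 0$), with $\{\lambda_k\},\{2^{-k}\}$ the standard Seeley coefficients and $\rho$ a smooth cutoff; this is a well-known linear extension which is bounded on every $\CmSpace{N}$. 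Then set
\begin{equation*}
    \Extend^1 f := \eta\cdot\bigl( \Phi^{*}\sS\Phi_{*}(\chi_0 f) + \chi_1 f \bigr),
\end{equation*}
where the first term is understood to vanish outside $U$. By construction $\Extend^1 f\in\CinftyCptSpace[\Omega_1]$, it restricts to $f$ on $\ManifoldN$ (because $\chi_0+\chi_1=1$ on $\supp(f)$ and $\sS$ is an extension), and the $\CmSpace{N}$ bound follows from the $\CmSpace{N}$-continuity of $\sS$, the smoothness of $\Phi,\chi_j,\eta$, and the Leibniz rule.

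For $\Extend^2$, iterate in the two variables. Using the product collar $U\times U$ with trivialization $\Phi\times\Phi$, and cutoffs $\chi_i(x)\chi_j(y)$, I would apply the Seeley operator $\sS$ separately in the normal variable of $x$ and in the normal variable of $y$, composed in either order (they commute as they act on disjoint variables), followed by multiplication by $\eta(x)\eta'(y)$ for a cutoff $\eta'\in\CinftyCptSpace[\Omega_2]$ equal to $1$ near $\Compact_2$. Equivalently, one defines $\Extend^2 F$ by applying $\Extend^1$ (in the $x$ variable, holding $y$ fixed) to $\Extend^1_y F$ (the analogous map in the $y$ variable into $\Omega_2$). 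The restriction property and the $\CmSpace{N}$ bound follow from the one-variable versions together with the fact that mixed partial derivatives of $\Extend^1_x\Extend^1_y F$ are controlled by mixed partials of $F$.

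The hard part is minor and essentially bookkeeping: one must verify that the Seeley operator is bounded on every $\CmSpace{N}$ uniformly, and that the product of cutoffs and collar diffeomorphisms does not destroy this bound. Both are classical (see Seeley's original paper); all remaining verifications are routine applications of the Leibniz rule and the chain rule through $\Phi$, so I would not grind through them here.
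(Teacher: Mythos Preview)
Your proposal is correct and follows essentially the same approach the paper indicates: the paper's proof is merely a ``Comments on the proof'' paragraph citing Seeley's half-space extension, noting that a partition of unity handles the manifold case for $\Extend^1$, and that working in each variable separately yields $\Extend^2$. You have filled in exactly these details (collar plus Seeley in the normal direction, then iterate), so there is nothing to add.
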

\begin{proof}[Comments on the proof]
    This is result is standard.
    Results of this form were first established by Mitjagin \cite{MitjaginApproximateDimensionAndBasesInNuclearSpaces}
    and Seeley \cite{SeeleyExtensionOfCInfinityFunctionsDefinedInAHalfSpace}.
    For example, \cite{SeeleyExtensionOfCInfinityFunctionsDefinedInAHalfSpace} works on a half-space, but a simple
    partition of unity can be used to prove the result on a manifold as is stated here for \(\Extend^1\).
    To obtain results like \(\Extend^2\), one may work in each variable separately and thereby reduce to the methods
    of \cite{SeeleyExtensionOfCInfinityFunctionsDefinedInAHalfSpace}.
\end{proof}

We turn to a reduction.\footnote{This reduction is not necessary, and the proof below can be carried out on an arbitrary
manifold, but becomes notationally more cumbersome.}
\begin{lemma}\label{Lemma::Spaces::Elem::Extend::ReduceToUnitBall}
    It suffices to prove Proposition \ref{Prop::Spaces::Elem::Extend::Extend::Verbose} in the special case \(\ManifoldM=\Bn{1}\), \(\ManifoldN=\Bngeq{1}\),
    \(\Compact=\overline{\Bngeq{1/2}}\), \(\Omega_1=\Bn{3/4}\),  \(\Omega=\Bn{7/8}\),
    \(\Vol\) equal to Lebesgue measure,
    and \(\WhWdv\) satisfying the hypotheses of Section \ref{Section::Spaces::Scaling}.
\end{lemma}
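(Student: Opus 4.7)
The plan is a partition of unity argument that reduces the general assertion to the model setting on $\Bn{1}$. First I would cover $\Compact$ by finitely many relatively compact open subsets $\{U_\alpha\}$ of $\Omega$, each equipped with a smooth diffeomorphism $\phi_\alpha \colon U_\alpha \xrightarrow{\sim} V_\alpha \subseteq \Bn{1}$, chosen so that: (i) if $U_\alpha$ meets $\BoundaryN$ then $\phi_\alpha(U_\alpha \cap \ManifoldN) = V_\alpha \cap \Bngeq{1}$ and the pushforward vector fields $(\phi_\alpha)_* \Wh_j$ (extended smoothly to all of $\Bn{1}$) satisfy the non-characteristic hypothesis of Section \ref{Section::Spaces::Scaling}; (ii) otherwise $U_\alpha \subseteq \InteriorN$ and $V_\alpha \subseteq \Bng{1}$, in which case the half-space structure of $\Bn{1}$ is inactive. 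Existence of such charts near non-characteristic boundary points follows from Example \ref{Example::Filtrations::RestrictingFiltrations::NonCharExamples} \ref{Item::Filtrations::RestrictingFiltrations::NonCharExamples::CharacterizeNonCharInTermsOfWWdv}, which supplies a field $\Wh_{j_0}$ of minimal degree transverse to $\BoundaryN$ and guarantees lower-degree fields remain tangent; straightening the boundary along the flow of $\Wh_{j_0}$ and rescaling then produces coordinates matching Section \ref{Section::Spaces::Scaling}.

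Next I would fix a partition of unity $\{\psi_\alpha\}$ subordinate to $\{U_\alpha\}$ with $\sum_\alpha \psi_\alpha \equiv 1$ on a neighborhood of $\Compact$, and for each $(E,\delta) \in \sE$ decompose
\[
E(x,y) = \sum_{\alpha,\beta} \psi_\alpha(x)\, E(x,y)\, \psi_\beta(y).
\]
For pairs $(\alpha,\beta)$ with $U_\alpha \cup U_\beta$ contained in a single chart (arranged by refining the cover), the pullback of the corresponding piece under the chart is a family of compactly supported functions on $V\times V$ which, by Propositions \ref{Prop::VectorFields::Scaling::VolAndMetricEquivalnce} and \ref{Prop::VectorFields::Scaling::AmbientVolAndMetricEquivalnce}, lies in the analogous pre-elementary class on $\Bn{1}$ with respect to the pushed-forward vector fields and Lebesgue measure. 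Applying the special case of Proposition \ref{Prop::Spaces::Elem::Extend::Extend::Verbose} in $\Bn{1}$, pushing the resulting extension forward by $\phi^{-1}$, and extending by zero outside the chart yields the local piece of $\Extend_\delta E$. For pairs with disjoint supports, the bound \eqref{Eqn::Spaces::LP::PreElemBound} together with the uniform lower bound $\MetricWhWdv[x][y] \gtrsim 1$ on $\supp\psi_\alpha \times \supp\psi_\beta$ shows that $\psi_\alpha E \psi_\beta$ and all its derivatives are $O(\delta^M)$ for every $M$; for these off-diagonal pieces I would use the classical extension of Lemma \ref{Lemma::Spaces::Elem::Extend::Classical} (in each variable, or only in $x$ for the $\Extend_\delta^0$ case), and the pre-elementary bounds on the extended function follow trivially from the rapid decay.

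Summing over $(\alpha,\beta)$ gives $\Extend_\delta$. For $\Extend_\delta^0$ I only extend in the $x$-variable; since by hypothesis $E(x,y)$ vanishes to infinite order as $y\to \BoundaryN$, the required support condition $\supp \Extend_\delta^0 E \subseteq \Omega_1 \times (\Omega_1 \cap \ManifoldN)$ is automatic, and the pre-elementary bounds transfer exactly as above. Linearity of $\Extend_\delta$ and $\Extend_\delta^0$ in $E$ is preserved because the cover, partition of unity, chart maps, and the linear extension furnished by the special case are all fixed independently of $E$. The main obstacle is verifying, uniformly in $\delta$ and over $(E,\delta) \in \sE$, that the pulled-back family genuinely lies in the pre-elementary class on $\Bn{1}$; the delicate point is matching the weight $\Vol[\BWhWdv{x}{\delta+\MetricWhWdv[x][y]}]\wedge 1$ on the manifold with the Lebesgue weight on $\Bn{1}$ built from the pushed-forward vector fields. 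Proposition \ref{Prop::VectorFields::Scaling::AmbientVolAndMetricEquivalnce} supplies exactly this equivalence once the pushed-forward vector fields are extended to a common ambient neighborhood, after which the reduction goes through.
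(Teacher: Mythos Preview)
Your approach is essentially the same as the paper's: a partition of unity reduction with local pieces handled by pulling back to the model $\Bn{1}$ and off-diagonal pieces handled via the rapid decay coming from $\MetricWhWdv[x][y]\gtrsim 1$ on disjoint supports together with the classical extension of Lemma \ref{Lemma::Spaces::Elem::Extend::Classical}. The paper organizes the decomposition slightly differently: rather than a symmetric $\sum_{\alpha,\beta}\psi_\alpha(x)E(x,y)\psi_\beta(y)$ with a star refinement, it covers only $\Compact\cap\BoundaryN$ by boundary charts, pairs each $\phi_j(x)$ with an enlarged bump $\phit_j(y)$ supported in the same chart, and absorbs the interior contribution into a single $\psi\in\CinftyCptSpace[\InteriorN]$, writing
\[
E=\sum_j \phi_j(x)E(x,y)\phit_j(y)+\sum_j \phi_j(x)E(x,y)(1-\phit_j(y))+\psi(x)E(x,y).
\]
The first sum sits in a single chart by construction, the second is automatically off-diagonal, and for $\Extend_\delta^0$ the third is extended by zero since it vanishes to infinite order in both variables. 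This avoids the refinement step you need to guarantee that nearby $(\alpha,\beta)$ land in a common chart, and gives a slightly cleaner reduction of the $\Extend_\delta$ case to the $\Extend_\delta^0$ case by swapping the roles of $x$ and $y$ on the interior piece. Both arguments are correct and rest on the same ingredients.
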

\begin{proof}
    For each \(x\in \Compact\cap \BoundaryN\), let \(U_x\subset \Omega_1\) be an \(\ManifoldM\)-neighborhood of 
    \(x\) such that:
    \begin{itemize}
        \item \(\exists j_0=j_0(x)\in \left\{ 1,\ldots, r \right\}\), \(\forall y\in U_x\cap \BoundaryN\), \(W_{j_0}(y)\not \in \TangentSpace{y}{\BoundaryN}\)
            and \(\Wdv_k<\Wdv_{j_0}\) implies \(W_k(y)\in \TangentSpace{y}{\BoundaryN}\), \(\forall y\in U_x\cap \BoundaryN\) (this is always
            possible, since \(\Compact\cap\BoundaryN\subseteq \BoundaryNncF\)).
        \item There exists a diffeomorphism \(g_x:\Bn{1}\xrightarrow{\sim}U_x\), with \(g_x(0)=x\), \(g_x(\Bnmo{1})=U_x\cap \BoundaryN\),
            and \(g_x(\Bngeq{1})=U_x\cap \ManifoldN\).
    \end{itemize}
    \(\left\{ g_x(\Bn{1/2}) : x\in \Compact\cap \BoundaryN \right\}\) is an open cover for the compact
    set \(\Compact\cap\BoundaryN\).  Let \(\phi_j\), \(j=1,\ldots, M\) be a partition of unity subordinate to this cover,
    so that:
    \begin{itemize}
        \item \(\phi_{j}\in \CinftyCptSpace[g_{x_j}(\Bn{1/2})]\), for some \(x_j\in \Compact\cap \BoundaryN\).
        \item \(\sum_{j=1}^M \phi_j =1\) on a neighborhood of \(\Compact\).
    \end{itemize}
    Take \(\phit_j\in \CinftyCptSpace[g_{x_j}(\Bn{1/2})]\) with \(\phit_j=1\) on a neighborhood of \(\supp(\phi_j)\).
    Take \(\psi,\psit\in \CinftyCptSpace[\InteriorN]\) with \(\psit=1\) on a neighborhood of \(\supp(\psi)\)
    and \(\psi+\sum_{j=1}^M \phi_j=1\) on a neighborhood of \(\Compact\).

    For \(E(x,y)\in \CinftySpace[\ManifoldN\times \ManifoldN]\) with \(\supp(E)\subseteq \Compact\times \Compact\),
    we have
    \begin{equation}\label{Eqn::Spaces::Elem::Extend::DecomposeToReduceToBall}
            E(x,y) = \sum_{j=1}^M \phi_j(x) E(x,y)\phit_j(y) + \sum_{j=1}^M \phi_j(x) E(x,y)(1-\phit_j(y)) + \psi(x)E(x,y).
    \end{equation}
    The goal is to extend \(E\) to \(\ManifoldM\times\ManifoldM\) as in the statement of Proposition \ref{Prop::Spaces::Elem::Extend::Extend::Verbose};
    we do treating the three terms on the right-hand side of \eqref{Eqn::Spaces::Elem::Extend::DecomposeToReduceToBall},
    separately.

    Throughout we assume \((E,\delta)\in \sE\), where
    \(\sE\in \PElemz{\WWdv,\Omega\cap \ManifoldN,\Vol}{\Compact}\) when considering \(\Extend^0_\delta\)
    and \(\sE\in \PElem{\WWdv,\Omega\cap \ManifoldN,\Vol}{\Compact}\) when considering \(\Extend_\delta\).
    
    When considering the term \(\Extend_\delta \psi(x) E(x,y)\), we have \(\supp(\psi(x)E(x,y))\subseteq (\InteriorN\cap \Compact)\times \Compact\),
    and therefore vanish to infinite order as \(x\rightarrow \BoundaryN\).
    By reversing the roles of \(x\) and \(y\), this is of the same form as \(\Extend_\delta^0 E(x,y)\), so 
    once we prove the full result for \(\Extend_\delta^0 E(x,y)\), the term \(\Extend_\delta \psi(x)E(x,y)\) can be handled in the same way.

    We are assuming we already have the result on the unit ball (as described in the statement of the lemma),
    and so by pulling back to \(\Bn{1}\) via \(g_{x_j}\), we may extend \( \phi_j(x) E(x,y)\phit_j(y)\)
    as in the statement of Proposition \ref{Prop::Spaces::Elem::Extend::Extend::Verbose}. Here, we may use Lebesgue measure
    in place of \(g_x^{*}\Vol\), since any smooth, strictly positive density gives equivalent estimates
    for the volumes which are used (see Proposition \ref{Prop::VectorFields::Scaling::VolAndMetricEquivalnce} \ref{Item::VectorFields::Scaling::VolAndMetricEquivalnce::VolOfMetricWequalsVolOfMetricZ}).
    So, it suffices to extend the terms \(\sum_{j=1}^M \phi_j(x) E(x,y)(1-\phit_j(y))\)
    and \( \psi(x)E(x,y)\)
    for \(\Extend_\delta^0\), and the term \(\sum_{j=1}^M \phi_j(x) E(x,y)(1-\phit_j(y))\) for \(\Extend_\delta\).
    For these terms, \(\Extend_\delta^0\) and \(\Extend_\delta\) will not depend on \(\delta\),
    and so we drop the \(\delta\) from our notation in what follows; though our estimates still depend on \(\delta\)
    since \((E,\delta)\in \sE\).

    We begin with \(\Extend^0\) and the terms \(\psi(x) E(x,y)\), \((E,\delta)\in \sE\).
    Since (when considering \(\Extend_\delta^0\)), we have 
    \(E(x,y)\) vanishes to infinite order as \(y\rightarrow \BoundaryN\),
    it follows that
    \(\psi(x) E(x,y)\) vanishes to infinite order as either \(x\rightarrow \BoundaryN\) or \(y\rightarrow \BoundaryN\).
    We set
    \begin{equation*}
        \Extend^0 \psi(x) E(x,y)
        =\begin{cases}
            \psi(x) E(x,y),&x,y\in \InteriorN,\\
            0, &\text{otherwise.}
        \end{cases}
    \end{equation*}
    Clearly, 
    \(\Extend^0 \psi(x) E(x,y)\) is smooth and
    \(\supp(\Extend^0 \psi(x) E(x,y))\subseteq \Compact\times \Compact\subseteq \overline{\Omega_1}\times \overline{\Omega_1}\),
    and we have, \(\forall N\), \(\forall m\),
    \begin{equation*}
    \begin{split}
        &\sum_{|\alpha|,|\beta|\leq N} 
        \left| \left( \delta^{\Wdv} \Wh_x \right)^{\alpha} \left( \delta^{\Wdv} \Wh_y \right)^{\beta} \Extend^0 \psi(x)E(x,y) \right|
        \\&=
        \begin{cases}
            \sum_{|\alpha|,|\beta|\leq N} \left| \left( \delta^{\Wdv} W_x \right)^{\alpha} \left( \delta^{\Wdv} W_y \right)^{\beta} \psi(x)E(x,y) \right|, & x,y\in \InteriorN\cap \Compact,\\
            0, &\text{otherwise}
        \end{cases}
        \\&\lesssim
        \begin{cases}
            \frac{\left( 1+\delta^{-1}\MetricWWdv[x][y] \right)^{-m}}{\Vol[\BWWdv{x}{\delta+\MetricWWdv[x][y]}]\wedge 1}, & x,y\in \InteriorN\cap \Compact,\\
            0, &\text{otherwise}
        \end{cases}
        \\& \lesssim 
        \frac{\left( 1+\delta^{-1}\MetricWhWdv[x][y] \right)^{-m}}{\Vol[\BWhWdv{x}{\delta+\MetricWhWdv[x][y]}]\wedge 1},
    \end{split}
    \end{equation*}
    where the last estimate follows from Proposition \ref{Prop::VectorFields::Scaling::AmbientVolAndMetricEquivalnce} \ref{Item::VectorFields::Scaling::AmbientVolAndMetricEquivalnce::MetricWequalsMetricZ} and \ref{Item::VectorFields::Scaling::AmbientVolAndMetricEquivalnce::VolOfMetricWequalsVolOfMetricZ};
    which establishes that \(\Extend^0 \psi(x) E(x,y)\) is of the desired form.

    We turn to defining \(\Extend^0 \sum_{j=1}^M \phi_j (x) E(x,y) (1-\phit_j(y))\) and
    \(\Extend \sum_{j=1}^M \phi_j (x) E(x,y) (1-\phit_j(y))\) (where for \(\Extend^0\),
    \(\sE\in  \PElemz{\WWdv,\Omega\cap \ManifoldN,\Vol}{\Compact}\) and for \(\Extend\),
    \(\sE\in  \PElem{\WWdv,\Omega\cap \ManifoldN,\Vol}{\Compact}\)).
    Set \(F(x,y):=\sum_{j=1}^M \phi_j (x) E(x,y) (1-\phit_j(y))\).
    We set
    \begin{equation*}
        \Extend^0 F(x,y):=
        \begin{cases}
            \Extend^1 F(\cdot, y)\big|_{\cdot=x}, &y\in \ManifoldN,\\
            0, &\text{otherwise},
        \end{cases}
    \end{equation*}
    \begin{equation*}
        \Extend F(x,y):=
            \Extend^2 F(x, y)
    \end{equation*}
    where \(\Extend^1\) and \(\Extend^2\) are the maps from Lemma \ref{Lemma::Spaces::Elem::Extend::Classical},
    and \(\Extend^1\) is acting in the \(\cdot\) variable
    and \(\Omega_1\) playing the role of both \(\Omega_1\) and \(\Omega_2\) in that lemma.
    Since \(F(x,y)\) vanishes to infinite order as \(y\rightarrow \BoundaryN\), we have \(\Extend^0 F\) is smooth;
    and \(\Extend F(x,y)\) is smooth by definition.
    Clearly \(\supp(\Extend^1 F)\subseteq \Omega_1\times (\Omega_1\cap \ManifoldN)\)
    since
    \(\supp(F)\subseteq \Compact\times \Compact\) in that case,
    and \(\supp(\Extend^2 F)\subseteq \Omega_1\times \Omega_1\) directly from Lemma \ref{Lemma::Spaces::Elem::Extend::Classical}.
    As a result, we only need to verify the estimates \eqref{Eqn::Spaces::LP::PreElemBound}, with \(2^{-j}\) replaced by \(\delta\).

    Next, we claim if \((x,y)\in \supp(F)\), then \(\MetricWWdv[x][y]\gtrsim 1\).
    Indeed, suppose \((x,y)\in \supp(\phi_j(x) E(x,y)(1-\phit_j(y)))\); then \(x\) and \(y\)
    are in disjoint compact subsets of \(\Omega_1\cap \ManifoldN\), and by Proposition \ref{Prop::VectorFields::Scaling::SameTopology}
    this implies \(\MetricWWdv[x][y]\gtrsim 1\). Since \(F(x,y)\) is a finite sum of such terms, we see
    if \((x,y)\in \supp(F)\), then \(\MetricWWdv[x][y]\gtrsim 1\).
    As a result, we have, for every \(L\), with \(Q_2\) as in Proposition \ref{Prop::VectorFields::Scaling::VolEstimates} \ref{Item::VectorFields::Scaling::VolEstimates::VolPoly},
    \begin{equation*}
    \begin{split}
         &\sum_{|\alpha|,|\beta|\leq N}
         \left| \left( \delta^{\Wdv} W_x \right)^{\alpha} \left( \delta^{\Wdv} W_y \right)^\beta F(x,y) \right|
         \lesssim
         \frac{\left( 1+\delta^{-1} \MetricWWdv[x][y] \right)^{-L-Q_2}}{\Vol[\BWWdv{x}{\delta+\MetricWWdv[x][y]}]\wedge 1}
         \\& 
         \lesssim \frac{\delta^{L+Q_2}}{\Vol[\BWWdv{x}{\delta+\MetricWWdv[x][y]}]\wedge 1}
         \lesssim \delta^L,
    \end{split}
    \end{equation*}
    and therefore, using Notation \ref{Notation::Spaces::LP::MultiIndexAndDegree}, we have for every \(L,N\),
    by taking 
    \(M\) large depending on \(N\) (using H\"ormander's condition), and taking
    \(L'\) large depending on \(L\) and \(M\),
    \begin{equation*}
    \begin{split}
         &\CmNorm{F}{N}
         \leq \sup_{x,y\in \Compact} \sum_{|\alpha|,|\beta|\leq M} \left| W_x^{\alpha} W_y^\beta F(x,y) \right|
         \leq \sup_{x,y\in \Compact} \sum_{|\alpha|,|\beta|\leq M} \delta^{-\DegWdv{\alpha}-\DegWdv{\beta}} \left| \left(\delta^{\Wdv} W_x \right)^{\alpha} \left(\delta^{\Wdv} W_y \right)^\beta F(x,y) \right|
         \\&\lesssim  \sum_{|\alpha|,|\beta|\leq M} \delta^{-\DegWdv{\alpha}-\DegWdv{\beta}+L'}
         \lesssim \delta^L.
    \end{split}
    \end{equation*}

    We conclude, for every \(L\), \(N\), using Lemma \ref{Lemma::Spaces::Elem::Extend::Classical},
    \begin{equation*}
    \begin{split}
         &\sum_{|\alpha|,|\beta|\leq N}
         \left| \left( \delta^{\Wdv} W_x \right)^{\alpha} \left( \delta^{\Wdv} W_y \right)^{\beta} \Extend^0 F(x,y) \right|
        \lesssim \sum_{|\beta|\leq N} \BCmNorm{\left( \delta^{\Wdv} W_y \right)^{\beta} \Extend^0 F(\cdot,y)  }{N}
        \\&\lesssim \sum_{|\beta|\leq N} \BCmNorm{\left( \delta^{\Wdv} W_y \right)^{\beta} F(\cdot,y)  }{N}
        \lesssim \BCmNorm{ F  }{N}
        \lesssim \delta^{L}
        \\&\lesssim \left( 1+\delta^{-1}\MetricWhWdv[x][y] \right)^{-L}
        \leq \frac{\left( 1+\delta^{-1}\MetricWhWdv[x][y] \right)^{-L}}{\Vol[\BWhWdv{x}{\delta+\MetricWhWdv[x][y]}]\wedge 1},
    \end{split}
    \end{equation*}
    and similarly,
    \begin{equation*}
    \begin{split}
         &\sum_{|\alpha|,|\beta|\leq N}
         \left| \left( \delta^{\Wdv} W_x \right)^{\alpha} \left( \delta^{\Wdv} W_y \right)^{\beta} \Extend F(x,y) \right|
         \lesssim \BCmNorm{ F  }{N}
         \lesssim \delta^{L}
         \lesssim \frac{\left( 1+\delta^{-1}\MetricWhWdv[x][y] \right)^{-L}}{\Vol[\BWhWdv{x}{\delta+\MetricWhWdv[x][y]}]\wedge 1}, 
    \end{split}
    \end{equation*}
    which are the desired estimates, completing the proof.
\end{proof}

With Lemma \ref{Lemma::Spaces::Elem::Extend::ReduceToUnitBall} in hand, we proceed by working on the unit ball
\(\Bn{1}\) endowed with Lebesgue measure, with the setting as in Section \ref{Section::Spaces::Scaling};
let \(\delta_1,\xi_1\in (0,1]\), and \(\Psi_{x,\delta}\) be as in the statement of Theorem \ref{Thm::Spaces::Scaling::MainScalingThm},
with \(r_1=5/8\) and \(r_2=3/4\).

\begin{proposition}\label{Prop::Spaces::Elem::Extend::NiceCovering}
    \(\exists \delta_2\in (0,1]\), \(\exists M\in \Zg\), \(\forall \delta\in (0,\delta_2]\),
    there exists finite collections \(x_1',\ldots, x_{L_1}'\in \BnmoClosure{5/8}\),
    \(y_1,\ldots, y_{L_2}\in \Bng{5/8}\), \(z_1,\ldots, z_{L_3}\in \Bnl{5/8}\) such that
    \(\forall \delta \in (0,\delta_2]\),
    \begin{enumerate}[(a)]
        \item\label{Item::Spaces::Elem::Extend::NiceCovering::Covering} We have
            \begin{equation*}
                \begin{split}
                &\left\{ \BWhWdv{(x_j',0)}{\xi_1\delta/4} : j=1,\ldots,L_1 \right\}
                \\&\bigcup \left\{ \BWhWdv{y_j}{\xi_1^2\delta/64} : j=1,\ldots, L_2 \right\}  
                \\&\bigcup \left\{ \BWhWdv{z_j}{\xi_1^2\delta/64} : j=1,\ldots, L_3 \right\}  
                \end{split}
            \end{equation*}
            is a cover for \(\BnClosure{9/16}\).
        \item\label{Item::Spaces::Elem::Extend::NiceCovering::SmallOverlap} No point lies in more than \(M\) of the balls
        \begin{equation*}
            \begin{split}
            &\left\{ \BWhWdv{(x_j',0)}{\delta} : j=1,\ldots,L_1 \right\}
            \\&\bigcup \left\{ \BWhWdv{y_j}{\xi_1\delta/16} : j=1,\ldots, L_2 \right\}  
            \\&\bigcup \left\{ \BWhWdv{z_j}{\xi_1\delta/16} : j=1,\ldots, L_3 \right\}  
            \end{split}
        \end{equation*}
        \item\label{Item::Spaces::Elem::Extend::NiceCovering::InB34} \(\BWhWdv{y_j}{\xi_1\delta/16}\subseteq \Bng{3/4}\),
            \(\BWhWdv{z_j}{\xi_1\delta/16}\subseteq \Bnl{3/4}\),
            \(\BWhWdv{(x_j',0)}{\xi_1\delta/4}\subseteq \Bn{3/4}\).
    \end{enumerate}
\end{proposition}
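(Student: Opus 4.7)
The plan is a Vitali-type covering argument in the \(\MetricWhWdv\)-geometry, organized around distance to the boundary \(\Bnmo{1}\).

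First, I would split \(\BnClosure{9/16}\) into a ``near-boundary'' set \(A\) of points within \(\MetricWhWdv\)-distance \(\xi_1\delta/16\) of \(\Bnmo{1}\), and its complement, which is further split by whether points lie in \(\Bng{1}\) or \(\Bnl{1}\). For \(A\), take a maximal collection \(\{(x_j',0)\}\) with \(x_j'\in \BnmoClosure{5/8}\) such that the balls \(\BWhWdv{(x_j',0)}{\xi_1\delta/16}\) are pairwise disjoint; the usual maximality/triangle inequality argument gives that the enlarged balls \(\BWhWdv{(x_j',0)}{\xi_1\delta/4}\) cover \(A\). Taking \(\delta_2\) small, Proposition \ref{Prop::VectorFields::Scaling::SameTopology} ensures that nearest boundary points stay in \(\BnmoClosure{5/8}\). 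Similarly, I would pick maximal collections \(\{y_j\}\subset\Bng{5/8}\) and \(\{z_j\}\subset\Bnl{5/8}\) so that the small balls \(\BWhWdv{y_j}{\xi_1^2\delta/128}\) and \(\BWhWdv{z_j}{\xi_1^2\delta/128}\) are pairwise disjoint within each family; their enlargements by a factor of \(4\) cover the remaining two pieces of \(\BnClosure{9/16}\).

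Part \ref{Item::Spaces::Elem::Extend::NiceCovering::InB34} follows from choosing \(\delta_2\) small enough that the enlarged \(\MetricWhWdv\)-balls around any center in \(\BnClosure{5/8}\) remain in \(\Bn{3/4}\); Proposition \ref{Prop::VectorFields::Scaling::SameTopology} lets us pass between the \(\MetricWhWdv\)-metric and the Euclidean metric on compact sets.

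The main obstacle is the bounded multiplicity \ref{Item::Spaces::Elem::Extend::NiceCovering::SmallOverlap}, complicated by the three different enlarged radii (the boundary balls are enlarged to \(\delta\), much larger than the interior/exterior radius \(\xi_1\delta/16\)). The key tool is the doubling property of the Carnot--Carath\'eodory volume (Theorem \ref{Thm::Spaces::Scaling::MainScalingThm} \ref{Item::Spaces::Scaling::Doubling}). If a point \(p\) lies in \(N\) of the balls \(\BWhWdv{(x_j',0)}{\delta}\), then each of the disjoint pieces \(\BWhWdv{(x_j',0)}{\xi_1\delta/16}\) is contained in \(\BWhWdv{p}{2\delta}\), and iterated doubling gives \(\Vol[\BWhWdv{(x_j',0)}{\xi_1\delta/16}]\gtrsim \Vol[\BWhWdv{p}{\delta}]\), bounding \(N\) by an absolute constant. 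The same volume-comparison argument bounds the multiplicities within the \(\{y_j\}\) and \(\{z_j\}\) families; the overall \(M\) is then the sum of the three family-specific bounds.
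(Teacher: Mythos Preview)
Your proposal is correct and follows essentially the same approach as the paper: a Vitali-type maximal disjoint selection, split according to distance to \(\Bnmo{1}\), with the bounded-overlap part \ref{Item::Spaces::Elem::Extend::NiceCovering::SmallOverlap} obtained from the doubling property of \(\Vol\) via the standard volume-comparison argument (stated in the paper as Lemma~\ref{Lemma::Spaces::Elem::Extend::DisjointOverlap}). The only cosmetic difference is that the paper selects a single maximal collection \(\{w_j\}\) of off-boundary centers (with disjoint balls of radius \(\xi_1^2\delta/256\)) and then splits into the \(y_j\) and \(z_j\) after showing \(\BWhWdv{w_j}{\xi_1\delta/16}\cap\Bnmo{1}=\emptyset\), whereas you select the two families separately; and for \ref{Item::Spaces::Elem::Extend::NiceCovering::InB34} the paper invokes Theorem~\ref{Thm::Spaces::Scaling::MainScalingThm}~\ref{Item::Spaces::Scaling::ContainedInAmbient} directly rather than Proposition~\ref{Prop::VectorFields::Scaling::SameTopology}.
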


To prove Proposition \ref{Prop::Spaces::Elem::Extend::NiceCovering} we require a lemma;
the proof of which is standard (see, for example, \cite[page 32]{SteinHarmonicAnalysisRealVariableMethodsOrthogonalityOscillatoryIntegrals}).

\begin{lemma}\label{Lemma::Spaces::Elem::Extend::DisjointOverlap}
    Let \(K>1\). Then, there exists \(M=M(K)\in \Zg\) such that if 
    \(\delta\in (0,\delta_1/2^{\ceil{\log_2(K)}+1}]\) and \(\left\{ \BWhWdv{x_j}{\delta} \right\}\) is a disjoint
    collection of balls with \(x_j\in \BnClosure{5/8}\), then no point lies in more than
    \(M\) of the balls \(\left\{ \BWhWdv{x_j}{K\delta} \right\}\).
\end{lemma}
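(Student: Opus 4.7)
The plan is to run the standard Vitali-style volume-packing argument, with the doubling property of the Carnot--Carath\'eodory balls (Proposition \ref{Prop::VectorFields::Scaling::VolEstimates} \ref{Item::VectorFields::Scaling::VolEstimates::VolDoubling}) playing the role of the Euclidean doubling property. All constants will depend on the fixed compact set $\BnClosure{5/8}$, but this is harmless since we are told $x_j \in \BnClosure{5/8}$.

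Fix a point $y$ and suppose $y \in \BWhWdv{x_{j_i}}{K\delta}$ for $i = 1, \ldots, M$. Then by the triangle inequality for $\rho_{\WhWdv}$ (and hence for $\MetricWhWdv$, at least when $K\delta < 1$), each $x_{j_i}$ satisfies $\MetricWhWdv[x_{j_i}][y] \leq K\delta$, so
\[
\BWhWdv{x_{j_i}}{\delta} \subseteq \BWhWdv{y}{(K+1)\delta}, \qquad \BWhWdv{y}{\delta} \subseteq \BWhWdv{x_{j_i}}{(K+1)\delta}.
\]
Since the balls $\BWhWdv{x_{j_i}}{\delta}$ are disjoint and the $x_{j_i}$ lie in $\BnClosure{5/8}$, summing Lebesgue measure gives
\[
\sum_{i=1}^M \Vol[\BWhWdv{x_{j_i}}{\delta}] \leq \Vol[\BWhWdv{y}{(K+1)\delta}].
\]
The restriction $\delta \leq \delta_1 / 2^{\lceil \log_2 K\rceil + 1}$ is chosen precisely so that we may apply the doubling bound of Proposition \ref{Prop::VectorFields::Scaling::VolEstimates} \ref{Item::VectorFields::Scaling::VolEstimates::VolDoubling} approximately $\lceil \log_2 K\rceil + 1$ times without leaving the range $(0,\delta_1]$. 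Iterating doubling this many times yields
\[
\Vol[\BWhWdv{x_{j_i}}{(K+1)\delta}] \leq C(K)\, \Vol[\BWhWdv{x_{j_i}}{\delta}],
\]
with $C(K)$ independent of $i$ and of $\delta$, and in particular
\[
\Vol[\BWhWdv{y}{\delta}] \leq \Vol[\BWhWdv{x_{j_i}}{(K+1)\delta}] \leq C(K)\,\Vol[\BWhWdv{x_{j_i}}{\delta}].
\]
Plugging this lower bound for $\Vol[\BWhWdv{x_{j_i}}{\delta}]$ into the disjointness inequality gives $M\, \Vol[\BWhWdv{y}{\delta}] \leq C(K)\, \Vol[\BWhWdv{y}{(K+1)\delta}]$, and one more application of iterated doubling on the right yields $M \leq C(K)^2$, so we may set $M := \lceil C(K)^2\rceil$.

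The only potentially delicate point is to make sure the doubling bound is applicable at every relevant scale. This is exactly why the hypothesis caps $\delta$ at $\delta_1 / 2^{\lceil\log_2 K\rceil+1}$: it guarantees that $(K+1)\delta \leq \delta_1$, so each of the $\lceil\log_2 K\rceil+1$ doublings we need stays within the admissible range of Proposition \ref{Prop::VectorFields::Scaling::VolEstimates}. If $K\delta \geq 1$, a separate (trivial) argument is needed since $\MetricWhWdv$ is capped at $1$; but under the stated smallness of $\delta$ this case does not occur, so no additional work is required. I expect the main bookkeeping obstacle to be simply tracking the number of doublings against the allowed range, which the precise form of the hypothesis on $\delta$ was evidently written to make automatic.
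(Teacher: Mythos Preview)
Your argument is correct and is essentially the same volume-packing argument the paper gives: use the triangle inequality to trap all the disjoint $\delta$-balls inside one ball of radius $\approx K\delta$ around $y$, use doubling to compare $\Vol[\BWhWdv{x_{j_i}}{\delta}]$ with $\Vol[\BWhWdv{y}{\delta}]$, and conclude $M\lesssim 1$. The paper uses $2K\delta$ where you use $(K+1)\delta$, and cites the doubling property from Theorem~\ref{Thm::Spaces::Scaling::MainScalingThm}~\ref{Item::Spaces::Scaling::Doubling} (the scaling theorem specific to $\Bn{1}$, whose $\delta_1$ is the one appearing in the hypothesis) rather than Proposition~\ref{Prop::VectorFields::Scaling::VolEstimates}~\ref{Item::VectorFields::Scaling::VolEstimates::VolDoubling}, but these are cosmetic differences.
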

\begin{proof}
    Suppose \(y\in \BWhWdv{x_{j_l}}{K\delta}\), \(l=1,\ldots, M\); we wish to show \(M\lesssim 1\) (where the implicit constant depends on \(K\)).
    Since \(y\in \BWhWdv{y}{K\delta}\cap \BWhWdv{x_{j_l}}{K\delta}\),
    it follows that 
    \begin{equation}\label{Eqn::Spaces::Elem::Extend::DisjointOverlap::Tmp1}
        \BWhWdv{x_{j_l}}{\delta}\subseteq \BWhWdv{x_{j_l}}{K\delta}\subseteq \BWhWdv{y}{2K\delta}
    \end{equation}
    and similarly,
    \begin{equation}\label{Eqn::Spaces::Elem::Extend::DisjointOverlap::Tmp2}
        \BWhWdv{y}{\delta}\subseteq \BWhWdv{x_{j_l}}{2K\delta}.
    \end{equation}
    By \eqref{Eqn::Spaces::Elem::Extend::DisjointOverlap::Tmp1} and \eqref{Eqn::Spaces::Elem::Extend::DisjointOverlap::Tmp2}
    we have
    \begin{equation}\label{Eqn::Spaces::Elem::Extend::DisjointOverlap::Tmp2.5}
        \Vol[\BWWdv{x_{j_l}}{\delta}]\leq \Vol[\BWhWdv{y}{2K\delta}], \quad \Vol[\BWWdv{y}{\delta}]\leq \Vol[\BWhWdv{x_{j_l}}{2K\delta}].
    \end{equation}
    Using repeated applications of Theorem \ref{Thm::Spaces::Scaling::MainScalingThm} \ref{Item::Spaces::Scaling::Doubling},
    we have
    \begin{equation}\label{Eqn::Spaces::Elem::Extend::DisjointOverlap::Tmp3}
        \Vol[\BWhWdv{y}{2K\delta}]\leq \Vol[\BWhWdv{y}{2^{\ceil{\log_2(K)}+1}\delta}]
        \lesssim \Vol[\BWhWdv{y}{\delta}],
    \end{equation}
    and similarly
    \begin{equation}\label{Eqn::Spaces::Elem::Extend::DisjointOverlap::Tmp4}
        \Vol[\BWhWdv{x_{j_l}}{2K\delta}]
        \lesssim \Vol[\BWhWdv{x_{j_l}}{\delta}].
    \end{equation}
    Combining \eqref{Eqn::Spaces::Elem::Extend::DisjointOverlap::Tmp2.5}, \eqref{Eqn::Spaces::Elem::Extend::DisjointOverlap::Tmp3},
    and \eqref{Eqn::Spaces::Elem::Extend::DisjointOverlap::Tmp4}, we see
    \begin{equation}\label{Eqn::Spaces::Elem::Extend::DisjointOverlap::Tmp5}
        \Vol[\BWhWdv{y}{\delta}]
        \approx \Vol[\BWhWdv{x_{j_l}}{\delta}].
    \end{equation}
    Using \eqref{Eqn::Spaces::Elem::Extend::DisjointOverlap::Tmp1}, \eqref{Eqn::Spaces::Elem::Extend::DisjointOverlap::Tmp3},
    \eqref{Eqn::Spaces::Elem::Extend::DisjointOverlap::Tmp5}, and the fact that the balls \(\BWhWdv{x_{j_l}}{\delta}\)
    are disjoint, we have
    \begin{equation}\label{Eqn::Spaces::Elem::Extend::DisjointOverlap::Tmp6}
    \begin{split}
         &\Vol[\BWWdv{y}{\delta}] \gtrsim \Vol[\BWhWdv{y}{2K\delta}] \geq \sum_{j=1}^M \Vol[\BWhWdv{x_{j_l}}{\delta}] 
         \\&\approx \sum_{j=1}^M \Vol[\BWhWdv{y}{\delta}] = M\Vol[\BWhWdv{y}{\delta}].
    \end{split}
    \end{equation}
    Using that \(\Vol[\BWhWdv{y}{\delta}]\ne 0\) 
    (see Proposition \ref{Prop::VectorFields::Scaling::VolEstimates} \ref{Item::VectorFields::Scaling::VolEstimates::VolApproxLambda}),
    dividing both sides of \eqref{Eqn::Spaces::Elem::Extend::DisjointOverlap::Tmp6} by \(\Vol[\BWhWdv{y}{\delta}]\)
    shows \(M\lesssim 1\), completing the proof.
\end{proof}

\begin{proof}[Proof of Proposition \ref{Prop::Spaces::Elem::Extend::NiceCovering}]
    This proof is based on standard covering techniques like those from the Vitali covering lemma
    (see \cite[Chapter I, Section 3]{SteinHarmonicAnalysisRealVariableMethodsOrthogonalityOscillatoryIntegrals}).
    Let \(K=16 \xi_1^{-1}\). We take \(\delta_2\leq \delta/2^{\ceil{\log_2(K)}+1}\) so small that
    for \(w\in \BnClosure{{9/16}}\)
    and \(\delta\in(0,\delta_2]\)
    we have \(\BWhWdv{w}{\delta}\subseteq \Bn{5/8}\)
    (this is always possible--apply Theorem \ref{Thm::Spaces::Scaling::MainScalingThm} \ref{Item::Spaces::Scaling::ContainedInAmbient}
    with \(r_1=9/16\), \(r_2=5/8\)).

    Fix \(\delta\in (0,\delta_2]\).
    From the collection
    \begin{equation*}
        \left\{ \BWhWdv{(y',0)}{\xi_1\delta/16} : y'\in \BnmoClosure{5/8} \right\}
    \end{equation*}
    pick a maximal disjoint subcollection:
    \begin{equation}\label{Eqn::Spaces::Elem::Extend::NiceCovering::XjDisjoint}
        \BWhWdv{(x_j',0)}{\xi_1\delta/16}, \quad x_1',\ldots, x_{L_1}'\in \BnmoClosure{5/8}.
    \end{equation}
    Since \(\delta\leq \delta_1\), it follows from Theorem \ref{Thm::Spaces::Scaling::MainScalingThm} \ref{Item::Spaces::Scaling::ContainedInAmbient}
    that \(\BWhWdv{(x_j',0)}{\delta}\subseteq \Bn{3/4}\), establishing part of \ref{Item::Spaces::Elem::Extend::NiceCovering::InB34}.

    For a set \(S\) and a point \(y\), we let \(\DistWWdv[y][S]:=\inf\left\{ x\in S : \DistWWdv[x][y] \right\}\).
    We claim
    \begin{equation}\label{Eqn::Spaces::Elem::Extend::NiceCovering::CoversNearBoundary}
        \left\{ y\in \Bn{1}:\DistWWdv[y][\BnmoClosure{5/8}] < \xi_1\delta/16 \right\}
        \subseteq \bigcup_{j=1}^{L_1} \BWhWdv{(x_j,0)}{\xi_1\delta/4}.
    \end{equation}
    Indeed, suppose \(\DistWWdv[y][\BnmoClosure{5/8}] < \xi_1\delta/16\). Take \(z'\in \BnmoClosure{5/8}\)
    with \(y\in \BWhWdv{(z',0)}{\xi_1\delta/16}\). By maximality,
    \begin{equation*}
        \BWhWdv{(z',0)}{\xi\delta/16}\cap \BWhWdv{(x_j',0)}{\xi_1\delta/16}\ne \emptyset.
    \end{equation*}
    for some \(j\). Then, we have \(y\in \BWhWdv{(z',0)}{\xi_1\delta/16}\subseteq \BWhWdv{(x_j',0)}{\xi_1\delta/4}\),
    establishing \eqref{Eqn::Spaces::Elem::Extend::NiceCovering::CoversNearBoundary}.

    From the collection
    \begin{equation*}
        \left\{ \BWhWdv{w}{\xi_1^2\delta/256} : y\in \BnClosure{9/16}, \DistWWdv[y][\BnmoClosure{5/8}] \geq \xi_1\delta/16 \right\},
    \end{equation*}
    pick a maximal, disjoint subcollection
    \begin{equation}\label{Eqn::Spaces::Elem::Extend::NiceCovering::WjDisjoint}
        \BWhWdv{w_j}{\xi_1^2\delta/256},\quad j=1,\ldots, L_4.
    \end{equation}
    By maximality,
    \begin{equation}\label{Eqn::Spaces::Elem::Extend::NiceCovering::CoversAwayBoundary}
        \left\{ y\in \BnClosure{9/16} : \DistWWdv[y][\Bnmo{5/8}]\geq \xi_1\delta/16 \right\}
        \subseteq \bigcup_{j=1}^{L_4}\BWhWdv{w_j}{\xi_1^2 \delta/64}.
    \end{equation}
    Indeed, if \(y\in \BnClosure{9/16}\) with \(\DistWWdv[y][\Bnmo{5/8}]\geq \xi_1\delta/16\)
    were not contained in \(\bigcup_{j=1}^{L_4}\BWhWdv{w_j}{\xi_1^2 \delta/64}\), then \(\BWhWdv{y}{\xi_1^2\delta/256}\)
    would be disjoint from the collection \eqref{Eqn::Spaces::Elem::Extend::NiceCovering::WjDisjoint}, contradicting maximality.
    
    By the choice of \(\delta_2\), we have \(\BWhWdv{w_j}{\xi_1\delta/16}\subseteq \Bn{5/8}\) and
    by the choice of \(w_j\), \(\BWhWdv{w_j}{\xi_1\delta/16}\cap \BnmoClosure{5/8}=\emptyset\).
    We conclude \(\BWhWdv{w_j}{\xi_1\delta/16}\cap \Bnmo{1}=\emptyset\).
    Directly from the definition, \(\BWhWdv{w_j}{\xi_1\delta/16}\) is path connected, and we conclude
    that for each \(j\), 
    \begin{equation*}
        \BWhWdv{w_j}{\xi_1\delta/16}\subseteq \Bng{5/8}\text{ or }\BWhWdv{w_j}{\xi_1\delta/16}\subseteq \Bnl{5/8}.
    \end{equation*}
    Let \(y_1,\ldots, y_{L_2}\) be an enumeration of those \(w_j\) with \(\BWhWdv{w_j}{\xi_1\delta/16}\subseteq \Bng{5/8}\)
    and \(z_1,\ldots, z_{l_3}\) be an enumeration of those \(w_j\) with \(\BWhWdv{w_j}{\xi_1\delta/16}\subseteq \Bnl{5/8}\).
    The remainder of \ref{Item::Spaces::Elem::Extend::NiceCovering::InB34} follows immediately.
    \ref{Item::Spaces::Elem::Extend::NiceCovering::Covering} follows from
    \eqref{Eqn::Spaces::Elem::Extend::NiceCovering::CoversNearBoundary} and \eqref{Eqn::Spaces::Elem::Extend::NiceCovering::CoversAwayBoundary}.

    Lemma \ref{Lemma::Spaces::Elem::Extend::DisjointOverlap} applied to the disjoint collection
    \eqref{Eqn::Spaces::Elem::Extend::NiceCovering::XjDisjoint} (with \(K=16 \xi_1^{-1}\))
    shows that no point lies in more than \(M_1\lesssim 1\) of the balls
    \(\BWhWdv{(x_j',0)}{\delta}\). Similarly, Lemma \ref{Lemma::Spaces::Elem::Extend::DisjointOverlap} applied to the disjoint collection
    \eqref{Eqn::Spaces::Elem::Extend::NiceCovering::WjDisjoint}
    shows that no point lies in more than \(M_2\lesssim 1\) of the balls
    \(\BWhWdv{w_j}{\xi_1\delta/16}\). Taking \(M=M_1+M_2\), \ref{Item::Spaces::Elem::Extend::NiceCovering::SmallOverlap} follows.
\end{proof}

\begin{proposition}\label{Prop::Spaces::Elem::Extend::ParitionOfUnity}
    \(\exists \delta_2\in (0,1]\), \(\exists M\in \Zg\),
    \(\forall \delta\in (0,\delta_2]\), 
    \(\exists \psi_\delta\in \CinftyCptSpace[\Bng{3/4}]\), \(\phi_{1,\delta},\ldots, \phi_{L,\delta}\in \CinftyCptSpace[\Bn{3/4}]\),
    and \(x_1',\ldots, x_{L}'\in \BnmoClosure{5/8}\),
    where \(L=L(\delta)\) depends on \(\delta\), such that:
    \begin{enumerate}[(i)]
        \item\label{Item::Spaces::Elem::Extend::ParitionOfUnity::phiSupport} \(\phi_{j,\delta}\in \CinftyCptSpace[\Psi_{(x_j',0),\delta}(\Qn{1/2})]\),
        \item\label{Item::Spaces::Elem::Extend::ParitionOfUnity::SumTo1} \(\psi_\delta+\sum_{j=1}^L \phi_{j,\delta}=1\) on a neighborhood of \(\BngeqClosure{1/2}\),
        \item\label{Item::Spaces::Elem::Extend::ParitionOfUnity::DerivEstimates} \(\forall N\),
            \begin{equation*}
                \sup_{x} \sup_{\delta\in (0,\delta_2]} \sum_{|\alpha|\leq N}
                \left| \left( \delta^{\Wdv} W \right)^{\alpha} \psi_\delta(x) \right|<\infty,
            \end{equation*}
            \begin{equation*}
                \sup_{x} \sup_{\delta\in (0,\delta_2]}\max_{j} \sum_{|\alpha|\leq N}
                \left| \left( \delta^{\Wdv} W \right)^{\alpha} \phi_{j,\delta}(x) \right|<\infty.
            \end{equation*}
        \item\label{Item::Spaces::Elem::Extend::PartitionOfUnity::phiDontOverlapBig} No point lies in more than \(M\)
            of the sets \(\Psi_{(x_j',0),\delta}(\Qn{1})\).
    \end{enumerate}
\end{proposition}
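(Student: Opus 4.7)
The plan is to build the partition by pulling back a single fixed cutoff through the scaling maps of Theorem \ref{Thm::Spaces::Scaling::MainScalingThm} and then normalizing. First I would apply Proposition \ref{Prop::Spaces::Elem::Extend::NiceCovering} to obtain centers $x_1',\ldots,x_{L_1}'\in\BnmoClosure{5/8}$ and $y_1,\ldots,y_{L_2}\in\Bng{5/8}$ (discarding the $z_k$'s in $\Bnl{5/8}$, which are disjoint from $\BngeqClosure{1/2}$), and take $\delta_2$ small enough so that Theorem \ref{Thm::Spaces::Scaling::MainScalingThm} applies to both $\Psi_{(x_j',0),\delta}$ and $\Psi_{y_k,\xi_1\delta/16}$. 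Fix once and for all $\chi\in\CinftyCptSpace[\Qn{1/2}]$ with $0\le\chi\le 1$ and $\chi\equiv 1$ on $\QnClosure{1/4}$.

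Next I would define the boundary bumps $\phi^0_{j,\delta}(x):=\chi(\Psi^{-1}_{(x_j',0),\delta}(x))$ and the interior bumps $\psi^0_{k,\delta}(x):=\chi(\Psi^{-1}_{y_k,\xi_1\delta/16}(x))$, extended by zero. Using part \ref{Item::Spaces::Scaling::ImageContainsBall} of Theorem \ref{Thm::Spaces::Scaling::MainScalingThm}, we have
\begin{equation*}
\BWhWdv{(x_j',0)}{\xi_1\delta/4}\subseteq \BWhWdv{(x_j',0)}{\xi_1\delta}\subseteq \Psi_{(x_j',0),\delta}(\Qn{1/4}),
\end{equation*}
so $\phi^0_{j,\delta}\equiv 1$ on $\BWhWdv{(x_j',0)}{\xi_1\delta/4}$; an analogous computation with scale $\xi_1\delta/16$ shows $\psi^0_{k,\delta}\equiv 1$ on $\BWhWdv{y_k}{\xi_1^2\delta/64}$. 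The support of $\phi^0_{j,\delta}$ is inside $\Psi_{(x_j',0),\delta}(\Qn{1/2})\subseteq \BWhWdv{(x_j',0)}{\delta}\subseteq\Bn{3/4}$ by part \ref{Item::Spaces::Scaling::ContainedInAmbient}, giving \ref{Item::Spaces::Elem::Extend::ParitionOfUnity::phiSupport}, while $\supp\psi^0_{k,\delta}\subseteq \BWhWdv{y_k}{\xi_1\delta/16}\subseteq \Bng{3/4}$ by Proposition \ref{Prop::Spaces::Elem::Extend::NiceCovering}\ref{Item::Spaces::Elem::Extend::NiceCovering::InB34}. The overlap bound \ref{Item::Spaces::Elem::Extend::PartitionOfUnity::phiDontOverlapBig} then follows directly from Proposition \ref{Prop::Spaces::Elem::Extend::NiceCovering}\ref{Item::Spaces::Elem::Extend::NiceCovering::SmallOverlap}.

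Derivative estimates are the technical heart and rely on part \ref{Item::Spaces::Scaling::PullBackSmooth} of Theorem \ref{Thm::Spaces::Scaling::MainScalingThm}: since $\Psi_{(x_j',0),\delta}^{*}(\delta^{\Wdv_l}\Wh_l)=\Wh_l^{(x_j',0),\delta}$ is uniformly smooth on $\Qn{1}$, iterated application of $(\delta^{\Wdv}\Wh)^\alpha$ to $\phi^0_{j,\delta}$ corresponds, after pullback, to iterated application of $(\Wh^{(x_j',0),\delta})^\alpha$ to the fixed function $\chi$ on $\Qn{1}$, which is uniformly bounded in $\delta$. The interior bumps $\psi^0_{k,\delta}$ use scale $\xi_1\delta/16$; the identity $\delta^{\Wdv_l}\Wh_l=(16/\xi_1)^{\Wdv_l}(\xi_1\delta/16)^{\Wdv_l}\Wh_l$ absorbs the scale mismatch into a constant depending only on the finitely many formal degrees. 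Summing and invoking the bounded overlap from Proposition \ref{Prop::Spaces::Elem::Extend::NiceCovering}\ref{Item::Spaces::Elem::Extend::NiceCovering::SmallOverlap} shows $S:=\sum_j\phi^0_{j,\delta}+\sum_k\psi^0_{k,\delta}$ has uniform bounds on $(\delta^{\Wdv}\Wh)^\alpha S$ as well.

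Finally, to enforce \ref{Item::Spaces::Elem::Extend::ParitionOfUnity::SumTo1}, fix (independently of $\delta$) $\theta\in\CinftyCptSpace[\Bn{9/16}]$ with $\theta\equiv 1$ on a neighborhood of $\BngeqClosure{1/2}$. By Proposition \ref{Prop::Spaces::Elem::Extend::NiceCovering}\ref{Item::Spaces::Elem::Extend::NiceCovering::Covering} together with the identities $\phi^0_{j,\delta}\equiv 1$ and $\psi^0_{k,\delta}\equiv 1$ on the appropriate balls, $S\ge 1$ on $\BnClosure{9/16}\supseteq \supp\theta$. Set
\begin{equation*}
\phi_{j,\delta}:=\theta\,\phi^0_{j,\delta}/S,\qquad \psi_\delta:=\theta\sum_{k}\psi^0_{k,\delta}/S.
\end{equation*}
The sum $\psi_\delta+\sum_j\phi_{j,\delta}=\theta$, which is $1$ near $\BngeqClosure{1/2}$. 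Since $S\ge 1$ on $\supp\theta$, Leibniz and quotient rule applied with the uniform bounds on $(\delta^{\Wdv}\Wh)^\alpha S$, $(\delta^{\Wdv}\Wh)^\alpha\phi^0_{j,\delta}$, and $(\delta^{\Wdv}\Wh)^\alpha\theta$ (the last being uniform since $\delta^{\Wdv_l}\le 1$ and $\theta$ is fixed smooth) yield \ref{Item::Spaces::Elem::Extend::ParitionOfUnity::DerivEstimates}. The main obstacle I anticipate is verifying cleanly that the normalization step preserves the scaled-derivative estimates without losing uniformity in $\delta$; this is handled by keeping $\theta$ independent of $\delta$ and exploiting that $S$ is bounded below by $1$ on $\supp\theta$ uniformly.
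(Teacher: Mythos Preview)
Your proposal is correct and takes essentially the same approach as the paper: both pull back a fixed bump on \(\Qn{1/2}\) via the scaling maps \(\Psi_{(x_j',0),\delta}\) and \(\Psi_{y_k,\xi_1\delta/16}\) from Theorem \ref{Thm::Spaces::Scaling::MainScalingThm}, invoke part \ref{Item::Spaces::Scaling::PullBackSmooth} for the uniform scaled-derivative bounds, and normalize by dividing by the total sum after multiplying by a fixed cutoff (your \(\theta\) is the paper's \(\Phi\in\CinftyCptSpace[\Bn{9/16}]\)). The only cosmetic differences are notational (your \(\chi,\phi^0_{j,\delta},\psi^0_{k,\delta},S\) correspond to the paper's \(\phi,\phih_{j,\delta},\psih_{j,\delta}\) and their sum).
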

\begin{proof}
    We take \(\delta_2\) as in Proposition \ref{Prop::Spaces::Elem::Extend::NiceCovering},
    and for \(\delta\in (0,\delta_2]\) apply Proposition \ref{Prop::Spaces::Elem::Extend::NiceCovering}
    to obtain 
    \(x_1',\ldots, x_L'\in \BnmoClosure{5/8}\) and \(y_1,\ldots, y_{L_2}\in \Bng{5/8}\)
    as in that result (here we are renaming \(L_1\) from Proposition \ref{Prop::Spaces::Elem::Extend::NiceCovering} to be \(L\)).
    Fix \(\phi\in \CinftyCptSpace[\Qn{1/2}]\) with \(\phi=1\) on a neighborhood of \(\QnClosure{1/4}\) and \(0\leq \phi\leq 1\);
    \(\phi\) is a single fixed function, and does not depend on \(\delta\).

    Set \(\psih_{j,\delta}:=\phi\circ \Psi_{y_j, \xi_1\delta/16}^{-1}\)
    and \(\phih_{j,\delta}:=\phi\circ \Psi_{(x_j',0),\delta}^{-1}\),
    so that by Theorem \ref{Thm::Spaces::Scaling::MainScalingThm} \ref{Item::Spaces::Scaling::ImageContainsBall},
    \begin{equation}\label{Eqn::Spaces::Elem::Extend::ParitionOfUnity::psih1}
        \psih_{j,\delta}=1\text{ on }\BWhWdv{y_j}{\xi_1^2\delta/16},
    \end{equation}
    \begin{equation}\label{Eqn::Spaces::Elem::Extend::ParitionOfUnity::phih1}
        \phih_{j,\delta}=1\text{ on }\BWhWdv{(x_j',0)}{\xi_1\delta},
    \end{equation}
    \begin{equation}\label{Eqn::Spaces::Elem::Extend::ParitionOfUnity::psihAndphihSupport}
        \supp(\psih_{j,\delta})\subseteq \BWhWdv{y_j}{\xi_1\delta/16}, \quad \supp(\phih_{j,\delta})\subseteq \BWhWdv{(x_j',0)}{\delta}.
    \end{equation}

    We have, by Theorem \ref{Thm::Spaces::Scaling::MainScalingThm} \ref{Item::Spaces::Scaling::PullBackSmooth}, \(\forall N\),
    \begin{equation}\label{Eqn::Spaces::Elem::Extend::ParitionOfUnity::Boundpsih}
    \begin{split}
            &\sup_{x}\sum_{|\alpha|\leq N} \left| \left( \delta^{\Wdv} W \right)^{\alpha} \psih_{j,\delta}(x) \right|
            \approx \sup_{x}\sum_{|\alpha|\leq N} \left| \left( \left( \xi_1\delta/16 \right)^{\Wdv} W \right)^{\alpha} \psih_{j,\delta}(x) \right|
            \\&=\sup_{u} \sum_{|\alpha|\leq N} \left| \left( W^{y_j, \xi_1\delta/16}  \right)^{\alpha} \phi(u) \right|
            \lesssim 1,
    \end{split}    
    \end{equation}
    and similarly,
    \begin{equation}\label{Eqn::Spaces::Elem::Extend::ParitionOfUnity::Boundphih}
    \begin{split}
         &\sup_{x}\sum_{|\alpha|\leq N} \left| \left( \delta^{\Wdv} W \right)^{\alpha} \phih_{j,\delta}(x) \right|
         =\sup_{u} \sum_{|\alpha|\leq N} \left| \left( W^{(x_j',0),\delta} \right)^{\alpha}\phi(u) \right|\lesssim 1.
    \end{split}
    \end{equation}

    By \eqref{Eqn::Spaces::Elem::Extend::ParitionOfUnity::psihAndphihSupport} and Proposition \ref{Prop::Spaces::Elem::Extend::NiceCovering} \ref{Item::Spaces::Elem::Extend::NiceCovering::SmallOverlap},
    \(0\leq \sum_{j=1}^{L} \phih_{j,\delta}+\sum_{j=1}^{L_2}\psih_{j,\delta}\leq M\),
    and by \eqref{Eqn::Spaces::Elem::Extend::ParitionOfUnity::psih1}, \eqref{Eqn::Spaces::Elem::Extend::ParitionOfUnity::phih1},
    and Proposition \ref{Prop::Spaces::Elem::Extend::NiceCovering} \ref{Item::Spaces::Elem::Extend::NiceCovering::Covering},
    \(\sum_{j=1}^{L} \phih_{j,\delta}+\sum_{j=1}^{L_2}\psih_{j,\delta}\geq 1\) on \(\BnClosure{9/16}\).
    
    Fix \(\Phi\in \CinftyCptSpace[\Bn{9/16}]\) with \(\Phi=1\) on a neighborhood of \(\BnClosure{1/2}\).
    Here, \(\Phi\) is a fixed function which does not depend on \(\delta\).
    Set,
    \begin{equation}\label{Eqn::Spaces::Elem::Extend::ParitionOfUnity::DefinephiAndpsi}
        \psi_{k,\delta}:=\frac{\Phi \psih_{k,\delta}}{\sum_{j=1}^{L} \phih_{j,\delta}+\sum_{j=1}^{L_2}\psih_{j,\delta}},
        \quad \phi_{k,\delta}:=\frac{\Phi \phih_{k,\delta}}{\sum_{j=1}^{L} \phih_{j,\delta}+\sum_{j=1}^{L_2}\psih_{j,\delta}}.
    \end{equation}
    Combining the above comments with \eqref{Eqn::Spaces::Elem::Extend::ParitionOfUnity::Boundpsih} and \eqref{Eqn::Spaces::Elem::Extend::ParitionOfUnity::Boundphih},
    we see
    \begin{equation}\label{Eqn::Spaces::Elem::Extend::ParitionOfUnity::Boundpsi}
        \sup_{x} \sum_{|\alpha|\leq N} \left|  \left( \delta^{\Wdv} W \right)^{\alpha}\psi_{j,\delta}(x) \right|\lesssim 1,
    \end{equation}
    \begin{equation}\label{Eqn::Spaces::Elem::Extend::ParitionOfUnity::Boundphi}
        \sup_{x} \sum_{|\alpha|\leq N} \left|  \left( \delta^{\Wdv} W \right)^{\alpha}\phi_{j,\delta}(x) \right|\lesssim 1.
    \end{equation}
    Setting \(\psi_\delta:=\sum_{j=1}^{L_2}\psi_{j,\delta}\), and using the fact that no point
    is in the support of more than \(M\lesssim 1\) of the \(\psi_{j,\delta}\) (by \eqref{Eqn::Spaces::Elem::Extend::ParitionOfUnity::psihAndphihSupport}
    and Proposition \ref{Prop::Spaces::Elem::Extend::NiceCovering} \ref{Item::Spaces::Elem::Extend::NiceCovering::SmallOverlap}),
    we have 
    \begin{equation}\label{Eqn::Spaces::Elem::Extend::ParitionOfUnity::BoundpsiSum}
        \sup_{x} \sum_{|\alpha|\leq N} \left|  \left( \delta^{\Wdv} W \right)^{\alpha}\psi_{\delta}(x) \right|\lesssim 1.
    \end{equation}
    \ref{Item::Spaces::Elem::Extend::ParitionOfUnity::DerivEstimates}
    follows from \eqref{Eqn::Spaces::Elem::Extend::ParitionOfUnity::BoundpsiSum} and  \eqref{Eqn::Spaces::Elem::Extend::ParitionOfUnity::Boundphi}.
    We have, from the definitions, \eqref{Eqn::Spaces::Elem::Extend::ParitionOfUnity::psihAndphihSupport}, and 
    Proposition \ref{Prop::Spaces::Elem::Extend::NiceCovering} \ref{Item::Spaces::Elem::Extend::NiceCovering::InB34},
    \begin{equation*}
        \supp(\psi_{j,\delta})\subseteq \supp(\psih_{j,\delta})\Subset \Bng{3/4},
    \end{equation*}
    and therefore \(\psi_\delta\in \CinftyCptSpace[\Bng{3/4}]\).
    We have, from the definition,
    \begin{equation*}
        \supp(\phi_{j,\delta})\subseteq \supp(\phih_{j,\delta})\Subset \Psi_{(x_j',0),\delta}(\Qn{1/2}),
    \end{equation*}
    establishing \ref{Item::Spaces::Elem::Extend::ParitionOfUnity::phiSupport}.
    We have from \eqref{Eqn::Spaces::Elem::Extend::ParitionOfUnity::DefinephiAndpsi}
    \begin{equation*}
        \psi_\delta + \sum_{j=1}^L \phi_{j,\delta} = \Phi,
    \end{equation*}
    and \ref{Item::Spaces::Elem::Extend::ParitionOfUnity::SumTo1} follows from the choice of \(\Phi\).

    Finally, we have (see Theorem \ref{Thm::Spaces::Scaling::MainScalingThm}),
    \begin{equation*}
        \Psi_{(x_j',0),\delta}(\Qn{1})\subseteq \BWhWdv{(x_j',0)}{\delta},
    \end{equation*}
    and \ref{Item::Spaces::Elem::Extend::PartitionOfUnity::phiDontOverlapBig} follows from 
    Proposition \ref{Prop::Spaces::Elem::Extend::NiceCovering} \ref{Item::Spaces::Elem::Extend::NiceCovering::SmallOverlap}.
\end{proof}

\begin{lemma}\label{Lemma::Spaces::Elem::Extend::PreElemBoundUnderSmallPerterbations}
    For \(x_1,x_2,y_1,y_2\in \Bn{7/8}\) and \(\delta\in (0,1]\), 
    if \(\MetricWhWdv[x_1][x_2]<2\delta\)
    and \(\MetricWhWdv[y_1][y_2]<2\delta\), then
    \begin{equation}\label{Eqn::Spaces::Elem::Extend::PreElemBoundUnderSmallPerterbations::Top}
        1+\delta^{-1}\MetricWhWdv[x_1][y_1]\approx 1+\delta^{-1}\MetricWhWdv[x_2][y_2],
    \end{equation}
    \begin{equation}\label{Eqn::Spaces::Elem::Extend::PreElemBoundUnderSmallPerterbations::Bottom}
        \Vol[\BWhWdv{x_1}{\delta+\MetricWhWdv[x_1][y_1]}]\wedge 1 \approx \Vol[\BWhWdv{x_2}{\delta +\MetricWhWdv[x_2][y_2]}].
    \end{equation}
\end{lemma}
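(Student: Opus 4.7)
The plan is to exploit two facts: first, that $\MetricWhWdv = \DistWWdv \wedge 1$ is a genuine (extended) metric, so the triangle inequality is available; second, the doubling property of balls from Proposition \ref{Prop::VectorFields::Scaling::VolEstimates} \ref{Item::VectorFields::Scaling::VolEstimates::VolWedge1Doubling}, applied with $\Compact = \BnClosure{7/8}$ inside some slightly larger open set such as $\Bn{15/16}$. Both hypotheses $\MetricWhWdv[x_1][x_2] < 2\delta$ and $\MetricWhWdv[y_1][y_2] < 2\delta$ should be read as telling us $x_1,x_2$ (resp.\ $y_1,y_2$) are at metric distance at most a constant multiple of $\delta$, which propagates through the triangle inequality and through a bounded number of doublings.

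For \eqref{Eqn::Spaces::Elem::Extend::PreElemBoundUnderSmallPerterbations::Top}, the triangle inequality gives
\begin{equation*}
    \bigl|\MetricWhWdv[x_1][y_1] - \MetricWhWdv[x_2][y_2]\bigr| \leq \MetricWhWdv[x_1][x_2] + \MetricWhWdv[y_1][y_2] < 4\delta,
\end{equation*}
hence $1 + \delta^{-1}\MetricWhWdv[x_1][y_1] \leq 5 + \delta^{-1}\MetricWhWdv[x_2][y_2] \leq 5\bigl(1+\delta^{-1}\MetricWhWdv[x_2][y_2]\bigr)$, and symmetrically for the reverse inequality.

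For \eqref{Eqn::Spaces::Elem::Extend::PreElemBoundUnderSmallPerterbations::Bottom}, another application of the triangle inequality yields
\begin{equation*}
    \BWhWdv{x_1}{\delta + \MetricWhWdv[x_1][y_1]} \subseteq \BWhWdv{x_2}{\MetricWhWdv[x_1][x_2] + \delta + \MetricWhWdv[x_1][y_1]} \subseteq \BWhWdv{x_2}{7\bigl(\delta + \MetricWhWdv[x_2][y_2]\bigr)},
\end{equation*}
where the last inclusion uses the bound on $\MetricWhWdv[x_1][y_1]$ already established. Iterating Proposition \ref{Prop::VectorFields::Scaling::VolEstimates} \ref{Item::VectorFields::Scaling::VolEstimates::VolWedge1Doubling} a bounded (in fact $\ceil{\log_2 7}$) number of times with base point $x_2 \in \BnClosure{7/8}$ gives
\begin{equation*}
    \Vol[\BWhWdv{x_2}{7(\delta + \MetricWhWdv[x_2][y_2])}] \wedge 1 \lesssim \Vol[\BWhWdv{x_2}{\delta + \MetricWhWdv[x_2][y_2]}] \wedge 1,
\end{equation*}
and combining yields one direction of \eqref{Eqn::Spaces::Elem::Extend::PreElemBoundUnderSmallPerterbations::Bottom}. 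The reverse direction follows by swapping the roles of $(x_1,y_1)$ and $(x_2,y_2)$.

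There is essentially no obstacle here; the only subtlety is the book-keeping ensuring that Proposition \ref{Prop::VectorFields::Scaling::VolEstimates} applies uniformly for $x_2$ ranging in $\BnClosure{7/8}$, which is immediate since this set is compactly contained in $\Bn{1}$, and that the doubling is needed only finitely many times (depending on the fixed constant $7$), producing a uniform implicit constant.
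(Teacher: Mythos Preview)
Your proof is correct and follows essentially the same approach as the paper: triangle inequality for \eqref{Eqn::Spaces::Elem::Extend::PreElemBoundUnderSmallPerterbations::Top}, then the doubling property of Proposition \ref{Prop::VectorFields::Scaling::VolEstimates} \ref{Item::VectorFields::Scaling::VolEstimates::VolWedge1Doubling} with \(\Compact=\BnClosure{7/8}\) for \eqref{Eqn::Spaces::Elem::Extend::PreElemBoundUnderSmallPerterbations::Bottom}. The paper is somewhat terser, deducing from \eqref{Eqn::Spaces::Elem::Extend::PreElemBoundUnderSmallPerterbations::Top} that \(\delta+\MetricWhWdv[x_1][y_1]\approx \delta+\MetricWhWdv[x_2][y_2]\) and then invoking doubling directly, whereas you spell out the ball inclusion; but the content is the same.
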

\begin{proof}
    \eqref{Eqn::Spaces::Elem::Extend::PreElemBoundUnderSmallPerterbations::Top} is true for any metric, and only uses
    the triangle inequality.
    Using \eqref{Eqn::Spaces::Elem::Extend::PreElemBoundUnderSmallPerterbations::Top}, we have
    \(\delta+\MetricWhWdv[x_1][y_1] \approx \delta+\MetricWhWdv[x_2][y_2]\),
    and \eqref{Eqn::Spaces::Elem::Extend::PreElemBoundUnderSmallPerterbations::Bottom} follows from
    Proposition \ref{Prop::VectorFields::Scaling::VolEstimates} \ref{Item::VectorFields::Scaling::VolEstimates::VolWedge1Doubling},
    with \(\Compact=\BnClosure{7/8}\).
\end{proof}

\begin{proof}[Proof of Proposition \ref{Prop::Spaces::Elem::Extend::Extend::Verbose} for \(\Extend_\delta^0\)]
    In light of Lemma \ref{Lemma::Spaces::Elem::Extend::ReduceToUnitBall}, we may work on the unit ball,
    with \(\Vol\) given by Lebesgue measure.
    
    Let 
    \begin{equation*}
        \Extend^1:\left\{ f\in \CinftySpace[\Qngeq{1}] : \supp(f)\subseteq \QngeqClosure{1/2} \right\}\rightarrow \CinftyCptSpace[\Qn{3/4}]
    \end{equation*}
    be a classical extension operator as in Lemma \ref{Lemma::Spaces::Elem::Extend::Classical}.
    Let \(\delta_2\), \(\phi_{j,\delta}\), \(j=1,\ldots, L=L(\delta)\), and \(\psi_{\delta}\) be as in that Proposition \ref{Prop::Spaces::Elem::Extend::ParitionOfUnity}.
    For \(E(x,y)\in \CinftyCptSpace[\Bn{1}]\) with \(\supp(E)\subseteq \BngeqClosure{1/2}\times \left( \BngeqClosure{1/2}\cap \Bng{1} \right)\),
    and \(\delta\in (0,\delta_2]\), set
    \begin{equation}\label{Eqn::Spaces::Elem::Extend::Extend::DefineEdelta0}
        \Extend_\delta^0 E(x,y)
        :=
        \begin{cases}
            \sum_{j=1}^L \left( \Psi_{x,\delta} \right)_{*} \Extend^1 \Psi_{x,\delta}^{*} \phi_{j,\delta}(\cdot) E(\cdot,y)\big|_{\cdot=x} + \psi_{\delta}(x) E(x,y), &y\in \Bngeq{1},\\
            0,&\text{otherwise.}
        \end{cases}
    \end{equation}
    Because \(E(x,y)\) vanishes to infinite order as \(y\rightarrow \Bnmo{1}\) (when considering \(\Extend_\delta^0\)), 
    \(\Extend_\delta^0 E(x,y)\) is smooth.
    We have, 
    \begin{equation*}
        \begin{split}
            &\supp(\Extend_0^\delta E(x,y ))
            \subseteq\left( \bigcup_{j=1}^L \left(  \Psi_{x,\delta}(\Qn{1})\times \left( \BngeqClosure{1/2}\cap \Bng{1} \right) \right) \right)
            \bigcup
            \left( \BngeqClosure{1/2}\times \BngeqClosure{1/2} \right)
            \\&\subseteq \Bn{3/4}\times \Bngeq{3/4}.
        \end{split}
    \end{equation*}
    so \(\supp(\Extend^0_\delta E)\in \CinftyCptSpace[\Bn{3/4}\times \Bngeq{3/4}])\).
    For \(\delta\in (\delta_2,1]\), set \(\Extend_\delta^0:=\Extend_{\delta_2}^0\); the estimates for
    \(\delta\in (\delta_2,1]\) follow from those for \(\delta=\delta_2\), so we may assume \(\delta\in (0,\delta_2]\)
    for the rest of the proof.

    To complete the proof, we wish to show that if \(\sE\in \PElemz{\WWdv, \Bngeq{7/8},\Vol}{\Bngeq{1/2}}\),
    then for \((E,\delta)\in \sE\), 
    \begin{equation}\label{Eqn::Spaces::Elem::Extend::Extend::ToShowPElemEst::Edelta0}
        \sum_{|\alpha|,|\beta|\leq N} \left| \left( \delta^{\Wdv}\Wh_x^\alpha \right) \left( \delta^{\Wdv} \Wh_x^{\beta} \right)
        \Extend_0^\delta E(x,y )
        \right|
        \lesssim
        \frac{
            \left( 1+\delta^{-1} \MetricWhWdv[x][y] \right)^{-m}
        }{
            \Vol[\BWhWdv{x}{\delta+\MetricWhWdv[x][y]}\wedge 1]
        },
    \end{equation}
    \(\forall N\), \(\forall m\).
    In light of Proposition \ref{Prop::Spaces::Elem::Extend::ParitionOfUnity} \ref{Item::Spaces::Elem::Extend::PartitionOfUnity::phiDontOverlapBig}
    each \((x,y)\) is in the support of at most
    \(M+1\) of the terms on the right-hand side of \eqref{Eqn::Spaces::Elem::Extend::Extend::DefineEdelta0},
    where \(M\lesssim 1\).
    Thus, it suffices to prove \eqref{Eqn::Spaces::Elem::Extend::Extend::ToShowPElemEst::Edelta0}
    for each term separately.

   In \eqref{Eqn::Spaces::Elem::Extend::Extend::ToShowPElemEst::Edelta0}, we may assume
    \(y\in \BngeqClosure{1/2}\), as otherwise \((x,y)\) is not in the support of \(\Extend_0^\delta E(x,y)\).
    We begin with the term \(\psi_{\delta}(x) E(x,y)\), which is supported on \(\BngeqClosure{1/2}\times \BngeqClosure{1/2}\),
    and so we may assume \((x,y)\in \BngeqClosure{1/2}\times \BngeqClosure{1/2}\), where \(\Wh_j=W_j\).
    We have, using Proposition \ref{Prop::Spaces::Elem::Extend::ParitionOfUnity} \ref{Item::Spaces::Elem::Extend::ParitionOfUnity::DerivEstimates},
    \begin{equation*}
        \begin{split}
            &\sum_{|\alpha|,|\beta|\leq N} \left| \left( \delta^{\Wdv}\Wh_x \right)^{\alpha} \left( \delta^{\Wdv} \Wh_y \right)^{\beta} \psi_\delta(y) E(x,y) \right|
            \lesssim \sum_{|\alpha|,|\beta|\leq N} \left| \left( \delta^{\Wdv}\Wh_x \right)^{\alpha} \left( \delta^{\Wdv} \Wh_y \right)^{\beta}  E(x,y) \right|
            \\&=\sum_{|\alpha|,|\beta|\leq N} \left| \left( \delta^{\Wdv}W_x \right)^{\alpha} \left( \delta^{\Wdv} W_y \right)^{\beta}  E(x,y) \right|
            \lesssim 
            \frac{
            \left( 1+\delta^{-1} \MetricWWdv[x][y] \right)^{-m}
        }{
            \Vol[\BWWdv{x}{\delta+\MetricWWdv[x][y]}\wedge 1]
        }
        \\&\approx
        \frac{
            \left( 1+\delta^{-1} \MetricWhWdv[x][y] \right)^{-m}
        }{
            \Vol[\BWhWdv{x}{\delta+\MetricWhWdv[x][y]}\wedge 1]
        },
        \end{split}
       \end{equation*}
    Where the second to last estimate uses the definition of \(\PElemz{\WhWdv, \Bngeq{7/8},\Vol}{\Bngeq{1/2}}\),
    and the last estimate uses Proposition \ref{Prop::VectorFields::Scaling::AmbientVolAndMetricEquivalnce}
    \ref{Item::VectorFields::Scaling::AmbientVolAndMetricEquivalnce::MetricWequalsMetricZ}
    and
    \ref{Item::VectorFields::Scaling::AmbientVolAndMetricEquivalnce::VolOfMetricWequalsVolOfMetricZ};
    establishing \eqref{Eqn::Spaces::Elem::Extend::Extend::ToShowPElemEst::Edelta0} for this term.

    Finally, we prove the estimate \eqref{Eqn::Spaces::Elem::Extend::Extend::ToShowPElemEst::Edelta0} for the term
    \(\left( \Psi_{x,\delta} \right)_{*} \Extend^1 \Psi_{x,\delta}^{*} \phi_{j,\delta}(\cdot) E(\cdot,y)\big|_{\cdot=x}\).
    This function is zero if \(x\not \in \Psi_{(x_j',0),\delta}(\Qn{1})\),
    so we assume \(x\in \Psi_{(x_j',0),\delta}(\Qn{1})\subseteq \BWhWdv{(x_j',0)}{\delta}\).
    As described above, we may assume \(y\in \Bngeq{1/2}\), where \(\Wh_j=W_j\), and so we may
    replace \(\Wh_y\) with \(W_y\) whenever it appears.
    We have, with \(\Wh^{(x_j',0),\delta}\) as in Theorem \ref{Thm::Spaces::Scaling::MainScalingThm},
    \begin{equation}\label{Eqn::Spaces::Elem::Extend::Extend::Extend0::Tmp1}
    \begin{split}
         &\sum_{|\alpha|,|\beta|\leq N} \left| \left( \delta^{\Wdv} \Wh_x \right)^{\alpha} \left( \delta^{\Wdv} \Wh_y \right)^{\beta} \left( \Psi_{x,\delta} \right)_{*} \Extend^1 \Psi_{x,\delta}^{*} \phi_{j,\delta}(\cdot) E(\cdot,y)\big|_{\cdot=x} \right|
         \\&\leq \sup_{u}\sum_{|\alpha|,|\beta|\leq N} \left| \left( \Wh_u^{(x_j',\delta)} \right)^{\alpha} \Extend^1 \Psi_{(x_j',0),\delta}^{*} \phi_{j,\delta}(\cdot) \left( \delta^{\Wdv} W_y \right)^{\beta}E(\cdot, y)\big|_{\cdot=u} \right|
         \\&\lesssim \sum_{|\beta|\leq N}\BCmNorm{ \Extend^1 \Psi_{(x_j',0),\delta}^{*} \phi_{j,\delta}(\cdot) \left( \delta^{\Wdv} W_y \right)^{\beta}E(\cdot, y)}{N}[\Qn{1}]
         \\&\lesssim \sum_{|\beta|\leq N}\BCmNorm{\Psi_{(x_j',0),\delta}^{*} \phi_{j,\delta}(\cdot) \left( \delta^{\Wdv} W_y \right)^{\beta}E(\cdot, y)}{N}[\Qngeq{1}],
    \end{split}
    \end{equation}
    where the second estimate uses Theorem \ref{Thm::Spaces::Scaling::MainScalingThm} \ref{Item::Spaces::Scaling::PullBackSmooth}
    and the final estimate uses Lemma \ref{Lemma::Spaces::Elem::Extend::Classical}.
    By Theorem \ref{Thm::Spaces::Scaling::MainScalingThm} \ref{Item::Spaces::Scaling::PullBackSmooth} and \ref{Item::Spaces::Scaling::PullBackSatisfyHormander}, we have (with \(m'\)
    given by \(m\) in that result),
    \begin{equation*}
        \BCmNorm{f}{N}[\Qngeq{1}]\lesssim \sum_{|\alpha|\leq m'N} \sup_{u\in \Qngeq{1}} \left| \left( \Wh^{(x_j',0),\delta} \right)^{\alpha} f(u) \right|.
    \end{equation*}
    So we have
    \begin{equation}\label{Eqn::Spaces::Elem::Extend::Extend::Extend0::Tmp2}
    \begin{split}
         &\sum_{|\beta|\leq N}\BCmNorm{\Psi_{(x_j',0),\delta}^{*} \phi_{j,\delta}(\cdot) \left( \delta^{\Wdv} W_y \right)^{\beta}E(\cdot, y)}{N}[\Qngeq{1}]
         \\&\lesssim \sum_{\substack{|\alpha|\leq m'N\\ |\beta|\leq N}}\sup_{u\in \Qngeq{1}} \left| \left( W_u^{(x_j',0),\delta} \right)^\alpha \Psi_{(x_j',0),\delta}^{*} \phi_{j,\delta}(\cdot) \left( \delta^{\Wdv}W_y \right)^{\beta} E(\cdot,y)\big|_{\cdot=u} \right|
         \\&\leq \sum_{\substack{|\alpha|\leq m'N\\ |\beta|\leq N}}\sup_{w\in \BWhWdv{(x_j',0)}{\delta} \cap \Bngeq{1}} \left| \left( \delta^{\Wdv} W_w \right)^{\alpha} \phi_{j,\delta}(w) \left( \delta^{\Wdv} W_y \right)^{\beta} E(w,y)  \right|
         \\&\lesssim \sum_{\substack{|\alpha|\leq m'N\\ |\beta|\leq N}}\sup_{w\in \BWhWdv{(x_j',0)}{\delta} \cap \Bngeq{1}} \left| \left( \delta^{\Wdv} W_w \right)^{\alpha}  \left( \delta^{\Wdv} W_y \right)^{\beta} E(w,y)  \right|
         \\&\lesssim \sup_{w\in \BWhWdv{(x_j',0)}{\delta}\cap \Bngeq{1}}
         \frac{
            \left( 1+\delta^{-1} \MetricWWdv[w][y] \right)^{-m}
        }{
            \Vol[\BWWdv{w}{\delta+\MetricWWdv[w][y]}\wedge 1]
        }
        \\&\lesssim \sup_{w\in \BWhWdv{(x_j',0)}{\delta}}
        \frac{
            \left( 1+\delta^{-1} \MetricWhWdv[w][y] \right)^{-m}
        }{
            \Vol[\BWWdv{w}{\delta+\MetricWhWdv[w][y]}\wedge 1]
        },
    \end{split}
    \end{equation}
    where the third estimate uses Proposition \ref{Prop::Spaces::Elem::Extend::ParitionOfUnity} \ref{Item::Spaces::Elem::Extend::ParitionOfUnity::DerivEstimates},
    the fourth uses the definition of \(\PElemz{\WhWdv, \Bngeq{7/8},\Vol}{\Bngeq{1/2}}\), and the final uses 
    Proposition \ref{Prop::VectorFields::Scaling::AmbientVolAndMetricEquivalnce}
    \ref{Item::VectorFields::Scaling::AmbientVolAndMetricEquivalnce::MetricWequalsMetricZ}
    and
    \ref{Item::VectorFields::Scaling::AmbientVolAndMetricEquivalnce::VolOfMetricWequalsVolOfMetricZ}.
    Since \(x\in\BWhWdv{(x_j',0)}{\delta} \), Lemma \ref{Lemma::Spaces::Elem::Extend::PreElemBoundUnderSmallPerterbations}
    shows
    \begin{equation}\label{Eqn::Spaces::Elem::Extend::Extend::Extend0::Tmp3}
        \sup_{w\in \BWhWdv{(x_j',0)}{\delta}}
        \frac{
            \left( 1+\delta^{-1} \MetricWhWdv[w][y] \right)^{-m}
        }{
            \Vol[\BWWdv{w}{\delta+\MetricWhWdv[w][y]}\wedge 1]
        }
        \approx
        \frac{
            \left( 1+\delta^{-1} \MetricWhWdv[x][y] \right)^{-m}
        }{
            \Vol[\BWWdv{x}{\delta+\MetricWhWdv[x][y]}\wedge 1]
        }.
    \end{equation}
    Combining \eqref{Eqn::Spaces::Elem::Extend::Extend::Extend0::Tmp1}, \eqref{Eqn::Spaces::Elem::Extend::Extend::Extend0::Tmp2},
    and \eqref{Eqn::Spaces::Elem::Extend::Extend::Extend0::Tmp3}
    establishes \eqref{Eqn::Spaces::Elem::Extend::Extend::ToShowPElemEst::Edelta0}
    for the term 
    \(\left( \Psi_{x,\delta} \right)_{*} \Extend^1 \Psi_{x,\delta}^{*} \phi_{j,\delta}(\cdot) E(\cdot,y)\big|_{\cdot=x}\),
    completing the proof.
\end{proof}

\begin{proof}[Proof of Proposition \ref{Prop::Spaces::Elem::Extend::Extend::Verbose} for \(\Extend_\delta\)]
    In light of Lemma \ref{Lemma::Spaces::Elem::Extend::ReduceToUnitBall}, we may work on the unit ball,
    with \(\Vol\) given by Lebesgue measure.
    Let \(\delta_2\in (0,1]\) be as in Proposition \ref{Prop::Spaces::Elem::Extend::ParitionOfUnity},
    and for \(\delta\in (0,\delta_2]\), let \(\phi_{j,\delta}\), \(j=1,\ldots,L=L(\delta)\), and \(\psi_\delta\)
    be as in that proposition.
    We define and study \(\Extend_{\delta}\) for \(\delta\in (0,\delta_2]\).
    For \(\delta\in (\delta_2,1]\), set \(\Extend_\delta:=\Extend_{\delta_2}\), and the required estimates
    for \(\Extend_\delta\) follow from those for \(\Extend_{\delta_2}\).
    We henceforth fix \(\delta\in (0,\delta_2]\).
    
    For \(E\in \CinftyCptSpace[\Bngeq{1}]\) with \(\supp(E)\subseteq \BngeqClosure{1/2}\times \BngeqClosure{1/2}\),
    we have
    \begin{equation*}
    \begin{split}
         &E(x,y)=\psi_\delta(x)E(x,y)+\left( 1-\psi_\delta(x) \right) E(x,y) \psi_\delta(y)
         +\sum_{j,k=1}^L \phi_{j,\delta}(x)E(x,y)\phi_{k,\delta}(y).
    \end{split}
    \end{equation*}
    
    We define the extension of each term separately.
    Since \((1-\psi_\delta(x))E(x,y)\psi_\delta(y)\) is supported in \(\BngeqClosure{1/2}\times \left( \BngeqClosure{1/2}\cap \Bng{1} \right)\),
    and therefore vanishes to infinite order as \(y\rightarrow \Bnmo{1}\),
    we may define
    \begin{equation*}
        \Extend_\delta (1-\psi_\delta(x))E(x,y)\psi_\delta(y) := \Extend_\delta^0 (1-\psi_\delta(x))E(x,y)\psi_\delta(y).
    \end{equation*}
    Using Proposition \ref{Prop::Spaces::Elem::Extend::ParitionOfUnity} \ref{Item::Spaces::Elem::Extend::ParitionOfUnity::DerivEstimates},
    if \(\sE\in \PElem{\WWdv, \Bngeq{7/8},\Vol}{\Bngeq{1/2}}\), then
    \begin{equation*}
        \left\{ ((1-\psi_\delta(x))E(x,y)\psi_\delta(y),\delta) : (E,\delta)\in \sE \right\}\in \PElemz{\WWdv, \Bngeq{7/8},\Vol}{\Bngeq{1/2}}.
    \end{equation*}
    Thus, it follows from the already proved result for \(\Extend_\delta^0\)
    that
    \begin{equation*}
         \left\{ \left( \Extend_\delta (1-\psi_\delta(x))E(x,y)\psi_\delta(y) ,\delta \right) : (E,\delta)\in \sE\right\}\in \PElem{\WWdv, \Bn{7/8},\Vol}{\BnClosure{3/4}},
    \end{equation*}
    as desired.

    Similarly, \(\psi_\delta(x)E(x,y)\) is supported away from \(x_n=0\), so we may define
    \(\Extend_\delta \psi_\delta(x)E(x,y)\) by reversing the roles of \(x\) and \(y\) and applying \(\Extend_\delta^0\).
    The same proof as above shows
    \begin{equation*}
        \left\{ \left( \Extend_\delta \psi_\delta(x)E(x,y) ,\delta \right) : (E,\delta)\in \sE\right\}\in \PElem{\WWdv, \Bn{7/8},\Vol}{\BnClosure{3/4}}.
   \end{equation*}

    Finally, we address the terms \(\phi_{j,\delta}(x)E(x,y)\phi_{k,\delta}(y)\). Set
    \(\Psih_{j,k,\delta}(u,v):=\left( \Psi_{(x_j',0),\delta}(u), \Psi_{(x_k',0),\delta }(v) \right)\),
    where \(x_j'\) is as in  Proposition \ref{Prop::Spaces::Elem::Extend::ParitionOfUnity}.
    Let 
    \begin{equation*}
        \Extend^2:\left\{ F\in \CinftySpace[\Qngeq{1}\times \Qngeq{1}] : \supp(F)\subseteq \QngeqClosure{1/2}\times \QngeqClosure{1/2} \right\}\rightarrow \CinftyCptSpace[\Qn{3/4}\times \Qn{3/4}]
    \end{equation*}
    be a classical extension operator as in Lemma \ref{Lemma::Spaces::Elem::Extend::Classical}.
    Set
    \begin{equation*}
        \Extend_\delta\phi_{j,\delta}(x)E(x,y)\phi_{k,\delta}(y)
        :=\left( \Psih_{j,k,\delta} \right)_{*} \Extend^2 \Psih_{j,k,\delta} ^{*} \phi_{j,\delta}(\cdot_1) E(\cdot_1,\cdot_2) \phi_{k,\delta}(\cdot_2)\big|_{\substack{\cdot_1=x \\ \cdot_2=y}}.
    \end{equation*}
    We have
    \begin{equation*}
        \supp\left( \Extend_\delta\phi_{j,\delta}(x)E(x,y)\phi_{k,\delta}(y) \right)\subseteq 
        \Psi_{(x_j',0),\delta}(\Qn{1})\times \Psi_{(x_j',0),\delta}(\Qn{1})
        \subseteq
        \Bn{3/4}\times \Bn{3/4},
    \end{equation*}
    and so \(\Extend_\delta\phi_{j,\delta}(x)E(x,y)\phi_{k,\delta}(y)\in \CinftyCptSpace[\Bn{3/4}\times \Bn{3/4}]\).

    To complete the proof, we wish to show that if \(\sE\in \PElem{\WWdv, \Bngeq{7/8},\Vol}{\Bngeq{1/2}}\),
    then \(\forall (E,\delta)\in \sE\), \(\forall L\), \(\forall N\),
    \begin{equation}\label{Eqn::Spaces::Elem::Extend::Extenddelta::ToShowWithSum}
    \begin{split}
         &\sum_{|\alpha|,|\beta|\leq N} \left|  \left( \delta^{\Wdv}\Wh_x \right)^{\alpha}\left( \delta^{\Wdv}\Wh_y \right)^{\beta} \sum_{j,k}\Extend_\delta\phi_{j,\delta}(x)E(x,y)\phi_{k,\delta}(y) \right|
         \lesssim
        \frac{
            \left( 1+\delta^{-1} \MetricWhWdv[x][y] \right)^{-L}
        }{
            \Vol[\BWhWdv{x}{\delta+\MetricWhWdv[x][y]}\wedge 1]
        }.
    \end{split}
    \end{equation}
    By Proposition \ref{Prop::Spaces::Elem::Extend::ParitionOfUnity} \ref{Item::Spaces::Elem::Extend::PartitionOfUnity::phiDontOverlapBig},
    each \((x,y)\) only lies in the support of at most \(M^2\) of the terms in the \(\sum_{j,k}\), where \(M\lesssim 1\).
    It therefore suffices to estimate each term separately; i.e., we wish to show
    \begin{equation}\label{Eqn::Spaces::Elem::Extend::Extenddelta::ToShowWithOutSum}
        \begin{split}
             &\sum_{|\alpha|,|\beta|\leq N} \left|  \left( \delta^{\Wdv}\Wh_x \right)^{\alpha}\left( \delta^{\Wdv}\Wh_y \right)^{\beta} \Extend_\delta\phi_{j,\delta}(x)E(x,y)\phi_{k,\delta}(y) \right|
             \lesssim
            \frac{
                \left( 1+\delta^{-1} \MetricWhWdv[x][y] \right)^{-L}
            }{
                \Vol[\BWhWdv{x}{\delta+\MetricWhWdv[x][y]}\wedge 1]
            }.
        \end{split}
        \end{equation}
        We henceforth assume \(x\in \BWhWdv{(x_j',0)}{\delta}\) and \(y\in \BWhWdv{(x_k',0)}{\delta}\),
        as otherwise 
        the left-hand side of \eqref{Eqn::Spaces::Elem::Extend::Extenddelta::ToShowWithOutSum} is \(0\).

    Letting \(W^{(x_j',0),\delta}\) be as in Theorem \ref{Thm::Spaces::Scaling::MainScalingThm},
    we have
    \begin{equation}\label{Eqn::Spaces::Elem::Extend::Extenddelta::FinalEquations1}
    \begin{split}
         &\sum_{|\alpha|,|\beta|\leq N} \left|  \left( \delta^{\Wdv}\Wh_x \right)^{\alpha}\left( \delta^{\Wdv}\Wh_y \right)^{\beta} \Extend_\delta\phi_{j,\delta}(x)E(x,y)\phi_{k,\delta}(y) \right|
         \\&\leq \sup_{u,v\in \Qn{1}} \left|  \left( \Wh_u^{(x_j',0),\delta} \right)^{\alpha}\left( \Wh_v^{(x_k',0),\delta} \right)^{\beta} \Extend^2 \Psih_{j,k,\delta}^{*} \phi_{j,\delta}(\cdot_1)E(\cdot_1,\cdot_2)\phi_{k,\delta}(\cdot_2)\big|_{\substack{u=\cdot_1\\v=\cdot_2}} \right|
         \\&\lesssim \BCmNorm{\Extend^2\Psih_{j,k,\delta}^{*} \phi_{j,\delta}(\cdot_1) E(\cdot_1,\cdot_2)\phi_{k,\delta}(\cdot_2)}{2N}[\Qn{1}\times \Qn{1}]
         \\&\lesssim \BCmNorm{\Psih_{j,k,\delta}^{*}\phi_{j,\delta}(\cdot_1) E(\cdot_1,\cdot_2)\phi_{k,\delta}(\cdot_2)}{2N}[\Qngeq{1}\times \Qngeq{1}],
    \end{split}
    \end{equation}
    where the second to last estimate follows from Theorem \ref{Thm::Spaces::Scaling::MainScalingThm} \ref{Item::Spaces::Scaling::PullBackSmooth},
    and the last estimate follows from Lemma \ref{Lemma::Spaces::Elem::Extend::Classical}.
    By Theorem \ref{Thm::Spaces::Scaling::MainScalingThm} \ref{Item::Spaces::Scaling::PullBackSmooth} and \ref{Item::Spaces::Scaling::PullBackSatisfyHormander}, we have 
    (with \(m\) as in that result),
    \begin{equation}\label{Eqn::Spaces::Elem::Extend::Extenddelta::SwitchToVFs}
    \begin{split}
         &\BCmNorm{F}{2N}[\Qngeq{1}\times \Qngeq{1}]
         \lesssim \sum_{|\alpha|,|\beta|\leq 2mN} \sup_{u,v\in \Qngeq{1}} \left|  \left( \Wh_u^{(x_j',0),\delta} \right)^{\alpha}\left( \Wh_v^{(x_k',0),\delta} \right)^{\beta} F(u,v) \right|.
    \end{split}
    \end{equation}
    Using \eqref{Eqn::Spaces::Elem::Extend::Extenddelta::SwitchToVFs}, we have
    \begin{equation}\label{Eqn::Spaces::Elem::Extend::Extenddelta::FinalEquations2}
    \begin{split}
         &\BCmNorm{\Psih_{j,k,\delta}^{*} \phi_{j,\delta}(\cdot_1) E(\cdot_1,\cdot_2)\phi_{k,\delta}(\cdot_2)}{2N}[\Qngeq{1}\times \Qngeq{1}]
         \\&\lesssim \sum_{|\alpha|,|\beta|\leq 2mN} \sup_{u,v\in \Qngeq{1}} \left|\left( \Wh_u^{(x_j',0),\delta} \right)^{\alpha}\left( \Wh_v^{(x_k',0),\delta} \right)^{\beta}\Psih_{j,k,\delta}^{*} \phi_{j,\delta}(\cdot_1) E(\cdot_1,\cdot_2) \phi_{k,\delta}(\cdot_2)  \right|
         \\&=\sum_{|\alpha|,|\beta|\leq 2mN} \sup_{\xt,\yt\in \Psih_{j,k,\delta}(\Qngeq{1})} 
        \left| 
            \left( \delta^{\Wdv} \Wh_{\xt} \right)^{\alpha}
            \left( \delta^{\Wdv} \Wh_{\yt} \right)^{\beta} 
            \phi_{j,\delta}(\xt) E(\xt,\yt) \phi_{k,\delta}(\yt) 
        \right|
        \\&\lesssim \sum_{|\alpha|,|\beta|\leq 2mN}
        \sup_{\substack{\xt\in \BWhWdv{(x_j',0)}{\delta}\cap \Bngeq{1}\\ \yt\in \BWhWdv{(x_k',0)}{\delta}}\cap \Bngeq{1}}
        \left| 
            \left( \delta^{\Wdv} \Wh_{\xt} \right)^{\alpha}
            \left( \delta^{\Wdv} \Wh_{\yt} \right)^{\beta} 
            E(\xt,\yt)
        \right|
        \\&\lesssim
        \sup_{\substack{\xt\in \BWhWdv{(x_j',0)}{\delta}\cap \Bngeq{1}\\ \yt\in \BWhWdv{(x_k',0)}{\delta}}\cap\Bngeq{1}}
        \frac{
                \left( 1+\delta^{-1} \MetricWWdv[x][y] \right)^{-L}
            }{
                \Vol[\BWWdv{x}{\delta+\MetricWWdv[x][y]}\wedge 1]
            }
        \\&\lesssim 
        \sup_{\substack{\xt\in \BWhWdv{(x_j',0)}{\delta}\\ \yt\in \BWhWdv{(x_k',0)}{\delta}}}
        \frac{
                \left( 1+\delta^{-1} \MetricWhWdv[x][y] \right)^{-L}
            }{
                \Vol[\BWhWdv{x}{\delta+\MetricWhWdv[x][y]}\wedge 1]
            }
    \end{split}
    \end{equation}
    where the third-to-last estimate used Proposition \ref{Prop::Spaces::Elem::Extend::ParitionOfUnity} \ref{Item::Spaces::Elem::Extend::ParitionOfUnity::DerivEstimates},
    the second-to-last used the definition of \(\PElem{\WWdv, \Bngeq{7/8},\Vol}{\Bngeq{1/2}}\),
    and the last used Proposition \ref{Prop::VectorFields::Scaling::AmbientVolAndMetricEquivalnce}
    \ref{Item::VectorFields::Scaling::AmbientVolAndMetricEquivalnce::MetricWequalsMetricZ}
    and
    \ref{Item::VectorFields::Scaling::AmbientVolAndMetricEquivalnce::VolOfMetricWequalsVolOfMetricZ}.

    Since  \(x\in \BWhWdv{(x_j',0)}{\delta}\) and \(y\in \BWhWdv{(x_k',0)}{\delta}\),
    Lemma \ref{Lemma::Spaces::Elem::Extend::PreElemBoundUnderSmallPerterbations} shows
    \begin{equation}\label{Eqn::Spaces::Elem::Extend::Extenddelta::FinalEquations3}
    \begin{split}
         &\sup_{\substack{\xt\in \BWhWdv{(x_j',0)}{\delta}\\ \yt\in \BWhWdv{(x_k',0)}{\delta}}}
         \frac{
                 \left( 1+\delta^{-1} \MetricWhWdv[x][y] \right)^{-L}
             }{
                 \Vol[\BWhWdv{x}{\delta+\MetricWhWdv[x][y]}\wedge 1]
             }
             \approx
             \frac{
                 \left( 1+\delta^{-1} \MetricWhWdv[x][y] \right)^{-L}
             }{
                 \Vol[\BWhWdv{x}{\delta+\MetricWhWdv[x][y]}\wedge 1]
             }.
    \end{split}
    \end{equation}
    Combining \eqref{Eqn::Spaces::Elem::Extend::Extenddelta::FinalEquations1}, \eqref{Eqn::Spaces::Elem::Extend::Extenddelta::FinalEquations2},
    and \eqref{Eqn::Spaces::Elem::Extend::Extenddelta::FinalEquations3}
    establishes \eqref{Eqn::Spaces::Elem::Extend::Extenddelta::ToShowWithOutSum} and completes the proof.
\end{proof}

        \subsubsection{Pre-elemenary operators}

We first prove part of Lemma \ref{Lemma::Spaces::LP::ElemDoesntDependOnChoices}:
\begin{lemma}\label{Lemma::Spaces::Elem::PreElem::PreElemDoesntDependOnChoices}
    Let \(\FilteredSheafG\), \(\Omegah\), \(\ZZde\), and \(\Volh\) be as in Lemma \ref{Lemma::Spaces::LP::ElemDoesntDependOnChoices}.
    Then,
    \begin{equation*}
        \PElem{\WWdv, \Omega,\Vol}{\Compact}=\PElem{\ZZde,\Omegah,\Volh}{\Compact},\quad \PElemz{\WWdv, \Omega,\Vol}{\Compact}=\PElemz{\ZZde,\Omegah,\Volh}{\Compact}.
    \end{equation*}
\end{lemma}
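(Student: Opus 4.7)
The plan is to show directly that $\sE\in\PElem{\WWdv,\Omega,\Vol}{\Compact}$ if and only if $\sE\in\PElem{\ZZde,\Omegah,\Volh}{\Compact}$, with an essentially identical argument for the $\PElemzOnlySubscript$ versions. The support condition of Definition \ref{Defn::Spaces::LP::PElemWWdv} \ref{Item::Spaces::LP::PElemWWdv::Support} and the vanishing-to-infinite-order condition at $\BoundaryN$ are intrinsic to $E$ and do not mention any of the choices, so all the work is in showing that the estimate \eqref{Eqn::Spaces::LP::PreElemBound} for one data set implies it for the other. After replacing $\Omega$ by $\Omega\cap\Omegah$ we may assume the two sets of generators live on a common open neighborhood of $\Compact$ inside $\ManifoldNncF$, on which $\LieFilteredSheafF=\LieFilteredSheafG$.

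The right-hand side of \eqref{Eqn::Spaces::LP::PreElemBound} is handled immediately by Proposition \ref{Prop::VectorFields::Scaling::VolAndMetricEquivalnce} \ref{Item::VectorFields::Scaling::VolAndMetricEquivalnce::MetricWequalsMetricZ} and \ref{Item::VectorFields::Scaling::VolAndMetricEquivalnce::VolOfMetricWequalsVolOfMetricZ}, which give $\MetricWWdv\approx\MetricZZde$ on $\Compact$ and comparability (up to truncation at $1$) of the corresponding ball volumes. The substantive part is to compare the derivative expressions $(2^{-j\Wdv}W)^{\alpha}$ and $(2^{-j\Zde}Z)^{\alpha'}$. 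Since $\LieFilteredSheafF=\LieFilteredSheafG$ on the common open set, each $Z_k$ lies in $\LieFilteredSheafF[\cdot][\Zde_k]$ and hence can be written as a $\CinftySpace$-linear combination of iterated commutators $C_I$ of the $W_j$'s with $\DegWdv{C_I}\leq \Zde_k$. For such a commutator $C_I=[W_{i_1},[W_{i_2},\ldots,W_{i_l}]]$, a direct calculation shows $2^{-j\DegWdv{C_I}}C_I=[2^{-j\Wdv_{i_1}}W_{i_1},[\ldots,2^{-j\Wdv_{i_l}}W_{i_l}]]$, which as an operator expands (with $\pm 1$ coefficients) into a sum of operator products $(2^{-j\Wdv}W)^{\sigma}$ with $\DegWdv{\sigma}=\DegWdv{C_I}$. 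Combining, $2^{-j\Zde_k}Z_k$ acts on a smooth function as a finite sum $\sum_\sigma c_{k,\sigma}(x)\,2^{-j(\Zde_k-\DegWdv{\sigma})}(2^{-j\Wdv}W)^{\sigma}$ with smooth bounded coefficients and the exponents $\Zde_k-\DegWdv{\sigma}\geq 0$, so the $2^j$-factors are uniformly bounded.

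From here I would induct on $|\alpha|+|\beta|$ to produce a finite expansion
\begin{equation*}
    (2^{-j\Zde}Z_x)^{\alpha}(2^{-j\Zde}Z_y)^{\beta}E(x,y)
    =\sum_{\alpha',\beta'} c_{\alpha',\beta'}(x,y)\,(2^{-j\Wdv}W_x)^{\alpha'}(2^{-j\Wdv}W_y)^{\beta'}E(x,y),
\end{equation*}
where the sum is finite, the $c_{\alpha',\beta'}$ are smooth functions on the common open set bounded uniformly in $j$, and $\DegWdv{\alpha'}\leq\DegZde{\alpha}$ (respectively $\DegWdv{\beta'}\leq\DegZde{\beta}$). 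The induction step uses the Leibniz rule: when a new factor $2^{-j\Zde_k}Z_{k,x}$ hits the previous expansion, it either differentiates one of the smooth coefficients $c_{\alpha',\beta'}$ (producing a new bounded smooth coefficient with the same $W$-products) or it differentiates $(2^{-j\Wdv}W_x)^{\alpha'}E$, in which case the single-$Z$ expansion above produces new $W$-products with $\DegWdv{\sigma}\leq\Zde_k$. Feeding this expansion into the estimate \eqref{Eqn::Spaces::LP::PreElemBound} for $(\WWdv,\Vol)$ and invoking the kernel comparability from Proposition \ref{Prop::VectorFields::Scaling::VolAndMetricEquivalnce} yields the estimate for $(\ZZde,\Volh)$; by symmetry the converse holds. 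The main obstacle is strictly bookkeeping: keeping track of the combinatorics of the inductive expansion and the degree inequality $\DegWdv{\sigma}\leq\DegZde{\alpha}$ so that the $2^j$-powers are always non-positive and the estimate closes uniformly.
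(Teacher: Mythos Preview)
Your proposal is correct and follows essentially the same approach as the paper: both handle the right-hand side of \eqref{Eqn::Spaces::LP::PreElemBound} via Proposition~\ref{Prop::VectorFields::Scaling::VolAndMetricEquivalnce}, and both handle the left-hand side by writing one family of scaled vector fields in terms of the other with bounded coefficients and nonnegative powers of $2^{-j}$. The only cosmetic difference is that the paper first reduces to the case $\FilteredSheafG=\LieFilteredSheafF$, which lets one direction avoid commutators entirely (since then $W_j\in\FilteredSheafGenBy{\ZZde}[\Omega\cap\Omegah][\Wdv_j]$ directly), whereas you treat both directions symmetrically through commutators; your induction is effectively re-deriving the statement behind Remark~\ref{Rmk::Filtrations::DiffOps::DoesntDependOnChoices} that $\FilteredSheafF$-degree is independent of the choice of generators.
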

\begin{proof}
    The result for \(\PElemzSymbol\) follows from the result for \(\PElemSymbol\), so we prove only the result for \(\PElemSymbol\).
    It suffices to prove the result with \(\FilteredSheafG\) replaced by \(\LieFilteredSheafF\) (see Proposition \ref{Prop::Filtrations::RestrictingFiltrations::LieFIsHormanderFiltration}), 
    and we henceforth make this
    replacement.

    Proposition \ref{Prop::VectorFields::Scaling::VolAndMetricEquivalnce} \ref{Item::VectorFields::Scaling::VolAndMetricEquivalnce::MetricWequalsMetricZ} and \ref{Item::VectorFields::Scaling::VolAndMetricEquivalnce::VolOfMetricWequalsVolOfMetricZ} 
    shows that the right-hand side
    \eqref{Eqn::Spaces::LP::PreElemBound} is equivalent whether we use \(\WWdv\), \(\Omega\), and \(\Vol\),
    or \(\ZZde\), \(\Omegah\), and \(\Volh\).
    Thus, it remains to show that in the left hand side of \eqref{Eqn::Spaces::LP::PreElemBound}, it is equivalent
    to either use \(\WWdv\) or \(\ZZde\).

    Since
    \((W_j,\Wdv_j)\in \FilteredSheafF[\Omega\cap \Omegah][\Wdv_j]\subset \LieFilteredSheafF[\Omega\cap \Omegah][\Wdv_j]=\FilteredSheafGenBy{\ZZde}[\Omega\cap \Omegah][\Wdv_j]\),
    we may replace any term on the left-hand side of \eqref{Eqn::Spaces::LP::PreElemBound} with sums of terms of that form
    with \(\WWdv\) replaced by \(\ZZde\). We conclude  \(\PElem{\WWdv, \Omega,\Vol}{\Compact}\subseteq\PElem{\ZZde,\Omegah,\Volh}{\Compact}\).
    
    For the reverse containment, let \(\GenWWdv\) be as in Lemma \ref{Lemma::Filtrations::GeneratorsForLieFiltration}.
    It follows
    from Lemma \ref{Lemma::Filtrations::GeneratorsForLieFiltration} that there is a neighborhood \(\Omega_1\Subset \ManifoldN\)
    of \(\Compact\) on which each
    \((Z_j,e_j)\) can be written as a \(\CinftySpace[\Omega_1]\) linear combination of 
    \(\left\{ (V,f)\in \GenWWdv : f\leq e_j \right\}\). Using this, the left-hand side of \eqref{Eqn::Spaces::LP::PreElemBound} with \(\WWdv\) replaced by \(\ZZde\)
    can be bounded by a sum of terms of the same form using \(\WWdv\).
    We conclude  \(\PElem{\WWdv, \Omega,\Vol}{\Compact}\supseteq\PElem{\ZZde,\Omegah,\Volh}{\Compact}\), completing the proof.
\end{proof}

With Lemma \ref{Lemma::Spaces::Elem::PreElem::PreElemDoesntDependOnChoices} in hand, 
we no longer require Notation \ref{Notation::Spaces::Elem::Intro::VerposeElemNotation} when referring
to \(\PElemOnlySubscript{\FilteredSheafF}\) and \(\PElemzOnlySubscript{\FilteredSheafF}\).

\begin{lemma}\label{Lemma::Spaces::Elem::PElem::PElemOpsBoundedOnLp}
    Let \(\sE\in \PElemF{\Compact}\). Then,
    \begin{equation*}
        \sup_{1\leq p\leq \infty} \sup_{(E,\delta)\in \sE} \LpOpNorm{E}{p}<\infty.
    \end{equation*}
\end{lemma}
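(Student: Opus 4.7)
The plan is to apply the classical Schur test for $L^p$-boundedness: if $\sup_x \int |E(x,y)|\, d\Vol(y) \leq A$ and $\sup_y \int |E(x,y)|\, d\Vol(x) \leq A$, then the operator with kernel $E$ satisfies $\LpOpNorm{E}{p} \leq A$ for every $1 \leq p \leq \infty$. Thanks to the symmetry observed in Remark \ref{Rmk::Spaces::LP::BoundIsSymmetric}, the two integral estimates are equivalent (up to a constant), so it suffices to show
\begin{equation*}
\sup_{x \in \Compact} \sup_{(E,\delta)\in \sE} \int |E(x,y)|\, d\Vol(y) < \infty.
\end{equation*}

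Fix any $m \geq 1$ and invoke the defining bound \eqref{Eqn::Spaces::LP::PreElemBound} with $\alpha = \beta = 0$. Since $E(x,\cdot)$ is supported in $\Compact$, the integration takes place on $\Compact$. I would decompose $\Compact$ dyadically in $\MetricWWdv[x][y]$: let
\begin{equation*}
A_0 = \{ y \in \Compact : \MetricWWdv[x][y] \leq \delta \}, \qquad A_k = \{ y \in \Compact : 2^{k-1}\delta < \MetricWWdv[x][y] \leq 2^k \delta\}, \quad k \geq 1.
\end{equation*}
On $A_k$ we have $\delta + \MetricWWdv[x][y] \approx 2^k\delta$ and $1 + 2^j\MetricWWdv[x][y] \gtrsim 2^k$ (for $k \geq 1$; the $k=0$ piece is handled trivially), so the integrand is controlled by $2^{-km}/(\Vol[\BWWdv{x}{2^k\delta}] \wedge 1)$.

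The key estimate is then $\int_{A_k} |E(x,y)|\, d\Vol(y) \lesssim 2^{-km}$ uniformly in $k$, $x$, and $(E,\delta)$, after which summing over $k \geq 0$ finishes the proof. To establish it, note that $A_k \subseteq \Compact \cap \BWWdv{x}{2^k\delta}$, so $\Vol[A_k] \leq \min\bigl(\Vol[\Compact],\, \Vol[\BWWdv{x}{2^k\delta}]\bigr)$. The only point requiring attention is the $\wedge 1$ appearing in the denominator: if $\Vol[\BWWdv{x}{2^k\delta}] \leq 1$ the doubling estimate in Proposition \ref{Prop::VectorFields::Scaling::VolEstimates} \ref{Item::VectorFields::Scaling::VolEstimates::VolWedge1Doubling} gives $\Vol[A_k]/(\Vol[\BWWdv{x}{2^k\delta}] \wedge 1) \lesssim 1$, while if $\Vol[\BWWdv{x}{2^k\delta}] > 1$ the denominator equals $1$ and the numerator is bounded by $\Vol[\Compact] < \infty$. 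In both regimes the ratio is $O(1)$, yielding the claimed bound.

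This is genuinely the whole argument; the lemma is a standard Schur-test consequence of the kernel decay. The only mild obstacle is the bookkeeping surrounding the $\wedge 1$ truncation, which is why a case split on $\Vol[\BWWdv{x}{2^k\delta}]$ (or equivalently on whether $2^k\delta \leq \delta_1$) is needed; Proposition \ref{Prop::VectorFields::Scaling::VolEstimates} supplies the doubling and saturation properties that make both cases go through.
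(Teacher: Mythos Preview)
Your argument is correct and is precisely the direct route the paper itself flags as an alternative. The paper's actual proof proceeds differently: it uses the extension operators $\Extend_\delta$ of Proposition~\ref{Prop::Spaces::Elem::Extend::Extend::NonVerbose} to push each kernel to the ambient manifold without boundary $\ManifoldM$, obtaining $\sEh\in\PElemFh{\overline{\Omega_1}}$, and then cites the already-established boundaryless result \cite[Corollary~5.4.11]{StreetMaximalSubellipticity}. The paper then remarks that one could instead ``recreate the proofs \ldots\ on a manifold with boundary with almost no changes, and avoid using extension operators''---which is exactly your Schur-test computation. Your approach is more self-contained and elementary; the paper's approach is shorter on the page (by outsourcing the work) and has the advantage of simultaneously yielding the vector-valued Proposition~\ref{Prop::Spaces::Elem::PElem::PElemOpsBoundedOnVV}, which a bare Schur test does not immediately give.

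One small remark on the bookkeeping: splitting on whether $\Vol[\BWWdv{x}{2^k\delta}]\lessgtr 1$ is slightly awkward, since the genuine doubling of Proposition~\ref{Prop::VectorFields::Scaling::VolEstimates}~\ref{Item::VectorFields::Scaling::VolEstimates::VolDoubling} is only stated for radii $\leq\delta_1$. The split on $2^k\delta\lessgtr\delta_1$ that you note parenthetically is cleaner; alternatively, since $\Vol[A_k]\leq\Vol[\Compact]$ one has $\Vol[A_k]\lesssim\Vol[A_k]\wedge 1$, after which part~\ref{Item::VectorFields::Scaling::VolEstimates::VolWedge1Doubling} handles all $k$ uniformly.
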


\begin{proposition}\label{Prop::Spaces::Elem::PElem::PElemOpsBoundedOnVV}
    Let \(\VSpace{p}{q}\) be as in Notation \ref{Notation::Spaces::Classical::VSpacepq}
    and let \(\sE\in \PElemF{\Compact}\). For \(\sE'=\left\{ (E_j,\delta_j) : j\in \Zgeq \right\}\subseteq \sE\)
    define
    \begin{equation*}
        \sT_{\sE'} \left\{ f_j \right\}_{j\in \Zgeq} := \left\{ E_j f_j \right\}_{j\in \Zgeq}.
    \end{equation*}
    Then,
    \begin{equation*}
        \sup_{\sE'\subseteq \sE} \VOpNorm{\sT_{\sE'}}{p}{q}<\infty,
    \end{equation*}
    where the supremum is taken over all countable sequences in \(\sE\).
\end{proposition}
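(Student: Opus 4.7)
The plan is to treat the two cases of $\VSpace{p}{q}$ separately.

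\emph{Besov case.} When $\VSpace{p}{q}=\lqLpSpace{p}{q}$, the result is essentially immediate from Lemma \ref{Lemma::Spaces::Elem::PElem::PElemOpsBoundedOnLp}. Applying the uniform $L^p\to L^p$ operator bound termwise inside the $\ell^q$ norm gives
\begin{equation*}
    \lqLpNormNoSet{\sT_{\sE'}\{f_j\}}{p}{q}
    =\Big(\sum_j \BLpNorm{E_j f_j}{p}^q\Big)^{1/q}
    \lesssim \Big(\sum_j \BLpNorm{f_j}{p}^q\Big)^{1/q}
    =\lqLpNormNoSet{\{f_j\}}{p}{q},
\end{equation*}
uniformly over $\sE'\subseteq\sE$ (with the obvious modification when $q=\infty$).

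\emph{Triebel--Lizorkin case.} When $\VSpace{p}{q}=\LplqSpace{p}{q}$ with $1<p<\infty$ and $1<q\leq\infty$, I will reduce to a vector-valued maximal inequality on a space of homogeneous type. The key ingredient is a pointwise bound
\begin{equation*}
    |E f(x)| \lesssim \Maximal f(x), \quad x\in\Compact,
\end{equation*}
valid for every $(E,\delta)\in\sE$ with constant independent of $(E,\delta)$, where $\Maximal$ is the Hardy--Littlewood maximal operator adapted to the balls $\BWWdv{x}{r}$:
\begin{equation*}
    \Maximal f(x) := \sup_{r>0}\frac{1}{\Vol[\BWWdv{x}{r}]\wedge 1}\int_{\BWWdv{x}{r}\cap\Compact}|f(y)|\,d\Vol(y).
\end{equation*}
To establish this pointwise bound, I would decompose $\ManifoldN$ into the annular regions $A_k(x):=\BWWdv{x}{2^k\delta}\setminus \BWWdv{x}{2^{k-1}\delta}$ for $k\geq 1$ together with $A_0(x):=\BWWdv{x}{\delta}$. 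On $A_k(x)$, the kernel bound from Definition \ref{Defn::Spaces::LP::PElemWWdv} yields
\begin{equation*}
    |E(x,y)| \lesssim \frac{2^{-km}}{\Vol[\BWWdv{x}{2^k\delta}]\wedge 1},
\end{equation*}
so $\int_{A_k(x)}|E(x,y)||f(y)|\,d\Vol(y)\lesssim 2^{-km}\Maximal f(x)$. Choosing $m$ larger than the doubling exponent given by Proposition \ref{Prop::VectorFields::Scaling::VolEstimates} \ref{Item::VectorFields::Scaling::VolEstimates::VolWedge1Doubling} and summing the geometric series yields the pointwise estimate.

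\emph{Applying Fefferman--Stein.} Propositions \ref{Prop::VectorFields::Scaling::SameTopology} and \ref{Prop::VectorFields::Scaling::VolEstimates} together make a neighborhood of $\Compact$ into a (truncated) space of homogeneous type, on which the standard Fefferman--Stein vector-valued maximal inequality holds for $1<p<\infty$ and $1<q\leq\infty$:
\begin{equation*}
    \BLpNorm{\bigl(\textstyle\sum_j|\Maximal f_j|^q\bigr)^{1/q}}{p}
    \lesssim \BLpNorm{\bigl(\textstyle\sum_j|f_j|^q\bigr)^{1/q}}{p}.
\end{equation*}
Combining this with the pointwise estimate $|E_j f_j(x)|\lesssim \Maximal f_j(x)$ applied termwise gives $\LplqNormNoSet{\sT_{\sE'}\{f_j\}}{p}{q}\lesssim \LplqNormNoSet{\{f_j\}}{p}{q}$, uniformly over countable $\sE'\subseteq\sE$.

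The main obstacle is verifying the pointwise maximal function bound uniformly in $(E,\delta)\in\sE$ in the presence of the $\wedge 1$ truncation in the denominator, and handling the case $\delta$ comparable to $1$ where the annular decomposition effectively terminates; the compactness of $\Compact$ together with $\MetricWWdv\leq 1$ resolves this by ensuring only finitely many annuli contribute. Once the pointwise bound is in hand, the vector-valued maximal inequality on spaces of homogeneous type is a classical result and closes out the argument.
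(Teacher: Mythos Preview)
Your proof is correct, and it takes a different route from the paper's primary argument. The paper proves this proposition (jointly with Lemma \ref{Lemma::Spaces::Elem::PElem::PElemOpsBoundedOnLp}) by invoking the extension operators $\Extend_\delta$ of Proposition \ref{Prop::Spaces::Elem::Extend::Extend::NonVerbose} to replace $\sE$ by $\sEh=\{(\Extend_\delta E,\delta):(E,\delta)\in\sE\}\in\PElemFh{\overline{\Omega_1}}$ on an ambient manifold $\ManifoldM$ without boundary, and then citing the known boundaryless results \cite[Corollary 5.4.11 and Lemma 5.4.12]{StreetMaximalSubellipticity}. The paper itself remarks that one could alternatively recreate those proofs directly on $\ManifoldN$ ``with almost no changes''; your argument is precisely that alternative, carried out in detail. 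Your Besov case is exactly the termwise application of Lemma \ref{Lemma::Spaces::Elem::PElem::PElemOpsBoundedOnLp}, and your Triebel--Lizorkin argument (annular decomposition plus Fefferman--Stein on a space of homogeneous type) is the standard mechanism underlying the cited results; indeed the paper uses essentially the same pointwise maximal bound later in Lemma \ref{Lemma::Trace::Forward::BoundPreElemByMaximal} and the same vector-valued maximal inequality in \eqref{Eqn::Trace::Forward::VVMaximalIneq}. The extension approach buys brevity by outsourcing the analysis; your direct approach is self-contained and avoids the machinery of Section on extension operators, at the cost of redoing estimates that are already available in the boundaryless literature.
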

\begin{proof}[Proof of Lemma \ref{Lemma::Spaces::Elem::PElem::PElemOpsBoundedOnLp} and Proposition \ref{Prop::Spaces::Elem::PElem::PElemOpsBoundedOnVV}]
    Let \(\ManifoldM\) and \(\FilteredSheafFh\) be as in Proposition \ref{Prop::Filtrations::RestrictingFiltrations::CoDim0CoRestriction},
    and let \(\Extend_\delta\) be as in Proposition \ref{Prop::Spaces::Elem::Extend::Extend::NonVerbose}, where \(\Omega_1\)
    is any set as in the statement of that proposition.
    In light of Proposition \ref{Prop::Spaces::Elem::Extend::Extend::NonVerbose} \ref{Item::Spaces::Elem::Extend::Extend::NonVerbose::ExtendRestrictIsI}
    it suffices to prove the results with
    \(\sE\) replaced by \(\sEh:=\left\{ (\Extend_\delta E, \delta) : (E,\delta)\in \sE \right\}\).
    By Proposition \ref{Prop::Spaces::Elem::Extend::Extend::NonVerbose} \ref{Item::Spaces::Elem::Extend::Extend::NonVerbose::ExtendPreElemIsPreElem},
    \(\sEh\in \PElemFh{\overline{\Omega_1}}\).
    In short, we see that it suffices to prove the result with \(\FilteredSheafF\) replaced by \(\FilteredSheafFh\);
    in other words, it suffices to prove the result on a manifold without boundary.
    On such a manifold without boundary, the results are shown in
    \cite[Corollary 5.4.11 and Lemma 5.4.12]{StreetMaximalSubellipticity}.
    Alternatively, one could recreate the proofs of \cite[Corollary 5.4.11 and Lemma 5.4.12]{StreetMaximalSubellipticity} on a manifold with boundary with almost no changes,
    and avoid using extension operators.
\end{proof}


\begin{lemma}\label{Lemma::Spaces::Elem::PreElem::ComposePreElemBound}
    \(\forall m\in \Zgeq\), \(\exists m'\in \Zgeq\), \(C\geq 1\), \(\forall x,z\in \Compact\), \(j,k\in [0,\infty)\),
    \begin{equation*}
    \begin{split}
         &\int_{\Compact} 
         \frac{\left( 1+2^{j}\MetricWWdv[x][y] \right)^{-m'}}{\Vol[\BWWdv{x}{2^{-j}+\MetricWWdv[x][y]}]}
         \frac{\left( 1+2^{j}\MetricWWdv[y][z] \right)^{-m'}}{\Vol[\BWWdv{y}{2^{-j}+\MetricWWdv[y][z]}]}
         \:d\Vol[y]
         \\&\leq C
         \frac{\left( 1+2^{j}\MetricWWdv[x][z] \right)^{-m}}{\Vol[\BWWdv{x}{2^{-j}+\MetricWWdv[x][z]}]}.
    \end{split}
    \end{equation*}
\end{lemma}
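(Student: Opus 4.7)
The plan is to carry out the standard ``composition of kernels on a space of homogeneous type'' argument, adapted to our Carnot--Carath\'eodory setting. The three ingredients I will draw on are: the doubling property from Proposition \ref{Prop::VectorFields::Scaling::VolEstimates} \ref{Item::VectorFields::Scaling::VolEstimates::VolDoubling} (together with \ref{Item::VectorFields::Scaling::VolEstimates::VolWedge1Doubling} to cover radii $>\delta_1$, since $\MetricWWdv\le 1$ keeps all radii bounded by $2$), the center-swap symmetry from Remark \ref{Rmk::Spaces::LP::BoundIsSymmetric}, and the polynomial volume lower bound in \ref{Item::VectorFields::Scaling::VolEstimates::VolPoly} to sum the final annular decomposition.

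First I would split $\Compact = A \cup B$ where
\[
A = \left\{y \in \Compact : \MetricWWdv[x][y] \geq \tfrac{1}{2}\MetricWWdv[x][z]\right\}, \qquad
B = \left\{y \in \Compact : \MetricWWdv[y][z] \geq \tfrac{1}{2}\MetricWWdv[x][z]\right\};
\]
the triangle inequality gives $A \cup B = \Compact$. The desired bound is essentially symmetric in $(x,z)$ once we use Remark \ref{Rmk::Spaces::LP::BoundIsSymmetric} to rewrite the denominator $\Vol[\BWWdv{y}{2^{-j}+\MetricWWdv[y][z]}]$ as $\Vol[\BWWdv{z}{2^{-j}+\MetricWWdv[y][z]}]$ (up to constants); so it suffices to control the integral over $A$, with the integral over $B$ handled identically after this swap.

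On $A$ I would use $\MetricWWdv[x][y] \gtrsim \MetricWWdv[x][z]$ twice: in the numerator to extract
\[
(1+2^j\MetricWWdv[x][y])^{-m'} \lesssim (1+2^j\MetricWWdv[x][z])^{-m}\,(1+2^j\MetricWWdv[x][y])^{-(m'-m)},
\]
and in the denominator, via monotonicity and doubling, to obtain $\Vol[\BWWdv{x}{2^{-j}+\MetricWWdv[x][y]}] \gtrsim \Vol[\BWWdv{x}{2^{-j}+\MetricWWdv[x][z]}]$. Pulling the $y$-independent factor $(1+2^j\MetricWWdv[x][z])^{-m}/\Vol[\BWWdv{x}{2^{-j}+\MetricWWdv[x][z]}]$ outside the integral leaves (after bounding the leftover decay factor by $1$) the classical tail integral
\[
\int_{\Compact}\frac{(1+2^j\MetricWWdv[z][y])^{-m'}}{\Vol[\BWWdv{z}{2^{-j}+\MetricWWdv[z][y]}]}\,d\Vol(y).
\]

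To conclude, I would decompose this integral into dyadic annuli $\{y : 2^{k-1}\cdot 2^{-j} \le \MetricWWdv[z][y] < 2^{k}\cdot 2^{-j}\}$, use that on the $k$-th annulus the denominator is $\gtrsim \Vol[\BWWdv{z}{2^{k-j}}]$ while the measure of the annulus is bounded by $\Vol[\BWWdv{z}{2^{k-j}}]$, and use $(1+2^j\MetricWWdv[z][y])^{-m'} \le 2^{-k(m'-1)}$, producing a geometric series summable once $m'$ is taken large (any $m' \ge m+1$ will do once the decay factor suffices). The main technical nuisance I anticipate is the bookkeeping for radii $2^{-j}+\MetricWWdv[\cdot][\cdot]$ that exceed $\delta_1$: here I would simply use Proposition \ref{Prop::VectorFields::Scaling::VolEstimates} \ref{Item::VectorFields::Scaling::VolEstimates::VolPoly} to replace the relevant volumes by the uniformly positive constant $\Vol[\BWWdv{z}{\delta_1}]$, absorbing the discrepancy into the implicit constant, which requires $m'$ to be at least of order $Q_1 + m$ but is otherwise routine.
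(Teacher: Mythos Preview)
Your argument is correct and self-contained; the standard space-of-homogeneous-type kernel composition works here exactly as you outline, with the only care being that all radii are bounded by $2$ so that the doubling of Proposition \ref{Prop::VectorFields::Scaling::VolEstimates} \ref{Item::VectorFields::Scaling::VolEstimates::VolWedge1Doubling} together with the uniform upper bound $\Vol[\BWWdv{\cdot}{\delta}]\le \Vol[\Omega]$ suffices throughout (your closing remark about needing $m'$ of order $Q_1+m$ is overly cautious---the finitely many annuli with radius in $(\delta_1,2]$ have volumes $\approx 1$ and cost only a fixed constant).

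The paper takes a different, shorter path: it invokes Proposition \ref{Prop::VectorFields::Scaling::AmbientVolAndMetricEquivalnce} to replace $\WWdv$, $\Vol$ by their counterparts $\WhWdv$, $\Volh$ on an ambient manifold $\ManifoldM\supseteq\ManifoldN$ without boundary (the two metrics and ball-volumes are equivalent on $\Compact$), after which the statement is exactly \cite[Proposition 5.4.4]{StreetMaximalSubellipticity}. The paper explicitly notes that one can alternatively reprove that proposition directly in the present setting---which is precisely what you do. Your route is more elementary and avoids the ambient-manifold machinery; the paper's route avoids repeating an argument already in the literature.
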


\begin{lemma}\label{Lemma::Spaces::Elem::PreElem::PreElemBoundDifferentScales}
    \(\forall m\in \Zgeq\), \(\exists M\geq 1\), \(C\geq 1\), \(\forall x,z\in \Compact\), \(\forall j,k\in [0,\infty)\),
    \begin{equation*}
    \begin{split}
         &2^{-M|j-k|} 
         \frac{\left( 1+2^{k}\MetricWWdv[x][z] \right)^{-m}}{\Vol[\BWWdv{x}{2^{-k}+\MetricWWdv[x][z]}]}
        \leq C
        \frac{\left( 1+2^{j}\MetricWWdv[x][z] \right)^{-m}}{\Vol[\BWWdv{x}{2^{-j}+\MetricWWdv[x][z]}]}.
    \end{split}
    \end{equation*}
\end{lemma}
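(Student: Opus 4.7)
The plan is to exploit the algebraic identity
\[ 1 + 2^{j}\MetricWWdv[x][z] \;=\; 2^{j}\bigl(2^{-j} + \MetricWWdv[x][z]\bigr). \]
Setting $d := \MetricWWdv[x][z]$ and $f(t) := 2^{-t} + d$, each side of the desired inequality rewrites as
\[ F(j) \;:=\; \frac{(1+2^{j}d)^{-m}}{\Vol[\BWWdv{x}{f(j)}]} \;=\; \frac{2^{-jm}}{f(j)^{m}\,\Vol[\BWWdv{x}{f(j)}]}, \]
so the claim reduces to $F(k)/F(j) \leq C\,2^{M|j-k|}$. Factoring gives
\[ \frac{F(k)}{F(j)} \;=\; 2^{m(j-k)} \left(\frac{f(j)}{f(k)}\right)^{m} \frac{\Vol[\BWWdv{x}{f(j)}]}{\Vol[\BWWdv{x}{f(k)}]}, \]
and I would split into cases based on the sign of $j-k$.

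The easy case is $j \geq k$: monotonicity of $f$ (decreasing in $t$) and of the ball volume (increasing in radius) gives $f(j) \leq f(k)$ and $\Vol[\BWWdv{x}{f(j)}] \leq \Vol[\BWWdv{x}{f(k)}]$, so both of the latter two factors are $\leq 1$ and the bound $F(k)/F(j) \leq 2^{m|j-k|}$ follows at once. For $j<k$, the elementary estimate $f(j) \leq 2^{k-j}f(k)$ (immediate from $2^{-j}+d \leq 2^{k-j}(2^{-k}+d)$) gives $(f(j)/f(k))^{m} \leq 2^{m(k-j)}$, which exactly cancels the $2^{m(j-k)}$ prefactor. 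Everything therefore hinges on bounding the volume ratio $\Vol[\BWWdv{x}{f(j)}]/\Vol[\BWWdv{x}{f(k)}]$ by $C\,2^{M(k-j)}$.

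The main step is to iterate the doubling estimate from Proposition \ref{Prop::VectorFields::Scaling::VolEstimates} \ref{Item::VectorFields::Scaling::VolEstimates::VolDoubling}: applying it $k-j+1$ times gives $\Vol[\BWWdv{x}{f(j)}] \leq \Vol[\BWWdv{x}{2^{k-j}f(k)}] \leq C_0^{k-j+1}\Vol[\BWWdv{x}{f(k)}]$ provided all intermediate scales lie in $(0,\delta_1]$. The only real obstacle is this scale restriction, since $f(j)$ or $f(k)$ could in principle exceed $\delta_1$. This is handled by observing that $d \leq 1$ and $j,k \geq 0$ force $f(j),f(k) \leq 2$, so the volumes are uniformly bounded above on $\Compact$ (the balls lie in the relatively compact set $\Omega$ carrying the generators $\WWdv$). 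When $f(k) \geq \delta_1$, Proposition \ref{Prop::VectorFields::Scaling::VolEstimates} \ref{Item::VectorFields::Scaling::VolEstimates::VolPoly} yields $\Vol[\BWWdv{x}{f(k)}] \gtrsim \delta_1^{Q_2}$, so the volume ratio is $O(1)$; and when $f(k) < \delta_1 \leq f(j)$, the same polynomial bound together with $f(k) \geq 2^{-(k-j)}f(j) \gtrsim 2^{-(k-j)}\delta_1$ yields a volume ratio $\lesssim 2^{Q_2(k-j)}$. Taking $M := \max(m,\,1+\log_2 C_0,\,Q_2)$ then absorbs every case and completes the proof.
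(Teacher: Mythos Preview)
Your proof is correct and takes a genuinely different route from the paper's. The paper argues by reduction: it embeds $\ManifoldN$ into an ambient manifold without boundary $\ManifoldM$ with H\"ormander filtration $\FilteredSheafFh$ (via Proposition \ref{Prop::Filtrations::RestrictingFiltrations::CoDim0CoRestriction}), invokes Proposition \ref{Prop::VectorFields::Scaling::AmbientVolAndMetricEquivalnce} \ref{Item::VectorFields::Scaling::AmbientVolAndMetricEquivalnce::MetricWequalsMetricZ} and \ref{Item::VectorFields::Scaling::AmbientVolAndMetricEquivalnce::VolOfMetricWequalsVolOfMetricZ} to transfer the statement to $\WhWdv$ and $\Volh$, and then cites the boundary-free case from \cite[Lemma 5.4.7]{StreetMaximalSubellipticity}. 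Your argument is instead self-contained: the algebraic identity $1+2^{j}\MetricWWdv[x][z]=2^{j}\bigl(2^{-j}+\MetricWWdv[x][z]\bigr)$ together with the case split on the sign of $j-k$ reduces everything to the doubling and polynomial volume bounds of Proposition \ref{Prop::VectorFields::Scaling::VolEstimates} \ref{Item::VectorFields::Scaling::VolEstimates::VolDoubling} and \ref{Item::VectorFields::Scaling::VolEstimates::VolPoly}. What your approach buys is independence from the extension machinery and the external reference; what the paper's approach buys is that Lemmas \ref{Lemma::Spaces::Elem::PreElem::ComposePreElemBound} and \ref{Lemma::Spaces::Elem::PreElem::PreElemBoundDifferentScales} are handled simultaneously by one reduction.

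Two small points of care. First, since $j,k\in[0,\infty)$ need not be integers, ``applying doubling $k-j+1$ times'' should be read as $\lceil k-j\rceil$ iterations, and in the subcase $f(j)\le\delta_1$ you should iterate only up to $2^{N}f(k)$ with $N$ minimal so that $2^{N}f(k)\ge f(j)$; then all inputs $2^{n-1}f(k)\le 2^{N-1}f(k)<f(j)\le\delta_1$ stay in range. Second, your assertion that the volumes are uniformly bounded above because the balls lie in the relatively compact set $\Omega$ implicitly uses $\Vol(\Omega)<\infty$, which is not automatic for an arbitrary smooth positive density on an open set. This is harmless --- by Lemma \ref{Lemma::Spaces::LP::ElemDoesntDependOnChoices} one may assume without loss of generality that $\Vol$ is the restriction of a smooth density from a neighborhood of $\overline{\Omega}$ --- but it is worth saying. (The paper's own reduction relies on Proposition \ref{Prop::VectorFields::Scaling::AmbientVolAndMetricEquivalnce} \ref{Item::VectorFields::Scaling::AmbientVolAndMetricEquivalnce::VolOfMetricWequalsVolOfMetricZ}, which is stated only for the $\wedge 1$-truncated volumes, so the same subtlety is present there.)
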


\begin{proof}[Proof of Lemma \ref{Lemma::Spaces::Elem::PreElem::ComposePreElemBound} and Lemma \ref{Lemma::Spaces::Elem::PreElem::PreElemBoundDifferentScales}]
    Let \(\ManifoldM\) and \(\FilteredSheafFh\) be as in Proposition \ref{Prop::Filtrations::RestrictingFiltrations::CoDim0CoRestriction}.
    Fix \(\Omegah\Subset \ManifoldM\) open with \(\Compact\Subset \Omegah\) and \(\Omegah\cap \ManifoldN\subseteq \ManifoldNncF\).
    Let \(\FilteredSheafFh\big|_{\Omegah}=\FilteredSheafGenBy{\WhWdv}\), where \(\WhWdv\) are H\"ormander vector fields
    with formal degrees on \(\Omegah\). Fix any smooth, strictly positive density \(\Volh\) on \(\Omegah\).

    In light of Proposition \ref{Prop::VectorFields::Scaling::AmbientVolAndMetricEquivalnce}
    \ref{Item::VectorFields::Scaling::AmbientVolAndMetricEquivalnce::MetricWequalsMetricZ}
    and \ref{Item::VectorFields::Scaling::AmbientVolAndMetricEquivalnce::VolOfMetricWequalsVolOfMetricZ},
    it suffices to prove the results with \(\WWdv\) and \(\Vol\)
    replaced by \(\WhWdv\) and \(\Volh\), respectively. In short, it suffices to prove the results
    on a manifold without boundary.
    This is contained in \cite[Propostion 5.4.4 and Lemma 5.4.7]{StreetMaximalSubellipticity}.

    Alternatively, one can follow the same proofs as those in \cite[Propostion 5.4.4 and Lemma 5.4.7]{StreetMaximalSubellipticity}
    to directly establish the results in this setting.
\end{proof}

        \subsubsection{Elementary opearators}
        Let \(\sE\in \ElemzF{\Compact}\). For \((E,\delta)\in \sE\), we may view \(E\) as an operator
\(E: \DistributionsZeroN\rightarrow\DistributionsZeroN\), whose range lies in
\(\CinftyCptSpace[\Omega]\), where \(\Compact\Subset \Omega\); see Remark \ref{Rmk::Spaces::LP::FunctionsOrOperators} for details.
In what follows, we switch between viewing \(E\) as a function and \(E\) as an operator, freely. When we treat
\(E\) as a function we will usually write \(E(x,y)\) to make it clear. When we treat \(E\) as an operator,
we merely write \(E\).

\begin{remark}\label{Rmk::Spaces::Elem::Elem::IntegrateByPartsWithoutBoundary}
    Because we view \(E\) as an operator \(E: \DistributionsZeroN\rightarrow\DistributionsZeroN\)
    and \(E(x,y)\) vanishes to infinite order as \(y\rightarrow \BoundaryN\), we may integrate by parts without any boundary terms.
    For example, for a vector field \(V\), we have
    \begin{equation*}
        \int V_y^{*} E(x,y) f(y)\: d\Vol(y) = E V f.
    \end{equation*}
    where \(V^{*}_y\) denotes the formal \(\LpSpace{2}[\Vol]\) adjoint of the vector field \(V\), acting in the \(y\) variable;
    i.e., \(V_y^{*}=-V_y+g(y)\)
    where \(g(y)\) is some smooth function.
\end{remark}

\begin{proposition}\label{Prop::Spaces::Elem::Elem::MainProps}
    Let \(\sE\in \ElemzF{\Compact}\). Then,
    \begin{enumerate}[(a)]
        \item\label{Item::Spaces::Elem::Elem::LinearComb} For \(C\geq 1\) fixed,
            \begin{equation*}
                \left\{ \left( aE_1+bE_2,\delta \right) :(E_1,\delta),(E_2,\delta)\in \sE, a,b\in \C, |a|,|b|\leq C\right\}
                \in \ElemzF{\Compact}.
            \end{equation*}
        \item\label{Item::Spaces::Elem::Elem::InfiniteComb} Suppose \(\left\{ a_l \right\}_{l\in \Zgeq}\) is a sequence of complex numbers with \(\sum|a_l|<\infty\). Then,
            \begin{equation*}
                \left\{ \left( \sum_{l\in \Zgeq} a_l E_l, \delta \right) : (E_l,\delta)\in \sE \right\}
                \in \ElemzF{\Compact}.
            \end{equation*}
        \item\label{Item::Spaces::Elem::Elem::MultBySmooth} Fix \(\Omega'\subseteq \ManifoldN\) open with \(\Compact\subseteq \Omega'\) and \(\psi\in \CinftyCptSpace[\Omega']\).
            Then,
            \begin{equation*}
                \left\{ \left( \psi(x)E(x,y),\delta \right), \left( E(x,y)\psi(y),\delta \right) :(E,\delta)\in \sE \right\}\in \ElemzF{\Compact}.
            \end{equation*}
        \item\label{Item::Spaces::Elem::Elem::DerivFunction} For an ordered multi-index \(\alpha\),
            \begin{equation*}
                \left\{ \left( \left( \delta^{\Wdv}W_x \right)^{\alpha}E(x,y),\delta \right), \left( \left( \delta^{\Wdv}W_y\right)^{\alpha}E(x,y),\delta \right)  : \left( E,\delta \right)\in \sE  \right\}\in \ElemzF{\Compact}.
            \end{equation*}
        \item\label{Item::Spaces::Elem::Elem::DerivOp} For an ordered multi-index \(\alpha\),
            \begin{equation*}
                \left\{ \left( \left( \delta^{\Wdv}W\right)^{\alpha}E,\delta \right), \left( E\left( \delta^{\Wdv}W\right)^{\alpha},\delta \right)  : \left( E,\delta \right)\in \sE  \right\}\in \ElemzF{\Compact}.
            \end{equation*}
        \item\label{Item::Spaces::Elem::Elem::PullOutNDerivs} For each \(N\in \Zgeq\), every \((E,2^{-j})\in \sE\) can be written as
            \begin{equation}\label{Eqn::Spaces::Elem::Elem::PullOutNDerivs::Left}
                E=\sum_{|\alpha|\leq N} 2^{(|\alpha|-N)j} \left( 2^{-j\Wdv}W \right) E_{\alpha},
            \end{equation}
            where \(\left\{ \left( E_\alpha, 2^{-j} \right) : (E,2^{-j})\in \sE,|\alpha|\leq N \right\}\in \ElemzF{\Compact}\).
            Similarly, \(E\) can be written as
            \begin{equation}\label{Eqn::Spaces::Elem::Elem::PullOutNDerivs::Right}
                E=\sum_{|\alpha|\leq N} 2^{(|\alpha|-N)j}  \Et_{\alpha}\left( 2^{-j\Wdv}W \right),
            \end{equation}
            where \(\left\{ \left( \Et_\alpha, 2^{-j} \right) : (E,2^{-j})\in \sE,|\alpha|\leq N \right\}\in \ElemzF{\Compact}\).
        \item\label{Item::Spaces::Elem::Elem::DerivFuncAribtrarySection} Let \(\Omega'\subseteq \ManifoldN\) be open with \(\Compact\Subset \Omega'\), and fix \(Y\in \FilteredSheafF[\Omega'][d_0]\).
            Then,
            \begin{equation*}
                \left\{ \left( 2^{-jd_0}Y_x E(x,y), 2^{-j} \right), \left( 2^{-jd_0}Y_y E(x,y), 2^{-j} \right) \right\}\in \ElemzF{\Compact}.
            \end{equation*}
    \end{enumerate}
\end{proposition}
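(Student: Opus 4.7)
The seven closure properties share a common template. By the coinductive nature of Definition \ref{Defn::Spaces::LP::ElemWWdv}, to show that a new set $\widetilde{\sE}$ lies in $\ElemzF{\Compact}$ it suffices to exhibit a single enlarged collection $\sE^*\supseteq \widetilde\sE$ satisfying both Definition \ref{Defn::Spaces::LP::ElemWWdv} \ref{Item::Spaces::LP::ElemWWdv::ElemArePreElem} and \ref{Item::Spaces::LP::ElemWWdv::ElemArePreDerivs}; maximality then yields $\sE^*\subseteq \ElemzF{\Compact}$. In each case the pre-elementary bounds come from the Leibniz rule together with the estimates already available for $\sE$, while the recursive decomposition needed for each element of $\widetilde\sE$ is produced by applying the recursive decomposition of $\sE$ a suitable number of times and then rearranging. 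The infinite-order vanishing of $E(x,y)$ in $y$ at $\BoundaryN$ is preserved at every stage.

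For parts (a), (b), and (c), the needed pre-elementary estimates follow from the triangle inequality, and, for (b), absolute summability of $\{a_l\}$. The recursive decomposition for each linear combination, absolute sum, or $\psi$-multiple is obtained simply by distributing the operation through the given decomposition of $\sE$. For (g), Lemma \ref{Lemma::Filtrations::GeneratorsOnRelCptSet} lets us write $Y=\sum_{k:\Wdv_k\leq d_0} a_k W_k$ on a neighborhood of $\Compact$ with $a_k\in\CinftySpace$, and then $2^{-jd_0}Y_xE(x,y)=\sum_k 2^{-j(d_0-\Wdv_k)}a_k(x)(2^{-j\Wdv_k}W_{k,x})E(x,y)$, each summand of which is handled by parts (c) and (d).

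The technical heart of the argument lies in (d), (e), and (f). For (d), I would iterate the recursive decomposition of $(E,2^{-j})\in\sE$ as many times as needed to pull an arbitrary number of $(2^{-j\Wdv}W)$-derivatives inside, and then absorb the outer $(2^{-j\Wdv}W_x)^\alpha$ (or $(2^{-j\Wdv}W_y)^\alpha$) into these inside slots, producing a valid recursive decomposition for $((2^{-j\Wdv}W_x)^\alpha E,2^{-j})$. Part (e) reduces to (d) together with integration by parts in the $y$-variable to re-express $E(2^{-j\Wdv}W)^\alpha$ as a $y$-derivative of $E(x,y)$ itself; by Remark \ref{Rmk::Spaces::Elem::Elem::IntegrateByPartsWithoutBoundary} no boundary terms arise, since $E(x,y)$ vanishes to infinite order in $y$ at $\BoundaryN$. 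For (f), I would iterate the recursive decomposition exactly $N$ times to obtain
\begin{equation*}
    E=\sum_{|\alpha|,|\beta|\leq N} 2^{-j(N-|\alpha|)}\,2^{-j(N-|\beta|)}\,(2^{-j\Wdv}W_x)^{\alpha}(2^{-j\Wdv}W_y)^{\beta}E_{\alpha,\beta},
\end{equation*}
and then set $E_{\alpha}(x,y):=\sum_{|\beta|\leq N}2^{-j(N-|\beta|)}(2^{-j\Wdv}W_y)^{\beta}E_{\alpha,\beta}(x,y)$. Parts (a) and (d), applied to the inner sum, certify $\{(E_{\alpha},2^{-j})\}\in \ElemzF{\Compact}$, yielding \eqref{Eqn::Spaces::Elem::Elem::PullOutNDerivs::Left}. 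The right form \eqref{Eqn::Spaces::Elem::Elem::PullOutNDerivs::Right} follows by instead integrating by parts the $y$-derivatives, expanding each $W_k^{*}=-W_k+c_k$ with smooth $c_k$, and combining (a), (c), and (d).

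The main obstacle is the bookkeeping: one must verify that the ``children'' produced at each level of the iterated recursive decomposition, after the rearrangements in (d) and (f), themselves lie in a single collection closed under the operations used. The analogous properties on a manifold without boundary are established in \cite[Section 5.5]{StreetMaximalSubellipticity}, and the essentially new ingredient here is the infinite-order vanishing of $E(x,y)$ as $y\to\BoundaryN$, which is precisely what allows the integration-by-parts moves used in (e) and in the right form of (f) to produce no boundary contributions.
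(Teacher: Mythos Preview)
Your proposal is correct and follows essentially the same approach as the paper's proof (given in Lemma~\ref{Lemma::Spaces::Elem::Elem::MainPropsLemma}): the coinductive template of Remark~\ref{Rmk::Spaces::Elem::Elem::HowToProveElem}, reduction of (e) and the right form of (f) to (d) via the boundary-free integration by parts of Remark~\ref{Rmk::Spaces::Elem::Elem::IntegrateByPartsWithoutBoundary}, and the reduction of (g) to (c) and (d) by writing $Y$ in terms of the generators. The only presentational difference is that the paper handles (c), (d), and (f) by proving the $|\alpha|=1$ (resp.\ $N=1$) case first---making the commutator bookkeeping for (c) explicit---and then inducting, whereas you describe the full iteration directly.
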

\begin{proof}
    Once  Lemma \ref{Lemma::Spaces::LP::ElemDoesntDependOnChoices} is established, this result is
    the same as Lemma \ref{Lemma::Spaces::Elem::Elem::MainPropsLemma}, below.
\end{proof}

\begin{remark}\label{Rmk::Spaces::Elem::Elem::HowToProveElem}
    Suppose we are given \(\sE\in \PElemzF{\Compact}\) and we wish to show \(\sE\in \ElemzF{\Compact}\).
    The main way we prove this is to show that there is a subset \(\sB\subseteq \PElemzF{\Compact}\)
    such that \(\sE\in \sB\) and \(\sB\) satisfies
    Definition \ref{Defn::Spaces::LP::ElemWWdv} \ref{Item::Spaces::LP::ElemWWdv::ElemArePreDerivs} (in place of \(\ElemzF{\Compact}\)).
    Since \(\ElemzF{\Compact}\subseteq \PElemzF{\Compact}\) is the largest subset satisfying this property, we get \(\sE\in \sB\subseteq \ElemzF{\Compact}\).
    In short, if we can show that for \((E,\delta)\in \sE\), we have
    \begin{equation*}
        E(x,y)=\sum_{|\alpha|,|\beta|\leq 1} \delta^{2-|\alpha|-|\beta|} \left( \delta^{\Wdv}W_x \right)^{\alpha} \left( \delta^{\Wdv} W_y \right)^{\beta}E_{\alpha,\beta} (x,y)
    \end{equation*}
    where \(E_{\alpha,\beta}\) is ``of the same form'' as \(E\), then \(\sE\in \ElemzF{\Compact}\).
\end{remark}

Since we have not yet proved Lemma \ref{Lemma::Spaces::LP::ElemDoesntDependOnChoices}, we should 
have used Notation \ref{Notation::Spaces::Elem::Intro::VerposeElemNotation}
in the statement of Proposition \ref{Prop::Spaces::Elem::Elem::MainProps}. In fact, we will use Proposition \ref{Prop::Spaces::Elem::Elem::MainProps}
in the proof of Lemma \ref{Lemma::Spaces::LP::ElemDoesntDependOnChoices}, so to avoid circular reasoning, we restate
Proposition \ref{Prop::Spaces::Elem::Elem::MainProps} in the next lemma.


\begin{lemma}\label{Lemma::Spaces::Elem::Elem::MainPropsLemma}
    Proposition \ref{Prop::Spaces::Elem::Elem::MainProps} holds with
    \(\ElemzOnlySubscript{\FilteredSheafF}\) replaced by \(\ElemzOnlySubscript{\WWdv,\Omega,\Vol}\), throughout.
\end{lemma}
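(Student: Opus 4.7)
The plan is to prove each of (a)--(g) by the strategy of Remark \ref{Rmk::Spaces::Elem::Elem::HowToProveElem}: for each family $\sE'$ we want to show lies in $\ElemzWWdvOmegaVol{\Compact}$, we exhibit an ambient set $\sB \subseteq \PElemzWWdvOmegaVol{\Compact}$ with $\sE' \subseteq \sB$ and with $\sB$ itself satisfying the defining decomposition of Definition \ref{Defn::Spaces::LP::ElemWWdv} \ref{Item::Spaces::LP::ElemWWdv::ElemArePreDerivs}; then by maximality $\sB \subseteq \ElemzWWdvOmegaVol{\Compact}$, so $\sE' \subseteq \ElemzWWdvOmegaVol{\Compact}$. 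Throughout, the essential feature distinguishing this from the no-boundary argument of \cite[Section 5.5]{StreetMaximalSubellipticity} is tracking the vanishing of $E(x,y)$ to infinite order as $y \to \BoundaryN$; this vanishing is manifestly preserved by every operation in (a)--(g) and, crucially, it legitimizes the integration by parts needed in (e) with no boundary terms, per Remark \ref{Rmk::Spaces::Elem::Elem::IntegrateByPartsWithoutBoundary}.

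For (a), (b), and (c), the pre-elementary bounds \eqref{Eqn::Spaces::LP::PreElemBound} are preserved under bounded linear combinations, absolutely summable series, and products with $\psi \in \CinftyCptSpace[\Omega']$ (in the last case by the Leibniz rule, since derivatives of $\psi$ are bounded smooth functions). The defining decomposition then follows by applying the decomposition of each $E \in \sE$ inside the linear combination or the product: writing $E = \sum_{|\alpha|,|\beta|\leq 1} \delta^{2-|\alpha|-|\beta|}(\delta^{\Wdv}W_x)^{\alpha}(\delta^{\Wdv}W_y)^{\beta}E_{\alpha,\beta}$ and multiplying by $\psi(x)$, a Leibniz expansion puts the derivatives outside and leaves kernels of the same form (absorbing commutators into the $E_{\alpha,\beta}$'s via property \ref{Item::Spaces::LP::ElemWWdv::ElemArePreDerivs}). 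Part (d) is automatic, since the decomposition of $E$ gives one for $(\delta^{\Wdv}W_x)^{\alpha}E$ or $(\delta^{\Wdv}W_y)^{\alpha}E$ after commuting derivatives past each other, any commutators appearing being $\CinftySpace$-linear combinations of iterated $W^{\gamma}$ with $\DegWdv{\gamma}$ controlled.

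Part (e) is the crux. Left multiplication $(\delta^{\Wdv}W)^{\alpha}E$ on $u \in \DistributionsZeroN$ has kernel $(\delta^{\Wdv}W_x)^{\alpha}E(x,y)$, handled by (d). For right multiplication, integrate by parts: $E(\delta^{\Wdv}W)^{\alpha}$ has kernel $(\delta^{\Wdv}W_y^{*})^{\alpha}E(x,y)$, where $W_j^{*} = -W_j + c_j$ with $c_j \in \CinftySpace$. Expanding and applying the Leibniz rule yields a finite sum of kernels of the form $\psi(y)(\delta^{\Wdv}W_y)^{\alpha'}E(x,y)$ with $|\alpha'| \leq |\alpha|$ and $\psi$ smooth, each elementary by (c) and (d). The infinite-order vanishing of $E(x,y)$ as $y \to \BoundaryN$ is precisely what makes this integration by parts produce no boundary term, keeping us inside $\ElemzWWdvOmegaVol{\Compact}$. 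Part (f) is then immediate: apply Definition \ref{Defn::Spaces::LP::ElemWWdv} \ref{Item::Spaces::LP::ElemWWdv::ElemArePreDerivs} inductively $N$ times to pull out $N$ scaled derivatives with factor $\delta^N = 2^{-jN}$, and use (e) to move all derivatives to one side.

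Part (g) uses that $Y \in \FilteredSheafF[\Omega'][d_0] = \FilteredSheafGenBy{\WWdv}[\Omega'][d_0]$ can be written, on a neighborhood of $\Compact$ after possibly shrinking $\Omega'$, as a $\CinftySpace$-linear combination $Y = \sum_{\Wdv_j \leq d_0} a_j(x) W_j$. Then $\delta^{d_0}Y = \sum a_j(x) \delta^{d_0 - \Wdv_j}(\delta^{\Wdv_j}W_j)$, and (a), (c), (d) applied term by term give the result (in both the $x$ and $y$ variables). The main obstacle is part (e): the interchange between kernel differentiation and operator differentiation on a manifold with boundary requires careful bookkeeping of the adjoint-and-Leibniz expansion, and the decisive point is that the infinite-order vanishing of $E(x,y)$ at $\BoundaryN$ kills every would-be boundary contribution; once (e) is in hand, the remaining parts are formal consequences.
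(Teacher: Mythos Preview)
Your argument is correct and follows essentially the same approach as the paper---the strategy of Remark \ref{Rmk::Spaces::Elem::Elem::HowToProveElem}, the Leibniz rule for (c), integration by parts (via Remark \ref{Rmk::Spaces::Elem::Elem::IntegrateByPartsWithoutBoundary}) for the right-operator half of (e), and the module expression $Y=\sum_{\Wdv_j\leq d_0} a_j W_j$ for (g). One small refinement: in (c) and (d) no vector-field commutators are actually needed---the paper simply regroups after inserting the decomposition of $E$ (for (d), when $|\gamma|=1$ the new inner term is just $\sum_{|\gamma|=1}(\delta^{\Wdv}W_x)^{\gamma}E_{\gamma,\beta}$, again ``of the same form,'' and when $|\gamma|=0$ the extra derivative is absorbed into $(\delta^{\Wdv}W_x)^{\alpha}E_{\{\},\beta}$), so your invocation of commutator structure, while not wrong, overcomplicates the picture.
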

\begin{proof}
    This result is very similar to \cite[Proposition 5.5.5]{StreetMaximalSubellipticity}, though the definitions
    and results here vary slightly from those. Because of this, we include the full proof.

    \ref{Item::Spaces::Elem::Elem::LinearComb} and \ref{Item::Spaces::Elem::Elem::InfiniteComb} follow easily
    from the definitions.
    For \ref{Item::Spaces::Elem::Elem::MultBySmooth}, we prove
    \begin{equation*}
        \left\{ \left( \psi(x)E(x,y),\delta \right) :(E,\delta)\in \sE \right\}\in \ElemzWWdvOmegaVol{\Compact},
    \end{equation*}
    the result with \(\psi(x)\) replaced by \(\psi(y)\) is nearly identical.
    It follows immediately from the definitions that 
    \begin{equation*}
        \left\{ \left( \psi(x)E(x,y),\delta \right) :(E,\delta)\in \sE \right\}\in \PElemzWWdvOmegaVol{\Compact},
    \end{equation*}
    so in light of Remark \ref{Rmk::Spaces::Elem::Elem::HowToProveElem}, we wish to show
    \(\psi(x)E(x,y)\) is a sum of derivatives which are of the same form as \(\psi(x)E(x,y)\).
    By definition, we have
    \begin{equation}\label{Eqn::Spaces::Elem::Elem::MainProperties::PullOutDerivsInProof}
        E=\sum_{|\alpha|,|\beta|\leq 1} \delta^{2-|\alpha|-|\beta|}\left( \delta^{\Wdv}W_x \right)^{\alpha} \left( \delta^{\Wdv}W_y \right)^{\beta}E_{\alpha,\beta}(x,y),
    \end{equation}
    where \(\left\{ \left( E_{\alpha,\beta},\delta \right) : (E,\delta)\in \sE,|\alpha|,|\beta|\leq 1 \right\}\in \ElemzWWdvOmegaVol{\Compact}\).
    We therefore have,
    \begin{equation*}
    \begin{split}
         & \psi(x) E =\sum_{|\alpha|,|\beta|\leq 1} \delta^{2-|\alpha|-|\beta|}\psi(x)\left( \delta^{\Wdv}W_x \right)^{\alpha} \left( \delta^{\Wdv}W_y \right)^{\beta}E_{\alpha,\beta}(x,y)
         \\&=\sum_{|\alpha|,|\beta|\leq 1} \delta^{2-|\alpha|-|\beta|}\left( \delta^{\Wdv}W_x \right)^{\alpha} \left( \delta^{\Wdv}W_y \right)^{\beta}\psi(x)E_{\alpha,\beta}(x,y)
         +\sum_{j=1}^r \delta^{1-|\beta|+\Wdv_j} (W_j\psi)(x) \left( \delta^{\Wdv}W_y \right)^{\beta} E_{\{j\},\beta}(x,y)
         \\&=\sum_{|\alpha|=1,|\beta|\leq 1}\delta^{1-|\beta|}\left( \delta^{\Wdv}W_x \right)^{\alpha} \left( \delta^{\Wdv}W_y \right)^{\beta}\psi(x)E_{\alpha,\beta}(x,y)
         \\&\quad\quad+\sum_{|\beta|\leq 1} \delta^{2-|\beta|}\left( \delta^{\Wdv}W_y \right)^{\beta}\left[  \psi(x)E_{\{\},\beta}(x,y) + \sum_{j=1}^r \delta^{\Wdv_j-1} (W_j \psi(x))E_{\{j\},\beta}(x,y) \right]
    \end{split}
    \end{equation*}
    By \ref{Item::Spaces::Elem::Elem::LinearComb} this sees \(\psi(x)E(x,y)\) as an appropriate sum of derivatives of terms of the same form
    as \(\psi(x)E(x,y)\)
    (see Remark \ref{Rmk::Spaces::Elem::Elem::HowToProveElem}), completing the proof of \ref{Item::Spaces::Elem::Elem::MultBySmooth}.

    It suffices to prove \ref{Item::Spaces::Elem::Elem::DerivFunction} in the case \(|\alpha|=1\), as the general case follows
    from this and a simple induction; so we assume \(|\alpha|=1\).
    We prove
    \begin{equation*}
        \left\{ \left( \left( \delta^{\Wdv}W_x \right)^{\alpha}E(x,y),\delta \right)  : \left( E,\delta \right)\in \sE  \right\}\in \ElemzWWdvOmegaVol{\Compact},
    \end{equation*}
    the proof of \(
        \left\{  \left( \left( \delta^{\Wdv}W_y\right)^{\alpha}E(x,y),\delta \right)  : \left( E,\delta \right)\in \sE  \right\}\in \ElemzWWdvOmegaVol{\Compact}
        \)
    is similar.
    It follows immediately from the definitions that 
    \begin{equation*}
        \left\{ \left( \left( \delta^{\Wdv}W_x \right)^{\alpha}E(x,y),\delta \right)  : \left( E,\delta \right)\in \sE  \right\}\in \PElemzWWdvOmegaVol{\Compact}.
    \end{equation*}
    By definition, we have \eqref{Eqn::Spaces::Elem::Elem::MainProperties::PullOutDerivsInProof}, and therefore,
    \begin{equation*}
    \begin{split}
         &\left( \delta^{\Wdv}W_x \right)^{\alpha}E(x,y)
         =\left( \delta^{\Wdv}W_x \right)^{\alpha}\sum_{|\gamma|,|\beta|\leq 1} \delta^{2-|\gamma|-|\beta|}  \left( \delta^{\Wdv}W_x \right)^{\gamma} \left( \delta^{\Wdv}W_y \right)^{\beta}E_{\gamma,\beta}(x,y)
         \\&=\sum_{|\beta|\leq 1}\delta^{1-|\beta|} \left( \delta^{\Wdv}W_x \right)^{\alpha} \left( \delta^{\Wdv}W_y \right)^{\beta} \left[ \sum_{|\gamma|=1} \left( \delta^{\Wdv}W_x \right)^{\gamma} E_{\gamma,\beta}(x,y)\right]
         +\sum_{|\beta|\leq 1}\delta^{2-|\beta|}\left( \delta^{\Wdv}W_y \right)^{\beta} \left[ \left( \delta^{\Wdv}W_x \right)^{\alpha}E_{\{\},\beta}(x,y) \right].
    \end{split}
    \end{equation*}
    By \ref{Item::Spaces::Elem::Elem::LinearComb} this sees \(\left( \delta^{\Wdv}W_x \right)^{\alpha}E(x,y)\) as an appropriate sum of derivatives of terms of the same form
    as \(\left( \delta^{\Wdv}W_x \right)^{\alpha}E(x,y)\)
    (see Remark \ref{Rmk::Spaces::Elem::Elem::HowToProveElem}), completing the proof of  \ref{Item::Spaces::Elem::Elem::DerivFunction}.

    We turn to \ref{Item::Spaces::Elem::Elem::DerivOp}. That
    \begin{equation*}
        \left\{ \left( \left( \delta^{\Wdv}W\right)^{\alpha}E,\delta \right)  : \left( E,\delta \right)\in \sE  \right\}\in \ElemzWWdvOmegaVol{\Compact}
    \end{equation*}
    is exactly the same statement as 
    \begin{equation*}
        \left\{ \left( \left( \delta^{\Wdv}W_x \right)^{\alpha}E(x,y),\delta \right)  : \left( E,\delta \right)\in \sE  \right\}\in \ElemzWWdvOmegaVol{\Compact},
    \end{equation*}
    which was established in \ref{Item::Spaces::Elem::Elem::DerivFunction}.
    So it suffices to show 
    \begin{equation}\label{Eqn::Spaces::ELem::Elem::DerivFunction::ProofToShow}
        \left\{ \left( E\left( \delta^{\Wdv}W\right)^{\alpha},\delta \right)  : \left( E,\delta \right)\in \sE  \right\}\in \ElemzWWdvOmegaVol{\Compact}.
    \end{equation}
    We prove \eqref{Eqn::Spaces::ELem::Elem::DerivFunction::ProofToShow} in the case \(|\alpha|=1\), as the general case
    follows from this and a simple induction. Thus, we have \(\left( \delta^{\Wdv} W \right)^{\alpha}= \delta^{\Wdv_j}W_j\)
    for some \(j\). We have \(W_j^{*}=-W_j+f_j\), where \(f_j\in \CinftySpace[\Omega]\), and so (using Remark \ref{Rmk::Spaces::Elem::Elem::IntegrateByPartsWithoutBoundary}),
    \begin{equation*}
    \begin{split}
         &\left[ E \left( \delta^{\Wdv_j}W_j \right) \right](x,y)
         =-\delta^{\Wdv_j} W_{j,y}E(x,y)+\delta^{\Wdv_j} f_j(y) E(x,y),
    \end{split}
    \end{equation*}
    where \(W_{j,y}\) denotes the vector field \(W_j\) acting in the \(y\)-variable.
    \eqref{Eqn::Spaces::ELem::Elem::DerivFunction::ProofToShow} now follows from \ref{Item::Spaces::Elem::Elem::LinearComb}
    and \ref{Item::Spaces::Elem::Elem::DerivFunction}.

    We turn to \ref{Item::Spaces::Elem::Elem::PullOutNDerivs}. We begin with \eqref{Eqn::Spaces::Elem::Elem::PullOutNDerivs::Left}.
    We prove \eqref{Eqn::Spaces::Elem::Elem::PullOutNDerivs::Left} in the case \(N=1\);
    the general case follows from this and a simple induction.
    Using \eqref{Eqn::Spaces::Elem::Elem::MainProperties::PullOutDerivsInProof},
    we have for \(\left( E,\delta \right)\in \sE\),
    \begin{equation*}
    \begin{split}
         & E=\sum_{|\alpha|\leq 1} \delta^{1-|\alpha|}\left( \delta^{\Wdv}W \right)^{\alpha}  \left( \sum_{|\alpha|\leq 1} \delta^{1-|\alpha|} \left( \delta^{\Wdv}W_y \right)^{\beta} E_{\alpha,\beta}(x,y)\right)
         \\&=:\sum_{|\alpha|\leq 1} \delta^{1-|\alpha|}\left( \delta^{\Wdv}W \right)^{\alpha} \Et_{\alpha}(x,y).
    \end{split}
    \end{equation*}
    By \ref{Item::Spaces::Elem::Elem::LinearComb} and \ref{Item::Spaces::Elem::Elem::DerivFunction},
    \begin{equation*}
    \begin{split}
         &\left\{ \left( \Et_{\alpha}, \delta \right) : (E,\delta)\in \sE, |\alpha|\leq 1\right\}\in \ElemzWWdvOmegaVol{\Compact},
    \end{split}
    \end{equation*}
    establishing \eqref{Eqn::Spaces::Elem::Elem::PullOutNDerivs::Left}.
    The proof for \eqref{Eqn::Spaces::Elem::Elem::PullOutNDerivs::Right} is similar, using Remark \ref{Rmk::Spaces::Elem::Elem::IntegrateByPartsWithoutBoundary}.

    Finally, we turn to \ref{Item::Spaces::Elem::Elem::DerivFuncAribtrarySection}.
    Since \(Y\in \FilteredSheafF[d_0][\Omega']\subseteq \FilteredSheafF[d_0][\Omega\cap \Omega']\),
    we have \(Y=\sum_{\Wdv_j\leq d_0} c_j W_j\), where \(c_j\in \CinftySpace[\Omega\cap \Omega']\).
    From here,     \ref{Item::Spaces::Elem::Elem::DerivFuncAribtrarySection}
    follows from \ref{Item::Spaces::Elem::Elem::LinearComb}, \ref{Item::Spaces::Elem::Elem::MultBySmooth},
    and \ref{Item::Spaces::Elem::Elem::DerivFunction}.
\end{proof}

The next lemma completes the proof of Lemma \ref{Lemma::Spaces::LP::ElemDoesntDependOnChoices}.

\begin{lemma}\label{Lemma::Spaces::Elem::Elem::ElemDoesntDependOnChoices}
    Let \(\FilteredSheafG\), \(\Omegah\), \(\ZZde\), and \(\Volh\) be as in Lemma \ref{Lemma::Spaces::LP::ElemDoesntDependOnChoices}.
    Then,
    \begin{equation*}
        \Elemz{\WWdv, \Omega,\Vol}{\Compact}=\Elemz{\ZZde,\Omegah,\Volh}{\Compact}.
    \end{equation*}
\end{lemma}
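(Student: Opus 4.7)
\emph{Plan.} The proof is driven by the defining maximality of $\Elemz{}{}$. By symmetry of the hypotheses (which are preserved under interchanging the data $(\WWdv, \Omega, \Vol)$ and $(\ZZde, \Omegah, \Volh)$, using $\LieFilteredSheafF = \LieFilteredSheafG$), it suffices to prove $\Elemz{\WWdv, \Omega, \Vol}{\Compact} \subseteq \Elemz{\ZZde, \Omegah, \Volh}{\Compact}$. By Lemma \ref{Lemma::Spaces::Elem::PreElem::PreElemDoesntDependOnChoices} we have $\Elemz{\WWdv, \Omega, \Vol}{\Compact} \subseteq \PElemz{\WWdv, \Omega, \Vol}{\Compact} = \PElemz{\ZZde, \Omegah, \Volh}{\Compact}$. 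Since, by Definition \ref{Defn::Spaces::LP::ElemWWdv}, $\Elemz{\ZZde, \Omegah, \Volh}{\Compact}$ is the \emph{largest} subcollection of $\PElemz{\ZZde, \Omegah, \Volh}{\Compact}$ stable under the recursive $Z$-derivative decomposition in \ref{Item::Spaces::LP::ElemWWdv::ElemArePreDerivs}, it will be enough to verify that $\Elemz{\WWdv, \Omega, \Vol}{\Compact}$ itself enjoys this $Z$-derivative stability.

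\emph{Key auxiliary fact.} The heart of the argument is an extension of Lemma \ref{Lemma::Spaces::Elem::Elem::MainPropsLemma}\ref{Item::Spaces::Elem::Elem::DerivFuncAribtrarySection} from $\FilteredSheafF$ to $\LieFilteredSheafF$: for any $Y$ in $\LieFilteredSheafF$ at level $d_0$ on a neighborhood of $\Compact$ and any $\sE \in \Elemz{\WWdv, \Omega, \Vol}{\Compact}$, the collections $\{((2^{-jd_0}Y_x)E, 2^{-j}): (E,2^{-j})\in\sE\}$ and the analogous one for $Y_y$ lie again in $\Elemz{\WWdv, \Omega, \Vol}{\Compact}$. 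I would prove this by induction on the depth of an iterated commutator of $W_i$'s representing $Y$: the depth-one case is \ref{Item::Spaces::Elem::Elem::DerivFuncAribtrarySection}; the inductive step writes $Y = [Y_1, Y_2]$, expands $2^{-jd_0}[Y_1, Y_2] = (2^{-j\Wdv_{Y_1}}Y_1)(2^{-jd_2}Y_2) - (2^{-jd_2}Y_2)(2^{-j\Wdv_{Y_1}}Y_1)$, and applies the inductive hypothesis together with \ref{Item::Spaces::Elem::Elem::LinearComb} and \ref{Item::Spaces::Elem::Elem::MultBySmooth}; a $C^\infty$-linear combination over commutators of varying depths, together with absorption of bounded excess factors $2^{-j(d_0 - d)} \leq 1$ via \ref{Item::Spaces::Elem::Elem::LinearComb}, extends this to arbitrary $Y \in \LieFilteredSheafF[\cdot][d_0]$. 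In particular, since $Z_k \in \FilteredSheafG[\cdot][\Zde_k] \subseteq \LieFilteredSheafG[\cdot][\Zde_k] = \LieFilteredSheafF[\cdot][\Zde_k]$, applying $(2^{-j\Zde_k}Z_k)$ in either variable preserves $\Elemz{\WWdv, \Omega, \Vol}{\Compact}$.

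\emph{Main argument and obstacle.} Given $(E,2^{-j}) \in \sE \in \Elemz{\WWdv, \Omega, \Vol}{\Compact}$, apply the $\WWdv$-recursive property to write
\[
E = \sum_{|\alpha|,|\beta|\leq 1} 2^{-j(2-|\alpha|-|\beta|)} (2^{-j\Wdv}W_x)^\alpha (2^{-j\Wdv}W_y)^\beta E_{\alpha,\beta}
\]
with $\{(E_{\alpha,\beta},2^{-j})\} \in \Elemz{\WWdv, \Omega, \Vol}{\Compact}$. On a neighborhood of $\Compact$ each $W_i \in \LieFilteredSheafG[\cdot][\Wdv_i]$ is a $C^\infty$-linear combination of iterated commutators of $Z_k$'s of total $\Zde$-degree $\leq \Wdv_i$; expanding each such commutator via $[A,B]=AB-BA$ writes $(2^{-j\Wdv_i}W_i)$ as a finite smooth-coefficient sum of terms $2^{-j(\Wdv_i - \sum_l \Zde_{k_l})} (2^{-j\Zde_{k_1}}Z_{k_1}) \cdots (2^{-j\Zde_{k_p}}Z_{k_p})$, the residual scale factor being $\leq 1$. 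Substituting into the term $2^{-j}(2^{-j\Wdv_i}W_{i,x})E_{\{i\},\emptyset}$, peeling off the outermost $(2^{-j\Zde_{k_1}}Z_{k_1,x})$, and using the auxiliary fact iteratively to conclude that the remaining $(2^{-j\Zde_{k_2}}Z_{k_2,x})\cdots(2^{-j\Zde_{k_p}}Z_{k_p,x})$ applied to $c(x)E_{\{i\},\emptyset}$ still lies in $\Elemz{\WWdv, \Omega, \Vol}{\Compact}$, produces the desired single-$Z$-derivative form $2^{-j}(2^{-j\Zde_{k_1}}Z_{k_1,x}) \Eh_{\{k_1\},\emptyset}$; the remaining scale factors $2^{-j(a-1)} \leq 1$ and smooth coefficients are absorbed via \ref{Item::Spaces::Elem::Elem::LinearComb} and \ref{Item::Spaces::Elem::Elem::MultBySmooth}. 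The terms with $|\beta|=1$ and $|\alpha|=|\beta|=1$ are treated identically, peeling one $Z$-derivative on each side. The main obstacle is the bookkeeping of these scale factors when $\Wdv_i > \sum_l \Zde_{k_l}$: the proof works because every such excess exponent is $\geq 0$, so every absorbed factor is bounded, leaving the coefficients uniformly in $\Elemz{\WWdv, \Omega, \Vol}{\Compact}$ and yielding the required $Z$-derivative decomposition.
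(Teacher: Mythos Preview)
Your proof is correct and follows essentially the same approach as the paper. The paper also argues by symmetry and the maximality definition, reducing to showing that every $\sE\in\Elemz{\WWdv,\Omega,\Vol}{\Compact}$ satisfies the $Z$-derivative recursive decomposition; the paper's key step (its claim that $\{(g_1(x)g_2(y)(\delta^{\Zde}Z_x)^{\alpha}(\delta^{\Zde}Z_y)^{\beta}E,\delta)\}\in\Elemz{\WWdv,\Omega,\Vol}{\Compact}$, proved by expressing each $Z_k\in\LieFilteredSheafF$ via $\GenWWdv$) is precisely your auxiliary fact, and the subsequent substitution of the $W\to Z$ expansion into the $W$-decomposition followed by peeling off one outermost $Z$ on each side is carried out there in the same way.
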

\begin{proof}
    We show 
    \(\Elemz{\WWdv, \Omega,\Vol}{\Compact}\subseteq \Elemz{\ZZde,\Omegah,\Volh}{\Compact}\),
    and the reverse containment follows by symmetry.
    Let \(\sE\in \Elemz{\WWdv, \Omega,\Vol}{\Compact}\); we will show \(\sE\in \Elemz{\ZZde,\Omegah,\Volh}{\Compact}\).
    Lemma \ref{Lemma::Spaces::Elem::PreElem::PreElemDoesntDependOnChoices} shows
    \(\sE\in \PElemz{\ZZde,\Omegah,\Volh}{\Compact}\).
    We proceed as in Remark \ref{Rmk::Spaces::Elem::Elem::HowToProveElem}; so the goal is to show
    that for \((E,\delta)\in \sE\), \(E\) can be written as
    \begin{equation}\label{Eqn::Spaces::Elem::Elem::ElemDoesntDependOnChoices::ToShowForm}
        E(x,y) = \sum_{|\alpha|,|\beta|\leq 1} \delta^{2-|\alpha|-|\beta|} \left( \delta^{\Zde} Z_x \right)^{\alpha} \left( \delta^{\Zde} Z_y \right)^{\beta} E_{\alpha,\beta}(x,y),
    \end{equation}
    where
    \begin{equation*}
        \left\{ (E_{\alpha,\beta},\delta) : (E,\delta)\in \sE, |\alpha|,|\beta|\leq 1 \right\}\in \Elemz{\WWdv, \Omega,\Vol}{\Compact},
    \end{equation*}
    and the result will follow.

    Each 
    \(W_j\in \FilteredSheafF[\Wdv_j][\Omega]\subseteq \LieFilteredSheafF[\Wdv_j][\Omega]=\LieFilteredSheafG[\Wdv_j][\Omega]\).
    Letting \(\GenZZde\) be as in Lemma \ref{Lemma::Filtrations::GeneratorsForLieFiltration} that lemma implies
    each \(W_j\) can be written as a finite sum
    \begin{equation*}
        W_j=\sum a_j^k V_k, \quad \left( V_k,g_k \right)\in \GenZZde,g_k\leq \Wdv_j, a_j^k\in \CinftySpace[\Omega\cap \Omegah]. 
    \end{equation*}
    It follows that, for \(|\alpha|= 1\), we may write
    \begin{equation}\label{Eqn::Spaces::Elem::Elem::ElemDoesntDependOnChoices::WInTermsOfZ}
        \left( \delta^{\Wdv}W \right)^{\alpha} = \sum_{1\leq |\gamma|\leq N} \delta^{c_1(\alpha,\gamma)} a_{\alpha}^{\gamma}\left( \delta^{\Zde}Z \right)^{\gamma},\quad a_\alpha^\gamma\in\CinftySpace[\Omega\cap\Omegah],\: c_1(\alpha,\gamma)\geq 1,
    \end{equation}
    for some fixed \(N\), and similarly by reversing the roles of \(\FilteredSheafF\) and \(\FilteredSheafG\),
    \begin{equation}\label{Eqn::Spaces::Elem::Elem::ElemDoesntDependOnChoices::ZInTermsOfW}
        \left( \delta^{\Zde}Z \right)^{\alpha} = \sum_{1\leq |\gamma|\leq N} \delta^{c_2(\alpha,\gamma)} b_{\alpha}^{\gamma}\left( \delta^{\Wdv}W \right)^{\gamma},\quad b_\alpha^\gamma\in\CinftySpace[\Omega\cap\Omegah],\: c_2(\alpha,\gamma)\geq 1.
    \end{equation}

    Let \(g_1,g_2\in \CinftySpace[\Omega\cap \Omegah]\).
    We claim, \(\forall \alpha,\beta\),
    \begin{equation}\label{Eqn::Spaces::Elem::Elem::ElemDoesntDependOnChoices::DerivsKeepBounded}
        \left\{ \left( g_1(x)g_2(y)\left( \delta^{\Zde}Z_x \right)^{\alpha} \left( \delta^{\Zde}Z_y \right)^{\beta} E(x,y),\delta \right) : (E,\delta)\in \sE \right\}\in \ElemzWWdvOmegaVol{\Compact}.
    \end{equation}
    It suffices to prove \eqref{Eqn::Spaces::Elem::Elem::ElemDoesntDependOnChoices::DerivsKeepBounded} for \(|\alpha|=1\), \(|\beta|=0\)
    and \(|\alpha|=0\), \(|\beta|=1\) as the general result then follows by a simple induction.
    In either of these two cases, \eqref{Eqn::Spaces::Elem::Elem::ElemDoesntDependOnChoices::DerivsKeepBounded}
    follows from \eqref{Eqn::Spaces::Elem::Elem::ElemDoesntDependOnChoices::ZInTermsOfW}
    and applying Lemma \ref{Lemma::Spaces::Elem::Elem::MainPropsLemma}
    \ref{Item::Spaces::Elem::Elem::LinearComb}, \ref{Item::Spaces::Elem::Elem::MultBySmooth}, and
    \ref{Item::Spaces::Elem::Elem::DerivFunction}.

    Consider, using Definition \ref{Defn::Spaces::LP::ElemWWdv}, we have for \((E,\delta)\in \sE\),
    using \eqref{Eqn::Spaces::Elem::Elem::ElemDoesntDependOnChoices::WInTermsOfZ}
    \begin{equation}\label{Eqn::Spaces::Elem::Elem::ElemDoesntDependOnChoices::LotsOfZs}
    \begin{split}
         & E(x,y) = \sum_{|\alpha|,|\beta|\leq 1}\delta^{2-|\alpha|-|\beta|} \left( \delta^{\Wdv}W_x \right)^{\alpha} \left( \delta^{\Wdv}W_y \right)^{\beta} E_{\alpha,\beta}(x,y)
         \\&=\sum_{|\alpha|,|\beta|= 1} \sum_{1\leq |\gamma_1|,|\gamma_2|\leq N}  \delta^{c_1(\alpha,\gamma_1)} a_\alpha^{\gamma_1}(x) \left( \delta^{\Zde}Z_x \right)^{\gamma_1} \delta^{c_1(\alpha,\gamma_2)} a_\alpha^{\gamma_2}(y) \left( \delta^{\Zde}Z_y \right)^{\gamma_2}E_{\alpha,\beta}(x,y)
         \\&\quad +\sum_{|\alpha|= 1} \sum_{1\leq |\gamma_1|\leq N}\delta^{1+c_1(\alpha,\gamma_1)} a_\alpha^{\gamma_1}(x) \left( \delta^{\Zde}Z_x \right)^{\gamma_1} E_{\alpha,\{\}}(x,y)
         \\&\quad + \sum_{|\beta|= 1} \sum_{1\leq |\gamma_2|\leq N}  \delta^{1+c_1(\alpha,\gamma_2)} a_\alpha^{\gamma_2}(y) \left( \delta^{\Zde}Z_y \right)^{\gamma_2}E_{\{\},\beta}(x,y)
         \\&\quad+\delta^2 E_{\{\},\{\}}(x,y).
    \end{split}
    \end{equation}
    The fourth term on the right-hand side of \eqref{Eqn::Spaces::Elem::Elem::ElemDoesntDependOnChoices::LotsOfZs}
    is already of the desired form \eqref{Eqn::Spaces::Elem::Elem::ElemDoesntDependOnChoices::ToShowForm}.
    The first three terms on the right-hand side of \eqref{Eqn::Spaces::Elem::Elem::ElemDoesntDependOnChoices::LotsOfZs}
    are all similar, so we describe only the second as that contains the main ideas.
    Fix \(|\alpha|=1\) and \(1\leq |\gamma_1|\leq N\).  Write \(\gamma_1=(k,\gamma')\), where \(|\gamma'|\leq N-1\).
    \begin{equation}\label{Eqn::Spaces::Elem::Elem::ElemDoesntDependOnChoices::Tmp1}
    \begin{split}
        &\delta^{1+c_1(\alpha,\gamma_1)} a_\alpha^{\gamma_1}(x) \left( \delta^{\Zde}Z_x \right)^{\gamma_1} E_{\alpha,\{\}}(x,y)
        \\&=\delta^{1+c_1(\alpha,\gamma_1)} \left( \delta^{\Zde_k}Z_k \right) a_\alpha^{\gamma_1}(x) \left( \delta^{\Zde}Z_x \right)^{\gamma'} E_{\alpha,\{\}}(x,y)
        \\&\quad+\delta^{1+\Zde_k+c_1(\alpha,\gamma_1)} \left( Z_k a_\alpha^{\gamma_1}(x) \right)  \left( \delta^{\Zde}Z_x \right)^{\gamma'} E_{\alpha,\{\}}(x,y).
    \end{split}
    \end{equation}
    Using that \(\Zde_k\geq 1\),
    the second-term on the right-hand side of \eqref{Eqn::Spaces::Elem::Elem::ElemDoesntDependOnChoices::Tmp1}
    is of the desired form \eqref{Eqn::Spaces::Elem::Elem::ElemDoesntDependOnChoices::ToShowForm} (with \(|\alpha|=|\beta|=0\)),
    by \eqref{Eqn::Spaces::Elem::Elem::ElemDoesntDependOnChoices::DerivsKeepBounded}.
    The first term on the right-hand size of \eqref{Eqn::Spaces::Elem::Elem::ElemDoesntDependOnChoices::Tmp1}
    is of the desired form \eqref{Eqn::Spaces::Elem::Elem::ElemDoesntDependOnChoices::ToShowForm} (with \(|\alpha|=1\), \(|\beta|=0\))
    by \eqref{Eqn::Spaces::Elem::Elem::ElemDoesntDependOnChoices::DerivsKeepBounded}.

    
\end{proof}

The corresponding definition of \(\ElemzF{\Compact}\) in \cite[Definition 5.2.8]{StreetMaximalSubellipticity} is slightly
different than the one here (Definition \ref{Defn::Spaces::LP::ElemWWdv}): in \cite{StreetMaximalSubellipticity}
the definition treats \(E\) as an operator instead of a function.  These two definitions are equivalent, as the next lemma shows.

\begin{lemma}\label{Lemma::Spaces::Elem::Elem::ElemzhF}
    Let \(\ElemzhF{\Compact}\) be the largest set of subsets of \(\CinftyCptSpace[\ManifoldN\times \ManifoldN]\times (0,1]\)
    satisfying
    \begin{enumerate}[(a)]
        \item\label{Item::Spaces::Elem::Elem::ElemzhF::ElemzhFIsPElem} \(\ElemzhF{\Compact}\subseteq \PElemzF{\Compact}\).
        \item\label{Item::Spaces::Elem::Elem::ElemzhF::ElemzhFIsDerivs} \(\forall\sE \in \ElemzhF{\Compact}\), \(\forall (E,2^{-j})\in \sE\), we have
            \begin{equation*}
                E=\sum_{|\alpha|,|\beta|\leq 1} 2^{-j(2-|\alpha|-|\beta|)}\left(2^{-j\Wdv}W  \right)^{\alpha} E_{\alpha,\beta} \left( 2^{-j\Wdv}W \right)^{\beta},
            \end{equation*}
            where
            \begin{equation*}
                \left\{ \left( E_{\alpha,\beta},2^{-j} \right) : (E,2^{-j})\in \sE, |\alpha|,|\beta|\leq 1 \right\}\in \ElemzhF{\Compact}.
            \end{equation*}
    \end{enumerate}
    Then, \(\ElemzF{\Compact}=\ElemzhF{\Compact}\).
\end{lemma}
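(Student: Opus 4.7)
The plan is to establish both containments separately, with the key observation that composition of an operator $E$ with a vector field on the right is related to differentiation of the kernel $E(x,y)$ in the $y$ variable via integration by parts, and because of the infinite-order vanishing of $E(x,y)$ as $y\to\BoundaryN$, this integration by parts produces no boundary terms (Remark \ref{Rmk::Spaces::Elem::Elem::IntegrateByPartsWithoutBoundary}). Concretely, for each $j$, the formal $\LpSpace{2}[\Vol]$-adjoint satisfies $\left(2^{-j\Wdv_k}W_k\right)^*=-2^{-j\Wdv_k}W_k+2^{-j\Wdv_k}g_k$ for some $g_k\in\CinftySpace[\Omega]$, and since $\Wdv_k\geq 1$ the zeroth-order term is a smooth function times $2^{-j}$; this is exactly the amount of decay needed to absorb the extra term into the recursive structure of Definition~\ref{Defn::Spaces::LP::ElemWWdv}.

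For $\ElemzF{\Compact}\subseteq\ElemzhF{\Compact}$, let $\sE\in\ElemzF{\Compact}$; clearly $\sE\subseteq\PElemzF{\Compact}$ so property \ref{Item::Spaces::Elem::Elem::ElemzhF::ElemzhFIsPElem} holds. For property \ref{Item::Spaces::Elem::Elem::ElemzhF::ElemzhFIsDerivs}, write $(E,2^{-j})\in\sE$ as $E(x,y)=\sum_{|\alpha|,|\beta|\leq 1}2^{-j(2-|\alpha|-|\beta|)}\left(2^{-j\Wdv}W_x\right)^\alpha\left(2^{-j\Wdv}W_y\right)^\beta E_{\alpha,\beta}(x,y)$ using the $\ElemzF$ definition. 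Each factor $\left(2^{-j\Wdv}W_y\right)^\beta E_{\alpha,\beta}(x,y)$, viewed as an operator, equals $E_{\alpha,\beta}\bigl(2^{-j\Wdv}W\bigr)^{*\beta}$, which by the adjoint formula above equals $\pm E_{\alpha,\beta}\bigl(2^{-j\Wdv}W\bigr)^\beta$ plus $2^{-j}$ times operators of the form $E_{\alpha,\beta}\cdot(\text{smooth function})$. The latter are themselves in $\ElemzF{\Compact}$ by Proposition~\ref{Prop::Spaces::Elem::Elem::MainProps}~\ref{Item::Spaces::Elem::Elem::MultBySmooth}, so after regrouping we see $E=\sum_{|\alpha|,|\beta|\leq 1}2^{-j(2-|\alpha|-|\beta|)}\bigl(2^{-j\Wdv}W\bigr)^\alpha \Et_{\alpha,\beta}\bigl(2^{-j\Wdv}W\bigr)^\beta$ with $\{(\Et_{\alpha,\beta},2^{-j})\}\in\ElemzF{\Compact}$. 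The collection $\sB:=\{\sE\in\ElemzF{\Compact}\}$ therefore satisfies conditions \ref{Item::Spaces::Elem::Elem::ElemzhF::ElemzhFIsPElem} and \ref{Item::Spaces::Elem::Elem::ElemzhF::ElemzhFIsDerivs} defining $\ElemzhF{\Compact}$, so by maximality $\ElemzF{\Compact}\subseteq\ElemzhF{\Compact}$.

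For the reverse containment $\ElemzhF{\Compact}\subseteq\ElemzF{\Compact}$, the argument is symmetric. Starting from $\sE\in\ElemzhF{\Compact}$ and $(E,2^{-j})\in\sE$, write $E=\sum_{|\alpha|,|\beta|\leq 1}2^{-j(2-|\alpha|-|\beta|)}\bigl(2^{-j\Wdv}W\bigr)^\alpha E_{\alpha,\beta}\bigl(2^{-j\Wdv}W\bigr)^\beta$ and rewrite each operator composition as a kernel derivative: $\bigl(2^{-j\Wdv}W\bigr)^\alpha E_{\alpha,\beta}$ corresponds to $\bigl(2^{-j\Wdv}W_x\bigr)^\alpha$ applied to $E_{\alpha,\beta}(x,y)$, while $E_{\alpha,\beta}\bigl(2^{-j\Wdv}W\bigr)^\beta$ corresponds to $\bigl(2^{-j\Wdv}W_y^*\bigr)^\beta E_{\alpha,\beta}(x,y)$, producing $\bigl(2^{-j\Wdv}W_y\bigr)^\beta$ acting on the kernel plus an additional $2^{-j}$ times a multiplication. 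One checks that the resulting expression fits the $\ElemzF$ recursion, and $\{\sE\in\ElemzhF{\Compact}\}$ is shown to satisfy Definition~\ref{Defn::Spaces::LP::ElemWWdv}\ref{Item::Spaces::LP::ElemWWdv::ElemArePreDerivs}, giving $\ElemzhF{\Compact}\subseteq\ElemzF{\Compact}$ by the maximality of $\ElemzF{\Compact}$.

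The main technical point, and the only real obstacle, is bookkeeping in the recursive definition: after integrating by parts, one obtains both the desired derivative term and lower-order correction terms, and one needs to verify that these corrections can be folded into the recursive structure without breaking the $\ElemzF$ (or $\ElemzhF$) property. This is handled using Proposition~\ref{Prop::Spaces::Elem::Elem::MainProps} \ref{Item::Spaces::Elem::Elem::LinearComb}, \ref{Item::Spaces::Elem::Elem::MultBySmooth}, and \ref{Item::Spaces::Elem::Elem::DerivFunction}, together with the fact that $\Wdv_k\geq 1$ provides the crucial extra factor of $2^{-j}$ so that the correction terms genuinely reduce to lower order in the recursion.
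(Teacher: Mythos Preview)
Your approach is essentially the same as the paper's: both directions rely on integration by parts (Remark~\ref{Rmk::Spaces::Elem::Elem::IntegrateByPartsWithoutBoundary}) to translate between kernel $y$-derivatives and operator composition on the right, with the correction terms absorbed using $2^{-j\Wdv_k}\leq 2^{-j}$. For the forward inclusion the paper packages this as Proposition~\ref{Prop::Spaces::Elem::Elem::MainProps}~\ref{Item::Spaces::Elem::Elem::PullOutNDerivs}, but the content is what you wrote.

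There is one point to tighten in the reverse inclusion $\ElemzhF{\Compact}\subseteq\ElemzF{\Compact}$. To show $\ElemzhF{\Compact}$ satisfies Definition~\ref{Defn::Spaces::LP::ElemWWdv}~\ref{Item::Spaces::LP::ElemWWdv::ElemArePreDerivs}, the decomposition pieces (including the multiplication-by-$f_\beta(y)$ correction terms) must lie in $\ElemzhF{\Compact}$, not merely in $\ElemzF{\Compact}$. You invoke Proposition~\ref{Prop::Spaces::Elem::Elem::MainProps}~\ref{Item::Spaces::Elem::Elem::MultBySmooth}, but that statement and its proof are for $\ElemzF{\Compact}$; at this stage of the argument you cannot yet use $\ElemzF{\Compact}=\ElemzhF{\Compact}$. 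The paper handles this by observing that the proof of Proposition~\ref{Prop::Spaces::Elem::Elem::MainProps}~\ref{Item::Spaces::Elem::Elem::MultBySmooth} goes through verbatim with $\ElemzhF{\Compact}$ in place of $\ElemzF{\Compact}$, so the multiplication corrections remain in $\ElemzhF{\Compact}$. Once you note this, your argument is complete.
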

\begin{proof}
    We begin with \(\ElemzF{\Compact}\subseteq \ElemzhF{\Compact}\), and we do this by showing
    \ref{Item::Spaces::Elem::Elem::ElemzhF::ElemzhFIsPElem} and \ref{Item::Spaces::Elem::Elem::ElemzhF::ElemzhFIsDerivs}
    hold with \(\ElemzhF{\Compact}\) replaced by \(\ElemzF{\Compact}\)--then, since \(\ElemzhF{\Compact}\) is the largest
    set satisfying these properties, we have \(\ElemzF{\Compact}\subseteq \ElemzhF{\Compact}\).
    \ref{Item::Spaces::Elem::Elem::ElemzhF::ElemzhFIsPElem} is just a restatement of 
    Definition \ref{Defn::Spaces::LP::ElemWWdv} \ref{Item::Spaces::LP::ElemWWdv::ElemArePreElem}.
    \ref{Item::Spaces::Elem::Elem::ElemzhF::ElemzhFIsDerivs} follows by applying 
    Proposition \ref{Prop::Spaces::Elem::Elem::MainProps} \ref{Item::Spaces::Elem::Elem::PullOutNDerivs} 
    (using both \eqref{Eqn::Spaces::Elem::Elem::PullOutNDerivs::Left} and \eqref{Eqn::Spaces::Elem::Elem::PullOutNDerivs::Right}, with \(N=1\)),
    completing the proof of \(\ElemzF{\Compact}\subseteq \ElemzhF{\Compact}\).

    For the reverse containment, the proof is similar. We show \(\ElemzhF{\Compact}\subseteq \ElemzF{\Compact}\)
    by showing that \(\ElemzhF{\Compact}\) satisfies   
    Definition \ref{Defn::Spaces::LP::ElemWWdv} \ref{Item::Spaces::LP::ElemWWdv::ElemArePreElem} and \ref{Item::Spaces::LP::ElemWWdv::ElemArePreDerivs},
    and the containment will follows. As in the previous part,  Definition \ref{Defn::Spaces::LP::ElemWWdv} \ref{Item::Spaces::LP::ElemWWdv::ElemArePreElem}
    is just a restatement of \ref{Item::Spaces::Elem::Elem::ElemzhF::ElemzhFIsPElem}.
    By \ref{Item::Spaces::Elem::Elem::ElemzhF::ElemzhFIsDerivs}, for \((E,2^{-j})\in \sE\), 
    using Remark \ref{Rmk::Spaces::Elem::Elem::IntegrateByPartsWithoutBoundary}, we may write
    \begin{equation}\label{Eqn::Spaces::Elem::Elem::ElemzhF::PulledOutSomeDerivs}
    \begin{split}
         &E=\sum_{|\alpha|,|\beta|\leq 1} 2^{-j(2-|\alpha|-|\beta|)}\left(2^{-j\Wdv}W  \right)^{\alpha} E_{\alpha,\beta} \left( 2^{-j\Wdv}W \right)^{\beta}
         \\&=-\sum_{|\alpha|\leq 1,|\beta|=1} 2^{-j(1-|\alpha|)}\left(2^{-j\Wdv}W_x  \right)^{\alpha} \left( 2^{-j\Wdv}W_y \right)^{\beta}E_{\alpha,\beta}(x,y) 
         \\&\quad + \sum_{|\alpha|\leq 1,|\beta|=1} 2^{-j(1-|\alpha|+\DegWdv{\beta})}\left(2^{-j\Wdv}W_x  \right)^{\alpha} f_{\beta}(y)E_{\alpha,\beta}(x,y)
         \\&\quad + \sum_{|\alpha|\leq 1} 2^{-j(2-|\alpha|)} \left( 2^{-j\Wdv}W_x  \right)^{\alpha} E_{\alpha,\{\}}(x,y),
    \end{split}
    \end{equation}
    where \(f_\beta\in \CinftySpace[\Omega]\).  The first and third term on the right-hand side of \eqref{Eqn::Spaces::Elem::Elem::ElemzhF::PulledOutSomeDerivs}
    are of the desired form. For the second, we use that Proposition \ref{Prop::Spaces::Elem::Elem::MainProps} \ref{Item::Spaces::Elem::Elem::MultBySmooth}
    holds with \(\ElemzF{\Compact}\) replaced by \(\ElemzhF{\Compact}\) (with the same proof--see also the proof of 
    \cite[Proposition 5.5.5 (c)]{StreetMaximalSubellipticity}); showing that the second term on the
    right-hand side of \eqref{Eqn::Spaces::Elem::Elem::ElemzhF::PulledOutSomeDerivs} is of the desired form, completing the proof.
\end{proof}

If \(\sE\in \ElemzF{\Compact}\). If \((E_1,2^{-j}),(E_2,2^{-k})\in \sE\), then \(E_1\) and \(E_2\) are
``almost orthogonal'' as the next two results show.

\begin{lemma}\label{Lemma::Spaces::Elem::ELem::ProdOfElemAsSmallSumOfProds}
    Let \(\sE\in \ElemzF{\Compact}\). Then, \(\forall N\in \Zgeq\), \(\exists K=K(N)\in \Zgeq\), 
    \(\exists \sE_N\in \ElemzF{\Compact}\), \(\forall (E_j,2^{-j}), (F_k,2^{-k})\in \sE\),
    \begin{equation*}
        F_k E_j=\sum_{l=1}^K 2^{-N|j-k|} F_{k,l} E_{j,l},
    \end{equation*}
    where \(\left( E_{j,l},2^{-j} \right), \left( F_{k,l},2^{-k} \right)\in \sE_N\).
\end{lemma}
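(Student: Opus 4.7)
The plan is to use the ``pull out $N$ derivatives'' formula of Proposition~\ref{Prop::Spaces::Elem::Elem::MainProps}~\ref{Item::Spaces::Elem::Elem::PullOutNDerivs} together with the derivative-absorption property \ref{Item::Spaces::Elem::Elem::DerivOp} to convert $N$ units of scale difference into the factor $2^{-N|j-k|}$. By splitting into the cases $j \ge k$ and $k > j$, it suffices to carry out the first (the second is the mirror image, using \eqref{Eqn::Spaces::Elem::Elem::PullOutNDerivs::Right} in place of \eqref{Eqn::Spaces::Elem::Elem::PullOutNDerivs::Left}).

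In the case $j \ge k$, I would first apply \eqref{Eqn::Spaces::Elem::Elem::PullOutNDerivs::Left} with parameter $N$ to write, for each $(E_j,2^{-j}) \in \sE$,
\[
E_j = \sum_{|\alpha|\le N} 2^{(|\alpha|-N)j}\,(2^{-j\Wdv}W)^{\alpha}\, E_{j,\alpha},
\]
with $\{(E_{j,\alpha},2^{-j})\}$ in $\ElemzF{\Compact}$. Next, I would rewrite the scale-$2^{-j}$ derivatives at scale $2^{-k}$ via the identity $(2^{-j\Wdv}W)^{\alpha} = 2^{(k-j)\DegWdv{\alpha}}(2^{-k\Wdv}W)^{\alpha}$ and attach them to $F_k$: by Proposition~\ref{Prop::Spaces::Elem::Elem::MainProps}~\ref{Item::Spaces::Elem::Elem::DerivOp}, the family $F_{k,\alpha} := F_k (2^{-k\Wdv}W)^{\alpha}$ lies in a collection in $\ElemzF{\Compact}$ at scale $2^{-k}$. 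This yields
\[
F_k E_j = \sum_{|\alpha|\le N} 2^{(|\alpha|-N)j + (k-j)\DegWdv{\alpha}}\, F_{k,\alpha}\, E_{j,\alpha}.
\]
Once the exponent is shown to satisfy $(|\alpha|-N)j + (k-j)\DegWdv{\alpha} \le -N(j-k)$, the remaining bounded factor $2^{c_{\alpha,j,k}}$ (with $c_{\alpha,j,k}\le 0$) can be absorbed into $F_{k,\alpha}$ using Proposition~\ref{Prop::Spaces::Elem::Elem::MainProps}~\ref{Item::Spaces::Elem::Elem::LinearComb}, producing the required decomposition.

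The main obstacle is verifying that exponent inequality; after rearrangement, it becomes $j(|\alpha|-\DegWdv{\alpha}) + k(\DegWdv{\alpha}-N) \le 0$. The first summand is nonpositive since each $\Wdv_i \ge 1$ forces $\DegWdv{\alpha} \ge |\alpha|$. For the second, if $\DegWdv{\alpha} \le N$ it is already nonpositive; if $\DegWdv{\alpha} > N$, then $j \ge k \ge 0$ and $|\alpha|\le N$ give $k(\DegWdv{\alpha}-N) \le j(\DegWdv{\alpha}-N) \le j(\DegWdv{\alpha}-|\alpha|)$, which cancels the first term. This is essentially the only real content of the argument; the rest is a mechanical application of the elementary-operator calculus of Proposition~\ref{Prop::Spaces::Elem::Elem::MainProps}. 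Taking $\sE_N$ to be the (finite) union of the elementary collections produced in the two cases, with $K(N)$ the resulting cardinality, completes the proof.
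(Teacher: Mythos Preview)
Your proposal is correct and follows essentially the same approach as the paper. The only cosmetic difference is that the paper treats the case \(k\ge j\), pulling derivatives out of \(F_k\) on the right via \eqref{Eqn::Spaces::Elem::Elem::PullOutNDerivs::Right} and absorbing them into \(E_j\) on the left, whereas you handle \(j\ge k\), pulling derivatives out of \(E_j\) on the left and absorbing into \(F_k\) on the right; your exponent check is in fact more detailed than the paper's one-line assertion of the inequality.
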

\begin{proof}
    We prove the result for \(k\geq j\); the proof for \(j>k\) is similar and we leave it to the reader.
    By Proposition \ref{Prop::Spaces::Elem::Elem::MainProps} \ref{Item::Spaces::Elem::Elem::PullOutNDerivs}
    we may write
    \begin{equation*}
        F_k=\sum_{|\alpha|\leq N} 2^{(|\alpha|-N)k}F_{k,\alpha} \left( 2^{-k\Wdv}W \right)^{\alpha},
    \end{equation*}
    where \(\left\{ \left( F_{k,\alpha},2^{-k} \right) : (F_k,2^{-k})\in \sE,|\alpha|\leq N \right\}\in \ElemzF{\Compact}\).
    We, therefore have
    \begin{equation*}
    \begin{split}
         &F_k E_j
         =\sum_{|\alpha|\leq N} 2^{(|\alpha|-N)k}F_{k,\alpha} \left( 2^{-k\Wdv}W \right)^{\alpha} E_j
         =\sum_{|\alpha|\leq N} 2^{(|\alpha|-N)k} 2^{(j-k)\DegWdv{\alpha}}F_{k,\alpha} \left( \left( 2^{-j\Wdv}W \right)^{\alpha} E_j \right).
    \end{split}
    \end{equation*}
    Proposition  \ref{Prop::Spaces::Elem::Elem::MainProps} \ref{Item::Spaces::Elem::Elem::DerivOp} shows
    \(\left\{ \left( \left( 2^{-j\Wdv}W \right)^{\alpha} E_j ,2^{-j} \right) : (E,2^{-j})\in \sE, |\alpha|\leq N \right\}\in \ElemzF{\Compact}\).
    Since \(2^{2^{(|\alpha|-N)k} 2^{(j-k)\DegWdv{\alpha}}}\leq 2^{-N|j-k|}\), the claim follows.
\end{proof}

\begin{lemma}\label{Lemma::Spaces::ELem::Elem::ComposeElem}
    Let \(\sE\in \ElemzF{\Compact}\). Then, \(\forall N\geq 0\),
    \begin{equation*}
        \left\{ \left( 2^{N|j-k|}E_jE_k, 2^{-j} \right), \left( 2^{N|j-k|}E_jE_k, 2^{-k} \right) : (E_j,2^{-j}), (E_k,2^{-k})\in \sE \right\}\in \ElemzF{\Compact}.
    \end{equation*}
\end{lemma}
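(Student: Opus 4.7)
The strategy is to introduce an auxiliary collection $\sB$ of bounded sets of weighted products of elementary operators, verify that $\sB\subseteq\PElemzF{\Compact}$ and that each element of $\sB$ admits an operator-form derivative decomposition whose inner pieces remain in $\sB$, and then deduce via Lemma~\ref{Lemma::Spaces::Elem::Elem::ElemzhF} that $\sB\subseteq\ElemzF{\Compact}$. Call a subset $\sF\subseteq\CinftyCptSpace[\ManifoldN\times\ManifoldN]\times(0,1]$ \emph{admissible} if there exist $\sE_1,\sE_2\in\ElemzF{\Compact}$ and $N\in\Zgeq$ such that every element of $\sF$ has the form $(c\,F_jG_k,2^{-l})$ with $(F_j,2^{-j})\in\sE_1$, $(G_k,2^{-k})\in\sE_2$, $l\in\{j,k\}$, and $|c|\le 2^{N|j-k|}$; let $\sB$ be the collection of admissible sets. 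The set appearing in the statement of the lemma is admissible (take $\sE_1=\sE_2=\sE$ and the given $N$), so it suffices to prove $\sB\subseteq\ElemzF{\Compact}$.

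For the pre-elementary bound, fix $(cF_jG_k,2^{-l})\in\sF\in\sB$ and assume $l=\min(j,k)=j$ (the opposite case is symmetric). The kernel is $cF_jG_k(x,y)=c\int F_j(x,z)G_k(z,y)\,d\Vol(z)$. Differentiating at scale $2^{-j}$ distributes onto the two factors, and the identity $(2^{-j\Wdv}W_y)^\beta=2^{-(k-j)\DegWdv{\beta}}(2^{-k\Wdv}W_y)^\beta$ produces a harmless factor of size at most $1$. Proposition~\ref{Prop::Spaces::Elem::Elem::MainProps}\ref{Item::Spaces::Elem::Elem::DerivFunction} then gives pre-elementary bounds for the two differentiated factors at their native scales. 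Lemma~\ref{Lemma::Spaces::Elem::PreElem::PreElemBoundDifferentScales} transfers $G_k$'s bound from scale $2^{-k}$ to scale $2^{-j}$ with arbitrary decay $2^{-M|j-k|}$; choosing $M\ge N$ absorbs the factor $|c|\le 2^{N|j-k|}$. Finally Lemma~\ref{Lemma::Spaces::Elem::PreElem::ComposePreElemBound} convolves in $z$ to yield the desired pre-elementary bound at scale $2^{-j}$.

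For the recursive derivative structure, following Remark~\ref{Rmk::Spaces::Elem::Elem::HowToProveElem} and the operator-form of Lemma~\ref{Lemma::Spaces::Elem::Elem::ElemzhF}, take again $l=j\le k$ and apply Proposition~\ref{Prop::Spaces::Elem::Elem::MainProps}\ref{Item::Spaces::Elem::Elem::PullOutNDerivs} with $N=1$ in each direction to write
\[F_j=\sum_{|\alpha|\le 1}2^{(|\alpha|-1)j}(2^{-j\Wdv}W)^\alpha F_{j,\alpha},\qquad G_k=\sum_{|\beta|\le 1}2^{(|\beta|-1)k}\tilde G_{k,\beta}(2^{-k\Wdv}W)^\beta.\]
Then rewrite the outer right derivative using $(2^{-k\Wdv}W)^\beta=2^{-(k-j)\DegWdv{\beta}}(2^{-j\Wdv}W)^\beta$ and collect powers of $2^j,2^k$ to obtain
\[cF_jG_k=\sum_{|\alpha|,|\beta|\le 1}2^{-j(2-|\alpha|-|\beta|)}(2^{-j\Wdv}W)^\alpha\Psi_{\alpha,\beta}(2^{-j\Wdv}W)^\beta,\qquad \Psi_{\alpha,\beta}=c\,2^{(k-j)(|\beta|-\DegWdv{\beta}-1)}F_{j,\alpha}\tilde G_{k,\beta}.\]
Because every formal degree satisfies $\Wdv_i\ge 1$, one has $\DegWdv{\beta}\ge|\beta|$, so the exponent above is $\le-|j-k|$ and the scalar weighting of $\Psi_{\alpha,\beta}$ is bounded by $2^{(N-1)|j-k|}$. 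Proposition~\ref{Prop::Spaces::Elem::Elem::MainProps}\ref{Item::Spaces::Elem::Elem::PullOutNDerivs} also ensures that the collections $\{(F_{j,\alpha},2^{-j})\}$ and $\{(\tilde G_{k,\beta},2^{-k})\}$ lie in $\ElemzF{\Compact}$, so the set $\{(\Psi_{\alpha,\beta},2^{-j}):\ldots\}$ is admissible with level $\max(N-1,0)$, hence in $\sB$. The case $l=\max(j,k)$ is handled by the symmetric decomposition---converting the outer left derivative instead---which yields admissibility with $N'\le N+1+\max_i\Wdv_i$, again in $\sB$.

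Since $\sB\cup\ElemzF{\Compact}\subseteq\PElemzF{\Compact}$ satisfies the operator-form derivative property characterizing $\ElemzF{\Compact}$ in Lemma~\ref{Lemma::Spaces::Elem::Elem::ElemzhF}, maximality gives $\sB\subseteq\ElemzF{\Compact}$, and the lemma follows. The main obstacle is the coefficient bookkeeping in Step~2: after pulling out derivatives and unifying to the single scale $2^{-l}$, the scalar attached to the inner product $F_{j,\alpha}\tilde G_{k,\beta}$ must be controlled by $2^{N'|j-k|}$ for some fixed $N'$ so that the recursion closes inside $\sB$. This succeeds precisely because $\DegWdv{\beta}\ge|\beta|$, which makes the key exponent $|\beta|-\DegWdv{\beta}-1$ strictly negative.
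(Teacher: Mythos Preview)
Your overall strategy is correct and matches the paper's approach: the paper defers to \cite[Proposition 5.5.11]{StreetMaximalSubellipticity}, citing precisely the ingredients you use---the operator-form characterization of Lemma~\ref{Lemma::Spaces::Elem::Elem::ElemzhF}, Proposition~\ref{Prop::Spaces::Elem::Elem::MainProps}, and Lemmas~\ref{Lemma::Spaces::Elem::PreElem::ComposePreElemBound} and~\ref{Lemma::Spaces::Elem::PreElem::PreElemBoundDifferentScales}. Your coinductive argument via the admissible class $\sB$ is exactly how such proofs are run, and your coefficient computation in Step~2 for $l=\min(j,k)$ is correct.

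There is, however, a sign error in Step~1. With $l=j\le k$, the conversion identity is
\[(2^{-j\Wdv}W_y)^\beta=2^{(k-j)\DegWdv{\beta}}(2^{-k\Wdv}W_y)^\beta,\]
not $2^{-(k-j)\DegWdv{\beta}}$, so the factor is $\ge 1$, not ``harmless of size at most~1.'' This does not break the argument: since the pre-elementary definition quantifies over each fixed $\alpha,\beta,m$ separately, you may take $M\ge N+\DegWdv{\beta}$ in Lemma~\ref{Lemma::Spaces::Elem::PreElem::PreElemBoundDifferentScales} to absorb both $|c|\le 2^{N|j-k|}$ and this conversion factor. (The case $l=\max(j,k)$ is in fact the one where the conversion factor on the outer side is $\le 1$.) With this correction, your Step~1 goes through.

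Your handling of the $l=\max(j,k)$ case in Step~2, where the level $N$ increases to roughly $N+\max_i\Wdv_i$, is fine: the coinductive argument only requires the inner pieces to lie in $\sB$, not at the same or lower level, so the recursion still closes.
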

\begin{proof}
    This follows just as in \cite[Proposition 5.5.11]{StreetMaximalSubellipticity},
    using Lemma \ref{Lemma::Spaces::Elem::Elem::ElemzhF}
    and using
    Proposition \ref{Prop::Spaces::Elem::Elem::MainProps} 
    and Lemmas \ref{Lemma::Spaces::Elem::PreElem::ComposePreElemBound} and \ref{Lemma::Spaces::Elem::PreElem::PreElemBoundDifferentScales}
    in place of the corresponding results in \cite{StreetMaximalSubellipticity}.
\end{proof}

\begin{proposition}\label{Prop::Spaces::Elem::Elem::ConvergenceOfElemOps}
    Let \(t\in \R\) and \(\left\{ \left( E_j,2^{-j} \right) : j\in \Zgeq \right\}\in \ElemzF{\Compact}\).
    Then, for \(f\in \CinftySpace[\ManifoldN]\), the sum
    \(\sum_{j\in \Zgeq} 2^{jt}E_j f\) converges in \(\CinftyCptSpace[\ManifoldN]\) and defines a continuous
    linear map \(\CinftySpace[\ManifoldN]\rightarrow \CinftyCptSpace[\ManifoldN]\).
    For \(u\in \DistributionsZeroN\), the sum
    \(\sum_{j\in \Zgeq} 2^{jt}E_j u\) converges in \(\DistributionsZeroN\) and defines a continuous linear map
    \(\DistributionsZeroN\rightarrow \DistributionsZeroN\).
\end{proposition}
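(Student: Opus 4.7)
My plan is to exploit the cancellation encoded in Proposition \ref{Prop::Spaces::Elem::Elem::MainProps}: by \ref{Item::Spaces::Elem::Elem::PullOutNDerivs}, each $E_j$ can be written as $2^{-Nj}$ times a sum of elementary operators multiplied by derivatives which, when applied to test data, contribute an extra $2^{-j\DegWdv{\alpha}}$; choosing $N$ large will kill the growth factor $2^{jt}$. For the smooth case, fix $f\in \CinftySpace[\ManifoldN]$ and use the right-derivative form \eqref{Eqn::Spaces::Elem::Elem::PullOutNDerivs::Right} to write $E_j = \sum_{|\alpha|\leq N} 2^{(|\alpha|-N)j}\Et_{j,\alpha}(2^{-j\Wdv}W)^\alpha$ with $\{(\Et_{j,\alpha},2^{-j})\}\in \ElemzF{\Compact}$. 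The uniform $L^\infty$-boundedness from Lemma \ref{Lemma::Spaces::Elem::PElem::PElemOpsBoundedOnLp} then yields $\|E_j f\|_{L^\infty}\lesssim 2^{-Nj}\max_{|\alpha|\leq N}\|W^\alpha f\|_{L^\infty(\Compact)}$; for a left derivative, Proposition \ref{Prop::Spaces::Elem::Elem::MainProps} \ref{Item::Spaces::Elem::Elem::DerivOp} gives $W^\beta E_j = 2^{j\DegWdv{\beta}}F_j^{(\beta)}$ with $\{(F_j^{(\beta)},2^{-j})\}\in\ElemzF{\Compact}$, producing $\|W^\beta E_j f\|_{L^\infty}\lesssim 2^{(\DegWdv{\beta}-N)j}\max_{|\alpha|\leq N}\|W^\alpha f\|_{L^\infty(\Compact)}$. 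Since $E_j f$ is supported in $\Compact$ and H\"ormander's condition allows arbitrary smooth derivatives on $\Compact$ to be expressed via iterated brackets of the $W_k$'s, choosing $N$ as large as desired makes $\sum_j 2^{jt}E_j f$ absolutely convergent in $\CinftyCptSpace[\ManifoldN]$, with continuous dependence on $f$.

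For the distribution case, I would dualize: for $\phi\in \TestFunctionsZeroN$, set $\psi_j:=E_j^T\phi$, where the transpose is taken with respect to $\Vol$, so that $\psi_j(y)=\int E_j(x,y)\phi(x)\,d\Vol(x)$. Because $E_j(x,y)$ vanishes to infinite order as $y\to \BoundaryN$ and is supported in $\Compact\times\Compact$, $\psi_j$ lies in $\TestFunctionsZeroN$ with $\supp(\psi_j)\subseteq \Compact$; Fubini then gives $\langle E_j u,\phi\rangle = \langle u,\psi_j\rangle$ for every $u\in \DistributionsZeroN$. It therefore suffices to show $\sum_j 2^{jt}\psi_j$ converges in $\TestFunctionsZeroN$ with continuous dependence on $\phi$. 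Using the left-derivative form \eqref{Eqn::Spaces::Elem::Elem::PullOutNDerivs::Left} and transposing, $E_j^T = \sum_{|\alpha|\leq N}2^{(|\alpha|-N)j}\Et_{j,\alpha}^T\bigl((2^{-j\Wdv}W)^\alpha\bigr)^T$; the adjoint is a differential operator of order $|\alpha|$ with scaling $2^{-j\DegWdv{\alpha}}$, while $\Et_{j,\alpha}^T$ lies in $\PElemF{\Compact}$ by the symmetry of the pre-elementary bound (Remark \ref{Rmk::Spaces::LP::BoundIsSymmetric}), so Lemma \ref{Lemma::Spaces::Elem::PElem::PElemOpsBoundedOnLp} still yields uniform $L^\infty$-boundedness. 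Combining gives $\|\psi_j\|_{L^\infty}\lesssim 2^{-Nj}\max_{|\alpha|\leq N}\|W^\alpha\phi\|_{L^\infty}$, and Proposition \ref{Prop::Spaces::Elem::Elem::MainProps} \ref{Item::Spaces::Elem::Elem::DerivFunction} (which converts $y$-derivatives of $E_j(x,y)$ into new elementary kernels at scale $2^{j\DegWdv{\beta}}$) reduces $W^\beta\psi_j$ to the same estimate at the cost of a factor $2^{j\DegWdv{\beta}}$.

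The main obstacle I foresee is the asymmetric vanishing built into Definition \ref{Defn::Spaces::LP::ElemWWdv}: since infinite-order vanishing is imposed only in the second kernel argument, $E_j^T$ is not itself an element of the class associated to $\PElemzF{\Compact}$, so one cannot directly treat the auxiliary transposed operators $\Et_{j,\alpha}^T$ as elements of $\ElemzF{\Compact}$. The resolution is to demand only the weaker bound from $\PElemF{\Compact}$ for $\Et_{j,\alpha}^T$, which by Remark \ref{Rmk::Spaces::LP::BoundIsSymmetric} and Lemma \ref{Lemma::Spaces::Elem::PElem::PElemOpsBoundedOnLp} is enough for the $L^\infty$ bounds; the $y$-vanishing required to ensure $\psi_j\in \TestFunctionsZeroN$ is then inherited directly from the kernel of $E_j$ itself rather than from any derivative decomposition.
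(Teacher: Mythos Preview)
Your proposal is correct and follows essentially the same route as the paper's proof. Both arguments reduce to the estimates
\[
\sup_x \left|2^{jt} W^{\alpha} E_j f(x)\right| \lesssim 2^{-Nj}\|f\|_{C^M},\qquad
\sup_x \left|2^{jt} W^{\alpha} E_j^{\transpose} \phi(x)\right| \lesssim 2^{-Nj}\|\phi\|_{C^M},
\]
by first stripping off the outer $W^\alpha$ via Proposition~\ref{Prop::Spaces::Elem::Elem::MainProps}\,\ref{Item::Spaces::Elem::Elem::DerivOp} (respectively \ref{Item::Spaces::Elem::Elem::DerivFunction} for the transpose), then absorbing $2^{jt}$ via \ref{Item::Spaces::Elem::Elem::PullOutNDerivs}, and finally invoking Lemma~\ref{Lemma::Spaces::Elem::PElem::PElemOpsBoundedOnLp}. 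Your explicit remark that the transposed auxiliary kernels lie only in $\PElemF{\Compact}$ (via Remark~\ref{Rmk::Spaces::LP::BoundIsSymmetric}) rather than $\PElemzF{\Compact}$, and that this suffices because Lemma~\ref{Lemma::Spaces::Elem::PElem::PElemOpsBoundedOnLp} requires only $\PElemF{\Compact}$, makes explicit a point the paper passes over silently; the paper also notes (as you should) that the integration by parts implicit in writing $E_j^{\transpose}\phi = \sum E_{j,\alpha}^{\transpose}[(2^{-j\Wdv}W)^\alpha]^{\transpose}\phi$ is justified precisely because $\phi\in\TestFunctionsZeroN$ vanishes to infinite order at $\BoundaryN$.
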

\begin{proof}
    Let \(\Omega_1\Subset \Omega\) be open with \(\Compact\Subset\Omega_1\).
    We will show \(\forall \alpha\), \(\forall N\), exists \(M\)
    such that
    \begin{equation}\label{Eqn::Spaces::Elem::Elem::ConvergenceOfElemOps::ToShow1}
        \sup_{x} \left| 2^{jt} W^{\alpha} E_j f(x) \right|\lesssim 2^{-Nj} \CmNorm{f}{M}[\Omega_1], \quad\forall f\in \CinftySpace[\Omega],
    \end{equation}
    \begin{equation}\label{Eqn::Spaces::Elem::Elem::ConvergenceOfElemOps::ToShow1::Transpose}
        \sup_{x} \left| 2^{jt} W^{\alpha} E_j^{\transpose} f(x) \right|\lesssim 2^{-Nj} \CmNorm{f}{M}[\Omega_1], \quad\forall f\in \TestFunctionsZero[\Omega],
    \end{equation}
    where \(E_j^{\transpose}\) denotes the transpose of \(E_j\)
     with respect to the usual pairing of \(\DistributionsZeroN\) and \(\TestFunctionsZeroN\).

    Using that \(W_1,\ldots, W_r\) satisfy H\"ormander's condition on a neigborhood of \(\Omega\), and 
    since \(\supp(E_j)\subseteq \Compact\times \Compact\subseteq \Omega\times \Omega\), \(\forall j\), the convergence in \(\CinftyCptSpace[\ManifoldN]\)
    and the fact that the sum defines a continuous
    linear map \(\CinftySpace[\ManifoldN]\rightarrow \CinftyCptSpace[\ManifoldN]\) 
    follows immediately from \eqref{Eqn::Spaces::Elem::Elem::ConvergenceOfElemOps::ToShow1} (with \(N=1\)).
    Similarly, that  \(\sum_{j\in \Zgeq} 2^{jt}E_j u\) converges in \(\DistributionsZeroN\) and defines a continuous
    \(\DistributionsZeroN\rightarrow \DistributionsZeroN\) follows easily from
    \eqref{Eqn::Spaces::Elem::Elem::ConvergenceOfElemOps::ToShow1::Transpose} (with \(N=1\)).

    Note that
    \begin{equation*}
        2^{jt}W^{\alpha}E_j = 2^{j(t+\DegWdv{\alpha})} \left( 2^{-j\Wdv}W \right)^{\alpha}E_j
    \end{equation*}
    and by Proposition \ref{Prop::Spaces::Elem::Elem::MainProps} \ref{Item::Spaces::Elem::Elem::DerivOp},
    \(\left\{ \left(\left( 2^{-j\Wdv}W \right)^{\alpha}E_j, 2^{-j}  \right) : j\in \Zgeq \right\}\in \ElemzF{\Compact}\).
    And so, by replacing \(t\) with \(t+\DegWdv{\alpha}\), it suffices to prove \eqref{Eqn::Spaces::Elem::Elem::ConvergenceOfElemOps::ToShow1}
    in the case \(|\alpha|=0\).
    Similarly, since
    \begin{equation*}
        \left[ \left( 2^{-j\Wdv}W \right)^{\alpha} E_j^{\transpose} \right](y,x)
        =\left( 2^{-j\Wdv}W_y \right) E_j(x,y),
    \end{equation*}
    it follows from Proposition \ref{Prop::Spaces::Elem::Elem::MainProps} \ref{Item::Spaces::Elem::Elem::DerivFunction}
    that it also suffices to prove \eqref{Eqn::Spaces::Elem::Elem::ConvergenceOfElemOps::ToShow1::Transpose} in the case
    \(|\alpha|=0\). I.e., we wish to show
    \begin{equation}\label{Eqn::Spaces::Elem::Elem::ConvergenceOfElemOps::ToShow1::alpha0}
        \sup_{x} \left| 2^{jt}  E_j f(x) \right|\lesssim 2^{-Nj} \CmNorm{f}{M}[\Omega_1], \quad\forall f\in \CinftySpace[\Omega],
    \end{equation}
    \begin{equation}\label{Eqn::Spaces::Elem::Elem::ConvergenceOfElemOps::ToShow1::Transpose::alpha0}
        \sup_{x} \left| 2^{jt} E_j^{\transpose} f(x) \right|\lesssim 2^{-Nj} \CmNorm{f}{M}[\Omega_1], \quad\forall f\in \TestFunctionsZero[\Omega],
    \end{equation}

    Next, we claim that it suffices to prove \eqref{Eqn::Spaces::Elem::Elem::ConvergenceOfElemOps::ToShow1::alpha0}
    and \eqref{Eqn::Spaces::Elem::Elem::ConvergenceOfElemOps::ToShow1::Transpose::alpha0}
    in the case \(N=t=0\).
    Indeed, by Proposition \ref{Prop::Spaces::Elem::Elem::MainProps} \ref{Item::Spaces::Elem::Elem::PullOutNDerivs},
    for \(M\geq 1\) fixed,
    we may write for \(f\in \CinftySpace[\Omega]\),
    \begin{equation*}
        E_j f = \sum_{|\alpha|\leq M}2^{-j(M-|\alpha|)} \Et_{j,\alpha} \left( 2^{-j\Wdv}W \right)^{\alpha} f 
        =\sum_{|\alpha|\leq M} 2^{-j(M-|\alpha|+\DegWdv{\alpha})} \Et_{j,\alpha} W^{\alpha}f,
    \end{equation*}
    where \(\left\{ \left( \Et_{j,\alpha},2^{-j} \right) : j\in \Zgeq \right\}\in \ElemzF{\Compact}\).
    Since \(W^{\alpha}f\in \CinftySpace[\Omega]\) and \(2^{-j(M-|\alpha|+\DegWdv{\alpha})}\leq 2^{-Mj}\),
    we see that it suffices to prove \eqref{Eqn::Spaces::Elem::Elem::ConvergenceOfElemOps::ToShow1::alpha0} with
    \(N\) replaced by \(N-M\). Taking \(M\geq N+t\), we see that it 
    suffices to prove \eqref{Eqn::Spaces::Elem::Elem::ConvergenceOfElemOps::ToShow1::alpha0}
    with \(N=t=0\), as claimed.
    Similarly, again using Proposition \ref{Prop::Spaces::Elem::Elem::MainProps} \ref{Item::Spaces::Elem::Elem::PullOutNDerivs} we may write
    \begin{equation*}
        E_j^{\transpose} f = \sum_{|\alpha|\leq M}2^{-j(M-|\alpha|)} E_{j,\alpha}^{\transpose} \left[ \left( 2^{-j\Wdv}W \right)^{\alpha} \right]^{\transpose} f 
        =\sum_{|\alpha|\leq M} 2^{-j(M-|\alpha|+\DegWdv{\alpha})} E_{j,\alpha}^{\transpose} \left[ W^{\alpha} \right]^{\transpose}f,
    \end{equation*}
    where \(\left\{ \left( E_{j,\alpha},2^{-j} \right) : j\in \Zgeq \right\}\in \ElemzF{\Compact}\).
    Here we have used that \(f\in \TestFunctionsZero[\Omega]\), so there are no boundary terms in the integration by parts
    defining \(\left[ W^{\alpha} \right]^{\transpose}\).
    As before, by taking \(M\geq t+N\), we see that it suffices to prove 
    \eqref{Eqn::Spaces::Elem::Elem::ConvergenceOfElemOps::ToShow1::Transpose::alpha0}
    in the case \(N=t=0\).

    Finally, \eqref{Eqn::Spaces::Elem::Elem::ConvergenceOfElemOps::ToShow1::alpha0} and 
    \eqref{Eqn::Spaces::Elem::Elem::ConvergenceOfElemOps::ToShow1::Transpose::alpha0} in the case \(N=t=0\)
    follow from Lemma \ref{Lemma::Spaces::Elem::PElem::PElemOpsBoundedOnLp}, completing the proof.
\end{proof}
    
    \subsection{Schwartz space}
    In this section, we describe some  subspaces of 
Schwartz space (\(\SchwartzSpaceRn\)) which are important for our proofs.
For \(f\in \SchwartzSpaceRn\), we let \(\fh\in \SchwartzSpaceRn\) denote the Fourier transform.
It is a standard results that the fourier transform is an automorphism of 
the Fr\'echet space
\(\SchwartzSpaceRn\).
Let
\begin{equation}\label{Eqn::Spaces::Schwartz::DefnSchwartzSpacez}
    \SchwartzSpacezRn=\left\{ f\in \SchwartzSpaceRn:\int t^{\alpha} f(t)\: dt=0,\: \forall \alpha \right\},
\end{equation}
the space of Schwartz functions, all of whose moments vanish.

\begin{lemma}\label{Lemma::Spaces::Schwartz::FTOfSchwartzSpacez}
    For \(f\in \SchwartzSpaceRn\),
    we have \(f\in \SchwartzSpacezRn\) if and only if \(\partial_\xi^{\alpha}\fh(0)=0\), \(\forall \alpha\).
\end{lemma}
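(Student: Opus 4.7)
The plan is to use the standard differentiation-under-the-integral identity for the Fourier transform to convert each derivative of $\hat{f}$ at the origin into a moment of $f$, making the equivalence transparent.

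First, I would fix a convention for the Fourier transform (say $\hat{f}(\xi) = \int_{\Rn} e^{-2\pi i x \cdot \xi} f(x)\, dx$; the argument is insensitive to normalization). Since $f \in \SchwartzSpaceRn$, for every multi-index $\alpha$ the function $x^\alpha f(x)$ is absolutely integrable on $\Rn$, which justifies differentiating under the integral sign as many times as we wish. This yields the identity
\begin{equation*}
    \partial_\xi^\alpha \hat{f}(\xi) = \int_{\Rn} (-2\pi i x)^\alpha e^{-2\pi i x\cdot \xi} f(x)\, dx,
\end{equation*}
and evaluating at $\xi = 0$ gives
\begin{equation*}
    \partial_\xi^\alpha \hat{f}(0) = (-2\pi i)^{|\alpha|} \int_{\Rn} x^\alpha f(x)\, dx.
\end{equation*}

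From here both directions are immediate. If $f \in \SchwartzSpacezRn$, then $\int x^\alpha f(x)\, dx = 0$ for every $\alpha$, so $\partial_\xi^\alpha \hat{f}(0) = 0$ for every $\alpha$. Conversely, if $\partial_\xi^\alpha \hat{f}(0) = 0$ for every $\alpha$, then $(-2\pi i)^{|\alpha|} \int x^\alpha f(x)\, dx = 0$, and since $(-2\pi i)^{|\alpha|} \ne 0$ we conclude $\int x^\alpha f(x)\, dx = 0$ for every $\alpha$, i.e., $f \in \SchwartzSpacezRn$.

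There is essentially no obstacle here: the only point requiring any care is the justification of differentiating under the integral, which is standard because $x^\alpha f(x) \in \LpSpace{1}[\Rn]$ for every $\alpha$ when $f$ is Schwartz. The whole argument is therefore a one-line computation followed by reading off both implications.
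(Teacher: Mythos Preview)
Your proof is correct and is essentially the same as the paper's approach: the paper simply says the result follows by taking the Fourier transform of the definition of $\SchwartzSpacezRn$, and you have written out exactly that computation in detail.
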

\begin{proof}
    This follows by taking the Fourier transform of the definition \eqref{Eqn::Spaces::Schwartz::DefnSchwartzSpacez}.
\end{proof}

\begin{lemma}\label{Lemma::Spaces::Schwartz::SchwartzSpacezPullOutDerivsOneWay}
    For \(f\in \SchwartzSpacezRn\), \(f=\sum_{j=1}^n \partial_{t_j} f_j\), where \(f_j\in \SchwartzSpacezRn\).
\end{lemma}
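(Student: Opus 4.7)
The plan is to work on the Fourier side, using Lemma \ref{Lemma::Spaces::Schwartz::FTOfSchwartzSpacez}. Taking Fourier transforms converts the statement $f = \sum_j \partial_{t_j} f_j$ into $\fh(\xi) = \sum_j (i\xi_j) \widehat{f_j}(\xi)$, and converts membership in $\SchwartzSpacezRn$ into the condition that a Schwartz function vanishes to infinite order at $\xi = 0$. So it suffices to prove the following Fourier-side statement: given $g \in \SchwartzSpaceRn$ with $\partial^\alpha g(0) = 0$ for all $\alpha$, one can write $g(\xi) = \sum_j \xi_j g_j(\xi)$ where each $g_j \in \SchwartzSpaceRn$ satisfies $\partial^\alpha g_j(0) = 0$ for all $\alpha$.

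To construct the $g_j$, fix $\chi \in \CinftyCptSpace[\Rn]$ with $\chi \equiv 1$ on a neighborhood of $0$, and split $g = \chi g + (1-\chi)g$. For the outer piece, I would set
\begin{equation*}
    G_j(\xi) := \frac{\xi_j(1-\chi(\xi))}{|\xi|^2}\,g(\xi),
\end{equation*}
which is smooth on all of $\Rn$ (since $1-\chi$ vanishes near the singularity of $1/|\xi|^2$), lies in $\SchwartzSpaceRn$ (since $g$ does), and in fact vanishes identically near $0$. It clearly satisfies $\sum_j \xi_j G_j = (1-\chi)g$. For the inner piece $h := \chi g$, which is now compactly supported, smooth, and vanishes to infinite order at $0$, I would define
\begin{equation*}
    K_j(\xi) := \frac{\xi_j}{|\xi|^2}\,h(\xi)\quad (\xi\neq 0),\qquad K_j(0):=0,
\end{equation*}
so that again $\sum_j \xi_j K_j = h$. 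Setting $g_j := G_j + K_j$ then gives the desired decomposition, provided each $K_j$ is smooth and vanishes to infinite order at $0$; since $K_j$ is also compactly supported, smoothness automatically upgrades to Schwartz.

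The main obstacle, and the only nonroutine step, is verifying that $K_j$ is $C^\infty$ at the origin. Away from $0$ this is clear. Near $0$, by Leibniz and the identity $\partial^\delta(1/|\xi|^2) = P_\delta(\xi)/|\xi|^{2+2|\delta|}$ for a polynomial $P_\delta$ homogeneous of degree $|\delta|$, one sees that for $\xi \neq 0$ every derivative $\partial^\alpha K_j(\xi)$ is a finite sum of terms of the form $c\,\xi^\gamma\,\partial^\beta h(\xi)\,/\,|\xi|^{2m}$ with $|\gamma|-2m+1 \geq -|\alpha|$ in the worst case. Since $h$ vanishes to infinite order at $0$, for every $N$ we have $|\partial^\beta h(\xi)| \lesssim_N |\xi|^N$ near $0$ (uniformly in $\beta$ on any bounded set), so each such term is $O(|\xi|^{N-C(\alpha)})$ for every $N$ and hence extends continuously to $\xi=0$ with value $0$. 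A standard result on extending $C^m$-regularity across a point then upgrades these continuous extensions of partials to genuine derivatives of $K_j$ on all of $\Rn$, simultaneously showing $\partial^\alpha K_j(0) = 0$ for every $\alpha$. Combining the two pieces and inverting the Fourier transform yields $f_j \in \SchwartzSpacezRn$ with $f = \sum_j \partial_{t_j} f_j$, up to absorbing the factor of $i$ into the $f_j$.
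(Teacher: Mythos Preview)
Your proof is correct and follows essentially the same approach as the paper. The paper's argument is more terse: it simply writes $\fh(\xi)=\sum_j \xi_j\bigl(\tfrac{\xi_j}{|\xi|^2}\fh(\xi)\bigr)$ and asserts directly that $\tfrac{\xi_j}{|\xi|^2}\fh(\xi)\in\SchwartzSpaceRn$ vanishes to infinite order at $0$; your cutoff $\chi$ just makes the smoothness verification at the origin more explicit, but since $G_j+K_j=\tfrac{\xi_j}{|\xi|^2}\fh(\xi)$ you are producing the same functions.
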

\begin{proof}
    We have,
    \begin{equation*}
        \fh(\xi)=\sum_{j=1}^n \xi_j \left( \frac{\xi_j}{|\xi|^2} \fh(\xi) \right),
    \end{equation*}
    where \(\frac{\xi_j}{|\xi|^2} \fh(\xi) \in \SchwartzSpaceRn\) and vanishes to infinite order at \(\xi=0\)
    by Lemma \ref{Lemma::Spaces::Schwartz::FTOfSchwartzSpacez}. Taking the inverse Fourier transform,
    and applying Lemma \ref{Lemma::Spaces::Schwartz::FTOfSchwartzSpacez}, completes the proof.
\end{proof}

Lemma \ref{Lemma::Spaces::Schwartz::SchwartzSpacezPullOutDerivsOneWay} is actually a characterization
of \(\SchwartzSpacezRn\), as the next lemma shows.

\begin{lemma}\label{Lemma::Spaces::Schwartz::SchwartzSpacezPullOutDerivsCharacterization}
    Let \(\sS\subseteq \SchwartzSpaceRn\) be the largest subset such that \(\forall f\in \sS\),
    \(f=\sum_{j=1}^n \partial_{t_j}f_j\), where \(f_j\in \sS\). Then, \(\sS=\SchwartzSpacezRn\).
\end{lemma}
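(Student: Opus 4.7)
The plan is to prove the two inclusions separately, with the nontrivial direction reducing to a simple integration-by-parts argument after iterating the defining property of $\sS$.

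First I would establish $\SchwartzSpacezRn \subseteq \sS$. By Lemma \ref{Lemma::Spaces::Schwartz::SchwartzSpacezPullOutDerivsOneWay}, the space $\SchwartzSpacezRn$ itself satisfies the defining property of $\sS$ (namely, every element is a sum $\sum_j \partial_{t_j} f_j$ with $f_j \in \SchwartzSpacezRn$). Since $\sS$ is by definition the largest subset of $\SchwartzSpaceRn$ with this property, we conclude $\SchwartzSpacezRn \subseteq \sS$.

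For the reverse inclusion, I would iterate. Fix $f \in \sS$ and $N \in \Zgeq$. Applying the defining property of $\sS$ once gives $f = \sum_{j=1}^n \partial_{t_j} f_j$ with $f_j \in \sS$; applying it to each $f_j$ and continuing for $N$ steps expresses
\begin{equation*}
f = \sum_{|\alpha| = N} \partial^{\alpha} g_{\alpha},
\end{equation*}
where each $g_\alpha \in \sS \subseteq \SchwartzSpaceRn$. Now given any multi-index $\beta$, choose $N > |\beta|$, so that $\partial^{\alpha} t^{\beta} = 0$ for every $|\alpha| = N$. Integration by parts (with no boundary terms, since the $g_\alpha$ are Schwartz) yields
\begin{equation*}
\int t^\beta f(t)\, dt = \sum_{|\alpha|=N} (-1)^{|\alpha|} \int (\partial^\alpha t^\beta) g_\alpha(t)\, dt = 0.
\end{equation*}
Hence every moment of $f$ vanishes, so $f \in \SchwartzSpacezRn$.

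There is essentially no obstacle here: the only thing one needs to check is that the iterative step produces a genuinely finite sum with all constituents remaining in $\SchwartzSpaceRn$, which is automatic since $\sS \subseteq \SchwartzSpaceRn$ by definition. Combining the two inclusions gives $\sS = \SchwartzSpacezRn$.
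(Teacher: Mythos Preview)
Your proof is correct and follows essentially the same approach as the paper: both use Lemma~\ref{Lemma::Spaces::Schwartz::SchwartzSpacezPullOutDerivsOneWay} for the inclusion $\SchwartzSpacezRn \subseteq \sS$, and for the reverse inclusion both iterate the defining property of $\sS$ enough times (the paper uses $|\alpha|+1$ iterations, you use $N>|\beta|$) so that integration by parts annihilates every monomial $t^\beta$.
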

\begin{proof}
    Lemma \ref{Lemma::Spaces::Schwartz::SchwartzSpacezPullOutDerivsOneWay} shows
    \(\SchwartzSpacezRn\subseteq \sS\).
    For the converse, let \(f\in \sS\) and fix \(\alpha\).
    Applying the definition of \(\sS\) \(|\alpha|+1\) times, we may write
    \(f=\sum_{|\beta|=|\alpha|+1} \partial_t^{\beta} f_\beta\), where \(f_\beta\in \sS\subseteq \SchwartzSpaceRn\).
    We have,
    \begin{equation*}
        \int t^{\alpha} f = \sum_{|\beta|=|\alpha|+1} \int t^{\alpha} \partial_t^{\beta} f_\beta(t)\: dt=0.
    \end{equation*}
    Since \(\alpha\) was arbitrary, we see \(f\in \SchwartzSpacezRn\), completing the proof.
\end{proof}

For the remainder of the section, we take \(n=1+q\) and work on \(\R^{1+q}=\R\times \R^q\), with coordinates
\(t=(t_0,t')\in \R\times \R^q\).
Lemma \ref{Lemma::Spaces::Schwartz::SchwartzSpacezPullOutDerivsCharacterization}
leads us to a convenient definition of our purposes.

\begin{definition}\label{Defn::Spaces::Schwartz::sT}
    Let \(\sT\subseteq \SchwartzSpaceRopq\)  be the largest subset such that \(\forall f\in \sT\),
    \begin{enumerate}[(i)]
        \item\label{Item::Spaces::Schwartz::sT::Support} \(\supp(f)\subseteq [0,\infty)\times \R^q\),
        \item\label{Item::Spaces::Schwartz::sT::Deriv} \(f=\sum_{j=0}^{q} \partial_{t_j} f_j\), \(f_j\in \sT\).
    \end{enumerate}
\end{definition}

\begin{proposition}\label{Prop::Spaces::Schwartz::CharacterizesT}
    \(\sT=\left\{ f\in \SchwartzSpacezRopq : \supp(f)\subseteq[0,\infty)\times \R^{q} \right\}\).
\end{proposition}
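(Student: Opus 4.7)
For the inclusion $\sT \subseteq \{f \in \SchwartzSpacezRopq : \supp f \subseteq [0,\infty)\times\R^q\}$, I would argue exactly as in Lemma \ref{Lemma::Spaces::Schwartz::SchwartzSpacezPullOutDerivsCharacterization}: given $f \in \sT$ and a multi-index $\alpha$, iterate Definition \ref{Defn::Spaces::Schwartz::sT}\ref{Item::Spaces::Schwartz::sT::Deriv} a total of $|\alpha|+1$ times to write $f = \sum_{|\beta|=|\alpha|+1} \partial^{\beta} f_{\beta}$ with each $f_{\beta} \in \sT \subseteq \SchwartzSpaceRopq$; integration by parts over $\R^{1+q}$ (no boundary terms, since $f_{\beta}$ is Schwartz and the integration is over all of $\R^{1+q}$) then gives $\int t^{\alpha} f = 0$. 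The support condition is built into Definition \ref{Defn::Spaces::Schwartz::sT}\ref{Item::Spaces::Schwartz::sT::Support}.

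For the reverse inclusion, set $\mathcal{R} := \{f \in \SchwartzSpacezRopq : \supp f \subseteq [0,\infty)\times\R^q\}$. Since $\sT$ is the largest subset of $\SchwartzSpaceRopq$ meeting the two conditions of Definition \ref{Defn::Spaces::Schwartz::sT}, the plan is to show that $\mathcal{R}$ itself meets those conditions; maximality will then force $\mathcal{R} \subseteq \sT$. The only nontrivial task is to exhibit, for every $f \in \mathcal{R}$, a decomposition $f = \sum_{j=0}^{q} \partial_{t_j} f_j$ with $f_j \in \mathcal{R}$. My approach is to handle the $t'$-directions by Lemma \ref{Lemma::Spaces::Schwartz::SchwartzSpacezPullOutDerivsOneWay} applied on $\R^q$ and the $t_0$-direction by an explicit antiderivative, gluing the two together with the Schwartz bump $H$ from Lemma \ref{Lemma::Spaces::Classical::ExistenceGoodFunc}. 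Specifically, let $h(t') := \int f(t_0,t')\,dt_0$; moment-vanishing of $f$ yields $h \in \SchwartzSpacez[\R^q]$, so Lemma \ref{Lemma::Spaces::Schwartz::SchwartzSpacezPullOutDerivsOneWay} writes $h = \sum_{j=1}^{q} \partial_{t'_j} h_j$ with $h_j \in \SchwartzSpacez[\R^q]$, and I would then define
\[
f_j(t_0,t') := H(t_0) h_j(t') \quad (j=1,\ldots,q), \qquad f_0(t_0,t') := -\int_{t_0}^{\infty} \bigl[f(s,t') - H(s)h(t')\bigr]\,ds.
\]
A direct differentiation yields $f = \sum_{j=0}^{q} \partial_{t_j} f_j$.

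The main obstacle is to verify $f_0 \in \mathcal{R}$; the $f_j$ for $j \geq 1$ land in $\mathcal{R}$ immediately, since $\supp H \subseteq (0,\infty)$ and moments of $H(t_0) h_j(t')$ factor as products with at least one vanishing factor. For $f_0$, the support claim $\supp f_0 \subseteq [0,\infty)\times\R^q$ follows because $H$ was chosen precisely so that $f - Hh$ has identically vanishing $t_0$-marginal in $t'$, so that for $t_0 < 0$ the integral collapses to this marginal. Smoothness of $f_0$ across $t_0 = 0$ uses the general observation that any Schwartz function supported in $[0,\infty)\times\R^q$ vanishes to infinite order on $\{t_0 = 0\}$, applied to $f - Hh$; Schwartz decay as $t_0 \to +\infty$ is a routine tail estimate. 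Moment-vanishing drops out of Fubini, exploiting the support of $f_0$:
\[
\int t^{\alpha} f_0\,dt = -\int_{\R^q} (t')^{\alpha'} \int_0^{\infty} \Bigl[\int_0^s t_0^{\alpha_0}\,dt_0\Bigr] \bigl[f - Hh\bigr](s,t')\,ds\,dt' = -\frac{1}{\alpha_0+1}\int s^{\alpha_0+1}(t')^{\alpha'}[f-Hh](s,t')\,ds\,dt',
\]
which vanishes because $f, Hh \in \SchwartzSpacezRopq$ (for $Hh$ using $\int s^{\alpha_0+1} H(s)\,ds = 0$ since $\alpha_0 + 1 \geq 1$).
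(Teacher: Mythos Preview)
Your proof is correct, and the forward inclusion matches the paper's approach (via Lemma~\ref{Lemma::Spaces::Schwartz::PuttingElementsInsT}~\ref{Item::Spaces::Schwartz::PuttingElementsInsT::sTInSchwartzSpacez}). The reverse inclusion, however, is handled quite differently. The paper does not verify the defining axioms of \(\sT\) directly on the set \(\mathcal{R}\); instead it builds a Littlewood--Paley style resolution of the identity \(f=\sum_{j\in\Z} f*\Dild{2^j}{\vsig_1}\) with \(\vsig_1\in\sT\) (Proposition~\ref{Prop::Spaces::Schwartz::SumToIdentity}), and then invokes Lemma~\ref{Lemma::Spaces::Schwartz::sT4InsT}, which shows that any such convolution series against a \(\sT\)-kernel lands in \(\sT\). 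Your argument is more elementary and self-contained: you bypass the dilation machinery entirely by producing an explicit one-step decomposition \(f=\sum_{j=0}^q\partial_{t_j}f_j\), using the marginal-correction trick \(g=f-Hh\) to make the \(t_0\)-antiderivative vanish on \(t_0<0\) and retain all vanishing moments. The paper's route has the advantage of reusing infrastructure already needed elsewhere (the \(\vsig_0,\vsig_1\) from Proposition~\ref{Prop::Spaces::Schwartz::SumToIdentity} are what drive the operators \(\Dh_j\) in Section~\ref{Section::Spaces::Multiplication}); your route has the advantage of making the result stand on its own with only Lemma~\ref{Lemma::Spaces::Classical::ExistenceGoodFunc} and Lemma~\ref{Lemma::Spaces::Schwartz::SchwartzSpacezPullOutDerivsOneWay} as inputs. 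One small remark: your concern about smoothness of \(f_0\) across \(t_0=0\) is a non-issue, since \(f_0(t_0,t')=-\int_{t_0}^\infty g(s,t')\,ds\) is manifestly smooth in \(t_0\) for all \(t_0\in\R\) once \(g\) is Schwartz; the vanishing-to-infinite-order observation is a consequence rather than a hypothesis.
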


We prove Proposition \ref{Prop::Spaces::Schwartz::CharacterizesT} at the end of this section.
In the meantime, we state and prove several preliminary results which are useful for the proof of
Proposition \ref{Prop::Spaces::Schwartz::CharacterizesT} and later in the paper.



\begin{lemma}\label{Lemma::Spaces::Schwartz::PuttingElementsInsT}
    We have,
    \begin{enumerate}[(i)]
        \item\label{Item::Spaces::Schwartz::PuttingElementsInsT::sTInSchwartzSpacez} \(\sT\subseteq \SchwartzSpacezRopq\).
        \item\label{Item::Spaces::Schwartz::PuttingElementsInsT::SzRtimesSRqInsT} If \(F_1\in \SchwartzSpacezR\) with \(\supp(F_1)\subseteq [0,\infty)\) and \(F_2\in \SchwartzSpace[\R^q]\), then \(F_1(t_0)F_2(t')\in \sT\).
        \item\label{Item::Spaces::Schwartz::PuttingElementsInsT::SRtimesSzRqInsT} If \(F_1\in \SchwartzSpaceR\) with \(\supp(F_1)\subseteq [0,\infty)\) and \(F_2\in \SchwartzSpacez[\R^q]\), then \(F_1(t_0)F_2(t')\in \sT\).
    \end{enumerate}
\end{lemma}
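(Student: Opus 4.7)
The plan is to prove the three parts in order, handling (i) by iterating the derivative property of $\sT$, and reducing (ii) and (iii) to the maximality clause in the definition of $\sT$.

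For (i), I would argue exactly as in the proof of Lemma \ref{Lemma::Spaces::Schwartz::SchwartzSpacezPullOutDerivsCharacterization}: given $f \in \sT$ and a multi-index $\alpha$, apply property \ref{Item::Spaces::Schwartz::sT::Deriv} of Definition \ref{Defn::Spaces::Schwartz::sT} a total of $|\alpha|+1$ times to write $f = \sum_{|\beta|=|\alpha|+1} \partial_t^{\beta} f_\beta$ with each $f_\beta \in \sT \subseteq \SchwartzSpaceRopq$. Integration by parts then gives $\int t^{\alpha} f\,dt = 0$, so $f \in \SchwartzSpacezRopq$; the support condition is immediate from \ref{Item::Spaces::Schwartz::sT::Support}.

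For (ii) and (iii), the strategy is to exhibit a set $\sS$ containing the claimed elements such that $\sS$ itself satisfies both defining properties of $\sT$; since $\sT$ is the \emph{largest} such subset, $\sS \subseteq \sT$ follows. For (ii), I would let
\[
\sS_1 := \{0\} \cup \bigl\{ F_1(t_0) F_2(t') : F_1 \in \SchwartzSpacezR,\ \supp(F_1)\subseteq[0,\infty),\ F_2 \in \SchwartzSpace[\R^q] \bigr\}.
\]
Given $F_1 \in \SchwartzSpacezR$ supported in $[0,\infty)$, define the primitive $G_1(t_0) := \int_{-\infty}^{t_0} F_1(s)\,ds$. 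Since $F_1$ vanishes to infinite order at $0$, $G_1$ is smooth with support in $[0,\infty)$; since $\int F_1 = 0$ we have $G_1(t_0) = -\int_{t_0}^{\infty} F_1$, which decays rapidly, and all higher derivatives of $G_1$ are $F_1^{(k-1)}$, so $G_1 \in \SchwartzSpaceR$; and integration by parts gives $\int t_0^k G_1\,dt_0 = -\tfrac{1}{k+1}\int t_0^{k+1}F_1\,dt_0 = 0$ for all $k\geq 0$, so $G_1 \in \SchwartzSpacezR$. Then $F_1(t_0)F_2(t') = \partial_{t_0}\bigl(G_1(t_0)F_2(t')\bigr)$ with $G_1 F_2 \in \sS_1$, and we take $f_j = 0 \in \sS_1$ for $j \geq 1$. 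So $\sS_1$ satisfies \ref{Item::Spaces::Schwartz::sT::Support} and \ref{Item::Spaces::Schwartz::sT::Deriv}.

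For (iii), let
\[
\sS_2 := \{0\} \cup \bigl\{ F_1(t_0) F_2(t') : F_1 \in \SchwartzSpaceR,\ \supp(F_1)\subseteq[0,\infty),\ F_2 \in \SchwartzSpacez[\R^q] \bigr\}.
\]
By Lemma \ref{Lemma::Spaces::Schwartz::SchwartzSpacezPullOutDerivsOneWay} applied on $\R^q$, write $F_2 = \sum_{j=1}^{q} \partial_{t_j} H_j$ with $H_j \in \SchwartzSpacez[\R^q]$. Then $F_1(t_0)F_2(t') = \sum_{j=1}^{q}\partial_{t_j}\bigl(F_1(t_0) H_j(t')\bigr)$ with each $F_1 H_j \in \sS_2$, and the $j=0$ term is taken to be $0$. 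So $\sS_2$ also satisfies the two defining properties, and $\sS_2 \subseteq \sT$ by maximality. The main technical step—and the only place where care is required—is the verification that the primitive $G_1$ lies in $\SchwartzSpacezR$, i.e.\ the combination of rapid decay at $+\infty$ with the vanishing of all moments; everything else is formal bookkeeping.
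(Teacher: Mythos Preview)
Your proof is correct and follows essentially the same approach as the paper. The paper packages the primitive construction for (ii) into two auxiliary lemmas (Lemma \ref{Lemma::Spaces::Schwartz::IntegrateSchwartzSpacez} and Lemma \ref{Lemma::Spaces::Schwartz::CharacterizesT0}), while you carry it out directly; for (iii) both you and the paper invoke Lemma \ref{Lemma::Spaces::Schwartz::SchwartzSpacezPullOutDerivsOneWay} on the \(\R^q\) factor, and for (i) both arguments reduce to Lemma \ref{Lemma::Spaces::Schwartz::SchwartzSpacezPullOutDerivsCharacterization}.
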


To prove Lemma \ref{Lemma::Spaces::Schwartz::PuttingElementsInsT}, we use the next two lemmas.

\begin{lemma}\label{Lemma::Spaces::Schwartz::IntegrateSchwartzSpacez}
    Let \(\phi \in \SchwartzSpacezR\). Then, \(\int_{-\infty}^t \phi(s)\: ds\in \SchwartzSpacezR\).
\end{lemma}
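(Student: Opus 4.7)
Let $\Phi(t) := \int_{-\infty}^t \phi(s)\,ds$. The plan is first to show that $\Phi \in \SchwartzSpaceR$ and then to verify that all moments of $\Phi$ vanish, which is the definition of $\SchwartzSpacezR$.

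For the Schwartz property, I would split into two observations. Note that $\Phi' = \phi \in \SchwartzSpaceR$, so all higher derivatives $\Phi^{(k)} = \phi^{(k-1)}$ for $k \geq 1$ are already Schwartz, and the issue is purely about decay of $\Phi$ itself. Crucially, $\phi \in \SchwartzSpacezR$ implies the zeroth moment $\int_{-\infty}^\infty \phi = 0$, so we may rewrite $\Phi(t) = -\int_t^\infty \phi(s)\,ds$ for $t > 0$, showing $\Phi$ decays at $+\infty$. Using $|\phi(s)| \lesssim_N (1+|s|)^{-N}$ for any $N$, an elementary estimate on the tail integral gives $|\Phi(t)| \lesssim_N (1+|t|)^{-N}$ for both $t \to +\infty$ (via the $-\int_t^\infty$ representation) and $t \to -\infty$ (directly from the definition). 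Thus $\Phi \in \SchwartzSpaceR$.

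For the vanishing of moments, fix $k \in \Zgeq$. Integrating by parts on $[-R,R]$,
\begin{equation*}
\int_{-R}^R t^k \Phi(t)\,dt = \left[\frac{t^{k+1}}{k+1}\Phi(t)\right]_{-R}^R - \frac{1}{k+1}\int_{-R}^R t^{k+1}\phi(t)\,dt.
\end{equation*}
Since $\Phi$ is Schwartz, $R^{k+1}\Phi(\pm R) \to 0$ as $R \to \infty$, so the boundary terms vanish. Letting $R \to \infty$,
\begin{equation*}
\int_{-\infty}^\infty t^k \Phi(t)\,dt = -\frac{1}{k+1}\int_{-\infty}^\infty t^{k+1}\phi(t)\,dt = 0,
\end{equation*}
where the final equality uses $\phi \in \SchwartzSpacezR$. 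This holds for every $k \geq 0$, so $\Phi \in \SchwartzSpacezR$.

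There is no real obstacle here; the essential input is the vanishing $\int \phi = 0$, which is what forces $\Phi$ to decay rather than tend to a nonzero constant at $+\infty$, and the integration by parts trick which shifts moments of $\Phi$ to (one higher) moments of $\phi$. Alternatively, one could argue on the Fourier side via Lemma \ref{Lemma::Spaces::Schwartz::FTOfSchwartzSpacez}: $\hat\Phi(\xi) = \hat\phi(\xi)/(i\xi)$ is smooth and Schwartz because $\hat\phi$ vanishes to infinite order at $0$, and it still vanishes to infinite order at $0$, so $\Phi \in \SchwartzSpacezR$; but the direct approach above is cleaner.
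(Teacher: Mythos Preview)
Your proof is correct. The paper takes a different, shorter route: it invokes Lemma~\ref{Lemma::Spaces::Schwartz::SchwartzSpacezPullOutDerivsOneWay} (in dimension one) to write \(\phi = \partial_t \phi_1\) with \(\phi_1 \in \SchwartzSpacezR\), and then observes that \(\int_{-\infty}^t \phi(s)\,ds = \phi_1(t)\) since both sides are Schwartz functions with the same derivative vanishing at \(-\infty\). That lemma is itself proved on the Fourier side (dividing \(\hat\phi\) by \(\xi\), which is harmless because \(\hat\phi\) vanishes to infinite order at the origin), so the paper's argument is essentially the Fourier alternative you sketched at the end. Your direct approach has the virtue of being self-contained and making transparent exactly which moment conditions are used where; the paper's approach is slicker but relies on having the derivative-decomposition lemma already in hand.
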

\begin{proof}
    By Lemma \ref{Lemma::Spaces::Schwartz::SchwartzSpacezPullOutDerivsOneWay}, \(\phi=\partial_t\phi_1\), where
    \(\phi_1\in \SchwartzSpacezR\).  We have \(\int_{-\infty}^t \phi(s)\: ds=\phi_1(t)\in \SchwartzSpacezR\).
\end{proof}


\begin{lemma}\label{Lemma::Spaces::Schwartz::CharacterizesT0}
    Let \(\sT_0\subseteq \SchwartzSpaceR\) be the largest subset such that \(\forall f\in \sT_0\),
    \begin{enumerate}[(i)]
        \item\label{Item::Spaces::Schwartz::CharacterizesT0::Support} \(\supp(f)\subseteq [0,\infty)\),
        \item\label{Item::Spaces::Schwartz::CharacterizesT0::Deriv} \(f=\partial_t f_1\), \(f_0\in \sT_0\).
    \end{enumerate}
    Then, \(\sT_0=\left\{ f\in \SchwartzSpacezR : \supp(f)\subseteq [0,\infty) \right\}\).
\end{lemma}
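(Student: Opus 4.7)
The proof will mirror the structure of Lemma \ref{Lemma::Spaces::Schwartz::SchwartzSpacezPullOutDerivsCharacterization}, establishing the two inclusions separately. Let $\sS_0 := \{f \in \SchwartzSpacezR : \supp(f) \subseteq [0,\infty)\}$; the goal is $\sT_0 = \sS_0$.

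For the inclusion $\sT_0 \subseteq \sS_0$: take $f \in \sT_0$. Property \ref{Item::Spaces::Schwartz::CharacterizesT0::Support} gives $\supp(f) \subseteq [0,\infty)$, so it remains to show all moments of $f$ vanish. Fixing $\alpha \in \Zgeq$, I would iterate property \ref{Item::Spaces::Schwartz::CharacterizesT0::Deriv} exactly $\alpha+1$ times to write $f = \partial_t^{\alpha+1} g$ for some $g \in \sT_0 \subseteq \SchwartzSpaceR$. Then
\[
    \int t^{\alpha} f(t)\, dt = \int t^{\alpha} \partial_t^{\alpha+1} g(t)\, dt = 0
\]
by $\alpha+1$ applications of integration by parts (no boundary contributions since $g$ is Schwartz). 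Thus $f \in \SchwartzSpacezR$.

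For the reverse inclusion $\sS_0 \subseteq \sT_0$: since $\sT_0$ is by definition the largest subset of $\SchwartzSpaceR$ verifying \ref{Item::Spaces::Schwartz::CharacterizesT0::Support} and \ref{Item::Spaces::Schwartz::CharacterizesT0::Deriv}, it suffices to show that $\sS_0$ itself satisfies both properties. Condition \ref{Item::Spaces::Schwartz::CharacterizesT0::Support} is built into the definition of $\sS_0$. For \ref{Item::Spaces::Schwartz::CharacterizesT0::Deriv}, given $f \in \sS_0$, I would set
\[
    f_1(t) := \int_{-\infty}^t f(s)\, ds,
\]
so that $\partial_t f_1 = f$ and $f_1 \in \SchwartzSpacezR$ by Lemma \ref{Lemma::Spaces::Schwartz::IntegrateSchwartzSpacez}. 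Moreover, $f_1(t) = 0$ for $t < 0$ because $f$ vanishes on $(-\infty,0)$, hence $\supp(f_1) \subseteq [0,\infty)$. Therefore $f_1 \in \sS_0$, establishing \ref{Item::Spaces::Schwartz::CharacterizesT0::Deriv} for $\sS_0$ and completing the proof.

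There is no real obstacle here: the key technical input is Lemma \ref{Lemma::Spaces::Schwartz::IntegrateSchwartzSpacez}, which guarantees that the antiderivative of a Schwartz function with all moments vanishing remains in $\SchwartzSpacezR$; the support information is preserved automatically because the antiderivative is taken from $-\infty$, where $f$ is already zero. The argument is essentially identical in spirit to Lemma \ref{Lemma::Spaces::Schwartz::SchwartzSpacezPullOutDerivsCharacterization}, with the one-sided support condition simply tracked in parallel.
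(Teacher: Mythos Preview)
Your proposal is correct and follows essentially the same approach as the paper. The only cosmetic difference is that for the inclusion \(\sT_0 \subseteq \sS_0\) the paper cites Lemma~\ref{Lemma::Spaces::Schwartz::SchwartzSpacezPullOutDerivsCharacterization} (whose proof is exactly the moment-vanishing argument you write out), while you reprove that one-dimensional instance directly; the reverse inclusion via Lemma~\ref{Lemma::Spaces::Schwartz::IntegrateSchwartzSpacez} and the antiderivative from \(-\infty\) is identical.
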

\begin{proof}
    Lemma \ref{Lemma::Spaces::Schwartz::SchwartzSpacezPullOutDerivsCharacterization} 
    implies \(\sT_0\subseteq \SchwartzSpacezR\), and therefore, 
    \(\sT_0\subseteq \left\{ f\in \SchwartzSpacezR : \supp(f)\subseteq [0,\infty) \right\}\).
    Let \(g\in  \left\{ f\in \SchwartzSpacezR : \supp(f)\subseteq [0,\infty) \right\}\).
    By Lemma \ref{Lemma::Spaces::Schwartz::IntegrateSchwartzSpacez}, we have
    \(\int_{-\infty}^t g(s)\: ds\in \left\{ f\in \SchwartzSpacezR : \supp(f)\subseteq [0,\infty) \right\}\).
    Since \(g=\partial_t \int_{-\infty}^t g(s)\: ds\), this shows 
    \(\left\{ f\in \SchwartzSpacezR : \supp(f)\subseteq [0,\infty) \right\}\)
    satisfies the axiom \ref{Item::Spaces::Schwartz::CharacterizesT0::Deriv}.
    Since it clearly satisfies the axiom \ref{Item::Spaces::Schwartz::CharacterizesT0::Support},
    we have \(\left\{ f\in \SchwartzSpacezR : \supp(f)\subseteq [0,\infty) \right\}\subseteq \sT_0\),
    completing the proof.
\end{proof}

\begin{proof}[Proof of Lemma \ref{Lemma::Spaces::Schwartz::PuttingElementsInsT}]
    \ref{Item::Spaces::Schwartz::PuttingElementsInsT::sTInSchwartzSpacez} follows from 
    Lemma \ref{Lemma::Spaces::Schwartz::SchwartzSpacezPullOutDerivsCharacterization}.

    Let \(\sT_1:=\left\{ F_1(t_0)F_2(t') : F_1\in \SchwartzSpacezR, \supp(F_1)\subseteq [0,\infty), F_2\in \SchwartzSpace[\R^q] \right\}\).
    Lemma \ref{Lemma::Spaces::Schwartz::CharacterizesT0} applied to \(F_1\)
    shows that \(\sT_1\)
    satisfies the axiom Definition \ref{Defn::Spaces::Schwartz::sT} \ref{Item::Spaces::Schwartz::sT::Deriv}.
    Since \(\sT_1\) clearly also satisfy the axiom Definition \ref{Defn::Spaces::Schwartz::sT} \ref{Item::Spaces::Schwartz::sT::Support},
    we have \(\sT_1\subseteq \sT\), establishing \ref{Item::Spaces::Schwartz::PuttingElementsInsT::SzRtimesSRqInsT}.
    
    Let \(\sT_2:=\left\{ F_1(t_0)F_2(t') : F_1\in \SchwartzSpaceR, \supp(F_1)\subseteq [0,\infty), F_2\in \SchwartzSpacez[\R^q] \right\}\).
    Lemma \ref{Lemma::Spaces::Schwartz::SchwartzSpacezPullOutDerivsCharacterization} applied to \(F_2\)
    shows that \(\sT_2\) satisfies the axiom Definition \ref{Defn::Spaces::Schwartz::sT} \ref{Item::Spaces::Schwartz::sT::Deriv}.
    Since \(\sT_2\) clearly also satisfy the axiom Definition \ref{Defn::Spaces::Schwartz::sT} \ref{Item::Spaces::Schwartz::sT::Support},
    we have \(\sT_2\subseteq \sT\), establishing \ref{Item::Spaces::Schwartz::PuttingElementsInsT::SRtimesSzRqInsT}.
\end{proof}




Fix \(\Xdv_0, \Xdv_1,\ldots, \Xdv_q\in \Zg\) and for \(t\in \R^{1+q}\), set
\(
    2^{j\Xdv}(t_0,t_1,\ldots, t_q):=(2^{j\Xdv_0}t_0, 2^{j\Xdv_1}t_1,\ldots, 2^{j\Xdv_q}t_q),
\)
and for a function \(\vsig(t)\) set
\(\Dild{2^j}{\vsig}(t)=2^{j(\Xdv_0+\Xdv_1+\cdots+\Xdv_q)}\vsig(2^{jd}t)\).
Note that \(\int \Dild{2^j}{\vsig} = \int \vsig\).

\begin{proposition}\label{Prop::Spaces::Schwartz::SumToIdentity}
    There exists \(\vsig_0\in \SchwartzSpaceRopq\) such that the following holds.
    \begin{enumerate}[(i)]
        \item\label{Item::Spaces::Schwartz::SumToIdentity::Support} \(\supp(\vsig_0)\subseteq [0,\infty)\times \R^q\).
        \item\label{Item::Spaces::Schwartz::SumToIdentity::sT} Let \(\vsig_1:=\Dild{2}{\vsig_0}-\vsig_0\). Then \(\vsig_1\in \sT\).
        \item\label{Item::Spaces::Schwartz::SumToIdentity::SumToDelta} \(\delta_0(t)=\vsig_0(t)+\sum_{j=1}^\infty \Dild{2^j}{\vsig_1}(t)\), where \(\delta_0(t)\)
            denotes the Dirac \(\delta\) function at \(0\) and the sum is taken in the sense of tempered distributions.
        \item\label{Item::Spaces::Schwartz::SumToIdentity::IdentityOnSchwartz} For \(f\in \SchwartzSpacezRopq\), we have \(f=\sum_{j=-\infty}^{\infty} f*\Dild{2^j}{\vsig_1}\)
    with convergence in \(\SchwartzSpacezRopq\).
    \end{enumerate}
\end{proposition}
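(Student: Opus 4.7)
}
The plan is to construct $\vsig_0$ as the tensor product $\vsig_0(t_0,t') := H(t_0)\psi(t')$, where $H\in\SchwartzSpaceR$ is the function supplied by Lemma~\ref{Lemma::Spaces::Classical::ExistenceGoodFunc} (so $\int H = 1$, $\int t_0^k H\,dt_0 = 0$ for all $k\geq 1$, and $\supp H\subseteq(0,\infty)$), and $\psi\in\SchwartzSpace[\R^q]$ is any Schwartz function with $\int\psi=1$ and $\int(t')^{\beta}\psi(t')\,dt' = 0$ for every $\beta\neq 0$ (supplied by the $q$-variable analogue of Lemma~\ref{Lemma::Spaces::Classical::ExistsGoodPsi}). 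Then (i) is immediate, and the product structure gives $\int t^\alpha\vsig_0\,dt = \delta_{\alpha,0}$ for every multi-index $\alpha$.

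For (ii), the change of variables $u = 2^{\Xdv}t$ gives $\int t^\alpha\,\Dild{2}{\vsig_0}(t)\,dt = 2^{-\sum_{j=0}^q \Xdv_j\alpha_j}\int t^\alpha\vsig_0(t)\,dt$, hence $\int t^\alpha\vsig_1\,dt = (2^{-\sum_j \Xdv_j\alpha_j}-1)\int t^\alpha\vsig_0$. This vanishes for $\alpha = 0$ (the prefactor is zero) and for every $\alpha\neq 0$ (the moment is zero), so $\vsig_1\in\SchwartzSpacezRopq$; since $\supp\vsig_1\subseteq[0,\infty)\times\R^q$, Proposition~\ref{Prop::Spaces::Schwartz::CharacterizesT} gives $\vsig_1\in\sT$. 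For (iii), the scaling identity $\Dild{2^j}{\Dild{2}{\vsig_0}} = \Dild{2^{j+1}}{\vsig_0}$ (immediate from the definition) yields the telescoping relation $\Dild{2^j}{\vsig_1} = \Dild{2^{j+1}}{\vsig_0} - \Dild{2^j}{\vsig_0}$; summing (with the appropriate starting index) collapses to $\Dild{2^{N+1}}{\vsig_0} - \vsig_0$, and because $\int\vsig_0 = 1$ the standard approximation-to-identity argument gives $\Dild{2^{N+1}}{\vsig_0}\to\delta_0$ in $\TemperedDistributions[\R^{1+q}]$.

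For (iv) I would pass to the Fourier side: a computation gives $\widehat{\Dild{2^j}{\vsig_1}}(\xi) = \hat{\vsig}_1(2^{-j\Xdv}\xi)$ and $\hat{\vsig}_1(\xi) = \hat{\vsig}_0(2^{-\Xdv}\xi) - \hat{\vsig}_0(\xi)$, so the two-sided sum telescopes to $\hat{\vsig}_0(2^{-(N+1)\Xdv}\xi) - \hat{\vsig}_0(2^{-M\Xdv}\xi)$, which converges pointwise to $\hat{\vsig}_0(0) - 0 = 1$ on $\xi\neq 0$ as $N\to+\infty$, $M\to-\infty$. For $f\in\SchwartzSpacezRopq$, Lemma~\ref{Lemma::Spaces::Schwartz::FTOfSchwartzSpacez} shows that $\hat f$ vanishes to infinite order at $0$; using this together with the fact that $\hat{\vsig}_1$ also vanishes to infinite order at $0$ (by Lemma~\ref{Lemma::Spaces::Schwartz::FTOfSchwartzSpacez} applied to $\vsig_1\in\SchwartzSpacezRopq$), one upgrades the pointwise telescoping to convergence of $\hat f\cdot\sum_j \hat{\vsig}_1(2^{-j\Xdv}\xi)$ to $\hat f$ in every Schwartz seminorm. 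Inverting the Fourier transform (an automorphism of $\SchwartzSpacezRopq$) gives the desired convergence in $\SchwartzSpacezRopq$.

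The main obstacle is the Fréchet-space convergence in (iv): one has to control every Schwartz seminorm of the tail sums uniformly in two directions. The high-frequency tail $j\to+\infty$ exploits the infinite-order vanishing of $\hat{\vsig}_1$ at $0$ against Schwartz decay of $\hat f$, while the low-frequency tail $j\to-\infty$ uses the infinite-order vanishing of $\hat f$ at $0$ against uniform bounds on $\hat{\vsig}_1$; the two estimates are standard Littlewood--Paley type arguments but must be carried out in every $\SchwartzSpacezRopq$-seminorm. All remaining steps reduce to the explicit tensor-product choice of $\vsig_0$ and the scaling identity for $\Dild{\cdot}{\cdot}$.
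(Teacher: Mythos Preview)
Your construction of $\vsig_0$ as a tensor product and your arguments for parts (i), (iii), and (iv) match the paper's approach essentially verbatim.

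However, your argument for part (ii) is circular. You invoke Proposition~\ref{Prop::Spaces::Schwartz::CharacterizesT} to conclude $\vsig_1\in\sT$, but in the paper the proof of Proposition~\ref{Prop::Spaces::Schwartz::CharacterizesT} \emph{uses} Proposition~\ref{Prop::Spaces::Schwartz::SumToIdentity} (specifically part (iv), which already presupposes $\vsig_1\in\sT$ via Lemma~\ref{Lemma::Spaces::Schwartz::sT4InsT}). So you cannot appeal to the characterization $\sT=\{f\in\SchwartzSpacezRopq:\supp(f)\subseteq[0,\infty)\times\R^q\}$ here; it has not yet been established.

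The paper avoids this by arguing directly from the tensor-product structure. Writing $\vsig_0(t_0,t')=F_1(t_0)F_2(t')$ and expanding the anisotropic dilation, one gets
\[
\vsig_1(t_0,t')=2^{\Xdv_0}F_1(2^{\Xdv_0}t_0)\bigl[\Dild{2}{F_2}(t')-F_2(t')\bigr]+\bigl[2^{\Xdv_0}F_1(2^{\Xdv_0}t_0)-F_1(t_0)\bigr]F_2(t'),
\]
a sum of two terms: the first is of the form $G_1(t_0)G_2(t')$ with $G_1\in\SchwartzSpaceR$, $\supp G_1\subseteq[0,\infty)$, and $G_2\in\SchwartzSpacez[\R^q]$; the second is of the form $H_1(t_0)H_2(t')$ with $H_1\in\SchwartzSpacezR$, $\supp H_1\subseteq[0,\infty)$, and $H_2\in\SchwartzSpace[\R^q]$. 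Then Lemma~\ref{Lemma::Spaces::Schwartz::PuttingElementsInsT}~\ref{Item::Spaces::Schwartz::PuttingElementsInsT::SRtimesSzRqInsT} and~\ref{Item::Spaces::Schwartz::PuttingElementsInsT::SzRtimesSRqInsT} (which are proved independently of the present proposition, using only the one-dimensional Lemma~\ref{Lemma::Spaces::Schwartz::CharacterizesT0}) place each term in $\sT$. Your moment computation is correct and does show $\vsig_1\in\SchwartzSpacezRopq$, but that alone does not yet give membership in $\sT$ at this point in the logical order.
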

\begin{proof}
    Take any \(F_2(t')\in \SchwartzSpace[\R^q]\) with \(\int F_2(t')\: dt'=1\) and \(\int \left( t' \right)^{\alpha}F_2(t')\: dt'=0\), \(\forall \alpha\ne 0\)
    (for example take \(\widehat{F_2}(\xi)=1\) on a neighborhood of \(0\)).
    Let \(F_1(t_0)\in \SchwartzSpace[\R]\) satisfy \(\supp(F_1)\subseteq [0,\infty)\), \(\int F_1 =1\), and \(\int t_0^j F_1(t_0)\: dt_0\),
    \(\forall j\geq 1\)--see Lemma \ref{Lemma::Spaces::Classical::ExistenceGoodFunc} for the existence of \(F_1\).

    Set \(\vsig_0(t_0,t'):=F_1(t_0)F_2(t')\in \SchwartzSpaceRopq\) and set
    \(\vsig_1:=\Dild{2}{\vsig_0}-\vsig_0\).
    Note that \(\vsig_0+\sum_{j=1}^N \Dild{2^j}{\vsig_1}=\Dild{2^{N+1}}{\vsig_0}\xrightarrow{N\rightarrow\infty}\delta_0\),
    since \(\int \vsig_0=1\).
    \ref{Item::Spaces::Schwartz::SumToIdentity::Support} and \ref{Item::Spaces::Schwartz::SumToIdentity::SumToDelta}
    follow.

    We have,
    \begin{equation}\label{Eqn::Spaces::Schwartz::SumToIdentity::DoubleDifference}
    \begin{split}
         &\vsig_1(t_0,t_1,\ldots, t_q) = 2^{\Xdv_0} F_1(2^{\Xdv_0}t_0)\left[  2^{\Xdv_1+\cdots +\Xdv_q} F_2(2^{\Xdv_1} t_1,\ldots, 2^{\Xdv_q} t_q) - F_2(t_1,\ldots, t_q) \right]
         \\&\quad\quad\quad+\left[ 2^{\Xdv_0} F_1(2^{\Xdv_0} t_0)- F_1(t_0) \right]F_2(t').
    \end{split}
    \end{equation}
    The first term on the right-hand side of \eqref{Eqn::Spaces::Schwartz::SumToIdentity::DoubleDifference}
    is of the form \(G_1(t_0)G_2(t')\) where \(G_1\in \SchwartzSpaceR\), \(\supp(G_1)\subseteq [0,\infty)\),
    and \(G_2\in \SchwartzSpacez[\R^q]\). 
    The second term on the right-hand side of \eqref{Eqn::Spaces::Schwartz::SumToIdentity::DoubleDifference}
    is similarly of the form \(H_1(t_0)H_2(t')\) where \(H_1\in \SchwartzSpacezR\), \(\supp(H_1)\subseteq [0,\infty)\),
    and \(H_2\in \SchwartzSpace[\R^q]\)
    
    From here, Lemma \ref{Lemma::Spaces::Schwartz::PuttingElementsInsT} \ref{Item::Spaces::Schwartz::PuttingElementsInsT::SzRtimesSRqInsT}
    and \ref{Item::Spaces::Schwartz::PuttingElementsInsT::SRtimesSzRqInsT}
    show \(\vsig_1\in \sT\); i.e., \ref{Item::Spaces::Schwartz::SumToIdentity::sT} holds.

    Finally, we show \ref{Item::Spaces::Schwartz::SumToIdentity::IdentityOnSchwartz}.
    Taking the Fourier transform, it suffices to show
    \(\sum_{j=-N}^N \fh(\xi)\widehat{\Dild{2^j}{\vsig_1}}(\xi)\rightarrow \fh(\xi)\), with convergence in \(\SchwartzSpaceRopq\).
    But, 
    \begin{equation*}
        \sum_{j=-N}^N \fh(\xi)\widehat{\Dild{2^j}{\vsig_1}}(\xi)
        =\fh(\xi)\widehat{\Dild{2^{N+1}}{\vsig_0}}(\xi) - \fh(\xi)\widehat{\Dild{2^{-N}}{\vsig_0}}(\xi)
        =\fh(\xi)\hat{\vsig}_0(2^{(-N-1)\Xdv}\xi) - \fh(\xi)\hat{\vsig}_0(2^{N\Xdv}\xi).
    \end{equation*}
    Using Lemma \ref{Lemma::Spaces::Schwartz::FTOfSchwartzSpacez}, it is easy to see
    \(\fh(\xi)\hat{\vsig}_0(2^{N\Xdv}\xi)\rightarrow 0\) in \(\SchwartzSpacezRopq\),
    and using the choice of \(\vsig_0\)
    (in particular, that \(\int \vsig_0=1\)),
    it is easy to see \(\fh(\xi)\hat{\vsig}_0(2^{(-N-1)\Xdv}\xi)\rightarrow \fh(\xi)\)
    in \(\SchwartzSpaceRopq\), completing the proof.
\end{proof}

\begin{lemma}\label{Lemma::Spaces::Schwartz::sT4InsT}
    Consider the set:
    \begin{equation*}
        \sT_4:=\left\{ \sum_{j=-\infty}^{\infty} 2^{Nj} f*\Dild{2^j}{\vsig} : N\in \Z, f\in \SchwartzSpacezRopq, \supp(f)\subseteq[0,\infty)\times \R^{q}, \vsig\in \sT \right\}.
    \end{equation*}
    Then, \(\sT_4\subseteq \sT\).
\end{lemma}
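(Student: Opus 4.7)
The plan is to verify that \(\sT_4\) satisfies the two defining axioms of \(\sT\) with the components of the derivative decomposition again lying in \(\sT_4\); since \(\sT\) is by definition the largest such subset of \(\SchwartzSpaceRopq\), the inclusion \(\sT_4\subseteq \sT\) then follows automatically (more precisely, \(\sT\cup\sT_4\) still satisfies the axioms, so \(\sT\cup\sT_4\subseteq\sT\)).

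First, I would verify that each formal sum defining an element of \(\sT_4\) genuinely converges in \(\SchwartzSpaceRopq\). On the Fourier side, \(\widehat{f*\Dild{2^j}{\vsig}}(\xi)=\hat f(\xi)\hat\vsig(2^{-j\Xdv}\xi)\). By Lemma \ref{Lemma::Spaces::Schwartz::PuttingElementsInsT} \ref{Item::Spaces::Schwartz::PuttingElementsInsT::sTInSchwartzSpacez}, \(\vsig\in\sT\subseteq\SchwartzSpacezRopq\), so both \(\hat f\) and \(\hat\vsig\) vanish to infinite order at the origin (Lemma \ref{Lemma::Spaces::Schwartz::FTOfSchwartzSpacez}). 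The infinite-order vanishing of \(\hat\vsig\) at \(0\) dominates \(2^{Nj}\) as \(j\to+\infty\), while the infinite-order vanishing of \(\hat f\) at \(0\) together with the Schwartz decay of \(\hat\vsig\) dominates \(2^{Nj}\) as \(j\to-\infty\); standard estimates then give convergence in every Schwartz seminorm. Next, the support condition for \(\sT\) is immediate: \(\supp\Dild{2^j}{\vsig}=2^{-j\Xdv}\supp\vsig\subseteq[0,\infty)\times\R^q\) for each \(j\) because \(\vsig\in\sT\), and similarly for \(f\), so \(\supp(f*\Dild{2^j}{\vsig})\subseteq[0,\infty)\times\R^q\), a property preserved under summation.

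The heart of the argument is the derivative decomposition. Given \(g=\sum_{j\in\Z}2^{Nj}f*\Dild{2^j}{\vsig}\in\sT_4\), use that \(\vsig\in\sT\) to write \(\vsig=\sum_{i=0}^{q}\partial_{t_i}\vsig_i\) with \(\vsig_i\in\sT\). The chain rule gives
\begin{equation*}
\partial_{t_i}\Dild{2^j}{\vsig_i}(t)=2^{j\Xdv_i}\Dild{2^j}{\partial_{t_i}\vsig_i}(t),
\end{equation*}
hence \(\Dild{2^j}{\vsig}=\sum_{i=0}^{q}2^{-j\Xdv_i}\partial_{t_i}\Dild{2^j}{\vsig_i}\). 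Convolving with \(f\), interchanging the finite sum over \(i\) with the sum over \(j\), and pulling the derivative out, one obtains
\begin{equation*}
g=\sum_{i=0}^{q}\partial_{t_i}g_i,\qquad g_i:=\sum_{j\in\Z}2^{(N-\Xdv_i)j}\,f*\Dild{2^j}{\vsig_i}.
\end{equation*}
Each \(g_i\) has exactly the form defining membership in \(\sT_4\), with \(N\) replaced by \(N-\Xdv_i\in\Z\), the same \(f\), and \(\vsig_i\in\sT\) in place of \(\vsig\); thus \(g_i\in\sT_4\).

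Combining the three steps, \(\sT_4\) is a subset of \(\SchwartzSpaceRopq\) closed under the derivative decomposition from Definition \ref{Defn::Spaces::Schwartz::sT} and supported in \([0,\infty)\times\R^q\), so \(\sT_4\subseteq\sT\) by maximality. I expect the main technical obstacle to be the first step, the Schwartz-space convergence of the bi-infinite sum; everything else is symbolic manipulation, using that each differentiation introduces a compensating factor \(2^{-j\Xdv_i}\) that shifts the exponent \(N\) to \(N-\Xdv_i\) and keeps us inside \(\sT_4\).
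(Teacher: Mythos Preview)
Your proposal is correct and follows essentially the same approach as the paper: verify Schwartz convergence of the bi-infinite sum via the Fourier transform and infinite-order vanishing at the origin, check the support condition, and then use the decomposition \(\vsig=\sum_{i}\partial_{t_i}\vsig_i\) with \(\vsig_i\in\sT\) to write \(g=\sum_i\partial_{t_i}g_i\) with \(g_i\in\sT_4\), concluding by maximality of \(\sT\). The paper is terser on the convergence step (simply asserting it ``follows easily'' from the Fourier transform and Lemma~\ref{Lemma::Spaces::Schwartz::FTOfSchwartzSpacez}), but otherwise the arguments coincide.
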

\begin{proof}
    For any \(f,\vsig\in \SchwartzSpacezRopq\), we have
    \begin{equation}\label{Eqn::Spaces::Schwartz::sT4InsT::SumInSchwartz}
        \sum_{j=-\infty}^{\infty} 2^{Nj} f*\Dild{2^j}{\vsig}\in \SchwartzSpaceRopq.
    \end{equation}
    Indeed, this follows easily by taking the Fourier transform and applying Lemma \ref{Lemma::Spaces::Schwartz::FTOfSchwartzSpacez}.
    Since \(\sT\subseteq \SchwartzSpacezRopq\) by Lemma \ref{Lemma::Spaces::Schwartz::PuttingElementsInsT} \ref{Item::Spaces::Schwartz::PuttingElementsInsT::sTInSchwartzSpacez},
    it follows that \(\sT_4\subseteq \SchwartzSpaceRopq\).

    For any \(f,\vsig\in \SchwartzSpacezRopq\), with \(\supp(f),\supp(\vsig)\subseteq[0,\infty)\times \R^q\),
    we have (directly from the definition), \(\supp\left( \sum_{j=-\infty}^{\infty} 2^{Nj} f*\Dild{2^j}{\vsig} \right)\subseteq [0,\infty)\times \R^q\).
    Thus, for \(G\in \sT_4\), \(\supp(G)\subseteq [0,\infty)\times \R^q\).

    Finally, take \(G=\sum_{j=-\infty}^{\infty} 2^{Nj} f*\Dild{2^j}{\vsig}\in \sT_4\), 
    where \(f\in \SchwartzSpacezRopq\), \(\supp(f)\subseteq[0,\infty)\times \R^q\),
    and \(\vsig\in \sT\).  Then, \(\vsig=\sum_{l=0}^q \partial_{t_l} \vsig_l\), where \(\vsig_l\in \sT\),
    and therefore,
    \begin{equation*}
        G=\sum_{j=-\infty}^{\infty} 2^{Nj} f*\Dild{2^j}{\vsig}
        =\sum_{l=0}^q\partial_{t_l}\sum_{j=-\infty}^{\infty} 2^{(N-\Xdv_l)j} f*\Dild{2^j}{\vsig_l}
        =:\sum_{l=0}^q \partial_{t_l} G_l,
    \end{equation*}
    where \(G_l\in \sT_4\).
    We have shown \(\sT_4\) satisfies the axioms of \(\sT\) from Definition \ref{Defn::Spaces::Schwartz::sT},
    and it follows that \(\sT_4\subseteq \sT\), completing the proof.
\end{proof}

\begin{proof}[Proof of Proposition \ref{Prop::Spaces::Schwartz::CharacterizesT}]
    Lemma \ref{Lemma::Spaces::Schwartz::PuttingElementsInsT} \ref{Item::Spaces::Schwartz::PuttingElementsInsT::sTInSchwartzSpacez}
    combined with Definition \ref{Defn::Spaces::Schwartz::sT} \ref{Item::Spaces::Schwartz::sT::Support}
    shows \(\sT\subseteq \left\{ f\in \SchwartzSpacezRopq : \supp(f)\subseteq [0,\infty)\times \R^q \right\}\).
    For \(f\in \left\{ f\in \SchwartzSpacezRopq : \supp(f)\subseteq [0,\infty)\times \R^q \right\}\),
    we have  \(f=\sum_{j=-\infty}^{\infty} f*\Dild{2^j}{\vsig_1}\), by Proposition \ref{Prop::Spaces::Schwartz::SumToIdentity}.
    Since \(\vsig_1\in \sT\), Lemma \ref{Lemma::Spaces::Schwartz::sT4InsT} shows \(f\in \sT_4\subseteq \sT\),
    completing the proof.
\end{proof}

\begin{lemma}\label{Lemma::Spaces::Schwartz::MultiplysTbyPoly}
    For \(\vsig(t)\in \sT\), we have \(t^{\alpha}\vsig(t)\in \sT\), \(\forall \alpha\).
\end{lemma}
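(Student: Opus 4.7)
The plan is to avoid working directly with the recursive definition of $\sT$ from Definition \ref{Defn::Spaces::Schwartz::sT}, and instead exploit the explicit characterization
\[
\sT = \bigl\{ f\in \SchwartzSpacezRopq : \supp(f)\subseteq [0,\infty)\times \R^q \bigr\}
\]
established in Proposition \ref{Prop::Spaces::Schwartz::CharacterizesT}. With this characterization in hand, the lemma becomes essentially tautological, so the argument reduces to verifying three simple properties of $t^\alpha \vsig(t)$.

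First, multiplication by a polynomial preserves Schwartz space, so $t^\alpha \vsig(t)\in \SchwartzSpaceRopq$. Second, since $\supp(t^\alpha \vsig) \subseteq \supp(\vsig) \subseteq [0,\infty)\times \R^q$, the support condition is immediate. Third, the moment condition: for every multi-index $\beta$,
\[
\int_{\R^{1+q}} t^\beta \bigl( t^\alpha \vsig(t) \bigr)\,dt = \int_{\R^{1+q}} t^{\alpha+\beta}\vsig(t)\,dt = 0,
\]
where the final equality uses that $\vsig\in \SchwartzSpacezRopq$ has all moments vanishing. Hence $t^\alpha \vsig\in \SchwartzSpacezRopq$, and applying Proposition \ref{Prop::Spaces::Schwartz::CharacterizesT} in the reverse direction places $t^\alpha \vsig\in \sT$.

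There is no real obstacle in this argument; all of the substance has been absorbed into the characterization lemma. An alternative route would be to induct on $|\alpha|$ using only Definition \ref{Defn::Spaces::Schwartz::sT} directly, writing $t_j \vsig$ in terms of the decomposition $\vsig = \sum_k \partial_{t_k}\vsig_k$ via the identity $t_j \partial_{t_k}\vsig_k = \partial_{t_k}(t_j \vsig_k) - \delta_{jk}\vsig_k$; but this would require verifying stability of $\sT$ under the resulting combinations, which is precisely the sort of bookkeeping that Proposition \ref{Prop::Spaces::Schwartz::CharacterizesT} was designed to avoid. So the proof as planned is just a one-line application of that characterization.
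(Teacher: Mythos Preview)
Your proof is correct and takes essentially the same approach as the paper: invoke the characterization $\sT = \{f\in\SchwartzSpacezRopq : \supp(f)\subseteq[0,\infty)\times\R^q\}$ from Proposition \ref{Prop::Spaces::Schwartz::CharacterizesT} and note that multiplication by $t^\alpha$ preserves both $\SchwartzSpacezRopq$ and the support condition. The paper's proof is the one-line version of yours, leaving the three verifications implicit.
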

\begin{proof}
    Since \(t^{\alpha}f\in \SchwartzSpacezRopq\), \(\forall f\in \SchwartzSpacezRopq\), this follows from 
    Proposition \ref{Prop::Spaces::Schwartz::CharacterizesT}.
\end{proof}

    \subsection{Multiplication operators}\label{Section::Spaces::Multiplication}
    In this section, we prove Proposition \ref{Prop::Spaces::LP::DjExist}; we do so by proving a more detailed
results in the case when \(\ManifoldN\) is embedded in a submanifold without boundary. To describe this, we need some definitions.
Let \(\ManifoldM\) be a smooth manifold (without boundary) such that \(\ManifoldN\subseteq \ManifoldM\)
 is a closed, embedded, co-dimension \(0\) submanifold (with boundary).
 Let \(\FilteredSheafFh\) be a H\"ormander filtration of sheaves of vector fields on \(\ManifoldM\)
 such that \(\RestrictFilteredSheaf{\FilteredSheafFh}{\ManifoldN}=\FilteredSheafF\)
 (such an \(\ManifoldM\) and \(\FilteredSheafFh\) always exist--see Proposition \ref{Prop::Filtrations::RestrictingFiltrations::CoDim0CoRestriction}).

 Fix a compact set \(\Compact\Subset \ManifoldM\), and \(\Omega\Subset \ManifoldM\) open and relatively compact
 with \(\Compact\Subset \Omega\).  Let \(\WhWdv=\left\{ (\Wh_1,\Wdv_1),\ldots, (\Wh_r,\Wdv_r) \right\}\subset \VectorFields{\Omega}\times \Zg\)
 be H\"ormander vector fields with formal degrees on \(\Omega\), such that
 \(\FilteredSheafFh\big|_{\Omega}=\FilteredSheafGenBy{\WhWdv}\).
 Set \(W_j:=\Wh_j\big|_{\Omega\cap \ManifoldN}\) and \(\WWdv=\left\{ (W_1,\Wdv_1),\ldots, (W_r,\Wdv_r) \right\}\)
 so that by Proposition \ref{Prop::Filtrations::RestrictingFiltrations::CoDim0Restriction},
 \(\FilteredSheafF\big|_{\Omega\cap \ManifoldN}=\FilteredSheafGenBy{\WWdv}\).

 \begin{definition}\label{Defn::Spaces::Multiplication::PElemzFhN}
    Let \(\PElemzFhN{\Compact}\) be the set of those \(\sE\in \PElemzFh{\Compact}\)
    such that \(\forall(E,\delta)\in \sE\), \(\supp(E)\cap \left( \ManifoldN\times \ManifoldM \right)\subseteq \ManifoldN\times \ManifoldN\).
 \end{definition}

 \begin{definition}\label{Defn::Spaces::Multiplication::ElemzFhN}
    Let \(\ElemzFhN{\Compact}\) be the largest set of subsets of \(\CinftyCptSpace[\ManifoldM\times\ManifoldM]\times (0,1]\)
    satisfying:
    \begin{enumerate}[(i)]
        \item \(\ElemzFhN{\Compact}\subseteq \PElemzFhN{\Compact}\).
        \item\label{Item::Spaces::Multiplication::ElemzFhN::PullOutDerivs} \(\forall \sE\in \ElemzFhN{\Compact}\), \(\forall (E,2^{-j})\in \sE\),
            \begin{equation*}
                E(x,y)=\sum_{|\alpha|,|\beta|\leq 1} 2^{-j(2-|\alpha|-|\beta|)} \left( 2^{-j\Wdv}\Wh_x \right)^{\alpha}
                \left( 2^{-j\Wdv}\Wh_y \right)^{\beta} E_{\alpha,\beta}(x,y),
            \end{equation*}
            where
            \begin{equation*}
                \left\{ \left( E_{\alpha,\beta},2^{-j} \right) : (E,2^{-j})\in \sE, |\alpha|,|\beta|\leq 1 \right\}\in \ElemzFhN{\Compact}.
            \end{equation*}
    \end{enumerate}
    For \(\Omegah\subseteq \ManifoldM\) open, set
    \begin{equation*}
        \ElemzFhN{\Omegah}:=\bigcup_{\substack{\Compact\Subset \Omegah \\ \Compact\text{ compact}}} \ElemzFhN{\Compact}.
    \end{equation*}
 \end{definition}

 Clearly, \(\ElemzFhN{\Compact}\subseteq \ElemzFh{\Compact}\).

 \begin{lemma}\label{Lemma::Spaces::Multiplication::OldElemResultsApply}
    Lemma \ref{Lemma::Spaces::LP::ElemDoesntDependOnChoices}, Proposition \ref{Prop::Spaces::Elem::Elem::MainProps},
    and Lemma \ref{Lemma::Spaces::Elem::Elem::ElemzhF} hold with \(\ElemzF{\cdot}\) and \(\PElemzF{\cdot}\)
    replaced by \(\ElemzFhN{\cdot}\) and \(\PElemzFhN{\cdot}\), throughout.
 \end{lemma}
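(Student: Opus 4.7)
The plan is to observe that all three results transfer to the $\ElemzFhN$/$\PElemzFhN$ setting essentially by the same arguments as their $\ElemzF$/$\PElemzF$ counterparts, provided one tracks that every operation involved preserves the extra support constraint
\[
\supp(E)\cap(\ManifoldN\times \ManifoldM)\subseteq \ManifoldN\times \ManifoldN
\]
that distinguishes $\PElemzFhN$ from $\PElemzFh$. First, for the analog of Lemma \ref{Lemma::Spaces::LP::ElemDoesntDependOnChoices}, since $\PElemzFhN\subseteq\PElemzFh$ and the latter is independent of the choices of $\Omega$, generators and density by Lemma \ref{Lemma::Spaces::LP::ElemDoesntDependOnChoices} applied to $\FilteredSheafFh$, independence of $\PElemzFhN$ follows because the support constraint is intrinsic (it does not refer to any of the choices). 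For $\ElemzFhN$, the same bidirectional argument as in the proof of Lemma \ref{Lemma::Spaces::Elem::Elem::ElemDoesntDependOnChoices} applies, using the characterization from Remark \ref{Rmk::Spaces::Elem::Elem::HowToProveElem} (which is equally valid for $\ElemzFhN$, since $\ElemzFhN$ is defined as the largest set satisfying an identical derivative-decomposition property), together with the fact that writing $\Wh_j$ as a $\CinftySpace$-linear combination of iterated commutators of $\Zh_k$'s and vice versa preserves the support condition.

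Second, for the analog of Proposition \ref{Prop::Spaces::Elem::Elem::MainProps}, we proceed through items \ref{Item::Spaces::Elem::Elem::LinearComb}--\ref{Item::Spaces::Elem::Elem::DerivFuncAribtrarySection} exactly as in the proof of Lemma \ref{Lemma::Spaces::Elem::Elem::MainPropsLemma}. One checks that each operation leaves the support constraint in place: finite and absolutely-convergent linear combinations obviously do; multiplication by $\psi(x)$ or $\psi(y)$ with $\psi\in \CinftyCptSpace[\Omega']$ can only shrink the support; derivatives $\left(\delta^{\Wdv}\Wh_x\right)^{\alpha}$ and $\left(\delta^{\Wdv}\Wh_y\right)^{\beta}$ of $E(x,y)$, as well as $\left(\delta^{\Wdv}\Wh\right)^{\alpha} E$ and $E\left(\delta^{\Wdv}\Wh\right)^{\alpha}$, are supported inside $\supp(E)$ (or the same slice thereof after integration by parts, which introduces only smooth coefficients on $\Omega$). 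The pull-out-$N$-derivatives identity \ref{Item::Spaces::Elem::Elem::PullOutNDerivs} transfers verbatim from Definition \ref{Defn::Spaces::Multiplication::ElemzFhN} \ref{Item::Spaces::Multiplication::ElemzFhN::PullOutDerivs}. Finally, for \ref{Item::Spaces::Elem::Elem::DerivFuncAribtrarySection} we use that any $Y\in \FilteredSheafFh[\Omega'][d_0]$ is a $\CinftySpace$-linear combination of $\Wh_j$'s on $\Omega\cap \Omega'$, reducing to the previous items.

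Third, for the analog of Lemma \ref{Lemma::Spaces::Elem::Elem::ElemzhF}, define $\Elemzh{\FilteredSheafFh}{\Compact,\ManifoldN}$ by the obvious operator-side variant of Definition \ref{Defn::Spaces::Multiplication::ElemzFhN} and repeat the proof of Lemma \ref{Lemma::Spaces::Elem::Elem::ElemzhF}: the forward containment uses the analog of Proposition \ref{Prop::Spaces::Elem::Elem::MainProps} \ref{Item::Spaces::Elem::Elem::PullOutNDerivs} (already handled above), and the reverse containment uses Remark \ref{Rmk::Spaces::Elem::Elem::IntegrateByPartsWithoutBoundary}; the boundaryless integration by parts in the $y$-variable is legal because any $(E,2^{-j})\in \sE$ still vanishes to infinite order as $y\rightarrow\BoundaryN$, and no integration by parts in $x$ (which would be problematic since $x$ ranges over $\ManifoldM$) is ever used.

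The only step requiring attention is verifying that the integration-by-parts manipulations from the original proofs, when transplanted to the ambient $\ManifoldM$, do not generate terms violating the support constraint. This is immediate here because every such manipulation acts on the kernels $E(x,y)$ by multiplication and differentiation, never by a convolution or change of variables that could spread the support off the diagonal in the relevant sense; thus if $E$ originally satisfied $\supp(E)\cap (\ManifoldN\times \ManifoldM)\subseteq \ManifoldN\times \ManifoldN$, so do all of the derived kernels produced in the decomposition. Since this is the only potential obstacle and it dissolves upon inspection, the three statements follow by the same proofs with essentially no changes beyond replacing $\FilteredSheafF$ by $\FilteredSheafFh$ and $\WWdv$ by $\WhWdv$ throughout.
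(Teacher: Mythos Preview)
Your proposal is correct and takes essentially the same approach as the paper's proof, which simply observes that the only new feature is the support condition \(\supp(E)\cap(\ManifoldN\times\ManifoldM)\subseteq\ManifoldN\times\ManifoldN\) and that tracking it through the original proofs leaves everything unchanged. One small remark: your justification for integration by parts in the \(y\)-variable is slightly off---the reason no boundary terms arise is that \(\ManifoldM\) has no boundary (the integration is over \(\ManifoldM\), not \(\ManifoldN\)), not any vanishing at \(\BoundaryN\); but this does not affect the argument.
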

 \begin{proof}
    The only difference between the setting here, and the setting of those results,
    is the additional condition \(\supp(E)\cap \left( \ManifoldN\times \ManifoldM \right)\subseteq \ManifoldN\times \ManifoldN\)
    in Definition \ref{Defn::Spaces::Multiplication::PElemzFhN}. Keeping track of this additional condition,
    the proofs of the results go through otherwise unchanged.
 \end{proof}

 The main result of this section is the following.
 \begin{theorem}\label{Thm::Spaces::Multiplication::MainAmbientTheorem}
    Let \(\Omegah\subseteq \ManifoldM\) be open with
    \(\Omegah\cap \ManifoldN\subseteq \ManifoldNncF\), and \(\psih\in \CinftyCptSpace[\Omegah]\).
    Then, there exists \(\left\{ \left( \Dh_j,2^{-j} \right) :j\in \Zgeq \right\}\in \ElemzFhN{\Omegah}\)
    such that the following hold:
    \begin{enumerate}[(i)]
        \item\label{Item::Spaces::Multiplication::MainAmbientTheorem::SumToMultplication} \(\sum_{j\in \Zgeq} \Dh_j =\Mult{\psih}\).
        \item\label{Item::Spaces::Multiplication::MainAmbientTheorem::RestrictedAreElem} Set \(D_j:=\Dh_j\big|_{\ManifoldN\times \ManifoldN}\). Then, \(\left\{ (D_j,2^{-j}) : j\in \Zgeq \right\}\in \ElemzF{\Omegah\cap \ManifoldN}\).
        \item\label{Item::Spaces::Multiplication::MainAmbientTheorem::RestrictedSumToMultiplication} \(\sum_{j\in \Zgeq} D_j = \Mult{\psih\big|_{\ManifoldN}}\).
        \item\label{Item::Spaces::Multiplication::MainAmbientTheorem::PhArePreElem} Set \(\Ph_j:=\sum_{k=0}^j \Dh_j\). Then, \(\left\{ \left( \Ph_j, 2^{-j} \right) : j\in \Zgeq \right\}\in \PElemzFhN{\Omegah}\).
        \item\label{Item::Spaces::Multiplication::MainAmbientTheorem::PArePreElem} Set \(P_j:=\Ph_j\big|_{\ManifoldN\times \ManifoldN}=\sum_{k=0}^j D_j\). Then, \(\left\{ \left( P_j, 2^{-j} \right) : j\in \Zgeq \right\}\in \PElemzF{\Omegah\cap \ManifoldN}\).
    \end{enumerate}
    See Proposition \ref{Prop::Spaces::Elem::Elem::ConvergenceOfElemOps} for the convergence of the sums in
    \ref{Item::Spaces::Multiplication::MainAmbientTheorem::SumToMultplication}
    and
    \ref{Item::Spaces::Multiplication::MainAmbientTheorem::RestrictedSumToMultiplication}.
 \end{theorem}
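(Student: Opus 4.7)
The plan is to construct the $\Dh_j$ by partition of unity, using different local models at interior points versus non-characteristic boundary points, with the Littlewood--Paley resolution from Proposition \ref{Prop::Spaces::Schwartz::SumToIdentity} pushed forward through exponential maps of appropriate vector fields. Let $\Compacth = \supp(\psih)$. Cover $\Compacth$ by finitely many neighborhoods of two types: interior neighborhoods (away from $\BoundaryN$), where the manifold-without-boundary construction from \cite[Chapter 5]{StreetMaximalSubellipticity} applies directly and automatically yields operators supported in $\InteriorN\times\InteriorN$; and neighborhoods of points $x_0\in\Compacth\cap\BoundaryN$. Let $\{\phi_\iota\}$ be a subordinate partition of unity with $\sum_\iota \phi_\iota = \psih$ on a neighborhood of $\Compacth$. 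It suffices to construct $\Dh_j^\iota$ associated with each $\phi_\iota$ and sum them, since $\ElemzFhN{\Omegah}$ and $\PElemzFhN{\Omegah}$ are closed under addition by the analogue of Proposition \ref{Prop::Spaces::Elem::Elem::MainProps} \ref{Item::Spaces::Elem::Elem::LinearComb} guaranteed by Lemma \ref{Lemma::Spaces::Multiplication::OldElemResultsApply}.

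For a boundary patch centered at $x_0\in\BoundaryN$, set $\lambda:=\degBoundaryNF(x_0)$. Example \ref{Example::Filtrations::RestrictingFiltrations::NonCharExamples} \ref{Item::Filtrations::RestrictingFiltrations::NonCharExamples::CharacterizeNonCharInTermsOfWWdv} provides a $\Wh_{j_0}$ with $\Wdv_{j_0}=\lambda$ and $\Wh_{j_0}(x)\notin T_x\BoundaryN$ for $x$ in a neighborhood of $x_0$, while $\Wh_k(x)\in T_x\BoundaryN$ whenever $\Wdv_k<\lambda$. After possibly negating, arrange that $\Wh_{j_0}$ points into $\InteriorN$. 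Build a list $\XhXde=\{(\Xh_0,\Xde_0),\ldots,(\Xh_q,\Xde_q)\}$ generating $\LieFilteredSheafFh$ as in Lemma \ref{Lemma::Filtrations::GeneratorsForLieFiltration}, with $(\Xh_0,\Xde_0)=(\Wh_{j_0},\lambda)$ and the $\Xh_l(x)$ spanning $T_x\ManifoldM$. Choose $\vsig_0,\vsig_1$ from Proposition \ref{Prop::Spaces::Schwartz::SumToIdentity} with these formal degrees, and fix $\eta\in\CinftyCptSpace[\R^{1+q}]$ equal to $1$ near $0$ with small support. Define, for $j\in\Zgeq$,
\begin{equation*}
    \Dh_j^\iota f(x):=\phi_\iota(x)\int f\bigl(e^{t_0\Xh_0+t_1\Xh_1+\cdots+t_q\Xh_q}x\bigr)\,\eta(t)\,\Dild{2^j}{\vsig_j^\iota}(t)\,dt,
\end{equation*}
with $\vsig_0^\iota=\vsig_0$ and $\vsig_j^\iota=\vsig_1$ for $j\geq 1$. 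The support condition $\supp(\vsig_j^\iota)\subseteq[0,\infty)\times\R^q$ forces $t_0\geq 0$; since $\Xh_0$ points into $\InteriorN$ on $\BoundaryN$, the flow $e^{t\cdot\Xh}x$ stays in $\ManifoldN$ whenever $x\in\ManifoldN$. Hence the kernel $\Dh_j^\iota(x,y)$ satisfies the support hypothesis of Definition \ref{Defn::Spaces::Multiplication::PElemzFhN}, and vanishes to infinite order as $y\to\BoundaryN$.

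Membership in $\PElemzFhN{\Compacth}$ is verified after pulling back via the scaling maps $\Psi_{x,2^{-j}}$ from Theorem \ref{Thm::Spaces::Scaling::MainScalingThm}: the estimates \eqref{Eqn::Spaces::LP::PreElemBound} follow from the Schwartz decay of $\vsig_j^\iota$, the uniform smoothness of $\Xh^{x,2^{-j}}_l$ in Theorem \ref{Thm::Spaces::Scaling::MainScalingThm} \ref{Item::Spaces::Scaling::PullBackSmooth}, and the identification of the normalizing factor $2^{j(\Xde_0+\cdots+\Xde_q)}$ with $\Volh[\BWhWdv{x}{2^{-j}}]^{-1}$ via Proposition \ref{Prop::VectorFields::Scaling::VolEstimates} \ref{Item::VectorFields::Scaling::VolEstimates::VolApproxLambda}. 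Upgrading from pre-elementary to elementary, which is the main obstacle, exploits $\vsig_1\in\sT$. By Proposition \ref{Prop::Spaces::Schwartz::CharacterizesT}, we may write $\vsig_1=\sum_{l=0}^{q}\partial_{t_l}\vsig_{1,l}$ with $\vsig_{1,l}\in\sT$; integrating by parts in the defining integral converts $\partial_{t_l}$ into $2^{-j\Xde_l}\Xh_l$ acting on $f$, and since each $\Xh_l\in\LieFilteredSheafFh[\Omegah][\Xde_l]$ it can be rewritten as a sum of $\Wh$-derivatives of total degree $\leq\Xde_l$ (via Lemma \ref{Lemma::Filtrations::GeneratorsForLieFiltration}). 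Applying this twice, and noting that the resulting kernels are again of the same form (since $\sT$ is stable under the relevant operations by Proposition \ref{Prop::Spaces::Schwartz::CharacterizesT} and Lemma \ref{Lemma::Spaces::Schwartz::MultiplysTbyPoly}), produces the decomposition in Definition \ref{Defn::Spaces::Multiplication::ElemzFhN} \ref{Item::Spaces::Multiplication::ElemzFhN::PullOutDerivs}, as in Remark \ref{Rmk::Spaces::Elem::Elem::HowToProveElem}.

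The identity $\sum_{j\in\Zgeq}\Dh_j=\Mult{\psih}$ follows directly from Proposition \ref{Prop::Spaces::Schwartz::SumToIdentity} \ref{Item::Spaces::Schwartz::SumToIdentity::SumToDelta}: the sum telescopes to convolution in the exponential coordinates with $\delta_0$, which evaluates $f$ at $x$. For the partial sums, the telescoping in Proposition \ref{Prop::Spaces::Schwartz::SumToIdentity} gives $\Ph_j=\sum_{k=0}^{j}\Dh_k$ as the analogous integral against $\Dild{2^j}{\vsig_0}$; since $\vsig_0\in\SchwartzSpaceRopq$ (with support in $[0,\infty)\times\R^q$) but is not required to lie in $\sT$, the same scaling argument produces the pre-elementary bounds of \ref{Item::Spaces::Multiplication::MainAmbientTheorem::PhArePreElem} but does not yield elementary bounds. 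Finally, \ref{Item::Spaces::Multiplication::MainAmbientTheorem::RestrictedAreElem}, \ref{Item::Spaces::Multiplication::MainAmbientTheorem::RestrictedSumToMultiplication}, and \ref{Item::Spaces::Multiplication::MainAmbientTheorem::PArePreElem} follow by restricting to $\ManifoldN\times\ManifoldN$: Proposition \ref{Prop::Spaces::Elem::Extend::Restrict::NonVerbose} (together with its elementary-class analogue, which follows line-by-line from Proposition \ref{Prop::Spaces::Elem::Elem::MainProps}) gives the required membership, and the support condition built into Definition \ref{Defn::Spaces::Multiplication::PElemzFhN} guarantees the restricted sums reproduce multiplication by $\psih\big|_{\ManifoldN}$.
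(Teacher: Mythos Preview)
Your overall strategy---partition of unity, local construction via exponential maps weighted by half-line-supported Schwartz functions, telescoping for $\Ph_j$---is precisely the paper's approach. However, two technical points are genuinely wrong as written.

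First, the generators $\Xh_1,\ldots,\Xh_q$ must be arranged to be \emph{tangent to $\BoundaryN$} at boundary points, and your construction via Lemma \ref{Lemma::Filtrations::GeneratorsForLieFiltration} does not ensure this. You note that $\Wh_k(x)\in T_x\BoundaryN$ when $\Wdv_k<\lambda$, but the $\Xh_l$ are iterated commutators with degrees $\geq\lambda$ as well, and these need not be tangent. Without tangency, take $x\in\BoundaryN$: for $t_0>0$ very small but $|t'|$ of order $a$, the vector $t_0\Xh_0(x)+\sum_{l\geq 1}t_l\Xh_l(x)$ can point out of $\ManifoldN$, so $e^{t\cdot\Xh}x\notin\ManifoldN$ and the support condition in Definition \ref{Defn::Spaces::Multiplication::PElemzFhN} fails. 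The paper fixes this in Lemma \ref{Lemma::Spaces::Multiplication::NiceGenerators} by subtracting suitable multiples of $\Xh_0$ from each $\Xh_j$ with $\Xdv_j\geq\lambda$.

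Second, the integration-by-parts step is oversimplified: $\partial_{t_l}\bigl[f(e^{t\cdot\Xh}x)\bigr]\neq(\Xh_lf)(e^{t\cdot\Xh}x)$ away from $t=0$. The correct identity (Lemma \ref{Lemma::Spaces::Multiplication::ExpDerivs}, a Baker--Campbell--Hausdorff expansion relying on the commutator structure $[\Xh_j,\Xh_k]=\sum_{\Xdv_m\leq\Xdv_j+\Xdv_k}c_{j,k}^m\Xh_m$) introduces extra $t^\alpha$ factors and possibly higher-degree $\Xh_k$'s; this is exactly why Lemma \ref{Lemma::Spaces::Schwartz::MultiplysTbyPoly} is needed. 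Relatedly, your claim that the normalizing factor $2^{j(\Xde_0+\cdots+\Xde_q)}$ ``identifies with'' $\Volh[\BWhWdv{x}{2^{-j}}]^{-1}$ is false: the latter depends on $x$ through a maximum of subdeterminants (Proposition \ref{Prop::VectorFields::Scaling::VolEstimates}). The paper obtains the pre-elementary bounds by citing \cite[Lemma 5.6.6]{StreetMaximalSubellipticity} rather than via the scaling maps directly.
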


 First we see how Theorem \ref{Thm::Spaces::Multiplication::MainAmbientTheorem}
 implies Proposition \ref{Prop::Spaces::LP::DjExist}.

 \begin{proof}[Proof of Proposition \ref{Prop::Spaces::LP::DjExist}]
    In Proposition \ref{Prop::Spaces::LP::DjExist}, \(\FilteredSheafFh\) and \(\ManifoldM\) are not given,
    but there always exists a choice of such data--see Proposition \ref{Prop::Filtrations::RestrictingFiltrations::CoDim0CoRestriction}.
    Let \(\Omega\subseteq \ManifoldNncF\) and \(\psi\) be as in Proposition \ref{Prop::Spaces::LP::DjExist}.
    Pick any \(\Omegah\subseteq \ManifoldM\) open with \(\Omegah\cap \ManifoldN=\Omega\), and any
    \(\psih\in \CinftyCptSpace[\Omegah]\) with \(\psih\big|_{\ManifoldN}=\psi\)
    (such a choice of \(\psih\) always exists--see Lemma \ref{Lemma::Spaces::Elem::Extend::Classical}).
    From here, Theorem \ref{Thm::Spaces::Multiplication::MainAmbientTheorem} directly
    implies the result.
 \end{proof}

 The rest of this section is devoted to the proof of Theorem \ref{Thm::Spaces::Multiplication::MainAmbientTheorem}.

 \begin{lemma}\label{Lemma::Spaces::Multiplication::RestrictElemGivesElem}
    Suppose \(\Compact\cap \ManifoldN\subseteq \ManifoldNncF\). 
    \begin{enumerate}[(i)]
        \item\label{Item::Spaces::Multiplication::RestrictElemGivesElem::PElem} For \(\sE\in \PElemzFhN{\Compact}\),
            \begin{equation*}
                \sE\big|_{\ManifoldN\times \ManifoldN}:=\left\{ \left( E\big|_{\ManifoldN\times \ManifoldN},\delta \right) : (E,\delta)\in \sE \right\}
        \in \PElemzF{\Compact\cap \ManifoldN}.
            \end{equation*}
        \item\label{Item::Spaces::Multiplication::RestrictElemGivesElem::Elem} For \(\sE\in \ElemzFhN{\Compact}\),
        \begin{equation*}
                \sE\big|_{\ManifoldN\times \ManifoldN}
        \in \ElemzF{\Compact\cap \ManifoldN}.
            \end{equation*}
    \end{enumerate}
    
 \end{lemma}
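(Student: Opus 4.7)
The plan is to verify the two statements directly from the definitions, showing that restriction to $\ManifoldN\times\ManifoldN$ preserves each of the three structural conditions (support, derivative bounds, vanishing on the boundary) required by Definitions \ref{Defn::Spaces::LP::PElemWWdv} and \ref{Defn::Spaces::LP::ElemWWdv}.

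For \ref{Item::Spaces::Multiplication::RestrictElemGivesElem::PElem}, fix $\sE\in\PElemzFhN{\Compact}$ and set $\sE':=\sE\big|_{\ManifoldN\times\ManifoldN}$. The support condition in Definition \ref{Defn::Spaces::LP::PElemWWdv} \ref{Item::Spaces::LP::PElemWWdv::Support} is immediate: for $(E,\delta)\in\sE$, $\supp(E)\subseteq\Compacth\times\Compacth$ in $\ManifoldM\times\ManifoldM$, so $\supp(E\big|_{\ManifoldN\times\ManifoldN})\subseteq (\Compact\cap\ManifoldN)\times(\Compact\cap\ManifoldN)$. For the derivative estimate, observe that since $\ManifoldN$ has codimension $0$ in $\ManifoldM$, the restriction $W_j=\Wh_j\big|_{\Omega\cap\ManifoldN}$ satisfies $(\Wh_j F)\big|_{\ManifoldN}=W_j(F\big|_{\ManifoldN})$ for every $F\in\CinftySpace[\ManifoldM]$ (at boundary points this is just the identification of a one-sided derivative of a smooth function with the two-sided derivative). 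Hence for $x,y\in\Compact\cap\ManifoldN$ and any $\alpha,\beta$,
\begin{equation*}
    \bigl|\bigl(2^{-j\Wdv}W_x\bigr)^{\alpha}\bigl(2^{-j\Wdv}W_y\bigr)^{\beta}E(x,y)\bigr|
    =\bigl|\bigl(2^{-j\Wdv}\Wh_x\bigr)^{\alpha}\bigl(2^{-j\Wdv}\Wh_y\bigr)^{\beta}E(x,y)\bigr|.
\end{equation*}
Applying the ambient bound from Definition \ref{Defn::Spaces::LP::PElemWWdv} \ref{Item::Spaces::LP::PElemWWdv::Bound} together with Proposition \ref{Prop::VectorFields::Scaling::AmbientVolAndMetricEquivalnce} \ref{Item::VectorFields::Scaling::AmbientVolAndMetricEquivalnce::MetricWequalsMetricZ} and \ref{Item::VectorFields::Scaling::AmbientVolAndMetricEquivalnce::VolOfMetricWequalsVolOfMetricZ} (this is where the hypothesis $\Compact\cap\ManifoldN\subseteq\ManifoldNncF$ is used) converts the ambient $\MetricWhWdv$ and $\Volh$-ball volumes into $\MetricWWdv$ and $\Vol$-ball volumes up to a multiplicative constant, giving the required bound \eqref{Eqn::Spaces::LP::PreElemBound}.

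The vanishing-on-boundary condition in $\PElemzF{\cdot}$ is the key point: I need $E(x,y)\big|_{\ManifoldN\times\ManifoldN}$ to vanish to infinite order as $y\to\BoundaryN$. By Definition \ref{Defn::Spaces::Multiplication::PElemzFhN}, for $x\in\ManifoldN$ the section $y\mapsto E(x,y)$ vanishes on the open set $\ManifoldM\setminus\ManifoldN$. Since $E$ is smooth on $\ManifoldM\times\ManifoldM$ and $\BoundaryN$ lies in the topological boundary of $\ManifoldM\setminus\ManifoldN$, every partial derivative of $E(x,\cdot)$ vanishes identically on $\ManifoldM\setminus\ManifoldN$ and, by continuity, on $\BoundaryN$. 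This gives the desired infinite-order vanishing, completing \ref{Item::Spaces::Multiplication::RestrictElemGivesElem::PElem}.

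For \ref{Item::Spaces::Multiplication::RestrictElemGivesElem::Elem}, the plan is to invoke the ``largest subset'' characterization of $\ElemzF{\Compact\cap\ManifoldN}$ exactly as in Remark \ref{Rmk::Spaces::Elem::Elem::HowToProveElem}. Define
\begin{equation*}
    \sB:=\bigl\{\sE\big|_{\ManifoldN\times\ManifoldN}:\sE\in\ElemzFhN{\Compact}\bigr\}.
\end{equation*}
By part \ref{Item::Spaces::Multiplication::RestrictElemGivesElem::PElem} (noting $\ElemzFhN{\Compact}\subseteq\PElemzFhN{\Compact}$), $\sB\subseteq\PElemzF{\Compact\cap\ManifoldN}$, verifying Definition \ref{Defn::Spaces::LP::ElemWWdv} \ref{Item::Spaces::LP::ElemWWdv::ElemArePreElem}. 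For the derivative decomposition \ref{Item::Spaces::LP::ElemWWdv::ElemArePreDerivs}, take $\sE\in\ElemzFhN{\Compact}$ and $(E,2^{-j})\in\sE$; by Definition \ref{Defn::Spaces::Multiplication::ElemzFhN} \ref{Item::Spaces::Multiplication::ElemzFhN::PullOutDerivs},
\begin{equation*}
    E(x,y)=\sum_{|\alpha|,|\beta|\leq 1}2^{-j(2-|\alpha|-|\beta|)}\bigl(2^{-j\Wdv}\Wh_x\bigr)^{\alpha}\bigl(2^{-j\Wdv}\Wh_y\bigr)^{\beta}E_{\alpha,\beta}(x,y),
\end{equation*}
with $\{(E_{\alpha,\beta},2^{-j})\}\in\ElemzFhN{\Compact}$. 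Restricting to $\ManifoldN\times\ManifoldN$ and using the identity $(\Wh F)\big|_{\ManifoldN}=W(F\big|_{\ManifoldN})$ in both the $x$ and $y$ variables gives the same decomposition with $\Wh$ replaced by $W$ and $E_{\alpha,\beta}$ replaced by $E_{\alpha,\beta}\big|_{\ManifoldN\times\ManifoldN}$, and the restricted family lies in $\sB$. Thus $\sB$ satisfies both axioms of $\ElemzF{\Compact\cap\ManifoldN}$, and by maximality $\sB\subseteq\ElemzF{\Compact\cap\ManifoldN}$, proving \ref{Item::Spaces::Multiplication::RestrictElemGivesElem::Elem}. The main subtle point is the infinite-order vanishing in part \ref{Item::Spaces::Multiplication::RestrictElemGivesElem::PElem}, where one must pass from the essentially algebraic support condition of Definition \ref{Defn::Spaces::Multiplication::PElemzFhN} to the analytic vanishing condition of Definition \ref{Defn::Spaces::LP::PElemWWdv}.
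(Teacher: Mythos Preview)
Your proof is correct and follows essentially the same approach as the paper's. The only difference is cosmetic: for part \ref{Item::Spaces::Multiplication::RestrictElemGivesElem::PElem} the paper cites Proposition \ref{Prop::Spaces::Elem::Extend::Restrict::NonVerbose} for the $\PElemF{\cdot}$ membership and then argues the infinite-order vanishing exactly as you do, whereas you reprove that proposition's short content inline; part \ref{Item::Spaces::Multiplication::RestrictElemGivesElem::Elem} is handled identically in both via the maximality argument of Remark \ref{Rmk::Spaces::Elem::Elem::HowToProveElem}.
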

 \begin{proof}
    \ref{Item::Spaces::Multiplication::RestrictElemGivesElem::PElem}:
    Proposition \ref{Prop::Spaces::Elem::Extend::Restrict::NonVerbose} shows
    \(\sE\big|_{\ManifoldN\times \ManifoldN}\in \PElemF{\Compact\cap \ManifoldN}\).
    Since \(\supp(E)\cap \left( \ManifoldN\times \ManifoldM\right)\subseteq \ManifoldN\times \ManifoldN\), for \(x\in \ManifoldN\)
    and \((E,\delta)\in \sE\), we have \(E\big|_{\ManifoldN\times\ManifoldN}(x,y)\) vanishes to infinite order as \(y\rightarrow \BoundaryN\).
    We conclude \(\sE\big|_{\ManifoldN\times \ManifoldN}\in \PElemzF{\Compact\cap \ManifoldN}\).

    \ref{Item::Spaces::Multiplication::RestrictElemGivesElem::Elem}:
    For \(\left( E,2^{-j} \right)\in \sE\), we have using Definition \ref{Defn::Spaces::Multiplication::ElemzFhN} \ref{Item::Spaces::Multiplication::ElemzFhN::PullOutDerivs},
    \begin{equation*}
    \begin{split}
         E\big|_{\ManifoldN\times\ManifoldN}(x,y)
         &=\sum_{|\alpha|,|\beta|\leq 1} 2^{-j(2-|\alpha|-|\beta|)}
         \left[ \left( 2^{-j\Wdv}\Wh_x \right)^{\alpha}\left( 2^{-j\Wdv}\Wh_y \right)^{\beta} E_{\alpha,\beta}(x,y) \right]\Bigg|_{\ManifoldN\times \ManifoldN}
         \\&=\sum_{|\alpha|,|\beta|\leq 1} 2^{-j(2-|\alpha|-|\beta|)}
         \left( 2^{-j\Wdv} W_x \right)^{\alpha}\left( 2^{-j\Wdv} W_y^{\beta} \right) E_{\alpha,\beta}\big|_{\ManifoldN\times\ManifoldN}(x,y),
    \end{split}
    \end{equation*}
    where 
    \begin{equation*}
        \left\{ \left( E_{\alpha,\beta},2^{-j} \right) : (E,2^{-j})\in \sE, |\alpha|,|\beta|\leq 1 \right\}\in \ElemzFhN{\Compact}.
    \end{equation*}
    This shows that 
    \begin{equation*}
        \left\{ \sE\big|_{\ManifoldN\times \ManifoldN} : \sE\in \ElemzFhN{\Compact} \right\}
    \end{equation*}
    satisfies the axioms of Definition \ref{Defn::Spaces::LP::ElemWWdv} and therefore
    \(\left\{ \sE\big|_{\ManifoldN\times \ManifoldN} : \sE\in \ElemzFhN{\Compact} \right\}\subseteq \ElemzF{\Compact\cap \ManifoldN}\).
 \end{proof}

 \begin{lemma}\label{lemma::Spaces::Multiplication::RestrictMultiplicationGivesMultiplicationRestrict}
    Suppose \(\Omegah\subseteq \ManifoldM\) with \(\Omegah\cap \ManifoldN\subseteq \ManifoldNncF\),
    \(\psih\in \CinftyCptSpace[\Omegah]\), and
    \(\left\{ \left( \Dh_j, 2^{-j} \right) :j\in \Zgeq\right\}\in \ElemzFhN{\Omegah}\)
    satisfies \(\sum_{j\in \Zgeq}\Dh_j=\Mult{\psih}\).
    Let \(D_j:=\Dh_j\big|_{\ManifoldN\times \ManifoldN}\).
    Then, \(\sum_{j\in \Zgeq} D_j=\Mult{\psih\big|_{\ManifoldN}}\).
    See  Proposition \ref{Prop::Spaces::Elem::Elem::ConvergenceOfElemOps} (and Lemma \ref{Lemma::Spaces::Multiplication::RestrictElemGivesElem})
    for the convergence of the sums.
 \end{lemma}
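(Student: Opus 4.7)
The plan is to argue by duality, reducing the claim on $\ManifoldN$ to the hypothesized identity on $\ManifoldM$. Fix $u \in \DistributionsZeroN$ and $f \in \TestFunctionsZeroN$; the goal is to show $\sum_j \langle D_j u, f\rangle = \langle \psih\big|_{\ManifoldN} u, f\rangle$. First I would use Remark \ref{Rmk::Spaces::LP::FunctionsOrOperators} to write $\langle D_j u, f\rangle = \langle u, D_j^{\transpose} f\rangle$, where $D_j^{\transpose} f(y) := \int D_j(x,y) f(x)\, d\Vol(x)$. The key preliminary observation is that since $\Dh_j$ satisfies $\supp(\Dh_j)\cap(\ManifoldN\times\ManifoldM)\subseteq \ManifoldN\times\ManifoldN$ (Definition \ref{Defn::Spaces::Multiplication::PElemzFhN}), for any fixed $x\in\ManifoldN$ the function $y\mapsto \Dh_j(x,y)$ vanishes on the open set $\ManifoldM\setminus\ManifoldN$, hence by smoothness vanishes to infinite order at $\BoundaryN$. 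Consequently $D_j^{\transpose} f \in \TestFunctionsZeroN$, so the pairing with $u$ is meaningful.

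Next I would extend $f$ by zero to $\tilde{f} \in \CinftyCptSpace[\ManifoldM]$; this is a valid smooth, compactly supported extension precisely because $f$ vanishes to infinite order at $\BoundaryN$. The support condition then yields the identity
\begin{equation*}
    \Dh_j^{\transpose}\tilde{f}(y) = \int_{\ManifoldM}\Dh_j(x,y)\tilde{f}(x)\,d\Vol(x) = \int_{\ManifoldN}D_j(x,y) f(x)\,d\Vol(x) = D_j^{\transpose} f(y), \qquad y\in \ManifoldN,
\end{equation*}
so that $D_j^{\transpose} f = \Dh_j^{\transpose}\tilde{f}\big|_{\ManifoldN}$.

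Now I would exploit the hypothesis $\sum_j \Dh_j = \Mult{\psih}$. Dualizing via the pairing on $\ManifoldM$ (using that for $g\in\CinftyCptSpace[\ManifoldM]$, $\langle \Dh_j^{\transpose}\tilde{f}, g\rangle = \langle \tilde{f}, \Dh_j g\rangle$ and that $\sum_j \Dh_j g = \psih g$ converges in $\CinftyCptSpace[\ManifoldM]$ by Proposition \ref{Prop::Spaces::Elem::Elem::ConvergenceOfElemOps} and Lemma \ref{Lemma::Spaces::Multiplication::OldElemResultsApply}) gives $\sum_j \Dh_j^{\transpose}\tilde{f} = \psih\tilde{f}$ in $\Distributions[\ManifoldM]$. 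Restricting to $\ManifoldN$ — and noting $\psih\tilde{f}\big|_{\ManifoldN} = (\psih\big|_{\ManifoldN})\,f$ — yields $\sum_j D_j^{\transpose} f = (\psih\big|_{\ManifoldN}) f$, with convergence in $\TestFunctionsZeroN$ (again by Proposition \ref{Prop::Spaces::Elem::Elem::ConvergenceOfElemOps}, since $\{(D_j,2^{-j})\}\in \ElemzF{\Omegah\cap\ManifoldN}$ by Lemma \ref{Lemma::Spaces::Multiplication::RestrictElemGivesElem}). Pairing with $u$ and exchanging the sum with the pairing then produces
\begin{equation*}
    \sum_j \langle D_j u, f\rangle = \sum_j \langle u, D_j^{\transpose} f\rangle = \langle u, (\psih\big|_{\ManifoldN}) f\rangle = \langle (\psih\big|_{\ManifoldN}) u, f\rangle,
\end{equation*}
which is the required identity in $\DistributionsZeroN$.

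The main obstacle is the duality bookkeeping: one must verify that the hypothesized operator-level identity $\sum_j \Dh_j = \Mult{\psih}$ dualizes to a genuine convergence statement for $\sum_j \Dh_j^{\transpose}\tilde{f}$ strong enough to permit restriction to $\ManifoldN$ and pairing with $u$. This is where the zero-extension $\tilde{f}$ and the asymmetric support condition in Definition \ref{Defn::Spaces::Multiplication::PElemzFhN} (only requiring vanishing outside $\ManifoldN$ in the second variable when the first lies in $\ManifoldN$) combine: together they ensure that $\Dh_j^{\transpose}\tilde{f}$ is supported in $\ManifoldN$ and that its restriction lies in $\TestFunctionsZeroN$, so the entire argument is internally consistent and the interchange of limits and pairings is justified by the continuity results already established for elementary operators.
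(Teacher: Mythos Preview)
Your duality argument is correct, but it takes a different route from the paper's proof. The paper argues directly: for $f\in\CinftyCptSpace[\ManifoldM]$ one has $(\Dh_j f)\big|_{\ManifoldN}=D_j(f\big|_{\ManifoldN})$ (using the support condition in Definition~\ref{Defn::Spaces::Multiplication::PElemzFhN}), so the identity $\sum_j D_j u = (\psih\big|_{\ManifoldN})u$ holds for every $u$ of the form $f\big|_{\ManifoldN}$; since such $u$ are dense in $\DistributionsZeroN$ and both sides define continuous operators on $\DistributionsZeroN$ (Proposition~\ref{Prop::Spaces::Elem::Elem::ConvergenceOfElemOps}), the identity extends. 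This is shorter because it never touches the transpose.

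Your approach instead dualizes: you extend the test function $f\in\TestFunctionsZeroN$ by zero to $\tilde f\in\CinftyCptSpace[\ManifoldM]$ and transfer the identity through $\Dh_j^{\transpose}\tilde f$. This is valid, but note that your appeal to Proposition~\ref{Prop::Spaces::Elem::Elem::ConvergenceOfElemOps} for the convergence of $\sum_j D_j^{\transpose}f$ in $\TestFunctionsZeroN$ is slightly imprecise: the \emph{statement} of that proposition concerns $\sum_j E_j$ acting on $\CinftySpace[\ManifoldN]$ and on $\DistributionsZeroN$, not the transpose acting on $\TestFunctionsZeroN$. The estimate you need is \eqref{Eqn::Spaces::Elem::Elem::ConvergenceOfElemOps::ToShow1::Transpose} from the \emph{proof} of that proposition, which does give uniform $C^\infty$ convergence of $\sum_j D_j^{\transpose}f$ (and closure of $\TestFunctionsZeroN$ in $\CinftyCptSpace[\ManifoldN]$ then finishes it). So the argument goes through, but the citation should point to the proof rather than the statement. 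The paper's density argument avoids this bookkeeping entirely.
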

 \begin{proof}
    By Lemma \ref{Lemma::Spaces::Multiplication::RestrictElemGivesElem} and Proposition \ref{Prop::Spaces::Elem::Elem::ConvergenceOfElemOps},
    \(\sum_{j\in \Zgeq} D_j=\Mult{\psih\big|_{\ManifoldN}}\) converges as continuous linear operators
    \(\DistributionsZeroN\rightarrow \DistributionsZeroN\).
    For \(f\in \CinftyCptSpace[\ManifoldM]\), we have
    \begin{equation*}
        \psih\big|_{\ManifoldN} f\big|_{\ManifoldN}=\left( \psih f \right)\big|_{\ManifoldN}  = \left( \sum_{j\in \Zgeq} \Dh_j f \right)\big|_{\ManifoldN}
        =\sum_{j\in \Zgeq} \left( \Dh_j f \right)\big|_{\ManifoldN} = \sum_{j\in \Zgeq} D_j \left( f\big|_{\ManifoldN} \right).
    \end{equation*}
    Thus, we see for all \(u\) of the form \(u=f\big|_{\ManifoldN}\), where \(f\in \CinftyCptSpace[\ManifoldM]\), we have
    \begin{equation}\label{Eqn::Spaces::Multiplication::RestrictMultiplicationGivesMultiplicationRestrict::AppliedToSmooth}
        \psih\big|_{\ManifoldN} u = \sum_{j\in \Zgeq} D_j u.
    \end{equation}
    Since every element of \(\DistributionsZeroN\) can be approximated (in \(\DistributionsZeroN\)) by such \(u\),
    \eqref{Eqn::Spaces::Multiplication::RestrictMultiplicationGivesMultiplicationRestrict::AppliedToSmooth} holds
    \(\forall u\in \DistributionsZeroN\), completing the proof.
 \end{proof}

 Next, we state a local version of Theorem \ref{Thm::Spaces::Multiplication::MainAmbientTheorem}.

 \begin{lemma}\label{Lemma::Spaces::Multiplication::LocalVersion}
    \(\forall x\in \left( \ManifoldM\setminus \ManifoldN \right)\bigcup \ManifoldNncF\),
    there exists an \(\ManifoldM\)-open set \(\Omegah_0\subseteq \ManifoldM\), with \(x\in \Omegah_0\)
    and such that \(\forall \Omegah\subseteq \Omegah_0\) open, \(\forall \psih\in \CinftyCptSpace[\Omegah]\),
    \(\exists \left\{ \left( \Dh_j, 2^{-j} \right) : j\in \Zgeq \right\} \in \ElemzFhN{\Omegah}\) with
    \(\sum_{j\in \Zgeq}\Dh_j = \Mult{\psih}\) and such that if \(\Ph_j:=\sum_{k=0}^j \Dh_j\),
    then \(\left\{ \left( \Ph_j, 2^{-j} \right):j\in \Zgeq \right\}\in \PElemzFhN{\Omegah}\).

 \end{lemma}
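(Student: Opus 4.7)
The plan is to split into cases on the location of $x$ and concentrate the essential work on $x \in \BoundaryNncF$, following the construction sketched informally in Section \ref{Section::GlobalCor::InfomralNorms}.

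If $x \in (\ManifoldM \setminus \ManifoldN) \cup \InteriorN$, choose $\Omegah_0$ $\ManifoldM$-open with $\overline{\Omegah_0} \cap \BoundaryN = \emptyset$, which is possible since $x \notin \BoundaryN$. Any kernel $E$ supported in $\Omegah_0 \times \Omegah_0$ then trivially satisfies $\supp(E) \cap (\ManifoldN \times \ManifoldM) \subseteq \ManifoldN \times \ManifoldN$ and vanishes identically near $\BoundaryN$, so that the extra support conditions in Definition \ref{Defn::Spaces::Multiplication::PElemzFhN} are vacuous. The required decomposition is then the standard single-parameter interior Littlewood--Paley construction on a manifold without boundary, built from $\vsig_0 \in \SchwartzSpaceRn$ with $\int \vsig_0 = 1$ and vanishing higher moments, transported along the flow of a generic combination of generators of $\LieFilteredSheafFh\big|_{\Omegah_0}$; the elementary and pre-elementary properties reduce to known results on a manifold without boundary.

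The essential case is $x \in \BoundaryNncF$. Shrink $\Omegah_0$ so that $\FilteredSheafFh\big|_{\Omegah_0} = \FilteredSheafGenBy{\WhWdv}$ for H\"ormander vector fields with formal degrees $\WhWdv$, and apply Example \ref{Example::Filtrations::RestrictingFiltrations::NonCharExamples} \ref{Item::Filtrations::RestrictingFiltrations::NonCharExamples::CharacterizeNonCharInTermsOfWWdv} to find $j_0$ with $\Wh_{j_0}$ transverse to $\BoundaryN$ and with $\Wh_k$ tangent to $\BoundaryN$ whenever $\Wdv_k < \Wdv_{j_0}$. After relabeling and a possible sign change, I may assume $j_0 = 1$ and $\Wh_1$ points into $\InteriorN$ at $x$; then subtract smooth multiples of $\Wh_1$ from each remaining $\Wh_k$ (which does not alter $\FilteredSheafGenBy{\WhWdv}$) to make them tangent to $\BoundaryN$ near $x$. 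Enumerating iterated commutators up to finite order produces generators $\XhXde = \{(\Xh_0, 1),(\Xh_1, \Xde_1),\ldots,(\Xh_q,\Xde_q)\}$ of $\LieFilteredSheafFh\big|_{\Omegah_0}$ with $\Xh_0 = \Wh_1$ pointing into $\InteriorN$ and $\Xh_1,\ldots,\Xh_q$ tangent to $\BoundaryN$. Apply Proposition \ref{Prop::Spaces::Schwartz::SumToIdentity} with scaling weights $(1,\Xde_1,\ldots,\Xde_q)$ to obtain $\vsig_0 \in \SchwartzSpaceRopq$ with $\supp(\vsig_0) \subseteq [0,\infty) \times \R^q$ such that $\vsig_1 := \Dild{2}{\vsig_0} - \vsig_0 \in \sT$ and the dilates telescope to $\delta_0$. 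With a cutoff $\eta \in \CinftyCptSpace[\R^{1+q}]$ equal to $1$ near the origin and with support small enough that the flow is well defined, set $\vsig_j := \vsig_1$ for $j \geq 1$ and
\begin{equation*}
    \Dh_j f(y) := \psih(y) \int f\bigl( e^{t_0 \Xh_0 + t_1 \Xh_1 + \cdots + t_q \Xh_q} y \bigr) \eta(t) \Dild{2^j}{\vsig_j}(t) \, dt.
\end{equation*}
The support condition $\supp(\vsig_0) \subseteq [0,\infty) \times \R^q$, together with $\Xh_0$ pointing into $\InteriorN$ and $\Xh_1,\ldots,\Xh_q$ tangent to $\BoundaryN$, forces $e^{t \cdot \Xh} y \in \ManifoldN$ whenever $y \in \ManifoldN$ and $t \in \supp(\eta)$, giving the support constraint in Definition \ref{Defn::Spaces::Multiplication::PElemzFhN}; Proposition \ref{Prop::Spaces::Schwartz::SumToIdentity} yields $\sum_j \Dh_j = \Mult{\psih}$, and the partial sums $\Ph_j$ take the form of an average against $\Dild{2^{j+1}}{\vsig_0}$, which is pre-elementary.

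The main obstacle is verifying that $\{(\Dh_j,2^{-j})\} \in \ElemzFhN{\Omegah}$ and $\{(\Ph_j, 2^{-j})\} \in \PElemzFhN{\Omegah}$. Pre-elementary bounds are obtained by pulling back the integral representation under the scaling maps $\Psi_{y,2^{-j}}$ from Theorem \ref{Thm::Spaces::Scaling::MainScalingThm}: parts \ref{Item::Spaces::Scaling::PullBackSmooth} and \ref{Item::Spaces::Scaling::PullBackSatisfyHormander} convert $\WhWdv$-derivatives into uniform derivatives on $\Qn{1}$, and combined with the rapid decay of $\vsig_j$ (using Lemma \ref{Lemma::Spaces::Schwartz::MultiplysTbyPoly} to absorb polynomial factors) this gives the bound \eqref{Eqn::Spaces::LP::PreElemBound} uniformly in $j$. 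The elementary factorization of Definition \ref{Defn::Spaces::Multiplication::ElemzFhN} \ref{Item::Spaces::Multiplication::ElemzFhN::PullOutDerivs} is the heart of the argument: for $j \geq 1$, Definition \ref{Defn::Spaces::Schwartz::sT} gives $\vsig_j = \sum_{l=0}^q \partial_{t_l} \vsig_{j,l}$ with $\vsig_{j,l} \in \sT$, and an integration by parts in $t_l$ converts $\partial_{t_l}$ into the vector field $\Xh_l$ acting on $f$ with gain $2^{-j\Xde_l}$, producing the required one-sided factor $(2^{-j\Wdv}\Wh_l)$; a symmetric manipulation using formal adjoints acting in the $y$-variable produces the factor on the other side, and iterating gives the full two-sided decomposition. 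The key point is that because $\supp(\vsig_{j,l}) \subseteq [0,\infty) \times \R^q$ and $\Xh_0$ points into $\InteriorN$, these integrations by parts produce no boundary terms, so the extra support constraint of Definition \ref{Defn::Spaces::Multiplication::PElemzFhN} is preserved through the factorization.
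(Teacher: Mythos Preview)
Your proposal follows the same overall architecture as the paper: split into the interior case (where the extra support condition is vacuous) and the boundary case, and for the latter use tangent/transverse generators together with the one-sided Schwartz function of Proposition \ref{Prop::Spaces::Schwartz::SumToIdentity} to build $\Dh_j$ via the explicit flow formula.

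There is, however, a genuine technical gap in your elementary factorization step. You write that integration by parts ``converts $\partial_{t_l}$ into the vector field $\Xh_l$ acting on $f$ with gain $2^{-j\Xde_l}$.'' This is false unless the $\Xh_k$ commute: in general $\partial_{t_l}\bigl[f(e^{t\cdot\Xh}y)\bigr] \ne (\Xh_l f)(e^{t\cdot\Xh}y)$. The paper closes this gap with Lemma \ref{Lemma::Spaces::Multiplication::ExpDerivs}, which uses the bracket relations $[\Xh_j,\Xh_k]=\sum_{\Xdv_m\le\Xdv_j+\Xdv_k}c_{j,k}^m\Xh_m$ (this is why the paper's Lemma \ref{Lemma::Spaces::Multiplication::NiceGenerators} includes property \ref{Item::Spaces::Multiplication::NiceGenerators::NSWCommutators}) to expand $\partial_{t_l}\bigl[f(e^{t\cdot\Xh}y)\bigr]$ as a finite sum $\sum t^{\alpha}g(t,y)(\Xh_k f)(e^{t\cdot\Xh}y)$ with the crucial degree constraint $\Xdv_k\le\DegXdv{\alpha}+\Xdv_l$. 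Only because of this constraint does the factor $t^{\alpha}$, when absorbed into $\Dild{2^j}{\vsig_{j,l}}$ via Lemma \ref{Lemma::Spaces::Schwartz::MultiplysTbyPoly}, leave the correct net gain; without it the argument does not close.

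A secondary point: you assign formal degree $1$ to the transverse field $\Xh_0$. In general its degree must be $\lambda=\degBoundaryNF[x]$, which can exceed $1$ (for instance in the heat-equation example of Example \ref{Example::Filtrations::RestrictingFiltrations::NonCharExamples}); the paper's Lemma \ref{Lemma::Spaces::Multiplication::NiceGenerators} records this carefully.
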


 Before we prove Lemma \ref{Lemma::Spaces::Multiplication::LocalVersion}, we see how it
 completes the proof of Theorem \ref{Thm::Spaces::Multiplication::MainAmbientTheorem}.

 \begin{proof}[Proof of Theorem \ref{Thm::Spaces::Multiplication::MainAmbientTheorem}]
    Let \(\Omegah\subseteq \ManifoldM\) be \(\ManifoldM\)-open, with \(\Omegah\cap \ManifoldN\subseteq \ManifoldNncF\),
    and \(\psih\in \CinftyCptSpace[\Omegah]\).
    For each \(x\in \supp(\psih)\), let \(\Omegah_{x,0}\) be the neighborhood of \(x\) from
    Lemma \ref{Lemma::Spaces::Multiplication::LocalVersion} and let \(\Omegah_x:=\Omegah_{x,0}\cap \Omega\).
    \(\left\{ \Omegah_x : x\in \supp(\psih) \right\}\) is an open cover for 
    the compact set \(\supp(\psih)\);
    extract a finite subcover, \(\Omegah_{x_1},\ldots, \Omegah_{x_L}\).
    Let \(\phi_l\in \CinftyCptSpace[\Omegah_{x_l}]\) satisfy \(\sum_{l=1}^L \phi_l=1\) on a neighborhood of \(\supp(\psih)\).
    Set \(\psih_l:=\phi_l \psih\in \CinftyCptSpace[\Omega_{x_l}]\), so that \(\sum_{l=1}^L \psih_l=\psih\).

    Applying Lemma \ref{Lemma::Spaces::Multiplication::LocalVersion} to \(\psih_l\), we may write
    \(\Mult{\psih_l}=\sum_{j\in \Zgeq} \Dh_{j,l}\),
    where \(\left\{ \left( \Dh_{j,l}, 2^{-j} \right) :j\in \Zgeq\right\}\in \ElemzFhN{\Omegah_{x_l}}\subseteq \ElemzFhN{\Omegah}\).
    Letting \(\Ph_{j,l}:=\sum_{k=0}^j \Dh_{j,l}\), Lemma \ref{Lemma::Spaces::Multiplication::LocalVersion} shows
    \(\left\{ \left( \Ph_{j,l}, 2^{-j} \right) :j\in \Zgeq\right\}\in \PElemzFhN{\Omegah_{x_l}}\subseteq \PElemzFhN{\Omegah}\).
    Setting \(\Dh_j:=\sum_{l=1}^M \Dh_{j,l}\), 
    Lemma \ref{Lemma::Spaces::Multiplication::OldElemResultsApply} (see Proposition \ref{Prop::Spaces::Elem::Elem::MainProps} \ref{Item::Spaces::Elem::Elem::LinearComb})
    implies \(\left\{ (\Dh_j, 2^{-j}) : j\in \Zgeq \right\}\in \ElemzFhN{\Omega}\).
    Note \(\Ph_j:=\sum_{k=0}^j \Dh_j=\sum_{l=1}^M \Ph_{j,l}\), and therefore
    \(\left\{ (\Ph_j, 2^{-j}) : j\in \Zgeq \right\}\in \PElemzFhN{\Omega}\), establishing \ref{Item::Spaces::Multiplication::MainAmbientTheorem::PhArePreElem}.
    We also have \(\sum_{j\in \Zgeq} \Dh_j =\sum_{l=1}^M\sum_{j\in \Zgeq}\Dh_{j,l}=\sum_{l=1}^M \Mult{\psih_l}=\Mult{\psih}\),
    establishing \ref{Item::Spaces::Multiplication::MainAmbientTheorem::SumToMultplication}.
    \ref{Item::Spaces::Multiplication::MainAmbientTheorem::RestrictedAreElem} and \ref{Item::Spaces::Multiplication::MainAmbientTheorem::PArePreElem}
    follow from Lemma \ref{Lemma::Spaces::Multiplication::RestrictElemGivesElem}.
    \ref{Item::Spaces::Multiplication::MainAmbientTheorem::RestrictedSumToMultiplication} follows from
    \ref{Item::Spaces::Multiplication::MainAmbientTheorem::SumToMultplication}
    and Lemma \ref{lemma::Spaces::Multiplication::RestrictMultiplicationGivesMultiplicationRestrict}.
 \end{proof}

 Lemma \ref{Lemma::Spaces::Multiplication::LocalVersion}
 follows immediately from the next two lemmas.
 
 \begin{lemma}\label{Lemma::Spaces::Multiplication::LocalVersion::Interior}
    Lemma \ref{Lemma::Spaces::Multiplication::LocalVersion} holds for \(x\in \InteriorN\cup \left( \ManifoldM\setminus \ManifoldN \right)\).
 \end{lemma}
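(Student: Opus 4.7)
The plan is to choose $\Omegah_0$ small enough that $\overline{\Omegah_0}$ is disjoint from $\BoundaryN$, which will reduce everything to the standard manifold-without-boundary construction from \cite{StreetMaximalSubellipticity}. Since $\ManifoldN$ is a closed, embedded submanifold with boundary of $\ManifoldM$, both $\InteriorN$ and $\ManifoldM\setminus\ManifoldN$ are $\ManifoldM$-open, and $x$ lies in one of them; so I may pick a relatively compact, $\ManifoldM$-open neighborhood $\Omegah_0\ni x$ with $\overline{\Omegah_0}$ contained entirely in that single open set. In particular, $\overline{\Omegah_0}\cap\BoundaryN=\emptyset$.

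With this choice, for any $\Omegah\subseteq\Omegah_0$ open and any $E\in\CinftyCptSpace[\Omegah\times\Omegah]$, the support of $E$ is bounded away from $\BoundaryN$ in both variables: $E(x,y)$ vanishes identically near $\BoundaryN$ (hence trivially to infinite order there), and either $\supp(E)\cap(\ManifoldN\times\ManifoldM)=\emptyset$ (when $\overline{\Omegah_0}\subseteq\ManifoldM\setminus\ManifoldN$) or $\supp(E)\subseteq\InteriorN\times\InteriorN\subseteq\ManifoldN\times\ManifoldN$ (when $\overline{\Omegah_0}\subseteq\InteriorN$). Thus the extra support condition built into Definition \ref{Defn::Spaces::Multiplication::PElemzFhN} is automatic. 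It follows that, restricted to operators with kernels supported in $\Omegah\times\Omegah$, the classes $\PElemzFhN{\overline{\Omegah}}$ and $\ElemzFhN{\overline{\Omegah}}$ coincide with $\PElemzFh{\overline{\Omegah}}$ and $\ElemzFh{\overline{\Omegah}}$, respectively.

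The problem is therefore reduced to producing, on the manifold $\ManifoldM$ without boundary, a Littlewood--Paley decomposition $\Mult{\psih}=\sum_{j\in\Zgeq}\Dh_j$ with $\{(\Dh_j,2^{-j})\}\in\ElemzFh{\Omegah}$ and $\{(\Ph_j,2^{-j})\}\in\PElemzFh{\Omegah}$ for $\Ph_j=\sum_{k\leq j}\Dh_k$, where all kernels are supported in $\Omegah\times\Omegah$. Such a construction is carried out in detail in Chapter 5 of \cite{StreetMaximalSubellipticity}; for concreteness, one may apply Lemma \ref{Lemma::Filtrations::GeneratorsForLieFiltration} to obtain generators $\XhXde=\{(\Xh_1,\Xde_1),\ldots,(\Xh_q,\Xde_q)\}$ of $\LieFilteredSheafFh$ on a neighborhood of $\overline{\Omegah_0}$ which span the tangent space at every point, fix a Schwartz function $\vsig_0\in\SchwartzSpace[\R^q]$ with $\int\vsig_0=1$ and vanishing higher moments, set $\vsig_j:=\Dild{2}{\vsig_0}-\vsig_0$ for $j\geq 1$, pick a cutoff $\eta\in\CinftyCptSpace[\R^q]$ equal to $1$ near $0$ and small enough that the exponential flow $\exp(t_1\Xh_1+\cdots+t_q\Xh_q)y$ is defined and stays in $\Omegah_0$ for $y\in\supp(\psih)$ and $t\in\supp(\eta)$, and put
\[
\Dh_j f(y) := \psih(y)\int f\bigl(e^{t_1\Xh_1+\cdots+t_q\Xh_q}y\bigr)\,\eta(t)\,\Dild{2^j}{\vsig_j}(t)\,dt.
\]
Telescoping $\sum_{k=0}^j\Dild{2^k}{\vsig_k}=\Dild{2^j}{\vsig_0}\to\delta_0$ yields both $\sum_j\Dh_j=\Mult{\psih}$ and the pre-elementary bound for $\Ph_j$, while the membership $\{(\Dh_j,2^{-j})\}\in\ElemzFh{\Omegah}$ follows from standard Nagel--Stein--Wainger-type kernel estimates combined with the ability to extract derivatives from $\vsig_1$ via Lemma \ref{Lemma::Spaces::Schwartz::SchwartzSpacezPullOutDerivsOneWay}. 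No genuine obstacle arises in this interior case: the entire content of the lemma is the opening observation that, once $\overline{\Omegah_0}$ is arranged to miss $\BoundaryN$, the added conditions of Definitions \ref{Defn::Spaces::Multiplication::PElemzFhN} and \ref{Defn::Spaces::Multiplication::ElemzFhN} are vacuous; the real work is deferred to the boundary case.
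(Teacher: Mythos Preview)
Your proposal is correct and follows essentially the same approach as the paper: both choose $\Omegah_0$ relatively compact in $\InteriorN$ or in $\ManifoldM\setminus\ManifoldN$ so that the extra support condition in Definitions \ref{Defn::Spaces::Multiplication::PElemzFhN} and \ref{Defn::Spaces::Multiplication::ElemzFhN} becomes vacuous, then invoke the standard exponential-flow construction (citing \cite[Section 5.6]{StreetMaximalSubellipticity}) on the manifold without boundary. Your telescoping identity has a minor index slip (with $\vsig_j=\Dild{2}{\vsig_0}-\vsig_0$ the partial sum is $\vsig_0-\Dild{2}{\vsig_0}+\Dild{2^{j+1}}{\vsig_0}$ rather than $\Dild{2^j}{\vsig_0}$), but this is cosmetic and the limit and estimates are unaffected.
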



\begin{lemma}\label{Lemma::Spaces::Multiplication::LocalVersion::Boundary}
    Lemma \ref{Lemma::Spaces::Multiplication::LocalVersion} holds for \(x\in \BoundaryNncF\).
 \end{lemma}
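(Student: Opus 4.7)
The plan is to carry out rigorously, in the ambient manifold $\ManifoldM$, the construction sketched informally in Section \ref{Section::GlobalCor::InfomralNorms}. Fix $x_0 \in \BoundaryNncF$. By Example \ref{Example::Filtrations::RestrictingFiltrations::NonCharExamples} \ref{Item::Filtrations::RestrictingFiltrations::NonCharExamples::CharacterizeNonCharInTermsOfWWdv} and shrinking to a neighborhood $\Omegah_0$ of $x_0$, I may choose H\"ormander generators $\WhWdv$ of $\FilteredSheafFh\big|_{\Omegah_0}$ and an index $j_0$ so that $\Wh_{j_0}(y)\notin T_y\BoundaryN$ for $y\in \Omegah_0\cap \BoundaryN$, while $\Wh_k(y)\in T_y\BoundaryN$ whenever $\Wdv_k<\Wdv_{j_0}$. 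After possibly replacing $\Wh_{j_0}$ by $-\Wh_{j_0}$, and subtracting appropriate multiples $a_k\Wh_{j_0}$ from each $\Wh_k$ with $\Wdv_k=\Wdv_{j_0}$ (which leaves the filtration of sheaves unchanged), I may further arrange that the flow $e^{t\Wh_{j_0}}$ preserves $\ManifoldN$ for small $t\geq 0$ and that $\Wh_k$ is tangent to $\BoundaryN$ along $\Omegah_0\cap \BoundaryN$ for $k\ne j_0$.

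Next, using Lemma \ref{Lemma::Filtrations::GeneratorsForLieFiltration}, fix generators $\XhXde=\{(\Xh_0,1),(\Xh_1,\Xde_1),\ldots,(\Xh_q,\Xde_q)\}$ of $\LieFilteredSheafFh\big|_{\Omegah_0}$ with $\Xh_0:=\Wh_{j_0}$ and the remaining $\Xh_l$ iterated commutators of the adjusted $\Wh_k$'s, so each $\Xh_l$ ($l\geq 1$) is tangent to $\BoundaryN$ along $\Omegah_0\cap\BoundaryN$ and $\Xh_0,\ldots,\Xh_q$ pointwise span the tangent space on $\Omegah_0$. By Proposition \ref{Prop::Spaces::Schwartz::SumToIdentity}, choose $\vsig_0\in \SchwartzSpace[\R^{1+q}]$ with $\supp(\vsig_0)\subseteq [0,\infty)\times \R^q$, $\int \vsig_0=1$ and all higher moments vanishing, and set $\vsig_1:=\Dild{2}{\vsig_0}-\vsig_0\in \sT$. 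Fix a cutoff $\eta\in \CinftyCptSpace[\R^{1+q}]$ equal to $1$ near $0$ with small enough support that $(t,x)\mapsto e^{t_0\Xh_0+\cdots+t_q\Xh_q}x$ is defined and stays in $\Omegah_0$ for $x\in \supp(\psih)$ and $t\in \supp(\eta)$. For $j\in\Zgeq$ define
\begin{equation*}
    \Dh_j f(x):=\psih(x)\int f\bigl(e^{t_0\Xh_0+\cdots+t_q\Xh_q}x\bigr)\,\eta(t)\,\Dild{2^j}{\vsig_j}(t)\,dt,
\end{equation*}
with the convention $\vsig_j=\vsig_1$ for $j\geq 1$, and set $\Ph_j:=\Dh_0+\cdots+\Dh_j$; the telescoping identity in Proposition \ref{Prop::Spaces::Schwartz::SumToIdentity} \ref{Item::Spaces::Schwartz::SumToIdentity::SumToDelta} shows $\Ph_j$ has the same form with $\vsig_j$ replaced by $\Dild{2^j}{\vsig_0}$, and that $\sum_{j\in\Zgeq}\Dh_j=\Mult{\psih}$ as operators on $\DistributionsZeroN$ (in particular establishing the sum condition).

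The verification of membership in $\ElemzFhN{\Omegah}$ and $\PElemzFhN{\Omegah}$ splits as follows. For the support condition of Definition \ref{Defn::Spaces::Multiplication::PElemzFhN}, the assumption $\supp(\vsig_0)\subseteq[0,\infty)\times\R^q$ together with $\Xh_0=\Wh_{j_0}$ pointing into $\InteriorN$ along $\BoundaryN$ guarantees that if $x\in\ManifoldN$ and $t_0\geq 0$ is small then $e^{t\cdot\Xh}x\in\ManifoldN$; hence the kernel of $\Dh_j$ (and $\Ph_j$) vanishes whenever $x\in\ManifoldN$ and $y\notin\ManifoldN$, and since $\vsig_j\in\SchwartzSpacezRopq$ vanishes to infinite order on $\{t_0=0\}$ (again using $\supp(\vsig_j)\subseteq[0,\infty)\times\R^q$ and $\vsig_j\in\sT$) the kernel vanishes to infinite order as $y\to\BoundaryN$. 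For the pre-elementary bounds, I pull back the kernel of $\Dh_j$ and $\Ph_j$ along the scaling map $\Psi_{x,2^{-j}}$ from Theorem \ref{Thm::Spaces::Scaling::MainScalingThm}: the vector fields $2^{-j\Xde_l}\Xh_l$ become $\CinftySpace$ vector fields on $\Qn{1}$ uniformly in $x$ and $j$ (by \ref{Item::Spaces::Scaling::PullBackSmooth}) satisfying H\"ormander's condition uniformly (by \ref{Item::Spaces::Scaling::PullBackSatisfyHormander}), so the pulled-back kernel is a Schwartz function of bounded seminorms, and Proposition \ref{Prop::VectorFields::Scaling::VolEstimates} converts these into the ball-volume estimate \eqref{Eqn::Spaces::LP::PreElemBound}.

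To upgrade the pre-elementary structure of $\Dh_j$ to the elementary structure of Definition \ref{Defn::Spaces::Multiplication::ElemzFhN}, I use the crucial fact $\vsig_1\in\sT$: by Definition \ref{Defn::Spaces::Schwartz::sT}, $\vsig_j=\sum_{l=0}^q\partial_{t_l}\vsig_{j,l}$ with $\vsig_{j,l}\in\sT$. Integrating by parts in $t$ converts the $\partial_{t_l}$ into the action of $\Xh_l$ on $f\circ e^{t\cdot\Xh}$ at $t=0$, producing a factor $2^{-j\Xde_l}\Xh_l$ acting in the $x$-variable, plus error terms from variable coefficients of the flow which are themselves of the same integral form. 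Since $\{(\Xh_l,\Xde_l)\}$ generate $\LieFilteredSheafFh$ and $\ElemzFhN{\cdot}$ only depends on $\LieFilteredSheafFh$ (the analog of Lemma \ref{Lemma::Spaces::LP::ElemDoesntDependOnChoices} granted by Lemma \ref{Lemma::Spaces::Multiplication::OldElemResultsApply}), this derivative in $x$ is of the required form; the analogous $y$-derivative is produced by writing the adjoint in the $y$-variable, using again $\vsig_{j,l}\in\sT$ and the tangency of $\Xh_l$ ($l\geq 1$) to $\BoundaryN$ to keep the $\PElemzFhN{}$ support condition intact. Recursing this decomposition on $\vsig_{j,l}$ produces the nested family required by Definition \ref{Defn::Spaces::Multiplication::ElemzFhN} \ref{Item::Spaces::Multiplication::ElemzFhN::PullOutDerivs}. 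The main obstacle will be this third step: the error terms from integration by parts (coming from non-commutativity of the flow and from $\Xh_0$ only being tangent to $\BoundaryN$ in a normal direction rather than annihilating the boundary defining function) must themselves be recognized as $2^{-j}$ times expressions of the same elementary form, and doing this uniformly in $j$ requires rewriting differential operators in terms of the scaled fields $2^{-j\Xde_l}\Xh_l$ and carefully tracking that $\vsig_{j,l}$ remains in $\sT$ at each recursive step---this is exactly where Proposition \ref{Prop::Spaces::Schwartz::CharacterizesT} and the self-referential definition of $\sT$ do their work.
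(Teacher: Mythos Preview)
Your overall strategy matches the paper's (Lemmas \ref{Lemma::Spaces::Multiplication::NiceGenerators}--\ref{Lemma::Spaces::Multiplcation::EhAreElem}): choose Lie-algebra generators adapted to the boundary, define $\Dh_j$ by integrating against $\Dild{2^j}{\vsig_j}$ along $e^{t\cdot\Xh}$, and recurse via the derivative property of $\sT$ to verify the elementary structure. However, your construction of the generators has a genuine gap. You assert that the $\Xh_l$ for $l\geq 1$, being iterated commutators of the adjusted $\Wh_k$, are automatically tangent to $\BoundaryN$; this is false --- a commutator such as $[\Wh_{j_0},\Wh_k]$ with $\Wh_k$ tangent need not be tangent (on the half-space take $\Wh_{j_0}=\partial_n$, $\Wh_k=\partial_1+x_n\partial_n$, so $[\Wh_{j_0},\Wh_k]=\partial_n$). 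Without tangency of every $\Xh_l$ for $l\geq 1$, the support condition of Definition \ref{Defn::Spaces::Multiplication::PElemzFhN} can fail: for $x\in\BoundaryN$ the flow $e^{t\cdot\Xh}x$ may leave $\ManifoldN$ even for $t$ in the support of $\vsig_j$. You also assign $\Xh_0$ formal degree $1$, but it must carry degree $\degBoundaryNF[x_0]$; this matters for the dilation $\Dild{2^j}{\cdot}$ and for the bookkeeping in Lemma \ref{Lemma::Spaces::Multiplication::ExpDerivs}.

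The paper's Lemma \ref{Lemma::Spaces::Multiplication::NiceGenerators} repairs both issues: it first forms generators $\ZhZdv$ of $\LieFilteredSheafFh$ via Lemma \ref{Lemma::Filtrations::GeneratorsForLieFiltration} (with $\Zh_0$ transversal of degree $\degBoundaryNF[x_0]$), and only \emph{afterward} subtracts a multiple of $\Xh_0$ from each $\Zh_j$ with $\Zdv_j\geq\Zdv_0$ to force tangency; this preserves the generated filtration and the commutator identity $[\Xh_j,\Xh_k]=\sum_{\Xdv_l\leq\Xdv_j+\Xdv_k}c_{j,k}^l\Xh_l$ required by Lemma \ref{Lemma::Spaces::Multiplication::ExpDerivs}. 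One smaller point: on the ambient $\ManifoldM$ there is no boundary, so the infinite-order-vanishing clause defining $\PElemzSymbol$ is vacuous there; the condition you actually need is $\supp(\Dh_j)\cap(\ManifoldN\times\ManifoldM)\subseteq\ManifoldN\times\ManifoldN$, and that is precisely what tangency of the $\Xh_l$ together with $\supp(\vsig_j)\subseteq[0,\infty)\times\R^q$ delivers.
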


 The rest of this section is devoted to the proofs of Lemmas \ref{Lemma::Spaces::Multiplication::LocalVersion::Interior} and \ref{Lemma::Spaces::Multiplication::LocalVersion::Boundary}.
The proof of Lemma \ref{Lemma::Spaces::Multiplication::LocalVersion::Interior} is a simpler reprise of the proof
of Lemma \ref{Lemma::Spaces::Multiplication::LocalVersion::Boundary}; so we first prove 
 Lemma \ref{Lemma::Spaces::Multiplication::LocalVersion::Boundary} and then explain the modifications necessary to establish
 Lemma \ref{Lemma::Spaces::Multiplication::LocalVersion::Interior}.

Fix \(x_0\in \BoundaryNncF\) for which we want to prove Lemma \ref{Lemma::Spaces::Multiplication::LocalVersion::Boundary}.
It will be convenient to pick certain generators of \(\LieFilteredSheafFh\) near \(x_0\). The next lemma gives such generators.

\begin{lemma}\label{Lemma::Spaces::Multiplication::NiceGenerators}
    There exists a connected, \(\ManifoldM\)-open neighborhood \(\Omegah\Subset \ManifoldM\) of \(x_0\)
    such that \(\Omegah\cap \ManifoldN\subseteq \ManifoldNncF\) and
    \(\XhXdv=\left\{ \left( \Xh_0,\Xdv_0 \right),\left( \Xh_1,\Xdv_1 \right)\ldots, \left( \Xh_q,\Xdv_q \right) \right\}\subset \VectorFields{\Omegah}\times \Zg\)
    H\"ormander vector fields with formal degrees on \(\Omegah\), such that:
    \begin{enumerate}[(i)]
        \item\label{Item::Spaces::Multiplication::NiceGenerators::Span} \(\forall x\in \Omegah\), \(\Span\left\{ \Xh_0(x),\ldots, \Xh_q(x) \right\}=\TangentSpace{x}{\Omegah}\).
        \item\label{Item::Spaces::Multiplication::NiceGenerators::AreGenerators} \(\LieFilteredSheafFh\big|_{\Omegah}=\FilteredSheafGenBy{\XhXdv}\).
        \item\label{Item::Spaces::Multiplication::NiceGenerators::RestrictGenerators} Let \(X_j=\Xh_j\big|_{\ManifoldN\cap \Omegah}\), and \(\XXdv=\left\{ (X_0,\Xdv_0),\ldots, (X_q,\Xdv_q) \right\}\). Then, 
            \(\LieFilteredSheafF\big|_{\ManifoldN\cap \Omegah}=\FilteredSheafGenBy{\XXdv}\).
        \item\label{Item::Spaces::Multiplication::NiceGenerators::NSWCommutators} \(\left[ \Xh_j,\Xh_k \right]=\sum_{\Xdv_l\leq \Xdv_j+\Xdv_k}c_{j,k}^l \Xh_l\), \(c_{j,k}^l\in \CinftySpace[\Omegah]\).
        \item\label{Item::Spaces::Multiplication::NiceGenerators::Xh0PointsIn} 
            \(\Xdv_0=\degBoundaryNF[x_0]\),
        \(\Xh_0(x)\not \in \TangentSpace{x}{\BoundaryN}\), 
            \(\forall x\in \BoundaryN\cap \Omegah\), and \(\Xh_0\) points
            into \(\ManifoldN\) in the sense 
            if \(\Compact\Subset \Omegah\) is compact,
            then there exists \(a>0\) small such that for \(0<t<a\), \(e^{t\Xh_0}x\in \InteriorN\),
            for \(x\in \BoundaryN\cap \Compact\).
        \item\label{Item::Spaces::Multiplication::NiceGenerators::XhjAreTangentToBdry} \(\Xh_j(x)\in \TangentSpace{x}{\BoundaryN}\), \(\forall 1\leq j\leq q\), \(x\in \BoundaryN\cap \Omegah\).
    \end{enumerate}
\end{lemma}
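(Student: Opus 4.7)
The plan is to build $\XhXdv$ in two stages: first obtain an auxiliary list of generators of $\LieFilteredSheafFh$ near $x_0$ (handling (i), (ii), (iv), and by restriction the first half of (iii)), then replace each generator of large enough degree by a corrected version that is tangent to $\BoundaryN$, after picking one distinguished generator of minimal degree transversal to $\BoundaryN$ to play the role of $\Xh_0$.

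First I would choose a connected $\ManifoldM$-open neighborhood $\Omegah$ of $x_0$ on which $\FilteredSheafFh\big|_{\Omegah} = \FilteredSheafGenBy{\WhWdv}$ for some H\"ormander vector fields with formal degrees $\WhWdv$ (Lemma \ref{Lemma::Filtrations::GeneratorsOnRelCptSet}); after shrinking, I may also assume $\Omegah \cap \ManifoldN \subseteq \ManifoldNncF$, using that $\BoundaryNncF$ is open. Applying Lemma \ref{Lemma::Filtrations::GeneratorsForLieFiltration} to $\LieFilteredSheafFh$ then yields, after one further shrinking, a list $\{(X'_0, e_0), \ldots, (X'_q, e_q)\}$ whose values span $T_x\Omegah$ and with $\LieFilteredSheafFh\big|_{\Omegah} = \FilteredSheafGenBy{\{(X'_j, e_j)\}}$. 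By Proposition \ref{Prop::Filtrations::RestrictingFiltrations::CoDim0Restriction}, the restrictions $X'_j\big|_{\ManifoldN}$ generate $\LieFilteredSheafF\big|_{\Omegah \cap \ManifoldN}$.

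Set $\Xdv_0 := \degBoundaryNF[x_0]$. Because every local section of $\LieFilteredSheafF$ of degree $d$ is a smooth combination of the $X'_j\big|_{\ManifoldN}$ with $e_j \leq d$, the minimal $e_j$ for which $X'_j(x_0) \notin \TangentSpace{x_0}{\BoundaryN}$ is exactly $\Xdv_0$. Rename such a generator as $X'_0$, replace it by $-X'_0$ if necessary so it points into $\InteriorN$, and shrink $\Omegah$ once more so that $\degBoundaryNF$ is constantly $\Xdv_0$ on $\BoundaryN \cap \Omegah$ (using non-characteristicity of $x_0$). For each $j \geq 1$ with $e_j \geq \Xdv_0$, solve for $a_j$ smoothly on $\BoundaryN \cap \Omegah$ so that $X'_j(y) - a_j(y) X'_0(y) \in \TangentSpace{y}{\BoundaryN}$ for every $y \in \BoundaryN \cap \Omegah$; this $a_j$ is uniquely and smoothly determined at $\BoundaryN$ by transversality of $X'_0$ (in a boundary-defining-function chart it is a ratio of two smooth functions with nonvanishing denominator), and I extend it smoothly to all of $\Omegah$. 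Then define $\Xh_0 := X'_0$, $\Xdv_j := e_j$, $\Xh_j := X'_j - a_j X'_0$ for $e_j \geq \Xdv_0$, and $\Xh_j := X'_j$ for $e_j < \Xdv_0$.

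To verify (i)--(vi): (i) is immediate since each $\Xh_j$ differs from $X'_j$ by a smooth multiple of $\Xh_0$; (ii) follows because $a_j \Xh_0 \in \LieFilteredSheafFh[\Omegah][\Xdv_0] \subseteq \LieFilteredSheafFh[\Omegah][e_j]$, so $\FilteredSheafGenBy{\XhXdv} = \FilteredSheafGenBy{\{(X'_j, e_j)\}} = \LieFilteredSheafFh\big|_{\Omegah}$; (iii) then follows from Proposition \ref{Prop::Filtrations::RestrictingFiltrations::CoDim0Restriction}; for (iv), $[\Xh_j, \Xh_k] \in \LieFilteredSheafFh[\Omegah][\Xdv_j + \Xdv_k] = \FilteredSheafGenBy{\XhXdv}[\Omegah][\Xdv_j + \Xdv_k]$ which by definition is the $\CinftySpace[\Omegah][\R]$-module generated by the $\Xh_l$ with $\Xdv_l \leq \Xdv_j + \Xdv_k$; (v) follows from the construction, with the inward flow property coming from continuity of $\Xh_0$ and the exponential map (shrinking $\Omegah$ once more if needed); and (vi) holds by construction on the $e_j \geq \Xdv_0$ branch and, on the $e_j < \Xdv_0$ branch, from the constancy of $\degBoundaryNF \equiv \Xdv_0$ on $\BoundaryN \cap \Omegah$ (if $X'_j(y) \notin \TangentSpace{y}{\BoundaryN}$ for some $y$ and some $j$ with $e_j < \Xdv_0$, then $\degBoundaryNF[y] \leq e_j < \Xdv_0$, a contradiction). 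The main obstacle is exactly this minimality argument, and in particular its propagation to all $y \in \BoundaryN \cap \Omegah$ rather than only to $x_0$: one must carefully combine Lemma \ref{Lemma::Filtrations::GeneratorsForLieFiltration} with the characterization of non-characteristic points in Example \ref{Example::Filtrations::RestrictingFiltrations::NonCharExamples} \ref{Item::Filtrations::RestrictingFiltrations::NonCharExamples::CharacterizeNonCharInTermsOfWWdv} to control how the "$\partial_n$-component" behaves uniformly on $\BoundaryN \cap \Omegah$, not just at the base point.
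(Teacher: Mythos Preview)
Your proposal is correct and follows essentially the same approach as the paper: first obtain generators of $\LieFilteredSheafFh$ via Lemmas \ref{Lemma::Filtrations::GeneratorsOnRelCptSet} and \ref{Lemma::Filtrations::GeneratorsForLieFiltration}, pick one of minimal degree transverse to $\BoundaryN$ (adjusting its sign and shrinking $\Omegah$), and then subtract multiples of it from the remaining generators of degree $\geq \Xdv_0$ to force tangency. The only place worth tightening is your verification of (iii): Proposition \ref{Prop::Filtrations::RestrictingFiltrations::CoDim0Restriction} applied to $\LieFilteredSheafFh$ gives $\RestrictFilteredSheaf{\LieFilteredSheafFh}{\ManifoldN}=\FilteredSheafGenBy{(X'\big|_{\ManifoldN},e)}$, but identifying this with $\LieFilteredSheafF$ requires the short two-inclusion argument the paper gives (using that your $X'_j$ come from $\Gen{\WhWdv}$ for one direction and the obvious inclusion $\LieFilteredSheafF\subseteq \RestrictFilteredSheaf{\LieFilteredSheafFh}{\ManifoldN}$ for the other).
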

\begin{proof}
    First we create \(\ZhZdv\) satisfying all the properties of \(\XhXdv\) except 
    \ref{Item::Spaces::Multiplication::NiceGenerators::XhjAreTangentToBdry}.
    Let \(\Omegah_1\Subset \ManifoldM\) be \(\ManifoldM\)-open and relatively compact with
    \(x_0\in \Omegah_1\) and \(\Omegah_1 \cap \ManifoldN\subseteq \ManifoldNncF\).
    By Lemma \ref{Lemma::Filtrations::GeneratorsOnRelCptSet},
    there exists \(\WhWde\) H\"ormander vector fields with formal degrees with
    \(\FilteredSheafFh\big|_{\Omegah_1}=\FilteredSheafGenBy{\WhWde}\),
    and by Proposition \ref{Prop::Filtrations::RestrictingFiltrations::CoDim0Restriction}
     if \(\WWde=(\Wh\big|_{\Omegah_1\cap\ManifoldN}, \Wde)\) we have
    \(\FilteredSheafF\big|_{\Omegah_1\cap \ManifoldN}=\FilteredSheafGenBy{\WWde}\).

    By Lemma \ref{Lemma::Filtrations::GeneratorsForLieFiltration}, we make take a finite set
    \(\ZhZdv\subset\Gen{\WhWde}\) satisfying
    \ref{Item::Spaces::Multiplication::NiceGenerators::Span} and \ref{Item::Spaces::Multiplication::NiceGenerators::AreGenerators},
    with \(\XhXdv\) replaced by \(\ZhZdv\) and \(\Omegah\) replaced by \(\Omegah_1\).
    Since \(\left[ \Zh_j,\Zh_k \right]\in \LieFilteredSheafFh[\Omegah_1][\Zdv_j+\Zdv_k]=\FilteredSheafGenBy{\ZhZdv}[\Omegah_1][\Zdv_j+\Zdv_k]\),
    \ref{Item::Spaces::Multiplication::NiceGenerators::NSWCommutators} holds as well.

    Let \(\ZZdv=(\Zh\big|_{\Omegah_1\cap \ManifoldN},\Zdv)\).
    Since \(\ZhZdv\subset\Gen{\WhWde}\), we have \(\ZZdv\subset \Gen{\WWde}\) and therefore
    \(\FilteredSheafGenBy{\ZZdv}\subseteq \LieFilteredSheaf{\FilteredSheafGenBy{\WWde}}=\LieFilteredSheafF\big|_{\Omegah_1\cap \ManifoldN}\).
    Conversely, we have \(\LieFilteredSheafF\big|_{\Omegah_1\cap \ManifoldN}\subseteq \RestrictFilteredSheaf{\LieFilteredSheafFh}{\Omegah_1\cap\ManifoldN}=\FilteredSheafGenBy{\ZZdv}\),
    where the final equality follows from Proposition \ref{Prop::Filtrations::RestrictingFiltrations::CoDim0Restriction}.
    We conclude \ref{Item::Spaces::Multiplication::NiceGenerators::RestrictGenerators} holds for \(\ZhZdv\).

    Since \(\LieFilteredSheafF\big|_{\Omegah_1\cap \ManifoldN}=\FilteredSheafGenBy{\ZZdv}\),
    there exists \(j\in \{0,\ldots,q\}\) with \(\Zh_j(x_0)\not \in \TangentSpace{x_0}{\BoundaryN}\)
    and \(\Zdv_j=\degBoundaryNF[x_0]\). 
    Without loss of generality, re-order \(\ZhZdv\) so that \(j=0\).
    Letting \(\Omegah\Subset \Omegah_1\) be a sufficiently small, connected, \(\ManifoldM\)-neighborhood
    of \(x_0\), and using that \(x_0\in \BoundaryNncF\),
    we have \(\degBoundaryNF[x]=\Zdv_0\)
    and \(\Zh_0(x)\not \in \TangentSpace{x}{\BoundaryN}\), \(\forall x\in \Omegah\cap \BoundaryN\).
    By possibly replacing \(\Zh_0\) with \(-\Zh_0\) and possibly shrinking \(\Omegah\), we see that \ref{Item::Spaces::Multiplication::NiceGenerators::Xh0PointsIn}
    holds for \(\ZhZdv\).
    Since \(\degBoundaryNF[x]=\Zdv_0\), \(\forall x\in \Omegah\cap \BoundaryN\), if
    \(\Zdv_j<\Zdv_0\) then \(\Zh_j(x)\in \TangentSpace{x}{\BoundaryN}\), \(\forall x\in \Omegah\cap\BoundaryN\).

    We define \(\XhXdv=\left\{ \left( \Xh_0,\Xdv_0 \right),\left( \Xh_1,\Xdv_1 \right),\ldots, \left( \Xh_1,\Xdv_q \right) \right\}\)
    as follows.
    \(\Xh_0:=\Zh_0\). If \(\Xdv_j<\Xdv_0\), then \(\Xh_j:=\Zh_j\).
    If \(\Xdv_j\geq \Xdv_0\), \(j\geq 1\), pick \(a_j\in \CinftySpace[\Omegah]\) such that with \(\Xh_j(x):=\Zh_j(x)-a_j(x)\Xh_0(x)\)
    we have \(\Xh_j(x)\in \TangentSpace{x}{\BoundaryN}\), \(\forall x\in \Omegah\cap \BoundaryN\).
    \ref{Item::Spaces::Multiplication::NiceGenerators::XhjAreTangentToBdry} holds for \(\XhXdv\),
    and the other properties follow directly from those for \(\ZhZdv\).
\end{proof}

Let \(\Omegah_0\) be \(\Omegah\) from Lemma \ref{Lemma::Spaces::Multiplication::NiceGenerators}
and 
fix \(\XhXdv\), \(\XXdv\) as in that lemma.
Let \(\sT\) be as in Definition \ref{Defn::Spaces::Schwartz::sT}\footnote{By Proposition \ref{Prop::Spaces::Schwartz::CharacterizesT}, \(\sT=\left\{ f\in \SchwartzSpacezRopq : \supp(f)\subseteq[0,\infty)\times \R^{q} \right\}\),
but that characterization is not important for what follows.}.
Let \(\vsig_0\) and \(\vsig_1\) be as in Proposition \ref{Prop::Spaces::Schwartz::SumToIdentity},
where we use the degrees \(\Xdv_0,\Xdv_1,\ldots, \Xdv_q\) in that proposition.

Fix \(\Omegah\subseteq \Omegah_0\) open and
Fix \(\psih\in \CinftyCptSpace[\Omega]\), \(a>0\) a small constant to be chosen later, and
\(\eta\in \CinftyCptSpace[B^{1+q}(a)]\), with \(\eta=1\) on a neighborhood of \(0\).
Set,
\begin{equation}\label{Eqn::Spaces::Multiplication::DefineDh0}
    \Dh_0 f(x) := \psih(x) \int f\left( e^{t\cdot \Xh}x \right)\eta(t)\vsig_0(t)\: dt,
\end{equation}
\begin{equation}\label{Eqn::Spaces::Multiplication::DefineDhj}
    \Dh_j f(x) := \psih(x) \int f\left( e^{t\cdot \Xh}x \right)\eta(t)\Dild{2^{j}}{\vsig_1}(t)\: dt,\quad j\geq 1,
\end{equation}
\begin{equation}\label{Eqn::Spaces::Multiplication::DefinePhj}
    \Ph_j f(x) := \sum_{k=0}^j \Dh_k f(x)=
    \psih(x) \int f\left( e^{t\cdot \Xh}x \right)\eta(t)\Dild{2^{j+1}}{\vsig_0}(t)\: dt,\quad j\geq 0.
\end{equation}
We endow \(\ManifoldM\) with a smooth, strictly positive density \(\Vol\) (the choice of \(\Vol\) is not relevant).
Once we do so, we can view \(\Dh_j\) either as an operator, or as a function of two variables (see Remark \ref{Rmk::Spaces::LP::FunctionsOrOperators}).
We move between these two interpretations freely in what follows.

\begin{lemma}\label{Lemma::Spaces::Multiplication::ExistsDhj}
    If \(a>0\) is sufficiently small,
    \(\left\{ \left( \Dh_j,2^{-j} \right):j\in \Zgeq \right\}\in \ElemzFhN{\Omega}\)
    and satisfies \(\sum_{j\in \Zgeq} \Dh_j=\Mult{\psih}\).
    Moreover, if \(\Ph_j=\sum_{k=0}^j \Dh_j\), then \(\left\{ \left( \Ph_j, 2^{-j} \right) : j\in \Zgeq \right\}\in \PElemzFhN{\Omega}\).
\end{lemma}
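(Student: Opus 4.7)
The plan is to split the verification into three steps: (i) the telescoping identity $\sum_j \Dh_j = \Mult{\psih}$, (ii) the pre-elementary membership $\{(\Dh_j,2^{-j})\},\{(\Ph_j,2^{-j})\}\in\PElemzFhN{\Omega}$, and (iii) the elementary membership $\{(\Dh_j,2^{-j})\}\in\ElemzFhN{\Omega}$. Step (i) is immediate: for smooth compactly supported $f$, Fubini and Proposition \ref{Prop::Spaces::Schwartz::SumToIdentity}\ref{Item::Spaces::Schwartz::SumToIdentity::SumToDelta} collapse the sum of rescaled $\vsig_0,\vsig_1$ to $\delta_0(t)$, and the choice $\eta(0)=1$ yields $\psih(x)f(x)$; the extension to $\DistributionsZeroN$ then follows once the operator estimates below are in place.

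For step (ii), choose $a>0$ small enough that $\Phi_x(t):=e^{t\cdot \Xh}x$ is a uniform family of diffeomorphisms from a neighborhood of $0\in\R^{1+q}$ onto a neighborhood of $x$ in $\Omegah_0$, for $x\in\supp(\psih)$. Containment in $\Compact\times\Compact$ is built into the supports of $\psih$ and $\eta$. The defining condition of $\PElemzFhN$---namely $\supp(\Dh_j)\cap(\ManifoldN\times\ManifoldM)\subseteq\ManifoldN\times\ManifoldN$---reduces to showing $e^{t\cdot\Xh}x\in\ManifoldN$ when $x\in\ManifoldN$ and $t\in\supp(\vsig_0)\cap\supp(\eta)\subseteq[0,\infty)\times\R^q$. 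Introducing a boundary defining function $\rho$ near $\BoundaryN\cap\Omegah_0$, parts \ref{Item::Spaces::Multiplication::NiceGenerators::Xh0PointsIn}--\ref{Item::Spaces::Multiplication::NiceGenerators::XhjAreTangentToBdry} of Lemma \ref{Lemma::Spaces::Multiplication::NiceGenerators} give $(t\cdot\Xh)\rho|_{\BoundaryN}=t_0(\Xh_0\rho)\geq 0$, so the continuation principle for ODEs keeps $s\mapsto e^{st\cdot\Xh}x$ inside $\ManifoldN$. Vanishing of $\Dh_j(x,\cdot)$ to infinite order at $\BoundaryN$ holds because $H$ from Lemma \ref{Lemma::Spaces::Classical::ExistenceGoodFunc} vanishes identically near $0$, hence so do $\vsig_0$ and $\vsig_1$ near $\{t_0=0\}$, while $\Phi_x^{-1}(y)$ has $t_0\to 0^+$ as $y\to\BoundaryN$. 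The pre-elementary kernel bound \eqref{Eqn::Spaces::LP::PreElemBound} is obtained by the change of variables $y=\Phi_x(t)$, Schwartz decay of $\vsig_0,\vsig_1$ at scale $2^{-j\Xdv}$, and comparing $|\det d\Phi_x(t)|^{-1}$ to $\Vol[\BWhWdv{x}{2^{-j}+\MetricWhWdv[x][y]}]$ via Theorem \ref{Thm::Spaces::Scaling::MainScalingThm} and Proposition \ref{Prop::VectorFields::Scaling::VolEstimates}\ref{Item::VectorFields::Scaling::VolEstimates::VolApproxLambda}.

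Step (iii) is the main obstacle. My approach is to introduce the auxiliary class
\[
\sB:=\Bigl\{\bigl\{(T^{\vsig,g}_j,2^{-j}):j\in\Zgeq\bigr\}:\vsig\in\sT,\ g\in\CinftyCptSpace[\Omegah]\text{ suitably supported}\Bigr\},
\]
where $T^{\vsig,g}_j f(x):=g(x)\int f(e^{t\cdot\Xh}x)\eta(t)\Dild{2^j}{\vsig}(t)\,dt$, and to verify that $\sB$ satisfies the axiom of Definition \ref{Defn::Spaces::Multiplication::ElemzFhN}\ref{Item::Spaces::Multiplication::ElemzFhN::PullOutDerivs}; since $\vsig_1\in\sT$ by Proposition \ref{Prop::Spaces::Schwartz::SumToIdentity}\ref{Item::Spaces::Schwartz::SumToIdentity::sT}, $\{(\Dh_j,2^{-j})\}=\{(T^{\vsig_1,\psih}_j,2^{-j})\}\in\sB$ will follow. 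Step (ii) already gives $\sB\subseteq\PElemzFhN{\Omega}$. By Definition \ref{Defn::Spaces::Schwartz::sT} any $\vsig\in\sT$ decomposes as $\vsig=\sum_{l=0}^{q}\partial_{t_l}\vsig_l$ with $\vsig_l\in\sT$; rescaling yields $\Dild{2^j}{\vsig}=\sum_l 2^{-j\Xdv_l}\partial_{t_l}\Dild{2^j}{\vsig_l}$, and integrating by parts in $t_l$ brings a derivative onto $g(x)\eta(t)f(e^{t\cdot\Xh}x)$. Using the spanning property \ref{Item::Spaces::Multiplication::NiceGenerators::Span} together with $d\Phi_x(0)\partial_{t_l}=\Xh_l(x)$,
\[
\partial_{t_l}f(e^{t\cdot\Xh}x)=\sum_{k=0}^q a^l_k(t,x)\,(\Xh_k f)(e^{t\cdot\Xh}x),\qquad a^l_k\in\CinftySpace,\ a^l_k(0,x)=\delta^l_k.
\]
Iterating the $\sT$-decomposition twice and Taylor-expanding each $a^l_k$, the leading diagonal contribution produces two $\Xh$-derivatives which, after splitting one onto the $x$-side (via the adjoint action of the flow, whose structure is controlled by property \ref{Item::Spaces::Multiplication::NiceGenerators::NSWCommutators} of Lemma \ref{Lemma::Spaces::Multiplication::NiceGenerators}, and legitimate integration by parts in $y$ by Remark \ref{Rmk::Spaces::Elem::Elem::IntegrateByPartsWithoutBoundary} since the kernel vanishes at $\BoundaryN$), yield exactly the four-term form of Definition \ref{Defn::Spaces::Multiplication::ElemzFhN}\ref{Item::Spaces::Multiplication::ElemzFhN::PullOutDerivs}. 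Every residual contribution---off-diagonal $k\neq l$ terms, Taylor remainders $t_m r_k^{l,m}(t,x)(\Xh_k f)(\Phi_x(t))$, $\partial_{t_l}\eta$ terms (supported where $|t|\gtrsim 1$, producing a rapidly decreasing family), and $\partial_x g$ contributions---can be written as elements of $\sB$ with a modified $\vsig'\in\sT$: Lemma \ref{Lemma::Spaces::Schwartz::MultiplysTbyPoly} ensures $t^\gamma\vsig_l\in\sT$, and Proposition \ref{Prop::Spaces::Elem::Elem::MainProps}\ref{Item::Spaces::Elem::Elem::MultBySmooth} (applicable in $\ElemzFhN$ by Lemma \ref{Lemma::Spaces::Multiplication::OldElemResultsApply}) handles the smooth multipliers. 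The hard part is precisely this bookkeeping---verifying the two-sided pullout closes up under iteration---after which $\sB\subseteq\ElemzFhN{\Omega}$ and hence $\{(\Dh_j,2^{-j})\}\in\ElemzFhN{\Omega}$.
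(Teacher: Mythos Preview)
Your overall strategy---building an auxiliary class $\sB$ of operators and verifying the recursive axiom of Definition~\ref{Defn::Spaces::Multiplication::ElemzFhN}---is exactly the paper's approach (carried out in Lemma~\ref{Lemma::Spaces::Multiplcation::EhAreElem}), and your handling of step~(i) and the support condition in step~(ii) is correct. However, your class $\sB$ is too small to be closed under the axiom, and this is a genuine gap rather than just missing bookkeeping.

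The problem is the coefficient functions produced by differentiating the flow. After integrating by parts and writing $\partial_{t_l}f(e^{t\cdot\Xh}x)=\sum_k a^l_k(t,x)(\Xh_k f)(e^{t\cdot\Xh}x)$, the coefficients $a^l_k(t,x)$ (and, after your Taylor step, the remainders $r^{l,m}_k(t,x)$) depend jointly on $t$ and $x$ in a non-separable way. These cannot be absorbed into the product form $g(x)\eta(t)\Dild{2^j}{\vsig'}(t)$: the $t^\gamma$ factor goes into $\vsig'$ via Lemma~\ref{Lemma::Spaces::Schwartz::MultiplysTbyPoly}, but the residual smooth factor in $(t,x)$ has nowhere to go. The paper fixes this by enlarging the class to operators of the form $\psih(x)\int f(e^{t\cdot\Xh}x)\,h(t,x)\,\eta(t)\,\Dild{2^j}{\vsigt_1}(t)\,dt$ with a general amplitude $h(t,x)\in\CinftySpace$; then the iteration closes because each step simply replaces $h$ by $g^{\alpha,k}_{i,l}\cdot h$ or $\partial_{t_l}h$. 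Two related omissions: your $\sB$ excludes $\Dh_0$ (which uses $\vsig_0\notin\sT$), so the $j=0$ level must be parametrized separately and handled by the trivial decomposition $E_0=E_{0,\{\},\{\}}$; and your expansion of $a^l_k$ does not track the degree constraint $\Xdv_k\leq\DegXdv{\alpha}+\Xdv_l$, which is precisely what makes the coefficient $2^{-j(\DegXdv{\alpha}+\Xdv_l-\Xdv_k)}$ bounded---this is the content of Lemma~\ref{Lemma::Spaces::Multiplication::ExpDerivs}, derived from the commutator structure \ref{Item::Spaces::Multiplication::NiceGenerators::NSWCommutators}, and without it the scaling does not close. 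Finally, your remark about infinite-order vanishing at $\BoundaryN$ is vacuous here since $\ManifoldM$ has no boundary; the relevant extra condition for $\PElemzFhN$ is only the support constraint $\supp(E)\cap(\ManifoldN\times\ManifoldM)\subseteq\ManifoldN\times\ManifoldN$, which your ODE argument does address.
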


\begin{proof}[Proof of Lemma \ref{Lemma::Spaces::Multiplication::LocalVersion::Boundary}]
    This follows immediately from Lemma \ref{Lemma::Spaces::Multiplication::ExistsDhj}.
\end{proof}

To prove Lemma \ref{Lemma::Spaces::Multiplication::ExistsDhj}, we require two additional lemmas.

\begin{lemma}\label{Lemma::Spaces::Multiplication::ExpDerivs}
    Fix a relatively compact, open set \(\Omegah_1\Subset \Omegah\).
    There exists \(L\in \Zg\) and \(a>0\) small such that the following holds.
    For \(l\in \{0,1,\ldots, q\}\), \(|\alpha|\leq L\), \(\Xdv_k\leq \DegWdv{\alpha}-\Xdv_l\),
    there exist \(g_{1,l}^{\alpha,k}(t,x),g_{2,l}^{\alpha,k}(t,x)\in \CinftySpace[B^{1+q}(a)\times \Omegah_1]\)
    such that
    \begin{equation}\label{Eqn::Spaces::Multiplication::ExpDerivs::XhOnInside}
        \partial_{t_l} f\left( e^{t\cdot \Xh} x \right)=
        \sum_{\substack{|\alpha|\leq L \\ \Xdv_k\leq \DegXdv{\alpha}+\Xdv_l}} t^{\alpha} g_{1,l}^{\alpha,k}(t,x) \left( \Xh_k f \right)\left( e^{t\cdot \Xh}x \right),
    \end{equation}
    \begin{equation}\label{Eqn::Spaces::Multiplication::ExpDerivs::XhOnOutside}
        \partial_{t_l} f\left( e^{t\cdot \Xh} x \right)=
        \sum_{\substack{|\alpha|\leq L \\ \Xdv_k\leq \DegXdv{\alpha}+\Xdv_l}} t^{\alpha} g_{2,l}^{\alpha,k}(t,x) \Xh_k  \left( f \left( e^{t\cdot \Xh}x \right) \right).
    \end{equation}
    Here, for a multi-index \(\alpha=(\alpha_0,\ldots, \alpha_q)\in \Zgeq^q\), we have written
    \(\DegXdv{\alpha}=\alpha_0\Xdv_0+\alpha_1\Xdv_1+\cdots+\alpha_q\Xdv_q\).
\end{lemma}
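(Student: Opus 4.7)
The plan is to derive both formulas from a Duhamel identity for the flow of the $t$-dependent vector field $A(t):=t\cdot\Xh=\sum_{j=0}^q t_j\Xh_j$, followed by a Taylor expansion in $t$ whose coefficients are controlled by the filtration structure. Since $\partial_{t_l}A(t)=\Xh_l$, Duhamel yields, as an operator identity on $\CinftySpace[\Omegah]$,
\begin{equation*}
\partial_{t_l}\,e^{A(t)}=\int_0^1 e^{(1-\tau)A(t)}\,\Xh_l\,e^{\tau A(t)}\,d\tau=e^{A(t)}\,Y(t)=Z(t)\,e^{A(t)},
\end{equation*}
where $Y(t):=\int_0^1\mathrm{Ad}(e^{-\tau A(t)})\Xh_l\,d\tau$ and $Z(t):=\int_0^1\mathrm{Ad}(e^{(1-\tau)A(t)})\Xh_l\,d\tau$ are smooth one-parameter families of vector fields on $\Omegah$. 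Applied to $f$, the $e^{A(t)}Y(t)$ form gives $\partial_{t_l}f(e^{A(t)}x)=(Y(t)f)(e^{A(t)}x)$, which matches the shape of \eqref{Eqn::Spaces::Multiplication::ExpDerivs::XhOnInside}, while $Z(t)e^{A(t)}$ gives $\partial_{t_l}f(e^{A(t)}x)=\bigl(Z(t)(f\circ e^{A(t)})\bigr)(x)$, matching \eqref{Eqn::Spaces::Multiplication::ExpDerivs::XhOnOutside}. Taking $a>0$ small enough that the flow carries $B^{1+q}(a)\times\Omegah_1$ into $\Omegah$ makes everything well defined.

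Next I would analyse $Y(t)$ and $Z(t)$ by Taylor-expanding in $t$. Formally $\mathrm{Ad}(e^{sA(t)})\Xh_l=\sum_{n\ge 0}\tfrac{s^n}{n!}\ad[A(t)]^n\Xh_l$, and expanding $\ad[A(t)]^n\Xh_l$ multilinearly via $A(t)=\sum_j t_j\Xh_j$ produces monomials $t^\beta[\Xh_{j_1},[\Xh_{j_2},\ldots,[\Xh_{j_n},\Xh_l]\ldots]]$ with $|\beta|=n$ and $\DegXdv{\beta}=\Xdv_{j_1}+\cdots+\Xdv_{j_n}$. By Lemma \ref{Lemma::Spaces::Multiplication::NiceGenerators}\ref{Item::Spaces::Multiplication::NiceGenerators::AreGenerators} combined with an induction on $n$ based on the structural relation \ref{Item::Spaces::Multiplication::NiceGenerators::NSWCommutators}, each such iterated bracket lies in $\LieFilteredSheafFh[\Omegah][\DegXdv{\beta}+\Xdv_l]$ and is therefore a $\CinftySpace[\Omegah]$-linear combination of those $\Xh_k$ with $\Xdv_k\le\DegXdv{\beta}+\Xdv_l$. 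Taylor's theorem with integral remainder, applied to the smooth map $t\mapsto Y(t)$ on $B^{1+q}(a)$ at order $L$, then yields a finite expansion of the prescribed shape plus a remainder $\sum_{|\alpha|=L+1}t^\alpha\widetilde R_\alpha(t,\cdot)$, with each $\widetilde R_\alpha$ a smooth family of vector fields on $\Omegah$, and identically for $Z(t)$.

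To absorb the remainder I would choose $L\ge M-1$, where $M:=\max_k\Xdv_k$. Then every $\alpha$ with $|\alpha|=L+1$ satisfies $\DegXdv{\alpha}\ge|\alpha|\ge M$, so the constraint $\Xdv_k\le\DegXdv{\alpha}+\Xdv_l$ holds automatically for every $k$, and one may freely decompose $\widetilde R_\alpha(t,y)=\sum_k h^{\alpha,k}(t,y)\Xh_k(y)$ using the spanning property Lemma \ref{Lemma::Spaces::Multiplication::NiceGenerators}\ref{Item::Spaces::Multiplication::NiceGenerators::Span}. Setting $g_{1,l}^{\alpha,k}(t,x):=h^{\alpha,k}(t,e^{A(t)}x)$ and $g_{2,l}^{\alpha,k}(t,x):=h^{\alpha,k}(t,x)$ (with the analogous decomposition of $Z(t)$ producing the latter) then yields \eqref{Eqn::Spaces::Multiplication::ExpDerivs::XhOnInside} and \eqref{Eqn::Spaces::Multiplication::ExpDerivs::XhOnOutside}, respectively. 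The main obstacle I anticipate is the combinatorial bookkeeping verifying at each Taylor order that the iterated brackets lie in the correct filtration piece and that the resulting coefficients depend smoothly on $y$; this is handled by induction on bracket depth using \ref{Item::Spaces::Multiplication::NiceGenerators::NSWCommutators}, with the rest being routine Taylor analysis and a careful choice of $a$ and $L$.
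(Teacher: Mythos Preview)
Your proposal is correct. The paper's own proof is a one-line citation to \cite[Proposition 4.2.2 (iii) and (iv)]{StreetMaximalSubellipticity}, invoking only the bracket relation of Lemma \ref{Lemma::Spaces::Multiplication::NiceGenerators}\ref{Item::Spaces::Multiplication::NiceGenerators::NSWCommutators}; your Duhamel-plus-Taylor argument is precisely the standard route by which that cited proposition is established, so you have supplied the details the paper outsources to the reference. One cosmetic point: your Taylor remainder lives at $|\alpha|=L+1$ while the displayed sums run over $|\alpha|\le L$, but since $L$ is existentially quantified you may simply take the statement's $L$ to be your $L+1$.
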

\begin{proof}
    Using Lemma \ref{Lemma::Spaces::Multiplication::NiceGenerators} \ref{Item::Spaces::Multiplication::NiceGenerators::NSWCommutators},
    this follows from \cite[Proposition 4.2.2 (iii) and (iv)]{StreetMaximalSubellipticity}.
\end{proof}

\begin{lemma}\label{Lemma::Spaces::Multiplcation::EhAreElem}
    Fix \(\Omegah_1 \Subset \Omegah\) open and relatively compact. For \(a>0\) sufficiently small, the following holds
    for \(h(t,x)\in \CinftySpace[B^{1+q}(a)\times \Omegah_1]\), \(\psih\in \CinftyCptSpace[\Omegah_1]\),
    and \(\eta\in \CinftyCptSpace[B^{1+q}(a)]\).
    \begin{enumerate}[(i)]
        \item\label{Item::Spaces::Multiplcation::EhAreElem::PElem} For \(\vsigt_0\in \SchwartzSpaceRopq\) with \(\supp(\vsigt_0)\subseteq [0,\infty)\times \R^q\) set
            \begin{equation*}
                \Fh_j f(x):=\psih(x) \int f\left( e^{t\cdot \Xh}x \right) h(t,x) \eta(t) \Dild{2^j}{\vsigt_0}(t)\: dt.
            \end{equation*}
            Then \(\left\{ \left( \Fh_j, 2^{-j} \right) : j\in \Zgeq \right\}\in \PElemzLieFhN{\Omegah_1}\).
        \item\label{Item::Spaces::Multiplcation::EhAreElem::Elem} For \(\vsigt_0\in \SchwartzSpaceRopq\) with \(\supp(\vsigt_0)\subseteq [0,\infty)\times \R^q\) and
            \(\vsigt_1\in \sT\), set
            \begin{equation}\label{Eqn::Spaces::Multiplcation::EhAreElem::Eh0Defn}
        \Eh_0 f(x)=\psih(x) \int f\left( e^{t\cdot \Xh}x \right) h(t,x) \eta(t) \vsigt_0(t)\: dt,
    \end{equation}
    \begin{equation}\label{Eqn::Spaces::Multiplcation::EhAreElem::EhjDefn}
        \Eh_j f(x)=\psih(x)\int f\left( e^{t\cdot \Xh}x \right) h(t,x)\eta(t)\Dild{2^j}{\vsigt_1}(t)\: dt,\quad j\geq 1.
    \end{equation}
    Then, \(\left\{ \left( \Eh_j,2^{-j} \right) :j\in \Zgeq \right\}\in \ElemzLieFhN{\Omegah_1}\).
    \end{enumerate}

\end{lemma}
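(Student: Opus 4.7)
The plan is to establish the two claims by verifying (A) the support conditions, (B) the pre-elementary kernel bounds, and (C) the elementary derivative decomposition; I would handle (A) and (B) in one block for both parts and then specialize to (C) for part (ii).

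First I would verify the support conditions. Since $\psih\in\CinftyCptSpace[\Omegah_1]$ and $\eta\in\CinftyCptSpace[B^{1+q}(a)]$, a standard estimate on the flow $(t,x)\mapsto e^{t\cdot\Xh}x$ shows that, for $a>0$ small enough (independent of $j$), $\supp(\Fh_j),\supp(\Eh_j)\subseteq \Compact\times\Compact$ for some compact $\Compact\Subset\Omegah$ depending only on $\supp(\psih)$, $\supp(\eta)$, and $\XhXdv$. For the $N$-condition of Definition \ref{Defn::Spaces::Multiplication::PElemzFhN}, I would use Lemma \ref{Lemma::Spaces::Multiplication::NiceGenerators} \ref{Item::Spaces::Multiplication::NiceGenerators::Xh0PointsIn} and \ref{Item::Spaces::Multiplication::NiceGenerators::XhjAreTangentToBdry}: because $\Xh_0$ points into $\ManifoldN$ while $\Xh_1,\ldots,\Xh_q$ are tangent to $\BoundaryN$, the flow $e^{t\cdot\Xh}$ carries $\ManifoldN\cap\Compact$ into $\ManifoldN$ for $t\in[0,\infty)\times\R^q$ small. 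Since $\supp(\vsigt_0),\supp(\vsigt_1)\subseteq[0,\infty)\times\R^q$, this forces $y=e^{t\cdot\Xh}x\in\ManifoldN$ whenever $x\in\ManifoldN$ and $(x,y)$ is in the support of the kernel; hence $\supp(\Fh_j)\cap(\ManifoldN\times\ManifoldM)\subseteq\ManifoldN\times\ManifoldN$, and likewise for $\Eh_j$.

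For part (i), I would establish the pre-elementary bound in Definition \ref{Defn::Spaces::LP::PElemWWdv} by transferring the kernel to the variable $y$ using Theorem \ref{Thm::Spaces::Scaling::MainScalingThm}. Writing $y=e^{t\cdot\Xh}x$ locally as a diffeomorphism in $t$, the Jacobian of the inverse change of variable is comparable, for $|t_l|\lesssim 2^{-j\Xdv_l}$, to $\Lambda(x,2^{-j})^{-1}$, which is in turn $\approx \Vol[\BWhWdv{x}{2^{-j}}]^{-1}$ by Proposition \ref{Prop::VectorFields::Scaling::VolEstimates} \ref{Item::VectorFields::Scaling::VolEstimates::VolApproxLambda}. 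The Schwartz decay of $\vsigt_0$ translates, after this change of variable, into decay in $2^{j}\MetricWhWdv[x][y]$; combining the two gives the sought-for pointwise bound in the case $|\alpha|=|\beta|=0$. To handle $(2^{-j\Xdv}\Xh_x)^{\alpha}(2^{-j\Xdv}\Xh_y)^{\beta}\Fh_j(x,y)$ for general $\alpha,\beta$, I would differentiate under the integral sign, convert the $\Xh$-derivatives to $t$-derivatives via Lemma \ref{Lemma::Spaces::Multiplication::ExpDerivs}, and repeat the estimate. This is the same pattern used in the case without boundary in \cite[Sections 5.4–5.5]{StreetMaximalSubellipticity}; the new feature is only that the scaling estimates now come from Proposition \ref{Prop::VectorFields::Scaling::VolEstimates} rather than from their interior counterpart.

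For part (ii), I would proceed via Remark \ref{Rmk::Spaces::Elem::Elem::HowToProveElem} and Lemma \ref{Lemma::Spaces::Elem::Elem::ElemzhF}, producing the operator decomposition
\[
\Eh_j=\sum_{|\alpha|,|\beta|\leq 1}2^{-j(2-|\alpha|-|\beta|)}(2^{-j\Xdv}\Xh)^{\alpha}\,\Eh_{j,\alpha,\beta}\,(2^{-j\Xdv}\Xh)^{\beta},
\]
with each $\Eh_{j,\alpha,\beta}$ again of the form \eqref{Eqn::Spaces::Multiplcation::EhAreElem::EhjDefn}. The key is the $\sT$-characterization: by applying Definition \ref{Defn::Spaces::Schwartz::sT} \ref{Item::Spaces::Schwartz::sT::Deriv} twice, one writes $\vsigt_1=\sum_{l,m}\partial_{t_l}\partial_{t_m}\vsigt_{1,l,m}$ with $\vsigt_{1,l,m}\in\sT$. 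In the integral defining $\Eh_j$ I would integrate by parts twice in the $t$-variables; the boundary terms at $t_0=0$ vanish because $\vsigt_{1,l,m}\in\SchwartzSpacezRopq$ with support in $[0,\infty)\times\R^q$ vanishes to infinite order there (as explained in the discussion surrounding Lemma \ref{Lemma::Spaces::Schwartz::PuttingElementsInsT}), and those at infinity vanish from the compact support of $\eta$. The resulting derivatives $\partial_{t_l}\partial_{t_m}$ hit the product $f(e^{t\cdot\Xh}x)\cdot h(t,x)\eta(t)$; using Lemma \ref{Lemma::Spaces::Multiplication::ExpDerivs} \eqref{Eqn::Spaces::Multiplication::ExpDerivs::XhOnOutside} on one factor of $\partial_t$ and \eqref{Eqn::Spaces::Multiplication::ExpDerivs::XhOnInside} on the other turns them into one $\Xh$-derivative ``on the left'' (acting on $x$ outside the integrand) and one ``on the right'' (acting on $f$ inside), which at the operator level give the factors $(2^{-j\Xdv}\Xh)^{\alpha}$ on the left and $(2^{-j\Xdv}\Xh)^{\beta}$ on the right. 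The polynomial factors $t^{\alpha}$ produced along the way are absorbed via $t^{\alpha}\Dild{2^j}{\vsigt_{1,l,m}}=2^{-j\DegXdv{\alpha}}\Dild{2^j}{(t^{\alpha}\vsigt_{1,l,m})}$, and $t^{\alpha}\vsigt_{1,l,m}\in\sT$ by Lemma \ref{Lemma::Spaces::Schwartz::MultiplysTbyPoly}. The constraint $\Xdv_k\leq\DegXdv{\alpha}+\Xdv_l$ in Lemma \ref{Lemma::Spaces::Multiplication::ExpDerivs} is exactly what ensures each term carries the correct power of $2^{-j}$ to match the elementary axiom. Tracking which variants of $h,\eta,\vsigt$ appear verifies that the class of operators of the form \eqref{Eqn::Spaces::Multiplcation::EhAreElem::EhjDefn} (with $h$ ranging over a bounded set, $\vsigt_1$ ranging over a bounded set in $\sT$) satisfies the defining axiom of $\ElemzLieFhN$, and by maximality this class is contained in $\ElemzLieFhN{\Omegah_1}$.

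The main obstacle, in my view, is the bookkeeping for the elementary decomposition in part (ii): one must simultaneously extract one derivative ``on the left'' and one ``on the right'' while keeping all lower-order remainder terms inside the same class of operators, and at the same time track that no boundary terms appear from the integration by parts. The pre-elementary bounds in part (i), while technical, are a direct transplant of the established manifold-without-boundary argument from \cite{StreetMaximalSubellipticity} with the metric/volume comparisons now supplied by Proposition \ref{Prop::VectorFields::Scaling::VolEstimates}, so the genuinely new content is concentrated in the boundary support analysis and in the integration-by-parts argument for part (ii).
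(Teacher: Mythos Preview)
Your proposal is correct and matches the paper's approach. For part (i), the paper simply cites \cite[Lemma~5.6.6]{StreetMaximalSubellipticity} for the pre-elementary bounds and then verifies the $\ManifoldN$-support condition exactly as you describe; your change-of-variables sketch (``$y=e^{t\cdot\Xh}x$ as a diffeomorphism in $t$'') is imprecise since $q+1$ may exceed $\dim\ManifoldM$, but this is moot given the citation. For part (ii), the paper proves the two one-sided decompositions
\[
\Eh_j=\sum_{|\alpha|\le1}2^{-j(1-|\alpha|)}(2^{-j\Xdv}\Xh)^\alpha F_{j,\alpha},
\qquad
\Eh_j=\sum_{|\alpha|\le1}2^{-j(1-|\alpha|)}\Ft_{j,\alpha}(2^{-j\Xdv}\Xh)^\alpha
\]
separately (one integration by parts each, using \eqref{Eqn::Spaces::Multiplication::ExpDerivs::XhOnOutside} and \eqref{Eqn::Spaces::Multiplication::ExpDerivs::XhOnInside} respectively) and then composes them, rather than extracting both derivatives simultaneously as you propose; this is only an organizational difference. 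Finally, the $t$-integral is over all of $\R^{1+q}$ with $\eta$ compactly supported, so there are no boundary terms in the integration by parts to check.
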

\begin{proof}
    \ref{Item::Spaces::Multiplcation::EhAreElem::PElem}:
    That \(\left\{ \left( \Fh_j, 2^{-j} \right):j\in \Zgeq \right\}\in \PElemzLieFh{\Omega}\)
    follows from \cite[Lemma 5.6.6]{StreetMaximalSubellipticity} (applied with \(\nu=1\), and \((X, \vec{d})\) and \((W, \vec{\WdWithBar})\)
    in that lemma replaced by \(\XhXdv\)).

    If \(a>0\) is sufficiently small, then for \(t\in B^{1+q}(a)\) with \(t_0>0\) and \(x\in \Omegah_1\cap \ManifoldN\),
    it follows from Lemma \ref{Lemma::Spaces::Multiplication::NiceGenerators} \ref{Item::Spaces::Multiplication::NiceGenerators::Xh0PointsIn}
    that \(e^{t\cdot \Xh}x\in \InteriorN\).
    Thus, since \(\supp(\vsigt_0),\supp(\vsigt_1)\subseteq [0,\infty)\times \R^q\), and \(\supp(\eta)\subseteq B^{1+q}(a)\),
    the formulas \eqref{Eqn::Spaces::Multiplcation::EhAreElem::Eh0Defn} and \eqref{Eqn::Spaces::Multiplcation::EhAreElem::EhjDefn}
    show 
    \(\supp(\Fh_j)\cap \left( \ManifoldN\times \ManifoldM \right)\subseteq \ManifoldN\times \ManifoldN\).
    We conclude \(\left\{ \left( \Fh_{j}, 2^{-j} \right) :j\in \Zgeq\right\}\in \PElemzLieFhN{\Omegah_1}\)

    \ref{Item::Spaces::Multiplcation::EhAreElem::Elem}:
    Having 
    established  \ref{Item::Spaces::Multiplcation::EhAreElem::PElem},
    we follow the proof method from Remark \ref{Rmk::Spaces::Elem::Elem::HowToProveElem}
    to complete the proof. We also use the characterization of \(\ElemzLieFhN{\cdot}\) given in Lemmas \ref{Lemma::Spaces::Elem::Elem::ElemzhF} and \ref{Lemma::Spaces::Multiplication::OldElemResultsApply}.
    We will show
    \begin{equation}\label{Eqn::Spaces::Multiplcation::TwoShowPullOutDerivs}
        \Eh_j = \sum_{|\alpha|,|\beta|\leq 1} 2^{-j(2-|\alpha|-|\beta|)} \left( 2^{-j\Xdv} \Xh \right)^{\alpha} \Et_{j,\alpha,\beta} \left( 2^{-j\Xdv}\Xh \right)^{\beta},
    \end{equation}
    where \(\Et_{j,\alpha,\beta}\) is ``of the same form'' as \(\Eh_j\).
    In light of Remark \ref{Rmk::Spaces::Elem::Elem::HowToProveElem}, \eqref{Eqn::Spaces::Multiplcation::TwoShowPullOutDerivs}
    completes the proof by showing that \(\left\{ \left( \Eh_j,2^{-j} \right) :j\in \Zgeq \right\}\in \ElemzLieFhN{\Omegah_1}\).

    We turn to \eqref{Eqn::Spaces::Multiplcation::TwoShowPullOutDerivs}.  In fact, we establish \eqref{Eqn::Spaces::Multiplcation::TwoShowPullOutDerivs}
    where \(\Et_{j,\alpha,\beta}\) is a finite linear combination of terms of the same form as \(\Eh_j\),
    where the number of terms in this linear combination, and the coefficients, are bounded independent of \(j\).
    This is sufficient to carry out the above arguments and complete the proof.

    We will show
    \begin{equation}\label{Eqn::Spaces::Multiplication::PullOutLeft}
        \Eh_j=\sum_{|\alpha|\leq 1} 2^{-j(1-|\alpha|)} \left( 2^{-j\Xdv}\Xh \right)^{\alpha} F_{j,\alpha},
    \end{equation}
    \begin{equation}\label{Eqn::Spaces::Multiplication::PullOutRight}
        \Eh_j=\sum_{|\alpha|\leq 1} 2^{-j(1-|\alpha|)}  \Ft_{j,\alpha}\left( 2^{-j\Xdv}\Xh \right)^{\alpha},
    \end{equation}
    where \(F_{j,\alpha}\) and \(\Ft_{j,\alpha}\) are linear combinations of terms of the same form as \(\Eh_j\),
    where the number of terms in this linear combination, and the coefficients, are bounded independent of \(j\).
    An application of \eqref{Eqn::Spaces::Multiplication::PullOutLeft} followed by applying
    \eqref{Eqn::Spaces::Multiplication::PullOutRight} to each \(F_{j,\alpha}\) establishes \eqref{Eqn::Spaces::Multiplcation::TwoShowPullOutDerivs}.

    We turn to proving \eqref{Eqn::Spaces::Multiplication::PullOutLeft} and \eqref{Eqn::Spaces::Multiplication::PullOutRight}.
    Take \(F_{0,\{\}}=\Ft_{0,\{\}}=\Eh_0\) and \(F_{0,\alpha}=\Ft_{0,\alpha}=0\) for \(\alpha\ne \{\}\),
    which establishes 
    \eqref{Eqn::Spaces::Multiplication::PullOutLeft} and \eqref{Eqn::Spaces::Multiplication::PullOutRight}
    for \(j=0\).

    We turn to \(j\geq 1\).  Since \(\vsigt_1\in \sT\) we may write
    \(\vsigt_1=\sum_{l=0}^q \partial_{t_l} \vsigt_{1,l}\), \(\vsigt_{1,l}\in \sT\), and so
    \begin{equation}\label{Eqn::Spaces::Multiplication::EhAreElem::DoIntByParts}
    \begin{split}
         \Eh_j f(x) &= \sum_{j=0}^q \psih(x) \int f\left( e^{t\cdot \Xh}x \right) h(t,x)\eta(t) \Dild{2^j}{\partial_{t_l} \vsigt_{1,l}}(t)\: dt
         \\&= \sum_{l=0}^q 2^{-j\Xdv_l}\psih(x)  \int f\left( e^{t\cdot \Xh}x \right) h(t,x)\eta(t) \partial_{t_l} \Dild{2^j}{\vsigt_{1,l}}(t)\: dt
         \\&=\sum_{l=0}^q -2^{-j\Xdv_l}\psih(x) \int f\left( e^{t\cdot \Xh}x \right) \left( \partial_{t_l}h(t,x) \right) \eta(t)\Dild{2^j}{\vsigt_{1,l}}(t)\: dt
         \\&\quad -\sum_{l=0}^q 2^{-j\Xdv_l}\psih(x) \int f\left( e^{t\cdot \Xh}x \right) h(t,x) \left( \partial_{t_l}\eta(t) \right)\Dild{2^j}{\vsigt_{1,l}}(t)\: dt
         \\&\quad -\sum_{l=0}^q \psih(x) \int \left[ 2^{-j\Xdv_l}\partial_{t_l} f\left( e^{t\cdot \Xh}x \right) \right] h(t.x) \eta(t)\Dild{2^j}{\vsigt_{1,l}}(t)\: dt
    \end{split}
    \end{equation}
    Since \(\Xdv_l\geq 1\) and \(\partial_{t_l}h(t,x)\) and \(\partial_{t_l}\eta(t)\) are of the same form as \(h(t,x)\) and \(\eta(t)\), respectively,
    the first two sums on the right-hand side of \eqref{Eqn::Spaces::Multiplication::EhAreElem::DoIntByParts}
    are of the form \eqref{Eqn::Spaces::Multiplication::PullOutLeft} and \eqref{Eqn::Spaces::Multiplication::PullOutRight}
    (here we take \(\alpha=\{\}\)).

    We turn to the third sum on the right-hand side of \eqref{Eqn::Spaces::Multiplication::EhAreElem::DoIntByParts}.
    Using \eqref{Eqn::Spaces::Multiplication::ExpDerivs::XhOnInside}, we have
    \begin{equation}\label{Eqn::Spaces::Multiplication::EhAreElem::DerivOnInside}
    \begin{split}
         &\psih(x)
         \int \left[ 2^{-j\Xdv_l} \partial_{t_l} f\left( e^{t\cdot \Xh} x \right) \right] h(x,t)\eta(t) \Dild{2^j}{\vsigt_{1,l}}(t)\: dt
         \\&=
            \sum_{\substack{|\alpha|\leq L \\\Xdv_k\leq \DegXdv{\alpha}+\Xdv_l }} 2^{-j(\Xdv_l-\Xdv_k)}\psih(x) \int  \left( 2^{-j\Xdv_k}\Xh_k f \right)\left( e^{t\cdot \Xh}x \right) \left( g_{1,l}^{\alpha,k}(t,x) h(t,x) \right) \eta(t)  t^{\alpha} \Dild{2^j}{\vsigt_{1,l}}(t)\: dt
        \\&=\sum_{\substack{|\alpha|\leq L \\\Xdv_k\leq \DegXdv{\alpha}+\Xdv_l }}2^{-j(\DegXdv{\alpha}+\Xdv_l-\Xdv_k)} \psih(x) \int  \left( 2^{-j\Xdv_k}\Xh_k f \right)\left( e^{t\cdot \Xh}x \right) \left( g_{1,l}^{\alpha,k}(t,x) h(t,x) \right) \eta(t)   \Dild{2^j}{t^{\alpha}\vsigt_{1,l}}(t)\: dt.
    \end{split}
    \end{equation}
    Since \(2^{-j(\DegXdv{\alpha}+\Xdv_l-\Xdv_k)}\leq 1\) and \(t^{\alpha}\vsigt_{1,l}\in \sT\) (see Lemma \ref{Lemma::Spaces::Schwartz::MultiplysTbyPoly}),
    the right-hand side of \eqref{Eqn::Spaces::Multiplication::EhAreElem::DerivOnInside} is of the form \eqref{Eqn::Spaces::Multiplication::PullOutRight},
    which completes the proof of \eqref{Eqn::Spaces::Multiplication::PullOutRight}.
    Similarly, using \eqref{Eqn::Spaces::Multiplication::ExpDerivs::XhOnOutside}, we have
    \begin{equation}\label{Eqn::Spaces::Multiplication::EhAreElem::DerivOnOutside}
        \begin{split}
             &\psih(x)
             \int \left[ 2^{-j\Xdv_l} \partial_{t_l} f\left( e^{t\cdot \Xh} x \right) \right] h(x,t)\eta(t) \Dild{2^j}{\vsigt_{1,l}}(t)\: dt
             \\&=\sum_{\substack{|\alpha|\leq L \\\Xdv_k\leq \DegXdv{\alpha}+\Xdv_l }} 2^{-j(\Xdv_l-\Xdv_k)}\psih(x) \int \left[  2^{-j\Xdv_k}\Xh_k \left( f \left( e^{t\cdot \Xh}x \right) \right) \right] \left( g_{2,l}^{\alpha,k}(t,x) h(t,x) \right) \eta(t)  t^{\alpha} \Dild{2^j}{\vsigt_{1,l}}(t)\: dt
             \\&=\sum_{\substack{|\alpha|\leq L \\\Xdv_k\leq \DegXdv{\alpha}+\Xdv_l }} 2^{-j(\DegXdv{\alpha}+\Xdv_l-\Xdv_k)}\psih(x) \int  \left[2^{-j\Xdv_k}\Xh_k  \left( f \left( e^{t\cdot \Xh}x \right) \right) \right] \left( g_{2,l}^{\alpha,k}(t,x) h(t,x) \right) \eta(t)   \Dild{2^j}{t^{\alpha}\vsigt_{1,l}}(t)\: dt
             \\&=\sum_{\substack{|\alpha|\leq L \\\Xdv_k\leq \DegXdv{\alpha}+\Xdv_l }} 2^{-j(\DegXdv{\alpha}+\Xdv_l-\Xdv_k)} \left( 2^{-j\Xdv_k}\Xh_k \right) \left[ \psih(x) \int  f \left( e^{t\cdot \Xh}x \right)  \left( g_{2,l}^{\alpha,k}(t,x) h(t,x) \right) \eta(t)   \Dild{2^j}{t^{\alpha}\vsigt_{1,l}}(t)\: dt \right]
             \\&\quad-\sum_{\substack{|\alpha|\leq L \\\Xdv_k\leq \DegXdv{\alpha}+\Xdv_l }} 2^{-j\Xdv_k} 2^{-j(\DegXdv{\alpha}+\Xdv_l-\Xdv_k)}  \left( \Xh_k\psih(x) \right) \int  f \left( e^{t\cdot \Xh}x \right)  \left( g_{2,l}^{\alpha,k}(t,x) h(t,x) \right) \eta(t)   \Dild{2^j}{t^{\alpha}\vsigt_{1,l}}(t)\: dt 
             \\&\quad-\sum_{\substack{|\alpha|\leq L \\\Xdv_k\leq \DegXdv{\alpha}+\Xdv_l }} 2^{-j\Xdv_k} 2^{-j(\DegXdv{\alpha}+\Xdv_l-\Xdv_k)}  \psih(x)  \int  f \left( e^{t\cdot \Xh}x \right)  \left( \Xh_k\left( g_{2,l}^{\alpha,k}(t,x) h(t,x) \right) \right) \eta(t)   \Dild{2^j}{t^{\alpha}\vsigt_{1,l}}(t)\: dt 
        \end{split}
        \end{equation}
        Since \(2^{-j(\DegXdv{\alpha}+\Xdv_l-\Xdv_k)}\leq 1\) and \(t^{\alpha}\vsigt_{1,l}\in \sT\) (see Lemma \ref{Lemma::Spaces::Schwartz::MultiplysTbyPoly}),
        the first sum on the right-hand side of \eqref{Eqn::Spaces::Multiplication::EhAreElem::DerivOnOutside}
        is of the form \eqref{Eqn::Spaces::Multiplication::PullOutLeft} (with \(|\alpha|=1\)).
        Similarly, since \(2^{-j\Xdv_k}\leq 2^{-j}\), the second and third sum on the right-hand side of
        of \eqref{Eqn::Spaces::Multiplication::EhAreElem::DerivOnOutside} are of the form \eqref{Eqn::Spaces::Multiplication::PullOutLeft}
        with \(\alpha=\{\}\). This establishes \eqref{Eqn::Spaces::Multiplication::PullOutLeft} and completes the proof.
\end{proof}

\begin{proof}[Proof of Lemma \ref{Lemma::Spaces::Multiplication::ExistsDhj}]
    Lemmas \ref{Lemma::Spaces::LP::ElemDoesntDependOnChoices} and \ref{Lemma::Spaces::Multiplication::OldElemResultsApply}
    show that \(\ElemzFhN{\Omegah}=\ElemzLieFhN{\Omegah}\)
    and \(\PElemzFhN{\Omegah}=\PElemzLieFhN{\Omegah}\).
    Lemma \ref{Lemma::Spaces::Multiplcation::EhAreElem} \ref{Item::Spaces::Multiplcation::EhAreElem::Elem} then implies
    \(\left\{ \left( \Dh_j,2^{-j} \right):j\in \Zgeq \right\}\in \ElemzFhN{\Omegah}\).
    Also, by \eqref{Eqn::Spaces::Multiplication::DefinePhj},
    \begin{equation*}
    \begin{split}
         &\Ph_j f(x) := \sum_{k=0}^j \Dh_k f(x)=
    \psih(x) \int f\left( e^{t\cdot \Xh}x \right)\eta(t)\Dild{2^{j+1}}{\vsig_0}(t)\: dt
    =\psih(x) \int f\left( e^{t\cdot \Xh}x \right)\eta(t)\Dild{2^{j}}{ \Dild{2}{ \vsig_0 }}(t)\: dt
    \end{split}
    \end{equation*}
    and Lemma \ref{Lemma::Spaces::Multiplcation::EhAreElem} \ref{Item::Spaces::Multiplcation::EhAreElem::PElem} implies
    \(\left\{ \left( \Ph_j,2^{-j} \right):j\in \Zgeq \right\}\in \PElemzFhN{\Omegah}\).

    Using \eqref{Eqn::Spaces::Multiplication::DefineDh0} and \eqref{Eqn::Spaces::Multiplication::DefineDhj},
    and the fact that \(\vsig_0(t)+\sum_{j=1}^\infty \Dild{2^j}{\vsig_1}(t)=\delta_0(t)\) (Proposition \ref{Prop::Spaces::Schwartz::SumToIdentity})
    we have for \(f\in \CinftySpace[\ManifoldM]\)
    \begin{equation*}
    \begin{split}
         &\sum_{j=0}^\infty \Dh_j f(x) = \psih(x)\int f\left( e^{t\cdot \Xh}x \right) \eta(t) \delta_0(t)\: dt
         =\psih(x)f\left( e^{0\cdot \Xh}x \right)\eta(0) = \psih(x)f(x).
    \end{split}
    \end{equation*}
    By continuity (see Proposition \ref{Prop::Spaces::Elem::Elem::ConvergenceOfElemOps}), we have
    for all \(f\in \Distributions[\ManifoldM]\), \(\sum_{j=0}^\infty f= \psih f\).
\end{proof}

\begin{proof}[Proof of Lemma \ref{Lemma::Spaces::Multiplication::LocalVersion::Interior}]
    This is a simpler version of Lemma \ref{Lemma::Spaces::Multiplication::LocalVersion::Interior}, because we can stay away
    from \(\BoundaryN\). It is also essentially proved in \cite[Section 5.6]{StreetMaximalSubellipticity} 
    (see \cite[Proposition 5.6.3]{StreetMaximalSubellipticity}). We make some comments on the proof here.

    Let \(x\in \InteriorN\cup \left( \ManifoldM\setminus\ManifoldN \right)\) be as in the statement of the lemma.
    If \(x\in \InteriorN\) let \(\Omegah_0\Subset \InteriorN\) be a relatively compact, connected neighborhood of \(x\).
    If instead \(x\in \ManifoldM\setminus \ManifoldN\), take \(\Omegah_0\Subset \ManifoldM\setminus\ManifoldN\)
    a relatively compact, connected neighborhood of \(x\).
    Let \(\Omegah\subseteq \Omegah_0\) be open and \(\psih\in \CinftyCptSpace[\Omegah]\).

    Because \(\Omegah\cap \BoundaryN=\emptyset\), \(\ElemzFhN{\Omega}=\ElemzFh{\Omega}\) and \(\PElemzFhN{\Omega}=\PElemzFh{\Omega}\);
    indeed Definitions \ref{Defn::Spaces::Multiplication::PElemzFhN} and \ref{Defn::Spaces::Multiplication::ElemzFhN}
    only differ from Definitions \ref{Defn::Spaces::LP::PElemWWdv} and \ref{Defn::Spaces::LP::ElemWWdv}
    in the requirement that \(\supp(E)\cap \left( \ManifoldN\times \ManifoldM \right)\subseteq \ManifoldN\times \ManifoldN\).
    However, this holds trivially in this case, since either \(\Omegah\subseteq \InteriorN\) or \(\Omegah\cap \ManifoldN=\emptyset\).

    Let \(\Omegah_1\Subset \Omegah\) be such that \(\psih\in \CinftyCptSpace[\Omegah_1]\).
    Let \(\XhXdv=\left\{ \left( \Xh_0,\Xdv_1 \right),\ldots, \left( \Xh_{q},\Xdv_{q+1} \right) \right\}\subset \VectorFields{\Omegah_1}\times \Zg\)
    be such that \(\LieFilteredSheafFh\big|_{\Omegah_1}=\FilteredSheafGenBy{\XhXdv}\).
    In the proof of Lemma \ref{Lemma::Spaces::Multiplication::LocalVersion::Interior}, we used
    Lemma \ref{Lemma::Spaces::Multiplication::NiceGenerators} to make a choice of \(\XhXdv\) that behaved well at the boundary;
    in this proof, any choice of \(\XhXdv\) will do, since we stay away from \(\BoundaryN\).

    Let \(\vsig_0\in \SchwartzSpaceRopq\) satisfy \(\int \vsig_0 =1\) and \(\int t^{\alpha} \vsig_0(t)=0\), \(\forall |\alpha|\geq 1\).
    Define \(\vsig_1(t)=\Dild{2}{\vsig_0}-\vsig_0\in \SchwartzSpacezRopq\).
    In the proof of Lemma \ref{Lemma::Spaces::Multiplication::LocalVersion::Interior}, we used \(\vsig_0\) and \(\vsig_1\)
    from Proposition \ref{Prop::Spaces::Schwartz::SumToIdentity}. One can still use these choices, though we do not need
    the additional information provided by that proposition.
    Note that we have \(\vsig_0+\sum_{j=1}^\infty \Dild{2^j}{\vsig_1}=\delta_0\).
    For \(a>0\) small, define \(\Dh_j\) and \(\Ph_j\) as in \eqref{Eqn::Spaces::Multiplication::DefineDh0},
    \eqref{Eqn::Spaces::Multiplication::DefineDhj}, and \eqref{Eqn::Spaces::Multiplication::DefinePhj}.
    As before, \(\sum_{j\in \Zgeq} \Dh_j f= \psi f\).

    The same proof as above shows
    \(\left\{ \left( \Dh_j, 2^{-j} \right) : j\in \Zgeq \right\}\in \ElemzFh{\Omegah_1}\subseteq \ElemzFh{\Omegah}\)
    and \(\left\{ \left( \Ph_j, 2^{-j} \right) : j\in \Zgeq \right\}\in \PElemzFh{\Omegah_1}\subseteq \PElemzFh{\Omegah}\).
    Alternatively,
    that  \(\left\{ \left( \Ph_j, 2^{-j} \right) : j\in \Zgeq \right\}\in \PElemzLieFh{\Omegah_1}= \PElemzFh{\Omegah_1}\)
    follows from \cite[Lemma 5.6.6]{StreetMaximalSubellipticity}
    and that \(\left\{ \left( \Dh_j, 2^{-j} \right) : j\in \Zgeq \right\}\in \ElemzLieFh{\Omegah_1}=\ElemzFh{\Omegah_1}\)
    follows from \cite[Proposition 5.6.3]{StreetMaximalSubellipticity}.
\end{proof}

    \subsection{The main estimate}
    Many basic results for \(\ASpace{s}{p}{q}[\Compact][\FilteredSheafF]\) follow from
a single estimate (Proposition \ref{Prop::Spaces::MainEst::MainEst}, below).
The results in this section have similar proofs to corresponding results in
the special case of manifolds without boundary covered in \cite[Chapter 6]{StreetMaximalSubellipticity}.
We describe any needed changes, and refer the reader to that reference for detailed proofs.

Recall, in Section \ref{Section::Spaces::MainDefns}, 
we have fixed \(\psi\in \CinftyCptSpace[\ManifoldNncF]\) with \(\psi\equiv 1\) on a neighborhood of \(\Compact\Subset \ManifoldNncF\) (where
\(\Compact\) is compact),
and by 
Proposition \ref{Prop::Spaces::LP::DjExist}, we may write
\(\Mult{\psi}=\sum_{j\in \Zgeq} D_j\), where
\(\sD_0:=\left\{ \left( D_j, 2^{-j} \right) : j\in \Zgeq \right\}\in \ElemzF{\ManifoldNncF}\).

\begin{notation}\label{Notation::Spaces::MainEst::ZeroForNegIndices}
    For any sequence of operators indexed by \(j\in \Zgeq\), \(E_j\), we define \(E_j:=0\) for \(j<0\).
    For example, we have \(\Mult{\psi}=\sum_{j\in \Z} D_j\).
\end{notation}

For \(j,k,l\in \Z\) and \(N\geq 1\), set
\begin{equation*}
    F_{N,j,k,l}:=2^{N|k|+|l|} D_{j+k} D_{j+k+l}.
\end{equation*}
Set
\begin{equation*}
    \sD_N:=\left\{ \left( F_{N,j,k,l},2^{-(j+k)} \right) : j,k,l\in \Z, j+k\geq 0, |l|>|k| \right\}.
\end{equation*}

\begin{lemma}\label{Lemma::Spaces::MainEst::sDNIsElem}
    For \(N\geq 1\), \(\sD_N\in \ElemzF{\ManifoldNncF}\). Moreover, if \(\Omega\subseteq \ManifoldNncF\)
    is open and \(\sD_0\in \ElemzF{\Omega}\), then \(\sD_N\in \ElemzF{\Omega}\).
\end{lemma}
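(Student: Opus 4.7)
The plan is to deduce this from the almost-orthogonality estimate for products of elementary operators (Lemma \ref{Lemma::Spaces::ELem::Elem::ComposeElem}) combined with closure of $\ElemzF{\cdot}$ under bounded scalar multiplication (Proposition \ref{Prop::Spaces::Elem::Elem::MainProps} \ref{Item::Spaces::Elem::Elem::LinearComb}). The key arithmetic observation is that because $|l|>|k|$ and both are integers, $|l|-|k|\geq 1$, so the unwanted factor $2^{N|k|}$ is dominated by a single extra copy of $2^{|l|}$; thus choosing the exponent in the composition lemma to be $N+1$ will be enough.

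First I would reduce to $j+k\geq 0$ and $j+k+l\geq 0$: when $j+k+l<0$, Notation \ref{Notation::Spaces::MainEst::ZeroForNegIndices} gives $D_{j+k+l}=0$, hence $F_{N,j,k,l}=0$, which can be discarded without affecting membership in $\ElemzF{\ManifoldNncF}$. Next I would apply Lemma \ref{Lemma::Spaces::ELem::Elem::ComposeElem} to the elementary family $\sD_0=\{(D_j,2^{-j}):j\in\Zgeq\}$ with its parameter taken to be $M=N+1$, producing
\[
    \sE \;:=\; \left\{ \bigl(2^{(N+1)|\alpha-\beta|}\,D_\alpha D_\beta,\; 2^{-\alpha}\bigr) : \alpha,\beta\in\Zgeq \right\} \;\in\; \ElemzF{\ManifoldNncF}.
\]

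Setting $\alpha=j+k$ and $\beta=j+k+l$, so that $|\alpha-\beta|=|l|$, I would then rewrite each element of $\sD_N$ as
\[
    F_{N,j,k,l}
    \;=\; 2^{N|k|+|l|}\,D_\alpha D_\beta
    \;=\; 2^{-N(|l|-|k|)} \cdot \bigl(2^{(N+1)|l|}\,D_\alpha D_\beta\bigr)
    \;=\; c_{k,l}\, G_{j,k,l},
\]
with $(G_{j,k,l},2^{-\alpha})\in\sE$ and $|c_{k,l}|=2^{-N(|l|-|k|)}\leq 1$ thanks to $|l|-|k|\geq 1$. Since the scale $2^{-\alpha}=2^{-(j+k)}$ is exactly the one attached to $F_{N,j,k,l}$ in $\sD_N$, each pair in $\sD_N$ is a uniformly bounded scalar multiple (at the correct scale) of a pair in $\sE$, so Proposition \ref{Prop::Spaces::Elem::Elem::MainProps} \ref{Item::Spaces::Elem::Elem::LinearComb} (with $b=0$ and $C=1$) yields $\sD_N\in\ElemzF{\ManifoldNncF}$.

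For the second assertion, if $\sD_0\in\ElemzF{\Omega}$ for some open $\Omega\subseteq\ManifoldNncF$, then by \eqref{Eqn::Spaces::LP::ElemzFOmegaIsUnion} there is a compact set $\Compact\Subset\Omega$ with $\sD_0\in\ElemzF{\Compact}$; the very same construction carried out inside $\ElemzF{\Compact}$ gives $\sD_N\in\ElemzF{\Compact}\subseteq\ElemzF{\Omega}$. Since every step is a direct invocation of a previously established closure property, the only real content is the trivial inequality $|l|-|k|\geq 1$, and I do not anticipate any serious obstacle.
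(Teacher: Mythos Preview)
Your proof is correct and follows the same route as the paper, which simply invokes Lemma~\ref{Lemma::Spaces::ELem::Elem::ComposeElem} and refers to \cite[Lemma 6.4.2]{StreetMaximalSubellipticity} for details. Your explicit computation---applying the composition lemma with exponent $N+1$ and using $|l|-|k|\geq 1$ to absorb the factor $2^{N|k|}$---is precisely the arithmetic that makes this work; the only cosmetic point is that rather than ``discarding'' the zero elements, you should note they too are of the form $c\cdot G$ with $c=0$, so that all of $\sD_N$ sits inside the family produced by Proposition~\ref{Prop::Spaces::Elem::Elem::MainProps}~\ref{Item::Spaces::Elem::Elem::LinearComb}.
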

\begin{proof}
    This follows from Lemma \ref{Lemma::Spaces::ELem::Elem::ComposeElem}; see \cite[Lemma 6.4.2]{StreetMaximalSubellipticity}.
\end{proof}

\begin{proposition}\label{Prop::Spaces::MainEst::MainEst}
    Fix \(K_0\Subset \R\) compact and set \(N:=\sup\left\{ |s| : s\in K_0 \right\}+1\). Suppose
    \(s\in \Compact\), \(f\in \DistributionsZeroN\) with \(\supp(f)\subseteq \Compact\),
    and \(\VpqsENorm{f}[p][q][s][\sD_N]<\infty\). Then, for every \(\sE\in \ElemzF{\ManifoldNncF}\),
    there exists \(C=C(\sE, K_0,p,q)\geq 0\) such that
    \begin{equation*}
        \VpqsENorm{f}[p][q][s][\sE]\leq C \VpqsENorm{f}[p][q][s][\sD_0],
    \end{equation*}
    where if the right-hand side is finite, so is the left-hand side. Here, \(C\) does not depend on \(f\),
    \(\VpqsENorm{f}[p][q][s][\sD_N]\), or the particular \(s\in K_0\).
\end{proposition}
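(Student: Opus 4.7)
The plan is to mimic the classical Littlewood--Paley argument that the function space is independent of the choice of projector family, with the almost-orthogonality estimates of Lemmas \ref{Lemma::Spaces::ELem::Elem::ComposeElem} and \ref{Lemma::Spaces::Elem::ELem::ProdOfElemAsSmallSumOfProds} replacing exact frequency orthogonality. Since $\psi \equiv 1$ on a neighborhood of $\Compact \supseteq \supp(f)$, Proposition \ref{Prop::Spaces::LP::DjExist} yields the resolution of the identity $f = \psi f = \sum_k D_k f$ in $\DistributionsZeroN$. For an arbitrary admissible sequence $\{(E_j, 2^{-j})\}_{j \in \Zgeq} \subseteq \sE$ I would then write $E_j f = \sum_k E_j D_k f$ and aim to prove the Calder\'on-type pointwise-in-$j$ estimate
\[
\|E_j f\|_{L^p} \lesssim \sum_{k \in \Zgeq} 2^{-M|j-k|} \|D_k f\|_{L^p}
\]
for some $M = M(K_0) \geq N$; the conclusion then follows by standard sequence-space arguments.

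To establish this key bound I would form the combined family $\sEt := \sE \cup \sD_0 \in \ElemzF{\ManifoldNncF}$ (which satisfies the axioms of Definition \ref{Defn::Spaces::LP::ElemWWdv} since each summand does) and apply Lemma \ref{Lemma::Spaces::Elem::ELem::ProdOfElemAsSmallSumOfProds} to write
\[
E_j D_k = \sum_{l=1}^{K} 2^{-M|j-k|}\, E'_{j,l}\, D'_{k,l},
\]
where $(E'_{j,l}, 2^{-j}), (D'_{k,l}, 2^{-k})$ lie in a bounded family $\sEt_M \in \ElemzF{\ManifoldNncF}$. Combining with the uniform $L^p$-boundedness of $\sEt_M$ from Lemma \ref{Lemma::Spaces::Elem::PElem::PElemOpsBoundedOnLp} yields
\[
\|E_j D_k f\|_{L^p} \lesssim 2^{-M|j-k|} \max_l \|D'_{k,l} f\|_{L^p}.
\]
The remaining step is to pass from the generic operator $D'_{k,l}$ back to the specific $D_k$. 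I would iterate: expand $D'_{k,l} f = \sum_{k'} D'_{k,l} D_{k'} f$ and apply Lemma \ref{Lemma::Spaces::Elem::ELem::ProdOfElemAsSmallSumOfProds} once more to each $D'_{k,l} D_{k'}$; after a bounded number of steps the resulting products have precisely the structure $D_m D_{m+l}$ with the weights appearing in the definition of $\sD_N$. The hypothesis $\VpqsENorm{f}[p][q][s][\sD_N] < \infty$ is indispensable at this stage: it guarantees the absolute convergence of the iterated sums and legitimizes the required interchanges of summation.

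With the key inequality in hand, I would multiply by $2^{js}$ and take the $\VSpace{p}{q}$-norm. For the Besov case $\VSpace{p}{q} = \ell^q(L^p)$, the convolution kernel $\{2^{(j-k)s - M|j-k|}\}$ has $\ell^1$-norm bounded by $\sum_l 2^{ls - M|l|}$, which is finite and uniformly bounded in $s \in K_0$ provided $M \geq N > \sup_{s \in K_0} |s|$; Young's discrete convolution inequality then delivers the estimate. For the Triebel--Lizorkin case $\VSpace{p}{q} = L^p(\ell^q)$, I would instead invoke the analogous vector-valued convolution inequality of Fefferman--Stein type. Taking the supremum over admissible sequences from $\sE$ would then produce a constant $C = C(\sE, K_0, p, q)$ depending only on the $L^p$-operator bounds of $\sEt_M$ and on the $\ell^1$-norm of the kernel, so $C$ is uniform in $s \in K_0$ and independent of both $f$ and $\VpqsENorm{f}[p][q][s][\sD_N]$. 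The main obstacle I anticipate is precisely the iterated almost-orthogonality cascade that replaces $\|D'_{k,l} f\|_{L^p}$ by $\|D_k f\|_{L^p}$: the $\sD_N$ hypothesis is calibrated just so that this cascade converges, but verifying that the final constant $C$ does \emph{not} depend on $\VpqsENorm{f}[p][q][s][\sD_N]$ requires careful bookkeeping of the error terms produced at each iteration, and this uniformity is the delicate technical point.
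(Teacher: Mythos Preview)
Your overall strategy is right, but step 4 has a genuine gap. Lemma~\ref{Lemma::Spaces::Elem::ELem::ProdOfElemAsSmallSumOfProds} decomposes $E_j D_k = \sum_{l} 2^{-M|j-k|} E'_{j,l} D'_{k,l}$, where $D'_{k,l}$ is a \emph{generic} element of an enlarged family $\sEt_M$, not the specific operator $D_k$. Iterating---writing $D'_{k,l} f=\sum_{k'} D'_{k,l} D_{k'} f$ and reapplying the lemma---only produces further generic operators $D''_{k',l'}$; you never recover the specific $D_m$ needed to land in $\sD_0$, nor the specific products $D_m D_{m+l'}$ that constitute $\sD_N$. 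The cascade does not close, and in particular your target pointwise-in-$j$ bound $\|E_j f\|_{L^p}\lesssim\sum_k 2^{-M|j-k|}\|D_k f\|_{L^p}$ is not obtainable by this route.

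The paper's argument (following \cite[Theorem~6.4.3]{StreetMaximalSubellipticity}, via Proposition~\ref{Prop::Spaces::Elem::PElem::PElemOpsBoundedOnVV} and Lemma~\ref{Lemma::Spaces::ELem::Elem::ComposeElem}) avoids this by never abandoning the specific operators $D_m$. One applies the Calder\'on formula \emph{twice}, $E_j f=\sum_{k,l} E_j D_{j+k} D_{j+k+l} f$, and then strips off the outer factors using the vector-valued boundedness of Proposition~\ref{Prop::Spaces::Elem::PElem::PElemOpsBoundedOnVV} on $\VSpace{p}{q}$---not by decomposing the product. When $|l|\le|k|$, Lemma~\ref{Lemma::Spaces::ELem::Elem::ComposeElem} places $2^{M|k|} E_j D_{j+k}$ in a single pre-elementary family; passing through it with Proposition~\ref{Prop::Spaces::Elem::PElem::PElemOpsBoundedOnVV} lands on the genuine $\sD_0$-sequence $\{D_{j+k+l} f\}_j$, with summable weights $2^{-M|k|-(k+l)s}$. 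When $|l|>|k|$, one passes through $E_j$ alone and lands on $\{D_{j+k} D_{j+k+l} f\}_j$; here $2^{N|k|+|l|} D_{j+k} D_{j+k+l}$ at scale $2^{-(j+k)}$ is \emph{by construction} the element $F_{N,j,k,l}\in\sD_N$. So $\sD_N$ is engineered precisely to capture the off-diagonal piece of the double expansion---it is not something one reaches by iterating almost-orthogonality. The a~priori finiteness of $\VpqsENorm{f}[p][q][s][\sD_N]$ is exactly what makes the double sum over $(k,l)$ converge and permits the interchanges needed to extract a final constant depending only on $\sE$, $K_0$, $p$, $q$.
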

\begin{proof}
    Using 
    Proposition \ref{Prop::Spaces::Elem::PElem::PElemOpsBoundedOnVV} and 
    Lemma \ref{Lemma::Spaces::ELem::Elem::ComposeElem} this follows  as in
    \cite[Theorem 6.4.3]{StreetMaximalSubellipticity}.
\end{proof}

\begin{corollary}\label{Cor::Spaces::MainEst::VpqsESeminormIsContinuous}
    Let \(s\in \R\) and let \(\sE\in \ElemzF{\ManifoldNncF}\). Then,
    \(\VpqsENorm{\cdot}\) defines a continuous semi-norm on \(\ASpace{s}{p}{q}[\Compact][\FilteredSheafF]\).
    In particular, there is a constant \(C\geq 0\) with
    \begin{equation*}
        \VpqsENorm{f}[p][q][s][\sE]\leq C \VpqsENorm{f}[p][q][s][\sD_0]=C \ANorm{f}{s}{p}{q}[\FilteredSheafF], \quad \forall f\in \ASpace{s}{p}{q}[\Compact][\FilteredSheafF]
    \end{equation*}
    Furthermore, if \(K_0\Subset \R\) is compact, \(C\geq 0\) may be chosen independent of \(s\in K_0\).
\end{corollary}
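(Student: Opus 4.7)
My plan is to derive the corollary as a direct application of Proposition \ref{Prop::Spaces::MainEst::MainEst}, using Definition \ref{Defn::Spaces::Defns::ASpace} to supply the finiteness hypothesis that proposition requires. The quantifier on $\sE$ in the corollary matches the quantifier on $\sE$ in Proposition \ref{Prop::Spaces::MainEst::MainEst}, so essentially all of the work has already been done; the only task is to check that the hypothesis $\VpqsENorm{f}[p][q][s][\sD_N] < \infty$ is automatic for $f \in \ASpace{s}{p}{q}[\Compact][\FilteredSheafF]$.

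First I would fix $\sE \in \ElemzF{\ManifoldNncF}$ and an arbitrary compact $K_0 \Subset \R$ (if one only wants the non-uniform statement, take $K_0 = \{s\}$), and set $N := \sup\{|\sigma| : \sigma \in K_0\} + 1$. By Lemma \ref{Lemma::Spaces::MainEst::sDNIsElem}, the set $\sD_N$ lies in $\ElemzF{\ManifoldNncF}$. Now, by Definition \ref{Defn::Spaces::Defns::ASpace}, membership of $f$ in $\ASpace{s}{p}{q}[\Compact][\FilteredSheafF]$ means $\supp(f) \subseteq \Compact$ and $\VpqsENorm{f}[p][q][s][\sE'] < \infty$ for every $\sE' \in \ElemzF{\ManifoldNncF}$; in particular, taking $\sE' = \sD_N$ gives $\VpqsENorm{f}[p][q][s][\sD_N] < \infty$.

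Thus the hypotheses of Proposition \ref{Prop::Spaces::MainEst::MainEst} are met, and that proposition supplies a constant $C = C(\sE, K_0, p, q) \geq 0$, independent of $f$ and of the particular $s \in K_0$, such that
\begin{equation*}
    \VpqsENorm{f}[p][q][s][\sE] \leq C\, \VpqsENorm{f}[p][q][s][\sD_0].
\end{equation*}
By Notation \ref{Notation::Spaces::Defns::Norm}, the right-hand side equals $C\,\ANorm{f}{s}{p}{q}[\FilteredSheafF]$, so this is precisely the assertion that $\VpqsENorm{\cdot}[p][q][s][\sE]$ is a continuous semi-norm on $\ASpace{s}{p}{q}[\Compact][\FilteredSheafF]$, with the stated uniformity over $s \in K_0$. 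I do not anticipate any real obstacle: the entire analytic content has been packaged into Proposition \ref{Prop::Spaces::MainEst::MainEst} and Lemma \ref{Lemma::Spaces::MainEst::sDNIsElem}, and the proof amounts to recognising that the definition of the space already furnishes the one finiteness condition the main estimate needs.
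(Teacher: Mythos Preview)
Your proof is correct and follows essentially the same approach as the paper: the paper's proof also cites Lemma \ref{Lemma::Spaces::MainEst::sDNIsElem} to get $\sD_N \in \ElemzF{\ManifoldNncF}$ and then invokes Proposition \ref{Prop::Spaces::MainEst::MainEst}, with the finiteness of $\VpqsENorm{f}[p][q][s][\sD_N]$ coming from the definition of the space.
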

\begin{proof}
    Using Lemma \ref{Lemma::Spaces::MainEst::sDNIsElem}, this follows from 
    Proposition \ref{Prop::Spaces::MainEst::MainEst}. See the proof of \cite[Corollary 6.4.4]{StreetMaximalSubellipticity}.
\end{proof}

\begin{corollary}\label{Cor::Spaces::MainEst::ChoiceOfNormWhichGivesFinite}
    For \(f\in \DistributionsZeroN\), we have \(f\in \ASpace{s}{p}{q}[\Compact][\FilteredSheafF]\)
    if and only if \(\VpqsENorm{f}[p][q][s][\sD_0]+\VpqsENorm{f}[p][q][s][\sD_{|s|+1}]<\infty\) and \(\supp(f)\subseteq \Compact\).
    Moreover, the norm \(\VpqsENorm{\cdot}[p][q][s][\sD_0]+\VpqsENorm{\cdot}[p][q][s][\sD_{|s|+1}]\)
    is equivalent to the norm \(\VpqsENorm{\cdot}[p][q][s][\sD_0]=\ANorm{\cdot}{s}{p}{q}[\FilteredSheafF]\) on \(\ASpace{s}{p}{q}[\Compact]\).
\end{corollary}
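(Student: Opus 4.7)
The plan is to read this off directly from Proposition~\ref{Prop::Spaces::MainEst::MainEst} and Lemma~\ref{Lemma::Spaces::MainEst::sDNIsElem}, which together already do essentially all of the work.

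First I would dispatch the ``only if'' direction. If $f \in \ASpace{s}{p}{q}[\Compact][\FilteredSheafF]$, then by Definition~\ref{Defn::Spaces::Defns::ASpace} we have $\supp(f) \subseteq \Compact$ and $\VpqsENorm{f}[p][q][s][\sE] < \infty$ for every $\sE \in \ElemzF{\ManifoldNncF}$. By Lemma~\ref{Lemma::Spaces::MainEst::sDNIsElem} both $\sD_0$ and $\sD_{|s|+1}$ lie in $\ElemzF{\ManifoldNncF}$, so both seminorms in question are automatically finite.

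Next I would handle the ``if'' direction. Suppose $\supp(f)\subseteq \Compact$ and $\VpqsENorm{f}[p][q][s][\sD_0] + \VpqsENorm{f}[p][q][s][\sD_{|s|+1}] < \infty$. I apply Proposition~\ref{Prop::Spaces::MainEst::MainEst} with $K_0 = \{s\}$, so $N = |s|+1$. The hypothesis $\VpqsENorm{f}[p][q][s][\sD_N]<\infty$ is exactly the assumed finiteness of $\VpqsENorm{f}[p][q][s][\sD_{|s|+1}]$, and the conclusion yields that for every $\sE \in \ElemzF{\ManifoldNncF}$,
\begin{equation*}
    \VpqsENorm{f}[p][q][s][\sE] \leq C(\sE) \VpqsENorm{f}[p][q][s][\sD_0] < \infty.
\end{equation*}
By Definition~\ref{Defn::Spaces::Defns::ASpace}, this gives $f \in \ASpace{s}{p}{q}[\Compact][\FilteredSheafF]$.

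Finally, for the equivalence of norms on $\ASpace{s}{p}{q}[\Compact][\FilteredSheafF]$: one direction is trivial, since $\VpqsENorm{f}[p][q][s][\sD_0] \leq \VpqsENorm{f}[p][q][s][\sD_0] + \VpqsENorm{f}[p][q][s][\sD_{|s|+1}]$. The reverse direction follows from Corollary~\ref{Cor::Spaces::MainEst::VpqsESeminormIsContinuous} applied with $\sE = \sD_{|s|+1}$, which gives $\VpqsENorm{f}[p][q][s][\sD_{|s|+1}] \leq C \VpqsENorm{f}[p][q][s][\sD_0] = C \ANorm{f}{s}{p}{q}[\FilteredSheafF]$ for $f\in \ASpace{s}{p}{q}[\Compact][\FilteredSheafF]$, so the sum is comparable to $\ANorm{f}{s}{p}{q}[\FilteredSheafF]$.

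Honestly, there is no real obstacle here — the whole point of formulating Proposition~\ref{Prop::Spaces::MainEst::MainEst} in the way it is stated (with the $\sD_N$ finiteness as a \emph{hypothesis} rather than a conclusion, and the bound only in terms of $\sD_0$) is precisely to make this corollary immediate. The only thing to be careful about is matching $N$ to $|s|+1$ as the proposition requires, and noting that $\supp(f)\subseteq \Compact$ is preserved throughout since all $D_j$ come from the multiplication operator $\Mult{\psi}$ with $\psi \equiv 1$ on a neighborhood of $\Compact$.
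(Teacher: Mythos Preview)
Your proof is correct and follows essentially the same approach as the paper, which simply cites Lemma~\ref{Lemma::Spaces::MainEst::sDNIsElem} and Corollary~\ref{Cor::Spaces::MainEst::VpqsESeminormIsContinuous} (and refers to \cite[Corollary 6.4.5]{StreetMaximalSubellipticity} for details); you have merely unpacked the argument, invoking Proposition~\ref{Prop::Spaces::MainEst::MainEst} directly for the ``if'' direction rather than going through its consequence Corollary~\ref{Cor::Spaces::MainEst::VpqsESeminormIsContinuous}. Your closing remark about $\supp(f)\subseteq\Compact$ being ``preserved'' is unnecessary---the support condition is simply part of the hypothesis and of Definition~\ref{Defn::Spaces::Defns::ASpace}, not something that needs to be tracked through the $D_j$.
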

\begin{proof}
    This follows from Lemma \ref{Lemma::Spaces::MainEst::sDNIsElem} and Corollary \ref{Cor::Spaces::MainEst::VpqsESeminormIsContinuous}.
    See the proof of \cite[Corollary 6.4.5]{StreetMaximalSubellipticity}.
\end{proof}

\begin{remark}\label{Rmk::Spaces::MainEst::EquivNormWhichHasFinitenessProperty}
    As described in Remark \ref{Rmk::Spaces::Defns::NormFiniteDoesntMeanInSpace}, if we have
    \(f\in \DistributionsZeroN\) with \(\supp(f)\subseteq \Compact\)
    and \(\ANorm{f}{s}{p}{q}[\FilteredSheafF]=\VpqsENorm{f}[p][q][s][\sD_0]<\infty\), we have not shown
    that
    \(f\in \ASpace{s}{p}{q}[\Compact][\FilteredSheafF]\).
    However, Corollary \ref{Cor::Spaces::MainEst::ChoiceOfNormWhichGivesFinite} shows that if we replace
    \(\ANorm{f}{s}{p}{q}[\FilteredSheafF]\) with the equivalent norm \(\VpqsENorm{f}[p][q][s][\sD_0]+\VpqsENorm{f}[p][q][s][\sD_{|s|+1}]\),
    then it does follow that \(f\in \ASpace{s}{p}{q}[\Compact][\FilteredSheafF]\).
    Alternatively, when \(s>0\), one can use \(\VpqsENorm{f}[p][q][s][\sD_0]\)--see
    Section \ref{Section::Spaces::BasicProps::FiniteNorm}.
\end{remark}

\begin{corollary}\label{Cor::Spaces::MainEst::SufficesToCheckInSpaceLocally}
    Let \(\Omega\subseteq \ManifoldNncF\) be open with \(\Compact\Subset \Omega\).
    Then for \(f\in \DistributionsZeroN\) 
    the following are equivalent:
    \begin{enumerate}[(i)]
        \item\label{Item::Spaces::MainEst::SufficesToCheckInSpaceLocally::InSpace} \(f\in \ASpace{s}{p}{q}[\Compact][\FilteredSheafF]\),
        \item\label{Item::Spaces::MainEst::SufficesToCheckInSpaceLocally::LocalElem} \(\supp(f)\subseteq \Compact\) and \(\forall \sE\in \ElemzF{\Omega}\) we have \(\VpqsENorm{f}<\infty\).
    \end{enumerate}
\end{corollary}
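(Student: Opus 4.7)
The plan is to prove the two directions separately, with (i)$\Rightarrow$(ii) being essentially immediate and (ii)$\Rightarrow$(i) being the substantive direction, which I would handle by constructing a Littlewood--Paley decomposition localized to $\Omega$ and then invoking the main estimate (Proposition \ref{Prop::Spaces::MainEst::MainEst}).

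For (i)$\Rightarrow$(ii), I would observe that since $\Omega\subseteq \ManifoldNncF$, the union in \eqref{Eqn::Spaces::LP::ElemzFOmegaIsUnion} gives $\ElemzF{\Omega}\subseteq \ElemzF{\ManifoldNncF}$; combined with Definition \ref{Defn::Spaces::Defns::ASpace} this immediately yields \ref{Item::Spaces::MainEst::SufficesToCheckInSpaceLocally::LocalElem}.

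For (ii)$\Rightarrow$(i), the key idea is to exploit that $\Compact \Subset \Omega$ to pick a cutoff $\psi\in \CinftyCptSpace[\Omega]$ with $\psi\equiv 1$ on a neighborhood of $\Compact$, and then apply Proposition \ref{Prop::Spaces::LP::DjExist} to obtain a family $\sD_0':=\{(D_j',2^{-j}):j\in\Zgeq\}\in \ElemzF{\Omega}$ with $\sum_j D_j' = \Mult{\psi}$. By Proposition \ref{Prop::Spaces::Defns::EquivNorms}, this $\sD_0'$ defines a norm on $\ASpace{s}{p}{q}[\Compact][\FilteredSheafF]$ equivalent to $\ANorm{\cdot}{s}{p}{q}[\FilteredSheafF]$, so it is a legitimate choice on which to base the main estimate. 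Lemma \ref{Lemma::Spaces::MainEst::sDNIsElem} then gives $\sD_N'\in \ElemzF{\Omega}$ for every $N\geq 1$. Hypothesis \ref{Item::Spaces::MainEst::SufficesToCheckInSpaceLocally::LocalElem} applied to $\sD_0'$ and $\sD_{|s|+1}'$ yields $\VpqsENorm{f}[p][q][s][\sD_0']<\infty$ and $\VpqsENorm{f}[p][q][s][\sD_{|s|+1}']<\infty$.

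Now, for any $\sE\in \ElemzF{\ManifoldNncF}$, Proposition \ref{Prop::Spaces::MainEst::MainEst} (applied with $K_0=\{s\}$ and with $\sD_0$ and $\sD_N$ there replaced by $\sD_0'$ and $\sD_{|s|+1}'$) gives $\VpqsENorm{f}[p][q][s][\sE]\leq C\,\VpqsENorm{f}[p][q][s][\sD_0']<\infty$. Since $\supp(f)\subseteq \Compact$ by assumption, Definition \ref{Defn::Spaces::Defns::ASpace} now yields $f\in \ASpace{s}{p}{q}[\Compact][\FilteredSheafF]$, completing the proof. The only point requiring care is confirming that the main estimate applies to the localized family $\sD_0'$ rather than the specific one fixed once-and-for-all in Section \ref{Section::Spaces::MainDefns}; this is essentially the content of Proposition \ref{Prop::Spaces::Defns::EquivNorms}, which shows that any two such choices yield equivalent norms and hence the hypothesis of Proposition \ref{Prop::Spaces::MainEst::MainEst} (finiteness of $\VpqsENorm{\cdot}[p][q][s][\sD_{|s|+1}]$) is independent of the choice.
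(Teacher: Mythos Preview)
Your approach is essentially the paper's: choose $\psi\in\CinftyCptSpace[\Omega]$ so that the associated $\sD_0'$ and (via Lemma~\ref{Lemma::Spaces::MainEst::sDNIsElem}) $\sD_{|s|+1}'$ lie in $\ElemzF{\Omega}$, apply hypothesis~\ref{Item::Spaces::MainEst::SufficesToCheckInSpaceLocally::LocalElem} to get both seminorms finite, then conclude $f\in\ASpace{s}{p}{q}[\Compact][\FilteredSheafF]$. The paper packages your direct appeal to Proposition~\ref{Prop::Spaces::MainEst::MainEst} into Corollary~\ref{Cor::Spaces::MainEst::ChoiceOfNormWhichGivesFinite}, but the content is the same.

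One small logical slip: your final paragraph invokes Proposition~\ref{Prop::Spaces::Defns::EquivNorms} to justify that Proposition~\ref{Prop::Spaces::MainEst::MainEst} applies with $\sD_0'$ in place of the fixed $\sD_0$, but Proposition~\ref{Prop::Spaces::Defns::EquivNorms} only asserts norm equivalence for $f$ already known to lie in the space, which is precisely what you are trying to prove. The correct (and simpler) justification is that since $\psi\in\CinftyCptSpace[\Omega]\subseteq\CinftyCptSpace[\ManifoldNncF]$, your $\sD_0'$ is itself an admissible choice of the $\sD_0$ fixed at the start of Section~\ref{Section::Spaces::MainDefns}; hence Proposition~\ref{Prop::Spaces::MainEst::MainEst} and Corollary~\ref{Cor::Spaces::MainEst::ChoiceOfNormWhichGivesFinite} apply to it directly, with no replacement or transfer argument needed.
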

\begin{proof}
    This follows from Lemma \ref{Lemma::Spaces::MainEst::sDNIsElem} and Corollary \ref{Cor::Spaces::MainEst::ChoiceOfNormWhichGivesFinite}.
    See the proof of \cite[Corollary 6.4.7]{StreetMaximalSubellipticity} for details.
\end{proof}

\begin{corollary}\label{Cor::Spaces::MainEst::SpacesAreDefinedLocally}
    Let \(\ManifoldNt\) be an open subset of \(\ManifoldN\), and set \(\FilteredSheafG:=\FilteredSheafF\big|_{\ManifoldNt}\).
    Then, \(\FilteredSheafG\) is a H\"ormander filtration of sheaves of vector fields on \(\ManifoldNt\)
    and \(\ManifoldNtncG=\ManifoldNncF\cap \ManifoldNt\). Furthermore, if \(\Compact\Subset \ManifoldNtncG\) is compact
    then,
    \begin{equation}\label{Eqn::Spaces::MainEst::SpacesAreDefinedLocally::SpacesAreEqual}
        \ASpace{s}{p}{q}[\Compact][\FilteredSheafF]=\ASpace{s}{p}{q}[\Compact][\FilteredSheafG],
    \end{equation}
    with equality of topologies.
\end{corollary}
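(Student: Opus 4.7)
The plan is as follows. The first statement---that $\FilteredSheafG$ is a H\"ormander filtration of sheaves on $\ManifoldNt$---is immediate from Definition \ref{Defn::Filtrations_Sheaves::Hormander_Filtration_Sheaves}: any local H\"ormander generators for $\FilteredSheafF$ near a point $x \in \ManifoldNt$ also generate $\FilteredSheafG$ near $x$, since $\FilteredSheafG$ is defined by restriction of $\FilteredSheafF$ to open subsets of $\ManifoldNt$. The equality $\ManifoldNtncG = \ManifoldNncF \cap \ManifoldNt$ follows from Definition \ref{Defn::Filtrations::RestrictingFiltrations::NonCharPoints}: the relevant degree function on the boundary is computed using $\LieFilteredSheafF$ on arbitrarily small neighborhoods of the boundary point, so it agrees pointwise with the corresponding function for $\LieFilteredSheafG$ on $\BoundaryN \cap \ManifoldNt$, and the continuity condition defining non-characteristic points is itself local.

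For \eqref{Eqn::Spaces::MainEst::SpacesAreDefinedLocally::SpacesAreEqual}, I would fix open $\Omega$ with $\Compact \Subset \Omega \Subset \ManifoldNtncG$, and apply Corollary \ref{Cor::Spaces::MainEst::SufficesToCheckInSpaceLocally} with this choice of $\Omega$ to both $\FilteredSheafF$ and $\FilteredSheafG$. This reduces the problem to identifying $\ElemzF{\Omega}$ with $\ElemzG{\Omega}$. (At the level of distributions, the subspace $\{f \in \DistributionsZeroN : \supp f \subseteq \Compact\}$ is canonically identified with $\{f \in \DistributionsZero[\ManifoldNt] : \supp f \subseteq \Compact\}$ by zero extension, using that $\Compact \Subset \ManifoldNt$ and $\partial \ManifoldNt = \BoundaryN \cap \ManifoldNt$.)

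To identify the elementary classes, given a family supported in $\Compact' \times \Compact'$ for some $\Compact' \Subset \Omega$, one picks a relatively compact open $\Omega_1$ with $\Compact' \Subset \Omega_1 \Subset \Omega$ and H\"ormander generators $\WWdv \subset \VectorFields{\Omega_1} \times \Zg$ with $\FilteredSheafF|_{\Omega_1} = \FilteredSheafGenBy{\WWdv}$; since $\Omega_1 \subseteq \ManifoldNt$, automatically $\FilteredSheafG|_{\Omega_1} = \FilteredSheafGenBy{\WWdv}$ as well. Equipping $\Omega_1$ with a smooth, strictly positive density $\Vol$, one observes that Definitions \ref{Defn::Spaces::LP::PElemWWdv} and \ref{Defn::Spaces::LP::ElemWWdv} depend only on the intrinsic data $(\WWdv, \Omega_1, \Vol)$ and the associated $\MetricWWdv$ and $\Vol[\BWWdv{\cdot}{\cdot}]$. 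By Lemma \ref{Lemma::Spaces::LP::ElemDoesntDependOnChoices}, the two descriptions therefore coincide. The only subtle point is the vanishing-at-boundary condition for $\PElemzOnlySubscript{\cdot}$: for supports in $\Compact' \Subset \ManifoldNt$, vanishing to infinite order as $y \to \BoundaryN$ and vanishing to infinite order as $y \to \partial \ManifoldNt = \BoundaryN \cap \ManifoldNt$ are equivalent, so the two classes $\PElemzF{\Omega}$ and $\PElemzG{\Omega}$ (and hence $\ElemzF{\Omega}$ and $\ElemzG{\Omega}$) agree.

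Finally, equality of topologies is automatic: choosing $\psi \in \CinftyCptSpace[\Omega]$ with $\psi \equiv 1$ near $\Compact$, Proposition \ref{Prop::Spaces::LP::DjExist} produces $\sD_0 = \{(D_j, 2^{-j}) : j \in \Zgeq\} \in \ElemzF{\Omega} = \ElemzG{\Omega}$ with $\sum_j D_j = \Mult{\psi}$, and the defining norm $\ANorm{\cdot}{s}{p}{q}[\FilteredSheafF] = \VpqsENorm{\cdot}[p][q][s][\sD_0] = \ANorm{\cdot}{s}{p}{q}[\FilteredSheafG]$ is literally the same on the two spaces. The main obstacle---modest as it is---consists of the bookkeeping for the identification of the elementary classes, particularly matching the two boundary-vanishing conditions; once this is settled, the rest of the argument is a direct invocation of Corollary \ref{Cor::Spaces::MainEst::SufficesToCheckInSpaceLocally} together with Proposition \ref{Prop::Spaces::LP::DjExist}.
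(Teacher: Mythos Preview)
Your proposal is correct and follows essentially the same route as the paper's proof: reduce to Corollary \ref{Cor::Spaces::MainEst::SufficesToCheckInSpaceLocally} via the identification $\ElemzF{\Omega}=\ElemzG{\Omega}$, then obtain equality of norms by choosing a common $\sD_0$. You supply more detail than the paper (which dispatches $\ElemzF{\Omega}=\ElemzG{\Omega}$ with the phrase ``by definition''), in particular the bookkeeping for the boundary-vanishing condition and the identification of the ambient distribution spaces, but the overall strategy is identical.
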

\begin{proof}
    That \(\FilteredSheafG\) is a H\"ormander filtration of sheaves of vector fields on \(\ManifoldNt\)
    and \(\ManifoldNtncG=\ManifoldNncF\cap \ManifoldNt\) follows from the definitions.
    Suppose \(\Compact\Subset \ManifoldNtncG\) is compact and let \(\Omega\Subset \ManifoldNtncG\)
    be open and relatively compact with \(\Compact\Subset \Omega\).
    Note that \(\ElemzF{\Omega}=\ElemzG{\Omega}\), by definition.
    Then, by Corollary \ref{Cor::Spaces::MainEst::SufficesToCheckInSpaceLocally},
    we have \(f\in \ASpace{s}{p}{q}[\Compact][\FilteredSheafF]\)
    if and only if Corollary \ref{Cor::Spaces::MainEst::SufficesToCheckInSpaceLocally} \ref{Item::Spaces::MainEst::SufficesToCheckInSpaceLocally::LocalElem} holds
    if and only if Corollary \ref{Cor::Spaces::MainEst::SufficesToCheckInSpaceLocally} \ref{Item::Spaces::MainEst::SufficesToCheckInSpaceLocally::LocalElem} holds with \(\FilteredSheafF\)
    replaced by \(\FilteredSheafG\)
    if and only if \(f\in \ASpace{s}{p}{q}[\Compact][\FilteredSheafG]\), proving \eqref{Eqn::Spaces::MainEst::SpacesAreDefinedLocally::SpacesAreEqual}.
    To see that the topologies are equal, choose \(\sD_0\in \ElemzF{\Omega}=\ElemzG{\Omega}\),
    so that the chosen norm (\(\VpqsENorm{f}[p][q][s][\sD_0]\)) is the same on both spaces.
\end{proof}

By definition, \(\ASpace{s}{p}{q}[\Compact][\FilteredSheafF]\subseteq \DistributionsZeroN\); the next result
shows that this embedding is continuous and makes it quantitative.

\begin{proposition}\label{Prop::Spaces::MainEst::QuantitativeDistributions}
    Fix \(\Omega_1\Subset \Omega\Subset \ManifoldNncF\), relatively compact, open sets with \(\Compact\Subset \Omega_1\).
    Let \(\WWdv=\left\{ (W_1,\Wdv_1),\ldots, (W_r,\Wdv_r) \right\}\subset \VectorFields{\Omega}\times \Zg\)
    be H\"ormander vector fields with formal degrees on \(\Omega\)
    such that \(\FilteredSheafF\big|_{\Omega}=\FilteredSheafGenBy{\WWdv}\) (see Lemma \ref{Lemma::Filtrations::GeneratorsOnRelCptSet}).
    Fix \(N\in \Zgeq\). 
    There exists \(C_{N}\geq 1\)
    such that for \(s\in \R\) with \(-s+1\leq N\), we have
    \begin{equation*}
        \left| \PairDistributionAndTestFunctions{f}{g} \right| \leq C_{N} \left( \sum_{|\alpha|\leq N} \LpNorm{W^\alpha g}{\infty}[\Omega_1] \right)
        \ANorm{f}{s}{p}{q}[\FilteredSheafF],\quad \forall f\in \ASpace{s}{p}{q}[\Compact][\FilteredSheafF], g\in \TestFunctionsZeroN,
    \end{equation*}
    where \(\PairDistributionAndTestFunctions{f}{g}\) denotes the pairing of the distribution \(f\in \DistributionsZeroN\) with the
    test function \(g\in \TestFunctionsZeroN\).
    Here, \(C_N\) may depend on \(p,q\), \(\FilteredSheafF\), \(\Compact\), and \(N\), but does not depend on \(s\) with \(|s|+1\leq N\).
\end{proposition}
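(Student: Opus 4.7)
The plan is to write $f$ as a sum of Littlewood--Paley pieces via the decomposition $\sum_j D_j = \Mult{\psi}$ from Proposition \ref{Prop::Spaces::LP::DjExist}, pull $N$ $W$-derivatives off each $D_j$, and transfer them to $g$ by integration by parts. Since $g \in \TestFunctionsZeroN$ vanishes to infinite order at $\BoundaryN$, no boundary terms appear. Concretely, choose $\psi \in \CinftyCptSpace[\Omega_1]$ with $\psi \equiv 1$ on a neighborhood of $\Compact$, and pick $\sD_0 = \{(D_j,2^{-j}) : j\in\Zgeq\} \in \ElemzF{\Omega_1}$ with $\sum_{j\geq 0} D_j = \Mult{\psi}$. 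Since $\supp(f) \subseteq \Compact \subseteq \{\psi \equiv 1\}$, we have $f = \sum_j D_j f$ in $\DistributionsZeroN$, and by Proposition \ref{Prop::Spaces::Defns::EquivNorms} this $\sD_0$ may be used to compute $\ANorm{f}{s}{p}{q}[\FilteredSheafF]$. Applying Proposition \ref{Prop::Spaces::Elem::Elem::MainProps} \ref{Item::Spaces::Elem::Elem::PullOutNDerivs} yields
\begin{equation*}
D_j = \sum_{|\alpha|\leq N} 2^{(|\alpha|-N)j}(2^{-j\Wdv}W)^\alpha E_{j,\alpha}, \qquad \{(E_{j,\alpha},2^{-j})\}_j \in \ElemzF{\Omega_1},
\end{equation*}
and integrating by parts (legitimate since $g$ vanishes to infinite order at $\BoundaryN$) gives
\begin{equation*}
\langle D_j f, g\rangle = \sum_{|\alpha|\leq N} 2^{-j(N-|\alpha|+\DegWdv{\alpha})} \int (E_{j,\alpha} f)\,((W^\alpha)^\transpose g)\,d\Vol.
\end{equation*}

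Next I would bound each integral by H\"older's inequality and expand $(W^\alpha)^\transpose g$ as a linear combination of $W^\beta g$ (with $|\beta|\leq|\alpha|$) with smooth coefficients bounded on a fixed compact $\Compact' \Subset \Omega_1$ containing the supports of all the $E_{j,\alpha}f$. Using $\Wdv_k \geq 1$ so that $N - |\alpha| + \DegWdv{\alpha} \geq N$, this gives
\begin{equation*}
|\langle D_j f, g\rangle| \lesssim 2^{-jN}\,\Big(\sum_{|\beta|\leq N}\|W^\beta g\|_{L^\infty(\Omega_1)}\Big)\sum_{|\alpha|\leq N}\|E_{j,\alpha}f\|_{L^p}.
\end{equation*}
The key termwise bound is $\|E_{j,\alpha}f\|_{L^p} \leq C\, 2^{-js}\, \ANorm{f}{s}{p}{q}[\FilteredSheafF]$, which follows from Corollary \ref{Cor::Spaces::MainEst::VpqsESeminormIsContinuous}: in the Besov case $\VSpace{p}{q} = \lqLpSpace{p}{q}$ by extracting the $j$-th term from the $\ell^q$-norm, and in the Triebel--Lizorkin case $\VSpace{p}{q} = \LplqSpace{p}{q}$ via the pointwise comparison $|2^{js} E_{j,\alpha}f(x)| \leq (\sum_k |2^{ks}E_{k,\alpha}f(x)|^q)^{1/q}$ followed by taking $L^p$ in $x$. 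Summing over $j$ and using $N+s \geq 1$ (the hypothesis $-s+1 \leq N$) produces a convergent geometric series, and uniformity of the constant on the compact range $|s|+1 \leq N$ follows from the corresponding clause of Corollary \ref{Cor::Spaces::MainEst::VpqsESeminormIsContinuous}.

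The main technical point is the single-scale $L^p$ bound on $E_{j,\alpha}f$ in the Triebel--Lizorkin case, where the outer norm is an $L^p$ of an $\ell^q$ rather than an $\ell^q$ of $L^p$'s; the pointwise comparison above is what bridges this. Apart from that, the argument is a routine combination of the elementary-operator calculus from Proposition \ref{Prop::Spaces::Elem::Elem::MainProps} with the Banach-space estimates from Corollary \ref{Cor::Spaces::MainEst::VpqsESeminormIsContinuous}.
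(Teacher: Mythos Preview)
Your proposal is correct and follows essentially the same route as the paper: decompose $f=\sum_j D_j f$ via Proposition \ref{Prop::Spaces::LP::DjExist}, pull out $N$ derivatives on the left of $D_j$ using Proposition \ref{Prop::Spaces::Elem::Elem::MainProps}\ref{Item::Spaces::Elem::Elem::PullOutNDerivs}, transfer them to $g$ by integration by parts (no boundary terms since $g\in\TestFunctionsZeroN$), and conclude via Corollary \ref{Cor::Spaces::MainEst::VpqsESeminormIsContinuous}. The only cosmetic difference is that the paper bounds the pairing with the $L^1$ norm of $E_{j,\alpha}f$ (equivalent to your $L^p$ by compact support) and organizes the exponents so that $2^{-j(N-|\alpha|+\DegWdv{\alpha}+s-1)}\leq 1$, leaving a clean $\sum_j 2^{-j}$; your arrangement $\sum_j 2^{-j(N+s)}$ is equally valid since $N+s\geq 1$ on the stated range.
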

\begin{proof}
    Let \(\psi\in \CinftyCptSpace[\Omega_1]\) equal \(1\) on a neighborhood of \(\Compact\).
    Using Proposition \ref{Prop::Spaces::LP::DjExist}, write \(\Mult{\psi}=\sum_{j\in \Zgeq} D_j\),
    where \(\left\{ \left( D_j, 2^{-j} \right):j\in \Zgeq \right\}\in \ElemzF{\Omega_1}\).
    Using Proposition \ref{Prop::Spaces::Elem::Elem::MainProps} \ref{Item::Spaces::Elem::Elem::PullOutNDerivs},
    we may write
    \begin{equation*}
        D_{j}= \sum_{|\alpha|\leq N} 2^{-j(N-|\gamma|)}\left( 2^{-j\Wdv} W \right)^{\gamma} D_{j,\gamma},
    \end{equation*}
    where \(\sE:=\left\{ \left( D_{j,\gamma},2^{-j} \right): j\in \Zgeq, |\gamma|\leq N  \right\}\in \ElemzF{\Omega_1}\).
    We have, for \(f\in \ASpace{s}{p}{q}[\Compact][\FilteredSheafF]\) and \(g\in \TestFunctionsZeroN\),
    \begin{equation}\label{Eqn::Spaces::MainEst::QuantitativeDistributions::EstimatePairing}
    \begin{split}
         &\left| \PairDistributionAndTestFunctions{f}{g} \right|
         =\left| \PairDistributionAndTestFunctions{\psi f}{g} \right|
         \leq \sum_{j\in \Zgeq} \left| \PairDistributionAndTestFunctions{D_j f}{g} \right|
         \leq \sum_{j\in \Zgeq}\sum_{|\gamma|\leq N} 2^{-j(N-|\gamma|)} \left| \PairDistributionAndTestFunctions{(2^{-j\Wdv}W)^\gamma D_{j,\gamma} f}{g} \right|
         \\&=\sum_{j\in \Zgeq}\sum_{|\gamma|\leq N} 2^{-j(N-|\gamma|+\DegWdv{\alpha}+(s-1))} \left| \PairDistributionAndTestFunctions{ 2^{j(s-1)}D_{j,\gamma}f}{ \left[ W^{\gamma} \right]^{\transpose}g} \right|
         \\&\leq \left( \sum_{|\gamma|\leq N} \BLpNorm{\left[ W^{\gamma} \right]^{\transpose}g}{\infty}[\Omega_1] \right) \sum_{j\in \Zgeq} \BLpNorm{2^{j(s-1)}D_{j,\gamma}f}{1}[\Omega_1],
    \end{split}
    \end{equation}
    where in the last estimate we have used \(2^{-j(N-|\gamma|+\DegWdv{\alpha}+(s-1))}\leq 1\) due to the choice of \(N\),
    and we are using a smooth, strictly positive density on \(\Omega\) to define the \(\LpSpace{1}\) norm (as usual, the particular choice of
    \(\Vol\) is not relevant for our purposes).
    We have,
    \begin{equation}\label{Eqn::Spaces::MainEst::QuantitativeDistributions::Tmp1}
        \sum_{|\gamma|\leq N} \BLpNorm{\left[ W^{\gamma} \right]^{\transpose}g}{\infty}[\Omega_1] \lesssim \sum_{|\gamma|\leq N} \BLpNorm{ W^{\gamma}g}{\infty}[\Omega_1].
    \end{equation}
    And, for all \(p,q\),
    \begin{equation}\label{Eqn::Spaces::MainEst::QuantitativeDistributions::Tmp2}
    \begin{split}
         &\sum_{j\in \Zgeq} \BLpNorm{2^{j(s-1)}D_{j,\gamma}f}{1}[\Omega_1]
         \lesssim \BVNorm{\left\{ 2^{js}D_{j,\gamma}f \right\}}{p}{q}
         =\VpqsENorm{f}
         \lesssim \ANorm{f}{s}{p}{q}[\FilteredSheafF],
    \end{split}
    \end{equation}
    where the final estimate uses Corollary \ref{Cor::Spaces::MainEst::VpqsESeminormIsContinuous}.
    Combining \eqref{Eqn::Spaces::MainEst::QuantitativeDistributions::EstimatePairing}, \eqref{Eqn::Spaces::MainEst::QuantitativeDistributions::Tmp1},
    and \eqref{Eqn::Spaces::MainEst::QuantitativeDistributions::Tmp2}, completes the proof.
\end{proof}

    \subsection{Basic properties}\label{Section::Spaces::BasicProps}
    \begin{proof}[Proof of Proposition \ref{Prop::Spaces::Defns::EquivNorms}]
    The result is symmetric in \(\sD_0\) and \(\sDt_0\), so it suffices to show
    \(\VpqsENorm{f}[p][q][s][\sDt_0]\lesssim \VpqsENorm{f}[p][q][s][\sD_0]\), \(\forall f\in \ASpace{s}{p}{q}[\Compact][\FilteredSheafF]\).
    This follows immediately from Corollary \ref{Cor::Spaces::MainEst::VpqsESeminormIsContinuous}.
\end{proof}


\begin{proof}[Proof of Proposition \ref{Prop::Spaces::Defns::NormAndBanach}]
    This is largely the same as the proof of \cite[Proposition 6.3.6]{StreetMaximalSubellipticity}
    in \cite[Section 6.5]{StreetMaximalSubellipticity}; we describe the modifications necessary.

    For \(g\in \TestFunctionsZeroN\), by Proposition \ref{Prop::Spaces::MainEst::QuantitativeDistributions}, the map
    \(\lambda_g:\DistributionsZeroN\rightarrow \C\) given by \(\lambda_g(u)=u(g)\)
    restricts to a bounded linear functional on \(\ASpace{s}{p}{q}[\Compact][\FilteredSheafF]\).
    From here, the proof of \cite[Proposition 6.3.6]{StreetMaximalSubellipticity} goes through
    by taking \(g\in \TestFunctionsZeroN\) in the proof, and using
    Corollaries \ref{Cor::Spaces::MainEst::VpqsESeminormIsContinuous} and
    \ref{Cor::Spaces::MainEst::ChoiceOfNormWhichGivesFinite} in place of the corresponding results in \cite{StreetMaximalSubellipticity}.
\end{proof}

\begin{proof}[Proof of Proposition \ref{Prop::Spaces::MappingOfFuncsAndVFs}]
    We begin with  \ref{Item::Spaces::MappingOfFuncsAndVFs::VFsInF}.
    Let \(\sE\in \ElemzF{\ManifoldNncF}\). Take \(\phi\in \CinftyCptSpace[\Omega]\) with \(\phi=1\) on \(\Compact\).
    We have,
    \begin{equation*}
    \begin{split}
         &\VpqsENorm{Yf}[p][q][s-d] = \sup_{\left\{ (E_j,2^{-j}) : j\in \Zgeq \right\}\subseteq \sE} \BVNorm{ \left\{ 2^{j(s-d)} E_j \phi Y f \right\}}{p}{q}
         =\sup_{\left\{ (E_j,2^{-j}) : j\in \Zgeq \right\}\subseteq \sE} \BVNorm{ \left\{ 2^{js} E_j \phi 2^{-jd}Y f \right\}}{p}{q}.
    \end{split}
    \end{equation*}
    Set \(\sEt:=\left\{ \left( 2^{-jd}E_j \phi Y, 2^{-j} \right) : (E_j,2^{-j})\in \sE \right\}\).
    Since \(\phi\in \CinftyCptSpace[\Omega]\), we have \(\phi Y\in \FilteredSheafF[\ManifoldN][d]\).
    Using Remark \ref{Rmk::Spaces::Elem::Elem::IntegrateByPartsWithoutBoundary}, we have
    \begin{equation*}
        \left[ E\phi Y \right](x,y)= -\phi(y) Y_y E(x,y) + \phi(y)g(y)E(x,y),
    \end{equation*}
    for some \(g\in \CinftySpace[\Omega]\),
    it follows from Proposition \ref{Prop::Spaces::Elem::Elem::MainProps} 
    \ref{Item::Spaces::Elem::Elem::MultBySmooth} and \ref{Item::Spaces::Elem::Elem::DerivFuncAribtrarySection}
    that \(\sEt\in \ElemzF{\ManifoldNncF}\).
    We conclude
    \begin{equation*}
    \begin{split}
         &\VpqsENorm{Yf}[p][q][s-d] = \VpqsENorm{f}[p][q][s][\sEt]\lesssim \ANorm{f}{s}{p}{q}[\FilteredSheafF]<\infty,
    \end{split}
    \end{equation*}
    where the final inequality uses Corollary \ref{Cor::Spaces::MainEst::VpqsESeminormIsContinuous}.
    Since \(\sE\in \ElemzF{\ManifoldNncF}\) was arbitrary, and \(\supp(Yf)\subseteq \Compact\),
    we have \(Yf\in \ASpace{s}{p}{q}[\Compact][\FilteredSheafF]\). Taking \(\sE=\sD_0\)
    we have
    \(\ANorm{Yf}{s-d}{p}{q}[\FilteredSheafF]=\VpqsENorm{Yf}[p][q][s-d][\sD_0]\lesssim \ANorm{f}{s}{p}{q}[\FilteredSheafF]\),
    establishing  \ref{Item::Spaces::MappingOfFuncsAndVFs::VFsInF}.

    Since \(\ASpace{s}{p}{q}[\Compact][\FilteredSheafF]=\ASpace{s}{p}{q}[\Compact][\LieFilteredSheafF]\)
    (see Theorem \ref{Thm::Spaces::OnlyDependsOnLieFiltration} and Lemma \ref{Lemma::Filtrations::GeneratorsForLieFiltration}),
    \ref{Item::Spaces::MappingOfFuncsAndVFs::VFsInLieF} follows from \ref{Item::Spaces::MappingOfFuncsAndVFs::VFsInF}.

    \ref{Item::Spaces::MappingOfFuncsAndVFs::Funcs} has a nearly identical proof to \ref{Item::Spaces::MappingOfFuncsAndVFs::VFsInF},
    where \(d\) is taken to be \(0\), and we use Proposition \ref{Prop::Spaces::Elem::Elem::MainProps} \ref{Item::Spaces::Elem::Elem::MultBySmooth}
    to directly conclude \(\sEt\in \ElemzF{\ManifoldNncF}\).

    \ref{Item::Spaces::MappingOfFuncsAndVFs::DiffOps} is an immediate consequence of repeated
    applications of \ref{Item::Spaces::MappingOfFuncsAndVFs::VFsInF}, followed by \ref{Item::Spaces::MappingOfFuncsAndVFs::Funcs}.
\end{proof}

\begin{proposition}\label{Prop::Spaces::BasicProps::OneDerivsGivesNorms}
    Fix \(s\in \R\), \(N\in \Zgeq\).
    Let \(\Omega\subseteq \ManifoldN\) be open
    with \(\Compact\Subset \Omega\) (where \(\Compact\Subset \ManifoldNncF\) is compact)
    and suppose
    \(\FilteredSheafF\big|_{\Omega}=\FilteredSheafGenBy{\WWo}\),
    where \(\left\{ (W_1,1),\ldots, (W_r,1) \right\}\subset \VectorFields{\Omega}\times \Zg\)
    are H\"ormander vector fields on \(\Omega\) all with formal degree \(1\).
    For \(f\in \DistributionsZeroN\)
    the following are equivalent:
    \begin{enumerate}[(i)]
        \item\label{Item::Spaces::BasicProps::OneDerivsGivesNorms::fInAsplusN} \(f\in \ASpace{s+N}{p}{q}[\Compact][\FilteredSheafF]\),
        \item\label{Item::Spaces::BasicProps::OneDerivsGivesNorms::DerivsInAs} \(\forall |\alpha|\leq N\), \(W^{\alpha} f\in \ASpace{s}{p}{q}[\Compact][\FilteredSheafF]\).
    \end{enumerate}
    In this case, we have
    \begin{equation}\label{Eqn::Spaces::BasicProps::OneDerivsGivesNorms::Estimate}
        \ANorm{f}{s+N}{p}{q}[\FilteredSheafF]\approx \sum_{|\alpha|\leq N} \ANorm{W^{\alpha}f}{s}{p}{q}[\FilteredSheafF],
    \end{equation}
    where the implicit constants do not depend on \(f\), but may depend on any of the other ingredients.
\end{proposition}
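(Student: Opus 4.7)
My plan is to handle the two implications separately. For (i)$\Rightarrow$(ii), each $W_j$ lies in $\FilteredSheafF[\Omega][1]$, so Proposition \ref{Prop::Spaces::MappingOfFuncsAndVFs} \ref{Item::Spaces::MappingOfFuncsAndVFs::VFsInF} furnishes continuous maps $W_j\colon \ASpace{t}{p}{q}[\Compact][\FilteredSheafF]\to\ASpace{t-1}{p}{q}[\Compact][\FilteredSheafF]$ for every $t\in\R$. Iterating $|\alpha|$ times yields $W^\alpha f\in\ASpace{s+N-|\alpha|}{p}{q}[\Compact][\FilteredSheafF]$ with a corresponding norm bound. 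The elementary embedding $\ASpace{s+N-|\alpha|}{p}{q}\hookrightarrow\ASpace{s}{p}{q}$, immediate from the defining $\VSpace{p}{q}$-norm since $2^{jt}$ is nondecreasing in $t$ for $j\geq 0$ and $\VSpace{p}{q}$ is monotone in the absolute values of its entries, then completes this half and produces the $\lesssim$ direction of \eqref{Eqn::Spaces::BasicProps::OneDerivsGivesNorms::Estimate}.

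The substance lies in (ii)$\Rightarrow$(i). The $\alpha=\{\}$ case of (ii) already gives $f\in\DistributionsZeroN$ with $\supp(f)\subseteq\Compact$. Fix an arbitrary $\sE\in\ElemzF{\ManifoldNncF}$; by Definition \ref{Defn::Spaces::Defns::ASpace} it suffices to show $\VpqsENorm{f}[p][q][s+N][\sE]<\infty$. I apply Proposition \ref{Prop::Spaces::Elem::Elem::MainProps} \ref{Item::Spaces::Elem::Elem::PullOutNDerivs} (specifically \eqref{Eqn::Spaces::Elem::Elem::PullOutNDerivs::Right}) to decompose each $(E,2^{-j})\in\sE$ as
\[
E=\sum_{|\alpha|\leq N} 2^{(|\alpha|-N)j}\,\Et_{\alpha}\,(2^{-j\Wdv}W)^{\alpha},
\]
with $\sEt:=\{(\Et_\alpha,2^{-j}):(E,2^{-j})\in\sE,\,|\alpha|\leq N\}\in\ElemzF{\ManifoldNncF}$. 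The hypothesis $\Wdv_k=1$ for every $k$ is decisive: it forces $\DegWdv{\alpha}=|\alpha|$, so $(2^{-j\Wdv}W)^{\alpha}=2^{-j|\alpha|}W^{\alpha}$, and the decomposition telescopes cleanly to $E=\sum_{|\alpha|\leq N}2^{-jN}\Et_\alpha W^\alpha$. Multiplying by $2^{j(s+N)}$ and passing to the $\VSpace{p}{q}$-norm over subsequences of $\sE$ gives
\[
\VpqsENorm{f}[p][q][s+N][\sE]\leq\sum_{|\alpha|\leq N}\VpqsENorm{W^{\alpha}f}[p][q][s][\sEt],
\]
each summand of which is finite by hypothesis, so $f\in\ASpace{s+N}{p}{q}[\Compact][\FilteredSheafF]$. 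Specializing $\sE=\sD_0$ and invoking Corollary \ref{Cor::Spaces::MainEst::VpqsESeminormIsContinuous} to dominate $\VpqsENorm{W^\alpha f}[p][q][s][\sEt]\lesssim\ANorm{W^\alpha f}{s}{p}{q}[\FilteredSheafF]$ closes the norm equivalence.

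The main obstacle is the algebraic matching of scales in the pull-out identity. The factor $2^{-jN}$ produced above is exactly the power of $2^{-j}$ needed to shift regularity from $s+N$ down to $s$ \emph{uniformly} in $\alpha$; for general formal degrees the same decomposition yields the less symmetric factor $2^{-j(N-\DegWdv{\alpha})}$, with $\DegWdv{\alpha}$ spread over an interval, and a term-by-term control by a sum of $\ASpace{s}{p}{q}$-norms of $W^\alpha f$ breaks down. This is precisely why the statement is confined to unit formal degrees and why the analogous characterization for general $\WWdv$ is deferred to future work.
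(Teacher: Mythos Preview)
Your approach matches the paper's: pull out derivatives from the elementary operators via Proposition~\ref{Prop::Spaces::Elem::Elem::MainProps}~\ref{Item::Spaces::Elem::Elem::PullOutNDerivs}, use $\Wdv_k\equiv 1$ to collapse $2^{(|\alpha|-N)j}(2^{-j}W)^\alpha=2^{-jN}W^\alpha$, and convert the $(s+N)$-test into a sum of $s$-tests on $W^\alpha f$. The paper carries this out for $N=1$ and inducts; your direct-$N$ version is equivalent.

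There is one small technical slip. Proposition~\ref{Prop::Spaces::Elem::Elem::MainProps} is stated for $\sE\in\ElemzF{\Compact'}$ with the $W$'s taken to be generators on a neighborhood of that particular $\Compact'$. When you fix an arbitrary $\sE\in\ElemzF{\ManifoldNncF}$, it lies in $\ElemzF{\Compact'}$ for some $\Compact'$ that need not be contained in $\Omega$, so you are not entitled to decompose using the specific $W_1,\dots,W_r$ from the hypothesis (which are only defined on $\Omega$). The paper closes this by restricting at the outset to $\sE\in\ElemzF{\Omega\cap\ManifoldNncF}$, which makes those $W$'s available, and then citing Corollary~\ref{Cor::Spaces::MainEst::SufficesToCheckInSpaceLocally} to conclude $f\in\ASpace{s+N}{p}{q}[\Compact][\FilteredSheafF]$; for the norm bound it picks a $\sDt_0\in\ElemzF{\Omega\cap\ManifoldNncF}$ and invokes Proposition~\ref{Prop::Spaces::Defns::EquivNorms}. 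With that localization inserted, your argument goes through.
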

\begin{proof}
    Suppose \ref{Item::Spaces::BasicProps::OneDerivsGivesNorms::fInAsplusN} holds. Then, by applying
    Proposition \ref{Prop::Spaces::MappingOfFuncsAndVFs} \ref{Item::Spaces::MappingOfFuncsAndVFs::VFsInF} (with \(d=1\))
    \(|\alpha|\leq N\) times, we see \(W^{\alpha}f\in \ASpace{s+N-|\alpha|}{p}{q}[\Compact][\FilteredSheafF]\subseteq \ASpace{s}{p}{q}[\Compact][\FilteredSheafF]\),
    and
    \begin{equation*}
        \ANorm{W^{\alpha}f}{s}{p}{q}[\FilteredSheafF]\leq \ANorm{W^{\alpha}f}{s+N-|\alpha|}{p}{q}[\FilteredSheafF]
        \lesssim \ANorm{f}{s+N}{p}{q}[\FilteredSheafF].
    \end{equation*}
    This establishes \ref{Item::Spaces::BasicProps::OneDerivsGivesNorms::DerivsInAs}
    and the \(\gtrsim\) part of \eqref{Eqn::Spaces::BasicProps::OneDerivsGivesNorms::Estimate}.

    We prove \ref{Item::Spaces::BasicProps::OneDerivsGivesNorms::DerivsInAs}\(\implies\)\ref{Item::Spaces::BasicProps::OneDerivsGivesNorms::fInAsplusN}
    and the \(\lesssim\) part of \eqref{Eqn::Spaces::BasicProps::OneDerivsGivesNorms::Estimate} in the case \(N=1\).
    The result for general \(N\) follows from this and a simple induction.

    Suppose \ref{Item::Spaces::BasicProps::OneDerivsGivesNorms::DerivsInAs} holds for \(N=1\);
    in particular since \(f\in \ASpace{s}{p}{q}[\Compact][\FilteredSheafF]\), we have \(\supp(f)\subseteq \Compact\).
    Let \(\sE\in \ElemzF{\Omega\cap \ManifoldNncF}\). For \((E_j,2^{-j})\in \sE\), Proposition \ref{Prop::Spaces::Elem::Elem::MainProps}
    \ref{Item::Spaces::Elem::Elem::PullOutNDerivs} shows that we may write
    \begin{equation*}
        E_j=\sum_{|\alpha|\leq 1} 2^{(|\alpha|-1)j} \Et_{j,\alpha} \left( 2^{-j}W \right)^{\alpha}
        =\sum_{|\alpha|\leq 1} 2^{-j} \Et_{j,\alpha} W^{\alpha},
    \end{equation*}
    where \(\sEt:=\left\{ (\Et_{j,\alpha}, 2^{-j}) : (E_j,2^{-j})\in \sE, |\alpha|\leq 1 \right\}\in \ElemzF{\Omega\cap\ManifoldNncF}\).
    We have,
    \begin{equation}\label{Eqn::Spaces::BasicProps::OneDerivsGivesNorms::DerivsBasicEst}
    \begin{split}
         &\VpqsENorm{f}[p][q][s+1][\sE]
         =\sup_{\left\{ (E_j, 2^{-j}) : j\in \Zgeq \right\}\subseteq \sE} \BVNorm{\left\{ 2^{j(s+1)} E_j f \right\}}{p}{q}
         \\&\leq \sum_{|\alpha|\leq 1} \sup_{\sup_{\left\{ (\Et_{j,\alpha}, 2^{-j}) : j\in \Zgeq \right\}\subseteq \sEt}}
         \BVNorm{\left\{ 2^{js} \Et_{j,\alpha} W^{\alpha}f \right\}}{p}{q}
         =\sum_{|\alpha|\leq 1} \VpqsENorm{W^{\alpha}f}[p][q][s][\sEt]
         \\&\lesssim \sum_{|\alpha|\leq 1} \ANorm{W^{\alpha}f}{s}{p}{q}[\FilteredSheafF]<\infty.
    \end{split}
    \end{equation}
    Using that \(\supp(f)\subseteq \Compact\),
    Corollary \ref{Cor::Spaces::MainEst::SufficesToCheckInSpaceLocally} (applied with \(\Omega\) replaced by \(\Omega\cap \ManifoldNncF\))
    shows that \eqref{Eqn::Spaces::BasicProps::OneDerivsGivesNorms::DerivsBasicEst} implies 
    \(f\in \ASpace{s+1}{p}{q}[\Compact][\FilteredSheafF]\); i.e., \ref{Item::Spaces::BasicProps::OneDerivsGivesNorms::fInAsplusN} holds
    with \(N=1\).
    Pick \(\psi\in \CinftyCptSpace[\Omega\cap \ManifoldNncF]\) with \(\psi=1\) on a neighborhood of \(\Compact\).
    By Proposition \ref{Prop::Spaces::LP::DjExist}, we may write \(\Mult{\psi}=\sum_{j\in \Zgeq} \Dt_j\),
    where \(\sDt_0:=\left\{ \left( \Dt_j, 2^{-j} \right) : j\in \Zgeq \right\}\in \ElemzF{\Omega\cap \ManifoldNncF}\).
    Using Proposition \ref{Prop::Spaces::Defns::EquivNorms} and \eqref{Eqn::Spaces::BasicProps::OneDerivsGivesNorms::DerivsBasicEst},
    we see
    \begin{equation*}
    \begin{split}
         &\ANorm{f}{s+1}{p}{q}[\FilteredSheafF] \approx \VpqsENorm{f}[p][q][s+1][\sDt_0] \lesssim  \sum_{|\alpha|\leq 1} \ANorm{W^{\alpha}f}{s}{p}{q}[\FilteredSheafF],
    \end{split}
    \end{equation*}
    establishing the \(\lesssim\) part of \eqref{Eqn::Spaces::BasicProps::OneDerivsGivesNorms::Estimate} in the case \(N=1\),
    and completing the proof.
\end{proof}

        \subsubsection{Diffeomorphism Invariance}\label{Section::Spaces::BasicProps::DiffeoInv}
        Our definitions are all ``coordinate-free,'' and therefore behave well under diffeomorphims.
In this subsection, we describe this.
Let \(\ManifoldN\) and \(\ManifoldNt\) be smooth manifolds with boundary and let
\(\Phi:\ManifoldN\xrightarrow{\sim} \ManifoldNt\) be a smooth diffeomorphism.

\begin{definition}
    Let \(\FilteredSheafF\) be a filtration of sheaves of vector fields on \(\ManifoldN\).
    We define \(\Phi_{*}\FilteredSheafF\) a filtration of sheaves of vector fields on \(\ManifoldNt\) by
    \begin{equation*}
        \left( \Phi_{*}\FilteredSheafNoSetF[d] \right)(\Phi(\Omega)) = \left\{ \Phi_{*} X: X\in \FilteredSheafF[\Omega][d] \right\}.
    \end{equation*}
\end{definition}

\begin{lemma}\label{Lemma::Spaces::BasicProps::DiffeoInv::PushForwardHormanderFiltration}
    \(\FilteredSheafF\) is a H\"ormander filtration of sheaves of vector fields on \(\ManifoldN\) if and only if
    \(\FilteredSheafG:=\Phi_{*}\FilteredSheafF\) is a H\"ormander filtration of sheaves of vector fields on \(\ManifoldNt\).
    Moreover, \(\Phi\left( \ManifoldNncF \right)=\ManifoldNtncG\).
\end{lemma}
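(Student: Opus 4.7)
The plan is to exploit the symmetry of the statement under $\Phi \leftrightarrow \Phi^{-1}$: it suffices to prove the forward direction of the equivalence, since the same argument applied to $\Phi^{-1}$ yields the reverse. For the forward direction, the task reduces to verifying that pushforward by $\Phi$ respects all the structural ingredients that go into Definition \ref{Defn::Filtrations_Sheaves::Hormander_Filtration_Sheaves}: the sheaf axioms, the filtration property, the $C^\infty$-module structure, the Lie bracket, and the spanning condition at each point.

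First I would check that $\Phi_* \FilteredSheafF$ is a filtration of sheaves of vector fields in the sense of Definition \ref{Defn::Filtrations_Sheaves::Filtration_Sheaves}. This is routine: $\Phi$ being a diffeomorphism induces a bijection between open subsets of $\ManifoldN$ and $\ManifoldNt$ preserving inclusions, and pushforward is a $\R$-linear bijection on vector fields that commutes with multiplication by smooth functions via $\Phi_*(f X) = (f \circ \Phi^{-1}) \Phi_* X$. Hence the $C^\infty$-module structure and the sheaf axioms transfer. The monotonicity $d_1 \leq d_2 \Rightarrow \FilteredSheafF[\Omega][d_1] \subseteq \FilteredSheafF[\Omega][d_2]$ is preserved trivially. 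For the Hörmander condition, fix $y \in \ManifoldNt$ and set $x = \Phi^{-1}(y)$; by hypothesis there exist an open $\Omega \ni x$ and Hörmander vector fields with formal degrees $\WWdv = \{(W_1,\Wdv_1),\ldots,(W_r,\Wdv_r)\}$ on $\Omega$ with $\FilteredSheafF\big|_\Omega = \FilteredSheafGenBy{\WWdv}$. Then $(\Phi_* W_j, \Wdv_j)$ are smooth vector fields on $\Phi(\Omega)$; since $[\Phi_* W_i, \Phi_* W_j] = \Phi_*[W_i, W_j]$ and $d\Phi_x$ is a linear isomorphism $T_x\ManifoldN \to T_y\ManifoldNt$, iterated brackets of the $\Phi_* W_j$ at $y$ span $T_y \ManifoldNt$ whenever the corresponding brackets of $W_j$ span $T_x \ManifoldN$, so Hörmander's condition is preserved. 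The equality $\Phi_* \FilteredSheafF\big|_{\Phi(\Omega)} = \FilteredSheafGenBy{(\Phi_* W, \Wdv)}$ then follows from the $C^\infty$-compatibility of pushforward.

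For the equality $\Phi(\ManifoldNncF) = \ManifoldNtncG$, I would first note that $\Phi$, being a diffeomorphism of manifolds with boundary, restricts to diffeomorphisms $\InteriorN \to \mathrm{Int}(\ManifoldNt)$ and $\BoundaryN \to \partial\ManifoldNt$, so it suffices to check $\Phi(\BoundaryNncF) = \BoundaryNt^{\mathrm{nc}}_{\FilteredSheafG}$. The key observation is that $\Phi_* \LieFilteredSheafF = \mathrm{Lie}(\Phi_* \FilteredSheafF)_\bullet = \LieFilteredSheafG$, which holds because $\Phi_*$ preserves Lie brackets and the operation $\mathrm{Lie}(\cdot)$ is the smallest bracket-closed filtration containing the input. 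Combined with the fact that $X(x_0) \notin T_{x_0}\BoundaryN$ iff $(\Phi_* X)(\Phi(x_0)) \notin T_{\Phi(x_0)}\partial\ManifoldNt$, this gives the identity $\degBoundaryN{\FilteredSheafF}(x_0) = \deg^{\partial\ManifoldNt}_{\FilteredSheafG}(\Phi(x_0))$ for all $x_0 \in \BoundaryN$. Since $\Phi\big|_{\BoundaryN}$ is a homeomorphism, upper semi-continuity and continuity of the degree function transfer, giving the claimed equality of non-characteristic sets.

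I do not expect any serious obstacle: every step is a direct verification from the definitions, and the main thing to keep track of is that the naturality identity $\Phi_*[X,Y] = [\Phi_* X, \Phi_* Y]$ propagates through all the Lie-theoretic constructions. The most technical point is ensuring $\Phi_* \LieFilteredSheafF = \LieFilteredSheafG$, which I would establish by a double-containment argument using the universal property of $\mathrm{Lie}(\cdot)_\bullet$ and the fact that both $\Phi_*$ and $(\Phi^{-1})_*$ preserve Lie brackets.
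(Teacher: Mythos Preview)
Your proposal is correct and takes the same approach as the paper: the paper's proof is the single sentence ``This follows from the definitions,'' and what you have written is precisely the routine verification that this sentence abbreviates. Every step you outline is sound, and there is nothing to add.
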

\begin{proof}
    This follows from the definitions.
\end{proof}

\begin{proposition}\label{Prop::Spaces::BasicProps::DiffeoInv::DiffeosInduceIsomorphism}
    Let \(\FilteredSheafF\) be a H\"ormander filtration of sheaves of vector fields on \(\ManifoldN\)
    and set \(\FilteredSheafG:=\Phi_{*}\FilteredSheafF\) (so that \(\FilteredSheafG\)
    is a H\"ormander filtration of sheaves of vector fields on \(\ManifoldNt\)
    by Lemma \ref{Lemma::Spaces::BasicProps::DiffeoInv::PushForwardHormanderFiltration}).
    Then,
    \(\Phi^{*}:\TestFunctionsZero[\ManifoldNt]\rightarrow \TestFunctionsZeroN\) restricts to an isomorphism
    \(\forall \Compact\Subset \ManifoldNtncG\),
    \begin{equation*}
        \ASpace{s}{p}{q}[\Compact][\FilteredSheafG]\xrightarrow{\sim} \ASpace{s}{p}{q}[\Phi^{-1}(\Compact)][\FilteredSheafF].
    \end{equation*}
\end{proposition}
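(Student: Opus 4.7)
The plan is to show that conjugation by $\Phi^{*}$ takes the whole Littlewood--Paley machinery on $\widetilde{\ManifoldN}$ defined by $\FilteredSheafG$ bijectively onto the one on $\ManifoldN$ defined by $\FilteredSheafF$, and then read off the isomorphism of function spaces from the intrinsic definition in Section \ref{Section::Spaces::MainDefns}. First I would record that, since $\Phi$ is a diffeomorphism of manifolds with boundary, it restricts to a diffeomorphism $\InteriorN\xrightarrow{\sim}\mathrm{Int}(\widetilde{\ManifoldN})$ and $\BoundaryN\xrightarrow{\sim}\partial\widetilde{\ManifoldN}$, so pullback $\Phi^{*}$ is a topological isomorphism $\TestFunctionsZero[\ManifoldNt]\xrightarrow{\sim}\TestFunctionsZeroN$ and hence $\DistributionsZero[\ManifoldNt]\xrightarrow{\sim}\DistributionsZeroN$, and it sends distributions supported in $\Compact$ to distributions supported in $\Phi^{-1}(\Compact)$.

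Next I would verify the correspondence of the underlying data. By Lemma \ref{Lemma::Spaces::BasicProps::DiffeoInv::PushForwardHormanderFiltration} we have $\Phi(\ManifoldNncF)=\ManifoldNtncG$, and if $\Omega\Subset\ManifoldNncF$ is a chart on which $\FilteredSheafF\big|_\Omega=\FilteredSheafGenBy{\WWdv}$, then $\FilteredSheafG\big|_{\Phi(\Omega)}=\FilteredSheafGenBy{(\Phi_{*}\WWdv)}$, where the formal degrees are preserved. Fixing a smooth strictly positive density $\Vol$ on $\Omega$, the push-forward $\Phi_{*}\Vol$ is a smooth strictly positive density on $\Phi(\Omega)$, and by invariance under change of variables $\Vol[\BWWdv{x}{\delta}]=(\Phi_{*}\Vol)[B_{\Phi_{*}\WWdv}(\Phi(x),\delta)]$ and $\MetricWWdv[x][y]=\rho_{\Phi_{*}\WWdv}(\Phi(x),\Phi(y))$. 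These are the only quantities appearing in the bounds of Definition \ref{Defn::Spaces::LP::PElemWWdv}.

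The central step is then to check that conjugation gives a bijection $\ElemzG{\Phi(\Compact')}\xrightarrow{\sim}\ElemzF{\Compact'}$ for every compact $\Compact'\Subset\ManifoldNncF$. Given $\sEt\in\ElemzG{\Phi(\Compact')}$, for each $(\Et,\delta)\in\sEt$ with Schwartz kernel $\Et(\xt,\yt)$, the operator $\Phi^{*}\Et(\Phi^{*})^{-1}$ has Schwartz kernel $E(x,y)=\Et(\Phi(x),\Phi(y))\,J_{\Phi}(y)$, where $J_\Phi$ is the density-ratio. Because $\Phi$ maps boundary to boundary, $E(x,y)$ still vanishes to infinite order as $y\to\BoundaryN$, so the support and vanishing conditions in Definitions \ref{Defn::Spaces::LP::PElemWWdv} and \ref{Defn::Spaces::LP::ElemWWdv} are preserved. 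The bounds \eqref{Eqn::Spaces::LP::PreElemBound} transfer because $\left(\delta^{\Wdv}W_x\right)^{\alpha}\left(\delta^{\Wdv}W_y\right)^{\beta}E(x,y)$ equals, modulo smooth factors from $J_\Phi$ and products with functions from $\CinftySpace[\Omega]$ (handled by Proposition \ref{Prop::Spaces::Elem::Elem::MainProps} \ref{Item::Spaces::Elem::Elem::MultBySmooth}), the corresponding derivative of $\Et$ evaluated at $(\Phi(x),\Phi(y))$, combined with the metric/volume identities above. The derivative representation required by Definition \ref{Defn::Spaces::LP::ElemWWdv} \ref{Item::Spaces::LP::ElemWWdv::ElemArePreDerivs} is preserved inductively in the same way. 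Symmetry with $\Phi^{-1}$ in place of $\Phi$ gives the reverse inclusion.

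With this bijection of elementary operators in hand, I would finish by choosing $\psit\in\CinftyCptSpace[\ManifoldNtncG]$ equal to $1$ on a neighborhood of $\Compact$ and writing $\Mult{\psit}=\sum_{j}\Dt_j$ as in Proposition \ref{Prop::Spaces::LP::DjExist}; then $\Mult{\Phi^{*}\psit}=\sum_j \Phi^{*}\Dt_j(\Phi^{*})^{-1}$ is a valid Littlewood--Paley decomposition on $\ManifoldN$ for $\FilteredSheafF$ by the previous step, with $\Phi^{*}\psit\equiv 1$ on a neighborhood of $\Phi^{-1}(\Compact)$. Applying Definition \ref{Defn::Spaces::Defns::VspqENorm} with this decomposition and invoking Proposition \ref{Prop::Spaces::Defns::EquivNorms} gives $\ANorm{\Phi^{*}\ft}{s}{p}{q}[\FilteredSheafF]\approx\ANorm{\ft}{s}{p}{q}[\FilteredSheafG]$, so $\Phi^{*}$ restricts to the claimed topological isomorphism. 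The main obstacle is the bookkeeping in the third paragraph: carefully tracking that pulling the Jacobian $J_\Phi$ through the $W$-derivatives only produces terms of the same form as $E$, so that the class $\ElemzOnlySubscript{\FilteredSheafF}$ is genuinely preserved under conjugation; once that is done, the rest is essentially formal.
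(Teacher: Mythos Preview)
Your approach is correct and is precisely what the paper means by its one-line proof ``This follows from the definitions'': since every ingredient in Definitions \ref{Defn::Spaces::LP::PElemWWdv}, \ref{Defn::Spaces::LP::ElemWWdv}, \ref{Defn::Spaces::Defns::VspqENorm}, and \ref{Defn::Spaces::Defns::ASpace} is stated intrinsically (generators of $\FilteredSheafF$, the induced metric $\MetricWWdv$, a strictly positive smooth density), and each is carried bijectively by $\Phi$ to the corresponding object for $\FilteredSheafG$, the conjugation bijection $\ElemzG{\Phi(\Compact')}\xrightarrow{\sim}\ElemzF{\Compact'}$ and hence the norm equivalence are immediate. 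You are simply unpacking this.

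One small remark: the ``main obstacle'' you flag is not actually there. With your own choice of densities---$\Vol$ on $\Omega$ and $\Phi_{*}\Vol$ on $\Phi(\Omega)$---the change-of-variables formula gives the kernel of $\Phi^{*}\Et(\Phi^{*})^{-1}$ with respect to $\Vol$ as exactly $\Et(\Phi(x),\Phi(y))$, with Jacobian $J_\Phi\equiv 1$. The freedom to make this compatible choice is justified by Lemma \ref{Lemma::Spaces::LP::ElemDoesntDependOnChoices}, which shows $\ElemzF{\Compact'}$ is independent of the density. So no bookkeeping with $J_\Phi$ through $W$-derivatives is required, and the transfer of the bounds \eqref{Eqn::Spaces::LP::PreElemBound} and of the derivative structure in Definition \ref{Defn::Spaces::LP::ElemWWdv} \ref{Item::Spaces::LP::ElemWWdv::ElemArePreDerivs} is literally a relabeling $(x,y)\mapsto(\Phi(x),\Phi(y))$, $W_j\mapsto\Phi_{*}W_j$.
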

\begin{proof}
    This follows from the definitions.
\end{proof}

        \subsubsection{Distributions of finite norm}\label{Section::Spaces::BasicProps::FiniteNorm}
        In Notation \ref{Notation::Spaces::Defns::Norm}, we defined 
\(\ANorm{f}{s}{p}{q}[\FilteredSheafF]:=\VpqsENorm{f}[p][q][s][\sD_0]\).
However, if \(f\in \TestFunctionsZeroN\) with \(\supp(f)\subseteq \Compact\)
and \(\ANorm{f}{s}{p}{q}[\FilteredSheafF]<\infty\), we have not shown
\(f\in \ASpace{s}{p}{q}[\Compact][\FilteredSheafF]\).
Corollary \ref{Cor::Spaces::MainEst::ChoiceOfNormWhichGivesFinite} (see Remark \ref{Rmk::Spaces::MainEst::EquivNormWhichHasFinitenessProperty})
shows that we may replace \(\VpqsENorm{\cdot}[p][q][s][\sD_0]\) with an equivalent norm
whose finiteness does imply belonging to the space \(\ASpace{s}{p}{q}[\Compact][\FilteredSheafF]\).
For many applications, \(\VpqsENorm{\cdot}[p][q][s][\sD_0]\) is the most natural and convenient norm to use.
The next result shows that if \(s>0\), the finiteness of this norm does imply belonging to
\(\ASpace{s}{p}{q}[\Compact][\FilteredSheafF]\).

\begin{proposition}\label{Prop::Spaces::BasicProps::FiniteNorm}
    Let \(\ManifoldN\) be a smooth manifold with boundary, \(\FilteredSheafF\) a H\"ormander filtration of sheaves
    of vector fields on \(\ManifoldN\), and \(\Compact\Subset \ManifoldNncF\) compact.
    Fix \(s>0\). For \(f\in \DistributionsZeroN\) with \(\supp(f)\subseteq \Compact\), the following are equivalent:
    \begin{enumerate}[(i)]
        \item \(f\in \ASpace{s}{p}{q}[\Compact][\FilteredSheafF]\).
        \item \(\ANorm{f}{s}{p}{q}[\FilteredSheafF]<\infty\).
    \end{enumerate}
\end{proposition}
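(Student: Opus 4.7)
The implication (i)$\Rightarrow$(ii) is immediate from Notation \ref{Notation::Spaces::Defns::Norm}. For the converse, assume $\VpqsENorm{f}[p][q][s][\sD_0]=\ANorm{f}{s}{p}{q}[\FilteredSheafF]<\infty$, $\supp(f)\subseteq\Compact$, and $s>0$. By Corollary \ref{Cor::Spaces::MainEst::ChoiceOfNormWhichGivesFinite}, it suffices to establish $\VpqsENorm{f}[p][q][s][\sD_N]<\infty$ for $N=\lfloor s\rfloor+1$; this will then yield $f\in\ASpace{s}{p}{q}[\Compact][\FilteredSheafF]$.

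The main quantitative step I would establish is the bound $\VpqsENorm{g}[p][q][s][\sD_N]\lesssim\VpqsENorm{g}[p][q][s][\sD_0]$ for all $g\in\ASpace{s}{p}{q}[\Compact][\FilteredSheafF]$. Fix a sequence $\{(E_m,2^{-m})\}\subseteq\sD_N$ with $E_m=2^{N|k_m|+|l_m|}D_mD_{m+l_m}$ and $|l_m|>|k_m|$. Apply Lemma \ref{Lemma::Spaces::Elem::ELem::ProdOfElemAsSmallSumOfProds} with parameter $M'\geq N+|s|+2$ to factor $D_mD_{m+l_m}=\sum_{r=1}^K 2^{-M'|l_m|}D_{m,r}'D_{m+l_m,r}''$ with $(D_{m,r}',2^{-m})$ and $(D_{n,r}'',2^{-n})$ belonging to some $\sE_{M'}\in\ElemzF{\ManifoldNncF}$. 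Using $|k_m|<|l_m|$ and writing $2^{ms}=2^{(m+l_m)s}2^{-l_ms}$, the prefactor is absorbed:
\begin{equation*}
2^{N|k_m|+|l_m|(1-M')-l_ms}\leq 2^{(N+1+|s|-M')|l_m|}\leq 2^{-|l_m|}.
\end{equation*}
Taking $\VSpace{p}{q}$-norms, invoking Proposition \ref{Prop::Spaces::Elem::PElem::PElemOpsBoundedOnVV} to absorb the termwise action of $D_{m,r}'$, decomposing over the level sets $\{m:l_m=l\}$, and combining shift-invariance of the $\VSpace{p}{q}$-norm with $\sum_{l\in\Z}2^{-|l|}<\infty$ yields $\|\{2^{ms}E_mg\}_m\|_{\VSpace{p}{q}}\lesssim\VpqsENorm{g}[p][q][s][\sE_{M'}]$. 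Corollary \ref{Cor::Spaces::MainEst::VpqsESeminormIsContinuous} then gives $\VpqsENorm{g}[p][q][s][\sE_{M'}]\lesssim\VpqsENorm{g}[p][q][s][\sD_0]$, completing the estimate.

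To apply this when $f$ is not yet known to lie in the space, I would use the smooth truncation $f_M:=\sum_{j=0}^MD_jf\in\CinftyCptSpace$, which has support in $\Compact$ and is therefore automatically in $\ASpace{s}{p}{q}[\Compact][\FilteredSheafF]$, with $f_M\to f$ in $\DistributionsZeroN$. The previous step applied to each $f_M$ gives $\VpqsENorm{f_M}[p][q][s][\sD_N]\lesssim\VpqsENorm{f_M}[p][q][s][\sD_0]$; since each $D_k$ is smoothing, $D_kf_M\to D_kf$ pointwise, and Fatou's lemma yields $\VpqsENorm{f}[p][q][s][\sD_N]\leq\liminf_M\VpqsENorm{f_M}[p][q][s][\sD_N]$. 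The hardest step is the uniform bound $\VpqsENorm{f_M}[p][q][s][\sD_0]\lesssim\VpqsENorm{f}[p][q][s][\sD_0]$ independently of $M$: expanding $D_kf_M=\sum_{j=0}^MD_kD_jf$ and applying Lemma \ref{Lemma::Spaces::Elem::ELem::ProdOfElemAsSmallSumOfProds}, one is led to the kernel $2^{(k-j)s-N|k-j|}$, whose two-sided summability in $l=k-j\in\Z$ is ensured by taking $N>s$, and where the hypothesis $s>0$ is what prevents low-frequency pathologies from the truncation (for $s\leq 0$, analogous estimates fail, which is precisely why Corollary \ref{Cor::Spaces::MainEst::ChoiceOfNormWhichGivesFinite} requires the auxiliary $\sD_N$-norm in the general case). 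Combining these yields $\VpqsENorm{f}[p][q][s][\sD_N]<\infty$ and hence $f\in\ASpace{s}{p}{q}[\Compact][\FilteredSheafF]$.
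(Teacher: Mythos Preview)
Your overall strategy—approximate $f$ by smooth $f_M=P_Mf$, apply the quantitative bound there, and pass to the limit via Fatou—is reasonable in outline, and your first paragraph is correct (in fact the bound $\VpqsENorm{g}[p][q][s][\sD_N]\lesssim\VpqsENorm{g}[p][q][s][\sD_0]$ for $g$ already in the space is immediate from Corollary~\ref{Cor::Spaces::MainEst::VpqsESeminormIsContinuous} applied to $\sE=\sD_N$, without the intermediate computation). The gap is in your ``hardest step.'' When you apply Lemma~\ref{Lemma::Spaces::Elem::ELem::ProdOfElemAsSmallSumOfProds} to $D_kD_j$, the factorization reads $D_kD_j=\sum_r 2^{-N|k-j|}D_{k,r}'D_{j,r}''$, and the inner factor $D_{j,r}''$ acting on $f$ is \emph{not} $D_j$ itself: from the proof of that lemma it equals $(2^{-j\Wdv}W)^\alpha D_j$ in one regime and $D_{j,\alpha}$ in the other, both of which lie in some auxiliary $\sE_N\in\ElemzF{\ManifoldNncF}$ but not in $\sD_0$. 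After your kernel manipulation the right-hand side is therefore $\VpqsENorm{f}[p][q][s][\sE_N]$, not $\VpqsENorm{f}[p][q][s][\sD_0]$; bounding one $\sE$-seminorm by another is precisely Corollary~\ref{Cor::Spaces::MainEst::VpqsESeminormIsContinuous}, which already requires $f\in\ASpace{s}{p}{q}[\Compact][\FilteredSheafF]$. So the argument is circular at this point. (Minor slip: $f_M$ is supported in the compact set carrying $\sD_0$, generally strictly larger than $\Compact$.)

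The paper itself defers to \cite[Proposition~6.10.1]{StreetMaximalSubellipticity}, citing only Lemmas~\ref{Lemma::Spaces::ELem::Elem::ComposeElem} and~\ref{Lemma::Spaces::Elem::PElem::PElemOpsBoundedOnLp}. Note that your kernel $2^{(k-j)s-N|k-j|}$ is summable for $N>|s|$ regardless of the sign of $s$, so that summability alone cannot be where $s>0$ enters. The genuine use of $s>0$ is that it forces $\sum_j\|D_jf\|_{\LpSpace{p}}<\infty$, hence $f\in\LpSpace{p}$; the device that avoids circularity is then to write the composition as $E_j(D_kf)$ and apply Lemma~\ref{Lemma::Spaces::Elem::PElem::PElemOpsBoundedOnLp} to the \emph{outer} operator acting on the fixed function $D_kf$, which keeps the original $D_k$ attached to $f$. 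This handles the piece $k\geq j$ directly (kernel $2^{(j-k)s}$, summable exactly because $s>0$); the low-frequency piece $\sum_{k<j}E_jD_k=E_jP_{j-1}$ must be treated by combining the elementarity of $E_j$ with $\{(P_j,2^{-j}):j\in\Zgeq\}\in\PElemzF{\ManifoldNncF}$ from Proposition~\ref{Prop::Spaces::LP::DjExist}~\ref{Item::Spaces::LP::DjExist::SumToPreElem}. Working out how to close this low-frequency contribution is where the real content of the proof lies, and your sketch does not address it.
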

\begin{proof}
    Using Lemmas \ref{Lemma::Spaces::ELem::Elem::ComposeElem} and \ref{Lemma::Spaces::Elem::PElem::PElemOpsBoundedOnLp},
    this follows just as in the proof of \cite[Proposition 6.10.1]{StreetMaximalSubellipticity}.
\end{proof}

    \subsection{Bounded operators}
    In \cite[Chapter 5]{StreetMaximalSubellipticity}, several equivalent characterizations of singular integrals were given,
which were shown to be bounded on the adapted Besov and Triebel--Lizorkin spaces (see \cite[Theorem 6.3.10]{StreetMaximalSubellipticity}).
In this section, we adapt one version of this to the current (more general) setting of manifolds with boundary.

\begin{theorem}\label{Thm::Spaces::BoundedOps::SumOfElemIsBoundedOp}
    Let \(\sE\in \ElemzF{\Compact}\) and \(t\in \R\). For \(\sE':=\left\{ (E_j,2^{-j}):j\in \Zgeq \right\}\subseteq \sE\)
    set \(T_{\sE'}f:=\sum_{j\in \Zgeq} 2^{jt}E_jf\) (see Proposition \ref{Prop::Spaces::Elem::Elem::ConvergenceOfElemOps}
    for the convergence of this sum). Then,
    \(T_{\sE'}:\ASpace{s+t}{p}{q}[\Compact][\FilteredSheafF]\rightarrow \ASpace{s}{p}{q}[\Compact][\FilteredSheafF]\) and
    \begin{equation}\label{Eqn::Spaces::BoundedOps::SumOfElemIsBoundedOp::UniformEstimate}
        \sup_{\sE'\subseteq \sE} \|T_{\sE'}\|_{\ASpace{s+t}{p}{q}[\Compact][\FilteredSheafF]\rightarrow \ASpace{s}{p}{q}[\Compact][\FilteredSheafF]}
        <\infty,
    \end{equation}
    where the supremum is taken over all \(\sE'\) as described above.
\end{theorem}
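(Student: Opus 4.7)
The approach is a standard Littlewood--Paley almost-orthogonality argument, following the template of \cite[Theorem 6.3.10]{StreetMaximalSubellipticity} but adapted to the boundary setting here. Fix $f\in\ASpace{s+t}{p}{q}[\Compact][\FilteredSheafF]$ and an auxiliary elementary family $\sF=\{(F_k,2^{-k}):k\in\Zgeq\}\in\ElemzF{\ManifoldNncF}$; I will eventually specialize to $\sF=\sD_0$ and $\sF=\sD_{|s|+1}$. Proposition \ref{Prop::Spaces::Elem::Elem::ConvergenceOfElemOps} shows $T_{\sE'}$ is continuous on $\DistributionsZeroN$, so $F_kT_{\sE'}f=\sum_{j\in\Zgeq}2^{jt}F_kE_jf$. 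After replacing $\sE$ by $\sE\cup\sF$---which lies in $\ElemzF{\ManifoldNncF}$ by a maximality argument with the axioms of Definition \ref{Defn::Spaces::LP::ElemWWdv}---one may assume $\sF\subseteq\sE$ and invoke Lemma \ref{Lemma::Spaces::ELem::Elem::ComposeElem} with $N:=\ceil{|t|}+2$ to produce $\sE_N\in\ElemzF{\ManifoldNncF}$ with
\begin{equation*}
F_kE_j=2^{-N|j-k|}G_{j,k},\qquad(G_{j,k},2^{-k})\in\sE_N.
\end{equation*}

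With the substitution $j=k+l$ this becomes
\begin{equation*}
2^{ks}F_kT_{\sE'}f=\sum_{\substack{l\in\Z\\k+l\geq 0}}2^{lt-N|l|}\bigl(2^{k(s+t)}G_{k+l,k}f\bigr).
\end{equation*}
Applying the triangle inequality in $l$ inside $\VNorm{\cdot}{p}{q}$---valid for both $\lqLpSpace{p}{q}$ and $\LplqSpace{p}{q}$---and invoking Corollary \ref{Cor::Spaces::MainEst::VpqsESeminormIsContinuous} at regularity $s+t$ on the subfamily $\{(G_{k+l,k},2^{-k})\}_k\subseteq\sE_N$ for each fixed $l$, one obtains
\begin{equation*}
\VNorm{\{2^{k(s+t)}G_{k+l,k}f\}_k}{p}{q}\leq C\,\ANorm{f}{s+t}{p}{q}[\FilteredSheafF]
\end{equation*}
with $C$ independent of $l$ and of $\sE'$. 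Since $N>|t|$, the geometric series $\sum_{l\in\Z}2^{lt-N|l|}$ converges, and combining with the previous display gives
\begin{equation*}
\VpqsENorm{T_{\sE'}f}[p][q][s][\sF]\lesssim\ANorm{f}{s+t}{p}{q}[\FilteredSheafF]
\end{equation*}
uniformly in $\sE'\subseteq\sE$. The support containment $\supp(T_{\sE'}f)\subseteq\Compact$ is immediate from Definition \ref{Defn::Spaces::LP::PElemWWdv} \ref{Item::Spaces::LP::PElemWWdv::Support} and the fact that support is preserved in the $\DistributionsZeroN$-limit.

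To conclude, taking $\sF=\sD_0$ bounds $\ANorm{T_{\sE'}f}{s}{p}{q}[\FilteredSheafF]$, while taking $\sF=\sD_{|s|+1}$ bounds $\VpqsENorm{T_{\sE'}f}[p][q][s][\sD_{|s|+1}]$; Corollary \ref{Cor::Spaces::MainEst::ChoiceOfNormWhichGivesFinite} then places $T_{\sE'}f$ inside $\ASpace{s}{p}{q}[\Compact][\FilteredSheafF]$, and the uniform bound obtained for $\sF=\sD_0$ is precisely \eqref{Eqn::Spaces::BoundedOps::SumOfElemIsBoundedOp::UniformEstimate}.

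I expect the main obstacle to be the scale-selection step in the composition lemma: Lemma \ref{Lemma::Spaces::ELem::Elem::ComposeElem} offers $G_{j,k}=2^{N|j-k|}F_kE_j$ with either scale $2^{-j}$ or $2^{-k}$, and choosing $2^{-k}$ uniformly throughout the sum is what aligns the weight $2^{k(s+t)}$ with the regularity index $s+t$ fed into Corollary \ref{Cor::Spaces::MainEst::VpqsESeminormIsContinuous}. The auxiliary closure of $\ElemzF{\cdot}$ under finite unions, needed to merge $\sE$ and $\sF$ before applying the composition lemma, is a straightforward but somewhat subtle consequence of the maximality formulation in Definition \ref{Defn::Spaces::LP::ElemWWdv} that must be spelled out carefully.
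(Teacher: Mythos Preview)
Your proposal is correct and follows essentially the same almost-orthogonality strategy as the paper, but the packaging differs in a way worth noting. The paper does not sum over the shift $l$ via the triangle inequality in $\VSpace{p}{q}$; instead, it absorbs the entire sum $\sum_{k}2^{(k-j)t}F_jE_k$ into a \emph{single} elementary operator using Proposition \ref{Prop::Spaces::Elem::Elem::MainProps} \ref{Item::Spaces::Elem::Elem::InfiniteComb} (infinite combinations with summable coefficients), showing directly that $\sEh:=\{(2^{-jt}F_jT_{\sE'},2^{-j})\}\in\ElemzF{\ManifoldNncF}$. This yields $\VpqsENorm{T_{\sE'}f}[p][q][s][\sEt]\leq\VpqsENorm{f}[p][q][s+t][\sEh]$ for \emph{every} $\sEt\in\ElemzF{\ManifoldNncF}$, so membership in $\ASpace{s}{p}{q}[\Compact][\FilteredSheafF]$ follows straight from Definition \ref{Defn::Spaces::Defns::ASpace} without the detour through $\sD_{|s|+1}$ and Corollary \ref{Cor::Spaces::MainEst::ChoiceOfNormWhichGivesFinite}. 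Your route is equally valid; the paper's just keeps more of the argument inside the elementary-operator calculus. Your observation about needing closure of $\ElemzF{\cdot}$ under finite unions is correct and is used implicitly in the paper's proof as well (Lemma \ref{Lemma::Spaces::ELem::Elem::ComposeElem} is stated for a single family, but is applied there with $F_j\in\sEt$ and $E_k\in\sE$).
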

\begin{proof}
    Let \(\sEt\in \ElemzFh{\ManifoldNncF}\). We claim
    \begin{equation}\label{Eqn::Spaces::BoundedOps::SumOfElemIsBoundedOp::ToShowBoundedSet}
        \sEh:=\left\{ \left( 2^{-jt}F_j T_{\sE'},2^{-j} \right) : \left( F_j, 2^{-j} \right)\in \sEt, \sE'\subseteq \sE \right\}\in \ElemzFh{\ManifoldNncF}.
    \end{equation}
    Indeed,
    \begin{equation}\label{Eqn::Spaces::BoundedOps::SumOfElemIsBoundedOp::ElemOpAsSum}
        2^{-jt} F_j T_{\sE'} =\sum_{k\in \Zgeq} 2^{(k-j)t} F_j E_k = \sum_{k\in \Zgeq}2^{-|j-k|} \Ft_{k,j},
    \end{equation}
    where \(\Ft_{k,j}=2^{|k-j|+t(k-j)} F_j E_k\).
    By Lemma \ref{Lemma::Spaces::ELem::Elem::ComposeElem} and Proposition \ref{Prop::Spaces::Elem::Elem::MainProps} \ref{Item::Spaces::Elem::Elem::LinearComb}
    we have 
    \begin{equation*}
        \left\{ \left( \Ft_{k,j},2^{-j} \right) : \left( F_j,2^{-j} \right)\in \sEt, \sE'\subseteq \sE, k\in \Zgeq \right\}\in \ElemzFh{\ManifoldNncF}.
    \end{equation*}
    \eqref{Eqn::Spaces::BoundedOps::SumOfElemIsBoundedOp::ToShowBoundedSet} now follows from \eqref{Eqn::Spaces::BoundedOps::SumOfElemIsBoundedOp::ElemOpAsSum}
    and Proposition \ref{Prop::Spaces::Elem::Elem::MainProps} \ref{Item::Spaces::Elem::Elem::InfiniteComb}.

    Consider, for \(f\in \ASpace{s}{p}{q}[\Compact][\FilteredSheafF]\),
    \begin{equation}\label{Eqn::Spaces::BoundedOps::SumOfElemIsBoundedOp::EstimateSup}
    \begin{split}
         &\sup_{\sE'\subseteq \sE} \VpqsENorm{T_{\sE'}f}[p][q][s][\sEt]
         =\sup_{\substack{ \left\{ (F_j,2^{-j}) : j\in \Zgeq \right\}\subseteq \sEt \\ \sE'\subseteq \sE }} \BVNorm{ \left\{ 2^{j(s+t)} 2^{-jt} F_j T_{\sE'}f \right\}_{j\in \Zgeq} }{p}{q}
         \\&\leq \sup_{\left\{ (G_j, 2^{-j}) : j\in \Zgeq \right\}\subseteq \sEh} \BVNorm{\left\{ 2^{j(s+t)}G_j f \right\}_{j\in \Zgeq}}{p}{q}
         =\VpqsENorm{f}[p][q][s+t][\sEh]
         \lesssim \ANorm{f}{s+t}{p}{q}[\FilteredSheafF]<\infty,
    \end{split}
    \end{equation}
    where the \(\lesssim\) uses Corollary \ref{Cor::Spaces::MainEst::VpqsESeminormIsContinuous}.
    Since \(\sE\in \ElemzF{\Compact}\), we have \(\supp(T_{\sE'}f)\subseteq \Compact\),
    \eqref{Eqn::Spaces::BoundedOps::SumOfElemIsBoundedOp::EstimateSup} shows \(T_{\sE'}f\in \ASpace{s}{p}{q}[\Compact][\FilteredSheafF]\).
    Taking \(\sEt=\sD_0\), \eqref{Eqn::Spaces::BoundedOps::SumOfElemIsBoundedOp::EstimateSup} gives \eqref{Eqn::Spaces::BoundedOps::SumOfElemIsBoundedOp::UniformEstimate}
    and completes the proof.
\end{proof}

It will be useful to have finer control of the convergence of the sum \(\sum_{j\in \Zgeq} 2^{jt}E_j\) in Theorem \ref{Thm::Spaces::BoundedOps::SumOfElemIsBoundedOp}.

\begin{proposition}\label{Prop::Spaces::BoundedOps::ConvergeInStrongOpTop}
    Let \(\left\{ \left( E_j,2^{-j} \right) : j\in \Zgeq \right\}\in \ElemzFh{\Compact}\) and fix \(t\in \R\).
    Then, 
    if \(q<\infty\),
    \(\sum_{j\in \Zgeq} 2^{jt}E_j\) converges in the strong operator topology as operators
    \(\ASpace{s+t}{p}{q}[\Compact][\FilteredSheafF]\rightarrow \ASpace{s}{p}{q}[\Compact][\FilteredSheafF]\).
\end{proposition}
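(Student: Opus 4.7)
Since Theorem~\ref{Thm::Spaces::BoundedOps::SumOfElemIsBoundedOp} already furnishes uniform boundedness of the tails $R_N := \sum_{j > N} 2^{jt} E_j$ as operators $\ASpace{s+t}{p}{q}[\Compact][\FilteredSheafF] \to \ASpace{s}{p}{q}[\Compact][\FilteredSheafF]$ (take the subfamily of $\sE$ indexed by $j > N$, extended by zero operators), the content of the proposition is pointwise convergence: for each fixed $f \in \ASpace{s+t}{p}{q}[\Compact][\FilteredSheafF]$, show $\ANorm{R_N f}{s}{p}{q}[\FilteredSheafF] \to 0$. My plan is to do this directly, by combining almost-orthogonality with dominated convergence, and it is precisely at the final step that the hypothesis $q < \infty$ is used; no density argument is required.

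Compute the norm through the fixed Littlewood--Paley decomposition $\sD_0 = \{(D_l, 2^{-l}) : l \in \Zgeq\}$ of Proposition~\ref{Prop::Spaces::LP::DjExist}, so $\ANorm{R_N f}{s}{p}{q}[\FilteredSheafF] \approx \BVNorm{\{2^{ls} D_l R_N f\}_l}{p}{q}$. Fix $M > |s| + |t| + 1$ and apply Lemma~\ref{Lemma::Spaces::ELem::Elem::ComposeElem} to the family $\sE \cup \sD_0$ to produce $\sEh \in \ElemzF{\ManifoldNncF}$ such that $D_l E_k = 2^{-M|l-k|} G_{l,k}$ with $\{(G_{l,k}, 2^{-l})\}_{l,k \geq 0} \subseteq \sEh$. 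Expand and change variable to $m := k - l$:
\[
    2^{ls} D_l R_N f \;=\; \sum_{m\,:\,l + m > N} 2^{-ms - M|m|}\,\bigl(2^{(l+m)(s+t)} G_{l, l+m} f\bigr).
\]
Triangle inequality in $L^p$ together with Minkowski in the outer $\ell^q$ (Besov case) or $L^p(\ell^q)$ (Triebel--Lizorkin case), combined with the identity $2^{-ms}\cdot 2^{m(s+t)} = 2^{mt}$, yields
\[
    \BVNorm{\{2^{ls} D_l R_N f\}_l}{p}{q} \;\lesssim\; \sum_{m \in \Z} 2^{mt - M|m|}\, u_{N, m}, \qquad u_{N, m} \;:=\; \BVNorm{\{\mathbf{1}_{l > N - m}\cdot 2^{l(s+t)} G_{l, l+m} f\}_l}{p}{q}.
\]
The outer weight sequence $\{2^{mt - M|m|}\}_m$ is summable since $M > |t|$.

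For each fixed $m \in \Z$, the sequence $\{(G_{l, l+m}, 2^{-l}) : l \geq 0\}$ (extended arbitrarily for the finitely many $l$ with $l + m < 0$) is drawn from the single family $\sEh$, so Corollary~\ref{Cor::Spaces::MainEst::VpqsESeminormIsContinuous} delivers the uniform-in-$m$ bound
\[
    \BVNorm{\{2^{l(s+t)} G_{l, l+m} f\}_l}{p}{q} \;\leq\; \VpqsENorm{f}[p][q][s+t][\sEh] \;\lesssim\; \ANorm{f}{s+t}{p}{q}[\FilteredSheafF].
\]
Because $q < \infty$, this $\VSpace{p}{q}$-membership forces $u_{N, m} \to 0$ as $N \to \infty$ for each fixed $m$: in the Besov case this is the vanishing tail of an $\ell^q$ series, while in the Triebel--Lizorkin case one applies dominated convergence to the outer $L^p$ integral, the inner $\ell^q$-tail vanishing pointwise almost everywhere and being dominated by the full $\ell^q$-norm (which lies in $L^p$). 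A final dominated convergence applied to the outer sum in $m$, with dominating summable function $m \mapsto C\,2^{mt - M|m|}\,\ANorm{f}{s+t}{p}{q}[\FilteredSheafF]$, concludes $\sum_m 2^{mt - M|m|} u_{N, m} \to 0$, which is the desired pointwise convergence.

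The delicate point is the uniform-in-$m$ bound on the inner $\VSpace{p}{q}$-norm, since without it the outer dominated convergence step fails; this uniformity is exactly what Corollary~\ref{Cor::Spaces::MainEst::VpqsESeminormIsContinuous} provides, because the family $\sEh$ does not depend on $m$ and the corollary's constant is uniform over choice of subsequence. The hypothesis $q < \infty$ is irreducible in this approach, as the tail of an $\ell^\infty$-sequence need not vanish, consistent with the proposition's explicit exclusion of $q = \infty$.
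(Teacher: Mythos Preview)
Your proof is correct and follows essentially the same approach as the paper. Both arguments decompose $D_l \sum_{k>N} 2^{kt} E_k$ via the almost-orthogonality of Lemma~\ref{Lemma::Spaces::ELem::Elem::ComposeElem}, reindex by the difference $m = k - l$, bound by a summable weight times a tail-truncated $\VSpace{p}{q}$-norm uniformly controlled through Corollary~\ref{Cor::Spaces::MainEst::VpqsESeminormIsContinuous}, and conclude by dominated convergence using $q < \infty$; the only difference is that the paper packages the final step as Lemma~\ref{Lemma::Spaces::BoundedOps::LimitOfVNorm} (deferred to \cite{StreetMaximalSubellipticity}) while you spell it out directly.
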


To prove Proposition \ref{Prop::Spaces::BoundedOps::ConvergeInStrongOpTop}, we use the next lemma.

\begin{lemma}\label{Lemma::Spaces::BoundedOps::LimitOfVNorm}
    Suppose \(\left\{ \left( E_{j,l},2^{-j} \right) : j\in \Zgeq, l\in \Z \right\}\in \ElemzF{\ManifoldNncF}\)
    and \(q<\infty\).
    Then, \(\forall f\in \ASpace{s}{p}{q}[\Compact][\FilteredSheafF]\),
    \begin{equation*}
        \lim_{L\rightarrow \infty} \sum_{l\in \Z} 2^{-|l|}\BVNorm{\left\{ 2^{js} \chi_{\{j>L\}} E_{j,l}f \right\}_{j\in \Zgeq}}{p}{q}
        =0,
    \end{equation*}
    where \(\chi_{\{j>L\}}=1\) if \(j>L\) and \(0\) otherwise.
\end{lemma}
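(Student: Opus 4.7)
The plan is to reduce the statement to two facts: a uniform-in-$l$ bound that enables a dominated-convergence argument in the outer sum, and a pointwise-in-$l$ tail-vanishing statement whose proof uses $q<\infty$ in an essential way.

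First, I would observe that because $\sE:=\{(E_{j,l},2^{-j}):j\in\Zgeq,l\in\Z\}\in\ElemzF{\ManifoldNncF}$, Corollary \ref{Cor::Spaces::MainEst::VpqsESeminormIsContinuous} gives a constant $C$ with $\VpqsENorm{f}[p][q][s][\sE]\le C\,\ANorm{f}{s}{p}{q}[\FilteredSheafF]$. For each fixed $l$, the family $\{(E_{j,l},2^{-j}):j\in\Zgeq\}$ is an admissible choice in the supremum of Definition \ref{Defn::Spaces::Defns::VspqENorm}, so
\[
\BVNorm{\{2^{js}\chi_{\{j>L\}}E_{j,l}f\}_{j\in\Zgeq}}{p}{q}\le \BVNorm{\{2^{js}E_{j,l}f\}_{j\in\Zgeq}}{p}{q}\le C\,\ANorm{f}{s}{p}{q}[\FilteredSheafF]=:A,
\]
uniformly in $l$ and $L$. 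Hence each summand is majorised by $2^{-|l|}A$, which is summable over $l\in\Z$.

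Next, I would show that for each fixed $l$,
\[
\lim_{L\to\infty}\BVNorm{\{2^{js}\chi_{\{j>L\}}E_{j,l}f\}_{j\in\Zgeq}}{p}{q}=0.
\]
In the Besov case $\VSpace{p}{q}=\lqLpSpace{p}{q}$ this is immediate: the expression equals $\bigl(\sum_{j>L}\LpNorm{2^{js}E_{j,l}f}{p}^{q}\bigr)^{1/q}$, which is the tail of a series whose full sum is at most $A^{q}<\infty$, and the hypothesis $q<\infty$ guarantees this tail tends to $0$. In the Triebel--Lizorkin case $\VSpace{p}{q}=\LplqSpace{p}{q}$, set $g_{L}(x):=\bigl(\sum_{j>L}|2^{js}E_{j,l}f(x)|^{q}\bigr)^{1/q}$ and $g(x):=g_{-1}(x)$. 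Then $\LpNorm{g}{p}\le A<\infty$, and since $q<\infty$ we have $g_{L}(x)\downarrow 0$ pointwise with $g_{L}\le g$; dominated convergence yields $\LpNorm{g_{L}}{p}\to 0$.

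Finally, combining the two observations: the sequence $l\mapsto 2^{-|l|}\BVNorm{\{2^{js}\chi_{\{j>L\}}E_{j,l}f\}_{j\in\Zgeq}}{p}{q}$ tends pointwise to $0$ as $L\to\infty$ and is uniformly bounded by the summable sequence $2^{-|l|}A$, so dominated convergence on the counting measure on $\Z$ gives the claim. The only genuinely delicate step is the Triebel--Lizorkin case of the fixed-$l$ argument, which requires the $L^{p}$ dominated-convergence step to pass from pointwise vanishing of the tail $\ell^{q}$-norm to vanishing in $L^{p}$-norm; this is precisely where $q<\infty$ is used, and indeed the conclusion fails for $q=\infty$, since the tail $\ell^{\infty}$-norm of an $\ell^{\infty}$-sequence need not decay.
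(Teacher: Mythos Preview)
Your proof is correct and is the natural dominated-convergence argument: the uniform-in-$l$ bound from Corollary~\ref{Cor::Spaces::MainEst::VpqsESeminormIsContinuous} justifies the outer dominated convergence, and the tail vanishing for each fixed $l$ uses $q<\infty$ exactly as you describe (with the additional observation in the Triebel--Lizorkin case that $p<\infty$ by convention, so the inner $L^p$ dominated convergence is valid). The paper does not give an in-text proof but defers to \cite[Lemma~6.5.18]{StreetMaximalSubellipticity}, whose argument proceeds along the same lines; your write-up supplies precisely the details the paper omits.
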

\begin{proof}
    This follows just as in \cite[Lemma 6.5.18]{StreetMaximalSubellipticity}.
\end{proof}

\begin{proof}[Proof of Proposition \ref{Prop::Spaces::BoundedOps::ConvergeInStrongOpTop}]
    Theorem \ref{Thm::Spaces::BoundedOps::SumOfElemIsBoundedOp} shows
    the partial sums \(\sum_{j=0}^L 2^{jt}E_j\) define bounded operators \(\ASpace{s+t}{p}{q}[\Compact][\FilteredSheafF]\rightarrow \ASpace{s}{p}{q}[\Compact][\FilteredSheafF]\),
    so the goal is to show that \(\sum_{j>L} 2^{jt}E_j\xrightarrow{L\rightarrow\infty}0\)  in the strong operator topology
    \(\ASpace{s+t}{p}{q}[\Compact][\FilteredSheafF]\rightarrow \ASpace{s}{p}{q}[\Compact][\FilteredSheafF]\).
    Since Theorem \ref{Thm::Spaces::BoundedOps::SumOfElemIsBoundedOp} shows
    \(\sum_{j>L} 2^{jt}E_j\) is bounded     \(\ASpace{s+t}{p}{q}[\Compact][\FilteredSheafF]\rightarrow \ASpace{s}{p}{q}[\Compact][\FilteredSheafF]\)
    for each \(L\), it suffices to show
    \begin{equation}\label{Eqn::Spaces::BoundedOps::ConvergeInStrongOpTop::ToShow}
        \BANorm{\sum_{j>L} 2^{jt}E_j f}{s}{p}{q}[\FilteredSheafF]\xrightarrow{L\rightarrow \infty} 0,\quad \forall f\in \ASpace{s+t}{p}{q}[\Compact][\FilteredSheafF].
    \end{equation}

    Take \(\sD_0=\left\{ \left( D_j, 2^{-j} \right) : j\in \Zgeq \right\}\in \ElemzF{\ManifoldNncF}\) as in Section \ref{Section::Spaces::MainDefns}.
    Set, for \(l\in \Z\), \(F_{j+l,l}:=2^{-ls+|l|} D_j E_{j+l}\), where we define \(E_k=0\) for \(k<0\),
    and therefore \(F_{k,l}=0\) if \(k<0\).
    By Lemma \ref{Lemma::Spaces::ELem::Elem::ComposeElem} and Proposition \ref{Prop::Spaces::Elem::Elem::MainProps} \ref{Item::Spaces::Elem::Elem::LinearComb},
    \(\left\{ \left( F_{j,l}, 2^{-j} \right) : j\in \Zgeq \right\}\in \ElemzF{\ManifoldNncF}\).
    We have, for \(f\in \ASpace{s+t}{p}{q}[\Compact][\FilteredSheafF]\),
    \begin{equation*}
    \begin{split}
         &\BANorm{\sum_{k>L} 2^{kt}E_k f}{s}{p}{q}[\FilteredSheafF]
         =\BVNorm{\left\{ \sum_{k>L} 2^{js} D_j 2^{kt}E_k f  \right\}_{j\in \Zgeq}}{p}{q}
         =\BVNorm{\left\{ \sum_{l\in \Z} 2^{js} \chi_{\{j+l>L\}} D_j 2^{(j+l)t}E_{j+l} f  \right\}_{j\in \Zgeq}}{p}{q}
         \\&=\BVNorm{\left\{ \sum_{l\in \Z} 2^{-|l|} 2^{(j+l)(s+t)} \chi_{\{j+l>L\}} F_{j+l,l} f  \right\}_{j\in \Zgeq}}{p}{q}
         \leq \sum_{l\in \Z} 2^{-|l|} \BVNorm{\left\{  2^{j(s+t)} \chi_{\{j>L\}} F_{j,l} f  \right\}_{j\in \Zgeq}}{p}{q}
         \\&\xrightarrow{L\rightarrow \infty}0,
    \end{split}
    \end{equation*}
    where we have applied Lemma \ref{Lemma::Spaces::BoundedOps::LimitOfVNorm} in the final step.
    This establishes \eqref{Eqn::Spaces::BoundedOps::ConvergeInStrongOpTop::ToShow} and completes the proof.
\end{proof}

    \subsection{Approximation by smooth functions}\label{Section::Spaces::SmoothApproximation}
    One can approximate elements in the classical Besov and Triebel--Lizorkin spaces by using the standard
approximation of the identity given by convolving with a smooth function with compact support. This argument does not
work in the current setting, because convolution operators do not respect the underlying Carnot--Carath\'eodory geometry.
However, the ideas can be generalized as we show in this section.

\begin{theorem}\label{Thm::Spaces::Approximation::BasicPlProps}
    Let \(\Omega\Subset \ManifoldNncF\) be open and relatively compact and fix
    \(\psi\in \CinftyCptSpace[\Omega]\).  Then, for each \(l\in \Zgeq\), there are continuous
    maps \(P_l:\DistributionsZeroN\rightarrow \CinftyCptSpace[\Omega]\) such that:
    \begin{enumerate}[(i)]
        \item\label{Item::Spaces::Approximation::BasicPlProps::ConvergeInDist} \(\forall f\in \DistributionsZeroN\), \(P_lf\xrightarrow{l\rightarrow \infty} \psi f\) in \(\DistributionsZeroN\).
        \item\label{Item::Spaces::Approximation::BasicPlProps::ConvergeInSOT} If \(q<\infty\), \(\forall f\in \ASpace{s}{p}{q}[\OmegaClosure][\FilteredSheafF]\),
            \(P_l f \xrightarrow{l\rightarrow \infty} \psi f\) in the strong topology on \(\ASpace{s}{p}{q}[\OmegaClosure][\FilteredSheafF]\).
        \item\label{Item::Spaces::Approximation::BasicPlProps::UnifBdd} \(\forall s\in \R\) and for all \(p,q\) (see Notation \ref{Notation::Spaces::XSpace}),
            \begin{equation*}
                \sup_{l\in \Zgeq} \left\| P_l  \right\|_{\ASpace{s}{p}{q}[\OmegaClosure][\FilteredSheafF]\rightarrow \ASpace{s}{p}{q}[\OmegaClosure][\FilteredSheafF]}<\infty.
            \end{equation*}
    \end{enumerate}
\end{theorem}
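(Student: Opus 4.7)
The plan is to take $P_l$ to be the $l$th partial sum of a Littlewood--Paley decomposition of $\Mult{\psi}$. Concretely, apply Proposition~\ref{Prop::Spaces::LP::DjExist} to obtain
\[
    \sD_0 := \left\{ \left( D_j, 2^{-j} \right) : j \in \Zgeq \right\} \in \ElemzF{\OmegaClosure}
\]
satisfying $\Mult{\psi} = \sum_{j \in \Zgeq} D_j$, and set $P_l := \sum_{j=0}^{l} D_j$. Since $\sD_0 \in \ElemzF{\Compact'}$ for some compact $\Compact' \Subset \Omega$, each $D_j$ has Schwartz kernel supported in $\Compact' \times \Compact'$, so Proposition~\ref{Prop::Spaces::Elem::Elem::ConvergenceOfElemOps} (applied to a singleton family) gives that each $D_j$, and hence $P_l$, is continuous $\DistributionsZeroN \to \CinftyCptSpace[\Omega]$. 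Part~\ref{Item::Spaces::Approximation::BasicPlProps::ConvergeInDist} is then immediate from Proposition~\ref{Prop::Spaces::Elem::Elem::ConvergenceOfElemOps} together with $\Mult{\psi} = \sum_{j} D_j$.

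For the uniform boundedness \ref{Item::Spaces::Approximation::BasicPlProps::UnifBdd}, the strategy is to realize each $P_l$ as one of the operators $T_{\sE'}$ from Theorem~\ref{Thm::Spaces::BoundedOps::SumOfElemIsBoundedOp} with a single underlying elementary family $\sE$ independent of $l$. Using Proposition~\ref{Prop::Spaces::Elem::Elem::MainProps} \ref{Item::Spaces::Elem::Elem::LinearComb} with the coefficients $(a,b) \in \{(1,0), (0,0)\}$ produces a set in $\ElemzF{\OmegaClosure}$ that contains
\[
    \sE := \sD_0 \cup \left\{ \left( 0, 2^{-j} \right) : j \in \Zgeq \right\};
\]
since any subset of an element of $\ElemzF{\OmegaClosure}$ lies in $\ElemzF{\OmegaClosure}$ (this follows directly from the maximality in Definition~\ref{Defn::Spaces::LP::ElemWWdv}, checking that ``all subsets of elements of $\ElemzF{\OmegaClosure}$'' satisfies the two defining axioms), we have $\sE \in \ElemzF{\OmegaClosure}$. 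For each $l \in \Zgeq$ define $E_j^{(l)} := D_j$ for $j \leq l$ and $E_j^{(l)} := 0$ otherwise, so that $\sE_l' := \{(E_j^{(l)}, 2^{-j}) : j \in \Zgeq\} \subseteq \sE$ and $T_{\sE_l'} = P_l$. Theorem~\ref{Thm::Spaces::BoundedOps::SumOfElemIsBoundedOp} with $t=0$ then yields the desired uniform bound.

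For the strong convergence \ref{Item::Spaces::Approximation::BasicPlProps::ConvergeInSOT} when $q < \infty$, apply Proposition~\ref{Prop::Spaces::BoundedOps::ConvergeInStrongOpTop} with $t = 0$ to the family $\sD_0$: the sum $\sum_{j} D_j$ converges in the strong operator topology on $\ASpace{s}{p}{q}[\OmegaClosure][\FilteredSheafF]$. The strong limit, being a fortiori a limit in $\DistributionsZeroN$, must coincide with $\Mult{\psi}$ by \ref{Item::Spaces::Approximation::BasicPlProps::ConvergeInDist}; and $\Mult{\psi}$ maps $\ASpace{s}{p}{q}[\OmegaClosure][\FilteredSheafF]$ to itself by Proposition~\ref{Prop::Spaces::MappingOfFuncsAndVFs} \ref{Item::Spaces::MappingOfFuncsAndVFs::Funcs} (after extending $\psi$ by zero from $\Omega$ to $\ManifoldNncF$), which makes the statement $P_l f \to \psi f$ meaningful in the target space.

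The main obstacle I anticipate is the step that forces the partial sum $P_l$ into the format of Theorem~\ref{Thm::Spaces::BoundedOps::SumOfElemIsBoundedOp}: the theorem sums over all $j \in \Zgeq$, so one must augment $\sD_0$ with zero operators at every scale while preserving membership in $\ElemzF{\OmegaClosure}$. Once the maximality argument for subsets is in place, everything else is a routine combination of results already established.
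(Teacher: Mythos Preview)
Your proposal is correct and is essentially the paper's own proof: the paper also sets \(P_l=\sum_{j=0}^{l}D_j\) with \(D_j\) from Proposition~\ref{Prop::Spaces::LP::DjExist}, obtains \ref{Item::Spaces::Approximation::BasicPlProps::ConvergeInDist} from Proposition~\ref{Prop::Spaces::Elem::Elem::ConvergenceOfElemOps}, \ref{Item::Spaces::Approximation::BasicPlProps::ConvergeInSOT} from Proposition~\ref{Prop::Spaces::BoundedOps::ConvergeInStrongOpTop}, and \ref{Item::Spaces::Approximation::BasicPlProps::UnifBdd} from Theorem~\ref{Thm::Spaces::BoundedOps::SumOfElemIsBoundedOp} applied to \(\{(D_j,2^{-j}),(0,2^{-j}):j\in\Zgeq\}\in\ElemzF{\Omega}\). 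Your justification of the zero-augmentation step via Proposition~\ref{Prop::Spaces::Elem::Elem::MainProps}\ref{Item::Spaces::Elem::Elem::LinearComb} and closure under subsets is more explicit than the paper (which simply asserts it), but the argument is the same.
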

\begin{proof}
    Let \(\left\{ \left( D_j, 2^{-j} \right) : j\in \Zgeq \right\}\in \ElemzF{\Omega}\) be as in Proposition \ref{Prop::Spaces::LP::DjExist},
    so that for \(f\in \DistributionsZeroN\) we have (by Propositions \ref{Prop::Spaces::Elem::Elem::ConvergenceOfElemOps} and \ref{Prop::Spaces::LP::DjExist}),
    \(\Mult{\psi}=\sum_{j\in \Zgeq} D_j f=\psi f\), with convergence in \(\DistributionsZeroN\).
    We set \(P_l=\sum_{j=0}^l D_j\in \CinftyCptSpace[\Omega\times \Omega]\),
    so that \(P_l(x,y)\) vanishes to infinite order as \(y\rightarrow \BoundaryN\).
    We conclude  \(P_l:\DistributionsZeroN\rightarrow \CinftyCptSpace[\Omega]\)  is continuous (see Remark \ref{Rmk::Spaces::LP::FunctionsOrOperators})  and
    \ref{Item::Spaces::Approximation::BasicPlProps::ConvergeInDist} holds.

    \ref{Item::Spaces::Approximation::BasicPlProps::ConvergeInSOT} can be equivalently restated as
    \(P_l\xrightarrow{l\rightarrow \infty}\Mult{\psi}\) in the strong operator topology
    as operators \(\ASpace{s}{p}{q}[\OmegaClosure][\FilteredSheafF]\rightarrow \ASpace{s}{p}{q}[\OmegaClosure][\FilteredSheafF]\).
    Since \(P_l\) are the partial sums of \(\sum_{j=0}^\infty D_j\) that \(P_l\) converges in the strong operator topology follows from
    Proposition \ref{Prop::Spaces::BoundedOps::ConvergeInStrongOpTop}; that the limit of this convergent sequence must be \(\Mult{\psi}\)
    follows from \ref{Item::Spaces::Approximation::BasicPlProps::ConvergeInDist}.

    Since \(P_l=\sum_{j=0}^l D_j\) and \(\left\{ (D_j, 2^{-j}), (0,2^{-j}) : j\in \Zgeq \right\}\in \ElemzF{\Omega}\),
    \ref{Item::Spaces::Approximation::BasicPlProps::UnifBdd} follows from Theorem \ref{Thm::Spaces::BoundedOps::SumOfElemIsBoundedOp}
    (with \(t=0\)).
\end{proof}

\begin{corollary}[Smooth functions are dense]\label{Cor::Spaces::Approximation::SmoothFunctionsAreDense}
    Fix \(\Omega\Subset \ManifoldNncF\) with \(\Compact\Subset \Omega\) and
    let \(f\in \ASpace{s}{p}{q}[\Compact][\FilteredSheafF]\).
    \begin{enumerate}[(i)]
        \item\label{Item::Spaces::Approximation::SmoothFunctionsAreDense::ApproxInDist} 
        There exists \(f_j\in \CinftyCptSpace[\Omega]\) with \(\left\{ f_j :j\in \Zgeq\right\}\subset \ASpace{s}{p}{q}[\OmegaClosure][\FilteredSheafF]\)
            a bounded set, with \(f_j\rightarrow f\) in \(\DistributionsZeroN\).
        \item\label{Item::Spaces::Approximation::SmoothFunctionsAreDense::ApproxInST} If \(q<\infty\), there exists \(f_j\in \CinftyCptSpace[\Omega]\) with \(f_j\rightarrow f\) in \(\ASpace{s}{p}{q}[\OmegaClosure][\FilteredSheafF]\).
    \end{enumerate}
\end{corollary}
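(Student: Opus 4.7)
The plan is to deduce this corollary directly from Theorem \ref{Thm::Spaces::Approximation::BasicPlProps} by making a judicious choice of the cutoff $\psi$. Since $\Compact \Subset \Omega$ with $\Omega$ open and relatively compact in $\ManifoldNncF$, I can pick $\psi \in \CinftyCptSpace[\Omega]$ with $\psi \equiv 1$ on an $\ManifoldN$-neighborhood of $\Compact$. Because $\supp(f) \subseteq \Compact$, multiplication by $\psi$ fixes $f$, that is, $\psi f = f$ in $\DistributionsZeroN$; this is the observation that lets me pass from statements about $\Mult{\psi}$ to statements about the identity on the subspace $\ASpace{s}{p}{q}[\Compact][\FilteredSheafF]$.

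With this $\psi$ fixed, let $P_l:\DistributionsZeroN\rightarrow \CinftyCptSpace[\Omega]$ be the operators produced by Theorem \ref{Thm::Spaces::Approximation::BasicPlProps}, and set $f_l := P_l f$. By construction, each $f_l$ lies in $\CinftyCptSpace[\Omega]$, which takes care of the regularity and support requirement. For part \ref{Item::Spaces::Approximation::SmoothFunctionsAreDense::ApproxInST}, when $q<\infty$, the strong-operator convergence in Theorem \ref{Thm::Spaces::Approximation::BasicPlProps} \ref{Item::Spaces::Approximation::BasicPlProps::ConvergeInSOT} applied to $f\in \ASpace{s}{p}{q}[\Compact][\FilteredSheafF]\subseteq \ASpace{s}{p}{q}[\OmegaClosure][\FilteredSheafF]$ (using Proposition \ref{Prop::Spaces::Containment}) yields $f_l = P_l f \to \psi f = f$ in $\ASpace{s}{p}{q}[\OmegaClosure][\FilteredSheafF]$, which is exactly what is wanted.

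For part \ref{Item::Spaces::Approximation::SmoothFunctionsAreDense::ApproxInDist}, the same sequence works: Theorem \ref{Thm::Spaces::Approximation::BasicPlProps} \ref{Item::Spaces::Approximation::BasicPlProps::UnifBdd} gives $\sup_l \|P_l\|_{\ASpace{s}{p}{q}[\OmegaClosure][\FilteredSheafF]\to\ASpace{s}{p}{q}[\OmegaClosure][\FilteredSheafF]}<\infty$, so $\{f_l\}$ is a bounded set in $\ASpace{s}{p}{q}[\OmegaClosure][\FilteredSheafF]$, while Theorem \ref{Thm::Spaces::Approximation::BasicPlProps} \ref{Item::Spaces::Approximation::BasicPlProps::ConvergeInDist} gives $f_l = P_l f \to \psi f = f$ in $\DistributionsZeroN$. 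There is no real obstacle here; the work has already been done in Theorem \ref{Thm::Spaces::Approximation::BasicPlProps}, and this corollary is essentially a transcription of that theorem into the language of approximating sequences.
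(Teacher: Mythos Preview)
Your proof is correct and follows essentially the same approach as the paper: choose \(\psi\in\CinftyCptSpace[\Omega]\) equal to \(1\) near \(\Compact\), set \(f_l:=P_l f\) with \(P_l\) from Theorem \ref{Thm::Spaces::Approximation::BasicPlProps}, and read off both parts from \ref{Item::Spaces::Approximation::BasicPlProps::ConvergeInDist}, \ref{Item::Spaces::Approximation::BasicPlProps::ConvergeInSOT}, and \ref{Item::Spaces::Approximation::BasicPlProps::UnifBdd}.
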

\begin{proof}
    Take \(\psi\in \CinftyCptSpace[\Omega]\) with \(\psi=1\) on a neighborhood of \(\Compact\).
    Let \(P_l\) be as in Theorem \ref{Thm::Spaces::Approximation::BasicPlProps} with this choice of \(\psi\)
    and set \(f_l:=P_l f\).
    \ref{Item::Spaces::Approximation::SmoothFunctionsAreDense::ApproxInDist} follows from 
    Theorem \ref{Thm::Spaces::Approximation::BasicPlProps} \ref{Item::Spaces::Approximation::BasicPlProps::ConvergeInDist} and \ref{Item::Spaces::Approximation::BasicPlProps::UnifBdd},
    while 
    \ref{Item::Spaces::Approximation::SmoothFunctionsAreDense::ApproxInST}
    follows from
    Theorem \ref{Thm::Spaces::Approximation::BasicPlProps} \ref{Item::Spaces::Approximation::BasicPlProps::ConvergeInSOT}.
\end{proof}

Corollary \ref{Cor::Spaces::Approximation::SmoothFunctionsAreDense} \ref{Item::Spaces::Approximation::SmoothFunctionsAreDense::ApproxInST} gives 
a stronger result than 
Corollary
\ref{Cor::Spaces::Approximation::SmoothFunctionsAreDense} \ref{Item::Spaces::Approximation::SmoothFunctionsAreDense::ApproxInDist},
but requires \(q<\infty\).
However, Corollary
\ref{Cor::Spaces::Approximation::SmoothFunctionsAreDense} \ref{Item::Spaces::Approximation::SmoothFunctionsAreDense::ApproxInDist} is
often a sufficiently strong approximation for our purposes, 
as the next proposition shows.

\begin{proposition}\label{Prop::Spaces::Approximation::DistributionConvgToStronger}
    Suppose \(\left\{ u_j  \right\}_{j\in \Zgeq}\subset \ASpace{s}{p}{q}[\Compact][\FilteredSheafF]\) is a bounded sequence such that
    \(u_j\rightarrow u\) in \(\DistributionsZeroN\), for some \(u\in \DistributionsZeroN\). Then,
    \begin{enumerate}[(i)]
        \item\label{Item::Spaces::Approximation::DistributionConvgToStronger::NormBound} \(u\in \ASpace{s}{p}{q}[\Compact][\FilteredSheafF]\) with 
        \(
            \ANorm{u}{s}{p}{q}[\FilteredSheafF]\leq \liminf_{j\rightarrow \infty} \ANorm{u_j}{s}{p}{q}[\FilteredSheafF].
        \)
        \item\label{Item::Spaces::Approximation::DistributionConvgToStronger::StrongConvg} If \(1<p,q<\infty\), then \(u\in \convClosure\left\{ u_j : j\in \Zgeq \right\}\), where \(\convClosure\) denotes the closure of
            the convex hull, and the closure is taken in the norm topology on \(\ASpace{s}{p}{q}[\Compact][\FilteredSheafF]\).
    \end{enumerate}
\end{proposition}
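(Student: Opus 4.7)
The plan is to prove (i) by a Fatou-type argument applied to the Littlewood--Paley characterization of the norm, and (ii) by establishing reflexivity of $\ASpace{s}{p}{q}[\Compact][\FilteredSheafF]$ when $1 < p, q < \infty$ and then invoking Mazur's theorem.

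For (i), first observe $\supp(u) \subseteq \Compact$: since $\supp(u_j) \subseteq \Compact$ and $u_j \to u$ in $\DistributionsZeroN$, any $\phi \in \TestFunctionsZeroN$ with $\supp(\phi) \cap \Compact = \emptyset$ satisfies $\PairDistributionAndTestFunctions{u}{\phi} = \lim_j \PairDistributionAndTestFunctions{u_j}{\phi} = 0$. Fix $\sD_0 = \{(D_j, 2^{-j}) : j \in \Zgeq\} \in \ElemzF{\ManifoldNncF}$ as in Section \ref{Section::Spaces::MainDefns}. Unpacking Definition \ref{Defn::Spaces::Defns::VspqENorm}, the sup is attained by including every index (by monotonicity of the $\VSpace{p}{q}$-norms under augmentation by zero entries), so $\ANorm{f}{s}{p}{q}[\FilteredSheafF] = \|\{2^{js} D_j f\}_{j \in \Zgeq}\|_{\VSpace{p}{q}}$. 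Since each $D_j : \DistributionsZeroN \to \CinftyCptSpace[\ManifoldN]$ is continuous (Proposition \ref{Prop::Spaces::Elem::Elem::ConvergenceOfElemOps}), $D_j u_k(x) \to D_j u(x)$ pointwise as $k \to \infty$ for every fixed $j$. In the Besov case I apply Fatou in $L^p$ for each $j$, then Fatou in the counting measure on $\{j \leq N\}$, and finally pass $N \to \infty$ by monotone convergence; in the Triebel--Lizorkin case I apply Fatou in $L^p$ to the truncated functions $(\sum_{j \leq N} |2^{js} D_j \cdot|^q)^{1/q}$ and then let $N \to \infty$ by monotone convergence. Either way, $\ANorm{u}{s}{p}{q}[\FilteredSheafF] \leq \liminf_j \ANorm{u_j}{s}{p}{q}[\FilteredSheafF] < \infty$. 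To upgrade this finite-norm bound to actual membership in $\ASpace{s}{p}{q}[\Compact][\FilteredSheafF]$ (cf.\ Remark \ref{Rmk::Spaces::Defns::NormFiniteDoesntMeanInSpace}), I run the same Fatou argument on the equivalent norm $\VpqsENorm{\cdot}[p][q][s][\sD_0] + \VpqsENorm{\cdot}[p][q][s][\sD_{|s|+1}]$ from Corollary \ref{Cor::Spaces::MainEst::ChoiceOfNormWhichGivesFinite}, noting $\sD_{|s|+1} \in \ElemzF{\ManifoldNncF}$ by Lemma \ref{Lemma::Spaces::MainEst::sDNIsElem}.

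For (ii), the key intermediate step is reflexivity. The map $T : f \mapsto \{2^{js} D_j f\}_{j \in \Zgeq}$ embeds $\ASpace{s}{p}{q}[\Compact][\FilteredSheafF]$ isometrically into $\VSpace{p}{q}$. Since $\ASpace{s}{p}{q}[\Compact][\FilteredSheafF]$ is complete (Theorem \ref{Thm::Spaces::BanachSpace}), its image under $T$ is closed in $\VSpace{p}{q}$; and for $1 < p, q < \infty$ both $\lqLpSpace{p}{q}$ and $\LplqSpace{p}{q}$ are reflexive, so every closed subspace is reflexive. Hence $\ASpace{s}{p}{q}[\Compact][\FilteredSheafF]$ is reflexive, and the bounded sequence $\{u_j\}$ admits a weakly convergent subsequence $u_{j_k} \rightharpoonup v$. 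By Proposition \ref{Prop::Spaces::MainEst::QuantitativeDistributions}, for each $g \in \TestFunctionsZeroN$ the linear functional $f \mapsto \PairDistributionAndTestFunctions{f}{g}$ is continuous on $\ASpace{s}{p}{q}[\Compact][\FilteredSheafF]$, so $\PairDistributionAndTestFunctions{u_{j_k}}{g} \to \PairDistributionAndTestFunctions{v}{g}$; combined with distributional convergence this forces $v = u$. Applying Mazur's theorem to the convex set $\conv\{u_{j_k}\} \subseteq \conv\{u_j\}$ produces norm-convergent finite convex combinations of the $u_{j_k}$ approaching $u$, so $u \in \convClosure\{u_j\}$.

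The main obstacle is the reflexivity argument in (ii), which relies on $T$ being a genuine isometry (not merely bi-Lipschitz); this in turn rests on the observation that the sup in Definition \ref{Defn::Spaces::Defns::VspqENorm} for $\sE = \sD_0$ is attained by the full sequence, which needs to be verified from the structure of $\sD_0$ and the monotonicity of $\ell^q(L^p)$ and $L^p(\ell^q)$ norms. Part (i) is essentially Fatou's lemma, with the only subtle point being the passage from a finite-norm estimate to true space membership handled via Corollary \ref{Cor::Spaces::MainEst::ChoiceOfNormWhichGivesFinite}.
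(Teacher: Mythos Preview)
Your proof is correct and follows essentially the same approach as the paper: Fatou for (i), reflexivity plus Mazur for (ii). Two small remarks: for (i), the paper bounds $\VpqsENorm{u}[p][q][s][\sE]$ for \emph{every} $\sE\in\ElemzF{\ManifoldNncF}$ via Fatou and thereby gets membership directly from Definition~\ref{Defn::Spaces::Defns::ASpace}, rather than routing through Corollary~\ref{Cor::Spaces::MainEst::ChoiceOfNormWhichGivesFinite}; and for (ii), the paper applies weak compactness in $\VSpace{p}{q}$ rather than first proving $\ASpace{s}{p}{q}[\Compact][\FilteredSheafF]$ is reflexive---but your closed-subspace argument is equivalent, and in fact only needs $T$ to be bi-Lipschitz (not isometric), since reflexivity passes to closed subspaces and is an isomorphism invariant, so your flagged concern is unnecessary.
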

\begin{proof}
    Let \(\sE\in \ElemzF{\ManifoldNncF}\).  For \((E_j, 2^{-j})\in \sE\), we have
    \(E_j(x,\cdot)\in \TestFunctionsZeroN\) for every \(x\), and therefore,
    \begin{equation}\label{Eqn::Spaces::Approximation::DistributionConvgToStronger::PtWise}
        E_j u(x) = u(E_j(x,\cdot))=\lim_{k\rightarrow \infty} u_k(E_j(x,\cdot)) = \lim_{k\rightarrow \infty} E_j u_k(x),
    \end{equation}
    where we have written \(u(E_j(x,\cdot))\) for the distribution \(u\in \DistributionsZeroN\) applied to the function
    \(E_j(x,\cdot)\). In other words, \(E_j u_k\rightarrow E_j u\) pointwise.

    We begin with \ref{Item::Spaces::Approximation::DistributionConvgToStronger::NormBound}.
    Let \(\sE\in \ElemzF{\ManifoldNncF}\).
    Using \eqref{Eqn::Spaces::Approximation::DistributionConvgToStronger::PtWise},  an elementary argument if \(p\), \(q\), or both are \(\infty\), and Fatou's lemma when \(p<\infty\) or \(q<\infty\), we have
    \begin{equation}\label{Eqn::Spaces::Approximation::DistributionConvgToStronger::LimitBound}
    \begin{split}
         &\VpqsENorm{u}[p][q][s][\sE] 
         =\sup_{\left\{ \left( E_j, 2^{-j} \right) : j\in \Zgeq \right\}\subseteq \sE} \BVNorm{\left\{ 2^{js}E_j u \right\}_{j\in \Zgeq}}{p}{q}
         \\&=\sup_{\left\{ \left( E_j, 2^{-j} \right) : j\in \Zgeq \right\}\subseteq \sE} \BVNorm{\left\{ \lim_{k\rightarrow \infty}2^{js}E_j u_k \right\}_{j\in \Zgeq}}{p}{q}
         \leq \liminf_{k\rightarrow \infty}\sup_{\left\{ \left( E_j, 2^{-j} \right) : j\in \Zgeq \right\}\subseteq \sE} \BVNorm{\left\{ 2^{js}E_j u_k \right\}_{j\in \Zgeq}}{p}{q}
         \\&=\liminf_{k\rightarrow\infty} \VpqsENorm{u_k}[p][q][s][\sE] \lesssim \liminf_{k\rightarrow\infty}\ANorm{u_k}{s}{p}{q}[\FilteredSheafF]<\infty,
    \end{split}
    \end{equation}
    where the \(\lesssim\) uses Corollary \ref{Cor::Spaces::MainEst::VpqsESeminormIsContinuous}.
    Since \(\supp(u_k)\subseteq \Compact\), \(\forall \Compact\), we have \(\supp(u)\subseteq \Compact\).
    It follows that \(u\in \ASpace{s}{p}{q}[\Compact][\FilteredSheafF]\).
    Using \eqref{Eqn::Spaces::Approximation::DistributionConvgToStronger::LimitBound},
    with \(\sE=\sD_0\), we have
    \begin{equation*}
        \ANorm{u}{s}{p}{q}[\FilteredSheafF]=\VpqsENorm{u}[p][q][s][\sD_0] \leq \liminf_{k\rightarrow\infty} \VpqsENorm{u_k}[p][q][s][\sD_0]
        =\liminf_{k\rightarrow\infty}\ANorm{u_k}{s}{p}{q}[\FilteredSheafF],
    \end{equation*}
    completing the proof of \ref{Item::Spaces::Approximation::DistributionConvgToStronger::NormBound}.

    Turning to \ref{Item::Spaces::Approximation::DistributionConvgToStronger::StrongConvg}, we know by 
    \ref{Item::Spaces::Approximation::DistributionConvgToStronger::NormBound} that \(u\in \ASpace{s}{p}{q}[\Compact][\FilteredSheafF]\).
    With \(\sD_0=\left\{ \left( D_j, 2^{-j} \right) : j\in \Zgeq \right\}\in \ElemzF{\ManifoldNncF}\) as in Section \ref{Section::Spaces::MainDefns},
    set \(\sT:\ASpace{s}{p}{q}[\Compact][\FilteredSheafF]\rightarrow \VSpace{p}{q}\) by
    \(\sT v = \left\{ 2^{js}D_j v \right\}_{j\in \Zgeq}\), so that
    \begin{equation}\label{Eqn::Spaces::Approximation::DistributionConvgToStronger::NormInTermsOfVV}
        \VNorm{\sT v}{p}{q}=\ANorm{v}{s}{p}{q}[\FilteredSheafF], \quad v\in \ASpace{s}{p}{q}[\Compact][\FilteredSheafF].
    \end{equation}

    By assumption and \eqref{Eqn::Spaces::Approximation::DistributionConvgToStronger::NormInTermsOfVV}, we have
    \(\left\{ \sT u_k : k\in \Zgeq \right\}\subset \VSpace{p}{q}\) is a bounded set. Since \(\VSpace{p}{q}\)
    is reflexive (see \cite[Section 2.11.1]{TriebelTheoryOfFunctionSpaces}--this uses \(1<p,q<\infty\)), there exists a
    subsequence \(u_{k_l}\) such that \(\sT u_{k_l}\) converges weakly in \(\VSpace{p}{q}\) to some
    \(\left\{ f_j \right\}_{j\in \Zgeq}\in \VSpace{p}{q}\). But we have already shown
    \(D_j u_{k}\rightarrow D_j u\) pointwise, and therefore \(f_j=\lim_{l\rightarrow \infty} 2^{js}D_j u_{k_l}=2^{js}D_j u\);
    i.e., \(\left\{ f_j\right\}_{j\in \Zgeq}=\sT u\).

    By Mazur's Lemma, there exists \(v_m\in \conv\left\{ u_{k_1},\ldots, u_{k_m} \right\}\) such that
    \(\lim_{m\rightarrow\infty} \sT v_m = \left\{ f_j \right\}_{j\in \Zgeq}=\sT u\), where the limit is taken in the
    norm topology on \(\VSpace{p}{q}\).  By \eqref{Eqn::Spaces::Approximation::DistributionConvgToStronger::NormInTermsOfVV},
    and using \(u, v_m\in \ASpace{s}{p}{q}[\Compact][\FilteredSheafF]\), \(\forall m\),
    this is equivalent to \(v_m\rightarrow u\) in \(\ASpace{s}{p}{q}[\Compact][\FilteredSheafF]\), completing the proof.
\end{proof}

    \subsection{Restrictions and extensions}\label{Section::Spaces::RestrictionAndExtension}
    In this section, we prove Theorem \ref{Thm::Spaces::Extension}.

For \(\Omega\subseteq \ManifoldM\), 
we write
\(\VSpace{p}{q}(\Omega)\) for
\begin{equation*}
    \VSpace{p}{q}=
    \begin{cases}
        \lqLpSpace{p}{q}[\Omega], &\text{if } \ASpace{s}{p}{q}=\BesovSpace{s}{p}{q},\\
        \LplqSpace{p}{q}[\Omega], &\text{if } \ASpace{s}{p}{q}=\TLSpace{s}{p}{q},
    \end{cases}
\end{equation*}
as in
Notation \ref{Notation::Spaces::Classical::VSpacepq}, where we make \(\Omega\) explicit.
We similarly define \(\VSpace{p}{q}(\Omega\cap \ManifoldN)\).

\begin{proof}[Proof of Theorem \ref{Thm::Spaces::Extension} \ref{Item::Spaces::Extension::Restriction}]
    Let \(\Omega\Subset \ManifoldM\) be \(\ManifoldM\)-open and relatively compact
    with \(\Omega\cap \ManifoldN\subseteq \ManifoldNncF\) and \(\Compact\Subset \Omega\).
    Fix \(\psih\in \CinftyCptSpace[\Omega]\) with \(\psih=1\) on a neighborhood of \(\Compact\).
    Let \(\left\{ \left( \Dh_j,2^{-j} \right) : j\in \Zgeq\right\}\in \ElemzFhN{\Omega}\) be 
    as in Theorem \ref{Thm::Spaces::Multiplication::MainAmbientTheorem} (with \(\Omegah\) replaced by \(\Omega\))
    so that \(\sum_{j\in \Zgeq}=\Mult{\psih}\) and if \(D_j=\Dh_j\big|_{\ManifoldN\times \ManifoldN}\),
    then Theorem \ref{Thm::Spaces::Multiplication::MainAmbientTheorem} \ref{Item::Spaces::Multiplication::MainAmbientTheorem::RestrictedAreElem}
    and \ref{Item::Spaces::Multiplication::MainAmbientTheorem::RestrictedSumToMultiplication} hold as well.

    To prove the result, we use Corollary \ref{Cor::Spaces::MainEst::ChoiceOfNormWhichGivesFinite}.
    For \(j,k,l\in \Zgeq\) and \(N:=|s|+1\), set
    \begin{equation*}
        \Fh_{N,j,k,l}:=2^{N|k|+|l|} \Dh_{j+k}\Dh_{j+k+l}, \quad F_{N,j,k,l}:=2^{N|k|+|l|} D_{j+k}D_{j+k+l},
    \end{equation*}
    where we have used Notation \ref{Notation::Spaces::MainEst::ZeroForNegIndices} (any operator with a negative
    index is defined to be zero).
    Note that \(\supp(\Fh_{N,j,k,l})\cap \left( \ManifoldN\times \ManifoldM \right)\subseteq \ManifoldN\times \ManifoldN\)
    as the same is true for \(\Dh_j\) (see Definition \ref{Defn::Spaces::Multiplication::PElemzFhN}).
    Also, we have \(\Fh_{N,j,k,l}\big|_{\ManifoldN\times \ManifoldN}=F_{N,j,k,l}\).

    Let
    \begin{equation*}
        \sDh_N:=\left\{ \left( \Fh_{j,k,l},2^{-(j+k)} \right) :j+k\geq 0\right\}, \quad \sD_N:=\left\{ \left( F_{j,k,l},2^{-(j+k)} \right) :j+k\geq 0\right\},
    \end{equation*}
    \begin{equation*}
        \sDh_0:=\left\{ \left( \Dh_j,2^{-j} \right) :j\in \Zgeq\right\}, \quad \sD_0:=\left\{ \left( D_j,2^{-j} \right) :j\in \Zgeq\right\}.
    \end{equation*}
    By Theorem \ref{Thm::Spaces::Multiplication::MainAmbientTheorem},
    \(\sDh_0\in \ElemzFhN{\Omega}\subseteq \ElemzFh{\Omega}\) and \(\sD_0\in \ElemzF{\Omega\cap \ManifoldN}\).
    By Lemma \ref{Lemma::Spaces::MainEst::sDNIsElem},
    \(\sDh_N\in \ElemzFh{\Omega}\) and \(\sD_N\in \ElemzF{\Omega\cap \ManifoldN}\).

    Due to the fact that \(\supp(\Dh_j)\cap \left( \ManifoldN\times \ManifoldM \right)\subseteq \ManifoldN\times \ManifoldN\)
    and \(D_j=\Dh_j\big|_{\ManifoldN\times \ManifoldN}\),
    we have, for \(f\in \DistributionsZeroN\),
    \begin{equation*}
        \VpqOmegaCapNsENorm{f\big|_{\TestFunctionsZeroN}}[p][q][s][\sD_0] \leq \VpqOmegasENorm{f}[p][q][s][\sDh_0],
    \end{equation*}
    and similarly for \(\sD_0\) and \(\sDh_0\) replaced by \(\sD_N\) and \(\sDh_N\).

    Suppose \(f\in \ASpace{s}{p}{q}[\Compact][\FilteredSheafFh]\).  We have,
    \begin{equation}\label{Eqn::Spaces::RestrictExtend::ProofOfRestrictionTmp1}
    \begin{split}
         & \VpqOmegaCapNsENorm{f\big|_{\TestFunctionsZeroN}}[p][q][s][\sD_0] + \VpqOmegaCapNsENorm{f\big|_{\TestFunctionsZeroN}}[p][q][s][\sD_N]
         \leq \VpqOmegasENorm{f}[p][q][s][\sDh_0]+\VpqOmegasENorm{f}[p][q][s][\sDh_N]\approx \ANorm{f}{s}{p}{q}[\FilteredSheafFh]<\infty,
    \end{split}
    \end{equation}
    where the \(\approx\) uses Corollary \ref{Cor::Spaces::MainEst::ChoiceOfNormWhichGivesFinite}.
    Since \(\supp(f\big|_{\DistributionsZeroN})\subseteq \supp(f)\cap \ManifoldN\subseteq \Compact\cap \ManifoldN\),
    \eqref{Eqn::Spaces::RestrictExtend::ProofOfRestrictionTmp1} and Corollary \ref{Cor::Spaces::MainEst::ChoiceOfNormWhichGivesFinite}
    imply \(f\big|_{\DistributionsZeroN}\in \ASpace{s}{p}{q}[\Compact\cap \ManifoldN][\FilteredSheafF]\),
    and again by \eqref{Eqn::Spaces::RestrictExtend::ProofOfRestrictionTmp1} and Corollary \ref{Cor::Spaces::MainEst::ChoiceOfNormWhichGivesFinite},
    we have 
    \begin{equation*}
        \BANorm{f\big|_{\DistributionsZeroN}}{s}{p}{q}[\FilteredSheafF]\approx  \VpqOmegaCapNsENorm{f\big|_{\TestFunctionsZeroN}}[p][q][s][\sD_0] + \VpqOmegaCapNsENorm{f\big|_{\TestFunctionsZeroN}}[p][q][s][\sD_N] \lesssim \ANorm{f}{s}{p}{q}[\FilteredSheafFh],
    \end{equation*}
    completing the proof.
\end{proof}

We turn to Theorem \ref{Thm::Spaces::Extension} \ref{Item::Spaces::Extension::Extension}.
We do this by introducing a more general class of operators, of which our extension operator is a special case.
Let \(\Compact\) and \(\Omega\) be as in Theorem \ref{Thm::Spaces::Extension} \ref{Item::Spaces::Extension::Extension}.
Let \(\Omega_1\Subset \Omega_2\Subset \Omega\) be \(\ManifoldM\)-open 
and relatively compact with
with \(\Compact\Subset \Omega_1\) and \(\Omega_2\cap \ManifoldN\subseteq \ManifoldNncF\).

Let \(\WhWdv=\left\{ \left( \Wh_1,\Wdv_1 \right),\ldots, \left( \Wh_r,\Wdv_r \right) \right\}\subset \VectorFields{\Omega_2}\times \Zgeq\)
be H\"ormander vector fields with formal degrees such that \(\FilteredSheafFh\big|_{\Omega_2}=\FilteredSheafGenBy{\WhWdv}\)
(see Lemma \ref{Lemma::Filtrations::GeneratorsOnRelCptSet}).
Set \(W_j:=\Wh_j\big|_{\Omega_2\cap \ManifoldN}\) and \(\WWdv:=\left\{ \left( W_1,\Wdv_1 \right),\ldots, \left( W_r,\Wdv_r \right)\right\}\)
so that \(\FilteredSheafF\big|_{\Omega_2\cap \ManifoldN}=\FilteredSheafGenBy{\WWdv}\) (see Proposition \ref{Prop::Filtrations::RestrictingFiltrations::CoDim0Restriction}).

Let \(\left\{ \left( E_j, 2^{-j} \right) : j\in \Zgeq \right\}\in \PElemzFh{\overline{\Omega_1}}\)
be such that \(\supp(E_j)\subseteq \ManifoldM\times \ManifoldN\), \(\forall j\).
Fix 
\(s_0\in \R\),
\(N\in \Zgeq\), and \(\alpha,\beta\) with \(|\alpha|,|\beta|\leq N\). Define
an operator (informally, taking functions on \(\ManifoldN\) to functions on \(\ManifoldM\)) by
\begin{equation*}
    T =\sum_{j\in \Zgeq} 2^{-j(2N-|\alpha|-|\beta|)} 2^{js_0} \left( 2^{-j\Wdv}\Wh \right)^{\alpha} E_j \left( 2^{-j\Wdv}W \right)^{\beta}.
\end{equation*}
Our main result regarding \(T\) is the following.

\begin{proposition}\label{Prop::Spaces::Extend::TDefinesBoundedOperator}
    For \(s\in \R\) with \(N\geq 2|s|+|s_0|+1\) and \(\max\{\Wdv_k :1\leq k\leq r\} \max\{ \ceil{ 1-s-s_0},0\}+s_0<N\), and \(f\in \ASpace{s}{p}{q}[\Compact][\FilteredSheafF]\), the sum
    \begin{equation}\label{Eqn::Spaces::Extend::TDefinesBoundedOperator::WriteDownT}
        Tf:=\sum_{j\in \Zgeq} 2^{-j(2N-|\alpha|-|\beta|)} 2^{js_0} \left( 2^{-j\Wdv}\Wh \right)^{\alpha} E_j \left( 2^{-j\Wdv}W \right)^{\beta}f
    \end{equation}
    converges in \(\Distributions[\ManifoldM]\), and defines a bounded operator
    \(T:\ASpace{s+s_0}{p}{q}[\Compact][\FilteredSheafF]\rightarrow \ASpace{s}{p}{q}[\FilteredSheafFh][\overline{\Omega_1}]\).
\end{proposition}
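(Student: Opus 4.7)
The strategy is the standard Littlewood--Paley framework: decompose the input $f$ on $\ManifoldN$ and the output $Tf$ on $\ManifoldM$, prove almost-orthogonality of the resulting matrix of operators with decay fast enough to overcome the factors $2^{-j(2N-|\alpha|-|\beta|)}2^{js_0}$ in the definition of $T$, and then recombine. Concretely, pick $\psi\in\CinftyCptSpace[\Omega_2\cap\ManifoldN]$ with $\psi\equiv 1$ near $\Compact$ and $\psih\in\CinftyCptSpace[\Omega_2]$ with $\psih\equiv 1$ near $\overline{\Omega_1}$, and by Proposition \ref{Prop::Spaces::LP::DjExist} and Theorem \ref{Thm::Spaces::Multiplication::MainAmbientTheorem} write $\Mult{\psi}=\sum_{k}D_k$ and $\Mult{\psih}=\sum_{l}\Dh_l$ with $\{(D_k,2^{-k}):k\in\Zgeq\}\in\ElemzF{\Omega_2\cap\ManifoldN}$ and $\{(\Dh_l,2^{-l}):l\in\Zgeq\}\in\ElemzFh{\Omega_2}$. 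Since $\psi\equiv 1$ on $\supp f$, any $f\in\ASpace{s+s_0}{p}{q}[\Compact][\FilteredSheafF]$ admits the decomposition $f=\sum_k D_kf$.

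For the convergence of \eqref{Eqn::Spaces::Extend::TDefinesBoundedOperator::WriteDownT} in $\Distributions[\ManifoldM]$, test against $g\in\CinftyCptSpace[\ManifoldM]$ and integrate by parts to transfer $(2^{-j\Wdv}\Wh)^{\alpha}$ onto $g$. Using that $\{(E_j,2^{-j})\}$ is pre-elementary (hence $E_j^{\transpose}$ is uniformly bounded on every $\LpSpace{p'}$ by Lemma \ref{Lemma::Spaces::Elem::PElem::PElemOpsBoundedOnLp}) and that $\supp E_j\subseteq\ManifoldM\times\ManifoldN$, one is reduced to bounding a pairing $\PairDistributionAndTestFunctions{(2^{-j\Wdv}W)^{\beta}f}{h_j}$ with $h_j\in\TestFunctionsZeroN$; another integration by parts moves $W^{\beta}$ onto $h_j$, and Proposition \ref{Prop::Spaces::MainEst::QuantitativeDistributions} bounds the pairing by $\ANorm{f}{s+s_0}{p}{q}[\FilteredSheafF]$ times a controlled number of $W$-derivatives of $h_j$. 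The hypothesis $\max_k\Wdv_k\cdot\max\{\ceil{1-s-s_0},0\}+s_0<N$ is exactly what is needed so that the resulting $j$-series is absolutely summable.

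The heart of the proof is the almost-orthogonality estimate
\[
\bigl\|\Dh_l\bigl(2^{-j\Wdv}\Wh\bigr)^{\alpha}E_j\bigl(2^{-j\Wdv}W\bigr)^{\beta}D_k\bigr\|_{\LpSpace{p}\to\LpSpace{p}}\lesssim 2^{-M(|l-j|+|j-k|)},
\]
valid for any fixed $M$ once $N$ is taken sufficiently large. When $l\leq j$, Proposition \ref{Prop::Spaces::Elem::Elem::MainProps} \ref{Item::Spaces::Elem::Elem::DerivOp} gives $\Dh_l(2^{-j\Wdv}\Wh)^{\alpha}=2^{-(j-l)\DegWdv{\alpha}}\Dht_{l,\alpha}$ with $\{(\Dht_{l,\alpha},2^{-l})\}\in\ElemzFh{\Omega_2}$, producing decay; symmetrically for $k\leq j$ on the right. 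In the reverse regimes ($l>j$ or $k>j$) the gain is produced by the right pull-out of $\Dh_l$ (resp.\ left pull-out of $D_k$) from Proposition \ref{Prop::Spaces::Elem::Elem::MainProps} \ref{Item::Spaces::Elem::Elem::PullOutNDerivs}, using $N$ factors: this converts one of the factors $2^{-j\Wdv_k}\Wh_k$ or $2^{-j\Wdv_k}W_k$ into the same-scale derivative of $\Dh_l$ or $D_k$, producing a gain $2^{-N|l-j|}$ or $2^{-N|j-k|}$ at the cost of leaving a residual $\Wh$- or $W$-derivative; composing the remaining operators via Lemmas \ref{Lemma::Spaces::Elem::PreElem::ComposePreElemBound} and \ref{Lemma::Spaces::Elem::PreElem::PreElemBoundDifferentScales} plus Proposition \ref{Prop::Spaces::Elem::PElem::PElemOpsBoundedOnVV} gives the claimed bound. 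The bookkeeping must track how $|\alpha|$ and $|\beta|$ combine with $2N$ in the prefactor so that each cross-term is controlled; the condition $N\geq 2|s|+|s_0|+1$ leaves enough margin.

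Once the almost-orthogonality is established, with $\sDh_0=\{(\Dh_l,2^{-l})\}$ and $\sD_0=\{(D_k,2^{-k})\}$, one writes
\[
2^{ls}\Dh_l Tf=\sum_{j,k}c_{j,k,l}\cdot 2^{k(s+s_0)}\bigl[2^{ls-k(s+s_0)+js_0-j(2N-|\alpha|-|\beta|)}\Dh_l(2^{-j\Wdv}\Wh)^{\alpha}E_j(2^{-j\Wdv}W)^{\beta}D_k\bigr]f,
\]
and the almost-orthogonality bound together with $N\geq 2|s|+|s_0|+1$ yields a summable Schur-type kernel $\bigl|c_{j,k,l}\bigr|\lesssim 2^{-|l-k|}$ after summing in $j$. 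Applying Proposition \ref{Prop::Spaces::Elem::PElem::PElemOpsBoundedOnVV} and standard Minkowski/Schur arguments on the $\VSpace{p}{q}$-norm then gives $\VpqsENorm{Tf}[p][q][s][\sDh_0]\lesssim\VpqsENorm{f}[p][q][s+s_0][\sD_0]\approx\ANorm{f}{s+s_0}{p}{q}[\FilteredSheafF]$. Since $\supp(Tf)\subseteq\overline{\Omega_1}$ by construction and this norm dominates the $\sDh_{|s|+1}$-norm (by repeating the argument with $\sDh_0$ replaced by $\sDh_N$, cf.\ Lemma \ref{Lemma::Spaces::MainEst::sDNIsElem}), Corollary \ref{Cor::Spaces::MainEst::ChoiceOfNormWhichGivesFinite} then places $Tf$ in $\ASpace{s}{p}{q}[\overline{\Omega_1}][\FilteredSheafFh]$ with the desired norm bound. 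The main obstacle is the almost-orthogonality estimate itself: because $E_j$ is only pre-elementary, all derivative shuffling must be routed through $\Dh_l$ or $D_k$, and one must carefully bookkeep the two sides simultaneously so that the cross-terms from $(2^{-j\Wdv}\Wh)^{\alpha}$ and $(2^{-j\Wdv}W)^{\beta}$ do not overwhelm the $2^{-j(2N-|\alpha|-|\beta|)}$ prefactor.
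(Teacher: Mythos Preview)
Your overall architecture matches the paper's: decompose $f$ via $D_k$, test $Tf$ against an elementary family on $\ManifoldM$, and sum a Schur-type kernel in the scale indices. The convergence argument in $\Distributions[\ManifoldM]$ is essentially the paper's Lemma~\ref{Lemma::Spaces::Extend::ConvergeInDist}, and the final reassembly via Lemma~\ref{Lemma::Spaces::MainEst::sDNIsElem}/Corollary~\ref{Cor::Spaces::MainEst::ChoiceOfNormWhichGivesFinite} is equivalent to the paper's ``estimate for arbitrary $\sF$'' route.

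The gap is in your almost-orthogonality step. The bound
\[
\bigl\|\Dh_l\bigl(2^{-j\Wdv}\Wh\bigr)^{\alpha}E_j\bigl(2^{-j\Wdv}W\bigr)^{\beta}D_k\bigr\|_{\LpSpace{p}\to\LpSpace{p}}\lesssim 2^{-M(|l-j|+|j-k|)}
\]
is false as stated when $l<j$ and $|\alpha|$ is small (or $k<j$ and $|\beta|$ is small). In that regime, absorbing $(2^{-j\Wdv}\Wh)^{\alpha}$ into $\Dh_l$ yields only $2^{-(j-l)\DegWdv{\alpha}}$, which vanishes when $|\alpha|=0$; and since $E_j$ is merely \emph{pre}-elementary, there is no cancellation mechanism in $\Dh_l E_j$ to produce decay in $j-l$. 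No choice of $N$ repairs this, because $N$ does not enter the composition bound---it lives only in the prefactor $2^{-j(2N-|\alpha|-|\beta|)}$ that you have deliberately set aside. (And $N$ is fixed by hypothesis anyway, not free to be taken ``sufficiently large''.)

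The paper handles this by never isolating an operator-norm almost-orthogonality. Instead it keeps the prefactor attached, reindexes the $T$-sum as $E_{j+l_1}$ and the input decomposition as $D_{j+l_1+l_2}$ (with $F_j$ the test operator), and splits into the four cases $\pm l_1\geq |l_2|$, $\pm l_2\geq |l_1|$. In each case the decay comes from the \emph{combination} of the scale-shift factor (e.g.\ $2^{-l_1\DegWdv{\alpha}}$) with the prefactor piece $2^{-l_1(2N-|\alpha|-|\beta|)}$: when $|\alpha|=0$ the latter alone supplies $2^{-l_1\cdot 2N}$, which is enough. The key black box is Lemma~\ref{Lemma::Spaces::Extend::ReduceToVVIneq}, which reduces each case to a uniform $\VSpace{p}{q}$ bound for pre-elementary operators applied to $\{2^{k(s+s_0)}D_kf\}$. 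Your plan can be rescued by exactly this move: fold the prefactor into the estimate before separating scales, then do the four-case bookkeeping rather than claiming a uniform $2^{-M(|l-j|+|j-k|)}$ bound.
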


Note that each term of the sum \eqref{Eqn::Spaces::Extend::TDefinesBoundedOperator::WriteDownT} makes sense,
because
\(E_j\in \CinftyCptSpace[\ManifoldM\times \ManifoldM]\) with 
\(\supp(E_j)\subseteq \ManifoldM\times \ManifoldN\), and therefore \(E_j(x,\cdot)\in \TestFunctionsZeroN\) for each \(x\in \ManifoldM\).
Thus, for \(f\in \DistributionsZeroN\), we have
\(\left( 2^{-j\Wdv}\Wh \right)^{\alpha} E_j \left( 2^{-j\Wdv}W \right)^{\beta}f\in \CinftyCptSpace[\ManifoldM]\).

We turn to the proof of Proposition \ref{Prop::Spaces::Extend::TDefinesBoundedOperator}. We begin with two lemmas.

\begin{lemma}\label{Lemma::Spaces::Extend::ConvergeInDist}
    For \(s\in \R\) such that \(\max\{\Wdv_k :1\leq k\leq r\} \max\{ \ceil{ 1-s-s_0},0\}+s_0<N\), and
    \(f\in \ASpace{s}{p}{q}[\Compact][\FilteredSheafF]\), the sum
    \begin{equation*}
        Tf:=\sum_{j\in \Zgeq} 2^{-j(2N-|\alpha|-|\beta|)} 2^{js_0} \left( 2^{-j\Wdv}\Wh \right)^{\alpha} E_j \left( 2^{-j\Wdv}W \right)^{\beta}f
    \end{equation*}
    converges in  \(\Distributions[\ManifoldM]\) and defines a continuous linear map
    \(\ASpace{s}{p}{q}[\Compact][\FilteredSheafF]\rightarrow \Distributions[\ManifoldM]\).
\end{lemma}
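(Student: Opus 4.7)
The plan is to test $Tf$ against an arbitrary $g\in\CinftyCptSpace[\ManifoldM]$ and establish an estimate of the form $|\langle Tf,g\rangle|\leq C(g)\,\ANorm{f}{s+s_0}{p}{q}[\FilteredSheafF]$, where $C(g)$ is a continuous seminorm on $\CinftyCptSpace[\ManifoldM]$. Such an estimate, applied to each partial sum of \eqref{Eqn::Spaces::Extend::TDefinesBoundedOperator::WriteDownT}, gives both absolute convergence of $\langle Tf,g\rangle$ and the claimed continuity into $\Distributions[\ManifoldM]$. The strategy is to transfer the two scaled derivative blocks off of $f$ via two integrations by parts, and then to apply Proposition \ref{Prop::Spaces::MainEst::QuantitativeDistributions} to the resulting distributional pairing.

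The two integrations by parts produce no boundary terms: the $(2^{-j\Wdv}\Wh)^{\alpha}$ block can be moved onto $g$ because $\ManifoldM$ has no boundary and $g$ has compact support; the $(2^{-j\Wdv}W)^{\beta}$ block can be moved onto $E_j$ because $\supp(E_j)\subseteq \ManifoldM\times\ManifoldN$ and $E_j\in\CinftySpace[\ManifoldM\times\ManifoldM]$ force $E_j(x,y)$ to vanish to infinite order as $y\to\BoundaryN$ (so that transposing $W_y$ against $E_j(x,\cdot)$ generates no boundary contribution). Setting $g_j:=[(2^{-j\Wdv}\Wh)^{\alpha}]^{\transpose}g$ and
\[
h_j(y):=[(2^{-j\Wdv}W)^{\beta}]^{\transpose}_y\int E_j(x,y)\,g_j(x)\,d\Vol(x)\in\TestFunctionsZeroN,
\]
the $j$-th term of $\langle Tf,g\rangle$ becomes $2^{j(s_0-2N+|\alpha|+|\beta|)}\langle f,h_j\rangle$.

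Next I apply Proposition \ref{Prop::Spaces::MainEst::QuantitativeDistributions} with $M\geq\max\{0,\ceil{1-s-s_0}\}$ to obtain $|\langle f,h_j\rangle|\lesssim\bigl(\sum_{|\gamma|\leq M}\LpNorm{W^{\gamma}h_j}{\infty}[\Omega_1]\bigr)\,\ANorm{f}{s+s_0}{p}{q}[\FilteredSheafF]$. To bound $W^{\gamma}h_j$, I expand $W_y^{\gamma}[(2^{-j\Wdv}W_y)^{\beta}]^{\transpose}$ as a sum of $2^{j\DegWdv{\gamma}}$ times terms $a(y)\,(2^{-j\Wdv}W_y)^{\gamma'}$ with $|\gamma'|\leq|\gamma|+|\beta|$, apply the pre-elementary bound from Definition \ref{Defn::Spaces::LP::PElemWWdv} to $(2^{-j\Wdv}W_y)^{\gamma'}E_j(x,y)$ (identifying $W_y$ with $\Wh_y$ on $\ManifoldN$ and invoking Proposition \ref{Prop::VectorFields::Scaling::AmbientVolAndMetricEquivalnce} where needed), and integrate in $x$ against $g_j$ using the standard observation that such kernels map $\LpSpace{\infty}\to\LpSpace{\infty}$ uniformly in $j$. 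Since $\LpNorm{g_j}{\infty}\lesssim 2^{-j\DegWdv{\alpha}}\CmNorm{g}{|\alpha|}[\Omega]$, this gives $\LpNorm{W^{\gamma}h_j}{\infty}[\Omega_1]\lesssim 2^{j(\DegWdv{\gamma}-\DegWdv{\alpha})}\CmNorm{g}{|\alpha|}[\Omega]$.

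Putting the estimates together, $|\langle u_j,g\rangle|$ is bounded by a constant multiple of
\[
2^{j(s_0-2N+|\alpha|+|\beta|+M\max\{\Wdv\}-\DegWdv{\alpha})}\,\CmNorm{g}{|\alpha|}[\Omega]\,\ANorm{f}{s+s_0}{p}{q}[\FilteredSheafF],
\]
and using $\DegWdv{\alpha}\geq|\alpha|$ and $|\beta|\leq N$ the exponent is at most $s_0-N+M\max\{\Wdv\}$, which is strictly negative by the hypothesis $\max\{\Wdv_k\}\max\{\ceil{1-s-s_0},0\}+s_0<N$. Summing the resulting geometric series yields the desired bound and completes the proof. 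The main obstacle is purely bookkeeping: tracking all the $2^{j\cdot}$ factors produced by scaling and by the pre-elementary derivatives and confirming that the hypothesis is exactly what makes the series summable.
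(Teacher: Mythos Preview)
Your proposal is correct and follows essentially the same approach as the paper: transpose \((2^{-j\Wdv}\Wh)^{\alpha}\) onto \(g\), absorb \((2^{-j\Wdv}W)^{\beta}\) into the kernel (the paper packages this as \(\Et_j:=2^{-j(N-|\beta|)}E_j(2^{-j\Wdv}W)^{\beta}\in\PElemzFh{\overline{\Omega_1}}\)), apply Proposition~\ref{Prop::Spaces::MainEst::QuantitativeDistributions} with \(N_0=\max\{0,\ceil{1-s-s_0}\}\), then use the pre-elementary \(\LpSpace{\infty}\) bound (Lemma~\ref{Lemma::Spaces::Elem::PElem::PElemOpsBoundedOnLp}) on \((2^{-j\Wdv}W)^{\gamma}\Et_j^{\transpose}\) to obtain the geometrically decaying factor \(2^{-j(N-s_0-\max\{\Wdv_k\}N_0)}\). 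Your exponent accounting reaches the same threshold, and your justification that \(h_j\in\TestFunctionsZeroN\) (via the vanishing of \(E_j(x,\cdot)\) at \(\BoundaryN\)) is exactly the point that makes the integration by parts boundary-free.
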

\begin{proof}
    Let \(N_0:=\max\{ \ceil{ 1-s-s_0},0\}\).
    Set \(\Et_j:=2^{-j(N-|\beta|)} \Et_j \left( 2^{-j\Wdv}W \right)^{\beta}=2^{-j(N-|\beta|)} \Et_j \left( 2^{-j\Wdv}\Wh \right)^{\beta}\),
    so that \(\left\{ \left( \Et_j, 2^{-j} \right) : j\in \Zgeq \right\}\in \PElemzFh{\overline{\Omega_1}}\),
    with \(\supp(\Et_j)\subseteq \ManifoldM\times \ManifoldN\), \(\forall j\).

    For \(f\in \ASpace{s+s_0}{p}{q}[\Compact][\FilteredSheafF]\) and \(g\in \CinftyCptSpace[\ManifoldM]\), and using
    Proposition \ref{Prop::Spaces::MainEst::QuantitativeDistributions} (with \(\Omega\) replaced by \(\Omega_2\)),
    \begin{equation}\label{Eqn::Spaces::Extend::ConvergeInDist::Tmp1}
    \begin{split}
         &\sum_{j\in \Zgeq} 2^{-j(N-|\alpha|)+js_0} \left| \PairDistributionAndTestFunctions{g}{ \left( 2^{-j\Wdv}W \right)^{\alpha} \Et_j f} \right| 
         \\&= \sum_{j\in \Zgeq} 2^{-j(N-|\alpha|+\DegWdv{\alpha})+js_0} \left| \PairDistributionAndTestFunctions{ \Et_j^{\transpose} \left( \Wh^{\alpha} \right)^{\transpose} g  }{f} \right|
         \\&\lesssim \sum_{j\in \Zgeq} 2^{-j(N-s_0)} \ANorm{f}{s+s_0}{p}{q}[\FilteredSheafF] \sum_{|\gamma|\leq N_0} \BLpNorm{W^{\gamma} \Et_j^{\transpose} \left( \Wh^{\alpha} \right)^{\transpose}g}{\infty}
         \\&\leq \sum_{j\in \Zgeq} 2^{-j(N-s_0-\max\{\Wdv_k\}N_0)} \ANorm{f}{s+s_0}{p}{q}[\FilteredSheafF] \sum_{|\gamma|\leq N_0} \BLpNorm{\left(2^{-j\Wdv} W \right)^{\gamma} \Et_j^{\transpose} \left( \Wh^{\alpha} \right)^{\transpose}g}{\infty}.
    \end{split}
    \end{equation}
    We have \(\left[ \left(2^{-j\Wdv} W \right)^{\gamma} \Et_j^{\transpose} \right](x,y)= \left(2^{-j\Wdv} W_x \right)^{\gamma}\Et_j(y,x)\)
    and therefore, \(\left\{  \left( \left( 2^{-j\Wdv}W \right)^{\gamma}\Et_j^{\transpose},2^{-j} \right)  : j\in \Zgeq \right\}\in \PElemFh{\overline{\Omega_1}}\) (see Remark \ref{Rmk::Spaces::LP::BoundIsSymmetric})
    and
    Lemma \ref{Lemma::Spaces::Elem::PElem::PElemOpsBoundedOnLp} implies
    \(\BLpNorm{\left(2^{-j\Wdv} W \right)^{\gamma} \Et_j^{\transpose} \left( \Wh^{\alpha} \right)^{\transpose}g}{\infty} \lesssim \BLpNorm{\left( \Wh^{\alpha} \right)^{\transpose}g}{\infty}[\overline{\Omega_1}]\).
    Plugging this into \eqref{Eqn::Spaces::Extend::ConvergeInDist::Tmp1}, we see
    \begin{equation}\label{Eqn::Spaces::Extend::ConvergeInDist::Tmp2}
        \begin{split}
             &\sum_{j\in \Zgeq} 2^{-j(N-|\alpha|)+js_0} \left| \PairDistributionAndTestFunctions{g}{ \left( 2^{-j\Wdv}W \right)^{\alpha} \Et_j f} \right|              
             \\&\lesssim 
             \sum_{j\in \Zgeq}2^{-j(N-s_0-\max\{\Wdv_k\}N_0)} \ANorm{f}{s+s_0}{p}{q}[\FilteredSheafF]  \BLpNorm{ \left( \Wh^{\alpha} \right)^{\transpose}g}{\infty}[\overline{\Omega_1}].
        \end{split}
        \end{equation}
    Since \(N-s_0-\max\{\Wdv_k\}N_0>0\), by assumption, the convergence follows. Moreover, \eqref{Eqn::Spaces::Extend::ConvergeInDist::Tmp2} also implies
    \begin{equation*}
        \left| \PairDistributionAndTestFunctions{g}{Tf} \right| \lesssim \ANorm{f}{s+s_0}{p}{q}[\FilteredSheafF] \BLpNorm{ \left( \Wh^{\alpha} \right)^{\transpose}g}{\infty}[\overline{\Omega_1}]
    \end{equation*}
    giving the desired continuity as well.
\end{proof}

\begin{lemma}\label{Lemma::Spaces::Extend::ReduceToVVIneq}
    Let \(\sE,\sF\in \PElemFh{\ManifoldM}\), with \(\supp(E_j)\subseteq \ManifoldM\times \ManifoldN\), \(\forall (E,2^{-j})\in \sE\),
    and \(\sD\in \ElemzF{\ManifoldNncF}\). For any operator \(E_j\) we set \(E_j=0\) for \(j<0\).
    Let \(a_{l_1,l_2}\geq 0\) be a sequence with \(\sum_{l_1,l_2\in \Z}a_{l_1,l_2}<\infty\).
    Then,
    \begin{equation*}
    \begin{split}
         &\sum_{l_1,l_2\in \Z} a_{l_1,l_2}
         \sup_{
            \substack{\left\{ (E_j,2^{-j}) : j\in \Zgeq \right\}\subseteq \sE\\
            \left\{ (F_j,2^{-j}) : j\in \Zgeq \right\}\subseteq \sF\\
            \left\{ (D_j,2^{-j}) : j\in \Zgeq \right\}\subseteq \sD
            }}
        \BVNormOmega{\left\{ F_j E_{j+l_1} 2^{(j+l_1+l_2)s} D_{j+l_1+l_2} f \right\}_{j\in \Zgeq}}{p}{q}
        \lesssim \ANorm{f}{s}{p}{q}[\FilteredSheafF]\sum_{l_1,l_2\in \Z} a_{l_1,l_2},
    \end{split}
    \end{equation*}
    for all \(f\in \ASpace{s}{p}{q}[\Compact][\FilteredSheafF]\).
\end{lemma}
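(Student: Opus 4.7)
The plan is to establish a uniform bound: for each $(l_1, l_2) \in \Z^2$,
\begin{equation*}
\sup_{\{F_j\} \subseteq \sF,\, \{E_j\} \subseteq \sE,\, \{D_j\} \subseteq \sD} \BVNormOmega{\{F_j E_{j+l_1} 2^{(j+l_1+l_2)s} D_{j+l_1+l_2} f\}_{j \in \Zgeq}}{p}{q} \lesssim \ANorm{f}{s}{p}{q}[\FilteredSheafF]
\end{equation*}
with constant independent of $(l_1, l_2)$. Multiplying by $a_{l_1, l_2}$ and summing in $(l_1, l_2)$ then yields the lemma.

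For this uniform bound, the idea is to peel off the operators $F_j$ and $E_{j+l_1}$ in turn via Proposition \ref{Prop::Spaces::Elem::PElem::PElemOpsBoundedOnVV}, reducing to a norm that only involves $\sD$. Setting $h_k := 2^{ks} D_k f$ (and $h_k = 0$ for $k<0$), the integrand factors as $F_j(E_{j+l_1} h_{j+l_1+l_2})$. A first application of Proposition \ref{Prop::Spaces::Elem::PElem::PElemOpsBoundedOnVV} to the pre-elementary family $\sF$ on $\ManifoldM$ bounds the left side by $\|\{E_{j+l_1} h_{j+l_1+l_2}\}_j\|_{\VSpace{p}{q}(\ManifoldM)}$. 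Dropping indices $j$ with $j+l_1<0$ (where $E_{j+l_1}=0$) and re-indexing $k=j+l_1$ preserves the $\VSpace{p}{q}$ norm by shift-invariance and produces $\{E_k h_{k+l_2}\}_k$ with $\{(E_k, 2^{-k})\} \subseteq \sE$. The support hypothesis $\supp(E_k) \subseteq \ManifoldM \times \ManifoldN$ (and the fact that extension by zero preserves $L^p$ norms) lets a second application of Proposition \ref{Prop::Spaces::Elem::PElem::PElemOpsBoundedOnVV}, this time to $\sE$, bound the preceding quantity by $\|\{h_{k+l_2}\}_k\|_{\VSpace{p}{q}(\ManifoldN)}$.

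A final shift $m=k+l_2$ gives $\|\{h_{k+l_2}\}_k\|_{\VSpace{p}{q}(\ManifoldN)} \leq \|\{h_m\}_{m\in\Zgeq}\|_{\VSpace{p}{q}(\ManifoldN)}$; taking the sup over $\{D_m\} \subseteq \sD$ of the right-hand side produces $\VpqsENorm{f}[p][q][s][\sD]$, which Corollary \ref{Cor::Spaces::MainEst::VpqsESeminormIsContinuous} bounds by $\ANorm{f}{s}{p}{q}[\FilteredSheafF]$. Chaining the inequalities yields the uniform estimate, with all implicit constants depending only on the bounded sets $\sE, \sF, \sD$. There is no hard analytic step here—the main care is bookkeeping: handling the zero-padding for negative indices, verifying that each re-indexed family is a legitimate subsequence in $\sE$ or $\sF$ paired with an admissible scale, and keeping track of which ambient manifold the $\VSpace{p}{q}$ norm lives on at each stage so that the pre-elementary boundedness applies.
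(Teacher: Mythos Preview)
Your proposal is correct and matches the paper's approach essentially step for step: reduce to a uniform bound, apply Proposition~\ref{Prop::Spaces::Elem::PElem::PElemOpsBoundedOnVV} twice to peel off $F_j$ and then $E_{j+l_1}$ (using the support hypothesis to pass from $\VSpace{p}{q}(\Omega)$ to $\VSpace{p}{q}(\Omega\cap\ManifoldN)$), shift indices, and invoke Corollary~\ref{Cor::Spaces::MainEst::VpqsESeminormIsContinuous}. The only cosmetic difference is that the paper applies both vector-valued bounds before re-indexing, whereas you re-index between them; both orderings are valid.
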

\begin{proof}
    It suffices to show
    \begin{equation}\label{Eqn::Spaces::Extend::ReduceToVVIneq::ToShow}
        \sup_{
            \substack{\left\{ (E_j,2^{-j}) : j\in \Zgeq \right\}\subseteq \sE\\
            \left\{ (F_j,2^{-j}) : j\in \Zgeq \right\}\subseteq \sF\\
            \left\{ (D_j,2^{-j}) : j\in \Zgeq \right\}\subseteq \sD
            }}
        \BVNormOmega{\left\{ F_j E_{j+l_1} 2^{(j+l_1+l_2)s} D_{j+l_1+l_2} f \right\}_{j\in \Zgeq}}{p}{q}
        \lesssim \ANorm{f}{s}{p}{q}[\FilteredSheafF],
    \end{equation}
    for all \(f\in \ASpace{s}{p}{q}[\Compact][\FilteredSheafF]\), \(\forall l_1,l_2\in \Z\).
    Applying Lemma \ref{Prop::Spaces::Elem::PElem::PElemOpsBoundedOnVV} twice, we see
    \begin{equation*}
    \begin{split}
         &\sup_{
            \substack{\left\{ (E_j,2^{-j}) : j\in \Zgeq \right\}\subseteq \sE\\
            \left\{ (F_j,2^{-j}) : j\in \Zgeq \right\}\subseteq \sF\\
            \left\{ (D_j,2^{-j}) : j\in \Zgeq \right\}\subseteq \sD
            }}
            \BVNormOmega{\left\{ F_j E_{j+l_1} 2^{(j+l_1+l_2)s} D_{j+l_1+l_2} f \right\}_{j\in \Zgeq}}{p}{q}
        \\&\lesssim 
        \sup_{            
            \left\{ (D_j,2^{-j}) : j\in \Zgeq \right\}\subseteq \sD
            }
            \BVNormOmegaN{\left\{ 2^{(j+l_1+l_2)s} D_{j+l_1+l_2} f \right\}_{j\in \Zgeq}}{p}{q}
        \\&\leq 
        \sup_{            
            \left\{ (D_j,2^{-j}) : j\in \Zgeq \right\}\subseteq \sD
            }
            \BVNormOmegaN{\left\{ 2^{js} D_{j} f \right\}_{j\in \Zgeq}}{p}{q}
        \\&\lesssim \ANorm{f}{s}{p}{q}[\FilteredSheafF],
    \end{split}
    \end{equation*}
    where the final estimate uses Corollary \ref{Cor::Spaces::MainEst::VpqsESeminormIsContinuous}.
\end{proof}

\begin{proof}[Proof of Proposition \ref{Prop::Spaces::Extend::TDefinesBoundedOperator}]
    Let \(\sF\in \ElemzFh{\ManifoldM}\). 
    For \(s\) as in the statement of the result,
    we will show
    \begin{equation}\label{Eqn::Spaces::Extend::TDefinesBoundedOperator::MainEst}
        \VpqsENorm{ Tf}[p][q][s][\sF]\lesssim \ANorm{f}{s+s_0}{p}{q}[\FilteredSheafF], \quad \forall f\in \ASpace{s+s_0}{p}{q}[\Compact][\FilteredSheafF].
    \end{equation}

    First we see why \eqref{Eqn::Spaces::Extend::TDefinesBoundedOperator::MainEst} completes the proof.
    Since \(\sF\in \ElemzFh{\ManifoldM}\) is arbitrary, and \(\supp(Tf)\subseteq \overline{\Omega_1}\),
    \eqref{Eqn::Spaces::Extend::TDefinesBoundedOperator::MainEst} implies \(Tf\in \ASpace{s}{p}{q}[\overline{\Omega_1}][\FilteredSheafFh]\).
    Moreover, by picking \(\sF\) as in Notation \ref{Notation::Spaces::Defns::Norm}, \eqref{Eqn::Spaces::Extend::TDefinesBoundedOperator::MainEst}
    shows \(\ANorm{Tf}{s}{p}{q}[\FilteredSheafFh]\lesssim \ANorm{f}{s+s_0}{p}{q}[\FilteredSheafF]\), establishing the boundedness
    of \(T\).

    We turn to proving \eqref{Eqn::Spaces::Extend::TDefinesBoundedOperator::MainEst}.
    Fix \(\psi\in \CinftyCptSpace[\Omega_1\cap \ManifoldN]\) with \(\psi=1\) on a neighborhood of \(\Compact\).
    Using Proposition \ref{Prop::Spaces::LP::DjExist}
    let \(\left\{ \left( D_j, 2^{-j} \right) :j\in \Zgeq \right\}\in \ElemzF{\Omega_1\cap \ManifoldN}\)
    be such that \(\sum_{j\in \Zgeq}D_j =\Mult{\psi}\).
    Let \(\left\{ \left( F_j,2^{-j} \right):j\in \Zgeq \right\}\in \sF\); all estimates below will be independent
    of the particular subset chosen from \(\sF\).

    In what follows, we use the convention that any operator \(E_j\) is defined to be zero for \(j<0\).
    We consider,
    \begin{equation}\label{Eqn::Spaces::Extend::TDefinesBoundedOperator::Tmp1}
    \begin{split}
         & \BVNorm{  \left\{ 2^{js} F_j T f \right\}_{j\in \Zgeq}}{p}{q}
         =\BVNorm{\left\{ 2^{js} F_j T \sum_{k\in \Zgeq}  D_k f \right\}_{j\in \Zgeq}}{p}{q}
         \\&=\BVNorm{\left\{ \sum_{l_1,l_2\in \Z} 2^{js} F_j 2^{-(j+l_1)(2N-|\alpha|-|\beta|-s_0)} \left( 2^{-(j+l_1)\Wdv}\Wh \right)^{\alpha}  E_{j+l_1}\left( 2^{-(j+l_1)\Wdv} W \right)^{\beta} D_{j+l_1+l_2} f   \right\}}{p}{q}
         \\&\leq \sum_{l_1,l_2\in \Z}\BVNorm{\left\{  2^{js} F_j 2^{-(j+l_1)(2N-|\alpha|-|\beta|-s_0)} \left( 2^{-(j+l_1)\Wdv}\Wh \right)^{\alpha}  E_{j+l_1}\left( 2^{-(j+l_1)\Wdv} W \right)^{\beta} D_{j+l_1+l_2} f   \right\}}{p}{q}.
    \end{split}
    \end{equation}

    Our goal is to show that the right-hand side of \eqref{Eqn::Spaces::Extend::TDefinesBoundedOperator::Tmp1}
    is \(\lesssim \ANorm{f}{s+s_0}{p}{q}[\FilteredSheafF]\). To establish this, 
    we manipulate terms to see
    that the right-hand side of \eqref{Eqn::Spaces::Extend::TDefinesBoundedOperator::Tmp1}
    in the form covered by
    Lemma \ref{Lemma::Spaces::Extend::ReduceToVVIneq}.

    We separate the sum on the right-hand side of \eqref{Eqn::Spaces::Extend::TDefinesBoundedOperator::Tmp1}
    into four parts: \(l_1\geq |l_2|\), \(-l_1\geq |l_2|\), \(l_2\geq |l_1|\), and \(-l_2\geq |l_1|\).
    We have,
    \begin{equation}\label{Eqn::Spaces::Extend::TDefinesBoundedOperator::l1geql2::1}
    \begin{split}
         &\sum_{l_1\geq |l_2|}\BVNorm{\left\{  2^{js} F_j 2^{-(j+l_1)(2N-|\alpha|-|\beta|-s_0)} \left( 2^{-(j+l_1)\Wdv}\Wh \right)^{\alpha}  E_{j+l_1}\left( 2^{-(j+l_1)\Wdv} W \right)^{\beta} D_{j+l_1+l_2} f   \right\}}{p}{q}
         \\&= \sum_{l_1\geq |l_2|} 2^{-(l_1+l_2)s-l_2s_0-l_1(2N-|\alpha|-|\beta|+\DegWdv{\alpha})} 
         \\&\quad\quad\quad\BVNorm{ \left\{ 2^{-j(2N-|\alpha|-|\beta|)} \left[ F_j \left( 2^{j\Wdv}\Wh \right)^{\alpha} \right] \left[ E_{j+l_1} \left( 2^{-(j+l_1)\Wdv}W \right)^{\beta} \right] 2^{(s+s_0)(j+l_1+l_2)} D_{j+l_1+l_2}f \right\} }{p}{q}.
    \end{split}
    \end{equation}
    Since \(2^{-j(2N-|\alpha|-|\beta|)}\leq 1\), we have
    \begin{equation}\label{Eqn::Spaces::Extend::TDefinesBoundedOperator::l1geql2::2}
    \begin{split}
         &\sum_{l_1\geq |l_2|} 2^{-(l_1+l_2)s-l_2s_0-l_1(2N-|\alpha|-|\beta|+\DegWdv{\alpha})} 
         \\&\quad\quad\quad\BVNorm{ \left\{ 2^{-j(2N-|\alpha|-|\beta|)} \left[ F_j \left( 2^{j\Wdv}\Wh \right)^{\alpha} \right] \left[ E_{j+l_1} \left( 2^{-(j+l_1)\Wdv}W \right)^{\beta} \right] 2^{(s+s_0)(j+l_1+l_2)} D_{j+l_1+l_2}f \right\} }{p}{q}
         \\&\lesssim \ANorm{f}{s+s_0}{p}{q}[\FilteredSheafF]\sum_{l_1\geq |l_2|} 2^{-(l_1+l_2)s-l_2s_0-l_1(2N-|\alpha|-|\beta|+\DegWdv{\alpha})}.
    \end{split}
    \end{equation}
    by Lemma \ref{Lemma::Spaces::Extend::ReduceToVVIneq} since it is of the form covered in that lemma;
    more precisely,
    \begin{equation*}
        \left\{ \left(  F_j \left( 2^{j\Wdv}\Wh \right)^{\alpha},2^{-j} \right) : (F_j, 2^{-j})\in \sF \right\}\in \PElemFh{\ManifoldM},
    \end{equation*}
    \(\supp\left(  E_{j} \left( 2^{-(j)\Wdv}W \right)^{\beta} \right)\subseteq \ManifoldM\times \ManifoldN\)
    and
    \begin{equation*}
        \left\{ \left(  E_{j} \left( 2^{-(j)\Wdv}W \right)^{\beta},2^{-j}\right) :j\in \Zgeq\right\}\in \PElemFh{\ManifoldM},
    \end{equation*}
    and
    \begin{equation*}
        \left\{ \left( D_j,2^{-j} \right) : j\in \Zgeq \right\}\in \ElemzF{\ManifoldNncF}.
    \end{equation*}
    Also, we have 
    \begin{equation}\label{Eqn::Spaces::Extend::TDefinesBoundedOperator::l1geql2::3}
    \begin{split}
         &\sum_{l_1\geq |l_2|} 2^{-(l_1+l_2)s-l_2s_0-l_1(2N-|\alpha|-|\beta|+\DegWdv{\alpha})} 
         \leq \sum_{l_1\geq |l_2|} 2^{-(l_1+l_2)s-l_2s_0-l_1N} 
         \sum_{l_1\geq |l_2|} 2^{-l_1}\lesssim 1,
    \end{split}
    \end{equation}
    by the choice of \(N\).
    Combining \eqref{Eqn::Spaces::Extend::TDefinesBoundedOperator::l1geql2::1}, \eqref{Eqn::Spaces::Extend::TDefinesBoundedOperator::l1geql2::2},a
    and \eqref{Eqn::Spaces::Extend::TDefinesBoundedOperator::l1geql2::3}
    shows \(\sum_{l_1\geq |l_2|} \cdot \lesssim \ANorm{f}{s+s_0}{p}{q}[\FilteredSheafF]\), as desired.

    For \(\sum_{-l_1\geq |l_2|}\), we use Proposition \ref{Prop::Spaces::Elem::Elem::MainProps} \ref{Item::Spaces::Elem::Elem::PullOutNDerivs}
    to write
    \begin{equation*}
        F_j=\sum_{|\gamma|\leq N} 2^{-j(N-|\gamma|)} F_{j,\gamma} \left( 2^{-j}\Wh \right)^{\gamma},
    \end{equation*}
    where \(\left\{ \left( F_{j,\gamma},2^{-j} \right) : (F_j, 2^{-j})\in \sF \right\}\in \ElemzFh{\ManifoldM}\).
    Then, we have
    \begin{equation*}
    \begin{split}
         &\sum_{-l_1\geq |l_2|}\BVNorm{\left\{  2^{js} F_j 2^{-(j+l_1)(2N-|\alpha|-|\beta|-s_0)} \left( 2^{-(j+l_1)\Wdv}\Wh \right)^{\alpha}  E_{j+l_1}\left( 2^{-(j+l_1)\Wdv} W \right)^{\beta} D_{j+l_1+l_2} f   \right\}}{p}{q}
         \\&\leq \sum_{|\gamma|\leq N} \sum_{-l_1\geq |l_2|}
         2^{l_1(N-|\gamma|+\DegWdv{\gamma}) - (l_1+l_2)s-l_2s_0}
         \\&\quad\quad\BVNorm{
            \left\{  F_{j,\gamma} 
            \left[ 2^{-(j+l_1) (2N-|\alpha|-|\beta|)} \left( 2^{-(j+l_1)\Wdv}\Wh \right)^{\gamma}\left( 2^{-(j+l_1)\Wdv}\Wh \right)^{\alpha} E_{j+l_1} \left( 2^{-(j+l_1)}W \right)^{\beta} \right]
            2^{(j+l_1+l_2)(s+s_0)}D_{j+l_1+l_2}f
         \right\}}{p}{q}
         \\&\lesssim \ANorm{f}{s+s_0}{p}{q}[\FilteredSheafF] \sum_{-l_1\geq |l_2|} 2^{l_1(N-|\gamma|+\DegWdv{\gamma}) - (l_1+l_2)s-l_2s_0},
    \end{split}
    \end{equation*}
    where we have again used Lemma \ref{Lemma::Spaces::Extend::ReduceToVVIneq} (which applies for the same reasons as before), and the fact that
    \(2^{-(j+l_1)(2N-|\alpha|-|\beta|)}\leq 1\) in the above, because \(E_{j+l_1}=0\) when \(j+l_1<0\).
    Using the choice of \(N\), we have
    \begin{equation*}
    \begin{split}
         &\sum_{-l_1\geq |l_2|} 2^{l_1(N-|\gamma|+\DegWdv{\gamma}) - (l_1+l_2)s-l_2s_0}
         \leq \sum_{-l_1\geq |l_2|}2^{-N|l_1| - (l_1+l_2)s-l_2s_0}
         \leq \sum_{-l_1\geq |l_2|}2^{-|l_1|} \lesssim 1.
    \end{split}
    \end{equation*}
    Combining the previous two equations gives \(\sum_{-l_1\geq |l_2|} \cdot \lesssim \ANorm{f}{s+s_0}{p}{q}[\FilteredSheafF]\), as desired.

    We have,
    \begin{equation}\label{Eqn::Spaces::Extend::TDefinesBoundedOperator::ml2geql1::1}
    \begin{split}
         &\sum_{-l_2\geq |l_1|}\BVNorm{\left\{  2^{js} F_j 2^{-(j+l_1)(2N-|\alpha|-|\beta|-s_0)} \left( 2^{-(j+l_1)\Wdv}\Wh \right)^{\alpha}  E_{j+l_1}\left( 2^{-(j+l_1)\Wdv} W \right)^{\beta} D_{j+l_1+l_2} f   \right\}}{p}{q}
         \\&\leq \sum_{-l_2\geq |l_1|}
         2^{l_2(N-|\beta|+\DegWdv{\beta}) + (l_1+l_2)s+l_2s_0}
         \\&\BVNorm{
            \left\{ F_j \left[ 2^{-(j+l_1)(N-|\alpha|)} 2^{-(j+l_1)} E_{j+l_1} \right] 
            \left[ 2^{-(j+l_1+l_2)(N-|\beta|)} 2^{-(j+l_1+l_2)(s+s_0)}\left( 2^{-(j+l_1+l_2)\Wdv}W \right)^{\beta}D_{j+l_1+L_2}\right] f \right\}
         }{p}{q} 
         \\&\lesssim \ANorm{f}{s+s_0}{p}{q}[\FilteredSheafF] \sum_{-l_2\geq |l_1|}
         2^{l_2(N-|\beta|+\DegWdv{\beta}) + (l_1+l_2)s+l_2s_0},
    \end{split}
    \end{equation}
    where we have again applied Lemma \ref{Lemma::Spaces::Extend::ReduceToVVIneq};
    and have used \(2^{-(j+l_1+l_2)(N-|\beta|)}\leq 1\) (because \(D_{j+l_1+l_2}=0\) if \(j+l_1+l_2<0\))
    and have applied Proposition \ref{Prop::Spaces::Elem::Elem::MainProps} \ref{Item::Spaces::Elem::Elem::DerivOp} to see
    \begin{equation*}
        \left\{ \left( \left( 2^{-(j)\Wdv}W \right)^{\beta}D_{j}, 2^{-j} \right) : j\in \Zgeq \right\}\in \ElemzF{\ManifoldNncF}.
    \end{equation*}
    We have, by the choice of \(N\),
    \begin{equation}\label{Eqn::Spaces::Extend::TDefinesBoundedOperator::ml2geql1::2}
        \sum_{-l_2\geq |l_1|}
         2^{l_2(N-|\beta|+\DegWdv{\beta}) + (l_1+l_2)s+l_2s_0}
         \leq \sum_{-l_2\geq |l_1|}
         2^{l_2N + (l_1+l_2)s+l_2s_0}
         \leq \sum_{-l_2\geq |l_1|}2^{-|l_2|}\lesssim 1.
    \end{equation}
    Plugging \eqref{Eqn::Spaces::Extend::TDefinesBoundedOperator::ml2geql1::2} in to \eqref{Eqn::Spaces::Extend::TDefinesBoundedOperator::ml2geql1::1}
    gives   \(\sum_{-l_2\geq |l_1|} \cdot \lesssim \ANorm{f}{s+s_0}{p}{q}[\FilteredSheafF]\), as desired.

    For \(\sum_{l_2\geq |l_1|}\), we use Proposition \ref{Prop::Spaces::Elem::Elem::MainProps} \ref{Item::Spaces::Elem::Elem::PullOutNDerivs}
    to write
    \(D_j=\sum_{|\gamma|\leq N} 2^{-j(N-|\gamma|)} \left( 2^{-j\Wdv}W \right)^{\gamma} D_{j,\gamma}\),
    where
    \begin{equation*}
        \left\{ \left( D_{j,\gamma},2^{-j} \right) : j\in \Zgeq \right\}\in \ElemzF{\ManifoldNncF}.
    \end{equation*}
    We have,
    \begin{equation*}
    \begin{split}
         &\sum_{l_2\geq |l_1|}\BVNorm{\left\{  2^{js} F_j 2^{-(j+l_1)(2N-|\alpha|-|\beta|-s_0)} \left( 2^{-(j+l_1)\Wdv}\Wh \right)^{\alpha}  E_{j+l_1}\left( 2^{-(j+l_1)\Wdv} W \right)^{\beta} D_{j+l_1+l_2} f   \right\}}{p}{q}
         \\&\leq 
         \sum_{|\gamma|\leq N}\sum_{l_2\geq |l_1|}
         2^{-l_2(N-|\gamma|+\DegWdv{\gamma}) + (l_1+l_2)s+l_2s_0}
         \\&
         \BVNorm{
            \left\{ 
                F_j 
                \left[  2^{-(j+l_1)(3N-|\alpha|-|\beta|-|\gamma|)} \left( 2^{-(j+l_1)\Wdv}\Wh \right)^{\alpha} E_{j+l_1} \left( 2^{-(j+l_1)\Wdv}W \right)^{\beta}\left( 2^{-(j+l_1)\Wdv}W \right)^{\gamma} \right]
                2^{(j+l_1+l_2)(s+s_0)} D_{j+l_1+l_2,\gamma}f
             \right\}
         }{p}{q}
         \\&\lesssim \ANorm{f}{s+s_0}{p}{q}[\FilteredSheafF]\sum_{l_2\geq |l_1|}
         2^{-l_2(N-|\gamma|+\DegWdv{\gamma}) + (l_1+l_2)s+l_2s_0},
    \end{split}
    \end{equation*}
    where we have  again applied Lemma \ref{Lemma::Spaces::Extend::ReduceToVVIneq}
    and used \( 2^{-(j+l_1)(3N-|\alpha|-|\beta|-|\gamma|)}\leq 1\), since \(E_{j+l_1}=0\) if \(j+l_1<0\).
    Using our choice of \(N\), we have
    \begin{equation*}
        \sum_{l_2\geq |l_1|}
         2^{-l_2(N-|\gamma|+\DegWdv{\gamma}) + (l_1+l_2)s+l_2s_0}
         \leq 2^{-l_2N+ (l_1+l_2)s+l_2s_0}\leq \sum_{l_2\geq |l_1|}2^{-|l_2|}\lesssim 1.
    \end{equation*}
    Combining the previous two equations gives \(\sum_{l_2\geq |l_1|} \cdot \lesssim \ANorm{f}{s+s_0}{p}{q}[\FilteredSheafF]\), as desired.

    Plugging the above into \eqref{Eqn::Spaces::Extend::TDefinesBoundedOperator::Tmp1}, we see
    \begin{equation*}
        \BVNorm{  \left\{ 2^{js} F_j T f \right\}_{j\in \Zgeq}}{p}{q}
        \lesssim \ANorm{f}{s+s_0}{p}{q}[\FilteredSheafF],
    \end{equation*}
    and taking the supremum over all \(\left\{ (F_j, 2^{-j}) : j\in \Zgeq \right\}\subseteq \sF\)
    yields \eqref{Eqn::Spaces::Extend::TDefinesBoundedOperator::MainEst} and completes the proof.
\end{proof}

\begin{proof}[Proof of Theorem \ref{Thm::Spaces::Extension} \ref{Item::Spaces::Extension::Extension}]
    Fix \(\Omega_0\Subset \Omega_1\) an \(\ManifoldM\)-open set with \(\Compact\Subset \Omega_0\).
    Let \(\psi\in \CinftyCptSpace[\Omega_0\cap \ManifoldN]\) be equal to \(1\) on an  \(\ManifoldN\)-neighborhood of \(\Compact\).
    Using Proposition \ref{Prop::Spaces::LP::DjExist}, write \(\Mult{\psi}=\sum_{j\in \Zgeq}D_j\),
    where \(\left\{ \left( D_j,2^{-j} \right) : j\in \Zgeq \right\}\in \ElemzF{\Omega_0\cap \ManifoldN}\).
    Fix \(N\in \Zg\) with
    \(N\geq 2N_0+1+\max\{\Wdv_k\}(2+N_0)\).
    By repeated applications of Definition \ref{Defn::Spaces::LP::ElemWWdv}, we may write
    \begin{equation*}
        D_j=\sum_{|\alpha|,|\beta|\leq N} 2^{-j(2N-|\alpha|-|\beta|)} \left( 2^{-j\Wdv}W \right)^{\alpha} D_{j,\alpha,\beta} \left( 2^{-j\Wdv}W \right)^{\beta},
    \end{equation*}
    where 
    \(\left\{ \left( D_{j,\alpha,\beta},2^{-j} \right) : j\in \Zgeq, |\alpha|,|\beta|\leq N \right\}\in \ElemzF{\Omega_0\cap \ManifoldN}\).
    
    Let \(\Extend_\delta^0\) be as in Proposition \ref{Prop::Spaces::Elem::Extend::Extend::NonVerbose},
    with \(\Compact\) replaced by \(\overline{\Omega_0}\cap \ManifoldN\) and \(\Omega_1\) playing the role of 
    the set of the same name
    in that proposition.
    Set \(E_{j,\alpha,\beta}:=\Extend_\delta^0 D_{j,\alpha,\beta}\)
    so that \(\supp(E_{j,\alpha,\beta})\subseteq \ManifoldM\times \ManifoldN\),
    \(E_{j,\alpha,\beta}\big|_{\ManifoldN\times \ManifoldN}=D_{j,\alpha,\beta}\),
    and (using Proposition \ref{Prop::Spaces::Elem::Extend::Extend::NonVerbose} \ref{Item::Spaces::Elem::Extend::Extend::NonVerbose::ExtendPreElemzIsPreElem})
    \(\left\{ \left( E_{j,\alpha,\beta} ,2^{-j} \right) : j\in \Zgeq, |\alpha|,|\beta|\leq N \right\}\in \PElemFh{\overline{\Omega_1}}\).

    Set,
    \begin{equation*}
        \Extension[N_0]:=\sum_{|\alpha|,|\beta|\leq N}\sum_{j\in \Zgeq} 2^{-j(2N-|\alpha|-|\beta|)}\left( 2^{-j\Wdv}\Wh \right)^{\alpha} E_{j,\alpha,\beta} \left( 2^{-j\Wdv}W \right)^{\beta}.
    \end{equation*}
    By Proposition \ref{Prop::Spaces::Extend::TDefinesBoundedOperator},
    for \(s\in [-N_0,N_0]\), we have
    \(\Extension[N_0]\) defines a bounded operator \(\ASpace{s}{p}{q}[\Compact][\FilteredSheafF]\rightarrow \ASpace{s}{p}{q}[\overline{\Omega_1}][\FilteredSheafFh]\),
    and since the inclusion \(\ASpace{s}{p}{q}[\overline{\Omega_1}][\FilteredSheafFh]\hookrightarrow \ASpace{s}{p}{q}[\overline{\Omega}][\FilteredSheafFh]\)
    is continuous (see Proposition \ref{Prop::Spaces::Containment}), we see that
    \(\Extension[N_0]:\ASpace{s}{p}{q}[\Compact][\FilteredSheafF]\rightarrow \ASpace{s}{p}{q}[\overline{\Omega}][\FilteredSheafFh]\)
    is bounded.

    Finally, we have for \(f\in \ASpace{s}{p}{q}[\Compact][\FilteredSheafF]\), with \(s\in [-N_0,N_0]\),
    \begin{equation*}
    \begin{split}
         &\left[ \Extension[N_0] f \right]\big|_{\TestFunctionsZeroN}
         =\sum_{|\alpha|,|\beta|\leq N}\sum_{j\in \Zgeq} 2^{-j(2N-|\alpha|-|\beta|)}\left[ \left( 2^{-j\Wdv}\Wh \right)^{\alpha} E_{j,\alpha,\beta} \left( 2^{-j\Wdv}W \right)^{\beta}f \right]\Big|_{\TestFunctionsZeroN}
         \\&=\sum_{|\alpha|,|\beta|\leq N}\sum_{j\in \Zgeq} 2^{-j(2N-|\alpha|-|\beta|)}\left( 2^{-j\Wdv}W \right)^{\alpha} D_{j,\alpha,\beta} \left( 2^{-j\Wdv}W \right)^{\beta}f
         =\sum_{j\in \Zgeq} D_j f = \psi f = f,
    \end{split}
    \end{equation*}
    completing the proof.
\end{proof}

        \subsubsection{Simple consequences of the extension theorem}\label{Section::Spaces::RestrictionAndExtension::SimpleConsequences}
        In this subsection, we describe some simple consequences of
Theorem \ref{Thm::Spaces::Extension}.

\begin{corollary}\label{Cor::Spaces::Extend::Consequences::RestrictionSpaceDefn}
    Let \(\Compact\Subset \ManifoldNncF\) be compact and \(\Omega\Subset \ManifoldM\) be \(\ManifoldM\)-open and relatively
    compact with 
    \(\Compact\subseteq \Omega\). 
    Then,
    \begin{equation}\label{Eqn::Spaces::Extend::Consequences::RestrictionSpaceDefn::Qual}
        \ASpace{s}{p}{q}[\Compact][\FilteredSheafF]
        =\left\{ g\big|_{\TestFunctionsZeroN} : g\in \ASpace{s}{p}{q}[\overline{\Omega}][\FilteredSheafFh] \right\}
    \end{equation}
    and
    \begin{equation}\label{Eqn::Spaces::Extend::Consequences::RestrictionSpaceDefn::Quant}
        \ANorm{f}{s}{p}{q}[\FilteredSheafF]
        \approx \inf \left\{ \ANorm{g}{s}{p}{q}[\FilteredSheafFh] : g\in \ASpace{s}{p}{q}[\overline{\Omega}][\FilteredSheafFh], g\big|_{\TestFunctionsZeroN}=f \right\},
        \quad \forall f\in \ASpace{s}{p}{q}[\Compact][\FilteredSheafF].
    \end{equation}
\end{corollary}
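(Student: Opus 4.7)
The plan is to read both assertions directly off Theorem \ref{Thm::Spaces::Extension}, whose parts (i) and (ii) provide respectively a continuous restriction map and a continuous right-inverse; (\ref{Eqn::Spaces::Extend::Consequences::RestrictionSpaceDefn::Quant}) will then say that the quotient norm on the right-hand side matches the intrinsic norm on the left, and the set equality (\ref{Eqn::Spaces::Extend::Consequences::RestrictionSpaceDefn::Qual}) will drop out as a byproduct.

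For the ``$\subseteq$'' direction of (\ref{Eqn::Spaces::Extend::Consequences::RestrictionSpaceDefn::Qual}) and the ``$\gtrsim$'' direction of (\ref{Eqn::Spaces::Extend::Consequences::RestrictionSpaceDefn::Quant}), fix any integer $N_0 \geq |s|$ and apply Theorem \ref{Thm::Spaces::Extension} \ref{Item::Spaces::Extension::Extension} with the given $\Compact$ and $\Omega$ to obtain a continuous linear extension $\Extension[N_0] : \ASpace{s}{p}{q}[\Compact][\FilteredSheafF] \to \ASpace{s}{p}{q}[\overline{\Omega}][\FilteredSheafFh]$ with $(\Extension[N_0] f)|_{\TestFunctionsZeroN}=f$. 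Setting $g:=\Extension[N_0] f$ writes each $f$ on the left of (\ref{Eqn::Spaces::Extend::Consequences::RestrictionSpaceDefn::Qual}) as a restriction from the right, and yields
\[
\inf\bigl\{\ANorm{h}{s}{p}{q}[\FilteredSheafFh] : h|_{\TestFunctionsZeroN}=f\bigr\} \;\leq\; \ANorm{\Extension[N_0] f}{s}{p}{q}[\FilteredSheafFh] \;\lesssim\; \ANorm{f}{s}{p}{q}[\FilteredSheafF].
\]

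For the reverse directions, first use openness of $\ManifoldNncF$ in $\ManifoldN$ together with compactness of $\Compact$ to select an $\ManifoldM$-open set $\Omega_1$ with $\Compact \subseteq \Omega_1 \Subset \Omega$ and $\overline{\Omega_1}\cap \ManifoldN \subseteq \ManifoldNncF$, plus a cutoff $\phi \in \CinftyCptSpace[\Omega_1]$ equal to $1$ on a neighborhood of $\Compact$. Given any $g \in \ASpace{s}{p}{q}[\overline{\Omega}][\FilteredSheafFh]$ whose restriction equals some $f \in \ASpace{s}{p}{q}[\Compact][\FilteredSheafF]$, Proposition \ref{Prop::Spaces::MappingOfFuncsAndVFs} \ref{Item::Spaces::MappingOfFuncsAndVFs::Funcs} gives $\phi g \in \ASpace{s}{p}{q}[\overline{\Omega}][\FilteredSheafFh]$ with norm $\lesssim \ANorm{g}{s}{p}{q}[\FilteredSheafFh]$, and $\phi g$ is moreover supported in $\overline{\Omega_1}$, so by Proposition \ref{Prop::Spaces::Containment} it lies in $\ASpace{s}{p}{q}[\overline{\Omega_1}][\FilteredSheafFh]$ with the same norm. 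Since $f$ is supported where $\phi\equiv 1$, we have $(\phi g)|_{\TestFunctionsZeroN}=f$. Theorem \ref{Thm::Spaces::Extension} \ref{Item::Spaces::Extension::Restriction}, applied with ambient compact set $\overline{\Omega_1}$, then gives
\[
\ANorm{f}{s}{p}{q}[\FilteredSheafF] \;=\; \ANorm{(\phi g)|_{\TestFunctionsZeroN}}{s}{p}{q}[\FilteredSheafF] \;\lesssim\; \ANorm{\phi g}{s}{p}{q}[\FilteredSheafFh] \;\lesssim\; \ANorm{g}{s}{p}{q}[\FilteredSheafFh],
\]
and taking the infimum over all such $g$ completes both remaining directions.

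The only mildly delicate point, which I expect to be the main obstacle, is that the extension in Theorem \ref{Thm::Spaces::Extension} \ref{Item::Spaces::Extension::Extension} works with the original $\Omega$ whereas the restriction in Theorem \ref{Thm::Spaces::Extension} \ref{Item::Spaces::Extension::Restriction} demands that the ambient compact set meet $\ManifoldN$ inside $\ManifoldNncF$; an arbitrary $g$ on the right of (\ref{Eqn::Spaces::Extend::Consequences::RestrictionSpaceDefn::Qual}) need not be supported in a smaller $\overline{\Omega_1}$ where that hypothesis is available. The cutoff step above bridges this gap cleanly using only multiplication continuity and support localization, and requires no fresh analytic input.
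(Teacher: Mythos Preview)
Your proof is correct and follows essentially the same approach as the paper's: the extension operator $\Extension[N_0]$ handles the ``$\subseteq$'' and ``$\gtrsim$'' directions, while for the reverse you localize via a cutoff $\phi$ supported in a smaller $\Omega_1$ with $\overline{\Omega_1}\cap\ManifoldN\subseteq\ManifoldNncF$ before applying Theorem~\ref{Thm::Spaces::Extension}~\ref{Item::Spaces::Extension::Restriction}---exactly as the paper does (with $\psi$ in place of your $\phi$). One minor phrasing point: for the ``$\supseteq$'' direction you should start from an arbitrary $g$ with $\supp(g|_{\TestFunctionsZeroN})\subseteq\Compact$ rather than assuming $f:=g|_{\TestFunctionsZeroN}$ already lies in $\ASpace{s}{p}{q}[\Compact][\FilteredSheafF]$; your argument actually establishes this membership (via the restriction theorem and Proposition~\ref{Prop::Spaces::Containment}), but the current wording makes it look circular.
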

\begin{proof}
    Fix \(\Omega_1\Subset \Omega\) an \(\ManifoldM\)-open, relatively compact set with
    \(\Compact\Subset \Omega_1\) and \(\overline{\Omega_1}\cap \ManifoldN\subseteq \ManifoldNncF\).
    Fix \(\psi\in \CinftyCptSpace[\Omega_1]\) with \(\psi=1\) on an \(\ManifoldM\)-neighborhood of \(\Compact\).
    By Proposition \ref{Prop::Spaces::MappingOfFuncsAndVFs} \ref{Item::Spaces::MappingOfFuncsAndVFs::Funcs},
    \(\Mult{\psi}:\ASpace{s}{p}{q}[\overline{\Omega}][\FilteredSheafFh]\rightarrow \ASpace{s}{p}{q}[\overline{\Omega}][\FilteredSheafFh]\)
    is continuous, and therefore, by Proposition \ref{Prop::Spaces::Containment},
    \(\Mult{\psi}:\ASpace{s}{p}{q}[\overline{\Omega}][\FilteredSheafFh]\rightarrow \ASpace{s}{p}{q}[\overline{\Omega_1}][\FilteredSheafFh]\)
    is continuous.

    Let \(g\in \ASpace{s}{p}{q}[\overline{\Omega}][\FilteredSheafFh]\)
    with \(\supp(g\big|_{\TestFunctionsZeroN})\subseteq \Compact\). Then,
    since \(\psi g\in \ASpace{s}{p}{q}[\overline{\Omega_1}][\FilteredSheafFh]\),
    Theorem \ref{Thm::Spaces::Extension} \ref{Item::Spaces::Extension::Restriction} gives
    \(g\big|_{\TestFunctionsZeroN}= \left( \psi g \right)\big|_{\TestFunctionsZeroN} \in \ASpace{s}{p}{q}[\overline{\Omega_1}\cap \ManifoldN][\FilteredSheafF]\),
    with 
    \begin{equation*}
        \ANorm{g\big|_{\TestFunctionsZeroN}}{s}{p}{q}[\FilteredSheafF]
        \lesssim \ANorm{\psi g}{s}{p}{q}[\FilteredSheafFh]
        \lesssim \ANorm{g}{s}{p}{q}[\FilteredSheafFh].
    \end{equation*}
    Since \(\supp(g\big|_{\TestFunctionsZeroN})\subseteq \Compact\), that \(g\big|_{\TestFunctionsZeroN}\in \ASpace{s}{p}{q}[\overline{\Omega_1}\cap \ManifoldN][\FilteredSheafF]\)
    implies \(\ASpace{s}{p}{q}[\Compact][\FilteredSheafF]\) (see Proposition \ref{Prop::Spaces::Containment}).
    This establishes the \(\supseteq\) part of \eqref{Eqn::Spaces::Extend::Consequences::RestrictionSpaceDefn::Qual}
    and the \(\lesssim\) part of \eqref{Eqn::Spaces::Extend::Consequences::RestrictionSpaceDefn::Quant}.

    Let \(f\in \ASpace{s}{p}{q}[\Compact][\FilteredSheafF]\).
    Take \(N_0=|s|\), and set \(g=\Extension[N_0] f\), where \(\Extension[N_0]\) is as in 
    Theorem \ref{Thm::Spaces::Extension} \ref{Item::Spaces::Extension::Extension}.
    Then, \(g\in \ASpace{s}{p}{q}[\overline{\Omega}][\FilteredSheafFh]\) with
    \(g\big|_{\TestFunctionsZeroN}=f\) and \(\ANorm{g}{s}{p}{q}[\FilteredSheafFh]\lesssim \ANorm{f}{s}{p}{q}[\FilteredSheafF]\).
    This establishes the \(\subseteq\) part of \eqref{Eqn::Spaces::Extend::Consequences::RestrictionSpaceDefn::Qual}
    and the \(\gtrsim\) part of \eqref{Eqn::Spaces::Extend::Consequences::RestrictionSpaceDefn::Quant},
    competing the proof.
\end{proof}

\begin{proof}[Proof of Proposition \ref{Prop::Spaces::EqualsLp}]
    In Proposition \ref{Prop::Spaces::EqualsLp}, there is no given ambient manifold \(\ManifoldM\)
    or ambient H\"ormander filtration of sheaves of vector fields \(\FilteredSheafFh\).
    However, such data always exists: see Proposition \ref{Prop::Filtrations::RestrictingFiltrations::CoDim0CoRestriction}.

    In light of Corollary \ref{Cor::Spaces::Extend::Consequences::RestrictionSpaceDefn}, it suffices
    to prove the result on \(\ManifoldM\) with \(\FilteredSheafF\) replaced by
    \(\FilteredSheafFh\). This is exactly \cite[Proposition 6.2.13]{StreetMaximalSubellipticity}.
\end{proof}

\begin{proposition}\label{Prop::Spaces::Extend::Consequences::DerivForSobolevNormsOnFunctionsOfCptSupport}
    Let \(\Omega_1\Subset\Omega_2\Subset \ManifoldM\) be relatively compact, \(\ManifoldM\)-open sets with \(\Omega_2\cap\ManifoldN\Subset \ManifoldNncF\).
    Let \(\WhWdv=\left\{ \left( \Wh_1,\Wdv_1 \right),\ldots, \left( \Wh_r,\Wdv_r \right) \right\}\subset \VectorFields{\Omega_2}\times \Zg\)
    be H\"ormander vector fields with formal degrees such that \(\FilteredSheafFh\big|_{\Omega_2}=\FilteredSheafGenBy{\WhWdv}\)
    (see Lemma \ref{Lemma::Filtrations::GeneratorsOnRelCptSet}). Set \(W_j:=\Wh_j\big|_{\Omega_2\cap\ManifoldN}\)
    and set \(\WWdv:=\left\{ \left( W_1,\Wdv_1 \right),\ldots, \left( W_r,\Wdv_r \right) \right\}\)
    so that \(\FilteredSheafF\big|_{\Omega_2\cap \ManifoldN}=\FilteredSheafGenBy{\WWdv}\) (see Proposition \ref{Prop::Filtrations::RestrictingFiltrations::CoDim0Restriction}).
    Fix \(\kappa\in \Zg\) such that \(\Wdv_j\) divides \(\kappa\), for every \(j\). Then, for \(1<p<\infty\),
    \begin{equation*}
        \TLNorm{f}{\kappa}{p}{2}[\FilteredSheafF]\approx \TLNorm{f}{\kappa}{p}{2}[\FilteredSheafFh]
        \approx \sum_{\DegWdv{\alpha}\leq \kappa} \LpNorm{W^{\alpha}f}{p},\quad \forall f\in \CinftyCptSpace[\Omega_1\cap\InteriorN].
    \end{equation*}
\end{proposition}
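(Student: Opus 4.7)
The plan is to establish the three-way equivalence by extending $f$ by zero from $\Omega_1 \cap \InteriorN$ to a smooth, compactly supported function on the ambient manifold $\ManifoldM$, applying the corresponding interior result on $\ManifoldM$ (which has no boundary), and then transferring back to $\ManifoldN$ via the restriction half of the extension theorem. Throughout, I use that the construction of $\ManifoldM, \FilteredSheafFh$ in Proposition \ref{Prop::Filtrations::RestrictingFiltrations::CoDim0CoRestriction} guarantees $\RestrictFilteredSheaf{\FilteredSheafFh}{\ManifoldN}=\FilteredSheafF$ and $\Wh_j|_{\ManifoldN\cap\Omega_2}=W_j$.

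First, since $\supp(f)$ is a compact subset of $\InteriorN \cap \Omega_1$, the extension by zero $\tilde{f}\in \CinftyCptSpace[\Omega_1]\subseteq \CinftyCptSpace[\ManifoldM]$ is smooth with compact support in $\ManifoldM$, and $\tilde{f}|_{\TestFunctionsZeroN}$ coincides with $f$ as an element of $\DistributionsZeroN$. Because $\ManifoldM$ has no boundary, $\ManifoldM^{\mathrm{nc}}_{\FilteredSheafFh}=\ManifoldM$, so the cited interior Sobolev characterization \cite[Corollary 6.2.14]{StreetMaximalSubellipticity} applies on $\ManifoldM$ with the generators $\WhWdv$ and yields
\begin{equation*}
\TLNorm{\tilde{f}}{\kappa}{p}{2}[\FilteredSheafFh] \approx \sum_{\DegWdv{\alpha}\leq \kappa} \LpNorm{\Wh^{\alpha} \tilde{f}}{p}[\ManifoldM] = \sum_{\DegWdv{\alpha}\leq \kappa} \LpNorm{W^{\alpha} f}{p},
\end{equation*}
where the equality uses that $\tilde{f}$ vanishes outside $\InteriorN$ and $\Wh_j|_{\ManifoldN\cap\Omega_2}=W_j$, so all derivatives are supported inside $\InteriorN\cap\Omega_1$.

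Second, Theorem \ref{Thm::Spaces::Extension} \ref{Item::Spaces::Extension::Restriction} (applied to any compact set in $\ManifoldM$ containing $\supp(\tilde f)$ and having its intersection with $\ManifoldN$ contained in $\ManifoldNncF$) shows that restriction is continuous, giving $\TLNorm{f}{\kappa}{p}{2}[\FilteredSheafF]\lesssim \TLNorm{\tilde{f}}{\kappa}{p}{2}[\FilteredSheafFh]$. In the opposite direction, Proposition \ref{Prop::Spaces::MappingOfFuncsAndVFs} \ref{Item::Spaces::MappingOfFuncsAndVFs::DiffOps} together with the identification $\TLSpace{0}{p}{2}[\cdot][\FilteredSheafF] = \LpSpace{p}$ from Proposition \ref{Prop::Spaces::EqualsLp} shows that every $W^{\alpha}$ with $\DegWdv{\alpha}\leq \kappa$ maps $\TLSpace{\kappa}{p}{2}[\Compact][\FilteredSheafF]$ continuously into $\LpSpace{p}[\Compact]$, so $\sum_{\DegWdv{\alpha}\leq \kappa}\LpNorm{W^{\alpha}f}{p}\lesssim \TLNorm{f}{\kappa}{p}{2}[\FilteredSheafF]$. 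Chaining the three inequalities
\begin{equation*}
\sum_{\DegWdv{\alpha}\leq \kappa}\LpNorm{W^{\alpha}f}{p} \approx \TLNorm{\tilde{f}}{\kappa}{p}{2}[\FilteredSheafFh] \gtrsim \TLNorm{f}{\kappa}{p}{2}[\FilteredSheafF] \gtrsim \sum_{\DegWdv{\alpha}\leq \kappa}\LpNorm{W^{\alpha}f}{p}
\end{equation*}
forces all four quantities (including $\TLNorm{\tilde f}{\kappa}{p}{2}[\FilteredSheafFh]$, which is the middle $\TLNorm{f}{\kappa}{p}{2}[\FilteredSheafFh]$ under the standing identification of $f$ with $\tilde f$) to be equivalent. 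There is no real obstacle beyond bookkeeping: the only point requiring care is making sure that the domain/range compact sets in each invocation of Theorem \ref{Thm::Spaces::Extension} and of the cited interior result sit inside $\ManifoldNncF$ and $\ManifoldM$ respectively, which is ensured by the hypothesis $\Omega_2\cap\ManifoldN \Subset \ManifoldNncF$.
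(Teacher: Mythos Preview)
Your proof is correct and follows essentially the same approach as the paper: both arguments close the chain of inequalities by combining \cite[Corollary 6.2.14]{StreetMaximalSubellipticity} on the ambient manifold, continuity of restriction (you cite Theorem \ref{Thm::Spaces::Extension} \ref{Item::Spaces::Extension::Restriction} directly, while the paper uses its immediate consequence, Corollary \ref{Cor::Spaces::Extend::Consequences::RestrictionSpaceDefn}), and the $\FilteredSheafF$-side bound via Propositions \ref{Prop::Spaces::MappingOfFuncsAndVFs} and \ref{Prop::Spaces::EqualsLp}. One small remark: in this section $\ManifoldM$ and $\FilteredSheafFh$ are already given as part of the standing hypotheses, so your reference to Proposition \ref{Prop::Filtrations::RestrictingFiltrations::CoDim0CoRestriction} as a ``construction'' is unnecessary.
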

\begin{proof}
    Since \(W^{\alpha}:\TLSpace{\kappa}{p}{2}[\overline{\Omega_1}][\FilteredSheafF]\rightarrow \TLSpace{\kappa-\DegWdv{\alpha}}{p}{2}[\overline{\Omega_1}][\FilteredSheafF]\hookrightarrow \TLSpace{0}{p}{2}[\overline{\Omega_1}][\FilteredSheafF]\)
    is continuous for \(\DegWdv{\alpha}\leq \kappa\) (see Proposition \ref{Prop::Spaces::MappingOfFuncsAndVFs} \ref{Item::Spaces::MappingOfFuncsAndVFs::VFsInF}),
    we have
    \begin{equation*}
        \sum_{\DegWdv{\alpha}\leq \kappa} \TLNorm{W^{\alpha}f}{0}{p}{2}[\FilteredSheafF]
        \lesssim \TLNorm{f}{\kappa}{p}{2}[\FilteredSheafF], \quad \forall f\in \CinftyCptSpace[\Omega_1\cap \ManifoldN].
    \end{equation*}
    Combining this with Proposition \ref{Prop::Spaces::EqualsLp} shows 
    \begin{equation}\label{Spaces::Extend::Consequences::DerivForSobolevNormsOnFunctionsOfCptSupport::Tmp1}
        \sum_{\DegWdv{\alpha}\leq \kappa} \LpNorm{W^{\alpha}f}{p}
        \lesssim \TLNorm{f}{\kappa}{p}{2}[\FilteredSheafF], \quad \forall f\in \CinftyCptSpace[\Omega_1\cap \ManifoldN].
    \end{equation}
    Corollary \ref{Cor::Spaces::Extend::Consequences::RestrictionSpaceDefn} shows
    \begin{equation*}
        \TLNorm{f\big|_{\ManifoldN}}{\kappa}{p}{2}[\FilteredSheafF]\lesssim \TLNorm{f}{\kappa}{p}{2}[\FilteredSheafFh],\quad \forall f\in \CinftyCptSpace[\Omega_1],
    \end{equation*}
    and therefore
    \begin{equation}\label{Spaces::Extend::Consequences::DerivForSobolevNormsOnFunctionsOfCptSupport::Tmp2}
        \TLNorm{f}{\kappa}{p}{2}[\FilteredSheafF]\lesssim \TLNorm{f}{\kappa}{p}{2}[\FilteredSheafFh],\quad \forall f\in \CinftyCptSpace[\Omega_1\cap \InteriorN].
    \end{equation}
    By \cite[Corollary 6.2.14]{StreetMaximalSubellipticity}, we have
    \begin{equation*}
        \TLNorm{f}{\kappa}{p}{2}[\FilteredSheafFh]
        \approx \sum_{\DegWdv{\alpha}\leq \kappa} \LpNorm{\Wh^{\alpha}f}{p},\quad \forall f\in \CinftyCptSpace[\Omega_1],
    \end{equation*}
    and therefore,
    \begin{equation}\label{Spaces::Extend::Consequences::DerivForSobolevNormsOnFunctionsOfCptSupport::Tmp3}
        \TLNorm{f}{\kappa}{p}{2}[\FilteredSheafFh]
        \approx \sum_{\DegWdv{\alpha}\leq \kappa} \LpNorm{W^{\alpha}f}{p},\quad \forall f\in \CinftyCptSpace[\Omega_1\cap\InteriorN].
    \end{equation}
    Combining \eqref{Spaces::Extend::Consequences::DerivForSobolevNormsOnFunctionsOfCptSupport::Tmp1}, 
    \eqref{Spaces::Extend::Consequences::DerivForSobolevNormsOnFunctionsOfCptSupport::Tmp2},
    and \eqref{Spaces::Extend::Consequences::DerivForSobolevNormsOnFunctionsOfCptSupport::Tmp3}
    completes the proof.
\end{proof}

\begin{remark}\label{Rmk::Spaces::Extend::Consequences::DerivForSobolevNormsOnFunctionsOfCptSupport::NotAllFunctions}
    Proposition \ref{Prop::Spaces::Extend::Consequences::DerivForSobolevNormsOnFunctionsOfCptSupport}
    addresses only functions \(f\in \CinftyCptSpace[\Omega_1\cap\InteriorN]\). While we expect
    the result to be true \(\forall f\in \TLSpace{\kappa}{p}{2}[\overline{\Omega_1}][\FilteredSheafF]\),
    smooth functions with compact support in \(\InteriorN\) are not dense (see Theorem \ref{Thm::Trace::Vanish::MainVanishThm}).
    Because of this, new ideas will be needed, and we do not pursue this here.
    For a special case where all functions can be addressed, see Proposition \ref{Prop::Spaces::EqualsSobolev}.
\end{remark}

\begin{proof}[Proof of Proposition \ref{Prop::Spaces::EqualsClassical}]
    Let \(\ManifoldM\) be the double of \(\ManifoldN\), and set \(\FilteredSheafFh[d][\Omega]=\VectorFields{\Omega}\)
    for every \(d\in \Zg\) and \(\Omega\subseteq \ManifoldM\) open.
    Note that \(\RestrictFilteredSheaf{\FilteredSheafFh}{\ManifoldN}=\FilteredSheafF\).
    
    The classical Besov and Triebel--Lizorkin spaces are traditionally defined as the restrictions
    of the spaces on an ambient manifold (as in Corollary \ref{Cor::Spaces::Extend::Consequences::RestrictionSpaceDefn}).
    Thus, it suffices to show that for \(\Compact\Subset \ManifoldM\), compact,
    we have 
    \(\ASpace{s}{p}{q}[\Compact][\FilteredSheafFh]\) coincides with the space of those distributions
    supported in \(\Compact\) which lie in the classical Besov or Triebel--Lizorkin space on \(\ManifoldM\)
    (with equivalence of norms).
    This is \cite[Theorem 6.6.7]{StreetMaximalSubellipticity}.
\end{proof}

\section{Trace theorems}\label{Chapter::Trace}
Throughout this chapter,  \(\ManifoldN\) is a smooth manifold, with boundary, and and \(\FilteredSheafF\)
is a H\"ormander filtration of sheaves of vector fields.

In this chapter, we state and prove the main trace theorems regarding
the spaces
\(\BesovSpace{s}{p}{q}[\Compact][\FilteredSheafF]\) and \(\TLSpace{s}{p}{q}[\Compact][\FilteredSheafF]\);
namely, we generalize Theorem \ref{Thm::Intro::Classical::TraceThm} and Corollary \ref{Cor::GlobalCors::TraceThm}
to this more general setting; see Theorems  \ref{Thm::Trace::ForwardMap} and \ref{Thm::Trace::Dirichlet::MainInverseThm}.  We also study subspaces
of functions which vanish at the boundary; see Section \ref{Section::Trace::Vanish}.

Trace theorems (like Theorem \ref{Thm::Intro::Classical::TraceThm} and Corollary \ref{Cor::GlobalCors::TraceThm})
have two parts: the trace map (the forward map), and its right inverse (the inverse map).
We state these as separate theorems.
Fix \(\lambda\in \Zg\) such that
\(\BoundaryNncF[\lambda]\ne \emptyset\), and \(\Omega\Subset \ManifoldNncF[\lambda]\) open and relatively compact.
Recall the spaces \(\ASpaceCpt{s}{p}{q}[\Omega][\FilteredSheafF]\) from Definition \ref{Defn::Spaces::Defns::ASpaceCpt}.
The first main theorem of this chapter is:

\begin{theorem}[The forward map]
    \label{Thm::Trace::ForwardMap}
    Let \(B\) be a partial differential operator with smooth coefficients defined on a neighborhood of \(\OmegaClosure\),
    and suppose \(B\) has \(\FilteredSheafF\)-degree \(\leq \kappa\) on \(\OmegaClosure\)
    (see Definition \ref{Defn::Filtrations::DiffOps::Deg}).
    Then, there exists a unique linear map:
    \begin{equation*}
        \TraceMap[B]:\left( \bigcup_{\substack{ 1< p\leq \infty\\1\leq q\leq \infty \\ s>\kappa+\lambda/p }} \BesovSpaceCpt{s}{p}{q}[\Omega][\FilteredSheafF]\right)
        \bigcup
        \left( \bigcup_{\substack{ 1< p< \infty\\1< q\leq \infty \\ s>\kappa+\lambda/p }} \TLSpaceCpt{s}{p}{q}[\Omega][\FilteredSheafF]\right)
        \rightarrow \Distributions[\Omega\cap \BoundaryN]
    \end{equation*}
    such that:
    \begin{enumerate}[(I)]
        \item\label{Item::Trace::ForwardMap::TraceOnSmooth} \(\forall f\in \CinftyCptSpace[\Omega]\), \(\TraceMap[B]f=Bf\big|_{\BoundaryN}\),
        \item\label{Item::Trace::ForwardMap::Continuous} \(\forall \Compact\Subset \Omega\) compact, the following are continuous:
            \begin{equation*}
                \TraceMap[B]:\BesovSpace{s}{p}{q}[\Compact][\FilteredSheafF] \rightarrow \BigBesovSpace{s-\kappa-\lambda/p}{p}{q}[\Compact\cap \BoundaryN][\RestrictFilteredSheaf{\LieFilteredSheafF}{\BoundaryNncF}],\quad 1<p\leq \infty, \: 1\leq q\leq\infty,\: s>\kappa+\lambda/p,
            \end{equation*}
            \begin{equation*}
                \TraceMap[B]:\TLSpace{s}{p}{q}[\Compact][\FilteredSheafF] \rightarrow \BigBesovSpace{s-\kappa-\lambda/p}{p}{p}[\Compact\cap \BoundaryN][\RestrictFilteredSheaf{\LieFilteredSheafF}{\BoundaryNncF}],\quad 1<p< \infty, \: 1< q\leq\infty,\: s>\kappa+\lambda/p.
            \end{equation*}
    \end{enumerate}
\end{theorem}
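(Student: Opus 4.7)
The plan is to split into uniqueness, reduction to $B = I$, and the core continuity estimate.

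\textbf{Uniqueness and reduction to $B = I$.} Corollary~\ref{Cor::Spaces::Approximation::SmoothFunctionsAreDense} gives for every $f$ in any of the input spaces a bounded sequence $f_n \in \CinftyCptSpace[\Omega]$ with $f_n \to f$ in $\DistributionsZeroN$. Any two candidate maps $\TraceMap[B]$ satisfying \ref{Item::Trace::ForwardMap::TraceOnSmooth} and \ref{Item::Trace::ForwardMap::Continuous} must produce the same limit in $\Distributions[\Omega\cap\BoundaryN]$ by Proposition~\ref{Prop::Spaces::Approximation::DistributionConvgToStronger} applied to the target boundary Besov space; this gives uniqueness. For the reduction, Proposition~\ref{Prop::Spaces::MappingOfFuncsAndVFs}~\ref{Item::Spaces::MappingOfFuncsAndVFs::DiffOps} gives continuity $B : \XSpace{s}{p}{q}[\Compact][\FilteredSheafF] \to \XSpace{s-\kappa}{p}{q}[\Compact][\FilteredSheafF]$. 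Once $\TraceMap := \TraceMap[I]$ is constructed with the stated continuity for $s > \lambda/p$, we set $\TraceMap[B] := \TraceMap \circ B$, and the case $s > \kappa + \lambda/p$ follows.

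\textbf{Adapted local Littlewood--Paley decomposition.} By a partition of unity using Proposition~\ref{Prop::Spaces::MappingOfFuncsAndVFs}~\ref{Item::Spaces::MappingOfFuncsAndVFs::Funcs} it suffices to work locally near a point $x_0 \in \Omega \cap \BoundaryN$. Embed $\ManifoldN \subseteq \ManifoldM$ into a boundaryless ambient manifold with $\FilteredSheafFh$ extending $\FilteredSheafF$ (Proposition~\ref{Prop::Filtrations::RestrictingFiltrations::CoDim0CoRestriction}), and apply Lemma~\ref{Lemma::Spaces::Multiplication::NiceGenerators} to obtain generators $\XhXdv = \{(\Xh_0,\lambda),(\Xh_1,\Xdv_1),\ldots,(\Xh_q,\Xdv_q)\}$ of $\LieFilteredSheafFh$ on a neighborhood $\Omegah$ of $x_0$, with $\Xh_0$ transverse to $\BoundaryN$ pointing inward (degree $\lambda$) and $\Xh_j \in T\BoundaryN$ for $j \geq 1$. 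By Proposition~\ref{Prop::Filtrations::RestrictingFiltrations::CoDim0Restriction}, $\RestrictFilteredSheaf{\LieFilteredSheafF}{\BoundaryN\cap\Omegah}$ is generated by $\{(\Xh_j|_{\BoundaryN},\Xdv_j)\}_{j\geq 1}$. Build LP projectors
\begin{equation*}
\Dh_j f(x) = \psih(x)\int f(e^{t\cdot\Xh}x)\,\eta(t)\,\Dild{2^j}{\vsig_j}(t)\,dt, \qquad \sum_{j\in\Zgeq} \Dh_j = \Mult{\psih},
\end{equation*}
as in Lemma~\ref{Lemma::Spaces::Multiplication::ExistsDhj}, with $\supp(\vsig_j) \subseteq [0,\infty)\times\R^q$. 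Since $t_0 \geq 0$ on $\supp(\vsig_j)$ and $\Xh_0$ points into $\ManifoldN$, $\Dh_j f(x)$ is well-defined and smooth for $x \in \BoundaryN\cap\Omegah$. In parallel, build boundary LP projectors $D^\partial_k$ for $\RestrictFilteredSheaf{\LieFilteredSheafF}{\BoundaryNncF}$ using only $\Xh_1,\ldots,\Xh_q$.

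\textbf{Trace inequality and summation.} The heart of the argument is the non-isotropic trace inequality
\begin{equation*}
\left\| \Dh_j f|_{\BoundaryN} \right\|_{L^p(\BoundaryN\cap\Compact)} \lesssim 2^{j\lambda/p}\,\left\| \Dh_j f \right\|_{L^p(\Compact)}.
\end{equation*}
Because $\Xh_0$ points inward with degree $\lambda$, flowing by $\Xh_0$ from $x' \in \BoundaryN$ for time $\tau \in [0, c\cdot 2^{-j\lambda}]$ stays within $\MetricWhWdv$-distance $\lesssim 2^{-j}$ of $x'$. The $\PElemzSymbol$-bounds of Definition~\ref{Defn::Spaces::LP::PElemWWdv} give the almost-constancy of $\Dh_j f$ on such balls, and combined with the boundary-versus-ambient volume comparisons of Propositions~\ref{Prop::VectorFields::Scaling::VolEstimates} and~\ref{Prop::VectorFields::Scaling::AmbientVolAndMetricEquivalnce} this produces the pointwise bound
\begin{equation*}
|\Dh_j f(x')|^p \lesssim \frac{1}{c\cdot 2^{-j\lambda}}\int_0^{c\cdot 2^{-j\lambda}} |\Dh_j f(e^{\tau\Xh_0}x')|^p \,d\tau,
\end{equation*}
which integrated in $x'$ yields the trace inequality. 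The restricted family $\{\Dh_j|_{\BoundaryN\times\BoundaryN}\}$ gives elementary operators for $\RestrictFilteredSheaf{\LieFilteredSheafF}{\BoundaryN\cap\Omegah}$, so by Corollary~\ref{Cor::Spaces::MainEst::VpqsESeminormIsContinuous} it computes Besov/TL norms on $\BoundaryN$. Weighting by $2^{j(s-\lambda/p)q}$ and summing in $\ell^q$ yields the Besov bound in \ref{Item::Trace::ForwardMap::Continuous}. In the TL case, applying Minkowski's inequality to swap $L^p(\BoundaryN)$ with $\ell^p$ in $j$ produces $\BesovSpace{s-\lambda/p}{p}{p}[\BoundaryN]$ on the boundary; this is the classical mechanism (see Remark~\ref{Rmk::Intro::Classical::TraceGoesToBesov}) forcing the swap to Besov exponent $p$.

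\textbf{Main obstacle.} The main difficulty is establishing the trace inequality with the sharp loss $2^{j\lambda/p}$ without an equiregularity hypothesis, where the boundary and interior ball volumes can degenerate in complicated ways as the basepoint moves along $\BoundaryN$. The scaling charts of Theorem~\ref{Thm::Spaces::Scaling::MainScalingThm} reduce this, after pullback to $\Qn{1}\cap \Qngeqc{1}{c_0}$, to a uniform-in-$(x,\delta)$ classical trace estimate, but the uniform volume and metric comparisons at the boundary of Propositions~\ref{Prop::VectorFields::Scaling::VolEstimates} and~\ref{Prop::VectorFields::Scaling::AmbientVolAndMetricEquivalnce} must be deployed carefully to preserve the correct power of $\lambda$. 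A secondary subtlety is identifying the restricted LP family as genuinely computing Besov/TL norms for the induced Hörmander filtration on the boundary, but this is reduced to the sheaf machinery of Section~\ref{Section::Spaces::LittlewoodPaleyTheory} once the setup above is in place.
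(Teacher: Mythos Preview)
Your overall architecture (uniqueness, reduction to $B=I$, localization, a scale-$2^{-j}$ trace inequality, then summation) matches the paper's, but the core technical step has two genuine gaps.

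\textbf{First gap: the pointwise ``almost-constancy'' bound is not valid as stated.} The inequality
\[
|\Dh_j f(x')|^p \lesssim \frac{1}{c\,2^{-j\lambda}}\int_0^{c\,2^{-j\lambda}} |\Dh_j f(e^{\tau\Xh_0}x')|^p\,d\tau
\]
would require $|\Dh_j f(x')|\lesssim |\Dh_j f(y)|$ for every $y$ at $\XXdv$-distance $\lesssim 2^{-j}$ from $x'$, but the $\PElemzSymbol$ derivative bounds only give $|\Dh_j f(x')-\Dh_j f(y)|\lesssim \sup|\Dh_j f|$, not a pointwise comparison. The paper handles this by introducing the Peetre-type maximal function
\[
E_j^{*,L}f(x)=\sup_{z}\bigl(1+2^j\DistXXdv[x][z]\bigr)^{-L}|E_jf(z)|,
\]
proving $|E_jf(x',0)|\lesssim E_j^{*,L}f(x',x_n)$ for $x_n\approx 2^{-j\lambda}$ (Lemma~\ref{Lemma::Trace::Forward::BoundRestrictElemByElemStar}), and then establishing the vector-valued bound $\|\{2^{js}E_j^{*,L}f\}\|_{\VSpace{p}{q}}\lesssim \ANorm{f}{s}{p}{q}$ via the Hardy--Littlewood maximal function on the doubling space $(\Compact,\MetricXXdv,\Vol)$ (Proposition~\ref{Prop::Trace::Forward::BoundEjStarByNorm} and Lemmas~\ref{Lemma::Trace::Forward::BoundPreElemByMaximal}--\ref{Lemma::Trace::Forward::BoundElemStarVVByElemVV}). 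This machinery is not optional.

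\textbf{Second gap: restricted $\Dh_j$ do not compute the boundary norm.} You conflate two different objects: $(\Dh_j f)\big|_{\BoundaryN}$ (which integrates the kernel $\Dh_j(x',y)$ against $f(y)$ for $y$ in the \emph{interior}) and $\Dh_j\big|_{\BoundaryN\times\BoundaryN}$ applied to $f\big|_{\BoundaryN}$. Your trace inequality controls the former, but the boundary Besov norm of $f\big|_{\BoundaryN}$ is computed by genuine boundary elementary operators $D_k'$ (which you build but never use). The missing link is an almost-orthogonality estimate
\[
E_k'\TraceMap E_j=\sum_{l=1}^K 2^{-N|j-k|}E_{k,l}'\TraceMap E_{j,l}\qquad(k\geq j),
\]
which the paper proves in Lemma~\ref{Lemma::Trace::Forward::ElemRestrictElemAsSmallSumOfElemRestrictElem} using that the tangential generators satisfy $V_j\TraceMap=\TraceMap X_j$; this feeds into Lemma~\ref{Lemma::Trace::Forward::ElemRestrictBoundedByElemRestrictElem} and only then does the $\ell^q$ summation go through. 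Without it, your weighted sum $\sum_j 2^{j(s-\lambda/p)q}\|(\Dh_jf)|_{\BoundaryN}\|_{L^p}^q$ is not an equivalent expression for the target boundary Besov norm.

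A minor point: your uniqueness argument via Proposition~\ref{Prop::Spaces::Approximation::DistributionConvgToStronger} presupposes that $\TraceMap^i f_n\to\TraceMap^i f$ in $\Distributions[\BoundaryN]$, which is not automatic from $f_n\to f$ in $\DistributionsZeroN$ and norm-boundedness alone. The paper instead uses strong convergence in the input space for $q<\infty$ (Corollary~\ref{Cor::Spaces::Approximation::SmoothFunctionsAreDense}~\ref{Item::Spaces::Approximation::SmoothFunctionsAreDense::ApproxInST}) and an embedding into a $q<\infty$ space for $q=\infty$.
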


After two reductions (in Sections \ref{Section::Trace::ReductionToLocal} and \ref{Section::Trace::ReductionToCoord}),
the proof of Theorem \ref{Thm::Trace::ForwardMap} is completed in Section \ref{Section::Trace::Forward}.

We turn to the inverse map. Since \(\ManifoldN\) does not have a Riemannian metric, it does not make sense
to talk about normal derivatives at the boundary. Instead, we proceed more generally, and generalize the 
notion of a \textit{Dirichlet system} to the maximally subelliptic setting;
see \cite[Part I, Section 4.1]{AgranovichEgorovShubinPartialDifferentialEquationsIX} for the definitions in
the classical setting. We take this approach in Section \ref{Section::Trace::Dirichlet}, and present an indicative special case here,
which is a corollary of Theorem \ref{Thm::Trace::Dirichlet::MainInverseThm}, below.

Let \(\Omega\Subset \ManifoldNncF[\lambda]\) be a relatively compact, open set, and let 
\(V\in \FilteredSheafF[\Omega][\lambda]\) with \(V(x)\not \in \TangentSpace{x}{\BoundaryN}\),
\(\forall x\in \Omega\cap \BoundaryN\) (\(V\) will play the role of a ``normal derivative''). Such a \(V\)
always exists by the definition of \(\BoundaryNncF[\lambda]\), and a simple partition of unity argument.
For each \(L\in \Zgeq\), we consider the map, for \(f\in \CinftyCptSpace[\Omega]\), and \(x\in \BoundaryN\),
\begin{equation*}
    \TraceMap[L][V] f(x) = \left( f\big|_{\BoundaryN}(x), V f\big|_{\BoundaryN}(x),\ldots, V^L f\big|_{\BoundaryN}(x)\right).
\end{equation*}
By Theorem \ref{Thm::Trace::ForwardMap}, we may extend \(\TraceMap[L][V]\) uniquely to a map
such that \(\forall \Compact \Subset \Omega\) compact,
\begin{equation*}
    \TraceMap[L][V]:\BesovSpace{s}{p}{q}[\Compact][\FilteredSheafF]
    \rightarrow \prod_{l=0}^L \BigBesovSpace{s-l\lambda-\lambda/p}{p}{q}[\Compact\cap\BoundaryN][\RestrictFilteredSheaf{\LieFilteredSheafF}{\BoundaryNncF}],\quad 1<p\leq \infty, \: 1\leq q\leq \infty,\: s>L\lambda+\lambda/p,
\end{equation*}
\begin{equation*}
    \TraceMap[L][V]:\TLSpace{s}{p}{q}[\Compact][\FilteredSheafF]
    \rightarrow \prod_{l=0}^L \BigBesovSpace{s-l\lambda-\lambda/p}{p}{p}[\Compact\cap\BoundaryN][\RestrictFilteredSheaf{\LieFilteredSheafF}{\BoundaryNncF}],\quad 1<p< \infty, \: 1< q\leq \infty,\: s>L\lambda+\lambda/p.
\end{equation*}
Our next result says that these maps are right invertible; we state this as a corollary of Theorem \ref{Thm::Trace::Dirichlet::MainInverseThm}, below.

\begin{corollary}[The inverse map]
    \label{Cor::Trace::InverseMap}
    Fix \(\Omega_1\Subset \Omega_2\Subset \Omega\), relatively compact, open sets.
    There is a continuous, linear map
    \begin{equation*}
        \TraceInverseMap[L][V]:\Distributions[\Omega_2\cap \BoundaryN]^{L+1}\rightarrow \DistributionsZeroN
    \end{equation*}
    such that
    \begin{enumerate}[(i)]
        \item \(\TraceInverseMap[L][V]:\CinftySpace[\Omega_2\cap \BoundaryN]\rightarrow \CinftyCptSpace[\Omega]\), continuously.
        \item \(\TraceInverseMap[L][V]\) is continuous:
            \begin{equation*}
                \TraceInverseMap[L][V]:
                \prod_{l=0}^L
                \BigBesovSpace{s-l\lambda-\lambda/p}{p}{q}[\overline{\Omega_1}\cap \BoundaryN][\RestrictFilteredSheaf{\LieFilteredSheafF}{\BoundaryNncF}]
                \rightarrow
                \BesovSpace{s}{p}{q}[\overline{\Omega_2}][\FilteredSheafF],\quad 1\leq p\leq \infty, \: 1\leq q\leq\infty, \: s\in \R,
            \end{equation*}
            \begin{equation*}
                \TraceInverseMap[L][V]:
                \prod_{l=0}^L
                \BigBesovSpace{s-l\lambda-\lambda/p}{p}{p}[\overline{\Omega_1}\cap \BoundaryN][\RestrictFilteredSheaf{\LieFilteredSheafF}{\BoundaryNncF}]
                \rightarrow
                \TLSpace{s}{p}{q}[\overline{\Omega_2}][\FilteredSheafF],\quad 1< p< \infty, \: 1< q\leq\infty, \: s\in \R.
            \end{equation*}
        \item \(\TraceMap[L][V]\TraceInverseMap[L][V]=I\) when \(s>L\lambda+\lambda/p\) and \(p>1\); i.e., whenever the range
            space of \(\TraceInverseMap[L][V]\) is a domain space of \(\TraceMap[L][V]\).
    \end{enumerate}
\end{corollary}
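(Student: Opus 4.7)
The plan is to deduce Corollary~\ref{Cor::Trace::InverseMap} as a direct consequence of the more general Dirichlet-system result Theorem~\ref{Thm::Trace::Dirichlet::MainInverseThm}, by verifying that the operators $B_l := V^l$ for $l = 0, 1, \ldots, L$ meet the hypotheses of that theorem. The two things to check are (a) each $B_l$ has the correct $\FilteredSheafF$-degree and (b) the family $\{V^l\}_{l=0}^L$ plays the role of a ``normal'' Dirichlet system at $\BoundaryNncF[\lambda]$ relative to the filtration $\RestrictFilteredSheaf{\LieFilteredSheafF}{\BoundaryNncF}$.

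First I would check the degree condition. Since $V \in \FilteredSheafF[\Omega][\lambda]$, the composition $V^l$ is a partial differential operator with smooth coefficients whose $\FilteredSheafF$-degree is $\leq l\lambda$ on any $\Compact \Subset \Omega$ in the sense of Definition~\ref{Defn::Filtrations::DiffOps::Deg}; indeed, expressing $V$ locally as a smooth combination of generators of $\FilteredSheafF$ of degree $\leq \lambda$, repeated commutation pushes all derivatives into $W^\alpha$ with $\DegWdv{\alpha} \leq l\lambda$. This produces the shift $s \mapsto s - l\lambda - \lambda/p$ on the target Besov factor, exactly as in the forward trace Theorem~\ref{Thm::Trace::ForwardMap}.

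Second, I would verify the ``normality'' or non-characteristic condition for the system $(B_0,\ldots, B_L)$. Because $\Omega \Subset \ManifoldNncF[\lambda]$, we have $\degBoundaryNF[x] = \lambda$ for $x \in \Omega \cap \BoundaryN$, so $V$ is an operator of the minimal transversal degree. Consequently, modulo the ideal of operators whose leading part at the boundary lies in $\RestrictFilteredSheaf{\LieFilteredSheafF}{\BoundaryNncF}$, the symbols of $I, V, V^2, \ldots, V^L$ are linearly independent; this is the intrinsic analog of the condition that the classical system $(1,\partial_n,\ldots,\partial_n^L)$ has distinct orders with linearly independent normal principal parts. Once this is established, Theorem~\ref{Thm::Trace::Dirichlet::MainInverseThm} produces a continuous linear $\TraceInverseMap[L][V]$ with the mapping properties asserted in items (i) and (ii), and the identity $\TraceMap[L][V] \TraceInverseMap[L][V] = I$ is part of that theorem's conclusion. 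The smoothing property (i) on $\CinftySpace$ follows because $\TraceInverseMap[L][V]$ will be built out of elementary operators from $\ElemzF{\ManifoldNncF}$ whose kernels lie in $\CinftyCptSpace$.

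The main obstacle is step two: formulating the ``linearly independent normal principal parts'' hypothesis of Theorem~\ref{Thm::Trace::Dirichlet::MainInverseThm} and then checking it for $\{V^l\}$ without invoking a filtered-manifold structure (which is explicitly not assumed). The delicacy is that $V^l$ is not automatically a ``pure'' $l$-th transversal derivative---commutator terms of lower filtration degree appear---so one must verify that these lower-order corrections are absorbed into the tangential part of $\RestrictFilteredSheaf{\LieFilteredSheafF}{\BoundaryNncF}$ and therefore do not disturb the triangular structure needed to solve the system iteratively. The fact that $\Wdv_k < \lambda$ forces $W_k$ to be tangential on $\BoundaryN$ near $x$ (Example~\ref{Example::Filtrations::RestrictingFiltrations::NonCharExamples}~\ref{Item::Filtrations::RestrictingFiltrations::NonCharExamples::CharacterizeNonCharInTermsOfWWdv}) is what makes this triangular structure work, and this is the key algebraic input that reduces the corollary to Theorem~\ref{Thm::Trace::Dirichlet::MainInverseThm}.
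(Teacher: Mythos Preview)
Your approach is correct and matches the paper's: apply Theorem~\ref{Thm::Trace::Dirichlet::MainInverseThm} after checking that \((1,V,\ldots,V^L)\) is an \(\FilteredSheafF\)-Dirichlet system, which is exactly Example~\ref{Example::Dirichlet::VpowersIsDirichletSystem}. However, you substantially overestimate the difficulty of that verification. By Definition~\ref{Defn::Trace::Dirichlet::NormalOperator}, an \(\FilteredSheafF\)-normal operator of degree \(l\lambda\) is any operator of the form \(a_0(x)V^{l}+\sum_j P_j\) with \(a_0\) nonvanishing on \(\Omega\cap\BoundaryN\) and each \(P_j\) containing strictly fewer than \(l\) factors of \(V\); the operator \(V^l\) is literally \(1\cdot V^l\) with empty remainder, so the check is immediate. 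There is no ``linearly independent principal parts'' hypothesis to formulate, no commutator terms to absorb, and no triangular structure to verify---the definition was engineered so that this example is trivial. The paper's proof is accordingly a one-line citation.
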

\begin{proof}
    This is an immediate corollary of Theorem \ref{Thm::Trace::Dirichlet::MainInverseThm}, below; see Example \ref{Example::Dirichlet::VpowersIsDirichletSystem}.
\end{proof}

Our final main result of this chapter locally characterizes the closure of \(\CinftyCptSpace[\InteriorN]\) in
some of the above spaces, in terms of vanishing traces on the boundary. We present this in Section \ref{Section::Trace::Vanish}.

    \subsection{Dirichlet systems and the inverse map}\label{Section::Trace::Dirichlet}
    Let \(\Omega\Subset \ManifoldNncF[\lambda]\) be a relatively compact, open set, and let 
\(V\in \LieFilteredSheafF[\Omega][\lambda]\) with \(V(x)\not \in \TangentSpace{x}{\BoundaryN}\),
\(\forall x\in \Omega\cap \BoundaryN\). As described above, we treat \(V\) as the
``normal derivative;'' though none of the definitions or results which follow depend on the
choice of \(V\). One may always choose \(V\in \FilteredSheafF[\Omega][\lambda]\).

\begin{remark}\label{Rmk::Trace::Dirichlet::SubtractOffNonTangentPart}
    Let \(Y\in \LieFilteredSheafF[\Omega][d]\).
    \begin{itemize}
        \item If \(d<\lambda\), then \(Y(x)\in \TangentSpace{x}{\BoundaryN}\), \(\forall x\in \Omega\cap\BoundaryN\),
            by the definition of \(\BoundaryNncF[\lambda]\).
        \item If \(d\geq \lambda\), let \(a(x)\in \CinftySpace[\Omega\cap \BoundaryN]\) be the unique function
            such that \(Y(x)-a(x)V(x)\in \TangentSpace{x}{\BoundaryN}\), \(\forall x\in \Omega\cap\BoundaryN\),
            and let \(\at\in \CinftySpace[\Omega]\) be any extension of \(a\). Then,
            \(Y-\at V\in \LieFilteredSheafF[\Omega][d]\), and is tangent to \(\BoundaryN\) on \(\BoundaryN\cap \Omega\).
            Thus, modulo \(V\), \(Y\) is tangent to \(\BoundaryN\) on \(\Omega\cap\BoundaryN\).
    \end{itemize}
\end{remark}

\begin{definition}\label{Defn::Trace::Dirichlet::NormalOperator}
    Let \(B\) be a partial differential operator with smooth coefficients defined on \(\Omega\).
    Fix \(\kappa\in \Zgeq\) such that \(\lambda\) divides \(\kappa\).
    We say \(B\) is an \(\FilteredSheafF\)-normal 
    partial differential operator of degree 
    \(\kappa\) on \(\Omega\)
    if \(B\) can be written as
    \begin{equation*}
        B=a_0(x) V^{\kappa/\lambda} + \sum_{j=1}^L P_j
    \end{equation*}
    where,
    \(a_0\in \CinftySpace[\Omega]\), \(a_0(x)\ne 0\), \(\forall x\in \Omega\cap \BoundaryN\), and
    for each \(j\), \(P_j=b_j(x)X_{j,1}\cdots X_{j,L_j}\) where:
    \begin{itemize}
        \item \(b_j\in \CinftySpace[\Omega]\),
        \item For each \(j\),
        \(X_{j,k}\in \LieFilteredSheafF[\Omega][d_{j,k}]\), with \(d_{j,1}+\cdots+d_{j,L_j}\leq \kappa\),
        \item For each \(j,k\), either \(X_{j,k}(x)\in \TangentSpace{x}{\BoundaryN}\), \(\forall x\in \Omega\cap\BoundaryN\), or \(X_{j,k}=V\),
        \item For each \(j\), the number of \(k\) such that \(X_{j,k}=V\) is \(<\kappa/\lambda\).
    \end{itemize}
\end{definition}

\begin{remark}
    Remark \ref{Rmk::Trace::Dirichlet::SubtractOffNonTangentPart} can be used to easily show that
    Definition \ref{Defn::Trace::Dirichlet::NormalOperator} does not depend on the choice of \(V\in \LieFilteredSheafF[\Omega][\lambda]\) with \(V(x)\not \in \TangentSpace{x}{\BoundaryN}\),
\(\forall x\in \Omega\cap \BoundaryN\).
\end{remark}

\begin{remark}\label{Rmk::Trace::Dirichlet::NormalOperatorHasDeg}
    If \(B\) an \(\FilteredSheafF\)-normal 
    partial differential operator of degree 
    \(\kappa\) then \(B\) has \(\FilteredSheafF\)-degree \(\leq \kappa\)
    in the sense of Definition \ref{Defn::Filtrations::DiffOps::Deg}; see also Remark \ref{Rmk::Filtrations::DiffOps::DoesntDependOnChoices}.
\end{remark}

\begin{remark}\label{Rmk::Trace::Dirichlet::NormalOperatorWhenLambda1}
    Definition \ref{Defn::Trace::Dirichlet::NormalOperator}
    is easier to understand when \(\lambda=1\).
    If \(\lambda=1\), then \(B\) is an 
    \(\FilteredSheafF\)-normal 
    partial differential operator of degree 
    \(\kappa\) on \(\Omega\)
    if and only if
    \begin{itemize}
        \item \(B\)  has \(\FilteredSheafF\)-degree \(\leq \kappa\)
    in the sense of Definition \ref{Defn::Filtrations::DiffOps::Deg}
     (see Remark \ref{Rmk::Trace::Dirichlet::NormalOperatorHasDeg}).

     \item \(\BoundaryN\cap \Omega\) is non-characteristic for \(B\) (in the classical sense).
    \end{itemize} 
\end{remark}

\begin{definition}
    Let \(\sB=\left( B_0, B_1,\ldots, B_L \right)\) be such that \(B_j\)
    is an \(\FilteredSheafF\)-normal partial differential operator  of degree \(j\lambda\) on \(\Omega\).
    We say \(\sB\) is a \(\FilteredSheafF\)-Dirichlet system on \(\Omega\).
\end{definition}

\begin{example}\label{Example::Dirichlet::VpowersIsDirichletSystem}
    \(\sB=(1, V, V^2,\ldots, V^L)\) is a Dirichlet system on \(\Omega\).
\end{example}

Let \(\sB=\left( B_0,B_1,\ldots, B_L \right)\) be an \(\FilteredSheafF\)-Dirichlet system on \(\Omega\).
For \(f\in \CinftySpace[\Omega]\), set
\begin{equation*}
    \TraceMap[\sB]f(x)=
    \left( 
        B_0 f\big|_{\Omega\cap\BoundaryN}(x), 
        B_1 f\big|_{\Omega\cap\BoundaryN}(x),\ldots, B_L f\big|_{\Omega\cap \BoundaryN}(x) \right), \quad x\in \Omega\cap\BoundaryN.
\end{equation*}
By Theorem \ref{Thm::Trace::ForwardMap}, we may extend \(\TraceMap[\sB]\) uniquely to a map
such that \(\forall \Compact \Subset \Omega\) compact,
\begin{equation*}
    \TraceMap[\sB]:\BesovSpace{s}{p}{q}[\Compact][\FilteredSheafF]
    \rightarrow \prod_{l=0}^L \BigBesovSpace{s-l\lambda-\lambda/p}{p}{q}[\Compact\cap\BoundaryN][\RestrictFilteredSheaf{\LieFilteredSheafF}{\BoundaryNncF}],\quad 1<p\leq \infty, \: 1\leq q\leq \infty,\: s>L\lambda+\lambda/p,
\end{equation*}
\begin{equation*}
    \TraceMap[\sB]:\TLSpace{s}{p}{q}[\Compact][\FilteredSheafF]
    \rightarrow \prod_{l=0}^L \BigBesovSpace{s-l\lambda-\lambda/p}{p}{p}[\Compact\cap\BoundaryN][\RestrictFilteredSheaf{\LieFilteredSheafF}{\BoundaryNncF}],\quad 1<p< \infty, \: 1< q\leq \infty,\: s>L\lambda+\lambda/p.
\end{equation*}
Our main inverse result says that these maps are right invertible.

\begin{theorem}[The inverse map]
    \label{Thm::Trace::Dirichlet::MainInverseThm}
    Fix \(\Omega_1\Subset \Omega_2\Subset \Omega\), relatively compact, open sets.
    There is a continuous, linear map
    \begin{equation*}
        \TraceInverseMap[\sB]:\Distributions[\Omega_2\cap \BoundaryN]^{L+1}\rightarrow \DistributionsZeroN
    \end{equation*}
    such that
    \begin{enumerate}[(i)]
        \item\label{Item::Trace::Dirichlet::MainInverseThm::ContinuousOnSmoothFuncs} 
        \(\TraceInverseMap[\sB]:\CinftySpace[\Omega_2\cap \BoundaryN]^{L+1}\rightarrow \CinftyCptSpace[\Omega]\), continuously.
        \item\label{Item::Trace::Dirichlet::MainInverseThm::ContinuousOnSpaces} \(\TraceInverseMap[\sB]\) is continuous:
            \begin{equation*}
                \TraceInverseMap[\sB]:
                \prod_{l=0}^L
                \BigBesovSpace{s-l\lambda-\lambda/p}{p}{q}[\overline{\Omega_1}\cap \BoundaryN][\RestrictFilteredSheaf{\LieFilteredSheafF}{\BoundaryNncF}]
                \rightarrow
                \BesovSpace{s}{p}{q}[\overline{\Omega_2}][\FilteredSheafF],\quad 1\leq p\leq \infty, \: 1\leq q\leq\infty, \: s\in \R,
            \end{equation*}
            \begin{equation*}
                \TraceInverseMap[\sB]:
                \prod_{l=0}^L
                \BigBesovSpace{s-l\lambda-\lambda/p}{p}{p}[\overline{\Omega_1}\cap \BoundaryN][\RestrictFilteredSheaf{\LieFilteredSheafF}{\BoundaryNncF}]
                \rightarrow
                \TLSpace{s}{p}{q}[\overline{\Omega_2}][\FilteredSheafF],\quad 1< p< \infty, \: 1< q\leq\infty, \: s\in \R.
            \end{equation*}
        \item\label{Item::Trace::Dirichlet::MainInverseThm::IsInverse} \(\TraceMap[\sB]\TraceInverseMap[\sB]=I\) when \(s>L\lambda+\lambda/p\) and \(p>1\)
        in \ref{Item::Trace::Dirichlet::MainInverseThm::ContinuousOnSpaces}; i.e., whenever the range
            space of \(\TraceInverseMap[\sB]\) is a domain space of \(\TraceMap[\sB]\).
    \end{enumerate}
\end{theorem}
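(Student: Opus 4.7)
My plan is to proceed in two stages. First, I would reduce to constructing the inverse only for the canonical Dirichlet system $\sB_0 = (1, V, V^2, \ldots, V^L)$ of Example \ref{Example::Dirichlet::VpowersIsDirichletSystem}; second, I would build $\TraceInverseMap[\sB_0]$ via a Littlewood--Paley decomposition of the boundary data coupled with adapted extensions into the interior built from the scaling maps of Theorem \ref{Thm::Spaces::Scaling::MainScalingThm}.

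For the reduction, I would exploit the structure of $\FilteredSheafF$-normal operators together with Remark \ref{Rmk::Trace::Dirichlet::SubtractOffNonTangentPart}. Writing $B_j = a_0 V^{j} + \sum P_k$, each $P_k$ involves $<j$ factors of $V$, and iteratively replacing every non-tangential factor in a $P_k$ by a multiple of $V$ plus a tangential remainder yields an identity
\begin{equation*}
    B_j f\big|_{\BoundaryN} = \sum_{k=0}^j M_{jk}\bigl(V^k f \big|_{\BoundaryN}\bigr),
\end{equation*}
where $(M_{jk})$ is a lower-triangular matrix of partial differential operators on $\BoundaryN \cap \Omega$ whose entries have bounded $\RestrictFilteredSheaf{\LieFilteredSheafF}{\BoundaryNncF}$-degree and whose diagonal $M_{jj}$ is multiplication by a smooth nowhere-vanishing function on $\BoundaryN \cap \Omega$. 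Its formal triangular inverse $M^{-1}$ has the same type; Proposition \ref{Prop::Spaces::MappingOfFuncsAndVFs} \ref{Item::Spaces::MappingOfFuncsAndVFs::DiffOps} applied on $\BoundaryNncF$ then shows that $M^{-1}$ maps $\prod_l \BesovSpace{s-l\lambda-\lambda/p}{p}{q}$ to itself with exactly the shifts that make $\TraceInverseMap[\sB] := \TraceInverseMap[\sB_0] \circ M^{-1}$ have the required continuity and right-inverse properties.

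For the canonical system, a partition of unity lets me localize to small neighborhoods of points in $\BoundaryNncF[\lambda]$ and in $\InteriorN$; the interior case is a minor modification of Theorem \ref{Thm::Spaces::Extension} \ref{Item::Spaces::Extension::Extension}, so I focus on the boundary case. Choose $\phi \in \SchwartzSpaceR$ with $\supp(\phi) \subseteq [0,\infty)$ and $\partial_t^k \phi(0) = \delta_{k,0}$ for $0 \leq k \leq L$ (produced from Lemma \ref{Lemma::Spaces::Classical::ExistenceGoodFunc} by multiplication by a suitable polynomial and renormalization), and let $\{(D_j^{\BoundaryN}, 2^{-j}) : j \in \Zgeq\}$ be Littlewood--Paley operators on $\overline{\Omega_1} \cap \BoundaryN$ given by Proposition \ref{Prop::Spaces::LP::DjExist} applied to $\RestrictFilteredSheaf{\LieFilteredSheafF}{\BoundaryNncF}$. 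In local scaling coordinates $\Psi_{x,2^{-j}}$ near a boundary point (Theorem \ref{Thm::Spaces::Scaling::MainScalingThm}), the vector field $V$ pulls back to $\approx \partial_{t_0}$ and $\BoundaryN$ to $\{t_0 = 0\}$. Define operators $\Et_{j,l}$ that take a function $h$ on $\BoundaryN$ to a function on $\ManifoldN$ which, in these coordinates, equals $(t_0^l/l!)\, \phi(2^{j\lambda} t_0)\, \widetilde h$ with $\widetilde h$ a constant-along-$V$ extension of a localized $h$. Setting $\TraceInverseMap[\sB_0](g_0, \ldots, g_L) := \sum_{l=0}^L \sum_{j \geq 0} \Et_{j,l}\, D_j^{\BoundaryN} g_l$, the choice of $\phi$ gives $V^k \Et_{j,l} h\big|_{\BoundaryN} = \delta_{k,l}\, h$, so $\sum_j D_j^{\BoundaryN} = I$ on $\overline{\Omega_1} \cap \BoundaryN$ yields $\TraceMap[\sB_0] \TraceInverseMap[\sB_0] = I$ whenever both sides are defined, giving \ref{Item::Trace::Dirichlet::MainInverseThm::IsInverse}; the smoothness statement \ref{Item::Trace::Dirichlet::MainInverseThm::ContinuousOnSmoothFuncs} is immediate from the construction.

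The hard part will be \ref{Item::Trace::Dirichlet::MainInverseThm::ContinuousOnSpaces}, proving the precise index shift $s - l\lambda - \lambda/p \leadsto s$ across the full range of spaces. This requires a boundary-to-interior analogue of Proposition \ref{Prop::Spaces::Extend::TDefinesBoundedOperator}: one must show that pairing $\Et_{j,l}$ with interior Littlewood--Paley operators produces an almost-orthogonal sum, with the mismatch in $q$-index for Triebel--Lizorkin (the $\BesovSpace{\cdot}{p}{p}$ rather than $\TLSpace{\cdot}{p}{q}$ on the boundary) absorbed by passing through an $\ell^p$ bound as one crosses between $\BoundaryN$ and $\ManifoldN$. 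The factor of $2^{-j\lambda/p}$ needed to account for the codimension is supposed to come from a geometric comparison of balls: the $\FilteredSheafF$-volume of $\BWWdv{x}{2^{-j}}$ exceeds the induced $(\RestrictFilteredSheaf{\LieFilteredSheafF}{\BoundaryNncF})$-volume of its trace on $\BoundaryN$ by a factor of roughly $2^{-j\lambda}$ (the ``thickness'' in the $V$-direction). Establishing this comparison uniformly at non-characteristic boundary points, then using it inside an almost-orthogonality estimate to precisely balance the sums in $j$ indexed by the boundary-to-interior shift, is the main technical obstruction; it will likely follow the template of Section \ref{Section::Spaces::RestrictionAndExtension} but requires genuinely new bookkeeping for the mixed-dimensional operator $\Et_{j,l}$.
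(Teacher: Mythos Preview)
Your reduction to the canonical system $\sN_L = (1, V, \ldots, V^L)$ via a lower-triangular differential matrix on $\BoundaryN$ is exactly the paper's approach (Proposition~\ref{Prop::Trace::Reduction::TurnBIntoN}), and the Littlewood--Paley extension idea is right. But two points need correction. First, your requirement that $\phi \in \SchwartzSpaceR$ satisfy both $\supp(\phi) \subseteq [0,\infty)$ and $\phi(0) = 1$ is impossible: any smooth function on $\R$ supported in $[0,\infty)$ vanishes to infinite order at $0$, and Lemma~\ref{Lemma::Spaces::Classical::ExistenceGoodFunc} cannot help. The paper imposes no support condition on its $\phi_l$; since the formula evaluates $\phi_l(2^{k\lambda} x_n)$ only for $x_n \geq 0$ and a cutoff $\gamma(x_n)$ handles compact support, the negative axis is irrelevant. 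Second, the scaling maps $\Psi_{x,2^{-j}}$ are unnecessary overhead. The paper instead makes a single coordinate change (Proposition~\ref{Prop::Trace::CoordReduction::Proof::ExistCoordSystem}) in which $V$ becomes a constant multiple of $\partial_{x_n}$ on a full neighborhood, after which the extension is the explicit global formula $\sum_k \gamma(x_n)\, 2^{-l\lambda k}\, \phi_l(2^{k\lambda} x_n)\, D_k' f_l(x')$ with $\partial^j \phi_l(0) = \delta_{j,l}$. Patching together scale-dependent coordinates would work but is harder, and your ``constant-along-$V$ extension'' becomes ambiguous when the chart depends on $j$. Also, there is no interior case: the partition of unity only needs to cover $\overline{\Omega_1} \cap \BoundaryN$.

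For \ref{Item::Trace::Dirichlet::MainInverseThm::ContinuousOnSpaces}, the paper's argument (Lemmas~\ref{Lemma::Trace::InverseProof::ElemAppliedToOpQ} and~\ref{Lemma::Trace::InverseProof::VpqNormInequality}) matches your outline: an almost-orthogonality estimate for $E_j$ applied to the extension reduces matters to a direct norm computation. The $\lambda/p$ shift arises simply from $\int_0^\infty |\phi(2^{k\lambda} x_n)|^p\, dx_n \approx 2^{-k\lambda}$, not from a separate ball-volume comparison lemma; the Triebel--Lizorkin case is handled by slicing $[0,\infty)$ into dyadic $x_n$-annuli and applying H\"older in the $j$-sum on each, which is the concrete mechanism behind your ``passing through $\ell^p$''.
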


After two reductions (in Sections \ref{Section::Trace::ReductionToLocal} and \ref{Section::Trace::ReductionToCoord}),
the proof of Theorem \ref{Thm::Trace::Dirichlet::MainInverseThm} is completed in Section \ref{Section::Trace::Inverse}.

    \subsection{Functions which vanish at the boundary}\label{Section::Trace::Vanish}
    In this section, we present results which show that if a function has enough derivatives
vanish at the boundary, \(\BoundaryN\), it can be approximated by elements of \(\CinftyCptSpace[\InteriorN]\).
The first result, which is the main theorem of this section, is a local result which does not require \(\ManifoldN\)
to be compact.

\begin{theorem}\label{Thm::Trace::Vanish::MainVanishThm}
    Let \(\Omega\Subset \ManifoldNncF[\lambda]\)
     be a relatively compact, open set and \(\Compact\Subset \Omega\) compact.
    Fix \(p,q\in (1,\infty)\), \(s\in \R\), and \(u\in \ASpace{s}{p}{q}[\Compact][\FilteredSheafF]\).
    Then,
    \begin{enumerate}[(A)]
        \item\label{Item::Trace::Vanish::MainVanishThm::Smalls} If \(s<\lambda/p\), 
            \(\exists \{f_j\}_{j\in \Zgeq}\subset \CinftyCptSpace[\Omega \cap \InteriorN]\) with \(f_j\rightarrow u\)
            in \(\ASpace{s}{p}{q}[\OmegaClosure][\FilteredSheafF]\).

        \item\label{Item::Trace::Vanish::MainVanishThm::Larges} If \(s\in (L\lambda+\lambda/p, (L+1)\lambda+\lambda/p)\), where \(L\in \Zgeq\), let \(\sB=\left( B_0,B_1,\ldots, B_L \right)\)
            be an \(\FilteredSheafF\)-Dirichlet system on \(\Omega\). The following are equivalent:
            \begin{enumerate}[(i)]
                \item\label{Item::Trace::Vanish::MainVanishThm::Larges::Trace0} \(\TraceMap[\sB]u=0\).
                \item\label{Item::Trace::Vanish::MainVanishThm::Larges::SmoothApprox} \(\exists \{f_j\}_{j\in \Zgeq}\subset \CinftyCptSpace[\Omega\cap \InteriorN]\) with 
                    \(f_j\rightarrow u\) in \(\ASpace{s}{p}{q}[\OmegaClosure][\FilteredSheafF]\).
            \end{enumerate}
    \end{enumerate}
\end{theorem}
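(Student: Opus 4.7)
The plan is to combine smooth approximation with a cutoff procedure that pushes support off $\BoundaryN$. The easy direction of (B), namely \ref{Item::Trace::Vanish::MainVanishThm::Larges::SmoothApprox}$\Rightarrow$\ref{Item::Trace::Vanish::MainVanishThm::Larges::Trace0}, is immediate: any $f_j \in \CinftyCptSpace[\Omega\cap\InteriorN]$ satisfies $\TraceMap[\sB] f_j = 0$, and Theorem \ref{Thm::Trace::ForwardMap} gives continuity of $\TraceMap[\sB]$ on $\ASpace{s}{p}{q}[\OmegaClosure][\FilteredSheafF]$ for $s > L\lambda+\lambda/p$, transferring the vanishing to $u$.

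For the hard direction of (B), I would use $\TraceInverseMap[\sB]$ from Theorem \ref{Thm::Trace::Dirichlet::MainInverseThm} to reduce to smooth data with vanishing trace. Choose $\Omega' \Subset \Omega$ with $\Compact \subset \Omega'$; apply Corollary \ref{Cor::Spaces::Approximation::SmoothFunctionsAreDense} (valid since $p,q \in (1,\infty)$) to obtain $g_j \in \CinftyCptSpace[\Omega']$ with $g_j \to u$ in $\ASpace{s}{p}{q}$, and set $h_j := g_j - \TraceInverseMap[\sB] \TraceMap[\sB] g_j$. By Theorem \ref{Thm::Trace::Dirichlet::MainInverseThm}\ref{Item::Trace::Dirichlet::MainInverseThm::ContinuousOnSmoothFuncs}, $h_j \in \CinftyCptSpace[\Omega]$; by construction $\TraceMap[\sB] h_j = 0$; and since $\TraceMap[\sB] g_j \to \TraceMap[\sB] u = 0$ in the trace space, the continuity of $\TraceInverseMap[\sB]$ on $\ASpace{s}{p}{q}$ forces $h_j \to u$. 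It remains to approximate an arbitrary smooth $h \in \CinftyCptSpace[\Omega]$ with $\TraceMap[\sB] h = 0$ by functions in $\CinftyCptSpace[\Omega\cap\InteriorN]$; part (A) is the analogous statement for arbitrary $h \in \CinftyCptSpace[\Omega]$ with no boundary condition imposed.

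For this final step, I would choose $\chi_\epsilon \in \CinftySpace[\ManifoldN]$ equal to $0$ within $\MetricWWdv$-distance $\epsilon$ of $\BoundaryN$ and equal to $1$ outside distance $2\epsilon$, with adapted-derivative bounds $|X_j^\alpha \chi_\epsilon| \lesssim \epsilon^{-\DegXdv{\alpha}}$ where the $X_j$ are generators of $\LieFilteredSheafF$ near $\Compact$ (as in Lemma \ref{Lemma::Filtrations::GeneratorsForLieFiltration}). Then $\chi_\epsilon h \in \CinftyCptSpace[\Omega\cap\InteriorN]$, so the task reduces to showing $r_\epsilon := (1-\chi_\epsilon)h \to 0$ in $\ASpace{s}{p}{q}[\OmegaClosure][\FilteredSheafF]$. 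In (B), Taylor expansion of $h$ along the flow of $V$, using $B_0 h = \cdots = B_L h = 0$ on $\BoundaryN$, gives $|h(x)| \lesssim \epsilon^{(L+1)\lambda}$ on $\supp(r_\epsilon)$, with matching estimates for adapted derivatives. Combined with the volume of the $2\epsilon$-tubular neighborhood of $\BoundaryN$, which is $\approx \epsilon^\lambda$ by Proposition \ref{Prop::VectorFields::Scaling::VolEstimates}, bounding $\VpqsENorm{r_\epsilon}$ for an elementary family by splitting the Littlewood--Paley scales at $2^{-j} \approx \epsilon$ should yield a net gain of a positive power of $\epsilon$ precisely under the strict inequalities $L\lambda+\lambda/p < s < (L+1)\lambda+\lambda/p$ in (B) and $s < \lambda/p$ in (A), the latter corresponding to $L = -1$ with only the trivial bound $|r_\epsilon|\lesssim 1$.

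The main obstacle I expect is making the last paragraph rigorous. Unlike in the Euclidean setting there is no Fourier transform available, so the estimate on $\ANorm{r_\epsilon}{s}{p}{q}[\FilteredSheafF]$ must be obtained intrinsically from the elementary-operator description of Section \ref{Section::Spaces::LittlewoodPaleyTheory}. The plan is to apply each $E_j$ from $\sE \in \ElemzF{\ManifoldNncF}$ to $r_\epsilon$, distributing the vanishing order of $h$ via integration by parts (Remark \ref{Rmk::Spaces::Elem::Elem::IntegrateByPartsWithoutBoundary}) and extracting derivatives as in Proposition \ref{Prop::Spaces::Elem::Elem::MainProps}\ref{Item::Spaces::Elem::Elem::PullOutNDerivs}, while using the kernel bounds from Definition \ref{Defn::Spaces::LP::PElemWWdv} and the volume estimates from Section \ref{Section::VectorFieldsAndSheaves::MetricsAndVolumes} to control the integrals over the tubular neighborhood. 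The endpoints $s = L\lambda + \lambda/p$ are excluded for the same reason as in the classical Runst--Sickel theorem, as noted in Remark \ref{Rmk::GlobalCor::MissingEndpointsForDensity}.
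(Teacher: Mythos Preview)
Your overall architecture---easy direction of (B) via continuity of $\TraceMap[\sB]$, reduction to smooth data with vanishing trace via $h_j := g_j - \TraceInverseMap[\sB]\TraceMap[\sB] g_j$, then a cutoff step---matches the paper's exactly (see Lemma~\ref{Lemma::Trace::CharVanish::ApproxBySmoothWithVanish}). The paper also makes the reduction from a general $\sB$ to $\sN_L=(1,V,\ldots,V^L)$ via the triangular matrix of Proposition~\ref{Prop::Trace::Reduction::TurnBIntoN}, and then passes to a coordinate chart (Section~\ref{Section::Trace::ReductionToCoord}) in which the normal is $\partial_{x_n}$; this makes the cutoff $\phi_0(2^{\lambda K}x_n)$ depend on $x_n$ alone, which is simpler than your $\chi_\epsilon$ built from the Carnot--Carath\'eodory distance (though by Lemma~\ref{Lemma::Trace::Forward::SimpleDistanceToBoundary} the two are comparable).

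The substantive difference is in the cutoff step, and here the paper takes a genuinely different route. You aim to show $\|r_\epsilon\|_{\ASpace{s}{p}{q}}\to 0$ quantitatively. The paper does \emph{not} prove this. Instead, it establishes only the uniform bound
\[
\sup_{K\ge 4}\;\ANorm{\phi_0(2^{\lambda K}x_n)\,x_n^{L+1}g}{s}{p}{q}[\FilteredSheafGenByXXdv]<\infty
\]
(Proposition~\ref{Prop::Trace::CharVanish::MainEstimate}, via the scale-splitting estimate of Lemma~\ref{Lemma::Trace::CharVanish::MainEstimatelemma}), notes the trivial distributional convergence $\phi_0(2^{\lambda K}x_n)f\to f$, and then invokes Proposition~\ref{Prop::Spaces::Approximation::DistributionConvgToStronger}~\ref{Item::Spaces::Approximation::DistributionConvgToStronger::StrongConvg}: for $1<p,q<\infty$ the space is reflexive, so weak subsequential convergence plus Mazur's lemma places $f$ in the norm-closure of the convex hull of the cutoff approximants. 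The approximating sequence the paper actually produces thus consists of convex combinations of the $\chi_\epsilon h$, not the $\chi_\epsilon h$ themselves. This is precisely where the restriction $p,q\in(1,\infty)$ is used, and it sidesteps entirely the delicate intrinsic Littlewood--Paley computation you flag as the main obstacle. Your direct-convergence approach should work in principle (it does in the Euclidean model via interpolation), but making it rigorous here would require tracking decay rates through the elementary-operator calculus, which the Mazur argument renders unnecessary.
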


After two reductions (in Sections \ref{Section::Trace::ReductionToLocal} and \ref{Section::Trace::ReductionToCoord}),
the proof of Theorem \ref{Thm::Trace::Vanish::MainVanishThm} is completed in Section \ref{Section::Trace::CharacterizeVanish}.

Suppose \(\ManifoldN\) is compact and \(\BoundaryN=\BoundaryNncF\).
Then, \(\ManifoldN\Subset \ManifoldNncF\) and it makes sense to talk about the spaces
\(\ASpace{s}{p}{q}[\ManifoldN][\FilteredSheafF]\).
In this case, we define \(\ACircSpace{s}{p}{q}[\ManifoldN][\FilteredSheafF]\) to be the
closure of \(\CinftyCptSpace[\InteriorN]\) in \(\ASpace{s}{p}{q}[\ManifoldN][\FilteredSheafF]\).

\begin{corollary}\label{Cor::Trace::Vanish::EveryPointNonChar}
    Suppose \(\ManifoldN\) is compact and \(\BoundaryN=\BoundaryNncF[\lambda]\) for some \(\lambda\).
    Fix \(p,q\in (1,\infty)\), \(s\in \R\).
    Then,
    \begin{enumerate}[(A)]
        \item If \(s<\lambda/p\), \(\ASpace{s}{p}{q}[\ManifoldN][\FilteredSheafF]=\ACircSpace{s}{p}{q}[\ManifoldN][\FilteredSheafF]\).
        \item If \(s\in (L\lambda+\lambda/p, (L+1)\lambda+\lambda/p)\), where \(L\in \Zgeq\), let \(\sB=\left( B_0,B_1,\ldots, B_L \right)\)
            be an \(\FilteredSheafF\)-Dirichlet system on \(\ManifoldN\).  Then,
            \begin{equation*}
                \ACircSpace{s}{p}{q}[\ManifoldN][\FilteredSheafFh]
                =\left\{ f\in \ASpace{s}{p}{q}[\ManifoldN][\FilteredSheafF] : \TraceMap[\sB]f=0 \right\}.
            \end{equation*}
    \end{enumerate}
\end{corollary}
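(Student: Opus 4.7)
The plan is to reduce this global statement on the compact manifold $\ManifoldN$ to the local result of Theorem \ref{Thm::Trace::Vanish::MainVanishThm} by means of a finite partition of unity. Since $\ManifoldN$ is compact and $\BoundaryN=\BoundaryNncF[\lambda]$, every point of $\BoundaryN$ admits a neighborhood of the type required by Theorem \ref{Thm::Trace::Vanish::MainVanishThm}; extracting a finite subcover gives open sets $\Omega_0,\Omega_1,\ldots,\Omega_N$ with $\overline{\Omega_0}\Subset \InteriorN$ and each $\overline{\Omega_k}\Subset \ManifoldNncF[\lambda]$ (for $k\geq 1$) small enough for the local theorem to apply. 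Let $\{\psi_k\}_{k=0}^N$ be a smooth partition of unity subordinate to this cover. By Proposition \ref{Prop::Spaces::MappingOfFuncsAndVFs}\ref{Item::Spaces::MappingOfFuncsAndVFs::Funcs} multiplication by each $\psi_k$ is bounded on $\ASpace{s}{p}{q}$, so any $f$ in that space decomposes as $f=\sum_k\psi_kf$ with $\psi_kf\in \ASpace{s}{p}{q}[\overline{\Omega_k}][\FilteredSheafF]$.

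For part (A) there is no trace constraint. I apply Corollary \ref{Cor::Spaces::Approximation::SmoothFunctionsAreDense} to $\psi_0 f$ (whose support is compact in $\InteriorN$) and Theorem \ref{Thm::Trace::Vanish::MainVanishThm}\ref{Item::Trace::Vanish::MainVanishThm::Smalls} to each $\psi_kf$ for $k\geq 1$, producing approximating sequences $f_{k,j}\in \CinftyCptSpace[\Omega_k\cap\InteriorN]$. Summing over the finitely many $k$ gives $\sum_k f_{k,j}\in \CinftyCptSpace[\InteriorN]$ converging to $f$, yielding $\ASpace{s}{p}{q}[\ManifoldN][\FilteredSheafF]\subseteq \ACircSpace{s}{p}{q}[\ManifoldN][\FilteredSheafF]$; the reverse containment is definitional.

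For part (B), the inclusion $\ACircSpace{s}{p}{q}\subseteq \{f:\TraceMap[\sB]f=0\}$ is immediate: $\CinftyCptSpace[\InteriorN]$ lies in the kernel of $\TraceMap[\sB]$, and this kernel is closed because each $B_l$ is bounded $\ASpace{s}{p}{q}\to \ASpace{s-l\lambda}{p}{q}$ (Proposition \ref{Prop::Spaces::MappingOfFuncsAndVFs}\ref{Item::Spaces::MappingOfFuncsAndVFs::DiffOps} combined with Remark \ref{Rmk::Trace::Dirichlet::NormalOperatorHasDeg}) and $\TraceMap[\sB]$ is continuous onto the asserted product of Besov spaces by Theorem \ref{Thm::Trace::ForwardMap}. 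For the reverse direction, given $f$ with $\TraceMap[\sB]f=0$, the critical step is to verify that $\TraceMap[\sB](\psi_kf)=0$ on $\Omega_k\cap\BoundaryN$ for each $k\geq 1$; granting this, Theorem \ref{Thm::Trace::Vanish::MainVanishThm}\ref{Item::Trace::Vanish::MainVanishThm::Larges} produces smooth approximants $f_{k,j}\in \CinftyCptSpace[\Omega_k\cap\InteriorN]$ converging to $\psi_kf$, and summing concludes. The propagation identity I would use is $B_l(\psi_kf)=\psi_kB_lf+[B_l,\psi_k]f$, whose first summand vanishes on $\BoundaryN$ trivially.

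The main obstacle is therefore showing that each commutator term $[B_l,\psi_k]f$ also has zero boundary trace. This should follow from the triangular structure of a Dirichlet system (Definition \ref{Defn::Trace::Dirichlet::NormalOperator}): writing $B_m=a_0^{(m)}V^m+(\text{terms with fewer }V\text{'s})$ with $a_0^{(m)}$ nonvanishing on $\BoundaryN$, one inverts inductively to express $V^mf\big|_{\BoundaryN}$ in terms of $\{B_jf\big|_{\BoundaryN}\}_{j\leq m}$ together with tangential operators applied to these traces (tangential operators preserve boundary vanishing since they restrict to $\BoundaryN$). Iterating, any differential operator $Q$ of $\FilteredSheafF$-degree $\leq L\lambda$ applied to $f$ has $Qf\big|_{\BoundaryN}$ expressible as a combination of the data $\TraceMap[\sB]f$ and hence vanishes when the latter does. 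Carrying out this triangular propagation cleanly with the correct bookkeeping of formal degrees is the only nontrivial ingredient beyond Theorem \ref{Thm::Trace::Vanish::MainVanishThm}; once established, the global corollary telescopes to the local theorem.
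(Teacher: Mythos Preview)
Your approach is correct but significantly more elaborate than the paper's. The paper's proof is a single line: since $\ManifoldN$ is compact and $\BoundaryN=\BoundaryNncF[\lambda]$, one has $\ManifoldNncF[\lambda]=\InteriorN\cup\BoundaryNncF[\lambda]=\ManifoldN$, so Theorem~\ref{Thm::Trace::Vanish::MainVanishThm} applies directly with $\Omega=\Compact=\ManifoldN$. No partition of unity is needed at this stage.

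What you are doing is essentially re-running the localization argument that already lives inside the proof of Theorem~\ref{Thm::Trace::Vanish::MainVanishThm} itself (see the reduction in Section~\ref{Section::Trace::ReductionToLocal::Proof}). There the commutator issue you flag is handled by first passing from a general Dirichlet system $\sB$ to the normal system $\sN_L=(1,V,\ldots,V^L)$ via the triangular matrix of Proposition~\ref{Prop::Trace::Reduction::TurnBIntoN}; since that matrix is invertible, $\TraceMap[\sB]f=0\Leftrightarrow\TraceMap[\sN_L]f=0$, and for $\sN_L$ the product rule gives $V^j(\psi_k f)=\sum_{a+b=j}\binom{j}{a}(V^a\psi_k)(V^b f)$ with each $V^b f\big|_{\BoundaryN}=0$, so $\TraceMap[\sN_L](\psi_k f)=0$ immediately. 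Your proposed direct triangular inversion for general $\sB$ would also work but amounts to reproving Proposition~\ref{Prop::Trace::Reduction::TurnBIntoN}. The upshot: your route is sound, but you missed that the compactness hypothesis lets you take the ``local'' open set to be all of $\ManifoldN$, collapsing the argument to a citation.
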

\begin{proof}
    This is the special case of Theorem \ref{Thm::Trace::Vanish::MainVanishThm}
    with \(\Omega=\Compact=\ManifoldN\).
\end{proof}

        \subsubsection{Dirichlet problem}\label{Section::Trace::Vanish::DirichletProblem}
        Theorem \ref{Thm::Trace::Vanish::MainVanishThm} can be used to help understand maximally subelliptic
Dirichlet problems, and this subsection is devoted to explaining this connection.

Let \(\ManifoldM\) be a smooth manifold without boundary, \(\Vol\) a smooth, strictly positive density on 
\(\ManifoldM\), \(\ManifoldN\subseteq \ManifoldM\) a closed, embedded, co-dimension \(0\) submanifold (with boundary) of \(\ManifoldM\).
Let \(\FilteredSheafFh\) be a H\"ormander filtration of sheaves of vector fields on \(\ManifoldM\)
and set \(\FilteredSheafF:=\RestrictFilteredSheaf{\FilteredSheafFh}{\ManifoldN}\), so that
\(\FilteredSheafF\) is a H\"ormander filtration of sheaves of vector fields on \(\ManifoldN\)
by Proposition \ref{Prop::Filtrations::RestrictingFiltrations::CoDim0Restriction}.

For simplicity, we assume there are globally defined H\"ormander vector fields
with formal degrees \(\WhWdv=\left\{ \left( \Wh_1,\Wdv_1 \right),\ldots, \left( \Wh_r, \Wdv_r \right) \right\}\subset \VectorFields{\ManifoldM}\times \Zg\)
such that
\(\FilteredSheafFh=\FilteredSheafGenBy{\WhWdv}\);
such a \(\WhWdv\) always exists if \(\ManifoldM\) is compact--see Lemma \ref{Lemma::Filtrations::GeneratorsOnRelCptSet}.

Fix \(\kappa\in \Zg\) such that \(\Wdv_j\) divides \(\kappa\) for every \(j\). We consider partial differential operators
of the form
\begin{equation}\label{Eqn::Trace::Vanish::DirichletProblem::opPFormula}
    \opP=\sum_{\DegWdv{\alpha}\leq \kappa} a_\alpha(x) \Wh^{\alpha},\quad a_\alpha\in \CinftySpace[\ManifoldM].
\end{equation}

\begin{definition}[{\cite[Definition 1.1.7]{StreetMaximalSubellipticity}}]
    We say \(\opP\) given by \eqref{Eqn::Trace::Vanish::DirichletProblem::opPFormula} is maximally subelliptic
    of degree \(\kappa\) with respect to \(\FilteredSheafF\) on \(\ManifoldM\) if for all relatively compact, open sets
    \(\Omegah\Subset \ManifoldM\), \(\exists C_{\Omegah}\geq 1\), 
    \begin{equation*}
        \sum_{j=1}^r \BLpNorm{\Wh_j^{\kappa/\Wdv_j} f}{2}[\ManifoldM,\Vol]
        \leq C_{\Omegah}
        \left( \LpNorm{\opP f}{2}[\ManifoldM,\Vol] + \LpNorm{f}{2}[\ManifoldM,\Vol] \right),
        \quad \forall f\in \CinftyCptSpace[\Omegah].
    \end{equation*}
\end{definition}

With \(\opP\) given by \eqref{Eqn::Trace::Vanish::DirichletProblem::opPFormula}, set
\(\opL=\opP^{*}\opP\), where \(\opP\) denotes the formal \(\LpSpace{2}[\ManifoldM,\Vol]\) adjoint
of \(\opP\). We think of \(\opL\) as a symmetric operator on \(\LpSpace{2}[\ManifoldN,\Vol]\)
with dense domain \(\CinftyCptSpace[\ManifoldN]\), and we wish to consider the self-adjoint
extension of \(\opL\) (on \(\LpSpace{2}[\ManifoldN,\Vol]\)) corresponding to Dirichlet boundary conditions.

More precisely, consider the quadratic form
\begin{equation*}
    Q(f)=\BLpNorm{\opP f}{2}^2,\quad f\in \CinftyCptSpace[\InteriorN].
\end{equation*}
The self-adjoint extension of \(\opL\) corresponding Dirichlet boundary conditions is,
by definition, the self-adjoint extension corresponding to the form closure
\(\overline{Q}\) of \(Q\); namely the Friedrichs extension of \(\opL\)
(see \cite[Section X.3]{ReedSimonMethodsOfModernMathematicalPhysicsII}).
The domain of \(\overline{Q}\) is the space \(\DXSpace[2]\) given by the completion
of \(\CinftyCptSpace[\InteriorN]\) in the norm
\(\DXNorm{f}[2]^2:=Q(f)+\LpNorm{f}{2}^2=\LpNorm{\opP f}{2}^2+\LpNorm{f}{2}^2\).
More generally, for \(1<p<\infty\), let \(\DXSpace\) denote the completion
of \(\CinftyCptSpace[\InteriorN]\) in the norm
\(\DXNorm{f}^p:=\LpNorm{\opP f}{p}^p+\LpNorm{f}{p}^p\).
Note that \(\DXSpace\subseteq \LpSpace{p}[\ManifoldN,\Vol]\), and we may therefore identify
elements of \(\DXSpace\) with distributions in \(\DistributionsZeroN\); we henceforth make this identification.

The main result of this subsection (Theorem \ref{Thm::Trace::Vanish::DirichletProblem::MainCharacterizationThm}) characterizes \(\DXSpace\) near a point of \(\BoundaryNncF\).
First, we need that the space \(\DXSpace\) is localizable.

\begin{proposition}\label{Prop::Trace::Vanish::DirichletProblem::MultPsi}
    Suppose \(\opP\) is 
    maximally subelliptic of degree \(\kappa\) with respect to \(\FilteredSheafFh\) on \(\ManifoldM\).
    For \(\psi\in \CinftyCptSpace[\ManifoldN]\), the map \(u\mapsto \psi u\)
    is continuous \(\DXSpace\rightarrow \DXSpace\).
\end{proposition}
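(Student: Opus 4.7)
The plan is to prove the estimate
\begin{equation*}
\DXNorm{\psi f} \lesssim \DXNorm{f}, \quad \forall f \in \CinftyCptSpace[\InteriorN],
\end{equation*}
and then extend by density, since by definition $\CinftyCptSpace[\InteriorN]$ is dense in $\DXSpace$ and $\psi f \in \CinftyCptSpace[\InteriorN]$ whenever $f \in \CinftyCptSpace[\InteriorN]$ (because $\supp(\psi f) \subseteq \supp(f) \Subset \InteriorN$). The $\LpNorm{\psi f}{p} \lesssim \LpNorm{f}{p}$ piece of the estimate is immediate from $\psi \in \CinftyCptSpace[\ManifoldN] \subseteq \LpSpace{\infty}$, so the whole proof reduces to controlling $\LpNorm{\opP(\psi f)}{p}$.

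First I would write, using iterated Leibniz rule applied to each monomial $\Wh^{\alpha}$ appearing in $\opP$,
\begin{equation*}
\opP(\psi f) = \psi \opP f + Qf,
\end{equation*}
where $Q$ is a partial differential operator of $\FilteredSheafFh$-degree $\leq \kappa - 1$ with smooth coefficients compactly supported in a neighborhood of $\supp(\psi)$. The term $\psi \opP f$ is trivially controlled by $\LpNorm{\opP f}{p}$. For the commutator-type remainder $Q f$, the bound
\begin{equation*}
\LpNorm{Q f}{p} \lesssim \sum_{\DegWdv{\gamma} \leq \kappa - 1} \LpNorm{\Wh^{\gamma} f}{p}
\end{equation*}
is just the triangle inequality.

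The main step is then to control $\sum_{\DegWdv{\gamma} \leq \kappa-1} \LpNorm{\Wh^{\gamma} f}{p}$ by $\LpNorm{\opP f}{p} + \LpNorm{f}{p}$ for $f \in \CinftyCptSpace[\InteriorN]$. Extending $f$ by zero to $\ManifoldM$ (which is valid since $\supp(f) \Subset \InteriorN$), one can invoke the $\LpSpace{p}$ version of the maximal subelliptic estimate on the manifold \emph{without} boundary $\ManifoldM$, namely
\begin{equation*}
\sum_{\DegWdv{\alpha} \leq \kappa} \LpNorm{\Wh^{\alpha} f}{p}[\ManifoldM, \Vol] \lesssim \LpNorm{\opP f}{p}[\ManifoldM, \Vol] + \LpNorm{f}{p}[\ManifoldM, \Vol], \quad f \in \CinftyCptSpace[\Omegah],
\end{equation*}
for $1 < p < \infty$ and $\Omegah \Subset \ManifoldM$ a fixed relatively compact open neighborhood of $\supp(\psi)$. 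Combining these three bounds gives $\DXNorm{\psi f} \lesssim \DXNorm{f}$, and density finishes the proof.

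The main obstacle is the invocation of the $\LpSpace{p}$-maximal subelliptic estimate, since the definition in this paper is stated only for $p = 2$. I would cite the $\LpSpace{p}$-theory on manifolds without boundary developed in \cite{StreetMaximalSubellipticity} (where the $\LpSpace{p}$ estimate is a consequence of the $\LpSpace{2}$ hypothesis via singular integral bounds and Calder\'on--Zygmund theory for the adapted Carnot--Carath\'eodory geometry). Once this ambient estimate is accepted, the argument above is routine; it is essentially a commutator argument, with the only subtlety being that $\psi$ need not vanish at $\BoundaryN$, which is handled by the fact that $f$ already has compact support in $\InteriorN$, so there are no boundary terms to worry about in the Leibniz expansion.
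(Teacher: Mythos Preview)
Your proof is correct. Both your argument and the paper's reduce to the ambient \(\LpSpace{p}\) maximal subelliptic estimate on \(\ManifoldM\) (which is precisely Lemma~\ref{Lemma::Trace::Vanish::DirichletProblem::EquivMaxSub} \ref{Item::Trace::Vanish::DirichletProblem::EquivMaxSub::EstimateLpNorms} in the paper, drawn from \cite{StreetMaximalSubellipticity}), so the ``main obstacle'' you flag is handled exactly as you suspect.

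The execution differs slightly. The paper extends \(\psi\) to \(\psih_1\in \CinftyCptSpace[\ManifoldM]\), passes through the equivalence \(\DXNorm{\cdot}\approx \TLNorm{\cdot}{\kappa}{p}{2}[\FilteredSheafFh]\) (Lemma~\ref{Lemma::Trace::Vanish::DirichletProblem::EquivMaxSub} \ref{Item::Trace::Vanish::DirichletProblem::EquivMaxSub::EstimateLpNorms}), and then invokes the localized estimate \(\TLNorm{\psih_1 f}{\kappa}{p}{2}[\FilteredSheafFh]\lesssim \LpNorm{\psih_2 \opP f}{p}+\LpNorm{\psih_2 f}{p}\) (Lemma~\ref{Lemma::Trace::Vanish::DirichletProblem::EquivMaxSub} \ref{Item::Trace::Vanish::DirichletProblem::EquivMaxSub::LocalizeSobolevSpace}) as a black box. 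You instead expand \(\opP(\psi f)=\psi\opP f+Qf\) by Leibniz and bound the lower-order commutator \(Q\) directly via the full \(\LpSpace{p}\) estimate. Your route is more self-contained (it does not need the Triebel--Lizorkin characterization), while the paper's leverages machinery it has already packaged; substantively they are the same argument.
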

We defer the proof of Proposition \ref{Prop::Trace::Vanish::DirichletProblem::MultPsi} to later in this subsection.
The main result of this subsection is the following theorem.

\begin{theorem}\label{Thm::Trace::Vanish::DirichletProblem::MainCharacterizationThm}
    Suppose \(\opP\) is 
    maximally subelliptic of degree \(\kappa\) with respect to \(\FilteredSheafFh\) on \(\ManifoldM\).
    Fix \(1<p<\infty\), and \(\Omega\Subset \ManifoldNncF[\lambda]\)
    for some fixed \(\lambda\in \Zg\), with \(\Omega\cap \BoundaryN\ne \emptyset\).
    Set\footnote{To see \(\kappa/\lambda\in \Zg\),
    note that \(\lambda=\Wdv_j\) for some \(j\)--see Example \ref{Example::Filtrations::RestrictingFiltrations::NonCharExamples} \ref{Item::Filtrations::RestrictingFiltrations::NonCharExamples::CharacterizeNonCharInTermsOfWWdv}. 
    By hypothesis, \(\Wdv_j\) divides \(\kappa\), for every \(j\).} 
    \(L:=\kappa/\lambda-1\in \Zgeq\) and let \(\sB=\left( B_0, B_1,\ldots, B_L \right)\)
    be an \(\FilteredSheafF\)-Dirichlet system on \(\Omega\). For \(u\in \DistributionsZeroN\) and
    \(\psi\in \CinftyCptSpace[\Omega]\), the following are equivalent:
    \begin{enumerate}[(i)]
        \item\label{Item::Trace::Vanish::DirichletProblem::MainCharacterizationThm::uInTLSpace} \(\psi u \in \TLSpace{\kappa}{p}{2}[\overline{\Omega}][\FilteredSheafF]\) and \(\TraceMap[\sB]\psi u=0\).
        \item\label{Item::Trace::Vanish::DirichletProblem::MainCharacterizationThm::uXSpace} \(\psi u \in \DXSpace\).
    \end{enumerate}
\end{theorem}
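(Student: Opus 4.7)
The plan is to prove the two implications by approximating $\psi u$ by elements of $\CinftyCptSpace[\Omega\cap\InteriorN]$ in each space separately, and then matching the limits via the common ambient space $\LpSpace{p}[\ManifoldN]$. The central ingredients will be Theorem~\ref{Thm::Trace::Vanish::MainVanishThm} (which produces approximants with the right trace hypothesis), the continuity of $\opP$ on Triebel--Lizorkin spaces coming from Proposition~\ref{Prop::Spaces::MappingOfFuncsAndVFs}, Proposition~\ref{Prop::Spaces::Extend::Consequences::DerivForSobolevNormsOnFunctionsOfCptSupport} which identifies $\TLSpace{\kappa}{p}{2}$ with a derivative Sobolev norm on $\CinftyCptSpace[\cdot\cap\InteriorN]$, and the $\LpSpace{p}$-version of maximal subellipticity for $\opP$ on the interior, obtained from the given $\LpSpace{2}$-maximal subellipticity by the Calder\'on--Zygmund machinery of \cite{StreetMaximalSubellipticity}. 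A preliminary observation is that $s=\kappa$ sits in the interval $(L\lambda+\lambda/p,(L+1)\lambda+\lambda/p)$ since $L=\kappa/\lambda-1$, so Theorem~\ref{Thm::Trace::Vanish::MainVanishThm}\ref{Item::Trace::Vanish::MainVanishThm::Larges} applies with $s=\kappa$.

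For \ref{Item::Trace::Vanish::DirichletProblem::MainCharacterizationThm::uInTLSpace}$\Rightarrow$\ref{Item::Trace::Vanish::DirichletProblem::MainCharacterizationThm::uXSpace}, assume $\psi u\in\TLSpace{\kappa}{p}{2}[\OmegaClosure][\FilteredSheafF]$ with $\TraceMap[\sB]\psi u=0$. By Theorem~\ref{Thm::Trace::Vanish::MainVanishThm}\ref{Item::Trace::Vanish::MainVanishThm::Larges}, choose $f_j\in\CinftyCptSpace[\Omega\cap\InteriorN]$ with $f_j\rightarrow \psi u$ in $\TLSpace{\kappa}{p}{2}[\OmegaClosure][\FilteredSheafF]$. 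Since $\opP$ has $\FilteredSheafF$-degree $\leq\kappa$ on $\OmegaClosure$, Proposition~\ref{Prop::Spaces::MappingOfFuncsAndVFs}\ref{Item::Spaces::MappingOfFuncsAndVFs::DiffOps} gives $\opP:\TLSpace{\kappa}{p}{2}[\OmegaClosure][\FilteredSheafF]\to\TLSpace{0}{p}{2}[\OmegaClosure][\FilteredSheafF]\cong\LpSpace{p}[\OmegaClosure]$ (the identification is Proposition~\ref{Prop::Spaces::EqualsLp}). Together with the obvious $\TLSpace{\kappa}{p}{2}\hookrightarrow\LpSpace{p}$ embedding this shows $\{f_j\}$ is Cauchy in $\DXNorm{\cdot}$, so converges to some $v\in\DXSpace$; comparing $\LpSpace{p}$-limits yields $v=\psi u$.

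For \ref{Item::Trace::Vanish::DirichletProblem::MainCharacterizationThm::uXSpace}$\Rightarrow$\ref{Item::Trace::Vanish::DirichletProblem::MainCharacterizationThm::uInTLSpace}, suppose $\psi u\in\DXSpace$ and pick $g_j\in\CinftyCptSpace[\InteriorN]$ with $g_j\to\psi u$ in $\DXSpace$. Choose $\phi\in\CinftyCptSpace[\Omega]$ with $\phi\equiv 1$ on a neighborhood of $\supp\psi$; by Proposition~\ref{Prop::Trace::Vanish::DirichletProblem::MultPsi} the sequence $f_j:=\phi g_j\in\CinftyCptSpace[\Omega\cap\InteriorN]$ still converges to $\phi\psi u=\psi u$ in $\DXSpace$. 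Proposition~\ref{Prop::Spaces::Extend::Consequences::DerivForSobolevNormsOnFunctionsOfCptSupport} identifies
\[
\TLNorm{f_j-f_k}{\kappa}{p}{2}[\FilteredSheafF]\approx \sum_{\DegWdv{\alpha}\leq\kappa}\LpNorm{W^\alpha(f_j-f_k)}{p},
\]
and the $\LpSpace{p}$-maximal subellipticity of $\opP$ on the interior bounds the right-hand side by $\LpNorm{\opP(f_j-f_k)}{p}+\LpNorm{f_j-f_k}{p}\to 0$. Hence $\{f_j\}$ is Cauchy in $\TLSpace{\kappa}{p}{2}[\OmegaClosure][\FilteredSheafF]$ and converges to $\psi u$ there. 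Since each $f_j$ has support in $\InteriorN$, $\TraceMap[\sB]f_j=0$; the continuity of $\TraceMap[\sB]$ from Theorem~\ref{Thm::Trace::ForwardMap} (applicable because $\kappa>L\lambda+\lambda/p$) forces $\TraceMap[\sB]\psi u=0$.

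The main obstacle is the $\LpSpace{p}$-maximal subellipticity of $\opP$, which is hypothesized only in $\LpSpace{2}$. I expect to extract this by invoking the $\LpSpace{p}$-theory of maximally subelliptic operators on manifolds without boundary from \cite{StreetMaximalSubellipticity}: localize near a point of $\OmegaClosure$, extend coefficients across $\BoundaryN$ (this is permissible for $f\in\CinftyCptSpace[\Omega\cap\InteriorN]$ because the supports stay inside $\InteriorN$), and apply the interior $\LpSpace{p}$-subelliptic estimates there. A secondary technical point is verifying that $\phi g_j\in\CinftyCptSpace[\Omega\cap\InteriorN]$ genuinely converges in $\DXSpace$, which is exactly what Proposition~\ref{Prop::Trace::Vanish::DirichletProblem::MultPsi} is designed to provide.
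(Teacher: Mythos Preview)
Your proposal is correct and follows essentially the same approach as the paper. The paper packages the key norm equivalence $\DXNorm{f}\approx\TLNorm{f}{\kappa}{p}{2}[\FilteredSheafF]$ for $f\in\CinftyCptSpace[\Omega\cap\InteriorN]$ into a separate lemma (Lemma~\ref{Lemma::Trace::Vanish::DirichletProblem::DxNormIsEquivToTLNorm}, itself resting on Lemma~\ref{Lemma::Trace::Vanish::DirichletProblem::EquivMaxSub} for the $\LpSpace{p}$-maximal subellipticity you correctly anticipate), whereas you unfold its two halves inline---using Proposition~\ref{Prop::Spaces::MappingOfFuncsAndVFs}\ref{Item::Spaces::MappingOfFuncsAndVFs::DiffOps} for one direction and Proposition~\ref{Prop::Spaces::Extend::Consequences::DerivForSobolevNormsOnFunctionsOfCptSupport} plus $\LpSpace{p}$-subellipticity for the other.
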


Before we prove Theorem \ref{Thm::Trace::Vanish::DirichletProblem::MainCharacterizationThm}, we present an important
corollary.

\begin{corollary}\label{Cor::Trace::Vanish::DirichletProblem::GlobalCharacterization}
    Suppose \(\opP\) is 
    maximally subelliptic of degree \(\kappa\) with respect to \(\FilteredSheafFh\) on \(\ManifoldM\).
    Suppose \(\ManifoldN\) is compact and \(\BoundaryN=\BoundaryNncF[\lambda]\) for some fixed \(\lambda\)
    (in particular, every boundary point is assumed \(\FilteredSheafF\)-non-characteristic).
    Set \(L:=\kappa/\lambda-1\in \Zgeq\) and let  \(\sB=\left( B_0, B_1,\ldots, B_L \right)\)
    be an \(\FilteredSheafF\)-Dirichlet system on \(\Omega\).
    Then,
    \begin{equation*}
        \DXSpace=\left\{ u\in \TLSpace{\kappa}{p}{2}[\ManifoldN][\FilteredSheafF] : \TraceMap[\sB] u =0 \right\}, \quad \forall 1<p<\infty.
    \end{equation*}
\end{corollary}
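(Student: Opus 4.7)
The plan is to use Corollary~\ref{Cor::Trace::Vanish::EveryPointNonChar} to identify both sides with a common completion of $\CinftyCptSpace[\InteriorN]$, and then show the two norms at play are equivalent on $\CinftyCptSpace[\InteriorN]$. With $s=\kappa$ and $L=\kappa/\lambda-1$, the range condition
\[
s\in\bigl(L\lambda+\lambda/p,\,(L+1)\lambda+\lambda/p\bigr)=\bigl(\kappa-\lambda+\lambda/p,\,\kappa+\lambda/p\bigr)
\]
holds precisely because $1<p<\infty$, so Corollary~\ref{Cor::Trace::Vanish::EveryPointNonChar} (applied with $\XSpace{s}{p}{q}=\TLSpace{\kappa}{p}{2}$) identifies the right-hand side of the statement with the closure of $\CinftyCptSpace[\InteriorN]$ in $\TLSpace{\kappa}{p}{2}[\ManifoldN][\FilteredSheafF]$. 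On the other hand $\DXSpace$ is by construction the completion of $\CinftyCptSpace[\InteriorN]$ in the norm $\DXNorm{f}=\LpNorm{\opP f}{p}+\LpNorm{f}{p}$, and embeds continuously into $\LpSpace{p}[\ManifoldN]\subset\DistributionsZeroN$. Both spaces then sit inside $\DistributionsZeroN$ as completions of $\CinftyCptSpace[\InteriorN]$ under two norms, so they coincide as soon as these norms are equivalent.

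The upper bound $\DXNorm{f}\lesssim\TLNorm{f}{\kappa}{p}{2}[\ManifoldN][\FilteredSheafF]$ on $\CinftyCptSpace[\InteriorN]$ is immediate from Chapter~\ref{Chapter::Spaces}: $\opP$ and the identity both have $\FilteredSheafF$-degree $\le\kappa$ on $\ManifoldN$ (Definition~\ref{Defn::Filtrations::DiffOps::Deg}, with Proposition~\ref{Prop::Filtrations::RestrictingFiltrations::CoDim0Restriction} used to restrict the global generators $\WhWdv$ from $\ManifoldM$ to $\ManifoldN$), so Proposition~\ref{Prop::Spaces::MappingOfFuncsAndVFs}\ref{Item::Spaces::MappingOfFuncsAndVFs::DiffOps} combined with Proposition~\ref{Prop::Spaces::EqualsLp} bounds both $\LpNorm{\opP f}{p}$ and $\LpNorm{f}{p}$ by $\TLNorm{f}{\kappa}{p}{2}[\ManifoldN][\FilteredSheafF]$. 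For the reverse inequality I cover the compact set $\ManifoldN=\ManifoldNncF[\lambda]$ by finitely many relatively compact open $\Omega_j\Subset\ManifoldN$ over which $\FilteredSheafFh$ admits H\"ormander generators $\WhWdv$, take a subordinate partition of unity $\{\psi_j\}$, and apply Proposition~\ref{Prop::Spaces::Extend::Consequences::DerivForSobolevNormsOnFunctionsOfCptSupport} to each $\psi_j f$, which has compact support in $\Omega_j\cap\InteriorN$. This yields
\[
\TLNorm{f}{\kappa}{p}{2}[\ManifoldN][\FilteredSheafF]\lesssim\sum_j\TLNorm{\psi_j f}{\kappa}{p}{2}[\ManifoldN][\FilteredSheafF]\approx\sum_j\sum_{\DegWdv{\alpha}\le\kappa}\LpNorm{\Wh^\alpha(\psi_j f)}{p},
\]
and Leibniz together with resummation bound the right-hand side by $\sum_{\DegWdv{\beta}\le\kappa}\LpNorm{\Wh^\beta f}{p}$. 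The $L^p$-version of the maximal subelliptic estimate for $\opP$ then bounds this last quantity by $\LpNorm{\opP f}{p}+\LpNorm{f}{p}=\DXNorm{f}$.

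The main obstacle is the final step: the definition in the paper provides maximal subellipticity only in $L^2$, so one must invoke the $L^p$-analogue $\sum_{\DegWdv{\alpha}\le\kappa}\LpNorm{\Wh^\alpha f}{p}\lesssim\LpNorm{\opP f}{p}+\LpNorm{f}{p}$ for $f\in\CinftyCptSpace[\Omegah]$, which is available from the interior regularity theory developed in \cite{StreetMaximalSubellipticity}. Given that input, the rest of the argument reduces to Corollary~\ref{Cor::Trace::Vanish::EveryPointNonChar}, the interior pointwise characterization of Triebel--Lizorkin norms on compactly supported functions, and a routine partition-of-unity argument.
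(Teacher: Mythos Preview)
Your proposal is correct and follows essentially the same route as the paper: the paper derives this as the special case $\Omega=\ManifoldN$, $\psi=1$ of Theorem~\ref{Thm::Trace::Vanish::DirichletProblem::MainCharacterizationThm}, whose proof uses exactly your two ingredients --- Theorem~\ref{Thm::Trace::Vanish::MainVanishThm} (equivalently Corollary~\ref{Cor::Trace::Vanish::EveryPointNonChar}) and the norm equivalence $\DXNorm{\cdot}\approx\TLNorm{\cdot}{\kappa}{p}{2}[\FilteredSheafF]$ on $\CinftyCptSpace[\InteriorN]$ (recorded as Lemma~\ref{Lemma::Trace::Vanish::DirichletProblem::DxNormIsEquivToTLNorm}, via the $L^p$ maximal subelliptic estimate of Lemma~\ref{Lemma::Trace::Vanish::DirichletProblem::EquivMaxSub} drawn from \cite{StreetMaximalSubellipticity}). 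Your partition-of-unity step is harmless but unnecessary: since $\ManifoldN$ is compact and $\WhWdv$ is globally defined on $\ManifoldM$, Proposition~\ref{Prop::Spaces::Extend::Consequences::DerivForSobolevNormsOnFunctionsOfCptSupport} applies directly with a single $\Omega_1\supset\ManifoldN$.
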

\begin{proof}
    This is the special case of Theorem \ref{Thm::Trace::Vanish::DirichletProblem::MainCharacterizationThm}
    with \(\Omega=\ManifoldN\) and \(\psi=1\in \CinftyCptSpace[\ManifoldN]\).
\end{proof}

We turn to the proofs of Proposition \ref{Prop::Trace::Vanish::DirichletProblem::MultPsi}
and Theorem \ref{Thm::Trace::Vanish::DirichletProblem::MainCharacterizationThm}.
Because \(\ManifoldM\) is a manifold without boundary, the results from
\cite{StreetMaximalSubellipticity} apply to \(\ManifoldM\) and \(\FilteredSheafFh\).

\begin{lemma}\label{Lemma::Trace::Vanish::DirichletProblem::EquivMaxSub}
    Let \(\opP\) be given by \eqref{Eqn::Trace::Vanish::DirichletProblem::opPFormula} and
    \(\opL=\opP^{*}\opP\), so that \(\opL\) is a polynomial of degree \(2\kappa\) in \(\WhWdv\)--see \cite[(8.4)]{StreetMaximalSubellipticity}.
    The following are equivalent:
    \begin{enumerate}[(i)]
        \item\label{Item::Trace::Vanish::DirichletProblem::EquivMaxSub::opLMaxSub} \(\opL\) is maximally subelliptic of degree \(2\kappa\) with respect to \(\FilteredSheafFh\) on \(\ManifoldM\).
        \item\label{Item::Trace::Vanish::DirichletProblem::EquivMaxSub::opPMaxSub} \(\opP\) is maximally subellitpic of degree \(\kappa\) with respect to \(\FilteredSheafFh\) on \(\ManifoldM\).
        \item\label{Item::Trace::Vanish::DirichletProblem::EquivMaxSub::EstimateLpNorms} \(\forall \Omegah\Subset \ManifoldM\), a relatively compact, \(\ManifoldM\)-open set, we have for \(1<p<\infty\),
            \begin{equation*}
                \sum_{\DegWdv{\alpha}\leq \kappa} \BLpNorm{\Wh^{\alpha} f}{p}^p\approx 
                \TLNorm{f}{\kappa}{p}{2}[\FilteredSheafFh]^p
                \approx \BLpNorm{\opP f}{p}^p+\LpNorm{f}{p}^p,\quad \forall f\in \CinftyCptSpace[\Omegah],
            \end{equation*}
            where the implicit constants do not depend on \(f\), but may depend on anything else.
        \item\label{Item::Trace::Vanish::DirichletProblem::EquivMaxSub::LocalizeBesovAndTLSpace} For all \(\psih_1,\psih_2, \psih_3\in \CinftyCptSpace[\ManifoldM]\) with \(\psih_{j+1}\) equal to \(1\)
            on a neighborhood of \(\supp(\psih_j)\) and all \(s\in \R\),
            we have
            \begin{equation*}
                \psih_3\opP u\in \ASpaceCpt{s+\kappa}{p}{q}[\ManifoldM][\FilteredSheafFh]
                \implies 
                \psih_1 u\in \ASpaceCpt{s}{p}{q}[\ManifoldM][\FilteredSheafFh], \quad \forall u\in \Distributions[\ManifoldM].
            \end{equation*}
            Moreover, in this case, we have \(\forall N\geq 0\)
            \begin{equation}\label{Eqn::Trace::Vanish::DirichletProblem::EquivMaxSub::LocalizeBesovAndTLSpace::Estimate}
                \ANorm{\psih_1 u}{s+\kappa}{p}{q}[\FilteredSheafFh]
                \lesssim \ANorm{\psih_3 \opP f}{s}{p}{q}[\FilteredSheafFh]
                +\ANorm{\psih_2 u}{s-N}{p}{q}[\FilteredSheafFh],
            \end{equation}
            where the implicit constant does not depend on \(u\), but may depend on anything else.

        \item\label{Item::Trace::Vanish::DirichletProblem::EquivMaxSub::LocalizeSobolevSpace} \(\forall \psih_1, \psih_2\in \CinftyCptSpace[\ManifoldM]\) with \(\psih_2\) equal to \(1\) on a neighborhood
            of \(\supp(\psih_1)\), we have for \(1<p<\infty\),
            \begin{equation}\label{Eqn::Trace::Vanish::DirichletProblem::EquivMaxSub::LocalizeSobolevSpace::Estimate}
                \TLNorm{\psih_1 f}{\kappa}{p}{2}[\FilteredSheafFh] \lesssim
                \BLpNorm{\psih_2 \opP f}{p}+\BLpNorm{\psih_2 f}{p},\quad \forall f\in \CinftySpace[\ManifoldM],
            \end{equation}
            where the implicit constant does not depend on \(f\), but may depend on anything else.
    \end{enumerate}
\end{lemma}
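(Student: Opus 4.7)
The plan is to deduce all five equivalent conditions from the interior theory for maximally subelliptic operators on the ambient manifold without boundary $\ManifoldM$, developed in \cite{StreetMaximalSubellipticity}, combined with the local Besov and Triebel--Lizorkin theory of Chapter \ref{Chapter::Spaces}. The key observation is that $\ManifoldM$ has no boundary, so every cited result from \cite{StreetMaximalSubellipticity} applies directly to $\opP$, $\opL$, $\FilteredSheafFh$, and integration by parts produces no boundary terms for $f \in \CinftyCptSpace[\Omegah]$.

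First I would establish the cycle \ref{Item::Trace::Vanish::DirichletProblem::EquivMaxSub::opPMaxSub} $\Rightarrow$ \ref{Item::Trace::Vanish::DirichletProblem::EquivMaxSub::LocalizeBesovAndTLSpace} $\Rightarrow$ \ref{Item::Trace::Vanish::DirichletProblem::EquivMaxSub::LocalizeSobolevSpace} $\Rightarrow$ \ref{Item::Trace::Vanish::DirichletProblem::EquivMaxSub::EstimateLpNorms} $\Rightarrow$ \ref{Item::Trace::Vanish::DirichletProblem::EquivMaxSub::opPMaxSub}. The implication \ref{Item::Trace::Vanish::DirichletProblem::EquivMaxSub::opPMaxSub} $\Rightarrow$ \ref{Item::Trace::Vanish::DirichletProblem::EquivMaxSub::LocalizeBesovAndTLSpace} is the sharp Besov/Triebel--Lizorkin interior regularity estimate for maximally subelliptic operators on a manifold without boundary (the content of \cite[Chapter 8]{StreetMaximalSubellipticity} combined with the space theory of \cite[Chapter 6]{StreetMaximalSubellipticity}), applied via the standard three-function localization with $\psih_1, \psih_2, \psih_3$. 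Specializing to $s=0$, $q=2$, then invoking Proposition \ref{Prop::Spaces::Extend::Consequences::DerivForSobolevNormsOnFunctionsOfCptSupport} (which identifies $\TLSpace{\kappa}{p}{2}[\FilteredSheafFh]$ on $\CinftyCptSpace$ with the subelliptic $L^p$-Sobolev space characterized by $\sum_{\DegWdv{\alpha}\leq\kappa}\LpNorm{\Wh^\alpha f}{p}$), yields \ref{Item::Trace::Vanish::DirichletProblem::EquivMaxSub::LocalizeSobolevSpace}. For \ref{Item::Trace::Vanish::DirichletProblem::EquivMaxSub::LocalizeSobolevSpace} $\Rightarrow$ \ref{Item::Trace::Vanish::DirichletProblem::EquivMaxSub::EstimateLpNorms}: apply \eqref{Eqn::Trace::Vanish::DirichletProblem::EquivMaxSub::LocalizeSobolevSpace::Estimate} with $\psih_1\equiv 1$ on $\overline{\Omegah}$ and $\psih_2\in\CinftyCptSpace[\ManifoldM]$ equal to one on a neighborhood of $\supp(\psih_1)$; the reverse inequality $\LpNorm{\opP f}{p}\lesssim\sum_{\DegWdv{\alpha}\leq\kappa}\LpNorm{\Wh^\alpha f}{p}$ is immediate from \eqref{Eqn::Trace::Vanish::DirichletProblem::opPFormula}. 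Finally \ref{Item::Trace::Vanish::DirichletProblem::EquivMaxSub::EstimateLpNorms} $\Rightarrow$ \ref{Item::Trace::Vanish::DirichletProblem::EquivMaxSub::opPMaxSub} follows at $p=2$ by retaining only the summands with $\Wh^\alpha=\Wh_j^{\kappa/\Wdv_j}$.

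For the equivalence of \ref{Item::Trace::Vanish::DirichletProblem::EquivMaxSub::opLMaxSub} with \ref{Item::Trace::Vanish::DirichletProblem::EquivMaxSub::opPMaxSub}, the central identity is $\LpNorm{\opP f}{2}^2=\langle \opL f, f\rangle$ for $f\in\CinftyCptSpace[\Omegah]$. For \ref{Item::Trace::Vanish::DirichletProblem::EquivMaxSub::opLMaxSub} $\Rightarrow$ \ref{Item::Trace::Vanish::DirichletProblem::EquivMaxSub::opPMaxSub}: apply the already-established \ref{Item::Trace::Vanish::DirichletProblem::EquivMaxSub::LocalizeBesovAndTLSpace} to $\opL$ with $p=q=2$ and $s=-\kappa$ to obtain $\TLNorm{\psih_1 f}{\kappa}{2}{2}[\FilteredSheafFh]\lesssim\TLNorm{\psih_3\opL f}{-\kappa}{2}{2}[\FilteredSheafFh]+\TLNorm{\psih_2 f}{-N}{2}{2}[\FilteredSheafFh]$, then combine with the duality estimate $\TLNorm{\opL f}{-\kappa}{2}{2}[\FilteredSheafFh]\,\TLNorm{f}{\kappa}{2}{2}[\FilteredSheafFh]\geq|\langle\opL f, f\rangle|=\LpNorm{\opP f}{2}^2$ and Proposition \ref{Prop::Spaces::Extend::Consequences::DerivForSobolevNormsOnFunctionsOfCptSupport} to conclude. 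For \ref{Item::Trace::Vanish::DirichletProblem::EquivMaxSub::opPMaxSub} $\Rightarrow$ \ref{Item::Trace::Vanish::DirichletProblem::EquivMaxSub::opLMaxSub}: use the already-proved equivalence \ref{Item::Trace::Vanish::DirichletProblem::EquivMaxSub::opPMaxSub} $\Leftrightarrow$ \ref{Item::Trace::Vanish::DirichletProblem::EquivMaxSub::EstimateLpNorms} to rewrite $\|\opL f\|_2=\|\opP^*\opP f\|_2\lesssim\sum_{\DegWdv{\alpha}\leq\kappa}\LpNorm{\Wh^\alpha \opP f}{2}$ (since $\opP^*$ is a polynomial of degree $\kappa$ in $\WhWdv$), and then commute each $\Wh^\alpha$ past $\opP$: the principal term $\opP \Wh^\alpha f$ is bounded via a second application of \ref{Item::Trace::Vanish::DirichletProblem::EquivMaxSub::EstimateLpNorms} to $\Wh^\alpha f$, while commutators are of strictly lower subelliptic order and are absorbed by iterated use of \ref{Item::Trace::Vanish::DirichletProblem::EquivMaxSub::EstimateLpNorms}.

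The main obstacle is the commutator bookkeeping in the \ref{Item::Trace::Vanish::DirichletProblem::EquivMaxSub::opPMaxSub} $\Rightarrow$ \ref{Item::Trace::Vanish::DirichletProblem::EquivMaxSub::opLMaxSub} direction: the Lie-algebraic structure of $\LieFilteredSheafFh$ means that commutators of $\Wh^\alpha$ with $\opP$ produce many terms, each of which needs to be controlled in the correct subelliptic order. A cleaner alternative, which I would use if the direct route proves unwieldy, is to invoke the pseudodifferential parametrix for maximally subelliptic operators constructed in \cite[Chapter 8]{StreetMaximalSubellipticity}: the parametrix $Q$ for $\opL$ has order $-2\kappa$, so from $Q\opL=I\bmod\mathrm{smoothing}$ one obtains the $TL^{2\kappa}_{2,2}$-regularity of \ref{Item::Trace::Vanish::DirichletProblem::EquivMaxSub::opLMaxSub} directly from the $TL^{\kappa}_{2,2}$-regularity of \ref{Item::Trace::Vanish::DirichletProblem::EquivMaxSub::opPMaxSub} by composing with $\opP$. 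Either route avoids any use of the boundary, relying entirely on the interior theory on $\ManifoldM$.
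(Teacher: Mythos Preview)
Your cycle \ref{Item::Trace::Vanish::DirichletProblem::EquivMaxSub::opPMaxSub} $\Rightarrow$ \ref{Item::Trace::Vanish::DirichletProblem::EquivMaxSub::LocalizeBesovAndTLSpace} $\Rightarrow$ \ref{Item::Trace::Vanish::DirichletProblem::EquivMaxSub::LocalizeSobolevSpace} $\Rightarrow$ \ref{Item::Trace::Vanish::DirichletProblem::EquivMaxSub::EstimateLpNorms} $\Rightarrow$ \ref{Item::Trace::Vanish::DirichletProblem::EquivMaxSub::opPMaxSub} matches the paper's route exactly: cite \cite{StreetMaximalSubellipticity} for \ref{Item::Trace::Vanish::DirichletProblem::EquivMaxSub::opPMaxSub} $\Rightarrow$ \ref{Item::Trace::Vanish::DirichletProblem::EquivMaxSub::LocalizeBesovAndTLSpace}, specialize to $s=0$, $q=2$ and use $\TLSpace{0}{p}{2}\cong L^p$ for \ref{Item::Trace::Vanish::DirichletProblem::EquivMaxSub::LocalizeBesovAndTLSpace} $\Rightarrow$ \ref{Item::Trace::Vanish::DirichletProblem::EquivMaxSub::LocalizeSobolevSpace}, choose $\psih_1\equiv 1$ on $\overline{\Omegah}$ for \ref{Item::Trace::Vanish::DirichletProblem::EquivMaxSub::LocalizeSobolevSpace} $\Rightarrow$ \ref{Item::Trace::Vanish::DirichletProblem::EquivMaxSub::EstimateLpNorms}, and take $p=2$ for \ref{Item::Trace::Vanish::DirichletProblem::EquivMaxSub::EstimateLpNorms} $\Rightarrow$ \ref{Item::Trace::Vanish::DirichletProblem::EquivMaxSub::opPMaxSub}. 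The paper, however, does \emph{not} argue \ref{Item::Trace::Vanish::DirichletProblem::EquivMaxSub::opLMaxSub} $\Leftrightarrow$ \ref{Item::Trace::Vanish::DirichletProblem::EquivMaxSub::opPMaxSub} by hand: it simply cites \cite[Theorem 8.1.1 (vi)$\Leftrightarrow$(i)$\Leftrightarrow$(v)]{StreetMaximalSubellipticity} for the full block \ref{Item::Trace::Vanish::DirichletProblem::EquivMaxSub::opLMaxSub} $\Leftrightarrow$ \ref{Item::Trace::Vanish::DirichletProblem::EquivMaxSub::opPMaxSub} $\Leftrightarrow$ \ref{Item::Trace::Vanish::DirichletProblem::EquivMaxSub::LocalizeBesovAndTLSpace}. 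Your attempt to re-derive that block is where the proposal has genuine gaps.

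For \ref{Item::Trace::Vanish::DirichletProblem::EquivMaxSub::opLMaxSub} $\Rightarrow$ \ref{Item::Trace::Vanish::DirichletProblem::EquivMaxSub::opPMaxSub}: the duality inequality you write, $\TLNorm{\opL f}{-\kappa}{2}{2}[\FilteredSheafFh]\,\TLNorm{f}{\kappa}{2}{2}[\FilteredSheafFh]\geq\LpNorm{\opP f}{2}^2$, is a \emph{lower} bound on $\TLNorm{\opL f}{-\kappa}{2}{2}$ and cannot be plugged into the regularity estimate $\TLNorm{\psih_1 f}{\kappa}{2}{2}\lesssim\TLNorm{\psih_3\opL f}{-\kappa}{2}{2}+\cdots$, which needs an \emph{upper} bound. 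What you actually need is $\TLNorm{\opL f}{-\kappa}{2}{2}\lesssim\LpNorm{\opP f}{2}$, obtained by testing $\langle\opL f,g\rangle=\langle\opP f,\opP g\rangle\leq\LpNorm{\opP f}{2}\LpNorm{\opP g}{2}\lesssim\LpNorm{\opP f}{2}\TLNorm{g}{\kappa}{2}{2}$ against $\TLNorm{g}{\kappa}{2}{2}\leq 1$; with this correction the argument closes.

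For \ref{Item::Trace::Vanish::DirichletProblem::EquivMaxSub::opPMaxSub} $\Rightarrow$ \ref{Item::Trace::Vanish::DirichletProblem::EquivMaxSub::opLMaxSub} your chain $\LpNorm{\opL f}{2}\lesssim\sum_\alpha\LpNorm{\Wh^\alpha\opP f}{2}\lesssim\sum_\alpha\LpNorm{\opP\Wh^\alpha f}{2}+\text{commutators}\lesssim\cdots$ runs entirely in the \emph{wrong direction}: it yields $\LpNorm{\opL f}{2}\lesssim\TLNorm{f}{2\kappa}{2}{2}$, which is the trivial bound, not the maximal subellipticity estimate $\TLNorm{f}{2\kappa}{2}{2}\lesssim\LpNorm{\opL f}{2}+\LpNorm{f}{2}$. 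The commutator bookkeeping does not produce the coercive direction. Your parametrix alternative would work, but the cleanest fix is simply to do what the paper does and cite \cite[Theorem 8.1.1]{StreetMaximalSubellipticity} directly for \ref{Item::Trace::Vanish::DirichletProblem::EquivMaxSub::opLMaxSub} $\Leftrightarrow$ \ref{Item::Trace::Vanish::DirichletProblem::EquivMaxSub::opPMaxSub}, since that equivalence lives entirely on the boundaryless manifold $\ManifoldM$ and is already packaged there.
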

\begin{proof}
    \ref{Item::Trace::Vanish::DirichletProblem::EquivMaxSub::opLMaxSub}\(\Leftrightarrow\)
    \ref{Item::Trace::Vanish::DirichletProblem::EquivMaxSub::opPMaxSub}\(\Leftrightarrow\)
    \ref{Item::Trace::Vanish::DirichletProblem::EquivMaxSub::LocalizeBesovAndTLSpace}
    is \cite[Theorem 8.1.1 (vi)\(\Leftrightarrow\)(i)\(\Leftrightarrow\)(v)]{StreetMaximalSubellipticity}.

    \ref{Item::Trace::Vanish::DirichletProblem::EquivMaxSub::LocalizeBesovAndTLSpace}\(\Rightarrow\)
    \ref{Item::Trace::Vanish::DirichletProblem::EquivMaxSub::LocalizeSobolevSpace}: 
    Since \(\CinftyCptSpace[\ManifoldM]\subseteq \ASpaceCpt{s}{p}{q}[\ManifoldM][\FilteredSheafFh]\), \(\forall s\)
    (see \cite[Proposition 6.5.5]{StreetMaximalSubellipticity}), 
    \eqref{Eqn::Trace::Vanish::DirichletProblem::EquivMaxSub::LocalizeBesovAndTLSpace::Estimate}
    holds with \(u\) replace by 
    \(f\in \CinftySpace[\ManifoldM]\). Taking the special case \(N=s=0\) and \(\ASpace{s}{p}{q}=\TLSpace{s}{p}{q}\),
    we see for \(f\in \CinftyCptSpace[\ManifoldM]\),
    \begin{equation*}
    \begin{split}
         &\TLNorm{\psih_1 f}{\kappa}{p}{2}[\FilteredSheafFh] 
         \lesssim \TLNorm{\psih_3 \opP f}{0}{p}{q}[\FilteredSheafFh]
                +\TLNorm{\psih_2 u}{0}{p}{q}[\FilteredSheafFh]
        \approx \BLpNorm{\psih_3 \opP f}{p}+\BLpNorm{\psih_2 f}{p}
        \lesssim \BLpNorm{\psih_3 \opP f}{p}+\BLpNorm{\psih_3 f}{p},
    \end{split}
    \end{equation*}
    where the \(\approx\) follows from \cite[Proposition 6.2.13]{StreetMaximalSubellipticity}.
    By renaming \(\psih_3\) to be \(\psih_2\), this establishes \ref{Item::Trace::Vanish::DirichletProblem::EquivMaxSub::LocalizeSobolevSpace}.

    \ref{Item::Trace::Vanish::DirichletProblem::EquivMaxSub::LocalizeSobolevSpace}\(\Rightarrow\)
    \ref{Item::Trace::Vanish::DirichletProblem::EquivMaxSub::EstimateLpNorms}:
    Taking \(\psih_1\) equal to \(1\) on a neighborhood of \(\overline{\Omegah}\) in 
    \eqref{Eqn::Trace::Vanish::DirichletProblem::EquivMaxSub::LocalizeSobolevSpace::Estimate}
    establishes
    \begin{equation*}
        \sum_{\DegWdv{\alpha}\leq \kappa} \BLpNorm{\Wh^{\alpha} f}{p}
        \lesssim \BLpNorm{\opP f}{p}+\LpNorm{f}{p},\quad \forall f\in \CinftyCptSpace[\Omegah].
    \end{equation*}
    That
    \begin{equation*}
        \BLpNorm{\opP f}{p}+\LpNorm{f}{p}
        \lesssim \sum_{\DegWdv{\alpha}\leq \kappa} \BLpNorm{\Wh^{\alpha} f}{p},
        \quad \forall f\in \CinftyCptSpace[\Omegah],
    \end{equation*}
    follows immediately from \eqref{Eqn::Trace::Vanish::DirichletProblem::opPFormula}.
    Finally, that
    \begin{equation*}
        \sum_{\DegWdv{\alpha}\leq \kappa} \BLpNorm{\Wh^{\alpha} f}{p}
        \approx \TLNorm{f}{\kappa}{p}{2}[\FilteredSheafFh],\quad \forall f\in \CinftyCptSpace[\Omegah]
    \end{equation*}
    follows from \cite[Corollary 6.2.14]{StreetMaximalSubellipticity}. Combining these estimates establishes
    \ref{Item::Trace::Vanish::DirichletProblem::EquivMaxSub::EstimateLpNorms}.

    Taking \(p=2\) in \ref{Item::Trace::Vanish::DirichletProblem::EquivMaxSub::EstimateLpNorms}
    gives \ref{Item::Trace::Vanish::DirichletProblem::EquivMaxSub::opPMaxSub}.
\end{proof}

\begin{lemma}\label{Lemma::Trace::Vanish::DirichletProblem::DxNormIsEquivToTLNorm}
    Suppose \(\opP\) is maximally subelliptic of degree \(\kappa\) with respect to \(\FilteredSheafFh\) on \(\ManifoldM\).
    Fix \(1<p<\infty\) and \(\Omegah\Subset\ManifoldM\) open and relatively compact with \(\Omegah\cap \ManifoldN\Subset \ManifoldNncF\).
    Then,
    \begin{equation*}
        \DXNorm{f}\approx \TLNorm{f}{\kappa}{p}{2}[\FilteredSheafFh]\approx \TLNorm{f}{\kappa}{p}{2}[\FilteredSheafF]
        \approx \sum_{\DegWdv{\alpha}\leq \kappa}\BLpNorm{W^{\alpha}f}{p},
        \quad \forall f\in \CinftyCptSpace[\Omegah\cap \InteriorN].
    \end{equation*}
\end{lemma}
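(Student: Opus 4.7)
The proof is essentially a matter of combining two results established earlier in the paper, so the plan is to make the ingredients match up and chain the equivalences.

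First, I would apply Lemma \ref{Lemma::Trace::Vanish::DirichletProblem::EquivMaxSub} \ref{Item::Trace::Vanish::DirichletProblem::EquivMaxSub::EstimateLpNorms}, whose hypothesis requires only that $f\in \CinftyCptSpace[\Omegah]$. Since $\CinftyCptSpace[\Omegah\cap\InteriorN]\subseteq \CinftyCptSpace[\Omegah]$, this immediately gives
\begin{equation*}
\DXNorm{f}^p = \BLpNorm{\opP f}{p}^p + \BLpNorm{f}{p}^p \approx \TLNorm{f}{\kappa}{p}{2}[\FilteredSheafFh]^p \approx \sum_{\DegWdv{\alpha}\leq \kappa}\BLpNorm{\Wh^{\alpha}f}{p}^p,
\end{equation*}
for all $f\in \CinftyCptSpace[\Omegah\cap\InteriorN]$, so the first and second $\approx$ in the lemma are covered directly.

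Next, to compare with the intrinsic $\FilteredSheafF$-norm on $\ManifoldN$, I would apply Proposition \ref{Prop::Spaces::Extend::Consequences::DerivForSobolevNormsOnFunctionsOfCptSupport}. To invoke it, choose $\ManifoldM$-open, relatively compact sets $\Omega_1, \Omega_2$ with $\overline{\Omegah}\subseteq \Omega_1\Subset \Omega_2\Subset \ManifoldM$ and $\overline{\Omega_2}\cap \ManifoldN\subseteq \ManifoldNncF$; this is possible because $\ManifoldNncF$ is open in $\ManifoldN$ and $\overline{\Omegah\cap \ManifoldN}\subseteq \ManifoldNncF$ by hypothesis. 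Using Lemma \ref{Lemma::Filtrations::GeneratorsOnRelCptSet}, take H\"ormander vector fields with formal degrees $\WhWdv$ on $\Omega_2$ generating $\FilteredSheafFh\big|_{\Omega_2}$, and restrict them to $\Omega_2\cap\ManifoldN$ to get $\WWdv$ generating $\FilteredSheafF\big|_{\Omega_2\cap \ManifoldN}$. The proposition then yields
\begin{equation*}
\TLNorm{f}{\kappa}{p}{2}[\FilteredSheafF] \approx \TLNorm{f}{\kappa}{p}{2}[\FilteredSheafFh] \approx \sum_{\DegWdv{\alpha}\leq \kappa}\BLpNorm{W^{\alpha}f}{p},\quad \forall f\in \CinftyCptSpace[\Omega_1\cap\InteriorN],
\end{equation*}
which in particular holds for $f\in \CinftyCptSpace[\Omegah\cap\InteriorN]$.

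Chaining the two displays gives every pairwise equivalence in the lemma. There is no genuine obstacle here; the only mild subtlety is ensuring that the $W^{\alpha}$ appearing in the statement of the lemma (viewed as operators associated to $\FilteredSheafF$ on $\ManifoldN$) agree with $\Wh^{\alpha}\big|_{\ManifoldN}$ from the ambient picture, which is immediate since $\WhWdv$ was chosen on a set containing $\overline{\Omegah}$ and its restriction to $\ManifoldN$ generates $\FilteredSheafF$ there by Proposition \ref{Prop::Filtrations::RestrictingFiltrations::CoDim0Restriction}. This finishes the proof.
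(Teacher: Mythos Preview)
Your proof is correct and follows essentially the same route as the paper: combine Proposition \ref{Prop::Spaces::Extend::Consequences::DerivForSobolevNormsOnFunctionsOfCptSupport} for the three right-hand equivalences and Lemma \ref{Lemma::Trace::Vanish::DirichletProblem::EquivMaxSub} \ref{Item::Trace::Vanish::DirichletProblem::EquivMaxSub::EstimateLpNorms} for the link to \(\DXNorm{\cdot}\). One small remark: rather than invoking Lemma \ref{Lemma::Filtrations::GeneratorsOnRelCptSet} to produce fresh generators on \(\Omega_2\), you should simply restrict the globally defined \(\WhWdv\) from the section setup, since the divisibility hypothesis \(\Wdv_j\mid\kappa\) in Proposition \ref{Prop::Spaces::Extend::Consequences::DerivForSobolevNormsOnFunctionsOfCptSupport} is already arranged for those.
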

\begin{proof}
    That \(\TLNorm{f}{\kappa}{p}{2}[\FilteredSheafFh]\approx \TLNorm{f}{\kappa}{p}{2}[\FilteredSheafF]\approx \sum_{\DegWdv{\alpha}\leq \kappa}\BLpNorm{W^{\alpha}f}{p}\),
    \(\forall f\in \CinftyCptSpace[\Omegah\cap \InteriorN]\) follows from Proposition \ref{Prop::Spaces::Extend::Consequences::DerivForSobolevNormsOnFunctionsOfCptSupport}
    (with \(\Omega_1=\Omegah\)).
    That
    \begin{equation*}
        \TLNorm{f}{\kappa}{p}{2}[\FilteredSheafFh]
        \approx \DXNorm{f},\quad \forall f\in \CinftyCptSpace[\Omegah]
    \end{equation*}
    follows from Lemma \ref{Lemma::Trace::Vanish::DirichletProblem::EquivMaxSub}
    \ref{Item::Trace::Vanish::DirichletProblem::EquivMaxSub::opPMaxSub}\(\Rightarrow\)\ref{Item::Trace::Vanish::DirichletProblem::EquivMaxSub::EstimateLpNorms}.
\end{proof}

\begin{proof}[Proof of Proposition \ref{Prop::Trace::Vanish::DirichletProblem::MultPsi}]
    Let \(\psih_1\in \CinftyCptSpace[\ManifoldM]\) be such that \(\psih_1\big|_{\ManifoldN}=\psi\)
    (see Lemma \ref{Lemma::Spaces::Elem::Extend::Classical}) and let \(\psih_2\in \CinftyCptSpace[\ManifoldM]\)
    be equal to \(1\) on a neighborhood of \(\supp(\psih_1)\).
    Fix \(\Omegah\Subset \ManifoldM\) an \(\ManifoldM\)-open, relatively compact set with
    \(\psih_2\in \CinftyCptSpace[\Omegah]\).
    Then, for \(f\in \CinftyCptSpace[\InteriorN]\) we have,
    using Lemma \ref{Lemma::Trace::Vanish::DirichletProblem::EquivMaxSub} 
    \ref{Item::Trace::Vanish::DirichletProblem::EquivMaxSub::opPMaxSub}\(\Rightarrow\)
    \ref{Item::Trace::Vanish::DirichletProblem::EquivMaxSub::EstimateLpNorms},\ref{Item::Trace::Vanish::DirichletProblem::EquivMaxSub::LocalizeSobolevSpace},
    \begin{equation*}
    \begin{split}
         &\DXNorm{\psi f}
         \approx \BLpNorm{\opP \psi f}{p} + \LpNorm{\psi f}{p}
         =\BLpNorm{\opP \psih_1 f}{p} + \LpNorm{\psih_1 f}{p}
         \approx \TLNorm{\psih_1 f}{\kappa}{p}{2}[\FilteredSheafFh]
         \\&\lesssim 
         \BLpNorm{\psih_2 \opP f}{p} + \BLpNorm{\psih_2 f}{p}
         \lesssim \BLpNorm{\opP f}{p} + \BLpNorm{f}{p}
         \approx \DXNorm{f}.
    \end{split}
    \end{equation*}
    The result follows.
\end{proof}

\begin{proof}[Proof of Theorem \ref{Thm::Trace::Vanish::DirichletProblem::MainCharacterizationThm}]
    Note that \(L\) is chosen so that \(\kappa\in (L\lambda+\lambda/p, (L+1)\lambda+\lambda/p)\),
    which means that we may apply Theorem \ref{Thm::Trace::Vanish::MainVanishThm} \ref{Item::Trace::Vanish::MainVanishThm::Larges}
    with \(s=\kappa\).

    \ref{Item::Trace::Vanish::DirichletProblem::MainCharacterizationThm::uInTLSpace}
    \(\Rightarrow\)
    \ref{Item::Trace::Vanish::DirichletProblem::MainCharacterizationThm::uXSpace}:
    Let \(\Omega_1\Subset \Omega\) be \(\ManifoldN\)-open and relatively compact with
    \(\psi\in \CinftyCptSpace[\Omega_1]\). Then, by Proposition \ref{Prop::Spaces::Containment}
    we have \(\psi u \in \TLSpace{\kappa}{p}{2}[\overline{\Omega_1}][\FilteredSheafF]\).
    By assumption we have \(\TraceMap[\sB] \psi u =0\).
    Theorem \ref{Thm::Trace::Vanish::MainVanishThm} \ref{Item::Trace::Vanish::MainVanishThm::Larges} shows
    that there exists \(f_j\in \CinftyCptSpace[\Omega\cap \InteriorN]\) with \(f_j\rightarrow \psi u\)
    in \(\TLSpace{\kappa}{p}{2}[\overline{\Omega}][\FilteredSheafF]\). By Lemma \ref{Lemma::Trace::Vanish::DirichletProblem::DxNormIsEquivToTLNorm}
    we have
    \begin{equation*}
        \DXNorm{f_j-f_k}\approx \TLNorm{f_j-f_k}{\kappa}{p}{2}[\FilteredSheafF],
    \end{equation*}
    and therefore \(f_j\) is Cauchy in \(\DXSpace\). We conclude \(f_j\rightarrow \psi u\)
    in \(\DXSpace\), proving \(\psi u\in \DXSpace\).

    \ref{Item::Trace::Vanish::DirichletProblem::MainCharacterizationThm::uXSpace}
    \(\Rightarrow\)
    \ref{Item::Trace::Vanish::DirichletProblem::MainCharacterizationThm::uInTLSpace}:
    Suppose \(\psi u\in \DXSpace\). Fix \(\psi_2\in \CinftyCptSpace[\Omega]\) with \(\psi_2\)
    equal to \(1\) on a neighborhood of \(\supp(\psi)\).
    By definition, there exists \(f_j\in \CinftyCptSpace[\InteriorN]\)
    with \(f_j\rightarrow \psi u\) in \(\DXSpace\). By  Proposition \ref{Prop::Trace::Vanish::DirichletProblem::MultPsi},
    \(\psi_2 f_j\) is Cauchy in \(\DXSpace\), and therefore convergent in \(\DXSpace\). Clearly, \(\psi_2 f_j\rightarrow \psi u\)
    in \(\DXSpace\). By Lemma \ref{Lemma::Trace::Vanish::DirichletProblem::DxNormIsEquivToTLNorm},
    \begin{equation*}
        \DXNorm{\psi_2 f_j-\psi_2 f_k}\approx \TLNorm{\psi_2 f_j-\psi_2 f_k}{\kappa}{p}{2}[\FilteredSheafF],
    \end{equation*}
    and therefore \(\psi_2 f_j\) is Cauchy in \(\TLSpace{\kappa}{p}{2}[\overline{\Omega}][\FilteredSheafF]\).
    We conclude \(\psi_2 f_j\rightarrow \psi u\) in \(\TLSpace{\kappa}{p}{2}[\overline{\Omega}][\FilteredSheafF]\),
    and in particular, \(\psi u\in \TLSpace{\kappa}{p}{2}[\overline{\Omega}][\FilteredSheafF]\).
    Theorem \ref{Thm::Trace::Vanish::MainVanishThm} \ref{Item::Trace::Vanish::MainVanishThm::Larges} shows
    \(\TraceMap[\sB]\psi u=0\), completing the proof.
\end{proof}

    \subsection{Reduction to local results}\label{Section::Trace::ReductionToLocal}
    The main results of this chapter (Theorems \ref{Thm::Trace::ForwardMap}, \ref{Thm::Trace::Dirichlet::MainInverseThm}, and \ref{Thm::Trace::Vanish::MainVanishThm})
are implicitly local.
In this section, we replace them with explicitly local (and slightly simpler) versions and then, in Section \ref{Section::Trace::ReductionToCoord},
we restate these local versions in a convenient coordinate system.


\begin{proposition}[Local version of Theorem \ref{Thm::Trace::ForwardMap}]
    \label{Prop::Trace::Reduction::ForwardMap}
    Let \(x_0\in \BoundaryNncF[\lambda]\) for some \(\lambda\). There is an open neighborhood \(\Omega\Subset \ManifoldNncF[\lambda]\)
    of \(x_0\)
    such that for all \(1<p\leq \infty\), \(1\leq q\leq \infty\), \(s>\lambda/p\)
    \begin{equation*}
        \BigBesovNorm{f\big|_{\BoundaryN}}{s-\lambda/p}{p}{q}[\RestrictFilteredSheaf{\LieFilteredSheafF}{\BoundaryNncF}]
        \lesssim \BesovNorm{f}{s}{p}{q}[\FilteredSheafF],\quad \forall f\in \CinftyCptSpace[\Omega],
    \end{equation*}
    and for all \(1<p<\infty\), \(1<q\leq \infty\), \(s>\lambda/p\)
    \begin{equation*}
        \BigBesovNorm{f\big|_{\BoundaryN}}{s-\lambda/p}{p}{p}[\RestrictFilteredSheaf{\LieFilteredSheafF}{\BoundaryNncF}]
        \lesssim \TLNorm{f}{s}{p}{q}[\FilteredSheafF],\quad \forall f\in \CinftyCptSpace[\Omega].
    \end{equation*}
\end{proposition}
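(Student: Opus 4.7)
The strategy is to exploit the local structure at $x_0$ by building compatible Littlewood--Paley projectors on $\ManifoldN$ and on $\BoundaryN$, and then to prove a single almost-orthogonality estimate that captures the $\lambda/p$-loss of the trace.

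First I would shrink to a neighborhood $\Omega_0$ of $x_0$ on which Lemma~\ref{Lemma::Spaces::Multiplication::NiceGenerators} supplies generators $\XXdv=\{(X_0,\lambda),(X_1,\Xdv_1),\ldots,(X_q,\Xdv_q)\}$ of $\LieFilteredSheafF$ with $X_0$ transverse to $\BoundaryN$ and pointing into $\InteriorN$, and $X_1,\ldots,X_q$ tangent to $\BoundaryN \cap \Omega_0$. Setting $X'_j := X_j|_{\BoundaryN\cap\Omega_0}$ for $j\geq 1$, the family $\{(X'_j,\Xdv_j)\}_{j\geq 1}$ generates $\LieFilteredSheaf{\RestrictFilteredSheaf{\LieFilteredSheafF}{\BoundaryNncF}}$ near $x_0$ by Propositions~\ref{Prop::Filtrations::RestrictingFiltrations::CoDim0Restriction} and~\ref{Prop::Filtrations::RestrictingFiltrations::LieFIsHormanderFiltration}. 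Fix $\Omega \Subset \Omega_0$, a cutoff $\psi\in\CinftyCptSpace[\Omega_0]$ with $\psi\equiv 1$ near $\OmegaClosure$, and a Schwartz function $\vsig_0$ as in Proposition~\ref{Prop::Spaces::Schwartz::SumToIdentity}. Define ambient Littlewood--Paley operators
\begin{equation*}
D_j f(x) := \psi(x)\int f(e^{t\cdot X}x)\,\eta(t)\,\Dild{2^j}{\vsig_j}(t)\,dt, \quad \vsig_j := \Dild{2}{\vsig_0}-\vsig_0\in\sT\ (j\geq 1),
\end{equation*}
so that $\sD:=\{(D_j,2^{-j}):j\in\Zgeq\} \in \ElemzLieF{\ManifoldNncF[\lambda]}$ and $\sum_j D_j f = f$ on $\supp(f)$. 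In parallel, using only $X'_1,\ldots,X'_q$ and a $q$-dimensional profile $\bar\vsig_j$, construct boundary operators $\bar D_j$ with $\{(\bar D_j,2^{-j})\} \in \Elemz{\RestrictFilteredSheaf{\LieFilteredSheafF}{\BoundaryNncF}}{\OmegaClosure\cap\BoundaryN}$.

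The heart of the argument is the almost-orthogonality bound
\begin{equation*}
\BLpNorm{\bar D_j\bigl((D_k f)|_{\BoundaryN}\bigr)}{p}[\BoundaryN] \leq C_N\,2^{-N|j-k|}\,2^{(j\wedge k)\lambda/p}\,\BLpNorm{D_k f}{p}[\ManifoldN],
\end{equation*}
valid for every $N\in\Zgeq$, every $1\leq p\leq\infty$, and every $f\in\CinftyCptSpace[\Omega]$. I would prove this by writing the composition $\bar D_j \circ (\,\cdot\,|_{\BoundaryN})\circ D_k$ as an iterated integral in the group parameters $t\in\R^{1+q}$ of $e^{t\cdot X}$ (with $t_0$ carrying weight $\lambda$) and $s\in\R^q$ of $e^{s\cdot X'}$. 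The decay $2^{-N|j-k|}$ is extracted by integration by parts using the vanishing moments of $\vsig_j$ and $\bar\vsig_j$, as in Lemma~\ref{Lemma::Spaces::ELem::Elem::ComposeElem}. The trace factor $2^{(j\wedge k)\lambda/p}$ arises from the volume identity
\begin{equation*}
\Vol[\BXXdv{x}{2^{-(j\wedge k)}}]\approx 2^{-(j\wedge k)\lambda}\,\mathrm{Vol}_{\BoundaryN}\!\bigl[\BXXdv{x}{2^{-(j\wedge k)}}\cap\BoundaryN\bigr],
\end{equation*}
which follows from Proposition~\ref{Prop::VectorFields::Scaling::VolEstimates} since $X_0$ is the only generator transverse to $\BoundaryN$ and has formal degree exactly $\lambda$: integrating out the $t_0$ variable of the ambient kernel before passing to $L^p$ on $\BoundaryN$ produces precisely this gain.

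With the key estimate in hand, the Besov inequality follows by writing $f|_{\BoundaryN} = \sum_k (D_k f)|_{\BoundaryN}$, applying $\bar D_j$ termwise, weighting by $2^{j(s-\lambda/p)}$, and summing via discrete Young's inequality; the resulting kernel in $|j-k|$ is absolutely summable once $N$ is chosen sufficiently large (uniformly for $s$ in a compact range), and the bound $\VpqsENorm{f|_{\BoundaryN}}[p][q][s-\lambda/p][\bar\sD] \lesssim \VpqsENorm{f}[p][q][s][\sD]$ identifies with the claimed Besov inequality via Corollary~\ref{Cor::Spaces::MainEst::VpqsESeminormIsContinuous}. For the Triebel--Lizorkin case, observe that the boundary target $\BesovSpace{s-\lambda/p}{p}{p}$ coincides with $\TLSpace{s-\lambda/p}{p}{p}$ by Fubini in the definition of the norms, and the same almost-orthogonality bound combined with Minkowski's inequality (interchanging the $\ell^p$ over $j$ on the boundary with the $L^p$ over $\ManifoldN$) delivers the desired estimate from $\TLSpace{s}{p}{q}$ for arbitrary $q\in(1,\infty]$. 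The main obstacle is the almost-orthogonality bound itself: carefully extracting the $2^{(j\wedge k)\lambda/p}$ factor in tandem with the off-diagonal decay $2^{-N|j-k|}$ requires delicate bookkeeping that combines the vanishing-moment arguments with the anisotropic volume comparison near the boundary.
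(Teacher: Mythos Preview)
Your central almost-orthogonality estimate fails when $j<k$ (boundary projector coarser than ambient). In the Euclidean model with $\lambda=1$, take $f(x',x_n)=\psi_j(x')\,e^{i2^{k}x_n}\rho(2^{k}x_n)$ with $\widehat{\psi_j}$ concentrated at $|\xi'|\approx 2^j$ and $\rho\in\SchwartzSpaceR$, $\rho(0)=1$; then $f$ sits at total frequency $\approx 2^k$, so $D_k f\approx f$ and $\|D_k f\|_{L^p(\ManifoldN)}\approx 2^{-k/p}\|\psi_j\|_p$, while $(D_k f)|_{\BoundaryN}\approx\psi_j$ lives at tangential frequency $2^j$, giving $\|\bar D_j(D_k f)|_{\BoundaryN}\|_p\approx\|\psi_j\|_p$. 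Your inequality would force $1\lesssim 2^{-(N+1/p)(k-j)}$. The obstruction is structural: to extract decay from the finer operator $D_k$ one must pull out its vanishing moments, but those necessarily involve the transverse field $X_0$, and $\TraceMap\circ X_0$ does not factor through a tangential derivative on $\BoundaryN$. Only tangential moments commute through $\TraceMap$, and those produce decay only in the direction $j\geq k$ --- this one-sided estimate is precisely Lemma~\ref{Lemma::Trace::Forward::ElemRestrictElemAsSmallSumOfElemRestrictElem}.

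The paper (after the coordinate reduction of Section~\ref{Section::Trace::ReductionToCoord}) therefore does not attempt two-sided $L^p$ almost-orthogonality. It introduces the Peetre-type maximal function $E_k^{*,L}f(x)=\sup_z(1+2^k\DistXXdv[x][z])^{-L}|E_kf(z)|$, proves the pointwise trace bound $|E_kf(x',0)|\lesssim E_k^{*,L}f(x',x_n)$ for $x_n\approx 2^{-k\lambda}$ (Lemma~\ref{Lemma::Trace::Forward::BoundRestrictElemByElemStar}), and integrates over that layer to extract one factor $2^{k\lambda/p}$; the Peetre maximal function is then controlled by vector-valued Hardy--Littlewood maximal functions of elementary operators on $f$ (Proposition~\ref{Prop::Trace::Forward::BoundEjStarByNorm}), and the non-orthogonal sum over $k>j$ converges using only the hypothesis $s>\lambda/p$. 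This pointwise control is also indispensable for the Triebel--Lizorkin case: a Minkowski interchange starting from scalar norms $\|D_kf\|_{L^p(\ManifoldN)}$ can at best reach the source space $\BesovSpace{s}{p}{p}=\TLSpace{s}{p}{p}$, not $\TLSpace{s}{p}{q}$ for $q>p$, whereas the paper's layer argument bounds $\sup_k 2^{ks}E_k^{*,L}f(x)$ pointwise and lands directly on the $\TLSpace{s}{p}{\infty}$ norm.
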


Theorems \ref{Thm::Trace::Dirichlet::MainInverseThm} and \ref{Thm::Trace::Vanish::MainVanishThm}
involve an \(\FilteredSheafF\)-Dirichlet system \(\sB\). For the next two local version, we use
a particular Dirichlet system. 
Given a point \(x_0\in \BoundaryNncF[\lambda]\), choose a neighborhood \(U\Subset \ManifoldNncF[\lambda]\)
and a vector field \(V\in \LieFilteredSheafF[U][\lambda]\) with \(V(x)\not \in \TangentSpace{x}{\BoundaryN}\),
\(\forall x\in U\cap \BoundaryN\). Take
\(\sN_L=\left( 1,V, V^2,\ldots, V^L \right)\); so that \(\sN\) is an \(\FilteredSheafF\)-Dirichlet system as pointed out in
Example \ref{Example::Dirichlet::VpowersIsDirichletSystem}.

\begin{proposition}[Local version of Theorem \ref{Thm::Trace::Dirichlet::MainInverseThm}]
    \label{Prop::Trace::Reduction::InverseMap}
    Let \(x_0\in \BoundaryNncF[\lambda]\) for some \(\lambda\). There are open neighborhoods
    \(\Omega_1\Subset\Omega_2\Subset \Omega\Subset \ManifoldNncF[\lambda]\) of \(x_0\)
    such that there exists a continuous, linear map
    \begin{equation*}
        \TraceInverseMap[\sN_L]:\Distributions[\Omega_2\cap\BoundaryN]^{L+1}\rightarrow \DistributionsZero[\ManifoldN]
    \end{equation*}
    satisfying
    \begin{enumerate}[(i)]
        \item\label{Item::Trace::Reduction::Dirichlet::MainInverseThm::ContinuousOnSmoothFuncs} 
        \(\TraceInverseMap[\sN_L]:\CinftySpace[\Omega_2\cap \BoundaryN]^{L+1}\rightarrow \CinftyCptSpace[\Omega]\), continuously.
        \item\label{Item::Trace::Reduction::Dirichlet::MainInverseThm::ContinuousOnSpaces} \(\TraceInverseMap[\sN_L]\) is continuous:
            \begin{equation*}
                \TraceInverseMap[\sN_L]:
                \prod_{l=0}^L
                \BigBesovSpace{s-l\lambda-\lambda/p}{p}{q}[\overline{\Omega_1}\cap \BoundaryN][\RestrictFilteredSheaf{\LieFilteredSheafF}{\BoundaryNncF}]
                \rightarrow
                \BesovSpace{s}{p}{q}[\overline{\Omega_2}][\FilteredSheafF],\quad 1\leq p\leq \infty, \: 1\leq q\leq\infty, \: s\in \R,
            \end{equation*}
            \begin{equation*}
                \TraceInverseMap[\sN_L]:
                \prod_{l=0}^L
                \BigBesovSpace{s-l\lambda-\lambda/p}{p}{p}[\overline{\Omega_1}\cap \BoundaryN][\RestrictFilteredSheaf{\LieFilteredSheafF}{\BoundaryNncF}]
                \rightarrow
                \TLSpace{s}{p}{q}[\overline{\Omega_2}][\FilteredSheafF],\quad 1< p< \infty, \: 1< q\leq\infty, \: s\in \R.
            \end{equation*}
        \item\label{Item::Trace::Reduction::Dirichlet::MainInverseThm::IsInverse} 
        \(\TraceMap[\sN_L]\TraceInverseMap[\sN_L]f=f\), \(\forall f\in \CinftyCptSpace[\Omega_1]^{L+1}\).

    \end{enumerate}
    
\end{proposition}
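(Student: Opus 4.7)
The plan is to build $\TraceInverseMap[\sN_L]$ as a Poisson-type extension that lifts a boundary Littlewood--Paley decomposition into the interior via the flow of $V$. Working in a sufficiently small open neighborhood $\Omega$ of $x_0$ with relatively compact $\Omega_1\Subset\Omega_2\Subset\Omega$, Example~\ref{Example::Filtrations::RestrictingFiltrations::NonCharExamples}~\ref{Item::Filtrations::RestrictingFiltrations::NonCharExamples::CharacterizeNonCharInTermsOfWWdv} lets me choose $V\in\LieFilteredSheafF[\Omega][\lambda]$ transversal to $\BoundaryN$ and pointing into $\InteriorN$, and I introduce tubular-neighborhood coordinates $(\pi_V,t)$ so that $V=\partial_t$ and $\{t=0\}=\BoundaryN$ locally. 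By Proposition~\ref{Prop::Filtrations::RestrictingFiltrations::LieFIsHormanderFiltration}, $\RestrictFilteredSheaf{\LieFilteredSheafF}{\BoundaryNncF}$ is a H\"ormander filtration on $\BoundaryN\cap\Omega$, so Proposition~\ref{Prop::Spaces::LP::DjExist} applied on the boundary with a cutoff $\psi\in\CinftyCptSpace[\Omega_2\cap\BoundaryN]$ equal to $1$ near $\overline{\Omega_1}\cap\BoundaryN$ produces elementary operators $\{(\Dh_j,2^{-j})\}_{j\in\Zgeq}$ with $\sum_j\Dh_j=\Mult{\psi}$.

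Next I pick scalar templates: using Lemma~\ref{Lemma::Spaces::Classical::ExistenceGoodFunc} and elementary linear algebra, fix $\phi_0,\ldots,\phi_L\in\SchwartzSpaceR$ supported in $[0,\infty)$ with $\phi_l(0)=1$ and $\partial_t^k\phi_l(0)=0$ for $1\le k\le L$; fix cutoffs $\eta\in\CinftyCptSpace[\R]$ equal to $1$ near $0$ and $\chi\in\CinftyCptSpace[\Omega]$ equal to $1$ on $\overline{\Omega_2}$. Then define
\begin{equation*}
  \TraceInverseMap[\sN_L](g_0,\ldots,g_L)(x):=\chi(x)\,\eta(t(x))\sum_{l=0}^{L}\frac{t(x)^l}{l!}\sum_{j\in\Zgeq}\phi_l\!\bigl(2^{j\lambda}t(x)\bigr)\,(\Dh_j g_l)(\pi_V(x)).
\end{equation*}
A Leibniz expansion of $V^k=\partial_t^k$ at $t=0$, together with the normalizations of the $\phi_l$ and $\sum_j\Dh_j g_k=\psi g_k=g_k$ on the support of $g_k$, forces $V^k\,\TraceInverseMap[\sN_L](g_0,\ldots,g_L)\big|_{\BoundaryN}=g_k$ for $0\le k\le L$ on smooth data in $\Omega_1\cap\BoundaryN$; this yields \ref{Item::Trace::Reduction::Dirichlet::MainInverseThm::IsInverse}, while \ref{Item::Trace::Reduction::Dirichlet::MainInverseThm::ContinuousOnSmoothFuncs} follows from Proposition~\ref{Prop::Spaces::Elem::Elem::ConvergenceOfElemOps}.

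For \ref{Item::Trace::Reduction::Dirichlet::MainInverseThm::ContinuousOnSpaces} I rewrite each $l$-summand as $\sum_j 2^{-j\lambda l}T_{j,l}g_l$, where
\begin{equation*}
  T_{j,l}g(x):=\tfrac{1}{l!}\chi(x)\,\eta(t(x))\,(2^{j\lambda}t(x))^l\,\phi_l\!\bigl(2^{j\lambda}t(x)\bigr)\,(\Dh_j g)(\pi_V(x)).
\end{equation*}
The structural claim is that $\{(T_{j,l},2^{-j})\}_{j\in\Zgeq}$ is a bounded family of ``boundary-to-interior'' pre-elementary operators in the sense of Definition~\ref{Defn::Spaces::LP::PElemWWdv}: its Schwartz kernel on $\ManifoldN\times\BoundaryN$ is concentrated at transverse distance $t\lesssim 2^{-j\lambda}$, which matches the ambient Carnot--Carath\'eodory scale $\MetricWWdv\lesssim 2^{-j}$ because $V$ has $\FilteredSheafF$-degree $\lambda$, while boundary directions inherit their scale from $\Dh_j$. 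Granting this, continuity into the advertised Besov and Triebel--Lizorkin spaces follows by composing $T_{j,l}$ with the interior Littlewood--Paley operators from Proposition~\ref{Prop::Spaces::LP::DjExist}, splitting the resulting double sum according to whether the ambient scale dominates the boundary scale, and invoking Lemma~\ref{Lemma::Spaces::ELem::Elem::ComposeElem} together with the vector-valued boundedness of Proposition~\ref{Prop::Spaces::Elem::PElem::PElemOpsBoundedOnVV}, along the same lines as in the proof of Proposition~\ref{Prop::Spaces::Extend::TDefinesBoundedOperator}.

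The main obstacle I anticipate is the scale-matching claim above together with the volume bookkeeping it requires. Translating an ambient bound involving $\Vol[\BWWdv{x}{\delta}]$ into a boundary Besov estimate demands relating that volume (for $x$ near $\BoundaryN$) to the corresponding boundary ball volume under $\rho_{\RestrictFilteredSheaf{\LieFilteredSheafF}{\BoundaryNncF}}$; in the present non-equiregular setting Proposition~\ref{Prop::VectorFields::Scaling::VolEstimates} shows this comparison picks up one transverse factor $\delta^{\lambda}$, and it is exactly this factor---together with the $L^p$ norm in $t$ of a transverse profile of height $2^{-j\lambda}$---that produces the $\lambda/p$-shift in the target Besov index. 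The asymmetry between the Besov and Triebel--Lizorkin targets in \ref{Item::Trace::Reduction::Dirichlet::MainInverseThm::ContinuousOnSpaces} (with $\mathscr{B}^{\cdot}_{p,p}$ appearing on the boundary when the ambient target is $\mathscr{F}^{\cdot}_{p,q}$) will emerge as in Theorem~\ref{Thm::Intro::Classical::TraceThm}: in the $\mathscr{F}$-case a Fubini/Minkowski on the transverse $t$-variable trades the outer $L^p(\ell^q)$ structure for $L^p$ in $t$ of an $\ell^p$-in-$j$ sequence of $L^p$-in-$x'$ norms, collapsing the boundary exponent to $q=p$.
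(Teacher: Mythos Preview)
Your approach is essentially the paper's: after passing to coordinates where \(V=\partial_{x_n}\) (the paper does this as a separate reduction, Proposition~\ref{Prop::Traces::CoordReduction::InverseMap} via Proposition~\ref{Prop::Trace::CoordReduction::Proof::ExistCoordSystem}), one defines the inverse exactly as you do, as a Littlewood--Paley lift \(\sum_j \gamma(x_n)\,2^{-jl\lambda}\,\phi_l(2^{j\lambda}x_n)\,D_j'f_l(x')\) built from boundary elementary operators \(D_j'\) and transverse Schwartz profiles \(\phi_l\); see \eqref{Eqn::Trace::InverseProof::DefineQt} and the argument immediately following Proposition~\ref{Prop::Trace::InverseProof::QtIsContinuous}. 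Your factor \(t^l/l!\) combined with your normalization of \(\phi_l\) is the same as the paper's choice \(\partial^j\phi_k(0)=\delta_{jk}\) after absorbing the polynomial into \(\phi_l\).

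Two points of precision. First, your requirement that \(\phi_l\in\SchwartzSpaceR\) be supported in \([0,\infty)\) with \(\phi_l(0)=1\) is impossible: a smooth function vanishing on \((-\infty,0)\) has all derivatives zero at the origin. The support condition is unnecessary anyway, since \(t(x)\ge 0\) on \(\ManifoldN\); the paper simply takes arbitrary \(\phi_l\in\SchwartzSpaceR\) with the prescribed jets at \(0\). Second, your ``structural claim'' that the \(T_{j,l}\) are pre-elementary in the sense of Definition~\ref{Defn::Spaces::LP::PElemWWdv} does not type-check---that definition concerns kernels on \(\ManifoldN\times\ManifoldN\), not \(\ManifoldN\times\BoundaryN\)---and Lemma~\ref{Lemma::Spaces::ELem::Elem::ComposeElem} likewise handles only compositions of elementary operators on the same space. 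The paper instead proves dedicated lemmas for the mixed situation: Lemma~\ref{Lemma::Trace::InverseProof::XjRelatedToVj} records the precise relation between the interior generators \(X_j\) and the boundary generators \(V_j\) (using that \(X_j-V_j\) vanishes on \(\{x_n=0\}\), hence carries an explicit factor of \(x_n\)); Lemma~\ref{Lemma::Trace::InverseProof::ElemAppliedToOpQ} uses this to show that applying an interior elementary operator \(E_j\) to the lift decomposes as a sum with geometric decay \(2^{-N|k|}\) in the scale mismatch; and Lemma~\ref{Lemma::Trace::InverseProof::VpqNormInequality} is the direct \(\VSpace{p}{q}\)-estimate that carries out the Fubini/Minkowski step you anticipate and produces the \(\lambda/p\)-shift together with the collapse to \(q=p\) in the Triebel--Lizorkin case. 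Your diagnosis of the obstacle is accurate, but the execution requires these bespoke results rather than the off-the-shelf elementary-operator calculus you invoke.
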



\begin{proposition}[Local version of Theorem \ref{Thm::Trace::Vanish::MainVanishThm}]
    \label{Prop::Trace::Reduction::VanishingChar}
    Let \(x_0\in \BoundaryNncF[\lambda]\) for some \(\lambda\).
    There are open neighborhoods \(\Omega_1\Subset\Omega\Subset \ManifoldNncF[\lambda]\)
    of \(x_0\) such that the following holds.
    Fix \(p,q\in (1,\infty)\), \(s\in \R\), and \(u\in \ASpace{s}{p}{q}[\overline{\Omega_1}][\FilteredSheafF]\).
    \begin{enumerate}[(A)]
        \item\label{Item::Trace::Reduction::VanishingChar::Smalls} If \(s<\lambda/p\), \(\exists \left\{ f_j \right\}_{j\in \Zgeq}\subset \CinftyCptSpace[\Omega\cap \InteriorN]\)
            with \(f_j\rightarrow u\) in \(\ASpace{s}{p}{q}[\overline{\Omega}][\FilteredSheafF]\).
        \item\label{Item::Trace::Reduction::VanishingChar::Larges} If \(s\in (L\lambda+\lambda/p, (L+1)\lambda+\lambda/p)\), where \(L\in \Zgeq\), and if 
            \(\TraceMap[\sN_L]u=0\), then \(\exists \left\{ f_j \right\}_{j\in \Zgeq}\subset \CinftyCptSpace[\Omega\cap \InteriorN]\)
            with \(f_j\rightarrow u\) in \(\ASpace{s}{p}{q}[\overline{\Omega}][\FilteredSheafF]\).
    \end{enumerate}
    
\end{proposition}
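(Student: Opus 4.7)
The plan is to extend $u$ by zero across $\BoundaryN$ to an ambient manifold, pull back along the flow of the transverse field $V$ to push the support strictly into $\InteriorN$, and then invoke Corollary \ref{Cor::Spaces::Approximation::SmoothFunctionsAreDense} to mollify. After invoking the coordinate reduction of Section \ref{Section::Trace::ReductionToCoord}, I would fix an ambient manifold $\ManifoldM\supset\ManifoldN$ with an extended H\"ormander filtration $\FilteredSheafFh$ satisfying $\RestrictFilteredSheaf{\FilteredSheafFh}{\ManifoldN}=\FilteredSheafF$ (via Proposition \ref{Prop::Filtrations::RestrictingFiltrations::CoDim0CoRestriction}) and extend $V\in\LieFilteredSheafF[\Omega][\lambda]$ smoothly to a vector field on $\ManifoldM$, whose flow $\Phi_t$ is defined on both sides of $\BoundaryN$ for small $|t|$.

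The crux is the following \emph{extension-by-zero} claim: under either hypothesis of the proposition, $u$ admits an extension $\tilde u\in\ASpace{s}{p}{q}[\overline{\Omega_1'}][\FilteredSheafFh]$, where $\overline{\Omega_1'}\Subset\ManifoldM$ is a compact set with $\overline{\Omega_1'}\cap\ManifoldN=\overline{\Omega_1}$, satisfying $\supp(\tilde u)\subseteq\overline{\Omega_1}$, $\tilde u|_{\TestFunctionsZeroN}=u$, and $\ANorm{\tilde u}{s}{p}{q}[\FilteredSheafFh]\lesssim\ANorm{u}{s}{p}{q}[\FilteredSheafF]$. I would begin from the ambient extension $v:=\Extension[N_0]u$ provided by Theorem \ref{Thm::Spaces::Extension} \ref{Item::Spaces::Extension::Extension} and subtract off its contribution on the ``other side'' $\ManifoldM\setminus\ManifoldN$ using a wrong-side reflection built from the inverse trace map of Proposition \ref{Prop::Trace::Reduction::InverseMap}. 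In case \ref{Item::Trace::Reduction::VanishingChar::Smalls}, the hypothesis $s<\lambda/p$ places us below the trace threshold, so no boundary-concentrated distributions of the form $\sum_{l=0}^L V^l(\delta_{\BoundaryN}\otimes h_l)$ can appear and the subtraction error is controlled by Proposition \ref{Prop::Spaces::MainEst::QuantitativeDistributions}; in case \ref{Item::Trace::Reduction::VanishingChar::Larges}, the hypothesis $\TraceMap[\sN_L]u=0$ is exactly what forces the wrong-side correction to carry vanishing Dirichlet data, so that $v$ minus this correction lies in $\ASpace{s}{p}{q}[\FilteredSheafFh]$ with support in $\overline{\ManifoldN}$.

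With $\tilde u$ in hand, I would set $\tilde u_t:=\tilde u\circ\Phi_{-t}$ for $t\in(0,t_0]$. Because $V$ points strictly into $\InteriorN$ at every point of $\overline{\Omega_1}\cap\BoundaryN\subseteq\BoundaryNncF[\lambda]$, the support $\Phi_t(\overline{\Omega_1})$ is contained in $\InteriorN$ for $t>0$ small; restricting via Theorem \ref{Thm::Spaces::Extension} \ref{Item::Spaces::Extension::Restriction} yields an element of $\ASpace{s}{p}{q}[\overline{\Omega}][\FilteredSheafF]$ supported in $\Omega\cap\InteriorN$, which Corollary \ref{Cor::Spaces::Approximation::SmoothFunctionsAreDense} \ref{Item::Spaces::Approximation::SmoothFunctionsAreDense::ApproxInST} approximates in norm by $\CinftyCptSpace[\Omega\cap\InteriorN]$ functions. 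To conclude, one needs $\tilde u_t\to\tilde u$ in $\ASpace{s}{p}{q}[\FilteredSheafFh]$ as $t\to 0^+$: uniform-in-$t$ boundedness of $T_t:f\mapsto f\circ\Phi_{-t}$ follows from Section \ref{Section::Spaces::BasicProps::DiffeoInv} (each $T_t$ is a pullback by a diffeomorphism depending smoothly on $t$); strong continuity on smooth functions follows from the Duhamel identity $T_tf-f=-\int_0^t T_s(Vf)\,ds$ together with Proposition \ref{Prop::Spaces::MappingOfFuncsAndVFs}; and extension to all of $\ASpace{s}{p}{q}[\FilteredSheafFh]$ is by a standard density argument using Corollary \ref{Cor::Spaces::Approximation::SmoothFunctionsAreDense}. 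A diagonal selection in $t$ and in the mollification produces the required sequence.

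The principal obstacle is the extension-by-zero step, particularly in case \ref{Item::Trace::Reduction::VanishingChar::Larges}: one must match the vanishing of the $L+1$ Dirichlet-type traces to the non-isotropic threshold $(L+1)\lambda+\lambda/p$ for $s$, and in particular verify that the wrong-side correction to the ambient extension lies in $\ASpace{s}{p}{q}[\FilteredSheafFh]$ with the claimed norm bound. This is the analog, in the maximally subelliptic setting, of the classical density result Theorem \ref{Thm::Intro::Classical::DensityOfSmoothWithCptSupp}, and requires careful bookkeeping of the range of $\TraceInverseMap[\sN_L]$ composed with a boundary reflection adapted to the anisotropic scaling induced by $V$.
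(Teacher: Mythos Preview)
Your approach is genuinely different from the paper's, but the step you yourself flag as the principal obstacle---extension by zero---is essentially equivalent to the statement you are trying to prove, and your sketch for handling it does not close the gap.

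The paper never extends by zero to an ambient manifold. Instead, after reducing to the coordinate model (Proposition~\ref{Prop::Traces::CoordReduction::VanishingChar}), it works entirely on \(\Bngeq{1}\) in two steps. First (Lemma~\ref{Lemma::Trace::CharVanish::ApproxBySmoothWithVanish}), approximate \(u\) by smooth functions, and in case \ref{Item::Trace::Reduction::VanishingChar::Larges} subtract \(\TraceInverseMap[\sN_L]\TraceMap[\sN_L]\) of the approximants so they have the required vanishing at \(x_n=0\). Second (Lemma~\ref{Lemma::Trace::CharVanish::ApproxBySmoothWithSupportAwayFromZero}), for a smooth \(f\) with \(\partial_{x_n}^l f|_{x_n=0}=0\) for \(0\le l\le L\), write \(f=x_n^{L+1}g\) and set \(g_k(x)=\phi_0(2^{\lambda k}x_n)f(x)\) with \(\phi_0\) a cutoff vanishing near zero. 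The key computation (Proposition~\ref{Prop::Trace::CharVanish::MainEstimate}) shows \(\sup_k\ANorm{g_k}{s}{p}{q}[\FilteredSheafGenByXXdv]<\infty\), and since \(g_k\to f\) in distributions, Proposition~\ref{Prop::Spaces::Approximation::DistributionConvgToStronger}~\ref{Item::Spaces::Approximation::DistributionConvgToStronger::StrongConvg} upgrades this to norm approximation by convex combinations. This is where the hypothesis \(1<p,q<\infty\) enters, via reflexivity of \(\VSpace{p}{q}\).

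Your extension-by-zero plan has two problems. First, the inverse trace map \(\TraceInverseMap[\sN_L]\) of Proposition~\ref{Prop::Trace::Reduction::InverseMap} lands in \(\DistributionsZeroN\), not on the ``wrong side'' \(\ManifoldM\setminus\ManifoldN\); you would need a second trace theory, for the complement, which the paper does not build. Second, and more fundamentally, even granting such a theory, the assertion that \(v-[\text{wrong-side correction}]\) has no singular layer at \(\BoundaryN\) and remains in \(\ASpace{s}{p}{q}[\FilteredSheafFh]\) is precisely the content of the density statement: in the classical setting, ``extension by zero stays in the space'' and ``density of \(\CinftyCptSpace[\InteriorN]\)'' are equivalent characterizations of the same subspace, each requiring real work to prove directly. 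Your appeal to Proposition~\ref{Prop::Spaces::MainEst::QuantitativeDistributions} in case \ref{Item::Trace::Reduction::VanishingChar::Smalls} is not to the point; that proposition bounds a pairing by the norm, not the norm of a zero extension.

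A secondary concern: the flow pullback \(T_t:f\mapsto f\circ\Phi_{-t}\) does not obviously act boundedly on \(\ASpace{s}{p}{q}[\FilteredSheafFh]\) uniformly in \(t\), because \(\Phi_{-t}\) does not preserve \(\FilteredSheafFh\) (or \(\LieFilteredSheafFh\)). Proposition~\ref{Prop::Spaces::BasicProps::DiffeoInv::DiffeosInduceIsomorphism} gives an isomorphism onto the space for the pushed-forward filtration \((\Phi_{-t})_*\FilteredSheafFh\), not the original one; comparing these uniformly as \(t\to 0\) is a separate stability result that the paper does not provide.
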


        \subsubsection{Proof of the reduction to local propositions}\label{Section::Trace::ReductionToLocal::Proof}
        In this subsection, we show how
Propositions \ref{Prop::Trace::Reduction::ForwardMap}, \ref{Prop::Trace::Reduction::InverseMap}, and \ref{Prop::Trace::Reduction::VanishingChar}
imply Theorems \ref{Thm::Trace::ForwardMap}, \ref{Thm::Trace::Dirichlet::MainInverseThm}, and \ref{Thm::Trace::Vanish::MainVanishThm}, respectively.

\begin{proof}[Proof of Theorem \ref{Thm::Trace::ForwardMap}]
    We begin with uniqueness.  Let \(\TraceMap[B]^1\) and \(\TraceMap[B]^2\) be two maps
    satisfying the conclusions of Theorem \ref{Thm::Trace::ForwardMap}.
    Let \(\Compact \Subset \Omega\) be compact and pick \(\Omega_1\Subset \Omega\) open and relatively compact
    with \(\Compact\Subset \Omega_1\).
    Let \(s>\kappa+\lambda/p\) and \(f\in \ASpace{s}{p}{q}[\Compact][\FilteredSheafF]\).
    We wish to show \(\TraceMap[B]^1 f= \TraceMap[B]^2f\).
    
    Suppose \(q<\infty\). By Corollary \ref{Cor::Spaces::Approximation::SmoothFunctionsAreDense} \ref{Item::Spaces::Approximation::SmoothFunctionsAreDense::ApproxInST},
    there exist \(f_j\in \CinftyCptSpace[\Omega_1]\) with \(f_j\rightarrow f\) in \(\ASpace{s}{p}{q}[\overline{\Omega_1}][\FilteredSheafF]\).
    Since we are assuming \ref{Item::Trace::ForwardMap::TraceOnSmooth} and \ref{Item::Trace::ForwardMap::Continuous}
    hold, we have
    \begin{equation*}
    \begin{split}
         & \TraceMap[B]^1 f = \lim_{j\rightarrow\infty} \TraceMap[B]^1 f_j 
         =\lim_{j\rightarrow\infty} f_j\big|_{\BoundaryN}
         = \lim_{j\rightarrow\infty} \TraceMap[B]^2 f_j 
         =\TraceMap[B]^2 f,
    \end{split}
    \end{equation*}
    as desired.

    For \(q=\infty\), take \(\epsilon>0\) so small \(s-\epsilon>\kappa+\lambda/p\).
    Then, \(\ASpace{s}{p}{\infty}[\Compact][\FilteredSheafF]\subseteq\ASpace{s-\epsilon}{p}{2}[\Compact][\FilteredSheafF]\).
    Since we have already shown \(\TraceMap[B]^1\) and \(\TraceMap[B]^2\) agree on 
    \(\ASpace{s-\epsilon}{p}{2}[\Compact][\FilteredSheafF]\), they must also agree on 
    \(\ASpace{s}{p}{\infty}[\Compact][\FilteredSheafF]\), completing the proof of uniqueness.

    With uniqueness proved, we turn to existence.
    Let \(\TraceMap=\TraceMap[1]\) be the map with \(B=1\); i.e., \(\TraceMap f = f\big|_{\BoundaryN}\) for smooth functions.
    By Proposition \ref{Prop::Spaces::MappingOfFuncsAndVFs} \ref{Item::Spaces::MappingOfFuncsAndVFs::DiffOps},
    \(B:\ASpace{s}{p}{q}[\Compact][\FilteredSheafF]\rightarrow \ASpace{s-\kappa}{p}{q}[\Compact][\FilteredSheafF]\)
    continuously (for all \(\Compact\Subset \Omega\), compact).
    Thus, if we prove the existence of \(\TraceMap\) as in the theorem (with \(B=1\) and \(\kappa=0\)), then setting
    \(\TraceMap[B]:=\TraceMap\circ B\) establishes the result for general \(B\).
    We henceforth assume \(B=1\) and \(\kappa=0\).

    Let \(\Compact\Subset \Omega\), informally, our main goal is to show that \(\TraceMap:f\mapsto f\big|_{\BoundaryN}\)
    extends to a continuous map
    \begin{equation*}
        \XTraceSpace[\Compact]\rightarrow \YTraceSpace[\Compact],
    \end{equation*}
    where either
    \begin{itemize}
        \item \(\XTraceSpaceShort=\BesovSpace{s}{p}{q}\), \(1<p\leq \infty\), \(1\leq q\leq \infty\), \(s>\lambda/p\),
            and \(\YTraceSpaceShort=\BesovSpace{s-\lambda/p}{p}{q}\); or,
        \item \(\XTraceSpaceShort=\TLSpace{s}{p}{q}\), \(1<p<\infty\), \(1<q\leq \infty\), \(s>\lambda/p\),
            and \(\YTraceSpaceShort=\BesovSpace{s-\lambda/p}{p}{p}\).
    \end{itemize}

    Let \(U\Subset \ManifoldNncF[\lambda]\) be a relatively compact, open set with \(\Omega\Subset U\).
    For each \(x\in \overline{U}\cap \BoundaryN\), let \(\Omega_x\Subset \ManifoldNncF[\lambda]\)
    be the neighborhood of \(x\) guaranteed by Proposition \ref{Prop::Trace::Reduction::ForwardMap}.
    \(\left\{ \Omega_x : x\in \overline{U} \cap \BoundaryN\right\}\) is an open cover for the compact
    set \(\overline{U}\cap\BoundaryN\); extract a finite subcover \(\Omega_{x_1},\ldots, \Omega_{x_L}\).
    Let \(\phi_j \in \CinftyCptSpace[\Omega_{x_j}]\) be such that \(\sum_{j=1}^L \phi_j =1\) on a neighborhood of \(\overline{U}\cap \BoundaryN\).
    We have, for \(f\in \CinftyCptSpace[U]\),
    \begin{equation}\label{Eqn::Trace::Reduction::ForwardMap::Tmp1}
        \TraceMap f = f\big|_{\BoundaryN}  = \sum_{j=1}^L \left( \phi_j f  \right)\big|_{\BoundaryN}  = \sum_{j=1}^L\TraceMap \phi_j f.
    \end{equation}
    Proposition \ref{Prop::Trace::Reduction::ForwardMap} gives
    \begin{equation}\label{Eqn::Trace::Reduction::ForwardMap::Tmp2}
        \YTraceNorm{\TraceMap\phi_j f} \lesssim \XTraceNorm{\phi_j f}\lesssim \XTraceNorm{f}, \quad f\in \CinftyCptSpace[U],
    \end{equation}
    where the final \(\lesssim\) uses Proposition \ref{Prop::Spaces::MappingOfFuncsAndVFs} \ref{Item::Spaces::MappingOfFuncsAndVFs::Funcs}.
    Combining \eqref{Eqn::Trace::Reduction::ForwardMap::Tmp1} and \eqref{Eqn::Trace::Reduction::ForwardMap::Tmp2}
    gives
    \begin{equation}\label{Eqn::Trace::Reduction::ForwardMap::APrioriNormEstimate}
        \YTraceNorm{\TraceMap f}\leq \sum_{j=1}^L \YTraceNorm{\TraceMap \phi_j f} \lesssim \XTraceNorm{f}, \quad \forall f\in \CinftyCptSpace[U].
    \end{equation}

    We now address the case \(q<\infty\).
    Let \(\XTraceSpacez[\overline{U}]\) denote the closure of \(\CinftyCptSpace[U]\) in 
    \(\XTraceSpace[\overline{U}]\).
    \eqref{Eqn::Trace::Reduction::ForwardMap::APrioriNormEstimate} shows that
    \(\TraceMap\) extends uniquely to a continuous map
    \begin{equation*}
        \XTraceSpacez[\overline{U}] \rightarrow \YTraceSpace[\overline{U}].
    \end{equation*}
    A priori, this map may depend on the particular choice of \(\XTraceSpaceShort\); so we write
    \begin{equation*}
        \XTraceMap:\XTraceSpacez[\overline{U}] \rightarrow \YTraceSpace[\overline{U}].
    \end{equation*}
    By Corollary \ref{Cor::Spaces::Approximation::SmoothFunctionsAreDense} \ref{Item::Spaces::Approximation::SmoothFunctionsAreDense::ApproxInST},
    \(\XTraceSpace[\overline{\Omega}]\subseteq \XTraceSpacez[\overline{U}]\), and by Proposition \ref{Prop::Spaces::Containment}
    it is a closed subspace. We conclude that \(\XTraceMap\) restricts to a continuous map
    \begin{equation}\label{Eqn::Trace::Reduction::ForwardMap::TmpContinuous}
        \XTraceMap:\XTraceSpace[\overline{\Omega}] \rightarrow \YTraceSpace[\overline{U}].
    \end{equation}
    We next claim that the map in \eqref{Eqn::Trace::Reduction::ForwardMap::TmpContinuous}
    does not depend on the choice of \(\XTraceSpaceShort\). More precisely,
    Let \(\XtTraceSpaceShort\) be another of the relevant Besov of Triebel--Lizorkin spaces we are studying
    (still with \(q<\infty\)), and let \(\XtTraceMap\) be the map from \eqref{Eqn::Trace::Reduction::ForwardMap::TmpContinuous}
    on \(\XtTraceSpace[\overline{\Omega}]\). Let \(f\in \XTraceSpace[\overline{\Omega}]\cap \XtTraceSpace[\overline{\Omega}]\).
    Fix \(U_0\Subset U\) open and relatively compact, with \(\Omega\Subset U_0\).
    By Theorem \ref{Thm::Spaces::Approximation::BasicPlProps} \ref{Item::Spaces::Approximation::BasicPlProps::ConvergeInSOT},
    applied with some \(\psi\in \CinftyCptSpace[U_0]\) equal to \(1\) on a neighborhood of \(\overline{\Omega}\),
    there exists \(f_l\in \CinftyCptSpace[U_0]\) such that \(f_l\rightarrow f\) in both
    \(\XTraceSpace[\overline{U_0}]\) and in \(\XtTraceSpace[\overline{U_0}]\).
    Therefore, with limits taken in the sense of distributions,
    \begin{equation*}
        \XTraceMap f = \lim_{l\rightarrow \infty} \XTraceMap f_l = \lim_{l\rightarrow \infty}  f_l\big|_{\BoundaryN} 
        = \lim_{l\rightarrow \infty} \XtTraceMap f_l = \XtTraceMap f.
    \end{equation*}
    Thus, we may drop \(\XTraceSpaceShort\) from the notation \(\XTraceMap\), and have a continuous map
    \begin{equation}\label{Eqn::Trace::Reduction::ForwardMap::Continuous}
        \TraceMap:\XTraceSpace[\overline{\Omega}] \rightarrow \YTraceSpace[\overline{U}],
    \end{equation}
    defined for all such \(\XTraceSpaceShort\), and agreeing on the intersections of any two such spaces.

    We next claim that for \(f\in \XTraceSpace[\overline{\Omega}]\), \(\supp(\TraceMap f)\subseteq \supp(f)\cap \BoundaryN\).
    Indeed, take any \(U_0\Subset U\) open and relatively compact with \(\supp(f)\Subset U_0\).
    By Corollary \ref{Cor::Spaces::Approximation::SmoothFunctionsAreDense} \ref{Item::Spaces::Approximation::SmoothFunctionsAreDense::ApproxInST},
    there exist \(f_j\in \CinftyCptSpace[U_0]\) such that \(f_j\rightarrow f\) in
    \(\XTraceSpace[\overline{U_0}]\).  We therefore have
    \begin{equation*}
        \TraceMap f = \XTraceMap f = \lim_{j\rightarrow \infty} \XTraceMap f_j = \lim_{j\rightarrow \infty} f_j\big|_{\BoundaryN},
    \end{equation*}
    with convergence in \(\YTraceSpace[\overline{U}]\), and therefore in \(\Distributions[\BoundaryN]\).
    Since \(\supp(f_j\big|_{\BoundaryN})\subseteq U_0\cap \BoundaryN\), we conclude
    \(\supp(\TraceMap f )\subseteq \overline{U_0}\cap \BoundaryN\).
    As \(U_0\) was an arbitrary set with \(\supp(f)\subseteq U_0\), it follows that
    \(\supp(\TraceMap f)\subseteq \supp(f)\cap \BoundaryN\), as desired.
    Combining this with \eqref{Eqn::Trace::Reduction::ForwardMap::Continuous}, Proposition \ref{Prop::Spaces::Containment}
    shows that for \(\Compact\Subset \Omega\) compact,
    \begin{equation*}
        \TraceMap:\XTraceSpace[\Compact] \rightarrow \YTraceSpace[\Compact].
    \end{equation*}

    This completes the proof for \(q<\infty\), and we turn to the case \(q=\infty\).
    In this case, \(\XTraceSpaceShort\) is either \(\BesovSpace{s}{p}{\infty}\)
    or \(\TLSpace{s}{p}{\infty}\); we present the proof for \(\BesovSpace{s}{p}{\infty}\),
    and the proof for \(\TLSpace{s}{p}{\infty}\) is nearly identical, and we leave the details to the reader.
    The main difficulty we must address is that the conclusions of 
    Theorem \ref{Thm::Spaces::Approximation::BasicPlProps} and Corollary \ref{Cor::Spaces::Approximation::SmoothFunctionsAreDense}
    are weaker when \(q=\infty\).

    First we note that \(\TraceMap\) is defined on \(\BesovSpace{s}{p}{\infty}[\Compact][\FilteredSheafF]\),
    for \(\Compact\Subset \Omega\) compact.  Indeed, take \(s'\) so that  \(s>s'>\kappa+\lambda/p\).
    Then \(\BesovSpace{s}{p}{\infty}[\Compact][\FilteredSheafF]\subseteq \BesovSpace{s'}{p}{2}[\Compact][\FilteredSheafF]\).
    Since \(\TraceMap\) is defined on \(\BesovSpace{s'}{p}{2}[\Compact][\FilteredSheafF]\)
    (as described above), it is also defined on \(\BesovSpace{s}{p}{\infty}[\Compact][\FilteredSheafF]\).

    Fix \(\Omega_1\Subset \Omega\) with open and relatively compact with \(\Compact\Subset \Omega_1\).
    Let \(P_l\) be as in Theorem \ref{Thm::Spaces::Approximation::BasicPlProps} with \(\Omega\) replaced by
    \(\Omega_1\), and with \(\psi\in \CinftyCptSpace[\Omega_1]\) chosen equal to \(1\) on a neighborhood of \(\Compact\).
    For \(f\in \BesovSpace{s}{p}{\infty}[\Compact][\FilteredSheafF]\subseteq \BesovSpace{s'}{p}{2}[\Compact][\FilteredSheafF]\),
    we have by \eqref{Eqn::Trace::Reduction::ForwardMap::APrioriNormEstimate}
    and Theorem \ref{Thm::Spaces::Approximation::BasicPlProps} \ref{Item::Spaces::Approximation::BasicPlProps::UnifBdd}
    \begin{equation}\label{Eqn::Trace::Reduction::ForwardMap::UniformBoundForApprox}
    \begin{split}
         &\sup_{l\in \Zgeq} \BigBesovNorm{\TraceMap P_l f}{s-\lambda/p}{p}{\infty}[\RestrictFilteredSheaf{\LieFilteredSheafF}{\BoundaryNncF}]
         \lesssim \sup_{l\in \Zgeq} \BesovNorm{P_l f}{s}{p}{\infty}[\FilteredSheafF]
         \lesssim \BesovNorm{f}{s}{p}{\infty}[\FilteredSheafF].
    \end{split}
    \end{equation}
    Since \(f\in \BesovSpace{s'}{p}{2}[\Compact][\FilteredSheafF]\),
    Theorem \ref{Thm::Spaces::Approximation::BasicPlProps} \ref{Item::Spaces::Approximation::BasicPlProps::ConvergeInSOT}
    shows \(P_l f\rightarrow \psi f=f\) in \(\BesovSpace{s'}{p}{2}[\overline{\Omega_1}][\FilteredSheafF]\).
    Thus, by the already established case of \(q<\infty\), we see
    \(\TraceMap P_l f\rightarrow \TraceMap f\) in \(\BesovSpace{s'}{p}{2}[\overline{\Omega_1}][\FilteredSheafF]\),
    and in particular,
    \begin{equation}\label{Eqn::Trace::Reduction::ForwardMap::ConvergeApprox}
        \TraceMap P_l f\rightarrow \TraceMap f \text{ in }\Distributions[\BoundaryN].
    \end{equation}
    Using Proposition \ref{Prop::Spaces::Approximation::DistributionConvgToStronger} \ref{Item::Spaces::Approximation::DistributionConvgToStronger::NormBound}
    we see \(\TraceMap f \in \BigBesovSpace{s-\lambda/p}{p}{\infty}[\overline{\Omega_1}\cap\BoundaryN][\RestrictFilteredSheaf{\LieFilteredSheafF}{\BoundaryNncF}]\)
    with
    \begin{equation}\label{Eqn::Trace::Reduction::ForwardMap::InftyTraceNorm}
        \BigBesovNorm{\TraceMap f}{s-\lambda/p}{p}{\infty}[\RestrictFilteredSheaf{\LieFilteredSheafF}{\BoundaryNncF}]
        \lesssim \BesovNorm{f}{s}{p}{\infty}[\FilteredSheafF].
    \end{equation}
    Finally, since we have already shown
    \(\supp(\TraceMap f)\subseteq \supp(f)\cap \BoundaryN\subseteq \Compact\cap \BoundaryN\), 
    Proposition \ref{Prop::Spaces::Containment} shows
    \begin{equation*}
        \TraceMap: \BesovSpace{s}{p}{\infty}[\Compact][\FilteredSheafF]
        \rightarrow 
        \BigBesovSpace{s-\lambda/p}{p}{\infty}[\Compact\cap\BoundaryN][\RestrictFilteredSheaf{\LieFilteredSheafF}{\BoundaryNncF}],
    \end{equation*}
    continuously, completing the proof.
\end{proof}

In Propositions \ref{Prop::Trace::Reduction::InverseMap} and \ref{Prop::Trace::Reduction::VanishingChar},
we use the particular Dirichlet system \(\sN_L\), while in Theorems \ref{Thm::Trace::Dirichlet::MainInverseThm} and \ref{Thm::Trace::Vanish::MainVanishThm}
we use a general Dirichlet system. The next proposition helps us reduce general Dirichlet systems to this particular choice.

Fix, for the rest of the subsection, \(\Omega\Subset \ManifoldNncF[\lambda]\) (for some fixed \(\lambda\in \Zg\)) and let
\begin{equation}\label{Eqn::Trace::Reduction::DirichletSystemAsColumn}
    \sB=\begin{bmatrix}
B_0 \\
B_1 \\
B_2\\
\vdots \\
B_L
\end{bmatrix},\quad
\sN_L=
\begin{bmatrix}
1 \\
V \\
V^2\\
\vdots \\
V^L
\end{bmatrix},
\end{equation}
where \(\sB\) is an \(\FilteredSheafF\)-Dirichlet system on \(\Omega\), and \(\sN_L\) is the \(\FilteredSheafF\)-Dirichlet
system discussed earlier for a particular choice of \(V\); see, also, Example \ref{Example::Dirichlet::VpowersIsDirichletSystem}.
In particular, 
 \(V\in \LieFilteredSheafF[\Omega][\lambda]\) with \(V(x)\not \in \TangentSpace{x}{\BoundaryN}\),
\(\forall x\in \Omega\cap \BoundaryN\).
In this subsection, it will be convenient to treat \(\sB\) and \(\sN_L\) as column vectors.

\begin{proposition}\label{Prop::Trace::Reduction::TurnBIntoN}
    There exists an \((L+1)\times (L+1)\) matrix, \(M\), with operator coefficients (indexed by \(0,\ldots, L\))
    given by 
    \begin{equation*}
        M_j^k
        =\begin{cases}
            a_j(x)\in \CinftySpace[\Omega\cap\BoundaryN], &j=k,\\
            P_j^k, & k<j,\\
            0, &k>j,
        \end{cases}
    \end{equation*}
    where \(a_j(x)\ne 0\), \(\forall x\in \Omega\cap\BoundaryN\), \(P_j^k\)
    is a
    partial differential operator which is 
    \(\RestrictFilteredSheaf{\LieFilteredSheafF}{\BoundaryNncF}\)-degree \(\leq j\lambda-k\lambda\) on \(\Omega\cap\BoundaryN\),
    and such that \(M\TraceMap[\sN_L]=\TraceMap[\sB]\). 
\end{proposition}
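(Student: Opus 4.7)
The plan is to rewrite each $B_j$ in a canonical normal form $\sum_{k=0}^{j} R_j^k V^k$, where each $R_j^k$ is a polynomial expression built only from vector fields in $\LieFilteredSheafF$ that are tangent to $\BoundaryN$ on $\BoundaryN$ together with smooth function multipliers, and the total $\LieFilteredSheafF$-degree of $R_j^k$ is at most $(j-k)\lambda$. The restriction $P_j^k := R_j^k\big|_{\BoundaryN}$ is then a well-defined differential operator on $\Omega \cap \BoundaryN$ of $\RestrictFilteredSheaf{\LieFilteredSheafF}{\BoundaryNncF}$-degree $\leq (j-k)\lambda$, and setting $M_j^k := P_j^k$ for $k<j$, $M_j^j := a_j|_{\BoundaryN}$ (the coefficient of $V^j$ in $B_j$, which is nonzero on $\BoundaryN\cap\Omega$ by Definition \ref{Defn::Trace::Dirichlet::NormalOperator}), and $M_j^k := 0$ for $k>j$ yields the desired lower-triangular matrix.

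First I would invoke Definition \ref{Defn::Trace::Dirichlet::NormalOperator} to write
\begin{equation*}
   B_j = a_j(x)V^j + \sum_i b_i(x)\,X_{i,1}X_{i,2}\cdots X_{i,L_i},
\end{equation*}
where each $X_{i,k}$ is either $V$ or tangent to $\BoundaryN$ on $\BoundaryN$, lies in $\LieFilteredSheafF[\Omega][d_{i,k}]$ with $\sum_k d_{i,k}\leq j\lambda$, and the number of $V$-factors in each non-leading product is strictly less than $j$. The heart of the proof is a commutator normal form lemma: any product of the above type can be rewritten as $\sum_m R_m V^m$ with the structure described above. I would prove this by induction on the quantity
\begin{equation*}
   N(X_1\cdots X_L) := \sum_{\{i\,:\,X_i = V\}} \#\bigl\{j>i : X_j \text{ is a tangent factor}\bigr\},
\end{equation*}
which measures how far the product is from normal form. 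The base case $N=0$ is already in the desired shape. For the inductive step, pick any $V$ immediately followed by a tangent factor $Y\in\LieFilteredSheafF[\Omega][d_Y]$ and apply $VY = YV + [V,Y]$; by Remark \ref{Rmk::Trace::Dirichlet::SubtractOffNonTangentPart}, $[V,Y] = c(x)V + Z$ with $c\in\CinftySpace[\Omega]$ and $Z\in\LieFilteredSheafF[\Omega][d_Y+\lambda]$ tangent to $\BoundaryN$ on $\BoundaryN$, so one original product becomes three whose $N$ values are each strictly smaller (the first moves the $V$ rightward, the second shortens the product while leaving $V$ in place, and the third removes a $V$ altogether). Total $\LieFilteredSheafF$-degree is preserved by each such commutation, and the tangent-versus-$V$ dichotomy is preserved for every factor that remains. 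Smooth-function coefficients are then collected to the left using $fW = Wf - (Wf)$ for $W\in\{V,Y_i\}$, which produces only additional degree-zero multipliers and does not alter the structural invariants.

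Once the normal form is established, the restriction step uses the fact that if $Y$ is tangent to $\BoundaryN$ on $\BoundaryN$ and $\tilde h_1,\tilde h_2$ agree on $\BoundaryN$, then $Y\tilde h_1$ and $Y\tilde h_2$ agree on $\BoundaryN$; iterating shows that $(R_j^k g)\big|_{\BoundaryN}$ depends only on $g\big|_{\BoundaryN}$, so $P_j^k := R_j^k|_{\BoundaryN}$ is a well-defined operator on $\Omega\cap\BoundaryN$. By Definition \ref{Defn::Filtrations::RestrictingFiltrations::RestrictedFiltration} and Proposition \ref{Prop::Filtrations::RestrictingFiltrations::LieFIsHormanderFiltration}, each restricted tangent factor $Y_i|_{\BoundaryN}$ lies in $\RestrictFilteredSheaf{\LieFilteredSheafF}{\BoundaryNncF}[\Omega\cap\BoundaryN][e_i]$, so $P_j^k$ has $\RestrictFilteredSheaf{\LieFilteredSheafF}{\BoundaryNncF}$-degree $\leq (j-k)\lambda$ in the sense of Definition \ref{Defn::Filtrations::DiffOps::Deg}. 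The identity $M\TraceMap[\sN_L]f = \TraceMap[\sB]f$ then reads, componentwise,
\begin{equation*}
    \sum_{k=0}^{j} P_j^k\bigl((V^k f)\big|_{\BoundaryN}\bigr)
    = \Bigl(\sum_{k=0}^{j} R_j^k V^k f\Bigr)\Bigl|_{\BoundaryN}
    = B_j f\big|_{\BoundaryN},
\end{equation*}
which holds for smooth $f$ by the normal form and extends by the uniqueness of the trace extension in Theorem \ref{Thm::Trace::ForwardMap}. The main obstacle is organizing the commutator induction so that the invariants (degree bound, tangent/$V$ dichotomy, and strict decrease of $N$) all remain transparent through iterated applications; choosing the induction parameter $N$ and processing the commutators in a fixed right-to-left order should keep the bookkeeping manageable.
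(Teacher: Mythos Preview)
Your proposal is correct and follows essentially the same approach as the paper: a commutator normal form (rewriting each $B_j$ as $\sum_{k\leq j} R_j^k V^k$ with tangent $R_j^k$ of degree $\leq (j-k)\lambda$), followed by restriction of the $R_j^k$ to $\BoundaryN$ and extension from smooth functions via the uniqueness in Theorem~\ref{Thm::Trace::ForwardMap}. The only difference is the bookkeeping device for the normal form---you induct on the inversion count $N$, the paper (Lemma~\ref{Lemma::Trace::Reduction::NormalOpSpecialForm}) on the product length $K$---but both are driven by the same identity $VY = YV + [V,Y]$ combined with Remark~\ref{Rmk::Trace::Dirichlet::SubtractOffNonTangentPart}.
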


Before we prove Proposition \ref{Prop::Trace::Reduction::TurnBIntoN},
we show how it can be used to reduce 
Theorems \ref{Thm::Trace::Dirichlet::MainInverseThm} and \ref{Thm::Trace::Vanish::MainVanishThm}
to the case \(\sB=\sN_L\).

\begin{proof}[Reduction of Theorems \ref{Thm::Trace::Dirichlet::MainInverseThm} and \ref{Thm::Trace::Vanish::MainVanishThm}
    to the case \(\sB=\sN_L\)]
    Let \(M\) be the matrix from Proposition \ref{Prop::Trace::Reduction::TurnBIntoN}.
    Since \(M\) is a triangular matrix (with never vanishing smooth functions on the diagonal), it is easy to see that \(M^{-1}\) is of the same form as \(M\).
    It follows from Proposition \ref{Prop::Spaces::MappingOfFuncsAndVFs} \ref{Item::Spaces::MappingOfFuncsAndVFs::Funcs}
    and \ref{Item::Spaces::MappingOfFuncsAndVFs::DiffOps} that for \(\Compact\Subset \Omega\) compact, 
    \(M\) and \(M^{-1}\) are bounded on
    \begin{equation}\label{Eqn::Trace::Reduction::MBoundedOnSpace}
        \BigASpace{s}{p}{q}[\Compact\cap\BoundaryN][\RestrictFilteredSheaf{\LieFilteredSheafF}{\BoundaryNncF}]
        \times 
        \BigASpace{s-\lambda}{p}{q}[\Compact\cap\BoundaryN][\RestrictFilteredSheaf{\LieFilteredSheafF}{\BoundaryNncF}]
        \times
        \cdots 
        \times 
        \BigASpace{s-L\lambda}{p}{q}[\Compact\cap\BoundaryN][\RestrictFilteredSheaf{\LieFilteredSheafF}{\BoundaryNncF}].
    \end{equation}
    Thus, if we have established Theorem \ref{Thm::Trace::Dirichlet::MainInverseThm} for \(\TraceInverseMap[\sN_L]\),
    then setting \(\TraceInverseMap[\sB]:=\TraceInverseMap[\sN_L]M^{-1}\) satisfies the conclusions
    of Theorem \ref{Thm::Trace::Dirichlet::MainInverseThm} \ref{Item::Trace::Dirichlet::MainInverseThm::ContinuousOnSmoothFuncs}
    and \ref{Item::Trace::Dirichlet::MainInverseThm::ContinuousOnSpaces}. Furthermore,
    we have
    \begin{equation*}
        \TraceMap[\sB] \TraceInverseMap[\sB]= M\TraceMap[\sN_L] \TraceInverseMap[\sN_L]M^{-1}  = MM^{-1}=I,
    \end{equation*}
    establishing Theorem \ref{Thm::Trace::Dirichlet::MainInverseThm} \ref{Item::Trace::Dirichlet::MainInverseThm::IsInverse}
    for \(\TraceInverseMap[\sB]\).

    Since \(\TraceMap[\sB]=M\TraceMap[\sN_L]\), we also have \(\TraceMap[\sN_L]=M^{-1}\TraceMap[\sB]\).
    Thus, \(\TraceMap[\sB]f=0\) if and only if \(\TraceMap[\sN_L]f=0\).
    This shows that if Theorem \ref{Thm::Trace::Vanish::MainVanishThm} holds with \(\sB\) replaced by \(\sN_L\),
    it holds for \(\sB\) as well.
\end{proof}

We turn to the proof of Proposition \ref{Prop::Trace::Reduction::TurnBIntoN}. To do so we introduce
two definitions which are only used in the proof of the proposition; since these definitions
are only used here, we leave implicit the dependence on things like \(\FilteredSheafF\).

\begin{definition}
    For \(X\in \LieFilteredSheafF[d][\Omega]\) with \(X(x)\in \TangentSpace{x}{\BoundaryN}\), \(\forall x\in \BoundaryN\cap \Omega\),
    we say \(X\) is a vector field of order \(d\) on \(\Omega\) tangent to the boundary.
\end{definition}

\begin{definition}\label{Defn::Trace::Reduction::TangentPDO}
    We say \(P\) is a differential operator or oder \(\leq \kappa\) on \(\Omega\) tangent to the boundary
    if
    \begin{equation}\label{Eqn::Trace::Reduction::TangentPDO::Formula}
        P=\sum_{j=0}^M b_j(x) X_{j,1}X_{j,2}\cdots X_{j,K_j}
    \end{equation}
    where \(b_j\in \CinftySpace[\Omega]\), and \(X_{j,k}\) is a vector field of order \(\Xdv_{j,k}\) on \(\Omega\)
    tangent to the boundary, and \(\Xdv_{j,1}+\cdots+\Xdv_{j,K_j}\leq \kappa\), \(\forall j\).
\end{definition}

\begin{lemma}\label{Lemma::Trace::Reduction::RestrictTangentVf}
    If \(X\) is a vector field of order \(d\) on \(\Omega\) tangent to the boundary,
    then \(X\big|_{\BoundaryN\cap \Omega}\in \RestrictFilteredSheaf{\LieFilteredSheafFNoSet[d]}{\BoundaryNncF}[\Omega\cap \BoundaryN]\)
    and \(Xf\big|_{\BoundaryN\cap \Omega} = X\big|_{\BoundaryN\cap \Omega} f\big|_{\BoundaryN\cap \Omega}\),
    \(\forall f\in \CinftySpace[\Omega]\).
\end{lemma}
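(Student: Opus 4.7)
The plan is to note that both assertions are essentially unwindings of the definitions, once one observes that the hypothesis $\Omega \Subset \ManifoldNncF[\lambda]$ places $\Omega\cap \BoundaryN$ inside $\BoundaryNncF[\lambda]\subseteq \BoundaryNncF$. In particular, $\Omega\cap \BoundaryNncF = \Omega\cap \BoundaryN$, and since $\BoundaryNncF$ is open in $\BoundaryN$ one has $T_x\BoundaryNncF = T_x\BoundaryN$ at every point of $U:=\Omega\cap \BoundaryN$.

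First I would establish the membership assertion. By hypothesis, $X\in \LieFilteredSheafF[d][\Omega]$ and $X(x)\in T_x\BoundaryN = T_x\BoundaryNncF$ for every $x\in U$. Taking $S = \BoundaryNncF$ in Definition \ref{Defn::Filtrations::RestrictingFiltrations::RestrictedFiltration}, with the ambient open set of that definition chosen to be $\Omega$ itself, the compatibility condition $\Omega\cap S = U$ becomes $\Omega\cap \BoundaryNncF = \Omega\cap \BoundaryN$, which we have just verified. Hence $X$ satisfies every hypothesis of the inclusion in \eqref{Eqn::Filtrations::RestrictingFiltrations::RestrictedFiltration::Defn}, so $X\big|_U\in \RestrictFilteredSheaf{\LieFilteredSheafFNoSet[d]}{\BoundaryNncF}[U]$, as required.

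Second I would verify the pointwise identity $Xf\big|_U(x_0) = \bigl(X\big|_U f\big|_U\bigr)(x_0)$ for each $x_0\in U$ and each $f\in \CinftySpace[\Omega]$. This is the standard fact that a vector field tangent to an embedded submanifold restricts to a well-defined derivation on the submanifold whose action on restrictions of ambient functions matches the restriction of the ambient action. Concretely, the inclusion $\iota:\BoundaryNncF\hookrightarrow \ManifoldN$ has differential $d\iota_{x_0}:T_{x_0}\BoundaryNncF\hookrightarrow T_{x_0}\ManifoldN$ which identifies $X\big|_U(x_0)$ with $X(x_0)$; since $f\big|_U = \iota^{*}f$, one has $\bigl(X\big|_U f\big|_U\bigr)(x_0) = \langle X\big|_U(x_0), d(\iota^{*}f)(x_0)\rangle = \langle X(x_0), df(x_0)\rangle = Xf(x_0)$, where the middle equality is naturality of pullback under $\iota$.

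There is no serious obstacle: the lemma is bookkeeping, and the only point that requires even a moment's attention is the identification $\Omega\cap \BoundaryNncF = \Omega\cap \BoundaryN$, which is forced by the standing assumption $\Omega \Subset \ManifoldNncF[\lambda]$.
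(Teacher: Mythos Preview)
Your proof is correct and matches the paper's approach exactly: the paper's proof is the single line ``This follows immediately from the definitions,'' and you have faithfully unpacked precisely those definitions (Definition~\ref{Defn::Filtrations::RestrictingFiltrations::RestrictedFiltration} for the membership claim and the standard tangent-vector-field fact for the pointwise identity). The observation that $\Omega\cap\BoundaryNncF=\Omega\cap\BoundaryN$ under the standing hypothesis $\Omega\Subset\ManifoldNncF[\lambda]$ is the right thing to check and is handled correctly.
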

\begin{proof}
    This follows immediately from the definitions.
\end{proof}

\begin{lemma}\label{Lemma::Trace::Reduction::RestrictTangentOp}
    Suppose \(P\) is a differential operator of order \(\leq \kappa\) on \(\Omega\) tangent to the boundary.
    Then, for \(f\in \CinftySpace[\Omega]\),
    \begin{equation*}
        \left( P f \right)\big|_{\BoundaryN\cap \Omega} = P\big|_{\BoundaryN\cap \Omega} f\big|_{\BoundaryN\cap \Omega},
    \end{equation*}
    where \(P\big|_{\BoundaryN\cap \Omega}\) is a partial differential operator on \(\BoundaryN\cap \Omega\)
    with \(\RestrictFilteredSheaf{\LieFilteredSheafF}{\BoundaryNncF}\) degree \(\leq \kappa\) on 
    \(\Omega\cap\BoundaryN\) (see Definition \ref{Defn::Filtrations::DiffOps::Deg}).
\end{lemma}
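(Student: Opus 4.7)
The plan is to reduce to a single monomial in the sum \eqref{Eqn::Trace::Reduction::TangentPDO::Formula} by linearity, and then to iteratively apply Lemma \ref{Lemma::Trace::Reduction::RestrictTangentVf} one vector field at a time. More precisely, for each monomial $b_j(x) X_{j,1} X_{j,2} \cdots X_{j,K_j}$ appearing in \eqref{Eqn::Trace::Reduction::TangentPDO::Formula}, I would set
\begin{equation*}
    \left(b_j(x) X_{j,1}\cdots X_{j,K_j}\right)\big|_{\BoundaryN\cap\Omega}
    := b_j\big|_{\BoundaryN\cap\Omega}(x)\, X_{j,1}\big|_{\BoundaryN\cap\Omega}\cdots X_{j,K_j}\big|_{\BoundaryN\cap\Omega},
\end{equation*}
and then define $P\big|_{\BoundaryN\cap\Omega}$ by summing over $j$.

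The key identity $(Pf)\big|_{\BoundaryN\cap\Omega} = P\big|_{\BoundaryN\cap\Omega} f\big|_{\BoundaryN\cap\Omega}$ for $f\in\CinftySpace[\Omega]$ will be established by induction on $K_j$. For the induction step, write $g := X_{j,2}\cdots X_{j,K_j}f$, which is smooth on $\Omega$ (since each $X_{j,k}$ is a smooth vector field). By Lemma \ref{Lemma::Trace::Reduction::RestrictTangentVf} applied to $X_{j,1}$ and $g$,
\begin{equation*}
    (X_{j,1} g)\big|_{\BoundaryN\cap\Omega} = X_{j,1}\big|_{\BoundaryN\cap\Omega}\, g\big|_{\BoundaryN\cap\Omega},
\end{equation*}
and by the inductive hypothesis applied to the tangent operator $X_{j,2}\cdots X_{j,K_j}$, the restriction $g\big|_{\BoundaryN\cap\Omega}$ equals $X_{j,2}\big|_{\BoundaryN\cap\Omega}\cdots X_{j,K_j}\big|_{\BoundaryN\cap\Omega}\, f\big|_{\BoundaryN\cap\Omega}$. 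Multiplication by $b_j(x)$ trivially commutes with restriction, yielding the claim for the monomial, and linearity completes the verification.

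Finally, for the degree bound: Lemma \ref{Lemma::Trace::Reduction::RestrictTangentVf} guarantees that each restricted vector field $X_{j,k}\big|_{\BoundaryN\cap\Omega}$ lies in $\RestrictFilteredSheaf{\LieFilteredSheafFNoSet[\Xdv_{j,k}]}{\BoundaryNncF}[\BoundaryN\cap\Omega]$. Hence each monomial on $\BoundaryN\cap\Omega$ has $\RestrictFilteredSheaf{\LieFilteredSheafF}{\BoundaryNncF}$-degree $\leq \Xdv_{j,1}+\cdots+\Xdv_{j,K_j}\leq \kappa$ in the sense of Definition \ref{Defn::Filtrations::DiffOps::Deg}, and summing over $j$ gives the desired bound for $P\big|_{\BoundaryN\cap\Omega}$. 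There is no real obstacle here; the only subtlety worth noting is that the expression defining $P\big|_{\BoundaryN\cap\Omega}$ depends a priori on the chosen representation \eqref{Eqn::Trace::Reduction::TangentPDO::Formula}, but the identity $(Pf)\big|_{\BoundaryN\cap\Omega} = P\big|_{\BoundaryN\cap\Omega} f\big|_{\BoundaryN\cap\Omega}$ (valid for all $f\in\CinftySpace[\Omega]$, in particular those whose boundary traces exhaust $\CinftySpace[\BoundaryN\cap\Omega]$) ensures that the resulting operator on $\BoundaryN\cap\Omega$ is determined by $P$ alone.
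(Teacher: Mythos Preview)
Your proposal is correct and takes essentially the same approach as the paper: define $P\big|_{\BoundaryN\cap\Omega}$ monomial-by-monomial via the restricted vector fields and invoke Lemma \ref{Lemma::Trace::Reduction::RestrictTangentVf} together with Definition \ref{Defn::Filtrations::DiffOps::Deg}. Your version is more explicit (spelling out the induction and the well-definedness remark), while the paper's proof is a terse two-line appeal to Lemma \ref{Lemma::Trace::Reduction::RestrictTangentVf} and the definitions.
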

\begin{proof}
    With \(P\) given by \eqref{Eqn::Trace::Reduction::TangentPDO::Formula}, we have by 
    Lemma \ref{Lemma::Trace::Reduction::RestrictTangentVf},
    \begin{equation*}
        P\big|_{\BoundaryN\cap \Omega}= \sum_{j=1}^M b_j\big|_{\BoundaryN\cap \Omega} X_{j,1}\big|_{\BoundaryN\cap \Omega}\cdots X_{j,K_j}\big|_{\BoundaryN\cap \Omega}.
    \end{equation*}
    From here the result follows from Lemma \ref{Lemma::Trace::Reduction::RestrictTangentVf} and the definitions
    (see Definition \ref{Defn::Filtrations::DiffOps::Deg}).
\end{proof}

\begin{lemma}\label{Lemma::Trace::Reduction::NormalOpSpecialForm}
    Let \(B\) be an \(\FilteredSheafF\)-normal partial differential operator of degree \(\kappa\) on \(\Omega\)
    (see Definition \ref{Defn::Trace::Dirichlet::NormalOperator}). Then, \(B\)
    can be written in the from
    \begin{equation*}
        B=a(x) V^{\kappa/\lambda} + \sum_{r=0}^{\kappa/\lambda-1} \opP_j V^r,
    \end{equation*}
    where 
    \(a(x)\ne 0\), \(\forall x\in \BoundaryN\cap \Omega\) and 
    \(\opP_j\) is a differential operator of order \(\leq \kappa-r\lambda\) on \(\Omega\)
    tangent to the boundary.
\end{lemma}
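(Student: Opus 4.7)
The plan is to expand $B$ according to Definition \ref{Defn::Trace::Dirichlet::NormalOperator}, so that
\begin{equation*}
B = a_0(x) V^{\kappa/\lambda} + \sum_{j=1}^L b_j(x) X_{j,1} X_{j,2}\cdots X_{j,L_j},
\end{equation*}
where each $X_{j,k}$ is either $V$ or a tangent vector field of some order $d_{j,k}$, with $\sum_k d_{j,k}\leq \kappa$ (counting $\ord(V)=\lambda$) and the number of $V$'s among $X_{j,1},\dots,X_{j,L_j}$ strictly less than $\kappa/\lambda$. The leading term $a_0(x)V^{\kappa/\lambda}$ already supplies the $a(x)V^{\kappa/\lambda}$ piece with $a(x)=a_0(x)$ nowhere vanishing on $\BoundaryN\cap\Omega$, so it suffices to rewrite each remaining monomial $P_j = b_j(x)X_{j,1}\cdots X_{j,L_j}$ in the form $\sum_{r=0}^{\kappa/\lambda-1}\opQ_r V^r$ with $\opQ_r$ tangent of order $\leq \kappa-r\lambda$.

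The central step I will prove is a rearrangement claim: any monomial $M = Y_1 Y_2\cdots Y_m$ in which each $Y_i$ is either $V$ or a tangent vector field of order $d_i$, with total order at most $\kappa$ and containing exactly $r$ copies of $V$, can be written as $M=\sum_{s=0}^r \opQ_s V^s$ with $\opQ_s$ tangent of order $\leq \kappa-s\lambda$. Applied to each $P_j$ (where $r<\kappa/\lambda$), this gives the lemma. The proof will proceed by induction with respect to the lexicographic order on the pair $(m,\, m-j^*)$, where $j^*$ is the position of the rightmost $V$ appearing in $M$ (with the convention $j^*=m$ when $r=0$). If $r=0$ there is nothing to do, and if $j^*=m$ I peel off the trailing $V$ to write $M=M'V$, reduce the length of the product, and invoke the inductive hypothesis on $M'$.

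When $j^*<m$, the block $VY_{j^*+1}$ lies inside $M$; I use the identity $VY_{j^*+1}=Y_{j^*+1}V+[V,Y_{j^*+1}]$ and, appealing to Remark \ref{Rmk::Trace::Dirichlet::SubtractOffNonTangentPart} (since $[V,Y_{j^*+1}]\in\LieFilteredSheafF[\Omega][d_{j^*+1}+\lambda]$), rewrite $[V,Y_{j^*+1}]=Y+\at V$ with $Y$ tangent of order $\leq d_{j^*+1}+\lambda$ and $\at\in\CinftySpace[\Omega]$. Substituting produces three terms: the first keeps $r$ copies of $V$ but moves the rightmost one from position $j^*$ to $j^*+1$, strictly reducing $m-j^*$; the second replaces $VY_{j^*+1}$ by $Y$, shortening the product to length $m-1$ with $r-1$ copies of $V$; the third replaces $VY_{j^*+1}$ by $\at V$ and, after using $Y_i\at=\at Y_i+(Y_i\at)$ repeatedly to sweep the function $\at$ to the front, becomes the coefficient $\at$ times a length-$(m-1)$ monomial plus Leibniz remainders of even smaller length. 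In each case the lex measure strictly decreases, so the inductive hypothesis applies and assembles all three contributions into the required form.

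The main obstacle is bookkeeping rather than any substantive analytic difficulty: one must verify that the three terms produced by a single commutation really are smaller in the chosen well-order and that the order-counting remains within $\kappa-s\lambda$ after every step. The trickiest piece is handling the $\at V$ term, where pushing $\at$ through the preceding $Y_1,\dots,Y_{j^*-1}$ creates Leibniz remainders whose structure I need to confirm is still a product of tangent vector fields and $V$'s (with possibly fewer $V$'s, which only helps) and whose total order remains bounded by $\kappa$; once this is checked, the $\opQ_r$ produced for $r\geq 1$ come from $P_j$'s with $r<\kappa/\lambda$, so no conflict with the leading term arises and the decomposition closes up as stated.
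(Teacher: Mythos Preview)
Your proposal is correct and takes essentially the same approach as the paper: reduce to a single monomial $b(x)Y_1\cdots Y_K$ with $m$ copies of $V$, then commute $V$ past tangent factors using $VY=YV+[V,Y]$ together with the splitting $[V,Y]=(\text{tangent})+\at V$ from Remark~\ref{Rmk::Trace::Dirichlet::SubtractOffNonTangentPart}. The paper organizes the induction on the length $K$ (doing $K=2$ explicitly and declaring the general step ``a simple induction, which we leave to the reader''), whereas you make the induction explicit via the lexicographic measure $(m,\,m-j^*)$; your extra care about sweeping the coefficient $\at$ to the front via Leibniz is precisely the bookkeeping the paper elides, and it is handled correctly.
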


\begin{remark}
    In short, Lemma \ref{Lemma::Trace::Reduction::NormalOpSpecialForm} says that in Definition \ref{Defn::Trace::Dirichlet::NormalOperator},
    one may put all the \(V\)'s on the right-hand side.
\end{remark}

\begin{proof}[Proof of Lemma \ref{Lemma::Trace::Reduction::NormalOpSpecialForm}]
    Fix \(\kappa_0\) and \(m\) and
    let \(P\) be a partial differential operator of the form
    \(P=b(x)Y_1\cdots Y_K\) where
    \begin{itemize}
        \item \(b\in \CinftySpace[\Omega]\),
        \item \(Y_k\in \LieFilteredSheafF[d_k][\Omega]\), with \(d_1+\cdots+d_K\leq \kappa_0\).
        \item \(\forall k\), either \(Y_k(x)\in \TangentSpace{x}{\BoundaryN}\), \(\forall x\in \BoundaryN\cap \Omega\)
            or \(Y_k=V\),
        \item \(\# \left\{ k : Y_k=V \right\}=m\); note that this implies \(m\lambda\leq \kappa_0\).
    \end{itemize}
    We claim, that in this case, we may write
    \begin{equation*}
        P=\sum_{r=0}^{m} \opP_j V^r,
    \end{equation*}
    where \(\opP_j\) is a differential operator of order \(\leq \kappa_0-r\lambda\) on \(\Omega\) tangent to the boundary.
    Once this claim is proved, the result follows from the definitions.

    We prove the claim by induction on \(K\). The cases \(K=0,1\) are trivial.
    For \(K=2\), the only non-trivial case is \(P=b VY\), where \(Y\in \LieFilteredSheafF[\Omega][d]\),
    \(d+\lambda\leq \kappa_0\), and \(Y(x)\in \TangentSpace{x}{\BoundaryN}\), \(\forall x\in \BoundaryN\cap \Omega\).
    We have
    \begin{equation*}
        b VY= b YV + b [V,Y].
    \end{equation*}
    The first term is of the desired form, so we consider the second. We have, by definition,
    \([V,Y]\in \LieFilteredSheafF[\Omega][d+\lambda]\). Take \(c(x)\in \CinftySpace[\Omega]\)
    to be any function so that \([V,Y](x)-c(x)V(x)\in \TangentSpace{x}{\BoundaryN}\), \(\forall x\in \BoundaryN\cap \Omega\).
    Note that \([V,Y]-cV\in \LieFilteredSheafF[\Omega][d+\lambda]\)
    since \(V\in \LieFilteredSheafF[\Omega][\lambda]\).  We conclude
    \begin{equation*}
        b[V,Y] = b\left( [V,Y]-cV \right) + bcV
    \end{equation*}
    is of the desired form.

    The claim for \(K\geq 3\) follows from the case \(K=2\) and a simple induction, which we leave to the reader.
\end{proof}

\begin{lemma}\label{Lemma::Trace::Reduction::MatrixAppledToNEqualsB}
    Let \(\sB\) and \(\sN_L\) the \(\FilteredSheafF\)-Dirichlet systems on \(\Omega\)
    as in \eqref{Eqn::Trace::Reduction::DirichletSystemAsColumn}.
    There exists an \((L+1)\times (L+1)\) matrix, \(\sM\), with components
    \begin{equation*}
        \sM_j^k
        =
        \begin{cases}
            a_j(x)\in \CinftySpace[\Omega], & j=k,\\
            \opP_j^k, & k<j,\\
            0, k>j,
        \end{cases}
    \end{equation*}
    where \(\opP_j^k\) is a partial differential operator of degree \(\leq j\lambda-k\lambda\) tangent
    to the boundary, and \(a_j(x)\ne 0\), \(\forall x\in \Omega\cap \BoundaryN\)
    and such that \(\sM \sN_L = \sB\).
\end{lemma}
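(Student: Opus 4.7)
The plan is to read off the matrix $\sM$ directly from the normal form given by Lemma \ref{Lemma::Trace::Reduction::NormalOpSpecialForm}. The key observation is that each entry $B_j$ of the Dirichlet system $\sB$ is, by definition, an $\FilteredSheafF$-normal partial differential operator of degree $j\lambda$ on $\Omega$, so Lemma \ref{Lemma::Trace::Reduction::NormalOpSpecialForm} applies (with $\kappa = j\lambda$) and yields a representation
\begin{equation*}
    B_j = a_j(x) V^{j} + \sum_{r=0}^{j-1} \opP_{j,r} V^{r},
\end{equation*}
where $a_j \in \CinftySpace[\Omega]$ with $a_j(x) \ne 0$ for $x \in \Omega \cap \BoundaryN$, and each $\opP_{j,r}$ is a partial differential operator of order $\leq j\lambda - r\lambda$ on $\Omega$ tangent to the boundary in the sense of Definition \ref{Defn::Trace::Reduction::TangentPDO}.

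Given this, I would define $\sM$ by setting $\sM_j^k := \opP_{j,k}$ for $0 \leq k < j$, $\sM_j^j := a_j(x)$, and $\sM_j^k := 0$ for $k > j$. It is then immediate from the definition of matrix-vector multiplication and of $\sN_L$ that
\begin{equation*}
    (\sM \sN_L)_j = \sum_{k=0}^{L} \sM_j^k V^k = a_j(x) V^j + \sum_{r=0}^{j-1} \opP_{j,r} V^r = B_j,
\end{equation*}
so $\sM \sN_L = \sB$. The structural conditions on $\sM$ required by the statement (smooth nonvanishing functions on the diagonal, tangent-to-boundary operators of the correct order below the diagonal, zeros above) are exactly what Lemma \ref{Lemma::Trace::Reduction::NormalOpSpecialForm} provides.

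There is essentially no obstacle here beyond invoking Lemma \ref{Lemma::Trace::Reduction::NormalOpSpecialForm} correctly; the only minor bookkeeping is to check that the degree count $\leq j\lambda - r\lambda$ coming from that lemma matches the required bound $\leq j\lambda - k\lambda$ after the relabeling $k = r$, and to note that the case $r = j$ gives the diagonal constant $a_j(x)$ (a tangent operator of order $0$ and hence of the permitted form). Since the proof is merely an assembly, no further reductions or new ingredients are needed.
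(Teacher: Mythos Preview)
Your proposal is correct and is exactly the paper's approach: the paper's proof of this lemma is the single sentence ``This follows by applying Lemma~\ref{Lemma::Trace::Reduction::NormalOpSpecialForm} to each component of \(\sB\),'' which is precisely what you have spelled out.
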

\begin{proof}
    This follows by applying Lemma \ref{Lemma::Trace::Reduction::NormalOpSpecialForm} to each component of \(\sB\).
\end{proof}

\begin{proof}[Proof of Propositon \ref{Prop::Trace::Reduction::TurnBIntoN}]
    Let \(\sM\) be as in Lemma \ref{Lemma::Trace::Reduction::MatrixAppledToNEqualsB} and
    set \(M=\sM\big|_{\BoundaryN\cap \Omega}\); i.e.,
    \begin{equation*}
        M_j^k=
        \begin{cases}
            a_j\big|_{\BoundaryN\cap \Omega}, & j=k,\\
            \opP_j^k\big|_{\BoundaryN\cap \Omega}, & k<j,\\
            0, & k>j,
        \end{cases}
    \end{equation*}
    where \(a_j\) and \(\opP_j^k\) are as in Lemma \ref{Lemma::Trace::Reduction::MatrixAppledToNEqualsB}
    and \(\opP_j^k\big|_{\BoundaryN\cap \Omega}\) is defined as in Lemma \ref{Lemma::Trace::Reduction::RestrictTangentOp}.
    Then, \(M\) is of the desired form by Lemma \ref{Lemma::Trace::Reduction::RestrictTangentOp} and we have
    for \(f\in \CinftySpace[\Omega]\),
    \begin{equation*}
        \TraceMap[\sB] f = \sB f\big|_{\BoundaryN}
        =\left( \sM \sN_L f \right)\big|_{\BoundaryN}
        =  M \left( \sN_L f \right)\big|_{\BoundaryN}
        =M \TraceMap[\sN_L] f.
    \end{equation*}
    The uniqueness of \(\TraceMap[\sB]\) described in Theorem \ref{Thm::Trace::ForwardMap}
    (along with the fact that \(M\) is bounded on \eqref{Eqn::Trace::Reduction::MBoundedOnSpace})
    proves \(\TraceMap[\sB]=\TraceMap[\sN_L]\) on their domain.
\end{proof}

\begin{proof}[Proof of Theorem \ref{Thm::Trace::Dirichlet::MainInverseThm}]
    As shown above, we may assume \(\sB=\sN_L\).  Let \(\Omega_1\), \(\Omega_2\), and \(\Omega\) be
    as in the statement of the theorem.
    For each \(x\in \overline{\Omega_1}\cap \BoundaryN\Subset \BoundaryNncF[\lambda]\), 
    Proposition \ref{Prop::Trace::Reduction::InverseMap} gives 
    neighborhoods \(\Omega_1^x\Subset\Omega_2^x\Subset \Omega^x \Subset \ManifoldNncF[\lambda]\)
    of \(x\)
    and a map
    \begin{equation*}
        \TraceInverseMap[\sN_L]^x : \Distributions[\Omega_2^x \cap \BoundaryN]^{L+1}
        \rightarrow \DistributionsZeroN
    \end{equation*}
    satisfying the conclusions of Proposition \ref{Prop::Trace::Reduction::InverseMap}
    (with \(\Omega_j=\Omega_j^x\) and \(\Omega=\Omega^x\)).

    \(\left\{ \Omega_1^x \cap \Omega_2 : x\in \overline{\Omega_1}\cap \BoundaryN \right\}\)
    is an open cover for the compact set \(\overline{\Omega_1}\cap \BoundaryN\); let
    \(\Omega_1^{x_1}\cap \Omega_2, \ldots, \Omega_1^{x_K}\cap \Omega_2\)
    be a finite subcover.
    Pick \(\phi_k\in \CinftyCptSpace[\Omega_1^{x_k}\cap \Omega_2\cap \BoundaryN]\) such that
    \(\sum_{k=1}^K\phi_k=1\) on a neighborhood of \(\overline{\Omega_1}\cap \BoundaryN\)
    and fix \(\psi\in \CinftyCptSpace[\Omega_2]\) with \(\psi=1\) on a neighborhood \(U\Subset \Omega_2\) of \(\overline{\Omega_1}\).
    For \(u\in \Distributions[\Omega_2\cap \BoundaryN]^{L+1}\), set
    \(\TraceInverseMap[\sN_L]u=\psi \sum_{k=1}^K \TraceInverseMap[\sN_L]^{x_k} \phi_k u\).
    The mapping property in \ref{Item::Trace::Dirichlet::MainInverseThm::ContinuousOnSmoothFuncs}
    follows from Proposition \ref{Prop::Trace::Reduction::InverseMap} \ref{Item::Trace::Reduction::Dirichlet::MainInverseThm::ContinuousOnSmoothFuncs}.
    Using Proposition \ref{Prop::Trace::Reduction::InverseMap} \ref{Item::Trace::Reduction::Dirichlet::MainInverseThm::ContinuousOnSpaces},
    Proposition \ref{Prop::Spaces::MappingOfFuncsAndVFs} \ref{Item::Spaces::MappingOfFuncsAndVFs::Funcs},
    and Proposition \ref{Prop::Spaces::Containment}, we see
    \(\TraceInverseMap[\sN_L]\) is continuous 
                \begin{equation}\label{Eqn::Trace::Reduction::ToLocal::ContinuousOnBesov}
                \TraceInverseMap[\sN_L]:
                \prod_{l=0}^L
                \BigBesovSpace{s-l\lambda-\lambda/p}{p}{q}[\overline{U}\cap \BoundaryN][\RestrictFilteredSheaf{\LieFilteredSheafF}{\BoundaryNncF}]
                \rightarrow
                \BesovSpace{s}{p}{q}[\overline{\Omega_2}][\FilteredSheafF],\quad 1\leq p\leq \infty, \: 1\leq q\leq\infty, \: s\in \R,
            \end{equation}
            \begin{equation*}
                \TraceInverseMap[\sN_L]:
                \prod_{l=0}^L
                \BigBesovSpace{s-l\lambda-\lambda/p}{p}{p}[\overline{U}\cap \BoundaryN][\RestrictFilteredSheaf{\LieFilteredSheafF}{\BoundaryNncF}]
                \rightarrow
                \TLSpace{s}{p}{q}[\overline{\Omega_2}][\FilteredSheafF],\quad 1< p< \infty, \: 1< q\leq\infty, \: s\in \R.
            \end{equation*}
    Proposition \ref{Prop::Spaces::Containment} then gives \ref{Item::Trace::Dirichlet::MainInverseThm::ContinuousOnSpaces}
    (since \(\overline{\Omega_1}\subseteq \overline{U}\)).


    We turn to \ref{Item::Trace::Dirichlet::MainInverseThm::IsInverse}.
    For \(u\in \CinftyCptSpace[U\cap \BoundaryN]^{L+1}\), we have
    by  Proposition \ref{Prop::Trace::Reduction::InverseMap} \ref{Item::Trace::Reduction::Dirichlet::MainInverseThm::IsInverse}
    \begin{equation}\label{Eqn::Trace::Reduction::ToLocal::TraceTraceInverseIsI}
    \begin{split}
         &\TraceMap[\sN_L] \TraceInverseMap[\sN_L] u
         =\sum_{k=1}^L \TraceMap[\sN_L] \psi \TraceInverseMap[\sN_L]^{x_k} \phi_k u
         =\sum_{k=1}^L \psi\big|_{\BoundaryN}\TraceMap[\sN_L]  \TraceInverseMap[\sN_L]^{x_k} \phi_k u
         =\sum_{k=1}^L \psi\big|_{\BoundaryN} \phi_k u = u.
    \end{split}
    \end{equation}
    Next, suppose \(u\in \prod_{l=0}^L
                \BigBesovSpace{s-l\lambda-\lambda/p}{p}{q}[\overline{\Omega_1}\cap \BoundaryN][\RestrictFilteredSheaf{\LieFilteredSheafF}{\BoundaryNncF}]\),
                where \(s>L\lambda+\lambda/p\) and \(p>1\).
    If \(q<\infty\),
    Corollary \ref{Cor::Spaces::Approximation::SmoothFunctionsAreDense} shows that there exists
    \(u_j \in \CinftyCptSpace[U\cap \BoundaryN]^{L+1}\) with \(u_j\rightarrow u\)
    in \(\BigBesovSpace{s-l\lambda-\lambda/p}{p}{q}[\overline{U}\cap \BoundaryN][\RestrictFilteredSheaf{\LieFilteredSheafF}{\BoundaryNncF}]\).
    Since \(\TraceMap[\sN_L]\TraceInverseMap[\sN_L]u_j=u_j\) (by \eqref{Eqn::Trace::Reduction::ToLocal::TraceTraceInverseIsI}),
    the continuity established in \eqref{Eqn::Trace::Reduction::ToLocal::ContinuousOnBesov}
    and Theorem \ref{Thm::Trace::ForwardMap}
    shows \(\TraceMap[\sN_L]\TraceInverseMap[\sN_L]u=u\).
    If \(q=\infty\), take \(s'\in (L\lambda+\lambda/p,s)\) and use the fact that
    \begin{equation*}
    \begin{split}
         &u\in \prod_{l=0}^L
                \BigBesovSpace{s-l\lambda-\lambda/p}{p}{\infty}[\overline{\Omega_1}\cap \BoundaryN][\RestrictFilteredSheaf{\LieFilteredSheafF}{\BoundaryNncF}]
        \subseteq\prod_{l=0}^L\BigBesovSpace{s'-l\lambda-\lambda/p}{p}{2}[\overline{\Omega_1}\cap \BoundaryN][\RestrictFilteredSheaf{\LieFilteredSheafF}{\BoundaryNncF}],
    \end{split}
    \end{equation*}
    to see that \(\TraceMap[\sN_L]\TraceInverseMap[\sN_L]u=u\) by the already proved case \(q<\infty\).
    This establishes \ref{Item::Trace::Dirichlet::MainInverseThm::IsInverse} and completes the proof.
\end{proof}

\begin{proof}[Proof of Theorem \ref{Thm::Trace::Vanish::MainVanishThm}]
    The proofs of \ref{Item::Trace::Vanish::MainVanishThm::Smalls} and \ref{Item::Trace::Vanish::MainVanishThm::Larges}
    \ref{Item::Trace::Vanish::MainVanishThm::Larges::Trace0}
    \(\Rightarrow\)
    \ref{Item::Trace::Vanish::MainVanishThm::Larges::SmoothApprox}
     are similar. We prove \ref{Item::Trace::Vanish::MainVanishThm::Larges}; the proof for \ref{Item::Trace::Vanish::MainVanishThm::Smalls}
     follows in the same way, by ignoring the condition \(\TraceMap[\sN_L]u=0\) throughout, and we leave the necessary
     simple modifications to the reader.
    Suppose \(s\in (L\lambda+\lambda/p, (L+1)\lambda+\lambda/p)\) and \(p,q\in (1,\infty)\) as in \ref{Item::Trace::Vanish::MainVanishThm::Larges}.
    We have already shown that it suffices to prove the result with \(\sB=\sN_L\) (see the proof following
    Proposition \ref{Prop::Trace::Reduction::TurnBIntoN}).

    \ref{Item::Trace::Vanish::MainVanishThm::Larges::Trace0}
    \(\Rightarrow\)
    \ref{Item::Trace::Vanish::MainVanishThm::Larges::SmoothApprox}:
    For each \(x\in \Compact\cap \BoundaryN\subseteq \BoundaryNncF[\lambda]\),
    Proposition \ref{Prop::Trace::Reduction::VanishingChar}
    gives \(\Omega_1^x\Subset \Omega^x\Subset \ManifoldNncF[\lambda]\), neighborhoods of \(x\),
    such that the result holds with \(\Omega=\Omega^x\), \(\Compact=\overline{\Omega_1^x}\).

    \(\left\{ \Omega_1^x  : x\in \Compact\cap \BoundaryN \right\}\) is an open cover for 
    the compact set \(\Compact\cap\BoundaryN\). Let \(\Omega_1^{x_1},\ldots, \Omega_x^{x_K}\)
    be a finite subcover.
    Take \(\phi_k\in \CinftyCptSpace[\Omega_1^{x_k}]\) such that \(\sum_{k=1}^K \phi_k =1\)
    on an \(\ManifoldN\)-neighborhoood of \(\Compact\cap \BoundaryN\).
    Let \(\phi_0\in \CinftyCptSpace[\Omega\cap\InteriorN]\) be such that \(\sum_{k=0}^K \phi_k =1\)
    on an \(\ManifoldN\)-neighborhood of \(\Compact\).

    Let \(\Compact_0:=\supp(\phi_0)\) and \(\Compacth=\Compact_0\bigcup\left( \bigcup_{k=1}^K \overline{\Omega_1}^{x_k} \right)\).
    For \(u\in \ASpace{s}{p}{q}[\Compact][\FilteredSheafF]\subseteq \ASpace{s}{p}{q}[\Compacth][\FilteredSheafF]\)
    (see Proposition \ref{Prop::Spaces::Containment}), 
    Proposition \ref{Prop::Spaces::MappingOfFuncsAndVFs} \ref{Item::Spaces::MappingOfFuncsAndVFs::Funcs}
    shows \(\phi_j u \in \ASpace{s}{p}{q}[\Compacth][\FilteredSheafF]\).
    For \(k\geq 1\), Proposition \ref{Prop::Spaces::Containment} then implies
    \(\phi_k u \in \ASpace{s}{p}{q}[\overline{\Omega_1^{x_k}}][\FilteredSheafF]\)
    and \(\phi_0 u\in \ASpace{s}{p}{q}[\Compact_0][\FilteredSheafF]\).

    Let \(\Omega_0\Subset \Omega\cap \InteriorN\) be a relatively compact, open set with \(\Compact_0\Subset \Omega_0\).
    Since \(q<\infty\), Corollary \ref{Cor::Spaces::Approximation::SmoothFunctionsAreDense}
    \ref{Item::Spaces::Approximation::SmoothFunctionsAreDense::ApproxInST}
    shows that there exist \(f_r^0\in \CinftyCptSpace[\Omega_0]\)
    with \(f_r^0\xrightarrow{r\rightarrow \infty} \phi_0 u\) in \(\ASpace{s}{p}{q}[\overline{\Omega_0}][\FilteredSheafF]\).
    Set \(\Compactt:=\overline{\Omega}\cup \bigcup_{k=1}^k \overline{\Omega^{x_k}}\)
    and therefore by Proposition \ref{Prop::Spaces::Containment},
    \(f_r^0\xrightarrow{r\rightarrow \infty} \phi_0 u\) in \(\ASpace{s}{p}{q}[\Compactt][\FilteredSheafF]\).

    Suppose \(\TraceMap[\sN_L ]u=0\).  Since \(\sN_L \phi_k u = \left(\phi_k u, V\phi_k u,\ldots, V^L \phi_k u  \right)\),
    the product rule shows that \(\sN_L \phi_k =0 \), for \(1\leq k\leq K\).
    Proposition \ref{Prop::Trace::Reduction::VanishingChar}
    \ref{Item::Trace::Reduction::VanishingChar::Larges}
    shows that for \(1\leq k\leq K\), there exist
    \(f_r^k\in \CinftyCptSpace[\Omega^{x_k}\cap \InteriorN]\) such that
    \(f_r^k\xrightarrow{r\rightarrow \infty} \phi_k u\) in \(\ASpace{s}{p}{q}[\overline{\Omega^{x_k}}][\FilteredSheafF]\),
    and therefore by Proposition \ref{Prop::Spaces::Containment},
    \(f_r^k\xrightarrow{r\rightarrow \infty} \phi_k u\) in \(\ASpace{s}{p}{q}[\Compactt][\FilteredSheafF]\).
    
    Fix \(\psi\in \CinftyCptSpace[\Omega]\) with \(\psi=1\) on a neighborhood of \(\Compact\).
    Set \(f_r:=\psi \sum_{k=0}^K f_r^k\in \CinftyCptSpace[\Omega\cap \InteriorN]\). 
    Proposition \ref{Prop::Spaces::MappingOfFuncsAndVFs} \ref{Item::Spaces::MappingOfFuncsAndVFs::Funcs} shows
    \(f_r\xrightarrow{r\rightarrow \infty} \psi \sum_{k=0}^K \phi_k u =u \)
    in \(\ASpace{s}{p}{q}[\Compacth][\FilteredSheafF]\), where the final equality used
    \(\supp(u)\subseteq \Compact\), since \(u\in \ASpace{s}{p}{q}[\Compact][\FilteredSheafF]\)
    (see Definition \ref{Defn::Spaces::Defns::ASpace}).
    Since \(\supp(f_r)\subseteq \Omega\), \(\forall r\), Proposition \ref{Prop::Spaces::Containment}
    shows
\(f_r\xrightarrow{r\rightarrow \infty}  =u \)
    in \(\ASpace{s}{p}{q}[\overline{\Omega}][\FilteredSheafF]\), completing the proof of \ref{Item::Trace::Vanish::MainVanishThm::Larges::SmoothApprox}.

    \ref{Item::Trace::Vanish::MainVanishThm::Larges::SmoothApprox}
    \(\Rightarrow\)
    \ref{Item::Trace::Vanish::MainVanishThm::Larges::Trace0}:
    Suppose \( f_j\in \CinftyCptSpace[\Omega\cap \InteriorN] \) is such that \(f_j\rightarrow u\)
    in \(\ASpace{s}{p}{q}[\overline{\Omega}][\FilteredSheafF]\).
    Clearly \(\TraceMap[\sN_L] f_j=0\), \(\forall j\), and the continuity of
    \(\TraceMap[\sN_L]\) (as in Theorem \ref{Thm::Trace::ForwardMap}) shows \(\TraceMap[\sN_L] u=0\).
\end{proof}

    \subsection{Reduction to a convenient coordinate system}\label{Section::Trace::ReductionToCoord}
    In Section \ref{Section::Trace::ReductionToLocal}, we reduced
Theorems \ref{Thm::Trace::ForwardMap}, \ref{Thm::Trace::Dirichlet::MainInverseThm}, and \ref{Thm::Trace::Vanish::MainVanishThm}
to simpler local propositions. In this section, we further reduce these results to a convenient coordinate system.
We being by stating the results in \(\R^n\), and then in Subsection \ref{Section::Trace::ReductionToCoord::Proof}
we show how these local results imply Propositions \ref{Prop::Trace::Reduction::ForwardMap},
\ref{Prop::Trace::Reduction::InverseMap},
and \ref{Prop::Trace::Reduction::VanishingChar}
(and therefore imply Theorems \ref{Thm::Trace::ForwardMap}, \ref{Thm::Trace::Dirichlet::MainInverseThm}, and \ref{Thm::Trace::Vanish::MainVanishThm}).

We work on \(\R^n\) with coordinates \(x=(x',x_n)\in \R^{n-1}\times \R\cong \R^{n+1}\).
Let \(\ManifoldN=\Bngeq{1}=\left\{ x=(x',x_n)\in\R^{n}: |x|<1, x_n\geq 0  \right\}\),
with \(\InteriorN=\Bng{1}=\left\{ (x',x_n)\in \Bngeq{1} : x_n>0 \right\}\)
and \(\BoundaryN=\left\{ (x',0) : |x'|<1 \right\}\cong \Bnmo{1}\).
We take \(\Vol\) to be Lebesgue measure on \(\Bngeq{1}\).

We suppose we are given smooth vector fields with formal degrees
\begin{equation*}
    \XXdv=\left\{ \left( X_1,\Xdv_1 \right),\ldots, \left( X_{q+1},\Xdv_{q+1} \right) \right\}
    \subset \VectorFields{\Bngeq{1}}\times \Zg
\end{equation*}
satisfying:
\begin{enumerate}[(A)]
    \item\label{Item::Traces::CoordReduction::IsPartialxn} \(X_{q+1}=\partial_{x_n}\). Set \(\lambda:=\Xdv_{q+1}\).
    \item\label{Item::Traces::CoordReduction::TangentToBoundary} \(X_j(x',0)\) has no \(\partial_{x_n}\) component for \(1\leq j\leq q\), \(\forall x\in \Bnmo{1}\).
    \item\label{Item::Traces::CoordReduction::NSW} \([X_j, X_k]=\sum_{\Xdv_l\leq \Xdv_j+\Xdv_k}c_{j,k}^l X_l\), \(c_{j,k}^l\in \CinftyCptSpace[\Bngeq{1}]\).
    \item\label{Item::Traces::CoordReduction::Span} \(\Span\left\{ X_1(x),\ldots, X_{q+1}(x) \right\}=\TangentSpace{x}{\R^n}\), \(\forall x\in \Bngeq{1}\).
\end{enumerate}
Using \ref{Item::Traces::CoordReduction::Span},
it is easy to see that \(\FilteredSheafGenByXXdv\) is a H\"ormander filtration of sheaves of vector fields on \(\Bngeq{1}\),
and using \ref{Item::Traces::CoordReduction::NSW}, we see
\(\LieFilteredSheaf{\FilteredSheafGenByXXdv}=\FilteredSheafGenByXXdv\).
Using \ref{Item::Traces::CoordReduction::IsPartialxn} and \ref{Item::Traces::CoordReduction::TangentToBoundary},
we see \(\degBoundaryBngeq{\FilteredSheafGenByXXdv}[x']=\lambda\), \(\forall x'\in \Bnmo{1}\),
and so every point of \(\partial \Bngeq{1}\) is \(\FilteredSheafGenByXXdv\)-non-characteristic with degree 
\(\lambda\).

For \(1\leq j\leq q\), set \(V_j:=X_j\big|_{\Bnmo{1}}\). Note that \ref{Item::Traces::CoordReduction::TangentToBoundary}
implies \(V_j\in \VectorFields{\Bnmo{1}}\). 
Set \(\VVdv:=\left\{ \left( V_1,\Vdv_1 \right),\ldots, \left( V_q,\Vdv_1 \right) \right\}\).
It follows from the definitions that
\begin{equation*}
    \RestrictFilteredSheaf{\LieFilteredSheaf{\FilteredSheafGenByXXdv}}{\Bnmo{1}}=\RestrictFilteredSheaf{\FilteredSheafGenByXXdv}{\Bnmo{1}}=\FilteredSheafGenByVVdv.
\end{equation*}

\begin{proposition}\label{Prop::Traces::CoordReduction::ForwardMap}
    For all \(1<p\leq \infty\), \(1\leq q\leq \infty\), \(s>\lambda/p\), \(\exists C\geq 0\), \(\forall f\in \CinftyCptSpace[\Bngeq{1/2}]\),
    \begin{equation}\label{Eqn::Traces::CoordReduction::ForwardMap::Besov}
        \BesovNorm{f\big|_{\Bnmo{1}}}{s-\lambda/p}{p}{q}[\FilteredSheafGenByVVdv]
        \leq C
        \BesovNorm{f}{s}{p}{q}[\FilteredSheafGenByXXdv].
    \end{equation}
    For all \(1<p< \infty\), \(1< q\leq \infty\), \(s>\lambda/p\), \(\exists C\geq 0\), \(\forall f\in \CinftyCptSpace[\Bngeq{1/2}]\),
    \begin{equation}\label{Eqn::Traces::CoordReduction::ForwardMap::TL}
        \BesovNorm{f\big|_{\Bnmo{1}}}{s-\lambda/p}{p}{p}[\FilteredSheafGenByVVdv]
        \leq C
        \TLNorm{f}{s}{p}{q}[\FilteredSheafGenByXXdv].
    \end{equation}
\end{proposition}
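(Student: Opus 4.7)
The plan is to follow the standard Littlewood--Paley trace scheme, with the Fourier transform replaced by the elementary operator machinery of Section \ref{Section::Spaces::LittlewoodPaleyTheory}. First, by Proposition \ref{Prop::Spaces::LP::DjExist}, fix $\psi\in\CinftyCptSpace[\Bngeq{3/4}]$ and $\tilde\psi\in\CinftyCptSpace[\Bnmo{3/4}]$ equal to $1$ on neighborhoods of $\Bngeq{1/2}$ and $\Bnmo{1/2}$ respectively, and write $\Mult{\psi}=\sum_{j\in\Zgeq}D_j$ with $\{(D_j,2^{-j})\}_j\in\Elemz{\FilteredSheafGenByXXdv}{\Bngeq{3/4}}$ and $\Mult{\tilde\psi}=\sum_{k\in\Zgeq}\tilde{D}_k$ with $\{(\tilde{D}_k,2^{-k})\}_k\in\Elemz{\FilteredSheafGenByVVdv}{\Bnmo{3/4}}$. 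For $f\in\CinftyCptSpace[\Bngeq{1/2}]$, the two sides of the target inequality are, by Proposition \ref{Prop::Spaces::Defns::NormAndBanach}, equivalent to the $\lqLpSpaceNoSet{p}{q}$- and $\LplqSpaceNoSet{p}{q}$-norms of $\{2^{js}D_jf\}_j$ and $\{2^{k(s-\lambda/p)}\tilde{D}_k(f|_{\Bnmo{1}})\}_k$ respectively, and $f|_{\Bnmo{1}}=\sum_j RD_jf$, where $R$ denotes restriction to $\Bnmo{1}$.

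The first main ingredient is a single-scale trace inequality of the shape $\|RD_jf\|_{L^p(\Bnmo{1})}\lesssim 2^{j\lambda/p}\|E_jf\|_{L^p(\Bngeq{1})}$ for some $\{(E_j,2^{-j})\}_j\in\Elemz{\FilteredSheafGenByXXdv}{\Bngeq{3/4}}$. Averaging the pointwise identity $D_jf(x',0)=D_jf(x',x_n)-\int_0^{x_n}\partial_{x_n}D_jf(x',t)\,dt$ over $x_n\in(0,c2^{-j\lambda})$, taking $p$-th powers, and invoking H\"older gives
\[
|D_jf(x',0)|^p\lesssim 2^{j\lambda}\int_0^{c2^{-j\lambda}}\!\left(|D_jf(x',x_n)|^p+2^{-j\lambda p}|\partial_{x_n}D_jf(x',x_n)|^p\right)dx_n,
\]
so integrating in $x'$ produces $\|RD_jf\|_{L^p}^p\lesssim 2^{j\lambda}(\|D_jf\|_{L^p}^p+2^{-j\lambda p}\|\partial_{x_n}D_jf\|_{L^p}^p)$. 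Since $\partial_{x_n}=X_{q+1}$ has formal degree $\lambda$, Proposition \ref{Prop::Spaces::Elem::Elem::MainProps} \ref{Item::Spaces::Elem::Elem::DerivFuncAribtrarySection} shows that $2^{-j\lambda}\partial_{x_n}D_j$ is again elementary at scale $2^{-j}$, so both terms on the right are controlled by $L^p$-norms of elementary operators applied to $f$, giving the claim.

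The second ingredient is almost-orthogonality of $\tilde{D}_kRD_j$ in $|k-j|$. The crucial commutation is $V^\alpha R=RX^\alpha$ for any tangential multi-index $\alpha\in\{1,\ldots,q\}^*$, valid because of hypothesis \ref{Item::Traces::CoordReduction::TangentToBoundary}: for $l\le q$, $V_l=X_l|_{\Bnmo{1}}$ acts on $f|_{\Bnmo{1}}$ exactly as $X_l$ acts on $f$ along the boundary. Using Proposition \ref{Prop::Spaces::Elem::Elem::MainProps} \ref{Item::Spaces::Elem::Elem::PullOutNDerivs} iteratively to peel $N$ tangential $V$-derivatives off $\tilde{D}_k$ when $k\ge j$ (and converting them through $R$ into rescaled tangential $X$-derivatives of $D_j$, yielding a factor $2^{-(k-j)N}$), and analogously peeling $X$-derivatives off $D_j$ when $j\ge k$, then applying the single-scale trace inequality to the resulting elementary pieces, one obtains
\[
\|\tilde{D}_kRD_jf\|_{L^p(\Bnmo{1})}\lesssim 2^{-N|k-j|}\cdot 2^{j\lambda/p}\cdot\|E_jf\|_{L^p(\Bngeq{1})}
\]
for some $\{(E_j,2^{-j})\}_j\in\Elemz{\FilteredSheafGenByXXdv}{\Bngeq{3/4}}$ (depending on $N$).

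Combining these, for the Besov case
\[
2^{k(s-\lambda/p)}\|\tilde{D}_k(f|_{\Bnmo{1}})\|_{L^p}\lesssim\sum_j 2^{-N|k-j|}2^{(k-j)(s-\lambda/p)}\cdot 2^{js}\|E_jf\|_{L^p},
\]
and choosing $N>|s-\lambda/p|$ makes the kernel $\ell^1$ in $k-j$, so Young's inequality in $\ell^q$ together with Corollary \ref{Cor::Spaces::MainEst::VpqsESeminormIsContinuous} (which absorbs the passage from $D_j$ to $E_j$ into the Besov norm) yields \eqref{Eqn::Traces::CoordReduction::ForwardMap::Besov}. For the Triebel--Lizorkin case \eqref{Eqn::Traces::CoordReduction::ForwardMap::TL}, upgrade the single-scale estimate to the pointwise bound $|D_jf(x',0)|\lesssim[\mathcal{M}_{x_n}(|E_jf|^p)(x',0)]^{1/p}$ via the one-dimensional Hardy--Littlewood maximal function, and after the almost-orthogonality step apply the Fefferman--Stein vector-valued maximal inequality in $L^p(\ell^p)$; the target sequence space is $\ell^p$ rather than $\ell^q$ because the $L^p$--$\ell^p$ interchange through the maximal function forces the sequence index to align with $p$. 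The main obstacle will be executing the almost-orthogonality step cleanly: keeping track of which elementary operator class the intermediate objects lie in, verifying the commutation $V^\alpha R=RX^\alpha$ produces only tangential derivatives (so no $\partial_{x_n}$ integration by parts is needed and no boundary terms arise), and accounting for the extra factors that appear when the formal degrees $\Xdv_l$ for $l\le q$ differ from $1$.
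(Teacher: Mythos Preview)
Your overall architecture---Littlewood--Paley decomposition on both sides, a single-scale trace estimate, and almost-orthogonality via the commutation \(V^\alpha R=RX^\alpha\)---matches the paper's, and your fundamental-theorem-of-calculus trace inequality is a legitimate substitute for the paper's Peetre-type bound. But there are two genuine gaps.

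\textbf{The almost-orthogonality is only one-sided.} Your claim of \(2^{-N|k-j|}\) decay by ``peeling \(X\)-derivatives off \(D_j\) when \(j\geq k\)'' does not work: Proposition~\ref{Prop::Spaces::Elem::Elem::MainProps}\ref{Item::Spaces::Elem::Elem::PullOutNDerivs} produces \emph{all} of \(X_1,\ldots,X_{q+1}\), and \(X_{q+1}=\partial_{x_n}\) does not commute through \(R\) as a tangential \(V\). The paper (Lemma~\ref{Lemma::Trace::Forward::ElemRestrictElemAsSmallSumOfElemRestrictElem}) only obtains decay \(2^{-N((k-j)\vee 0)}\), by peeling \(V\)-derivatives off \(\tilde D_k\) when \(k\geq j\); for \(j>k\) it uses nothing more than \(L^p\)-boundedness of \(\tilde D_k\), and the required summability comes from the factor \(2^{-(j-k)(s-\lambda/p)}\) together with \(s>\lambda/p\). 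Your Besov argument survives with this correction.

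\textbf{The Triebel--Lizorkin case needs a different mechanism.} Your sketch (one-dimensional Hardy--Littlewood in \(x_n\) at \(x_n=0\), then Fefferman--Stein in \(L^p(\ell^p)\)) at best controls the boundary norm by \(\|\{2^{js}E_jf\}_j\|_{L^p(\ell^p)(\Rngeq)}\), which is \emph{not} dominated by \(\TLNorm{f}{s}{p}{q}[\FilteredSheafGenByXXdv]\) when \(q>p\). The paper instead introduces the Peetre-type maximal functions \(E_j^{*,L}f(x)=\sup_z(1+2^j\DistXXdv[x][z])^{-L}|E_jf(z)|\), shows via the Carnot--Carath\'eodory maximal function and Fefferman--Stein that \(\|\{2^{js}E_j^{*,L}f\}_j\|_{L^p(\ell^q)}\lesssim\TLNorm{f}{s}{p}{q}\) (Proposition~\ref{Prop::Trace::Forward::BoundEjStarByNorm}), and then uses the pointwise bound \(|E_jf(x',0)|\lesssim E_j^{*,L}f(x',x_n)\) for \(x_n\) in the \emph{disjoint} dyadic intervals \([\epsilon_0 2^{-(j+1)\lambda},\epsilon_0 2^{-j\lambda}]\) (Lemma~\ref{Lemma::Trace::Forward::BoundRestrictElemByElemStar}). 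The disjointness converts \(\sum_j\int_{I_j}\) into \(\int_0^\infty\) after taking \(\sup_{j'}\) inside, producing the \(L^p(\ell^\infty)\) norm of \(\{2^{js}E_j^{*,L}f\}_j\), which is bounded by the \(\TLSpace{s}{p}{\infty}\) norm and hence by \(\TLSpace{s}{p}{q}\) for every \(q\). Your averaging interval \((0,c2^{-j\lambda})\) is not disjoint in \(j\), and without the Peetre maximal function you have no way to pass from the boundary back to an \(L^p(\ell^\infty)\) quantity on \(\Rngeq\).
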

Proposition \ref{Prop::Traces::CoordReduction::ForwardMap} is proved in Section \ref{Section::Trace::Forward}.

For the next two results, we use the \(\FilteredSheafGenByXXdv\)-Dirichlet system
\(\sN_L:=\left( 1, \partial_{x_n},\partial_{x_n}^2,\ldots, \partial_{x_n}^L \right)\).

\begin{proposition}\label{Prop::Traces::CoordReduction::InverseMap}
    There is a continuous, linear map
    \begin{equation}\label{Eqn::Traces::CoordReduction::InverseMap::BasicMapping}
        \TraceInverseMap[\sN_L]:\Distributions[\Bngeq{1/2}]^{L+1}\rightarrow \DistributionsZero[\Bngeq{1}]
    \end{equation}
    satisfying
    \begin{enumerate}[(i)]
        \item\label{Item::Traces::CoordReduction::InverseMap::MappingOnSmoothFunctions} \(\TraceInverseMap[\sN_L]: \CinftySpace[\Bnmo{1/2}]^{L+1}\rightarrow \CinftyCptSpace[\Bngeq{3/4}]\), continuously.
        \item\label{Item::Traces::CoordReduction::InverseMap::MappingOnBesovAndTL} \(\TraceInverseMap[\sN_L]\) is continuous:
                    \begin{equation*}
                \TraceInverseMap[\sN_L]:
                \prod_{l=0}^L
                \BigBesovSpace{s-l\lambda-\lambda/p}{p}{q}[\BnmoClosure{1/4}][\FilteredSheafGenByVVdv]
                \rightarrow
                \BesovSpace{s}{p}{q}[\BngeqClosure{3/4}][\FilteredSheafGenByXXdv],\quad 1\leq p\leq \infty, \: 1\leq q\leq\infty, \: s\in \R,
            \end{equation*}
            \begin{equation*}
                \TraceInverseMap[\sN_L]:
                \prod_{l=0}^L
                \BigBesovSpace{s-l\lambda-\lambda/p}{p}{p}[\BnmoClosure{1/4}][\FilteredSheafGenByVVdv]
                \rightarrow
                \TLSpace{s}{p}{q}[\BngeqClosure{3/4}][\FilteredSheafGenByXXdv],\quad 1< p< \infty, \: 1< q\leq\infty, \: s\in \R.
            \end{equation*}
        \item\label{Item::Traces::CoordReduction::InverseMap::IsInverse} \(\TraceMap[\sN_L]\TraceInverseMap[\sN_L]f=f\), \(\forall f\in \CinftyCptSpace[\Bnmo{1/4}]^{L+1}\).
    \end{enumerate}
\end{proposition}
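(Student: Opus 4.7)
The plan is to construct $\TraceInverseMap[\sN_L]$ explicitly, by combining a Littlewood--Paley decomposition on the boundary with a Taylor-like lifting in the $x_n$-direction. I would choose $\psih \in \CinftyCptSpace[\Bnmo{1/2}]$ equal to $1$ on a neighborhood of $\BnmoClosure{1/4}$ and, via Proposition~\ref{Prop::Spaces::LP::DjExist} applied to $\FilteredSheafGenByVVdv$ on $\Bnmo{1}$, write $\Mult{\psih} = \sum_{j \in \Zgeq} D_j^\partial$ with $\{(D_j^\partial, 2^{-j}) : j \in \Zgeq\} \in \Elemz{\FilteredSheafGenByVVdv}{\BnmoClosure{1/2}}$. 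For each $l \in \{0, 1, \dots, L\}$, fix $\phi_l \in \CinftyCptSpace[(-1,1)]$ with $\phi_l^{(l')}(0) = \delta_{l,l'}$ for $0 \le l' \le L$, together with a cutoff $\chi \in \CinftyCptSpace[(-1/2,1/2)]$ equal to $1$ near $0$. Then set
\begin{equation*}
    \TraceInverseMap[\sN_L](g_0, \dots, g_L)(x', x_n) := \chi(x_n) \sum_{l=0}^L \sum_{j \in \Zgeq} 2^{-j\lambda l}\, \phi_l(2^{j\lambda} x_n)\, D_j^\partial g_l(x').
\end{equation*}
Since $\partial_{x_n}^{l'}[\phi_l(2^{j\lambda} x_n)]|_{x_n=0} = 2^{j\lambda l'} \delta_{l,l'}$, the $l \ne l'$ cross-terms of $\TraceMap[\sN_L]$ vanish and the diagonal sum collapses to $\sum_{j} D_j^\partial g_l = \Mult{\psih} g_l = g_l$ on $\BnmoClosure{1/4}$, giving (iii); the smooth mapping property (i) follows from the support properties of $\chi$, $\phi_l$, and an additional $x'$-cutoff.

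The heart of the argument is (ii), and the guiding heuristic is that each summand $u_{l,j}(x',x_n) := 2^{-j\lambda l} \phi_l(2^{j\lambda} x_n)\, D_j^\partial g_l(x')$ lives at ambient $\XXdv$-scale $2^{-j}$: the boundary factor $D_j^\partial g_l$ is at $\VVdv$-scale $2^{-j}$, while $\phi_l(2^{j\lambda} \cdot)$ is concentrated where $x_n \sim 2^{-j\lambda}$, which matches ambient scale $2^{-j}$ precisely because $\partial_{x_n}$ has formal degree $\lambda$. Choosing ambient Littlewood--Paley operators $\{(D_k^{\mathrm{amb}}, 2^{-k})\}_k \in \Elemz{\FilteredSheafGenByXXdv}{\BngeqClosure{3/4}}$, I would show that the compositions $D_k^{\mathrm{amb}} \circ [\chi(x_n) 2^{-j\lambda l}\phi_l(2^{j\lambda}x_n) D_j^\partial\, \cdot\,]$ enjoy off-diagonal decay $O(2^{-N|j-k|})$ into $\LpSpace{p}$ for every $N$, in the spirit of Lemma~\ref{Lemma::Spaces::ELem::Elem::ComposeElem} and Lemma~\ref{Lemma::Spaces::Elem::ELem::ProdOfElemAsSmallSumOfProds}. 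The elementary pointwise identity $\|u_{l,j}\|_{\LpSpace{p}[\Bngeq{1}]} \approx 2^{-j\lambda(l + 1/p)} \|D_j^\partial g_l\|_{\LpSpace{p}[\Bnmo{1}]}$ then yields the Besov bound by Fubini and $\ell^q$-summation in $j$. The Triebel--Lizorkin case, which targets the boundary space $\BesovSpace{s-l\lambda-\lambda/p}{p}{p}$ rather than $\BesovSpace{s-l\lambda-\lambda/p}{p}{q}$, rests on the observation that for each fixed $x_n > 0$ only a bounded range of $j \approx \lambda^{-1}\log_2(1/x_n)$ contributes to the inner $\ell^q$-sum, so the $\ell^q$ collapses to a single term and the change of variable $x_n \to j$ with Jacobian $\mathrm{d}x_n \sim 2^{-j\lambda}\mathrm{d}j$ converts the outer $L^p$-integral into exactly the $\ell^p$-sum over $j$ with weight $2^{-j\lambda/p}$ defining the boundary norm.

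I expect the main obstacle to be the rigorous passage from the formal expansion $D_k^{\mathrm{amb}} \TraceInverseMap[\sN_L](g_0,\dots,g_L) = \sum_{l,j} E_{k,j,l} g_l$ to an almost-orthogonality statement on the relevant Littlewood--Paley pieces of $g_l$. The factor $\phi_l(2^{j\lambda} x_n)$ acts only in $x_n$ and is not itself an elementary operator in the multidimensional sense of Definitions~\ref{Defn::Spaces::LP::PElemWWdv}--\ref{Defn::Spaces::LP::ElemWWdv}, so the requisite composition estimates will need to be verified directly from the kernel inequality~\eqref{Eqn::Spaces::LP::PreElemBound}, exploiting that $X_1,\dots,X_q$ are tangent to $\{x_n = 0\}$ (so their action in $x'$ sees only the $D_j^\partial$-factor) while $\partial_{x_n}$ applied to $\phi_l(2^{j\lambda}x_n)$ costs exactly $2^{j\lambda}$, matching its formal degree $\lambda$ in $\XXdv$. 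Once this $|j-k|$-decay is established, the boundedness in (ii) follows by summing the decay via Theorem~\ref{Thm::Spaces::BoundedOps::SumOfElemIsBoundedOp}, and the distributional mapping~\eqref{Eqn::Traces::CoordReduction::InverseMap::BasicMapping} is then obtained by continuous extension from smooth data, using that the estimates are uniform in $s \in \R$.
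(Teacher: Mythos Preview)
Your proposal is correct and takes essentially the same approach as the paper: the same lifting formula (the paper's operators $\opQtEPhigamma$ in \eqref{Eqn::Trace::InverseProof::DefineQt}, with your $\chi$, $\phi_l$, $D_j^\partial$ playing the roles of $\gamma$, $\phi_l$, $D_k'$), the same verification of (iii), and the same almost-orthogonality strategy for (ii), which the paper carries out via Lemmas~\ref{Lemma::Trace::InverseProof::XjRelatedToVj}, \ref{Lemma::Trace::InverseProof::ElemAppliedToOpQ}, and~\ref{Lemma::Trace::InverseProof::VpqNormInequality}. One caveat: your Triebel--Lizorkin heuristic (``for fixed $x_n$ only a bounded range of $j$ contributes'') is not literally true since $\phi_0(0)\ne 0$ forces all $j$ with $2^{j\lambda}x_n<1$ to contribute; the paper's Lemma~\ref{Lemma::Trace::InverseProof::VpqNormInequality} instead handles this via a dyadic decomposition in $x_n$ and H\"older's inequality, with the weight $2^{j\lambda/p}$ providing the damping for small $j$.
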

Proposition \ref{Prop::Traces::CoordReduction::InverseMap} is proved in Section \ref{Section::Trace::Inverse}.

\begin{proposition}\label{Prop::Traces::CoordReduction::VanishingChar}
    Fix \(p,q\in (1,\infty)\), \(s\in \R\), and \(u\in \ASpace{s}{p}{q}[\BngeqClosure{1/2}][\FilteredSheafGenByXXdv]\).
    \begin{enumerate}[(i)]
        \item\label{Item::Traces::CoordReduction::VanishingChar::smalls} If \(s<\lambda/p\), \(\exists \left\{ f_j \right\}_{j\in \Zgeq}\subset \CinftyCptSpace[\Bng{3/4}]\)
            with \(f_j\rightarrow u\) in \(\ASpace{s}{p}{q}[\BngeqClosure{3/4}][\FilteredSheafGenByXXdv]\).
        \item\label{Item::Traces::CoordReduction::VanishingChar::larges} If \(s\in (L\lambda+\lambda/p, (L+1)\lambda+\lambda/p)\), where \(L\in \Zgeq\), and if
            \(\TraceMap[\sN_L] u=0\), then \(\exists \left\{ f_j \right\}_{j\in \Zgeq}\subset \CinftyCptSpace[\Bng{3/4}]\)
            with \(f_j\rightarrow u\) in \(\ASpace{s}{p}{q}[\BngeqClosure{3/4}][\FilteredSheafGenByXXdv]\).
    \end{enumerate}
\end{proposition}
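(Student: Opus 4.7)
The plan is to combine a cutoff near the boundary with a standard mollification. Choose $\chi \in C^\infty(\R)$ with $\chi(r) = 0$ for $r \leq 1$ and $\chi(r) = 1$ for $r \geq 2$, and for small $t > 0$ set $\chi_t(x) := \chi(x_n/t)$. For $u \in \ASpace{s}{p}{q}[\BngeqClosure{1/2}][\FilteredSheafGenByXXdv]$, the distribution $\chi_t u$ is supported in $\{x_n \geq t\} \cap \BngeqClosure{1/2}$, and convolving with a standard mollifier $\rho_\epsilon$ supported in $\Bn{\epsilon}$ with $\epsilon < t/4$ yields $f_{t,\epsilon} := \chi_t u * \rho_\epsilon \in \CinftyCptSpace[\Bng{3/4}]$. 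The goal is then to choose parameters $t, \epsilon \to 0$ so that $f_{t,\epsilon} \to u$ in $\ASpace{s}{p}{q}[\BngeqClosure{3/4}][\FilteredSheafGenByXXdv]$. Since the vector fields $X_{q+1} = \partial_{x_n}$ and the cutoff $\chi_t$ are well-adapted to each other, these operations interact naturally with the filtration.

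The mollification step is the easier of the two. Because $\chi_t u$ has support bounded away from $\BoundaryN$, one may extend $\chi_t u$ to an ambient distribution on $\Bn{1}$ using Theorem \ref{Thm::Spaces::Extension} (the extension requires no trace matching here), apply standard convolutional mollification in the ambient manifold-without-boundary setting where \cite[Theorem 6.5.1]{StreetMaximalSubellipticity} gives $\chi_t u * \rho_\epsilon \to \chi_t u$ in $\ASpace{s}{p}{q}[\Bn{1}][\FilteredSheafGenByXXdv]$ as $\epsilon \to 0$, and then restrict back to $\BngeqClosure{3/4}$ using Theorem \ref{Thm::Spaces::Extension} \ref{Item::Spaces::Extension::Restriction}. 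Thus for each fixed $t$ small, $f_{t,\epsilon} \to \chi_t u$ in $\ASpace{s}{p}{q}[\BngeqClosure{3/4}][\FilteredSheafGenByXXdv]$ as $\epsilon \to 0$.

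The main step is to show $\chi_t u \to u$, i.e., $(1 - \chi_t) u \to 0$, in $\ASpace{s}{p}{q}[\BngeqClosure{3/4}][\FilteredSheafGenByXXdv]$ as $t \to 0^+$. In the low-regularity case \ref{Item::Traces::CoordReduction::VanishingChar::smalls}, where $s < \lambda/p$, there is no trace obstruction and the convergence is obtained by a direct analysis on the Littlewood–Paley side: using $\sD_0$ from Section \ref{Section::Spaces::MainDefns}, one expands $(1-\chi_t)u$ in elementary operators and exploits that $1 - \chi_t$ is uniformly bounded, converges pointwise to zero away from $\BoundaryN$, and concentrates in a strip of thickness $t$ whose contribution to the $\VSpace{p}{q}$ norm vanishes precisely when $s < \lambda/p$ (the same exponent appearing in the trace theorem, reflecting the Carnot–Carathéodory scaling in which a strip of Euclidean thickness $t$ has intrinsic thickness $t^{1/\lambda}$). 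In the high-regularity case \ref{Item::Traces::CoordReduction::VanishingChar::larges}, one exploits the trace hypothesis $\TraceMap[\sN_L] u = 0$ via the Taylor representation
\begin{equation*}
u(x', x_n) = \frac{1}{L!}\int_0^{x_n} (x_n - \tau)^L \partial_{x_n}^{L+1} u(x', \tau)\, d\tau,
\end{equation*}
established first for smooth $u$ with vanishing traces and then extended by density using Corollary \ref{Cor::Spaces::Approximation::SmoothFunctionsAreDense} and the trace theorem. Combining this with Proposition \ref{Prop::Spaces::MappingOfFuncsAndVFs}, which sends $\partial_{x_n}^{L+1}$ boundedly from $\ASpace{s}{p}{q}$ to $\ASpace{s - (L+1)\lambda}{p}{q}$, reduces the problem to an anisotropic Hardy-type inequality bounding $\|(1 - \chi_t) u\|_{\ASpace{s}{p}{q}}$ by a positive power of $t$ times $\|\partial_{x_n}^{L+1} u\|_{\ASpace{s - (L+1)\lambda}{p}{q}}$, the latter space being of regularity $s - (L+1)\lambda < \lambda/p - \lambda < 0$, which falls in the regime already handled in case \ref{Item::Traces::CoordReduction::VanishingChar::smalls}.

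The main obstacle will be the anisotropic Hardy inequality in the high-regularity case: the classical proof, e.g., in \cite[Section 2.4.4]{RunstSickelSobolevSpacesOfFractionalOrder}, uses Fourier multiplier calculus and difference characterizations of Besov and Triebel–Lizorkin spaces that are unavailable here. The substitute is to work entirely within the Littlewood–Paley calculus of Section \ref{Section::Spaces::LittlewoodPaleyTheory}, expanding both $u$ and the cutoff in terms of elementary operators and estimating the multiplication scale-by-scale using Proposition \ref{Prop::Spaces::Elem::Elem::MainProps} and Lemma \ref{Lemma::Spaces::ELem::Elem::ComposeElem}; the anisotropic scaling factor $t^{1/\lambda}$ enters through Theorem \ref{Thm::Spaces::Scaling::MainScalingThm} applied at scale $\delta = t^{1/\lambda}$, matching precisely the exponent $\lambda/p$ that appears in the statement.
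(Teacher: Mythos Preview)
Your strategy has the right shape, but the anisotropic Hardy inequality you flag as ``the main obstacle'' is a genuine gap, not a detail: bounding the Taylor-remainder integral operator $u\mapsto\int_0^{x_n}(x_n-\tau)^L\partial_{x_n}^{L+1}u(\cdot,\tau)\,d\tau$ between the appropriate $\ASpace{s}{p}{q}$ spaces, together with a multiplier estimate for $(1-\chi_t)$, is not something the Littlewood--Paley calculus of Section~\ref{Section::Spaces::LittlewoodPaleyTheory} delivers off the shelf, and your sketch does not supply the mechanism. There is also a circularity risk: extending the Taylor identity from smooth $u$ to general $u$ ``by density'' requires smooth approximants with vanishing traces, which is essentially what you are proving. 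A smaller issue: standard convolution $u*\rho_\epsilon$ need not respect $\ASpace{s}{p}{q}[\cdot][\FilteredSheafGenByXXdv]$ since the $X_j$ are not translation-invariant; the adapted operators $P_l$ of Theorem~\ref{Thm::Spaces::Approximation::BasicPlProps} are the correct tool here.

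The paper sidesteps the Hardy inequality entirely by reversing your order of operations. First (Lemma~\ref{Lemma::Trace::CharVanish::ApproxBySmoothWithVanish}) it approximates $u$ by smooth compactly supported $f_j$; in case~\ref{Item::Traces::CoordReduction::VanishingChar::larges} it forces $\TraceMap[\sN_L]f_j=0$ by setting $f_j:=g_j-\TraceInverseMap[\sN_L]\TraceMap[\sN_L]g_j$, using the already-proved inverse trace map. For a \emph{smooth} $f$ with vanishing normal derivatives the factorization $f=x_n^{L+1}g$ with $g$ smooth is elementary, so no integral remainder is needed. The core estimate (Proposition~\ref{Prop::Trace::CharVanish::MainEstimate}) then shows only that the cutoffs $\phi_0(2^{\lambda K}x_n)f$ are \emph{uniformly bounded} in $\ASpace{s}{p}{q}$, not that they converge in norm; norm convergence (of convex combinations) is obtained by combining this with distributional convergence via reflexivity and Mazur's lemma in Proposition~\ref{Prop::Spaces::Approximation::DistributionConvgToStronger}~\ref{Item::Spaces::Approximation::DistributionConvgToStronger::StrongConvg}, which is exactly where the hypothesis $1<p,q<\infty$ enters.
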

Proposition \ref{Prop::Traces::CoordReduction::VanishingChar} is proved in Section \ref{Section::Trace::CharacterizeVanish}.
    
        \subsubsection{Proof of the reduction to a convenient coordinate system}\label{Section::Trace::ReductionToCoord::Proof}
        In this subsection, we reduce  Propositions \ref{Prop::Trace::Reduction::ForwardMap},
\ref{Prop::Trace::Reduction::InverseMap},
and \ref{Prop::Trace::Reduction::VanishingChar} to
Propositions \ref{Prop::Traces::CoordReduction::ForwardMap},
\ref{Prop::Traces::CoordReduction::InverseMap},
and \ref{Prop::Traces::CoordReduction::VanishingChar}, respectively;
thereby reducing Theorems \ref{Thm::Trace::ForwardMap}, \ref{Thm::Trace::Dirichlet::MainInverseThm}, and \ref{Thm::Trace::Vanish::MainVanishThm}
to Propositions \ref{Prop::Traces::CoordReduction::ForwardMap},
\ref{Prop::Traces::CoordReduction::InverseMap},
and \ref{Prop::Traces::CoordReduction::VanishingChar}.
The key is the existence of a good coordinate system, which the next proposition gives.

\begin{proposition}\label{Prop::Trace::CoordReduction::Proof::ExistCoordSystem}
    Let \(\ManifoldN\) be a smooth manifold with boundary and let \(\FilteredSheafF\)
    be a H\"ormander filtration of sheaves of vector fields on \(\ManifoldN\).
    Fix \(x_0\in \BoundaryNncF[\lambda]\) and let \(U\) be a neighborhood of \(x_0\)
    and \(Y\in \LieFilteredSheafF[U][\lambda]\) with \(Y(x_0)\not\in \TangentSpace{x_0}{\BoundaryN}\).
    There exist vector fields with formal degrees
    \(\XXdv=\left\{ \left( X_1,\Xdv_1 \right),\ldots, \left( X_{q+1},\Xdv_{q+1} \right) \right\}\subset \VectorFields{\Bngeq{1}}\times \Zg\)
    satisfying the properties \ref{Item::Traces::CoordReduction::IsPartialxn}--\ref{Item::Traces::CoordReduction::Span},
    and a map \(\Phi:\Bngeq{1}\rightarrow \ManifoldN\)
    satisfying:
    \begin{enumerate}[(i)]
        \item\label{Item::Trace::CoordReduction::Proof::ExistCoordSystem::Phi0} \(\Phi(0)=x_0\).
        \item\label{Item::Trace::CoordReduction::Proof::ExistCoordSystem::PhiDiffeo} \(\Phi(\Bngeq{1})\Subset \ManifoldNncF[\lambda]\) is open and \(\Phi:\Bngeq{1}\xrightarrow{\sim} \Phi\left( \Bngeq{1} \right)\)
            is a smooth diffeomorphism.
        \item\label{Item::Trace::CoordReduction::Proof::ExistCoordSystem::PhiMapsBoundarys} \(\Phi(\Bnmo{1})=\Phi\left( \Bngeq{1} \right)\cap \BoundaryN\).
        \item\label{Item::Trace::CoordReduction::Proof::ExistCoordSystem::PhiTakesXToF} Let \(\left( \Phi_{*}X, \Xdv \right)=\left\{ \left( \Phi_{*}X_1,\Xdv_1 \right),\ldots, \left( \Phi_{*}X_{q+1},\Xdv_{q+1} \right) \right\}\).
            Then, \(\LieFilteredSheafF\big|_{\Phi(\Bngeq{1})}=\FilteredSheafGenBy{\left( \Phi_{*}X, \Xdv \right)}\).
        \item\label{Item::Trace::CoordReduction::Proof::ExistCoordSystem::PullBackY} \(\Phi^{*}Y=\sigma \partial_{x_n}\), for some constant \(\sigma\ne 0\).
        \item\label{Item::Trace::CoordReduction::Proof::ExistCoordSystem::PhiTakesVToBoundaryF} Define \(\VVdv\) as above (i.e., \(V_j=X_j\big|_{\Bnmo{1}}\in \VectorFields{\Bnmo{1}}\), \(1\leq j\leq q\)),
            and set
            \(\left( \Phi_{*}V, \Vdv \right)=\left\{ \left( \Phi_{*}V_1, \Vdv_1 \right),\ldots, \left( \Phi_{*}V_{q}, \Vdv_q \right) \right\}\subset \VectorFields{\Phi(\Bnmo{1})}\times \Zg\).
            Then,
            \begin{equation*}
                \RestrictFilteredSheaf{\LieFilteredSheafF}{\Phi(\Bnmo{1})}=\FilteredSheafGenBy{(\Phi_{*}V, \Vdv)}.
            \end{equation*}
        \item\label{Item::Trace::CoordReduction::Proof::ExistCoordSystem::PhiIsIsoOnXSpace} 
            For \(\Compact \Subset \Bngeq{1}\) compact, 
            \(\Phi^{*}: \DistributionsZeroN\rightarrow \DistributionsZero[\Bngeq{1}]\) restricts to an isomorphism:
            \begin{equation*}
                \ASpace{s}{p}{q}[\Phi(\Compact)][\FilteredSheafF] \xrightarrow{\sim} \ASpace{s}{p}{q}[\Compact][\FilteredSheafGenByXXdv].
            \end{equation*}
        \item\label{Item::Trace::CoordReduction::Proof::ExistCoordSystem::PhiIsIsoOnVSpace} For \(\Compact_0\Subset \Bnmo{1}\) compact, \(\Phi\big|_{\Bnmo{1}}^{*}:\Distributions[\BoundaryN]\rightarrow \Distributions[\Bnmo{1}]\)
            restricts to an isomorphism:
            \begin{equation*}
                \BigASpace{s}{p}{q}[\Phi(\Compact_0)][\RestrictFilteredSheaf{\LieFilteredSheafF}{\BoundaryNncF}]
                \xrightarrow{\sim}
                \ASpace{s}{p}{q}[\Compact_0][\FilteredSheafGenByVVdv].
            \end{equation*}
    \end{enumerate}
\end{proposition}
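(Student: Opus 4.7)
The plan is to build $\Phi$ in two stages---a standard boundary chart followed by the flow of $Y$---and to construct $\XXdv$ by starting from Lie-filtration generators and adjusting them to be tangent to the boundary, aside from the one that represents $Y$.

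First I would fix a chart $\psih: U_0 \to \Rngeq$ centered at $x_0$ with $\psih(U_0 \cap \BoundaryN) \subseteq \{y_n = 0\}$, and after possibly replacing $Y$ by $-Y$ arrange that $Y$ points into $\InteriorN$ at $x_0$. Using Proposition \ref{Prop::Filtrations::RestrictingFiltrations::CoDim0CoRestriction} to embed $\ManifoldN$ as a closed co-dimension-$0$ submanifold of an ambient manifold without boundary, I can extend $Y$ so that its flow $e^{tY}$ is defined near $x_0$ for both positive and negative $t$. The map $(y', t) \mapsto e^{tY}(\psih^{-1}(y', 0))$ is then a local diffeomorphism from a neighborhood of $0 \in \R^n$ onto a neighborhood of $x_0$ in the ambient manifold, restricting (by transversality of $Y$ to $\BoundaryN$) to a diffeomorphism between neighborhoods of $0 \in \Rngeq$ and $x_0 \in \ManifoldN$ that sends $\{y_n = 0\}$ to $\BoundaryN$ and in which $Y = \partial_{y_n}$. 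Composing with a sufficiently small dilation yields $\Phi: \Bngeq{1} \to \ManifoldN$ with $\Phi(\Bngeq{1}) \Subset \ManifoldNncF[\lambda]$ (using that $\BoundaryNncF[\lambda]$ is open), establishing items \ref{Item::Trace::CoordReduction::Proof::ExistCoordSystem::Phi0}--\ref{Item::Trace::CoordReduction::Proof::ExistCoordSystem::PhiMapsBoundarys} and \ref{Item::Trace::CoordReduction::Proof::ExistCoordSystem::PullBackY} with $\sigma$ equal to the dilation factor.

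Next I would construct $\XXdv$. Apply Lemma \ref{Lemma::Filtrations::GeneratorsForLieFiltration} on a set $\Omegah \Subset \ManifoldNncF[\lambda]$ with $\Phi(\Bngeq{1}) \Subset \Omegah$ to get generators $(\Xh, \Xdv)$ of $\LieFilteredSheafF|_{\Omegah}$ spanning $T\ManifoldN$ at every point. Adjoin the pair $(Y, \lambda)$ to this list; it remains a generating set since $Y \in \LieFilteredSheafF[\Omegah][\lambda]$. For each $\Xh_j$ with $\Xdv_j \geq \lambda$, pick $a_j \in \CinftySpace[\Omegah]$ equal on $\Omegah \cap \BoundaryN$ to the coefficient of $\partial_{y_n}$ in the expansion of $\Xh_j$ in the straightened chart, and replace $\Xh_j$ by $\Xh_j - a_j Y$; this lies in $\LieFilteredSheafF[\Omegah][\Xdv_j]$ because $Y \in \LieFilteredSheafF[\Omegah][\lambda] \subseteq \LieFilteredSheafF[\Omegah][\Xdv_j]$. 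When $\Xdv_j < \lambda$, the hypothesis $\degBoundaryNF[x] = \lambda$ near $x_0$ already forces $\Xh_j(x) \in T_x \BoundaryN$ for $x \in \BoundaryN \cap \Omegah$. Pull everything back by $\Phi$ and label $(Y, \lambda)$ (rescaled by $1/\sigma$) as $(X_{q+1}, \Xdv_{q+1}) = (\partial_{x_n}, \lambda)$. This gives \ref{Item::Traces::CoordReduction::IsPartialxn}--\ref{Item::Traces::CoordReduction::TangentToBoundary} and \ref{Item::Traces::CoordReduction::Span}, and the commutator identities \ref{Item::Traces::CoordReduction::NSW} follow since $[X_j, X_k]$ lies in the degree-$\leq \Xdv_j+\Xdv_k$ piece of the Lie filtration, which is the $\CinftySpace$-span of $\{X_l : \Xdv_l \leq \Xdv_j+\Xdv_k\}$.

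To finish, item \ref{Item::Trace::CoordReduction::Proof::ExistCoordSystem::PhiTakesXToF} holds by construction. For \ref{Item::Trace::CoordReduction::Proof::ExistCoordSystem::PhiTakesVToBoundaryF}, the inclusion $\FilteredSheafGenBy{(\Phi_*V, \Vdv)} \subseteq \RestrictFilteredSheaf{\LieFilteredSheafF}{\Phi(\Bnmo{1})}$ is immediate from Definition \ref{Defn::Filtrations::RestrictingFiltrations::RestrictedFiltration}, since each $\Phi_*V_j$ is the restriction of the tangential field $\Phi_*X_j \in \LieFilteredSheafF[\Phi(\Bngeq{1})][\Xdv_j]$. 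For the reverse, by Proposition \ref{Prop::Filtrations::RestrictingFiltrations::LieFIsHormanderFiltration} it suffices to show any tangential $Z \in \LieFilteredSheafF[\Phi(\Bngeq{1})][d]$ has $Z|_{\Phi(\Bnmo{1})} \in \FilteredSheafGenBy{(\Phi_*V,\Vdv)}[d]$; expanding $Z = \sum c_l \Phi_*X_l$ with $\Xdv_l \leq d$, tangency on $\Phi(\Bnmo{1})$ kills the coefficient of $\Phi_*X_{q+1} = \partial_{x_n}$ there, yielding the claim. Items \ref{Item::Trace::CoordReduction::Proof::ExistCoordSystem::PhiIsIsoOnXSpace} and \ref{Item::Trace::CoordReduction::Proof::ExistCoordSystem::PhiIsIsoOnVSpace} then follow from Proposition \ref{Prop::Spaces::BasicProps::DiffeoInv::DiffeosInduceIsomorphism} applied to $\Phi$ and to $\Phi|_{\Bnmo{1}}$. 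The main technical obstacle I anticipate is \ref{Item::Trace::CoordReduction::Proof::ExistCoordSystem::PhiTakesVToBoundaryF}: interchanging Lie-algebra closure with restriction to the boundary requires Proposition \ref{Prop::Filtrations::RestrictingFiltrations::LieFIsHormanderFiltration} in an essential way, and a careful accounting of which degrees survive after the tangential modification is needed.
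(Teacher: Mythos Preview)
Your approach is essentially the paper's: it also constructs the chart via the flow of \(Y\) (after a sign choice) composed with a dilation, and builds \(\XXdv\) from Lie-filtration generators modified by subtracting off their normal component against \(Y\). The paper packages the generator construction into a separate lemma (Lemma~\ref{Lemma::Trace::CoordReduction::Proof::NiceGenerators}), but the content is the same.

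One point to tighten: for \ref{Item::Trace::CoordReduction::Proof::ExistCoordSystem::PhiIsIsoOnXSpace} and \ref{Item::Trace::CoordReduction::Proof::ExistCoordSystem::PhiIsIsoOnVSpace}, Proposition~\ref{Prop::Spaces::BasicProps::DiffeoInv::DiffeosInduceIsomorphism} alone gives an isomorphism onto \(\ASpace{s}{p}{q}[\Phi(\Compact)][\FilteredSheafGenBy{(\Phi_*X,\Xdv)}]\), not onto \(\ASpace{s}{p}{q}[\Phi(\Compact)][\FilteredSheafF]\). You still need to identify these two spaces, which the paper does via Corollary~\ref{Cor::Spaces::MainEst::SpacesAreDefinedLocally} (to pass from the filtration on \(\Phi(\Bngeq{1})\) to \(\LieFilteredSheafF\)) and Theorem~\ref{Thm::Spaces::OnlyDependsOnLieFiltration} (to pass from \(\LieFilteredSheafF\) to \(\FilteredSheafF\)). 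The analogous chain is needed on the boundary for \ref{Item::Trace::CoordReduction::Proof::ExistCoordSystem::PhiIsIsoOnVSpace}. This is routine but should be made explicit.
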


To prove Proposition \ref{Prop::Trace::CoordReduction::Proof::ExistCoordSystem}, we use the next lemma, which
gives convenient generators for \(\LieFilteredSheafF\).

\begin{lemma}\label{Lemma::Trace::CoordReduction::Proof::NiceGenerators}
        Let \(\ManifoldN\) be a smooth manifold with boundary and let \(\FilteredSheafF\)
    be a H\"ormander filtration of sheaves of vector fields on \(\ManifoldN\).
    Fix \(x_0\in \BoundaryNncF[\lambda]\) and let \(U\) be a neighborhood of \(x_0\)
    and \(Y\in \LieFilteredSheafF[U][\lambda]\) with \(Y(x_0)\not\in \TangentSpace{x_0}{\BoundaryN}\).
    There exists a neighborhood \(\Omega\Subset U\) of \(x_0\) and
    \((\Xt,\Xdv)=\left\{ \left( \Xt_1, \Xdv_1 \right),\ldots, \left( \Xt_{q+1},\Xdv_{q+1} \right) \right\}\subset \VectorFields{\Omega}\times \Zg\),
    such that:
    \begin{enumerate}[(i)]
        \item\label{Item::Trace::CoordReduction::Proof::NiceGenerators::Xqplus1} \(\Xt_{q+1}=\pm Y\).
        \item\label{Item::Trace::CoordReduction::Proof::NiceGenerators::Span} \(\forall x\in \Omega\), \(\Span\left\{ \Xt_1(x),\ldots, \Xt_{q+1}(x) \right\}=\TangentSpace{x_0}{\Omega}\).
        \item\label{Item::Trace::CoordReduction::Proof::NiceGenerators::AreGenerators} \(\LieFilteredSheafF\big|_{\Omega}=\FilteredSheafGenBy{(\Xt,\Xdv)}\).
        \item\label{Item::Trace::CoordReduction::Proof::NiceGenerators::NSW} \([\Xt_j,\Xt_k]=\sum_{\Xdv_l\leq \Xdv_j+\Xdv_k} \ct_{j,k}^l \Xt_l\), \(\ct_{j,k}^l\in \CinftySpace[\Omega]\).
        \item\label{Item::Trace::CoordReduction::Proof::NiceGenerators::PointsInside} \(\Xdv_{q+1}=\lambda\), \(\Xt_{q+1}(x)\not \in \TangentSpace{x}{\BoundaryN}\), \(\forall x\in \BoundaryN\),
            and \(\Xt_{q+1}\) points into \(\ManifoldN\), in the sense that if \(\forall \Compact\Subset \Omega\) is compact,
            \(\exists a>0\), \(\forall t\in (0,a)\), we have \(e^{t\Xt_{q+1}}x\in \InteriorN\), for \(x\in \BoundaryN\cap \Compact\).
        \item\label{Item::Trace::CoordReduction::Proof::NiceGenerators::TangentToBdry} \(\Xt_j(x)\in \TangentSpace{x}{\BoundaryN}\), \(\forall 1\leq j\leq q\), \(\forall x\in \Omega\cap\BoundaryN\).
        \item\label{Item::Trace::CoordReduction::Proof::NiceGenerators::GenerateBoundarySheaf} For \(1\leq j\leq q\), set \(\Vt_j:=\Xt_j\big|_{\Omega\cap\BoundaryN}\in \VectorFields{\Omega\cap\BoundaryN}\), and 
            \(\VtVdv:=\left\{ \left( \Vt_1,\Vdv_1\right),\ldots, \left( \Vt_{q},\Vdv_q \right) \right\}\).
            Then, \(\RestrictFilteredSheaf{\LieFilteredSheafF}{\BoundaryN\cap\Omega}=\FilteredSheafGenBy{\VtVdv}\).
    \end{enumerate}
\end{lemma}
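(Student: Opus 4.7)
The plan is to closely follow the construction in Lemma \ref{Lemma::Spaces::Multiplication::NiceGenerators}, making the necessary adjustments so that the output contains $\pm Y$ as the transverse generator (rather than a generic non-characteristic element extracted from a list), and so that the restriction to $\BoundaryN$ gives a H\"ormander filtration with explicit generators.

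First I would shrink $U$ if necessary and apply Lemma \ref{Lemma::Filtrations::GeneratorsOnRelCptSet} to find $\WWde\subset\VectorFields{U_1}\times \Zg$ with $\FilteredSheafF\big|_{U_1}=\FilteredSheafGenBy{\WWde}$ on a relatively compact $U_1\Subset U$ containing $x_0$. Then, on an $\ManifoldN$-open $\Omega\Subset U_1$ containing $x_0$, I would invoke Lemma \ref{Lemma::Filtrations::GeneratorsForLieFiltration} to produce $\ZZde=\{(Z_1,\Zde_1),\ldots,(Z_q,\Zde_q)\}\subset\Gen{\WWde}$ satisfying the spanning property, the generation $\LieFilteredSheafF\big|_{\Omega}=\FilteredSheafGenBy{\ZZde}$, and the Nagel--Stein--Wainger commutator identities $[Z_j,Z_k]=\sum_{\Zde_l\leq\Zde_j+\Zde_k}c_{j,k}^l Z_l$. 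Since $Y\in\LieFilteredSheafF[U][\lambda]=\FilteredSheafGenBy{\ZZde}[\Omega][\lambda]$, adjoining $(Y,\lambda)$ to $\ZZde$ preserves all of (ii), (iii), (iv): the added commutators $[Y,Z_j]\in\LieFilteredSheafF[\Omega][\lambda+\Zde_j]$ and $[Y,Y]=0$ can be expanded in $\ZZde$ (hence in the enlarged list). Using $x_0\in\BoundaryNncF[\lambda]$ and $Y(x_0)\not\in T_{x_0}\BoundaryN$, after further shrinking $\Omega$ we have $\degBoundaryNF[x]=\lambda$ and $Y(x)\not\in T_x\BoundaryN$ for every $x\in\Omega\cap\BoundaryN$; by the standard straightening of a transverse flow, replacing $Y$ with $-Y$ if needed yields the inward-pointing property \ref{Item::Trace::CoordReduction::Proof::NiceGenerators::PointsInside}. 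Set $\Xt_{q+1}:=\pm Y$.

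Next I would make the remaining generators tangent to $\BoundaryN$. For each $Z_j$: if $\Zde_j<\lambda$, then by Example \ref{Example::Filtrations::RestrictingFiltrations::NonCharExamples} \ref{Item::Filtrations::RestrictingFiltrations::NonCharExamples::CharacterizeNonCharInTermsOfWWdv}, after possibly shrinking $\Omega$, $Z_j(x)\in T_x\BoundaryN$ for $x\in\Omega\cap\BoundaryN$ automatically, so set $\Xt_j:=Z_j$; if $\Zde_j\geq\lambda$, pick the unique $a_j\in \CinftySpace[\Omega\cap\BoundaryN]$ with $Z_j-a_j\Xt_{q+1}$ tangent to $\BoundaryN$ on $\Omega\cap\BoundaryN$, extend $a_j$ smoothly to $\Omega$, and set $\Xt_j:=Z_j-a_j\Xt_{q+1}\in\LieFilteredSheafF[\Omega][\Zde_j]$, with $\Xdv_j:=\Zde_j$. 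This step preserves the generation and span properties and, by a direct computation using the NSW identities for $\ZZde$ together with $[\Xt_{q+1},\cdot]\in\LieFilteredSheafF$ at the appropriate level, preserves \ref{Item::Trace::CoordReduction::Proof::NiceGenerators::NSW} for the new list $\XtXdv$; \ref{Item::Trace::CoordReduction::Proof::NiceGenerators::TangentToBdry} holds by construction.

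The main obstacle is property \ref{Item::Trace::CoordReduction::Proof::NiceGenerators::GenerateBoundarySheaf}, namely $\RestrictFilteredSheaf{\LieFilteredSheafF}{\BoundaryN\cap\Omega}=\FilteredSheafGenBy{\VtVdv}$. The inclusion $\supseteq$ is immediate from Definition \ref{Defn::Filtrations::RestrictingFiltrations::RestrictedFiltration}, since each $\Xt_j\in\LieFilteredSheafF[\Omega][\Xdv_j]$ is tangent to $\BoundaryN$ on $\Omega\cap\BoundaryN$, so $\Vt_j=\Xt_j\big|_{\Omega\cap\BoundaryN}$ sits at level $\Xdv_j$ in the restricted sheaf. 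For the reverse inclusion, the plan is: any $W\in\LieFilteredSheafF[\Omega'][d]$ with $W(x)\in T_x\BoundaryN$ on $\Omega'\cap\BoundaryN$ can, by \ref{Item::Trace::CoordReduction::Proof::NiceGenerators::AreGenerators} and a partition-of-unity reduction, be written locally as $W=\sum_{\Xdv_j\leq d}c_j\Xt_j+c_{q+1}\Xt_{q+1}$ with $c_{q+1}$ appearing only when $\lambda\leq d$; evaluating at points of $\Omega'\cap\BoundaryN$ and using tangency of $W$ and $\Xt_1,\ldots,\Xt_q$ together with transversality of $\Xt_{q+1}$ forces $c_{q+1}\big|_{\BoundaryN}=0$, so $W\big|_{\Omega'\cap\BoundaryN}=\sum_{\Xdv_j\leq d}c_j\big|_{\BoundaryN}\Vt_j$, placing it in $\FilteredSheafGenBy{\VtVdv}[\Omega'\cap\BoundaryN][d]$. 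Since by Proposition \ref{Prop::Filtrations::RestrictingFiltrations::LieFIsHormanderFiltration} the restricted filtration is already known to be a H\"ormander filtration, this pointwise-modulo-$\Xt_{q+1}$ accounting is the cleanest way to pin down its generators, and is the delicate part of the argument — everything else is bookkeeping on top of the standard construction.
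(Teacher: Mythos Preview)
Your proposal is correct and follows essentially the same approach as the paper: obtain spanning generators $(Z,\Zde)$ for $\LieFilteredSheafF$ near $x_0$ via Lemma~\ref{Lemma::Filtrations::GeneratorsForLieFiltration}, adjoin $\pm Y$ at degree $\lambda$ as $\Xt_{q+1}$, then subtract off the transverse component $a_j\Xt_{q+1}$ from each $Z_j$ with $\Zde_j\geq\lambda$ to enforce tangency. The paper's write-up is briefer on \ref{Item::Trace::CoordReduction::Proof::NiceGenerators::GenerateBoundarySheaf}, simply citing (iii), (v), (vi) and Definition~\ref{Defn::Filtrations::RestrictingFiltrations::RestrictedFiltration}, whereas you spell out the ``$c_{q+1}\big|_{\BoundaryN}=0$'' argument explicitly; your version of that step is correct and is exactly what underlies the paper's citation. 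One minor point: for $\Zde_j<\lambda$ the tangency of $Z_j$ on $\Omega\cap\BoundaryN$ follows directly from the definition of $\degBoundaryNF$ and $\Omega\Subset\ManifoldNncF[\lambda]$ (no further shrinking of $\Omega$ is needed), so the reference to Example~\ref{Example::Filtrations::RestrictingFiltrations::NonCharExamples}\ref{Item::Filtrations::RestrictingFiltrations::NonCharExamples::CharacterizeNonCharInTermsOfWWdv} is slightly off but the conclusion is right.
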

\begin{proof}
    This is essentially a consequence of Lemma \ref{Lemma::Spaces::Multiplication::NiceGenerators}; however
    the proof in this setting is simpler, and we present it.
    First, we prove the result for some \(\ZZdv\) in place of \(\XtXdv\), satisfying all the conclusions
    except for \ref{Item::Trace::CoordReduction::Proof::NiceGenerators::Xqplus1}, \ref{Item::Trace::CoordReduction::Proof::NiceGenerators::TangentToBdry}, and 
    \ref{Item::Trace::CoordReduction::Proof::NiceGenerators::GenerateBoundarySheaf}.

    Let \(\Omega_1\Subset U\) be a relatively compact neighborhood of \(x_0\).
    By Lemma \ref{Lemma::Filtrations::GeneratorsForLieFiltration},
    there exists \(\ZZdv=\left\{ \left( Z_1,\Zdv_1 \right),\ldots, \left( Z_{q},\Zdv_{q} \right) \right\}\subset \VectorFields{\Omega_1}\times \Zg\)
    satisfying \ref{Item::Trace::CoordReduction::Proof::NiceGenerators::Span}
    and \ref{Item::Trace::CoordReduction::Proof::NiceGenerators::AreGenerators} (with \(\XtXdv\) replaced by \(\ZZdv\)).
    Since \([Z_j,Z_k]\in \LieFilteredSheafF[\Omega_1][\Zdv_j+\Zdv_k]\), \ref{Item::Trace::CoordReduction::Proof::NiceGenerators::NSW}
    follows from \ref{Item::Trace::CoordReduction::Proof::NiceGenerators::AreGenerators}.

    Let \(\Omega\Subset \Omega_1\) be a (small) connected neighborhood of \(x_0\) with \(\BoundaryN\cap\Omega\) connected
    and such that \(Y(x)\not \in \TangentSpace{x}{\BoundaryN}\), \(\forall x\in \Omega\cap \BoundaryN\).
    By letting \(\Xt_{q+1}\) be either \(Y\) or \(-Y\),
    \ref{Item::Trace::CoordReduction::Proof::NiceGenerators::PointsInside} holds (and clearly
    \ref{Item::Trace::CoordReduction::Proof::NiceGenerators::Xqplus1} holds as well).

    Since \(\Omega\Subset \ManifoldNncF[\lambda]\), if \(\Zdv_j< \lambda\) we have
    \(Z_j(x)\in \TangentSpace{x}{\BoundaryN}\), \(\forall x\in \Omega\cap \BoundaryN\).
    For \(1\leq j\leq q\) with \(\Zdv_j\geq \lambda\), pick \(a_j(x)\in \CinftySpace[\Omega]\)
    such that \(Z_j(x)-a_j(x)\Xt_{q+1}(x)\in \TangentSpace{x}{\BoundaryN}\), \(\forall x\in \Omega\cap \BoundaryN\).
    Set
    \begin{equation*}
        \Xt_j:=
        \begin{cases}
            Z_j, &\Zdv_j<\Zdv_{q+1},\\
            Z_j-a_j\Xt, & \Zdv_j\geq \Zdv_{q+1}, 1\leq j\leq q+1,\\
            \Xt_{q+1}, &j=q+1,
        \end{cases}
    \end{equation*} 
    and \(\XtXdv:=\left\{ \left( \Xt_1, \Zdv_1 \right),\ldots, \left( \Xt_{q+1}, \Zdv_{q+1} \right) \right\}\).
    \ref{Item::Trace::CoordReduction::Proof::NiceGenerators::Xqplus1},
    \ref{Item::Trace::CoordReduction::Proof::NiceGenerators::Span}, \ref{Item::Trace::CoordReduction::Proof::NiceGenerators::AreGenerators},
    \ref{Item::Trace::CoordReduction::Proof::NiceGenerators::NSW},
    and \ref{Item::Trace::CoordReduction::Proof::NiceGenerators::PointsInside}
    follow from the above construction and the corresponding results for \(\ZZdv\).
    \ref{Item::Trace::CoordReduction::Proof::NiceGenerators::TangentToBdry} holds by construction.
    Finally, \ref{Item::Trace::CoordReduction::Proof::NiceGenerators::GenerateBoundarySheaf}
    follows from \ref{Item::Trace::CoordReduction::Proof::NiceGenerators::AreGenerators},
    \ref{Item::Trace::CoordReduction::Proof::NiceGenerators::PointsInside},
    \ref{Item::Trace::CoordReduction::Proof::NiceGenerators::TangentToBdry}, and 
    Definition \ref{Defn::Filtrations::RestrictingFiltrations::RestrictedFiltration}.
\end{proof}

\begin{proof}[Proof of Proposition \ref{Prop::Trace::CoordReduction::Proof::ExistCoordSystem}]
    Let \(\XtXdv\) and \(\Omega\) be as in Lemma \ref{Lemma::Trace::CoordReduction::Proof::NiceGenerators}.
    Using Lemma \ref{Lemma::Trace::CoordReduction::Proof::NiceGenerators} \ref{Item::Trace::CoordReduction::Proof::NiceGenerators::PointsInside}
    and standard results from differential topology, for some \(\sigma>0\) small, we may pick
    \(\Phi_0:\Bngeq{\sigma}\xrightarrow{\sim}\Phi_{0}\left( \Bngeq{\sigma} \right)\subset \Omega\) a coordinate chart
    near \(x_0\), with \(\Phi_0^{*} X_{q+1}=\partial_{x_n}\), \(\Phi_0(0)=x_0\), and \(\Phi_0\left( \Bnmo{\sigma} \right)=\Phi_0\left( \Bngeq{\sigma} \right)\cap\BoundaryN\).
    Set \(\Psi_\sigma(t):=\sigma t\), and \(\Phi:=\Phi_0\circ \Psi_{\sigma}:\Bngeq{1}\xrightarrow{\sim}\Phi(\Bngeq{1})\).
    Note that \(\Phi(0)=x_0\), \(\Phi\) is a coordinate chart near \(x_0\), and \(\Phi\left( \Bnmo{1} \right)=\Phi\left( \Bngeq{1} \right)\cap \BoundaryN\).
    Set
    \begin{equation*}
        X_j:=
        \begin{cases}
            \Phi^{*} \Xt_j, &1\leq j\leq q,\\
            \partial_{x_n}=\sigma\Phi^{*} \Xt_{q+1}, & j=q+1,
        \end{cases}
    \end{equation*}
    and \(\XXdv:=\left\{ \left( X_1,\Xdv_1 \right),\ldots, \left( X_{q+1},\Xdv_{q+1} \right) \right\}\).

    \(\XXdv\) satisfies the assumptions \ref{Item::Traces::CoordReduction::IsPartialxn}--\ref{Item::Traces::CoordReduction::Span}
    by construction and the corresponding properties of \(\XtXdv\)
    (see Lemma  \ref{Lemma::Trace::CoordReduction::Proof::NiceGenerators} \ref{Item::Trace::CoordReduction::Proof::NiceGenerators::Span}, \ref{Item::Trace::CoordReduction::Proof::NiceGenerators::NSW}, and \ref{Item::Trace::CoordReduction::Proof::NiceGenerators::TangentToBdry}).

    \(\Phi\) satisfies \ref{Item::Trace::CoordReduction::Proof::ExistCoordSystem::Phi0},
    \ref{Item::Trace::CoordReduction::Proof::ExistCoordSystem::PhiDiffeo},
    and \ref{Item::Trace::CoordReduction::Proof::ExistCoordSystem::PhiMapsBoundarys} by construction;
    and satisfies \ref{Item::Trace::CoordReduction::Proof::ExistCoordSystem::PhiTakesXToF} and \ref{Item::Trace::CoordReduction::Proof::ExistCoordSystem::PhiTakesVToBoundaryF} by
    construction and 
    Lemma \ref{Lemma::Trace::CoordReduction::Proof::NiceGenerators} 
    \ref{Item::Trace::CoordReduction::Proof::NiceGenerators::AreGenerators} and \ref{Item::Trace::CoordReduction::Proof::NiceGenerators::GenerateBoundarySheaf}.
    Using Lemma \ref{Lemma::Trace::CoordReduction::Proof::NiceGenerators} \ref{Item::Trace::CoordReduction::Proof::NiceGenerators::Xqplus1},
    we have \(\Phi^{*}Y=\pm \Phi^{*}\Xt_{q+1}=\pm \sigma^{-1}\partial_{x_n}\),
    establishing \ref{Item::Trace::CoordReduction::Proof::ExistCoordSystem::PullBackY} with \(\sigma\)
    replaced by \(\pm \sigma^{-1}\).

    \ref{Item::Trace::CoordReduction::Proof::ExistCoordSystem::PhiIsIsoOnXSpace}:
    Let \(\Compact\Subset \Bngeq{1}\) be compact. Proposition \ref{Prop::Spaces::BasicProps::DiffeoInv::DiffeosInduceIsomorphism}
    shows
    \begin{equation*}
        \Phi^{*}:\BigASpace{s}{p}{q}[\Phi(\Compact)][\FilteredSheafGenBy{\XtXdv}]\xrightarrow{\sim} \BigASpace{s}{p}{q}[\Compact][\FilteredSheafGenBy{\XXdv}]
    \end{equation*}
    is an isomorphism.
    Corollary \ref{Cor::Spaces::MainEst::SpacesAreDefinedLocally}
    combined with Lemma \ref{Lemma::Trace::CoordReduction::Proof::NiceGenerators} \ref{Item::Trace::CoordReduction::Proof::NiceGenerators::AreGenerators}
    shows
    \begin{equation*}
        \BigASpace{s}{p}{q}[\Phi(\Compact)][\FilteredSheafGenBy{\XtXdv}]=\ASpace{s}{p}{q}[\Phi(\Compact)][\LieFilteredSheafF],
    \end{equation*}
    with equality of topologies. Finally, Theorem \ref{Thm::Spaces::OnlyDependsOnLieFiltration}
    (see, also, Lemma \ref{Lemma::Filtrations::GeneratorsForLieFiltration}) shows
    \begin{equation*}
        \ASpace{s}{p}{q}[\Phi(\Compact)][\LieFilteredSheafF]=\ASpace{s}{p}{q}[\Phi(\Compact)][\FilteredSheafF],
    \end{equation*}
    with equality of topologies. \ref{Item::Trace::CoordReduction::Proof::ExistCoordSystem::PhiIsIsoOnXSpace} follows.

    \ref{Item::Trace::CoordReduction::Proof::ExistCoordSystem::PhiIsIsoOnVSpace}:
    Let \(\Compact_0\Subset \Bngeq{1}\) be compact.
    Since \(\left( \Phi\big|_{\Bnmo{1}} \right)_{*} V_j=\Vt_j\), where \(\Vt_j\) are as in 
    Lemma \ref{Lemma::Trace::CoordReduction::Proof::NiceGenerators} \ref{Item::Trace::CoordReduction::Proof::NiceGenerators::GenerateBoundarySheaf},
    Proposition \ref{Prop::Spaces::BasicProps::DiffeoInv::DiffeosInduceIsomorphism}
    shows \(\Phi\big|_{\Bnmo{1}}^{*}\) is an isomorphism
    \begin{equation*}
                \BigASpace{s}{p}{q}[\Phi(\Compact_0)][\FilteredSheafGenBy{\VtVdv}]
                \xrightarrow{\sim}
                \ASpace{s}{p}{q}[\Compact_0][\FilteredSheafGenByVVdv].
    \end{equation*}
     Corollary \ref{Cor::Spaces::MainEst::SpacesAreDefinedLocally}
    combined with Lemma \ref{Lemma::Trace::CoordReduction::Proof::NiceGenerators} \ref{Item::Trace::CoordReduction::Proof::NiceGenerators::GenerateBoundarySheaf}
    shows
    \begin{equation*}
        \BigASpace{s}{p}{q}[\Phi(\Compact_0)][\FilteredSheafGenBy{\VtVdv}]=\BigASpace{s}{p}{q}[\Phi(\Compact_0)][\RestrictFilteredSheaf{\LieFilteredSheafF}{\BoundaryNncF}]
    \end{equation*}
    with equality of topologies, completing the proof of \ref{Item::Trace::CoordReduction::Proof::ExistCoordSystem::PhiIsIsoOnVSpace}.
\end{proof}

\begin{proof}[Proof of Propositions \ref{Prop::Trace::Reduction::ForwardMap},
\ref{Prop::Trace::Reduction::InverseMap},
and \ref{Prop::Trace::Reduction::VanishingChar}]
    We rename the vector field \(V\)
    in Propositions \ref{Prop::Trace::Reduction::ForwardMap},
\ref{Prop::Trace::Reduction::InverseMap},
and \ref{Prop::Trace::Reduction::VanishingChar} to be \(Y\).
    Applying the coordinate change given in Proposition \ref{Prop::Trace::CoordReduction::Proof::ExistCoordSystem},
    we see that Propositions \ref{Prop::Trace::Reduction::ForwardMap},
\ref{Prop::Trace::Reduction::InverseMap},
and \ref{Prop::Trace::Reduction::VanishingChar}
    are directly implied by Propositions \ref{Prop::Traces::CoordReduction::ForwardMap},
\ref{Prop::Traces::CoordReduction::InverseMap},
and \ref{Prop::Traces::CoordReduction::VanishingChar}, respectively if 
 \(\sN_L\) is replaced  in Propositions 
\ref{Prop::Traces::CoordReduction::InverseMap}
and \ref{Prop::Traces::CoordReduction::VanishingChar} by
    \(\sN_L^{\sigma}=(1, \sigma \partial_{x_n}, \sigma^2 \partial_{x_n}^2, \ldots, \sigma^L \partial_{x_n}^L )\),
    where \(\sigma\) is the nonzero constant from 
    Proposition \ref{Prop::Trace::CoordReduction::Proof::ExistCoordSystem} \ref{Item::Trace::CoordReduction::Proof::ExistCoordSystem::PullBackY}.
    Since
    \begin{equation*}
        \TraceMap[\sN_L^{\sigma}]=
\begin{pmatrix}
1 & & & \\
& \sigma & & \\
& & \ddots & \\
& & & \sigma^L
\end{pmatrix}
\TraceMap[\sN_L],
\quad
\TraceInverseMap[\sN_L^{\sigma}]
=\TraceInverseMap[\sN_L]
\begin{pmatrix}
1 & & & \\
& \sigma^{-1} & & \\
& & \ddots & \\
& & & \sigma^{-L}
\end{pmatrix},
    \end{equation*}
    the results for general (fixed) \(\sigma\) follow from those for \(\sigma=1\).
\end{proof}

    \subsection{Proof of the forward result (Proposition \texorpdfstring{\ref{Prop::Traces::CoordReduction::ForwardMap}}{\ref*{Prop::Traces::CoordReduction::ForwardMap}})}
    \label{Section::Trace::Forward}
    In this section, we prove Proposition \ref{Prop::Traces::CoordReduction::ForwardMap}.
Fix \(0<\eta_1<\eta_2<\eta_3<1\); we will prove Proposition \ref{Prop::Traces::CoordReduction::ForwardMap}
more generally with \(1/2\) replaced by \(\eta_1\).
For \(x\in \Bngeq{1}\), set
\begin{equation*}
    \MetricXXdvSet[\Bnmo{\eta_3}][x]:=\inf\left\{ \delta>0 : \exists x_0'\in \Bnmo{\eta_3}, \MetricXXdv[(x_0',0)][x]<\delta \right\}.
\end{equation*}
and 
\begin{equation*}
    \DistXXdvSet[\Bnmo{\eta_3}][x]:=\inf\left\{ \delta>0 : \exists x_0'\in \Bnmo{\eta_3}, \DistXXdv[(x_0',0)][x]<\delta \right\},
\end{equation*}
where \(\DistXXdv[x][y]\) and \(\MetricXXdv[x][y]=\DistXXdv[x][y]\wedge 1\)  are defined in \eqref{Eqn::VectorFields::DefineMetric} and \eqref{Eqn::VectorFields::DefineDistance}.

\begin{lemma}\label{Lemma::Trace::Forward::MetricAndDistEquiv}
    \(\MetricXXdv[x][y]\approx \DistXXdv[x][y]\), \(\forall x,y\in \BngeqClosure{\eta_3}\),
    where the implicit constant may depend on \(\XXdv\) and \(\eta_3\).
\end{lemma}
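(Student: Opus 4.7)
The plan is to reduce the equivalence to a single statement: \(\DistXXdv\) is uniformly bounded on \(\BngeqClosure{\eta_3}\times \BngeqClosure{\eta_3}\). Indeed, from the definition \(\MetricXXdv=\DistXXdv\wedge 1\) one gets \(\MetricXXdv[x][y]\leq \DistXXdv[x][y]\) for free, and the two quantities coincide whenever \(\DistXXdv[x][y]\leq 1\). So the only case to address is \(\DistXXdv[x][y]>1\), i.e.\ \(\MetricXXdv[x][y]=1\); there the reverse inequality \(\DistXXdv[x][y]\lesssim 1=\MetricXXdv[x][y]\) is exactly the boundedness claim.

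Next I would establish the boundedness using hypothesis \ref{Item::Traces::CoordReduction::Span}, which states \(\Span\{X_1(x),\dots,X_{q+1}(x)\}=\TangentSpace{x}{\R^n}\) for every \(x\in\Bngeq{1}\); this is considerably stronger than bare H\"ormander's condition and means no commutators are needed. Fix \(\eta_3'\in(\eta_3,1)\). For each \(x_0\in \BngeqClosure{\eta_3}\), smoothness of the flows together with the span condition gives \(r(x_0),\delta(x_0)>0\) such that \(\BXXdv{x_0}{\delta(x_0)}\) contains a Euclidean ball about \(x_0\) of radius \(r(x_0)\) and stays inside \(\Bngeq{\eta_3'}\) (one can connect \(x_0\) to any nearby point \(y\) by a short admissible curve of the form \(\gamma'(t)=\sum_j a_j(t)\delta^{\Xdv_j}X_j(\gamma(t))\) by solving a local ODE). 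By compactness of \(\BngeqClosure{\eta_3}\), I can select uniform constants \(r_0,\delta_0>0\), so that \(\DistXXdv[x][y]\leq \delta_0\) whenever \(x,y\in \BngeqClosure{\eta_3}\) and \(|x-y|\leq r_0\). Chaining at most \(\lceil \mathrm{diam}(\BngeqClosure{\eta_3})/r_0\rceil\) such steps (which is a constant depending only on \(\XXdv\) and \(\eta_3\)) connects any two points of \(\BngeqClosure{\eta_3}\), yielding \(\sup_{x,y\in\BngeqClosure{\eta_3}}\DistXXdv[x][y]\lesssim 1\).

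Alternatively, one can bypass the chaining entirely by invoking Proposition \ref{Prop::VectorFields::Scaling::SameTopology} applied with \(\FilteredSheafF=\FilteredSheafGenByXXdv\) and \(\Compact=\BngeqClosure{\eta_3}\subset \Bngeq{1}=\ManifoldNncF\): the \(\MetricXXdv\)-topology agrees with the Euclidean topology, so \(\BngeqClosure{\eta_3}\) is \(\MetricXXdv\)-compact. The local uniformity from the span hypothesis then upgrades this to \(\DistXXdv\)-compactness (locally \(\MetricXXdv\) and \(\DistXXdv\) are bi-Lipschitz equivalent), hence \(\DistXXdv\)-bounded diameter. I expect the main — though still modest — subtlety to be precisely this distinction: \(\MetricXXdv\)-boundedness of any set is trivial since \(\MetricXXdv\leq 1\), so the content of the lemma lies entirely in promoting this to \(\DistXXdv\)-boundedness, which is why hypothesis \ref{Item::Traces::CoordReduction::Span} (rather than just H\"ormander's condition) is being used.
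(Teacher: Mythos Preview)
Your proposal is correct and follows the same approach as the paper: both reduce the equivalence to the uniform bound \(\DistXXdv[x][y]\lesssim 1\) on \(\BngeqClosure{\eta_3}\times\BngeqClosure{\eta_3}\) and then invoke the spanning hypothesis \ref{Item::Traces::CoordReduction::Span}. The paper's proof is a single sentence (``This follows from the definitions and the assumption \ref{Item::Traces::CoordReduction::Span}''), whereas you have expanded the compactness-and-chaining argument that underlies that sentence; your second alternative via Proposition \ref{Prop::VectorFields::Scaling::SameTopology} is a valid variant but not needed.
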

\begin{proof}
    The statement of the lemma is equivalent to \(\DistXXdv[x][y]\lesssim 1\), \(\forall x,y\in \BngeqClosure{\eta_3}\).
    This follows from the definitions and the assumption \ref{Item::Traces::CoordReduction::Span}.
\end{proof}

\begin{lemma}\label{Lemma::Trace::Forward::SimpleDistanceToBoundary}
    For \(x=(x',x_n)\in \Bngeq{\eta_3}\),
    \begin{equation*}
        \MetricXXdvSet[\Bnmo{\eta_3}][x]
        \approx \MetricXXdv[(x',0)][x]\approx x_n^{1/\lambda}
        \approx \DistXXdv[(x',0)][x]
        \approx \DistXXdvSet[\Bnmo{\eta_3}][x].
    \end{equation*}
\end{lemma}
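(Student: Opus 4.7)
The plan is to prove the central quantitative statement $\DistXXdv[(x',0)][x]\approx x_n^{1/\lambda}$ and then derive all four claimed equivalences from it by comparing with the set-version and applying Lemma~\ref{Lemma::Trace::Forward::MetricAndDistEquiv}. Note that $x=(x',x_n)\in\Bngeq{\eta_3}$ forces $(x',0)\in\Bnmo{\eta_3}$, so all four quantities make sense and sit inside $\BngeqClosure{\eta_3}$, where $\MetricXXdv\approx\DistXXdv$ by the preceding lemma.

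For the upper bound, I would exhibit the explicit curve $\gamma(t)=(x',t x_n)$, $t\in[0,1]$. Since $\gamma'(t)=x_n\,X_{q+1}(\gamma(t))$ by hypothesis~\ref{Item::Traces::CoordReduction::IsPartialxn}, for any $\delta>x_n^{1/\lambda}$ the coefficient $a_{q+1}(t)=x_n/\delta^\lambda$ satisfies $|a_{q+1}|<1$, exhibiting $x\in\BXXdv{(x',0)}{\delta}$. Hence $\DistXXdv[(x',0)][x]\le x_n^{1/\lambda}$, and since $(x',0)\in\Bnmo{\eta_3}$ the same bound holds for the set-version $\DistXXdvSet[\Bnmo{\eta_3}][x]$ by definition.

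For the lower bound, the key input is that $X_j(x',0)$ has no $\partial_{x_n}$-component for $1\le j\le q$ (hypothesis~\ref{Item::Traces::CoordReduction::TangentToBoundary}); writing $X_j=\sum_{k<n}b_{j,k}\partial_{x_k}+c_j\partial_{x_n}$ and Taylor-expanding $c_j(x',x_n)=x_n\,d_j(x)$ with $d_j\in\CinftySpace[\Bngeq{1}]$ (uniformly bounded on $\BngeqClosure{\eta_3}$), any admissible curve $\gamma:[0,1]\to\BngeqClosure{1}$ from an arbitrary $(y',0)\in\Bnmo{\eta_3}$ to $x$ with parameter $\delta\le 1$ has $n$-th component satisfying
\[
\gamma_n'(t)=\sum_{j=1}^{q} a_j(t)\delta^{\Xdv_j}\gamma_n(t)\,d_j(\gamma(t))+a_{q+1}(t)\delta^{\lambda}.
\]
Using $|a_j|<1$, $\Xdv_j\ge 1$, and $\delta\le 1$, this yields $|\gamma_n(t)|\le C\delta\int_0^t|\gamma_n(s)|\,ds+\delta^{\lambda}t$, whence Gronwall gives $x_n=|\gamma_n(1)|\le e^{C}\delta^{\lambda}$, i.e.\ $\delta\gtrsim x_n^{1/\lambda}$. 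For $\delta\ge 1$ the conclusion $\delta\gtrsim x_n^{1/\lambda}$ is automatic since $x_n\le\eta_3<1$. Taking the infimum over $\delta$ and then over $y'\in\Bnmo{\eta_3}$ gives $\DistXXdvSet[\Bnmo{\eta_3}][x]\gtrsim x_n^{1/\lambda}$, and in particular $\DistXXdv[(x',0)][x]\gtrsim x_n^{1/\lambda}$.

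Combining the two bounds gives $\DistXXdv[(x',0)][x]\approx \DistXXdvSet[\Bnmo{\eta_3}][x]\approx x_n^{1/\lambda}$, and Lemma~\ref{Lemma::Trace::Forward::MetricAndDistEquiv} (which applies directly because $(x',0),x\in\BngeqClosure{\eta_3}$, and pointwise for the set-versions after picking a near-minimizer in $\Bnmo{\eta_3}$) transfers the equivalences to the $\MetricXXdv$ versions. The only subtle step is the Gronwall estimate: one must set up the Taylor factorization of the $\partial_{x_n}$-coefficients carefully and separate the $\delta\le 1$ regime (where the estimate is nontrivial) from the trivial $\delta>1$ regime. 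Everything else is book-keeping.
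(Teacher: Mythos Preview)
Your proposal is correct and follows essentially the same approach as the paper: the explicit vertical path $\gamma(t)=(x',tx_n)$ for the upper bound, and the Taylor factorization of the $\partial_{x_n}$-coefficient of $X_j$ (for $1\le j\le q$) combined with Gronwall's inequality for the lower bound, with Lemma~\ref{Lemma::Trace::Forward::MetricAndDistEquiv} handling the passage between $\MetricXXdv$ and $\DistXXdv$. The only point you flag but do not fully spell out---and which the paper makes explicit---is that for small $\delta$ the ball $\BXXdv{(y',0)}{\delta}$ lies in a fixed compact subset of $\Bngeq{1}$, so that the factored coefficients $d_j$ are uniformly bounded along the curve (not merely on $\BngeqClosure{\eta_3}$, which the curve need not stay in); this is exactly the care you anticipated in your final remark.
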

\begin{proof}
    Using Lemma \ref{Lemma::Trace::Forward::MetricAndDistEquiv} once we show
    \(\DistXXdvSet[\Bnmo{\eta_3}][x]\approx \DistXXdv[(x',0)][x]\approx x_n^{1/\lambda}\), it immediately follows
    that \(\MetricXXdvSet[\Bnmo{\eta_3}][x]
        \approx \MetricXXdv[(x',0)][x]\approx x_n^{1/\lambda}\).

    Clearly, \(\DistXXdvSet[\Bnmo{\eta_3}][x]\leq \DistXXdv[(x',0)][x]\).
    By considering the path \(\gamma(t)=(x',tx_n)\) we see
    \begin{equation*}
        \gamma'(t)=x_n \partial_{x_n}= \left( x_n^{1/\lambda} \right)^{\lambda} X_{q+1}(\gamma(t)),
    \end{equation*}
    and therefore \(\DistXXdv[(x',0)][x]\leq x_n^{1/\lambda}\).

    Finally, we show \(x_n^{1/\lambda}\lesssim \DistXXdvSet[\Bnmo{\eta_3}][x]\).
    Suppose \(x_0'\in \Bnmo{\eta_3}\) with \(x\in \BXXdv{(x_0',0)}{\delta}\)'
    we will show \(x_n\lesssim \delta^{\lambda}\), which will complete the proof.
    Since \(x_n\leq 1\), it suffices to prove this when \(\delta>0\) is small (depending on \(\eta_3\)).
    In particular, we may assume \(\BXXdv{(x_0',0)}{\delta}\) lies in a fixed compact subset of \(\Bngeq{1}\).

    Since \(X_j(x_0',0)\) has no \(\partial_{x_n}\) component, for \(1\leq j\leq q\), we may write
    \begin{equation*}
        X_j(x)=Z_j(x)+b_j(x)x_n\partial_{x_n}, \quad 1\leq j\leq q,
    \end{equation*}
    where \(Z_j\) has no \(\partial_{x_n}\) component and \(b_j\in \CinftySpace[\Bngeq{1}]\).
    Since \(x\in \BXXdv{(x_0',0)}{\delta}\), \(\exists \gamma:[0,1]\rightarrow \Bngeq{1}\) absolutely continuous,
    with 
    \(\gamma(0)=(x_0',0)\), \(\gamma(1)=x\), and
    (setting \(\gamma_n(t)\) to be the last component of \(\gamma(t)\))
    \begin{equation*}
        \gamma'(t)=\sum_{j=1}^{q+1} a_j(t) \delta^{\Xdv_j} X_j(\gamma(t))
        =\sum_{j=1}^q a_j(t) \delta^{\Xdv_j} Z_j(\gamma(t)) +\sum_{j=1}^q a_j(t)\delta^{\Xdv_j}b_j(\gamma(t)) \gamma_n(t) \partial_{x_n}
        +a_{q+1}(t)\delta^{\lambda}\partial_{x_n},
    \end{equation*}
    where \(\sum |a_j|^2<1\), almost everywhere.  Since \(\gamma(t)\in \BXXdv{(x_0',0)}{\delta}\),
    which lies in a fixed compact subset of \(\Bngeq{1}\), we have \(|b_j(\gamma(t))|\lesssim 1\).
    We conclude
    \begin{equation*}
        |\gamma_n(t)|\lesssim t\delta^{\lambda} + \sum_{j=1}^q \delta^{\Xdv_j} \int_0^t |\gamma_n(t)|\: dt
        \lesssim t\delta^{\lambda} +  \int_0^t |\gamma_n(t)|\: dt.
    \end{equation*}
    Gr\"onwall's inequality gives \(|\gamma_n(t)|\leq C t\delta^{\lambda} e^{Ct}\) for some \(C\approx 1\).
    Setting \(t=1\) gives \(x_n = |\gamma_n(t)|\lesssim \delta^{\lambda}\), completing the proof.
\end{proof}

For \(f\in \LpSpace{1}[\Bngeq{\eta_3}]\) and \(x\in \Bngeq{1}\), set
\begin{equation}\label{Eqn::Trace::Forward::DefineMaximal}
    \Maximal f(x):=
    \begin{cases}
        \sup_{\delta>0} \sup_{\substack{x\in \BXXdv{z}{\delta} \\ z\in \Bngeq{\eta_3}}}
            \Vol[\BXXdv{z}{\delta}]^{-1} \int_{\BXXdv{z}{\delta}} |f(y)|\: dy + \int_{\Bngeq{\eta_3}}|f(y)|\: dy, & x\in \Bngeq{\eta_3},\\
            0, &\text{otherwise}.
    \end{cases}
\end{equation}
By Proposition \ref{Prop::VectorFields::Scaling::VolEstimates} \ref{Item::VectorFields::Scaling::VolEstimates::VolWedge1Doubling}
(with \(\WWdv\) replaced by \(\XXdv\)), we see
\begin{equation}\label{Eqn::Trace::Forward::DoublingWithWedge}
    \Vol[\BXXdv{z}{2\delta}]\wedge 1\lesssim \Vol[\BXXdv{z}{\delta}]\wedge 1,\quad \forall z\in \Bngeq{\eta_3},\: \delta>0.
\end{equation}
However, since \(\Vol[\BXXdv{z}{\delta}]\leq \Vol[\Bngeq{1}]\lesssim 1\), \(\forall z\in \Bngeq{1}\), \(\delta>0\),
we see
\begin{equation}\label{Eqn::Trace::Forward::Doubling}
    \Vol[\BXXdv{z}{2\delta}]\lesssim \Vol[\BXXdv{z}{\delta}],\quad \forall z\in \Bngeq{\eta_3},\: \delta>0.
\end{equation}
Due to \eqref{Eqn::Trace::Forward::Doubling}, we have the Fefferman--Stein vector valued inequalities:
\begin{equation}\label{Eqn::Trace::Forward::VVMaximalIneq}
\begin{split}
     &\lqLpNorm{\left\{ \Maximal f_j \right\}_{j\in \Zgeq}}{p}{q}[\Bngeq{\eta_3}]
     \leq \lqLpNorm{\left\{ f_j \right\}_{j\in \Zgeq}}{p}{q}[\Bngeq{\eta_3}], \quad 1<p\leq \infty,\: 1\leq q\leq \infty,
     \\&\LplqNorm{\left\{ \Maximal f_j \right\}_{j\in \Zgeq}}{p}{q}[\Bngeq{\eta_3}]
     \leq \LplqNorm{\left\{ f_j \right\}_{j\in \Zgeq}}{p}{q}[\Bngeq{\eta_3}], \quad 1<p< \infty,\: 1< q\leq \infty.
\end{split}
\end{equation}
See \cite[Theorem 1.2]{GrafaosLiuYangVectorValuedSingularIntegralsAndMaximalFunctionsOnSpacesOfHomogeneousType}
which is based on \cite{FeffermanSteinSomeMaximalInequalities}.
In particular, using Notation \ref{Notation::Spaces::Classical::VSpacepq},
\eqref{Eqn::Trace::Forward::VVMaximalIneq} holds for the spaces \(\VSpace{p}{q}\).

Let \(\sE\in \ElemzXXdv{\BngeqClosure{\eta_2}}\). For \((E_j,2^{-j})\in \sE\), and \(L\in \Zgeq\), set
\begin{equation*}
    E_j^{*,L} f(x):=
    \begin{cases}
        \sup_{z\in \Bngeq{1}} \left(   1+2^{j}\DistXXdv[x][z] \right)^{-L} \left| E_j f(z) \right|, &x\in \Bngeq{\eta_3},\\
        0,&\text{otherwise}.
    \end{cases}
\end{equation*}

\begin{proposition}\label{Prop::Trace::Forward::BoundEjStarByNorm}
    \(\exists L\in \Zgeq\), \(\forall \sE\in \ElemzXXdv{\BngeqClosure{\eta_2}}\),
    \begin{equation*}
        \sup_{\left\{ \left( E_j,2^{-j} \right) : j\in \Zgeq \right\}\subseteq \sE} 
        \BVNorm{\left\{ 2^{js} E_j^{*,L}f \right\}_{j\in \Zgeq}}{p}{q}
        \lesssim \ANorm{f}{s}{p}{q}[\FilteredSheafGenByXXdv],\quad \forall f\in \CinftyCptSpace[\Bngeq{\eta_2}].
    \end{equation*}
\end{proposition}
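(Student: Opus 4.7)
The argument is a Peetre maximal function estimate adapted to the Carnot--Carath\'eodory setting, followed by the vector-valued Fefferman--Stein inequality \eqref{Eqn::Trace::Forward::VVMaximalIneq}. Pick $r \in (0, \min(p,q))$ in the Triebel--Lizorkin case, or $r \in (0, p)$ in the Besov case; this is possible under the stated restrictions on $p, q$.

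First I would prove a \emph{sub-mean-value inequality}: for $L$ large enough and any $(E, 2^{-j}) \in \sE$,
\[
    (1 + 2^j\DistXXdv[x][z])^{-L}|Ef(z)|^r \leq C(c) \Maximal[|Ef|^r](x) + \epsilon(c)^r \Psi_j^{*,L}(x)^r,
\]
where $\Psi_j^{*,L}$ is the Peetre max for an enlarged bounded set $\sE^{(1)}\in\ElemzXXdv{\BngeqClosure{\eta_2}}$ containing $\sE$ together with the first-order derivative operators $2^{-j\Xdv_k}X_k E$ (which are elementary by Proposition \ref{Prop::Spaces::Elem::Elem::MainProps}\ref{Item::Spaces::Elem::Elem::DerivFunction}, \ref{Item::Spaces::Elem::Elem::LinearComb}), and $\epsilon(c)\to 0$ as $c\to 0$. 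To prove this, fix $z\in\Bngeq{1}$ and, for $w\in\BXXdv{z}{c2^{-j}}$, integrate along a Carnot--Carath\'eodory path joining $z$ to $w$ to estimate
$|Ef(z)-Ef(w)| \leq \sum_k (c2^{-j})^{\Xdv_k}\sup_{u}|X_k Ef(u)|$,
convert each $X_k Ef = 2^{j\Xdv_k}\tilde F_k f$ to an elementary operator bound via the Peetre max $\tilde F_k^{*,L}f$, then average over $w$ in $\BXXdv{z}{c2^{-j}}$, using doubling (Proposition \ref{Prop::VectorFields::Scaling::VolEstimates} \ref{Item::VectorFields::Scaling::VolDoubling}) to cover by a ball centered at $x$ with relative volume factor $(1+2^j\DistXXdv[x][z])^Q$. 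Choosing $L$ with $Lr \ge Q$ and taking sup over $z$ gives the inequality.

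Second I would iterate. Applying the same bound to each $F \in \sE^{(1)}$ gives derivatives in a larger bounded set $\sE^{(2)}\in\ElemzXXdv{\cdot}$, and so on. After $N$ iterations,
\[
    (E_j^{*,L}f(x))^r \leq C(c)\sum_{m=0}^{N-1}B(c)^m \sum_{\vec k\in\{1,\ldots,q+1\}^m}\Maximal[|F^{(m)}_{\vec k,j} f|^r](x) + B(c)^N R_{N,j}(x),
\]
where $F^{(m)}_{\vec k,j} = 2^{-j(\Xdv_{k_1}+\cdots+\Xdv_{k_m})}X_{k_1}\cdots X_{k_m} E_j \in \sE^{(m)}$ and $R_{N,j}$ is a residual Peetre max of $N$-th order derivatives. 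Applying the $r$-th power identity $\|\{g_j\}\|_V^r = \|\{g_j^r\}\|_{\VSpace{p/r}{q/r}}$, the triangle inequality (valid since $p/r, q/r \geq 1$), Fefferman--Stein \eqref{Eqn::Trace::Forward::VVMaximalIneq} on $\VSpace{p/r}{q/r}$, and Corollary \ref{Cor::Spaces::MainEst::VpqsESeminormIsContinuous}, each summand is bounded by $B(c)^m (q+1)^m C_m \ANorm{f}{s}{p}{q}[\FilteredSheafGenByXXdv]^r$, where $C_m$ is the constant in $\VpqsENorm{f}[p][q][s][\sE^{(m)}]\lesssim C_m\ANorm{f}{s}{p}{q}$. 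Choosing $c$ so small that $B(c)(q+1)C_m^{1/m}$ is bounded away from $1$ uniformly in $m$ makes the series geometric, and the residual is absorbed as $N\to\infty$ since $f\in\CinftyCptSpace$ keeps all Peetre maxes pointwise finite.

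Taking $\VNorm{\cdot}{p}{q}$ of both sides and taking the sup over sequences $\{E_j\}\subseteq\sE$ yields the stated bound. The main obstacle will be tracking the growth of the constants $C_m$ associated with the iterated derivative sets $\sE^{(m)}\in\ElemzXXdv{\cdot}$: Definition \ref{Defn::Spaces::LP::PElemWWdv} allows these to depend on the order of derivatives, so a quantitative argument is needed to ensure that the averaging radius $c$ can be fixed independently of $m$ while still driving the iteration to convergence. This is where the a priori finiteness of Peetre maxes on $\CinftyCptSpace[\Bngeq{\eta_2}]$ is essential, allowing the residual term to be controlled by powers of $\|f\|_{C^0}$-like quantities that vanish in the limit.
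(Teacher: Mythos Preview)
Your approach---a sub-mean-value iteration in the spirit of the classical Peetre maximal inequality---is genuinely different from the paper's, and the gap you flag yourself is real and not easily patched within this framework. The paper does \emph{not} iterate derivatives. Instead, it observes (Lemma~\ref{Lemma::Trace::Forward::BoundPreElemByMaximal}) that the kernel decay of any pre-elementary $E_j$ already gives the pointwise bound
\[
|E_j g(z)| \lesssim (1+2^j\DistXXdv[x][z])^L\,\Maximal g(x)
\]
for \emph{arbitrary} $g\in L^1$---no mean-value estimate, no $r$-th powers. Then it writes $E_jf=\sum_k D_kE_jf$ via a Littlewood--Paley decomposition (Proposition~\ref{Prop::Spaces::LP::DjExist}), uses almost orthogonality (Lemma~\ref{Lemma::Spaces::Elem::ELem::ProdOfElemAsSmallSumOfProds}) to write $D_kE_j=\sum_{l\le K}2^{-N|j-k|}F_{k,l}E_{j,l}$ with all the $E_{j,l}$ lying in a single fixed set $\sG_N\in\ElemzXXdv{\Bngeq{\eta_2}}$, and applies the displayed bound with $g=E_{j,l}f$. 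Summing over $k$ and taking sup over $z$ gives (Lemma~\ref{Lemma::Trace::Forward::BoundElemStarByMaximal})
\[
E_j^{*,L}f(x)\lesssim\sum_{k\ge0}\sum_{l=1}^K 2^{-N|j-k|}\,\Maximal(G_{j,k}^lf)(x),\qquad G_{j,k}^l\in\sG_N.
\]
Fefferman--Stein in $\VSpace{p}{q}$ (applied directly, no $r$-th power trick needed) and a single invocation of Corollary~\ref{Cor::Spaces::MainEst::VpqsESeminormIsContinuous} then finish.

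The decisive advantage is that only one bounded elementary set $\sG_N$ appears, so Corollary~\ref{Cor::Spaces::MainEst::VpqsESeminormIsContinuous} is used once with one constant. Your iteration, by contrast, generates an increasing tower $\sE^{(m)}$ containing all scaled $m$-th-order derivatives of the original $\sE$; the constants $C_m$ from the corollary for these sets are not explicit, and nothing in the paper guarantees $\sup_m C_m^{1/m}<\infty$, which your geometric-series argument needs. The classical Euclidean Peetre argument escapes this because Fourier support constraints force $\partial(\varphi_j*f)$ to be pointwise controlled by $2^j$ times the Peetre max of $\varphi_j*f$ \emph{itself}, closing the loop after one step with no enlargement of the operator family. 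No such self-bound is available in the Carnot--Carath\'eodory setting without a group structure, which is precisely why the paper routes through the reproducing decomposition $\sum_k D_k$ instead of a bootstrap.
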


To prove Proposition \ref{Prop::Trace::Forward::BoundEjStarByNorm}, we need several preliminary lemmas.

\begin{lemma}\label{Lemma::Trace::Forward::BoundQuotientVolByPoly}
    \(\exists L\in \Zgeq\), \(\forall z\in \BngeqClosure{\eta_2}\), \(\forall \delta_1,\delta_2>0\),
    \begin{equation}\label{Eqn::Trace::Forward::BoundQuotientVolByPoly::MainEqn}
        \frac{\Vol[\BXXdv{z}{\delta_2}]}{\Vol[\BXXdv{z}{\delta_1}]}
        \lesssim \left( 1+\frac{\delta_2}{\delta_1} \right)^L.
    \end{equation}
\end{lemma}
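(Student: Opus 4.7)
The inequality is a standard consequence of the doubling property \eqref{Eqn::Trace::Forward::Doubling}, which, crucially, already holds for \emph{all} $\delta>0$ (not just small $\delta$) on the relevant compact set, since the ambient volume is finite. My strategy will be to split into two cases: a trivial case where $\delta_2 \leq \delta_1$, and the substantive case where $\delta_2 > \delta_1$, which will be handled by iterating the doubling bound a logarithmic number of times.

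More precisely, first I would observe that when $\delta_2 \leq \delta_1$ the inclusion $\BXXdv{z}{\delta_2} \subseteq \BXXdv{z}{\delta_1}$ is immediate from the definition of the balls, giving ratio $\leq 1 \leq (1+\delta_2/\delta_1)^L$ for any $L \geq 0$. Now suppose $\delta_2 > \delta_1$. Since $\BngeqClosure{\eta_2} \subseteq \Bngeq{\eta_3}$, we may apply \eqref{Eqn::Trace::Forward::Doubling}, which provides a constant $C \geq 1$ (depending only on $\XXdv$, $\eta_3$) with $\Vol[\BXXdv{z}{2\delta}] \leq C\,\Vol[\BXXdv{z}{\delta}]$ for every $z \in \BngeqClosure{\eta_2}$ and every $\delta > 0$. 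Choose $k \in \Zgeq$ minimal with $2^k \delta_1 \geq \delta_2$, so that $k \leq \log_2(\delta_2/\delta_1) + 1$. Iterating the doubling bound $k$ times and using $\BXXdv{z}{\delta_2} \subseteq \BXXdv{z}{2^k\delta_1}$,
\begin{equation*}
    \Vol[\BXXdv{z}{\delta_2}] \leq \Vol[\BXXdv{z}{2^k\delta_1}] \leq C^k\,\Vol[\BXXdv{z}{\delta_1}].
\end{equation*}

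Finally, I would pick $L \in \Zgeq$ so that $C \leq 2^L$, giving $C^k \leq 2^{Lk} \leq 2^L (\delta_2/\delta_1)^L$, and then absorb constants into the $\lesssim$. Combining both cases yields \eqref{Eqn::Trace::Forward::BoundQuotientVolByPoly::MainEqn}. There is no real obstacle here: the only thing one needs to verify is that the doubling constant in \eqref{Eqn::Trace::Forward::Doubling} is truly uniform in $z \in \BngeqClosure{\eta_2}$ and in all $\delta > 0$, which has already been established by the discussion surrounding \eqref{Eqn::Trace::Forward::DoublingWithWedge} and \eqref{Eqn::Trace::Forward::Doubling}. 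The proof is purely a polynomial-growth-from-doubling argument in the spirit of spaces of homogeneous type.
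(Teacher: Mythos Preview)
Your proposal is correct and follows essentially the same approach as the paper: split into the trivial case $\delta_2\le\delta_1$ (where the ratio is $\le 1$ by inclusion of balls) and the case $\delta_2>\delta_1$, where one iterates the doubling estimate \eqref{Eqn::Trace::Forward::Doubling} roughly $\log_2(\delta_2/\delta_1)$ times. The only cosmetic difference is that the paper first treats the dyadic case $\delta_2/\delta_1=2^M$ and then says the general case reduces to it, whereas you choose the minimal $k$ with $2^k\delta_1\ge\delta_2$ directly; these are the same argument.
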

\begin{proof}
    If \(\delta_2\leq \delta_1\), then the left-hand side of \eqref{Eqn::Trace::Forward::BoundQuotientVolByPoly::MainEqn}
    is \(\leq 1\) and the result is trivial.  If \(\delta_2/\delta_1=2^M\) for some \(M\in \Zgeq\),
    then repeated applications of \eqref{Eqn::Trace::Forward::Doubling} show
    \begin{equation*}
        \Vol[\BXXdv{z}{\delta_2}]
        =\Vol[\BXXdv{z}{(\delta_2/\delta_1)\delta_1}]
        \leq 2^{LM}\Vol[\BXXdv{z}{\delta_1}]
        =\left( \delta_2/\delta_1 \right)^{L}\Vol[\BXXdv{z}{\delta_1}],
    \end{equation*}
    for some fixed \(L\in \Zgeq\), establishing the result in this case. Finally,
    the case of general \(\delta_2\) can be easily reduced to the case when \(\delta_2/\delta_1=2^M\)
    for some \(M\in \Zgeq\).
\end{proof}

\begin{lemma}\label{Lemma::Trace::Forward::BoundAverageByMaximal}
    \(\exists L\in \Zgeq\), \(\forall z\in \BngeqClosure{\eta_2}\), \(\forall f\in \Bngeq{\eta_2}\),
    \(\forall \delta>0\), \(\forall x\in \Bngeq{1}\),
    \begin{equation*}
        \frac{1}{\Vol[\BXXdv{z}{\delta}]} \int_{\BXXdv{z}{\delta}} |f(y)|\: dy 
        \lesssim \left( 1+\delta^{-1}\DistXXdv[x][z] \right)^L \Maximal f(x).
    \end{equation*}
\end{lemma}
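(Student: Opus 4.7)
The strategy is a standard enlarge-and-compare trick built on top of the preceding volume lemma. First I would introduce $\delta' := \delta + \DistXXdv[x][z]$. Monotonicity of CC balls gives $\BXXdv{z}{\delta} \subseteq \BXXdv{z}{\delta'}$, and the infimum definition \eqref{Eqn::VectorFields::DefineDistance} (using $\delta > 0$) gives $x \in \BXXdv{z}{\delta'}$. Consequently
$$\frac{1}{\Vol[\BXXdv{z}{\delta}]}\int_{\BXXdv{z}{\delta}} |f(y)|\,dy \leq \frac{\Vol[\BXXdv{z}{\delta'}]}{\Vol[\BXXdv{z}{\delta}]}\cdot \frac{1}{\Vol[\BXXdv{z}{\delta'}]}\int_{\BXXdv{z}{\delta'}} |f(y)|\,dy.$$

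Next I would control the volume ratio via Lemma \ref{Lemma::Trace::Forward::BoundQuotientVolByPoly} applied with $\delta_1 = \delta$ and $\delta_2 = \delta'$. That lemma supplies an $L \in \Zgeq$ (depending only on $\XXdv$ and $\eta_2$) such that
$$\frac{\Vol[\BXXdv{z}{\delta'}]}{\Vol[\BXXdv{z}{\delta}]} \lesssim \left(1 + \delta^{-1}\delta'\right)^L = \left(2 + \delta^{-1}\DistXXdv[x][z]\right)^L \lesssim \left(1 + \delta^{-1}\DistXXdv[x][z]\right)^L,$$
which is exactly the polynomial factor appearing in the conclusion.

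It then remains to recognize the average over $\BXXdv{z}{\delta'}$ as $\lesssim \Maximal f(x)$. Since $z \in \BngeqClosure{\eta_2} \subseteq \Bngeq{\eta_3}$ is an admissible center in the supremum defining $\Maximal$ (see \eqref{Eqn::Trace::Forward::DefineMaximal}) and $x \in \BXXdv{z}{\delta'}$, whenever $x \in \Bngeq{\eta_3}$ the average is bounded directly by $\Maximal f(x)$, which finishes the proof in the main case.

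The only delicate point is the edge case $x \in \Bngeq{1} \setminus \Bngeq{\eta_3}$, where $\Maximal f(x) = 0$ by fiat. Here I would invoke Proposition \ref{Prop::VectorFields::Scaling::SameTopology}: the CC topology agrees with the Euclidean topology on $\BngeqClosure{1}$, and $\Bngeq{1}\setminus \Bngeq{\eta_3}$ and $\BngeqClosure{\eta_2}$ are disjoint Euclidean-compact sets, so $\DistXXdv[x][z] \geq c_0 > 0$ uniformly; one may then either absorb the (finite) contribution into the additive term $\int_{\Bngeq{\eta_3}}|f|\,dy$ baked into $\Maximal f$ at a nearby point, or simply note that in the only place the lemma is used (the proof of Proposition \ref{Prop::Trace::Forward::BoundEjStarByNorm}) the supremum defining $E_j^{*,L}$ vanishes for $x$ outside $\Bngeq{\eta_3}$ by construction, so this case is irrelevant. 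I do not anticipate any real obstacle: the substance of the argument is the volume-ratio estimate of the previous lemma, and everything else is bookkeeping.
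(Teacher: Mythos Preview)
Your proposal is correct and follows essentially the same approach as the paper: enlarge the ball so that it contains $x$, control the resulting volume ratio via Lemma~\ref{Lemma::Trace::Forward::BoundQuotientVolByPoly}, and recognize the enlarged average as dominated by $\Maximal f(x)$. The only difference is organizational: the paper splits into the two cases $\delta \geq \DistXXdv[x][z]$ (where doubling alone suffices) and $\delta < \DistXXdv[x][z]$ (where one enlarges to radius $\DistXXdv[x][z]$ and then invokes the first case), whereas your single choice $\delta' = \delta + \DistXXdv[x][z]$ handles both at once. Your treatment of the edge case $x \notin \Bngeq{\eta_3}$ is also appropriate; the paper's proof tacitly assumes $x \in \Bngeq{\eta_3}$ as well, and as you note this is the only case that matters in the downstream applications.
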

\begin{proof}
    We separate into two cases. When \(\delta\geq \DistXXdv[x][z]\), then \(x\in \BXXdv{z}{2\delta}\)
    and therefore using \eqref{Eqn::Trace::Forward::Doubling}, we have
    \begin{equation}\label{Eqn::Trace::Forward::BoundAverageByMaximal::BigDelta}
        \frac{1}{\Vol[\BXXdv{z}{\delta}]} \int_{\BXXdv{z}{\delta}} |f(y)|\: dy
        \lesssim \frac{1}{\Vol[\BXXdv{z}{2\delta}]} \int_{\BXXdv{z}{2\delta}} |f(y)|\: dy
        \leq \Maximal f(x),
    \end{equation}
    by the definition of \(\Maximal f(x)\), see \eqref{Eqn::Trace::Forward::DefineMaximal}.

    When \(\delta<\DistXXdv[x][z]\), we have using Lemma \ref{Lemma::Trace::Forward::BoundQuotientVolByPoly},
    \begin{equation*}
    \begin{split}
         &\frac{1}{\Vol[\BXXdv{z}{\delta}]} \int_{\BXXdv{z}{\delta}} |f(y)|\: dy
         \\&\leq \frac{\Vol[\BXXdv{z}{\DistXXdv[x][z]}]}{\Vol[\BXXdv{z}{\delta}]} \frac{1}{\Vol[\BXXdv{z}{\DistXXdv[x][z]}]} \int_{\BXXdv{z}{\DistXXdv[x][z]}}|f(y)|\: dy
         \\&\lesssim \left( 1+\delta^{-1} \DistXXdv[x][z] \right)^L \frac{1}{\Vol[\BXXdv{z}{\DistXXdv[x][z]}]} \int_{\BXXdv{z}{\DistXXdv[x][z]}}|f(y)|\: dy
         \\&\lesssim \left( 1+\delta^{-1} \DistXXdv[x][z] \right)^L\Maximal f(x),
    \end{split}
    \end{equation*}
    where the final inequality used \eqref{Eqn::Trace::Forward::BoundAverageByMaximal::BigDelta}.
\end{proof}

\begin{lemma}\label{Lemma::Trace::Forward::BoundPreElemByMaximal}
    \(\exists L\in \Zgeq\), \(\forall \sE\in \PElemXXdv{\BngeqClosure{\eta_2}}\),
    \(\exists C\geq 0\), \(\forall (E_j, 2^{-j})\in \sE\), \(\forall f\in \LpSpace{1}[\Bngeq{\eta_2}]\),
    \(\forall x,z\in \Bngeq{1}\),
    \begin{equation}\label{Eqn::Trace::Forward::BoundPreElemByMaximal}
        \left| E_j f(x) \right|\leq C \left( 1+2^{j} \DistXXdv[x][z] \right)^L \Maximal f(x).
    \end{equation}
\end{lemma}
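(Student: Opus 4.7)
The plan is to use the defining kernel bound for elements of $\PElemXXdv{\BngeqClosure{\eta_2}}$ together with a dyadic annular decomposition of the integration domain, controlling each annular contribution by the Hardy--Littlewood-type maximal function $\Maximal$ via Lemma \ref{Lemma::Trace::Forward::BoundAverageByMaximal}.

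First, since $\supp(E_j)\subseteq \BngeqClosure{\eta_2}\times\BngeqClosure{\eta_2}$ for every $(E_j,2^{-j})\in\sE$, the bound is trivial unless $x\in \BngeqClosure{\eta_2}\subseteq\BngeqClosure{\eta_3}$, so I restrict to that case. On $\BngeqClosure{\eta_3}$, Lemma \ref{Lemma::Trace::Forward::MetricAndDistEquiv} gives $\MetricXXdv\approx\DistXXdv$, and the volumes in the denominator of the pre-elementary kernel bound are bounded above by $\Vol[\Bngeq{1}]\lesssim 1$, so the $\wedge 1$ can be dropped. Thus for every $m$,
\begin{equation*}
|E_j(x,y)|\lesssim_{m}\frac{(1+2^{j}\DistXXdv[x][y])^{-m}}{\Vol[\BXXdv{x}{2^{-j}+\DistXXdv[x][y]}]},\qquad x,y\in \BngeqClosure{\eta_2}.
\end{equation*}

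Next, I write $E_j f(x)=\int E_j(x,y)f(y)\,dy$ and decompose the $y$-region into dyadic annuli $A_0=\BXXdv{x}{2^{-j}}$ and $A_k=\BXXdv{x}{2^{k-j}}\setminus\BXXdv{x}{2^{k-1-j}}$ for $k\geq 1$. On $A_k$, one has $1+2^{j}\DistXXdv[x][y]\approx 2^{k}$ and, by the doubling statement in Proposition \ref{Prop::VectorFields::Scaling::VolEstimates}, $\Vol[\BXXdv{x}{2^{-j}+\DistXXdv[x][y]}]\approx \Vol[\BXXdv{x}{2^{k-j}}]$, yielding the uniform bound $|E_j(x,y)|\lesssim 2^{-km}/\Vol[\BXXdv{x}{2^{k-j}}]$. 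Hence
\begin{equation*}
\int_{A_k}|E_j(x,y)||f(y)|\,dy\lesssim 2^{-km}\cdot\frac{1}{\Vol[\BXXdv{x}{2^{k-j}}]}\int_{\BXXdv{x}{2^{k-j}}}|f(y)|\,dy.
\end{equation*}

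Now I invoke Lemma \ref{Lemma::Trace::Forward::BoundAverageByMaximal}, applied with the lemma's center $z$ taken to be our $x$, radius $\delta=2^{k-j}$, and evaluating the maximal function at the given $z$, to obtain, for the $L_0$ produced by that lemma,
\begin{equation*}
\frac{1}{\Vol[\BXXdv{x}{2^{k-j}}]}\int_{\BXXdv{x}{2^{k-j}}}|f(y)|\,dy\lesssim (1+2^{j-k}\DistXXdv[x][z])^{L_0}\Maximal f(z)\leq (1+2^{j}\DistXXdv[x][z])^{L_0}\Maximal f(z).
\end{equation*}
Choosing $m\geq 1$ and summing the geometric series $\sum_{k\geq 0}2^{-km}$ produces
\begin{equation*}
|E_j f(x)|\lesssim (1+2^{j}\DistXXdv[x][z])^{L_0}\Maximal f(z)
\end{equation*}
for all $x,z\in\Bngeq{1}$; specializing $z=x$ gives $|E_j f(x)|\lesssim \Maximal f(x)$, which in particular implies the stated inequality with $L=L_0$ (and in fact yields the stronger pointwise control needed in Proposition \ref{Prop::Trace::Forward::BoundEjStarByNorm}). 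There is no real obstacle: doubling, the kernel bound, and the passage to $\Maximal$ are already encoded in Lemmas \ref{Lemma::Trace::Forward::BoundQuotientVolByPoly} and \ref{Lemma::Trace::Forward::BoundAverageByMaximal}, and the only care required is to choose $m$ larger than $L_0$ so that the dyadic sum converges.
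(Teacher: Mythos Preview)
Your proof is correct and follows essentially the same approach as the paper's: both use the pre-elementary kernel bound, a dyadic annular decomposition centered at the evaluation point, Lemma~\ref{Lemma::Trace::Forward::BoundAverageByMaximal} to pass to $\Maximal$, and then sum the resulting geometric series (the paper takes $m=L+1$, you take any $m\geq 1$ after first bounding $(1+2^{j-k}d)^{L_0}\leq(1+2^{j}d)^{L_0}$). You also correctly identify that the useful content is the bound $|E_jf(z)|\lesssim(1+2^{j}\DistXXdv[x][z])^{L}\Maximal f(x)$ (equivalently, your version with $x$ and $z$ swapped), which is exactly what the paper's proof establishes and what is needed downstream; the inequality as literally stated then follows trivially from the case $z=x$.
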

\begin{proof}
    When \(z\not \in \BngeqClosure{\eta_2}\), the left hand side of \eqref{Eqn::Trace::Forward::BoundPreElemByMaximal}
    is zero (see Definition \ref{Defn::Spaces::LP::PElemWWdv} \ref{Item::Spaces::LP::PElemWWdv::Support}),
    so we may assume \(z\in \BngeqClosure{\eta_2}\).

    Using Lemma \ref{Lemma::Trace::Forward::MetricAndDistEquiv} and \eqref{Eqn::Trace::Forward::Doubling}, we have
    \begin{equation*}
        \Vol[\BXXdv{x}{\delta+\MetricXXdv[x][y]}] \approx \Vol[\BXXdv{x}{\delta+\DistXXdv[x][y]}],\quad \forall x,y\in \Bngeq{\eta_3},\: \delta>0.
    \end{equation*}
    And therefore,  using Lemma \ref{Lemma::Trace::Forward::MetricAndDistEquiv} and 
    Definition \ref{Defn::Spaces::LP::PElemWWdv} \ref{Item::Spaces::LP::PElemWWdv::Bound},
    we have \(\forall m\in \Zgeq\),
    \begin{equation*}
        |E_j(x,y)|\lesssim \frac{\left( 1+2^{j}\MetricXXdv[x][y] \right)^{-m}}{\Vol[\BXXdv{x}{2^{-j}+\MetricXXdv[x][y]}]\wedge 1}
        \approx \frac{\left( 1+2^{j}\DistXXdv[x][y] \right)^{-m}}{\Vol[\BXXdv{x}{2^{-j}+\DistXXdv[x][y]}]}.
    \end{equation*}

    Taking \(L\in \Zgeq\) as in Lemma \ref{Lemma::Trace::Forward::BoundAverageByMaximal} and \(m= L+1\),
    we have (using \eqref{Eqn::Trace::Forward::Doubling})
    \begin{equation*}
    \begin{split}
         &\left| E_j f(z) \right|
         \lesssim \int_{\Bngeq{\eta_2}} \frac{\left( 1+2^j \DistXXdv[z][y] \right)^L}{\Vol[\BXXdv{z}{2^{-j}+\DistXXdv[z][y]}]}|f(y)|\: dy
         \\&\lesssim \sum_{k=0}^\infty 2^{-k(L+1)} \frac{1}{\Vol[\BXXdv{z}{2^{-j+k}}]} \int_{y\in \BXXdv{z}{2^{-j+k}}} |f(y)|\: dy
         \\&\lesssim \sum_{k=0}^\infty 2^{-k(L+1)} \left( 1+2^{j-k}\DistXXdv[x][z] \right)^L \Maximal f(x)
         \lesssim \sum_{k=0}^\infty 2^{-k} \left( 1+2^{j}\DistXXdv[x][z] \right)^L \Maximal f(x)
         \\&\lesssim \left( 1+2^{j}\DistXXdv[x][z] \right)^L \Maximal f(x).
    \end{split}
    \end{equation*}
\end{proof}

\begin{lemma}\label{Lemma::Trace::Forward::BoundProdElemByMaximal}
    \(\exists L\in \Zgeq\), \(\forall \sE\in \ElemzXXdv{\Bngeq{\eta_2}}\), \(\forall N\in \Zgeq\),
    \(\exists K=K(N)\in \Zgeq\), \(\exists \sG_N\in \ElemzXXdv{\Bngeq{\eta_2}}\),
    \(\exists C\geq 0\),
    \(\forall (E_j, 2^{-j}), (F_k, 2^{-k})\in \sE\), 
    \(\exists (G_j^1,2^{-j}),\ldots (G_j^K, 2^{-j})\in \sG_N\),
    \(\forall f\in \DistributionsZero[\Bngeq{1}]\),
    \(\forall x,z\in \Bngeq{1}\)
    \begin{equation*}
        \left| F_k E_j f(z) \right|
        \leq C \sum_{l=1}^K 2^{-N|j-k|} \left( 1+2^j \DistXXdv[x][z] \right)^L \Maximal\left( G_j^l f \right)(x).
    \end{equation*}
\end{lemma}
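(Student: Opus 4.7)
The plan is to combine the almost-orthogonal product decomposition from Lemma \ref{Lemma::Spaces::Elem::ELem::ProdOfElemAsSmallSumOfProds} with the pointwise maximal-function domination from Lemma \ref{Lemma::Trace::Forward::BoundPreElemByMaximal}. Let $L_0$ be the exponent provided by Lemma \ref{Lemma::Trace::Forward::BoundPreElemByMaximal}, and fix $N \in \Zgeq$ as in the statement. I would first choose $N' := N + L_0$ and apply Lemma \ref{Lemma::Spaces::Elem::ELem::ProdOfElemAsSmallSumOfProds} with parameter $N'$: this yields $K = K(N') \in \Zgeq$ and a set $\sE_{N'} \in \ElemzXXdv{\BngeqClosure{\eta_2}}$ such that, for any $(E_j, 2^{-j}), (F_k, 2^{-k}) \in \sE$, one may write
\begin{equation*}
    F_k E_j \;=\; \sum_{l=1}^{K} 2^{-N'|j-k|}\, F_{k,l}\, E_{j,l},
\end{equation*}
with $(E_{j,l}, 2^{-j}), (F_{k,l}, 2^{-k}) \in \sE_{N'}$.

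Next, for each $l$, set $g := E_{j,l} f$ and apply Lemma \ref{Lemma::Trace::Forward::BoundPreElemByMaximal} to the pre-elementary operator $F_{k,l}$ at scale $2^{-k}$ and the input $g$. This gives
\begin{equation*}
    |F_{k,l} E_{j,l} f(z)| \;\leq\; C \left(1 + 2^{k}\DistXXdv[x][z]\right)^{L_0} \Maximal(E_{j,l} f)(x),
\end{equation*}
where the exponent is a $2^k$ factor rather than the $2^j$ factor desired in the conclusion.

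The final step is a routine interpolation between the two dyadic scales. If $k \leq j$, then $(1 + 2^k \DistXXdv[x][z])^{L_0} \leq (1 + 2^j \DistXXdv[x][z])^{L_0}$ trivially. If $k > j$, use the elementary inequality $1 + 2^k d \leq 2^{k-j}(1 + 2^j d)$ (valid for $d \geq 0$) to obtain
\begin{equation*}
    (1 + 2^k \DistXXdv[x][z])^{L_0} \;\leq\; 2^{L_0(k-j)} (1 + 2^j \DistXXdv[x][z])^{L_0},
\end{equation*}
and combine with the prefactor to get $2^{-N'|j-k|}(1+2^k\DistXXdv[x][z])^{L_0} \leq 2^{-N|j-k|}(1+2^j\DistXXdv[x][z])^{L_0}$ by the choice $N' = N+L_0$. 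Setting $G_j^l := E_{j,l}$ and $\sG_N := \sE_{N'}$, and summing over $l$, gives the claimed bound.

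There is no serious obstacle here: once the correct value of $N'$ is built into the invocation of Lemma \ref{Lemma::Spaces::Elem::ELem::ProdOfElemAsSmallSumOfProds}, the proof is a direct composition of the two earlier estimates together with the one-line scale rebalancing. The only care needed is in tracking that the set of operators produced at the very end still lies in $\ElemzXXdv{\BngeqClosure{\eta_2}}$, which is automatic since Lemma \ref{Lemma::Spaces::Elem::ELem::ProdOfElemAsSmallSumOfProds} delivers precisely that membership for $\sE_{N'}$.
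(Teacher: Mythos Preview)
Your proposal is correct and follows essentially the same approach as the paper: decompose $F_k E_j$ via Lemma \ref{Lemma::Spaces::Elem::ELem::ProdOfElemAsSmallSumOfProds} with an augmented exponent $N' = N + L_0$, apply Lemma \ref{Lemma::Trace::Forward::BoundPreElemByMaximal} to the outer factor $F_{k,l}$, and then rebalance the scale from $2^k$ to $2^j$ using the extra $L_0$ in $N'$. The paper's presentation is merely terser, phrasing the rebalancing as ``by replacing $N$ with $N+L$ it suffices to prove the estimate with $2^k$ in place of $2^j$'' and then reducing to the $N=0$ case, but the underlying argument is identical to yours.
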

\begin{proof}
    By replacing \(N\) with \(N+L\) it suffices to prove
    \begin{equation}\label{Eqn::Trace::Forward::BoundProdElemByMaximal::ToShow}
        \left| F_k E_j f(z) \right|
        \leq C \sum_{l=1}^K 2^{-N|j-k|} \left( 1+2^k \DistXXdv[x][z] \right)^L \Maximal\left( G_j^l f \right)(x).
    \end{equation}
    In light of Lemma \ref{Lemma::Spaces::Elem::ELem::ProdOfElemAsSmallSumOfProds},
    it suffices to prove \eqref{Eqn::Trace::Forward::BoundProdElemByMaximal::ToShow} in the case \(N=0\).

    Using Lemma \ref{Lemma::Trace::Forward::BoundPreElemByMaximal}, we have
    \begin{equation*}
        \left| F_k E_j f(z) \right|
        \lesssim \left( 1+2^k \DistXXdv[x][z] \right)^L \Maximal\left( E_j f \right)(x),
    \end{equation*}
    which is of the form \eqref{Eqn::Trace::Forward::BoundProdElemByMaximal::ToShow} with \(N=0\),
    completing the proof.
\end{proof}

\begin{lemma}\label{Lemma::Trace::Forward::BoundElemStarByMaximal}
    \(\exists L\in \Zgeq\), \(\forall \sE\in \ElemzXXdv{\Bngeq{\eta_2}}\), \(\forall N\in \Zgeq\),
    \(\exists K=K(N)\in \Zgeq\), \(\exists \sG_N\in \ElemzXXdv{\Bngeq{\eta_2}}\),
    \(\forall (E_j, 2^{-j})\in \sE\), 
    \(\exists \left\{ \left( G_{j,k}^l, 2^{-j} \right) : k\in \Zgeq, 1\leq l\leq K \right\}\subseteq \sG_N\),
    \(\exists C\geq 0\),
    \(\forall f\in \DistributionsZero[\Bngeq{1}]\), \(\forall x\in \Bngeq{1}\),
    \begin{equation}\label{Eqn::Trace::Forward::BoundElemStarByMaximal}
        \left| E_j^{*,L} f(x) \right|
        \leq C \sum_{k=0}^\infty \sum_{l=1}^K 2^{-N|j-k|} \Maximal\left( G_{j,k}^l f \right)(x).
    \end{equation}
\end{lemma}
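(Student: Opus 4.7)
The plan is to use the reproducing identity from Proposition \ref{Prop::Spaces::LP::DjExist} to turn $E_j$ into a sum $\sum_k D_k E_j$ of compositions, then apply Lemma \ref{Lemma::Trace::Forward::BoundProdElemByMaximal} termwise, and finally divide by the weight $(1+2^j\DistXXdv[x][z])^L$ and take the supremum in $z$.

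First, I would choose $\psi\in\CinftyCptSpace[\Bngeq{1}]$ with $\psi\equiv 1$ on a neighborhood of $\BngeqClosure{\eta_2}$ and supported in a compact set $\Compact\Subset\Bngeq{\eta_2}$ that also contains the supports of every $(E_j,2^{-j})\in\sE$ (using $\sE\in\ElemzXXdv{\Compact'}$ for some compact $\Compact'\Subset\Bngeq{\eta_2}$). Proposition \ref{Prop::Spaces::LP::DjExist} produces $\sD:=\{(D_k,2^{-k}):k\in\Zgeq\}\in\ElemzXXdv{\Bngeq{\eta_2}}$ with $\Mult{\psi}=\sum_{k\in\Zgeq}D_k$. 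Since every $(E_j,2^{-j})\in\sE$ has $\supp(E_j)\subseteq\BngeqClosure{\eta_2}\times\BngeqClosure{\eta_2}$, the function $E_jf$ is supported where $\psi\equiv 1$, so
\begin{equation*}
E_jf=\Mult{\psi}(E_jf)=\sum_{k\in\Zgeq}D_kE_jf,
\end{equation*}
with convergence in $\DistributionsZero[\Bngeq{1}]$ by Proposition \ref{Prop::Spaces::Elem::Elem::ConvergenceOfElemOps}.

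Next I would enlarge to the collection $\sE\cup\sD\in\ElemzXXdv{\Bngeq{\eta_2}}$ and apply Lemma \ref{Lemma::Trace::Forward::BoundProdElemByMaximal} to it with parameter $N$. Treating the composition $D_kE_j$ as ``$F_kE_j$'' in the notation of that lemma (outer $D_k$, inner $E_j$), so that the ``inner'' index is the present $j$, we obtain $L\in\Zgeq$, $K=K(N)$, $\sG_N\in\ElemzXXdv{\Bngeq{\eta_2}}$, and $C\geq 0$ such that for every pair $(E_j,D_k)$ there are $(G_{j,k}^l,2^{-j})\in\sG_N$, $l=1,\dots,K$, with
\begin{equation*}
|D_kE_jf(z)|\leq C\sum_{l=1}^K 2^{-N|j-k|}(1+2^j\DistXXdv[x][z])^L\Maximal(G_{j,k}^lf)(x),
\end{equation*}
for all $x,z\in\Bngeq{1}$. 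Summing over $k$, dividing by $(1+2^j\DistXXdv[x][z])^L$, and taking the supremum over $z\in\Bngeq{1}$ yields the desired bound \eqref{Eqn::Trace::Forward::BoundElemStarByMaximal}.

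The main obstacle is the bookkeeping of elementary-operator classes: one must ensure that $\sE$ and $\sD$ can be combined into a single set in $\ElemzXXdv{\Bngeq{\eta_2}}$ (so that Lemma \ref{Lemma::Trace::Forward::BoundProdElemByMaximal} applies uniformly to both), and that the outputs $(G_{j,k}^l,2^{-j})$ really sit inside one fixed $\sG_N\in\ElemzXXdv{\Bngeq{\eta_2}}$ independent of $j$ and $k$. Both points follow from the closure properties of $\ElemzF{\Compact}$ (in particular, its stability under unions, which is a consequence of its definition as a maximal collection, together with Proposition \ref{Prop::Spaces::Elem::Elem::MainProps} \ref{Item::Spaces::Elem::Elem::LinearComb}) and the support control provided for the Littlewood--Paley projectors $D_k$ by Theorem \ref{Thm::Spaces::Multiplication::MainAmbientTheorem}.
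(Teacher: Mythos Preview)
Your approach is correct and matches the paper's proof exactly: write $E_jf=\Mult{\psi}E_jf=\sum_k D_kE_jf$ via Proposition~\ref{Prop::Spaces::LP::DjExist}, apply Lemma~\ref{Lemma::Trace::Forward::BoundProdElemByMaximal} termwise to the enlarged elementary collection, then divide and take the supremum in $z$. One small slip to fix: you cannot have $\psi\equiv 1$ on a neighborhood of $\BngeqClosure{\eta_2}$ while also $\supp(\psi)\Subset\Bngeq{\eta_2}$; what you actually need (and what the paper does) is $\psi\in\CinftyCptSpace[\Bngeq{\eta_2}]$ with $\psi\equiv 1$ on a neighborhood of the common compact support $\Compact'\Subset\Bngeq{\eta_2}$ of $\sE$.
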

\begin{proof}
    We will show, for \((E_j, 2^{-j})\in \sE\),
    \begin{equation}\label{Eqn::Trace::Forward::BoundElemStarByMaximal::ToShow}
        \left| E_j f(z) \right|
        \leq C \sum_{k=0}^{\infty} \sum_{l=1}^K 2^{-N|j-k|} \left( 1+2^{j}\DistXXdv[x][z] \right)^L \Maximal\left( G_{j,k}^l f \right)(x).
    \end{equation}
    The result then follows by dividing both sides of \eqref{Eqn::Trace::Forward::BoundElemStarByMaximal::ToShow}
    by \(\left( 1+2^{j}\DistXXdv[x][z] \right)^L\) and taking the supremum \(z\).

    Using the definition of \(\ElemzXXdv{\Bngeq{\eta_2}}\) (see \eqref{Eqn::Spaces::LP::ElemzFOmegaIsUnion}),
    there is a compact set \(\Compact\Subset \Bngeq{\eta_2}\) such that
    \(\supp(E_j)\subseteq \Compact\times\Compact\), \(\forall (E_j, 2^{-j})\in \sE\).
    Take \(\psi\in \CinftyCptSpace[\Bngeq{\eta_2}]\) with \(\psi=1\) on a neighborhood of \(\Compact\).
    Using Proposition \ref{Prop::Spaces::LP::DjExist}, we may write
    \begin{equation*}
        \Mult{\psi}=\sum_{k\in \Zgeq} D_k,
    \end{equation*}
    where \(\left\{ \left( D_k,2^{-k} \right):k\in \Zgeq \right\}\in \ElemzXXdv{\Bngeq{\eta_2}}\),
    and the convergence is described in Proposition \ref{Prop::Spaces::Elem::Elem::ConvergenceOfElemOps}.

    Using Lemma \ref{Lemma::Trace::Forward::BoundProdElemByMaximal}, we have
    \begin{equation*}
    \begin{split}
         &\left| E_j f(z) \right|
         =\left| \Mult{\psi} E_j f(z) \right|
         \leq \sum_{k=0}^\infty \left| D_k E_j f(z) \right|
         \leq C \sum_{k=0}^\infty \sum_{l=1}^K 2^{-N|j-k|}\left( 1+2^j \DistXXdv[x][z] \right)^L \Maximal\left( G_{j,k}^l f \right)(x),
    \end{split}
    \end{equation*}
    where the above are as in the statement of the lemma. This establishes \eqref{Eqn::Trace::Forward::BoundElemStarByMaximal::ToShow}
    and completes the proof.
\end{proof}

\begin{lemma}\label{Lemma::Trace::Forward::BoundElemStarVVByElemVV}
    \(\exists L\in \Zgeq\), \(\forall \sE\in \ElemzXXdv{\Bngeq{\eta_2}}\), \(\exists \sG\in \ElemzXXdv{\Bngeq{\eta_2}}\),
    \(\exists C\geq 0\),
    \(\forall f_j \in \DistributionsZero[\Bngeq{1}]\),
    \begin{equation*}
        \sup_{\left\{ \left( E_j, 2^{-j} \right) : j\in \Zgeq \right\}\subseteq \sE} 
        \BVNorm{ \left\{ E_j^{*,L} f_j \right\}_{j\in \Zgeq} }{p}{q} 
        \lesssim
        \sup_{\left\{ \left( G_j, 2^{-j} \right):j\in \Zgeq \right\}\subseteq \sG} \BVNorm{ \left\{ G_j f_j \right\}_{j\in \Zgeq}}{p}{q}.
    \end{equation*}
\end{lemma}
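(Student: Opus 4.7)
The plan is to combine the pointwise bound from Lemma \ref{Lemma::Trace::Forward::BoundElemStarByMaximal} with the Fefferman--Stein vector-valued maximal inequality \eqref{Eqn::Trace::Forward::VVMaximalIneq}, using a reindexing trick to absorb the factor $2^{-N|j-k|}$ into a summable series.

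First, I fix $N = 1$ in Lemma \ref{Lemma::Trace::Forward::BoundElemStarByMaximal} (the value is irrelevant as long as $\sum_{r \in \Z} 2^{-N|r|} < \infty$), take $L$ as given by that lemma, and let $\sG := \sG_N$ with operators $(G_{j,k}^l, 2^{-j}) \in \sG$. I adopt the convention $G_{j,k}^l := 0$ if $k < 0$. Lemma \ref{Lemma::Trace::Forward::BoundElemStarByMaximal} then yields, for any $\{(E_j, 2^{-j}) : j \in \Zgeq\} \subseteq \sE$, the pointwise bound
\begin{equation*}
|E_j^{*,L} f_j(x)| \leq C \sum_{l=1}^K \sum_{r \in \Z} 2^{-N|r|} \Maximal\bigl(G_{j, j+r}^l f_j\bigr)(x),
\end{equation*}
obtained by reindexing $k = j + r$. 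Next, I apply Minkowski's inequality in $\VSpace{p}{q}$ (valid since $p, q \geq 1$ in both the Besov and Triebel--Lizorkin cases) to pull the finite sum in $l$ and the absolutely convergent sum in $r$ outside:
\begin{equation*}
\BVNorm{\{E_j^{*,L} f_j\}_j}{p}{q} \leq C \sum_{l=1}^K \sum_{r \in \Z} 2^{-N|r|} \BVNorm{\{\Maximal(G_{j, j+r}^l f_j)\}_j}{p}{q}.
\end{equation*}

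The vector-valued Fefferman--Stein inequality \eqref{Eqn::Trace::Forward::VVMaximalIneq} then bounds each $\VSpace{p}{q}$ norm of the maximal functions by the corresponding norm of $\{G_{j,j+r}^l f_j\}_j$. For each fixed pair $(r, l)$, the sequence $\{(G_{j, j+r}^l, 2^{-j}) : j \in \Zgeq\}$ is contained in $\sG$, so
\begin{equation*}
\BVNorm{\{G_{j,j+r}^l f_j\}_j}{p}{q} \leq \sup_{\{(G_j, 2^{-j})\}_j \subseteq \sG} \BVNorm{\{G_j f_j\}_j}{p}{q}.
\end{equation*}
Since $\sum_{r \in \Z} 2^{-N|r|}$ is finite and $K$ is a fixed integer, summing over $r$ and $l$ gives
\begin{equation*}
\BVNorm{\{E_j^{*,L} f_j\}_j}{p}{q} \lesssim \sup_{\{(G_j, 2^{-j})\}_j \subseteq \sG} \BVNorm{\{G_j f_j\}_j}{p}{q},
\end{equation*}
uniformly in the choice of sequence from $\sE$, and taking the supremum over sequences in $\sE$ on the left completes the proof.

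The main technical nuisance will be keeping track of the zero-padding convention (so that $\{(G_{j,j+r}^l, 2^{-j})\}_j \subseteq \sG$ literally makes sense for all $r \in \Z$, including negative $r$ where some entries are zero); this is handled by noting that augmenting an elementary operator family with zero operators keeps it in $\ElemzXXdv{\Bngeq{\eta_2}}$ by Proposition \ref{Prop::Spaces::Elem::Elem::MainProps} \ref{Item::Spaces::Elem::Elem::LinearComb}. Aside from this bookkeeping, the argument is a clean two-step chain: pointwise bound by weighted maximal functions, then vector-valued maximal inequality plus summability of the weights.
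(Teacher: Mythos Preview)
Your proof is correct and follows the same approach as the paper: apply Lemma \ref{Lemma::Trace::Forward::BoundElemStarByMaximal} with $N=1$, then the Fefferman--Stein inequality \eqref{Eqn::Trace::Forward::VVMaximalIneq}, and sum the resulting weights. Your explicit reindexing $k = j+r$ (so that the weight $2^{-N|r|}$ is independent of the sequence index $j$) is in fact cleaner than the paper's presentation, which writes $\sum_{k=0}^\infty \sum_{l=1}^K 2^{-|j-k|}\|\{\Maximal(G_{j,k}^l f_j)\}_j\|_{\VSpace{p}{q}}$ with the free variable $j$ appearing both in the weight and as the bound index inside the norm; your version makes rigorous precisely the step the paper leaves implicit.
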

\begin{proof}
    Let \(\sG_1\in \ElemzXXdv{\Bngeq{\eta_2}}\) be as in Lemma \ref{Lemma::Trace::Forward::BoundElemStarByMaximal}
    with \(N=1\).
    Then, using \eqref{Eqn::Trace::Forward::BoundElemStarByMaximal} and \eqref{Eqn::Trace::Forward::VVMaximalIneq},
    we have for \(\left\{ \left( E_j, 2^{-j} \right) : j\in \Zgeq \right\}\subseteq \sE\),
    using the notation of Lemma \ref{Lemma::Trace::Forward::BoundElemStarByMaximal},
    \begin{equation*}
    \begin{split}
         &\BVNorm{\left\{ E_j^{*,L}f_j \right\}_{j\in \Zgeq}}{p}{q}
         \lesssim \sum_{k=0}^\infty \sum_{l=1}^K 2^{-|j-k|} \BVNorm{\left\{ \sM\left( G_{j,k}^l f_j \right) \right\}_{j\in \Zgeq}}{p}{q}
         \\&\leq \sum_{k=0}^\infty \sum_{l=1}^K 2^{-|j-k|} \sup_{\left\{ \left( G_j, 2^{-j} \right) : j\in \Zgeq \right\}\subseteq \sG_1} \BVNorm{\left\{ G_j f_j \right\}_{j\in \Zgeq}}{p}{q}
         \approx \sup_{\left\{ \left( G_j, 2^{-j} \right) : j\in \Zgeq \right\}\subseteq \sG_1} \BVNorm{\left\{ G_j f_j \right\}_{j\in \Zgeq}}{p}{q},
    \end{split}
    \end{equation*}
    where we have used \(K\approx 1\) in the estimate. Taking the supremum over \(\left\{ \left( E_j, 2^{-j} \right) : j\in \Zgeq \right\}\subseteq \sE\)
    completes the proof.
\end{proof}

\begin{proof}[Proof of Proposition \ref{Prop::Trace::Forward::BoundEjStarByNorm}]
    By Lemma \ref{Lemma::Trace::Forward::BoundElemStarVVByElemVV}, \(\exists \sG\in \ElemzXXdv{\Bngeq{\eta_2}}\),
    \(\forall f\in \CinftyCptSpace[\Bngeq{\eta_2}]\),
    \begin{equation*}
        \sup_{\left\{ \left( E_j, 2^{-j}  \right): j\in \Zgeq \right\}\subseteq \sE} 
        \BVNorm{\left\{ 2^{js} E_j^{*,L} f \right\}_{j\in \Zgeq}}{p}{q}
        \lesssim \VpqsENorm{f}[p][q][s][\sG]
        \lesssim \ANorm{f}{s}{p}{q}[\XXdv],
    \end{equation*}
    where the final estimate uses Corollary \ref{Cor::Spaces::MainEst::VpqsESeminormIsContinuous}.
\end{proof}

For the remainder of the proof, fix \(\epsilon_0:=\left( \eta_3-\eta_2 \right)/2\approx 1\).

\begin{lemma}\label{Lemma::Trace::Forward::BoundRestrictElemByElemStar}
    \(\forall K\in \Zgeq\), \(\exists C\geq 0\), \(\forall \sE\in \ElemzXXdv{\Bngeq{\eta_2}}\), \(\forall (E_j, 2^{-j})\in \sE\),
    \(\forall f\in \DistributionsZero[\Bngeq{1}]\), \(\forall (x',x_n)=x\in \Rngeq\),
    \begin{equation}\label{Eqn::Trace::Forward::BoundRestrictElemByElemStar}
        \left| E_j f(x',0) \right|
        \leq C E_j^{*,K} f(x',x_n)\text{ if } 2^{-(j+1)\lambda}\epsilon_0 \leq x_n\leq 2^{-j\lambda} \epsilon_0.
    \end{equation}
\end{lemma}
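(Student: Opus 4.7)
The plan is to unwind the definition of $E_j^{*,K}$ and apply Lemma \ref{Lemma::Trace::Forward::SimpleDistanceToBoundary} directly, choosing $z=(x',0)$ inside the supremum.

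First, I would dispense with the trivial case. Since $\sE \in \ElemzXXdv{\Bngeq{\eta_2}}$, there is a compact $\Compact \Subset \Bngeq{\eta_2}$ with $\supp(E_j) \subseteq \Compact \times \Compact$ for every $(E_j,2^{-j})\in\sE$ (see \eqref{Eqn::Spaces::LP::ElemzFOmegaIsUnion}). If $(x',0) \notin \Compact$, then $E_j f(x',0)=0$ and the inequality \eqref{Eqn::Trace::Forward::BoundRestrictElemByElemStar} holds trivially. So I assume $(x',0)\in\Compact$. In particular $(x',0)\in \Bngeq{\eta_2}$, so $|x'|\leq \eta_2$.

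Second, I would verify that the point $(x',x_n)$ lies in $\Bngeq{\eta_3}$, so that the supremum in the definition of $E_j^{*,K}f(x',x_n)$ is genuinely taken and does not collapse to $0$. Since $x_n \leq 2^{-j\lambda}\epsilon_0 \leq \epsilon_0 = (\eta_3-\eta_2)/2$, we have
\[
|(x',x_n)| \leq |x'| + x_n \leq \eta_2 + \tfrac{\eta_3-\eta_2}{2} = \tfrac{\eta_2+\eta_3}{2} < \eta_3,
\]
and $x_n\geq 0$, so $(x',x_n)\in \Bngeq{\eta_3}$.

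Third, I would control the weight factor. By Lemma \ref{Lemma::Trace::Forward::SimpleDistanceToBoundary} applied to the point $(x',x_n)\in \Bngeq{\eta_3}$, there is a constant $C_0\approx 1$ with
\[
\DistXXdv[(x',x_n)][(x',0)] \leq C_0\, x_n^{1/\lambda}.
\]
Combining this with the hypothesis $x_n \leq 2^{-j\lambda}\epsilon_0$ yields
\[
2^{j}\DistXXdv[(x',x_n)][(x',0)] \leq C_0\, 2^{j} x_n^{1/\lambda} \leq C_0\, \epsilon_0^{1/\lambda},
\]
which is bounded by an absolute constant. Therefore $\bigl(1+2^{j}\DistXXdv[(x',x_n)][(x',0)]\bigr)^{-K} \geq c_K$ for some $c_K>0$ depending only on $K$ and $\epsilon_0$.

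Fourth, I would take $z=(x',0)\in \Compact \subseteq \Bngeq{1}$ in the defining supremum of $E_j^{*,K}f(x',x_n)$:
\[
E_j^{*,K}f(x',x_n) \geq \bigl(1+2^{j}\DistXXdv[(x',x_n)][(x',0)]\bigr)^{-K}\,|E_j f(x',0)| \geq c_K\,|E_j f(x',0)|,
\]
which yields \eqref{Eqn::Trace::Forward::BoundRestrictElemByElemStar} with $C=c_K^{-1}$. There is no real obstacle here; the proof is essentially a direct application of Lemma \ref{Lemma::Trace::Forward::SimpleDistanceToBoundary} once the elementary geometric check placing $(x',x_n)$ in $\Bngeq{\eta_3}$ is made, and the choice $\epsilon_0 = (\eta_3-\eta_2)/2$ is precisely what makes that check work.
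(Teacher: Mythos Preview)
Your proposal is correct and follows essentially the same approach as the paper's proof: dispose of the trivial case via the support of $E_j$, verify that $(x',x_n)\in\Bngeq{\eta_3}$ from the choice of $\epsilon_0$, and then take $z=(x',0)$ in the supremum defining $E_j^{*,K}f(x',x_n)$, using Lemma~\ref{Lemma::Trace::Forward::SimpleDistanceToBoundary} to bound the weight $(1+2^j\DistXXdv[(x',x_n)][(x',0)])^{K}$ by a constant. Your writeup is slightly more explicit than the paper's (e.g., you spell out the geometric check $|(x',x_n)|\leq \eta_2+\epsilon_0<\eta_3$), but the argument is the same.
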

\begin{proof}
    For \(x'\not \in \Bnmo{\eta_2}\), the left-hand side of \eqref{Eqn::Trace::Forward::BoundRestrictElemByElemStar}
    is zero, so we may assume \(x'\in \Bnmo{\eta_2}\). For the \(x_n\) under consideration (by the choice of \(\epsilon_0\)),
    we have \((x',x_n)\in \Bngeq{\eta_3}\).

    By the definition of \(E_j^{*,K}\), we have
    \begin{equation*}
        \left| E_j f(x',0) \right|
        \leq \left( 1+2^{j}\MetricXXdv[(x',0)][(x',x_n)] \right)^K \left| E_j^{*,K} f(x',x_n) \right|.
    \end{equation*}
    Lemma \ref{Lemma::Trace::Forward::SimpleDistanceToBoundary} shows
    \(\MetricXXdv[(x',0)][(x',x_n)]\approx x_n^{1/\lambda}\approx 2^{-j}\) for the \(x=(x',x_n)\) in question.
    The result follows.
\end{proof}

\begin{lemma}\label{Lemma::Trace::Forward::BoundRestrictElemByIntElemStar}
    \(\forall K\in \Zgeq\), \(\exists C\geq 0\), \(\forall \sE\in \ElemzXXdv{\Bngeq{\eta_2}}\), \(\forall p\in (0,\infty)\),
     \(\forall (E_j, 2^{-j})\in \sE\),
    \begin{equation}\label{Eqn::Trace::Forward::BoundRestrictElemByIntElemStar::pFinite}
        \LpNorm{ E_j f(x',0)}{p}[\Rnmo]^p \leq C^p 2^{j\lambda} \int_{\epsilon_0 2^{-(j+1)\lambda}}^{\epsilon_0 2^{-j\lambda}} 
        \LpNorm{E_j^{*,K} f(x',x_n)}{p}[\Rnmo]^p\: dx_n.
    \end{equation}
    For \(p=\infty\), we have
    \(\forall K\in \Zgeq\), \(\exists C\geq 0\), \(\forall \sE\in \ElemzXXdv{\Bngeq{\eta_2}}\), 
     \(\forall (E_j, 2^{-j})\in \sE\),
    \begin{equation}\label{Eqn::Trace::Forward::BoundRestrictElemByIntElemStar::pInfinite}
        \LpNorm{ E_j f(x',0)}{\infty}[\Rnmo] \leq C \LpNorm{ E_j^{*,K} f(x',x_n)}{\infty}[\Rngeq].
    \end{equation}
\end{lemma}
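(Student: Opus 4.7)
The plan is to derive both bounds directly from the pointwise estimate in Lemma~\ref{Lemma::Trace::Forward::BoundRestrictElemByElemStar}, which asserts $\left| E_j f(x',0) \right| \leq C\, E_j^{*,K} f(x',x_n)$ whenever $x_n$ lies in the interval $I_j := [\epsilon_0 2^{-(j+1)\lambda}, \epsilon_0 2^{-j\lambda}]$. The key observation is that the left side is independent of $x_n$, so one may average the right side over $x_n \in I_j$, whose length is $|I_j| = \epsilon_0 2^{-j\lambda}(1 - 2^{-\lambda}) \approx 2^{-j\lambda}$.

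For the case $p \in (0,\infty)$, I would raise the pointwise inequality to the $p$-th power to obtain
\begin{equation*}
    \left| E_j f(x',0) \right|^p \leq C^p \left| E_j^{*,K} f(x',x_n) \right|^p, \quad x_n \in I_j,
\end{equation*}
and then integrate in $x_n$ over $I_j$. The left side produces a factor of $|I_j| \approx 2^{-j\lambda}$, which after dividing yields
\begin{equation*}
    \left| E_j f(x',0) \right|^p \leq C'^{\,p}\, 2^{j\lambda} \int_{\epsilon_0 2^{-(j+1)\lambda}}^{\epsilon_0 2^{-j\lambda}} \left| E_j^{*,K} f(x',x_n) \right|^p \, dx_n.
\end{equation*}
Integrating in $x'$ over $\Rnmo$ and applying Fubini gives \eqref{Eqn::Trace::Forward::BoundRestrictElemByIntElemStar::pFinite}, at the cost of adjusting the constant $C$ to absorb the factor coming from $(1 - 2^{-\lambda})^{-1}$.

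For $p = \infty$, the argument is even simpler: for each $x' \in \Rnmo$ pick any $x_n \in I_j \subseteq (0,\infty)$, so that by Lemma~\ref{Lemma::Trace::Forward::BoundRestrictElemByElemStar},
\begin{equation*}
    \left| E_j f(x',0) \right| \leq C\, E_j^{*,K} f(x', x_n) \leq C\, \BLpNorm{E_j^{*,K} f}{\infty}[\Rngeq].
\end{equation*}
Taking the supremum over $x' \in \Rnmo$ gives \eqref{Eqn::Trace::Forward::BoundRestrictElemByIntElemStar::pInfinite}.

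There is no substantive obstacle here: the entire lemma is a quantitative repackaging of Lemma~\ref{Lemma::Trace::Forward::BoundRestrictElemByElemStar}, where the only care needed is to track the length of the interval $I_j$ (which produces the factor $2^{j\lambda}$) and to verify that $I_j \subseteq (0, \epsilon_0]$ so that the averaged points $(x', x_n)$ remain in $\Bngeq{\eta_3}$, where $E_j^{*,K}$ is defined via a meaningful supremum rather than being set to zero.
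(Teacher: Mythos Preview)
Your proposal is correct and matches the paper's approach exactly; the paper's proof consists of the single sentence ``This follows immediately from Lemma~\ref{Lemma::Trace::Forward::BoundRestrictElemByElemStar},'' and you have spelled out precisely the averaging-over-$I_j$ argument that this sentence encodes. Your remark about tracking $|I_j|\approx 2^{-j\lambda}$ and ensuring $(x',x_n)\in\Bngeq{\eta_3}$ is the only care needed, and is already handled in the proof of Lemma~\ref{Lemma::Trace::Forward::BoundRestrictElemByElemStar}.
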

\begin{proof}
    This follows immediately from Lemma \ref{Lemma::Trace::Forward::BoundRestrictElemByElemStar}.
\end{proof}

Recall, \(V_j(x')=X_{j}(x',0)\) has no \(\partial_{x_n}\) component for \(1\leq j\leq q\).
With \(\TraceMap f(x'):=f(x',0)\), we have for smooth functions \(f\),
\begin{equation}\label{Eqn::Trace::Forward::CommuteVandRestrict}
    V_j \TraceMap f = \TraceMap X_j f,\quad 1\leq j\leq q.
\end{equation}

\begin{lemma}\label{Lemma::Trace::Forward::ElemRestrictElemAsSmallSumOfElemRestrictElem}
    \(\forall N\in \Zgeq\), \(\exists K\in \Zgeq\),
    \(\forall \sE\in \ElemzXXdv{\Bngeq{\eta_2}}\), \(\forall \sE'\in \ElemzVVdv{\Bnmo{\eta_2}}\),
    \(\exists \sG_N\in \ElemzXXdv{\Bngeq{\eta_2}}\), \(\exists \sG_N'\in \ElemzVVdv{\Bngeq{\eta_2}}\),
    \(\forall (E_j, 2^{-j})\in \sE\), \(\forall (E_k',2^{-k})\in \sE'\) with \(k\geq j\),
    \(\exists (E_{j,1},2^{-j}),\ldots, (E_{j,K},2^{-j})\in \sG_N\),
    \(\exists (E_{k,1}',2^{-k}),\ldots, (E_{k,K}',2^{-k})\in \sG_N'\),
    \begin{equation*}
        E_k' \TraceMap E_j = \sum_{l=1}^K 2^{-N|j-k|} E_{k,l}' \TraceMap E_{j,l}.
    \end{equation*}
\end{lemma}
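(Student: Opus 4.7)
The plan is to reduce the claim to a careful accounting of scales by pulling derivatives out of $E_k'$ and commuting them past the trace map using the identity \eqref{Eqn::Trace::Forward::CommuteVandRestrict}. Fix $N$ and choose $M \geq N$ (to be specified shortly in terms of $N$ and $\max_i \Xdv_i$).

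First, apply Proposition \ref{Prop::Spaces::Elem::Elem::MainProps} \ref{Item::Spaces::Elem::Elem::PullOutNDerivs}, in the form \eqref{Eqn::Spaces::Elem::Elem::PullOutNDerivs::Right}, to the boundary elementary operator $E_k'$: we may write
\begin{equation*}
    E_k' = \sum_{|\alpha| \leq M} 2^{-k(M-|\alpha|)} \Et_{k,\alpha}' (2^{-k\Vdv} V)^\alpha,
\end{equation*}
where $\{(\Et_{k,\alpha}', 2^{-k}) : (E_k',2^{-k}) \in \sE',\, |\alpha| \leq M\} \in \ElemzVVdv{\Bnmo{\eta_2}}$. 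Here $\alpha$ is a multi-index in $\{1,\dots,q\}^{\ast}$; because $V_j = X_j\big|_{\Bnmo{1}}$ has no $\partial_{x_n}$ component (see \ref{Item::Traces::CoordReduction::TangentToBoundary}) and since $\Vdv_j = \Xdv_j$ for $1 \leq j \leq q$, iterating \eqref{Eqn::Trace::Forward::CommuteVandRestrict} yields $(2^{-k\Vdv} V)^\alpha \TraceMap = \TraceMap (2^{-k\Xdv} X)^\alpha$ as operators on smooth functions.

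Next, rescale from $2^{-k}$ to $2^{-j}$: by Notation \ref{Notation::Spaces::LP::MultiIndexAndDegree}, $(2^{-k\Xdv} X)^\alpha = 2^{-(k-j)\DegXdv{\alpha}} (2^{-j\Xdv} X)^\alpha$, and by Proposition \ref{Prop::Spaces::Elem::Elem::MainProps} \ref{Item::Spaces::Elem::Elem::DerivOp}, the operators $E_{j,\alpha} := (2^{-j\Xdv} X)^\alpha E_j$ form an elementary family, i.e., $\{(E_{j,\alpha}, 2^{-j}) : (E_j, 2^{-j}) \in \sE,\, |\alpha| \leq M\} \in \ElemzXXdv{\Bngeq{\eta_2}}$. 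Combining these steps gives
\begin{equation*}
    E_k' \TraceMap E_j = \sum_{|\alpha| \leq M} 2^{(|\alpha|-M)k - (k-j)\DegXdv{\alpha}} \Et_{k,\alpha}' \TraceMap E_{j,\alpha}.
\end{equation*}

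The only remaining point, which is also where we choose $M$, is to check that each exponent is at most $-N(k-j)$. For $|\alpha| = 0$ the exponent is $-Mk \leq -M(k-j)$ since $j \geq 0$. For $|\alpha| \geq 1$, the bound $\DegXdv{\alpha} \geq |\alpha|$ (because every $\Xdv_i \geq 1$) gives
\begin{equation*}
    (|\alpha|-M)k - (k-j)\DegXdv{\alpha} \leq (|\alpha|-M)k - (k-j)|\alpha| = -Mk + j|\alpha| \leq -M(k-j),
\end{equation*}
using $|\alpha| \leq M$ in the last step. Choosing $M = N$ and letting $K$ be the number of multi-indices with $|\alpha| \leq M$ completes the proof, with $\sG_N := \{(\Et_{k,\alpha}', 2^{-k})\}$ and $\sG_N' := \{(E_{j,\alpha}, 2^{-j})\}$ serving (after relabeling) as the required elementary families. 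No genuine obstacle arises; the argument is essentially bookkeeping, provided one correctly exploits that $\DegXdv{\alpha} \geq |\alpha|$ to convert the $k$-weight from the derivative pullout into a $(k-j)$-decay.
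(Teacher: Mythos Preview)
Your proof is correct and follows essentially the same approach as the paper: pull derivatives out of $E_k'$ via Proposition~\ref{Prop::Spaces::Elem::Elem::MainProps}~\ref{Item::Spaces::Elem::Elem::PullOutNDerivs}, commute them past $\TraceMap$ using \eqref{Eqn::Trace::Forward::CommuteVandRestrict}, and absorb them into $E_j$ via Proposition~\ref{Prop::Spaces::Elem::Elem::MainProps}~\ref{Item::Spaces::Elem::Elem::DerivOp}. The only cosmetic differences are that the paper proves the case $N=1$ and iterates, whereas you take $M=N$ directly, and that your labeling of $\sG_N$ versus $\sG_N'$ is swapped relative to the statement; also, to match the exact form of the conclusion you should note (as the paper does, citing Proposition~\ref{Prop::Spaces::Elem::Elem::MainProps}~\ref{Item::Spaces::Elem::Elem::LinearComb}) that the leftover factor $2^{(|\alpha|-M)k-(k-j)\DegXdv{\alpha}+N(k-j)}\leq 1$ can be absorbed into one of the elementary operators.
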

\begin{proof}
    This proof is similar to the proof of Lemma \ref{Lemma::Spaces::Elem::ELem::ProdOfElemAsSmallSumOfProds}.
    It suffices to prove the result for \(N=1\), as the result for any \(N\) follows from
    repeated application of \(N=1\) case.

    Let \((E_j, 2^{-j})\in \sE\), \((E_k', 2^{-k})\in \sE'\) with \(k\geq j\).
    We use Proposition \ref{Prop::Spaces::Elem::Elem::MainProps} \ref{Item::Spaces::Elem::Elem::PullOutNDerivs} (with \(N=1\)) and
    \eqref{Eqn::Trace::Forward::CommuteVandRestrict} to write
    \begin{equation*}
        \begin{split}
        &E_k' \TraceMap E_j 
        = \sum_{|\alpha|\leq 1} 2^{(|\alpha|-1)k} E_{k,\alpha}' \left( 2^{-k\Vdv} V \right)^{\alpha}\TraceMap E_j
        = \sum_{|\alpha|\leq 1} 2^{(|\alpha|-1)k} E_{k,\alpha}' \TraceMap \left( 2^{-k\Xdv}X \right)^{\alpha} E_j
        \\&= \sum_{|\alpha|\leq 1} 2^{(|\alpha|-1)k - (k-j)\DegXdv{\alpha}} E_{k,\alpha}' \TraceMap \left( 2^{-j\Xdv} X \right)^{\alpha}E_j,
        \end{split}
    \end{equation*}
    where \(\left\{ \left( E_{k,\alpha}', 2^{-j} \right) : \left( E_k',2^{-j} \right)\in \sE', |\alpha|\leq 1 \right\}\in \ElemzVVdv{\Bnmo{\eta_2}}\).
    Proposition \ref{Prop::Spaces::Elem::Elem::MainProps} \ref{Item::Spaces::Elem::Elem::DerivOp}
    shows \(\left\{ \left( \left( 2^{-j\Xdv}X \right)^{\alpha}E_j, 2^{-j} \right) : (E_j, 2^{-j})\in \sE, |\alpha|\leq 1 \right\}\in \ElemzXXdv{\Bngeq{\eta_2}}\).
    Finally, using \(2^{(|\alpha|-1)k - (k-j)\DegXdv{\alpha}}\leq 2^{-|k-j|}\) and Proposition \ref{Prop::Spaces::Elem::Elem::MainProps} \ref{Item::Spaces::Elem::Elem::LinearComb}
    completes the proof.
\end{proof}

\begin{lemma}\label{Lemma::Trace::Forward::ElemRestrictBoundedByElemRestrictElem}
    \(\forall \sE'\in \ElemzVVdv{\Bnmo{\eta_2}}\), \(\forall N\in \Zgeq\),
    \(\exists K\in \Zgeq\), \(\exists \sE\in \ElemzXXdv{\Bngeq{\eta_2}}\), \(\exists C\geq 0\),
    \(\forall (E_k', 2^{-k})\in \sE'\), \(\forall j\in \Zgeq\), \(\exists \left( E_{j,1},2^{-j} \right),\ldots, \left( E_{j,K},2^{-j} \right)\in \sE\),
    \(\forall p\in [1,\infty]\),
    \(\forall f\in \CinftyCptSpace[\Bngeq{\eta_1}]\),
    \begin{equation}\label{Eqn::Trace::Forward::ElemRestrictBoundedByElemRestrictElem}
        \BLpNorm{E_k' \TraceMap f}{p}[\Rnmo] 
        \leq C \sum_{l=1}^K \sum_{j=0}^\infty 2^{-N\left( (k-j)\vee 0 \right)} \BLpNorm{\TraceMap E_{j,l}f}{p}[\Rnmo].
    \end{equation}
\end{lemma}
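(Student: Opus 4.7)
The plan is to decompose $f$ via a Littlewood--Paley expansion adapted to $\XXdv$ and then split the sum $\sum_j E_k' \TraceMap D_j f$ into a ``low frequency'' range $j \leq k$, handled by Lemma \ref{Lemma::Trace::Forward::ElemRestrictElemAsSmallSumOfElemRestrictElem}, and a ``high frequency'' range $j > k$, where the desired weight $2^{-N((k-j)\vee 0)}$ equals $1$ and only crude $L^p$-boundedness is needed. First, I would choose $\psi \in \CinftyCptSpace[\Bngeq{\eta_2}]$ with $\psi \equiv 1$ on a neighborhood of $\BngeqClosure{\eta_1}$ and apply Proposition \ref{Prop::Spaces::LP::DjExist} to obtain $\{(D_j,2^{-j}) : j \in \Zgeq\} \in \ElemzXXdv{\Bngeq{\eta_2}}$ with $\Mult{\psi} = \sum_{j\in \Zgeq} D_j$. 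Since $f \in \CinftyCptSpace[\Bngeq{\eta_1}]$ gives $\psi f = f$, this yields $f = \sum_j D_j f$ with convergence in $\CinftyCptSpace$ (Proposition \ref{Prop::Spaces::Elem::Elem::ConvergenceOfElemOps}), hence
\begin{equation*}
E_k' \TraceMap f = \sum_{j \in \Zgeq} E_k' \TraceMap D_j f,
\end{equation*}
with absolutely convergent terms in $\LpSpace{p}[\Rnmo]$ thanks to the rapid decay of $D_j f$ for smooth $f$.

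For $j \leq k$, Lemma \ref{Lemma::Trace::Forward::ElemRestrictElemAsSmallSumOfElemRestrictElem} applied to the pair $\sE' \in \ElemzVVdv{\Bnmo{\eta_2}}$ and $\{(D_j,2^{-j})\} \in \ElemzXXdv{\Bngeq{\eta_2}}$ produces an integer $K = K(N)$, sets $\sG_N \in \ElemzXXdv{\Bngeq{\eta_2}}$, $\sG_N'$, and operators $(E_{k,l}',2^{-k}) \in \sG_N'$, $(D_{j,l},2^{-j}) \in \sG_N$ for which
\begin{equation*}
E_k' \TraceMap D_j = \sum_{l=1}^K 2^{-N(k-j)} E_{k,l}' \TraceMap D_{j,l}.
\end{equation*}
Since $\sG_N' \subseteq \PElemOnlySubscript{\FilteredSheafGenByVVdv}$, Lemma \ref{Lemma::Spaces::Elem::PElem::PElemOpsBoundedOnLp} gives a uniform bound $\BLpNorm{E_{k,l}' g}{p}[\Rnmo] \lesssim \BLpNorm{g}{p}[\Rnmo]$, independently of $k,l$, so
\begin{equation*}
\BLpNorm{E_k' \TraceMap D_j f}{p}[\Rnmo] \lesssim \sum_{l=1}^K 2^{-N(k-j)} \BLpNorm{\TraceMap D_{j,l} f}{p}[\Rnmo], \qquad j \leq k.
\end{equation*}

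For $j > k$, the required weight is simply $1$, and Lemma \ref{Lemma::Spaces::Elem::PElem::PElemOpsBoundedOnLp} applied directly to $\sE'$ yields $\BLpNorm{E_k' \TraceMap D_j f}{p}[\Rnmo] \lesssim \BLpNorm{\TraceMap D_j f}{p}[\Rnmo]$; I would then take $E_{j,l} := D_j$ for all $l = 1, \ldots, K$, absorbing the factor $K$ into the constant. Setting $\sE := \sG_N \cup \{(D_j,2^{-j}) : j \in \Zgeq\}$, which lies in $\ElemzXXdv{\Bngeq{\eta_2}}$, and summing over $j$ combines the two ranges into the claimed estimate. There is no serious obstacle here: the only real subtlety is that Lemma \ref{Lemma::Trace::Forward::ElemRestrictElemAsSmallSumOfElemRestrictElem} is proved only for $k \geq j$, forcing the case split, and the packaging of the operators from both cases into a single admissible $\sE$ must be done with care.
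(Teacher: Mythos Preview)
Your proof is correct and essentially identical to the paper's own argument: both decompose $f$ via the Littlewood--Paley operators $D_j$ from Proposition \ref{Prop::Spaces::LP::DjExist}, split $\sum_j E_k' \TraceMap D_j f$ into $j\le k$ and $j>k$, invoke Lemma \ref{Lemma::Trace::Forward::ElemRestrictElemAsSmallSumOfElemRestrictElem} for the former and the uniform $L^p$-bound of Lemma \ref{Lemma::Spaces::Elem::PElem::PElemOpsBoundedOnLp} for the latter, and package the resulting operators into a single $\sE\in \ElemzXXdv{\Bngeq{\eta_2}}$. The only cosmetic difference is that the paper does not bother duplicating $D_j$ across all $K$ slots in the high-frequency range (and there is no ``factor $K$'' to absorb, since the right-hand side only becomes larger).
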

\begin{proof}
    Fix \(\psi\in \CinftyCptSpace[\Bngeq{\eta_2}]\) with \(\psi=1\) on a neighborhood of \(\BngeqClosure{\eta_1}\).
    Using Proposition \ref{Prop::Spaces::LP::DjExist}, we may write
    \begin{equation*}
        \Mult{\psi}=\sum_{k\in \Zgeq} D_k,
    \end{equation*}
    where \(\left\{ \left( D_k,2^{-k} \right):k\in \Zgeq \right\}\in \ElemzXXdv{\Bngeq{\eta_2}}\),
    and the convergence is described in Proposition \ref{Prop::Spaces::Elem::Elem::ConvergenceOfElemOps}.

    We have, for \(f\in \CinftyCptSpace[\Bngeq{\eta_1}]\),
    \begin{equation}\label{Eqn::Trace::Forward::ElemRestrictBoundedByElemRestrictElem::SumToBound}
        \BLpNorm{E_k' \TraceMap f}{p}[\Rnmo] 
        =\BLpNorm{E_k' \TraceMap D_j f}{p}[\Rnmo]
        \leq \sum_{j=0}^\infty \BLpNorm{E_k'\TraceMap D_j f}{p}[\Rnmo].
    \end{equation}
    We separate the \(\sum_{j=0}^\infty\)
    on the right-hand side of \eqref{Eqn::Trace::Forward::ElemRestrictBoundedByElemRestrictElem::SumToBound}
    into \(\sum_{j=0}^k\) and \(\sum_{j=k+1}^\infty\).

    Lemma \ref{Lemma::Spaces::Elem::PElem::PElemOpsBoundedOnLp} shows \(\LpOpNorm{E_k'}{p}[\Rnmo]\lesssim 1\), and therefore,
    \begin{equation*}
        \sum_{j=k+1}^\infty \BLpNorm{E_k'\TraceMap D_j f}{p}[\Rnmo]\lesssim \sum_{j=k+1}^\infty \BLpNorm{\TraceMap D_j f}{p}[\Rnmo],
    \end{equation*}
    which is of the desired form \eqref{Eqn::Trace::Forward::ElemRestrictBoundedByElemRestrictElem} (with \(K=1\)).

    Turning to \(\sum_{j=0}^k\), and using Lemma \ref{Lemma::Trace::Forward::ElemRestrictElemAsSmallSumOfElemRestrictElem},
    we have
    \begin{equation*}
        \sum_{j=0}^k \BLpNorm{E_k' \TraceMap D_j f}{p}[\Rnmo] 
        \leq \sum_{l=1}^K \sum_{j=0}^k 2^{-N|j-k|} \BLpNorm{E_{k,l}' \TraceMap E_{j,l}f}{p}[\Rnmo],
    \end{equation*}
    where \(E_{k,l}'\) and \(E_{j,l}\) are as in Lemma \ref{Lemma::Trace::Forward::ElemRestrictElemAsSmallSumOfElemRestrictElem}.
    Lemma \ref{Lemma::Spaces::Elem::PElem::PElemOpsBoundedOnLp} shows \(\LpOpNorm{E_{k,l}'}{p}[\Rnmo]\lesssim 1\) and so
    \begin{equation*}
        \sum_{j=0}^k \BLpNorm{E_k' \TraceMap D_j f}{p}[\Rnmo] 
        \lesssim \sum_{l=1}^K \sum_{j=0}^k 2^{-N|j-k|} \BLpNorm{\TraceMap E_{j,l}f}{p}[\Rnmo],
    \end{equation*}
    which is of the desired form \eqref{Eqn::Trace::Forward::ElemRestrictBoundedByElemRestrictElem}, completing the proof.
\end{proof}

\begin{proof}[Completion of the proof of Proposition \ref{Prop::Traces::CoordReduction::ForwardMap}]
    Take \(\psi'\in \CinftyCptSpace[\Bnmo{\eta_2}]\) with \(\psi'=1\) on a neighborhood of \(\BnmoClosure{\eta_1}\).
    Using Proposition \ref{Prop::Spaces::LP::DjExist}, we may write
    \begin{equation*}
        \Mult{\psi'}=\sum_{k\in \Zgeq} D_k',
    \end{equation*}
    where \(\left\{ \left( D_k',2^{-k} \right):k\in \Zgeq \right\}\in \ElemzVVdv{\Bnmo{\eta_2}}\),
    and the convergence is described in Proposition \ref{Prop::Spaces::Elem::Elem::ConvergenceOfElemOps}.
    \eqref{Eqn::Traces::CoordReduction::ForwardMap::Besov} can be restated as:
    \begin{equation}\label{Eqn::Trace::Forward::BesovRestated}
        \BlqLpNorm{ \left\{ 2^{k(s-\lambda/p)}D_k' \TraceMap f \right\}_{k\in \Zgeq} }{p}{q}[\Rnmo] 
        \lesssim \BesovNorm{f}{s}{p}{q}[\FilteredSheafGenByXXdv],
    \end{equation}
    while \eqref{Eqn::Traces::CoordReduction::ForwardMap::TL} can be restated as:
    \begin{equation}\label{Eqn::Trace::Forward::TLRestated}
        \BlqLpNorm{ \left\{ 2^{k(s-\lambda/p)}D_k' \TraceMap f \right\}_{k\in \Zgeq} }{p}{p}[\Rnmo] 
        \lesssim \TLNorm{f}{s}{p}{q}[\FilteredSheafGenByXXdv].
    \end{equation}
    Here, and in the rest of the proof, \(f\) is an arbitrary element of \(\CinftyCptSpace[\Bngeq{\eta_1}]\).

    Let \(N\in \Zgeq\) with 
    \(N\geq s+1\). By Lemma \ref{Lemma::Trace::Forward::ElemRestrictBoundedByElemRestrictElem},
    \(\exists K\in \Zgeq\) and \(\sE=\left\{ \left( E_{j,k,l}, 2^{-j} \right) : j,k\in \Zgeq, l=1,\ldots,K \right\}\in \ElemzXXdv{\Bngeq{\eta_2}}\)
    with
    \begin{equation}\label{Eqn::Trace::Forward::FinalProof::Tmp1}
        \BLpNorm{D_k' \TraceMap f}{p}[\Rnmo]
        \lesssim \sum_{l=1}^K \sum_{j\in \Zgeq} 2^{-N((k-j)\vee 0)} \BLpNorm{\TraceMap E_{j,k,l}f}{p}[\Rnmo].
    \end{equation}
    Consider for \(s>\lambda/p\), using \eqref{Eqn::Trace::Forward::FinalProof::Tmp1}, \(N\geq s+1\),
    and Lemma \ref{Lemma::Trace::Forward::BoundRestrictElemByIntElemStar}
    we have for \(p\in [1,\infty)\), with \(L\in \Zgeq\) as in Proposition \ref{Prop::Trace::Forward::BoundEjStarByNorm}
    and \(\epsilon_0>0\) as in Lemma \ref{Lemma::Trace::Forward::BoundRestrictElemByIntElemStar},
    \begin{equation}\label{Eqn::Trace::Forward::FinalProof::InitialBoundFinite}
    \begin{split}
         &2^{k(s-\lambda/p)} \BLpNorm{D_k' \TraceMap f}{p}[\Rnmo] 
         \lesssim \sum_{l=1}^K \sum_{j\in \Zgeq} 2^{k(s-\lambda/p)-N((k-j)\vee 0)} \BLpNorm{\TraceMap E_{j,k,l}f}{p}[\Rnmo]
         \\&\lesssim \sum_{l=1}^K \sum_{j\in \Zgeq} 2^{k(s-\lambda/p)-N((k-j)\vee 0)} \left( 2^{j\lambda}\int_{\epsilon_02^{-(j+1)\lambda}}^{\epsilon_02^{-j\lambda}} \BLpNorm{E_{j,k,l}^{*,L} f(\cdot, x_n)}{p}[\Rnmo]^p\: dx_n  \right)^{1/p}
         \\&= \sum_{l=1}^K \sum_{j=0}^\infty 2^{(k-j)(s-\lambda/p)-N((k-j)\vee 0)+js} \left( \int_{\epsilon_0 2^{-(j+1)\lambda}}^{\epsilon_0 2^{-j\lambda}} \BLpNorm{ E_{j,k,l}^{*,L}f(\cdot, x_n)}{p}[\Rnmo]^p \right)^{1/p}
         \\&\lesssim \sum_{l=1}^K \sum_{j=0}^\infty 2^{-\epsilon|j-k|}2^{js}\left( \int_{\epsilon_0 2^{-(j+1)\lambda}}^{\epsilon_0 2^{-j\lambda}} \BLpNorm{ E_{j,k,l}^{*,L}f(\cdot, x_n)}{p}[\Rnmo]^p \right)^{1/p},
    \end{split}
    \end{equation}
    where \(\epsilon:=(s-\lambda/p)\wedge 1>0\).
    When \(p=\infty\), using \eqref{Eqn::Trace::Forward::BoundRestrictElemByIntElemStar::pInfinite} in place of
    \eqref{Eqn::Trace::Forward::BoundRestrictElemByIntElemStar::pFinite}, the same proof shows
    \begin{equation}\label{Eqn::Trace::Forward::FinalProof::InitialBoundInfinite}
    \begin{split}
         &2^{ks} \BLpNorm{D_k' \TraceMap f}{\infty}[\Rnmo] 
         \lesssim \sum_{l=1}^K \sum_{j=0}^\infty 2^{-\epsilon|j-k|}2^{js} \BLpNorm{ E_{j,k,l}^{*,L}f}{\infty}[\Rngeq],
    \end{split}
    \end{equation}
    with \(\epsilon=s\wedge 1>0\).
    For convenience, in the rest of the proof we write \(E_{j,k,l}^{*,L}=0\) if either \(j<0\) or \(k<0\).

    We turn to \eqref{Eqn::Trace::Forward::BesovRestated}. Using \eqref{Eqn::Trace::Forward::FinalProof::InitialBoundFinite}
    and \eqref{Eqn::Trace::Forward::FinalProof::InitialBoundInfinite}, we have for any \(p\in [1,\infty]\),
    \begin{equation}\label{Eqn::Trace::Forward::FinalProof::InitialBoundBoth}
        2^{k(s-\lambda/p)} \BLpNorm{D_k' \TraceMap f}{p}[\Rnmo] 
        \lesssim \sum_{l=1}^K \sum_{j=0}^\infty 2^{-\epsilon|j-k|}2^{js} \BLpNorm{ E_{j,k,l}^{*,L}f}{p}[\Rngeq].
    \end{equation}
    Using \eqref{Eqn::Trace::Forward::FinalProof::InitialBoundBoth}, we have for \(1<p\leq \infty\), \(1\leq q\leq \infty\),
    we have (with the usual modification when \(q=\infty\)),
    \begin{equation*}
    \begin{split}
         &\left( \sum_{k=0}^\infty 2^{k(s-\lambda/p)q} \BLpNorm{D_k' \TraceMap f}{q}[\Rnmo]^q  \right)^{1/q}
         \lesssim \left( \sum_{k=0}^{\infty} \left( \sum_{j=0}^\infty \sum_{l=1}^K 2^{js} 2^{-\epsilon|j-k|} \BLpNorm{ E_{j,k,l}^{*,L}f}{p}[\Rngeq]  \right)^q  \right)^{1/q}
         \\&= \left( \sum_{k=0}^\infty \left( \sum_{j\in \Z} \sum_{l=1}^K 2^{(j+k)s} 2^{-\epsilon|j|} \BLpNorm{E_{j+k,k,l}^{*,L}f}{p}[\Rngeq]   \right)^q  \right)^{1/q}
         \\&\leq \sum_{j\in \Z} \sum_{l=1}^K 2^{-\epsilon|j|}\left( \sum_{k=0}^\infty \left(  2^{(j+k)s}  \BLpNorm{E_{j+k,k,l}^{*,L}f}{p}[\Rngeq]   \right)^q  \right)^{1/q}
         \\&= \sum_{j\in \Z} \sum_{l=1}^K 2^{-\epsilon|j|}\left( \sum_{k\in \Z} \left(  2^{ks}  \BLpNorm{E_{k,k-j,l}^{*,L}f}{p}[\Rngeq]   \right)^q  \right)^{1/q}
         \\&\leq \sum_{j\in \Z} \sum_{l=1}^K 2^{-\epsilon|j|} 
         \sup_{\left\{ \left( E_k,2^{-k} \right) :k\in \Zgeq \right\}\subseteq \sE} 
            \BlqLpNorm{ \left\{ 2^{ks} E_k^{*,L}f \right\}_{k\in \Zgeq} }{p}{q}[\Rngeq]
        \\&\lesssim \sup_{\left\{ \left( E_k,2^{-k} \right) :k\in \Zgeq \right\}\subseteq \sE} 
            \BlqLpNorm{ \left\{ 2^{ks} E_k^{*,L}f \right\}_{k\in \Zgeq} }{p}{q}[\Rngeq]
        \\&\lesssim \BesovNorm{f}{s}{p}{q}[\FilteredSheafGenByXXdv],
    \end{split}
    \end{equation*}
    where the final estimate uses Proposition \ref{Prop::Trace::Forward::BoundEjStarByNorm},
    establishing \eqref{Eqn::Trace::Forward::BesovRestated}.

    Turning to \eqref{Eqn::Trace::Forward::TLRestated}, because \(\TLNorm{f}{s}{p}{\infty}[\FilteredSheafGenByXXdv]\leq \TLNorm{f}{s}{p}{q}[\FilteredSheafGenByXXdv]\),
    \(\forall q\), it suffices to prove \eqref{Eqn::Trace::Forward::TLRestated} with \(q=\infty\).
    Using \eqref{Eqn::Trace::Forward::FinalProof::InitialBoundFinite} (recall, \(1<p<\infty\) in this case),
    we have
    \begin{equation*}
    \begin{split}
         &\left(\sum_{k=0}^\infty 2^{k(s-\lambda/p)p} \BLpNorm{D_k' \TraceMap f}{p}[\Rnmo]^p  \right)^{1/p}
         \\&\lesssim \left( \sum_{k=0}^\infty \left[  \sum_{j=0}^\infty \sum_{l=1}^K 2^{-\epsilon|j-k|+js} \left( \int_{\epsilon_0 2^{-(j+1)\lambda}}^{\epsilon_02^{-j\lambda}} \BLpNorm{E_{j,k,l}^{*,L}f(\cdot, x_n)}{p}[\Rnmo]^p\: dx_n \right)^{1/p} \right]^p  \right)^{1/p}
         \\&=\left( \sum_{k=0}^\infty \left[ \sum_{j\in \Z} \sum_{l=1}^K 2^{-\epsilon|j|+(j+k)s} \left( \int_{\epsilon_0 2^{-(j+k+1)\lambda}}^{\epsilon_0 2^{-(j+k)\lambda}} \BLpNorm{E_{j+k,k,l}^{*,L}f(\cdot,x_n)}{p}[\Rnmo]^p \:dx_n\right)^{1/p}  \right]^p \right)^{1/p}
         \\&\leq \sum_{j\in \Z} \sum_{l=1}^K 2^{-\epsilon|j|} \left( \sum_{k=0}^\infty 2^{(j+k)sp}\int_{\epsilon_0 2^{-(j+k+1)\lambda}}^{\epsilon_0 2^{-(j+k)\lambda}} \BLpNorm{E_{j+k,k,l}^{*,L}f(\cdot, x_n)}{p}[\Rnmo]^p \: dx_n \right)^{1/p}
         \\&\leq \sum_{j\in \Z} \sum_{l=1}^K 2^{-\epsilon|j|} \left( \sum_{k\in \Z} 2^{ksp}\int_{\epsilon_0 2^{-(k+1)\lambda}}^{\epsilon_0 2^{-k\lambda}} \BLpNorm{E_{k,k-j,l}^{*,L}f(\cdot, x_n)}{p}[\Rnmo]^p \: dx_n \right)^{1/p}
         \\&\lesssim \sup_{\left\{ \left( E_k,2^{-k} \right) : k\in \Zgeq\right\}\subseteq \sE} \left( \sum_{k\in \Z} 2^{ksp}\int_{\epsilon_0 2^{-(k+1)\lambda}}^{\epsilon_0 2^{-k\lambda}} \BLpNorm{E_{k}^{*,L}f(\cdot, x_n)}{p}[\Rnmo]^p \: dx_n \right)^{1/p}
         \\&\leq \sup_{\left\{ \left( E_k,2^{-k} \right) : k\in \Zgeq\right\}\subseteq \sE} \left( \sum_{k\in \Z} \int_{\epsilon_0 2^{-(k+1)\lambda}}^{\epsilon_0 2^{-k\lambda}} \int_{\Rnmo}\sup_{k'\in \Zgeq} \left| 2^{k's} E_{k'}^{*,L}f(x',x_n)  \right|^p  \: dx' \: dx_n \right)^{1/p}
         \\&\leq \sup_{\left\{ \left( E_k,2^{-k} \right) : k\in \Zgeq\right\}\subseteq \sE} \left( \int_0^\infty \int_{\Rnmo} \sup_{k'\in \Zgeq} \left| 2^{k's} E_{k'}^{*,L} f(x',x_n)  \right|^p\: dx'\: dx_n \right)^{1/p}
         \\&=\sup_{\left\{ \left( E_k,2^{-k} \right) : k\in \Zgeq\right\}\subseteq \sE} \BLplqNorm{ \left\{ 2^{ks} E_{k}^{*,L}f \right\}_{k\in \Zgeq} }{p}{\infty}[\Rn]
         \lesssim \TLNorm{f}{s}{p}{\infty}[\FilteredSheafGenByXXdv],
    \end{split}
    \end{equation*}
    where the final estimate uses Proposition \ref{Prop::Trace::Forward::BoundEjStarByNorm},
    establishing \eqref{Eqn::Trace::Forward::TLRestated} and completing the proof.
\end{proof}

    \subsection{Proof of the inverse result (Proposition \texorpdfstring{\ref{Prop::Traces::CoordReduction::InverseMap}}{\ref*{Prop::Traces::CoordReduction::InverseMap}})}
    \label{Section::Trace::Inverse}
    In this section, we prove Proposition \ref{Prop::Traces::CoordReduction::InverseMap}.
Fix \(0
<\etaOne
<\etaThree
<\etaFour
<\etaFive<1\).
We will prove Proposition \ref{Prop::Traces::CoordReduction::InverseMap} 
by specializing to the case  \(\etaFour=3/4\), \(\etaThree=1/2\), and \(\etaOne=1/4\).
The map \(\TraceInverseMap[\sN_L]\) is constructed from a more general class of operators to which we now turn.

    For \(\sE'=\left\{ \left( E_k', 2^{-k} \right) : k\in \Zgeq \right\}\in \ElemzVVdv{\Bnmo{\etaThree}}\), \(\Phi=\left\{ \phi_j \right\}_{j\in \Zgeq}\subset \SchwartzSpaceR\) a bounded sequence,
    \(\gamma\in \CinftyCptSpace[ {[0, (\etaFour-\etaThree)/4)}]\), and \(t\in \R\) define    
    \begin{equation}\label{Eqn::Trace::InverseProof::DefineQt}
        \opQtEPhigamma f(x',x_n)=\sum_{k=0}^\infty \gamma(x_n) 2^{kt}\phi_k(2^{k\lambda}x_n) E_k' f(x').
    \end{equation}
The main properties we use for \(\opQtEPhigamma\) are contained in the next two propositions.

\begin{proposition}\label{Prop::Trace::InverseProof::QtConverges}
    For each \(f\in \Distributions[\Bnmo{\etaThree}]\), the sum in \eqref{Eqn::Trace::InverseProof::DefineQt}
    converges in \(\DistributionsZero[\Bngeq{1}]\) and defines a continuous operator
    \(\opQtEPhigamma:\Distributions[\Bnmo{\etaThree}]\rightarrow \DistributionsZero[\Bngeq{1}]\)
    with \(\supp(\opQtEPhigamma f)\subseteq \BngeqClosure{\etaFour}\), \(\forall f\in \Distributions[\Bnmo{\etaThree}]\).
    For each \(f\in \CinftySpace[\Bnmo{\etaThree}]\), the sum in \eqref{Eqn::Trace::InverseProof::DefineQt}
    converges in \(\CinftyCptSpace[\Bngeq{\etaFour}]\) and defines a continuous operator
    \(\opQtEPhigamma:\CinftySpace[\Bnmo{\etaThree}]\rightarrow \CinftyCptSpace[\Bngeq{\etaFour}]\).
\end{proposition}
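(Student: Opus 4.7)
The plan is to split the proof into three parts: (1) the support containment, (2) convergence/continuity for smooth $f$, and (3) convergence/continuity for distributional $f$ via duality.

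For the support claim, I would use that $\sE' \in \ElemzVVdv{\Bnmo{\etaThree}}$ means (by Definition~\ref{Defn::Spaces::LP::ElemWWdv}) there is a single compact $\Compact \Subset \Bnmo{\etaThree}$ with $\supp(E_k'(\cdot,\cdot)) \subseteq \Compact \times \Compact$ for every $(E_k',2^{-k}) \in \sE'$. Hence $\supp(E_k'f) \subseteq \Compact$ uniformly in $k$, and combining with $\supp(\gamma) \subseteq [0,(\etaFour-\etaThree)/4)$ shows each term is supported in $\Compact \times [0,(\etaFour-\etaThree)/4)$. A direct computation using $|x'| \leq \etaThree$ and $x_n^2 \leq (\etaFour-\etaThree)^2/16$ gives $|x'|^2 + x_n^2 \leq \etaFour^2$, so the support lies in $\BngeqClosure{\etaFour}$.

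For smooth $f \in \CinftySpace[\Bnmo{\etaThree}]$, I would apply Proposition~\ref{Prop::Spaces::Elem::Elem::ConvergenceOfElemOps} to get, for every multi-index $\alpha'$ and every $N$,
\begin{equation*}
    \sup_{x'} \left| V^{\alpha'} 2^{kt'} E_k' f(x') \right| \lesssim 2^{-Nk},
\end{equation*}
with $t'$ arbitrary. Since $\{\phi_k\}$ is bounded in $\SchwartzSpaceR$, each derivative $\phi_k^{(j)}$ is bounded uniformly in $k$, so the chain rule gives $|\partial_{x_n}^{\alpha_n}(\gamma(x_n)\phi_k(2^{k\lambda}x_n))| \lesssim 2^{k\lambda \alpha_n}$. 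Combining, each term in the defining sum has $C^\alpha$-norm bounded by $2^{k(t+\lambda\alpha_n-N)}$; choosing $N$ large gives absolute convergence in $\CinftyCptSpace[\Bngeq{\etaFour}]$, together with the required continuity $\CinftySpace[\Bnmo{\etaThree}] \to \CinftyCptSpace[\Bngeq{\etaFour}]$.

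For the distributional case, I would reduce to duality. For $\phi \in \TestFunctionsZero[\Bngeq{1}]$, formally
\begin{equation*}
    \PairDistributionAndTestFunctions{\opQtEPhigamma f}{\phi}
    = \sum_{k=0}^\infty 2^{kt}\PairDistributionAndTestFunctions{f}{(E_k')^{\transpose}\psi_k},
    \qquad
    \psi_k(x') := \int \gamma(x_n)\phi_k(2^{k\lambda}x_n)\phi(x',x_n)\,dx_n.
\end{equation*}
Substituting $u = 2^{k\lambda}x_n$ gives $\psi_k(x') = 2^{-k\lambda}\int \gamma(2^{-k\lambda}u)\phi_k(u)\phi(x',2^{-k\lambda}u)\,du$. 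The crucial point is that $\phi \in \TestFunctionsZero$ vanishes to infinite order at $\{x_n=0\}$, so $|\partial_{x'}^{\alpha'}\phi(x',2^{-k\lambda}u)| \lesssim_N (2^{-k\lambda}u)^N$ uniformly on $\supp(\gamma) \cdot u$ bounded; combining with uniform Schwartz decay of $\phi_k$ in $u$ to handle large $u$, I get $\|\psi_k\|_{C^M(\Compact)} \lesssim 2^{-Mk\lambda}$ for every $M$ (and the supports stay in a fixed compact of $\Bnmo{\etaThree}$). Since $(E_k')^{\transpose}$ has uniformly compactly supported smooth kernel, $\{(E_k')^{\transpose}\psi_k\}$ is rapidly decaying in $\CinftyCptSpace[\Bnmo{\etaThree}]$, so $f$ pairs with it to give a summable series, proving convergence in $\DistributionsZero[\Bngeq{1}]$. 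The same estimate, applied to seminorms of $f$, yields the continuity $\Distributions[\Bnmo{\etaThree}] \to \DistributionsZero[\Bngeq{1}]$.

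The main obstacle is the distributional case: one has to extract rapid decay in $k$ from only polynomial growth of $\PairDistributionAndTestFunctions{f}{\cdot}$, which is made possible only by the interplay between the infinite-order vanishing of $\TestFunctionsZero$ test functions at the boundary and the Schwartz decay of $\phi_k$, both of which are needed in the change-of-variables estimate for $\psi_k$. Getting the support tracking right (so that $\psi_k$ remains in $\CinftyCptSpace[\Bnmo{\etaThree}]$) and carefully verifying that the $\psi_k$ estimate is uniform for $\phi$ varying in a bounded set of $\TestFunctionsZero[\Bngeq{1}]$ is the delicate technical step.
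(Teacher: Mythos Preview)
Your proposal is correct, and the support claim and smooth case are handled essentially as in the paper. The distributional case, however, takes a genuinely different route.

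The paper does not use the infinite-order vanishing of \(\phi\in\TestFunctionsZero[\Bngeq{1}]\) at all. Instead, it exploits that \(\Bnmo{1}\) has no boundary, so the adjoint \((E_k')^{*}\) is again an elementary operator (by a result from \cite{StreetMaximalSubellipticity}). Then the same Lemma~\ref{Lemma::Trace::InverseProof::ElemOnBoundedSets} used for the smooth case is applied to \(G_k[\zeta]=2^{k(t+1)}\overline{\gamma(x_n)\phi_k(2^{k\lambda}x_n)}(E_k')^{*}\zeta(x',x_n)\), showing this set is bounded in \(\CinftyCptSpace[\Bngeq{\etaFour}]\) for \(\zeta\) in any bounded subset of \(\CinftySpace[\Bngeq{1}]\). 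The rapid decay in \(k\) thus comes entirely from the elementary-operator decomposition (pulling derivatives out of \((E_k')^{*}\) onto \(\zeta\)), not from boundary vanishing; indeed the paper explicitly notes it could take \(\zeta\in\CinftySpace[\Bngeq{1}]\) rather than \(\TestFunctionsZero[\Bngeq{1}]\).

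Your mechanism is different: the dilation \(x_n\mapsto 2^{k\lambda}x_n\) concentrates \(\phi_k(2^{k\lambda}x_n)\) near \(x_n=0\), where \(\phi\in\TestFunctionsZero\) vanishes to infinite order, and this produces the super-polynomial decay of \(\psi_k\) directly. This is a valid and arguably more elementary argument, since it avoids both Lemma~\ref{Lemma::Trace::InverseProof::ElemOnBoundedSets} and the adjoint-is-elementary fact. The trade-off is that the paper's route gives a slightly stronger convergence statement (testing against general smooth \(\zeta\)) and reuses a single lemma for both halves. One point to tighten in your write-up: the phrase ``\((E_k')^{\transpose}\) has uniformly compactly supported smooth kernel'' is not enough by itself---the \(C^M\)-norms of those kernels grow like \(2^{kC(M)}\), and you should state explicitly that this polynomial growth is absorbed by the \(2^{-k\lambda N}\) decay of \(\psi_k\) (which you clearly intend, since you allow \(N\) arbitrary).
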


\begin{proposition}\label{Prop::Trace::InverseProof::QtIsContinuous}
    For \(s\in \R\), the following maps are continuous:
    \begin{equation*}
        \opQtEPhigamma: \BesovSpace{s+t}{p}{q}[\BnmoClosure{\etaOne}][\VVdv]
        \rightarrow 
        \BesovSpace{s+\lambda/p}{p}{q}[\BngeqClosure{\etaFour}][\XXdv], \quad 1\leq p,q\leq\infty.
    \end{equation*}
    \begin{equation*}
        \opQtEPhigamma: \BesovSpace{s+t}{p}{p}[\BnmoClosure{\etaOne}][\VVdv]
        \rightarrow 
        \TLSpace{s+\lambda/p}{p}{q}[\BngeqClosure{\etaFour}][\XXdv], \quad 1<p<\infty, \quad 1<q\leq \infty.
    \end{equation*}
\end{proposition}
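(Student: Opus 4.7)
The plan is to bound the norms of $\opQtEPhigamma f$ via the Littlewood--Paley characterization of the bulk spaces. Fix $\etaFive \in (\etaFour, 1)$ and a bounded family $\{(D_j, 2^{-j}) : j \in \Zgeq\} \in \ElemzXXdv{\BngeqClosure{\etaFive}}$; by Corollary \ref{Cor::Spaces::MainEst::VpqsESeminormIsContinuous} it suffices to control
\begin{equation*}
\BVNormOmega{\{2^{j(s+\lambda/p)} D_j \opQtEPhigamma f\}_{j \in \Zgeq}}{p}{q}
\end{equation*}
(with $\Omega = \BngeqClosure{\etaFour}$) by the appropriate boundary Besov norm of $f$.

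The core estimate is a scale-matching almost-orthogonality bound. Writing $\opQtEPhigamma f = \sum_{k \geq 0} 2^{kt} T_k f$ with $T_k f(x', x_n) := \gamma(x_n)\phi_k(2^{k\lambda} x_n) E_k' f(x')$, I would prove: for every $N \in \Zgeq$ there is a bounded family $\{(F_k, 2^{-k}) : k \in \Zgeq\} \in \ElemzVVdv{\BnmoClosure{\etaThree}}$ such that
\begin{equation*}
\|D_j T_k f\|_{L^p(\Omega)} \lesssim 2^{-N|j-k|} \cdot 2^{-k\lambda/p} \cdot \|F_k f\|_{L^p(\BnmoClosure{\etaOne})}.
\end{equation*}
The factor $2^{-k\lambda/p}$ comes from $\|\gamma(x_n)\phi_k(2^{k\lambda} x_n)\|_{L^p_{x_n}(\R)} \lesssim 2^{-k\lambda/p}$ and is exactly what produces the $\lambda/p$ regularity shift. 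The $2^{-N|j-k|}$ factor reflects that $T_k f$ has effective $\XXdv$-scale $2^{-k}$: $E_k' f$ is at $\VVdv$-scale $2^{-k}$ on the boundary, and $\phi_k(2^{k\lambda} x_n)$ localizes $x_n$ to scale $2^{-k\lambda}$, matching the weight $\lambda = \Xdv_{q+1}$ of $\partial_{x_n}$. To obtain the decay, for $j \geq k$ apply Proposition \ref{Prop::Spaces::Elem::Elem::MainProps} \ref{Item::Spaces::Elem::Elem::PullOutNDerivs} to write $D_j$ as a sum of $X$-derivatives of elementary operators and push them onto $T_k f$, the $(2^{-j\Xdv})^\alpha$ weights balancing each $\partial_{x_n}$ derivative of $\phi_k(2^{k\lambda} x_n)$ (which costs exactly $2^{k\lambda}$, matching $\Xdv_{q+1} = \lambda$) and each $V$-derivative on $E_k' f$ (at boundary scale $2^{-k}$). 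For $j < k$ dually rewrite $E_k'$ as $V$-derivatives of elementary operators on $\Bnmo{1}$ and transfer these through the bulk kernel via integration-by-parts in the $y'$-variable, gaining $2^{-(k-j)}$ per derivative.

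With the scale-matching bound in hand, using $2^{kt}$ from the definition of $\opQtEPhigamma$ and $2^{j(s+\lambda/p)}$ from the target norm,
\begin{equation*}
2^{j(s+\lambda/p)} \|D_j \opQtEPhigamma f\|_{L^p(\Omega)} \lesssim \sum_{k \geq 0} 2^{-N|j-k|} \cdot 2^{(j-k)(s+\lambda/p)} \cdot 2^{k(s+t)} \|F_k f\|_{L^p(\BnmoClosure{\etaOne})}.
\end{equation*}
Choosing $N > |s+\lambda/p|+1$ makes this a convolution of an $\ell^1$-kernel against $\{2^{k(s+t)}\|F_k f\|_{L^p}\}_k$; Young's inequality on $\ell^q(\Zgeq)$ together with Corollary \ref{Cor::Spaces::MainEst::VpqsESeminormIsContinuous} applied on the boundary yields the Besov case. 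For the Triebel--Lizorkin case, one refines the scale-matching bound to a pointwise estimate in terms of the non-isotropic Hardy--Littlewood maximal operator on the boundary (along the lines of Lemmas \ref{Lemma::Trace::Forward::BoundPreElemByMaximal} and \ref{Lemma::Trace::Forward::BoundElemStarByMaximal}) and then mimics the argument of Proposition \ref{Prop::Traces::CoordReduction::ForwardMap}: interchange $L^p$ and $\ell^p$ via Minkowski (which requires $p = p$, explaining why the boundary space must be $\BesovSpace{s+t}{p}{p}$) and apply the Fefferman--Stein vector-valued maximal inequality on $\Bnmo{1}$.

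The main obstacle is the scale-matching estimate when $j < k$: transferring cancellation from $E_k'$ into a decay bound on the bulk requires combining the tensor-product structure of $T_k$ with integration-by-parts in the kernel of $D_j$, whose vanishing-at-boundary convention is distinct from that of $E_k'$. Pushing derivatives between bulk and boundary elementary operators without introducing unwanted boundary terms (and verifying that the resulting operators remain in the elementary-operator classes developed earlier) is the main technical hurdle.
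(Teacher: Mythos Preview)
Your overall strategy---Littlewood--Paley on the bulk plus an almost-orthogonality estimate between the bulk scale $j$ and the boundary scale $k$---is exactly the paper's. But the paper organizes the argument differently, and the difference matters for the Triebel--Lizorkin case and for the ``main technical hurdle'' you flag.

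Rather than proving an $L^p$ scale-matching bound $\|D_j T_k f\|_{L^p}\lesssim 2^{-N|j-k|}2^{-k\lambda/p}\|F_k f\|_{L^p}$, the paper proves a \emph{structural} lemma (Lemma~\ref{Lemma::Trace::InverseProof::ElemAppliedToOpQ}): for any $N$, $E_j\opQtEPhigamma$ can be rewritten as a finite sum of terms $2^{-N|k|}2^{(j+k)t}E_{j,l}\,\gamma_l(x_n)\phi_{j,k,l}(2^{(j+k)\lambda}x_n)E_{j+k,l}'$, where $E_{j,l}$ is still a bulk elementary operator at scale $2^{-j}$ and the remaining factor has the same tensor-product form as a single term of $\opQtEPhigamma$. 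One then drops $E_{j,l}$ via the vector-valued $\VSpace{p}{q}$-boundedness of pre-elementary operators (Proposition~\ref{Prop::Spaces::Elem::PElem::PElemOpsBoundedOnVV}), and the remaining tensor-product is handled by Lemma~\ref{Lemma::Trace::InverseProof::VpqNormInequality}. This last lemma treats Besov and Triebel--Lizorkin uniformly, \emph{without any maximal functions}: for $\VSpace{p}{q}=\LplqSpace{p}{q}$ one reduces to $q<p$, decomposes the $x_n$-integral into dyadic shells $I_l=(2^{-(l+1)\lambda},2^{-l\lambda})$, and applies H\"older in the $\ell^q$-sum, which is where $\BesovSpace{s+t}{p}{p}$ on the boundary is forced. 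Your maximal-function route for TL would work in principle but is harder, because the bulk operator $D_j$ mixes $x'$ and $x_n$; one would need the structural factorization anyway to isolate a boundary operator to which $\Maximal$ applies.

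On the hurdle you identify: your phrase ``integration-by-parts in the $y'$-variable'' is not the mechanism. After pulling $V$-derivatives out of $E_{j+k}'$ and commuting them past $\gamma(x_n)\phi(2^{(j+k)\lambda}x_n)$ (legal since $V$ has no $\partial_{x_n}$-component), one must convert $V_m(x')$ into bulk vector fields $X$ acting at $(x',x_n)$ in order to absorb them into $E_j$. This is Lemma~\ref{Lemma::Trace::InverseProof::XjRelatedToVj}: $V_m(x')=\sum_{\Xdv_s\leq \Vdv_m+l\lambda} X_s(x',x_n)\,\at(x',x_n)x_n^l+g(x',x_n)$. The degree constraint $\Xdv_s\leq\Vdv_m+l\lambda$ is exactly what makes the scaling close: the extra $x_n^l$ becomes $(x_n^l\phi)(2^{(j+k)\lambda}x_n)$ (still Schwartz) at the cost of $2^{-(j+k)l\lambda}$, which together with $2^{-(j+k)\Vdv_m}$ balances the $2^{j\Xdv_s}$ from landing $X_s$ on $E_j$. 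The companion formula for $X_m$ in terms of $V$'s with $x_n$-powers handles the $j\geq k$ direction symmetrically. Without this $X\leftrightarrow V$ conversion (which uses assumption \ref{Item::Traces::CoordReduction::NSW}, \ref{Item::Traces::CoordReduction::TangentToBoundary} via iterated $\ad[\partial_{x_n}]$), the scale-matching bound cannot be established, so this is the genuinely missing idea in your sketch.
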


Before we prove Propositions \ref{Prop::Trace::InverseProof::QtConverges} and \ref{Prop::Trace::InverseProof::QtIsContinuous},
we see how they imply Proposition \ref{Prop::Traces::CoordReduction::InverseMap}.

\begin{proof}[Proof of Proposition \ref{Prop::Traces::CoordReduction::InverseMap}]
    Let \(\phi_0,\phi_1,\ldots, \phi_L\in \SchwartzSpace[\R]\) be such that
    \begin{equation}\label{Eqn::Trace::InverseProof::ChooseFinalphi}
        \partial^j \phi_k(0)
        =
        \begin{cases}
            1, &j=k, 0\leq j,k\leq L,\\
            0,&j\ne k, 0\leq j,k\leq L.
        \end{cases}
    \end{equation}
    Fix \(\psi\in \CinftyCptSpace[\Bnmo{\etaThree}]\) with \(\psi=1\) on a neighborhood of \(\BnmoClosure{\etaOne}\).
        Using Proposition \ref{Prop::Spaces::LP::DjExist}, we may write
    \begin{equation*}
        \Mult{\psi'}=\sum_{k\in \Zgeq} D_k',
    \end{equation*}
    where \(\sD_0':=\left\{ \left( D_k',2^{-k} \right):k\in \Zgeq \right\}\in \ElemzVVdv{\Bnmo{\etaThree}}\),
    and the convergence is described in Proposition \ref{Prop::Spaces::Elem::Elem::ConvergenceOfElemOps}.
    Let \(\gamma\in \CinftyCptSpace[ {[0, (\etaFour-\etaThree)/4)}]\) equal \(1\) on a neighborhood of \(0\),
    and set \(\Phi_l=\{\phi_l\}\) (i.e., \(\Phi_l\) denotes the constant sequence, \(\phi_l,\phi_l,\ldots\)).
    Set,
    \begin{equation*}
        \TraceInverseMap[\sN_L] \left( f_0, f_1,\ldots, f_L \right):= \sum_{l=0}^L \gamma(x_n) 2^{-l\lambda k} \phi_l(2^{k\lambda}x_n) D_k' f_l(x')
        =\sum_{l=0}^L \opQtEPhigamma[-l\lambda][\sD_0'][\Phi_l][\gamma]f_l. 
    \end{equation*}
    Proposition \ref{Prop::Trace::InverseProof::QtConverges} implies
    \(\TraceInverseMap[\sN_L]:\Distributions[\Bnmo{\etaThree}]\rightarrow \DistributionsZero[\Bngeq{\etaFour}]\)
    and \(\TraceInverseMap[\sN_L]:\CinftySpace[\Bnmo{\etaThree}]\rightarrow \CinftyCptSpace[\Bngeq{\etaFour}]\)
    are continuous; this establishes \eqref{Eqn::Traces::CoordReduction::InverseMap::BasicMapping} and 
    \ref{Item::Traces::CoordReduction::InverseMap::MappingOnSmoothFunctions} (by taking \(\etaFour=3/4\) and
    \(\etaThree=1/2\)). Taking \(\etaOne=1/4\), \ref{Item::Traces::CoordReduction::InverseMap::MappingOnBesovAndTL}
    follows from Proposition \ref{Prop::Trace::InverseProof::QtIsContinuous}.
    All that remains to show is \ref{Item::Traces::CoordReduction::InverseMap::IsInverse}.

    For \((f_0,f_1,\ldots, f_L)\in \CinftyCptSpace[\Bnmo{\etaOne}]^{L+1}\), \(0\leq l_0\leq L\), we have using
    Proposition \ref{Prop::Trace::InverseProof::QtConverges} and \eqref{Eqn::Trace::InverseProof::ChooseFinalphi},
    \begin{equation*}
        \begin{split}
        \partial_{x_n}^{l_0}\big|_{x_n=0} \TraceInverseMap[\sN_L] \left( f_0,f_1,\ldots, f_L \right)(x',x_n)
        &=\sum_{k\in \Zgeq} \sum_{l=0}^L 2^{(l_0-l)\lambda k} \left( \partial_{x_n}^{l_0}\phi_l \right)(0) D_k'f_l(x')
        \\&=\sum_{k\in \Zgeq} D_k' f_{l_0}(x') = \psi f_{l_0}=f_{l_0}.
        \end{split}
    \end{equation*}
    We conclude \(\TraceMap[\sN_L]\TraceInverseMap[\sN_L](f_0,f_1,\ldots, f_L)=(f_0,f_1,\ldots, f_L)\), establishing
    \ref{Item::Traces::CoordReduction::InverseMap::IsInverse} and completing the proof.
\end{proof}

We turn to the proof of Proposition \ref{Prop::Trace::InverseProof::QtConverges}.
For it, we need a lemma.

\begin{lemma}\label{Lemma::Trace::InverseProof::ElemOnBoundedSets}
    Let \(\sE'\in \ElemzVVdv{\Bnmo{\etaThree}}\), \(\sB\subset \CinftySpace[\Bngeq{1}]\) be a bounded set, \(\Phi=\left\{ \phi_j \right\}_{j\in \Zgeq}\subset \SchwartzSpace[\R]\) a bounded sequence,
            \(\gamma\in \CinftyCptSpace[{[0, (\etaFour-\etaThree)/4)}]\).  Then, \(\forall N\in \Zgeq\),
            \begin{equation*}
                \left\{ 2^{Nk} \gamma(x_n) \phi_k(2^{k\lambda}x_n) E_k' \zeta(x',x_n) : (E_k',2^{-k})\in \sE', \zeta\in \sB \right\}\subset \CinftyCptSpace[\Bngeq{\etaFour}]
            \end{equation*}
            is a bounded set. Here, \(E_k'\) acts on \(\zeta(x',x_n)\) in just the \(x'\)-variable.
\end{lemma}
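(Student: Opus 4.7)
The plan is to establish two things: (i) uniform compact support of every member of the set inside $\Bngeq{\etaFour}$, and (ii) a quantitative bound of the form $\|\partial_{x'}^{\alpha'}\partial_{x_n}^{\alpha_n}f_k\|_\infty \leq C_{\alpha,N'}\,2^{-N'k}$ for every multi-index $\alpha = (\alpha',\alpha_n)$ and every $N'\in\Zgeq$, uniformly in $k$, $(E_k',2^{-k})\in\sE'$, and $\zeta\in\sB$, where $f_k(x',x_n) := 2^{Nk}\gamma(x_n)\phi_k(2^{k\lambda}x_n)E_k'\zeta(x',x_n)$. Together these give boundedness in $\CinftyCptSpace[\Bngeq{\etaFour}]$. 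The support statement is immediate: since $\sE'\in\ElemzVVdv{\Bnmo{\etaThree}}$, by \eqref{Eqn::Spaces::LP::ElemzFOmegaIsUnion} there is a compact $\Compact\Subset\Bnmo{\etaThree}$ with $\supp(E_k')\subseteq\Compact\times\Compact$ for every $(E_k',2^{-k})\in\sE'$, so $E_k'\zeta(x',x_n)$ vanishes for $x'\notin\Compact$; combined with $\supp(\gamma)\subseteq [0,(\etaFour-\etaThree)/4)$, each $f_k$ is supported in $\Compact\times [0,(\etaFour-\etaThree)/4]\Subset \Bngeq{\etaFour}$.

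For the derivative bound, I first use the Leibniz rule to distribute $\partial_{x_n}^{\alpha_n}$ across the three $x_n$-dependent factors $\gamma(x_n)$, $\phi_k(2^{k\lambda}x_n)$, and (since $E_k'$ acts only in $x'$) $\zeta(x',x_n)$. The middle factor contributes $\partial_{x_n}^{j}\phi_k(2^{k\lambda}x_n) = 2^{k\lambda j}\phi_k^{(j)}(2^{k\lambda}x_n)$, whose sup-norm is bounded by $C\,2^{k\lambda j}$ because $\{\phi_k\}$ is bounded in $\SchwartzSpace[\R]$; the derivatives of $\gamma$ are fixed. The matter thus reduces to bounding $\|\partial_{x'}^{\alpha'}E_k'[\zeta'(\cdot,x_n)](x')\|_\infty$ for $\zeta'$ ranging over a bounded subset of $\CinftySpace[\Bngeq{1}]$, weighted by the harmful factor $2^{(N+\lambda\alpha_n)k}$ that must be absorbed.

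To absorb it, for any $M\in\Zgeq$ I invoke the pull-out-derivatives identity (Proposition \ref{Prop::Spaces::Elem::Elem::MainProps}\ref{Item::Spaces::Elem::Elem::PullOutNDerivs}) to write
\[
E_k' = \sum_{|\beta|\leq M} 2^{(|\beta|-M)k}E_{k,\beta}'(2^{-k\Vdv}V)^\beta,
\]
where $\{(E_{k,\beta}',2^{-k}) : (E_k',2^{-k})\in\sE',\,|\beta|\leq M\}\in\ElemzVVdv{\Compact}$. This yields $E_k'[\zeta'(\cdot,x_n)] = \sum_{|\beta|\leq M}2^{(|\beta|-M-\DegVdv{\beta})k}E_{k,\beta}'[V^\beta\zeta'(\cdot,x_n)]$; since $|\beta|\leq\DegVdv{\beta}$, every term carries an extra $2^{-Mk}$, while $V^\beta\zeta'$ stays in a bounded subset of $\CinftySpace$. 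Writing $\partial_{x'}^{\alpha'}E_{k,\beta}'[g](x') = \int \partial_{x'}^{\alpha'}E_{k,\beta}'(x',y')\,g(y')\,dy'$, the problem reduces to bounding $\int|\partial_{x'}^{\alpha'}E_{k,\beta}'(x',y')|\,dy'$. By Lemma \ref{Lemma::Filtrations::GeneratorsForLieFiltration} applied to $\FilteredSheafGenByVVdv$ on a relatively compact neighborhood of $\Compact$ in $\Bnmo{1}$, each $\partial_{x_i'}$ can be written as a smooth combination of the generators of $\LieFilteredSheafGenByVVdv$ of $\Vdv$-degree at most some fixed $d_0$; iterating and applying Proposition \ref{Prop::Spaces::Elem::Elem::MainProps}\ref{Item::Spaces::Elem::Elem::DerivFuncAribtrarySection} together with the pre-elementary kernel estimate from Definition \ref{Defn::Spaces::LP::PElemWWdv} (integrated in $y'$ using volume doubling as in Lemma \ref{Lemma::Spaces::Elem::PreElem::ComposePreElemBound}) gives $\int|\partial_{x'}^{\alpha'}E_{k,\beta}'(x',y')|\,dy' \leq C\,2^{kM'(\alpha')}$ for some $M'(\alpha')$ depending only on $|\alpha'|$. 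Assembling the pieces, $\|\partial_{x'}^{\alpha'}\partial_{x_n}^{\alpha_n}f_k\|_\infty \lesssim 2^{(N+\lambda\alpha_n+M'(\alpha')-M)k}$, and choosing $M \geq N+\lambda\alpha_n+M'(\alpha')+N'$ delivers the desired decay.

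The main technical obstacle is controlling classical $x'$-derivatives of the elementary-operator kernel $E_{k,\beta}'(x',y')$, since the adapted framework directly bounds only derivatives along the vector fields $V_j$. The resolution couples H\"ormander's condition on $\Bnmo{1}$ (via Lemma \ref{Lemma::Filtrations::GeneratorsForLieFiltration}, which rewrites each $\partial_{x_i'}$ as a smooth combination of $V$-iterated commutators of bounded total $\Vdv$-degree) with the pull-out-derivatives structure of elementary operators, which supplies arbitrarily many $2^{-k}$ factors on demand. This trade is what allows the polynomial growth in $2^k$ arising from converting classical derivatives to $V$-derivatives to be dominated by any desired power of $2^{-k}$.
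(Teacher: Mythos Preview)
Your proof is correct and uses the same core ingredients as the paper: the common compact support from \(\sE'\in\ElemzVVdv{\Bnmo{\etaThree}}\) and \(\supp(\gamma)\); the Leibniz distribution of \(\partial_{x_n}\)-derivatives across \(\gamma\), \(\phi_k\), and \(\zeta\); the pull-out-derivatives identity (Proposition~\ref{Prop::Spaces::Elem::Elem::MainProps}\ref{Item::Spaces::Elem::Elem::PullOutNDerivs}) to absorb the growing powers of \(2^k\); and the \(L^\infty\) boundedness of (pre-)elementary operators to close.

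The one organizational difference is how the \(x'\)-derivatives are handled. You stay with classical derivatives \(\partial_{x'}^{\alpha'}\) and, at the end, convert them to \(V\)-derivatives at the kernel level, incurring a polynomial cost \(2^{kM'(\alpha')}\) that you then absorb by taking \(M\) large. The paper instead observes at the outset that, in this coordinate system, Assumptions \ref{Item::Traces::CoordReduction::IsPartialxn}, \ref{Item::Traces::CoordReduction::TangentToBoundary}, and \ref{Item::Traces::CoordReduction::Span} force \(\Span\{V_1(x'),\ldots,V_q(x')\}=T_{x'}\Rnmo\), so it is equivalent (on the relevant compact set) to bound \(V^{\alpha}\partial_{x_n}^{\beta}\)-derivatives instead. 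This lets the paper absorb the \(V^{\alpha}\) directly into \(E_k'\) via Proposition~\ref{Prop::Spaces::Elem::Elem::MainProps}\ref{Item::Spaces::Elem::Elem::DerivOp} without ever touching the kernel explicitly, and then reduces to \(|\alpha|=|\beta|=0\) before using pull-out-derivatives to reduce to \(N=0\). Your route is a bit more hands-on with the kernel (and your invocation of Lemma~\ref{Lemma::Filtrations::GeneratorsForLieFiltration} is heavier than needed here, since the \(V_j\) already span without taking commutators), but it arrives at the same place.
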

\begin{proof}
    Using \(\supp(E_k')\subseteq \Bnmo{\etaThree}\times \Bnmo{\etaThree}\)
    and the support of \(\gamma\), we have
    \begin{equation}\label{Eqn::Trace::InverseProof::ElemOnBoundedSets::SupportEstimate}
        \bigcup_{\substack{(E_k',2^{-k})\in \sE'\\ \zeta\in \sB}} \supp\left( \gamma(x_n) \phi_k(2^{-k}x_n) E_k'\zeta(x',x_n)) \right)\Subset \Bngeq{\etaFour}.
    \end{equation}
    In light of Assumptions \ref{Item::Traces::CoordReduction::IsPartialxn}, \ref{Item::Traces::CoordReduction::TangentToBoundary}, and
    \ref{Item::Traces::CoordReduction::Span}
    in Section \ref{Section::Trace::ReductionToCoord}, since \(V_j=X_j\big|_{\Bnmo{1}}\), \(1\leq j\leq q\),
    we have \(\Span\left\{ V_j(x'):1\leq j\leq q \right\}=\TangentSpace{x'}{\Rnmo}\), \(\forall x'\in \Bnmo{1}\).
    Combining this with \eqref{Eqn::Trace::InverseProof::ElemOnBoundedSets::SupportEstimate},
    to prove the result, it suffices to show
    \(\forall N\in \Zgeq\), \(\forall \alpha,\beta\),
    \begin{equation}\label{Eqn::Trace::InverseProof::ElemOnBoundedSets::ToShowBound}
        \sup_{\zeta\in \sB} \sup_{x',x_n} \sup_{\left( E_k',2^{-k} \right)\in \sE'} \left| 2^{Nk} \partial_{x_n}^{\beta} V^{\alpha} \gamma(x_n) \phi(2^{k\lambda}x_n) E_k' \zeta(x',x_n) \right|<\infty.
    \end{equation}
    
    We claim that it suffices to prove \eqref{Eqn::Trace::InverseProof::ElemOnBoundedSets::ToShowBound}
    in the case \(|\alpha|=|\beta|=0\). Indeed,
    \begin{equation}\label{Eqn::Trace::InverseProof::ElemOnBoundedSets::DistributeDerivs}
        \begin{split}
            &2^{Nk} \partial_{x_n}^{\beta} V^{\alpha} \gamma(x_n) \phi(2^{k\lambda}x_n) E_k' \zeta(x',x_n)
            \\&= \sum_{\beta_1+\beta_2+\beta_3=\beta} c_{\beta_1,\beta_2,\beta_3}
            2^{(N+|\beta_2|\lambda+\DegVdv{\alpha})k} \left( \partial_{x_n}^{\beta_1}\gamma \right)(x_n)
            \left( \partial_{x_n}^{\beta_2}\phi_k \right)(2^{k\lambda}x_n)
            \left( \left( 2^{-k\Vdv}V \right)^{\alpha} E_k' \partial_{x_n}^{\beta_3}\zeta \right)(x',x_n).
        \end{split}
    \end{equation}
    By Proposition \ref{Prop::Spaces::Elem::Elem::MainProps} \ref{Item::Spaces::Elem::Elem::DerivOp}, for fixed \(\alpha\),
    \(\left\{ \left( \left( 2^{-k\Vdv}V \right)^{\alpha}E_k',2^{-k} \right) : \left( E_k',2^{-k} \right)\in \sE' \right\}\in \ElemzVVdv{\Bnmo{\etaThree}}\).
    It follows that the right-hand side of \eqref{Eqn::Trace::InverseProof::ElemOnBoundedSets::DistributeDerivs} is a linear combination
    of terms of the same form as those in \eqref{Eqn::Trace::InverseProof::ElemOnBoundedSets::ToShowBound}, with different
    choices of \(N\), and with \(|\alpha|=|\beta|=0\). Thus, to establish 
    \eqref{Eqn::Trace::InverseProof::ElemOnBoundedSets::ToShowBound} it suffices to instead show
    \begin{equation}\label{Eqn::Trace::InverseProof::ElemOnBoundedSets::ToShowBound::2}
        \sup_{\zeta\in \sB} \sup_{x',x_n} \sup_{\left( E_k',2^{-k} \right)\in \sE'} \left| 2^{Nk}  \gamma(x_n) \phi(2^{k\lambda}x_n) E_k' \zeta(x',x_n) \right|<\infty.
    \end{equation}

    Next, we claim that it suffices to prove \eqref{Eqn::Trace::InverseProof::ElemOnBoundedSets::ToShowBound::2}
    in the case \(N=0\). Indeed, for \(N\in \Zgeq\) fixed,
    Proposition \ref{Prop::Spaces::Elem::Elem::MainProps} \ref{Item::Spaces::Elem::Elem::PullOutNDerivs} shows that we may write
    \begin{equation}\label{Eqn::Trace::InverseProof::ElemOnBoundedSets::ToShowBound::Tmp1}
        E_k' \zeta(x',x_n) = \sum_{|\alpha|\leq N} 2^{(|\alpha|-N-\DegVdv{\alpha})k}E_{k,\alpha}' V^{\alpha} \zeta(x',x_n),
    \end{equation}
    where \(\left\{ \left( E_{k,\alpha}', 2^{-k} \right) : \left( E_k,2^{-k} \right)\in \sE,|\alpha|\leq N \right\}\in \ElemzVVdv{\Bnmo{\etaThree}}\).
    Since \(\left\{ V^{\alpha}\zeta :\zeta\in \sB, |\alpha|\leq N \right\}\subset \CinftySpace[\Bngeq{1}]\)
    is a bounded set, and since \(\DegVdv{\alpha}\geq|\alpha|\), \(\forall \alpha\),
    plugging \eqref{Eqn::Trace::InverseProof::ElemOnBoundedSets::ToShowBound::Tmp1}
    into \eqref{Eqn::Trace::InverseProof::ElemOnBoundedSets::ToShowBound::2} shows that it suffices to prove
    \eqref{Eqn::Trace::InverseProof::ElemOnBoundedSets::ToShowBound::2} with \(N=0\). I.e., we wish to show
    \begin{equation}\label{Eqn::Trace::InverseProof::ElemOnBoundedSets::ToShowBound::3}
        \sup_{\zeta\in \sB} \sup_{x',x_n} \sup_{\left( E_k',2^{-k} \right)\in \sE'} \left|  \gamma(x_n) \phi(2^{k\lambda}x_n) E_k' \zeta(x',x_n) \right|<\infty.
    \end{equation}
    Since \(\supp(E_k')\subseteq \Bnmo{\etaThree}\times \Bnmo{\etaThree}\) (see \eqref{Eqn::Spaces::LP::ElemzFOmegaIsUnion}),
    and since \(|\phi_k|\lesssim 1\), Lemma \ref{Lemma::Spaces::Elem::PElem::PElemOpsBoundedOnLp} (with \(p=\infty\))
    implies
    \begin{equation*}
        \left|  \gamma(x_n) \phi(2^{k\lambda}x_n) E_k' \zeta(x',x_n) \right|
        \lesssim \sup_{\substack{x_n\in [0,\infty) \\ x'\in \Bnmo{\etaThree}}}
        \left| \gamma(x_n)\zeta(x',x_n) \right|\lesssim 1,
    \end{equation*}
    establishing \eqref{Eqn::Trace::InverseProof::ElemOnBoundedSets::ToShowBound::3} and completing the proof.
\end{proof}

\begin{proof}[Proof of Proposition \ref{Prop::Trace::InverseProof::QtConverges}]
    We begin by showing the convergence on \(\CinftySpace[\Bnmo{\etaFour}]\).
    Since \(\sE'\in\ElemzVVdv{\Bnmo{\etaThree}}\), there is a common compact set \(\Compact\Subset \Bnmo{\etaThree}\)
    with \(\supp(E_k')\subset \Compact\times \Compact\), \(\forall (E_k',2^{-k})\in \sE\).
    Fix \(\psi'\in \CinftyCptSpace[\Bnmo{\etaThree}]\) with \(\psi'=1\) on a neighborhood of \(\Compact\).

    Let \(\sB'\subset \CinftySpace[\Bnmo{\etaThree}]\) be a bounded set.
    For \(f\in \sB'\), set
    \begin{equation*}
        G_k[f](x',x_n):=2^{k(t+1)}\gamma(x_n) \phi_k(2^{k\lambda}x_n) E_k' f(x')=2^{k(t+1)}\gamma(x_n) \phi_k(2^{k\lambda}x_n) E_k'\psi'f(x')
    \end{equation*}
    Using Lemma \ref{Lemma::Trace::InverseProof::ElemOnBoundedSets} 
    with \(\zeta(x',x_n)=\psi'(x')f(x')\), we see
    \begin{equation*}
        \left\{ G_k[f] : k\in \Zgeq, f\in \sB \right\}\subset \CinftyCptSpace[\Bngeq{\etaFour}]
    \end{equation*}
    is a bounded set.
    Therefore,
    \begin{equation*}
        \sum_{k=0}^\infty \gamma(x_n) 2^{kt} \phi_k(2^{k\lambda} x_n) E_k' f(x') = \sum_{k=0}^\infty 2^{-k} G_k[f](x',x_n)
    \end{equation*}
    converges in \(\CinftyCptSpace[\Bngeq{\etaFour}]\), uniformly for \(f\in \sB'\).
    The claimed convergence and continuity of  \(\opQtEPhigamma:\CinftySpace[\Bnmo{\etaThree}]\rightarrow \CinftyCptSpace[\Bngeq{\etaFour}]\)
    follows.

    We turn to the convergence on \(\Distributions[\Bnmo{\etaThree}]\).
    Let \(\sB'\subset \Distributions[\Bnmo{\etaThree}]\) and \(\sB\subset \CinftySpace[\Bngeq{1}]\)
    be bounded sets. We will show
    \begin{equation}\label{Eqn::Trace::InverseProof::ShowDistributionConvergence}
        \sum_{k=0}^\infty \iint \overline{\zeta(x',x_n)} \gamma(x_n) \phi_k(2^{k\lambda}x_n) 2^{kt} E_k' f(x')\: dx'\: dx_n
    \end{equation}
    converges absolutely and uniformly for \(f\in \sB'\) and \(\zeta\in \sB\); from this convergence the claimed convergence in the proposition
    and continuity  \(\opQtEPhigamma:\Distributions[\Bnmo{\etaThree}]\rightarrow \DistributionsZero[\Bngeq{\etaFour}]\)
    follow. In fact, this convergence of \eqref{Eqn::Trace::InverseProof::ShowDistributionConvergence} is somewhat stronger than
    is needed (for example, we could take \(\sB\subset \TestFunctionsZero[\Bngeq{1}]\), but do not require this).

    We have
    \begin{equation}\label{Eqn::Trace::InverseProof::ShowDistributionConvergence::Tmp1}
    \begin{split}
         &\sum_{k=0}^\infty \iint \overline{\zeta(x',x_n)} \gamma(x_n) \phi_k (2^{k\lambda}x_n) 2^{kt} E_k'f(x')\: dx'\: dx_n
         \\&=\sum_{k=0}^\infty \iint \overline{ 2^{kt}\overline{\gamma(x_n) \phi(2^{k\lambda}x_n)} \left( E_k' \right)^{*} \zeta(x',x_n)  } f(x')\: dx'\: dx_n.
    \end{split}
    \end{equation}
    We set,
    \begin{equation*}
        G_k[\zeta](x',x_n):= 2^{k(t+1)}\overline{\gamma(x_n) \phi(2^{k\lambda}x_n)} \left( E_k' \right)^{*} \zeta(x',x_n).
    \end{equation*}
    Using \cite[Lemma 5.5.6]{StreetMaximalSubellipticity}, we have \(\left\{ \left( \left( E_k' \right)^{*}, 2^{-k} \right) : (E_k', 2^{-k})\in \sE \right\}\in \ElemzVVdv{\Bnmo{\etaThree}}\)
    (here we are using that \(\Bnmo{1}\) is a manifold without boundary, so the results of \cite{StreetMaximalSubellipticity} apply).
    Therefore, Lemma \ref{Lemma::Trace::InverseProof::ElemOnBoundedSets} shows
    \begin{equation}\label{Eqn::Trace::InverseProof::ShowDistributionConvergence::Tmp2}
        \left\{ G_k[\zeta] : k\in \Zgeq,\zeta\in \sB \right\}\subset \CinftyCptSpace[\Bngeq{\etaFour}]
    \end{equation}
    is a bounded set. Plugging the definition of \(G_k[\zeta]\) into \eqref{Eqn::Trace::InverseProof::ShowDistributionConvergence::Tmp1}
    we see that \eqref{Eqn::Trace::InverseProof::ShowDistributionConvergence} is equal to
    \begin{equation*}
        \sum_{k=0}^\infty 2^{-k}\iint G_k[\zeta](x',x_n)\: dx_n\: f(x')\: dx',
    \end{equation*}
    which converges absolutely and uniformly for \(\zeta\in \sB\) and \(f\in \sB'\) since
    \eqref{Eqn::Trace::InverseProof::ShowDistributionConvergence::Tmp2} is a bounded set, completing the proof
    of the convergence of \eqref{Eqn::Trace::InverseProof::ShowDistributionConvergence}.

    Finally, we claim \(\supp(\opQtEPhigamma f)\subseteq \BngeqClosure{\etaFour}\), \(\forall f\in \Distributions[\Bnmo{\etaThree}]\).
    Indeed, since \(\supp(E_k' f)\subseteq \Bnmo{\etaThree}\) (since \(\sE'\in \ElemzVVdv{\Bnmo{\etaThree}}\)),
    and \(\supp(\gamma(x_n))\subseteq {[0, (\etaFour-\etaThree)/4)}\) this follows from the formula \eqref{Eqn::Trace::InverseProof::DefineQt}.
\end{proof}

We complete this section with the proof of Proposition \ref{Prop::Trace::InverseProof::QtIsContinuous}.
We begin with some formulas relating \(X_j\) and \(V_j\).

\begin{lemma}\label{Lemma::Trace::InverseProof::XjRelatedToVj}
    \(\exists N_0\in \Zgeq\), \(\forall j\in \left\{ 1,\ldots, q \right\}\),
    \begin{equation}\label{Eqn::Trace::InverseProof::XjRelatedToVj::XjInTermsOfVj}
        X_j(x',x_n)=\sum_{l=0}^{N_0} \sum_{\substack{\Vdv_k\leq \Xdv_j+l\lambda \\ 1\leq k,\leq q}} a_{j,l}^k (x',x_n) x_n^l V_k(x') + b_j(x',x_n) x_n\partial_{x_n},
    \end{equation}
    \begin{equation}\label{Eqn::Trace::InverseProof::XjRelatedToVj::VjInTermsOfXj}
        V_j(x')=\sum_{l=0}^{N_0} \sum_{\substack{\Xdv_k\leq \Vdv_j+l\lambda \\ 1\leq k\leq q+1}} X_k(x',x_n) \at_{j,l}^k(x',x_n)x_n^l +g_j(x',x_n),
    \end{equation}
    where \(a_{j,l}^k, \at_{j,l}^k, b_j, g_j\in \CinftySpace[\Bngeq{1}]\), and the functions are treated as multiplication operators.
\end{lemma}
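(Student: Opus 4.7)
Both displayed identities are operator identities but reduce to equalities of smooth vector fields on $\Bngeq{1}$, with each $V_k$ (originally on $\Bnmo{1}$) extended to $\Bngeq{1}$ to be independent of $x_n$. In equation (1) the scalar coefficients sit to the left of $V_k$, so the operator and vector-field forms coincide. In equation (2), $X_k$ sits to the left of the multiplication operator $\at_{j,l}^k x_n^l$; commuting them past each other via the product rule produces exactly the scalar correction $g_j = -\sum_{l,k} X_k(\at_{j,l}^k x_n^l)$ appearing in the statement. So it suffices to prove the two vector-field identities.

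The key input comes from iterating the commutator relation in assumption \ref{Item::Traces::CoordReduction::NSW} with $X_{q+1} = \partial_{x_n}$: for $m = 1$, $[\partial_{x_n}, X_j] = \sum_{\Xdv_l \le \Xdv_j + \lambda} c^l X_l$, and induction on $m$ using the Leibniz rule yields
\begin{equation*}
    \partial_{x_n}^m X_j(x', x_n) = \sum_{\Xdv_l \le \Xdv_j + m\lambda} c_{j,m}^l(x', x_n)\, X_l(x', x_n),
\end{equation*}
with smooth coefficients $c_{j,m}^l$ on $\Bngeq{1}$. I will fix $N_0 \in \Zg$ with $(N_0 + 1)\lambda \ge \max_{1 \le k \le q+1} \Xdv_k$; this threshold guarantees that arbitrary smooth vector-field remainders can be absorbed as $l = N_0 + 1$ terms.

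For identity (1), forward-Taylor-expand $X_j$ in $x_n$ around $x_n = 0$ to order $N_0$. Each Taylor coefficient $(\partial_{x_n}^m X_j)(x', 0)$ splits via the displayed identity at $(x', 0)$: for $1 \le l \le q$ the term $X_l(x', 0) = V_l(x')$ has $\Vdv_l = \Xdv_l \le \Xdv_j + m\lambda$ and supplies the desired $V_l$ contribution; for $l = q+1$ one gets $\frac{x_n^m}{m!}(\cdots)\partial_{x_n}$, which for $m \ge 1$ carries a factor $x_n$ and folds into $b_j x_n \partial_{x_n}$, while the $m = 0$ contribution vanishes because $X_j(x', 0)$ is tangential by \ref{Item::Traces::CoordReduction::TangentToBoundary}. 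The integral remainder is $x_n^{N_0 + 1}$ times a smooth vector field, which I decompose in the global frame $\{V_1, \ldots, V_q, \partial_{x_n}\}$ (spanning the tangent space at every point of $\Bngeq{1}$ by \ref{Item::Traces::CoordReduction::Span}); the $V_k$ components enter as $l = N_0 + 1$ terms (with the degree bound $\Vdv_k \le \Xdv_j + (N_0 + 1)\lambda$ automatic by our choice of $N_0$), and the $\partial_{x_n}$ component joins $b_j$.

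For identity (2), run the symmetric argument by backward-Taylor-expanding $X_j$ from $x_n$ down to $0$:
\begin{equation*}
    V_j(x') = X_j(x', 0) = \sum_{m=0}^{N_0} \frac{(-x_n)^m}{m!}(\partial_{x_n}^m X_j)(x', x_n) + R_{N_0}(x', x_n),
\end{equation*}
where $R_{N_0} = \tfrac{(-1)^{N_0+1}}{N_0!} x_n^{N_0+1}\int_0^1 s^{N_0}(\partial_{x_n}^{N_0+1} X_j)(x', sx_n)\,ds$ is manifestly $x_n^{N_0+1}$ times a smooth vector field. Apply the displayed commutator identity at $(x', x_n)$ (not at $(x', 0)$) to each $(\partial_{x_n}^m X_j)(x', x_n)$: this yields the sum $\sum_{m,l}\at_{j,m}^l(x',x_n) x_n^m X_l(x', x_n)$ with $\at_{j,m}^l = \tfrac{(-1)^m}{m!} c_{j,m}^l$ and $\Xdv_l \le \Xdv_j + m\lambda = \Vdv_j + m\lambda$. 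Decompose $R_{N_0}/x_n^{N_0+1}$ in the frame $\{X_1, \ldots, X_{q+1}\}$ (also spanning by \ref{Item::Traces::CoordReduction::Span}) and absorb the contribution as the $l = N_0 + 1$ term. The main technical point is verifying that the Leibniz-iterated NSW expansion preserves smoothness up to $x_n = 0$ and that the degree bound $\Xdv_l \le \Xdv_j + m\lambda$ propagates correctly through the induction on $m$; this is routine once the induction is set up carefully.
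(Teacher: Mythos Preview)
Your proof is correct. For \eqref{Eqn::Trace::InverseProof::XjRelatedToVj::XjInTermsOfVj} your approach coincides with the paper's: Taylor-expand $X_j$ at $x_n=0$, rewrite each Taylor coefficient via iterated $\ad(\partial_{x_n})$ using assumption \ref{Item::Traces::CoordReduction::NSW}, and absorb the remainder through a spanning family. For \eqref{Eqn::Trace::InverseProof::XjRelatedToVj::VjInTermsOfXj}, however, you take a genuinely different route. The paper first establishes \eqref{Eqn::Trace::InverseProof::XjRelatedToVj::XjInTermsOfVj}, rearranges it to isolate $V_j$ on the left with residual $V_k$-terms on the right, and then substitutes this identity back into itself $N_0$ times to successively eliminate those $V_k$, finally invoking the span condition to convert a high-order remainder into $X_k$'s. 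Your backward Taylor expansion $V_j(x')=X_j(x',0)=\sum_m \tfrac{(-x_n)^m}{m!}(\partial_{x_n}^m X_j)(x',x_n)+R$, combined with applying the commutator identity at $(x',x_n)$ rather than at the boundary, lands directly on combinations of $X_l(x',x_n)$ and bypasses the iterative substitution entirely. Your argument is shorter and treats the two identities symmetrically; the paper's has the minor advantage of deriving both from a single forward Taylor expansion.
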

\begin{proof}
    Throughout this proof, all functions of \(x',x_n\) are in \(\CinftySpace[\Bngeq{1}]\), while all functions of \(x'\)
    are in \(\CinftySpace[\Bnmo{1}]\).
    For \(1\leq j\leq q\) and \(N_0=\max\left\{ \Xdv_1,\ldots, \Xdv_{q+1} \right\}\geq \max\left\{ \Vdv_1,\ldots, \Vdv_q \right\}\),
    we have by Taylor's Theorem,
    \begin{equation}\label{Eqn::Trace::InverseProof::XjRelatedToVj::Tmp1}
        X_j(x',x_n)= X_j(a',0)+\sum_{l=1}^{N_0-1} \frac{x_n^l}{l!} \left( \ad[\partial_{x_n}]^l X_j \right) ( x',0) +x_n^{N_0} B_j(x',x_n)\cdot \grad_{(x',x_n)}
    \end{equation}
    where \(B_j\in \CinftySpace[\Bngeq{1};\R^n]\).

    Applying Assumption \ref{Item::Traces::CoordReduction::NSW} at the start of Section \ref{Section::Trace::ReductionToCoord},
    \(l\) times, and using \(\partial_{x_n}=X_{q+1}\) and \(\lambda=\Xdv_{q+1}\) shows for \(l\geq 1\),
    \begin{equation}\label{Eqn::Trace::InverseProof::XjRelatedToVj::Tmp2}
        \left( \ad[\partial_{x_n}]^l X_j \right) (x',0)=\sum_{\substack{\Xdv_k\leq \Xdv_j+l\lambda\\ 1\leq k\leq q+1}} h_{j,l}^k X_k(x',0)
        =\sum_{\substack{\Vdv_k\leq \Xdv_j+l\lambda \\ 1\leq k\leq q}} h_{j,l}^k(x') V_k(x')+ h_{j,l}^{q+1}(x')\partial_{x_n}, 
    \end{equation}
    where in the final equality, we used \(X_k(x',0)=V_k(x')\), \(1\leq k\leq q\).
    Plugging \eqref{Eqn::Trace::InverseProof::XjRelatedToVj::Tmp2} into \eqref{Eqn::Trace::InverseProof::XjRelatedToVj::Tmp1}
    and using \(N_0\geq \max\left\{ \Vdv_1,\ldots, \Vdv_q \right\}\), and \(V_k(x')=X_k(x',0)\) for \(1\leq k\leq q\) gives
    \begin{equation}\label{Eqn::Trace::InverseProof::XjRelatedToVj::Tmp3}
        X_j(x',x_n)= V_j(x')+\sum_{l=1}^{N_0} \sum_{\substack{\Vdv_k\leq \Xdv_j+l\lambda \\ 1\leq k\leq q}} x_n^l a_{j,l}^k(x',x_n) V_k(x')+b_j(x',x_n)x_n\partial_{x_n}.
    \end{equation}
    Rewriting \eqref{Eqn::Trace::InverseProof::XjRelatedToVj::Tmp3} gives
    \begin{equation}\label{Eqn::Trace::InverseProof::XjRelatedToVj::Tmp4}
        V_j(x')= X_j(x',x_n)-\sum_{l=1}^{N_0} \sum_{\substack{\Vdv_k\leq \Xdv_j+l\lambda \\ 1\leq k\leq q}} x_n^l a_{j,l}^k(x',x_n) V_k(x')-b_j(x',x_n)x_n\partial_{x_n}.
    \end{equation}
    \eqref{Eqn::Trace::InverseProof::XjRelatedToVj::Tmp4} gives a formula for \(V_j\); plugging this formula in for
    \(V_k\) on the right-hand side of \eqref{Eqn::Trace::InverseProof::XjRelatedToVj::Tmp4} and repeating this process \(N_0\)
    times gives
    \begin{equation}\label{Eqn::Trace::InverseProof::XjRelatedToVj::Tmp5}
        V_j(x')=X_j(x',x_n) + \sum_{l=1}^{N_0} \sum_{\substack{\Xdv_k\leq \Xdv_j+l\lambda \\ 1\leq k\leq q}} x_n^l \ah_{j,l}^k(x',x_n) X_k(x',x_n) 
        +\sum_{1\leq k\leq q} x_n^{N_0} c_j^k(x',x_n) V_k(x') + \bt(x',x_n)x_n \partial_{x_n}.
    \end{equation}
    Using 
     Assumption \ref{Item::Traces::CoordReduction::Span} at the start of Section \ref{Section::Trace::ReductionToCoord}
     and the fact that \(N_0=\max\left\{ \Xdv_1,\ldots, \Xdv_q \right\}\), we have
     \begin{equation}\label{Eqn::Trace::InverseProof::XjRelatedToVj::Tmp6}
        \sum_{1\leq k\leq q} x_n^{N_0} c_{j}^k(x',x_n)V_k(x') = \sum_{\substack{\Xdv_k\leq N_0\lambda \\ 1\leq k\leq q+1}} x_n^{N_0} \ch_{j}^k(x',x_n) X_k(x',x_n).
    \end{equation}
    Using that \(\partial_{x_n}=X_{q+1}\) and \(\Xdv_{q+1}=\lambda\), 
    and plugging \eqref{Eqn::Trace::InverseProof::XjRelatedToVj::Tmp6} into \eqref{Eqn::Trace::InverseProof::XjRelatedToVj::Tmp5},
    we have
    \begin{equation}\label{Eqn::Trace::InverseProof::XjRelatedToVj::Tmp7}
        V_j(x')=X_j(x',x_n) + \sum_{l=1}^{N_0} \sum_{\substack{\Xdv_k\leq \Xdv_j+l\lambda \\ 1\leq k\leq q+1}} x_n^l \at_{j,l}^k(x',x_n) X_k(x',x_n).
    \end{equation}
    Commuting \(x_n^l \at_{j,l}^k(x',x_n)\) past \(X_k\) gives
    \begin{equation}\label{Eqn::Trace::InverseProof::XjRelatedToVj::Tmp8}
        V_j(x')= X_j(x',x_n) + \sum_{l=1}^{N_0} \sum_{\substack{\Xdv_k\leq \Xdv_j+l\lambda \\ 1\leq k\leq q+1}} X_k(x',x_n) x_n^l \at_{j,l}^k(x',x_n)+g_j(x',x_n).
    \end{equation}
    \eqref{Eqn::Trace::InverseProof::XjRelatedToVj::XjInTermsOfVj} and \eqref{Eqn::Trace::InverseProof::XjRelatedToVj::VjInTermsOfXj}
    follow by extending the sum to \(l=0\) in 
    \eqref{Eqn::Trace::InverseProof::XjRelatedToVj::Tmp3} and \eqref{Eqn::Trace::InverseProof::XjRelatedToVj::Tmp8},
    respectively.
\end{proof}

\begin{notation}\label{Notation::Trace::InverseProof::ZeroIfNegativeIndex}
    For the remainder of this section, if we have an operator or a function indexed by \(j\in \Zgeq\) (for example \(E_j\)),
    then for \(j\in \Z\setminus \Zgeq\) we set \(E_j=0\).
\end{notation}

\begin{lemma}\label{Lemma::Trace::InverseProof::ElemAppliedToOpQ}
    Let \(\sE'=\left\{ \left( E_k', 2^{-k} \right) : k\in \Zgeq \right\}\in \ElemzVVdv{\Bnmo{\etaThree}}\), \(\Phi=\left\{ \phi_j \right\}_{j\in \Zgeq}\subset \SchwartzSpaceR\) a bounded sequence,
    \(\gamma\in \CinftyCptSpace[ {[0, (\etaFour-\etaThree)/4)}]\), and \(t\in \R\)
    be as in the definition of \(\opQtEPhigamma\) (see \eqref{Eqn::Trace::InverseProof::DefineQt}),
    and let \(\sE\in \ElemzXXdv{\Bngeq{\etaFive}}\).  Then, \(\forall N\in \Zgeq\),
    \(\exists L\in \Zgeq\), \(\forall (E_j, 2^{-j})\in \sE\) with \(j\in \Zgeq\),
    \begin{equation*}
        E_j \opQtEPhigamma = \sum_{l=1}^L \sum_{k\in \Z} 2^{-N|k|} 2^{(j+k)t} E_{j,l}\gamma_l(x_n) \phi_{j,k,l}\left( 2^{(j+k)\lambda}x_n \right)E_{j+k,l}',
    \end{equation*}
    where \(\gamma_1,\ldots, \gamma_L\in \CinftyCptSpace[{[0,(\etaFour-\etaThree)/4)}]\),
    \(\left\{ \phi_{j,k,l} : j\in \Zgeq, k\in \Z, l=1,\ldots,L \right\}\subset \SchwartzSpaceR\) is a bounded set,
    \(\left\{ \left( E_{j,l},2^{-j} \right) :(E_j,2^{-j})\in \sE, j\in \Zgeq, l=1,\ldots, L \right\}\in \ElemzXXdv{\Bngeq{\etaFive}}\),
    \(E_{j+k,l}'=0\) if \(j+k<0\) as in Notation \ref{Notation::Trace::InverseProof::ZeroIfNegativeIndex},
    and \(\left\{ \left( E_{j+k,l}', 2^{-j-k} \right) : j\in \Zgeq, k\in \Z, j+k\geq 0, l=1,\ldots, L \right\}\in \ElemzVVdv{\Bnmo{\etaThree}}\).
\end{lemma}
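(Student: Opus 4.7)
The plan is to carry out an almost-orthogonality computation that exploits the two-way dictionary between $V$-derivatives on the boundary and $X$-derivatives in the bulk provided by Lemma \ref{Lemma::Trace::InverseProof::XjRelatedToVj}. Writing the summation index in \eqref{Eqn::Trace::InverseProof::DefineQt} as $k' = j+k$, I would split the sum into $k \geq 0$ (i.e., $k' \geq j$) and $k < 0$ (i.e., $k' < j$), and in each case produce terms of the target form with the claimed $2^{-N|k|}$ decay. Throughout, I will choose the number of derivatives to extract to be $N' := 2N + C$ for a fixed overhead $C = C(\XXdv, N_0)$ absorbing the degree constants coming from Lemma \ref{Lemma::Trace::InverseProof::XjRelatedToVj}.

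For the case $k' \geq j$, I would use the left form of Proposition \ref{Prop::Spaces::Elem::Elem::MainProps} \ref{Item::Spaces::Elem::Elem::PullOutNDerivs} to write $E_{k'}' = \sum_{|\alpha| \leq N'} 2^{(|\alpha|-N'-\DegVdv{\alpha})k'} V^\alpha \tilde E_{k',\alpha}'$, substitute into the integral expression for $E_j \opQtEPhigamma$, and integrate by parts in $y'$. This integration by parts has no boundary contributions, since $E_j(x,y)$ vanishes to infinite order as $y \to \BoundaryN$ (as $\sE \in \ElemzXXdv$, via Definitions \ref{Defn::Spaces::LP::PElemWWdv} and \ref{Defn::Spaces::LP::ElemWWdv}) and $\gamma$ has compact support in $y_n$. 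Since each $V_i$ is purely a $y'$-derivative, the whole $V^\alpha$ lands on $E_j(x,y)$. I then apply Lemma \ref{Lemma::Trace::InverseProof::XjRelatedToVj} \eqref{Eqn::Trace::InverseProof::XjRelatedToVj::VjInTermsOfXj} to each $V_i$ to convert $V^\alpha E_j(x,y)$ into a finite sum of terms of the form $y_n^L \, a(y) \, X^\beta E_j(x,y)$ with the crucial constraint $\DegXdv{\beta} \leq \DegVdv{\alpha} + L\lambda$. Proposition \ref{Prop::Spaces::Elem::Elem::MainProps} \ref{Item::Spaces::Elem::Elem::DerivFunction} identifies $X^\beta E_j(x,y) = 2^{j\DegXdv{\beta}} \hat E_{j,\beta}(x,y)$ with $\hat E_{j,\beta}$ elementary at scale $2^{-j}$, and the $y_n^L$ is absorbed into $\phi_{k'}$ via $y_n^L \phi_{k'}(2^{k'\lambda}y_n) = 2^{-k'\lambda L} \hat\phi_{k',L}(2^{k'\lambda}y_n)$, where $\hat\phi_{k',L}(s) := s^L \phi_{k'}(s) \in \SchwartzSpaceR$.

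The case $k' < j$ is handled symmetrically: I pull out $N'$ derivatives from $E_j$ on the right via \eqref{Eqn::Spaces::Elem::Elem::PullOutNDerivs::Right}, then expand each resulting $X_i$ acting on $\gamma(y_n) \phi_{k'}(2^{k'\lambda}y_n) E_{k'}' f(y')$ using \eqref{Eqn::Trace::InverseProof::XjRelatedToVj::XjInTermsOfVj}. The tangential $V_k$ pieces are handled by Proposition \ref{Prop::Spaces::Elem::Elem::MainProps} \ref{Item::Spaces::Elem::Elem::DerivOp} ($V^\beta E_{k'}' = 2^{k'\DegVdv{\beta}} \hat E_{k',\beta}'$), while the $y_n \partial_{y_n}$ piece interacts with $\phi_{k'}(2^{k'\lambda}y_n)$ so that the factor $2^{k'\lambda}$ from the chain rule is exactly cancelled by the accompanying $y_n$, leaving behind a new Schwartz factor $s \mapsto s \phi_{k'}'(s)$. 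Summing the scale exponents and invoking the degree inequalities from Lemma \ref{Lemma::Trace::InverseProof::XjRelatedToVj} (namely $\DegXdv{\beta} \leq \DegVdv{\alpha} + L\lambda$ in the first case and $\Vdv_k \leq \Xdv_i + l\lambda$ in the second), the total exponent in each term reduces, after elementary algebra, to at most $-N(k'-j)$ (respectively $-N(j-k')$) with the choice $N' = 2N + C$.

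The main obstacle is the careful bookkeeping of all lower-order corrections: those produced by the Leibniz expansions in Lemma \ref{Lemma::Trace::InverseProof::XjRelatedToVj}, those arising from $(V^\alpha)^* = (-1)^{|\alpha|} V^\alpha + \text{lower order}$ in the integration by parts, and those coming from the product rule when $X^\alpha$ is distributed across $\gamma \phi_{k'} E_{k'}'f$. Each such correction must be shown to produce either a term of the same form with at least the claimed decay, or a strictly lower-order term that can be re-expanded inductively and absorbed into the finite sum over $l = 1, \ldots, K = K(N)$. Once that is done, Proposition \ref{Prop::Spaces::Elem::Elem::MainProps} \ref{Item::Spaces::Elem::Elem::LinearComb}, \ref{Item::Spaces::Elem::Elem::MultBySmooth}, \ref{Item::Spaces::Elem::Elem::DerivFunction}, and \ref{Item::Spaces::Elem::Elem::DerivOp} guarantee that the resulting families $\{E_{j,l}\}$, $\{E_{j+k,l}'\}$ lie in $\ElemzXXdv{\Bngeq{\etaFive}}$ and $\ElemzVVdv{\Bnmo{\etaThree}}$ respectively, that $\{\phi_{j,k,l}\}$ is a bounded subset of $\SchwartzSpaceR$, and that the $\gamma_l$ lie in $\CinftyCptSpace[{[0,(\etaFour-\etaThree)/4)}]$, completing the proof.
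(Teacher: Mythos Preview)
Your approach is correct and essentially the same as the paper's: split on the sign of $k$, extract derivatives from the finer-scale elementary operator, and transfer them to the coarser-scale one via the two identities of Lemma~\ref{Lemma::Trace::InverseProof::XjRelatedToVj}. The paper streamlines the bookkeeping you flag by proving only the case $N=1$ (a single derivative) and then observing that the output is again of the same form $E_{j,l}\,\gamma_l(x_n)\,\phi_{j,k,l}(2^{(j+k)\lambda}x_n)\,E'_{j+k,l}$, so iterating $N$ times yields $2^{-N|k|}$ with just a one-step Leibniz computation per iteration; it also works entirely at the operator level (commuting the lone $V$ past $\gamma\phi$ and absorbing the resulting $X_s$ into $E_j$ via Proposition~\ref{Prop::Spaces::Elem::Elem::MainProps}~\ref{Item::Spaces::Elem::Elem::DerivOp}) rather than integrating by parts on kernels, though the two viewpoints are equivalent by Remark~\ref{Rmk::Spaces::Elem::Elem::IntegrateByPartsWithoutBoundary}.
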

\begin{proof}
    Using the formula for \(\opQtEPhigamma\) (see \eqref{Eqn::Trace::InverseProof::DefineQt}) and 
    Proposition \ref{Prop::Trace::InverseProof::QtConverges}, we have by reindexing the sum \eqref{Eqn::Trace::InverseProof::DefineQt}
    and using Notation \ref{Notation::Trace::InverseProof::ZeroIfNegativeIndex},
    \begin{equation}\label{Eqn::Trace::InverseProof::ElemAppliedToOpQ::RewriteSum}
        E_j \opQtEPhigamma =\sum_{k\in \Z} 2^{(j+k)t} E_j \gamma(x_n) \phi_k\left( 2^{(j+k)\lambda}x_n \right)E_{j+k}',
    \end{equation}
    where both sides of \eqref{Eqn::Trace::InverseProof::ElemAppliedToOpQ::RewriteSum} are thought of as acting on
    functions (or distributions) on \(\Bnmo{\etaThree}\) and yield functions or distributions on \(\Bngeq{\etaFive}\).

    Thus, to prove the result, it suffices to show
    \begin{equation}\label{Eqn::Trace::InverseProof::ElemAppliedToOpQ::ToShow}
        E_j \gamma(x_n) \phi_k\left( 2^{(j+k)\lambda}x_n \right)E_{j+k}'
        =2^{-N|k|} \sum_{l=1}^L E_{j,l}\gamma_l(x_n) \phi_{j,k,l}\left( 2^{(j+k)\lambda}x_n \right) E_{j+k,l}',
    \end{equation}
    where each of the above objects is as in the statement of the lemma; in short, we wish to show
    \( E_j \gamma(x_n) \phi_k\left( 2^{(j+k)\lambda}x_n \right)E_{j+k}'\) equals \(2^{-N|k|}\)
    times a sum of terms of the same form.
    It suffices to prove \eqref{Eqn::Trace::InverseProof::ElemAppliedToOpQ::ToShow} in the case \(N=1\),
    and the general result follows by iterating the case \(N=1\).
    We separate the proof of \eqref{Eqn::Trace::InverseProof::ElemAppliedToOpQ::ToShow} in the case \(N=1\)
    into two parts: \(k<0\) and \(k\geq 0\).

    We begin with the case \(k<0\). When \(|k|>j\), then by Notation \ref{Notation::Trace::InverseProof::ZeroIfNegativeIndex},
    \(E_{j+k}'=0\) and the left-hand side of \eqref{Eqn::Trace::InverseProof::ElemAppliedToOpQ::ToShow} is zero, so there is
    nothing to show.  We henceforth assume \(|k|\leq j\).

    We apply Proposition \ref{Prop::Spaces::Elem::Elem::MainProps} \ref{Item::Spaces::Elem::Elem::PullOutNDerivs} (with \(N=1\))
    to write
    \begin{equation}\label{Eqn::Trace::InverseProof::ElemAppliedToOpQ::Tmp1}
        E_j=\sum_{|\alpha|\leq 1} 2^{(|\alpha|-1)j}E_{j,\alpha} \left( 2^{-j\Xdv}X \right)^{\alpha},
    \end{equation}
    where \(\left\{ \left( E_{j,\alpha},2^{-j} \right) : |\alpha|\leq 1, \left( E_j,2^{-j} \right)\in \sE \right\}\in \ElemzXXdv{\Bngeq{\etaFive}}\).
    We plug \eqref{Eqn::Trace::InverseProof::ElemAppliedToOpQ::Tmp1} into the left-hand side of \eqref{Eqn::Trace::InverseProof::ElemAppliedToOpQ::ToShow}
    and show that each term is of the desired form.

    When \(|\alpha|=0\), using \(|k|\leq j\), we have
    \begin{equation*}
        2^{-j} E_{j,0} \gamma(x_n) \phi_{j+k}\left( 2^{(j+k)\lambda}x_n\right) E_{j+k}'
        =2^{-|k|} E_{j,0}\gamma(x_n) \left(2^{|k|-j} \phi_{j+k} \right)\left( 2^{(j+k)\lambda}x_n\right) E_{j+k}'
    \end{equation*}
    is of the desired form.  
    
    Turning to the case \(|\alpha|=1\), we have
    \begin{equation}\label{Eqn::Trace::InverseProof::ElemAppliedToOpQ::Tmp2}
        \left( 2^{-j\Xdv}X \right)^{\alpha} =
        \begin{cases}
            2^{-j\Xdv_l}X_l, &\text{for some }1\leq l\leq q,\text{ or,}\\
            2^{-j\lambda}\partial_{x_n}.
        \end{cases}
    \end{equation}
    We deal with the two cases in \eqref{Eqn::Trace::InverseProof::ElemAppliedToOpQ::Tmp2} separately.
    We first address the case when \(\left( 2^{-j\Xdv}X \right)^{\alpha}=2^{-j\Xdv_l}X_l\) for some \(1\leq l\leq q\).
    Without loss of generality, we take \(l=1\); so we consider terms of the form
    \begin{equation}\label{Eqn::Trace::InverseProof::ElemAppliedToOpQ::Tmp3}
        E_{j,1} 2^{-j\Xdv_1} X_1 \gamma(x_n) \phi_{j+k}\left( 2^{(j+k)\lambda}x_n \right) E_{j+k}'.
    \end{equation}
    Applying \eqref{Eqn::Trace::InverseProof::XjRelatedToVj::XjInTermsOfVj} to \(X_l\),
    \eqref{Eqn::Trace::InverseProof::ElemAppliedToOpQ::Tmp3} is a sum of terms of two types:
    \begin{equation}\label{Eqn::Trace::InverseProof::ElemAppliedToOpQ::Tmp4}
        E_{j,1}2^{-j\Xdv_1}a(x) x_n^l V_s \gamma(x_n) \phi_{j+k}\left( 2^{(j+k)\lambda}x_n \right) E_{j+k}', \quad \Vdv_s\leq \Xdv_1+l\lambda,\: a\in \CinftySpace[\Bngeq{1}],
    \end{equation}
    \begin{equation}\label{Eqn::Trace::InverseProof::ElemAppliedToOpQ::Tmp5}
        E_{j,1}2^{-j\Xdv_1} b(x) x_n \partial_{x_n} \gamma(x_n) \phi_{j+k}\left( 2^{(j+k)\lambda}x_n \right) E_{j+k}', \quad b\in \CinftySpace[\Bngeq{1}].
    \end{equation}

    Since \(V_s\) does not have a \(\partial_{x_n}\) component, \eqref{Eqn::Trace::InverseProof::ElemAppliedToOpQ::Tmp4} can be rewritten as
    \begin{equation}\label{Eqn::Trace::InverseProof::ElemAppliedToOpQ::Tmp928}
        2^{k\Xdv_1} 2^{-(j+k)\Xdv_1-(j+k)l\lambda+(j+k)\Vdv_s} E_{j,1} a(x) \gamma(x_n) \left( x_n^l\phi_{j+k} \right)\left( 2^{(j+k)\lambda}x_n \right) \left( 2^{-(j+k)\Vdv_s}V_s E_{j+k}' \right)
    \end{equation}
    Since \(\Vdv_s\leq \Xdv_1+l\lambda\) and \(j+k\geq 0\), we have \(2^{-(j+k)\Xdv_1-(j+k)l\lambda+(j+k)\Vdv_s}\leq 1\),
    and we have \(2^{k\Xdv_1}\leq 2^{-|k|}\) since \(k<0\).
    Proposition \ref{Prop::Spaces::Elem::Elem::MainProps} \ref{Item::Spaces::Elem::Elem::MultBySmooth} shows
    \(\left\{ \left( E_{j,1}a,2^{-j} \right) : (E_j, 2^{-j})\in \sE \right\}\in \ElemzXXdv{\Bngeq{\etaFive}}\),
    and Proposition \ref{Prop::Spaces::Elem::Elem::MainProps} \ref{Item::Spaces::Elem::Elem::DerivOp} shows
    \(\left\{ \left( 2^{-k\Vdv_s} V_s E_{k}', 2^{-k} \right) : k\in \Zgeq \right\}\in \ElemzVVdv{\Bnmo{\etaThree}}\).
    It follows that \eqref{Eqn::Trace::InverseProof::ElemAppliedToOpQ::Tmp928} is of the desired form
    and therefore so is \eqref{Eqn::Trace::InverseProof::ElemAppliedToOpQ::Tmp4}.

    Since the expressions in this proof are acting on functions of only \(x'\), we have \(\partial_{x_n}E_{j+k}'=0\), and therefore
    \eqref{Eqn::Trace::InverseProof::ElemAppliedToOpQ::Tmp5} can be written as
    \begin{equation}\label{Eqn::Trace::InverseProof::ElemAppliedToOpQ::Tmp929}
        \begin{split}
            &2^{k\Xdv_1} 2^{-(j+k)\Xdv_1} E_{j,1} b(x) \left( x_n \partial_{x_n}\gamma\right)(x_n) \phi_{j+k}\left( 2^{(j+k)\lambda}x_n \right)E_{j+k}'
            \\&+2^{k\Xdv_1} 2^{-(j+k)\Xdv_1} E_{j,1} b(x) \gamma(x_n) \left( x_n \partial_{x_n}\phi \right)\left( 2^{(j+k)\lambda}x_n \right) E_{j+k}'.
        \end{split}
    \end{equation}
    Since \(k<0\), \(2^{k\Xdv_1}\leq 2^{-|k|}\) and since \(j+k\geq 0\), \(2^{-(j+k)\Xdv_1}\leq 1\).
    Proposition \ref{Prop::Spaces::Elem::Elem::MainProps} \ref{Item::Spaces::Elem::Elem::MultBySmooth}
    shows \(\left\{ \left( E_{j,1}b, 2^{-j} \right) : \left( E_j, 2^{-j} \right)\in \sE \right\}\in \ElemzXXdv{\Bngeq{\etaFive}}\).
    From here, it follows that \eqref{Eqn::Trace::InverseProof::ElemAppliedToOpQ::Tmp929} is of the desired form
    and therefore so is \eqref{Eqn::Trace::InverseProof::ElemAppliedToOpQ::Tmp5}.
    We conclude \eqref{Eqn::Trace::InverseProof::ElemAppliedToOpQ::Tmp3} is of the desired form.

    We turn to the second case in \eqref{Eqn::Trace::InverseProof::ElemAppliedToOpQ::Tmp2}: when \(\left( 2^{-j\Xdv}X \right)^{\alpha}=2^{-j\lambda}\partial_{x_n}\).
    In this case, we are considering the expression
    \begin{equation}\label{Eqn::Trace::InverseProof::ElemAppliedToOpQ::Tmp930}
        E_{j,q+1} 2^{-j\lambda} \partial_{x_n} \gamma(x_n) \phi_{j+k}\left( 2^{(j+k)\lambda}x_n \right) E_{j+k}'.
    \end{equation}
    As before, \(\partial_{x_n}E_{j+k}'=0\) and therefore \eqref{Eqn::Trace::InverseProof::ElemAppliedToOpQ::Tmp930}
    equals (using \(k<0\)),
    \begin{equation*}
    \begin{split}
         &2^{-|k|} E_{j,q+1}' \left( \partial_{x_n} \gamma \right)(x_n) \left( 2^{-(j-|k|)-(j-1)\lambda}\phi_{j+k} \right)\left( 2^{(j+k)\lambda}x_n \right)E_{j+k'}
         \\&+ 2^{-|k|} E_{j,q+1}' \gamma(x_n) \left( 2^{-|k|(\lambda-1)} \partial_{x_n} \phi_{j+k} \right)\left( 2^{(j+k)\lambda}x_n \right)E_{j+k}',
    \end{split}
    \end{equation*}
    which is of the desired form since \(\lambda\geq 1\) and \(|k|\leq j\).
    This completes the proof when \(k<0\).

    We turn to the case \(k\geq 0\). By  Proposition \ref{Prop::Spaces::Elem::Elem::MainProps} \ref{Item::Spaces::Elem::Elem::PullOutNDerivs}
    (with \(N=1\)), we may write
    \begin{equation}\label{Eqn::Trace::InverseProof::ElemAppliedToOpQ::Tmp931}
        E_{j+k}'=\sum_{|\alpha|\leq 1} 2^{(|\alpha|-1)(j+k)} \left( 2^{-(j+k)\Vdv}V \right)^{\alpha} E_{j+k,\alpha}',
    \end{equation}
    where \(\left\{ \left( E_{k,\alpha}',2^{-k}  \right) : k\in \Zgeq,|\alpha|\leq 1 \right\}\in \ElemzVVdv{\Bnmo{\etaThree}}\).
    We plug \eqref{Eqn::Trace::InverseProof::ElemAppliedToOpQ::Tmp931} into the left-hand side of \eqref{Eqn::Trace::InverseProof::ElemAppliedToOpQ::ToShow}
    and show that each term is of the desired form.
    When \(|\alpha|=0\), using that \(2^{-(j+k)}\leq 2^{-|k|}\), the resulting summand is clearly of thd desired form,
    so we treat only \(|\alpha|=1\).
    Without loss of generality, we assume \(\left( 2^{-(j+k)\Vdv}V \right)^{\alpha}=2^{-(j+k)\Vdv_1}V_1\).
    Thus, we are considering the term
    \begin{equation}\label{Eqn::Trace::InverseProof::ElemAppliedToOpQ::Tmp932}
        E_j \gamma(x_n) \phi_{j+k}\left( 2^{(j+k)\lambda}x_n \right) 2^{-(j+k)\Vdv_1}V_1 E_{j+k,1}'.
    \end{equation}
    Using that \(V_1\) has no \(\partial_{x_n}\) component, \eqref{Eqn::Trace::InverseProof::ElemAppliedToOpQ::Tmp932}
    equals
    \begin{equation}\label{Eqn::Trace::InverseProof::ElemAppliedToOpQ::Tmp933}
        E_j  2^{-(j+k)\Vdv_1}V_1 \gamma(x_n) \phi_{j+k}\left( 2^{(j+k)\lambda}x_n \right) E_{j+k,1}'.
    \end{equation}
    Applying \eqref{Eqn::Trace::InverseProof::XjRelatedToVj::VjInTermsOfXj}, we see
    \eqref{Eqn::Trace::InverseProof::ElemAppliedToOpQ::Tmp933} can be written as a sum of terms of two types:
    \begin{equation}\label{Eqn::Trace::InverseProof::ElemAppliedToOpQ::Tmp934}
        E_j 2^{-(j+k)\Vdv_1} X_s a(x) x_n^l \gamma(x_n) \phi_{j+k}\left( 2^{(j+k)\lambda}x_n \right) E_{j+k,1}', \quad \Xdv_s\leq \Vdv_1+l\lambda, a\in \CinftySpace[\Bngeq{1}],
    \end{equation}
    \begin{equation}\label{Eqn::Trace::InverseProof::ElemAppliedToOpQ::Tmp935}
        E_j 2^{-(j+k)\Vdv_1} g(x) a(x) \gamma(x_n) \phi_{j+k}\left( 2^{(j+k)\lambda}x_n \right) E_{j+k,1}', \quad g\in \CinftySpace[\Bngeq{1}].
    \end{equation}

    We have that \(2^{-(j+k)\Vdv_1}\leq 2^{-|k|}\). By 
     Proposition \ref{Prop::Spaces::Elem::Elem::MainProps} \ref{Item::Spaces::Elem::Elem::MultBySmooth}
     \(\left\{ \left( E_j g, 2^{-j} \right) : \left( E_j, 2^{-j} \right)\in \sE \right\}\in \ElemzXXdv{\Bngeq{\etaFive}}\).
     It follows that \eqref{Eqn::Trace::InverseProof::ElemAppliedToOpQ::Tmp935} is of the desired form.

     Finally, we show \eqref{Eqn::Trace::InverseProof::ElemAppliedToOpQ::Tmp934} is of the desired form.
     We rewrite \eqref{Eqn::Trace::InverseProof::ElemAppliedToOpQ::Tmp934} as
     \begin{equation}\label{Eqn::Trace::InverseProof::ElemAppliedToOpQ::Tmp936}
        2^{(j+k)(-\Vdv_1-l\lambda+\Xdv_s)} 2^{-k\Xdv_s} E_j 2^{-j\Xdv_s} X_s a(x) \gamma(x_n) \left(x_n^l \phi_{j+k} \right)\left( 2^{(j+k)\lambda}x_n \right) E_{j+k,1}', \quad \Xdv_s\leq \Vdv_1+l\lambda, a\in \CinftySpace[\Bngeq{1}].
     \end{equation}
     Since \(\Xdv_s\leq \Vdv_1+l\lambda\), we have \(2^{(j+k)(-\Xdv_1-l\lambda+\Xdv_s)}\leq 1\).
     Since \(k\geq\), we have \(2^{-k\Xdv_s}\leq 2^{-|k|}\).
     Proposition \ref{Prop::Spaces::Elem::Elem::MainProps} \ref{Item::Spaces::Elem::Elem::MultBySmooth} and \ref{Item::Spaces::Elem::Elem::DerivOp}
     show
     \(\left\{ \left( E_j 2^{-j\Xdv_s}X_s \Mult{a}, 2^{-j} \right) : (E_j, 2^{-j})\in \sE\right\}\in \ElemzXXdv{\Bngeq{\etaFive}}\).
     It follows that \eqref{Eqn::Trace::InverseProof::ElemAppliedToOpQ::Tmp936} is of the desired form, completing the proof.
\end{proof}

\begin{lemma}\label{Lemma::Trace::InverseProof::VpqNormInequality}
    Let \(\sE'\in \ElemzVVdv{\Bnmo{\etaThree}}\), \(\gamma\in \CinftyCptSpace[{[0,(\etaFour-\etaThree)/2)}]\),
    \(\Phi\subset \SchwartzSpaceR\) a bounded set, and
    \begin{equation*}
        \VSpace{p}{q}
        \in
        \left\{ \lqLpSpace{p}{q}[\Rngeq] : 1\leq p,q\leq \infty \right\}
        \bigcup 
        \left\{ \LplqSpace{p}{q}[\Rngeq] : 1<p<\infty, 1<q\leq \infty \right\}.
    \end{equation*}
    For \(s\in \R\), set
    \begin{equation*}
        \NewXSpace{s}:=
        \begin{cases}
            \BesovSpace{s}{p}{q}, & \VSpace{p}{q}=\lqLpSpace{p}{q}[\Rngeq],\\
            \BesovSpace{s}{p}{p}, & \VSpace{p}{q}=\LplqSpace{p}{q}[\Rngeq].
        \end{cases}
    \end{equation*}
    Then, \(\forall s\in \R\), \(\exists C\geq 0\), \(\forall f\in \NewXSpace{s}[\BnmoClosure{\etaOne}][\FilteredSheafGenByVVdv]\),
    \begin{equation*}
        \sup_{\left\{ \phi_j : j\in \Zgeq \right\}\subseteq \Phi}
        \sup_{\left\{ \left( E_j', 2^{-j} \right) : j\in \Zgeq \right\}\subseteq \sE'} 
        \BVNorm{ \left\{ \gamma(x_n) 2^{j\lambda/p} \phi_j(2^{j\lambda}x_n) 2^{js}E_j' f(x') \right\}_{j\in \Zgeq} }{p}{q} 
        \leq C \NewXNorm{f}{s}[\FilteredSheafGenByVVdv].
    \end{equation*}
\end{lemma}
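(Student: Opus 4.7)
The plan is to split the argument by the type of $\VSpace{p}{q}$. In the Besov case $\VSpace{p}{q} = \lqLpSpace{p}{q}[\Rngeq]$, the summand separates as $g_j(x_n,x') := 2^{js}\gamma(x_n) 2^{j\lambda/p}\phi_j(2^{j\lambda}x_n) E_j'f(x')$, so $\|g_j\|_{L^p(\Rngeq)}$ factors. The substitution $u = 2^{j\lambda}x_n$ shows $\|\gamma(x_n)\cdot 2^{j\lambda/p}\phi_j(2^{j\lambda}x_n)\|_{L^p(\R_{\geq 0})} \lesssim \|\phi_j\|_{L^p(\R)}$ uniformly over the bounded set $\Phi$. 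Hence $\BlqLpNorm{g}{p}{q}[\Rngeq] \lesssim \VpqsENorm{f}[p][q][s][\sE']$, and Corollary \ref{Cor::Spaces::MainEst::VpqsESeminormIsContinuous} delivers the target bound $\BesovNorm{f}{s}{p}{q}[\FilteredSheafGenByVVdv]$.

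The TL case $\VSpace{p}{q} = \LplqSpace{p}{q}[\Rngeq]$ is the substantive part, since the target is $\BesovNorm{f}{s}{p}{p}[\FilteredSheafGenByVVdv]$ (Besov with matched indices) and no simple Minkowski swap produces this. The key is the "almost disjoint scales" structure of the family $\{2^{j\lambda/p}\phi_j(2^{j\lambda}\cdot)\}$. Using Schwartz decay, write $|g_j| \leq C_N\, b_j(x')\, h_j(x_n)$ with $b_j(x') := 2^{js}|E_j'f(x')|$ and $h_j(x_n) := \gamma(x_n)\cdot 2^{j\lambda/p}(1+2^{j\lambda}x_n)^{-N}$, valid for any $N$. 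Decomposing $\R_{\geq 0}$ into dyadic annuli $I_k := [2^{-(k+1)\lambda}, 2^{-k\lambda})$, a direct computation yields the pointwise bound
\[
h_j(x_n) \lesssim 2^{k\lambda/p}\, 2^{-|j-k|\lambda\epsilon}, \qquad x_n \in I_k,
\]
with $\epsilon := \min(1/p,\, N-1/p) > 0$ once $N$ is taken large enough. This converts the $j$-dependence into $|j-k|$-decay multiplied by the volume-compensating factor $2^{k\lambda/p}$.

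Integrating $\bigl(\sum_j b_j(x')^q h_j(x_n)^q\bigr)^{p/q}$ over $I_k$ (whose length is $\sim 2^{-k\lambda}$) exactly cancels the $2^{k\lambda}$ that emerges from $h_j^p$, reducing the remaining task to the convolution-in-$k$ estimate
\[
\sum_{k\in\Z} \Bigl(\sum_{j} b_j(x')^q\, 2^{-|j-k|\lambda q\epsilon}\Bigr)^{p/q} \lesssim \sum_j b_j(x')^p.
\]
When $p/q \geq 1$ this follows from Young's inequality applied to $a_j := b_j^q$ convolved with $w_j := 2^{-|j|\lambda q\epsilon}$; when $p/q < 1$ it follows from subadditivity of $t\mapsto t^{p/q}$ together with summability of $\{w_j^{p/q}\}$. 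Integrating over $x'$ then gives $\BLplqNorm{g}{p}{q}[\Rngeq]^p \lesssim \sum_j 2^{jsp}\|E_j'f\|_{L^p(\Rnmo)}^p = \VpqsENorm{f}[p][p][s][\sE']^p$, and Corollary \ref{Cor::Spaces::MainEst::VpqsESeminormIsContinuous} finishes. The case $q=\infty$ is absorbed via the trivial inclusion $\ell^{q_0}\hookrightarrow \ell^\infty$ for any finite $q_0$.

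The main obstacle is purely technical: choosing $N$ large so the two-sided decay in $|j-k|$ survives after taking $q$-th powers, and then handling the two Young-type regimes ($p/q\geq 1$ versus $p/q<1$) cleanly so that constants remain uniform in $\sE'$ and in $\Phi$. The remainder is Fubini plus the two applications of Corollary \ref{Cor::Spaces::MainEst::VpqsESeminormIsContinuous}.
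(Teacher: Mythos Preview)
Your proof is correct and follows essentially the same strategy as the paper: the Besov case is handled identically by factoring the $L^p$ norm, and the Triebel--Lizorkin case uses the same dyadic decomposition of $x_n$ together with Schwartz decay of $\phi_j$ to reduce to an $\ell^p$-type convolution estimate in the scale index. The paper's organization differs only tactically---it first reduces to $q<p$ (using that the target $\BesovSpace{s}{p}{p}$ is independent of $q$) so that a single Minkowski/H\"older step suffices, and it treats $j\leq l$ and $j>l$ separately; your unified two-sided bound $h_j(x_n)\lesssim 2^{k\lambda/p}2^{-|j-k|\lambda\epsilon}$ on $I_k$ together with the Young/subadditivity dichotomy is a clean equivalent.
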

\begin{proof}
    Let \(\left\{ \left( E_j',2^{-j} \right) :j\in \Zgeq\right\}\subseteq \sE'\)
    and \(\left\{ \phi_j : j\in \Zgeq \right\}\subseteq \Phi\); the implicit constants
    which follow do not depend on the particular choice of subsets.

    We being with the case \(\VSpace{p}{q}=\lqLpSpace{p}{q}[\Rngeq]\).
    We have, (with the usual modification when \(q=\infty\)),
    \begin{equation*}
    \begin{split}
         &\BlqLpNorm{ \left\{ \gamma(x_n) 2^{j\lambda/p} \phi_j(2^{j\lambda}x_n) 2^{js} E_j'f(x') \right\}_{j\in \Zgeq} }{p}{q}[\Rngeq]
         \\&= \left( \sum_{j\in \Zgeq} \BLpNorm{ \gamma(x_n) 2^{j\lambda/p} \phi_j(2^{j\lambda}x_n) 2^{js}E_j'f(x')}{p}[\Rngeq]^q \right)^{1/q}
         \\&=\left( \sum_{j\in \Zgeq} 
        \BLpNorm{ \gamma(x_n) 2^{j\lambda/p} \phi_j(2^{j\lambda}x_n)}{p}[{\lbrack 0,\infty)}]^q 
         \BLpNorm{2^{js}E_j' f(x')}{p}[\Rnmo]^q  
         \right)^{1/q}
         \\&\lesssim \left( \sum_{j\in \Zgeq} 
         \BLpNorm{2^{js}E_j' f(x')}{p}[\Rnmo]^q  
         \right)^{1/q}
         \leq \VpqsENorm{f}[p][q][s][\sE']
         \lesssim \BesovNorm{f}{s}{p}{q}[\FilteredSheafGenByVVdv],
    \end{split}
    \end{equation*}
    where the final estimate used Corollary \ref{Cor::Spaces::MainEst::VpqsESeminormIsContinuous},
    completing the proof in this case.

    Turning to the case \(\VSpace{p}{q}=\LplqSpace{p}{q}[\Rngeq]\), since \(\NewXSpace{s}=\BesovSpace{s}{p}{p}\)
    does not depend on \(q\), and since \(\LplqNormNoSet{\cdot}{p}{q}\leq \LplqNormNoSet{\cdot}{p}{q'}\),
    \(\forall q'<q\), we may without loss of generality assume \(1<q<p\).
    We have
    \begin{equation*}
    \begin{split}
         &\BLplqNorm{\left\{\gamma(x_n) 2^{j\lambda/p} \phi_j(2^{j\lambda}x_n) 2^{js} E_j'f(x')  \right\}_{j\in \Zgeq}}{p}{q}[\Rngeq]^p
         \\&=\int_0^\infty \int_{\Rnmo} \left( \sum_{j\in \Zgeq} \left| \gamma(x_n) 2^{j\lambda/p} \phi_j(2^{j\lambda}x_n) 2^{js} E_j'f(x')  \right|^q \right)^{p/q} \: dx'\: dx_n
         \\&=\int_0^\infty  \BLpNorm{ \sum_{j\in \Zgeq} \left| \gamma(x_n) 2^{j\lambda/p}\phi_j (2^{j\lambda}x_n) 2^{js}E_j'f(x') \right|^q }{p/q}[\Rnmo]^{p/q} \: dx_n
         \\&\leq \int_0^\infty  \left(  \sum_{j\in \Zgeq} \BLpNorm{\left| \gamma(x_n)2^{j\lambda/p} \phi_j(2^{j\lambda}x_n) 2^{js} E_j' f(x') \right|^q}{p/q}[\Rnmo] \right)^{p/q}\: dx_n
         \\&=\int_0^\infty  \left(  \sum_{j\in \Zgeq} \BLpNorm{ \gamma(x_n) 2^{j\lambda/p} \phi_j(2^{j\lambda}x_n) 2^{js}E_j'f(x')}{p}[\Rnmo]^q \right)^{p/q}\: dx_n
         \\&=\int_0^\infty  \left(  \sum_{j\in \Zgeq} |\gamma(x_n)|^q  2^{jq\lambda/p} |\phi_j(2^{j\lambda}x_n)|^q \BLpNorm{2^{js} E_j' f(x')}{p}[\Rnmo]^q \right)^{p/q}\: dx_n
         \\&=: \int_I \cdot\: dx_n + \sum_{l=0}^\infty \int_{I_l} \cdot\: dx_n,
    \end{split}
    \end{equation*}
    where \(I=(1,\infty)\) and \(I_l=\left( 2^{-(l+1)\lambda}, 2^{-l\lambda} \right)\).
    Due to \(\supp(\gamma)\), we have \(\int_I =0\), so we focus on \(\sum_{l=0}^\infty \int_{I_l}\).

    Consider,
    \begin{equation}\label{Eqn::Trace::InverseProof::VpqNormInequality::CutUpSum}
    \begin{split}
         &\sum_{l=0}^\infty \int_{I_l} \left( \sum_{j\in \Zgeq} \left| \gamma(x_n) \right|^q 2^{jq\lambda/p} \left| \phi_j(2^{j\lambda}x_n) \right|^q \BLpNorm{2^{js}E_j' f(x')}{p}[\Rnmo] \right)^{p/q}\: dx_n
         \\&\lesssim \sum_{l=0}^\infty \int_{I_l}\left( \sum_{j=0}^l \cdot \right)^{p/q}\: dx_n+\sum_{l=0}^\infty \int_{I_l} \left( \sum_{j=l+1}^\infty \cdot \right)^{p/q}\: dx_n.
    \end{split}
    \end{equation}
    We estimate the two terms on the right-hand side of \eqref{Eqn::Trace::InverseProof::VpqNormInequality::CutUpSum}, separately.

    For the first term, let \(\epsilon:=q\lambda/2p>0\). Then, using \(|\gamma|,|\phi_j|\lesssim 1\) and H\"older's Inequality,
    we have with \((p/q)'\) denoting the dual exponent to \(p/q\in (1,\infty)\),
    \begin{equation*}
    \begin{split}
         &\sum_{l=0}^\infty \int_{I_l} \left( \sum_{j=0}^l |\gamma(x_n)|^q 2^{j\lambda q/p} |\phi_j(2^{j\lambda}x_n)|^q \BLpNorm{2^{js}E_j' f}{p}[\Rnmo]^q \right)^{p/q}\: dx_n
         \\&\lesssim \sum_{l=0}^\infty  |I_l|\left( \sum_{j=0}^l 2^{j\lambda q/p} \BLpNorm{2^{js}E_j' f}{p}[\Rnmo]^q \right)^{p/q}
         \\&\approx \sum_{l=0}^\infty 2^{-l\lambda} \left(\sum_{j=0}^l 2^{-\epsilon(l-j)}2^{\epsilon(l-j)}2^{j\lambda q/p} \BLpNorm{2^{js}E_j' f}{p}[\Rnmo]^q  \right)^{p/q}
         \\&\leq \sum_{l=0}^\infty 2^{-l\lambda} \left( \sum_{j=0}^l  2^{-\epsilon(l-j)(p/q)'}  \right)^{(p/q)/(p/q)'} \left( \sum_{j=0}^l 2^{\epsilon(l-j)p/q} 2^{j\lambda} \BLpNorm{2^{js}E_j' f}{p}[\Rnmo]^p  \right)
         \\&\lesssim \sum_{l=0}^\infty 2^{-l\lambda} \sum_{j=0}^l 2^{\epsilon(l-j)p/q} 2^{j\lambda} \BLpNorm{2^{js} E_j'f}{p}[\Rnmo]^p
         \\&=\sum_{j=0}^\infty\sum_{l=j}^\infty 2^{-l\lambda}  2^{\epsilon(l-j)p/q} 2^{j\lambda} \BLpNorm{2^{js}E_j'f}{p}[\Rnmo]^p
         \\&\lesssim \sum_{j=0}^\infty \BLpNorm{2^{js}E_j'f}{p}[\Rnmo]^p
         \leq \| f \|^p_{\lqLpSpaceNoSet{p}{p}, s, \sE'}
         \lesssim \BesovNorm{f}{s}{p}{p}[\FilteredSheafGenByVVdv]^p,
    \end{split}
    \end{equation*}
    where the final estimate used Corollary \ref{Cor::Spaces::MainEst::VpqsESeminormIsContinuous}.

    We turn to the second term on the right-hand side of \eqref{Eqn::Trace::InverseProof::VpqNormInequality::CutUpSum}.
    Take \(\rho>0\) such that \((\rho\lambda-1)(p/q)>\lambda\).
    Using that \(\Phi\subset \SchwartzSpaceR\) is a bounded set,
    for \(x_n\in I_l\) and \(j\geq l\), we have \(|\phi_j(2^{j\lambda}x_n)|\lesssim 2^{-(j-l)\lambda\rho}\).
    We have,
    using H\"older's inequality and with \((p/q)'\) the dual exponent of \(p/q\in (1,\infty)\), and using \(|\gamma(x_n|\lesssim 1)\),
    \begin{equation*}
    \begin{split}
         &\sum_{l=0}^\infty \int_{I_l} \left( \sum_{j=l+1}^\infty |\gamma(x_n)|^q 2^{j\lambda q/p} |\phi_j(2^{j\lambda}x_n)|^q \BLpNorm{2^{js}E_j' f}{p}[\Rnmo]^q  \right)^{p/q}\: dx_n
         \\&\lesssim \sum_{l=0}^\infty |I_l| \left( \sum_{j=l+1}^\infty 2^{j\lambda q/p}2^{-\rho(j-l)\lambda} \BLpNorm{2^{js} E_j' f}{p}[\Rnmo]^q \right)^{p/q}
         \\&\lesssim \sum_{l=0}^\infty 2^{-l\lambda} \left( \sum_{j=l+1}^\infty 2^{-(j-l)(p/q)'} \right)^{(p/q)/(p/q)'} \left( \sum_{j=l+1}^\infty 2^{j\lambda} 2^{-(\rho\lambda-1)(j-l)(p/q)} \BLpNorm{2^{js}E_j' f}{p}[\Rnmo]^p \right)
         \\&\lesssim \sum_{j=0}^\infty \sum_{l=0}^{j-1} 2^{(j-l)\lambda} 2^{-(\rho\lambda-1)(j-l)(p/q)} \BLpNorm{2^{js}E_j' f}{p}[\Rnmo]^p
         \\&\lesssim \sum_{j=0}^\infty \BLpNorm{2^{js}E_j' f}{p}[\Rnmo]^p
         \leq  \| f \|^p_{\lqLpSpaceNoSet{p}{p}, s, \sE'}
         \lesssim \BesovNorm{f}{s}{p}{p}[\FilteredSheafGenByVVdv]^p,
    \end{split}
    \end{equation*}
    where the third to last estimate used the choice of \(\rho\) and the  final estimate used Corollary \ref{Cor::Spaces::MainEst::VpqsESeminormIsContinuous}.
    Combining the above estimates completes the proof.
\end{proof}

\begin{proof}[Proof of Propoistion \ref{Prop::Trace::InverseProof::QtIsContinuous}]
    Let \(\ASpace{s}{p}{q}\in \left\{ \BesovSpace{s}{p}{q} : 1\leq p,q\leq \infty \right\}\bigcup \left\{ \TLSpace{s}{p}{q} : 1<p<\infty, 1<q\leq\infty\right\}\).
    Set
    \begin{equation*}
        \VSpace{p}{q}
        :=
        \begin{cases}
            \lqLpSpace{p}{q}[\Rngeq], & \ASpace{s}{p}{q}=\BesovSpace{s}{p}{q},\\
            \LplqSpace{p}{q}[\Rngeq], & \ASpace{s}{p}{q}=\TLSpace{s}{p}{q}.
        \end{cases}
    \end{equation*}
    Let \(\opQtEPhigamma\) be as in \eqref{Eqn::Trace::InverseProof::DefineQt}, with \(\sE'\), \(\Phi\), and \(\gamma\)
    as described there.

    Fix \(\sE\in \ElemzXXdv{\Bngeq{\etaFive}}\).  Fix \(N>|s+\lambda/p|\).
    Let \(\left\{ \left( E_j, 2^{-j} \right) : j\in \Zgeq \right\}\subseteq \sE\).
     For
    \begin{equation*}
        f\in
        \begin{cases}
            \BesovSpace{s+t}{p}{q}[\BnmoClosure{\etaOne}][\FilteredSheafGenByVVdv], & 
            \ASpace{s}{p}{q}=\BesovSpace{s}{p}{q},\\
            \BesovSpace{s+t}{p}{p}[\BnmoClosure{\etaOne}][\FilteredSheafGenByVVdv], & \ASpace{s}{p}{q}=\TLSpace{s}{p}{q},
        \end{cases}
    \end{equation*}
    using Lemma \ref{Lemma::Trace::InverseProof::ElemAppliedToOpQ} (and the notation of that lemma),
    we have (with implicit constants independent of the choice of subset of \(\sE\) and independent of \(f\)),
    \begin{equation}\label{Eqn::Trace::InverseProof::FinalProof::EstVpq1}
    \begin{split}
         & \BVNorm{ \left\{2^{js+j\lambda/p}  E_j \opQtEPhigamma f \right\}_{j\in \Zgeq}}{p}{q}
         \\&\leq \sum_{l=1}^L \sum_{k\in \Z} 2^{-N|k|} \BVNorm{\left\{ 2^{j(s+\lambda/p)+(j+k)t} E_{j,l}\gamma_l(x_n) \phi_{j,k,l}\left( 2^{(j+k)\lambda}x_n \right)E_{j+k}' f  \right\}_{j\in \Zgeq}}{p}{q}
         \\&\lesssim \sum_{l=1}^L \sum_{k\in \Z} 2^{-N|k|} \BVNorm{\left\{ 2^{j(s+\lambda/p)+(j+k)t} \gamma_l(x_n) \phi_{j,k,l}\left( 2^{(j+k)\lambda}x_n \right)E_{j+k}' f  \right\}_{j\in \Zgeq}}{p}{q},
    \end{split}
    \end{equation}
    where the final estimate uses \(\left\{ \left( E_{j,l},2^{-j} \right) : (E_j, 2^{-j})\in \sE, j\in \Zgeq, l=1,\ldots,L \right\}\in \ElemzXXdv{\Bngeq{\etaFive}}\subseteq \PElemXXdv{\BngeqClosure{\etaFive}}\)
    (by Lemma \ref{Lemma::Trace::InverseProof::ElemAppliedToOpQ})
    and Proposition \ref{Prop::Spaces::Elem::PElem::PElemOpsBoundedOnVV}.
    We have, using Lemma \ref{Lemma::Trace::InverseProof::VpqNormInequality} (with \(s\) replaced by \(s+t\)) and Notation \ref{Notation::Trace::InverseProof::ZeroIfNegativeIndex},
    \begin{equation}\label{Eqn::Trace::InverseProof::FinalProof::EstVpq2}
    \begin{split}
         &\sum_{l=1}^L \sum_{k\in \Z} 2^{-N|k|} \BVNorm{\left\{ 2^{j(s+\lambda/p)+(j+k)t}\gamma_l(x_n) \phi_{j,k,l}\left( 2^{(j+k)\lambda}x_n \right)E_{j+k}' f \right\}_{j\in \Zgeq}}{p}{q}
         \\&\leq \sum_{l=1}^L \sum_{k\in \Z} 2^{-N|k|-k(s+\lambda/p)} \BVNorm{ \left\{ 2^{j\lambda/p} \gamma_l(x_n) \phi_{j-k,k,l}\left( 2^{j\lambda}x_n \right) 2^{j(s+t)}E_{j,l}'f   \right\}_{j\in \Zgeq} }{p}{q}
         \\&\lesssim \sum_{l=1}^L \sum_{k\in \Z} 2^{-N|k|-k(s+\lambda/p)} 
         \begin{cases}
            \BesovNorm{f}{s+t}{p}{q}[\FilteredSheafGenByVVdv], &\ASpace{s}{p}{q}=\BesovSpace{s}{p}{q},\\
            \BesovNorm{f}{s+t}{p}{p}[\FilteredSheafGenByVVdv], &\ASpace{s}{p}{q}=\TLSpace{s}{p}{q}.
         \end{cases}
    \end{split}
    \end{equation}
    Combining \eqref{Eqn::Trace::InverseProof::FinalProof::EstVpq1} and \eqref{Eqn::Trace::InverseProof::FinalProof::EstVpq2},
    and taking the supremum over \(\left\{ \left( E_j, 2^{-j} \right) :j\in \Zgeq \right\}\subseteq \sE\), we conclude
    \begin{equation}\label{Eqn::Trace::InverseProof::FinalProof::EstVpq3}
        \VpqsENorm{\opQtEPhigamma f}[p][q][s+\lambda/p][\sE]
        \lesssim
         \begin{cases}
            \BesovNorm{f}{s+t}{p}{q}[\FilteredSheafGenByVVdv], &\ASpace{s}{p}{q}=\BesovSpace{s}{p}{q},\\
            \BesovNorm{f}{s+t}{p}{p}[\FilteredSheafGenByVVdv], &\ASpace{s}{p}{q}=\TLSpace{s}{p}{q}.
         \end{cases}
    \end{equation}
    Since \(\supp(\opQtEPhigamma f)\subseteq \BngeqClosure{\etaFour}\) (see Proposition \ref{Prop::Trace::InverseProof::QtConverges}),
    it follows that \(\opQtEPhigamma f\in \ASpace{s+\lambda/p}{p}{q}[\FilteredSheafGenByVVdv]\)
    (see Definition \ref{Defn::Spaces::Defns::ASpace}).
    By choosing \(\sE=\sD_0\in \ElemzXXdv{\Bnmo{\etaFive}}\) as in Notation \ref{Notation::Spaces::Defns::Norm}, 
    we see
    \begin{equation*}
        \ANorm{\opQtEPhigamma f}{s+\lambda/p}{p}{q}[\FilteredSheafGenByXXdv]
        \lesssim 
        \begin{cases}
            \BesovNorm{f}{s+t}{p}{q}[\FilteredSheafGenByVVdv], &\ASpace{s}{p}{q}=\BesovSpace{s}{p}{q},\\
            \BesovNorm{f}{s+t}{p}{p}[\FilteredSheafGenByVVdv], &\ASpace{s}{p}{q}=\TLSpace{s}{p}{q},
         \end{cases}
    \end{equation*}
    completing the proof.
\end{proof}

    \subsection{Proof of Proposition \texorpdfstring{\ref{Prop::Traces::CoordReduction::VanishingChar}}{\ref*{Prop::Traces::CoordReduction::VanishingChar}}}
    \label{Section::Trace::CharacterizeVanish}
    In this section, we prove Proposition \ref{Prop::Traces::CoordReduction::VanishingChar}.
Fix \(p,q\in (1,\infty)\) and \(0<\sigma_1<\sigma_2<\sigma_3<1\); we will later
specialize to \(\sigma_2=3/4\) and \(\sigma_1=1/2\).
Let \(\Psi\in \CinftyCptSpace[\Bngeq{\sigma_3}]\), with \(\Psi=1\) on a neighborhood
of \(\BngeqClosure{\sigma_2}\). Using Proposition \ref{Prop::Spaces::LP::DjExist}, we may write
\(\Mult{\Psi}=\sum_{j\in \Zgeq} D_j\), where \(\left\{ \left( D_j, 2^{-j} \right) : j\in \Zgeq \right\}\in \ElemzXXdv{\Bngeq{\sigma_3}}\)
and the convergence is taken in the sense of Proposition \ref{Prop::Spaces::Elem::Elem::ConvergenceOfElemOps}.
Using Notation \ref{Notation::Spaces::Defns::Norm} (see, also, Proposition \ref{Prop::Spaces::Defns::EquivNorms}),
we have
\begin{equation*}
    \ANorm{f}{s}{p}{q}[\FilteredSheafGenByXXdv] 
    = \BVNorm{\left\{ 2^{js} D_j f\right\}_{j\in \Zgeq}}{p}{q}, \quad f\in \ASpace{s}{p}{q}[\BngeqClosure{\sigma_2}][\FilteredSheafGenByXXdv].
\end{equation*}
The key estimate we need is the next proposition.

\begin{proposition}\label{Prop::Trace::CharVanish::MainEstimate}
    Let \(\phi_0(x_n)\in \CinftySpace[\lbrack 0,\infty)]\) be equal to \(0\) on \([0,1/2^\lambda]\)
    and be equal to \(1\) on \([1,\infty)\).
    Fix \(\kappa\in \Zgeq\), \(g\in \CinftyCptSpace[\Bngeq{1}]\), and \(s<\lambda\kappa+\lambda/p\).
    We have,
    \begin{equation*}
        \sup_{\substack{K\geq 4 \\ K\in \Z}} \sum_{j\in \Zgeq} 2^{js} \BLpNorm{ D_j \phi_0\left( 2^{\lambda K}x_n \right) x_n^{\kappa}g(x)}{p}[\Rngeq]<\infty.
    \end{equation*}
\end{proposition}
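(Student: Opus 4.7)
The strategy is to split the sum at $j=K$ and to decompose the function as
\[
f_K(x) := \phi_0(2^{\lambda K}x_n)\, x_n^{\kappa} g(x) = x_n^{\kappa}g(x) - h_K(x),\qquad h_K := (1-\phi_0(2^{\lambda K}x_n))\, x_n^{\kappa} g,
\]
where $h_K$ is supported in $\{0\le x_n \le 2^{-\lambda K}\}$ (up to a factor of $2^{\lambda}$).

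For the smooth part $x_n^{\kappa} g$: since this is an element of $\CinftyCptSpace[\Bngeq{1}]$ and since, under our running assumptions (see the discussion after the assumptions \ref{Item::Traces::CoordReduction::IsPartialxn}--\ref{Item::Traces::CoordReduction::Span}), every boundary point of $\Bngeq{1}$ is $\FilteredSheafGenByXXdv$-non-characteristic, an application of Proposition \ref{Prop::Spaces::Elem::Elem::MainProps} \ref{Item::Spaces::Elem::Elem::PullOutNDerivs} (pulling out $N$ derivatives) combined with $L^p$-boundedness of elementary operators (Lemma \ref{Lemma::Spaces::Elem::PElem::PElemOpsBoundedOnLp}) shows $\|D_j(x_n^\kappa g)\|_{L^p}\lesssim 2^{-jN}$ for every $N$. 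Choosing $N>s$ makes $\sum_j 2^{js}\|D_j(x_n^\kappa g)\|_{L^p}$ a finite quantity independent of $K$.

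For $h_K$ at coarse scales $j\le K$ I will use the uniform $L^p$ bound on $D_j$ (Lemma \ref{Lemma::Spaces::Elem::PElem::PElemOpsBoundedOnLp}), together with the direct computation $\|h_K\|_{L^p}\lesssim 2^{-\lambda K(\kappa+1/p)}$ obtained by integrating $x_n^{\kappa p}$ over $[0,2^{-\lambda K}]$. Summing the resulting bound gives $\sum_{j\le K}2^{js}\|D_j h_K\|_{L^p}\lesssim 2^{K(s-\lambda\kappa-\lambda/p)}$, which is uniformly bounded in $K$ exactly because of the hypothesis $s<\lambda\kappa+\lambda/p$.

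For $h_K$ at fine scales $j>K$ I will fix $N$ large so that $\lambda N>s$, invoke Proposition \ref{Prop::Spaces::Elem::Elem::MainProps} \ref{Item::Spaces::Elem::Elem::PullOutNDerivs} to write $D_j=\sum_{|\alpha|\le N}2^{(|\alpha|-N)j}\,\widetilde D_{j,\alpha}(2^{-j\Xdv}X)^{\alpha}$, and estimate $X^{\alpha}h_K$ in $L^p$. The key claim is the pointwise-in-$\alpha$ bound
\[
\|X^{\alpha} h_K\|_{L^p}\lesssim 2^{K\DegXdv{\alpha}-\lambda K(\kappa+1/p)}.
\]
For $\alpha$ consisting of $\partial_{x_n}=X_{q+1}$ only, this is a direct Leibniz computation: each derivative falling on $\phi_0(2^{\lambda K}x_n)$ produces a factor $2^{\lambda K}$ localised to $\{x_n\approx 2^{-\lambda K}\}$ (an interval of measure $2^{-\lambda K}$), each derivative on $x_n^{\kappa}$ lowers the power by one, and integration over the support yields precisely $2^{\lambda K(N-\kappa-1/p)}$. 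For mixed multi-indices one first uses \eqref{Eqn::Trace::InverseProof::XjRelatedToVj::XjInTermsOfVj} to rewrite each $X_j$ ($1\le j\le q$) as a combination of tangential $V_k$'s (with polynomial-in-$x_n$ coefficients) plus $b_j x_n\partial_{x_n}$; each $V_k$ leaves the $x_n$-dependence of $h_K$ untouched, while the $x_n\partial_{x_n}$ contributes a bounded factor because $x_n\approx 2^{-\lambda K}$ on the support where the differentiated cutoff is active. Combining this with the choice of $N$ yields $2^{js}\|D_j h_K\|_{L^p}\lesssim 2^{(j-K)(s-\lambda N)}\,2^{K(s-\lambda\kappa-\lambda/p)}$, whose sum over $j>K$ is uniformly bounded in $K$.

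\textbf{Main obstacle.} The technical heart is the estimate $\|X^{\alpha}h_K\|_{L^p}\lesssim 2^{K\DegXdv{\alpha}-\lambda K(\kappa+1/p)}$ for mixed $\alpha$. The vector fields $X_1,\dots,X_q$ are only tangent to the boundary \emph{at} $\BoundaryN$, so their $\partial_{x_n}$-components are nontrivial for $x_n>0$ and could in principle produce additional $2^{\lambda K}$ factors from the cutoff. The crucial cancellation, extracted from Lemma \ref{Lemma::Trace::InverseProof::XjRelatedToVj}, is that the normal component of $X_j$ is always weighted by $x_n$, which is bounded by $2^{-\lambda K}$ on $\supp(h_K)$ and precisely compensates the $2^{\lambda K}$ arising from differentiating $\phi_0(2^{\lambda K}x_n)$. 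Book-keeping these gains over all multi-indices $\alpha$ is the delicate combinatorial step.
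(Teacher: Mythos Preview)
Your approach is correct and takes a genuinely different route from the paper. The paper dyadically decomposes the cutoff as a telescoping sum $\phi_0(2^{\lambda K}x_n)x_n^\kappa g=\sum_{k=0}^K\tilde\psi_{K,k}(2^{k\lambda}x_n)x_n^\kappa g$, with the $\tilde\psi_{K,k}$ drawn from a fixed finite family in $\CinftyCptSpace[1/4^\lambda,2^{\lambda+1}]$, absorbs $x_n^\kappa$ as $2^{-k\lambda\kappa}$ times a rescaled bump, and then invokes a reusable double-sum estimate (Lemma~\ref{Lemma::Trace::CharVanish::MainEstimatelemma}). Your decomposition $f_K=x_n^\kappa g-h_K$ is more direct for this particular statement: the smooth piece is dispatched trivially, and $h_K$ is handled by splitting $\sum_j$ at $j=K$. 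Your key claim $\|X^\alpha h_K\|_{L^p}\lesssim 2^{K\DegXdv{\alpha}-\lambda K(\kappa+1/p)}$ is correct; the cleanest way to justify it is to write $h_K=2^{-\lambda K\kappa}\eta(2^{\lambda K}x_n)g$ with $\eta(t)=(1-\phi_0(t))t^\kappa\in\CinftyCptSpace[{[0,1]}]$ and observe that the proofs of Lemmas~\ref{Lemma::Trace::CharVanish::DerivOfBpsi}--\ref{Lemma::Trace::CharVanish::LpNormDerivOfbPsi} go through verbatim with $a_1=0$ (they never use that the support of $\psi$ is bounded away from the origin). Two minor slips to correct: the fine-scale decay your argument actually produces is $2^{(j-K)(s-N)}$, not $2^{(j-K)(s-\lambda N)}$, so you must take $N>s$ rather than merely $\lambda N>s$; and your stated coarse-scale bound $\sum_{j\le K}2^{js}\|D_jh_K\|_{L^p}\lesssim 2^{K(s-\lambda\kappa-\lambda/p)}$ holds as written only for $s>0$---for $s\le 0$ the sum is still uniformly bounded in $K$, just by a different expression.
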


To prove Proposition \ref{Prop::Trace::CharVanish::MainEstimate}, we use the next lemma.

\begin{lemma}\label{Lemma::Trace::CharVanish::MainEstimatelemma}
    Fix \(0<a_1<a_2\) and let \(\sB\subset \CinftyCptSpace[a_1,a_2]\) be a bounded set.
    Fix \(\epsilon>0\), \(R\geq 0\), and \(g\in \CinftyCptSpace[\Bngeq{1}]\).
    There exists \(C=C(\sB, \epsilon, R, g,p)\geq 0\) such that
    for all \(\left\{ \psi_k : k\in \Zgeq \right\}\subset \sB\), we have
    \begin{equation}\label{Eqn::Trace::CharVanish::MainEstimatelemma}
        \sum_{j,k\in \Zgeq} 2^{j(\lambda/p-\epsilon)+R(j-k)} \BLpNorm{D_j \psi_k\left( 2^{\lambda k}x_n \right) g(x)}{p}[\Rngeq]\leq 
        C.
    \end{equation}
\end{lemma}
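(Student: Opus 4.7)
The plan is to bound $\BLpNorm{D_j \psi_k(2^{\lambda k}x_n)g(x)}{p}[\Rngeq]$ in two regimes, $j \leq k$ and $j > k$, writing $F_k(x) := \psi_k(2^{\lambda k}x_n)g(x)$ for brevity. Since $\supp(F_k)$ is contained in the strip $\{y_n \in [a_1 2^{-\lambda k}, a_2 2^{-\lambda k}]\}$ intersected with $\supp(g)$, which has Lebesgue volume $\lesssim 2^{-\lambda k}$, and $\|F_k\|_\infty \lesssim 1$ uniformly for $\psi_k \in \sB$, we have $\|F_k\|_p \lesssim 2^{-\lambda k/p}$. In the regime $j \leq k$, we use the $L^p$-boundedness of $D_j$ from Lemma \ref{Lemma::Spaces::Elem::PElem::PElemOpsBoundedOnLp} to get the trivial bound $\|D_j F_k\|_p \lesssim 2^{-\lambda k/p}$, and the resulting partial sum $\sum_{0\leq j \leq k} 2^{j(\lambda/p-\epsilon) + R(j-k) - \lambda k/p}$ converges by elementary geometric series manipulation in both $j$ and $k$, regardless of $\epsilon > 0$ and $R \geq 0$.

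For the regime $j > k$, the trivial bound fails because of the weight $2^{j\lambda/p}$, so we need the almost-orthogonality estimate $\|D_j F_k\|_p \leq C_M 2^{-M(j-k)} \cdot 2^{-\lambda k/p}$, valid for any $M \in \Zgeq$, with $M$ eventually chosen satisfying $M > \lambda/p + R$. To prove this, apply the derivative pull-out formula \eqref{Eqn::Spaces::Elem::Elem::PullOutNDerivs::Right} to $D_j$ to write
\[
D_j F_k = \sum_{|\alpha|\leq M} 2^{(|\alpha|-M)j - j\DegXdv{\alpha}}\, \tilde E_{j,\alpha}\, X^\alpha F_k,
\]
with $\{(\tilde E_{j,\alpha}, 2^{-j}) : j \in \Zgeq, |\alpha| \leq M\} \in \ElemzXXdv{\BngeqClosure{\eta_2}}$, so each $\tilde E_{j,\alpha}$ is uniformly $L^p$-bounded. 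The key is the estimate $\|X^\alpha F_k\|_p \lesssim 2^{\lambda k\, r(\alpha)} \cdot 2^{-\lambda k/p}$, where $r(\alpha)$ is the number of $X_{q+1}=\partial_{x_n}$ factors in $\alpha$. This follows from iterated application of Lemma \ref{Lemma::Trace::InverseProof::XjRelatedToVj}: the tangential vector fields $X_l$ for $l \leq q$ decompose as a $V$-part plus $b_l(x)x_n\,\partial_{x_n}$, and on the support of $\psi_k(2^{\lambda k}\cdot)$ the factor $x_n \cdot 2^{\lambda k}$ is $O(1)$, so only $X_{q+1}$ contributes a genuine $2^{\lambda k}$ factor each time it hits $\psi_k(2^{\lambda k}x_n)$, while the support of $X^\alpha F_k$ remains contained in the strip of Lebesgue volume $\lesssim 2^{-\lambda k}$.

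Combining this size bound with the elementary inequality $\DegXdv{\alpha} \geq r(\alpha)\lambda + (|\alpha|-r(\alpha))$ (using that every $\Xdv_l \geq 1$), the individual term above is bounded by $2^{-Mj + r(\alpha)[\lambda k - j(\lambda-1)] - \lambda k/p}$. Since $j \geq k$ and $\lambda \geq 1$ give $\lambda k - j(\lambda - 1) \leq k$, and $r(\alpha) \leq M$, the exponent is at most $-M(j-k) - \lambda k/p$, establishing the claim. Summing the resulting bound in Case 2 yields $\sum_{l \geq 1}\sum_{k \geq 0} 2^{-k\epsilon + l(\lambda/p + R - M - \epsilon)}$ with $l = j-k$, which converges once $M > \lambda/p + R$. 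Combining the two cases gives the desired estimate.

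The main obstacle is verifying the size bound $\|X^\alpha F_k\|_p \lesssim 2^{\lambda k\, r(\alpha) - \lambda k/p}$: this requires iterating the Taylor-type decomposition from Lemma \ref{Lemma::Trace::InverseProof::XjRelatedToVj} along the entire product $X^\alpha = X_{j_1}\cdots X_{j_L}$, tracking how each intermediate application produces mixed terms with additional $x_n$-factors, and verifying that those $x_n$-factors cancel the corresponding $2^{\lambda k}$ growth on the support of $\psi_k(2^{\lambda k}\cdot)$, so that only the bare $\partial_{x_n}$ copies produce unmitigated $2^{\lambda k}$ contributions.
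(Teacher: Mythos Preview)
Your proposal is correct and follows essentially the same approach as the paper: split into $j\leq k$ (use $L^p$-boundedness of $D_j$ and the thin-strip bound $\|F_k\|_p\lesssim 2^{-k\lambda/p}$) versus $j>k$ (pull derivatives out of $D_j$ via Proposition~\ref{Prop::Spaces::Elem::Elem::MainProps}~\ref{Item::Spaces::Elem::Elem::PullOutNDerivs} and exploit that tangential $X_m$'s act through $x_n\partial_{x_n}$, which is harmless on $\supp \psi_k(2^{k\lambda}\cdot)$). The paper packages this last step as Lemmas~\ref{Lemma::Trace::CharVanish::DerivOfBpsi}--\ref{Lemma::Trace::CharVanish::LpNormDerivOfbPsi}, proving the uniform bound $\|(2^{-j\Xdv}X)^\alpha[g\,\psi_k(2^{k\lambda}x_n)]\|_p\lesssim 2^{(k-j)\DegXdv{\alpha}-k\lambda/p}$, whereas your bookkeeping via $r(\alpha)$ gives a slightly sharper intermediate estimate that leads to the same geometric sum.
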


Before we prove Lemma \ref{Lemma::Trace::CharVanish::MainEstimatelemma}, we see how it implies
Proposition \ref{Prop::Trace::CharVanish::MainEstimate}.

\begin{proof}[Proof of Proposition \ref{Prop::Trace::CharVanish::MainEstimate}]
    Take \(\phi(x_n)\in \CinftyCptSpace[1/4^{\lambda}, 2^\lambda]\) such that \(\phi=1\) on  a neighborhood of
    \([1/2^{\lambda}, 1]\). Set \(\psi(x_n)=\phi(x_n)-\phi(2^{-\lambda}x_n)\), so that 
    \(\psi\in \CinftyCptSpace[1/4^{\lambda}, 2^{\lambda+1}]\).
    By the choice of \(g\), we have \(g(x,x_n)=0\) for \(x_n\geq 1\).  We have, for \(K\geq 4\),
    \begin{equation*}
    \begin{split}
         &\phi_0\left( 2^{\lambda K}x_n \right)x_n^{\kappa} g(x) 
         =\phi_0\left( 2^{\lambda K }x_n \right) \phi\left( 2^{\lambda K}x_n \right) x_n^{\kappa} g(x)
         \\&=\left( \phi(x_n) + \sum_{k=1}^K \psi\left( 2^{k\lambda}x_n \right) \right) \phi_0\left( 2^{\lambda K}x_n \right)x_n^{\kappa} g(x)
         =\sum_{k=0}^K \psit_{K,k}\left( 2^{k\lambda}x_n \right)x_n^{\kappa} g(x),
    \end{split}
    \end{equation*}
    where
    \begin{equation*}
        \psit_{K,k}(x_n)
        =
        \begin{cases}
            \phi(x_n) \phi_0\left( 2^{K\lambda}x_n \right), &k=0,\\
            \psi(x_n) \phi_0\left( 2^{(K-k)\lambda}x_n \right), &1\leq k\leq K
        \end{cases}
        =
        \begin{cases}
            \phi(x_n), &k=0,\\
            \psi(x_n), &1\leq k\leq K-2,\\
            \psi(x_n) \phi\left( 2^{(K-k)\lambda }x_n \right), & K-2\leq k\leq K,
        \end{cases}
    \end{equation*}
    and we have used \(\phi_0(x_n)=1\) for \(x_n\geq 1\).
    In particular, \(\sB:=\left\{ \psit_{K, k} : k\in \Zgeq, K\geq 4 \right\}\) is finite  subset of \(\CinftyCptSpace[1/4^{\lambda}, 2^{\lambda+1}]\).

    Take \(\epsilon:=\lambda/p+\lambda \kappa-s>0\). We have, for every \(K\geq 4\),
    \begin{equation*}
    \begin{split}
         &\sum_{j\in \Zgeq} 2^{js}\BLpNorm{ D_j \phi_0\left( 2^{\lambda K}x_n \right)x_n^{\kappa}g }{p}    
         \leq \sum_{j\in \Zgeq} \sum_{k=0}^K 2^{j(\lambda/p-\epsilon)+\lambda\kappa j} \BLpNorm{D_j \psit_{K,k}\left( 2^{\lambda k}x_n \right)x_n^{\kappa}g}{p}
        \\&=\sum_{j\in \Zgeq} \sum_{k=0}^K 2^{j(\lambda/p-\epsilon)+\lambda \kappa(j-k)} \BLpNorm{D_j \left( x_n^\kappa \psit_{K,k} \right)\left( 2^{\lambda k}x_n \right)g}{p}
        \leq C(\sB, \epsilon, \lambda\kappa, g),
    \end{split}
    \end{equation*}
    where the final inequality uses Lemma \ref{Lemma::Trace::CharVanish::MainEstimatelemma}. Taking the supremum over
    \(K\geq 4\) completes the proof.
\end{proof}

We turn to the proof of Lemma \ref{Lemma::Trace::CharVanish::MainEstimatelemma}. For it, we need two more lemmas.

\begin{lemma}\label{Lemma::Trace::CharVanish::DerivOfBpsi}
    For all \(\alpha\), an ordered multi-index, \(\exists L\in \Zgeq\), \(\forall 0<a_1<a_2<\infty\),
    for all 
    \(\sB_1\subset \CinftyCptSpace[\Bngeq{1}]\) and \(\sB_2\subset \CinftyCptSpace[a_1,a_2]\) bounded sets,
    there exist 
    \(\sB_1'\subset \CinftyCptSpace[\Bngeq{1}]\) and \(\sB_2'\subset \CinftyCptSpace[a_1,a_2]\) bounded sets,
    \(\forall b(x)\in \sB_1\), \(\forall \psi(x_n)\in \sB_2\), \(\forall j,k\geq 0\),
    \begin{equation*}
        \left( 2^{-j\Xdv}X \right)^{\alpha} \left[ b(x) \psi\left( 2^{k\lambda}x_n \right) \right]
        =2^{(k-j)\DegXdv{\alpha}} \sum_{l=1}^L b_l(x) \psi_l\left( 2^{k\lambda}x_n \right),
    \end{equation*}
    where \(b_l\in \sB_1'\) and \(\psi_l\in \sB_2'\).
\end{lemma}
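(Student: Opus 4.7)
The plan is to induct on $|\alpha|$. The base case $|\alpha|=0$ is immediate, with $L=1$, $b_1=b$, $\psi_1=\psi$, and $\sB_j'=\sB_j$. For the inductive step it suffices to establish a single-derivative version: applying one scaled vector field $2^{-j\Xdv_m}X_m$ to a term of the form $b(x)\psi(2^{k\lambda}x_n)$ produces $2^{(k-j)\Xdv_m}$ times a sum (of length bounded by a constant $L_m$ independent of $b,\psi,j,k$) of terms of the same form, with the new $b_l$ and $\psi_l$ ranging over bounded subsets of $\CinftyCptSpace[\Bngeq{1}]$ and $\CinftyCptSpace[a_1,a_2]$ that depend only on $\sB_1$, $\sB_2$, and $\XXdv$. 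Iterating this $|\alpha|$ times yields the factor $2^{(k-j)\Xdv_{m_1}}\cdots 2^{(k-j)\Xdv_{m_{|\alpha|}}}=2^{(k-j)\DegXdv{\alpha}}$ and an $L$ bounded by $\prod_r L_{m_r}$.

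For the single-step, write $X_m=\sum_{r=1}^{n-1}a_{m,r}(x)\partial_{x_r}+d_m(x)\partial_{x_n}$ with $a_{m,r},d_m\in \CinftySpace[\Bngeq{1}]$. The Leibniz rule gives
\begin{equation*}
X_m[b(x)\psi(2^{k\lambda}x_n)]=(X_mb)(x)\,\psi(2^{k\lambda}x_n)+b(x)d_m(x)\cdot 2^{k\lambda}\psi'(2^{k\lambda}x_n).
\end{equation*}
Multiplying the first summand by $2^{-j\Xdv_m}$ gives $2^{(k-j)\Xdv_m}\bigl(2^{-k\Xdv_m}(X_mb)(x)\bigr)\psi(2^{k\lambda}x_n)$; since $k\geq 0$ and $\Xdv_m\geq 1$, the factor $2^{-k\Xdv_m}\leq 1$ and the parenthesized function lies in a bounded subset of $\CinftyCptSpace[\Bngeq{1}]$ determined by $\sB_1$. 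The interesting piece is the second summand. When $m=q+1$, we have $X_{q+1}=\partial_{x_n}$ and $\Xdv_{q+1}=\lambda$, so $d_{q+1}\equiv 1$; the second summand multiplied by $2^{-j\lambda}$ is exactly $2^{(k-j)\lambda}b(x)\psi'(2^{k\lambda}x_n)$, already of the required form because differentiation preserves bounded subsets of $\CinftyCptSpace[a_1,a_2]$. When $1\leq m\leq q$, Assumption \ref{Item::Traces::CoordReduction::TangentToBoundary} from the opening of Section \ref{Section::Trace::ReductionToCoord} asserts that $X_m(x',0)$ has no $\partial_{x_n}$ component, i.e., $d_m(x',0)=0$. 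A Taylor expansion in $x_n$ gives $d_m(x)=x_n\,h_m(x)$ for some $h_m\in \CinftySpace[\Bngeq{1}]$, hence
\begin{equation*}
b(x)d_m(x)\cdot 2^{k\lambda}\psi'(2^{k\lambda}x_n)=b(x)h_m(x)\cdot(2^{k\lambda}x_n)\psi'(2^{k\lambda}x_n)=b(x)h_m(x)\tilde\psi(2^{k\lambda}x_n),
\end{equation*}
with $\tilde\psi(t):=t\psi'(t)\in \CinftyCptSpace[a_1,a_2]$. Multiplying by $2^{-j\Xdv_m}=2^{(k-j)\Xdv_m}\cdot 2^{-k\Xdv_m}$ and absorbing $2^{-k\Xdv_m}\leq 1$ into the $x$-factor yields the required form.

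The main (and ultimately routine) obstacle is the bookkeeping that confirms $\sB_1'$ and $\sB_2'$ depend only on $\sB_1,\sB_2,\XXdv,\alpha$: at each inductive step, the new $b_l$ is obtained from $b$ by one of a finite list of operations (multiplication by a fixed smooth function drawn from $\{a_{m,r},h_m\}$, applying $X_m$, and possibly scaling by $2^{-k\Xdv_m}\leq 1$), while the new $\psi_l$ is obtained from $\psi$ by either $\psi\mapsto\psi'$ or $\psi\mapsto t\psi'(t)$. Each of these maps bounded subsets of the respective spaces to bounded subsets of the same space, and the support of $\psi_l$ remains in $[a_1,a_2]$ throughout. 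Hence, at every level of the induction the new families remain bounded, and $L$ grows at most like $2^{|\alpha|}$ (times a combinatorial constant depending on $\XXdv$), independent of $j,k,b,\psi$. The heart of the argument, and the only place the geometry enters, is the vanishing $d_m(x',0)=0$ for $m\leq q$, which converts a formally dangerous $2^{k\lambda}$ into the harmless factor $(2^{k\lambda}x_n)$ absorbed by $\psi'$.
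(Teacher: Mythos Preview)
Your proof is correct and follows essentially the same approach as the paper: induct on $|\alpha|$, handle the single-derivative case via the Leibniz rule, treat $m=q+1$ directly, and for $1\leq m\leq q$ use that the $\partial_{x_n}$-coefficient of $X_m$ vanishes at $x_n=0$ to factor out an $x_n$ and absorb it as $t\psi'(t)$. The only cosmetic difference is that the paper extracts this vanishing from the decomposition in Lemma~\ref{Lemma::Trace::InverseProof::XjRelatedToVj} (equation \eqref{Eqn::Trace::InverseProof::XjRelatedToVj::XjInTermsOfVj}), whereas you obtain it directly from Assumption~\ref{Item::Traces::CoordReduction::TangentToBoundary} via a one-step Taylor expansion.
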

\begin{proof}
    The result for \(|\alpha|=0\) is trivial. We prove the result for \(|\alpha|=1\) and the full result follow by iterating
    the case \(|\alpha|=1\).
    Thus, we consider the case when \(\left( 2^{-j\Xdv}X \right)^{\alpha}=2^{-j\Xdv_m}X_m\) for some \(1\leq m\leq q+1\).

    We have,
    \begin{equation}\label{Eqn::Trace::CharVanish::DerivOfBpsi::Tmp102}
        \begin{split}
            &\left( 2^{-j\Xdv}X \right)^{\alpha} \left[ b(x) \psi\left( 2^{k\lambda}x_n \right) \right]=
            \left( 2^{-j\Xdv_m}X_m \right)\left[ b(x) \psi\left( 2^{k\lambda}x_n \right) \right]
            \\&=
            2^{-j\Xdv_m} \left( X_m b \right) (x) \psi\left( 2^{k\lambda}x_n \right)
            +2^{-j\Xdv_m} b(x)\left( X_m\left[ \psi\left( 2^{k\lambda} x_n \right) \right] \right).
        \end{split}
    \end{equation}
    For the first term on the right-hand side of \eqref{Eqn::Trace::CharVanish::DerivOfBpsi::Tmp102}, we have
    \begin{equation*}
        2^{-j\Xdv_m} \left( X_m b \right)(x)\psi\left( 2^{k\lambda}x_n \right)
        =2^{(k-j)\Xdv_m} \left( 2^{-k\Xdv_m}X_m b \right)(x) \psi\left( 2^{k\lambda}x_n \right).
    \end{equation*}
    Since \(\left\{ 2^{-k\Xdv_m}X_m b :k\geq 0, b\in \sB_1 \right\}\subset \CinftyCptSpace[\Bngeq{1}]\) is a bounded set,
    we see that this term is of the desired form.

    We turn to the second term on the right-hand side of \eqref{Eqn::Trace::CharVanish::DerivOfBpsi::Tmp102},
    which we separate into two cases: \(1\leq m\leq q\) and \(m=q+1\).
    When \(m=q+1\), we have \(\Xdv_m=\lambda\), \(X_m=X_{q+1}=\partial_{x_n}\), and so
    \begin{equation*}
        2^{-j\Xdv_m}b(x) \left( X_m\left[ \psi\left( 2^{k\lambda} x_n \right) \right] \right)
        =2^{(k-j)\lambda} b(x) \left( \partial_{x_n} \psi \right)\left( 2^{k\lambda}x_n \right)
        =2^{(k-j)\Xdv_m} b(x) \left( \partial_{x_n}\psi \right)\left( 2^{k\lambda}x_n \right),
    \end{equation*}
    which is of the desired form.

    Finally, we turn to the case when \(1\leq m\leq q\). Without loss of generality, we make take \(m=1\),
    so that we are considering
    \begin{equation*}
        2^{-j\Xdv_1} b(x)\left( X_1\left[ \psi\left( 2^{k\lambda} x_n \right) \right] \right).
    \end{equation*}
    We use \eqref{Eqn::Trace::InverseProof::XjRelatedToVj::XjInTermsOfVj} to see
    \begin{equation*}
        X_1(x)\equiv b_1(x) x_n \partial_{x_n}\mod \Span\left\{ V_1(x),\ldots, V_q(x) \right\},
    \end{equation*}
    where \(b_1\in \CinftyCptSpace[\Bngeq{1}]\). Since \(V_k\) has no \(\partial_{x_n}\) component,
    we have \(V_k\left[ \psi\left( 2^{k\lambda} x_n \right)\right]=0\), and therefore,
    \begin{equation*}
    \begin{split}
            2^{-j\Xdv_1} b(x)\left( X_1\left[ \psi\left( 2^{k\lambda} x_n \right) \right] \right)
            &=2^{-j\Xdv_1} b_1(x) b(x)  \left( x_n \partial_{x_n} \left[ \psi\left( 2^{k\lambda}x_n \right) \right] \right)
            \\&=2^{(k-j)\Xdv_1}  \left( 2^{-k\Xdv_1} b_1(x) b(x) \right) \left( x_n \partial_{x_n} \psi \right)\left( 2^{k\lambda}x_n \right).
    \end{split}
    \end{equation*}
    Since \(\left\{ 2^{-k\Xdv_1} b_1 b : k\geq 0, b\in \sB_1 \right\}\subset \CinftyCptSpace[\Bngeq{1}]\) and
    \(\left\{ x_n \partial_{x_n}\psi :\psi\in \sB_2 \right\}\subset \CinftyCptSpace[a_1,a_2]\) are bounded sets,
    this is of the desired form, completing the proof.
\end{proof}

\begin{lemma}\label{Lemma::Trace::CharVanish::LpNormDerivOfbPsi}
    \(\forall 0<a_1<a_2<\infty\), for all \(\sB_1\subset \CinftyCptSpace[\Bngeq{1}]\) and \(\sB_2\subset \CinftyCptSpace[a_1,a_2]\)
    bounded sets, \(\forall \alpha\), 
    \(\exists C=C_{a_1,a_2,\sB_1,\sB_2,\alpha,p}\geq 0\),
    \(\forall j,k\geq 0\),
    \begin{equation*}
        \BLpNorm{\left( 2^{-j\Xdv}X \right)^{\alpha} \left[ b(x) \psi\left( 2^{k\lambda}x_n \right) \right]}{p}[\Rngeq]
        \leq C 2^{(k-j)\DegXdv{\alpha}}2^{-k\lambda/p}.
    \end{equation*}
\end{lemma}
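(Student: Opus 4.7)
The plan is to reduce the estimate to the undifferentiated case via Lemma \ref{Lemma::Trace::CharVanish::DerivOfBpsi}, and then compute the $L^p$ norm by an elementary change of variables in the $x_n$ variable. The factor $2^{(k-j)\DegXdv{\alpha}}$ will come out of the derivatives, while the factor $2^{-k\lambda/p}$ will come from rescaling the support of $\psi$ in $x_n$.

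Concretely, first I would apply Lemma \ref{Lemma::Trace::CharVanish::DerivOfBpsi} to the given $\alpha$, $\sB_1$, $\sB_2$ to produce bounded sets $\sB_1' \subset \CinftyCptSpace[\Bngeq{1}]$ and $\sB_2' \subset \CinftyCptSpace[a_1,a_2]$ and a finite $L$ such that, for every $b \in \sB_1$ and $\psi \in \sB_2$,
\begin{equation*}
    \left( 2^{-j\Xdv}X \right)^{\alpha} \left[ b(x)\, \psi\!\left( 2^{k\lambda}x_n \right) \right]
    = 2^{(k-j)\DegXdv{\alpha}} \sum_{l=1}^{L} b_l(x)\, \psi_l\!\left( 2^{k\lambda}x_n \right),
\end{equation*}
with $b_l \in \sB_1'$ and $\psi_l \in \sB_2'$. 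Taking the $\LpSpace{p}$-norm and applying the triangle inequality, it suffices to establish the uniform bound
\begin{equation*}
    \BLpNorm{b_l(x)\, \psi_l\!\left( 2^{k\lambda}x_n \right)}{p}[\Rngeq] \lesssim 2^{-k\lambda/p},
\end{equation*}
with the implicit constant depending only on $\sB_1'$, $\sB_2'$, and $p$.

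For this step, use that the elements of $\sB_1'$ are uniformly bounded in sup norm and share a common compact support in $\Bngeq{1}$, and that the elements of $\sB_2'$ are uniformly bounded in sup norm and supported in $[a_1,a_2]$. Writing the norm as an iterated integral in $x'$ and $x_n$, Tonelli gives
\begin{equation*}
    \BLpNorm{b_l(x)\, \psi_l\!\left( 2^{k\lambda}x_n \right)}{p}[\Rngeq]^{p}
    \lesssim \int_{\Rnmo}\!\!\! \int_{0}^{\infty} \chi_{\Bngeq{1}}(x',x_n)\, \bigl|\psi_l(2^{k\lambda}x_n)\bigr|^{p} \, dx_n \, dx'.
\end{equation*}
The substitution $y_n = 2^{k\lambda} x_n$ in the inner integral yields $\int_{a_1}^{a_2} |\psi_l(y_n)|^p \, dy_n \cdot 2^{-k\lambda} \lesssim 2^{-k\lambda}$, uniformly in $k$ and $\psi_l \in \sB_2'$. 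The $x'$-integral is bounded by the volume of the projection of the common support of $\sB_1'$, which is finite. Combining, the $p$-th power of the norm is $\lesssim 2^{-k\lambda}$, which gives the claimed $2^{-k\lambda/p}$ after taking a $p$-th root. Summing over $l = 1, \ldots, L$ produces a finite constant absorbed into $C$, completing the proof.

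There is no real obstacle: Lemma \ref{Lemma::Trace::CharVanish::DerivOfBpsi} already did the conceptual work of tracking how derivatives interact with the two-scale product $b(x)\psi(2^{k\lambda}x_n)$, and what remains is only a one-dimensional scaling computation. The only point to be careful about is that the compact support condition on $\sB_2'$ (inherited from Lemma \ref{Lemma::Trace::CharVanish::DerivOfBpsi}) is what enables the change-of-variables estimate; this is why the hypothesis $\sB_2 \subset \CinftyCptSpace[a_1,a_2]$ (rather than merely $\sB_2 \subset \CinftyCptSpace[\lbrack 0,\infty)]$) is essential.
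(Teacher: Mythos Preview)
Your proposal is correct and follows essentially the same approach as the paper: reduce via Lemma~\ref{Lemma::Trace::CharVanish::DerivOfBpsi} to the case $|\alpha|=0$, then estimate the $L^p$ norm by a change of variables in $x_n$ exploiting the compact support of $\psi$ in $[a_1,a_2]$. The paper's version is merely terser, writing the final integral directly as $\int_{2^{-k\lambda}a_1}^{2^{-k\lambda}a_2}\int_{\Bnmo{1}}\,dx'\,dx_n \approx 2^{-k\lambda}$.
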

\begin{proof}
    In light of Lemma \ref{Lemma::Trace::CharVanish::DerivOfBpsi}, it suffices to prove the result
    in the case \(|\alpha|=0\). In that case, we have
    \begin{equation*}
        \BLpNorm{b(x) \psi\left( 2^{k\lambda}x_n \right)}{p}[\Rngeq]^p 
        \lesssim \int_{2^{-k\lambda}a_1}^{2^{-k\lambda}a_2} \int_{\Bnmo{1}} \: dx' \: dx_n \approx 2^{-k\lambda},
    \end{equation*}
    completing the proof.
\end{proof}

\begin{proof}[Proof of Lemma \ref{Lemma::Trace::CharVanish::MainEstimatelemma}]
    We separate \eqref{Eqn::Trace::CharVanish::MainEstimatelemma} into \(\sum_{j\geq k\geq 0}\) and \(\sum_{k>j\geq 0}\).
    For \(j\geq k\), fix \(N>R+\lambda/p\). Using Proposition \ref{Prop::Spaces::Elem::Elem::MainProps} \ref{Item::Spaces::Elem::Elem::PullOutNDerivs},
    we may write
    \begin{equation*}
        D_j=\sum_{|\alpha|\leq N} 2^{-j(N-|\alpha|)} E_{j,\alpha} \left( 2^{-j\Xdv} X \right)^{\alpha},
    \end{equation*}
    where \(\left\{ \left( E_{j,\alpha},2^{-j}  \right) : j\in \Zgeq, |\alpha|\leq N \right\}\in \ElemzXXdv{\Bngeq{\sigma_3}}\).
    By Lemma \ref{Lemma::Spaces::Elem::PElem::PElemOpsBoundedOnLp}, \(\LpOpNorm{E_{j,\alpha}}{p}\lesssim 1\)
    and therefore using Lemma \ref{Lemma::Trace::CharVanish::LpNormDerivOfbPsi}, we have
    \begin{equation*}
    \begin{split}
         &\sum_{j\geq k\geq 0} 2^{j(\lambda/p-\epsilon)+R(j-k)} \BLpNorm{D_j \psi_k\left( 2^{\lambda k}x_n \right)g}{p}
         \\&\leq \sum_{|\alpha|\leq N} \sum_{j\geq k\geq 0} 2^{j(\lambda/p-\epsilon)+R(j-k)-j(N-|\alpha|)} \BLpNorm{ E_{j,\alpha} \left( 2^{-j\Xdv}X \right)^{\alpha} \psi_k\left( 2^{\lambda k}x_n \right) g}{p}
         \\&\lesssim \sum_{|\alpha|\leq N} \sum_{j\geq k\geq 0} 2^{j(\lambda/p-\epsilon)+R(j-k)-j(N-|\alpha|)} \BLpNorm{\left( 2^{-j\Xdv}X \right)^{\alpha} \psi_k\left( 2^{\lambda k}x_n \right) g}{p}
         \\&\lesssim \sum_{|\alpha|\leq N} \sum_{j\geq k\geq 0} 2^{j(\lambda/p-\epsilon) +R(j-k) -j(N-|\alpha|) -(j-k)\DegXdv{\alpha}-k\lambda/p}
         \\&\lesssim \sum_{j\geq k\geq 0} 2^{-(j-k)(N-R-\lambda/p+\epsilon)}2^{-k\epsilon}
         \lesssim 1.
    \end{split}
    \end{equation*}

    For \(0\leq j<k\), we use  Lemma \ref{Lemma::Spaces::Elem::PElem::PElemOpsBoundedOnLp} to see
    \(\LpOpNorm{D_j}{p}\lesssim 1\), and therefore using Lemma \ref{Lemma::Trace::CharVanish::LpNormDerivOfbPsi}
    with \(|\alpha|=0\), we have
    \begin{equation*}
        \BLpNorm{ D_j \psi_k\left( 2^{\lambda k}x_n \right)g}{p}
        \lesssim \BLpNorm{ \psi_k\left( 2^{\lambda k}x_n \right)g}{p}
        \lesssim 2^{-k\lambda/p},
    \end{equation*}
    and therefore,
    \begin{equation*}
    \begin{split}
         &\sum_{0\leq j<k} 2^{j(\lambda/p-\epsilon)+R(j-k)} 
            \BLpNorm{D_j \psi_k\left( 2^{\lambda k}x_n \right)g}{p}
        \lesssim \sum_{0\leq j<k} 2^{j(\lambda/p-\epsilon) +R (j-k) -k\lambda/p}
        \leq \sum_{0\leq j<k} 2^{(j-k)\lambda/p}2^{-j\epsilon}\lesssim 1.
    \end{split}
    \end{equation*}
\end{proof}

We have completed the proof of Lemma \ref{Lemma::Trace::CharVanish::MainEstimatelemma}
and therefore completed the proof of Proposition \ref{Prop::Trace::CharVanish::MainEstimate}.
With Proposition \ref{Prop::Trace::CharVanish::MainEstimate} in hand, we turn to the proof of
Proposition \ref{Prop::Traces::CoordReduction::VanishingChar}, which we separate into the next two lemmas.

\begin{lemma}\label{Lemma::Trace::CharVanish::ApproxBySmoothWithVanish}
    Let \(\ASpace{s}{p}{q}\) be either \(\BesovSpace{s}{p}{q}\) or \(\TLSpace{s}{p}{q}\).
    \begin{enumerate}[(i)]
        \item\label{Item::Trace::CharVanish::ApproxBySmoothWithVanish::smalls} If \(\ASpace{s}{p}{q}=\BesovSpace{s}{p}{q}\), we assume \(1\leq p\leq \infty\), \(1\leq q<\infty\); if 
            \(\ASpace{s}{p}{q}=\TLSpace{s}{p}{q}\) we assume \(1<p,q<\infty\).
            \(\forall u\in \ASpace{s}{p}{q}[\BngeqClosure{\sigma_1}][\FilteredSheafGenByXXdv]\),
            \(\exists \{f_j\}_{j\in \Zgeq}\subset \CinftyCptSpace[\Bngeq{\sigma_2}]\), with \(f_j\rightarrow u\)
            in \(\ASpace{s}{p}{q}[\BngeqClosure{\sigma_2}][\FilteredSheafGenByXXdv]\).
        \item\label{Item::Trace::CharVanish::ApproxBySmoothWithVanish::larges} If \(\ASpace{s}{p}{q}=\BesovSpace{s}{p}{q}\), we assume \(1< p\leq \infty\), \(1\leq q<\infty\); if 
            \(\ASpace{s}{p}{q}=\TLSpace{s}{p}{q}\) we assume \(1<p,q<\infty\).
            If \(s>L\lambda+\lambda/p\), where \(L\in \Zgeq\), if \(u\in \ASpace{s}{p}{q}[\BngeqClosure{\sigma_1}][\FilteredSheafGenByXXdv]\)
            with \(\TraceMap[\sN_L]u=0\), \(\exists \{f_j\}_{j\in \Zgeq}\subset \CinftyCptSpace[\Bngeq{\sigma_2}]\)
            with \(\partial_{x_n}^l\big|_{x_n=0} f_j=0\), for \(0\leq l\leq L\), and \(f_j\rightarrow u\)
            in \(\ASpace{s}{p}{q}[\BngeqClosure{\sigma_2}][\FilteredSheafGenByXXdv]\).
    \end{enumerate}
\end{lemma}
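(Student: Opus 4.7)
The plan is to prove Part~\ref{Item::Trace::CharVanish::ApproxBySmoothWithVanish::smalls} directly from the density of smooth functions in the relevant spaces, and to prove Part~\ref{Item::Trace::CharVanish::ApproxBySmoothWithVanish::larges} by combining that density with a trace subtraction procedure based on a right inverse of $\TraceMap[\sN_L]$.

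For Part~\ref{Item::Trace::CharVanish::ApproxBySmoothWithVanish::smalls}, I would fix any auxiliary scale $\sigma_2' \in (\sigma_1, \sigma_2)$ and apply Corollary~\ref{Cor::Spaces::Approximation::SmoothFunctionsAreDense}~\ref{Item::Spaces::Approximation::SmoothFunctionsAreDense::ApproxInST} with $\Compact = \BngeqClosure{\sigma_1}$ and $\Omega = \Bngeq{\sigma_2'}$. Since $q<\infty$ under the hypotheses of this part, the corollary produces $f_j \in \CinftyCptSpace[\Bngeq{\sigma_2'}] \subseteq \CinftyCptSpace[\Bngeq{\sigma_2}]$ converging to $u$ in the norm of $\ASpace{s}{p}{q}[\BngeqClosure{\sigma_2'}][\FilteredSheafGenByXXdv]$, and hence in $\ASpace{s}{p}{q}[\BngeqClosure{\sigma_2}][\FilteredSheafGenByXXdv]$ by Proposition~\ref{Prop::Spaces::Containment}. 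The hypothesis $s < \lambda/p$ plays no role here; it is included only to match the setting of Proposition~\ref{Prop::Traces::CoordReduction::VanishingChar}.

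For Part~\ref{Item::Trace::CharVanish::ApproxBySmoothWithVanish::larges}, I would first apply Part~\ref{Item::Trace::CharVanish::ApproxBySmoothWithVanish::smalls} at some intermediate scale $\sigma_2' \in (\sigma_1, \sigma_2)$ to produce $\tilde{g}_j \in \CinftyCptSpace[\Bngeq{\sigma_2'}]$ with $\tilde{g}_j \to u$ in norm. Continuity of $\TraceMap[\sN_L]$ (Theorem~\ref{Thm::Trace::ForwardMap}), together with $\TraceMap[\sN_L] u = 0$, then forces the traces $\vec{v}_j := \TraceMap[\sN_L]\tilde{g}_j$ to converge to zero in the relevant product of boundary Besov spaces on $\BnmoClosure{\sigma_2'}$. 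I would then construct a right inverse $\TraceInverseMap$ of $\TraceMap[\sN_L]$ by the same recipe as in the proof of Proposition~\ref{Prop::Traces::CoordReduction::InverseMap}, but with the scale parameters of the operators $\opQtEPhigamma$ chosen so that $\sigma_2' \le \etaOne < \etaThree < \etaFour \le \sigma_2$. Such a $\TraceInverseMap$ takes smooth traces supported in $\BnmoClosure{\sigma_2'}\subseteq \BnmoClosure{\etaOne}$ to smooth functions supported in $\BngeqClosure{\etaFour}\subseteq \BngeqClosure{\sigma_2}$ (Proposition~\ref{Prop::Trace::InverseProof::QtConverges}), is continuous on all relevant Besov/Triebel--Lizorkin trace spaces (Proposition~\ref{Prop::Trace::InverseProof::QtIsContinuous}), and satisfies $\TraceMap[\sN_L]\TraceInverseMap = I$ on smooth data by the choice~\eqref{Eqn::Trace::InverseProof::ChooseFinalphi} of the generating functions $\phi_l$. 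Setting $f_j := \tilde{g}_j - \TraceInverseMap(\vec{v}_j)$ then yields a sequence in $\CinftyCptSpace[\Bngeq{\sigma_2}]$ with $\TraceMap[\sN_L] f_j = \vec{v}_j - \vec{v}_j = 0$ (hence $\partial_{x_n}^l f_j|_{x_n=0} = 0$ for $0\le l \le L$) and $f_j \to u$ in norm, the latter because $\tilde{g}_j \to u$ and $\TraceInverseMap(\vec{v}_j) \to 0$ by the norm continuity of $\TraceInverseMap$.

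The main technical obstacle is the scale matching in the construction of $\TraceInverseMap$: one must thread $\etaOne < \etaThree < \etaFour$ between $\sigma_2'$ and $\sigma_2$ so that the input support condition on the traces and the output support condition of the extended functions are simultaneously met. This is a bookkeeping issue rather than a genuine analytic difficulty, since the operators $\opQtEPhigamma$ are constructed with arbitrary scale parameters in Section~\ref{Section::Trace::Inverse}; all of the analytic content is already contained in Propositions~\ref{Prop::Trace::InverseProof::QtConverges} and~\ref{Prop::Trace::InverseProof::QtIsContinuous}.
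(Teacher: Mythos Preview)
Your proposal is correct and follows essentially the same approach as the paper: Part~(i) is exactly the paper's argument, and for Part~(ii) both you and the paper approximate by smooth functions, then subtract off an extension of the trace via a right inverse of \(\TraceMap[\sN_L]\). The only difference is that the paper invokes the already-proved Theorem~\ref{Thm::Trace::Dirichlet::MainInverseThm} directly with \(\Omega_1=\Bngeq{\sigma_a}\), \(\Omega_2=\Bngeq{\sigma_b}\), \(\Omega=\Bngeq{\sigma_2}\) (for \(\sigma_1<\sigma_a<\sigma_b<\sigma_2\)), which absorbs the scale-matching bookkeeping you identify as the main obstacle; you instead re-build \(\TraceInverseMap\) from the \(\opQtEPhigamma\) machinery with hand-chosen scales, which works but is slightly more laborious.
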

\begin{proof}
    \ref{Item::Trace::CharVanish::ApproxBySmoothWithVanish::smalls} follows immediately from Corollary \ref{Cor::Spaces::Approximation::SmoothFunctionsAreDense} \ref{Item::Spaces::Approximation::SmoothFunctionsAreDense::ApproxInST}.

    For \ref{Item::Trace::CharVanish::ApproxBySmoothWithVanish::larges} fix \(\sigma_{a},\sigma_b\in (\sigma_1,\sigma_2)\).
    with \(\sigma_1<\sigma_a<\sigma_b<\sigma_2\).
    By Corollary \ref{Cor::Spaces::Approximation::SmoothFunctionsAreDense} \ref{Item::Spaces::Approximation::SmoothFunctionsAreDense::ApproxInST},
    \(\exists \{g_j\}_{j\in \Zgeq}\subset \CinftyCptSpace[\Bngeq{\sigma_{a}}]\) with 
    \(g_j\rightarrow u\) in \(\ASpace{s}{p}{q}[\BngeqClosure{\sigma_{a}}][\FilteredSheafGenByXXdv]\).
    Set \(f_j:=g_j-\TraceInverseMap[\sN_L]\TraceMap[\sN_L]g_j\in \CinftyCptSpace[\Bngeq{\sigma_2}]\),
    where \(\TraceInverseMap[\sN_L]\) is as in Theorem \ref{Thm::Trace::Dirichlet::MainInverseThm}
    with \(\Omega_1=\Bngeq{\sigma_{a}}\), \(\Omega_2=\Bngeq{\sigma_b}\), and \(\Omega=\Bngeq{\sigma_2}\).
    Note that \(\TraceMap[\sN_L]f_j=0\) (by Theorem \ref{Thm::Trace::Dirichlet::MainInverseThm} \ref{Item::Trace::Dirichlet::MainInverseThm::IsInverse}),
    and therefore by Theorem \ref{Thm::Trace::ForwardMap} \ref{Item::Trace::ForwardMap::TraceOnSmooth}, \(\partial_{x_n}^l\big|_{x_n=0}f_j=0\), \(\forall 0\leq l\leq L\).

    By Theorem \ref{Thm::Trace::ForwardMap} \ref{Item::Trace::ForwardMap::Continuous} and
    Theorem \ref{Thm::Trace::Dirichlet::MainInverseThm} \ref{Item::Trace::Dirichlet::MainInverseThm::ContinuousOnSpaces},
    we have \(f_j\rightarrow u-\TraceInverseMap[\sN_L]\TraceMap[\sN_L] u=u\) in  \(\ASpace{s}{p}{q}[\BngeqClosure{\sigma_2}][\FilteredSheafGenByXXdv]\).
\end{proof}

\begin{lemma}\label{Lemma::Trace::CharVanish::ApproxBySmoothWithSupportAwayFromZero}
    Let \(\ASpace{s}{p}{q}\) be either \(\BesovSpace{s}{p}{q}\) or \(\TLSpace{s}{p}{q}\) and fix \(p,q\in (1,\infty)\).
    Fix \(f\in \CinftyCptSpace[\Bngeq{\sigma_2}]\).
    \begin{enumerate}[(i)]
        \item\label{Item::Trace::CharVanish::ApproxBySmoothWithSupportAwayFromZero::smalls} If \(s<\lambda/p\),  \(\exists \{f_j\}_{j\in \Zgeq}\subset \CinftyCptSpace[\Bng{\sigma_2}]\) with
            \(f_j\rightarrow f\) in \(\ASpace{s}{p}{q}[\BngeqClosure{\sigma_2}][\FilteredSheafGenByXXdv]\).
        \item\label{Item::Trace::CharVanish::ApproxBySmoothWithSupportAwayFromZero::larges} If \(s<(L+1)\lambda+\lambda/p\) with \(L\in \Zgeq\) and \(\partial_{x_n}^l\big|_{x_n=0} f=0\) for \(0\leq l\leq L\),
            \(\exists \{f_j\}_{j\in \Zgeq}\subset \CinftyCptSpace[\Bng{\sigma_2}]\) with \(f_j\rightarrow f\)
            in \(\ASpace{s}{p}{q}[\BngeqClosure{\sigma_2}][\FilteredSheafGenByXXdv]\).
    \end{enumerate}
\end{lemma}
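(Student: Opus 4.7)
The plan is to use the cutoff $\phi_0$ from Proposition \ref{Prop::Trace::CharVanish::MainEstimate} to set, for each integer $K\geq 4$,
\[ f_K(x',x_n):=\phi_0(2^{\lambda K}x_n)\,f(x). \]
Because $\phi_0$ vanishes on $[0,1/2^\lambda]$, the function $f_K$ vanishes for $x_n\leq 2^{-\lambda(K+1)}$, and hence $f_K\in \CinftyCptSpace[\Bng{\sigma_2}]$. Moreover $\phi_0(2^{\lambda K}x_n)\to 1$ pointwise on $\{x_n>0\}$, and bounded convergence applied against any test function in $\TestFunctionsZero[\Bngeq{1}]$ (which vanishes to infinite order at $x_n=0$) gives $f_K\to f$ in $\DistributionsZero[\Bngeq{1}]$. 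The goal is to upgrade this distributional convergence to norm convergence in $\ASpace{s}{p}{q}[\BngeqClosure{\sigma_2}][\FilteredSheafGenByXXdv]$, possibly after passing to convex combinations, which remain in $\CinftyCptSpace[\Bng{\sigma_2}]$.

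For part \ref{Item::Trace::CharVanish::ApproxBySmoothWithSupportAwayFromZero::smalls}, I would apply Proposition \ref{Prop::Trace::CharVanish::MainEstimate} with $\kappa=0$ and $g=f$, which is legitimate since $f\in \CinftyCptSpace[\Bngeq{\sigma_2}]\subset \CinftyCptSpace[\Bngeq{1}]$ and $s<\lambda/p$. This yields
\[ \sup_{K\geq 4}\,\BesovNorm{f_K}{s}{p}{1}[\FilteredSheafGenByXXdv]=\sup_{K\geq 4}\sum_{j\in\Zgeq}2^{js}\BLpNorm{D_j f_K}{p}<\infty. \]
A routine $\ell^1\hookrightarrow \ell^q$ inclusion (together with Minkowski to pass from $\lqLpSpaceNoSet{p}{1}$ into $\LplqSpaceNoSet{p}{q}$ for the Triebel--Lizorkin case) gives $\ANorm{\cdot}{s}{p}{q}[\FilteredSheafGenByXXdv]\lesssim \BesovNorm{\cdot}{s}{p}{1}[\FilteredSheafGenByXXdv]$, so $\{f_K\}$ is bounded in $\ASpace{s}{p}{q}[\BngeqClosure{\sigma_2}][\FilteredSheafGenByXXdv]$. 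Because $1<p,q<\infty$, Proposition \ref{Prop::Spaces::Approximation::DistributionConvgToStronger}\ref{Item::Spaces::Approximation::DistributionConvgToStronger::StrongConvg} places $f$ in the norm-closed convex hull of $\{f_K\}$. Choosing $v_m\in \conv\{f_K:K\geq 4\}$ converging to $f$ in $\ASpace{s}{p}{q}$ finishes the proof, since each $v_m$ is a finite convex combination of elements of $\CinftyCptSpace[\Bng{\sigma_2}]$ and hence itself lies in $\CinftyCptSpace[\Bng{\sigma_2}]$.

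For part \ref{Item::Trace::CharVanish::ApproxBySmoothWithSupportAwayFromZero::larges}, I would first use Taylor's theorem with integral remainder: since $\partial_{x_n}^l|_{x_n=0} f=0$ for $0\leq l\leq L$,
\[ f(x',x_n)=x_n^{L+1}\,g(x),\qquad g(x',x_n):=\int_0^1 \frac{(1-s)^L}{L!}\,\partial_{x_n}^{L+1}f(x',sx_n)\,ds. \]
The function $g$ is smooth. For the support, note $\partial_{x_n}^{L+1}f$ is supported in $\supp(f)$, so the smooth function $g$ agrees with $f/x_n^{L+1}$ on $\{x_n>0\}$; from $f\equiv 0$ outside $\supp(f)$ and the uniqueness of smooth extension across $\{x_n=0\}$, one deduces $\supp(g)\subseteq \supp(f)\Subset \Bngeq{\sigma_2}\subset \Bngeq{1}$, so $g\in \CinftyCptSpace[\Bngeq{1}]$. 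Then $f_K=\phi_0(2^{\lambda K}x_n)\,x_n^{L+1}g$, and Proposition \ref{Prop::Trace::CharVanish::MainEstimate} with $\kappa=L+1$ (legitimate since $s<(L+1)\lambda+\lambda/p$) delivers the uniform $\BesovSpace{s}{p}{1}$ bound. The remainder of the argument is identical to part \ref{Item::Trace::CharVanish::ApproxBySmoothWithSupportAwayFromZero::smalls}.

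The main obstacle is that strong norm convergence of $f_K\to f$ in $\ASpace{s}{p}{q}$ is not directly available, because a cutoff in $x_n$ does not commute with the anisotropic Littlewood--Paley projections adapted to $\FilteredSheafGenByXXdv$. The way around this is the soft-analysis argument just sketched: the hard estimate of Proposition \ref{Prop::Trace::CharVanish::MainEstimate} produces a bounded family in a slightly stronger (summation) norm, distributional convergence is free, and the reflexivity-based extraction in Proposition \ref{Prop::Spaces::Approximation::DistributionConvgToStronger}\ref{Item::Spaces::Approximation::DistributionConvgToStronger::StrongConvg} (which is where the restriction $1<p,q<\infty$ is essential) converts boundedness plus weak convergence into norm approximation by convex combinations, which is exactly what is needed.
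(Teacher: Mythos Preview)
Your proposal is correct and follows essentially the same approach as the paper: the paper also sets $g_k(x)=\phi_0(2^{\lambda k}x_n)f(x)$, factors $f=x_n^{L+1}g$ via Taylor (treating part \ref{Item::Trace::CharVanish::ApproxBySmoothWithSupportAwayFromZero::smalls} as the case $L=-1$), invokes Proposition \ref{Prop::Trace::CharVanish::MainEstimate} with $\kappa=L+1$ to obtain a uniform $\ell^1(L^p)$ bound, uses the embedding $\VNorm{\cdot}{p}{q}\leq\lqLpNormNoSet{\cdot}{p}{1}$, and then applies Proposition \ref{Prop::Spaces::Approximation::DistributionConvgToStronger}\ref{Item::Spaces::Approximation::DistributionConvgToStronger::StrongConvg} to pass from distributional convergence plus uniform boundedness to norm convergence via convex combinations.
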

\begin{proof}
    \ref{Item::Trace::CharVanish::ApproxBySmoothWithSupportAwayFromZero::smalls} is the case of 
    \ref{Item::Trace::CharVanish::ApproxBySmoothWithSupportAwayFromZero::larges} with \(L=-1\). We proceed by allowing
    \(L=-1\) and prove
    \ref{Item::Trace::CharVanish::ApproxBySmoothWithSupportAwayFromZero::smalls} and
    \ref{Item::Trace::CharVanish::ApproxBySmoothWithSupportAwayFromZero::larges}
    simultaneously.

    Let \(\phi_0(x_n)\in \CinftySpace[\lbrack 0,\infty)]\) be equal to \(0\) on \([0,1/2^{\lambda}]\) and equal to \(1\)
    on \([1,\infty)\). Set,
    \begin{equation*}
        g_k(x', x_n)=\phi_0(2^{\lambda k}x_n) f(x',x_n)\in \CinftyCptSpace[\Bng{\sigma_2}].
    \end{equation*}
    Note that 
    \begin{equation}\label{Eqn::Trace::CharVanish::ApproxBySmoothWithSupportAwayFromZero::ConvgInDist}
        g_k\rightarrow f\text{ in }\DistributionsZero[\Bngeq{1}].
    \end{equation}
    We will show
    \begin{equation}\label{Eqn::Trace::CharVanish::ApproxBySmoothWithSupportAwayFromZero::ToShow}
        \sup_{k\geq 4} \ANorm{g_k}{s}{p}{q}[\FilteredSheafGenByXXdv]<\infty.
    \end{equation}
    The result follows from 
    \eqref{Eqn::Trace::CharVanish::ApproxBySmoothWithSupportAwayFromZero::ConvgInDist}
    and
    \eqref{Eqn::Trace::CharVanish::ApproxBySmoothWithSupportAwayFromZero::ToShow}
    using Proposition \ref{Prop::Spaces::Approximation::DistributionConvgToStronger} \ref{Item::Spaces::Approximation::DistributionConvgToStronger::StrongConvg};
    this is where we use \(1<p,q<\infty\).

    We turn to proving \eqref{Eqn::Trace::CharVanish::ApproxBySmoothWithSupportAwayFromZero::ToShow}.
    Since \(\partial_{x_n}^l\big|_{x_n=0}f=0\) for \(0\leq l\leq L\), we have \(f(x',x_n)=x_n^{L+1} g(x',x_n)\)
    for some \(g\in \CinftyCptSpace[\Bngeq{\sigma_2}]\).  With \(\VSpace{p}{q}\) as in Notation \ref{Notation::Spaces::Classical::VSpacepq},
    \eqref{Eqn::Trace::CharVanish::ApproxBySmoothWithSupportAwayFromZero::ToShow} is equivalent to
    \begin{equation}\label{Eqn::Trace::CharVanish::ApproxBySmoothWithSupportAwayFromZero::Tmp1}
        \sup_{k\geq 4} \BVNorm{\left\{2^{js} D_j \phi_0\left( 2^{\lambda k}x_n \right) x_n^{L+1}g(x) \right\}_{j\in \Zgeq}}{p}{q}<\infty.
    \end{equation}
    Since \(\VNorm{\cdot}{p}{q}\leq \lqLpNormNoSet{\cdot}{p}{1}\), it suffices to show
    \begin{equation}\label{Eqn::Trace::CharVanish::ApproxBySmoothWithSupportAwayFromZero::Tmp2}
        \sup_{k\geq 4} \BlqLpNorm{\left\{2^{js} D_j \phi_0\left( 2^{\lambda k}x_n \right) x_n^{L+1}g(x) \right\}_{j\in \Zgeq}}{p}{1}[\Rngeq]<\infty.
    \end{equation}
    \eqref{Eqn::Trace::CharVanish::ApproxBySmoothWithSupportAwayFromZero::Tmp2} follows immediately from
    Proposition \ref{Prop::Trace::CharVanish::MainEstimate}
    with \(\kappa=L+1\).
\end{proof}

\begin{proof}[Completion of the proof of Proposition \ref{Prop::Traces::CoordReduction::VanishingChar}]
    We take \(\sigma_1=1/2\) and \(\sigma_2=3/4\).
    \ref{Item::Traces::CoordReduction::VanishingChar::smalls} (respectively, \ref{Item::Traces::CoordReduction::VanishingChar::larges}) follows by combining
    Lemma \ref{Lemma::Trace::CharVanish::ApproxBySmoothWithVanish} \ref{Item::Trace::CharVanish::ApproxBySmoothWithVanish::smalls} (respectively, \ref{Item::Trace::CharVanish::ApproxBySmoothWithVanish::larges})
    with Lemma \ref{Lemma::Trace::CharVanish::ApproxBySmoothWithSupportAwayFromZero} \ref{Item::Trace::CharVanish::ApproxBySmoothWithSupportAwayFromZero::smalls} (respectively, \ref{Item::Trace::CharVanish::ApproxBySmoothWithSupportAwayFromZero::larges}).
\end{proof}

    \subsection{Characteristic points and the failure of our results}
    \label{Section::Trace::CharacteristicFailure}
    Let \(\ManifoldN=\left\{ (x,y)\in \R^2 : y\geq x^2 \right\}\), with
boundary \(\BoundaryN=\left\{ (x,x^2)\in \R^2 :x\in \R \right\}\cong \R\).
Consider the H\"ormander vector fields with formal degrees
\(\WWo:=\left\{ \left( \partial_x,1 \right), \left( x\partial_y,1 \right) \right\}\).
In this section, we show that the range of the trace map is not what one might expect,
and moreover it is unclear what the range is. Since the claim of this section is that
a result is ``not true,'' one could choose many slight variations of the same possible result
and the same argument shows that any such slight variation will not work. Thus, we proceed
somewhat informally and just present the main idea; in particular, we do not worry about
localizing the below proofs, and leave any relevant needed localization to the reader.

Let \(\FilteredSheafF=\FilteredSheafGenBy{\WWo}\).
Note that \((x,x^2)\in \BoundaryN\) is \(\FilteredSheafF\)-non-characteristic if and only if
\(x\ne 0\).

Note that \(\TangentSpace{x}{\BoundaryN} = \R \left( \partial_x+2x\partial_y \right)\)
and \(\partial_x+2x\partial_y\in \FilteredSheafF[\ManifoldN][1]\).
Therefore \(\RestrictFilteredSheaf{\FilteredSheafNoSetF}{\BoundaryN}[d]=\VectorFields{\BoundaryN}\),
for every \(d\).
Because of this, the spaces
\(\ASpace{s}{p}{q}[\RestrictFilteredSheaf{\LieFilteredSheafF}{\BoundaryN}]\)
locally equal the classical spaces \(\ASpace{s}{p}{q}[\R]\).

In light of Corollary \ref{Cor::GlobalCors::TraceThm},
away from \(x=0\), the trace map \(f\mapsto f\big|_{\BoundaryN}\)
is locally continuous and surjective as a map \(\TLSpace{s}{2}{2}[\ManifoldN][\FilteredSheafF]\rightarrow \TLSpace{s-1/2}{2}{2}[\R]\),
\(s>1/2\), where \(\TLSpace{s-1/2}{2}{2}[\R]=\BesovSpace{s-1/2}{2}{2}[\R]=H^s(\R)\) 
denotes the classical space on \(\R\) (i.e., the \(\LpSpace{2}\)-Sobolev space of order \(s-1/2\) on \(\R\)).
The main remark of this section is that this map is not continuous near \(x=0\): we lose
at least \(1\)
derivative, not \(1/2\).

To see this, set \(G(x,y)=e^{-x^2-y^2}\) and \(G_{\lambda}(x,y)=G(\lambda x, \lambda^2 y)\).
For \(L\in \Zg\), using Proposition \ref{Prop::Spaces::EqualsSobolev},
we see (for \(\lambda\) large)
\begin{equation*}
    \TLNorm{G_\lambda}{L}{2}{2}[\FilteredSheafF]\approx 
    \sum_{|\alpha|\leq L} \BLpNorm{ \left( \partial_x,x\partial_y \right)^{\alpha}G_\lambda }{2}[\ManifoldN]
    \approx \sum_{|\alpha|\leq L} \lambda^{|\alpha|-3/2}
    \approx
    \lambda^{L-3/2}.
\end{equation*}
Note that \(G_\lambda\big|_{\BoundaryN}=F_\lambda(x)\), where \(F(x)=e^{-x^2-x^4}\) and \(F_\lambda(x)=F(\lambda x)\).
It follows that (for \(s>0\)),
\begin{equation*}
    \TLNorm{G_\lambda\big|_{\BoundaryN}}{s}{2}{2}[\R]
    =
    \TLNorm{F_\lambda}{s}{2}{2}[\R]\approx
    \|F_\lambda\|_{H^s(\R)}
    \approx
    \lambda^{s-1/2}.
\end{equation*}
Thus if \(f\mapsto f\big|_{\BoundaryN}\) were continuous 
\(\TLSpace{L}{2}{2}[\ManifoldN][\FilteredSheafF]\rightarrow \TLSpace{s}{2}{2}[\R]\)
we would need \(\lambda^{s-1/2}\lesssim \lambda^{L-3/2}\) for 
\(\lambda\) large; 
i.e., \(s\leq L-1\).
In other words, the trace map must lose at least \(1\) derivative near \(0\).
Since, as described earlier, the trace map loses only \(1/2\) of a derivative away from \(x=0\),
it is unclear how to characterize the image of the trace map.

\section{Zygmund--H\"older spaces and compositions}
Throughout this chapter, let \(\ManifoldN\) be a smooth manifold with boundary,
\(\FilteredSheafF\) a H\"ormander filtration of sheaves of vector fields on \(\ManifoldN\),
and \(\Compact\Subset \ManifoldNncF\) compact.

\begin{definition}
    For \(s>0\),
    we let \(\ZygSpaceCompactF{s}:=\BesovSpace{s}{\infty}{\infty}[\Compact][\FilteredSheafF]\)
    with equality of norms:
    the \DefnIt{Zygmund--H\"older space of order \(s\) with respect to \(\FilteredSheafF\)}.
\end{definition}

The spaces \(\ZygSpaceCompactF{s}\) are closely related to the naturally defined H\"older spaces with respect
to the Carnot--Carath\'eodory metric. See Section \ref{Section::Zyg::Holder}.
As with the classical elliptic theory, they play a crucial role in the study of maximally subelliptic PDEs--see
\cite[Chapter 7]{StreetMaximalSubellipticity}.

Throughout this chapter, 
fix \(\Omega\Subset \ManifoldN\) open and relatively compact with \(\Compact\Subset \Omega\), and let
\(\WWdv=\left\{ \left( W_1,\Wdv_1 \right),\ldots, \left( W_r, \Wdv_r \right) \right\}\subset \VectorFields{\Omega}\times\Zg\)
be H\"ormander vector fields with formal degrees on \(\Omega\) such that
\(\FilteredSheafF\big|_{\Omega}=\FilteredSheafGenBy{\WWdv}\) (see Lemma \ref{Lemma::Filtrations::GeneratorsOnRelCptSet}).

    \subsection{The norm}\label{Section::Zyg::Norm}
    The norm \(\ZygNormF{\cdot}{s}=\BesovNorm{\cdot}{s}{\infty}{\infty}[\FilteredSheafF]\) was defined
in Notation \ref{Notation::Spaces::Defns::Norm} as follows.
Let \(\psi\in \CinftyCptSpace[\ManifoldNncF]\) equal \(1\) on a neighborhood of \(\Compact\).
Let \(\left\{ \left( D_j, 2^{-j} \right) : j\in \Zgeq \right\}\in \ElemzF{\ManifoldNncF}\)
satisfy \(\sum_{j\in \Zgeq} D_j =\Multpsi\). Then,
\begin{equation*}
    \ZygNormF{f}{s}=\BesovNorm{f}{s}{\infty}{\infty}[\FilteredSheafF]
    =\sup_{j\in \Zgeq} 2^{js} \LpNorm{D_j f}{\infty},
\end{equation*}
and since \(s>0\), Proposition \ref{Prop::Spaces::BasicProps::FiniteNorm}
shows
\begin{equation}\label{Eqn::Zyg::Norm::SpaceEqualsDistWithFiniteNorm}
    \ZygSpaceCompactF{s} = 
    \left\{ f\in \DistributionsZeroN : \supp(f)\subseteq \Compact, \ZygNormF{f}{s}<\infty \right\}.
\end{equation}

Proposition \ref{Prop::Spaces::Defns::EquivNorms} shows that the equivalence class of this norm does not 
depend on the choices made. We use this flexibility to pick \(D_j\) which are useful for
studying \(\ZygSpaceCompactF{s}\).
Namely, assume \(\psi\in \CinftyCptSpace[\Omega]\) and take
\(\left\{ (D_j, 2^{-j}) : j\in \Zgeq \right\}\in \ElemzF{\Omega}\) as in Proposition \ref{Prop::Spaces::LP::DjExist}.
Importantly, if \(P_j=\sum_{k=0}^j D_k\), then
Proposition \ref{Prop::Spaces::LP::DjExist} \ref{Item::Spaces::LP::DjExist::SumToPreElem}
gives \(\left\{ \left( P_j, 2^{-j} \right):j\in \Zgeq \right\}\in \PElemzF{\Omega}\).
Note that \(P_j f=\sum_{k=0}^j D_j f\xrightarrow{j\rightarrow \infty} f\), \(\forall f\in \ZygSpaceCompactF{s}\).
It follows from Lemma \ref{Lemma::Spaces::Elem::PElem::PElemOpsBoundedOnLp} that
\begin{equation}\label{Eqn::Zyg::Norm::DerivPjBounded}
    \sup_{j\in \Zgeq} \BLpOpNorm{\left( 2^{-j\Wdv}W \right)^{\alpha} P_j}{\infty}<\infty, \quad \forall \alpha.
\end{equation}

    \subsection{Decomposition into smooth functions}
    Similar to the classical setting on \(\Rn\), one of the easiest ways to study
functions in \(\ZygSpaceCompactF{s}\) is to decompose them into a sum of smooth functions.
The next proposition gives such a decomposition.

\begin{proposition}\label{Prop::Zyg::Decomp::MainDecomp}
    Let \(s>0\). For \(f\in \DistributionsZeroN\), the following are equivalent.
    \begin{enumerate}[(i)]
        \item\label{Item::Zyg::Decomp::MainDecomp::fInZyg} \(f\in \ZygSpaceCompactF{s}\).
        \item\label{Item::Zyg::Decomp::MainDecomp::Decompf} \(\supp(f)\subseteq \Compact\) and \(\exists \left\{ f_j \right\}\subset \CinftyCptSpace[\Omega]\) such that
            \begin{equation}\label{Eqn::Zyg::Decomp::MainDecomp::Boundfj}
                \sup_{j\in \Zgeq} 2^{js}\BLpNorm{\left( 2^{-j\Wdv}W \right)^{\alpha} f_j}{\infty}<\infty, \quad \forall \alpha,
            \end{equation}
            and \(f=\sum_{j\in \Zgeq} f_j\), with convergence in \(\CSpace{\ManifoldN}\) and in \(\DistributionsZeroN\).
    \end{enumerate}
    In this case, \(\exists M=M(s)\in \Zgeq\) such that
    \begin{equation}\label{Eqn::Zyg::Decomp::MainDecomp::BoundfByfj}
        \ZygNormF{f}{s} \leq C \sum_{|\alpha|\leq M} \sup_{j\in \Zgeq} 2^{js}\BLpNorm{\left( 2^{-j\Wdv}W \right)^{\alpha} f_j}{\infty}.
    \end{equation}
    Furthermore, \(f_j\) may be chosen such that
    \begin{equation}\label{Eqn::Zyg::Decomp::MainDecomp::BoundfjByf}
        \sup_{j\in \Zgeq} 2^{js}\BLpNorm{\left( 2^{-j\Wdv}W \right)^{\alpha} f_j}{\infty}\leq C_\alpha \ZygNormF{f}{s},\quad \forall \alpha.
    \end{equation}
    Here, \(C\) and \(C_\alpha\) do not depend on \(f\), but may depend on any other ingredient in the proposition.
\end{proposition}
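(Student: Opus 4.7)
The plan is to prove the two directions by constructing, in one direction, an explicit Littlewood--Paley decomposition of $f$, and, in the other, by showing that any admissible decomposition gives control on $\|D_k f\|_\infty$ via an almost-orthogonality argument, so that \eqref{Eqn::Zyg::Norm::SpaceEqualsDistWithFiniteNorm} delivers membership in $\ZygSpaceCompactF{s}$.

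For \ref{Item::Zyg::Decomp::MainDecomp::fInZyg}$\Rightarrow$\ref{Item::Zyg::Decomp::MainDecomp::Decompf} (with the stronger bound \eqref{Eqn::Zyg::Decomp::MainDecomp::BoundfjByf}), I would set $f_j:=D_j f$ with $\{(D_j,2^{-j}):j\in\Zgeq\}\in\ElemzF{\Omega}$ supplied by Proposition \ref{Prop::Spaces::LP::DjExist} (choosing $\psi\in\CinftyCptSpace[\Omega]$ equal to $1$ on a neighbourhood of $\Compact$). Each $f_j$ lies in $\CinftyCptSpace[\Omega]$ since the elementary-operator kernels are supported in a common compact subset of $\Omega\times\Omega$. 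Applying Proposition \ref{Prop::Spaces::Elem::Elem::MainProps}\ref{Item::Spaces::Elem::Elem::DerivOp} gives $\{((2^{-j\Wdv}W)^\alpha D_j,2^{-j}):j\in\Zgeq\}\in\ElemzF{\Omega}$, and then Corollary \ref{Cor::Spaces::MainEst::VpqsESeminormIsContinuous} (applied with $p=q=\infty$) yields \eqref{Eqn::Zyg::Decomp::MainDecomp::BoundfjByf}. Taking $\alpha=\{\}$ in \eqref{Eqn::Zyg::Decomp::MainDecomp::BoundfjByf} gives $\|f_j\|_\infty\lesssim 2^{-js}\ZygNormF{f}{s}$, which is summable since $s>0$; this gives uniform convergence of $\sum f_j$ to $\psi f=f$, while convergence in $\DistributionsZeroN$ already comes from Proposition \ref{Prop::Spaces::Elem::Elem::ConvergenceOfElemOps}.

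For \ref{Item::Zyg::Decomp::MainDecomp::Decompf}$\Rightarrow$\ref{Item::Zyg::Decomp::MainDecomp::fInZyg}, fix any $\{(D_k,2^{-k})\}\in\ElemzF{\Omega}$ as above and estimate $\|D_k f\|_\infty\le\sum_j\|D_k f_j\|_\infty$ by splitting into $j\geq k$ and $j<k$. For $j\geq k$, Lemma \ref{Lemma::Spaces::Elem::PElem::PElemOpsBoundedOnLp} yields $\|D_k f_j\|_\infty\lesssim\|f_j\|_\infty\lesssim 2^{-js}M_0$, where $M_\alpha$ denotes the supremum on the right of \eqref{Eqn::Zyg::Decomp::MainDecomp::Boundfj}. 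For $j<k$, I pick $N\in\Zgeq$ with $N>s$ and use Proposition \ref{Prop::Spaces::Elem::Elem::MainProps}\ref{Item::Spaces::Elem::Elem::PullOutNDerivs} (right version) to write $D_k=\sum_{|\alpha|\le N}2^{(|\alpha|-N)k}\tilde E_{k,\alpha}(2^{-k\Wdv}W)^\alpha$, giving
\begin{equation*}
D_k f_j=\sum_{|\alpha|\le N}2^{(|\alpha|-N)k+(j-k)\DegWdv{\alpha}}\tilde E_{k,\alpha}\bigl(2^{-j\Wdv}W\bigr)^\alpha f_j.
\end{equation*}
Since $\Wdv_i\geq 1$, one has $\DegWdv{\alpha}\geq|\alpha|$, so the exponent is at most $j|\alpha|-kN\le -(k-j)N$ (maximising over $|\alpha|\leq N$), and Lemma \ref{Lemma::Spaces::Elem::PElem::PElemOpsBoundedOnLp} again controls the $\tilde E_{k,\alpha}$. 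Hence $\|D_k f_j\|_\infty\lesssim M_N\,2^{-js-(k-j)N}$. Summing both cases produces $\|D_k f\|_\infty\lesssim M_N 2^{-ks}$ as a geometric series (using $N>s$), so $\ZygNormF{f}{s}<\infty$ and \eqref{Eqn::Zyg::Decomp::MainDecomp::BoundfByfj} holds; combined with $\supp(f)\subseteq\Compact$ and \eqref{Eqn::Zyg::Norm::SpaceEqualsDistWithFiniteNorm}, this gives $f\in\ZygSpaceCompactF{s}$.

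The main obstacle to watch for is the almost-orthogonality bookkeeping in the second direction: one must balance the three exponents $(|\alpha|-N)k$, $(j-k)\DegWdv{\alpha}$, and the $2^{-js}$ from the hypothesis so that the ``off-diagonal'' sum converges with a uniform $2^{-ks}$ factor. Choosing $N$ strictly larger than $s$, and exploiting $\DegWdv{\alpha}\ge|\alpha|$ to get that $(j-k)\DegWdv{\alpha}\le(j-k)|\alpha|$ for $j\le k$, is exactly what makes this work; otherwise all the remaining steps are direct applications of the elementary operator calculus assembled in Section~8.
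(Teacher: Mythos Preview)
Your proposal is correct and follows essentially the same argument as the paper: in both directions you use the same choice $f_j=D_j f$, the same derivative bound via Proposition~\ref{Prop::Spaces::Elem::Elem::MainProps}\ref{Item::Spaces::Elem::Elem::DerivOp} and Corollary~\ref{Cor::Spaces::MainEst::VpqsESeminormIsContinuous} (which the paper packages as Lemma~\ref{Lemma::Zyg::Decomp::DerivDjfBound}), and the same off-diagonal estimate via \ref{Item::Spaces::Elem::Elem::PullOutNDerivs} with $N>s$ (the paper calls this integer $M$ and swaps the roles of the indices $j,k$, but the exponent bookkeeping is identical).
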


To prove Proposition \ref{Prop::Zyg::Decomp::MainDecomp}, we use the next lemma.

\begin{lemma}\label{Lemma::Zyg::Decomp::DerivDjfBound}
    \(\forall s>0\), \(\forall \alpha\), \(\exists C_\alpha\geq 0\), \(\forall f\in \ZygSpaceCompactF{s}\),
    \begin{equation*}
        \sup_{j\in \Zgeq} 2^{js} \BLpNorm{ \left( 2^{-j\Wdv}W \right)^{\alpha} D_j f }{\infty}
        \leq C_{\alpha}\ZygNormF{f}{s},
    \end{equation*}
    where \(D_j\) is as in Section \ref{Section::Zyg::Norm}.
\end{lemma}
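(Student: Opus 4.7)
The plan is to recognize the left-hand side as (essentially) the Besov seminorm $\VpqsENorm{\cdot}[\infty][\infty][s][\sE]$ associated to a carefully chosen bounded set of elementary operators, and then invoke the main estimate (Corollary \ref{Cor::Spaces::MainEst::VpqsESeminormIsContinuous}) to bound it by the reference norm $\ZygNormF{f}{s}$.

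First I would note that, by construction in Section \ref{Section::Zyg::Norm}, $\sD_0 := \left\{(D_j, 2^{-j}) : j \in \Zgeq\right\} \in \ElemzF{\Omega} \subseteq \ElemzF{\ManifoldNncF}$. By Proposition \ref{Prop::Spaces::Elem::Elem::MainProps} \ref{Item::Spaces::Elem::Elem::DerivOp}, applying a $(2^{-j\Wdv}W)^{\alpha}$ on the left preserves membership in $\ElemzF{\ManifoldNncF}$, so
\[
\sE_\alpha := \left\{ \left( (2^{-j\Wdv}W)^{\alpha} D_j,\, 2^{-j} \right) : j \in \Zgeq \right\} \in \ElemzF{\ManifoldNncF}.
\]

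Next I would identify the quantity under the supremum with a $\VSpace{\infty}{\infty}$-type seminorm. Taking $p = q = \infty$ in Notation \ref{Notation::Spaces::Classical::VSpacepq}, we are in the Besov setting where $\VSpace{\infty}{\infty} = \lqLpSpace{\infty}{\infty} = \ell^\infty(\Zgeq;\LpSpace{\infty})$. Thus by Definition \ref{Defn::Spaces::Defns::VspqENorm},
\[
\VpqsENorm{f}[\infty][\infty][s][\sE_\alpha]
= \sup_{\{(E_j,2^{-j}):j\in\Zgeq\}\subseteq \sE_\alpha}\, \sup_{j \in \Zgeq} 2^{js}\, \LpNorm{E_j f}{\infty}
\;\geq\; \sup_{j \in \Zgeq} 2^{js} \BLpNorm{(2^{-j\Wdv}W)^{\alpha} D_j f}{\infty},
\]
by taking the one specific subsequence indexed by $j$.

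Finally, Corollary \ref{Cor::Spaces::MainEst::VpqsESeminormIsContinuous} applied to $\sE_\alpha \in \ElemzF{\ManifoldNncF}$ and to the space $\ASpace{s}{\infty}{\infty}[\Compact][\FilteredSheafF] = \ZygSpaceCompactF{s}$ gives a constant $C_\alpha$ such that
\[
\VpqsENorm{f}[\infty][\infty][s][\sE_\alpha] \leq C_\alpha \BesovNorm{f}{s}{\infty}{\infty}[\FilteredSheafF] = C_\alpha \ZygNormF{f}{s}, \quad \forall f \in \ZygSpaceCompactF{s}.
\]
Combining the two inequalities gives the claim. There is no real obstacle; the only subtlety is making sure we are entitled to apply the Besov-type version of Corollary \ref{Cor::Spaces::MainEst::VpqsESeminormIsContinuous} at $p=q=\infty$, which is within the allowed range in Notation \ref{Notation::Spaces::XSpace} for $\BesovSpace{s}{p}{q}$.
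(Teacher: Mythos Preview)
Your proof is correct and follows essentially the same approach as the paper: define $\sE_\alpha = \{((2^{-j\Wdv}W)^{\alpha}D_j, 2^{-j}) : j \in \Zgeq\}$, use Proposition \ref{Prop::Spaces::Elem::Elem::MainProps}~\ref{Item::Spaces::Elem::Elem::DerivOp} to see $\sE_\alpha \in \ElemzF{\ManifoldNncF}$, and then apply Corollary \ref{Cor::Spaces::MainEst::VpqsESeminormIsContinuous}. The only cosmetic difference is that since $\sE_\alpha$ has a unique element at each scale $2^{-j}$, the supremum in Definition \ref{Defn::Spaces::Defns::VspqENorm} is actually an equality rather than just the inequality you wrote, but this does not affect the argument.
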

\begin{proof}
    Set \(E_{j,\alpha}:=\left( 2^{-j \Wdv}W \right)^{\alpha} D_j\).
    Proposition \ref{Prop::Spaces::Elem::Elem::MainProps} \ref{Item::Spaces::Elem::Elem::DerivOp}
    shows \(\sE_\alpha:=\left\{ \left( E_{j,\alpha},2^{-j} \right) : j\in \Zgeq \right\}\in \ElemzF{\Omega}\).
    Using Corollary \ref{Cor::Spaces::MainEst::VpqsESeminormIsContinuous}, we have
    \begin{equation*}
        \begin{split}
            &\sup_{j\in \Zgeq} 2^{js}\BLpNorm{ \left( 2^{-j\Wdv}W \right)^{\alpha} D_j f }{\infty}
            =\sup_{j\in \Zgeq} 2^{js}\BLpNorm{ E_{j,\alpha} f }{\infty}
            =\VpqsENorm{f}[\infty][\infty][s][\sE_{\alpha}]
            \lesssim \BesovNorm{f}{s}{\infty}{\infty}[\FilteredSheafF]
            =\ZygNormF{f}{s}.
        \end{split}
    \end{equation*}
\end{proof}

\begin{proof}[Proof of Proposition \ref{Prop::Zyg::Decomp::MainDecomp}]
    Suppose \ref{Item::Zyg::Decomp::MainDecomp::fInZyg} holds, i.e.,
    \(f\in \ZygSpaceCompactF{s}\).  Then, by definition (see Definition \ref{Defn::Spaces::Defns::ASpace}), \(\supp(f)\subseteq \Compact\).
    Let \(D_j\) be as Section \ref{Section::Zyg::Norm}, and set \(f_j:=D_j f\).
    Since \(D_j\in \CinftyCptSpace[\Omega\times \Omega]\)
    and \(D_j(x,y)\) vanishes to infinite order as \(y\rightarrow \BoundaryN\)
    (see Definition \ref{Defn::Spaces::LP::PElemWWdv}), 
    we have \(D_j f\in\CinftyCptSpace[\Omega]\).
    By Proposition \ref{Prop::Spaces::Elem::Elem::ConvergenceOfElemOps}, \(\sum_{j\in \Zgeq} f_j=\sum_{j\in \Zgeq} D_j f =f\),
    with convergence in \(\DistributionsZeroN\).
    Lemma \ref{Lemma::Zyg::Decomp::DerivDjfBound}
    shows \eqref{Eqn::Zyg::Decomp::MainDecomp::BoundfjByf} holds (and therefore \eqref{Eqn::Zyg::Decomp::MainDecomp::Boundfj} holds).
    Finally, \eqref{Eqn::Zyg::Decomp::MainDecomp::BoundfjByf} implies \(\sum_{j\in \Zgeq} f_j\) converges in \(\CSpace{\ManifoldN}\);
    completing the proof of \ref{Item::Zyg::Decomp::MainDecomp::Decompf}.

    Suppose \ref{Item::Zyg::Decomp::MainDecomp::Decompf} holds.
    Fix \(M>s\). We claim, \(\forall j,k\in \Zgeq\),
    \begin{equation}\label{Eqn::Zyg::Decomp::MainDecomp::BoundDjfk}
        \BLpNorm{D_j f_k}{\infty}\lesssim 2^{-M|(j\vee k)-k |} \sum_{|\alpha|\leq M} \BLpNorm{ \left( 2^{-k\Wdv}W \right)^{\alpha} f_k}{\infty}.
    \end{equation}
    Indeed, for \(j\leq k\), \eqref{Eqn::Zyg::Decomp::MainDecomp::BoundDjfk} follows immediately
    from Lemma \ref{Lemma::Spaces::Elem::PElem::PElemOpsBoundedOnLp}. Suppose \(j>k\).
    using Proposition \ref{Prop::Spaces::Elem::Elem::MainProps} \ref{Item::Spaces::Elem::Elem::PullOutNDerivs}, we may write
    \(D_j=\sum_{|\alpha|\leq M} 2^{-j(M-|\alpha|)}E_{j,\alpha}\left( 2^{-j\Wdv}W \right)^{\alpha}\), where
    \(\left\{ \left( E_{j,\alpha},2^{j} \right) : j\in \Zgeq, |\alpha|\leq M \right\}\in \ElemzF{\Omega}\).
    Using Lemma \ref{Lemma::Spaces::Elem::PElem::PElemOpsBoundedOnLp}, we have
    \begin{equation*}
    \begin{split}
            &\BLpNorm{D_j f_k}{\infty}
            \leq \sum_{|\alpha|\leq M}2^{-j(M-|\alpha|)} \BLpNorm{E_{j,\alpha} \left( 2^{-j\Wdv}W \right)^{\alpha} f_k}{\infty}
            \\&\lesssim \sum_{|\alpha|\leq M} 2^{-j(M-|\alpha|)-(j-k)\DegWdv{\alpha}} \BLpNorm{\left( 2^{-k\Wdv}W \right)^{\alpha} f_k}{\infty}
            \leq \sum_{|\alpha|\leq M} 2^{-M(j-k)} \BLpNorm{\left( 2^{-k\Wdv}W \right)^{\alpha} f_k}{\infty},
    \end{split}    
    \end{equation*}
    establishing \eqref{Eqn::Zyg::Decomp::MainDecomp::BoundDjfk}.

    Using \eqref{Eqn::Zyg::Decomp::MainDecomp::BoundDjfk}, we have with \(f=\sum_{j\in \Zgeq} f_j\),
    \begin{equation}\label{Eqn::Zyg::Decomp::MainDecomp::BoundDjfTmp}
    \begin{split}
         &\BLpNorm{D_j f}{\infty}
         \leq \sum_{k\in \Zgeq}\BLpNorm{D_j f_k}{\infty}
         \lesssim 2^{-M|(j\vee k)-k |} \sum_{|\alpha|\leq M} \BLpNorm{ \left( 2^{-k\Wdv}W \right)^{\alpha} f_k}{\infty}
         \\& \leq \left( \sum_{|\alpha|\leq M} \sup_{k\in \Zgeq} 2^{ks}\BLpNorm{\left( 2^{-k\Wdv}W \right)^{\alpha} f_k}{\infty}  \right)
         \left( \sum_{k\in \Zgeq} 2^{-M|(j\vee k )-k |-ks}  \right)
         \\&\lesssim 2^{-js}\sum_{|\alpha|\leq M} \sup_{k\in \Zgeq} 2^{ks}\BLpNorm{\left( 2^{-k\Wdv}W \right)^{\alpha} f_k}{\infty}.
    \end{split}
    \end{equation}
    Multiplying both sides of \eqref{Eqn::Zyg::Decomp::MainDecomp::BoundDjfTmp} by \(2^{js}\)
    and taking the supremum over \(j\) gives \eqref{Eqn::Zyg::Decomp::MainDecomp::BoundfByfj}.
    From here, \eqref{Eqn::Zyg::Norm::SpaceEqualsDistWithFiniteNorm}
    implies \ref{Item::Zyg::Decomp::MainDecomp::fInZyg}.
\end{proof}

\begin{corollary}\label{Cor::Zyg::Decomp::ZygAreContinuous}
    For \(s>0\), \(\ZygSpaceCompactF{s}\subseteq \CSpace{\ManifoldN}\), and the inclusion is continuous.
    In particular, for \(f\in \ZygSpaceCompactF{s}\)
    \begin{equation}\label{Eqn::Zyg::Decomp::ZygAreContinuous::BoundLinftyNorm}
        \LpNorm{f}{\infty}\leq C_{s,\Compact,\FilteredSheafF} \ZygNormF{f}{s}.
    \end{equation}
\end{corollary}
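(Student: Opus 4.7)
The plan is to deduce the corollary directly from the decomposition given by Proposition \ref{Prop::Zyg::Decomp::MainDecomp}. Given $f \in \ZygSpaceCompactF{s}$, apply the implication \ref{Item::Zyg::Decomp::MainDecomp::fInZyg}$\Rightarrow$\ref{Item::Zyg::Decomp::MainDecomp::Decompf} of that proposition to obtain a sequence $\{f_j\}_{j\in\Zgeq} \subset \CinftyCptSpace[\Omega]$ with $f = \sum_{j\in\Zgeq} f_j$, where the convergence holds in $\CSpace{\ManifoldN}$, and the $f_j$ satisfy \eqref{Eqn::Zyg::Decomp::MainDecomp::BoundfjByf}. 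Since each $f_j$ is continuous and the sum converges uniformly on $\ManifoldN$, the limit $f$ is continuous, giving $\ZygSpaceCompactF{s} \subseteq \CSpace{\ManifoldN}$.

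For the quantitative estimate \eqref{Eqn::Zyg::Decomp::ZygAreContinuous::BoundLinftyNorm}, I would apply \eqref{Eqn::Zyg::Decomp::MainDecomp::BoundfjByf} with $\alpha = \{\}$ to obtain $2^{js}\LpNorm{f_j}{\infty} \leq C_0 \ZygNormF{f}{s}$ for all $j \in \Zgeq$, and then estimate
\begin{equation*}
    \LpNorm{f}{\infty} \leq \sum_{j\in \Zgeq} \LpNorm{f_j}{\infty} \leq C_0 \ZygNormF{f}{s} \sum_{j\in\Zgeq} 2^{-js} \lesssim \ZygNormF{f}{s},
\end{equation*}
where the geometric sum converges precisely because $s > 0$. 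This also shows the inclusion $\ZygSpaceCompactF{s} \hookrightarrow \CSpace{\ManifoldN}$ is continuous in the sup-norm on the target.

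There is no substantive obstacle here: the work has already been done in Proposition \ref{Prop::Zyg::Decomp::MainDecomp}, which provides both a decomposition into smooth pieces that converges in $C(\ManifoldN)$ and the quantitative bound on the $L^\infty$ norms of those pieces. The only mild point to verify is that the convergence in $\CSpace{\ManifoldN}$ guaranteed by \ref{Item::Zyg::Decomp::MainDecomp::Decompf} genuinely produces a continuous representative of the distribution $f$, which follows because each $f_j$ lies in $\CinftyCptSpace[\Omega] \subset \CSpace{\ManifoldN}$ and uniform limits of continuous functions are continuous.
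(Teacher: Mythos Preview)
Your proposal is correct and is essentially identical to the paper's proof: both apply Proposition~\ref{Prop::Zyg::Decomp::MainDecomp} to obtain the decomposition \(f=\sum_j f_j\) converging in \(\CSpace{\ManifoldN}\), then use \eqref{Eqn::Zyg::Decomp::MainDecomp::BoundfjByf} with \(\alpha=\{\}\) and sum the geometric series \(\sum_j 2^{-js}\) to obtain the \(L^\infty\) bound.
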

\begin{proof}
    Let \(f\in \ZygSpaceCompactF{s}\). Taking \(f_j\in \CinftyCptSpace[\Omega]\) as in Proposition \ref{Prop::Zyg::Decomp::MainDecomp} \ref{Item::Zyg::Decomp::MainDecomp::Decompf}
    so that \(f=\sum_{j\in \Zgeq} f_j\) (with convergence in \(\CSpace{\ManifoldN}\)) proves \(f\in \CSpace{\ManifoldN}\).
    Moreover, picking \(f_j\) so that \eqref{Eqn::Zyg::Decomp::MainDecomp::BoundfjByf} holds
    gives
    \begin{equation*}
        \BLpNorm{f}{\infty} \leq \sum_{k} \BLpNorm{f_k}{\infty} \lesssim \sum_{k} 2^{-ks} \ZygNormF{f}{s}\lesssim \ZygNormF{f}{s},
    \end{equation*}
    establishing \eqref{Eqn::Zyg::Decomp::ZygAreContinuous::BoundLinftyNorm} and
    proving that the inclusion is continuous.
\end{proof}

\begin{remark}\label{Rmk::Zyg::Decomp::TraceOnZyg}
    In light of Corollary \ref{Cor::Zyg::Decomp::ZygAreContinuous}, the restriction map
    \(f\mapsto f\big|_{\BoundaryN}\) is defined in the classical sense, for \(f\in \ZygSpaceCompactF{s}\).
    By continuity of the inclusion in Corollary \ref{Cor::Zyg::Decomp::ZygAreContinuous},
    this agrees \(\TraceMap[1]f\), where \(\TraceMap[1]\) 
    is the trace map from Theorem \ref{Thm::Trace::ForwardMap}.
\end{remark}

    \subsection{Algebra}
    When studying nonlinear equations, an important property is that \(\ZygSpaceCompactF{s}\) is an algebra.
In fact, stronger compositions results are true (see Theorem \ref{Thm::Zyg::Composition::MainComposition}), but in this section we focus on the algebra property.

\begin{proposition}\label{Prop::Zyg::Algebra::ZygIsAlgebra}
    Fix \(s>0\). For \(f,g\in \ZygSpaceCompactF{s}\), we have \(fg\in \ZygSpaceCompactF{s}\)
    and moreover,
    \begin{equation}\label{Eqn::Zyg::Algebra::ZygIsAlgebra::TameEst}
        \ZygNormF{fg}{s}
        \leq C\left( \ZygNormF{f}{s}\LpNorm{g}{\infty}+\LpNorm{f}{\infty}\ZygNormF{g}{s}  \right),
    \end{equation}
    where \(C=C(s,\Compact,\FilteredSheafF)\geq 0\).
\end{proposition}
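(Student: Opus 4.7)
The plan is to establish the algebra property via a Bony-type paraproduct decomposition of $fg$, then feed this into the characterization provided by Proposition \ref{Prop::Zyg::Decomp::MainDecomp} \ref{Item::Zyg::Decomp::MainDecomp::Decompf}. Fix $\psi\in \CinftyCptSpace[\Omega]$ equal to $1$ near $\Compact$, and let $\left\{(D_j,2^{-j}):j\in \Zgeq\right\}\in \ElemzF{\Omega}$ be the operators from Section \ref{Section::Zyg::Norm}, with partial sums $P_j:=\sum_{k=0}^j D_k$, so that Lemma \ref{Lemma::Zyg::Decomp::DerivDjfBound} and the estimate \eqref{Eqn::Zyg::Norm::DerivPjBounded} both apply. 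By Corollary \ref{Cor::Zyg::Decomp::ZygAreContinuous}, $f,g\in \CSpace{\ManifoldN}$. Since each $D_k(x,y)$ vanishes to infinite order as $y\to \BoundaryN$, each $D_k f$ and $P_N f$ (and similarly for $g$) lies in $\CinftyCptSpace[\Omega]$. Moreover $P_N f\to \psi f=f$ and $P_N g \to \psi g=g$ uniformly on $\ManifoldN$.

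I introduce the telescoping decomposition (with the convention $P_{-1}:=0$)
\begin{equation*}
   h_N := P_N f \cdot P_N g - P_{N-1} f \cdot P_{N-1} g = (D_N f)(P_{N-1} g) + (P_N f)(D_N g),
\end{equation*}
so that $h_N\in \CinftyCptSpace[\Omega]$, $\supp(h_N)\subseteq \Compact$, and $\sum_{N=0}^M h_N = P_M f \cdot P_M g \to fg$ uniformly on $\ManifoldN$ (hence in $\DistributionsZeroN$).

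The core of the argument is the uniform bound $\sup_N 2^{Ns}\BLpNorm{(2^{-N\Wdv}W)^\alpha h_N}{\infty} \lesssim \ZygNormF{f}{s}\LpNorm{g}{\infty}+\LpNorm{f}{\infty}\ZygNormF{g}{s}$ for every $\alpha$. Since each $W_j$ is a derivation, a straightforward induction on $|\alpha|$ using $W_j(uv)=(W_j u)v+u(W_j v)$ gives $W^\alpha(uv)=\sum_{\alpha=\beta\cdot\gamma}(W^\beta u)(W^\gamma v)$, where the sum ranges over the shuffles of $\alpha$ into $(\beta,\gamma)$ with $|\beta|+|\gamma|=|\alpha|$; multiplying through by $2^{-N\DegWdv{\alpha}}$ this becomes $(2^{-N\Wdv}W)^\alpha(uv)=\sum_{\alpha=\beta\cdot\gamma}(2^{-N\Wdv}W)^\beta u \cdot (2^{-N\Wdv}W)^\gamma v$. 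Applying this with $(u,v)=(D_N f, P_{N-1} g)$, Lemma \ref{Lemma::Zyg::Decomp::DerivDjfBound} yields $\BLpNorm{(2^{-N\Wdv}W)^\beta D_N f}{\infty}\lesssim 2^{-Ns}\ZygNormF{f}{s}$, while the identity $(2^{-N\Wdv}W)^\gamma P_{N-1}= 2^{-\DegWdv{\gamma}}(2^{-(N-1)\Wdv}W)^\gamma P_{N-1}$ combined with \eqref{Eqn::Zyg::Norm::DerivPjBounded} yields $\BLpNorm{(2^{-N\Wdv}W)^\gamma P_{N-1}g}{\infty}\lesssim \LpNorm{g}{\infty}$. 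The symmetric estimate on $(P_N f)(D_N g)$ produces the other half of the tame bound.

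Finally, applying the implication \ref{Item::Zyg::Decomp::MainDecomp::Decompf}$\Rightarrow$\ref{Item::Zyg::Decomp::MainDecomp::fInZyg} in Proposition \ref{Prop::Zyg::Decomp::MainDecomp} to the decomposition $fg=\sum_N h_N$, together with the quantitative bound \eqref{Eqn::Zyg::Decomp::MainDecomp::BoundfByfj}, shows that $fg\in \ZygSpaceCompactF{s}$ and delivers \eqref{Eqn::Zyg::Algebra::ZygIsAlgebra::TameEst}. The only real technicality is bookkeeping in the Leibniz expansion for the noncommutative operators $W^\alpha$; this is elementary but one must carefully verify that the scalings $2^{-N\DegWdv{\beta}}$ and $2^{-N\DegWdv{\gamma}}$ combine correctly so that the derivatives falling on the Littlewood--Paley piece $D_N$ produce the decay $2^{-Ns}$ while those falling on $P_{N-1}$ (or $P_N$) are absorbed into the uniform bound \eqref{Eqn::Zyg::Norm::DerivPjBounded}.
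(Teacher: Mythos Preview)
Your proof is correct and follows essentially the same approach as the paper: a telescoping (paraproduct) decomposition of $fg$ via $P_j f\cdot P_j g$, Leibniz-type estimates using Lemma \ref{Lemma::Zyg::Decomp::DerivDjfBound} for the $D_j$ factor and \eqref{Eqn::Zyg::Norm::DerivPjBounded} for the $P_j$ factor, and then an appeal to Proposition \ref{Prop::Zyg::Decomp::MainDecomp}. The paper splits the telescoping difference as $(D_j f)(P_j g)+(P_{j-1}f)(D_j g)$ rather than your $(D_N f)(P_{N-1}g)+(P_N f)(D_N g)$, but this is cosmetic. One small inaccuracy: you assert $\supp(h_N)\subseteq\Compact$, which need not hold since $D_j f, P_j g\in\CinftyCptSpace[\Omega]$ are only supported in $\Omega$; however Proposition \ref{Prop::Zyg::Decomp::MainDecomp} only requires $h_N\in\CinftyCptSpace[\Omega]$ and $\supp(fg)\subseteq\Compact$, so this does not affect the argument.
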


\begin{remark}\label{Rmk::Zyg::Tame}
    The estimate \eqref{Eqn::Zyg::Algebra::ZygIsAlgebra::TameEst} is stronger than the more obvious
    continuity estimate for multiplication:
    \begin{equation}\label{Eqn::Zyg::Algebra::ZygIsAlgebra::NonTameEst}
        \ZygNormF{fg}{s}\lesssim \ZygNormF{f}{s}\ZygNormF{g}{s}.
    \end{equation}
     \eqref{Eqn::Zyg::Algebra::ZygIsAlgebra::TameEst} is called a \textit{tame} estimate because 
    each summand on the right-hand side of \eqref{Eqn::Zyg::Algebra::ZygIsAlgebra::TameEst}
    involves at most one term involving \(\ZygNorm{\cdot}{s}\); and the other terms do not depend on \(s\)
    (in particular, \eqref{Eqn::Zyg::Algebra::ZygIsAlgebra::NonTameEst} is not a tame estimate).
    Estimates like \eqref{Eqn::Zyg::Algebra::ZygIsAlgebra::TameEst} (and the more general ones described in Theorem \ref{Thm::Zyg::Composition::MainComposition})
    are essential in the study of nonlinear PDEs.
    See \cite{HamiltonInverseFunctionTheoremOfNashAndMoser} for a general framework involving such estimates,
    and \cite[Chapter 9]{StreetMaximalSubellipticity} for applications of tame estimates in the study of nonlinear
    subelliptic equations.
\end{remark}

\begin{proof}[Proof of Proposition \ref{Prop::Zyg::Algebra::ZygIsAlgebra}]
    For \(f,g\in \ZygSpaceCompactF{s}\), we have \((P_j f)(P_j g)\rightarrow fg\) in \(\CSpace{\ManifoldN}\).
    Thus,
    \begin{equation*}
    \begin{split}
            &fg=(P_0f) (P_0 g)+\sum_{j=1}^\infty \left[ (P_{j} f)(P_{j}g)-(P_{j-1}f)(P_{j-1}g) \right] 
            \\&= (P_0f) (P_0 g) + \sum_{j=1}^\infty \left( D_j f \right) \left( P_j g \right) + \sum_{j=1}^\infty \left( P_{j-1} f \right)\left( D_j g \right).
            \\&=:\sum_{j\in \Zgeq} F_j  + \sum_{j\in \Zgeq} G_j,
    \end{split}
    \end{equation*}
    where,
    \begin{equation*}
        F_j =
        \begin{cases}
            (P_0f) (P_0 g),& j=0,\\
             \left( D_j f \right) \left( P_j g \right), &j\geq 1,
        \end{cases},
        \quad\quad
        G_j=
        \begin{cases}
            0, &j=0,\\
            \left( P_{j-1} f \right)\left( D_j g \right),&j\geq 1.
        \end{cases}
    \end{equation*}
    We will show, \(\forall \alpha\),
    \begin{equation}\label{Eqn::Zyg::Algebra::ZygIsAlgebra::BoundFj}
        \sup_{j\in \Zgeq} 2^{js}\BLpNorm{\left( 2^{-j\Wdv}W \right)^{\alpha} F_j}{\infty} \lesssim \ZygNormF{f}{s}\LpNorm{g}{\infty},
    \end{equation}
    \begin{equation}\label{Eqn::Zyg::Algebra::ZygIsAlgebra::BoundGj}
        \sup_{j\in \Zgeq} 2^{js}\BLpNorm{\left( 2^{-j\Wdv}W \right)^{\alpha} G_j}{\infty} \lesssim \LpNorm{f}{\infty}\ZygNormF{g}{s}.
    \end{equation}
    Using \eqref{Eqn::Zyg::Algebra::ZygIsAlgebra::BoundFj} and \eqref{Eqn::Zyg::Algebra::ZygIsAlgebra::BoundGj}
    the result follows directly from Proposition \ref{Prop::Zyg::Decomp::MainDecomp}.

    The proofs of \eqref{Eqn::Zyg::Algebra::ZygIsAlgebra::BoundFj} and \eqref{Eqn::Zyg::Algebra::ZygIsAlgebra::BoundGj}
    are similar, so we prove only \eqref{Eqn::Zyg::Algebra::ZygIsAlgebra::BoundFj}.
    We have, using \eqref{Eqn::Zyg::Norm::DerivPjBounded},
    \begin{equation*}
        \BLpNorm{W^\alpha F_0}{\infty}
        \lesssim \sum_{|\beta_1|+|\beta_2|\leq |\alpha|} \BLpNorm{W^{\beta_1} P_0 f}{\infty}\BLpNorm{W^{\beta_2} P_0 g}{\infty}
        \lesssim \BLpNorm{f}{\infty}\BLpNorm{g}{\infty}.
    \end{equation*}
    Since \(\BLpNorm{f}{\infty}\lesssim \ZygNormF{f}{s}\) (by \eqref{Eqn::Zyg::Decomp::ZygAreContinuous::BoundLinftyNorm}),
    this proves the case \(j=0\) of \eqref{Eqn::Zyg::Algebra::ZygIsAlgebra::BoundFj}.
    For \(j\geq 1\), we have using \eqref{Eqn::Zyg::Norm::DerivPjBounded} and Lemma \ref{Lemma::Zyg::Decomp::DerivDjfBound},
    \begin{equation*}
    \begin{split}
         &\BLpNorm{\left( 2^{-j\Wdv}W \right)^{\alpha}F_j}{\infty}
         \lesssim \sum_{|\beta_1|+|\beta_2|\leq |\alpha|} \BLpNorm{\left( 2^{-j\Wdv}W \right)^{\beta_1}D_j f}{\infty} \BLpNorm{\left( 2^{-j\Wdv}W \right)^{\beta_2}P_j g}{\infty}
         \lesssim 2^{-js} \ZygNormF{f}{s} \BLpNorm{g}{\infty}.
    \end{split}
    \end{equation*}
    \eqref{Eqn::Zyg::Algebra::ZygIsAlgebra::BoundFj} follows, completing the proof.
\end{proof}

\begin{remark}\label{Rmk::Zyg::Algebra::ExtensionDoesntWork}
    It is tempting to use the extension map \(\Extension[N_0]\) (where \(N_0>s\)) from Theorem \ref{Thm::Spaces::Extension}
    to reduce Proposition \ref{Prop::Zyg::Algebra::ZygIsAlgebra} to the simpler situation of manifolds without boundary
    (which was covered in \cite[Proposition 7.4.1 and Theorem 7.5.2 (iii)]{StreetMaximalSubellipticity}). 
    Indeed, using \(\left( \Extension[N_0] f \right)\left( \Extension[N_0] g \right)\big|_{\ManifoldN}=fg\), 
    one can easily use Theorem \ref{Thm::Spaces::Extension} and \cite[Proposition 7.4.1]{StreetMaximalSubellipticity}
    to conclude \(\ZygSpaceCompactF{s}\) is an algebra and that \eqref{Eqn::Zyg::Algebra::ZygIsAlgebra::NonTameEst} holds.
    However, since we have not shown \(\Extension[N_0]:\LpSpace{\infty}[\Compact]\rightarrow \LpSpace{\infty}[\OmegaClosure]\),
    we cannot directly conclude  \eqref{Eqn::Zyg::Algebra::ZygIsAlgebra::TameEst}.
\end{remark}

    \subsection{H\"older spaces}\label{Section::Zyg::Holder}
    The classical Zygmund--H\"older spaces in \(\Rn\) are given by
\(\ZygSpace{s}[\Rn]=\BesovSpace{s}{\infty}{\infty}[\Rn]\). However,
for \(s\in (0,\infty)\setminus \Zg\), they agree with the classical H\"older spaces:
\begin{equation*}
    \ZygSpace{s}[\Rn] = \HolderSpace{\lfloor s \rfloor}{s-\lfloor s\rfloor}[\Rn], \quad s\in  (0,\infty)\setminus \Zg,
\end{equation*}
with equality of topologies; see \cite[Sections 2.2.2 and 2.5.12]{TriebelTheoryOfFunctionSpaces}.
However, when \(s=m+1\in \Zg\), we have
\begin{equation*}
    \HolderSpace{m+1}{0}[\Rn]\subsetneq \HolderSpace{m}{1}[\Rn]\subsetneq \ZygSpace{m+1}[\Rn].
\end{equation*}
Similar remarks hold on the manifold with boundary \(\Rngeq\).

In this section, we describe the connection between \(\ZygSpaceCompactF{s}\) and the H\"older spaces
with respect to the Carnot--Carath\'eodory metric on manifolds with boundary; which is very similar to the above classical setting.
For these results, and more, on a manifold without boundary, see \cite[Section 7.7]{StreetMaximalSubellipticity}.

\begin{definition}[The H\"older Spaces]
    \label{Defn::Zyg::Holder::HolderSpace}
    For \(s\in [0,1]\), and \(f\in \CSpace{\ManifoldN}\) with \(\supp(f)\subseteq \Compact\), set
    \begin{equation}\label{Eqn::Zyg::Holder::HolderSpace::Norm}
        \HolderNormF{f}{0}{s}:=\sup_{x}|f(x)|+\sup_{\substack{x,y\in \Omega \\ x\ne y}} \MetricWWdv[x][y]^{-s} \left| f(x)-f(y) \right|,\quad s\in (0,1],
    \end{equation}
    \begin{equation*}
        \HolderNormF{f}{0}{0}:=\sup_{x}|f(x)|,
    \end{equation*}
    and
    \begin{equation*}
        \HolderSpaceCompactF{0}{s}:=
        \left\{ f\in \CSpace{\ManifoldN} : \supp(f)\subseteq \Compact, \HolderNormF{f}{0}{s}<\infty \right\}.
    \end{equation*}
\end{definition}

First, we show that this Definition \ref{Defn::Zyg::Holder::HolderSpace} is well-defined:

\begin{lemma}\label{Lemma::Zyg::Holder::NormWellDefined}
    Definition \ref{Defn::Zyg::Holder::HolderSpace} does not depend on the choice of \(\Omega\) or \(\WWdv\).
    More precisely, the equivalence class of \(\HolderNormF{f}{0}{s}\), for \(f\in \CSpace{\ManifoldN}\) with \(\supp(f)\subseteq \Compact\),
    does not depend on these choices.
    Moreover, \(\HolderSpaceCompactF{0}{s}=\HolderSpace{0}{s}[\Compact][\FilteredSheafG]\) (with equality of topologies),
    where \(\FilteredSheafG\) is any H\"ormander filtration of sheaves of vector fields on \(\ManifoldN\)
    with \(\LieFilteredSheafF=\LieFilteredSheafG\).
    In particular, we have
    \(\HolderSpaceCompactF{0}{s}=\HolderSpace{0}{s}[\Compact][\LieFilteredSheafF]\) (see Lemma \ref{Lemma::Filtrations::GeneratorsForLieFiltration}).
\end{lemma}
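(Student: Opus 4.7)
The plan is to reduce the claim to the metric-equivalence result Proposition \ref{Prop::VectorFields::Scaling::VolAndMetricEquivalnce} \ref{Item::VectorFields::Scaling::VolAndMetricEquivalnce::MetricWequalsMetricZ} via a localization argument. The second sup in \eqref{Eqn::Zyg::Holder::HolderSpace::Norm} formally depends on $\Omega$ and $\WWdv$, but morally only on pairs $(x,y)$ in a compact neighborhood of $\Compact$: pairs far from $\Compact$ are controlled by $\LpNorm{f}{\infty}$.

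First I would fix an auxiliary compact set $\Compacth$ with $\Compact \subset \mathrm{int}(\Compacth)$ and $\Compacth \Subset \Omega \cap \ManifoldNncF$, and show
\[
\HolderNormF{f}{0}{s} \approx \LpNorm{f}{\infty} + \sup_{\substack{x, y \in \Compacth \\ x \ne y}} \MetricWWdv[x][y]^{-s} |f(x) - f(y)|
\]
for every $f \in \CSpace{\ManifoldN}$ with $\supp(f) \subseteq \Compact$. The nontrivial direction uses that if $(x, y) \in \Omega \times \Omega$ has at least one of $x, y$ outside $\Compacth$, then either both lie in $\Omega \setminus \Compact$ (so $f(x) = f(y) = 0$) or one lies in $\Compact$ while the other lies in $\Omega \setminus \Compacth$, in which case $|f(x) - f(y)| \leq \LpNorm{f}{\infty}$ and $\MetricWWdv[x][y] \geq c_0$ for some positive constant $c_0$ depending only on $\Compact, \Compacth$, and $\WWdv$. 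The positivity of $c_0$ follows from Proposition \ref{Prop::VectorFields::Scaling::SameTopology}, which identifies the metric topology on compacta with the manifold topology, combined with the compactness of $\Compact$ and the fact that $\Compact$ is disjoint from the closed set $\Omega \setminus \mathrm{int}(\Compacth)$.

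Next, given any other H\"ormander filtration $\FilteredSheafG$ with $\LieFilteredSheafG = \LieFilteredSheafF$, I would apply Lemma \ref{Lemma::Filtrations::GeneratorsOnRelCptSet} to produce generators $\ZZde$ for $\FilteredSheafG$ on some relatively compact open $\Omega_1 \Subset \ManifoldNncF$ with $\Compacth \subset \Omega_1$, and then invoke Proposition \ref{Prop::VectorFields::Scaling::VolAndMetricEquivalnce} \ref{Item::VectorFields::Scaling::VolAndMetricEquivalnce::MetricWequalsMetricZ} to conclude $\MetricWWdv[x][y] \approx \MetricZZde[x][y]$ uniformly for $x, y \in \Compacth$. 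Combined with the localized expression above, this yields the norm equivalence and therefore the equality of the two spaces with equivalent topologies. The special case $\FilteredSheafG$ replaced by $\LieFilteredSheafF$ then follows immediately, since $\mathrm{Lie}(\LieFilteredSheafF) = \LieFilteredSheafF$ and $\LieFilteredSheafF$ is itself a H\"ormander filtration by Lemma \ref{Lemma::Filtrations::GeneratorsForLieFiltration}.

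The only step that is not a direct invocation of previously established results is the localization, and the sole delicate point there is the uniform lower bound $\MetricWWdv[x][y] \geq c_0$, which is a quick consequence of Proposition \ref{Prop::VectorFields::Scaling::SameTopology} together with compactness. I anticipate no substantive obstacle.
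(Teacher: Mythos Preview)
Your proposal is correct and follows essentially the same route as the paper: localize the second supremum in \eqref{Eqn::Zyg::Holder::HolderSpace::Norm} to a fixed compact neighborhood of \(\Compact\) using a uniform lower bound on \(\MetricWWdv\) between \(\Compact\) and its complement, then invoke Proposition \ref{Prop::VectorFields::Scaling::VolAndMetricEquivalnce} \ref{Item::VectorFields::Scaling::VolAndMetricEquivalnce::MetricWequalsMetricZ} on that neighborhood. The paper uses an open \(\Omega_1\Subset\Omega\cap\Omega'\) where you use a compact \(\Compacth\), and simply asserts the lower bound ``by compactness'' where you cite Proposition \ref{Prop::VectorFields::Scaling::SameTopology}; these are cosmetic differences.
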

\begin{proof}
    The result is obvious for \(s=0\), so we focus on the case \(s\in (0,1]\).
    Let \(\Omega'\Subset \ManifoldNncF\) be open and relatively compact with \(\Compact\Subset \Omega'\),
    and let \(\FilteredSheafG\big|_{\Omega'}=\FilteredSheafGenBy{\ZZde}\),
    where \(\ZZde=\left\{ \left( Z_1,\Zde_1 \right),\ldots, \left( Z_q,\Zde_1 \right) \right\}\subset \VectorFields{\Omega'}\times \Zg\)
    are H\"ormander vector fields with formal degrees on \(\Omega'\).
    We will show that if \(\WWdv\) and \(\Omega\) in \eqref{Eqn::Zyg::Holder::HolderSpace::Norm} are replaced
    with \(\ZZde\) and \(\Omega'\), one obtains a comparable norm, and the result follows.
    More precisely, we will show for \(f\in \CSpace{\ManifoldN}\) with \(\supp(f)\subseteq \Compact\),
    \begin{equation}\label{Eqn::Zyg::Holder::NormWellDefined::ToShow}
        \sup_{x}|f(x)|+\sup_{\substack{x,y\in \Omega \\ x\ne y}} \MetricWWdv[x][y]^{-s} \left| f(x)-f(y) \right|
        \approx
        \sup_{x}|f(x)|+\sup_{\substack{x,y\in \Omega' \\ x\ne y}} \MetricZZde[x][y]^{-s} \left| f(x)-f(y) \right|.
    \end{equation}

    Let \(\Omega_1\Subset \Omega\cap \Omega'\) be open and relatively compact with \(\Compact\Subset \Omega_1\).
    By compactness, \(\exists \delta_0>0\),
    \begin{equation}\label{Eqn::Zyg::Holder::NormWellDefined::DistanceFromKToComplement}
        \MetricWWdv[x][y]\geq \delta_0,\quad \forall x\in \Compact, y\in \Omega\setminus \Omega_1.
    \end{equation}
    In light of \eqref{Eqn::Zyg::Holder::NormWellDefined::DistanceFromKToComplement}, we have
    \begin{equation}\label{Eqn::Zyg::Holder::NormWellDefined::Tmp1}
        \sup_{x}|f(x)|+\sup_{\substack{x,y\in \Omega \\ x\ne y}} \MetricWWdv[x][y]^{-s} \left| f(x)-f(y) \right|\approx \sup_{x}|f(x)|+\sup_{\substack{x,y\in \Omega_1 \\ x\ne y}} \MetricWWdv[x][y]^{-s} \left| f(x)-f(y) \right|.
    \end{equation}
    and similarly,
    \begin{equation}\label{Eqn::Zyg::Holder::NormWellDefined::Tmp2}
        \sup_{x}|f(x)|+\sup_{\substack{x,y\in \Omega' \\ x\ne y}} \MetricZZde[x][y]^{-s} \left| f(x)-f(y) \right|\approx \sup_{x}|f(x)|+\sup_{\substack{x,y\in \Omega_1 \\ x\ne y}} \MetricZZde[x][y]^{-s} \left| f(x)-f(y) \right|.
    \end{equation}
    Proposition \ref{Prop::VectorFields::Scaling::VolAndMetricEquivalnce} \ref{Item::VectorFields::Scaling::VolAndMetricEquivalnce::MetricWequalsMetricZ}
    shows \(\MetricWWdv[x][y]\approx \MetricZZde[x][y]\), \(\forall x,y\in \overline{\Omega_1}\).
    Plugging this into \eqref{Eqn::Zyg::Holder::NormWellDefined::Tmp1} and \eqref{Eqn::Zyg::Holder::NormWellDefined::Tmp2}
    yields \eqref{Eqn::Zyg::Holder::NormWellDefined::ToShow} and completes the proof.
\end{proof}


\begin{remark}
    The notation \(\HolderNormF{\cdot}{0}{s}\) does not mention the compact set \(\Compact\).  This is justified by the fact
    that the equivalence class of the norm does not depend on the choice of \(\Compact\).
    More precisely, if \(\Compact_1,\Compact_2\Subset \ManifoldNncF\) are two compact sets, then one may choose \(\Omega\Subset \ManifoldNncF\)
    open and relatively compact with \(\Compact_1\cup\Compact_2\subseteq \Omega\), and one may define
    \(\HolderNormF{\cdot}{0}{s}\) with this choice of \(\Omega\). In light of Lemma \ref{Lemma::Zyg::Holder::NormWellDefined},
    the equivalence class of the norm does not depend on the choice of \(\Omega\).
\end{remark}

\begin{theorem}\label{Thm::Zyg::Holder::MainThm}
    For \(s\in (0,1)\), \(\ZygSpaceCompactF{s}=\HolderSpaceCompactF{0}{s}\), with equality of topologies;
    here we are identifying \(\ZygSpaceCompactF{s}\) with a subspace of \(\CSpace{\ManifoldN}\), as described in
    Corollary \ref{Cor::Zyg::Decomp::ZygAreContinuous}.
\end{theorem}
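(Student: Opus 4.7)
The plan is to prove a two-sided norm equivalence $\ZygNormF{f}{s} \approx \HolderNormF{f}{0}{s}$ on either space: once this is established, Corollary \ref{Cor::Zyg::Decomp::ZygAreContinuous} guarantees every element of $\ZygSpaceCompactF{s}$ is continuous, and Proposition \ref{Prop::Spaces::BasicProps::FiniteNorm} together with \eqref{Eqn::Zyg::Norm::SpaceEqualsDistWithFiniteNorm} gives set-theoretic equality, so the two-sided norm bound yields equality of topologies. I fix any $\Omega \Subset \ManifoldNncF$ open and relatively compact with $\Compact \subseteq \Omega$ and generators $\WWdv$ with $\FilteredSheafF\big|_{\Omega} = \FilteredSheafGenBy{\WWdv}$ (neither side of the equivalence depends on these choices, by Lemma \ref{Lemma::Zyg::Holder::NormWellDefined} and Proposition \ref{Prop::Spaces::Defns::EquivNorms}), and work with the operators $\left\{(D_j, 2^{-j}) : j \in \Zgeq\right\} \in \ElemzF{\Omega}$ satisfying $\sum_j D_j = \Multpsi$ from Section \ref{Section::Zyg::Norm}.

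For the inclusion $\ZygSpaceCompactF{s} \hookrightarrow \HolderSpaceCompactF{0}{s}$, I use Proposition \ref{Prop::Zyg::Decomp::MainDecomp} to write $f = \sum_j f_j$ with $f_j \in \CinftyCptSpace[\Omega]$ and $\BLpNorm{(2^{-j\Wdv}W)^\alpha f_j}{\infty} \lesssim 2^{-js}\,\ZygNormF{f}{s}$ for every $\alpha$; Corollary \ref{Cor::Zyg::Decomp::ZygAreContinuous} handles the $\LpSpace{\infty}$ piece of the H\"older norm. For the difference quotient, set $\delta := \MetricWWdv[x][y] \in (0,1]$ and pick $j_0$ with $2^{-j_0} \approx \delta$. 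For $j > j_0$ use the trivial bound $|f_j(x) - f_j(y)| \leq 2\|f_j\|_\infty \lesssim 2^{-js}\,\ZygNormF{f}{s}$; for $j \leq j_0$ connect $x$ and $y$ by an absolutely continuous curve $\gamma$ with $\gamma'(t) = \sum_k a_k(t) (2\delta)^{\Wdv_k} W_k(\gamma(t))$ coming from the definition \eqref{Eqn::VectorFields::DefineDistance} of $\MetricWWdv$, so that
\begin{equation*}
|f_j(x) - f_j(y)| \lesssim \sum_k (2\delta)^{\Wdv_k}\,\BLpNorm{W_k f_j}{\infty} \lesssim \sum_k (2^j\delta)^{\Wdv_k}\, 2^{-js}\,\ZygNormF{f}{s} \lesssim 2^j\delta\cdot 2^{-js}\,\ZygNormF{f}{s},
\end{equation*}
using $\Wdv_k \geq 1$ and $2^j\delta \leq 1$. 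Summing over $j$, $\sum_{j > j_0} 2^{-js} \approx \delta^s$ and $\delta\sum_{j \leq j_0} 2^{j(1-s)} \approx \delta\cdot\delta^{s-1} = \delta^s$ (the geometric series converging precisely because $s < 1$), giving $|f(x) - f(y)| \lesssim \delta^s\,\ZygNormF{f}{s}$ as desired.

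For the reverse inclusion $\HolderSpaceCompactF{0}{s} \hookrightarrow \ZygSpaceCompactF{s}$, I regard $f \in \HolderSpaceCompactF{0}{s}$ as an element of $\DistributionsZeroN$ and split
\begin{equation*}
D_j f(x) = \int D_j(x,y)\bigl[f(y) - f(x)\bigr]\,dy + f(x)\, D_j 1(x).
\end{equation*}
The first integral is controlled by $\HolderNormF{f}{0}{s}\int |D_j(x,y)|\,\MetricWWdv[x][y]^s\,dy \lesssim 2^{-js}\,\HolderNormF{f}{0}{s}$ via the pre-elementary kernel bound of Definition \ref{Defn::Spaces::LP::PElemWWdv} with $N$ larger than $s$, together with the doubling estimates of Proposition \ref{Prop::VectorFields::Scaling::VolEstimates}. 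For the second term, apply Proposition \ref{Prop::Spaces::Elem::Elem::MainProps} \ref{Item::Spaces::Elem::Elem::PullOutNDerivs} (right representation) to write $D_j = \sum_{|\alpha| \leq N} 2^{(|\alpha|-N)j}\,\Et_{j,\alpha}(2^{-j\Wdv}W)^\alpha$ with $\{(\Et_{j,\alpha}, 2^{-j})\} \in \ElemzF{\Omega}$; since $(2^{-j\Wdv}W)^\alpha 1 \equiv 0$ for $|\alpha| \geq 1$, only $\alpha = \emptyset$ survives and Lemma \ref{Lemma::Spaces::Elem::PElem::PElemOpsBoundedOnLp} gives $|D_j 1(x)| = 2^{-Nj}|\Et_{j,0} 1(x)| \lesssim 2^{-Nj}$. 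Taking $N > s$, $|f(x)\,D_j 1(x)| \lesssim \|f\|_\infty\, 2^{-js} \lesssim \HolderNormF{f}{0}{s}\, 2^{-js}$, whence $\sup_j 2^{js}\BLpNorm{D_j f}{\infty} \lesssim \HolderNormF{f}{0}{s}$.

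The main technical step is the $j \leq j_0$ regime in the first inclusion, where the Carnot--Carath\'eodory curve is used to trade the pointwise smoothness of $f_j$ for spatial regularity of $f$, and the restriction $s < 1$ is essential for the sum $\sum_{j \leq j_0} 2^{j(1-s)}$ to produce exactly $\delta^{s-1}$. The cancellation needed in Direction 2 is packaged cleanly by Proposition \ref{Prop::Spaces::Elem::Elem::MainProps} \ref{Item::Spaces::Elem::Elem::PullOutNDerivs}, so that direction requires no further cleverness once the derivatives are pulled to the right and annihilate the constant function.
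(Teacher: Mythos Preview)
Your proof is correct and self-contained, but it takes a genuinely different route from the paper. The paper does not argue directly; instead it reduces both inclusions to the already-known boundaryless case (Lemma~\ref{Lemma::Zyg::Holder::ManifoldsWithoutBoundary}, which cites \cite[Theorem~7.7.23]{StreetMaximalSubellipticity}). For $\ZygSpaceCompactF{s}\subseteq\HolderSpaceCompactF{0}{s}$ the paper extends $f$ to an ambient manifold $\ManifoldM$ without boundary via Corollary~\ref{Cor::Spaces::Extend::Consequences::RestrictionSpaceDefn}, applies the boundaryless result there, and restricts back using Proposition~\ref{Prop::VectorFields::Scaling::AmbientVolAndMetricEquivalnce}. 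For the reverse inclusion the paper writes $f=g_1+g_2$ where $g_1=\TraceInverseMap[(1)]\bigl(f|_{\BoundaryN}\bigr)$ matches $f$ on $\BoundaryN$ (so $g_1\in\ZygSpaceCompactF{s}$ by Theorem~\ref{Thm::Trace::Dirichlet::MainInverseThm} combined with Lemma~\ref{Lemma::Zyg::Holder::TraceHolderToZyg}), while $g_2=f-g_1$ vanishes on $\BoundaryN$ and hence extends by zero to $\ManifoldM$, where the boundaryless result applies once more (Lemma~\ref{Lemma::Zyg::Holder::VanishHolderInZyg}). Your argument is the classical direct Littlewood--Paley proof adapted to the Carnot--Carath\'eodory geometry: it avoids both the extension operator and the trace-inverse machinery entirely, and in fact would give an independent proof of the boundaryless case as a byproduct. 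The paper's route is shorter given the heavy machinery already in place, but yours is more elementary and makes the role of the restriction $s\in(0,1)$ completely transparent in the sum $\sum_{j\le j_0}2^{j(1-s)}$; your handling of $D_j 1$ via Proposition~\ref{Prop::Spaces::Elem::Elem::MainProps}~\ref{Item::Spaces::Elem::Elem::PullOutNDerivs} is also cleaner than invoking trace theorems for that purpose.
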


Before we prove Theorem \ref{Thm::Zyg::Holder::MainThm}, we consider the important special case when
\(\Wdv_1=\cdots=\Wdv_r=1\). I.e., the case when \(\FilteredSheafF\big|_{\Omega}=\FilteredSheafGenBy{\WWo}\),
where \(\WWo=\left\{ \left( W_1,1 \right),\ldots, \left( W_r,1 \right) \right\}\subset \VectorFields{\Omega}\times \Zg\)
and \(W_1,\ldots, W_r\) are H\"ormander vector fields on \(\Omega\).

\begin{definition}
    For \(m\in \Zgeq\), \(s\in [0,1]\), and \(f\in \CSpace{\ManifoldN}\) with \(\supp(f)\subseteq \Compact\),
    \begin{equation*}
        \HolderNormWWo{f}{0}{s}:=\HolderNormF{f}{0}{s}=\sup_{x} |f(x)|+\sup_{\substack{x,y\in \Omega\\x\ne y}} \MetricWWo[x][y]^{-s}\left| f(x)-f(y) \right|, \quad s\in (0,1],
    \end{equation*}
    \begin{equation*}
        \HolderNormWWo{f}{0}{0}:=\HolderNormF{f}{0}{s}=\sup_{x} |f(x)|,
    \end{equation*}
    \begin{equation*}
        \HolderNormWWo{f}{m}{s}:=\sum_{|\alpha|\leq m} \HolderNormWWo{W^\alpha f}{0}{s},
    \end{equation*}
    \begin{equation*}
        \HolderSpaceCompactWWo{m}{s}:=
        \left\{ f\in \CSpace{\ManifoldN} : W^{\alpha}f\in \CSpace{\ManifoldN}, \forall |\alpha|\leq m, \supp(f)\subseteq \Compact, \HolderNormWWo{f}{m}{s}<\infty \right\}.
    \end{equation*}
\end{definition}

\begin{corollary}\label{Cor::Zyg::Holder::ZygSpaceEqualsHolderForWWo}
    Suppose \(\FilteredSheafF\big|_{\Omega}=\FilteredSheafGenBy{\WWo}\). Then, 
    \(\forall m\in \Zgeq\), \(\forall s\in (0,1)\), we have
    \begin{equation*}
        \ZygSpaceCompactF{m+s}=\HolderSpaceCompactWWo{m}{s},
    \end{equation*}
    with equality of topologies.
\end{corollary}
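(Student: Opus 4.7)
The plan is to bootstrap Corollary \ref{Cor::Zyg::Holder::ZygSpaceEqualsHolderForWWo} from the case $m=0$, which is exactly Theorem \ref{Thm::Zyg::Holder::MainThm}, by using the derivative characterization of the norm that is available precisely because every formal degree equals $1$.

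First I would invoke Proposition \ref{Prop::Spaces::BasicProps::OneDerivsGivesNorms} with $\XSpace{s}{p}{q} = \BesovSpace{s}{\infty}{\infty}$, $N = m$, and regularity parameter $s \in (0,1)$ in that proposition's notation. Since $\FilteredSheafF\big|_\Omega = \FilteredSheafGenBy{\WWo}$ with all formal degrees equal to $1$, that proposition applies and gives the characterization
\begin{equation*}
    \ZygSpaceCompactF{m+s}
    = \left\{ f \in \DistributionsZeroN : \supp(f) \subseteq \Compact,\ W^\alpha f \in \ZygSpaceCompactF{s}\ \forall |\alpha| \leq m \right\},
\end{equation*}
together with the norm equivalence
\begin{equation*}
    \ZygNormF{f}{m+s} \approx \sum_{|\alpha| \leq m} \ZygNormF{W^\alpha f}{s}.
\end{equation*}
Here the derivatives $W^\alpha f$ are interpreted distributionally in $\DistributionsZeroN$.

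Next I would apply Theorem \ref{Thm::Zyg::Holder::MainThm} (the $m=0$ case, already established) to each term $W^\alpha f$ with $|\alpha| \leq m$, obtaining
\begin{equation*}
    \ZygNormF{W^\alpha f}{s} \approx \HolderNormF{W^\alpha f}{0}{s} = \HolderNormWWo{W^\alpha f}{0}{s},
\end{equation*}
where the second equality is by definition of the $\HolderNormWWo{\cdot}{0}{s}$ norm. Summing over $|\alpha| \leq m$ and using the definition of $\HolderNormWWo{\cdot}{m}{s}$ gives
\begin{equation*}
    \ZygNormF{f}{m+s} \approx \sum_{|\alpha| \leq m} \HolderNormWWo{W^\alpha f}{0}{s} = \HolderNormWWo{f}{m}{s}.
\end{equation*}
On the level of sets, Theorem \ref{Thm::Zyg::Holder::MainThm} identifies $\ZygSpaceCompactF{s}$ with $\HolderSpaceCompactF{0}{s} = \HolderSpaceCompactWWo{0}{s}$ as subspaces of $\CSpace{\ManifoldN}$, so an $f$ with $\supp(f) \subseteq \Compact$ lies in $\ZygSpaceCompactF{m+s}$ iff each $W^\alpha f$ ($|\alpha| \leq m$) lies in $\CSpace{\ManifoldN}$ with finite $\HolderNormWWo{\cdot}{0}{s}$, which is exactly membership in $\HolderSpaceCompactWWo{m}{s}$.

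The main obstacle is therefore not in this corollary itself but is absorbed into Theorem \ref{Thm::Zyg::Holder::MainThm} and Proposition \ref{Prop::Spaces::BasicProps::OneDerivsGivesNorms}. Two minor points of care will be (i) verifying that the distributional derivatives $W^\alpha f$ for $f \in \ZygSpaceCompactF{m+s}$ indeed agree with classical derivatives once we have identified $W^\alpha f \in \CSpace{\ManifoldN}$ via the $m=0$ theorem (this follows from the continuous inclusion of Corollary \ref{Cor::Zyg::Decomp::ZygAreContinuous} applied to $W^\alpha f \in \ZygSpaceCompactF{s}$), and (ii) noting that Lemma \ref{Lemma::Zyg::Holder::NormWellDefined} lets us use the $\WWo$-metric to define the $\HolderNormF{\cdot}{0}{s}$ norm, so the two Hölder conventions agree. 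Once these are observed, the equality of spaces and topologies follows immediately from the chain of norm equivalences above.
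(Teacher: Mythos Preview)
Your proposal is correct and follows exactly the paper's approach: the paper's proof simply reads ``This follows by combining Proposition \ref{Prop::Spaces::BasicProps::OneDerivsGivesNorms} and Theorem \ref{Thm::Zyg::Holder::MainThm},'' which is precisely the two-step bootstrap you describe. Your additional remarks about distributional versus classical derivatives and the agreement of H\"older conventions are reasonable clarifications but not separate ingredients beyond what the cited results already provide.
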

\begin{proof}
    This follows by combining Proposition \ref{Prop::Spaces::BasicProps::OneDerivsGivesNorms}
    and Theorem \ref{Thm::Zyg::Holder::MainThm}.
\end{proof}

We turn to the proof of Theorem \ref{Thm::Zyg::Holder::MainThm}. We proceed by reducing the result
to the simpler case of manifolds without boundary, which was covered in \cite{StreetMaximalSubellipticity}. We
state this case as a lemma.

\begin{lemma}\label{Lemma::Zyg::Holder::ManifoldsWithoutBoundary}
    Theorem \ref{Thm::Zyg::Holder::MainThm} holds when \(\ManifoldN\) does not have a boundary.
\end{lemma}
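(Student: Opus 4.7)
The plan is to deduce this lemma as essentially a citation from \cite[Section 7.7]{StreetMaximalSubellipticity}, which treats precisely the equivalence between Zygmund and H\"older spaces adapted to H\"ormander vector fields on a manifold without boundary. The substantive work is not analytic but consists of verifying that the definitions used in this paper reduce to those in \cite{StreetMaximalSubellipticity} when $\BoundaryN=\emptyset$, at which point the cited result applies verbatim.

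First I would check that the two sides of the claimed equality $\ZygSpaceCompactF{s}=\HolderSpaceCompactF{0}{s}$ match the objects studied in \cite{StreetMaximalSubellipticity}. On the Zygmund side, when $\BoundaryN=\emptyset$ every condition of the form ``$E(x,y)$ vanishes to infinite order as $y\to \BoundaryN$'' in Definitions \ref{Defn::Spaces::LP::PElemWWdv} and \ref{Defn::Spaces::LP::ElemWWdv} is vacuous, so $\PElemzF{\Compact}=\PElemF{\Compact}$ and, by Lemma \ref{Lemma::Spaces::Elem::Elem::ElemzhF}, $\ElemzF{\Compact}$ coincides with the bounded sets of elementary operators used in \cite[Chapter 5]{StreetMaximalSubellipticity}. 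Consequently the Littlewood--Paley projectors from Proposition \ref{Prop::Spaces::LP::DjExist} and the norm $\BesovNorm{\cdot}{s}{\infty}{\infty}[\FilteredSheafF]$ of Notation \ref{Notation::Spaces::Defns::Norm} reproduce the Besov norm of \cite[Chapter 6]{StreetMaximalSubellipticity} up to equivalence. On the H\"older side, Definition \ref{Defn::Zyg::Holder::HolderSpace} and Lemma \ref{Lemma::Zyg::Holder::NormWellDefined} (whose proofs never invoked boundary structure) identify $\HolderSpaceCompactF{0}{s}$ with the H\"older space with respect to the Carnot--Carath\'eodory metric $\MetricWWdv$ associated to generators of $\LieFilteredSheafF$, which is exactly the space considered in \cite[Section 7.7]{StreetMaximalSubellipticity}.

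With the definitions identified, the lemma is then the case $m=0$, $s\in(0,1)$ of \cite[Theorem 7.7.2]{StreetMaximalSubellipticity} (or its local variant), applied to a relatively compact neighborhood of $\Compact$ on which $\FilteredSheafF=\FilteredSheafGenBy{\WWdv}$. The ``hard'' direction $\HolderSpaceCompactF{0}{s}\subseteq \ZygSpaceCompactF{s}$ there rests on the cancellation property of elementary operators---$\int D_j(x,y)\,dy$ decays rapidly, allowing one to write $D_j f(x)$ as an integral against $f(y)-f(x)$ and exploit the H\"older estimate together with the volume bounds of Proposition \ref{Prop::VectorFields::Scaling::VolEstimates}---while the reverse direction uses the decomposition $f=\sum D_j f$ from Proposition \ref{Prop::Zyg::Decomp::MainDecomp}, splitting the sum at $2^{-j}\approx \MetricWWdv[x][y]$ and combining an $L^\infty$ bound on the high-frequency tail with a mean-value estimate coming from \eqref{Eqn::Zyg::Norm::DerivPjBounded} on the low-frequency part.

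The main potential obstacle is purely bookkeeping: ensuring that the slightly different conventions (formal degrees, the fact that \cite{StreetMaximalSubellipticity} works with $\LieFilteredSheafF$-generators of finite type while we sometimes work with $\FilteredSheafF$) do not introduce hidden obstructions. Theorem \ref{Thm::Spaces::OnlyDependsOnLieFiltration} together with Lemma \ref{Lemma::Spaces::LP::ElemDoesntDependOnChoices} handles this, since both $\ZygSpaceCompactF{s}$ and $\HolderSpaceCompactF{0}{s}$ depend on $\FilteredSheafF$ only through $\LieFilteredSheafF$; so we may freely replace $\FilteredSheafF$ by $\LieFilteredSheafF$ and invoke the cited result. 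No new analytic input beyond what is already in \cite{StreetMaximalSubellipticity} is required.
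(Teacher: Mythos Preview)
Your proposal is correct and takes essentially the same approach as the paper: both reduce the lemma to a citation of the corresponding result in \cite{StreetMaximalSubellipticity} (the paper cites \cite[Theorem 7.7.23]{StreetMaximalSubellipticity} in a single line, whereas you cite a nearby theorem and add supporting discussion of why the definitions match when $\BoundaryN=\emptyset$). Your additional verification that the boundary-vanishing conditions become vacuous and that the elementary operators coincide via Lemma \ref{Lemma::Spaces::Elem::Elem::ElemzhF} is sound and arguably makes the citation more transparent, but it is not a different route.
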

\begin{proof}
    This is \cite[Theorem 7.7.23]{StreetMaximalSubellipticity}.
\end{proof}

Let \(\ManifoldM\) be a smooth manifold without boundary
such that \(\ManifoldN\subseteq \ManifoldM\) is a closed, co-dimension \(0\), embedded, submanifold, 
and \(\FilteredSheafFh\) a H\"ormander filtration
of sheaves of vector fields on \(\ManifoldM\) such that \(\RestrictFilteredSheaf{\FilteredSheafFh}{\ManifoldN}=\FilteredSheafF\)
(such a choice of \(\ManifoldM\) and \(\FilteredSheafFh\) always exist--see Proposition \ref{Prop::Filtrations::RestrictingFiltrations::CoDim0CoRestriction}).

\begin{lemma}\label{Lemma::Zyg::Holder::ZygInHolder}
    For \(0<s<1\), \(\ZygSpaceCompactF{s}\subseteq \HolderSpaceCompactF{0}{s}\), and the inclusion is continuous.
\end{lemma}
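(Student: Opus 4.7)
The plan is to reduce to the already-proved case of manifolds without boundary (Lemma \ref{Lemma::Zyg::Holder::ManifoldsWithoutBoundary}) by composing the extension operator of Theorem \ref{Thm::Spaces::Extension} with restriction, and then transferring the Hölder estimate back to \(\ManifoldN\) using equivalence of Carnot--Carathéodory metrics. By Corollary \ref{Cor::Zyg::Decomp::ZygAreContinuous}, any \(f\in \ZygSpaceCompactF{s}\) is already in \(\CSpace{\ManifoldN}\) with \(\supp(f)\subseteq \Compact\), so the only task is to bound the Hölder seminorm.

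First, I will choose an ambient setup: by Proposition \ref{Prop::Filtrations::RestrictingFiltrations::CoDim0CoRestriction}, there is a smooth manifold \(\ManifoldM\) without boundary containing \(\ManifoldN\) as a closed, codimension \(0\), embedded submanifold, and a Hörmander filtration \(\FilteredSheafFh\) on \(\ManifoldM\) with \(\RestrictFilteredSheaf{\FilteredSheafFh}{\ManifoldN}=\FilteredSheafF\). Pick an \(\ManifoldM\)-open, relatively compact \(\Omegah\Subset \ManifoldM\) containing \(\Compact\). Applying Theorem \ref{Thm::Spaces::Extension} \ref{Item::Spaces::Extension::Extension} with any \(N_0\geq s\), I obtain a bounded linear map \(\Extension[N_0]:\ZygSpaceCompactF{s}\to \ZygSpace{s}[\overline{\Omegah}][\FilteredSheafFh]\) whose restriction to \(\TestFunctionsZeroN\) is the identity on \(f\). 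Set \(g:=\Extension[N_0] f\); then \(\ZygNorm{g}{s}[\FilteredSheafFh]\lesssim \ZygNormF{f}{s}\).

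Next, I will invoke Lemma \ref{Lemma::Zyg::Holder::ManifoldsWithoutBoundary} on the manifold without boundary \(\ManifoldM\) to conclude \(g\in \HolderSpace{0}{s}[\overline{\Omegah}][\FilteredSheafFh]\) with \(\HolderNorm{g}{0}{s}[\FilteredSheafFh]\lesssim \ZygNorm{g}{s}[\FilteredSheafFh]\lesssim \ZygNormF{f}{s}\). Since \(g\) is continuous on \(\overline{\Omegah}\) and \(f\) is continuous on \(\ManifoldN\), the identity \(g\big|_{\TestFunctionsZeroN}=f\) in \(\DistributionsZeroN\) forces \(g=f\) pointwise on \(\InteriorN\cap\overline{\Omegah}\), and by continuity on all of \(\overline{\Omegah}\cap \ManifoldN\). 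Finally, by Lemma \ref{Lemma::Zyg::Holder::NormWellDefined} I may compute \(\HolderNormF{f}{0}{s}\) using any convenient \(\Omega\Subset \ManifoldNncF\) containing \(\Compact\); with \(\ZZde\) generators of \(\LieFilteredSheafFh\) on \(\Omegah\) as in Proposition \ref{Prop::VectorFields::Scaling::AmbientVolAndMetricEquivalnce}, that proposition (part \ref{Item::VectorFields::Scaling::AmbientVolAndMetricEquivalnce::MetricWequalsMetricZ}) gives \(\MetricWWdv[x][y]\approx \MetricZZde[x][y]\) for \(x,y\) in the relevant compact subset of \(\Omegah\cap \ManifoldN\), so the Hölder quotient with respect to \(\MetricWWdv\) is bounded by a constant times the Hölder quotient of \(g\) with respect to \(\MetricZZde\).

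Combining these estimates yields \(\HolderNormF{f}{0}{s}\lesssim \ZygNormF{f}{s}\), which is exactly the continuous inclusion claimed. The only nontrivial technical step is the identification \(g\big|_{\ManifoldN}=f\) as continuous functions from the distributional identity \(g\big|_{\TestFunctionsZeroN}=f\); this is routine because \(\InteriorN\) is dense in \(\ManifoldN\) and both \(f\) and \(g\big|_{\ManifoldN}\) are continuous, but it is the one place where continuity (via Corollary \ref{Cor::Zyg::Decomp::ZygAreContinuous}) is essential, which is why this argument works for \(s>0\) and would not work at \(s=0\).
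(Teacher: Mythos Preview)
Your proposal is correct and follows essentially the same approach as the paper: extend \(f\) to the ambient manifold without boundary, apply Lemma \ref{Lemma::Zyg::Holder::ManifoldsWithoutBoundary} there, and transfer the H\"older estimate back to \(\ManifoldN\) via the metric equivalence in Proposition \ref{Prop::VectorFields::Scaling::AmbientVolAndMetricEquivalnce}. The paper invokes Corollary \ref{Cor::Spaces::Extend::Consequences::RestrictionSpaceDefn} for the extension step (a direct consequence of Theorem \ref{Thm::Spaces::Extension}) and is slightly less explicit than you about identifying \(g\big|_{\ManifoldN}=f\) as continuous functions, but the arguments are otherwise the same.
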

\begin{proof}
    Let \(f\in \ZygSpaceCompactF{s}\). We wish to show
    \begin{equation}\label{Eqn::Zyg::Holder::ZygInHolder::ToShow}
        \HolderNormF{f}{0}{s} \lesssim \ZygNormF{f}{s}.
    \end{equation}
    \eqref{Eqn::Zyg::Holder::ZygInHolder::ToShow} is clearly true for \(f=0\), so we may assume
    \(\ZygNormF{f}{s}>0\).
    Fix \(U_1\Subset U_2\Subset \ManifoldM\), \(\ManifoldM\)-open, relatively compact sets with \(\Omega\Subset U_1\).
    Let \(\FilteredSheafFh\big|_{U_2}=\FilteredSheafGenBy{\ZZde}\), where \(\ZZde\)
    are H\"ormander vector fields with formal degrees on \(U_2\) (see Lemma \ref{Lemma::Filtrations::GeneratorsOnRelCptSet}).
    By Corollary \ref{Cor::Spaces::Extend::Consequences::RestrictionSpaceDefn}, 
    \(\exists g\in \ZygSpace{s}[\overline{U_1}][\FilteredSheafFh]\), with \(g\big|_{\ManifoldN}=f\)
    and \(\ZygNormFh{g}{s}\lesssim \ZygNormF{f}{s}\).
    By Lemma \ref{Lemma::Zyg::Holder::ManifoldsWithoutBoundary},
    \begin{equation}\label{Eqn::Zyg::Holder::ZygInHolder::Tmp1}
        \HolderNormFh{g}{0}{s}\approx \ZygNormFh{g}{s}\lesssim \ZygNormF{f}{s}.
    \end{equation}
    Let \(\Omega_1\Subset \Omega\) be \(\ManifoldN\)-open and relatively compact, with \(\Compact\Subset \Omega\).
    \eqref{Eqn::Zyg::Holder::ZygInHolder::Tmp1} implies
    \begin{equation}\label{Eqn::Zyg::Holder::ZygInHolder::Tmp2}
    \begin{split}
         &\sup_{x}|f(x)|+\sup_{\substack{x,y\in \Omega_1 \\ x\ne y}} \MetricZZde[x][y]^{-s} \left| f(x)-f(y) \right|
         \leq \sup_{x}|g(x)|+\sup_{\substack{x,y\in U_2 \\ x\ne y}} \MetricZZde[x][y]^{-s} \left| g(x)-g(y) \right|
         \\&=\HolderNormFh{g}{0}{s}\lesssim \ZygNormF{f}{s}.
    \end{split}
    \end{equation}
    Proposition \ref{Prop::VectorFields::Scaling::AmbientVolAndMetricEquivalnce} \ref{Item::VectorFields::Scaling::AmbientVolAndMetricEquivalnce::MetricWequalsMetricZ}
    shows \(\MetricZZde[x][y]\approx \MetricWWdv[x][y]\), \(\forall x,y\in \overline{\Omega_1}\).
    Plugging this into the left-hand side of \eqref{Eqn::Zyg::Holder::ZygInHolder::Tmp2}
    shows
    \begin{equation*}
        \sup_{x}|f(x)|+\sup_{\substack{x,y\in \Omega_1 \\ x\ne y}} \MetricWWdv[x][y]^{-s} \left| f(x)-f(y) \right|
        \lesssim \ZygNormF{f}{s}.
    \end{equation*}
    In light of Lemma \ref{Lemma::Zyg::Holder::NormWellDefined} this gives \eqref{Eqn::Zyg::Holder::ZygInHolder::ToShow}.
\end{proof}

Lemma \ref{Lemma::Zyg::Holder::ZygInHolder} proves half of Theorem \ref{Thm::Zyg::Holder::MainThm};
we turn to the reverse containment: \(\HolderSpaceCompactF{0}{s}\subseteq \ZygSpaceCompactF{s}\).
This takes a bit more work to reduce to Lemma \ref{Lemma::Zyg::Holder::ManifoldsWithoutBoundary}.
We begin with a particular closed subspace of \(\HolderSpaceCompactF{0}{s}\).

\begin{lemma}\label{Lemma::Zyg::Holder::VanishHolderInZyg}
    \(\left\{ f\in \HolderSpaceCompactF{0}{s} : f\big|_{\BoundaryN}=0 \right\}\subseteq \ZygSpaceCompactF{s}\)
    and the inclusion is continuous.
\end{lemma}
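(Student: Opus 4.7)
The plan is to reduce the claim to the already-established boundaryless case (Lemma \ref{Lemma::Zyg::Holder::ManifoldsWithoutBoundary}) by extending $f$ by zero to an ambient manifold without boundary, proving that the extension is Hölder with respect to the ambient Carnot--Carath\'eodory metric, and then restricting back.

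\textbf{Step 1: Set up the ambient extension.} Using Proposition \ref{Prop::Filtrations::RestrictingFiltrations::CoDim0CoRestriction}, I would embed $\ManifoldN$ as a closed, co-dimension $0$, embedded submanifold of a smooth manifold $\ManifoldM$ (without boundary) and choose a H\"ormander filtration $\FilteredSheafFh$ on $\ManifoldM$ with $\RestrictFilteredSheaf{\FilteredSheafFh}{\ManifoldN}=\FilteredSheafF$. Given $f\in \HolderSpaceCompactF{0}{s}$ with $f\big|_{\BoundaryN}=0$, define $\tilde f:\ManifoldM\to\C$ by $\tilde f=f$ on $\ManifoldN$ and $\tilde f=0$ on $\ManifoldM\setminus\ManifoldN$. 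Since $f\big|_{\BoundaryN}=0$ and $f$ is continuous with $\supp(f)\subseteq \Compact$, the extension $\tilde f$ is continuous on $\ManifoldM$ with $\supp(\tilde f)\subseteq\Compact$.

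\textbf{Step 2: Show $\tilde f\in\HolderSpace{0}{s}[\Compact][\FilteredSheafFh]$ with $\HolderNormFh{\tilde f}{0}{s}\lesssim\HolderNormF{f}{0}{s}$.} Choose H\"ormander generators $\WhWdv$ of $\FilteredSheafFh$ on a relatively compact neighborhood of $\Compact$ in $\ManifoldM$ and let $\WWdv$ be their restrictions to $\ManifoldN$ (so $\FilteredSheafF=\FilteredSheafGenBy{\WWdv}$ by Proposition \ref{Prop::Filtrations::RestrictingFiltrations::CoDim0Restriction}). I would bound $|\tilde f(x)-\tilde f(y)|$ in three cases for $x,y$ in the relevant neighborhood. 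When both $x,y\in\ManifoldN$, Proposition \ref{Prop::VectorFields::Scaling::AmbientVolAndMetricEquivalnce}\ref{Item::VectorFields::Scaling::AmbientVolAndMetricEquivalnce::MetricWequalsMetricZ} gives $\MetricWWdv[x][y]\approx\MetricWhWdv[x][y]$, and the Hölder bound for $f$ transfers directly. When both $x,y\notin\ManifoldN$, both sides vanish. The substantive case is $x\in\ManifoldN$, $y\notin\ManifoldN$: here $\tilde f(y)=0$, so I need $|f(x)|\lesssim\HolderNormF{f}{0}{s}\MetricWhWdv[x][y]^s$.

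\textbf{Step 3: The geometric crossing lemma.} This is where the main technical content lies. Given $\epsilon>0$, pick an absolutely continuous sub-unit curve $\gamma:[0,1]\to\ManifoldM$ realizing $\MetricWhWdv[x][y]$ to within $\epsilon$. Since $\gamma(0)=x\in\ManifoldN$ and $\gamma(1)=y\notin\ManifoldN$, by the intermediate value theorem there is $t_0\in(0,1)$ with $z:=\gamma(t_0)\in\BoundaryN$. The sub-curve $\gamma|_{[0,t_0]}$ is sub-unit at the same scale, so $\MetricWhWdv[x][z]\le\MetricWhWdv[x][y]+\epsilon$. Applying Proposition \ref{Prop::VectorFields::Scaling::AmbientVolAndMetricEquivalnce}\ref{Item::VectorFields::Scaling::AmbientVolAndMetricEquivalnce::MetricWequalsMetricZ} to the pair $x,z\in\ManifoldN$ yields $\MetricWWdv[x][z]\approx\MetricWhWdv[x][z]\lesssim\MetricWhWdv[x][y]$. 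Using $f\big|_{\BoundaryN}=0$,
\[
|f(x)|=|f(x)-f(z)|\le\HolderNormF{f}{0}{s}\MetricWWdv[x][z]^s\lesssim\HolderNormF{f}{0}{s}\MetricWhWdv[x][y]^s,
\]
which completes Step 2. The hard part is this crossing argument; in particular, I would need to ensure that the intermediate point $z$ lands in a fixed compact subset of $\ManifoldNncF$ so that the metric-equivalence proposition applies. This can be arranged by working locally near $\Compact\cap\BoundaryN$ and choosing all neighborhoods sufficiently small, since $\Compact\Subset\ManifoldNncF$.

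\textbf{Step 4: Conclude via the boundaryless result and restriction.} By Lemma \ref{Lemma::Zyg::Holder::ManifoldsWithoutBoundary} applied on $\ManifoldM$, $\tilde f\in\ZygSpace{s}[\Compact][\FilteredSheafFh]$ with $\ZygNormFh{\tilde f}{s}\approx\HolderNormFh{\tilde f}{0}{s}$. By Theorem \ref{Thm::Spaces::Extension}\ref{Item::Spaces::Extension::Restriction}, restriction back to $\ManifoldN$ is continuous $\ZygSpace{s}[\Compact][\FilteredSheafFh]\to\ZygSpaceCompactF{s}$, and the restriction of $\tilde f$ as a distribution agrees with $f$ (since both are continuous functions on $\ManifoldN$ coinciding pointwise). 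Chaining the estimates gives $f\in\ZygSpaceCompactF{s}$ with $\ZygNormF{f}{s}\lesssim\HolderNormF{f}{0}{s}$, which is the desired continuous inclusion.
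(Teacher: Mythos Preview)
Your proposal is correct and follows essentially the same strategy as the paper: extend $f$ by zero to the ambient manifold $\ManifoldM$, prove the extension lies in $\HolderSpace{0}{s}[\Compact][\FilteredSheafFh]$ via a three-case analysis with a curve-crossing argument to locate a boundary point $z$ in the mixed case, invoke the boundaryless result (Lemma~\ref{Lemma::Zyg::Holder::ManifoldsWithoutBoundary}), and restrict back. The paper handles the compactness issue you flag in Step~3 by fixing a $\delta_0>0$ so that if $\MetricZZde[x][y]\geq\delta_0$ the estimate is trivial via the sup norm, while if $\MetricZZde[x][y]<\delta_0$ the connecting curve stays inside a prescribed relatively compact set $V$ with $V\cap\ManifoldN\Subset\ManifoldNncF$; your sketch of this localization is along the same lines.
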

\begin{proof}
    Let \(f\in \HolderSpaceCompactF{0}{s}\).
    Fix \(\Omega_1\Subset \Omega\) open and relatively compact in \(\Omega\), with \(\Compact\Subset \Omega_1\).
    Take \(U_1\Subset U_2\Subset \ManifoldM\), \(\ManifoldM\)-open, relatively compact sets with 
    \(U_1\cap \ManifoldN=\Omega_1\)  (and therefore \(\Compact\Subset U_1\)).
    Let \(\FilteredSheafFh\big|_{U_2}=\FilteredSheafGenBy{\ZZde}\), where \(\ZZde\)
    are H\"ormander vector fields with formal degrees on \(U_2\) (see Lemma \ref{Lemma::Filtrations::GeneratorsOnRelCptSet}).

    Define \(g:\ManifoldM\rightarrow \C\) by
    \begin{equation*}
        g(x)=
        \begin{cases}
            f(x), &x\in \ManifoldN,\\
            0,&x\in \ManifoldM\setminus \ManifoldN.
        \end{cases}
    \end{equation*}
    We claim \(g\in \HolderSpace{0}{s}[\Compact][\FilteredSheafFh]\) with
    \begin{equation}\label{Eqn::Zyg::Holder::VanishHolderInZyg::BoundExtended}
      \HolderNormFh{g}{0}{s}\lesssim  \HolderNormF{f}{0}{s},
    \end{equation}
    Clearly, \(\supp(g)\subseteq \Compact\) and \(\sup_{x}|g(x)|=\sup_{x}|f(x)|\).
    Fix \(\Omega_2\Subset\Omega\), \(\ManifoldN\)-open, and relatively compact in \(\Omega\), with \(\Omega_1\Subset \Omega_2\).
    We will show for \(x,y\in U_1\), \(x\ne y\),
    \begin{equation}\label{Eqn::Zyg::Holder::VanishHolderInZyg::BoundExtended::ToShow}
        \MetricZZde[x][y]^{-s}\left| g(x)-g(y) \right|
        \lesssim  
        \sup_{x} |f(x)|+
        \sup_{\substack{x',y'\in \Omega_2 \\ x'\ne y'}}\MetricWWdv[x'][y']^{-s}\left| f(x')-f(y') \right|,
    \end{equation}
    and then \eqref{Eqn::Zyg::Holder::VanishHolderInZyg::BoundExtended} will follow (using Lemma \ref{Lemma::Zyg::Holder::NormWellDefined}).

    We separate \eqref{Eqn::Zyg::Holder::VanishHolderInZyg::BoundExtended::ToShow} into three
    cases.
    If \(x,y\in U_1\setminus \Omega_1\), then \(g(x)=g(y)=0\), and \eqref{Eqn::Zyg::Holder::VanishHolderInZyg::BoundExtended::ToShow} is trivial.
    If \(x,y\in \Omega_1\), then \(g(x)=f(y)\) and \(g(y)=f(y)\) and by 
    Proposition \ref{Prop::VectorFields::Scaling::AmbientVolAndMetricEquivalnce} \ref{Item::VectorFields::Scaling::AmbientVolAndMetricEquivalnce::MetricWequalsMetricZ},
    \(\MetricZZde[x][y]\approx \MetricWWdv[x][y]\); \eqref{Eqn::Zyg::Holder::VanishHolderInZyg::BoundExtended::ToShow} follows
    in this case.
    Finally, we consider the case when \(x\in U_1\setminus \Omega\) and \(y\in \Omega\) or \(x\in \Omega\) and \(y\in U_1\setminus \Omega\).
    By symmetry, we may assume \(x\in U_1\setminus \Omega\) and \(y\in \Omega\). 
    
    
    Let \(V\Subset U_2\) be \(\ManifoldM\)-open and relatively compact in \(U_2\), with \(U_1\Subset V\) and \(V\cap \ManifoldN\Subset \Omega_2\), and take \(\delta_0>0\) with \(\MetricZZde[x'][y']\geq \delta_0\),
    \(\forall x'\in U_1\), \(y'\in U_2\setminus V\). 
    If \(\MetricZZde[x][y]\geq \delta_0\) then
    \eqref{Eqn::Zyg::Holder::VanishHolderInZyg::BoundExtended::ToShow} is trivial, so we assume  \(\MetricZZde[x][y]< \delta_0\).
    We claim
    \begin{equation}\label{Eqn::Zyg::Holder::VanishHolderInZyg::BoundExtended::ToShowMetric}
      \exists z\in V\cap \BoundaryN,\quad\MetricZZde[x][y]\gtrsim \MetricZZde[z][y].
    \end{equation}
    Suppose \(\MetricZZde[x][y]<\delta\)
    for some \(\delta<\delta_0\). 
    Then \(\exists \gamma:[0,1]\rightarrow U_2\) absolutely continuous, \(\gamma(0)=x\), \(\gamma(1)=y\), \(\gamma'(t)=\sum_{j=1}^q a_j(t) \delta^{\Zde_j} Z_j(t)\),
    \(\sum |a_j|^2<1\), almost everywhere. Note that \(\gamma([0,1])\subseteq \BZZde{x}{\delta}\subseteq \BZZde{x}{\delta_0}\subseteq V\) (since \(x\in U_1\)).
    Since \(\gamma(0)=x\in \ManifoldM\setminus \ManifoldN\) and \(\gamma(1)=y\in \ManifoldN\), \(\exists t_0\in [0,1]\)
    with \(z=\gamma(t_0)\in \BoundaryN\). Also, \(z\in \gamma([0,1])\subseteq V\). By definition, we have \(\MetricZZde[z][y]<\delta\).
    Taking \(\delta\leq 2\MetricZZde[x][y]\), \eqref{Eqn::Zyg::Holder::VanishHolderInZyg::BoundExtended::ToShowMetric} follows.

    With \eqref{Eqn::Zyg::Holder::VanishHolderInZyg::BoundExtended::ToShowMetric} in hand, we complete the proof of 
    \eqref{Eqn::Zyg::Holder::VanishHolderInZyg::BoundExtended::ToShow}.  We have,
    \begin{equation}\label{Eqn::Zyg::Holder::VanishHolderInZyg::BoundExtended::Tmp1}
        \MetricZZde[x][y]^{-s}\left| g(x)-g(y) \right|
        =\MetricZZde[x][y]^{-s}\left| g(y) \right|
        =\MetricZZde[x][y]^{-s}\left| g(z)-g(y) \right|
        \lesssim \MetricZZde[z][y]^{-s}\left| g(z)-g(y) \right|.
    \end{equation}
    By since \(z,y\in V\cap \ManifoldN\subseteq \Omega_2\), 
    Proposition \ref{Prop::VectorFields::Scaling::AmbientVolAndMetricEquivalnce} \ref{Item::VectorFields::Scaling::AmbientVolAndMetricEquivalnce::MetricWequalsMetricZ}
    shows \(\MetricZZde[z][y]\approx \MetricWWdv[z][y]\).
    Plugging this into \eqref{Eqn::Zyg::Holder::VanishHolderInZyg::BoundExtended::Tmp1} yields \eqref{Eqn::Zyg::Holder::VanishHolderInZyg::BoundExtended::ToShow}
    and completes the proof of \eqref{Eqn::Zyg::Holder::VanishHolderInZyg::BoundExtended}.

    By Lemma \ref{Lemma::Zyg::Holder::ManifoldsWithoutBoundary}, we have
    \(g\in \ZygSpace{s}[\Compact][\FilteredSheafFh]\), with
    \begin{equation*}
        \ZygNormFh{g}{s}\lesssim \HolderNormFh{g}{0}{s}\lesssim \HolderNormF{f}{0}{s}.
    \end{equation*}
    By Corollary \ref{Cor::Spaces::Extend::Consequences::RestrictionSpaceDefn},
    \(f=g\big|_{\ManifoldN}\in \ZygSpaceCompactF{s}\) with
    \begin{equation*}
        \ZygNormF{f}{s}\lesssim \ZygNormFh{g}{s}\lesssim \HolderNormF{f}{0}{s},
    \end{equation*}
    completing the proof.
\end{proof}

\begin{lemma}\label{Lemma::Zyg::Holder::TraceHolder}
    For \(s\in [0,1]\), the map \(f\mapsto f\big|_{\BoundaryNncF}\)
    is continuous
    \begin{equation*}
        \HolderSpaceCompactF{0}{s}\rightarrow \HolderSpace{0}{s}[\Compact\cap \BoundaryN][\RestrictFilteredSheaf{\LieFilteredSheafF}{\BoundaryNncF}].
    \end{equation*}
\end{lemma}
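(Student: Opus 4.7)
The plan is to reduce the statement to a one-sided metric inequality between the ambient Carnot--Carath\'eodory metric and the intrinsic one on the non-characteristic boundary. Once that inequality is in place, the $\HolderSpace{0}{s}$-norm comparison is immediate from Definition~\ref{Defn::Zyg::Holder::HolderSpace}.

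First I would set the stage. By Lemma~\ref{Lemma::Zyg::Holder::NormWellDefined}, applied on $\ManifoldN$ for the source norm and on $\BoundaryNncF$ for the target norm, it suffices to compute each norm with respect to generators of the corresponding \emph{Lie} filtration. So pick $\Omega \Subset \ManifoldNncF$ relatively compact with $\Compact \subseteq \Omega$, and by Lemma~\ref{Lemma::Filtrations::GeneratorsForLieFiltration} choose H\"ormander vector fields with formal degrees $\XXdv = \{(X_1,\Xdv_1),\ldots,(X_q,\Xdv_q)\}$ on $\Omega$ such that $\LieFilteredSheafF\big|_{\Omega} = \FilteredSheafGenBy{\XXdv}$. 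By Proposition~\ref{Prop::Filtrations::RestrictingFiltrations::LieFIsHormanderFiltration} combined with Lemma~\ref{Lemma::Filtrations::GeneratorsOnRelCptSet}, after possibly shrinking $\Omega$ there exist generators $\VVdv = \{(V_1,\Vdv_1),\ldots,(V_{q'},\Vdv_{q'})\}$ with $\RestrictFilteredSheaf{\LieFilteredSheafF}{\BoundaryNncF}\big|_{\Omega \cap \BoundaryN} = \FilteredSheafGenBy{\VVdv}$, and again by Lemma~\ref{Lemma::Zyg::Holder::NormWellDefined} (applied on $\BoundaryNncF$) the target norm is equivalent to the one built from $\VVdv$.

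The key step is to show
\[
\MetricXXdv[x][y] \lesssim \MetricVVdv[x][y], \quad \forall x,y \in \Compact \cap \BoundaryN.
\]
By the very definition of $\RestrictFilteredSheaf{\cdot}{\cdot}$ (Definition~\ref{Defn::Filtrations::RestrictingFiltrations::RestrictedFiltration}), each $V_k$ extends, after possibly shrinking $\Omega$, to a $\widetilde V_k \in \LieFilteredSheafF[\Omega][\Vdv_k]$ with $\widetilde V_k(x) \in \TangentSpace{x}{\BoundaryN}$ for $x \in \Omega \cap \BoundaryN$ and $\widetilde V_k\big|_{\Omega\cap\BoundaryN}=V_k$. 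Writing $\widetilde V_k = \sum_{j:\Xdv_j \le \Vdv_k} b_{k,j}\,X_j$ with $b_{k,j}\in \CinftySpace[\Omega]$, an absolutely continuous path $\gamma\colon[0,1]\to\Omega\cap\BoundaryN$ with $\gamma'(t) = \sum_k a_k(t)\,\delta^{\Vdv_k} V_k(\gamma(t))$ and $\sum|a_k|^2<1$ a.e.\ is, viewed in the ambient $\Omega$, the same absolutely continuous path with $\gamma'(t) = \sum_j \tilde a_j(t)\,\delta^{\Xdv_j} X_j(\gamma(t))$, where $\tilde a_j(t) = \sum_k a_k(t)\,b_{k,j}(\gamma(t))\,\delta^{\Vdv_k - \Xdv_j}$. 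Since $\delta \le 1$ and $b_{k,j}$ is bounded on $\overline{\Omega}$, we have $\sum_j |\tilde a_j|^2 \le C$ uniformly. Rescaling $\delta$ by a bounded constant yields the claimed inequality.

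With the metric bound in hand the proof finishes directly. For $f \in \HolderSpaceCompactF{0}{s}$, trivially $\sup_{x\in \Compact\cap\BoundaryN}|f(x)| \le \sup_{x}|f(x)| \le \HolderNormF{f}{0}{s}$, while for $x,y\in \Compact\cap\BoundaryN$ with $x\ne y$,
\[
\MetricVVdv[x][y]^{-s}\,|f(x)-f(y)| \lesssim \MetricXXdv[x][y]^{-s}\,|f(x)-f(y)| \le \HolderNormF{f}{0}{s},
\]
and taking the supremum (plus the norm-equivalence identification from Lemma~\ref{Lemma::Zyg::Holder::NormWellDefined}) gives the required continuous map. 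The only non-bookkeeping step is the metric comparison, and given the existing generator theory it is mild; I expect no substantive obstacle.
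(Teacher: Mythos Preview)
Your proof is correct and follows essentially the same route as the paper: reduce everything to a comparison of Carnot--Carath\'eodory metrics on boundary points, then read off the H\"older estimate. The paper invokes the full metric equivalence $\MetricVVde[x][y]\approx\MetricWWdv[x][y]$ on $\overline{\Gamma_1}$ from \cite[Theorem~\ref*{CC::Thm::Sheaves::Metrics::BoundaryMetricEquivalence}]{StreetCarnotCaratheodoryBallsOnManifoldsWithBoundary}, whereas you prove only the one-sided inequality $\MetricXXdv\lesssim\MetricVVdv$ directly by lifting boundary paths to ambient paths; since only that direction is actually needed for the continuity statement, your argument is more self-contained. One small technical point worth tightening: when you write $\widetilde V_k=\sum_j b_{k,j}X_j$ and assert boundedness of $b_{k,j}$ on $\overline\Omega$, you should choose the ambient generators $\XXdv$ on a slightly larger set (or restrict to an $\Omega_1\Subset\Omega$ containing $\Compact$ as the paper does) so that the bound is genuinely uniform along every admissible path; this is routine but should be made explicit.
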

\begin{proof}
    The result is trivial for \(s=0\), so we assume \(s\in (0,1]\).
    Let \(\Omega_1\Subset \Omega\) be open and relatively compact with \(\Compact\Subset \Omega\).
    Let \(\Gamma_1:=\Omega_1\cap \BoundaryN\) and \(\Gamma:=\Omega\cap\BoundaryN\),
    so that \(\Compact\cap\BoundaryN\Subset \Gamma_1\Subset\Gamma\Subset\BoundaryNncF\).
    Let \(\VVde:=\left\{ \left( V_1,\Vde_1 \right),\ldots, \left( V_q,\Vde_q \right) \right\}\subset \VectorFields{\Gamma}\times \Zg\)
    be H\"ormander vector fields with formal degrees on \(\Gamma\) such that
    \(\RestrictFilteredSheaf{\LieFilteredSheafF}{\Gamma}=\FilteredSheafGenBy{\VVde}\) 
    (see Lemma \ref{Lemma::Filtrations::GeneratorsOnRelCptSet} and Proposition \ref{Prop::Filtrations::RestrictingFiltrations::LieFIsHormanderFiltration}).
    For \(x,y\in \overline{\Gamma_1}\), we have \(\MetricVVde[x][y]\approx \MetricWWdv[x][y]\)
    (see \cite[Theorem \ref*{CC::Thm::Sheaves::Metrics::BoundaryMetricEquivalence}]{StreetCarnotCaratheodoryBallsOnManifoldsWithBoundary}).
    Therefore, using Lemma \ref{Lemma::Zyg::Holder::NormWellDefined},
    \begin{equation*}
    \begin{split}
         &\BHolderNorm{f\big|_{\BoundaryNncF} }{0}{s}[\RestrictFilteredSheaf{\LieFilteredSheafF}{\BoundaryNncF}]
         \approx \sup_{x\in \Gamma_1} |f(x)| + \sup_{\substack{x,y\in \Gamma_1 \\ x\ne y}} \MetricVVde[x][y]^{-s} \left| f(x)-f(y) \right|
         \\&\approx \sup_{x\in \Gamma_1} |f(x)| + \sup_{\substack{x,y\in \Gamma_1 \\ x\ne y}} \MetricWWdv[x][y]^{-s} \left| f(x)-f(y) \right|
         \\&\leq \sup_{x} |f(x)| + \sup_{\substack{x,y\in \Omega_1 \\ x\ne y}} \MetricWWdv[x][y]^{-s} \left| f(x)-f(y) \right|
         \approx \HolderNormF{f}{0}{s}[\FilteredSheafF].
    \end{split}
    \end{equation*}
\end{proof}

\begin{lemma}\label{Lemma::Zyg::Holder::TraceHolderToZyg}
    For \(s\in (0,1)\), the map \(f\mapsto f\big|_{\BoundaryNncF}\)
    is continuous
    \begin{equation*}
        \HolderSpaceCompactF{0}{s}\rightarrow \ZygSpace{s}[\Compact\cap \BoundaryN][\RestrictFilteredSheaf{\LieFilteredSheafF}{\BoundaryNncF}].
    \end{equation*}
\end{lemma}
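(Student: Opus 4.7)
The plan is to compose Lemma \ref{Lemma::Zyg::Holder::TraceHolder} with the already-established manifold-without-boundary case, namely Lemma \ref{Lemma::Zyg::Holder::ManifoldsWithoutBoundary}. The key observation enabling this is that the non-characteristic boundary $\BoundaryNncF$ is itself a smooth manifold \emph{without boundary} (it is an open subset of $\BoundaryN$), so the full strength of the classical equivalence between Zygmund and H\"older spaces on $\BoundaryNncF$ is available.

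First, I would verify the set-theoretic setup: since $\Compact \Subset \ManifoldNncF = \InteriorN \cup \BoundaryNncF$, the intersection $\Compact \cap \BoundaryN$ equals $\Compact \cap \BoundaryNncF$, which is a closed subset of $\Compact$ and hence compact, and is contained in $\BoundaryNncF$. By Proposition \ref{Prop::Filtrations::RestrictingFiltrations::LieFIsHormanderFiltration}, the restricted filtration $\RestrictFilteredSheaf{\LieFilteredSheafF}{\BoundaryNncF}$ is a H\"ormander filtration of sheaves of vector fields on the manifold-without-boundary $\BoundaryNncF$, so the results of Chapters \ref{Chapter::Spaces} and the present one apply on $\BoundaryNncF$ directly.

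Next, by Lemma \ref{Lemma::Zyg::Holder::TraceHolder} (applied with $s \in (0,1)$), the restriction map $f \mapsto f\big|_{\BoundaryNncF}$ is continuous
\[
    \HolderSpaceCompactF{0}{s}\rightarrow \HolderSpace{0}{s}[\Compact\cap \BoundaryN][\RestrictFilteredSheaf{\LieFilteredSheafF}{\BoundaryNncF}].
\]
Then, since $\BoundaryNncF$ has no boundary and $\Compact \cap \BoundaryN \Subset \BoundaryNncF$, Lemma \ref{Lemma::Zyg::Holder::ManifoldsWithoutBoundary} gives
\[
    \HolderSpace{0}{s}[\Compact\cap \BoundaryN][\RestrictFilteredSheaf{\LieFilteredSheafF}{\BoundaryNncF}] = \ZygSpace{s}[\Compact\cap \BoundaryN][\RestrictFilteredSheaf{\LieFilteredSheafF}{\BoundaryNncF}]
\]
with equivalence of norms. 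Composing these two continuous maps yields the claim.

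There is no serious obstacle: the work is really just assembling two already-proved ingredients. The only thing to be careful about is that Lemma \ref{Lemma::Zyg::Holder::ManifoldsWithoutBoundary} is applied on the intrinsic manifold $\BoundaryNncF$ (not on $\ManifoldN$), which is legitimate precisely because removing the characteristic points leaves an honest boundaryless manifold endowed with a H\"ormander filtration of sheaves.
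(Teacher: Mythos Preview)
Your proposal is correct and matches the paper's proof essentially verbatim: the paper also invokes Lemma \ref{Lemma::Zyg::Holder::TraceHolder} and then applies Lemma \ref{Lemma::Zyg::Holder::ManifoldsWithoutBoundary} on the boundaryless manifold \(\BoundaryNncF\) (citing Proposition \ref{Prop::Filtrations::RestrictingFiltrations::LieFIsHormanderFiltration} for the H\"ormander filtration there) to identify the H\"older and Zygmund spaces on \(\Compact\cap\BoundaryN\).
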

\begin{proof}
    Since \(\BoundaryNncF\) is a manifold without boundary, Lemma \ref{Lemma::Zyg::Holder::ManifoldsWithoutBoundary}
    shows
    \begin{equation*}
        \HolderSpace{0}{s}[\Compact\cap \BoundaryN][\RestrictFilteredSheaf{\LieFilteredSheafF}{\BoundaryNncF}]
        =
        \ZygSpace{s}[\Compact\cap \BoundaryN][\RestrictFilteredSheaf{\LieFilteredSheafF}{\BoundaryNncF}],
    \end{equation*}
    with equality of topologies;
    see also Proposition \ref{Prop::Filtrations::RestrictingFiltrations::LieFIsHormanderFiltration}.
    The result now follows from Lemma \ref{Lemma::Zyg::Holder::TraceHolder}.
\end{proof}

\begin{lemma}\label{Lemma::Zyg::Holder::ZygAgreeWithHolderOnBoundary}
    Fix \(s\in (0,1)\) and \(\Omega_1\Subset \Omega\) open and relatively compact with
    \(\Compact\Subset \Omega_1\). For \(f\in \HolderSpaceCompactF{0}{s}\), \(\exists g\in \ZygSpace{s}[\overline{\Omega_1}][\FilteredSheafF]\), 
    with \(\ZygNormF{g}{s}\lesssim \HolderNormF{f}{0}{s}\) and \(f\big|_{\BoundaryN}=g\big|_{\BoundaryN}\).
\end{lemma}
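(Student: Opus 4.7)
The plan is to produce $g$ by taking the boundary trace $f\big|_{\BoundaryN}$, which Lemma~\ref{Lemma::Zyg::Holder::TraceHolderToZyg} places in a Zygmund space on $\BoundaryNncF$, and lifting it back into $\ManifoldN$ via the inverse trace map of Theorem~\ref{Thm::Trace::Dirichlet::MainInverseThm}, applied with $L=0$, the trivial Dirichlet system $\sB=(1)$ (a $\FilteredSheafF$-Dirichlet system of length $0$, since $1$ is $\FilteredSheafF$-normal of degree $0$), and $p=q=\infty$. Since that theorem is formulated on an open set contained in a single stratum $\ManifoldNncF[\lambda]$, I will first localize using a partition of unity over the finitely many values of $\degBoundaryNF$ attained on $\Compact\cap\BoundaryN$, handling the piece supported away from $\BoundaryN$ separately via Lemma~\ref{Lemma::Zyg::Holder::VanishHolderInZyg}.

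Concretely: since $\Compact\Subset\ManifoldNncF$ is compact and the $\BoundaryNncF[\lambda]$ are pairwise disjoint clopen subsets of $\BoundaryNncF$, the set $\Lambda:=\{\lambda\in\Zg:\Compact\cap\BoundaryNncF[\lambda]\ne\emptyset\}$ is finite. I would choose pairwise disjoint $\ManifoldN$-open sets $U_\lambda\Subset\ManifoldNncF[\lambda]\cap\Omega_1$ with $\Compact\cap\BoundaryNncF[\lambda]\Subset U_\lambda$, an open $U_0\Subset\InteriorN\cap\Omega_1$ so that $\{U_0\}\cup\{U_\lambda\}_{\lambda\in\Lambda}$ covers a neighborhood of $\Compact$, and a subordinate partition of unity $\phi_0+\sum_{\lambda\in\Lambda}\phi_\lambda=1$ near $\Compact$. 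A standard check (smooth cutoffs are Lipschitz in the Euclidean metric, which locally dominates $\MetricWWdv$ up to a power, and $s\in(0,1)$) shows that multiplication by each $\phi_\lambda$ is bounded on $\HolderSpaceCompactF{0}{s}$, so each $\phi_\lambda f$ and $\phi_0 f$ remains $\HolderSpace{0}{s}$ on the relevant compact set with norm $\lesssim\HolderNormF{f}{0}{s}$.

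For each $\lambda\in\Lambda$, Lemmas~\ref{Lemma::Zyg::Holder::TraceHolder} and~\ref{Lemma::Zyg::Holder::TraceHolderToZyg} identify $(\phi_\lambda f)\big|_{\BoundaryN}$ as an element of $\BesovSpace{s}{\infty}{\infty}[\overline{U_\lambda\cap\BoundaryN}][\RestrictFilteredSheaf{\LieFilteredSheafF}{\BoundaryNncF}]$ with comparable norm. Theorem~\ref{Thm::Trace::Dirichlet::MainInverseThm}, applied with suitably nested open sets inside $U_\lambda\Subset\ManifoldNncF[\lambda]\cap\Omega_1$ and the choices $L=0$, $\sB=(1)$, $p=q=\infty$, then yields $g_\lambda:=\TraceInverseMap[\sB]\bigl((\phi_\lambda f)\big|_{\BoundaryN}\bigr)\in\BesovSpace{s}{\infty}{\infty}[\overline{\Omega_1}][\FilteredSheafF]=\ZygSpace{s}[\overline{\Omega_1}][\FilteredSheafF]$ with $\ZygNormF{g_\lambda}{s}\lesssim\HolderNormF{f}{0}{s}$, and $g_\lambda\big|_{\BoundaryN}=(\phi_\lambda f)\big|_{\BoundaryN}$ by Theorem~\ref{Thm::Trace::Dirichlet::MainInverseThm}~\ref{Item::Trace::Dirichlet::MainInverseThm::IsInverse}, valid because $s>0=L\lambda+\lambda/p$ and Remark~\ref{Rmk::Zyg::Decomp::TraceOnZyg} identifies classical restriction with $\TraceMap[\sB]$ on continuous functions. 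Meanwhile Lemma~\ref{Lemma::Zyg::Holder::VanishHolderInZyg}, applied to $\phi_0 f$ (which vanishes on a neighborhood of $\BoundaryN$), places $\phi_0 f\in\ZygSpace{s}[\overline{\Omega_1}][\FilteredSheafF]$ with $\ZygNormF{\phi_0 f}{s}\lesssim\HolderNormF{f}{0}{s}$.

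Setting $g:=\phi_0 f+\sum_{\lambda\in\Lambda}g_\lambda\in\ZygSpace{s}[\overline{\Omega_1}][\FilteredSheafF]$, one has $g\big|_{\BoundaryN}=0+\sum_{\lambda\in\Lambda}(\phi_\lambda f)\big|_{\BoundaryN}=f\big|_{\BoundaryN}$ on $\Compact\cap\BoundaryN$ (since $\sum_\lambda\phi_\lambda=1$ and $\phi_0=0$ there), and both $g\big|_{\BoundaryN}$ and $f\big|_{\BoundaryN}$ vanish outside $\Compact$, so equality holds on all of $\BoundaryN$; the norm estimate follows by summing the finitely many pieces. The only real obstacle is bookkeeping --- arranging the nested-open-set hypotheses of Theorem~\ref{Thm::Trace::Dirichlet::MainInverseThm} simultaneously for each $\lambda\in\Lambda$ so that all outputs land in $\overline{\Omega_1}$, while keeping the different $\BoundaryNncF[\lambda]$-neighborhoods disjoint --- after which everything reduces to previously established continuity and inversion.
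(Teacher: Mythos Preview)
Your proof is correct and uses the same core idea as the paper: take the boundary trace into the Zygmund space on \(\BoundaryNncF\) via Lemma~\ref{Lemma::Zyg::Holder::TraceHolderToZyg}, then lift with \(\TraceInverseMap[(1)]\) from Theorem~\ref{Thm::Trace::Dirichlet::MainInverseThm} at \(p=q=\infty\). The paper's proof is a single line doing exactly this, without the partition of unity.

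Your additional localization over the finitely many values of \(\lambda=\degBoundaryNF\) (and the separate interior piece via Lemma~\ref{Lemma::Zyg::Holder::VanishHolderInZyg}) is technically more careful than the paper, since Theorem~\ref{Thm::Trace::Dirichlet::MainInverseThm} as stated is formulated on an open set \(\Omega\Subset\ManifoldNncF[\lambda]\) for a single \(\lambda\), whereas nothing in the ambient chapter hypotheses forces \(\Compact\) to lie in a single stratum. The paper applies \(\TraceInverseMap\) directly with ``\(\Omega_2\) replaced with \(\Omega_1\) and \(\Omega_1\) replaced with any relatively compact, open subset of \(\Omega_1\) containing \(\Compact\)'', implicitly gliding over this point. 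Your version makes the reduction explicit; both arrive at the same \(g\) up to the pieces.
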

\begin{proof}
    Set \(g:=\TraceInverseMap\left( f\big|_{\BoundaryNncF} \right)\),
    where \(\TraceInverseMap\) is the map from Theorem \ref{Thm::Trace::Dirichlet::MainInverseThm} with \(\sB=(1)\),
    \(\Omega_2\) replaced with \(\Omega_1\) and \(\Omega_1\) replaced with any relatively compact, open subset of \(\Omega_1\)
    containing \(\Compact\).
    That the linear map \(f\mapsto g\) is continuous,
    \(\HolderSpaceCompactF{0}{s}\rightarrow \ZygSpace{s}[\overline{\Omega_1}][\FilteredSheafF]\), 
    follows by combining Lemma \ref{Lemma::Zyg::Holder::TraceHolderToZyg}
    and  Theorem \ref{Thm::Trace::Dirichlet::MainInverseThm} \ref{Item::Trace::Dirichlet::MainInverseThm::ContinuousOnSpaces};
    and therefore \(\ZygNormF{g}{s}\lesssim \HolderNormF{f}{0}{s}\).
    That \(f\big|_{\BoundaryN}=g\big|_{\BoundaryN}\) follows from 
    Theorem \ref{Thm::Trace::Dirichlet::MainInverseThm} \ref{Item::Trace::Dirichlet::MainInverseThm::IsInverse}
    (see also Remark \ref{Rmk::Zyg::Decomp::TraceOnZyg}); recall \(\supp(f\big|_{\BoundaryN})\subseteq \Compact\cap\BoundaryN\subseteq \BoundaryNncF\).
\end{proof}

\begin{lemma}\label{Lemma::Zyg::Holder::HolderAsSumOfZyg}
    Fix \(s\in (0,1)\) and \(\Omega_1\Subset \Omega\) open and relatively compact with
    \(\Compact\Subset \Omega_1\). For \(f\in \HolderSpaceCompactF{0}{s}\),
    \(\exists g_1,g_2\in \ZygSpace{s}[\overline{\Omega_1}][\FilteredSheafF]\), 
    with \(\ZygNormF{g_1}{s},\ZygNormF{g_2}{s}\lesssim \HolderNormF{f}{0}{s}\) and 
    \(f=g_1+g_2\).
\end{lemma}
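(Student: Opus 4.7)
The plan is to exploit the two preceding lemmas: Lemma \ref{Lemma::Zyg::Holder::ZygAgreeWithHolderOnBoundary} produces a Zygmund piece whose boundary trace matches that of $f$, and Lemma \ref{Lemma::Zyg::Holder::VanishHolderInZyg} converts H\"older data with vanishing boundary trace back into the Zygmund class. So the decomposition essentially writes itself.

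More precisely, fix an auxiliary open, relatively compact set $\Omega_1'$ with $\Compact\Subset\Omega_1'\Subset\Omega_1$, and apply Lemma \ref{Lemma::Zyg::Holder::ZygAgreeWithHolderOnBoundary} to produce $g_1\in\ZygSpace{s}[\overline{\Omega_1'}][\FilteredSheafF]\subseteq\ZygSpace{s}[\overline{\Omega_1}][\FilteredSheafF]$ with $g_1\big|_{\BoundaryN}=f\big|_{\BoundaryN}$ and $\ZygNormF{g_1}{s}\lesssim\HolderNormF{f}{0}{s}$. Set $g_2:=f-g_1$, which is supported in $\overline{\Omega_1}$ and satisfies $g_2\big|_{\BoundaryN}=0$ (using Remark \ref{Rmk::Zyg::Decomp::TraceOnZyg} to interpret the Zygmund trace classically).

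Next I would verify that $g_2\in\HolderSpace{0}{s}[\overline{\Omega_1}][\FilteredSheafF]$ with $\HolderNormF{g_2}{0}{s}\lesssim\HolderNormF{f}{0}{s}$. Since $f\in\HolderSpaceCompactF{0}{s}\subseteq\HolderSpace{0}{s}[\overline{\Omega_1}][\FilteredSheafF]$ trivially, it suffices to bound $g_1$ in the H\"older norm. But Lemma \ref{Lemma::Zyg::Holder::ZygInHolder} (applied with $\overline{\Omega_1}$ in place of $\Compact$, which is permitted since $\overline{\Omega_1}\Subset\ManifoldNncF$) gives exactly $\HolderNormF{g_1}{0}{s}\lesssim\ZygNormF{g_1}{s}\lesssim\HolderNormF{f}{0}{s}$.

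Finally, apply Lemma \ref{Lemma::Zyg::Holder::VanishHolderInZyg} with $\overline{\Omega_1}$ in place of $\Compact$ to $g_2$: since $g_2$ has vanishing boundary trace and finite H\"older norm on $\overline{\Omega_1}$, we obtain $g_2\in\ZygSpace{s}[\overline{\Omega_1}][\FilteredSheafF]$ with $\ZygNormF{g_2}{s}\lesssim\HolderNormF{g_2}{0}{s}\lesssim\HolderNormF{f}{0}{s}$. This yields the decomposition $f=g_1+g_2$ with both norms controlled, as required. The only mild obstacle is bookkeeping for the compact sets: one must ensure that Lemmas \ref{Lemma::Zyg::Holder::VanishHolderInZyg} and \ref{Lemma::Zyg::Holder::ZygInHolder} can be applied on the enlarged compact set $\overline{\Omega_1}\Subset\ManifoldNncF$ rather than $\Compact$, but this is immediate since those lemmas hold for any compact subset of $\ManifoldNncF$ and the choice of compact set does not appear in their proofs in an essential way.
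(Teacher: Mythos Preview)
Your proof is correct and follows essentially the same approach as the paper's: take $g_1$ from Lemma \ref{Lemma::Zyg::Holder::ZygAgreeWithHolderOnBoundary}, use Lemma \ref{Lemma::Zyg::Holder::ZygInHolder} to put $g_1$ back into the H\"older class, and then apply Lemma \ref{Lemma::Zyg::Holder::VanishHolderInZyg} to $g_2=f-g_1$. The auxiliary set $\Omega_1'$ you introduce is harmless but unnecessary---the paper simply applies Lemma \ref{Lemma::Zyg::Holder::ZygAgreeWithHolderOnBoundary} directly with $\Omega_1$ playing the role of that lemma's $\Omega_1$.
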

\begin{proof}
    Let \(g_1\) be \(g\) from Lemma \ref{Lemma::Zyg::Holder::ZygAgreeWithHolderOnBoundary}; so that
    \(g_1\in \ZygSpace{s}[\overline{\Omega_1}][\FilteredSheafF]\) with \(\ZygNormF{g_1}{s}\lesssim \HolderNormF{f}{0}{s}\)
    and \(f\big|_{\BoundaryN}=g_1\big|_{\BoundaryN}\).
    By Lemma \ref{Lemma::Zyg::Holder::ZygInHolder}, \(g_1\in \HolderSpace{0}{s}[\overline{\Omega_1}][\FilteredSheafF]\),
    with \(\HolderNormF{g_1}{0}{s}\lesssim \ZygNormF{g_1}{s}\lesssim \HolderNormF{f}{0}{s}\).

    Set \(g_2=f-g_1\in \HolderSpace{0}{s}[\overline{\Omega_1}][\FilteredSheafF]\)
    so that \(\HolderNormF{g_2}{0}{s}\leq \HolderNormF{f}{0}{s}+ \HolderNormF{g_1}{0}{s}\lesssim \HolderNormF{f}{0}{s}\).
    Since \(g_2\big|_{\BoundaryN}=0\), Lemma \ref{Lemma::Zyg::Holder::VanishHolderInZyg} shows
    \(g_2\in \ZygSpace{s}[\overline{\Omega_1}][\FilteredSheafF]\)
    with \(\ZygNormF{g_2}{s}\lesssim \HolderNormF{f}{0}{s}\), completing the proof.
\end{proof}

\begin{lemma}\label{Lemma::Zyg::Holder::HolderInZyg}
    For \(0<s<1\), \(\HolderSpaceCompactF{0}{s}\subseteq \ZygSpaceCompactF{s}\), and the inclusion is continuous.
\end{lemma}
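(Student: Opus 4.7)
The plan is that this lemma is essentially an immediate consequence of Lemma \ref{Lemma::Zyg::Holder::HolderAsSumOfZyg}, which does almost all the work.

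Given $f\in\HolderSpaceCompactF{0}{s}$, first fix an auxiliary open, relatively compact set $\Omega_1\Subset\Omega$ with $\Compact\Subset\Omega_1$, and apply Lemma \ref{Lemma::Zyg::Holder::HolderAsSumOfZyg} to decompose $f=g_1+g_2$, where $g_1,g_2\in\ZygSpace{s}[\overline{\Omega_1}][\FilteredSheafF]$ satisfy $\ZygNormF{g_1}{s}+\ZygNormF{g_2}{s}\lesssim\HolderNormF{f}{0}{s}$. Since $\ZygSpace{s}[\overline{\Omega_1}][\FilteredSheafF]$ is a vector space, $f=g_1+g_2$ lies in $\ZygSpace{s}[\overline{\Omega_1}][\FilteredSheafF]$, with
\[
\|f\|_{\ZygSpace{s}[\overline{\Omega_1}][\FilteredSheafF]}\leq \ZygNormF{g_1}{s}+\ZygNormF{g_2}{s}\lesssim \HolderNormF{f}{0}{s}.
\]
Now I invoke Proposition \ref{Prop::Spaces::Containment}: since $f\in\HolderSpaceCompactF{0}{s}$ has $\supp(f)\subseteq\Compact\subseteq\overline{\Omega_1}$, the characterization \eqref{Eqn::Spaces::CompactSubsetImpliesClosedSubspace} shows that $f\in\ZygSpaceCompactF{s}$ with $\ZygNormF{f}{s}\lesssim\|f\|_{\ZygSpace{s}[\overline{\Omega_1}][\FilteredSheafF]}$, so $\ZygNormF{f}{s}\lesssim\HolderNormF{f}{0}{s}$, which simultaneously gives the inclusion and its continuity.

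Thus there is nothing genuinely new to do here: the main obstacle has already been resolved in Lemma \ref{Lemma::Zyg::Holder::HolderAsSumOfZyg}, whose proof splits a H\"older function into a ``boundary part'' (handled via the inverse trace map and Lemma \ref{Lemma::Zyg::Holder::TraceHolderToZyg}) and a ``vanishing part'' (handled by Lemma \ref{Lemma::Zyg::Holder::VanishHolderInZyg}, which in turn is reduced to the boundary-free setting of \cite[Theorem 7.7.23]{StreetMaximalSubellipticity} via zero extension). Combined with Lemma \ref{Lemma::Zyg::Holder::ZygInHolder}, this final lemma completes the proof of Theorem \ref{Thm::Zyg::Holder::MainThm}.
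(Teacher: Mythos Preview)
Your proof is correct and follows essentially the same approach as the paper: invoke Lemma \ref{Lemma::Zyg::Holder::HolderAsSumOfZyg} to place $f$ in $\ZygSpace{s}[\overline{\Omega_1}][\FilteredSheafF]$ with the right norm bound, then use $\supp(f)\subseteq\Compact$ together with Proposition \ref{Prop::Spaces::Containment} to descend to $\ZygSpaceCompactF{s}$. Your explanatory paragraph on how Lemma \ref{Lemma::Zyg::Holder::HolderAsSumOfZyg} is assembled from the earlier lemmas is accurate and matches the paper's development.
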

\begin{proof}
    Let \(\Omega_1\Subset \Omega\) be a relatively compact, open set with \(\Compact\Subset \Omega_1\).
    By Lemma \ref{Lemma::Zyg::Holder::HolderAsSumOfZyg}, \(f\in \ZygSpace{s}[\overline{\Omega_1}][\FilteredSheafF]\)
    with \(\ZygNormF{f}{s}\lesssim \HolderNormF{f}{0}{s}\). Since \(\supp(f)\subseteq \Compact\),
    by definition (see Definition \ref{Defn::Zyg::Holder::HolderSpace}), Proposition \ref{Prop::Spaces::Containment}
    shows \(f\in \ZygSpaceCompactF{s}\), completing the proof.
\end{proof}

\begin{proof}[Proof of Theorem \ref{Thm::Zyg::Holder::MainThm}]
    This follows by combining Lemmas \ref{Lemma::Zyg::Holder::ZygInHolder} and \ref{Lemma::Zyg::Holder::HolderInZyg}.
\end{proof}

    \subsection{Compositions}\label{Section::Zyg::Compositions}
    Fix \(N\in \Zg\). For functions \(u\in \ZygSpaceCompactF{s}[\RN]\), we wish to consider compositions of the form
\(F(x,u(x))\), where \(F(x,\zeta):\ManifoldN\rightarrow \C\) is a sufficiently smooth function.

\begin{remark}
    Such compositions are important for nonlinear boundary value problems. A general nonlinear boundary value problem
    takes the form
    \begin{equation}\label{Eqn::Zyg::Comp::ExampleNonlinearInterior}
        F\left( x, \left\{ W^{\alpha}u(x) \right\}_{\DegWdv{\alpha}\leq \kappa} \right) =f(x) \text{ on }\InteriorN,
    \end{equation}
    \begin{equation}\label{Eqn::Zyg::Comp::ExampleNonlinearBoundary}
        G_j\left(x', \left\{ W^{\alpha} u\big|_{\BoundaryN}(x') \right\}_{\DegWdv{\alpha}\leq \kappa_j}\right)=g_j(x')\text{ on }\BoundaryN,
    \end{equation}
    where \(F(x,\zeta)\), \(f(x)\), \(G_j(x',\zeta)\), and \(g_j(x')\) (\(j=1,\ldots,L\)) are given.
    Let \(u\) be the unknown function, and suppose that near a point \(x_0\in \BoundaryNncF\), \(u\in \ZygSpace{s+\kappa}[\FilteredSheafF]\),
    for some \(s>0\).
    Then, after introducing appropriate cut-off functions, \eqref{Eqn::Zyg::Comp::ExampleNonlinearInterior}
    is of the form studied in this section. Moreover, in light of Theorem \ref{Thm::Trace::ForwardMap},
    if \(\kappa_j\leq \kappa\), \(\forall j\), 
    and again after introducing appropriate cut-off functions,
    \eqref{Eqn::Zyg::Comp::ExampleNonlinearBoundary}  is also of the form studied in this section (with \(\ManifoldN\)
    replaced by the manifold without boundary \(\BoundaryNncF\) and \(\FilteredSheafF\) replaced
    by \(\RestrictFilteredSheaf{\LieFilteredSheafF}{\BoundaryNncF}\)). Using the results in this section,
    such nonlinear boundary value problems will be studied in a future paper.
\end{remark}

Informally, the class of functions \(F(x,\zeta)\) we consider are \(\ZygSpaceCompactF{s}\) in the \(x\)-variable
and \(\ZygSpace{t}[\R^N]\) in the \(\zeta\)-variable, where \(\ZygSpace{t}[\R^N]\) denotes the classic
Zygmund--H\"older space on \(\R^N\) of order \(t>0\) (see \cite[Sections 2.2.2 and 2.5.12]{TriebelTheoryOfFunctionSpaces}).
To rigorously define these product Zygmund--H\"older spaces, let 
\(D_j\) be as in Section \ref{Section::Zyg::Norm}.
For \(l\in \Zgeq\), let \(\Dt_l\) be given by \eqref{Eqn::Spaces::Classical::DefineDjOnWholeSpace} (with \(n\) replaced by \(N\));
so that 
\(\Dt_l\) acts on \(\TemperedDistributions[\RN]\) and 
\(\sum_{l\in \Zgeq} \Dt_l = I\).
Set \(\Dh_{j,l}=D_j\otimes \Dt_l\).

\begin{definition}
    For \(s,t>0\), we let \(\ProdZygSpaceCompactRNF{s}{t}\) denote the space of all
    \(F(x,\zeta)\in \DistributionsZero[\ManifoldN\times \RN]\) such that \(\supp(F)\subseteq \Compact\times \RN\) 
    and the following norm is finite:
    \begin{equation*}
        \ProdZygNormF{F}{s}{t}:=\sup_{j,l\in \Zgeq} 2^{js+lt} \BLpNorm{\Dh_{j,l}F}{\infty}[\ManifoldN\times \RN].
    \end{equation*}
\end{definition}

\begin{theorem}\label{Thm::Zyg::Composition::MainComposition}
    Let \(s,t>0\), \(F\in \ProdZygSpaceCompactRNF{s}{t}\), \(u,v\in \ZygSpaceCompactF{s}[\RN]\), and \(w\in \ZygSpaceCompactF{s}\).
    Then,
    \begin{enumerate}[(i)]
        \item If \(t>\floor{s}+2\), then
            \(F(x,u(x)+\zeta)\in \ProdZygSpaceCompactRNF{s}{t-\floor{s}-2}\) and
            \begin{equation*}
                \ProdZygNormF{ F(x,u(x)+\zeta)}{s}{t-\floor{s}-2}
                \lesssim \ProdZygNormF{F}{s}{t} \left( 1+\ZygNormF{u}{s} \right) \left( 1+\LpNorm{u}{\infty} \right)^{\floor{s}+1}.
            \end{equation*}
        \item If \(t>\floor{s}+2\), then \(F(x,u(x))\in \ZygSpaceCompactF{s}\) and
            \begin{equation*}
                \ZygNormF{F(x,u(x))}{s}
                \lesssim
                \left( 1+\ZygNormF{u}{s} \right)
                \left( 1+\LpNorm{u}{\infty} \right)^{\floor{s}+1}.
            \end{equation*}
        \item\label{Item::Zyg::Composition::MainComposition::DotProduct} \(u\cdot v\in \ZygSpaceCompactF{s}\) with 
            \begin{equation*}
                \ZygNormF{u\cdot v}{s} \lesssim \ZygNormF{u}{s}\LpNorm{v}{\infty}+\LpNorm{u}{\infty}\ZygNormF{v}{s}.
            \end{equation*}
        \item If \(t>\floor{s}+2\), \(F(x,u(x)+\zeta)w(x)\in \ProdZygSpaceCompactRNF{s}{t-\floor{s}-2}\), with
            \begin{equation*}
                \ProdZygNormF{F(x,u(x)+\zeta)w(x)}{s}{t-\floor{s}-2}
                \lesssim \ProdZygNormF{F}{s}{t} \left( \ZygNormF{w}{s} + \ZygNormF{u}{s}\LpNorm{w}{\infty} \right)\left( 1+\LpNorm{u}{\infty} \right)^{\floor{s}+1}.
            \end{equation*}
        \item If \(t>\floor{s}+3\),
            \begin{equation*}
                \begin{split}
                    &\ProdZygNormF{F(x,u(x)+\zeta)-F(x,v(x)+\zeta)}{s}{t-\floor{s}-3}
                    \\&\lesssim \ProdZygNormF{F}{s}{t} 
                    \left( \ZygNormF{u-v}{s} + \left( \ZygNormF{u}{s}+\ZygNormF{v}{s} \right)\LpNorm{u-v}{\infty} \right)
                    \left( 1+\LpNorm{u}{\infty}+\LpNorm{v}{\infty} \right)^{\floor{s}+1}.
                \end{split}
            \end{equation*}
        \item If \(t>\floor{s}+3\)
            \begin{equation*}
            \begin{split}
                 &\ZygNormF{ F(x,u(x))-F(x,v(x))}{s}
                 \\&\lesssim \ProdZygNormF{F}{s}{t} 
                    \left( \ZygNormF{u-v}{s} + \left( \ZygNormF{u}{s}+\ZygNormF{v}{s} \right)\LpNorm{u-v}{\infty} \right)
                    \left( 1+\LpNorm{u}{\infty}+\LpNorm{v}{\infty} \right)^{\floor{s}+1}.                 
            \end{split}
            \end{equation*}
        \item Fix \(L\in \Zgeq\), \(L\geq 2\) and suppose \(\partial_{\zeta}^{\beta} F(x,0)=0\), \(\forall 0<|\beta|<L\)
            and \(t>\floor{s}+2+L\). Then,
            \begin{equation*}
            \begin{split}
                 &\ZygNormF{ F(x,u(x))-F(x,v(x))}{s}
                 \\&\lesssim \ProdZygNormF{F}{s}{t} 
                 \left( 1+\LpNorm{u}{\infty}+\LpNorm{v}{\infty} \right)^{\floor{s}+2}
                 \left( \LpNorm{u}{\infty}+\LpNorm{v}{\infty} \right)^{L-2}
                 \left( \ZygNormF{u}{s} + \ZygNormF{v}{s} \right)\LpNorm{u-v}{\infty}
                 \\&\quad+\ProdZygNormF{F}{s}{t} 
                 \left( \LpNorm{u}{\infty}+\LpNorm{v}{\infty} \right)^{L-1} \ZygNormF{u-v}{s}.
            \end{split}
            \end{equation*}
    \end{enumerate}
    The implicit constants do not depend on \(F\), \(u\), \(v\), or \(w\), but may depend on any other ingredient in the theorem.
\end{theorem}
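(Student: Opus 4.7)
The plan is to reduce the seven parts to two core technical arguments: the algebra estimate (already essentially given by Proposition~\ref{Prop::Zyg::Algebra::ZygIsAlgebra}), and a paralinearization-style decomposition of the composition $F(x,u(x)+\zeta)$. Part~\ref{Item::Zyg::Composition::MainComposition::DotProduct} is the easiest: since $u\cdot v=\sum_{k=1}^N u_k v_k$, applying Proposition~\ref{Prop::Zyg::Algebra::ZygIsAlgebra} coordinatewise gives the tame bound directly (the proposition is stated for scalar-valued functions, but the argument is linear in each factor). The composition parts will all follow from the same basic decomposition, with progressively more Taylor expansion for the difference estimates.

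For part~(i), I would use Proposition~\ref{Prop::Zyg::Decomp::MainDecomp}: it suffices to decompose $G(x,\zeta):=F(x,u(x)+\zeta)$ as a sum $\sum_j G_j$ of smooth functions whose derivatives in $W$ and $\partial_\zeta$ satisfy the appropriate $2^{-js}$ and $2^{-l(t-\floor{s}-2)}$ decay. With $P_j=\sum_{k\le j}D_k$ and $D_j=P_j-P_{j-1}$, write the telescoping identity
\begin{equation*}
F(x,u(x)+\zeta)=F(x,\zeta)+\sum_{j\ge 0}\bigl[F(x,P_j u(x)+\zeta)-F(x,P_{j-1}u(x)+\zeta)\bigr],
\end{equation*}
and use the fundamental theorem of calculus to rewrite each summand as
\begin{equation*}
G_j(x,\zeta)=\int_0^1 \nabla_\zeta F\bigl(x,P_{j-1}u(x)+sD_j u(x)+\zeta\bigr)\cdot D_j u(x)\,ds.
\end{equation*}
The factor $D_j u$ carries the $2^{-js}$ decay via Lemma~\ref{Lemma::Zyg::Decomp::DerivDjfBound}, while the remaining factor, being a $(\nabla_\zeta F)\circ$(smooth frequency-truncated argument), is bounded in $L^\infty$ by $\ProdZygNormF{F}{s}{t}$ (using $t>0$ and that Zygmund spaces embed in $C^0$). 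When a derivative $(2^{-j\Wdv}W_x)^\alpha$ hits $G_j$, the chain rule distributes it onto either the $F$-factor (each such hit produces one power of the bounded quantity $W^\beta P_{j-1}u$ or $W^\beta D_j u$, itself controlled by $2^{-\DegWdv{\beta}j}\cdot 2^{-js}\ZygNorm{u}{s}$ or by $\ZygNorm{u}{s}\cdot 2^{js(1-|\beta|)}$, using \eqref{Eqn::Zyg::Norm::DerivPjBounded} and Lemma~\ref{Lemma::Zyg::Decomp::DerivDjfBound}) or onto the final $D_j u$ factor. Careful bookkeeping with Fa\`a di Bruno shows that differentiating $\floor{s}+1$ times forces at most one occurrence of $\ZygNormF{u}{s}$, the rest being $\LpNorm{u}{\infty}$; and we lose $\floor{s}+2$ derivatives on the $\zeta$-side because $\nabla_\zeta F$ is evaluated composed with rough arguments, so the Zygmund-in-$\zeta$ regularity must be spent to absorb derivatives of these arguments up to order $\floor{s}+1$ (plus one more from Proposition~\ref{Prop::Zyg::Decomp::MainDecomp}'s loss of one index).

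Parts~(ii), (iv), and (vi) follow from~(i) by evaluating at $\zeta=0$ (continuous embedding in $C^0$), by inserting the bounded factor $w(x)$ and using the algebra-type argument of Proposition~\ref{Prop::Zyg::Algebra::ZygIsAlgebra} inside the pieces $G_j$, and by writing $F(x,u)-F(x,v)=\int_0^1 \nabla_\zeta F(x,v+\theta(u-v))\,d\theta\cdot(u-v)$ and then applying~(iv) with $F$ replaced by $\nabla_\zeta F$ (which costs one further derivative in $\zeta$, hence $\floor{s}+3$ instead of $\floor{s}+2$). Parts~(v) and (vii) are the corresponding statements with $+\zeta$ inside; (vii) additionally uses the assumption $\partial_\zeta^\beta F(x,0)=0$ for $0<|\beta|<L$ to Taylor-expand $\nabla_\zeta F$ to order $L-2$ around $0$, producing $L-1$ smallness factors of the form $\LpNorm{u}{\infty}+\LpNorm{v}{\infty}$.

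The main obstacle is the tame nature of the bounds: naive estimation after applying the chain rule produces products like $\ZygNorm{u}{s}^k$, whereas we are required to keep exactly one Zygmund factor and the rest in $L^\infty$. Handling this requires, at each differentiation, routing at most one derivative onto a $\ZygNorm{s}$-expensive object and routing all the others onto $L^\infty$-bounded ones; this is possible precisely because $P_{j-1}u$ and $D_j u$ each admit decompositions in which every $W^\alpha$ derivative is bounded by either $\LpNorm{u}{\infty}$ (for $P_{j-1}u$, via \eqref{Eqn::Zyg::Norm::DerivPjBounded} after replacing $f$ by its $L^\infty$-bounded decomposition) or by $\ZygNorm{u}{s}\cdot 2^{(\DegWdv{\alpha}-s)j}$ (for $D_j u$). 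Organizing the bookkeeping so that exactly one rough factor appears per term, and verifying that the resulting sequence $\{G_j\}$ fits the hypotheses of Proposition~\ref{Prop::Zyg::Decomp::MainDecomp} (uniformly in $\zeta$, treating the $\zeta$-Zygmund structure by a parallel Littlewood--Paley decomposition in $\zeta$), is the most delicate step; the rest is combinatorial.
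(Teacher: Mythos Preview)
Your proposal is correct and takes essentially the same approach as the paper, which simply refers the reader to the proof of \cite[Theorem 7.5.2]{StreetMaximalSubellipticity} with the tools Lemma~\ref{Lemma::Spaces::Elem::PElem::PElemOpsBoundedOnLp}, Proposition~\ref{Prop::Spaces::Elem::Elem::MainProps}, and \eqref{Eqn::Zyg::Norm::DerivPjBounded} substituted in. The telescoping decomposition via \(P_j u\), the fundamental theorem of calculus, and the tame bookkeeping you describe are exactly the ingredients of that referenced argument.
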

\begin{proof}[Comments on the proof]
    This is a simple reprise of the proof of \cite[Theorem 7.5.2]{StreetMaximalSubellipticity} (in the case \(\nu=1\)), using 
    Lemma \ref{Lemma::Spaces::Elem::PElem::PElemOpsBoundedOnLp},
    Proposition \ref{Prop::Spaces::Elem::Elem::MainProps},
    and \eqref{Eqn::Zyg::Norm::DerivPjBounded} in place of the corresponding results in \cite{StreetMaximalSubellipticity}.
\end{proof}

\begin{remark}
    As in Remark \ref{Rmk::Zyg::Tame}, the estimates in Theorem \ref{Thm::Zyg::Composition::MainComposition}
    are tame; which is essential for applications to nonlinear PDEs.
\end{remark}

\begin{remark}
    As in Remark \ref{Rmk::Zyg::Algebra::ExtensionDoesntWork}, one can obtain the qualitative
    consequences of Theorem \ref{Thm::Zyg::Composition::MainComposition} by combining
    \cite[Theorem 7.5.2]{StreetMaximalSubellipticity} (the result on manifolds without boundary)
    with the extension map from Theorem \ref{Thm::Spaces::Extension}. However, this does not
    establish the tame estimates. Instead, one can merely recreate the proof from \cite{StreetMaximalSubellipticity}
    in this setting to obtain the desired tame estimates.
\end{remark}

\begin{remark}
    See \cite[Corollary 4.5]{MemanaRegularityTheoremForFullyNonlinearMaximallySubellipticPDE} for a version of
    Theorem \ref{Thm::Zyg::Composition::MainComposition} for Sobolev spaces on manifolds without boundary. 
    It seems likely that a similar result is true on manifolds with boundary near a non-characteristic point of the boundary,
    though we do not pursue this here.
\end{remark}

\bibliographystyle{amsalpha}

\bibliography{bibliography}

\ifnotanonymous
\center{\it{University of Wisconsin-Madison, Department of Mathematics, 480 Lincoln Dr., Madison, WI, 53706}}

\center{\it{street@math.wisc.edu}}
\fi

\center{MSC 2020:  46E35 (Primary), 46E36, 35G15, and 35H20 (Secondary)}

\end{document}